\newcommand{\nsq}[1]{\,\framebox{\rule[0cm]{0cm}{0cm}$#1$}\,}
\newbox\rist@
\def\ristr#1#2{\setbox\rist@=\hbox{$\hbox{$#1$}_{#2}$}\setbox0\hbox{$#1$}
{{#1}\,\vrule width.39998pt height\ht\rist@ depth\dp\rist@\,
 \hbox{\vrule depth\dp0 height \ht0 width0pt}_{#2}}}
\def\ristrset#1#2{\setbox\rist@=\hbox{$\ssize\hbox{$\ssize#1$}_{#2}$}\setbox0\hb
ox{$\ssize#1$}
 {{#1}\,\vrule width.39998pt height\ht\rist@ depth\dp\rist@\,
 \hbox{\vrule depth\dp0 height \ht0 width0pt}_{#2}}}
\def\interior{\mathaccent"0017 }
\newtheorem{thm}{Theorem}[section]
\newtheorem{lem}[thm]{Lemma}
\newtheorem{prop}[thm]{Proposition}
\newtheorem{cor}[thm]{Corollary}
\theoremstyle{definition}
\newtheorem{defn}[thm]{Definition}
\newtheorem{defns}[thm]{Definitions}
\newtheorem*{problem}{Problem}
\theoremstyle{remark}
\newtheorem{remark}[thm]{Remark}
\newtheorem{remarks}[thm]{Remarks}
\newtheorem{example}[thm]{Example}
\newtheorem{examples}[thm]{Examples}
\newtheorem{quest}[thm]{Question}
\numberwithin{equation}{section}
\newcommand{\K}{{\mathbb K}} \newcommand{\N}{{\mathbb N}}
\newcommand{\Z}{{\mathbb Z}} \newcommand{\R}{{\mathbb R}}
\newcommand{\Q}{{\mathbb Q}} \newcommand{\C}{{\mathbb C}}
\newcommand{\sph}{{\mathbb S}} \newcommand{\D}{{\mathbb D}}
\newcommand{\PP}{{\mathbb P}}
\newcommand{\psd}{{\mathcal P}} 
\newcommand{\an}{{\EuScript O}} 
\newcommand{\hol}{{\mathcal H}} 
 \newcommand{\J}{{\mathcal J}}
 \newcommand{\I}{{\mathcal I}}
\newcommand{\ideal}{{\mathcal I}}
\newcommand{\ceros}{{\mathcal Z}}
\newcommand{\Mm}{{\mathfrak M}}
\newcommand{\mm}{{\mathfrak m}}
\newcommand{\gtp}{{\mathfrak p}} \newcommand{\gtq}{{\mathfrak q}}
\newcommand{\gtm}{{\mathfrak m}} \newcommand{\gtn}{{\mathfrak n}}
\newcommand{\gta}{{\mathfrak a}} \newcommand{\gtb}{{\mathfrak b}}
\newcommand{\gtP}{{\mathfrak P}} \newcommand{\gtQ}{{\mathfrak Q}}
 \newcommand{\gtA}{{\mathfrak A}}
 \newcommand{\gtM}{{\mathfrak M}}
\newcommand{\gtg}{{\mathfrak g}}  \newcommand{\gtV}{{\mathfrak V}}
\newcommand{\gtN}{{\mathfrak N}}
\newcommand{\Oo}{{\EuScript O}}
\newcommand{\Ii}{{\EuScript  I}}
\newcommand{\Jj}{{\EuScript J}}
\newcommand{\Ff}{{\EuScript F }}
\newcommand{\Aa}{{\EuScript A }}
\newcommand{\Gg}{{\EuScript G}}
\newcommand{\Rr}{{\EuScript R}}
\newcommand{\Cc}{{\EuScript C}}
\newcommand{\Env}{{\EuScript E}}
\newcommand{\shprod}{{\text{\Large$\displaystyle\uppi$}}}
\newcommand{\Reg}{\operatorname{Reg}}
\newcommand{\Sing}{\operatorname{Sing}}
\newcommand{\Int}{\operatorname{Int}}
\newcommand{\cl}{\operatorname{Cl}}
\newcommand{\supp}{\operatorname{supp}}
\newcommand{\id}{\operatorname{id}}
\newcommand{\rg}{\operatorname{rk}}
\newcommand{\zar}{\operatorname{zar}}
\newcommand{\GL}{\operatorname{GL}}
\newcommand{\re}{\operatorname{Re}}
\newcommand{\ima}{\operatorname{Im}}
\newcommand{\x}{{\tt x}} \newcommand{\y}{{\tt y}} 
\newcommand{\z}{{\tt z}} 
 \renewcommand{\u}{{\tt u}}
 \newcommand{\h}{{\tt h}}
\newcommand{\veps}{\varepsilon}
\newcommand{\half}{\frac{1}{2}}
\newcommand{\ol}{\overline}
\newcommand{\qq}[1]{\langle{#1}\rangle}
\numberwithin{equation}{section}
\newcommand{\bounded}{{\mathcal B}}
\newcommand{\stb}{{\EuScript E}}
\newcommand{\e}{{\tt e}}
\begin{document}

\title{Topics in global real analytic geometry}
\author{F. Acquistapace, F. Broglia, J. F. Fernando}
\maketitle

\null
\vfill
AMS subject classification:  

primary: 32C20, 32C05, 14P15. 

secondary: 58A07, 32Q35.

\bigskip

\sl Francesca Acquistapace; Fabrizio Broglia.

\rm Dipartimento di Matematica, Universit\`a degli Studi di Pisa, Largo Bruno Pontecorvo, 5, 56127 PISA (ITALY).

francesca.acquistapace@unipi.it, fabrizio.broglia@unipi.it

\medskip

\sl Jose Francisco Fernando.

\rm Departamento de \'Algebra,Geometr\`\i a y Topolog\'\i a, Facultad de Ciencias Matem\'aticas, Universidad Complutense de Madrid, 28040 MADRID (SPAIN). 

josefer@mat.ucm.es

\pagenumbering{Roman}
\setcounter{tocdepth}{1}
\pagenumbering{arabic}
\setcounter{page}{0}
\tableofcontents

\newpage

\chapter*{ Introduction.}

In the first half of twentieth century the theory of complex analytic functions and of their zerosets was fully developed.  The definition of holomorphic function has a local nature. Germs of holomorphic functions form a distinguished subring of the ring of germs of continuous functions. 
This way came out the notion of {\em analytic space}. The definition of a complex analytic set is by {\em local models} as in the case of complex manifolds. But while local models for manifolds are open sets of $\C^n$, a local model of an  analytic space is the  zeroset of finitely many analytic functions on an open set of $\C^n$ together with a sheaf of continuous function to be called {\em holomorphic}.

Towards the  years $50$ of  the last century, Cartan,  Whitney, Bruhat
and  others tried  to  formulate  the notion  of  analytic space  over
$\R$.  Immediately  they  realize  that  the  real  sets  verifying  a
definition similar to the complex  one form a cathegory whose elements
can get unpleasant behaviour. In particular this cathegory does not get
the  good  properties of  complex  analytic  spaces, as  for  instance
coherence of their structural sheaves and Theorems A and B do not hold
in  general.  In  contrast  with  what happens  for  instance  in  the
algebraic case it is not always  possible to see a real analytic space
as the  fixed point set of  a suitable {\em conjugation}  on a complex
analytic space. In this situation some doubts arose on the interest of
such  investigations. For  instance  Grothendieck  wrote in  \cite{c5}
exp. 9 pg 12

\begin{quoting}
Lorsque $k$ est alg\'ebriquement clos, il est probablement vrai que
tout espace analytique r\'eduit \`a un point est de la forme qu'on vient
d'indiquer, ce qui serait une des variantes du ``Nullstellensatz''
analytique. Signalons par contre tout de suite que rien de tel n'est vrai si
$k$ n'est pas alg\'ebriquement clos, par exemple si $k$ est le corps des
r\'eels $\R$. Ainsi, le sous-espace analytique de $\R^2$ d\'efini par
l'id\'eal engendr\'e par $x^2+y^2$ est r\'eduit au point origine, mais son
anneau local en ce point n'est pas artinien, mais de dimension de Krull
\'egale \`a $1$. L'int\'er\^et des espaces analytiques, lorsque $k$ n'est
pas alg\'ebriquement clos, est d'ailleurs douteux.
\end{quoting}

On the contrary Cartan worked to find the obstructions to get a good real cathegory. 
He proved that Theorems A and B pass through direct limits. So, since $\R^n$ has in $\C^n$ a fundamental system of open Stein neighbourhoods, he proved that analytic subsets of $\R^n$ defined as the zeroset of {\em global} analytic functions are support of a coherent sheaf of ideals. This sheaf of ideals defines a complex analytic subset of a Stein open neighbourhood of $\R^n$ in $\C^n$ hence Theorems A and B hold true. So he found a good class of real analytic spaces globally defined in $\R^n$ and hence getting a good {\em complexification}. He wrote \cite[p.49]{c1} 

\begin{quoting}
  \ldots  la seule  notion de  sous-ensemble analytique  r\'eel (d'une
  vari\'et\'e analy\-ti\-que-r\'eelle $V$) qui ne conduise pas \`a des
  propri\'et\'es  pathologiques  doit   se  r\'ef\'erer  \`a  l'espace
  complexe ambiant:  il faut consid\'erer les  sous-ensembles ferm\'es
  $E$ de $V$  tels qu'il existe une complexification $W$  de $V$ et un
  sous-ensemble analytique-complexe $E'$ de $W$, de mani\`ere que $E=W
  \cap E'$. On d\'emontre que ce  sont aussi les sous-ensembles de $V$
  qui  peuvent  \^etre  d\'efinis   globalement  par  un  nombre  fini
  d'\'equations    analytiques.    La    notion    de    sous-ensemble
  analytique-r\'eel  a ainsi  un  caract\`ere essentiellement  global,
  contrairement  \`a  ce  qui   avait  lieu  pour  les  sous-ensembles
  analytiques-complexes.
\end{quoting}

Cartan  uses  complex notions to describe real properties: for instance he
 defines the {\em complexification} of 
a germ of real analytic space $V_x$ at a point $x\in \R^n$ and proves that $V_x$ is coherent if and only if the complexification of $V_x$ induces the complexification of $V_y$  on  points $y$  close to $x$.

All these considerations  led Cartan  to the following characterization of what a good cathegory of real analytic sets should be, proving that 
for a closed real analytic subset $X\subset  \R^n$ the following statements are equivalent.
\begin{enumerate}
\item The set $X$ is the zeroset of finitely many real analytic functions.
\item There is a coherent ideal subsheaf of $\Oo_{\R^n}$ whose zeroset is $X$.
\item There is an open neighbourhood $\Omega$ of $\R^n$ in $\C^n$ and a closed subspace $Y \subset \Omega$ such that $Y\cap \R^n = X$.
\end{enumerate}

So, the notion of {\em complexification} plays a central role.
   
Bruhat  and Witney  extended the  notion of  complexification to  real
analitic manifolds  and introduced the  name {\em C-analytic}  for the
analytic  subsets  of   a  real  analytic  manifold   $M$  induced  by
intersection with $M$ of an analytic subset of the complexification of
$M$. Finally Tognoli extended the notion of complexification of a real
analytic  space  admitting  a  coherent  structure,  in  analogy  with
condition 2 of Cartan.

Tognoli distinguished  3 types of  real analytic spaces:  the coherent
ones, whose reduced  structure is coherent, the ones  carrying one (or
several)  coherent   structure  (for   instance  Whitney   and  Cartan
umbrellas) and  the ones  not admitting  any coherent  structure (wild
examples of Cartan, Bruhat-Cartan etc.)

\smallskip

Since a C-analytic space $X$ is globally defined, the ring $\Oo(X)$ of
(real) analytic functions on $X$ becomes interesting. One could follow
the  development of  Real Algebraic  Geometry and  try to  imitate its
theory. This is easy in the local case, not in the global one. This is
because an important  step in the algebraic theory is  the fact that a
non-negative  polynomial is  a sum  of squares  of rational  functions
(Artin's solution of Hilbert $17^{th}$  Problem). In the analytic case
this is  true for the ring  of germs, while  in the global case  it is
proved, as far  as we know, only in some  special particular cases, so
one  cannot expect  to  get  results in  analogy  with Real  Algebraic
Geometry.

In this book we follow another path, closer to Cartan's point of view, that is, deduce results for C-analytic spaces from the properties of their complexification.

In the  first two  chapters of  this book  we mainly  recall classical
results.  More precisely  in Chapter 1, after exposing  the main facts
on   complex  analytic   spaces,  we   give  the   costruction  of   a
complexification, showing why it is  necessary to pass to C-spaces. We
give also some bad examples of  real analytic sets in $\R^n$ following
Cartan  and  Bruhat.  In  Chapter   2  there  is  the  costruction  of
irreducible components  of complex and  real analytic sets.  After, we
see how this notion works when dealing with {\em normalization} from a
local  and a  global point  of view.  Concerning {\em  divisors} of  a
C-space, we  try to  ask to  the question of  what are  the conditions
under which a divisor is the divisor of a global analytic function.

As  we  said, in  the  local  case results  are  very  similar to  the
algebraic  ones. For  instance  the Nullstellensatz  for  the ring  of
complex or real analytic germs is exactly  the same as for the ring of
complex or real polynomials. The situation is no more the same for the
ring of global  analytic functions. In Chapter 3 we  give the proof of
Nullstellensatz for  closed ideals  in $\Oo(X)$ where  $X$ is  a Stein
space, following O.Forster. The  primary (infinite) decomposition of a
closed  ideal allows  to consider  irreducible components  of a  Stein
space with {\em multiplicity} as in the algebraic case.

We remark that in this case  there is a numerical function (namely the
primality  index)  which  controls whether  Hilbert's  Nullstellensatz
holds true  or not for a  closed ideal.  It  holds if and only  if the
numerical function is uniformely bounded. A somewhat similar result we
get for Hilbert's $17^{th}$ Problem. If it has a positive solution for
$\Oo(\R^n)$, then, the  Pythagoras number of the  field of meromorphic
functions is bounded.\footnote{The Pythagoras number  of a ring $R$ is
  a positive number $p$  if all sums of squares in  $R$ can be written
  as  sums  of $p$  squares,  otherwise  it  is $\infty$.}  Note  that
Hilbert's Problem and the calculus of Pythagoras number are completely
separate problems in  the algebraic case, while we  find an unespected
relation in the analytic case.

Also in Chapter 3 we give a real Nullstellensatz for the ring $\Oo(X)$
where $X$  is a C-analytic space.  The radical we use  is not Risler's
real radical,  it is {\em \L  ojasiewicz radical} which is  in general
larger than the  real radical of a  given ideal and is  the radical of
the convex  hull of the  given ideal. This is  because we do  not know
whether a positive semidefinite analytic  function is a sum of squares
of meromorphic functions. Indeed, in case the zeroset $Y$ of the given
ideal  is such  that positive  semidefinite functions,  having $Y$  as
zeroset, {\em are } sums of  squares of meromorphic functions, the two
radicals  coincide   and  we   get  a   result  similar   to  Risler's
Nullstellensatz.

So,  Hilbert  $17^{th}$  Problem  is crucial  also  in  Real  Analytic
Geometry. Hence  in Chapter  4 we give  the state of  the art  on this
problem, whose  solution is far  from being complete. Also  we discuss
some weaker questions which involve infinite sums of squares.

When dealing with real objects, inequalities appear immediately, equalities are not enough. Look for instance to the orthogonal projection of a circle from $\R^2$ to $\R$.

\L  ojasiewicz and  Hironaka  defined a  class of  subsets  of a  real
analytic manifold, that is {\em  semianalytic sets}, which are locally
defined by  analytic equalities  and inequalities. This  class behaves
well  with  respect  to  topological  properties  as  closure,  taking
connected  components  and so  on,  but  is  not stable  under  proper
projections. For this  reason the class of {\em  subanalytic sets} was
also introduced.  Nevertheless one  can ask whether  semianalytic sets
defined  by  finitely  many  global analytic  functions  ({\em  global
  semianalytic  sets}) could  get better  properties. Several  authors
investigated these sets applying the  algebraic thory of orders to get
topological  properties,  mainly  in  dimension less  than  three.  An
important result in general dimension is  that the closure of a global
semianalytic set  is locally  global. We  give another  more geometric
proof of this result in Chapter 5.

We define  an intermediate  class of semianalytic  sets, that  we call
{\em C-semianalytic  sets} in  analogy with  the notion  of C-analytic
spaces, that is locally finite  unions of global semianalytic sets. We
prove that this class is stable under topological operations. Moreover
it is stable  under proper invariant holomorphic maps. We  do not need
to  define  {\em  C-subanalytic  sets}   because  we  can  prove  that
subanalytic  sets  can  be  defined  replacing  semianalytic  sets  by
C-semianalytic sets.

Several remarkable  subsets of a  C-analytic set, as for  instance the
set  of points  of a  given  dimension, or  the  set where  it is  not
coherent, or  the set  of local  extrema of  an analytic  function are
proved to be C-semianalytic sets.

A theory of irreducibility and of irreducible components, analogous to
the  one  developed   for  semialgebraic  sets,  does   not  hold  for
C-semianalytic sets. Neverteless  there is a smaller  class where this
theory applies.  It is the  class of {\em amenable}  semianalytic sets
which are locally  finite unions of sets of the  type $U\cap X$, where
$U$ is an open set and $X$ is a C-analytic set.



In the text  references are mainly concentred in a  section at the end
of  each chapter  together  with some  historical  notes: for  general
notions  in complex  analysis we  refer to  \cite{gr,gu,ssst} and  for
notions in commutative algebra to  some classical texts as \cite{mat1}
and \cite{amd}.

\chapter{The class of C-analytic spaces.}

In this chapter we  introduce, following the ideas collected in \cite{c,t,wb}, the class of \em real analytic spaces. \rm \index{real analytic space} This type of spaces appear in the literature also as C-analytic spaces\index{analytic space! C-analytic --}. To introduce them we need to recall before the concept of {\em complex analytic space}\index{analytic space! complex --}. In what follows all involved topological spaces are supposed to be Hausdorff, paracompact and second-countable. 
\section{Complex analytic spaces.}

 A {\em ringed space}\index{ringed space} is a couple $(X,\Oo_X)$, where $X$ is a topological space and $\Oo_X$ is a subsheaf of the sheaf of germs of continuous functions on $X$.
We recall shortly the notion of {\em coherent sheaf}\index{coherent sheaf}.

\begin{defn}  A sheaf $\Ff$ on a ringed space $(X,\Oo_X)$ is called {\em $\Oo_X$-coherent}\index{coherent sheaf} if it is a sheaf of $\Oo_X$-modules {\em of finite presentation}, that is it verifies the following two conditions.
\begin{itemize}
\item[(i)] It is a \em finite type sheaf\em, that is, for each $x\in X$ there exists an open neighborhood $U^x$ and finitely many sections $\{G_1,\ldots,G_k\}$ on $U^x$ generating the fiber $\Ff_y$ for each $y\in U^x$. 
\item[(ii)] For each open set $U\subset X$ and for each finite number of sections $H_1,\ldots,H_p\in\Ff(U)$ the \em sheaf of relations \em among them, that is, the kernel of the homomorphism of sheaves $\sigma:\Oo_X^p|_U\to\Ff|_U$ given on each open set $V\subset U$ by
$$
\sigma:\Oo_X(U)^p\to\Ff(U),\ (A_1,\ldots,A_p)\mapsto A_1H_1+\cdots+A_pH_p
$$ 
 is a finite type sheaf.
\end{itemize}
\end{defn}

First of all we define what we mean by  local model of complex analytic space.

A  {\em local model of complex analytic space}\index{analytic space! local model of -} is a pair $(Y,\Oo_Y)$ constituted by a closed subset $Y$ of an open set $\Omega\subset\C^n$ that is the common zeroset of finitely many holomorphic functions $F_1,\ldots,F_k$ on $\Omega$ and a structure provided by a suitable sheaf $\Oo_Y$. We can define this structure sheaf mainly in two ways. Remember that  by Oka's Theorem $\Oo_\Omega$ is a coherent sheaf of $\Oo_\Omega$-modules, hence a sheaf $\Ff$ of $\Oo_\Omega$-modules is coherent if and only if $\Ff$ is of finite type. In particular, finitely generated sheaves of $\Oo_\Omega$-modules are coherent.

\noindent{\bf Way 1.} Let $\Oo_\Omega$ be the sheaf of germs of holomorphic functions on $\Omega$ and let $\Ii_Y$ be the sheaf of ideals
 consisting of all holomorphic function germs vanishing on $Y$. It is a coherent sheaf of ideals (Oka's Theorem). Consider now the coherent sheaf $\Oo_Y={\Oo_\Omega}/{\Ii_Y}$. Clearly, 
$$
Y={\rm supp}({\Oo_\Omega}/{\Ii_Y})=\{y\in \Omega:\ \Ii_{Y,y}\neq\Oo_{\Omega,y}\}
$$ 
and $(Y,\Oo_Y)=({\rm supp}(\Oo_\Omega/\Ii_Y),\Oo_\Omega/\Ii_Y)\hookrightarrow(\Omega,\Oo_\Omega)$ is a ringed space. The homomorphism of sheaves $\Oo_\Omega\to\Oo_\Omega/\Ii_Y$ is surjective. 

\noindent{\bf Way 2.} Consider the subsheaf $\Jj$ of $\Oo_\Omega$ generated by the holomorphic functions $F_1,\ldots,F_k$ on $\Omega$ and define $\Oo_Y={\Oo_\Omega}/{\Jj}$. As $\Jj=(F_1,\ldots,F_k)\Oo_\Omega$ is a finitely generated subsheaf of ideals of the coherent sheaf $\Oo_\Omega$, it is coherent itself, so also $\Oo_Y$ is a coherent sheaf. Again we have 
$$
Y={\rm supp}\left({\Oo_\Omega}/{\Jj}\right)=\{y\in \Omega:\ \Jj_y\neq\Oo_{\Omega,y}\}
$$ 
and $(Y,\Oo_Y)=({\rm supp}\left(\Oo_\Omega/\Jj\right),\Oo_\Omega/\Jj)\hookrightarrow(\Omega,\Oo_\Omega)$ is a ringed space. Again the homomorphism of sheaves $\Oo_\Omega\to\Oo_\Omega/\Jj$ is surjective.

\begin{defns}\label{defspaziocomplesso}
A {\em complex analytic space} \index{analytic space! complex --} is a (Hausdorff, paracompact) topological space $X$ endowed with a sheaf of rings $\Oo_X$ such that the pair $(X,\Oo_X)$ is locally isomorphic as a ringed space to a local model endowed with the structure provided in Way 2. In case the local models are chosen using the structure provided in Way 1, we say that $(X,\Oo_X)$ is a {\em reduced complex analytic space}\index{analytic space! reduced --}.
\end{defns}

Forgetting the structure sheaf we get the notion of
  {\em complex analytic set}\index{Complex analytic set} $X$; it is a closed subset of an open set $\Omega\subset\C^n$ that admits a local description as the zero set of finitely many holomorphic functions, that is, for each point $x\in\Omega$ there exists an open neighborhood $U^x$ and finitely many holomorphic functions $F_1,\ldots,F_k$ on $U^x$ such that 
$$
X\cap U^x=\{y\in U^x: F_1(y)=0,\cdots,F_k(y)=0\}.
$$
One can provide $X$ a structure by considering the sheaf of holomorphic function germs on it. Namely $\Oo_X=\Oo_\Omega/\Ii_X$ where $\Ii_X$ is the sheaf of germs vanishing on $X$. This structure is often called the {\em natural structure on $X$} and it is obtained following Way 1 above. Instead of $\Ii_X$ we can consider any coherent sheaf of ideals $\Jj\subset\Oo_\Omega$ having $X$ as zeroset  and take $\Oo_X=\Oo_\Omega/\Jj$. As $\Jj$ is locally finitely generated, local models for this structure are those provided by Way 2.

In case $(X,\Oo_X)$ is a non reduced complex analytic space there exists a {\em reduction morphism}\index{reduction morphism} $\rho:(X,\Oo_X^r)\to(X,\Oo_X)$, where $\Oo_X^r$ is the reduced structure on $X$, that behaves as follows. For each local model $(X\cap U,\Oo_{X\cap U})$ the sheaf $\Oo_{X\cap U}=\Oo_U/\Jj$ is the quotient of $\Oo_\Omega$ by a sheaf of ideals $\Jj$, which is in general properly contained in the sheaf of ideals $\Ii_{X\cap U}$. Then $\rho_x:\Oo_{X,x}\to{\Oo_{X,x}^r}$ maps each germ $g\in\Oo_{X,x}$ to its class modulo $\Ii_{X,x}$ for each $x\in U$.  

When we do not mention the structure sheaf $\Oo_X$ of a complex analytic space we are implicitly considering its reduced structure. 
 
\subsection{Local properties.}

We recall now the main properties of a reduced complex analytic space.  We denote $\Oo_n$ the local ring of holomorphic function germs at the origin $0\in\C^n$. As a consequence of Weierstrass Preparation and Division Theorems one proves that $\Oo_n$ is an integrally closed, noetherian, factorial domain. In particular, each ideal $\gta$ of $\Oo_n$ is a finite intersection $\gta=\bigcap_{i=1}^r\gtq_i$ of primary ideals $\gtq_i$ of $\Oo_n$ and the ideal of germs $I(\ceros(\gta))$ vanishing identically on its zeroset $\ceros(\gta)$ is exactly its radical $\sqrt{\gta}=\bigcap_{i=1}^r\sqrt{\gtq_i}$ (R\"uckert's Nullstellensatz). The zeroset $X_0=\ceros(\gta)=\ceros(\sqrt{\gta})$ is a finite union of {\em irreducible components}\index{irreducible component}, which are precisely the zerosets of the prime ideals $\gtp_i=\sqrt{\gtq_i}$, if the decomposition $\sqrt{\gta}=\bigcap_{i=1}^r\sqrt{\gtq_i}$ is irredundant, that is, $\gtp_i\not\subset\gtp_j$ if $i\neq j$. 

Recall that the polydisc $\Delta(x,\veps)$ in $\C^n$ of center $x=(x_1,\ldots,x_n)$ and polyradius $\veps=(\veps_1,\ldots,\veps_n)$, where each $\veps_i>0$, is $\Delta(x,\veps)=\prod_{i=1}^nD(x_i,\veps_i)$, where $D(x_i,\veps_i)=\{z\in\C:\ |z-x_i|<\veps_i\}$ is the disc in $\C$ of center $x_i$ and radius $\veps_i$. 

 The local properties of the zeroset $X_0$ of a prime ideal $\gtp\subset\Oo_n$ are described by the following. 

\begin{thm}
There exists a linear change of coordinates and a polydisc $\Delta(0,\veps)=\Delta_1 \times\Delta_2$, where $\Delta_1\subset\C^d$ and $\Delta_2\subset\C^{n-d}$ are polydiscs centered at the origin, such that:
\begin{itemize}
\item Each function germ of a fixed finite subfamily of $\gtp$ has a representative on $\Delta(0,\veps)$.
\item $A={\Oo_n}/{\gtp}$ is an integral extension of $\Oo_d$. 
\item There exists a representative $X$ of $X_0$, which is a complex analytic subset of the polydisc $\Delta(0,\veps)$, such that outside the (thin) zeroset of a non-zero $D\in\Oo_d$, the projection $\pi:X\setminus \ceros(D)\to\Delta_1\setminus \ceros(D)$ is a covering map. In particular, $\dim(X)=d$.
\item The difference $M=X\setminus \ceros(D)$ is a complex analytic manifold defined as the zeroset in $\Delta(0,\veps)$ of the representatives of $n-d$ elements of $\gtp$ whose Jacobian matrix has rank $n-d$ at each point of $M$. 
\item $M=X\setminus \ceros(D)$ is connected and dense in $X$. 
\end{itemize}
\end{thm}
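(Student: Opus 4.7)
The strategy is the classical local parametrization theorem, resting on Weierstrass Preparation and the primitive element theorem, and I would carry it out in four steps. \emph{Noether normalization.} Put $d=\dim A$, where $A=\Oo_n/\gtp$. Starting from any nonzero $f\in\gtp$, after a generic $\C$-linear change of coordinates I may assume $f(0,\ldots,0,z_n)\not\equiv 0$, so Weierstrass Preparation supplies a distinguished polynomial in $\gtp$ monic in $z_n$ with coefficients in $\Oo_{n-1}$; consequently $\Oo_n/\gtp$ is finite over $\Oo_{n-1}/(\gtp\cap\Oo_{n-1})$. If $\gtp\cap\Oo_{n-1}\neq 0$, I iterate the construction inside $\Oo_{n-1}$. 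After at most $n-d$ steps one reaches a coordinate decomposition $(z_1,\ldots,z_n)=(x_1,\ldots,x_d,y_1,\ldots,y_{n-d})$ with $\gtp\cap\Oo_d=0$, and the inclusion $\Oo_d\hookrightarrow A$ is then a finite (hence integral) extension. In particular each $y_j$ is integral over $\Oo_d$ and, by a further application of Weierstrass Preparation, I may pick $Q_j\in\gtp$ distinguished in $y_j$ with coefficients in $\Oo_d$.

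\emph{Primitive element and choice of polydisc.} Since $\Oo_d$ is noetherian, integrally closed, and of characteristic zero, the extension $\qf(A)/\qf(\Oo_d)$ is finite separable of some degree $s$, so there is $\xi\in A$ with $\qf(A)=\qf(\Oo_d)(\xi)$. Let $P(T)\in\Oo_d[T]$ be its monic minimal polynomial, $D_0\in\Oo_d\setminus\{0\}$ its discriminant, and pick $\delta\in\Oo_d\setminus\{0\}$ such that $\delta A\subset\Oo_d[\xi]$. Define
\[
D \;:=\; D_0\cdot\delta\cdot\prod_{j=1}^{n-d}\mathrm{disc}_{y_j}(Q_j)\;\in\;\Oo_d\setminus\{0\}.
\]
Fix a polydisc $\Delta(0,\veps)=\Delta_1\times\Delta_2\subset\C^d\times\C^{n-d}$ on which representatives of a finite generating family of $\gtp$ together with $P$, $D$, $\delta$, $\xi$ and the $Q_j$ are all defined, small enough that each $Q_j$ has all its $y_j$-roots inside $\Delta_2$; take $X\subset\Delta(0,\veps)$ to be the resulting representative of $X_0$.

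\emph{Covering and manifold structure.} The Weierstrass conditions on the $Q_j$ make $\pi\colon X\to\Delta_1$ proper and finite. Over $\Delta_1\setminus\ceros(D)$ the polynomial $P(x,T)$ has $s$ distinct simple roots and $\delta$ is invertible; because $\delta A\subset\Oo_d[\xi]$, the value at any $p\in\pi^{-1}(a)$ of every element of $A$ is a fixed polynomial expression in $\xi(p)$, so $\xi$ separates the $s$ points of the fiber and $\pi$ restricts to an unramified holomorphic covering of degree $s$ over $\Delta_1\setminus\ceros(D)$. Put $M:=X\setminus\pi^{-1}(\ceros(D))$. Because each $Q_j$ depends only on $(x,y_j)$, the submatrix $[\partial Q_j/\partial y_k]_{j,k=1,\ldots,n-d}$ of the full Jacobian is diagonal with entries $\partial Q_j/\partial y_j$ nowhere vanishing on $M$ (having absorbed $\mathrm{disc}_{y_j}(Q_j)$ into $D$). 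The implicit function theorem then shows that $M$ is a $d$-dimensional complex manifold, cut out in $\Delta(0,\veps)$ by the $n-d$ elements $Q_1,\ldots,Q_{n-d}\in\gtp$ with Jacobian of rank $n-d$ at every point of $M$.

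\emph{Density and connectedness.} Density of $M$ in $X$ is automatic, since $\pi^{-1}(\ceros(D))\cap X$ is a proper analytic subset of the pure $d$-dimensional set $X$. The delicate point is connectedness: $\Delta_1\setminus\ceros(D)$ is connected, and analytic continuation of the $s$ roots of $P(x,T)$ realizes the monodromy of $\pi|_M$, which agrees with the Galois action of the splitting field of $P$ over $\qf(\Oo_d)$ on those roots. This action is transitive precisely when $P$ is irreducible over $\qf(\Oo_d)$, and irreducibility of $P$ is itself equivalent to primality of $\gtp$ via the primitive-element isomorphism $\qf(A)\cong\qf(\Oo_d)[T]/(P)$; hence $M$ is connected. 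The chief obstacle is precisely this last bridge: translating the algebraic fact ``$\gtp$ prime'' into the geometric statement ``all $s$ sheets of $\pi|_M$ lie in a single connected component'' is what makes the proof rigorous and what forces one to set up the primitive element and the discriminant $D$ with the care described above.
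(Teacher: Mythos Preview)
The paper does not actually prove this theorem; it is quoted as a classical fact (the local parametrization theorem) with references to the standard literature in the bibliographic notes. Your outline follows precisely the classical route—iterated Weierstrass preparation for Noether normalization, primitive element plus discriminant, and monodromy for connectedness—and Steps 1, 2, and 4 are fine as sketched.

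There is, however, a real gap in Step 3. Your $n-d$ elements $Q_1,\ldots,Q_{n-d}$, with $Q_j\in\Oo_d[y_j]$ an integral equation for $\bar y_j$ alone, do \emph{not} cut out $M$. Over a point $x\in\Delta_1\setminus\ceros(D)$ the system $Q_1(x,y_1)=\cdots=Q_{n-d}(x,y_{n-d})=0$ has $\prod_j\deg_{y_j}Q_j$ solutions, whereas $\pi^{-1}(x)\cap X$ has only $s=[\qf(A):\qf(\Oo_d)]$ points; these numbers differ unless the $\bar y_j$ are algebraically independent over $\Oo_d$ modulo their individual equations, which is essentially never the case. So the common zero set of the $Q_j$ strictly contains $M$ (as a union of some of its sheets), and the theorem asks for $n-d$ elements whose zero set over $\Delta_1\setminus\ceros(D)$ \emph{equals} $M$.

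The fix uses the machinery you already set up in Step 2. After one more generic linear change among the $y$-coordinates you may take $\xi=\bar y_1$ as the primitive element. Then $P(x,y_1)\in\gtp$, and from $\delta A\subset\Oo_d[\bar y_1]$ you obtain polynomials $R_j\in\Oo_d[T]$ with
\[
F_1:=P(x,y_1)\in\gtp,\qquad F_j:=\delta(x)\,y_j-R_j(x,y_1)\in\gtp\quad(j=2,\ldots,n-d).
\]
The Jacobian $\partial(F_1,\ldots,F_{n-d})/\partial(y_1,\ldots,y_{n-d})$ is lower triangular with diagonal $\partial P/\partial y_1,\delta,\ldots,\delta$, hence of rank $n-d$ throughout $M$. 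Over any $x\in\Delta_1\setminus\ceros(D)$ these equations have exactly $s$ solutions (the $s$ simple roots of $P(x,\cdot)$, each determining $y_2,\ldots,y_{n-d}$ uniquely), so their zero set coincides with $M$. With this replacement your argument is complete.
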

The projection $\pi:X\to \Delta_1$ is a branched covering, also called {\em analytic cover}. 

The description above applies to the irreducible components of the zeroset $X_0$ of any radical ideal $\gta$ of $\Oo_n$. In this case the ideal $\gta$ is the intersection of the prime ideals $\gtp_i$ associated to the minimal prime ideal $\gtP_i=\gtp_i/\gta$ of $A=\Oo_n/\gta$. Then for each minimal prime ideal $\gtp_i$ we apply the argument to a suitable representative $X_i$ of $X_{i,0}=\ceros(\gtp_i)$, which is a complex analytic subset of a polydisc $\Delta(0,\veps)$. This polydisc is the same for all the germs $X_{i,0}$ (after applying a linear change of coordinates that works simultaneously for all the irreducible components of $X_0$). For each $i$ we obtain a complex analytic manifold $M_i\subset X$, which is dense in $X_i$. Then $M=\bigcup_iM_i\setminus\bigcup_{i\neq j}(M_i\cap M_j)$ is an open and dense subset of $X=\bigcup_iX_i$. Each connected component of $M$ is dense in an irreducible component $X_i$ of $X$ and each $X_i$ is the closure of a connected component of $M$. As $\gtP_i=\gtp_i/\gta$ is a minimal prime ideal of $A$ we get 
$$
\dim(A)=\max_i\{\dim(A/\gtP_i)\}=\max_i\{\dim(\Oo_n/\gtp_i)\}=\max_i\{\dim(X_{i,x})\}=\dim(X_x).
$$

\begin{remark}\label{algebraanalitica} Call \em analytic algebra \em any ring $A$ isomorphic to $\Oo_n/\gta$ for some $n$ and some ideal $\gta \subset \Oo_n$. The description above shows that any analytic set germ is equipped with an analytic algebra, but conversely an analytic algebra determines an analytic set germ, namely the zeroset of the ideal $\gta$ in a neighbourhood of $0\in \C^n$. Moreover if $f:X_x\to Y_y$ is a holomorphic map between analytic set germs, it induces an algebra homomorphism $f^*:\Oo_{Y,y}\to \Oo_{X,x}$ and viceversa. It is easy to prove that $f$ injective implies $f^*$ surjective  and $f$ surjective implies $f^*$ injective and viceversa. Hence $f$ is an isomorphism if and only if $f^*$ is an isomorphism.
\end{remark}

\subsubsection{Regular points of a reduced complex analytic space.}

Let $(X,\Oo_X)$ be a reduced complex analytic space. As the notion of \em regular point \em \index{regular point}has a local nature, we assume $(X,\Oo_X)$ is a reduced local model. Thus, $X$ is a closed subset of an open set $\Omega\subset\C^n$ and $\Oo_X=\Oo_\Omega/ \Ii_X$, where $\Ii_X$ is the sheaf of ideals of all holomorphic germs vanishing on $X$. For each $x\in X$ let $F_1,\ldots,F_\ell$ be generators of $\Ii_{X,x}$. We write 
$$ 
r_x={\rm rk}\left(\frac{\partial(F_1,\ldots,F_\ell)}{\partial(\x_1,\ldots,\x_n)}(x)\right)={\rm rk}\left(\frac{\partial F_i}{\partial\x_j}(x)\right)_{\substack{1\leq i\leq\ell\\1\leq j\leq n}}\leq \min\{n,l\}.
$$

It is straightforward to show that changing the set of generators, the value $r_x$ does not change, that is, it depends only on $\Ii_{X,x}$. Indeed, if $H_1,\ldots,H_s$ is another system of generators, it is enough to prove the rank of the jacobian of $F_1,\ldots F_\ell, H_i$ is the same as the rank of the jacobian of  $F_1,\ldots F_\ell$. Put  $F_{\ell+1}=H_i=G_1F_1+\cdots+G_\ell F_\ell$ where $G_i\in\Oo(\Omega)$. Thus, if $x\in X$, we have $F_1(x)=\cdots=F_\ell(x)=0$ and we conclude
{\small\begin{multline*}
{\rm rk}\left(\frac{\partial F_i}{\partial\x_j}(x)\right)_{\substack{1\leq i\leq\ell+1\\1\leq j\leq n}}={\rm rk}\left(\Big(\frac{\partial F_i}{\partial\x_j}(x)\Big)_{\substack{1\leq i\leq\ell\\1\leq j\leq n}}\,\Big(\sum_{i=1}^\ell G_i\frac{\partial F_i}{\partial\x_j}(x)+\sum_{i=1}^\ell\frac{\partial G_i}{\partial\x_j}(x)F_i(x)\Big)_{1\leq j\leq n}\right)\\
={\rm rk}\left(\Big(\frac{\partial F_i}{\partial\x_j}(x)\Big)_{\substack{1\leq i\leq\ell\\1\leq j\leq n}},\Big(\sum_{i=1}^\ell G_i\frac{\partial F_i}{\partial\x_j}(x)\Big)_{1\leq j\leq n}\right)={\rm rk}\left(\frac{\partial F_i}{\partial\x_j}(x)\right)_{\substack{1\leq i\leq\ell\\1\leq j\leq n}}=r_x.
\end{multline*}}

\begin{defn}
The {\em rank} of the sheaf of ideals $\Ii_X$ at a point $x\in X$ is 
$$
r(x)=\min_{W^x}\{\max_{y\in W^x}\{r_y\}\}\leq n.
$$
where $W^x$ runs over all the open neighborhood of $x$ in $X$.
\end{defn}

As the sheaf of ideals $\Ii_X$ is coherent, $r_x\leq r_y$ for each $y$ in a small neighborhood of $x$. In addition, \em $\{x\in X:\ r_x=r(x)\}$ is an open subset of $X$\em.
Indeed,
pick a point $x\in X$ such that $r_x=r(x)$. Let $W^x\subset X$ be an open neighborhood of $x$ such that $\displaystyle r(x)=\max_{y\in W^x}\{r_y\}$. As $r_x=r(x)$ and the sheaf of ideals $\Ii_X$ is coherent, we may assume that $r_y=r(x)$ for all $y\in W_x$, so $r_y=r(y)$ for all $y\in W_x$, so $W_x\subset\{x\in X:\ r_x=r(x)\}$ and this set is open.

\begin{defn}
The point $x\in X$ is {\em regular} if $r_x=r(x)$.
\end{defn}
We denote $\Reg(X)$ the set of regular points of $X$ and $\Sing(X)=X\setminus\Reg(X)$
the set of {\em singular points} of $X$. As $\Reg(X)$ is an open subset of $X$, the set $\Sing(X)$ is closed in $X$.

Let us prove next that each connected component $N$ of $\Reg(X)$ is a complex aalytic manifold. More precisely.

\begin{prop}\label{localmanifold}
Let $x_0\in\Reg(X)$ be a regular point of $X$. Then there exists an open neighborhood $U$ of $x_0$ in $\Omega$ such that $X\cap U$ is a complex manifold of dimension $n-r(x_0)$.
\end{prop}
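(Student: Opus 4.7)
The plan is to use $r:=r(x_0)$ of the generators of $\Ii_X$ whose differentials at $x_0$ are linearly independent to carve out a smooth variety $Y$ containing $X$ near $x_0$, and then to force the equality $X=Y$ locally by a dimension count.

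By the coherence of $\Ii_X$ I choose sections $F_1,\ldots,F_\ell$ that generate $\Ii_{X,y}$ for every $y$ in some open neighborhood $V$ of $x_0$. Since $r_{x_0}=r$, after relabeling I may assume that $dF_1(x_0),\ldots,dF_r(x_0)$ are linearly independent. Because the rank of a matrix of holomorphic functions is lower semicontinuous and the rank of an $r\times n$ matrix is at most $r$, there exists an open neighborhood $U\subset V$ of $x_0$ on which the Jacobian of $F_1,\ldots,F_r$ has rank $r$ at every point. The holomorphic implicit function theorem then shows that
\[
Y:=\{z\in U:\ F_1(z)=\cdots=F_r(z)=0\}
\]
is a complex submanifold of $U$ of pure dimension $n-r$, whose germ $Y_{x_0}$ is irreducible. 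Since $F_1,\ldots,F_r\in\Ii_{X,y}$ for every $y\in V$, we have $X\cap U\subset Y$.

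For the reverse inclusion I invoke the regularity of $x_0$: by the paragraph preceding the statement, after possibly shrinking $U$ one has $r_y=r$ for all $y\in X\cap U$. I claim that this forces $\dim_{x_0}X=n-r$. Granting this claim, the germ $X_{x_0}$ is contained in the irreducible germ $Y_{x_0}$ of the same dimension $n-r$, so some irreducible component of $X_{x_0}$ must coincide with $Y_{x_0}$; the remaining components, being contained in $Y_{x_0}$, would then be contained in that component, contradicting irredundancy of the decomposition. Hence $X_{x_0}=Y_{x_0}$, and after shrinking $U$ once more I conclude that $X\cap U=Y\cap U$ is a complex manifold of dimension $n-r(x_0)$.

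The main obstacle is establishing the dimension equality. To this end I decompose $X_{x_0}$ into irreducible components $(X_i)_{x_0}$ and set $d_i=\dim (X_i)_{x_0}$. By the local structure theorem for zerosets of prime ideals recalled earlier in this section, on each $X_i$ the set of smooth points lying off the other components is dense in every neighborhood of $x_0$; at any such smooth point $y$ the local ideal $\Ii_{X,y}=\Ii_{X_i,y}$ is generated by $n-d_i$ elements whose Jacobian has rank $n-d_i$, so $r_y=n-d_i$. Combined with the regularity condition $r_y=r$, this forces $d_i=n-r$ for every $i$, and therefore $\dim_{x_0}X=\max_i d_i=n-r$. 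The remaining ingredients are routine implicit function theorem applications together with the basic dictionary between ideals, germs and dimensions from the preceding subsection.
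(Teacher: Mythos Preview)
Your proof is correct and takes a genuinely different route from the paper. The paper shows directly that every $H\in\Ii_{X,x_0}$ vanishes on $Y_{x_0}$: after a holomorphic change of coordinates $\varphi$ flattening $Y$ to the coordinate subspace $\{y_1=\cdots=y_r=0\}$, it uses the Binet-Cauchy formula for determinants to express each transverse derivative $\partial(H\circ\psi)/\partial y_i$ (for $i>r$) as a combination of the $(r+1)\times(r+1)$ minors $D_s(H)$ of the Jacobian of $(F_1,\ldots,F_r,H)$; these minors vanish on $\Reg(X)$ because the rank there is $r$, hence lie in $\Ii_{X,x_0}$, and one iterates to kill all higher derivatives at $x_0$. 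Your argument instead invokes the local structure theorem to read off $r_y=n-d_i$ at generic smooth points of each irreducible component of $X_{x_0}$, combines this with the regularity condition $r_y=r$ to force every $d_i=n-r$, and concludes by comparing with the irreducible smooth germ $Y_{x_0}$ of that same dimension. Your approach is shorter and more conceptual, but leans on the heavier structure theorem for analytic germs; the paper's approach is computationally explicit and self-contained, needing only the implicit function theorem and a determinant identity.
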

\begin{proof}
Fix $r=r(x_0)$ and sections $F_1,\ldots,F_r$ of $\Ii_X$ in a neighborhood $U$ of $x_0$ such that their Jacobian matrix has rank $r$ at each point of $X\cap U$. The set $M=\{F_1=0,\ldots,F_r=0\}\subset U$ is by the Implicit Function Theorem a complex manifold of dimension $n-r$, which in addition contains $X\cap U$. 
As for the converse
we show that there exists a perhaps smaller neighborhood $W\subset U$ of $x_0$ such that $X\cap W=M\cap W$, or equivalently, $X_{x_0}=M_{x_0}$. To that end, it is enough to show that each function germ $H\in\Ii_{X,x_0}$ vanishes on $M_{x_0}$. 

As the holomorphic functions $F_1,\ldots,F_r$ have Jacobian matrix of rank $r$, we can complete this collection with linear functions $L_{r+1},\ldots,L_n$ depending on the variables $(\z_1,\ldots,\z_n)$ in such a way that 
\begin{align*}
\y_1=&\,F_1(\z_1,\ldots,\z_n)\\ 
&\vdots\\ 
\y_r=&\,F_r(\z_1,\ldots,\z_n)\\
\y_{r+1}=&\,L_{r+1}(\z_1,\ldots,\z_n)\\ 
&\vdots\\ 
\y_n=&\,L_n(\z_1,\ldots,\z_n)
\end{align*}
provide a holomorphic system of coordinate on an open neighborhood $V\subset U$ of $x_0$, that maps $M\cap V$ onto an open subset of the linear subspace $\{\y_1=0,\ldots,\y_r=0\}$. We may assume that $M\cap V$ is connected. In this way, we can define
$$
\varphi:V\to V'=\varphi(V),\ z=(z_1,\ldots,z_n)\mapsto(F_1(z),\ldots,F_r(z),L_{r+1}(z),\ldots,L_n(z))
$$
and consider its inverse $\psi=(\psi_1,\ldots,\psi_n):V'\to V$. Observe that 
$$
\psi(\{\y_1=0,\ldots,\y_r=0\}\cap V')=M\cap V.
$$
In addition, $F_i'=F_i\circ\psi=\y_i$ for $i=1,\ldots,r$. To prove that $H_{|M\cap V}$ is identically zero, we show that the restriction of $H'=H\circ\psi$ to $\{\y_1=0,\ldots,\y_r=0\}\cap V'$, which is a holomorphic function of the last $n-r$ coordinates, is identically zero. Write $y_0=\varphi(x_0)$. As $\{\y_1=0,\ldots,\y_r=0\}\cap V'$ is connected, we have to show 
$$
\displaystyle\frac{\partial^\alpha H'}{\partial\y_{r+1}^{\alpha_1}\ldots \partial \y_n^{\alpha_n}}(y_0)=0\quad \mbox{\rm for each multi-index}\quad
\alpha=(\alpha_1,\ldots \alpha_{n-r})\in(\N)^{n-r}.$$ 

For $i=r+1,\ldots,n$ one has
\begin{equation}\label{uno}
\det\begin{vmatrix} 
\frac{\partial F'_1}{\partial\y_1}&\cdots&\frac{\partial F'_1}{\partial\y_r}&\frac{\partial F'_1}{\partial\y_i}\\
\vdots&\ddots&\vdots \\ 
\frac{\partial F'_r}{\partial\y_1}&\cdots&\frac{\partial F'_r}{\partial\y_r}&\frac{\partial F'_r}{\partial\y_i}\\
\frac{\partial H'}{\partial\y_1}&\cdots&\frac{\partial H'}{\partial\y_r}&\frac{\partial H'}{\partial\y_i}\\
\end{vmatrix}
=\det \begin{vmatrix} 
1&0&\cdots&0&0 \\ 
0&1&\cdots&0&0 \\
\vdots&&\ddots&\vdots&\vdots \\
0&0&\cdots&1&0\\
\frac{\partial H'}{\partial\y_1}&\frac{\partial H'}{\partial\y_2}&\cdots&\frac{\partial H'}{\partial\y_r}&\frac{\partial H'}{\partial\y_i}\\
\end{vmatrix}
=\frac{\partial H'}{\partial\y_i}.
\end{equation}
Next, define 
$$
D_s(H)=\det\Big|\frac{\partial(F_1,\ldots,F_r,H)}{\partial(\z_{s_1},\ldots,\z_{s_{r+1}})}\Big|
$$
for each $s=(s_1,\ldots,s_{r+1})\in\{1,\ldots,n\}^{r+1}$ such that $1\leq s_1<\cdots<s_{r+1}\leq n$. As the rank of $\Ii_X$ is $r$, we deduce that $D_s(H)$ vanishes on $\Reg(X)\cap U$. Consequently, as $\Reg(X)$ is dense in $X_{x_0}$, we have $D_s(H)\in\Ii_{X,x_0}$. Thus, $D_s(H)\circ\psi\in\Ii_{X',y_0}$ where $X'=\varphi(X\cap V)$ and $\Ii_X'$ is the sheaf of ideals on $V$ consisting of all holomorphic function germs vanishing on $X'\cap V$.

We have
$$ 
\frac{\partial F'_j}{\partial\y_\ell}=\sum_{k=1}^n\Big(\frac{\partial F_j}{\partial\z_k}\circ\psi\Big)\cdot\frac{\partial\psi_k}{\partial\y_\ell}\quad\text{and}\quad\frac{\partial H'}{\partial\y_\ell}=\sum_{k=1}^n\Big(\frac{\partial H}{\partial\z_k}\circ\psi\Big)\cdot\frac{\partial\psi_k}{\partial\y_\ell}.
$$
for $j=1,\ldots,r$ and $\ell=1,\ldots,n$. Thus, for $i=r+1,\ldots,n$ we have
\begin{equation}\label{due}
\begin{pmatrix} 
\frac{\partial F'_1}{\partial\y_1}&\cdots&\frac{\partial F'_1}{\partial\y_r}&\frac{\partial F'_1}{\partial\y_i}\\
\vdots&\ddots&\vdots \\ 
\frac{\partial F'_r}{\partial\y_1}&\cdots&\frac{\partial F'_r}{\partial\y_r}&\frac{\partial F'_r}{\partial\y_i}\\
\frac{\partial H'}{\partial\y_1}&\cdots&\frac{\partial H'}{\partial\y_r}&\frac{\partial H'}{\partial\y_i}\\
\end{pmatrix}
=\begin{pmatrix} 
\frac{\partial F_1}{\partial\z_1}&\cdots&\frac{\partial F_1}{\partial\z_n}\\
\vdots&\ddots&\vdots \\ 
\frac{\partial F_r}{\partial\z_1}&\cdots&\frac{\partial F_r}{\partial\z_n}\\
\frac{\partial H}{\partial\z_1}&\cdots&\frac{\partial H}{\partial\z_n}\\
\end{pmatrix}
\begin{pmatrix} 
\frac{\partial\psi_1}{\partial\y_1}&\cdots&\frac{\partial\psi_1}{\partial\y_r}&\frac{\partial\psi_1}{\partial\y_i}\\
\vdots&\ddots&\vdots&\vdots\\ 
\frac{\partial\psi_n}{\partial\y_1}&\cdots&\frac{\partial\psi_n}{\partial\y_r}&\frac{\partial\psi_n}{\partial\y_i}.
\end{pmatrix}
\end{equation}
Hence, using \eqref{uno}, \eqref{due} and Binet-Cauchy formula for the determinant of the product of two rectangular matrices of transposed shapes, we deduce
$$
\frac{\partial H'}{\partial\y_i}=\sum_{\substack{s=(s_1,\ldots,s_{r+1})\\
s_1<s_2<\ldots <s_{r+1}}}(D_s(H)\circ\psi)\det\Big|\frac{
\partial(\psi_{s_1},\ldots,\psi_{s_{r+1}}) }{\partial(\y_1,\ldots,\y_r,\y_i)}\Big|\in \Ii_{X',y_0} 
$$
This implies that if $H\in\Ii_{X,x_0}$, then $\displaystyle \frac{\partial H'}{\partial\y_i}\in\Ii_{X',y_0}$ for $i=r+1,\ldots,n$. Consequently, $\displaystyle \frac{\partial H'}{\partial\y_i}\circ\varphi\in\Ii_{X,x_0}$. Then we can apply recursively the same trick to $\displaystyle \frac{\partial H'}{\partial\y_i}\circ\varphi$ for $i=r+1,\ldots,n$, to deduce
$$
\frac{\partial^2H'}{\partial\y_i\partial\y_s}\in\Ii_{X',y_0}
$$
for $r+1\leq i,s\leq n$, and so on. Thus, all derivatives of $H'$ of all orders vanish at $y_0$, so $H'$ is the zero function on $M\cap V$ and we conclude $X_{x_0}=M_{x_0}$, as required.
\end{proof} 
\begin{remarks}
Let $N$ be a connected component of $\Reg(X)$. 

(i)  If $x,y\in N$, then $r(x)=r(y)$. Indeed,
by Proposition \ref{localmanifold} the number $r(x)$ is locally constant in $\Reg(X)$. Hence it is constant on the connected components of $\Reg(X)$. 



(ii)  $N$ is a connected complex analytic manifold of dimension $n-r(x_0)$, where $x_0$ is any of the points of $N$.

(iii) The closure in $X$ of a connected component of  $\Reg(X)$ is an irreducible subset of $X$. This can be proved by the same argument used in R\u ckert's Nullstellensatz.
\end{remarks}

\subsubsection{Zariski's tangent space.}
We approach now regular points from another point of view. This requires to introduce the Zariski's tangent space. Let $(X,\an_X)$  denote a reduced complex analytic space.

\begin{defn}
Let $x\in X$ be a point and $F_1,\ldots,F_k$ be generators of the ideal $\Ii_{X,x}$.  The {\em Zariski's tangent space}\index{Zariski's tangent space}  $T_xX$ of $X$ at $x$ is defined by 
$$
T_xX=\ker(J(F_1,\ldots,F_k)(x)) 
$$
where $J(F_1,\ldots,F_k)(x)$ is the Jacobian matrix of $F_1,\ldots, F_k$ at the point $x$.
\end{defn}

By definition $\dim(T_xX)=n-r_x$ and it has minimal dimension when $x\in\Reg(X)$, that is, when $J(F_1,\ldots,F_k)(x)$ has rank $r(x)$.
 
\begin{lem} 
Let $\mm_x$ be the maximal ideal of $\Oo_{X,x}$ and recall that $\Oo_{X,x}/\mm_x\cong\C$. Then the $\Oo_{X,x}/\mm_x$-linear space $\mm_x/\mm_x^2$ is isomorphic to $T_xX$ as $\C$-linear spaces.
\end{lem}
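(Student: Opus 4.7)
The plan is to realize both $\mm_x/\mm_x^2$ and $T_xX$ inside vector spaces attached to the ambient $\C^n$, and then compare them. First I embed $X$ as a reduced local model in an open $\Omega\subset\C^n$, denote by $\mm\subset\Oo_{\C^n,x}$ the ambient maximal ideal (so that $\Ii_{X,x}\subset\mm$, since $F_i(x)=0$), and write $\Oo_{X,x}=\Oo_{\C^n,x}/\Ii_{X,x}$ with $\mm_x=\mm/\Ii_{X,x}$. The third isomorphism theorem then gives
$$
\mm_x/\mm_x^2\;\cong\;\mm/(\mm^2+\Ii_{X,x}).
$$

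The next step is to use the Taylor expansion at $x$ to produce the $\C$-linear isomorphism
$$
d_x\colon\mm/\mm^2\;\xrightarrow{\sim}\;\C^n,\qquad [f]\mapsto\bigl(\tfrac{\partial f}{\partial\z_1}(x),\ldots,\tfrac{\partial f}{\partial\z_n}(x)\bigr),
$$
and to identify the image of $\Ii_{X,x}$ in $\mm/\mm^2$ with the row space of $J(F_1,\ldots,F_k)(x)$. One inclusion is immediate from the definition; for the other, any $H\in\Ii_{X,x}$ can be written $H=\sum G_iF_i$, and the vanishing $F_i(x)=0$ together with the Leibniz rule yields $d_xH=\sum G_i(x)\,d_xF_i$, a $\C$-linear combination of rows of the Jacobian. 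Consequently
$$
\mm_x/\mm_x^2\;\cong\;\C^n/\mathrm{Row}\,J(F_1,\ldots,F_k)(x),
$$
a $\C$-vector space of dimension $n-r_x$.

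On the tangent-space side, $T_xX=\ker J(F_1,\ldots,F_k)(x)$ has dimension $n-r_x$ by rank--nullity. Two finite-dimensional $\C$-vector spaces of the same dimension are isomorphic, which finishes the proof. If a canonical identification is preferred, I would note that under the standard pairing $\C^n\times\C^n\to\C$ the subspace $T_xX$ is precisely the annihilator of $\mathrm{Row}\,J$, giving a canonical isomorphism $T_xX\cong(\C^n/\mathrm{Row}\,J)^*\cong(\mm_x/\mm_x^2)^*$; equivalently, each $v\in T_xX$ defines a derivation $f\mapsto\sum_jv_j\,\partial f/\partial\z_j(x)$ on $\Oo_{X,x}$ that vanishes on $\mm_x^2$ and so descends to a functional on $\mm_x/\mm_x^2$.

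The main step to verify carefully is that $(\mm^2+\Ii_{X,x})/\mm^2$ coincides with the row space of the Jacobian: this is exactly where the hypothesis $F_i(x)=0$ enters, because the cross terms $G_i\cdot F_i$ contribute to $\mm^2$ precisely thanks to that vanishing. Once that identification is in place, the lemma reduces to a dimension count together with the well-known fact that the differential $d_x$ identifies $\mm/\mm^2$ with $\C^n$.
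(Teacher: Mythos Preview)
Your proof is correct and follows essentially the same route as the paper: reduce to the ambient ring via the third isomorphism theorem, identify $\Mm_x/\Mm_x^2$ with $\C^n$ (the paper uses ${\C^n}^*$) via first-order Taylor expansion, and show that the image of $\Ii_{X,x}$ is the span of the differentials $d_xF_i$. The paper states the last identification as $L^{-1}(\langle L(F_1),\ldots,L(F_k)\rangle)=\Ii_{X,x}+\Mm_x^2$ and then concludes $\mm_x/\mm_x^2\cong T_xX^*$ directly, whereas you first do the dimension count and then recover this canonical duality; both arrive at the same place.
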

\begin{proof} 
Denote $\Mm_x$ the maximal ideal of $\Oo_{\C^n,x}$. Observe that
$$
\mm_x/\mm_x^2=(\Mm_x/\Ii_{X,x})\Big/((\Mm_x^2+\Ii_{X,x})/\Ii_{X,x})\cong\Mm_x/(\Mm_x^2+\Ii_{X,x}).
$$

Let ${\C^n}^*$ be the dual linear space of $\C^n$ and consider the linear map 
$$
L:\Mm_x\to{\C^n}^*,\ f\mapsto L(f)=\sum_{i=1}^n\frac{\partial f}{\partial\z_i}(x)u_i
.$$ 
Observe that $L$ is surjective and $\ker(L)=\Mm_x^2$. Consequently, $L$ induces an isomorphism $[L]:\Mm_x/\Mm_x^2\to{\C^n}^*,\ F+\Mm_x^2\mapsto L(F)$. 

Let $F_1,\ldots,F_k$ be a system of generators of $\Ii_{X,x}$. Then we get
$$
T_xX=\ker(L(F_1))\cap\cdots\cap\ker(L(F_k))
$$ 
and we can identify the dual space $T_xX^*$ with the quotient ${\C^n}^*/\qq{L(F_1),\ldots,L(F_k)}$, where $\qq{L(F_1),\ldots,L(F_k)}$ denotes the subspace spanned by $L(F_1),\ldots,L(F_k)$.

Indeed, consider the linear map
$$
\Gamma:{\C^n}^*\to T_xX^*,\ H\mapsto H_{|{T_xX}}.
$$
As each linear form $G:T_xX\to\C$ is the restriction of a linear form $H:\C^n\to\C$, the previous linear map is surjective. Consequently, $T_xX^*\cong{\C^n}^*/\ker(\Gamma)$. As $T_xX=\ker(L(F_1))\cap\cdots\cap\ker(L(F_k))$, we conclude that $\ker(\Gamma)=\qq{L(F_1),\ldots,L(F_k)}$.

We have the following commutative diagram
$$
\xymatrix{
\Mm_x\ar@{->>}[r]^L\ar@{->>}[dr]&{\C^n}^*\ar@{->>}[r]^{\Gamma}\ar@{->>}[dr]&T_xX^*\\
&\Mm_x/\Mm_x^2\ar@{<->}[u]_{[L]}&\ar@{<->}[u]_{[\Gamma]}{\C^n}^*/\qq{L(F_1),\ldots,L(F_k)}
}
.$$
So,
$$
L^{-1}(\qq{L(F_1),\ldots,L(F_k)})=\qq{F_1,\ldots,F_k}+\ker(L)=\Ii_{X,x}+\Mm_x^2,
$$ 
and we conclude
$$
\mm_x/\mm_x^2\cong\Mm_x/(\Mm_x^2+\Ii_{X,x})\cong\Mm_x/\ker(L)\cong{\C^n}^*/\qq{L(F_1),\ldots,L(F_k)}\cong T_xX^*,
$$
as required.
\end{proof}

As a consequence we get a characterization of a regular point $x\in X$ in terms of the algebraic properties of the ring $\Oo_{X,x}$. Recall that a local noetherian ring $(A,\mm)$ is called {\em regular}\index{regular ring} if $\dim(A) =\dim_\kappa(\mm/\mm^2)$,\footnote{The \em dimension $\dim(A)$ of a ring $A$ \em is the supremum over the lengths $\ell$ of each chain $\gtp_0\subsetneq\cdots\subsetneq\gtp_\ell$ of prime ideals of $A$.} where $\kappa=A/\mm$ is the \em residue field of $A$\em. 

Consider the local noetherian ring $A=\Oo_{\C^n,x}/\Ii_{X,x}$ and assume that $\Ii_{X,x}$ is a prime ideal. We know that, up to a linear change of coordinates, $A$ is an integral extension of the local ring $\Oo_d$ of holomorphic germs in $d$ variables. Thus, $\dim(A)=d$. 
\begin{lem}
Under the hypothesis above one has $d=n-r(x)$ and there exists an open neighborhood $X'\subset X$ of $x$ such that $r(y)=r(x)$ for each $y\in X'$. 

In particular, $d=n-r(x)\leq n-r_x=\dim(\mm_x/\mm_x^2)$.
\end{lem}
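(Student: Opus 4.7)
My plan is to invoke the structure theorem for the zeroset of a prime ideal recalled at the beginning of this subsection to produce, inside a neighborhood of $x$, a dense open $d$-dimensional manifold on which the Jacobian rank $r_y$ is constantly equal to $n-d$; the conclusion will then follow by combining this with the lower-semicontinuity $r_x\leq r_y$ granted by the coherence of $\Ii_X$.

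More precisely, since $\gtp=\Ii_{X,x}$ is prime, the structure theorem furnishes a polydisc $\Delta(0,\veps)=\Delta_1\times\Delta_2\subset\C^d\times\C^{n-d}$, a non-zero $D\in\Oo_d$ and germs $F_1,\ldots,F_{n-d}\in\gtp$ whose Jacobian has rank $n-d$ at every point of the complex $d$-dimensional manifold $M=X\setminus\ceros(D)$, with $M$ dense in $X$. For any $y\in M$ one has $F_1,\ldots,F_{n-d}\in\Ii_{X,y}$, so completing them to a system of generators of $\Ii_{X,y}$ and using that $r_y$ does not depend on the chosen generators shows $r_y\geq n-d$; conversely, near $y$ the analytic set $X$ coincides with the manifold $M$, so $T_yX=T_yM$ has dimension $d$ and hence $r_y=n-d$.

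For the upper bound at nearby points the idea is to exploit lower-semicontinuity: the set $\{y\in X:\ r_y\leq n-d\}$ is closed (it is the complement of the open set where the rank is at least $n-d+1$), and it contains the dense subset $M$, hence it contains an entire neighborhood $X'$ of $x$ in $X$. Thus $r_y\leq n-d$ on $X'$, with equality on $M\cap X'$, so $\max_{z\in W}r_z=n-d$ for every sufficiently small neighborhood $W$ of any $y\in X'$ (using density of $M$ near $y$). Taking the infimum over such $W$ yields $r(y)=n-d$ for every $y\in X'$; in particular $d=n-r(x)$ and $r(y)=r(x)$ on $X'$, which is the first statement of the lemma.

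The final inequality is then automatic: by definition $r_x\leq\max_{z\in W^x}r_z$ for every neighborhood $W^x$ of $x$, so $r_x\leq r(x)$; and the preceding lemma identifies $\mm_x/\mm_x^2\cong T_xX^*$, giving $\dim_\C(\mm_x/\mm_x^2)=\dim_\C T_xX=n-r_x$. Hence $d=n-r(x)\leq n-r_x=\dim(\mm_x/\mm_x^2)$. The one subtle step is the equality $r_y=n-d$ on $M$, which requires both the explicit generators $F_i$ provided by the structure theorem and the fact that near $y\in M$ the analytic set $X$ reduces to the manifold $M$ (Proposition \ref{localmanifold}); once this is secured, the rest is a routine combination of density of $M$ with lower-semicontinuity of the pointwise rank.
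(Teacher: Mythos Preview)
Your argument is correct and essentially the same as the paper's: both produce the dense $d$-manifold $M$ from the structure theorem, verify $r_z=n-d$ on $M$, and then combine density of $M$ with lower-semicontinuity of $y\mapsto r_y$ (coherence of $\Ii_X$) to obtain $r(y)=n-d$ on the whole neighborhood $X'$. The closing appeal to Proposition~\ref{localmanifold} is unnecessary, since the fact that $X$ coincides with $M$ near $y\in M$ is simply openness of $M=X\setminus\ceros(D)$ in $X$.
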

\begin{proof}
The germ $X_x$ has, after a linear change of coordinates, a representative $X'=X\cap\Delta(x,\veps)$ in a polydisc $\Delta(x,\veps)$ centered at $x$ and an open subset $M\subset X'$, which is connected and dense in $X$ such that the projection $\pi:\C^n\to\C^d$ onto the first $d$ coordinates induces a covering from $M$ to an open subset of $\C^d$. Thus, $\dim(M)=d$ and  $M$ is defined by representatives of $n-d$ elements in $\Ii_{X,x}$ whose Jacobian matrix has rank $n-d$ at each point of $M$. Observe that $M\subset\Reg(X)$ because at each point $z\in M$ one has $r_z=n-d$. Pick a point $y\in X'\setminus M$. As $M$ is dense in $X'$, there exists $z\in M$ close to $y$, so $r_y\leq r_z=n-d$. Thus, $r(x)=n-d$ because for each point $y\in X$ close to $x$, we have $r_y\leq n-d$ and as $M$ is dense in $X'$ there exists points $z\in M$ close to $x$ and at these points $r_z=n-d$. We have already proved in addition that if $y\in X'$, then $r(y)=r(x)$, as required.
\end{proof}

\begin{thm}
Let $(X,\Oo_X)$ be a reduced complex analytic space. Then a point $x\in X$ is regular if and only if the ring $\Oo_{X,x}$ is a local regular ring.
\end{thm}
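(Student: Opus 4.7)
The plan is to combine the three ingredients already set up in the excerpt: (a) the identification $\mm_x/\mm_x^2\cong T_xX^*$, giving $\dim_\C(\mm_x/\mm_x^2)=n-r_x$; (b) Proposition \ref{localmanifold}, which says that near a regular point $X$ looks locally like a complex manifold of dimension $n-r(x)$; and (c) the preceding lemma, which gives $\dim(\Oo_{X,x})=n-r(x)$ whenever $\Ii_{X,x}$ is prime.

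For the direction ``$x\in\Reg(X)\Rightarrow\Oo_{X,x}$ regular'', I would invoke Proposition \ref{localmanifold} directly: on some open neighborhood $U$ of $x$, the set $X\cap U$ is a complex manifold of dimension $n-r(x)$. Consequently the stalk $\Oo_{X,x}$ is isomorphic to the ring $\Oo_{n-r(x)}$ of convergent power series in $n-r(x)$ variables, which is the archetypical regular local ring (its maximal ideal is generated by the coordinate germs, whose classes are a basis of $\mm/\mm^2$, and its Krull dimension equals $n-r(x)$).

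For the converse, assume $\Oo_{X,x}$ is a regular local ring. A regular local ring is an integral domain (standard commutative algebra); therefore $\Ii_{X,x}$ is a prime ideal of $\Oo_{\C^n,x}$. The hypothesis of the lemma preceding the theorem is thus satisfied and gives $\dim(\Oo_{X,x})=n-r(x)$. On the other hand, by the Zariski tangent space lemma we have $\dim_\C(\mm_x/\mm_x^2)=\dim_\C T_xX=n-r_x$. Regularity of $\Oo_{X,x}$ means exactly
\[
\dim(\Oo_{X,x})=\dim_\C(\mm_x/\mm_x^2),
\]
so $n-r(x)=n-r_x$, i.e.\ $r_x=r(x)$, which is the definition of $x$ being regular.

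The only delicate point is to justify that $\Oo_{X,x}$ being regular forces $\Ii_{X,x}$ to be prime, so that the hypothesis of the previous lemma is available; without this observation one cannot compare $\dim(\Oo_{X,x})$ with $n-r(x)$. Once that standard fact (regular local ring $\Rightarrow$ domain) is invoked, the argument is just a juxtaposition of the two dimension formulas already proved.
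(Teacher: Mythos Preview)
Your proof is correct and essentially the same as the paper's, which runs the dimension chain $n-r_x=\dim(T_xX)=\dim_\C(\mm_x/\mm_x^2)$ against $n-r(x)=\dim(\Oo_{X,x})$ in both directions. Your explicit remark that a regular local ring is a domain (so $\Ii_{X,x}$ is prime and the preceding lemma applies) is precisely what the paper uses tacitly; your forward direction via Proposition~\ref{localmanifold} is a harmless variant of the same computation.
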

\begin{proof}
We may assume that $(X,\Oo_X)$ is a reduced local model. If $x$ is regular, then $r_x=r(x)$, so $\dim(T_xX)=n-r_x=n-r(x)=\dim(\Oo_{X,x})$, that is, the ring $\Oo_{X,x}$ is regular. Conversely, if $\Oo_{X,x}$ is regular, then $n-r(x)=\dim(\Oo_{X,x})=\dim_\C(\mm_x/\mm_x^2)=\dim(T_xX)=n-r_x$, so $r_x=r(x)$ and the point $x$ is regular. 
\end{proof}

\begin{cor} 
The subset $\Sing(X)$ of a reduced complex ana\-lytic spa\-ce 
$(X,\Oo_X)$ is a complex ana\-lytic sub\-set of $X$.
\end{cor}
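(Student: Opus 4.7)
The question is local, so fix $x_0\in X$ and assume $X$ is a closed reduced analytic subset of an open $\Omega\subset\C^n$ with $\Oo_X=\Oo_\Omega/\Ii_X$. By coherence of $\Ii_X$, I shrink to a neighbourhood $U$ of $x_0$ on which $\Ii_X$ is generated by finitely many holomorphic functions $F_1,\ldots,F_k$. For every integer $p\ge 0$, the locus
$$
\Sigma_p=\{y\in X\cap U:\rg J(F_1,\ldots,F_k)(y)\le p\}
$$
is complex analytic in $U$, being cut out by $F_1=\cdots=F_k=0$ together with the vanishing of all $(p+1)\times(p+1)$ minors of the Jacobian. The subtlety is that $y\in\Sing(X)$ means $r_y<r(y)$, and $r(\cdot)$ is not constant on $X$ unless $X$ is equidimensional, so $\Sing(X)$ is not itself any single $\Sigma_p$.

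The plan is to reduce to the equidimensional case. Using the primary decomposition of $\Ii_{X,x_0}$ in $\Oo_{n,x_0}$ and R\"uckert's Nullstellensatz, and grouping the minimal primes by dimension, I shrink $U$ further to arrange
$$
X\cap U=X^{(d_1)}\cup\cdots\cup X^{(d_s)},
$$
where each $X^{(d_j)}$ is a closed analytic subset of $U$ pure of dimension $d_j$, with $d_1,\ldots,d_s$ pairwise distinct. On each equidimensional piece one has $\dim\Oo_{X^{(d_j)},y}=d_j$ at every $y$, so by the theorem just proved $y\in\Reg(X^{(d_j)})$ iff $\dim_\C(\mm_y/\mm_y^2)=d_j$, equivalently $\rg J(G_1,\ldots,G_\ell)(y)=n-d_j$ for any local generators $G_i$ of $\Ii_{X^{(d_j)}}$. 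Thus $\Sing(X^{(d_j)})$ is cut out near every point by the uniform rank drop $r\le n-d_j-1$, and is therefore complex analytic by the same minors construction as above.

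I then claim
$$
\Sing(X)\cap U=\bigcup_{j=1}^s\Sing(X^{(d_j)})\;\cup\;\bigcup_{1\le j<k\le s}\bigl(X^{(d_j)}\cap X^{(d_k)}\bigr).
$$
If $y$ lies in exactly one $X^{(d_j)}$, then $X$ coincides with $X^{(d_j)}$ near $y$, so $y\in\Reg(X)\Leftrightarrow y\in\Reg(X^{(d_j)})$. If $y\in X^{(d_j)}\cap X^{(d_k)}$ with $j\neq k$, then the germ $X_y$ carries irreducible components of both dimensions $d_j$ and $d_k$, giving rise to minimal primes of $\Ii_{X,y}$ of distinct heights $n-d_j$ and $n-d_k$; consequently $\Oo_{X,y}$ is not a domain, a fortiori not regular, so $y\in\Sing(X)$. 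Finite unions and pairwise intersections of complex analytic sets being complex analytic, $\Sing(X)\cap U$ is complex analytic. The main obstacle is ruling out that a lower-dimensional germ component gets absorbed by a higher-dimensional one at points $y\neq x_0$: this is handled by observing that a $d_j$-dimensional irreducible germ of $X^{(d_j)}$ contained in a $d_k$-dimensional irreducible component of $X^{(d_k)}$ (with $d_j<d_k$) would have to lie in the proper analytic intersection of the two global irreducible components from the primary decomposition, which has dimension strictly less than $d_j$---a contradiction.
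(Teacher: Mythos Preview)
Your proof is correct and follows essentially the paper's strategy: decompose $X_{x_0}$ and write $\Sing(X)$ near $x_0$ as the union of the singular loci of the pieces together with their pairwise intersections. The paper decomposes directly into the irreducible components $X_1,\ldots,X_r$ (invoking the preceding Lemma that $r(\cdot)$ is constant near an irreducible germ) rather than grouping by dimension---your grouping is harmless but unnecessary, and the absorption issue you carefully address is handled in the paper by the single phrase ``shrinking $U$ if necessary''.
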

\begin{proof}
Pick a point $x\in X$. We distinguish two cases:
\begin{enumerate}
\item Suppose first that the germ $X_x$ is irreducible and denote $r=r(x)$. There exist an open neighborhood $U$ and sections $F_1,\ldots,F_r$ of the sheaf $\Ii_X$ defined on the open set $U$ such that they generate $\Ii_{X,y}$ for each $y\in U$. After shrinking $U$, we may assume that $r(y)=r(x)$ for each point $y\in U$. Then, $y\in U$ is a singular point if and only if $r_y<r(x)=r$. Thus, $\Sing(X)\cap U$ is the set of the points $y\in X$ at which all minors of order $r$ of the Jacobian matrix
$$
\Big(\frac{\partial F_i}{\partial\x_j}(x)\Big)_{\substack{1\leq i\leq r\\1\leq j\leq n}}
$$
are zero. Thus, $\Sing(X)\cap U$ is a complex analytic subset of $U$.

\item Suppose next that the germ $X_x$ is reducible. Then, there are an open neighborhood $U$ and complex analytic subsets $X_1,\ldots,X_r$ of $U$ such that $X\cap U=X_1\cup\cdots \cup X_r$, each germ $X_{i,x}$ is irreducible and $X_i$ contains a connected and dense complex analytic manifold of the same dimension as $X_i$. Using the fact the ring $\Oo_{X,y}$ is not an integral domain if the germ $X_y$ is reducible, we conclude that 
$$
\Sing(X\cap U)=\bigcup_{i=1}^r\Sing(X_i)\cup\bigcup_{i\neq j}(X_i\cap X_j).
$$
Using  (1) and the fact that each $X_i$ has a finite system of holomorphic equations in $U$ (shrinking $U$ if necessary), we conclude that $\Sing(X\cap U)$ is a  complex analytic subset of $U$, as required.
\end{enumerate} 
\end{proof}

 For 
an arbitrary complex analytic space $(X,\an_X)$ not necessarily reduced we say that a point $x\in X$ is \em regular \em if the ring $\an_{X,x}$ is a local regular ring. Otherwise, we will say that the point $x$ is a \em singular point of $X$\em. Again $\Reg(X)$ denotes the set of regular points of $X$, while $\Sing(X)$ denotes the set of singular points of $X$. 

\subsection{Stein spaces.}

Let $(X,\Oo_X)$ be a complex analytic space. Denote $\Oo(X)$ the algebra of its holomorphic functions. This algebra can be very small. For instance, if $X$ is compact as the projective space $\PP^n(\C)$, Maximum Principle shows that $\Oo(X)$ reduces to the set $\C$ of constant functions. Conversely, if $X$ is a closed analytic subset of $\C^n$, it has a lot of holomorphic functions. We give now a list of desirable properties that analytic subsets of $\C^n$ get. The first one is {\em to provide local coordinates}.

 
\begin{defn}
Let $(X,\Oo_X)$ be a reduced complex analytic space and let $x\in X$. We say that finitely many holomorphic functions $F_1,\ldots,F_n$ on an open neighborhood $U\subset X$ of $x$ {\em provide local coordinates} if they define a closed embedding $F=(F_1,\ldots,F_n):U\to\Omega\subset\C^n$, where $\Omega$ is an open subset. Observe that $F$ induces an isomorphism between $(U,\Oo_{|U})$ and a local model $(Y=F(U),\Oo_\Omega/\Ii_Y)$ in $\Omega$. 
\end{defn}

Here is the announced list of properties of a closed analytic subset  $X\subset \C^n$.

\begin{itemize}

\item[(i)] For any unbounded sequence of points $\{x_m\}_m$ in $X$ there exists a holomorphic function $f$ on $X$ such that $\displaystyle \lim_{n\to \infty}|f(x_m)|=\infty$. \footnote{For unbounded sequence we mean an infinite sequence such that it intersects all compact sets in a finite number of points}
\item[(ii)] Holomorphic functions on $X$ separate points and provide local coordinates at each point $x\in X$.
\item[(iii)] $X$ is not compact unless $X$ is finite. 
\end{itemize} 

These three properties characterize a larger class of analytic spaces. 

\begin{defn}\label{Stein}
A complex analytic space $(X,\Oo_X)$ is a {\em Stein space}\index{Stein space} if it satisfies conditions (i), (ii), (iii) above. 
\end{defn}

Among the most important results concerning Stein spaces we point out Cartan's Theorems A and B.

\begin{thm}\index{Theorems A and B}
Let $(X,\Oo_X)$ be a Stein space. Then each coherent sheaf $\Ff$ of $\Oo_X$-modules on $X$ satisfies the following properties.
\begin{itemize}
\item[(A)] Each fiber $\Ff_x$ is generated by global sections of $\Ff$. 
\item[(B)] $H^q(X,\Ff)=0$ for each $q>0$.
\end{itemize}
\end{thm}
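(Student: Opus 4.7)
The plan is to follow Cartan's classical approach, reducing step by step from the general Stein space to the polydisc case. The argument has three layers: (1) prove the theorems for coherent sheaves on an open polydisc $\Delta\subset\C^n$; (2) extend to coherent sheaves on a closed analytic subspace of such a polydisc; (3) pass to a general Stein space $X$ by exhausting $X$ with relatively compact open Stein subsets and invoking a Runge-type approximation theorem.

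For the polydisc $\Delta$, the key input is solvability of $\bar\partial$. Dolbeault's theorem identifies $H^q(\Delta,\Oo_\Delta)$ with the cohomology of the smooth $(0,q)$-forms under $\bar\partial$, which vanishes for $q\geq 1$ because one can explicitly solve $\bar\partial u=f$ on polydiscs. Starting from local finite free resolutions $\Oo^{p_k}\to\cdots\to\Oo^{p_0}\to\Ff\to 0$ (available by Oka coherence), one patches local resolutions via Cartan's lemma on the multiplicative splitting of holomorphic matrices over a Cousin pair of polydiscs. The long exact cohomology sequences, combined with $H^q(\Delta,\Oo_\Delta)=0$, then propagate the vanishing from $\Oo_\Delta$ to an arbitrary coherent $\Ff$ on $\Delta$. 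Statement (A) for $\Delta$ follows from the surjection $\Oo^{p_0}\to\Ff$ together with (B) applied to its kernel. For a closed analytic subspace $Y\subset\Delta$, extension by zero of an $\Oo_Y$-coherent sheaf produces an $\Oo_\Delta$-coherent sheaf with the same cohomology groups and the same stalk generators, so the polydisc case transfers directly to $Y$.

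To handle a general Stein space $X$, the defining properties (i)--(iii) yield an exhaustion $U_1\subset U_2\subset\cdots\subset X$ by open subsets with $\overline{U_n}$ compact and contained in $U_{n+1}$, such that each $U_n$ is itself Stein and a neighbourhood of $\overline{U_n}$ admits a closed holomorphic embedding into a polydisc. Property (i) forces holomorphic convexity of $X$ and, after smoothing, the existence of a strongly plurisubharmonic exhaustion; property (ii) provides the coordinate functions needed for the embedding; property (iii) ensures the exhaustion is non-trivial. The restriction of $\Ff$ to each $U_n$ is then controlled by step (2). Gluing these local statements into a global one on $X$ requires the Runge-type approximation theorem: sections of $\Ff$ over $U_n$ can be approximated in the natural Fr\'echet topology by restrictions of sections over $U_{n+1}$, and eventually over $X$.

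The main obstacle is precisely this approximation theorem, because it is intertwined with the very vanishing we want to deduce; one cannot cleanly separate (A), (B) and approximation. The standard resolution is a simultaneous induction on $n$ in which all three assertions are established together on $U_n$, using a Cousin-type construction on the overlap of two embedded polydiscs together with Cartan's splitting lemma for holomorphic matrices. Once approximation holds on every $U_n$, a Mittag-Leffler argument on the inverse system $\{\Ff(U_n)\}_n$ gives $H^1(X,\Ff)=0$; higher $H^q(X,\Ff)$ are reduced to this case via \v{C}ech cohomology with respect to a Stein cover (finite intersections of Stein opens are Stein); and (A) follows on $X$ by approximating local stalk generators by global sections.
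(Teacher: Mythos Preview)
The paper does not prove this theorem. Cartan's Theorems A and B are stated as classical results and cited without proof; the Bibliographic and Historical Notes at the end of Chapter~1 refer the reader to the S\'eminaires Cartan and to the standard references \cite{gr}, \cite{gare} for the complex analytic background. The paper's own work begins only afterwards, with Theorem~\ref{AandBreal} (Theorems A and B for a closed set admitting a fundamental system of Stein neighbourhoods), which takes the Stein case for granted.

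Your outline is the classical Cartan--Serre route and is correct as a roadmap: vanishing on polydiscs via $\bar\partial$ and Dolbeault, propagation to coherent sheaves through local free resolutions and Cartan's matrix splitting lemma, transfer to closed subspaces by extension by zero, and globalization by a Stein exhaustion together with Runge approximation and a Mittag--Leffler/inverse-limit argument. You are also right to flag the circularity issue: (A), (B), and approximation must be proved simultaneously by induction on the exhaustion index, and this is exactly how the standard proofs proceed. One technical point you gloss over is that ``finite intersections of Stein opens are Stein'' is itself nontrivial in the singular case and is typically handled as part of the same package (or via Narasimhan's characterization by plurisubharmonic exhaustions, quoted in the paper as Theorem~\ref{Narcrit}). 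As a sketch your proposal is sound; as a proof it would need the matrix splitting and the approximation step worked out in detail, which is several pages of hard analysis---precisely why the paper relegates it to the references.
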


In particular, when $(X,\Oo_X)$ is a complex analytic subspace of $\C^n$, one gets the exact
sequence 
$$ 
0\to\Ii_X\to\Oo_{\C^n}\to\Oo_X\to0,
$$
where each involved sheaf is by Oka's theorem a coherent $\Oo_{\C^n}$-module. This implies using Cartan's Theorem B that each holomorphic function on $X$ is the restriction to $X$ of a holomorphic function on $\C^n$.

Next we give a characterization of open Stein subsets of $\C^n$.

\begin{thm}\label{steinopen}{\rm(Characterization of open Stein sets)}
Let $\Omega$ be a connected open subset of $C^n$. The following are equivalent.
\begin{itemize}
\item[(i)] $\Omega$ is a Stein manifold.
\item[(ii)] $\Omega$ is holomorphically convex.
\item[(iii)] $\Omega$ is a holomorphy domain.\footnote{A connected open set $U\subset \C^n$ is {\em holomorphically convex} if the holomorphic envelop of a compact subset of $U$ is compact. It is a {\em holomorphy domain} if there do not exist non-empty open sets $\Omega \subset U$ and $V \subset \C^n$ connected and not included in $U$  such that $\Omega \subset U\cap V$ and there  are holomorphic function $f\in \Oo(U), g\in \Oo(V)$ such that the restriction of $g$ to $\Omega$ coincides with the restriction of $f$ to $\Omega$. Roughly speaking a holomorhy domain is a set which is maximal in the sense that there exists a holomorphic function on this set which cannot be extended to a bigger set.} 
\end{itemize}
\end{thm}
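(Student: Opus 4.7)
The plan is to establish the cycle $\mathrm{(i)}\Rightarrow\mathrm{(ii)}\Rightarrow\mathrm{(iii)}\Rightarrow\mathrm{(i)}$. For a non-empty open $\Omega\subset\C^n$ parts (ii) and (iii) of Definition \ref{Stein} are automatic: the ambient coordinates $\z_1,\ldots,\z_n$ restrict to $\Omega$ as holomorphic functions that separate points and supply local coordinates everywhere, and a non-empty open subset of $\C^n$ is neither finite nor compact. Consequently, the Stein hypothesis (i) of the theorem reduces to axiom (i) of Definition \ref{Stein}, namely that every unbounded sequence in $\Omega$ admits some $f\in\Oo(\Omega)$ with $|f|\to\infty$ along it.

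For $\mathrm{(i)}\Rightarrow\mathrm{(ii)}$, given a compact $K\subset\Omega$ I form the holomorphic hull
$$
\widehat{K}=\{z\in\Omega:|f(z)|\le\|f\|_K\text{ for every }f\in\Oo(\Omega)\},
$$
which is closed in $\Omega$ and, by taking $f=\z_i$, bounded in $\C^n$. If $\widehat{K}$ failed to be compact it would carry a sequence $\{z_m\}$ unbounded in the sense of the footnote, and the Stein axiom would furnish $f\in\Oo(\Omega)$ with $|f(z_m)|\to\infty$, contradicting $|f(z_m)|\le\|f\|_K$.

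The hard direction is $\mathrm{(ii)}\Rightarrow\mathrm{(iii)}$, which is the classical Cartan--Thullen construction and the main obstacle in the proof. The plan is a Baire-type diagonal argument: first exhaust $\Omega$ by holomorphically convex compacts $K_1\subset\Int(K_2)\subset K_2\subset\Int(K_3)\subset\cdots$ with $\widehat{K_j}=K_j$ (holomorphic convexity being used precisely to refine any given exhaustion into such a sequence); then enumerate a countable family $\{(V_k,p_k)\}$ sweeping out every potential continuation datum, where $V_k\subset\C^n$ is an open ball meeting $\partial\Omega$ and $p_k$ a point of a connected component of $V_k\cap\Omega$ approaching $\partial\Omega$; for each $k$ exploit $\widehat{K_{j(k)}}=K_{j(k)}$ to obtain $g_k\in\Oo(\Omega)$ with $|g_k(p_k)|$ large and $\|g_k\|_{K_{j(k)}}$ small; finally assemble $f=\sum_k c_kg_k^{n_k}$ with exponents and coefficients tuned so that the series converges uniformly on each $K_j$ while forcing $f$ to be unbounded in every component of every $V_k\cap\Omega$. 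The technical core lies in the combinatorial bookkeeping that simultaneously secures uniform convergence on compacts and obstructs every admissible analytic continuation, producing a function on $\Omega$ with natural boundary $\partial\Omega$.

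Finally, for $\mathrm{(iii)}\Rightarrow\mathrm{(i)}$ I would first record the easy half of Cartan--Thullen: for any $f\in\Oo(\Omega)$, compact $K\subset\Omega$, and $z\in\widehat{K}$, iterated Cauchy estimates on Taylor coefficients give $|\partial^\alpha f(z)/\alpha!|\le\|\partial^\alpha f/\alpha!\|_K$, so the Taylor series of $f$ at $z$ converges on every polydisc of polyradius strictly less than $\dist(K,\partial\Omega)$; because $\Omega$ is a holomorphy domain, this forces $\dist(z,\partial\Omega)\ge\dist(K,\partial\Omega)$, and therefore $\widehat{K}\Subset\Omega$. Given now an unbounded sequence $\{x_m\}\subset\Omega$, I fix an exhaustion $\{K_j\}$ with $\widehat{K_j}=K_j$ and, after extraction, arrange $x_m\in\Omega\setminus K_{j_m}$ with $j_m\to\infty$. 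Since $x_m\notin\widehat{K_{j_m}}$, there is $g_m\in\Oo(\Omega)$ with $|g_m(x_m)|>\|g_m\|_{K_{j_m}}$; after normalising and raising to a sufficiently high power I obtain $f_m\in\Oo(\Omega)$ satisfying $\|f_m\|_{K_{j_m}}<2^{-m}$ and $|f_m(x_m)|>m+\sum_{k<m}|f_k(x_m)|$. Then $f=\sum_m f_m$ converges uniformly on each $K_j$ to a holomorphic function on $\Omega$ with $|f(x_m)|\to\infty$, verifying the remaining Stein axiom and closing the cycle.
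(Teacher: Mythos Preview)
The paper does not actually prove this theorem: it is stated as one of several classical results on Stein spaces that are recalled without proof in Section~1.B of Chapter~1 (the surrounding text reads ``Next we give a characterization of open Stein subsets of $\C^n$'' and then moves on immediately to polydiscs; the bibliographic notes refer the reader to standard sources). So there is no paper-proof to compare against.

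That said, your outline is the standard Cartan--Thullen argument and is correct in structure. The implication $\mathrm{(i)}\Rightarrow\mathrm{(ii)}$ is exactly as you say. For $\mathrm{(iii)}\Rightarrow\mathrm{(i)}$ your use of the ``easy'' Cartan--Thullen direction is right: applying the hull inequality to the derivatives $\partial^\alpha f/\alpha!$ and combining with Cauchy estimates on a thickening of $K$ shows that the Taylor series of \emph{every} $f\in\Oo(\Omega)$ at any $z\in\widehat K$ converges on a fixed polydisc of radius almost $\dist(K,\partial\Omega)$, which indeed contradicts the holomorphy-domain hypothesis if $z$ were too close to $\partial\Omega$; the subsequent series construction for the unbounded function is fine. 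The sketch of $\mathrm{(ii)}\Rightarrow\mathrm{(iii)}$ is the genuine work, and your description (exhaustion by $\widehat K_j=K_j$, enumeration of continuation data, diagonal series with tuned exponents) is the correct blueprint; in a full write-up the only point requiring care is making precise the countable family $\{(V_k,p_k)\}$ so that blowing up along it really obstructs \emph{every} admissible pair $(\Omega',V)$ in the definition, not just balls --- but a countable base of balls for the topology of $\C^n$ handles this.
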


A very relevant example of Stein open subset of $\C^n$ is a polydisc. Note that one can choose local models as subspaces of a polydisc. As a consequence of Theorem B, a  closed subspace of a Stein space is also Stein, so 
we get that any complex analytic space is locally Stein. In particular closed subspaces of $\C^n$ are Stein spaces. The converse is almost true: any Stein space can be embedded in $\C^n$ as a closed analytic subspace for some $n$ large enough under the additional hypothesis that $\sup_{x\in X}\dim(T_{x}X)<\infty$. More precisely

\begin{thm}{\rm (Narasimhan)}\label{immersione}
Any Stein space $(X,\Oo_X)$ of dimension $n$ admits a one to one proper holomorphic map into $\C^{2n+1}$, that is, a holomorphic embedding at each regular point of $X$. Assume in addition that for each point $x\in X$ there exists an open neighborhood in $X$ that can be holomorphically embedded as a closed analytic subset of an open subset of $\C^N$ (with the analytic structure induced by $\C^N$), where $N>n$ is fixed. Then, there exists a one to one proper map $\varphi:X\to\C^{N+n}$ whose image (with the induced analytic structure provided by $\C^{N+n}$) is isomorphic to $X$ by means of $\varphi$.

In both cases above, the set of embeddings is dense in the space of holomorphic maps into $\C^m$ (where $m=2n+1$ in the first case and $m=N+n$ in the second case) if we endow such space with the compact-open topology.
\end{thm}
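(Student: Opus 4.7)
The plan is to prove both halves of the statement simultaneously via the classical Bishop--Narasimhan scheme: first exhibit \emph{some} proper holomorphic map from $X$ to a (possibly very high dimensional) affine space, then cut the dimension down to the advertised target by showing that, with the compact-open topology, a dense $G_\delta$ of linear projections from the ambient $\C^q$ to $\C^m$ preserves the three features we care about — properness, injectivity, and being an embedding at every regular point. Since a countable intersection of dense $G_\delta$ sets in a Fr\'echet space is nonempty, this automatically yields both existence and the density statement.

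First I would construct a proper holomorphic map $\Phi=(\Phi_1,\dots,\Phi_q)\colon X\to\C^q$ for some $q$. Because $X$ is Stein (Definition \ref{Stein}), properties (i)--(iii) give an exhaustion $K_1\Subset K_2\Subset\cdots$ by holomorphically convex compact sets with $\bigcup_j K_j=X$. Using Theorem~A to lift local sections to global holomorphic functions, I would for each $j$ choose finitely many $f_{j,1},\dots,f_{j,n_j}\in\Oo(X)$ that (a) separate points and provide local coordinates on $K_j$ — property (ii) — and (b) satisfy $\max_i|f_{j,i}|>j$ on $K_{j+1}\setminus K_j$, invoking (i). A rapidly decaying weighted sum of these data, in the style of the Remmert--Bishop exhaustion trick, assembles a finite tuple $\Phi$ that is proper (and, under the hypothesis of the second part, realises at once a closed embedding into $\C^q$, because by assumption each $K_j$ admits a closed embedding into an open subset of $\C^N$ that can be glued into $\Phi$).

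Next I would endow $\Oo(X,\C^m)=\Oo(X)^m$ with the compact-open topology, which makes it a Fr\'echet space, and study the subspace of maps of the form $A\cdot\Phi+L$ with $A$ a complex $m\times q$ matrix and $L\colon X\to\C^m$ a suitable linear correction. For each compact $K_j$, I would show that the subset of $A$ for which $A\cdot\Phi+L$ fails to be injective on $K_j\times K_j\setminus\Delta$, or fails to have injective Jacobian at some point of $\Reg(X)\cap K_j$, is a proper complex analytic subset (and in particular nowhere dense). The key dimension count is the usual Sard-type statement: if an analytic map $\psi\colon Z\to\C^m$ has source of complex dimension $\leq k$, then for a dense set of linear projections the image does not contain $0$ as soon as $m>k$. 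Applied to the difference map $(x,y)\mapsto\Phi(x)-\Phi(y)$ on $X\times X\setminus\Delta$ — source dimension $2n$ — this forces $m\geq 2n+1$ for injectivity, while applied to the Jacobian map at a regular point of local dimension $n$ (respectively embedding dimension $\leq N$) it forces $m\geq n+1$ (resp.\ $m\geq N$). Adding these yields $m=2n+1$ in the first case and $m=N+n$ in the second. Intersecting these open dense conditions over the countable exhaustion and applying the Baire category theorem gives the desired density.

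The main obstacle will be properness, which is \emph{not} a generic condition in $\Oo(X,\C^m)$ and therefore cannot be obtained by Baire category alone. My way around this is to work only within the affine subspace of perturbations $A\cdot\Phi+L$ for which $A$ has rank $m$: for such $A$ the composition inherits properness from $\Phi$ because $A$ is then proper on the image $\Phi(X)$ outside a subspace of codimension $m$, and the growth condition $\max_i|\Phi_i|\to\infty$ along the exhaustion is preserved. This stability under perturbation, together with the openness of "maximal rank $A$", lets me restrict the Baire argument to a nonempty open subset on which properness is automatic, and the density conclusion then upgrades to density of genuine closed embeddings rather than merely injective immersions.
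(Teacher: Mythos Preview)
The paper does not prove this theorem; it is stated as a classical result of Narasimhan and attributed to \cite{n2} in the bibliographic notes of Chapter~1. So there is no in-paper proof to compare against, and your outline must stand on its own.

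The overall architecture --- build a proper map into a large $\C^q$, then cut down by generic projection using Baire category --- is indeed the Bishop--Narasimhan scheme, and your injectivity/immersion counts land on the right targets. (The phrase ``adding these yields $m=N+n$'' is misleading, though: the constraints are not summed. In the second case the immersion condition $m>n+(N-1)$ dominates the injectivity condition $m>2n$ because $N>n$, and that alone gives $m=N+n$.)

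The genuine gap is properness. Your assertion that ``for $A$ of rank $m$ the composition $A\cdot\Phi$ inherits properness from $\Phi$'' is false. A surjective linear map $\C^q\to\C^m$ with $q>m$ has positive-dimensional affine fibers and is never proper; its restriction to the closed analytic set $\Phi(X)\subset\C^q$ need not be proper either. For a concrete obstruction, take $\Phi(X)=\{(z,e^z):z\in\C\}\subset\C^2$ and project onto the second coordinate: the preimage of $\{|w|\le 1\}$ is $\{(z,e^z):\operatorname{Re}z\le 0\}$, which is unbounded. Properness is \emph{not} open-dense among linear projections, so the Baire argument cannot deliver it.

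This also undermines your density claim. The role of $L$ is unclear (``a suitable linear correction'' is not defined on an abstract Stein space), but either way you are trapped: if $L$ ranges over all of $\an(X,\C^m)$ then every map is of the form $A\Phi+L$ with $A=0$, and your rank condition on $A$ is vacuous; if $L$ is restricted, your affine slice is not dense in $\an(X,\C^m)$ and you have not approximated an arbitrary map.

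Narasimhan handles properness by a separate, prior step: using the strongly plurisubharmonic exhaustion and an inductive construction over analytic polyhedra, he shows that for $m\ge n+1$ the proper maps form a dense $G_\delta$ in $\an(X,\C^m)$ (roughly, for each $j$ the condition ``$\{|f|\le j\}$ is relatively compact'' is open and dense). Only once properness is secured as a generic condition does one run the category argument for injectivity and immersion inside the open set of proper maps. Your outline skips this entirely, and without it the argument does not close.
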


As an application of Theorem B we recall an argument due to Grauert. 

\begin{prop}\label{globalequations} A Stein subspace of a Stein space has finitely many global holomorphic equations. More precisely, if $n$ is the dimension of the ambient Stein space, it is the zeroset of at most $n+1$ global holomorphic equations.
\end{prop}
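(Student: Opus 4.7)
The plan is to build, inductively, global sections $g_0,\dots,g_n$ of the coherent sheaf $\Ii_Y\subset\Oo_X$ defining $Y$ whose common zero set equals $Y$ as a set, by arranging at each step that the ``excess'' part of the current zero set (the portion lying outside $Y$) drops in dimension by one. I equip the space of global sections $\Gamma(X,\Ii_Y)$ with the Fr\'echet topology of uniform convergence on compacta; this is available because $X$ is paracompact and second countable.

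The engine is a Baire-category lemma: for any countable, locally finite family $\{W_\alpha\}$ of irreducible analytic subsets of $X$ with $W_\alpha\not\subset Y$, there exists $g\in\Gamma(X,\Ii_Y)$ with $g|_{W_\alpha}\not\equiv 0$ for every $\alpha$. To see this, pick points $p_\alpha\in W_\alpha\setminus Y$; since $p_\alpha\notin Y$ the stalk $\Ii_{Y,p_\alpha}$ coincides with $\Oo_{X,p_\alpha}$, and Cartan's Theorem~A says it is generated by global sections, so some element of $\Gamma(X,\Ii_Y)$ is nonzero at $p_\alpha$. Consequently each
$$
V_\alpha:=\{g\in\Gamma(X,\Ii_Y):g(p_\alpha)=0\}
$$
is a proper closed linear subspace of $\Gamma(X,\Ii_Y)$, hence nowhere dense; by the Baire category theorem $\bigcup_\alpha V_\alpha$ has empty interior, and any $g$ outside this union satisfies $g(p_\alpha)\neq 0$, in particular $g|_{W_\alpha}\not\equiv 0$, for every $\alpha$.

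The induction then runs as follows. Set $Z_{-1}:=X$ and suppose $g_0,\dots,g_{k-1}\in\Gamma(X,\Ii_Y)$ have been chosen so that every irreducible component of $Z_{k-1}:=\{g_0=\cdots=g_{k-1}=0\}$ not contained in $Y$ has dimension $\leq n-k$. The family of such components is locally finite in $X$, hence countable, so the lemma yields $g_k\in\Gamma(X,\Ii_Y)$ vanishing identically on none of them. On each such component $W$, the intersection $W\cap\{g_k=0\}$ is a proper analytic subset of the irreducible $W$, and its components therefore have dimension $\leq\dim W-1\leq n-k-1$; the components of $Z_k=Z_{k-1}\cap\{g_k=0\}$ outside $Y$ are exactly those components, so the inductive hypothesis is preserved. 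After $k=n$, components outside $Y$ would have to have dimension $\leq -1$, that is, there are none, so $Z_n=Y$ set-theoretically and $g_0,\dots,g_n$ are the desired $n+1$ global equations.

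The main obstacle is verifying the two geometric facts behind the induction: the local finiteness of the irreducible decomposition of an analytic subset of a paracompact Stein space (so that at each stage the family of ``bad'' components is countable and the Baire argument applies), and the strict dimension drop when intersecting an irreducible analytic set with the zero set of a non-vanishing holomorphic function. Both are standard consequences of the coherence theorems and the local structure theory of analytic sets recalled earlier in the chapter, and once they are in hand the Theorem~A plus Baire construction runs cleanly.
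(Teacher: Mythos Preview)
Your argument is correct. The inductive dimension drop and the Baire-category lemma are both sound: $\Gamma(X,\Ii_Y)$ is a closed subspace of the Fr\'echet space $\Gamma(X,\Oo_X)$ (hence itself Baire), evaluation at each $p_\alpha$ is continuous, and Theorem~A guarantees each $V_\alpha$ is proper, so the countable union $\bigcup V_\alpha$ is meagre. Your dimension count is also right: after choosing $g_0,\dots,g_n$ you have $n+1$ functions and $Z_n=Y$.

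The paper's proof is more direct and avoids the Fr\'echet topology altogether. Instead of a genericity argument, it uses Theorem~B: at each stage one picks a point $p_Z$ in every irreducible component $Z$ of the current zero set not lying in $Y$; since $Y$ together with this discrete set of points forms a closed analytic subspace of the Stein space $X$, Theorem~B extends the function which is $0$ on $Y$ and $1$ at each $p_Z$ to a global holomorphic function. This gives in one stroke a section of $\Ii_Y$ that is nonzero on every bad component. Your Baire approach trades this extension theorem for a softer existence statement (a generic section works), at the cost of invoking the completeness of $\Gamma(X,\Ii_Y)$; the paper's approach is more constructive but leans on the extension consequence of Theorem~B. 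Both yield the same bound $n+1$ by the same dimension-drop mechanism.
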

\begin{proof}
If $Y\subset X$ is a closed subspace of a Stein space $X$, we take first a non-identically zero holomorphic function $F_1$ on $X$ vanishing identically on $Y$, so $\dim(\{F_1=0\})=\dim(X)-1$. It exists by Theorem B, indeed as $Y$ is a Stein subspace of the Stein space $(X,\Oo_X)$, pick a point $p\in X\setminus Y$, as $Y \cup\{p\}$  is a Stein subspace of the Stein space $(X,\Oo_X)$, we can  consider a holomorphic function $F_1$ on $X$ that takes values $0$ on $Y$ and $1$ on $p$. If the zeroset of $F_1$ is $Y$, we are done. Otherwise, we decompose the zeroset $\{F_1=0\}$ as the union of its irreducible components. We pick a point $p_Z$ in each irreducible component $Z$ of $\{F_1=0\}$ that does not lie inside $Y$. Then, there exists a holomorphic function $F_2$ on $X$ that vanishes identically on $Y$ and takes the value $1$ at each point $p_Z$. Now, the common zeroset of $F_1,F_2$ outside $Y$ has strictly smaller dimension than the dimension of $\{F_1=0\}\setminus Y$. We repeat the same trick until we find holomorphic functions $F_3,\ldots,F_k$ such that $Y\subset\{F_1=0,\ldots,F_k=0\}$ and 
$$
\dim(\{F_1=0,\ldots,F_k=0\}\setminus Y)\leq 0. 
$$
Observe that $\{F_1=0,\ldots,F_k=0\}=Y\cup D$, where $D$ is a (may be empty) discrete set. If $F_{k+1}$ is holomorphic on $X$, vanishes identically on $Y$ and takes value $1$ at each isolated point of the discrete set $D$, we describe $Y$ as the common zeroset of $F_1,\ldots,F_{k+1}$. By construction $k+1 \leq n+1$.
\end{proof}

\subsubsection{Theorems A and B and direct limits.}

Next result plays a fundamental role in this framework, because it implies Theorems A and B for a large class of real analytic spaces, that will be introduced later. 

\begin{thm}\label{AandBreal} 
Let $Z$ be a closed subset of a complex analytic space $(X,\Oo_X)$ and define $\Oo_Z=\Oo_{|Z}$. Suppose that $Z$ has a fundamental system of open Stein neighborhoods in $X$. Then  Theorems A and B hold for $Z$, that is, for each $\Oo_Z$-coherent sheaf of modules $\Ff$ on $Z$ we have:
\begin{itemize}
\item[(A)] $\Ff_x$ is generated (as $\Oo_{Z,x}$-module) by the image of natural map $H^0(Z,\Ff)\to\Ff_x$ for each $x\in Z$, that is, $\Ff_x$ is generated by global sections of $\Ff$.
\item[(B)] $H^q(Z,\Ff)=0$ for each $q>0$. 
\end{itemize}
\end{thm}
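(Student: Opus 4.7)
The plan is to reduce Theorems A and B on $Z$ to the corresponding theorems on the Stein neighborhoods, using the fact that both sheaf cohomology and sections over a closed subset with a fundamental system of open neighborhoods can be computed as direct limits over those neighborhoods. Concretely, let $\{U_\alpha\}_{\alpha\in\Lambda}$ be a fundamental system of open Stein neighborhoods of $Z$ in $X$, directed by reverse inclusion so that $\alpha\leq\beta$ means $U_\beta\subset U_\alpha$.

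The first (and by far the main) step is an \emph{extension lemma}: any $\Oo_Z$-coherent sheaf $\Ff$ on $Z$ is the restriction to $Z$ of an $\Oo_{U_{\alpha_0}}$-coherent sheaf $\widetilde{\Ff}$ on some Stein neighborhood $U_{\alpha_0}$. Locally this is routine, because a finite presentation $\Oo_Z^p\to\Oo_Z^q\to\Ff\to 0$ is defined by a matrix of germs that spread to a matrix of holomorphic functions on an open neighborhood, producing a coherent extension on that neighborhood. The hard part is to glue these local extensions into a single coherent sheaf defined on an entire open neighborhood of $Z$; this is where the hypothesis of a fundamental system of Stein neighborhoods is used, together with Theorem B on the neighborhoods to kill the cohomological obstructions arising from gluing on overlaps. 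Once $\widetilde{\Ff}$ is constructed on one $U_{\alpha_0}$, we obtain a compatible family $\widetilde{\Ff}_\alpha=\widetilde{\Ff}|_{U_\alpha}$ on every smaller neighborhood $U_\alpha\subset U_{\alpha_0}$, and each $\widetilde{\Ff}_\alpha$ is coherent.

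The second step is the direct-limit identification. Since $Z=\bigcap_\alpha U_\alpha$ is closed and $\{U_\alpha\}$ is a fundamental system of paracompact open neighborhoods, sheaf cohomology commutes with this direct limit, giving
$$
H^q(Z,\Ff)\;\cong\;\varinjlim_\alpha H^q(U_\alpha,\widetilde{\Ff}_\alpha)
$$
for every $q\geq 0$. Each $U_\alpha$ is Stein and $\widetilde{\Ff}_\alpha$ is $\Oo_{U_\alpha}$-coherent, so Cartan's Theorem B gives $H^q(U_\alpha,\widetilde{\Ff}_\alpha)=0$ for $q>0$, and the direct limit vanishes as well. This establishes (B).

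For (A), take any $x\in Z$ and fix $\alpha$ with $x\in U_\alpha$. By Cartan's Theorem A on the Stein space $U_\alpha$, the stalk $\widetilde{\Ff}_{\alpha,x}$ is generated as an $\Oo_{U_\alpha,x}$-module by the images of finitely many global sections $s_1,\ldots,s_k\in H^0(U_\alpha,\widetilde{\Ff}_\alpha)$. Restricting these sections to $Z$ produces elements of $H^0(Z,\Ff)$ whose germs at $x$ generate $\Ff_x=\widetilde{\Ff}_{\alpha,x}/(\text{zero})$ as an $\Oo_{Z,x}$-module, since restriction is compatible with the isomorphism $\widetilde{\Ff}_\alpha|_Z\cong\Ff$ and the natural surjection $\Oo_{U_\alpha,x}\twoheadrightarrow\Oo_{Z,x}$. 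This proves (A).

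The main obstacle is the extension lemma in step one: the local extensions of $\Ff$ depend on choices of presentation, and patching them to a single coherent sheaf on a neighborhood of $Z$ requires controlling the discrepancies on overlaps. The Stein hypothesis on the neighborhoods is essential precisely to solve the associated glueing/cocycle problem via Theorem B applied one scale up; the remaining direct-limit and Cartan-A/B manipulations are then formal.
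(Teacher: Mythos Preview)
Your overall strategy matches the paper's: extend $\Ff$ to a coherent sheaf on some open neighborhood, then identify $H^q(Z,\Ff)$ with the direct limit of $H^q(\Omega_i,\Ff_i)$ over the Stein neighborhoods and apply Cartan's Theorems A and B there. That is exactly how the paper proceeds.

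There is one substantive misattribution, though. You say the Stein hypothesis is what makes the extension/gluing lemma work, via Theorem B ``to kill the cohomological obstructions arising from gluing on overlaps.'' In the paper this is not so: the extension lemma (Proposition~\ref{tripla}) is stated and proved for an arbitrary closed subset $C$ of an arbitrary complex analytic space, with no Stein assumption at all. The gluing is carried out by a direct paracompactness argument (locally finite refinements, shrinking covers, and an explicit open set on which the transition isomorphisms satisfy the cocycle condition), not by any cohomology vanishing. The Stein hypothesis enters only \emph{after} the extension is in hand, exactly where you use it in your second and third steps: to make each $H^q(\Omega_i,\Ff_i)$ vanish and to generate stalks by global sections on $\Omega_i$. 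So your sketch is right in shape, but you have located the hard work in the wrong place and invoked the wrong tool for it; the extension lemma is a self-contained sheaf-gluing statement that you would need to prove (or cite) independently of anything Stein.
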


In what follows, given a complex analytic space $(X,\Oo_X)$ and a closed subset $C\subset X$, the sheaves of $\Oo_{|C}$-modules will be called \em analytic sheaves on $C$\em\index{analytic sheaves}. First of all we need the following proposition.

\begin{prop}\label{tripla} 
Let $C$ be a closed subset of a complex analytic space $(X,\Oo_X)$. Let $\Gg$ be a coherent analytic sheaf on $C$. Then there exists a triple $(U,\Ff,\varphi)$, where $U$ is an open neighborhood of $C$, $\Ff$ is a coherent analytic sheaf on $U$ and $\varphi$ is an analytic sheaf isomorphism between $\Gg$ and the induced sheaf $\Ff_{|C}$.

Moreover this triple is unique up to isomorphism in the following sense: if $(U',\Ff',\varphi')$ is another triple satisfying the previous properties then there exists on open neighborhood $U''\subset U\cap U'$ of $C$ and an sheaf isomorphism of $\Oo_C$-modules between $\Ff_{|U''}$ and $\Ff'_{|U''}$.
\end{prop}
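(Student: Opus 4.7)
\textbf{Plan of proof for Proposition \ref{tripla}.}

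The strategy is to construct $\Ff$ locally from a finite presentation of $\Gg$, then glue, exploiting the fact that sections, sheaf homomorphisms and (being open conditions) sheaf isomorphisms of coherent sheaves extend from $C$ to an open neighborhood. First I would establish the following key extension principle, which underlies both existence and uniqueness: if $\Hh$ is a coherent analytic sheaf on an open $W\subset X$ then by the definition of the restricted sheaf $\Oo_X|_C$, every section $s\in(\Hh|_C)(W\cap C)$ is represented, on a neighborhood of each $x\in W\cap C$ in $X$, by an honest section of $\Hh$; since the support of $s$ and the set where a local extension fails to agree with $s$ on $C$ are closed, a standard paracompactness argument (shrinking to a locally finite cover) glues these into a section of $\Hh$ on some open $W'$ with $W\cap C\subset W'\subset W$.

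For existence I would proceed as follows. Fix $x\in C$. Since $\Gg$ is coherent of finite presentation on $C$, there is an open neighborhood $V_x\subset X$ of $x$, an integer pair $(p_x,q_x)$, and a presentation
$$
\Oo_X|_C^{\,p_x}\xrightarrow{\,M_x\,}\Oo_X|_C^{\,q_x}\longrightarrow\Gg|_{V_x\cap C}\longrightarrow 0.
$$
Applying the extension principle to each entry of the matrix $M_x$ (finitely many sections of $\Oo_X|_C$), I can find an open $W_x\subset X$ with $V_x\cap C\subset W_x$ on which a matrix $\widetilde M_x$ of sections of $\Oo_X$ extends $M_x$. Define $\Ff_x:=\operatorname{coker}(\Oo_X^{\,p_x}|_{W_x}\xrightarrow{\widetilde M_x}\Oo_X^{\,q_x}|_{W_x})$; this is coherent on $W_x$, and by construction the restriction to $W_x\cap C=V_x\cap C$ is canonically isomorphic to $\Gg|_{V_x\cap C}$ via some $\varphi_x$.

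Next I would glue. Choose a locally finite refinement $\{W_\alpha\}_{\alpha\in A}$ of $\{W_x\}_{x\in C}$ with local sheaves $\Ff_\alpha$ and isomorphisms $\varphi_\alpha\colon\Ff_\alpha|_{W_\alpha\cap C}\cong\Gg|_{W_\alpha\cap C}$. On overlaps $W_\alpha\cap W_\beta$ the composition $\varphi_\beta^{-1}\circ\varphi_\alpha$ is a section of the coherent sheaf $\Hom_{\Oo_X}(\Ff_\alpha,\Ff_\beta)|_C$. The extension principle produces an extension $\psi_{\alpha\beta}$ on some open neighborhood of $W_\alpha\cap W_\beta\cap C$ in $W_\alpha\cap W_\beta$; being an isomorphism is an open condition (the non-isomorphism locus is closed, being the support of either the kernel or cokernel), so after further shrinking $\psi_{\alpha\beta}$ is an isomorphism. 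On triple intersections the two composites $\psi_{\beta\gamma}\psi_{\alpha\beta}$ and $\psi_{\alpha\gamma}$ agree on $C$; by local uniqueness of the extension they agree on a neighborhood of the triple intersection with $C$. Here is the delicate point, and the one I expect to be the main obstacle: one must shrink the cover $\{W_\alpha\}$ once more, in a locally finite fashion, so that the cocycle identity holds literally on every triple overlap. This is a standard but careful paracompactness argument (one uses a locally finite shrinking $W'_\alpha\Subset W_\alpha$ of the cover and replaces each $W_\alpha$ by a sufficiently small open neighborhood of $\overline{W'_\alpha}\cap C$ inside $W_\alpha$). Once the $(\psi_{\alpha\beta})$ satisfy the cocycle condition honestly, the local sheaves $\Ff_\alpha$ glue to a coherent analytic sheaf $\Ff$ on $U:=\bigcup W_\alpha$, and the $\varphi_\alpha$ glue to the isomorphism $\varphi\colon\Ff|_C\cong\Gg$.

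For the uniqueness statement, given two triples $(U,\Ff,\varphi)$ and $(U',\Ff',\varphi')$, the composite $\varphi'\circ\varphi^{-1}$ is a global section of $\Hom_{\Oo_X}(\Ff,\Ff')|_C$ on $U\cap U'$, hence a sheaf isomorphism of $\Oo_X|_C$-modules. Applying the extension principle to this Hom-sheaf and the open-ness of being an isomorphism gives a local lift to an honest isomorphism on an open neighborhood of each point of $C$; a further paracompact shrinking enforces agreement on overlaps (exactly as in Step 3 above), producing the desired open neighborhood $U''\subset U\cap U'$ of $C$ and an isomorphism $\Ff|_{U''}\cong\Ff'|_{U''}$ compatible with $\varphi$ and $\varphi'$ on $C$.
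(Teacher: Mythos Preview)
Your proposal is correct and follows essentially the same strategy as the paper's proof: establish an extension principle for homomorphisms (the paper splits this into a Step~1 showing that the set where two homomorphisms agree is open, and a Step~2 extending a homomorphism $\Ff|_C\to\Ff'|_C$ to a neighborhood), construct local extensions of $\Gg$, extend the transition isomorphisms $\varphi_\beta^{-1}\circ\varphi_\alpha$ via the coherent $\Hom$-sheaf, and then perform the careful paracompact shrinking to force the cocycle identity on honest triple overlaps. The only organizational difference is that you build the local extension explicitly as a cokernel of an extended matrix, whereas the paper invokes the local existence of an extension more tersely; your version is slightly more explicit at that point but otherwise identical in spirit, including the delicate shrinking step you correctly flag as the main technical issue.
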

\begin{proof}

\noindent{\sc Step 1.} {\em Let $\Ff$, $\Ff'$ be coherent analytic sheaves on $C$. If $\phi,\psi:\Ff\to\Ff'$ are analytic sheaf homomorphisms, the set $\{x\in C:\ \phi_x=\psi_x\}$ is an open subset of $C$.}

Pick a point $x\in C$ and assume $\phi_x=\psi_x$. Let $F_1,\ldots,F_s$ be sections of $\Ff$ that generate $\Ff_y$ for all $y$ in a neighborhood $V$ of $x$. Then $G_i =\phi(F_i)$ is a section of $\Ff'$ over a neighborhood of $x$. Denote $G'_i=\psi(F_i)$ and observe that $G_{i,x}=G'_{i,x}$, so $G_i=G'_i$ on a neighborhood of $x$ in $C$. Consequently, $\phi=\psi$ on an open neighborhood of $x$ in $C$. 

\noindent{\sc Step 2.} {\em Let $\Ff$, $\Ff'$ be coherent analytic sheaves on $X$ and $\phi:\Ff_{|C}\to\Ff'_{|C}$ be an analytic sheaf homomorphism. Then there exists a neighborhood $U$ of $C$ and an analytic sheaf homomorphism $\Phi:\Ff_{|U}\to\Ff'_{|U}$ extending $\phi$.}

Take $x\in C$ and let $F_1,\ldots,F_s$ be sections of $\Ff$ that generate $\Ff_y$ for all $y$ in a neighborhood of $x$ in $X$. As $\Ff$ is coherent, the sheaf of relations among $F_1,\ldots,F_s$ is also finitely generated by some holomorphic section $B_{ik}$ for $k=1,\ldots,\ell$, which are defined and satisfy $\sum_{i=1}^sB_{ik}F_i=0$ (for each $k=1,\ldots,\ell$) on a neighborhood of $x$. Observe that if $\sum_{i=1}^sB_iF_i=0$ where $B_i$ is holomorphic in a neighborhood of $y$ close to $x$, then $B_i=\sum_{k=1}^\ell A_kB_{ik}$, where $A_k$ is holomorphic on a neighborhood of $y$, because the relations among $F_1,\ldots,F_s$ at $x$ generate the relations among $F_1,\ldots,F_s$ at $y$ for $y$ close to $x$. Now, $\phi_x(F_i)=\sum_{j=1}^rE_{ij}G_j$ where $G_1,\ldots,G_r$ are sections that generate $\Ff'$ on a neighborhood of $x$ in $X$ and each $E_{ji}$ is holomorphic on a neighborhood of $x$. In addition, for each $k=1,\ldots,\ell$, we have
\begin{equation}\label{three}
0=\sum_{i=1}^sB_{ik}\phi_x(F_i)=\sum_{i=1}^sB_{ik}\sum_{j=1}^rE_{ij}G_j=\sum_{i=1}^s\sum_{j=1}^rB_{ik}E_{ij}G_j
\end{equation} 
and the previous equality holds on a neighborhood of $x$. 

We claim: \em the formula $\Phi_y(F_i)=\sum_{j=1}^rE_{ij}G_j$ provides a well-defined extension $\Phi$ of $\phi_x$ to a neighborhood $U^x$ of $x$ in $X$\em. 

Indeed, assume that $\sum_{i=1}^sB_iF_i=0$ where $B_i$ is holomorphic in a neighborhood of $y$ close to $x$. We have seen that $B_i=\sum_{k=1}^\ell A_kB_{ik}$, where $A_k$ is holomorphic on a neighborhood of $y$. Thus, 
$$
\sum_{i=1}^sB_iF_i=\sum_{i=1}^s\sum_{k=1}^\ell A_kB_{ik}F_i=\sum_{k=1}^\ell A_k\Big(\sum_{i=1}^sB_{ik}F_i\Big)
$$
and we deduce using also \eqref{three} that
$$
\phi_y\Big(\sum_{i=1}^sB_iF_i\Big)=\sum_{k=1}^\ell A_k\Big(\sum_{i=1}^sB_{ik}\Phi_y(F_i)\Big)=\sum_{k=1}^\ell A_k\Big(\sum_{i=1}^sB_{ik}\sum_{j=1}^rE_{ij}G_j\Big)=0.
$$
Consequently, $\Phi$ is well-defined.

Consider next the open covering $\Rr=\{\{U^x\}_{x\in C}, \ X\setminus C\}$ of the paracompact space $X$. Then, $\Rr$ has locally finite open refinements $\{U_i\}_{i\in I}$ and $\{V_i\}_{i\in I}$ such that $\overline{V_i}\subset U_i$. Let $T$ be the union of all $\overline{V_i}$ intersecting $C$, which is a closed neighborhood of $C$ in $X$. In addition, if $U_i\cap C\neq\varnothing$, there exists an analytic homomorphism $\Phi_i:\Ff_{|U_i}\to\Ff'_{|U_i}$ extending $\phi_{|U_i\cap C}$. If $x\in C\cap U_i\cap U_j$, we have $\Phi_{i,x}=\phi_x=\Phi_{j,x}$. For each $x\in T$ there exists a finite set of indices $I(x)$ such that $x\in\overline{V_i}$ if and only if $i\in I(x)$, and if $y$ close to $x$, then $I(y)\subset I(x)$.  

By {\sc Step 1} the set $V=\{x\in T: \Phi_{i,x}=\Phi_{j,x},\ \forall i,j \in I(x)\}$ is an open subset of $T$ that contains $C$. For $x\in V$ we can define $\Phi_x$ as $\Phi_{i,x}$ if $i\in I(x)$. Note that the local sheaf homomorphims $\Phi_i$ glue together to provide a sheaf homomorphism $\Phi:\Ff_{|V}\to \Ff'_{|V}$that extends $\phi$. 

\noindent{\sc Step 3.} {\em Existence of the extension.} 
Let $x\in C$. By definition of a coherent sheaf, there exist an open neighborhood $U^x\subset X$ of $x$, a coherent sheaf $\Ff$ on $U^x$ and an analytic isomorphism between $\Gg_{|C\cap U^x}$ and the sheaf $\Ff_{|C\cap U^x}$. Again the family $\Rr=\{X\setminus C,\{U^x\}_{x\in C}\}$ defines an open covering of $X$ and there exist two locally finite open coverings $\{U_i\}_{i\in I}$ and $\{V_i\}_{\in I}$ refining $\Rr$ such that for each $\overline{V_i}\subset U_i$ for each $i\in I$. If $U_i$ meets $C$, we have a coherent sheaf $\Ff_i$ on $U_i$ and an analytic isomorphism $\varphi_i$ between the sheaf $\Gg_{|C\cap U_i}$ and ${\Ff_i}_{|C\cap U_i}$. For each couple $(i,j)$ such that $C\cap U_i\cap U_j\neq\emptyset$ the compositions $\varphi_i\circ\varphi_j^{-1}$ is an isomorphism $\varphi_{ij}$ between ${\Ff_j}_{|C\cap U_i\cap U_j}$ and ${\Ff_i}_{|C\cap U_i\cap U_j}$. In addition, if $C\cap U_i\cap U_j\cap U_k\neq\varnothing$, we have $\varphi_{ij}\circ\varphi_{jk}=\varphi_{ik}$.

By {\sc Step 2} applied to the space $U_i\cap U_j$ and to the closed subset $C\cap U_i\cap U_j$, there exist an open set $U_{ij}$, containing $C\cap U_i\cap U_j$ and contained in $ U_i\cap U_j$, and an analytic homomorphism $\Phi_{ij}:{\Ff_j}_{|U_{ij}}\to{\Ff_i}_{|U_{ij}}$ that extends $\varphi_{ij}$. If $i=j$ we write $U_{ii}=U_i$ and we take $\Phi_{ii}$ as the identity. 

We claim that there exists an open set $W$ containing $C$ such that for each couple $(i,j)$ we get:
$$ 
W\cap\overline{V_i}\cap\overline{V_j}\subset U_{ij}.
$$  

Indeed, each point $x\in C$ has an open neighbourhood $W^x$ which meets only a
finite number of $\overline{V_i}$. For each couple $(i,j)$ such that $\overline{V_i}$ and $\overline{V_j}$ meet $W^x$, the intersection $U_{ij}$ is an open set that contains $x$. The intersection 
$$
W'^x=W^x\cap\bigcap_{(i,j)}U_{ij}
$$ 
is an open neighbourhood of $x$. The open set $W$ we seek is $W=\bigcup_{x\in C}W'^x$.

Denote $V$ the set of those points $y\in W$ satisfying
$$ 
\Phi_{ij,y}\circ\Phi_{jk,y}=\Phi_{ik,y}
$$
for each triple $i,j,k$ such that $y\in\overline{V_i}\cap\overline{V_j}\cap\overline{V_k}$. The set $V$ contains $C$ and is open by {\sc Step 1}. 

Finally, let $\displaystyle U=\bigcup_{V_i\cap C\neq\varnothing}V\cap V_i$, which is open and contains $C$. We define the sheaf $\Ff$ on $U$ by ``glueing'' the sheaves already constructed. Observe that: 
\begin{itemize}
\item On $U\cap V_i$ we take the sheaf ${\Ff_i}_{|U\cap V_i}$.
\item On $U\cap V_i\cap V_j\subset U_{ij}$ (recall that $U\subset W$) we have the analytic homomorphism ${\Phi_{ij}}_{|U\cap V_i\cap V_j}:{\Ff_j}_{|U\cap V_i\cap V_j}\to{\Ff_i}_{|U\cap V_i\cap V_j}$.
\item On $U\cap V_i\cap V_j\cap V_k$ we have the relation $\Phi_{ij,y}\circ\Phi_{jk,y}=\Phi_{ik,y}$ because $U\subset V$. Making $i=k$ we deduce that $\Phi_{ij}$ is an isomorphism. 
\end{itemize}

Let $\Ff$ be the sheaf defined by glueing the ${\Ff_i}_{|U\cap V_i}$ via the transition isomorphisms $\Phi_{ij}$. As the sheaves ${\Ff_i}_{|U\cap V_i}$ are coherent, $\Ff$ is coherent too. The sheaf $\Ff_{|C}$ induced by $\Ff$ on $C$ is obtained by glueing the sheaves ${\Ff_i}_{|C\cap V_i}$ via the isomorphisms $\varphi_{ij}$ restricted to $C\cap V_i\cap V_j$, which are precisely the ones induced by the $\Phi_{ij}$ on $C\cap V_i\cap V_j\subset C\cap U_i\cap U_j$. Now, $\varphi_{ij}=\varphi_i\circ\varphi_j^{-1}$, where $\varphi_i:\Gg_{|C\cap V_i}\to{\Ff_i}_{|C\cap V_i}$, is an isomorphism. This implies that the collection $\{\varphi_i\}_{i\in I}$ defines an isomorphism $\varphi$ between the sheaf $\Gg$ and the sheaf $\Ff_{|C}$ induced by $\Ff$ on $C$.

\noindent{\sc Step 4.} {\em Uniqueness of the extension up to isomorphism.}
The proof of the uniqueness of the extension up to isomorphism follows as an easy consequence of {\sc Steps 1} and {\sc 2}.
\end{proof}

{\sc Proof of Theorem \ref{AandBreal}.}
Given a sheaf $\Ff$ of $\Oo_X$-modules on $(X,\Oo_x)$ and a closed subset $Z\subset X$, we denote $\Gamma(Z,\Ff_{|Z})$ the module of global sections of $\Ff_{|Z}$ on $Z$.
Let $\Ff$ be a coherent sheaf on $Z$ and let $\{\Omega_i\}_{i\in I}$ be a fundamental system of Stein neighborhoods of $Z$ in $X$. The coherent sheaf $\Ff$ on $Z$ extends by Proposition \ref{tripla} to a coherent sheaf $\Ff_i$ on $\Omega_i$ for some $i\in I$ and the restriction of $\Ff_i$ to $\Omega_j$ is $\Ff_j$ whenever $\Omega_j\subset \Omega_i$. To prove (A), we fix the Stein neighborhood $\Omega_i$ with the property that ${\Ff_i}_{|Z}=\Ff$. By Cartan's Thereom A, the $\Oo_{X,x}$-module $\Ff_x$ is generated by some sections of $\Ff_i$ on $\Omega_i$. Hence, `a fortiori' by some sections on $Z$.   

Let us prove (B). Each global section belonging to $\Gamma(Z,\Ff)$ can be extended to a section on an open neighborhood of $Z$ in $X$, so it can be extended to a section on some $\Omega_i$. In addition, if two sections of $\Ff_{|\Omega_i}$ coincide on $Z$, they coincide on a Stein neighborhood $\Omega_j\subset\Omega_i$ of $Z$.

If we prove that

$$ H^q(Z,\Ff)=\underrightarrow \lim {}  H^q(\Omega_i,\Ff_i)$$ 
where the direct limit is taken over the open sets $\Omega_i$ containing $Z$, 
we deduce (B) for $Z$ because by Cartan's Theorem B applied to $\Omega_i$, all the groups $H^q(\Omega_i,\Ff_i)$ are $0$ for $q>0$. 

The claim  has been already proved for $q=0$. For $q>0$, we use a {\em fine resolution} of the sheaf $\Ff$, that is, an exact sequence of shaves 
$$ 
0\to\Ff\to\Ff_0\to\Ff_1\to\ldots\to\Ff_q\ldots 
$$
where all the sheaves $\Ff_i$ are {\em fine} sheaves. We refer the reader to \cite{dl} for further details concerning fine sheaves. It is proved in \cite[Thm. A, p. 89]{dl} that there exists a canonical isomorphism between $H^q(X,\Ff)$ and the $q^{th}$ homology group of the complex given by the sequence 
$$ 
0\to\Gamma(X,\Ff_0)\overset{\delta^1}{\to}\Gamma(X,\Ff_1)\overset{\delta^2}{\to}\cdots\overset{\delta^{q-2}}{\to}\Gamma(X,\Ff_{q-2})\overset{\delta^{q-1}}{\to}\Gamma(X,\Ff_{q-1})\overset{\delta^q}{\to}\Gamma(X,\Ff_{q})\overset{\delta^{q+1}}{\to}\cdots
$$
that is, there exists a canonical isomorphism between $H^q(X,\Ff)$ and the quotient of the kernel of the homomorphism $\delta^{q+1}:\Gamma(X,\Ff_{q})\to\Gamma(X,\Ff_{q+1})$ by the image of the homomorphism $\delta^{q}:\Gamma(X,\Ff_{q-1})\to\Gamma(X,\Ff_{q})$. This result is also true for $Z$ and the Stein neighborhoods $\Omega_i$ that contains $Z$.

The complex $\sum_{q\geq0}\Gamma(Z,\Ff_q)$ is the direct limit of the complexes $\sum_{q\geq0}\Gamma(\Omega_i,\Ff_q)$ where $\Omega_i$ belongs to the fundamental system of Stein open neighborhoods of $Z$ (using the case $0$ for each sheaf $\Ff_q$). In addition, to consider the homology of a complex commutes with taking direct limits. Thus, we conclude that $H^q(Z,\Ff)$ is the direct limit of the $H^q (\Omega_i,\Ff)$. 
\qed

\subsubsection{Characterizations of Stein spaces.}
Recall that a $\Cc^2$ function $f:\Omega\subset\C^n\to\R$ is {\em convex} \index{convex}or \em plurisubharmonic \em if it is subharmonic on the intersection of each complex line $L$ with $\Omega$, that is, if the form $i
\partial\overline{\partial}f_{|L\cap \Omega}$ is positive definite, where $\partial,\overline{\partial}$ denote the Dolbeault operators. It holds that $f$ is convex if and only if $i\partial\overline{\partial}f=\omega$ is a K\"ahler form (or K\"ahler potential) on $X$. If $\Omega =\C^n$, then $f(z_1,\ldots,z_n)=\sum_{i=1}^n|z_i|^2$ is an example of convex function. 

We say that $f$ is  {\em strongly convex} \index{strongly convex}or \em strongly plurisubharmonic \em if for each open set $U\subset\Omega$ relatively compact in $\Omega$ there exists $\varepsilon>0$ such that if $h:\Omega\to\R$ is a smooth function and the absolute values of $h$ and its derivatives of order $\leq2$ are smaller than $\varepsilon$, then $f+h$ is convex in $U$. It holds that $f$ is strongly convex if and only if the form $i (\partial\overline{\partial}f-\omega)$ is positive, for some K\"ahler form $\omega$ on $\Omega$. The \index{Levi form}{\em Levi form} of a $\Cc^2$ function $f$ is 
$$
\displaystyle L(f) =\sum_{i,j}\frac{\partial^2 f}{\partial z_i \partial {\overline z}_j} dz_i d\overline z_j.
$$
It is known that $f$ is convex if and only if the quadratic form associated to $L(f)$, which is given by the square symmetric matrix $(a_{ij})$ whose entries are $\displaystyle a_{ij}=\frac{\partial^2 f}{\partial z_i\partial{\overline z}_j}$, is positive definite. Convexity is a local property, that is, a function is convex if and only if it is convex in a neighborhood of each point.

A function $f$ on $\Omega\subset\C^n$ is strongly convex if and only if for each open set $U\subset\Omega$ relatively compact in $\Omega$, the function $f$ can be written in the form $f=f_1+f_2$, where $f_1$ is convex on $U$ and $f_2$ is a smooth function such that $L(f_2)$ is positive semidefinite.

\begin{defn}
A real valued continuous function $f:X\to\R$ on a Stein space $(X,\Oo_X)$ is \em convex \em if for each $x\in X$, there exists a neighborhood $U$ and an analytic diffeomorphism of $\varphi:U\to Z\subset\Delta$ onto a complex analytic set $Z$ in a polydisc $\Delta\subset\C^n$ in which there exist a convex function $\widetilde{f}$ such that $f=\widetilde{f}\circ\varphi$ on $U$. If $\varphi$, $f$ and $\widetilde{f}$ can be chosen so that $\widetilde{f}$ is strongly convex in $\Delta$, then we say that $f$ is \em strongly convex \em on $X$.
\end{defn}

It can be shown that the previous definition is independent of the particular choice of the local embedding. 

There are several characterizations of Stein spaces. In the next theorem we recall two of them that will be useful in the sequel. 

\begin{thm}{\rm (Narasimhan)}\label{Narcrit} 
Let $(X,\Oo_X)$ be a complex analytic space whose topology is second countable. The following assetions are equivalent:
\begin{itemize}
\item[(i)] $X$ is a Stein space.
\item[(ii)] There exists a strongly convex function $f:X\to\R$ such that the set $\{x\in X:\ f(x)<\alpha\}$ is relatively compact in $X$ for each $\alpha\in\R$.
\item[(iii)] There exist a continuous convex function $f:X\to\R$ such that the set $\{x\in X:\ f(x)<\alpha\}$ is relatively compact for each $\alpha\geq 0$, and in addition there exists a continuous strongly convex function $g:X\to\R$.
\end{itemize}
\end{thm}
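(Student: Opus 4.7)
The equivalence has three directions, one of which, (ii) $\Rightarrow$ (iii), is trivial: if $g$ satisfies (ii), set $f=g$; then $f$ is continuous and convex (since strongly convex implies convex) with relatively compact sublevels, and $g$ itself plays the role of the strongly convex function required in (iii). I would therefore concentrate on (i) $\Rightarrow$ (ii) and on (iii) $\Rightarrow$ (i).

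For (i) $\Rightarrow$ (ii) the plan is to build the exhaustion in two layers. The first layer exploits the holomorphic convexity encoded in property (i) of Definition \ref{Stein}: take a compact exhaustion $K_1\subset K_2\subset\cdots$ of $X$ and, for each $n$, use property (i) of the Stein definition together with Theorem B to manufacture $h_n\in\Oo(X)$ with $\sup_{K_n}|h_n|$ arbitrarily small and $|h_n|$ as large as desired at some prescribed point outside $K_n$. The series $\sum_n|h_n|^2$ then converges locally uniformly to a continuous plurisubharmonic function whose sublevels are relatively compact. The second layer uses property (ii): at each $x\in X$ the Stein hypothesis provides holomorphic functions $F_1^x,\ldots,F_{n_x}^x\in\Oo(X)$ that give a local closed embedding, and $\sum_j|F_j^x|^2$ is strongly convex on a neighborhood of $x$. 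Using second countability, I would pick countably many such systems and add them to the first layer with positive coefficients small enough that the locally uniform convergence and the exhaustion property are preserved; the total sum is then a strongly convex exhaustion function.

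For (iii) $\Rightarrow$ (i) the strategy is to use the strongly plurisubharmonic exhaustion produced from the two functions of (iii) (after a suitable smoothing and convex combination) as a weight in Hörmander-type $L^2$-estimates for the $\bar\partial$ operator. One works first on the regular locus of $X$, and then descends through the reduction morphism and Theorem A applied to ideal sheaves, using Theorem \ref{AandBreal} once the sublevel sets $\{g<\alpha\}$ are known to admit fundamental systems of Stein neighborhoods (built inductively in $\alpha$). From solvability of $\bar\partial$ with good estimates one extracts, by the classical bumping/extension procedure, holomorphic functions separating any prescribed pair of points, providing local coordinates at any prescribed point, and diverging along any preassigned unbounded sequence. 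These are the three defining properties of Stein spaces in Definition \ref{Stein}.

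The principal obstacle is (iii) $\Rightarrow$ (i). Formulating Hörmander's $L^2$-machinery on a possibly singular complex space requires a delicate interplay between the coherent ideal sheaf of $X$ inside a local embedding and the vanishing $H^q(X,\Ff)=0$ one hopes to prove; one has to avoid a circular argument by running the inductive construction over the exhaustion provided by (iii), so that Stein-ness of the sublevels can be used at stage $n$ to set up $\bar\partial$-solvability at stage $n+1$. This bootstrapping via an ascending family of sublevel sets is the technical heart of Narasimhan's original proof. The other direction (i) $\Rightarrow$ (ii) poses only the combinatorial problem of arranging the weights in the double series so that the strongly convex second layer does not spoil the exhaustion inherited from the first, which is routine once the two layers are cleanly separated.
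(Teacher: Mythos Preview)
The paper does not prove this theorem. It is stated as a known result attributed to Narasimhan, and the bibliographic notes at the end of Chapter~1 simply refer the reader to \cite{n1,n4,n5} for the proof. So there is no ``paper's own proof'' to compare your proposal against.

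Your sketch is a reasonable outline of how the result is actually established in the literature. The direction (i) $\Rightarrow$ (ii) via series of the form $\sum |h_n|^2$ built from global holomorphic functions is standard. For (iii) $\Rightarrow$ (i), your instinct that this is the hard direction and requires an inductive/bumping argument over sublevel sets is correct; however, your proposed route through H\"ormander $L^2$-estimates on singular spaces is not quite Narasimhan's original argument, which predates the full $\bar\partial$-machinery on singular spaces and instead proceeds via Grauert's solution of the Levi problem and careful approximation arguments on the exhaustion. Your invocation of Theorem~\ref{AandBreal} in this direction is also somewhat circular in spirit, as you note yourself: that theorem is about closed subsets with fundamental systems of Stein neighborhoods, which is what you are trying to establish. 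If you were to write this out in full, you would need to be more careful there; but since the paper treats the whole theorem as a black box, this is moot for present purposes.
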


\bigskip

\section{Real analytic spaces.}\label{ras}

Replacing in Definition \ref{defspaziocomplesso} {\em holomorphic} with {\em
 real analytic}, one gets for the real case the definitions resp. of
{\em reduced real analytic space} and \index{analytic space!reduced real --}
 {\em C-space}\index{analityc space! C-space}. More precisely.
\vfill\eject

\noindent
\begin{defns}\label{defspazioreale}\null \hfill
\begin{itemize} 
\item A {\em reduced real analytic space} \index{analytic space!real --} is a ringed space $(X,\Aa_X)$ locally
isomorphic, as ringed space, to a local model $(Y,\Aa_Y)$ where $Y\subset U\subset \R^n$ is the zeroset of finitely many analytic functions in $\Aa(U)$ and $\Aa_Y$ is the quotient sheaf of the sheaf $\Aa_{U}$ of germs of
analytic functions on $U$ by the ideal sheaf 
$\Ii_Y$ of analytic function germs vanishing on $Y$.
\item A {\em real C-analytic space} is a ringed space $(X,\Aa_X)$ locally isomorphic to a local model $(Y,\Aa_Y)$ where $Y\subset U\subset \R^n$ is the zeroset of finitely many analytic functions $f_1,\ldots,f_s\in \Aa(U)$ and $\Aa_Y$ is the quotient sheaf of the sheaf $\Aa_{U}$ by the ideal sheaf generated by $f_1,\ldots,f_s$ \quad i.e.
 $\Aa_Y=\Aa_U/(f_1,\ldots,f_s)\Aa_U$.
\item When we consider only the set $X$ without any structure, we refer to it as an {\em analytic set}. So a {\em coherent analytic set } is an analytic set whose reduced structure is coherent. A {\em C-analytic set } is an analytic set admitting 
one (possibly several) coherent structure.
\end{itemize}
\end{defns}

\begin{remark} The ideal sheaf $\Ii_Y$ in (1) of Definition \ref{defspazioreale} needs not to be coherent since in the real case Oka's theorem does not hold. (see Examples \ref{Whitneyumbrella}).
There are also analytic sets that are not C-analytic (see Example \ref{a(z)}).

For a real C-analytic space the sheaf $\Aa_X$ is coherent since in all its models the ideal sheaf is a finitely generated subsheaf of $\Aa_U$. 
\end{remark}

We will see in the next section a criterion for a real analytic space to be coherent.

\subsection{Complexification.}

We will suppose always $\R^n$ canonically embedded in $\C^n$ as the fixed point set of the complex conjugation.
In this section we wonder whether a given real analytic space can be viewed as a subspace of a suitable complex space in a similar way.

\subsubsection{The local case.}

\begin{prop} Let $V_x$ be a real analytic set germ at a point $x\in \R^n\subset \C^n$. There exists in $\C^n$ one and only one germ of complex analytic set $\tilde {V}_x$ such that
\begin{enumerate}
\item $V_x\subset \tilde {V}_x$, $\tilde {V}_x\cap \R^n=V_x$. 
\item If a germ at $x$ of holomorphic function vanishes on $V_x$ then it vanishes on $\tilde {V}_x$.
\item $\tilde {V}_x$ is minimal with respect to properties (1) and (2).
\item If $\Ii_x$ is the ideal of germs of functions vanishing on $V_x$ and $\tilde \Ii_x$ the one of germs in $\Oo(\C^n)_x=\Aa(\R^n)_x\otimes \C$ vanishing on $\tilde {V}_x$ we have $\tilde \Ii_x=\Ii_x\otimes \C$.
\end{enumerate}
\end{prop}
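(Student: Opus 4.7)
The plan is to construct $\tilde V_x$ directly from the real defining ideal and to read off all four properties from a single algebraic observation. By R\"uckert's Nullstellensatz and the noetherianity of $\Aa(\R^n)_x$, the ideal $\Ii_x=I(V_x)$ of real analytic germs vanishing on $V_x$ is finitely generated, say $\Ii_x=(f_1,\ldots,f_k)$. Each $f_i$ has a canonical holomorphic extension $\tilde f_i\in\Oo(\C^n)_x$ (the convergent real power series of $f_i$ at $x$ converges on a complex polydisc about $x$), and I would set
$\tilde V_x:=\ceros(\tilde f_1,\ldots,\tilde f_k)$;
equivalently $\tilde V_x=\ceros(\tilde\Ii_x)$, where $\tilde\Ii_x:=\Ii_x\otimes_\R\C\subset\Oo(\C^n)_x$.

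The key algebraic observation is that the identification $\Oo(\C^n)_x=\Aa(\R^n)_x\otimes_\R\C=\Aa(\R^n)_x\oplus i\,\Aa(\R^n)_x$ writes every holomorphic germ $F$ uniquely as $F=g_1+ig_2$ with $g_1,g_2\in\Aa(\R^n)_x$. For a real point $p\in V_x$ close to $x$ one has $F(p)=g_1(p)+ig_2(p)$, so $F$ vanishes identically on $V_x$ if and only if $g_1,g_2\in\Ii_x$, i.e.\ if and only if $F\in\tilde\Ii_x$. Thus the ideal $J\subset\Oo(\C^n)_x$ of all holomorphic germs at $x$ vanishing on $V_x$ is exactly $\tilde\Ii_x$.

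Now the four properties follow at once. For (1), $V_x\subset\tilde V_x$ because the $f_i$ vanish on $V_x$; restricting the defining equations $\tilde f_i$ of $\tilde V_x$ to $\R^n$ recovers $f_1,\ldots,f_k$, so $\tilde V_x\cap\R^n$ is the real common zero set of a generating system of $\Ii_x$, which is precisely $V_x=\ceros(\Ii_x)$. For (2), a holomorphic germ that vanishes on $V_x$ lies in $J=\tilde\Ii_x$ by the key observation, and hence vanishes on $\tilde V_x=\ceros(\tilde\Ii_x)$. For (4), the full ideal of $\tilde V_x$ in $\Oo(\C^n)_x$ contains $\tilde\Ii_x$ by construction, and any germ vanishing on $\tilde V_x$ vanishes on $V_x\subset\tilde V_x$ and therefore belongs to $\tilde\Ii_x$ again by the key observation; so this full ideal is exactly $\tilde\Ii_x=\Ii_x\otimes\C$.

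Finally, uniqueness (which is stronger than the stated minimality): let $W_x$ be any complex analytic germ satisfying (1) and (2). From $V_x\subset W_x$ we get $I(W_x)\subset J=\tilde\Ii_x$, and property (2) gives $\tilde\Ii_x=J\subset I(W_x)$; hence $I(W_x)=\tilde\Ii_x=I(\tilde V_x)$, and two complex analytic germs with the same full vanishing ideal coincide, so $W_x=\tilde V_x$. The only delicate point in the whole argument is the direct-sum decomposition $\Oo(\C^n)_x=\Aa(\R^n)_x\oplus i\,\Aa(\R^n)_x$ of the complex analytic local ring as a free $\Aa(\R^n)_x$-module of rank two, which is what makes the tensor product $\Ii_x\otimes_\R\C$ behave so transparently; once that is in hand, everything reduces to book-keeping.
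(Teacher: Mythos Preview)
Your proof is correct and follows essentially the same approach as the paper: take real generators $f_1,\ldots,f_k$ of $\Ii_x$, extend them holomorphically, and use the real/imaginary part decomposition $F=g_1+ig_2$ to identify the ideal of holomorphic germs vanishing on $V_x$ with $\Ii_x\otimes\C$. The only organisational difference is that you isolate the equality $J=\tilde\Ii_x$ as a single ``key observation'' and derive everything from it, and you prove outright uniqueness (any $W_x$ satisfying (1) and (2) has $I(W_x)=\tilde\Ii_x$, hence equals $\tilde V_x$) rather than the slightly weaker minimality statement the paper writes down; both are fine.
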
 
\begin{proof} The ideal $\Ii_x$ is finitely generated. Let $(f_1,\ldots,f_k)$ be a finite set of real analytic functions, defined in a neighborhood $U\subset \R^n$ of $x$, such that their germs at $x$ generate $\Ii_x$.

They extend to holomorphic functions on a complex neighborhood $\Omega$ of $x$ in $\C^n$ and define in $\Omega$ a closed complex analytic set $\tilde V_x$. Then $V_x\subset \tilde V_x$ and $\tilde V_x\cap \R^n=V_x$. Moreover any holomorphic function germ vanishing on $V_x$ vanishes on $\tilde V_x$, since its real and imaginary part belong to $\Ii_{X,x}$ hence they belong to the complex ideal generated by $f_1,\ldots,f_k$ at $x\in \C^n$.

Assume now there exists another germ $W_x$ with the same properties as $\tilde V_x$. Then any holomorphic function germ vanishing on $W_x$ vanishes also on $V_x$ hence on $\tilde V_x$. This implies $W_x\supset V_x$.

Furthermore condition (2) implies $\tilde \Ii_x=\Ii_x \otimes \C$ because if $g$ belongs to $ \tilde \Ii_x$ then $g\in (f_1,\ldots,f_k)\Oo_{\C^n,x}=(f_1,\ldots,f_k)\Aa_{\R^n,x}\otimes \C=\Ii_x\otimes \C$.
\end{proof} 

\begin{defn} Let $V_x$ be a real analytic germ. We define  {\em complexification}\index{Complexification of a real germ} of $V_x$ the germ at $x$ of a complexification of a representant of $V_x$ in $\R^n$.
\end{defn}

\begin{remark}\label{zariski}
If $(X,\Oo_X)$ is a real analytic space and $x\in X$, the germ $X_x$ is always
realized as the germ of a local model of $X$. One could think that $\tilde
X_x$
depends on this model. As a matter of fact it does not for the following reasons.
\begin{itemize}
\item $X_x$ can be realized in its Zariski tangent space  which only depends on
 the germ $X_x$ and is the minimal dimension affine space in which $X_x$ embeds. \item If $\tilde X_x^{(1)},\tilde X_x^{(2)}$ are two complexifications relative
 to different local models, then the identity map of $X_x$ extend to an
 holomorphic isomorphism between Stein neighborhoods of $X_x$ resp. in
 $\tilde X_x^{(1)}$ and $\tilde X_x^{(2)}$ as we will see in next Section. 
\end{itemize}
\end{remark}

\begin{prop}\label{Noncoerenza}
Let $A_x$ be a real analytic set germ at a point $x$ in $\R^n$ and $B$ a complex analytic subset af a complex neighborhood of $x$ in $\C^n$ such that $B_x$ is the complexification of $A_x$. Then $A_x$ is coherent if and only if $\forall y$ close to $x$, $B_y$ is the complexification of $A_y$.
\end{prop}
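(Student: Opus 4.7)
The plan is to reduce the coherence of $A_x$ to the equality $\Ii_{B,y}=\Ii_{A,y}\otimes\C$ for every $y\in\R^n$ close to $x$, which by property (4) of the preceding proposition is precisely the statement that $B_y$ be the complexification of $A_y$. Recall that $A_x$ is coherent exactly when there exist a real open neighborhood $U$ of $x$ and finitely many sections $f_1,\dots,f_m\in\Ii_A(U)$ whose germs $f_{i,y}$ generate $\Ii_{A,y}$ for every $y\in U$.

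For the direct implication, assume $A_x$ coherent and fix such generators $f_1,\dots,f_m$. Their holomorphic extensions $\tilde f_1,\dots,\tilde f_m$ live on a complex neighborhood $\Omega$ of $x$ and cut out a closed complex analytic set $B'\subset\Omega$; by the preceding proposition $B'_x$ is the complexification of $A_x$, hence after shrinking $\Omega$ we may identify $B$ with $B'$ near $x$. For every real $y\in\Omega\cap\R^n$, the inclusion $\Ii_{A,y}\otimes\C\subseteq\Ii_{B,y}$ follows from $\tilde f_{i,y}\in\Ii_{B,y}$ together with the fact that $f_1,\dots,f_m$ generate $\Ii_{A,y}$; the reverse inclusion is obtained by writing any $h\in\Ii_{B,y}$ as $h=\tilde p+\sqrt{-1}\,\tilde q$ with $p,q\in\Aa_{\R^n,y}$, restricting to $\R^n$, and using $B_y\cap\R^n=A_y$ to deduce $p,q\in\Ii_{A,y}$. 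Hence $B_y$ is the complexification of $A_y$.

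For the converse, assume $\Ii_{B,y}=\Ii_{A,y}\otimes\C$ for every real $y$ close to $x$. Fix generators $f_1,\dots,f_m$ of $\Ii_{A,x}$; their holomorphic extensions $\tilde f_1,\dots,\tilde f_m$ generate $\Ii_{B,x}=\Ii_{A,x}\otimes\C$ as an $\Oo_{\C^n,x}$-module. By Oka's theorem the sheaf $\Ii_B$ is coherent, so after shrinking the complex neighborhood we may assume that $\tilde f_{1,z},\dots,\tilde f_{m,z}$ generate $\Ii_{B,z}$ for every $z$ close to $x$. The crucial step is then to promote these complex generators to real ones: for any $y\in\R^n$ near $x$ and any $g\in\Ii_{A,y}$, the holomorphic extension $\tilde g$ lies in $\Ii_{B,y}$, so $\tilde g=\sum_i\alpha_i\tilde f_i$ with $\alpha_i\in\Oo_{\C^n,y}$. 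Splitting $\alpha_i=\tilde\beta_i+\sqrt{-1}\,\tilde\gamma_i$ with $\beta_i,\gamma_i\in\Aa_{\R^n,y}$ and evaluating on $\R^n$, where $\tilde f_i$ and $\tilde g$ are real-valued, gives $g=\sum_i\beta_if_i$. Thus $f_1,\dots,f_m$ generate $\Ii_{A,y}$ for every $y\in\R^n$ near $x$, proving that $\Ii_A$ is finitely generated near $x$ and hence $A_x$ is coherent.

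The main obstacle I expect is this last extraction of real generators from complex ones in the converse direction; it crucially relies on the identification $\Oo_{\C^n,y}=\Aa_{\R^n,y}\otimes_\R\C$ and on the real-valuedness of the $f_i$ and $g$ on $\R^n$. Everything else amounts to Oka's coherence on the complex side plus the universal property (4) of the complexification established in the preceding proposition.
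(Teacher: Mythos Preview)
Your proof is correct and follows essentially the same route as the paper: both directions hinge on Oka's coherence of $\Ii_B$ and on the identity $\Ii_{B,y}=\Ii_{A,y}\otimes\C$ (property (4) of the preceding proposition). The only cosmetic difference is in the converse: the paper picks arbitrary holomorphic generators of $\Ii_{B,x}$ and takes their real and imaginary parts to generate $\Ii_{A,y}$, whereas you start from real generators of $\Ii_{A,x}$, extend, and show the same real functions persist as generators of $\Ii_{A,y}$ via the splitting $\alpha_i=\tilde\beta_i+\sqrt{-1}\,\tilde\gamma_i$; both arguments are equivalent instances of the descent $\Oo_{\C^n,y}=\Aa_{\R^n,y}\otimes_\R\C$.
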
 
\begin{proof} {\it The condition is necessary.} If $A$ is a coherent analytic set inducing $A_x$ and $\Ii_A$ is its ideal sheaf then there exist finitely many sections $
f_1,\ldots,f_k$ such that they generate $\Ii_{A,y}$ for any $y$ in an open neighborhood of $x$. Hence their holomorphic extension define a complex analytic set $B$ such that $B_x\cap \R^n=A_x$ and moreover $B_y\cap \R^n=A_y$ and $B_y$ is the complexification of $A_y$ $\forall y$ close to $x$.

{\it The condition is sufficient.} Let $\Ii_y$ the ideal attached to
$B_y$ for $y$ in a small neighborhood of $x$. $\Ii$ is a coherent
ideal sheaf and $\Ii_y=\Ii_{A,y}\otimes \C$ since by hypothesis $B_y$
is the complexification of $A_y$. Take finitely many holomorphic
sections $f_1,\ldots,f_k$ of $\Ii$ such that they generate $\Ii_z$
for $z$ in a complex neighborhood of $x$. Then the real and imaginary
parts of these sections generate $\Ii_{A,y}$ in a neighborhood since
$\Ii_{A,y}\otimes\C=\Ii_y$. So $\Ii_{A}$ is coherent in that
neighborhood and $A$ is a coherent real analytic set.
\end{proof}

\begin{prop} If the germ $A_a$ is the union of a finite
family of germs of real analytic sets $A'_a$, its complexification
$\tilde A_a$ is the union of the complexifications of the $A'_a$. If
moreover the $A'_a$ are the irreducible components of $A_a$, then the
complexifications $\tilde A'_a$ are the irreducible
components
\footnote{A germ $A_a$ is reducible if there exist two germs $B_a$,
$C_a$ such that $A_a=B_a\cup C_a$, $B_a\neq A_a$, $C_a\neq A_a$;
$A_a$ is \mbox{\em irreducible} if this is not the case. Since the ring $\Oo_{n,a}$ of germs of analytic functions at the point $a$ is noetherian,
one sees easily that any real analytic set germs the union of a
finite family of irreducible germs $A^i_a$ and that they are uniquely
determined if one suppose $A^i_a \not\subset A^j_a$ for $i\neq j$ (in this
case the $A^i_a$ are called ``irreducible components'' of $A_a$). One has the same notions and the same terminology for the complex analytic set germs.}
of the complexification $\tilde A_a$.
\end{prop}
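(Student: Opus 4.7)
The plan splits according to the two assertions. For the first, writing $\Ii_a=\Ii(A_a)$ and $\Ii^i_a=\Ii(A^i_a)$ for the vanishing ideals in $\Aa(\R^n)_a$, the set-theoretic identity $A_a=\bigcup_i A^i_a$ is equivalent to the ideal identity $\Ii_a=\bigcap_i \Ii^i_a$. Since $\C$ is a free (hence flat) $\R$-module, tensoring with $\C$ commutes with finite intersections of $\R$-submodules, giving
\[
\Ii_a\otimes_\R\C=\bigcap_i(\Ii^i_a\otimes_\R\C).
\]
By the preceding proposition the left-hand side is $\tilde\Ii_a$, the vanishing ideal of $\tilde A_a$ in $\Oo(\C^n)_a$, and each factor on the right is $\tilde\Ii^i_a$, the vanishing ideal of $\tilde A^i_a$. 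Taking zerosets in the noetherian local ring $\Oo(\C^n)_a$ converts the intersection of ideals into a union of zerosets, yielding $\tilde A_a=\bigcup_i \tilde A^i_a$.

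For the second assertion, pairwise incomparability is essentially free: an inclusion $\tilde A^i_a\subset\tilde A^j_a$ would give, after intersecting with $\R^n$, the inclusion $A^i_a\subset A^j_a$, contradicting the irredundancy of the real decomposition. The core step is then to show that each $\tilde A^i_a$ is itself irreducible as a complex analytic germ. Suppose $\tilde A^i_a=B\cup C$ with $B,C$ complex analytic subgerms. Intersecting with $\R^n$ produces a real analytic decomposition $A^i_a=(B\cap\R^n)\cup(C\cap\R^n)$, so by the irreducibility of $A^i_a$ one of the two intersections, say $B\cap\R^n$, must equal $A^i_a$. I would then verify that $B$ satisfies the characterizing conditions (1) and (2) of the complexification of $A^i_a$ from the preceding proposition: (1) is immediate, and (2) holds because any holomorphic germ $f$ vanishing on $A^i_a$ has $\mathrm{Re}\,f,\mathrm{Im}\,f\in\Ii^i_a$, so $f\in\Ii^i_a\otimes\C=\tilde\Ii^i_a$, and hence $f$ vanishes on $\tilde A^i_a\supset B$. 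The minimality clause (3) then forces $\tilde A^i_a\subset B$, so $B=\tilde A^i_a$ and the decomposition is trivial.

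The main obstacle is precisely this last step. One cannot argue abstractly that $\Ii^i_a\otimes\C$ is prime in $\Oo(\C^n)_a$ whenever $\Ii^i_a$ is prime in $\Aa(\R^n)_a$, because prime ideals need not remain prime under the base extension $\R\to\C$ in general; the argument goes through here only because $\Ii^i_a$ is a \emph{real} vanishing ideal attached to a set germ, a property captured exactly by the minimality characterization (1)--(3) of the complexification. Everything else (Part 1, and the incomparability part of Part 2) reduces to elementary flatness and intersection arguments.
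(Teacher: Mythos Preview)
Your proof is correct. Part~2 (both irreducibility of each $\tilde A^i_a$ and pairwise incomparability) is essentially identical to the paper's argument: intersect a putative complex decomposition with $\R^n$, use real irreducibility to force one piece to contain $A^i_a$, then invoke minimality of the complexification to conclude that piece is all of $\tilde A^i_a$. The paper phrases the last step more tersely (``$B'_a\supset A_a$, hence $B'_a\supset\tilde A_a$''), using implicitly that $\tilde A_a$ is the smallest complex analytic germ containing $A_a$; your verification of conditions (1)--(3) spells out the same thing.

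For Part~1 you take a different route. The paper argues directly from the axiomatic characterization: the union $\bigcup_i\tilde A^i_a$ contains $A_a$, and any holomorphic germ vanishing on $A_a$ vanishes on each $A^i_a$, hence on each $\tilde A^i_a$, hence on the union; minimality then gives equality. Your argument instead passes to ideals, uses flatness of $\C$ over $\R$ to commute $\otimes_\R\C$ with the finite intersection $\Ii_a=\bigcap_i\Ii^i_a$, and then reads off the set-theoretic identity from $\tilde\Ii_a=\bigcap_i\tilde\Ii^i_a$. Both are short; the paper's is perhaps more self-contained, while yours makes the algebraic content (property~(4), $\tilde\Ii_a=\Ii_a\otimes\C$) do all the work and would generalize more readily to other flat base changes.

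Your closing remark is apt: the naive hope that ``$\Ii^i_a$ prime $\Rightarrow$ $\Ii^i_a\otimes\C$ prime'' fails in general, and it is precisely the geometric minimality characterization that rescues the irreducibility step.
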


\begin{proof} The union of the $\tilde A'_a$ contains $A_a$ and any germ of
holomorphic function vanishing on $A_a$ vanishes on the union of the $\tilde
A'_a$, hence from the axiomatic definition of the complexification one gets that the
latter is equal to the union of the $\tilde A'_a$. Moreover if $A_a$ is
irreducible, $\tilde A_a$ is irreducible too: indeed, if it was $\tilde A_a=
B'_a \cup C'_a$ with $B'_a, C'_a$ complex germs, distinct from $\tilde A_a$
we would get 

$$ A_a=B_a \cup C_a \ \mbox{\rm with}\ B_a=B'_a \cup \R^n, \quad C_a=C'_a
\cap \R^n,$$ 

so, we would get for instance, $B_a=A_a$, hence $B'_a \supset A_a$, hence
$B'_a \supset \tilde A_a$ and finally $B'_a=\tilde A_a$ against the
 hypothesis. 
Assume now that $A_a$ decomposes in the irreducible components $A^i_a$: the
argument above shows that the complexifications $\tilde A^i_a$ are irreducible
and $\tilde A_a$ is the union of the $\tilde A^i_a$. To prove that they are
actually the irreducible components of $\tilde A_a$ it is enough to show that
$\tilde A^i_a$ is not contained in $ \tilde A^j_a$ for $i \neq j$: now,
if it was $\tilde A^i_a\subset \tilde A^j_a$ we would get $A^i_a\subset
A^j_a$, against the hypothesis.
\end{proof}

\begin{remark} Assume the complexification $B_a$ of a real analytic set germ $A_a$ to be irreducible. Then $B_a$ is equidimensional of some dimension $p$. Then arbitrarily close to $a$ there exist points where $A$ is an analytic manifold of dimension $p$ and $A_a$ is also irreducible. Nevertheless there can be points arbitrarily close to $a$ at which $A$ has dimension strictly less than $p$. 
\end{remark}

Next we give some examples of real analytic spaces that are not coherent.

\begin{examples}\label{Whitneyumbrella}\hfill

\begin{enumerate}  
\item $X=\{x^2 -zy^2=0\}\subset \R^3$. This is  known as {\em Whitney umbrella}.
\item $X=\{x^3 -z(y^2+ x^2)=0\}\subset \R^3$. This is known as   {\em Cartan umbrella}.

{\parindent = 0pt
Both surfaces are irreducible \footnote{Note that the surface in (2) is the cone on an irreducible cubic whose real part has two connected component, namely a point and a branch, and recall that for an irreducible homogeneous polynomial analytic and algebraic reducibility are equivalent.} and not equidimensional: so by Proposition \ref{Noncoerenza} their ideal sheaf cannot be coherent at the point $(0,0,0)$. }
\item $X=\{z(x+y)(x^2+y^2)-x^4=0\}\subset \R^3$

{\parindent =0pt This time $X$ has dimension 2 everywhere since the line corresponding to the
$z$-axis is embedded into its 2-dimensional part, again Proposition \ref{Noncoerenza} implies that $\Ii_x$ can not be
coherent at $(0,0,0)$. }

\end{enumerate}
\end{examples}

\subsubsection{A glueing lemma.}{\hfill}

Analytic spaces are defined as spaces locally isomorphic  to local models embedded in euclidean spaces. The reverse procedure is more delicated: given the local models, how can we glue them in order to get a an analytic space, without loosing for instance the Hausdorff property or adding undesired singularities?

Next proposition describes a general construction,  which avoids theses difficulties. The main tools are paracompactness, so that one can shrink a given covering, and a concrete topological space working as a guideline to follow during the glueing.

\begin{prop}\label{incollamentiT2}

  Let $X$ be  a paracompact topological space  with countable topology
  and let $\{T_i\}_{i\in I}$ a locally finite open covering. Assume to
  have  locally compact  spaces  $\{T_i^*\}_{i\in  I}$ and  comtinuous
  embeddings $\varphi_i:T_i\to T_i^*$ where $\varphi_i(T_i)$ is closed
  in $T_i^*$.

  Assume further that $\varphi_j\circ \varphi_i^{-1}:\varphi_i(T_i\cap
  T_j)\to\varphi_j(T_i\cap   T_j)$   extends    to   a   homeomorphism
  $\psi_{i,j}: T_  {i,j}^*\to T_  {j,i}^* $, where  $T_ {i,j}^*$  is a
  suitable relatively compact  open neighborhood of $\varphi_i(T_i\cap
  T_j)$  in   $T_i^*$,  and  that  $\psi_{j,k}\circ   \psi_{i,j}(x)  =
  \psi_{i,k}(x)$  for all  $x\in  T_k\cap T_i\cap  T_j$. Then,  there
exist open sets $V_i^*$ relatively compact in $T_i^*$ such that.

\begin{itemize}
\item The space $X^*=\bigcup V_i^*/\mathcal R$ is Hausdorff, where $\mathcal R$ is the equivalence relation on $\bigcup V_i^*$ given by $x\mathcal R y $ if $x\in V_i^*$, $y \in V_j^*$ and $\psi_ {i,j}(x)=y$.
\item $\pi: \bigcup V_i^*\to X^*$ is an open map.
\item $X$ embeds into $X^*$ as closed subspace. 
\end{itemize}
\end{prop}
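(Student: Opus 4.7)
The strategy is a standard but delicate shrinking argument. The main obstacle is Hausdorffness of $X^*$; the remaining assertions become formal once the shrinkings are chosen suitably. I would first use paracompactness and second countability of $X$ to produce three nested locally finite refinements $\{U_i\}$, $\{W_i\}$, $\{S_i\}$ of $\{T_i\}$ indexed by the same $I$ with $\overline{U_i}\subset W_i$, $\overline{W_i}\subset S_i$, $\overline{S_i}\subset T_i$. The $U_i$ will eventually be the pieces that cover the image of $X$, while the $W_i$ and $S_i$ provide successive padding to absorb boundary behaviour in $T_i^*$.

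Next I would choose for each $i$ an open relatively compact $V_i^*\subset T_i^*$ containing $\varphi_i(\overline{W_i})$ with the following two properties: (a) $V_i^*\cap\varphi_i(T_i)=\varphi_i(W_i')$ for some open $W_i'$ with $\overline{W_i}\subset W_i'\subset S_i$; (b) for each of the finitely many $j$ with $\overline{W_i}\cap\overline{W_j}\neq\varnothing$, $\overline{V_i^*}\cap\overline{\psi_{j,i}(V_j^*\cap T_{j,i}^*)}\subset T_{i,j}^*$. By local finiteness of $\{T_i\}$ and a compact exhaustion of $X$ coming from second countability, condition (b) reduces to finitely many constraints at each stage of an inductive construction, each realizable by shrinking $V_i^*$ sufficiently around the compact set $\varphi_i(\overline{W_i})$ inside the open neighborhood $T_{i,j}^*$ of $\varphi_i(T_i\cap T_j)$ in $T_i^*$.

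With this in place, the cocycle condition together with the forced identities $\psi_{i,i}=\id$ and $\psi_{j,i}=\psi_{i,j}^{-1}$ makes $\mathcal R$ a genuine equivalence relation. Openness of $\pi$ is automatic: for open $W\subset V_i^*$, the $\mathcal R$-saturation equals $\bigcup_j\psi_{i,j}(W\cap T_{i,j}^*)\cap V_j^*$, a union of open sets. The critical Hausdorff step then runs as follows: given $[x]\neq[y]$ with $x\in V_i^*$, $y\in V_j^*$, the case $i=j$ is handled in $T_i^*$; for $i\neq j$, suppose for contradiction that every pair of neighborhoods of $x$, $y$ contains $\mathcal R$-related points. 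This produces sequences $x_n\to x$ in $V_i^*$ and $y_n=\psi_{i,j}(x_n)\to y$ in $V_j^*$ with $x_n\in T_{i,j}^*$. Condition (b) then forces $x\in T_{i,j}^*$ and $y\in T_{j,i}^*$, and continuity of $\psi_{i,j}$ on $T_{i,j}^*$ yields $\psi_{i,j}(x)=y$, so $[x]=[y]$, a contradiction.

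Finally, I would define $\iota:X\to X^*$ by $\iota(p)=[\varphi_i(p)]$ whenever $p\in U_i$. This is well defined and continuous by the gluing identities $\psi_{i,j}=\varphi_j\circ\varphi_i^{-1}$ on $\varphi_i(T_i\cap T_j)$, using that $\varphi_i(U_i)\subset V_i^*$. Injectivity and the homeomorphism-onto-image property follow from injectivity of each $\varphi_i$ combined with the Hausdorffness of $X^*$ just established. Closedness of $\iota(X)$ in $X^*$ is checked locally on the open cover $\{\pi(V_i^*)\}$: the complement $\pi(V_i^*)\setminus\iota(X)=\pi(V_i^*\setminus\varphi_i(W_i'))$ is open because $\varphi_i(T_i)$ is closed in $T_i^*$ and $\pi$ is open.
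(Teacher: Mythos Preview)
Your overall strategy matches the paper's: nested shrinkings of the $T_i$, thickenings to relatively compact $V_i^*\subset T_i^*$, and a Hausdorff proof by a sequence argument showing that a limit of $\psi_{i,j}$-related pairs still lies in the domain of $\psi_{i,j}$. The final contradiction you write is exactly the one the paper uses.

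The gap is the construction of the $V_i^*$. Your condition (b) is the right target---the paper proves the analogous inclusion $\overline{V_{i,j}^*}\subset U_{i,j}^*$---but as you state it, (b) couples $V_i^*$ and $V_j^*$ symmetrically, so an induction on $i$ cannot impose it before $V_j^*$ exists. The natural decoupled strengthening $\overline{V_i^*}\cap\partial T_{i,j}^*=\varnothing$ is not obviously realizable either: the compact set $\varphi_i(\overline{W_i})$ that $V_i^*$ must contain may meet $\partial T_{i,j}^*$, since $T_{i,j}^*$ is only assumed to be a neighborhood of $\varphi_i(T_i\cap T_j)$, not of all of $\varphi_i(\overline{W_i})$. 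The paper handles this with an extra layer you are missing: it first shrinks each $T_{i,j}^*$ to a relatively compact $U_{i,j}^*$ calibrated so that $\varphi_i^{-1}(U_{i,j}^*)=U_i\cap U_j$ and $\varphi_i^{-1}(\overline{U_{i,j}^*})=\overline{U_i\cap U_j}$; it then separates the disjoint compacta $\overline{V_i}\setminus W_{i,j}^*$ and $\psi_{j,i}(\overline{V_j}\cap\overline{U_{i,j}})$ by auxiliary opens $A_{i,j}^*,B_{i,j}^*,W_{i,j}^*$, and builds $V_i^*\subset A_i^*\cap U_i^*$ out of pointwise neighborhoods $U_{i,x}^*$ satisfying four explicit containment conditions. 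That construction is the substance of the proof, and your one-line appeal to ``finitely many constraints at each stage'' does not replace it.

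Two smaller issues. You assert that the cocycle makes $\mathcal R$ an equivalence relation, but the hypothesis gives $\psi_{j,k}\circ\psi_{i,j}=\psi_{i,k}$ only at points coming from $T_i\cap T_j\cap T_k\subset X$; transitivity on the larger $V_i^*$ requires that triple overlaps land where the cocycle is valid, which is precisely the fourth condition on $U_{i,x}^*$ in the paper. And you impose (b) only for $j$ with $\overline{W_i}\cap\overline{W_j}\neq\varnothing$, yet $\psi_{i,j}$ can relate points of $V_i^*$ and $V_j^*$ whenever $T_i\cap T_j\neq\varnothing$; your Hausdorff argument must also cover those pairs, or you must argue separately that the $V_i^*$ are small enough to exclude such relations.
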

\begin{proof}
 Take two shrinking $V_i\subset U_i\subset T_i$, in such a way
 that $\overline {V_i}\subset U_i$ and $\overline {U_i}\subset T_i$.
 We can find in $T_{i,j}^*$ a relatively compact open set $U
 _{i,j}^*$ such that $\varphi^{-1}_i(U_{i,j}^*)=U_i\cap U_j=
 \varphi_j^{-1}(U_{j,i}^*)$ and $\varphi_i^{-1}(\overline{U_{i,j}^*})=
\overline{U_i\cap U_j}$. Also we can assume
 $\psi_{i,j}(U_{i,j}^*)=U_{j,i}^*$.

From now on we will identify $\varphi_i(A)$ with $A$, for any $A\subset T_i$.
We have to find relatively compact open sets $V_i^*$ such that $V_i^*\cap T_i=V_i,\overline{V_i^*}\cap T_i=\overline{V_i}$.

Since $\overline{V_i}\cap \psi_{j,i}(\overline{V_j}\cap
\overline{U_{i,j}})$ is compact in $U_{i,j}^*$ we can find an open set
$W_{i,j}^*\subset U_{i,j}^* $ containing it. So $\overline{V_i}
\setminus W_{i,j}^*$ and $\psi_{j,i}(\overline{V_j}\cap
\overline{U_{i,j}})$ are disjoint compact sets in $T_i^*$. Hence there
are disjoint open sets $A_{i,j}^*, B_{i,j}^*$ such that
$\overline{V_i}\subset A_{i,j}^*\cup W_{i,j}^*,
\psi_{j,i}(\overline{V_j}\cap\overline{U_{i,j}})\subset
B_{i,j}^*\cup W_{i,j}^*$.

Next take $A_i^*\subset T_i^*$ such that $A_i^*\cap T_i=V_i,\overline{A_i^*} 
\cap T_i=\overline{ V_i}$. Also we ask for $A_i^*\subset A_{i,j}^*\cup W_{i,j}^*$ for each $j$ such that $T_i\cap T_j \neq \varnothing$. This can be done because $\overline{V_i}\subset A_{i,j}^*\cup W_{i,j}^*$. These $A_i^*$'s verify
$$\overline{\psi_{i,j}(A_i^*\cap U_{i,j}^*)}\subset \psi_{i,j}(\overline{V_i}\cap\overline{U_{i,j}}).$$
Indeed 
$\overline{\psi_{i,j}(A_i^*\cap U_{i,j}^*)}\subset \psi_{i,j}(\overline {A^*_i}\cap\overline { U_{i,j}^*})$ and, since $\overline {A^*_i}\cap T_i=\overline {V_i}$,
intersecting with $T_i$ we get 
$\psi_{i,j}(\overline {A^*_i}\cap\overline {U_{i,j}^*})\cap T_i=\psi_{i,j}(\overline{V_i}\cap\overline{U_{i,j}}).$

Next we proceed with the construction of $V_i^*$. For any $x\in U_i$ we choose an open neighborhood $U_{i,x}^*\subset T_i^*$ verifying the following properties.
\begin{itemize}
\item $U_{i,x}^*\subset U_{i,j}^*$ for all $j$ such that $x\in U_i\cap U_j$.
\item For any $j$ such that $x\in \psi_{i,j}(\overline {V_i}\cap\overline{U_{i,j}})$ one gets $U_{i,x}^*\subset W_{i,x}^*\cup B_{i,x}^*$.
\item For any $j$ such that $x\notin\overline{V_j}$ one has $U_{i,x}^*\cap \psi_{j,i}(A_j^*\cap U_j^*)=\varnothing$. This fact is trivial when $U_i\cap U_j=\varnothing$; otherwise for the finitely many remaining $j$ this can be done because $x\notin\overline{V_j}$ implies $x\notin \psi_{j,i}(A_j^*\cap U_{i,j}^*)$ since $\psi_{j,i}(A_j^*)\cap T_i\subset V_j$. 
\item For any $j,k$ such that $x\in U_i\cap U_j\cap U_k=(U_i\cap U_j)\cap (U_i\cap U_k)$ (there exist finitely many such couples) we want $ U_{i,x}^*\subset \psi_{j,i}(U_{j,i}^*\cap U_{j,k}^*)\cap \psi_{k,i}(U_{k,i}^*\cap U_{k,j}^*)$. 
\end{itemize} 
 
Define $U_i^*=\bigcup_{x\in U_i}U_{i,x}^*$ and take $V_i^*$ as a relatively compact open neighborhood of $V_i$ contained in $A_i^*$.

By construction
\begin{align*} 
V_i\subset V_i^*\cap T_i\subset A_i^*\cap T_i=V_i.\\
\overline {V_i}\subset\overline{ V_i^*}\cap T_i\subset\overline {A_i^*}\cap T_i=\overline{V_i}.
\end{align*}

Define $V_{i,j}^*=V_i^*\cap \psi_{j,i}(V_j^*\cap U_{j,i}^*)$ and $V_{i,j,k}^*=V_{i,j}^*\cap V_{i,k}^*$. Also  by construction $V_{i,j}^*\subset U_{i,j}^*$ and $\psi_{i,j}$ is a homeomorphism between $V_{i,j}^*$ and $V_{j,i}^*$ inducing a homeomorphism between $V_{i,j,k}^*$ and $V_{j,i,k}^*$. Also $\psi_{i,j}=\psi_{k,j}\circ \psi_{i,k}$.

Next consider the disjoint union $\bigcup V_i^*$ and put on it the relation equivalence $\mathcal R$ defined as follows:

\begin{center}{
$ x\mathcal R y $ iff $x=y$ or $x \in V_{i,j }^*, y\in V_{j,i}^*$ and $y=\psi_{i,j}(x)$.}  
\end{center}

Put $X^*=(\bigcup V_i^*)/\mathcal R$. So $X^*$ is a countable locally finite union of paracompact spaces with countable topology. Hence it is paracompact as soon as we prove that it is Hausdorff.

Note that there exists an embedding $X\hookrightarrow X^*$ as a closed subspace because $\mathcal R$ induces on $(\bigcup V_i)$ simply the glueing $x\mathcal R y$ if and only if $x=y\in V_i\cap V_j$. 

So let us see that $X^*$ is Hausdorff. 

Note firstly that $\overline {V_{i,j}^*}\subset U_{i,j}^*$. It is enough to prove this inclusion when $T_i\cap T_j \neq \varnothing $. Take $y\in V_{i,j}^*\subset V_i^*\subset U_i^*$. So there exists one $x\in U_i$ such that $y\in U_{i,x}^*$. If $x\notin \psi_{j,i}(\overline{V_j}\cap\overline U_{i,j})$ then $x\notin V_j$ hence $y\notin \psi_{j,i}(A_j^*\cap U_{j,i}^*)$ so $y \notin \psi_{j,i}(V_j)^*\cap (U_{j,i}^*)$, contradiction, since $y\in V_{i,j}^*$. So $x\in \psi_{j,i}(\overline {V_j}\cap\overline {U_{j,i}})$, hence $y\in W_{i,j}^*\cup B_{i,j}^*$. But $y \in V_i^*\subset A_i^*\subset A_{i,j}^*\cup W_{i,j}^*$ and since $B_{i,j}^*\cap A_{i,j}^*=\varnothing$ we get $y\in W_{i,j}^*\subset U_{i,j}^*$. Now from $ {V_{i,j}^*}\subset W_{i,j}^*$ we get $\overline {V_{i,j}^*}\subset\overline{ W_{i,j}^*}\subset U_{i,j}^*$.

Next take $x,y\in\bigcup V_i^*$ and assume $\pi(x)\neq
 \pi (y)$. We have to find open neighbourhhods $A, B$ such that $x\in A, y\in B$ and no point in $A$ is equivalent to a point in $B$. Assume not. Then there exist sequences $\{x_k\}\subset V_{i,j}^*,\{y_k\}\subset V_{j,i}^*, $ tending to $x$, resp. to $y$, such that $y_k=\psi_{i,j}(x_k)$ which implies $x\in\overline{ V_{i,j}^*}\subset U_{i,j}^*, y_k \in\overline{ V_{j,i}^*}\subset U_{j,i}^*$ and by continuity $y=\psi_{i,j}(x)$. But then $y \in V_j^*\cap U_{j,i}^*$ and $x\in V_i^*\cap \psi_{j,i}(V_j^*\cap U_{j,i}^*)=V_{i,j}$. So $x,y$ are equivalent and we have got a contradiction. 
\end{proof}

\subsubsection{The global case.}{\hfill}

A complex analytic space $X$ can be viewed as a real space identifying $\C$
with $\R^2$: we will call this structure \index{ underlying real structure}
{\em the underlying real structure} on $X$ and we will denote this space with $X^{\R}$. In this sense a real analytic space $Y$ can be viewed as a subspace of a complex space $X$ if it is a subspace of $X^{\R}$.

\begin{defn}\label{Complessificazione} Let $X$ be a real analytic space. We will call {\em complexification} of $X$ any complex space $\tilde X$ such that
\begin{itemize}
\item $X$ is a closed real analytic subset in $\tilde X$ considered as a real analytic space.
\item For any $x\in X$ the germ $\tilde X_x$ is the complexification of $X_x$.
\end{itemize}
\end{defn}

Note that if a real analytic space $(X,\Oo_X)$ has a complexification then Proposition \ref{Noncoerenza} implies that it is a coherent analytic space. 
Next theorem implies a converse of this fact.

\begin{thm}\label{TognoliComplessificazione} Let $(X,\Oo_X)$ be a $C$-analytic space, then there exists a complex analytic space $(Z,\Oo_Z)$ such that 
\begin{itemize}
\item $X$ embeds in $Z$ as a closed subset.
\item For all $x\in X$ one has $\Oo_{Z,x}=\Oo_{X,x}\otimes \C$.
\end{itemize}

Moreover if $ Z_1$ and $Z_2$ are two such spaces for $X$, the identity map $i: (X,\Oo_X)\to (X,\Oo_X)$ extends to an isomorphism $\tilde i:(U_1,\Oo_{{Z_1}|_{_{U_1}}}x)\to (U_2,\Oo_{{Z_2}|_{_{U_2}}})$ between two open sets in $Z_1$ and $Z_2$, which are neighbourhoods of the image of $X$.
\end{thm}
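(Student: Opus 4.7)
The plan is to construct $Z$ by gluing local complexifications of the local models of $X$, using the glueing lemma (Proposition \ref{incollamentiT2}). First I would take a locally finite covering $\{U_i\}_{i\in I}$ of $X$ by open sets, each carrying an isomorphism $\varphi_i\colon (U_i,\Oo_{X|U_i})\to (Y_i,\Aa_{V_i}/(f_{i,1},\ldots,f_{i,s_i})\Aa_{V_i})$ onto a local model with $Y_i\subset V_i\subset\R^{n_i}$. Since $X$ is C-analytic, the defining functions $f_{i,k}$ are global sections on $V_i$ and extend to holomorphic functions $F_{i,k}$ on an open neighborhood $\Omega_i\subset\C^{n_i}$ of $V_i$; set $\tilde U_i^*=\{F_{i,1}=\cdots=F_{i,s_i}=0\}\subset\Omega_i$ equipped with the coherent structure sheaf $\Oo_{\Omega_i}/(F_{i,1},\ldots,F_{i,s_i})\Oo_{\Omega_i}$. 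By the germ-level complexification result just proved, $\tilde U_{i,x}^*$ is precisely the complexification of $U_{i,x}=X_x$ for every $x\in U_i$, and the stalk formula $\Oo_{\tilde U_i^*,x}=\Oo_{X,x}\otimes\C$ is built into the construction.

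Next I would produce the transition data. On $U_i\cap U_j$, the isomorphism $\varphi_j\circ\varphi_i^{-1}$ is a real analytic isomorphism of ringed spaces between the two local models; by Remark \ref{algebraanalitica} applied germwise, it induces an $\R$-algebra isomorphism of the structure sheaves, and tensoring by $\C$ (using $\Oo_{\tilde U_i^*,x}=\Aa_{X,x}\otimes\C$) extends it to a holomorphic isomorphism of germs at each point of $\varphi_i(U_i\cap U_j)\subset\tilde U_i^*$. Using Step 2 of the proof of Proposition \ref{tripla}, these germ isomorphisms glue to a biholomorphism $\psi_{i,j}\colon T_{i,j}^*\to T_{j,i}^*$ between relatively compact open neighborhoods of $\varphi_i(U_i\cap U_j)$ in $\tilde U_i^*$ and of $\varphi_j(U_i\cap U_j)$ in $\tilde U_j^*$. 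The cocycle condition $\psi_{j,k}\circ\psi_{i,j}=\psi_{i,k}$ holds by construction germwise along $U_i\cap U_j\cap U_k$ (two holomorphic extensions of the same real analytic map agree), and by Step 1 of Proposition \ref{tripla} it persists on an open neighborhood; a further shrinking, given by Proposition \ref{incollamentiT2}, takes care of the set-theoretic compatibility.

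With this data in hand, I would invoke the glueing lemma to assemble a Hausdorff, paracompact topological space $Z=\bigsqcup V_i^*/\mathcal{R}$ into which $X$ embeds as a closed subspace. The coherent structure sheaves on the $V_i^*$ are compatible under the $\psi_{i,j}$, so they descend to a coherent sheaf $\Oo_Z$ making $(Z,\Oo_Z)$ a complex analytic space; the local nature of the construction yields $\Oo_{Z,x}=\Oo_{X,x}\otimes\C$ for every $x\in X$. For the uniqueness part, given two complexifications $Z_1,Z_2$ of $X$, at each point $x\in X$ the germs $Z_{1,x}$ and $Z_{2,x}$ are complexifications of the same real germ $X_x$, hence are canonically isomorphic via a biholomorphism extending $\id_{X_x}$; one then repeats the germ-to-neighborhood extension argument of Proposition \ref{tripla} Step 2 together with the glueing procedure to obtain an isomorphism $\tilde\iota$ between open neighborhoods $U_1\subset Z_1$ and $U_2\subset Z_2$ of the image of $X$.

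The main obstacle I expect is the passage from germ-isomorphisms on overlaps to genuine open-set isomorphisms satisfying the cocycle condition while maintaining the Hausdorff property of the glued space; this is exactly why Proposition \ref{tripla} (for sheaf-theoretic extension of morphisms) and Proposition \ref{incollamentiT2} (for controlled topological glueing with separation) are cited as the key technical tools. Everything else is a careful bookkeeping exercise once these are in place.
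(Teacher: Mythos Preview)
Your proposal is correct and follows essentially the same route as the paper: complexify each local model by extending the finitely many defining functions holomorphically, obtain the transition biholomorphisms $\psi_{i,j}$ by tensoring the real sheaf isomorphisms with $\C$ and passing from germs to neighborhoods via coherence, then feed the resulting data into the glueing lemma (Proposition \ref{incollamentiT2}) to produce a Hausdorff complex space containing $X$ as a closed subset with the required stalk formula; uniqueness is handled by the same germ-to-neighborhood extension argument. The only cosmetic difference is that the paper appeals to Remark \ref{algebraanalitica} and coherence directly for the extension of the transition isomorphisms, whereas you route this through Steps 1 and 2 of Proposition \ref{tripla}; both amount to the same thing.
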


\begin{proof} Take a locally finite covering $\{T_i\}_{i\in I}$ of $X$ by relatively compact open sets. Assume to have for any $i$ a realization as local model 
$$
(\varphi_i,\eta_i): (T_i,\Oo_{X|_{_{T_i}}})\to \left(\varphi_i(T_i),
\left(\frac{\Oo_{A_i}}{\Ii_{i}}\right)_{|{_{\varphi_i (T_i)}}}\right)
$$
where $\varphi_i$ is an embedding of $T_i$ in an open set $A_i\subset \R^{n_i}$
 and $\eta_i: \left(\frac{\Oo_{A_i}}{\Ii_{i}}\right)_{|_{\varphi_i (T_i)}}\to 
\Oo_{X|_{_{T_i}}}$ 
is a sheaf isomorphism.

We can assume the ideal sheaf $\Ii_i$ to be generated by finitely many analytic functions on $A_i$, say $f_1,\ldots, f_{k_i}$. These functions extend holomorphically to a complex neighborhood $A_i^*$ of $A_i$ in $\C^{n_i}$ and generate an ideal sheaf $\Ii^*$ such that 
 $\displaystyle \left(\frac{\Oo_{A_i^*}}{\Ii_i^*}\right)_x=\left(\frac{\Oo_{A_i}}{\Ii_i}\right)_x
 \otimes \C
 $ for any $x\in \varphi_i(T_i)$.
 
Denote by $T_i^*$ the zeroset of $\Ii_i^*$ in $A_i^*$. 
When $T_i\cap T_j \neq \varnothing$ we have an isomorphism of coherent sheaf 
$\eta_{i,j}:\left(\frac{\Oo_{A_j}}{\Ii_{j}}\right)_{|{\varphi_j (T_i\cap T_j)}} 
\to \left(\frac{\Oo_{A_i}}{\Ii_{i}}\right)_{|{\varphi_i (T_i\cap T_j)}} $, 
hence between the two sheaves tensorized by $\C$. So $\eta_{i,j}$ extends to an isomorphism of the complex sheaves on a complex neighborhood of $\varphi_{j}(T_i\cap T_j)$.
 
 As we saw in Remark \ref{algebraanalitica} an  isomorphism of complex analytic algebras induces a biholomorpsm of the corresponding germs. Since $\Oo_X$ is coherent we get a biholomorphism $\psi_{i,j}: T_{i,j}^*\to T_{j,i}^*$ 
where $T_{i,j}^*$ is a suitable neighborhood of $\varphi_i(T_i\cap T_j)$ 
in $T_i^*$ such that $T_{i,j}^*\cap \R^{n_i}=\varphi_i(T_i\cap T_j)$ and 
${ \psi_{i,j}}_{|\varphi_i(T_i\cap T_j)}=\varphi_j\circ \varphi_i^{-1} $.
 
 Note that $\psi_{i,j}\circ \psi_{j,k}=\psi_{i,k}$ and $\psi_{i,j}^{-1}=\psi_{j,i}$.
By Proposition \ref{incollamentiT2} there exist relatively compact open sets $V_i^*\subset T_i^*$ with the following properties.
\begin{itemize} 
\item if $V_i=V_i^*\bigcap \R^{n_i}$ then $\{\varphi_i^{-1}(V_i)\}_{i\in I}$ is a 
shrinking of the covering $\{T_i\}$, hence still a covering of $X$ by local 
models.
\item In the disjoint union $\bigcup_i V_i^* $, the relation $x \mathcal R y$ if 
and only if either $x=y$ or $y=\psi_{i,j}(x)$ is an equivalence relation.
\item $\displaystyle X^*=\bigcup_i V_i^*/\mathcal R$ is Hausdorff and the projection $\pi$ is an open map.
\item $\pi(\bigcup_i V_i) $ is analytically isomorphic to $X$ and we identify it with $X$. 
\end{itemize} 

We are left with the proof that $X^*$ has a complex structure $\Oo_X^*$ and that $\Oo_{X^*,x}=\Oo_{X,x} \otimes \C$ for all $x\in X$.

Take a point $x\in X^*$. Its inverse image by $\pi$ is
finite. There exists an open set $U^x$ containing $x$ such that its inverse image is the disjoint union of biholomorphic open sets equipped with isomorphic sheaves. They give $U^x$ a unique complex structure $\Oo_{U^x}$. By construction, for $x\in X$ all these holomorphic sheaves have stalks in the corresponding points isomorphic to $\Oo_ {X,x}\otimes \C$.

Now if $(Z_1,\Oo_{Z_1})$ and $(Z_2,\Oo_{Z_2})$ are two such spaces both containing $X$ as a closed subset they verify $ \Oo_{Z_1,x}=\Oo_{Z_2,x}$ for all $x \in X$. 
Since the two sheaves $\Oo_{Z_1}, \Oo_{Z_2}$ are coherent this implies they are isomorphic on neighbourhoouds $U_1, U_2$ of $X$ respectively in $Z_1, Z_2$. In turn this isomorphism of sheaves induces a biolomorphism between $U_1$ and $U_2$. This ends the proof. 
\end{proof}

\begin{remark}The complex space $(Z,\Oo_Z)$ of the previous theorem depends on the structure sheaf $\Oo_X$ on the $C$-analytic space $X$. We shall call $(Z,\Oo_Z)$ the $\Oo_X$-complexification of $(X,\Oo_X)$. If $(X,\Oo_X)$ is reduced as C-analytic space then, it is coherent, hence $(Z,\Oo_Z)$ is the complexification of $(X,\Oo_X)$ as defined before in Definition \ref{Complessificazione}.
\end{remark}

\begin{thm}\label{intornist}
 Let $(X,\Oo_X)$ be a $C$-analytic space and $(Z,\Oo_Z)$ be a $\Oo_X$-comple\-xifi\-ca\-tion. Then $X$ has in $Z$ a fundamental system of open Stein neighborhoods.
 \end{thm}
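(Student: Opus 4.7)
The overall strategy is to invoke Narasimhan's characterization of Stein spaces (Theorem~\ref{Narcrit}): given any open neighborhood $U$ of $X$ in $Z$, I will exhibit a smaller open neighborhood $\Omega \subset U$ of $X$ together with a continuous strongly plurisubharmonic exhaustion $\Phi \colon \Omega \to \R$. This will force $\Omega$ to be Stein; letting $U$ range over arbitrary open neighborhoods of $X$ in $Z$ then produces the desired fundamental system.

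The starting point is a local construction. By Theorem~\ref{TognoliComplessificazione} one can cover $X$ by a locally finite family $\{U_i\}_{i\in I}$ of open subsets of $Z$ such that each $U_i$ is biholomorphic (as a ringed space) to a closed analytic subspace of a relatively compact open polydisc $\Delta_i \subset \C^{n_i}$, with $X \cap U_i$ sitting inside the real part $\Delta_i \cap \R^{n_i}$. Writing $z = x + iy$ on $\C^{n_i}$, the function $\rho_i(z) = |y|^2$ is strongly plurisubharmonic on $\Delta_i$ (its Levi form equals $\tfrac12\sum_k dz_k\,d\bar z_k$) and vanishes precisely on $\R^{n_i}$; moreover both $\rho_i$ and the $(1,0)$-form $\partial\rho_i = -i\sum y_k\,dz_k$ vanish identically on $\R^{n_i}$. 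Pulled back to $U_i$, this is a continuous strongly plurisubharmonic function $\rho_i$ on $U_i$ vanishing exactly on $X\cap U_i$.

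Next, choose a smooth partition of unity $\{\chi_i\}$ subordinate to $\{U_i\}$ and positive constants $C_i$ to be fixed later, and set
$$
\psi = \sum_i C_i\,\chi_i\,\rho_i.
$$
Working in any ambient chart $\Delta_j$, the mixed terms in the Levi form of $\psi$ coming from the derivatives of the $\chi_i$, namely $\rho_i\,\partial\bar\partial\chi_i$ and $\partial\chi_i\wedge\bar\partial\rho_i + \partial\rho_i\wedge\bar\partial\chi_i$, all vanish on $X$ (they involve $\rho_i$ or $\partial\rho_i$). Only the principal terms $C_i\chi_i\,\partial\bar\partial\rho_i$ survive at points of $X$, and there they add to a positive definite Hermitian form. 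By continuity, choosing the $C_i$ large enough on a compact exhaustion of $Z$ makes $\psi$ strongly plurisubharmonic on an open neighborhood $W$ of $X$ in $Z$, with $\psi^{-1}(0)=X$. To turn $\psi$ into an exhaustion, fix a continuous proper exhaustion $\sigma\colon X\to[0,\infty)$ (which exists by paracompactness of $X$), extend it continuously to a function $\tilde\sigma$ on $Z$, and pick a continuous $\delta\colon X\to(0,\infty)$, also extended continuously to a neighborhood of $X$, small enough that $\{z\in W:\psi(z)\le\delta(z),\ \tilde\sigma(z)\le n\}$ is compact and contained in $U$ for every $n\in\N$. On $\Omega=\{z\in W : \psi(z)<\delta(z)\}\cap U$ the function $\Phi=\tilde\sigma-\log(\delta-\psi)$ is then a continuous strongly plurisubharmonic exhaustion, and Theorem~\ref{Narcrit} shows that $\Omega$ is Stein.

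The main technical obstacle is the globalization step: patching strongly plurisubharmonic functions by a smooth partition of unity ordinarily destroys strong plurisubharmonicity, and the rescue here relies on the specific second-order vanishing of each $\rho_i$ on $X$, so that the offending cross terms vanish to first order on $X$ and can be absorbed by rescaling the $C_i$. The subsequent construction of the exhaustion is then a routine paracompactness argument.
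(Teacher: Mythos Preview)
Your first step is essentially the paper's Step~1 and is sound: since each $\rho_i=|y|^2$ vanishes to second order along $X$, the Levi form of $\sum_i C_i\chi_i\rho_i$ at points of $X$ reduces to $\sum_i C_i\chi_i\,\partial\bar\partial\rho_i>0$, and continuity gives strong plurisubharmonicity on a neighborhood of $X$. (In fact $C_i\equiv1$ already suffices for this conclusion; the constants are not yet doing any work.)

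The real gap is the exhaustion, which you dismiss as ``a routine paracompactness argument''. Your $\Phi=\tilde\sigma-\log(\delta-\psi)$ has no reason to be plurisubharmonic: $\tilde\sigma$ is an \emph{arbitrary} continuous extension of a proper function on $X$, and $\delta$ an arbitrary positive continuous function, so both the Levi form of $\tilde\sigma$ and the contribution $-\partial\bar\partial\delta/(\delta-\psi)$ can be as negative as one likes near $X$. Nothing in your construction absorbs this negativity---the $C_i$ were fixed before $\tilde\sigma,\delta$ were chosen, and you only arranged for $\psi$ (not $\psi+\tilde\sigma$) to be strongly psh. The paper closes exactly this gap with two devices you are missing. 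First, in its Step~1 it fixes a smooth proper function $r$ on a neighborhood of $X$ \emph{in advance} and then chooses the coefficients so that $\psi+r$ (not just $\psi$) is strongly psh; this is what the freedom in the $C_i$ is actually for. Second (its Step~2), instead of $-\log(\delta-\psi)$ with an uncontrolled $\delta$, it builds a genuinely plurisubharmonic $\eta\ge0$ vanishing on $X$ with $\eta\ge1$ on the boundary of a prescribed neighborhood $V\subset U$, via the auxiliary functions $e^{-1/\psi^i_a}$; then $h=1/(1-\eta)$ is plurisubharmonic and blows up on $\partial V$. The final $f=h+\psi+r$ is strongly psh by design, with compact sublevel sets because $h$ controls escape through $\partial V$ while $r$ controls escape to infinity along $X$. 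Your $\tilde\sigma$ and $\delta$ are trying to play the roles of $r$ and $\eta$, but without their plurisubharmonicity the argument breaks down.
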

\begin{proof}
We prove this statement applying the second critherion in Theorem \ref{Narcrit}. 
So first we have to construct suitable convex functions in a neighborhood of $X$ in $Z$. This is done in the next two steps.

{\sc Step 1.} {\em For any $C^2$-function $r:Z\to \R$ there exist a neighborhood $V$ of $X$ in $Z$ and a nonnegative strongly convex function $f:V\to\R$ vanishing on $X$ together with its first derivatives.}

  Take a locally finite covering $\{U_i\}_{i\in I}$ of $X$ and a shrinking 
$\{V_i\}_{i\in I}$ by relatively compact open sets in $Z$. We assume furtherly that for each $i$ there exists an isomorphism $\rho_i:U_i\to U_i'\subset \Omega_i\subset \C^{n_i}$ in such a way that $\rho_i(X\cap U_i)=U_i'\cap \R^{n_i}$.

Define $\displaystyle f'_i=4 \sum_{j=1}^{n_i} y_j^2=-\sum_{j=1}^{n_i}(z_j-\overline z_j)^2$ where $(z_1,\ldots,z_{n_i})$ are cohordinates in $\C^{n_i}$. 

Then the matrix $(L(f'_i))$ is equal to $2 I_{n_i}$. So for any $i$ $f'_i$ is strongly convex, vanishes on $\rho_i(X\cap U_i)$ together with its derivatives of order 1. The same is true for $f_i=f'_i\circ \rho_i$.

Let be $U=\bigcup U_i$. Take a smooth partition of unity $\alpha_i$ on $U$ subordinate
 to the covering $\{U_i\}$ verifying $\alpha_i>0$ on $V_i$. Since $\overline{V_i}$ is
 compact the number $k_i=\inf_{x\in\overline{V_i}} \alpha_i(x)$ is strictly positive.
 Now on $\rho_i(V_i\cap X)$ one has $(L(\alpha_if'_i))\geq 2k_iI_{n_i}$ since
 $(L(\alpha_if'_i))=\alpha_i(L(f'_i))=2\alpha_i I_{n_i}$ because the first
 derivative of $f'_i$ vanishes on $\rho_i(U_i\cap X)$. Being $(L(\alpha_if_i))$
 bounded from below by a positive definite matrix there exists a constant $h_i$ such
 that $h_i\alpha_i f_i + r$ is strongly convex on $V_i\cap X$, hence in a
 neighborhood of $V_i\cap X$ in $Z$.

Define $f=\sum_{i \in I}h_i\alpha_if_i$. It is well defined since the covering is locally finite, vanishes on $X$ together with its first derivatives and is nonnegative on $U$, is strongly convex on a neighborhood $V$ of $X$ and, by construction, $f+r$ is strongly convex on $V$.

 {\sc Step 2.} {\em There are neighborhoods $V,U$ of $X$ in $Z$, $\overline V\subset U$ and a smooth function $\eta:V\to \R$ with the following properties.
\begin{itemize}
\item $\eta \geq 0, \eta _{|{X}}=0, \eta $ is convex on $U$.
\item $\eta$ is strongly convex where it is positive.
\item $\eta(x)\geq 1 $ for any $x\in \partial V\subset U $.
\end{itemize}}

 \noindent Take a locally finite $\{U_i\}$ and a shrinking $\{V_i\}$, $V_i$ relatively compact in $U_i$ as in Step 1 with $\rho_i: U_i\to U'_i\subset \Omega_i\subset \C^{n_i}$ and $ \rho_i(U_i\cap X)=U'_i\cap \R^{n_i}$. Assume further the following.
\begin{itemize}
\item For any $i$ the distance $\delta_i=d(\partial (\rho_i(V_i)\cap \R^{n_i}), \partial (U'_i\cap \R^{n_i}))>0$.
\item For any $x\in U'_i\cap \rho_i (U_i\cap (\bigcup_{j\neq i}V_j))$ one has $d(x,\R^{n_i})<\frac{1}{4}\delta_i$.
\end{itemize}
This is always possible by a suitable choice of the covering $U_i$.

Let $z_j=x_j+iy_j$, $j=1,\ldots,n_i$ be coordinates in $\C^{n_i}$. For $a=(a_1,\ldots, a_{n_i})\in \R^{n_i}$ define 
$$
\psi^i_a(z)=2\sum_{j=1}^{n_i} y_j^2-\sum_{j=1}^{n_i} (x_j-a_j)^2=(d(z,\R^{n_i}))^2 - (d(\Re z,a))^2.$$

An easy calculation shows that the matrix associated to the Levi form of $\psi^i_a$
is $\frac{1}{2} I$. We want to prove that the function $\displaystyle\eta '_{i,a}=e^{-\frac{1}{\psi^i_a}}$ is strongly convex in the set where $\displaystyle 0<\psi^i_a< \frac{1}{2}$ (which is a neighborhood of $a$). Indeed if $\displaystyle \eta (t)=e^{-\frac{1}{t}}$
one has that $\displaystyle \frac{\mbox{\rm d}\eta}{\mbox{\rm d}t}=t^{-2}e^{-\frac{1}{t}}$ is positive for $t\neq 0$ while $\displaystyle \frac{\mbox{\rm d}^2\eta}{\mbox{\rm d}t^2}=t^{-3}e^{-\frac{1}{t}}(\frac{1}{t}-2)$ is positive for $\displaystyle 0<t<\frac{1}{2}$.

Let us evaluate the Levi form of $\eta '_{i,a}$ on a complex line $L$ with coordinate $z$.
$$ \frac{\partial^2\eta '_{i,a}}{\partial z \partial\overline z}=
\frac {\partial }{\partial\overline z}\left(
\frac{\partial\eta '_{i,a}}{\partial z} \right)
=\frac {\partial }{\partial\overline z} \left(
\frac{\partial\eta '_{i,a}} {\partial \psi^i_a} 
\cdot 
\frac{\partial \psi^i_a }{\partial z} \right)=
$$

$$=\frac{\partial^2\eta '_{i,a}}{\partial {\psi^i_a}^2 }\cdot 
\frac{\partial \psi^i_a}{\partial\overline z}\cdot 
\frac{\partial \psi^i_a}{\partial z}+ \frac{\partial \eta '_{i,a}}{\partial \psi^i_a} \cdot \frac{\partial^2 \psi^i_a}{\partial z\partial\overline z}.$$

Since $\psi^i_a$ is real valued and strictly plurisubarmonic, $\displaystyle \frac{\partial \psi^i_a}{\partial\overline z}\cdot 
\frac{\partial \psi^i_a}{\partial z} $ is nonnegative, hence for $0 <\psi^i_a<
\frac{1}{2}$ the first summand is nonnegative while the second one is positive 
 that is $\eta'_{i,a}$ is strictly subarmonic on every complex line hence strongly convex.
 Next define 
 $$
 \widehat{\eta} '_{i,a}=
 \begin{cases} 
 \eta '_{i,a} &\text{ if } \eta '_{i,a} >0 \\
 0 &\text{ otherwise } 
 
 \end{cases}
 .$$

This function is a $C^\infty$-function because so is for the function 
$$ h=
\begin{cases} 
 e^{-\frac{1}{t}} &\text{ if } t >0 \\
 0 &\text{ if } t\leq 0
 \end{cases} 
 .$$
Consider the function $ \widehat{\eta} '_{i,a}\circ \rho_i$. It is also a $C^\infty$-function on $U_i$.
 
Let $W$ be a neighborhood of $X$ contained in $\bigcup V_i$. Clearly $\partial W\cap X=\varnothing$. Also for any $i$ , $\rho_i(U_i\cap \partial W)\cap \R^{n_i}=\varnothing.$
Indeed if $y\in \rho_i(U_i\cap \partial W)\cap \R^{n_i}$ then $y=\rho_i(x)$ and $x\in X\cap U_i\cap \partial W$. But $\partial W\cap X=\varnothing$ so this is a contradiction.

Now we globalize. If $a\in \R^{n_i}$ put 
$$
 A^i_a=\{z\in \C^{n_i}|\eta^i_a(z)>0 \} \quad \quad B^i_a=\rho_i^{-1}(A^i_a\cap U_i')
. $$
 
We can assume that each point $x\in \partial W$ is internal to some $B^i_a$ where $a$ verifies $d(a,V'_i\cap \R^{n_i})<\displaystyle \frac {\delta_i}{4}.$ 

So $\displaystyle \partial W\subset \bigcup_a B^i_a$ and we can refine this covering into a locally finite open covering $\{C^{i_j}_{a_j}\}_{j\in J}$ where $C^{i_j}_{a_j}\subset B^{i}_{a_j}$ and $d(a_j,V'_i\cap \R^{n_i})<\displaystyle \frac {\delta_i}{4}$.
 
 We can extend $\eta^i_{a_j}$ to $W$ with $0$ outside $U_i\cap W$ and all these
 extended fuctions are of class $C^\infty$. Finally we can take positive constants
 $\beta_j$ with $j\in J$ such that the function $\eta=\sum_j \beta_j
 \eta^i_{a_j}$ verifies the following properties.
 \begin{itemize}
 \item It is well defined in any point of $W$ because only a finite number of terms are different from 0.
 
 \item It is smooth and convex in a neighborhood of $X$, vanishes on $X$ and it is nonnegative on $W$.
 
 \item It is strongly convex where it is positive (it is enough to take the open sets $U_i$ so small that on $U'_i$ arrives $\psi^i_{a_j}<\displaystyle \frac{1}{2}$). 
\end{itemize}
 So Step 2 is proved.

Now to prove the theorem we are left with finding for any neighborhood $U$ of $X\subset Z$ a smaller Stein neighborhood $V\subset U$. 

By Step 1 there exists an open set $U_1\subset U$ and a smooth function $g$ strongly convex on $U_1$. Since $U_1$ is locally compact and has countable topology there exists a continous nonnegative function $r:U_1\to \R$ such that the sets $U_1^{\lambda}=\{x\in U_1| r(x)\leq 
\lambda\}$ are compact for $\lambda >0$. By Whitney approximation theorem we can assume $r$ to be smooth. 

Again by Step 1 there exists a strongly convex smooth function $\varphi$ on a neighborhood $U_2\subset U_1$ of $X$ such that $\varphi$ vanishes on $X$ with its first derivatives and $\varphi + r$ is strongly convex on a smaller neighborhood $U_3\subset U_2$. 

By Step 2 there exists a convex smooth function $\eta$ on a neighborhood $U_4\subset U_3$ such that $\eta$ vanishes on $X$, $\eta \geq 0$ on $U_4$ and $\eta$ is strongly convex where it is positive. 

Define for positive $\lambda$ $U_4^\lambda=\{ x\in U_4| \eta(x)<\lambda \}$. Then $X\subset U_4^\lambda\subset U_4$ because $\eta$ vanishes on $X$. Hence on $V=U_4^1$ we can define, being $\eta <1$ on $V$, $\displaystyle h=\frac{1}{1-\eta}=1 +\eta + \eta^2 + \cdots$. 

Finally consider on $V$ the function $f=h+\varphi +r$ which is strongly convex by construction.

We claim that the set
$D^\lambda=\{ x\in V| f(x)<\lambda \}$ has compact closure in $V$.

Indeed   if   $V^\lambda_h=\{   x\in    V|   h(x)<\lambda   \}$   then
$D^\lambda\subset V^\lambda_h$.  Also $D^\lambda\subset  U_1^\lambda $
which is compact, hence $\overline{D^\lambda}$ is compact.
 Since $h$ diverges on $V$  we get $\overline{U_h^\lambda}\subset V$ so
$\overline{D^\lambda}\subset\overline{U_h^\lambda}\subset  V$.  So  by
Theorem \ref{Narcrit} the set $V$ is Stein.

\end{proof}

As a consequence of Theorem \ref{AandBreal} we get Theorems A and B for a $C$-analytic space.
\begin{thm}\label{ABreali} Let $(X,\Oo_X)$ be a $C$-analytic space. Then for any $\Oo_X$-coherent sheaf $\Ff$ on $X$
\begin{itemize}
 \item {\sc  A)} For any $x\in X$, the stalk  $\Ff_x$ is
 generated by global sections of $\Ff$. 
\item {\sc B)} $H^q(X, \Ff)=0$ for any $q>0$.
\end{itemize}
\end{thm}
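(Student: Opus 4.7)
The plan is to reduce the theorem to Theorem \ref{AandBreal} by passing to the $\Oo_X$-complexification $(Z,\Oo_Z)$ provided by Theorem \ref{TognoliComplessificazione}. We then have $X\hookrightarrow Z$ as a closed subset, and by Theorem \ref{intornist} the closed set $X$ admits a fundamental system of open Stein neighborhoods in $Z$. Thus the hypothesis of Theorem \ref{AandBreal} is satisfied, and it remains only to connect the $\Oo_X$-coherent sheaf $\Ff$ on $X$ to an $\Oo_Z|_X$-coherent sheaf to which we may apply that theorem.

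The bridge between the real and complex settings is the complexified sheaf $\Ff^\C=\Ff\otimes_\R\C$. Using that $\Oo_{Z,x}=\Oo_{X,x}\otimes_\R\C$ for every $x\in X$, the sheaf $\Ff^\C$ is naturally a sheaf of $\Oo_Z|_X$-modules whose stalks are
\[
\Ff^\C_x=\Ff_x\otimes_{\Oo_{X,x}}\Oo_{Z,x}=\Ff_x\otimes_\R\C=\Ff_x\oplus i\Ff_x.
\]
Since $\Ff$ is $\Oo_X$-coherent, a local finite presentation of $\Ff$ tensored with $\C$ produces a local finite presentation of $\Ff^\C$, so $\Ff^\C$ is $\Oo_Z|_X$-coherent.

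Now I apply Theorem \ref{AandBreal} to $\Ff^\C$ on the closed subset $X\subset Z$. For part (A), every stalk $\Ff^\C_x$ is generated by global sections of $\Ff^\C$ as an $\Oo_{Z,x}$-module. Writing each global section of $\Ff^\C$ as $s+it$ with $s,t\in\Ff(X)$, and using the stalk decomposition $\Ff^\C_x=\Ff_x\oplus i\Ff_x$, the real parts of the generators generate $\Ff_x$ as an $\Oo_{X,x}$-module, which is part (A) for $\Ff$. For part (B), the identity $H^q(X,\Ff^\C)=H^q(X,\Ff)\otimes_\R\C$ (cohomology commutes with the flat extension $\R\to\C$ and the decomposition $\Ff^\C\cong\Ff\oplus i\Ff$ as sheaves of abelian groups) together with $H^q(X,\Ff^\C)=0$ for $q>0$ yields $H^q(X,\Ff)=0$ for $q>0$.

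The main point requiring care is not the logical structure, which is essentially automatic given the two big theorems already proved, but the verification that $\Ff^\C$ really is $\Oo_Z|_X$-coherent and that the extract-real-part operations are legitimate. Both follow formally from $\Oo_Z|_X=\Oo_X\otimes_\R\C$ and the exactness of $-\otimes_\R\C$; no additional analytic machinery is needed beyond what has already been developed.
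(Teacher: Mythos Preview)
Your proof is correct and follows exactly the approach the paper intends: the paper states the theorem as an immediate consequence of Theorem \ref{AandBreal}, using the complexification from Theorem \ref{TognoliComplessificazione} and the fundamental system of Stein neighborhoods from Theorem \ref{intornist}, and you have spelled out the tensoring-with-$\C$ bridge that the paper leaves implicit. One small imprecision: when you extract generators for $\Ff_x$, you need both the real \emph{and} imaginary parts $s_j,t_j$ of the complex generators $s_j+it_j$ (a relation $f=\sum(a_j+ib_j)(s_{j,x}+it_{j,x})$ with $f\in\Ff_x$ gives $f=\sum(a_j s_{j,x}-b_j t_{j,x})$), not just the real parts; but these are all global sections of $\Ff$, so the conclusion stands.
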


\subsection{Anti-involutions and real parts.}

There are several ways to see a
real analytic space as a closed subset of a complex one. One way is to
consider local models $(Y,\Oo_Y)\subset (\Omega,\Oo_\Omega)\subset (\C^n,
\Oo)$ such that $\Omega $ is stable by complex conjugation and $X=Y\cap
\R^n$.

Another way is to endow the complex space with an antiholomorphic involution
$\sigma: Y\to Y$ in such a way that $X$ is the fixed point set $Y^\sigma$ of $\sigma$.

We need some definitions.

\begin{defn} {\hfill}
\begin{itemize}
\item Let $\tilde X, \tilde Y$ be two complex analytic spaces. A continous map
 $\varphi:\tilde X\to \tilde Y$ is \index{Antiholomorphic map} {\em
 antiholomorphic} if for any $x\in \tilde X$ there exist neighborhoods
 $U$ of $x$ and $V$ of $\varphi(x)$ and realizations of $U$, resp. $V$ as
 local models $U'\subset \C^n, V'\subset \C^m$ in such a way that
 $\varphi':U'\to V'$ is the restriction of an antiholomorphic map between
 open sets in $\C^n, \C^m$.
\item
A map $\sigma:\tilde X\to \tilde X$ will be said \index{Anti-involution} {\em anti-involution} if it is antiholomorphic and $\sigma\circ \sigma={\rm id}$.
\item Let $X$ be a complex analytic space and $Y$ be a real analytic subspace of $X^{\R}$. We call $Y$ a \index{Real part} {\em real part} of $X$ if there exists an open overing $U_i$ of $X$ and realizations $\varphi_i:U_i\to U'_i\subset \C^n$ in such a way that $ \varphi_i(U_i\cap Y)=\varphi_i(U_i)\cap\R^n$.
\end{itemize}
\end{defn}

\begin{remark}\label{realpart} Given a C-analytic space $(X,\Oo_X)$ we constructed its $\Oo_X$-com\-plexi\-fi\-ca\-tion starting from its real local models. Hence $(X,\Oo_X)$ is always the real part of its $\Oo_X$-complexification as soon as we take the local models $T^*_i$ invariant under complex conjugation.
\end{remark} 

We want to show the following characterization of C-analytic spaces.

\begin{thm}\label{c-analytic} Let $X$ be a (connected) real analytic set. The following conditions are equivalent.
\begin{itemize}
\item $X$ is a C-analitic set.
\item $X$ is a real part of a complex analytic space.
\item $X$ is a fixed part of a complex analytic space under an antinvolution.
\end{itemize}
\end{thm}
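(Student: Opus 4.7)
The plan is to prove the cyclic chain $(1) \Rightarrow (2) \Rightarrow (3) \Rightarrow (1)$.

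The implication $(1) \Rightarrow (2)$ is essentially the content of Remark \ref{realpart}: given a coherent structure $\Oo_X$, the $\Oo_X$-complexification $(Z,\Oo_Z)$ produced in Theorem \ref{TognoliComplessificazione} is built from local models $T_i^*$ that can be chosen invariant under the standard conjugation $c_i$ of $\C^{n_i}$, with $T_i^*\cap\R^{n_i}=\varphi_i(T_i)$. The resulting covering $\{T_i^*\}$ of $Z$ exhibits $X$ as a real part.

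For $(2) \Rightarrow (3)$, I would pull back the standard conjugations $c_i$ through the real-part charts to obtain local antiholomorphic involutions $\sigma_i=\varphi_i^{-1}\circ c_i\circ\varphi_i$ on $U_i$, each with fixed set $U_i\cap X$. On every overlap $U_i\cap U_j$ the two antiholomorphic maps $\sigma_j$ and $\psi_{ij}\circ\sigma_i\circ\psi_{ij}^{-1}$ coincide on $X\cap U_i\cap U_j$, which at each regular point of $X$ is totally real of real dimension equal to the complex dimension of $Z$. Analytic continuation across such a totally real set forces agreement of the two antiholomorphic maps on a complex neighborhood, and after suitable shrinking (carried out in the spirit of Proposition \ref{incollamentiT2}) the $\sigma_i$ patch into a global anti-involution $\sigma$ on a neighborhood $W$ of $X$ in $Z$ with $W^\sigma=X$. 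This gluing step is the main technical obstacle of the proof: one must not only match the local anti-involutions pointwise on $X$, but also control their holomorphic extensions coherently on overlapping complex neighborhoods while shrinking to arrange that the glued map is a genuine anti-involution of a single complex space.

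Finally, for $(3) \Rightarrow (1)$, given an anti-involution $\sigma$ with $X=W^\sigma$, I would produce a coherent structure on $X$ near a fixed $x\in X$ as follows. The pullback $\sigma^*:\Oo_{W,x}\to\Oo_{W,x}$, $\sigma^*f=\overline{f\circ\sigma}$, is a $\C$-antilinear involution preserving the maximal ideal $\mm_x$ and therefore acts on the cotangent space $\mm_x/\mm_x^2$ as an antilinear involution, which is linearly equivalent to standard conjugation. Lifting a basis of $\sigma^*$-fixed classes to generators $f_1,\ldots,f_n\in\mm_x$ and replacing each by its symmetrization $\tfrac{1}{2}(f_i+\sigma^*f_i)$, one obtains generators with $\sigma^*f_i=f_i$; the corresponding embedding $\varphi=(f_1,\ldots,f_n):U\to\C^n$ then satisfies $\varphi\circ\sigma=c\circ\varphi$ where $c$ is complex conjugation. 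Consequently the defining ideal $\Jj$ of $\varphi(U)$ in an ambient $c$-invariant polydisc is itself $c$-invariant and hence generated by $c$-symmetric holomorphic functions, whose restrictions to $\R^n$ provide real analytic equations for $\varphi(U\cap X)$. This yields a local C-analytic model, and coherence of the resulting ideal sheaf on $X$ is inherited from the coherence of $\Oo_W$.
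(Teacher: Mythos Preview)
Your $(1)\Rightarrow(2)$ is the paper's argument. Your $(3)\Rightarrow(1)$ is correct and takes a more elementary route than the paper: you linearize $\sigma$ \emph{locally} via $\sigma^*$-symmetrized generators of $\mm_x$, whereas the paper first proves $(3)\Rightarrow(2)$ \emph{globally} by embedding the Stein space into some $\C^N$ (Narasimhan) and post-composing with $\beta(z)=(z+\overline{\sigma(z)},\,i(z-\overline{\sigma(z)}))$ so that $\sigma$ becomes standard conjugation (Propositions~\ref{embedding} and~\ref{localembedding}), and only then applies the invariant-ideal argument you also give. Your version avoids the embedding theorem; the paper's detour costs more but yields the stronger global statement, reused elsewhere, that $\sigma$ straightens to conjugation in a single embedding.

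Your $(2)\Rightarrow(3)$, however, has a real gap. You assert that at each regular point of $X$ the set $X\cap U_i\cap U_j$ is totally real of real dimension equal to $\dim_\C Z$, but condition~(2) does not force $\dim_\R X_x=\dim_\C Z_x$. Take $Z=\{w^2+z^2=0\}\subset\C^2$ with real part $X=Z\cap\R^2=\{0\}$; the charts $\varphi_1=\mathrm{id}$ and $\varphi_2(w,z)=(iw,iz)$ both satisfy the real-part condition, yet the induced anti-involutions $(w,z)\mapsto(\bar w,\bar z)$ and $(w,z)\mapsto(-\bar w,-\bar z)$ agree only at the origin. So the local conjugations need not glue on the given $Z$, and your analytic-continuation step fails. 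The paper sidesteps this by routing through~(1): the invariant-ideal argument (the last paragraph of the paper's proof) gives real part $\Rightarrow$ C-analytic directly, and then Proposition~\ref{antinvolution} produces the anti-involution by gluing conjugations on the \emph{complexification} of $X$, where $\dim_\R X_x=\dim_\C\widetilde X_x$ holds by construction and the uniqueness lemma preceding Proposition~\ref{antinvolution} applies. Your argument would be repaired by the same rerouting.
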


First of all we prove the following lemma.

\begin{lem}
Let $Z$ be the reduction of an $\Oo_X$-complexification of a C-analytic space $(X,\Oo_X)$. Assume to have neighborhoods $U_1,U_2$ of $X$ in $Z$ together with anti-involutions $\sigma_i:U_i\to U_i, i=1,2$ such that $U^{\sigma_i}_i=X, i=1,2$.
Then there exists a smaller neighborhood $U_3\subset U_1\cap U_2$ such the two anti-involutions coincide on $U_3$.
\end{lem}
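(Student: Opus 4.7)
The plan is to reduce the statement to a general fact about holomorphic maps fixing $X$ pointwise. Since $\sigma_1$ and $\sigma_2$ are antiholomorphic and both restrict to the identity on their fixed set $X$, the composition $\tau=\sigma_1\circ\sigma_2$ is holomorphic wherever it is defined and satisfies $\tau|_X=\id_X$. Because $\sigma_2(X)=X\subset U_1$ and $\sigma_2$ is continuous, I may shrink to an open neighborhood $V$ of $X$ with $V\subset U_1\cap U_2$ and $\sigma_2(V)\subset U_1\cap U_2$, so that $\tau\colon V\to Z$ is well-defined and holomorphic. If I produce a smaller open neighborhood $U_3\subset V$ of $X$ on which $\tau$ agrees with the inclusion $V\hookrightarrow Z$, then $\sigma_1\circ\sigma_2=\id$ on $U_3$ and, since $\sigma_2$ is an involution, $\sigma_1=\sigma_2$ on $U_3$, which is the required conclusion.

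The coincidence set $E=\{y\in V:\tau(y)=y\}$ is a closed complex analytic subset of $V$. Indeed, working in a local model in which a neighborhood of a given point in $Z$ is realized inside an open set $\Omega\subset\C^n$ with coordinates $y_1,\ldots,y_n$ and $\tau=(\tau_1,\ldots,\tau_n)$ is given by $n$ holomorphic components, $E$ is locally cut out by the holomorphic equations $\tau_i(y)-y_i=0$ for $i=1,\ldots,n$. By construction $X\subset E$, so everything reduces to showing that a closed analytic subset of $Z$ containing $X$ must contain an open neighborhood of $X$ in $Z$.

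This is exactly where the universal property of the complexification enters. Fix $x\in X$ and choose a local embedding as above in which $Z_x$ is the complexification $\tilde X_x$ of the real analytic germ $X_x$. The germ $E_x$ is then an analytic germ in $\C^n$ satisfying $X_x\subset E_x\subset Z_x$. Every holomorphic function germ in the defining ideal $\Ii(E_x)\subset\Oo_{\C^n,x}$ vanishes on $X_x$, hence by property~(2) of the complexification it already vanishes on $\tilde X_x=Z_x$. This forces $\Ii(E_x)\subset\Ii(Z_x)$ and therefore $E_x=Z_x$, so $E$ contains an open neighborhood of $x$ in $Z$. Taking the union of such neighborhoods over $x\in X$ produces the desired open set $U_3$. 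The main subtlety, already handled here, is precisely the passage from the tautological pointwise identity $\tau|_X=\id$ to equality on a neighborhood, and it is the universal property of the complexification that makes this passage possible.
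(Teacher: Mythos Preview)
Your reduction to the holomorphic map $\tau=\sigma_1\circ\sigma_2$ (the paper composes in the other order, which is immaterial) and the passage to the coincidence set $E=\{\tau=\id\}$ are both fine, and this matches the paper's overall strategy. The paper then simply invokes Proposition~1.19 (uniqueness of extension of a coherent sheaf from the closed set $X$ to a neighbourhood): the coherent sheaf $\Oo_Z|_X$ admits a unique extension, so the two holomorphic maps extending the identity on $X$ coincide near $X$.

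The gap in your argument is the sentence ``choose a local embedding \dots\ in which $Z_x$ is the complexification $\tilde X_x$ of the real analytic germ $X_x$.'' This identification is \emph{not} available in general: by Proposition~1.25, the germ of an $\Oo_X$-complexification at $x$ coincides with the local complexification of the set germ $X_x$ precisely when $X$ is coherent at $x$. The lemma is stated (and used, in Proposition~1.32) for arbitrary C-analytic spaces, which need not be coherent. Concretely, take Whitney's umbrella $X=\{x^2-zy^2=0\}\subset\R^3$ with its well-reduced structure and $Z=\{x^2-zy^2=0\}\subset\C^3$. At the tail point $p=(0,0,-1)$ the set germ $X_p$ is the real line $\{x=y=0\}$, whereas $Z_p$ is a two-dimensional complex germ (two planes meeting along that line). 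The coordinate function $x$ vanishes on $X_p$ but not on $Z_p$, so property~(2) of Proposition~1.23 fails here and your deduction $E_p=Z_p$ does not follow.

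Your argument \emph{does} work verbatim at every point where $\dim_\R X_x=\dim_\C Z_x$ (in particular on the dense coherent locus), so the coincidence set $E$ contains a nonempty open piece of each irreducible component of $Z$ meeting $X$; one can then conclude by the identity principle. But as written, the pointwise invocation of the universal property of the local complexification is not justified, exactly at the non-coherence points where the content of the lemma is most delicate. The paper's route through Proposition~1.19 sidesteps this by working with the coherent sheaf $\Oo_Z|_X$ rather than with the set-theoretic germ $X_x$.
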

\begin{proof} Take $U_1'\subset U_1$ such that $\sigma_1(U_1')\subset U_2$ and consider 
$\sigma_2\circ\sigma_1:U'_1\to U_2$. It is an holomorphic map which is the identity on $X$, hence by Proposition \ref{tripla} it is the identity on some neighborhood $U_3$ of $X$ . There $\sigma_2=\sigma_1^{-1}=\sigma_1$.
\end{proof}
\begin{prop}\label{antinvolution} Let $X$ be a C-analytic set and $\Oo_X$ be a coherent structure on $X$. Then there exists an $\Oo_X$-complexification $(Z, \Oo_Z)$ and an anti-involution $\sigma$ on $Z$ such that $X=Z^\sigma$.
\end{prop}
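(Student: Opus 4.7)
The idea is to revisit the construction of the $\Oo_X$-complexification given in Theorem \ref{TognoliComplessificazione}, refining the choices so that complex conjugation on each local complex model descends to a global anti-involution of $Z$ whose fixed-point set is $X$.

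\emph{Step 1 (local anti-involutions).} Start from a locally finite family of local models $\varphi_i\colon T_i\to A_i\subset\R^{n_i}$ with ideal sheaf $\Ii_i$ generated by real analytic functions $f_1,\ldots,f_{k_i}\in\Aa(A_i)$. Enlarge $A_i$ to a conjugation-invariant open neighbourhood $A_i^*\subset\C^{n_i}$, and let $\tilde f_1,\ldots,\tilde f_{k_i}$ be the holomorphic extensions of the $f_j$'s. Since the $f_j$'s are real on $A_i^*\cap\R^{n_i}$, one has $\overline{\tilde f_j(\overline z)}=\tilde f_j(z)$, so the sheaf of ideals $\Ii_i^*$ generated by the $\tilde f_j$'s and its zeroset $T_i^*$ are invariant under the standard conjugation of $\C^{n_i}$. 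This conjugation therefore restricts to an anti-involution $\sigma_i\colon T_i^*\to T_i^*$ whose fixed-point set is exactly $\varphi_i(T_i)$.

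\emph{Step 2 (compatibility with the transition maps).} Let $\psi_{i,j}\colon T_{i,j}^*\to T_{j,i}^*$ be the transition biholomorphisms from the proof of Theorem \ref{TognoliComplessificazione}. After shrinking $T_{i,j}^*$ to a conjugation-invariant neighbourhood of $\varphi_i(T_i\cap T_j)$, the map
\[
\psi_{i,j}':=\sigma_j\circ\psi_{i,j}\circ\sigma_i\colon T_{i,j}^*\longrightarrow T_{j,i}^*
\]
is holomorphic (composition of three antiholomorphic/holomorphic pieces) and, on the real part $\varphi_i(T_i\cap T_j)$, coincides with $\varphi_j\circ\varphi_i^{-1}=\psi_{i,j}$ because $\sigma_i,\sigma_j$ act as the identity there. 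By the uniqueness-of-extension principle used throughout this chapter (cf.\ Remark \ref{algebraanalitica} and the first part of Proposition \ref{tripla}), two holomorphic maps on a connected complex neighbourhood of a maximal real subset of the same dimension that agree on the real part must coincide; after a further shrink we obtain $\psi_{i,j}'=\psi_{i,j}$, i.e.\ $\sigma_j\circ\psi_{i,j}=\psi_{i,j}\circ\sigma_i$.

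\emph{Step 3 (global glueing).} With the intertwining $\sigma_j\circ\psi_{i,j}=\psi_{i,j}\circ\sigma_i$ in hand, the family $\{\sigma_i\}$ respects the equivalence relation $\mathcal R$ used in Proposition \ref{incollamentiT2} to build $Z=\bigcup V_i^*/\mathcal R$. Choosing the shrinkings $V_i^*\subset T_i^*$ to be conjugation-invariant (still covering $X$, by Remark \ref{realpart}), the local anti-involutions $\sigma_i$ descend to a well-defined anti-involution $\sigma\colon Z\to Z$. Since on each chart the fixed-point set of $\sigma_i$ in $V_i^*$ is $V_i^*\cap\R^{n_i}=V_i$, we conclude $Z^\sigma=\bigcup V_i=X$. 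The lemma preceding this Proposition guarantees that this $\sigma$ is unique up to shrinking $Z$.

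\emph{Main obstacle.} The crux is Step 2: the transition maps $\psi_{i,j}$ are produced abstractly from isomorphisms of complexified analytic algebras, and one must verify they commute with the natural conjugations on the $T_i^*$'s. The argument is really a germ-uniqueness statement for holomorphic extensions of a real analytic map, parallel in spirit to the preceding lemma; once this compatibility is established, steps~1 and~3 are formal applications of the glueing machinery already developed.
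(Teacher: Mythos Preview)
Your proof is correct and follows essentially the same strategy as the paper: revisit the construction of the $\Oo_X$-complexification, choose the local complex models $T_i^*,V_i^*$ to be conjugation-invariant, and glue the resulting local anti-involutions $\sigma_i$ into a global $\sigma$ on a possibly smaller neighbourhood $Z'$ of $X$. The only organisational difference is that you verify the compatibility $\sigma_j\circ\psi_{i,j}=\psi_{i,j}\circ\sigma_i$ directly at the chart level via uniqueness of holomorphic extension, whereas the paper first pushes the $\sigma_i$ forward to local anti-involutions $\sigma_i'$ on $\pi(V_i^*)\subset Z$ and then invokes the preceding lemma (two anti-involutions of a complexification with fixed set $X$ coincide near $X$) to conclude $\sigma_i'=\sigma_j'$ on overlaps; this is exactly the same identity-principle argument you give in Step~2, just applied after glueing rather than before.
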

\begin{proof} In the proof of Theorem \ref{TognoliComplessificazione} the sets $T_i^*$ with their sheaves are complexifications of the sets $T_i$. We can assume the sets $V_i^*, T_i^*$ to be invariant under the complex conjugation so that $X$ is a real part of $Z$. Call $\sigma_i$ the anti-involution $V_i^*$ induced by the complex conjugation. Since $Z=\pi(\bigcup_i V^*_i)$ and $\pi$ restricted to $V_i^*$ is a biholomorphism we get anti-involutions $\sigma_i'$ on $\pi(V_i^*)$ for all $i\in I$.

Now $ \sigma_i'$ and $\sigma_j'$ have the same fixed point set in $\pi(V_i^*)\cap \pi(V_j^*)$ hence they coincide in a smaller neighborhood of $\pi(V_i)\cap \pi(V_j)$.

This allows to find an open neighborhood $Z'$ of $X$ in $Z$ where these local anti-involutions glue together in an anti-involution $\sigma: Z'\to Z'$ such that $X=\Z'^\sigma$.
\end{proof}
\begin{prop} \label{embedding} Let $Z$ be a reduced Stein space and $\sigma:Z\to Z$ be an antiholomorphic involution. 

If $\alpha:Z\to \C^N$ is a proper embedding, there exists a proper embedding $\beta:Z\to \C^{2N}$ such that $\beta\circ \sigma $
is the restriction of the complex conjugation to $\beta(Z)$.
\end{prop}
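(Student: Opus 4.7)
The idea is to symmetrize $\alpha$ with respect to $\sigma$ and the standard complex conjugation in $\C^N$. Since both $\sigma$ and complex conjugation on $\C^N$ are antiholomorphic, their composition is holomorphic, so the map
$$
\gamma:Z\longrightarrow\C^N,\qquad z\longmapsto\overline{\alpha(\sigma(z))}
$$
is a well-defined holomorphic map. I would then define $\beta:Z\to\C^{2N}=\C^N\times\C^N$ by
$$
\beta(z)=\bigl(\alpha(z)+\gamma(z),\ i(\alpha(z)-\gamma(z))\bigr).
$$

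Using $\sigma\circ\sigma=\id$, a direct substitution gives $\gamma(\sigma(z))=\overline{\alpha(z)}$. Computing both $\beta(\sigma(z))$ and $\overline{\beta(z)}$ and using the identities $\overline{\alpha(z)+\gamma(z)}=\overline{\alpha(z)}+\alpha(\sigma(z))$ and $\overline{i(\alpha(z)-\gamma(z))}=-i(\overline{\alpha(z)}-\alpha(\sigma(z)))=i(\alpha(\sigma(z))-\overline{\alpha(z)})$, one verifies the equality $\beta(\sigma(z))=\overline{\beta(z)}$ for every $z\in Z$. Thus $\beta(Z)$ is invariant under the complex conjugation of $\C^{2N}$, and $\beta\circ\sigma$ agrees with the restriction of this conjugation to $\beta(Z)$, as required.

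What remains is to verify that $\beta$ is a proper embedding. The key observation is that $\alpha$ and $\gamma$ are recovered from $\beta$ via the linear formulas $\alpha=\tfrac12(\beta_1-i\beta_2)$ and $\gamma=\tfrac12(\beta_1+i\beta_2)$. In particular, the linear projection $\pi:\C^{2N}\to\C^N$, $(u,v)\mapsto\tfrac12(u-iv)$, satisfies $\pi\circ\beta=\alpha$. Injectivity of $\beta$ is then immediate from injectivity of $\alpha$, and properness follows since for any compact $K\subset\C^{2N}$ one has $\beta^{-1}(K)\subset\alpha^{-1}(\pi(K))$, a compact set by properness of $\alpha$.

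Finally, because $\alpha$ is an analytic embedding, $\gamma$ descends to a holomorphic map $\widetilde\gamma:\alpha(Z)\to\C^N$ with $\gamma=\widetilde\gamma\circ\alpha$, so $\beta$ factors as $Z\xrightarrow{\alpha}\alpha(Z)\xrightarrow{\delta}\C^{2N}$, where $\delta(u)=(u+\widetilde\gamma(u),i(u-\widetilde\gamma(u)))$ is an analytic morphism with holomorphic left inverse $\pi$; hence $\delta$, and therefore $\beta$, is an analytic embedding at all points of $Z$ (regular or singular). I do not expect a serious obstacle in this argument; the only subtle point is maintaining the isomorphism of ringed-space structures at singular points, which is handled by the factorization through $\alpha(Z)$ and the retraction $\pi$.
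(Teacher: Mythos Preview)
Your argument is correct and essentially identical to the paper's: the paper first reduces to the case $\alpha=\id_Z$ (so $Z\subset\C^N$) and then sets $\beta(z)=(z+\overline{\sigma(z)},\,i(z-\overline{\sigma(z)}))$, checking injectivity, properness, and that the linear map $(t,w)\mapsto\tfrac12(t-iw)$ is a holomorphic inverse on $\beta(Z)$---exactly your $\pi$. Your version simply skips the preliminary reduction and carries $\alpha$ along throughout, which amounts to the same computation.
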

\begin{proof} First assume $\alpha=\mbox{id}_Z$, that is $Z$ is a closed subspace of $\C^N$. Define for $z\in Z$
$$
\beta (z)=\left(z+\overline{\sigma(z)}, i(z-\overline{\sigma(z)})\right).
$$
\begin{enumerate}
\parindent=0pt
\item $\beta$ is holomorphic, since $\sigma$ and the complex conjugation are both antiholomorphic.
\item $\beta $ is injective. 

Indeed $$\beta(z_1)=\beta(z_2) \Longrightarrow \begin{cases}
z_1+\overline{\sigma(z_1)}=z_2+\overline{\sigma(z_2)}\\
z_1-\overline{\sigma(z_1)}=z_2-\overline{\sigma(z_2)}
\end{cases}
 \Longrightarrow z_1=z_2.
$$
\item $ x\in Z$ is fixed by $\sigma$ if and only if 
$\beta(x)=\overline {\beta(x)}$. 

Indeed if $x=\sigma(x)$ then $\beta(x)=(2\Re x, 2 \Im x)\in \R^{2N}$. Conversely $\beta (x)=\overline {\beta (x)}$ means $\beta (x)=
\beta (\sigma(x))$. Since $\beta$ is injective this implies $x=\sigma(x)$.
\item $\beta (\sigma (x))=\overline {\beta (x)}$ by the same argument as in (3).
\item $\beta$ is proper. 

Since $\beta$ is injective we only need to prove that $\beta$ is closed. Take a convergent sequence $\{z_n\}$ in $Z$ with limit $L$. Then $\{\sigma(z_n)\}$ converges to $\sigma(L)$ hence $ \{\beta(z_n)\}$ converges to $\beta(L)$.
\item $\beta^{-1}:\beta(Z)\to Z$ is holomorphic.

Consider the map $\gamma: \C^{2N}\to \C^{N}$ given by $\gamma(t,w)=\frac{1}{2}(t-iw)$. 

It is holomorphic and 
since $\beta (Z)=\{(t,w)| t=z+\overline{\sigma(z)}, w=i(z-\overline{\sigma(z)})\}$ 
its restriction to $\beta(Z)$
gives $\gamma(t,w)=z$. So the restriction of $\gamma$ to $\beta(Z)$ is the inverse of $\beta$.
\end{enumerate}
Hence $\beta:Z\to \beta(Z)$ is a biholomorphism hence an isomorphism between the reduced Stein spaces $Z$ and $\beta(Z)$.

For the general case take $Z'=\alpha(Z)$ and $\sigma'=\alpha \sigma \alpha^{-1}$. Then $\sigma'$ is an antiholomorphic involution on $Z'$. Define $\beta:Z'\to \C^{2N}$ as before. Then $\beta\circ \alpha:Z\to \C^{2N}$ is the proper embedding we looked for. 
\end{proof}
\begin{prop} \label{localembedding} Let $X $ be a real analytic set that is the fixed part of an antiholomorphic involution $\sigma$ on a reduced Stein space $Z$ of dimension $n$. Then $X$ is the real part of a Stein space.
\end{prop}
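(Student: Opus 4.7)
The plan is to combine the embedding theorem of Narasimhan (Theorem \ref{immersione}) with Proposition \ref{embedding} in order to realize $Z$ as a closed complex analytic subset of some $\C^N$ that is invariant under the standard complex conjugation and on which $\sigma$ is identified with the conjugation. Once this is achieved, $X$ will automatically be the real part of a Stein space, because closed analytic subspaces of $\C^N$ are Stein (Cartan's Theorem B applied to $\Oo_{\C^N}/\Ii$, together with Proposition \ref{globalequations}).

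More precisely, by Theorem \ref{immersione} there exists a proper holomorphic embedding $\alpha:Z\to\C^{2n+1}$ whose image $Z'=\alpha(Z)$ is a closed analytic subset of $\C^{2n+1}$. Transporting $\sigma$ to $Z'$ via $\sigma'=\alpha\circ\sigma\circ\alpha^{-1}$, we get an antiholomorphic involution on $Z'$ with fixed part $\alpha(X)$. Proposition \ref{embedding} then produces a proper holomorphic embedding $\beta:Z'\to\C^{4n+2}$ such that $\beta\circ\sigma'$ is exactly the restriction of complex conjugation on $\C^{4n+2}$ to $W:=\beta(Z')$. Composing, $\widetilde{\beta}=\beta\circ\alpha:Z\to\C^{4n+2}$ is a proper holomorphic embedding whose image $W$ is a closed complex analytic subset of $\C^{4n+2}$, stable under the complex conjugation $c$ of $\C^{4n+2}$, and such that $\widetilde\beta(X)=W\cap\R^{4n+2}=W^c$.

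It remains to check that $X$ is a \emph{real part} of $W$ in the sense of the definition given just before Remark \ref{realpart}, that is, that around each point of $\widetilde\beta(X)$ one can realize $W$ locally so that its intersection with $\R^{4n+2}$ coincides with $\widetilde\beta(X)$. For a point $p\in W^c$, pick a finite system of holomorphic generators $f_1,\dots,f_k$ of $\Ii_{W,p}$ on some conjugation-invariant open neighbourhood $\Omega\subset\C^{4n+2}$ of $p$ (such $\Omega$ exists because $p\in\R^{4n+2}$). Since $W$ is invariant under $c$, the functions $\overline{f_j\circ c}$ also belong to $\Ii_{W}(\Omega)$, and therefore the real and imaginary parts $g_j=\tfrac12(f_j+\overline{f_j\circ c})$ and $h_j=\tfrac{1}{2i}(f_j-\overline{f_j\circ c})$ are holomorphic on $\Omega$, vanish on $W$, are \emph{real} on $\R^{4n+2}$, and together still generate $\Ii_{W,p}$. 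Consequently $W\cap\Omega$ is defined by holomorphic equations with real restrictions, so $W\cap\Omega\cap\R^{4n+2}$ is exactly the zeroset of the real analytic functions $g_j|_{\R^{4n+2}}, h_j|_{\R^{4n+2}}$, which coincides with $\widetilde\beta(X)\cap\Omega$. This is the required local realization.

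The main technical point — and the only place where something beyond bookkeeping is needed — is the averaging trick of the previous paragraph, which uses the conjugation-stability of $W$ to upgrade arbitrary holomorphic generators of $\Ii_W$ to generators with real restrictions; everything else is just assembling Theorems \ref{immersione} and \ref{embedding} and the fact that closed analytic subspaces of $\C^N$ are Stein.
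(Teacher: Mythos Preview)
Your strategy matches the paper's: use Narasimhan's embedding theorem together with Proposition \ref{embedding} to turn $\sigma$ into the standard complex conjugation. There is, however, a genuine gap in your invocation of Theorem \ref{immersione}. The first statement of that theorem only produces a one-to-one proper holomorphic map $Z\to\C^{2n+1}$ that is an embedding \emph{at regular points}; at singular points it need not induce an isomorphism of local rings. A genuine closed embedding $Z\hookrightarrow\C^{N}$ requires the second statement, whose hypothesis is $\sup_{z\in Z}\dim T_zZ<\infty$. This is not assumed in the proposition, so you cannot in general obtain a global proper embedding $\alpha$, and hence cannot apply Proposition \ref{embedding} globally.

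The paper handles this by localizing, which is exactly what the definition of ``real part'' permits: cover $Z$ by relatively compact $\sigma$-invariant open sets $U_i$ realized as local models in some $\C^{n_i}$. On each $U_i$ the Zariski tangent dimension is bounded (by compactness of $\overline{U_i}$), so the second part of Theorem \ref{immersione} applies to $U_i'$ and gives a true embedding into $\C^{n+N_i}$; Proposition \ref{embedding} then re-embeds into $\C^{2(n+N_i)}$ so that $\sigma$ becomes complex conjugation. This furnishes the required open covering in the definition of real part. Your argument becomes correct if you make this localization.

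A minor remark: your final paragraph (the averaging trick) is unnecessary. Once $W\subset\C^{N}$ is conjugation-stable with $\widetilde\beta(X)=W\cap\R^{N}$, the inclusion $W\hookrightarrow\C^{N}$ is already a single global realization exhibiting $\widetilde\beta(X)$ as the real part of $W$; no further manipulation of generators is needed.
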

\begin{proof} We can find an open covering $\{U_i\}$ of $Z$ with the following properties. 
\begin{itemize}
\item For all $i$, $U_i$ is relatively compact.
\item For all $i$, $U_i$ is a local model, i.e. there exists $\rho_i:U_i\to U'_i\subset \Omega_i\subset \C^{n_i}$.
\item For all $i$ such that $U_i\cap X \neq \varnothing$, $U_i$ 
is $\sigma$-invariant and $\Omega_i$ is Stein.
\end{itemize}
Hence $U'_i$ is a Stein subspace of $\Omega_i$ and inherits an antiholomorphic involution $\sigma'$ induced by $\sigma$. Property (2) implies that the dimension of the Zariski tangent space $T_z U'_i$ is bounded on $U_i$ by some integer  $N_1\leq n_i$. Hence by Theorem \ref{embedding} there exists a proper embedding $\alpha_i:U'_i\to \C^{n+N_i}$. Using the proposition above we find a proper embedding $\beta_i:U'_i\to \C^{2(n+N_i)}$ which trasforms the local model in a new local model where $\sigma$ becomes the complex conjugation.
So $\beta_i\circ \rho_i (X\cap U_i)$ is the real part of $\beta_i\circ \rho_i (U_i)$.
\end{proof}
\begin{remark} In the proof of the theorem above we got more. Indeed not only we got 
$\beta_i\circ \rho_i (X\cap U_i)$ to be the real part of $\beta_i\circ \rho_i (U_i)$ but in any local model the complex space $\beta_i\circ \rho_i (U_i)$ is invariant by complex conjugation. This is not always true: for instance take a complex line $L$ in $\C^2$ intersecting $\overline L$ only at the origin $(0,0)$. Then the origin is the real part of $L$ but $L$ is not stable by conjugation.
\end{remark}
\begin{cor}\label{Canalcaratt}Let $X$ be a real coherent space. Then
\begin{itemize}
\item $X$ admits a complexification.
\item $X$ is the real part of a complex analytic space.
\item $X$ is  the fixed part of a complex analytic space under an antinvolution.
\end{itemize}
\end{cor}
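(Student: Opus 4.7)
The plan is to observe that a coherent real analytic space is automatically a C-analytic space, and then to derive each of the three bullets as an immediate consequence of results proved earlier in the section. So the proof is essentially a matter of assembling previously established theorems with a few consistency checks.

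First I would verify the reduction: if $(X,\Aa_X)$ is a coherent real analytic space, then at each point $x\in X$ the ideal sheaf $\Ii_X$ is finitely generated on a neighbourhood of $x$ (by definition of a coherent sheaf plus Oka-type finite generation, together with the assumption that $X$ embeds locally as the zeroset of finitely many real analytic functions in some $\R^n$). Consequently, each local model can be written as $(Y, \Aa_U/(f_1,\ldots,f_s)\Aa_U)$ with generators $f_1,\ldots,f_s$ actually generating $\Ii_Y$, so $X$ fits Definition \ref{defspazioreale}(2) and is a C-analytic space with its reduced structure.

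Next I would dispatch the three bullets in order. For the first bullet, Theorem \ref{TognoliComplessificazione} applied to $(X,\Aa_X)$ produces a complex analytic space $(Z,\Oo_Z)$ containing $X$ as a closed subset with $\Oo_{Z,x}=\Aa_{X,x}\otimes\C$ for every $x\in X$. Since $X$ is coherent, Proposition \ref{Noncoerenza} ensures that for every $x\in X$ the germ $Z_x$ is genuinely the complexification of $X_x$ (not merely a complexification at a single point that fails to propagate), so $Z$ fulfills Definition \ref{Complessificazione}: this is the complexification of $X$. For the second bullet, Remark \ref{realpart} already records that in the construction of $Z$ one can choose the invariant local models $T_i^\ast$ to be stable under complex conjugation, and then $X$ is a real part of $Z$ in the sense of the definition preceding Theorem \ref{c-analytic}. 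For the third bullet, I would apply Proposition \ref{antinvolution} directly: it produces an open neighbourhood $Z'$ of $X$ in $Z$ together with an anti-involution $\sigma: Z'\to Z'$ such that $X=(Z')^\sigma$, which realises $X$ as the fixed point set of an anti-involution on the complex analytic space $Z'$.

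The only step that requires a small argument rather than mere invocation is the first one, namely checking that ``coherent real analytic space'' falls inside the hypothesis ``C-analytic space'' of Theorem \ref{TognoliComplessificazione} and of Proposition \ref{antinvolution}; once that translation is made, everything else is a citation. There is no genuine obstacle, since the machinery (complexification, glueing, anti-involution) has already been built in full generality for C-analytic spaces, and the coherent case is strictly easier than the non-reduced C-analytic case treated in the body of the section.
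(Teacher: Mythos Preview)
Your proof is correct and follows essentially the same route as the paper, which treats the corollary as an immediate consequence of the machinery already built: coherent implies C-analytic, then Theorem \ref{TognoliComplessificazione} (together with the remark following it) gives the complexification, Remark \ref{realpart} gives the real-part statement, and Proposition \ref{antinvolution} gives the anti-involution. The paper does not spell out a separate proof for this corollary, so your assembly of citations is precisely what is intended.
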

Note that the first condition is equivalent to $X$ to be coherent, unlike
conditions (2) and (3).

Summing up we get
 
{\sc proof of Theorem \ref{c-analytic}. }
(1)$\Rightarrow$ (3). This is Remark \ref{realpart}.

(3)$\Rightarrow$ (2). This is Proposition \ref {antinvolution}

(2)$\Rightarrow$ (3). This is Proposition \ref {localembedding}.

(3)$\Rightarrow$ (1). This is because (3) implies that the local models of $X$  get coherent structures. In fact in a local model $T_i^*\subset \Omega_i\subset \C^{n_i}, T_i^*$ is the zeroset of a finitely generated ideal sheaf $\Ii_i$. Take in this ideal the germs which are invariant under conjugation and restrict this smaller coherent ideal sheaf to $T_i^*\cap \R^{n_i}$. So $X$ is the support of a coherent sheaf, hence it is C-analytic. 
\qed

We end this section with a construction that generalises what done in the proof of Proposition \ref{embedding}. 

\subsection{Real structure of a complex analytic set.}

We assume for simplicity $X$ to be a closed subspace of $\R^n$.

 Let $\Omega$ be an open invariant neighbourhood of $\R^n$ in $\C^n$, $F:\Omega \to \C$ be a holomorphic function and denote by $\sigma$ the complex conjugation
.

\begin{defn}\label{re e im}\hfill
\begin{itemize}
\item $F$ is {\em invariant} if $F(z) = \overline{F\circ \sigma(z)}$. In this case $F$  restricts  to a real analytic function on $\R^n$. Conversely analytic functions on $\R^n$ extend to invariant holomorphic functions on a suitable neighbourhood  of $\R^n$ in $\C^n$.
\item For a general holomorphic function $F$ on $\Omega$ define:
$$\Re (F): \Omega \to \C, \, z\mapsto \frac{F(z) + \overline{F\circ \sigma(z)}}{2}$$
 and
$$ \Im (F):\Omega \to \C, \, z \mapsto \frac{F(z) - \overline{F\circ \sigma(z)}}{2i}.$$
\end{itemize}
\end{defn} 

Both $\Re(F)$ and $\Im(F)$  are invariant holomorphic functions and it holds 
$$F= \Re(F) +i \Im(F).$$
They are not the real and imaginary part of $F$. Indeed $F(x+iy) =  \Re^*(F)(x,y)+i\Im^*(F)(x,y)$ where
$$
\Re^*(F)(x,y)=\frac{F(z)+\ol{F(z)}}{2}\quad\text{and}\quad\Im^*(F)(x,y)=\frac{F(z)-\ol{F(z)}}{2i}
$$
are real valued analytic functions on $\Omega\equiv\Omega^\R$ understood as an open subset of $\R^{2n}$.  

Next consider a local model
$(Z,\an_Z)$ of a  complex analytic space, defined by a coherent sheaf of ideals ${\mathcal I}\subset\an_{\C^n}|_{\Omega}$, that is, $Z=\supp({\an_{\C^n}}_{|{\Omega}}/{\mathcal I})$ and $\an_Z={({\an_{\C^n}}_{|{\Omega}}/{\mathcal I})}_{|Z}$. 
Suppose that ${\mathcal I}$ is generated by finitely many holomorphic functions $F_1,\ldots,F_r$ on $\Omega$. 
Let ${\mathcal I}^\R$ be the coherent sheaf of ideals of ${\an_{\R^{2n}}}_{|{\Omega^\R}}$ generated by $\Re^*(F_i),\Im^*(F_i)$ for $i=1,\ldots,r$ and consider $(Z^\R,\an_Z^\R)$ be the local model for a real analytic space defined by the coherent sheaf of ideals ${\mathcal I}^\R$. This construction leads to the following fact. 

\begin{quotation}
For every complex analytic space $(X,\an_X)$ there exists a structure of real analytic space on $X$ that we denote $(X^\R,\an_X^\R)$ and it is called the \em real underlying structure of $(X,\an_X)$\em. If $(X,\an_X)$ is a reduced complex analytic space, it may fail that $(X^\R,\an^\R_X)$ is coherent or reduced, in any case it is C-analytic.
\end{quotation}

Also we get.

\begin{quotation}
Let $\varphi:(X,\an_X)\to(Y,\an_Y)$ be a morphism of $C$-analytic spaces. Let $(\widetilde{X},\an_{\widetilde{X}})$ and $(\widetilde{Y},\an_{\widetilde{Y}})$ be respective complexifications of $(X,\an_X)$ and $(Y,\an_Y)$. There exist:
\begin{itemize}
\item[(i)] a Stein open neighborhood $U\subset\widetilde{X}$ of $X$ and an anti-involution 
$$\sigma:(U,{\an_{\widetilde{X}}}_{|U})\to(U,{\an_{\widetilde{X}}}_{|U})$$ whose fixed part space is $(X,\an_X)$,
\item[(ii)] a Stein open neighborhood $V\subset\widetilde{Y}$ of $Y$ and an anti-involution 
$$\tau:(V,{\an_{\widetilde{Y}}}_{|V})\to(V,{\an_{\widetilde{Y}}}_{|V})$$ whose fixed part space is $(Y,\an_Y)$,
\item[(iii)] a morphism of Stein spaces 
$$\widetilde{\varphi}:(U,{\an_{\widetilde{X}}}_{|U})\to(V,{\an_{\widetilde{Y}}}_{|V})$$
 such that $\widetilde{\varphi}_{|X}=\varphi$ and $\widetilde{\varphi}^\R\circ\sigma=\tau\circ\widetilde{\varphi}^\R$.
\end{itemize}
In addition, if $\varphi$ is an isomorphism (resp. embedding), shrinking $U$ and $V$, also $\widetilde{\varphi}$ is an isomorphism (resp. embedding). 
\end{quotation}

\subsection{Real analytic subspaces of $\R^n$.}

First of all we present a closed analytic subset of $\R^3$ which does not admit any coherent structure. Hence not all real analytic spaces are C-analytic.

\begin{examples}\label{a(z)} \hfill

Let $a(z)$ be the function defined as follows.   
$$
a(z)=\begin{cases} \exp{\frac{1}{z^2-1}} &\text{ if } -1<z<1 \\ 0 &\text{ if } z \leq -1 \text{ or } z \geq 1 \end{cases}
$$

(1)
Let  $X \subset \R^3$ be the zeroset  of the smooth funtion $a(z)x^3 -z(x^2+ y^2)$. To verify that it is an analytic subset of $\R^3$ it is enough to do it in a neighborhood of the points $(0,0,\pm 1)$, where $X$ can be defined by the equations $x=0, y=0$.

We claim:  if $f \in \Oo(\R^3)$ vanishes on $X$ then $f\equiv 0$.

\begin{proof}\hfill

\begin{itemize}
\item First of all note that the germ at the origin $X_{(0,0,0)}$ is irreducible. Indeed assume $ z(x^2+ y^2)- a(z)x^3=p\cdot q$ with $p,q$ convergent power series. The order of the first member at the origin is $3$ and the first homogeneous polynomial in its developpement is $ z(x^2+ y^2)- a(0)x^3=z(x^2+ y^2)- \frac{1}{e}x^3$ which is the equation of the cone over the irreducible algebraic curve $x^2+ y^2- \frac{1}{e}x^3 $. Hence it cannot be factorised by the initial terms of $p$ and $q$.
\item $X$ reduces to a line for $|z|\neq 0$ in a neighborhood of $(0,0,z)$.
\item Let $X'$ be the complex analytic subset of $\C^3\setminus \{z=\pm1\}$ defined by the equation : $ z(x^2+ y^2)- x^3\exp{\frac{1}{z^2-1}}=0$.
Clearly $X'_{(0,0,0)}$ is the complexification of $X_{(0,0,0)}$. 
\item If we  still call $f$ a holomorphic extension of $f$ to a neighborhood $U$ of $\R^3$, $f$ vanishes on $X'$ in a neighborhood of the origin, in particular it vanishes on an open set of the (connected) set of regular points of the germs $X'_{(0,0,0)}$. Call $M$ the connected component of ${\mathcal Reg}(X')$ that contains the regular points near $(0,0,0)$. Since $M$ is a connected analytic manifold, $f$ must vanish on $\overline M$ (analytic identity principle).
\item If the open set $U$ is sufficiently small then ${\ Reg} (X')\cap U \supset \{y\neq 0\}\cap X'\cap U$. It is enough to look at the derivative with respect to $y$ that is $2yz$. On $X'\cap \{y\neq 0\}\cap U$ this derivative does not vanish unless $z=0$. But then also $x$ is $0$ and the derivative with respect to $z$ does not vanish. 
\item In a neighborhood of the real segment $ x=y=0, 0\leq z <1$ 
$${\Reg} (X')\subset M.$$ 
Indeed this is true for $0\leq z<\varepsilon$ for a suitable $\varepsilon$ since 
${\mathcal Reg} (X'_{(0,0,0)})\subset M$. Then the set of those $ t\in [0,1]$ such that ${\mathcal Reg} (X')\subset M\}$ in a neighborhood of $\{x=0,y=0,0\leq z<t\}$ is a not empty open and closed subset of the segment $[0,1]$. 
\item In a sufficently small neighborhood $V$ of $(0,0,1)\in \C^3$ the only connected component of ${\Reg} (X')\cap U$ in this neighborhood is $M$. 

It is enough to prove that $\forall \varepsilon$ if two points $(x_0,y_0,z_0), (x_1,y_1,z_1) \in X'$ verify
$$|x_i|<\varepsilon, 0<|y_i|<\varepsilon, |z_i-1|<\varepsilon \quad \quad i=1,2$$
then there exists a path joining them in 
$$X'\cap\{ |x|<\varepsilon \sqrt{3}, 0<|y|<\varepsilon, |z-1|<\varepsilon\}$$.

In order to  construct  the path define 
$$\lambda(z)=\frac{1}{z} \exp{\frac{1}{z^2-1}}.$$
We can join $z_0$ and $z_1$ in the anulus $0<|z-1|<\varepsilon$ by an arc $\Gamma$.

Let $B$ the upper bound of $|\lambda(z)|$ on $\Gamma$ and take $\eta>0$ such that $B\eta^3+\eta^2<1$.

Now we can join $(x_0,y_0,z_0)$ to $(x'_0,\eta y_0,z_0)$ where $(x_0')^2=\lambda(z_0)\eta^3x_0^3 -\eta^2x_0^2$. This is done by putting

\begin{align*}
x(t) &=t x_0 \\ 
y(t) &=\sqrt{\lambda(z_0)x(t)^3-x(t)^2}\\
z(t) &=z_0
\end{align*}

Note that $|\lambda(z_0)x(t)^3-x(t)^2|=|(\eta x_0)^3 - (\eta x_0)^2|<2\varepsilon^2$ by our choice of $\eta$. Hence $|y(t)|<3\varepsilon^2$.

Similarly we can join $(x_1,y_1,z_1)$ to the point $(\eta x_1,y'_1,z_1)$ where $(y'_1)^2=\lambda (z_1)\eta^3 x_1^3 - \eta^2x_1^2$.

We are left with the points $(\eta x_0,y'_0,z_0),(\eta x_1,y'_1,z_1)\in {\mathcal Reg}(X') $ that we can join easily joining $x_0$ to $x_1$ in $|x|<\varepsilon$ and putting for $y$   $y=\sqrt{\lambda (z)\eta^3 x^3- \eta^2x^2}$ while $x$ varies between $x_0$ and $x_1$ and $z$ varies between $z_0$ and $z_1$.

So far we got: $f$ vanishes in any regular point of $X'$ sufficientely close 
to $(0,0,1)$.

\item Fix $x_0$, $y_0$ 
close to $0$ with $x_0$ and $y_0$ different from $0$ and look at the function $\lambda(z)$ on the complex line $x=x_0,y=y_0$. It has a singularity at $z=1$ which is not a pole. By Picard theorem there exists a sequence of points converging to $(0,0,1)$ in $ {\mathcal Reg}(X')\cap \{x=x_0,y=y_0\}$ on which $\lambda(z)=\frac{x_0^2+y_0^2}{x_0^3}$ and $f$ vanishes on this sequence. Hence $f(x_0,y_0,z)=0$ for all $z$.
So the real function $f$ vanishes on the open set $V\subset \R^3$ $V=\{0<|x|<\varepsilon, 0<|y|<\varepsilon\}$ which implies by the identity principle that $f$ is the zero function.
\end{itemize}

\end{proof}

(2) If in the previous example we take the function 
$$z^2(1-2z^2)(x^2+y^2) -(y^4+x^4)a(z)$$
 instead of $a(z)x^3 -z(x^2+ y^2)$,
 we get that  its zeroset $X$ is an analytic set whose 2-dimensional part  is {\em compact}
 and,then, arguing as before, again any analytic function vanishing on $X$ does
 vanish on $\R^3$. 

(3) Finally using the function 
 $(1-4(x^2+y^2+z^2))((x^2+z^2-1)^2 +y^2) - ((x^2+z^2-1)^4 +y^4)a(z)$ to define  $X$, we get a compact example with the same properties as the other ones.

\end{examples}

\begin{remark}\label{anydim} The last example can be made in any dimension taking the same function with $y^2$ replaced by $\sum_{i=1}^p y_i^2$.  Then the same properties as before hold true for its zeroset $X_p \subset \R^{p+2}$ . Also, since all $X_p$ are compact, we can embed each $X_p$ in an open neighbourhood of a regular point of $X_{p+1}$, starting from $X= X_1$. This way after proving $X_p$ to be irreducible we get that any $f\in \Oo(\R^{p+2})$ that vanishes on the image of $X_1$ has to be identically zero. 
\end{remark}

Special properties characterize $C$-analytic subspaces of $\R^n$. They were
first described by Cartan.

The following proposition characterizes $C$-analytic subsets of $\R^n$.

\begin{prop}
Let $X$ be a closed analytic subset of $\R^n$. Then the following conditions are
equivalent. 
\begin{itemize}
\item There exist a coherent ideal sheaf $\Ii$ of $\Oo$ such that $X=V(\Ii)$.
\item There exist an open neighborhood $U$ of $\R^n$ in $\C^n$ and a complex
 analytic set $Z\subset U$ such that $Z\cap \R^n=X$.
\item There exists a finite number of analytic functions $f_1,\ldots, f_r$ on
 $\R^n$ such that $X$ is their common zeroset. 
\end{itemize}
\end{prop}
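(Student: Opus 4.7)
The proof proceeds by the cycle $(1) \Rightarrow (2) \Rightarrow (3) \Rightarrow (1)$, leveraging the complexification machinery of this chapter. For $(1) \Rightarrow (2)$, I apply Proposition \ref{tripla} to extend the coherent ideal sheaf $\Ii$ on $\R^n\subset\C^n$ to a coherent sheaf of ideals $\widetilde{\Ii}$ on some open neighborhood $U$ of $\R^n$ in $\C^n$, with stalks $\widetilde{\Ii}_x=\Ii_x\otimes_\R\C$ for each $x\in\R^n$. The complex analytic set $Z=\supp(\Oo_U/\widetilde{\Ii})\subset U$ then satisfies $Z\cap\R^n=V(\Ii)=X$.

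For $(2) \Rightarrow (3)$, I shrink $U$ to an open Stein neighborhood $U_0$ of $\R^n$ (which exists by Theorem \ref{intornist}). The intersection $Z\cap U_0$ is a closed analytic subspace of a Stein space, hence itself Stein, so by Proposition \ref{globalequations} it is the common zeroset in $U_0$ of finitely many holomorphic functions $F_1,\ldots,F_s\in\Oo(U_0)$. Using the invariant real/imaginary parts of Definition \ref{re e im}, the restrictions $\Re(F_j)|_{\R^n}$ and $\Im(F_j)|_{\R^n}$ are real-valued real analytic functions on $\R^n$ whose common zeroset is $(Z\cap U_0)\cap\R^n=Z\cap\R^n=X$.

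For $(3) \Rightarrow (1)$, let $\Ii=(f_1,\ldots,f_r)\Oo_{\R^n}$ be the ideal sheaf finitely generated by the $f_i$. Since $\Oo_{\R^n}$ is a coherent sheaf of rings (the real analogue of Oka's theorem on $\R^n$, invoked implicitly in the remark following Definition \ref{defspazioreale}), every finitely generated subsheaf of $\Oo_{\R^n}$ is coherent. Hence $\Ii$ is a coherent ideal sheaf, and $V(\Ii)=\{f_1=\cdots=f_r=0\}=X$ by construction.

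The main conceptual ingredient is the stalk identification $\widetilde{\Ii}_x=\Ii_x\otimes_\R\C$ built into the construction of the complexification, which guarantees that extending a real coherent ideal to a complex neighborhood does not introduce spurious real points in $Z\cap\R^n$. With this in hand, the remaining content is Grauert's finite-generation principle (Proposition \ref{globalequations}) combined with the standard passage from complex holomorphic to real analytic equations via real and imaginary parts; no further obstacle arises.
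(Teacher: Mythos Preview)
Your proof is correct and follows essentially the same cycle $(1)\Rightarrow(2)\Rightarrow(3)\Rightarrow(1)$ as the paper, using Proposition~\ref{tripla} for the extension, Proposition~\ref{globalequations} for finite global equations, and the coherence of finitely generated ideal sheaves in $\Oo_{\R^n}$. You supply a bit more detail than the paper (the explicit Stein shrinking via Theorem~\ref{intornist} and the reference to Definition~\ref{re e im} for the invariant real and imaginary parts), but the argument is the same.
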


\begin{proof}
The proof of the equivalence of these three statements is not difficult. Since
a coherent ideal sheaf on $\R^n$ extends to a coherent ideal sheaf in an open
Stein neighborhood of $\R^n$ in $\C^n$ (Proposition \ref{tripla}), $\Ii$  defines a complex analytic set $Z$
intersecting $\R^n$ in $X$. Since $Z$ can be defined by the vanishing of a
finite number of holomorphic functions, see Proposition \ref{globalequations}, the real and the imaginary parts of
these functions define the set $X$. In turn if $X=\{f_1=0,\ldots, f_r=0\}$,
these functions define a coherent ideal sheaf $(f_1,\ldots, f_r)\Oo$ whose
zeroset is $X$. By Corollary \ref{Canalcaratt} a real analytic subset of $\R^n$ satifying these properties is $C$-analytic.
\end{proof}

\subsection{Well reduced structure.}
The structural sheaf $\Oo_X$ of a C-analytic space is a coherent sheaf. 
As seen in the examples above
we cannot use the reduced structure unless it is already coherent. Neverthless
when $X$ is a closed subset of $\R^n$ or of an analytic manifold $M$ there exists a
largest coherent sheaf of ideals in $\Oo_M$ having $X$ as zeroset.

\begin{lem}\label{wellreduced} 
Let $X\subset M$ be a real closed C-analytic subset of $M$. Denote by $I(X)$ the
set of global analytic functions on $M$ that vanish on $X$. This ideal is not
the $0$ ideal because $X$ is C-analytic and its zeroset is precisely $X$.
Then $I(X)\Oo_m$ is the largest coherent sheaf of ideals having $X$ as zeroset. 
\end{lem}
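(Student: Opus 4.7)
The plan is to prove three assertions: (i) the zeroset of $I(X)\Oo_M$ equals $X$; (ii) $I(X)\Oo_M$ is coherent; (iii) any coherent ideal sheaf of $\Oo_M$ with zeroset $X$ is contained in $I(X)\Oo_M$. Part (i) is immediate from the preceding proposition, which furnishes $f_1,\ldots,f_r\in\Gamma(M,\Oo_M)$ with common zeroset $X$: each $f_i\in I(X)$, so $(f_1,\ldots,f_r)\Oo_M\subset I(X)\Oo_M\subset\Ii_X$, pinning down the zeroset as $X$. Part (iii) follows from Cartan's Theorem A for coherent $C$-analytic spaces: any coherent ideal sheaf $\mathcal{J}$ with zeroset $X$ satisfies $\Gamma(M,\mathcal{J})\subset I(X)$, and by Theorem~\ref{ABreali} applied to $M$, each stalk $\mathcal{J}_x$ is generated by global sections, so $\mathcal{J}_x\subset I(X)\Oo_{M,x}$ and therefore $\mathcal{J}\subset I(X)\Oo_M$.

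The main work is in (ii), and my approach is to pass to a complexification and exhibit $I(X)\Oo_M$ as the $\sigma$-invariants of a coherent sheaf. Let $\sigma$ be the anti-involution on a complexification $\tilde{M}$ of $M$, and let $\Omega\subset\tilde{M}$ be a $\sigma$-invariant Stein open neighborhood of $M$ (Theorem~\ref{intornist}); inside $\Omega$ choose a complexification $\tilde{X}$ of $X$ attached to some coherent structure on $X$. The ideal sheaf $\mathcal{I}_{\tilde{X}}$ is coherent on the Stein space $\Omega$ by Oka's theorem, so by Cartan's Theorem A on $\Omega$, for a fixed $x\in M$ there exist finitely many global sections $F_1,\ldots,F_r\in\Gamma(\Omega,\mathcal{I}_{\tilde{X}})$ whose germs generate $\mathcal{I}_{\tilde{X},x}$. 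Replacing each $F_i$ by its real and imaginary parts in the sense of Definition~\ref{re e im}, I may take the $F_i$ to be $\sigma$-invariant, and then $f_i:=F_i|_M$ is a real analytic function on $M$ vanishing on $X$, so $f_i\in I(X)$. Coherence of $\mathcal{I}_{\tilde{X}}$ then gives a $\sigma$-invariant open neighborhood $W$ of $x$ in $\Omega$ on which $\mathcal{I}_{\tilde{X}}|_W=(F_1,\ldots,F_r)\Oo_W$.

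Taking $\sigma$-invariants stalkwise at $y\in W\cap M$, the averaging identity $G=\sum\tfrac{h_i+h_i^\sigma}{2}F_i$ (applied to an invariant $G=\sum h_iF_i$) identifies the invariant part of $\mathcal{I}_{\tilde{X},y}$ with $(f_1,\ldots,f_r)\Oo_{M,y}$. Via the decomposition $\Oo_{\tilde{M},y}=\Oo_{M,y}\otimes_\R\C$, these same $\sigma$-invariants are simultaneously identified with $I(X)\Oo_{M,y}$, using that Cartan's Theorem A writes $\mathcal{I}_{\tilde{X},y}$ as $I(\tilde{X})\Oo_{\tilde{M},y}$ and the invariant/real-part splitting sends $I(\tilde{X})$ into $I(X)+i\,I(X)$. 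Combining both computations, $I(X)\Oo_M$ agrees on $W\cap M$ with the coherent sheaf $(f_1,\ldots,f_r)\Oo_M$, and is therefore coherent at $x$; since $x$ was arbitrary, $I(X)\Oo_M$ is coherent on all of $M$.

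The main technical obstacle is precisely the identification $(\mathcal{I}_{\tilde{X},y})^\sigma=I(X)\Oo_{M,y}$: the inclusion $\subset$ is handled cleanly by the invariant-generator/averaging argument, but the reverse inclusion requires that every $h\in I(X)$ admits a $\sigma$-invariant holomorphic extension that actually vanishes on $\tilde{X}$ near each $y\in M$, not merely on $X$. This compatibility between the global ideal $I(X)$ and the chosen complexification rests on the minimality of the local complexification (the preceding proposition on the uniqueness of $\tilde{V}_x$) combined with the global input of Cartan's Theorem A on $\Omega$, and dictates a careful choice of the coherent structure on $X$ used to build $\tilde{X}$.
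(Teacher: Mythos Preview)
Your part (iii) is exactly the paper's proof: the authors argue only maximality via Theorem~A and do not separately establish that $I(X)\Oo_M$ is coherent, evidently treating this as a standard fact (the sheaf generated by any family of global sections on a Stein space is coherent, as recorded later in Theorem~\ref{Moduli Chiusi Stein}(2), and the real case follows by passage to the complexification). Part (i) is a correct aside.

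Your route to (ii) through $\mathcal I_{\tilde X}$ has a real gap that you locate but do not close. The problematic inclusion is $I(X)\Oo_{M,y}\subset(\mathcal I_{\tilde X,y})^\sigma$: it asks that the invariant extension $H$ of each $h\in I(X)$ vanish on $\tilde X$ near $y$, not just on $X$. When $\tilde X$ is built from an auxiliary coherent structure $\Ii_0$ on $X$, the germ $\tilde X_y$ is the zero locus of $(\Ii_0)_y\otimes\C$, which in general strictly contains the complexification $\widetilde{X_y}$ of the reduced germ --- precisely because $(\Ii_0)_y\subsetneq\Ii_{X,y}$ wherever the reduced structure of $X$ fails to be coherent. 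The minimality property of $\widetilde{X_y}$ only forces $H$ to vanish on that smaller set, so $H$ need not lie in $\mathcal I_{\tilde X,y}$. No ``careful choice'' of $\Ii_0$ short of already knowing the well-reduced one resolves this, and picking $\Ii_0=I(X)\Oo_M$ is circular. The clean argument avoids $\tilde X$ entirely and shows directly that the sheaf generated by the family $I(X)$ of global sections is of finite type, via the closure-of-modules theorem applied in a Stein neighborhood of each point.
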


\begin{proof} For any coherent sheaf of ideals $\Ii$ having $X$ as zeroset
 Theorem A applies, hence
$\Ii_x$ is generated by $H^0(M, \Ii)$ whose elements are global analytic
functions on $M$ vanishing on $X$. Hence $\Ii\subset I(X)\Oo_M$.
\end{proof}

The structure $\Oo_M/I(X)\Oo_M$ will be called {\em well reduced structure}\index{well reduced structure} of
$X$.

We stress the fact that the complexification of a C-analytic space depends on
the structural sheaf $\Oo_X$ and can be of course not reduced. Nevertheless
the complexification of a well reduced structure is reduced as we prove in the
next proposition.

\begin{prop}
Let $(X,\Oo_X)\subset (R^n,\Oo_{\R^n}) $ be a C-analytic space. Then 
$(X,\Oo_X)$
is the well reduced structure if and only if the $\Oo_X $-complexification of 
$(X,\Oo_X) $ is reduced. 
\end{prop}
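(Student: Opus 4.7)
The plan is to translate the statement into a pair of sheaf-theoretic identities that are manifestly equivalent. Let $J\subseteq\Oo_{\R^n}$ be the coherent ideal sheaf with $\Oo_X=\Oo_{\R^n}/J$. Using Proposition~\ref{tripla}, Theorem~\ref{intornist} and Proposition~\ref{antinvolution}, I will fix a Stein open neighborhood $W$ of $X$ in $\C^n$, a coherent extension $\widetilde J\subseteq\Oo_W$ of $J$, and an anti-involution $\sigma$ on $W$ with fixed part $\R^n$. The construction in Theorem~\ref{TognoliComplessificazione} then identifies the $\Oo_X$-complexification with $(Z,\Oo_W/\widetilde J)$, so this space is reduced exactly when $\widetilde J=\Ii_Z$, where $\Ii_Z$ is the coherent sheaf of germs vanishing on $Z$. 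On the real side, by the definition given just before the statement, $\Oo_X$ is the well reduced structure exactly when $J=I(X)\Oo_{\R^n}$. The goal is thus to prove
\[
J=I(X)\Oo_{\R^n}\iff\widetilde J=\Ii_Z.
\]

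The key ingredient, which I expect to be the main obstacle, is the following \emph{linking lemma}: for every $f\in I(X)$, the invariant holomorphic extension $\widetilde f\in\Oo(W)$ lies in $H^0(W,\Ii_Z)$. To prove it I will decompose $Z$ into its locally finite family of irreducible components $\{Z_\alpha\}$; each $Z_\alpha$ is the complexification of a corresponding irreducible component $X_\alpha$ of $X$. By the remark following Proposition~\ref{Noncoerenza} the real set $X_\alpha$ contains an open dense subset of points at which it is a real analytic manifold of dimension $p_\alpha=\dim_\C Z_\alpha$, and each such point sits inside $\Reg(Z_\alpha)$ with $X_\alpha$ appearing as a totally real submanifold of maximal real dimension. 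Since $\widetilde f$ vanishes on this submanifold, the classical identity principle (a holomorphic function on a connected complex manifold vanishing on a totally real submanifold of top real dimension is identically zero) forces $\widetilde f\equiv 0$ first on the connected component of $\Reg(Z_\alpha)$ meeting $X_\alpha$, and then on all of $Z_\alpha$ by irreducibility.

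With the linking lemma in hand, both directions will follow by parallel inclusion arguments that combine Cartan A for the coherent sheaf $J$ on $\R^n$ (Theorem~\ref{ABreali}) with Cartan A for $\Ii_Z$ on the Stein neighborhood $W$ of $X$ (Theorem~\ref{AandBreal}). For $(\Rightarrow)$, assume $J=I(X)\Oo_{\R^n}$: then $J_x$ is generated by elements of $I(X)$, so $\widetilde J_x$ is generated by their holomorphic extensions, each of which lies in $\Ii_{Z,x}$ by the linking lemma; conversely $\Ii_{Z,x}$ is generated by global $F\in H^0(W,\Ii_Z)$, and the invariant summands $\Re(F),\Im(F)$ of Definition~\ref{re e im} remain in $\Ii_Z$ (because $Z$ is $\sigma$-invariant) and are holomorphic extensions of elements of $I(X)$, whence their germs lie in $\widetilde J_x$. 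For $(\Leftarrow)$, assume $\widetilde J=\Ii_Z$: the inclusion $J\subseteq I(X)\Oo_{\R^n}$ is Lemma~\ref{wellreduced}, while for the reverse any $f\in I(X)$ has $\widetilde f\in H^0(W,\Ii_Z)=H^0(W,\widetilde J)$ by the linking lemma, and since $\widetilde J_x=J_x\otimes_\R\C$ inside $\Oo_{W,x}=\Oo_{\R^n,x}\otimes_\R\C$ has invariant part exactly $J_x$, the invariance of $\widetilde f$ forces $f_x\in J_x$, giving $I(X)\Oo_{\R^n}\subseteq J$ and completing the equivalence.
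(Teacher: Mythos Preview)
Your linking lemma is false. Take $J=(x^2+y^2)\Oo_{\R^2}$, so $X=\{0\}$ and the $\Oo_X$-complexification has underlying set $Z=\{x^2+y^2=0\}\subset\C^2$, the pair of complex lines $x=\pm iy$. Since $x^2+y^2=(x+iy)(x-iy)$ is square-free in $\C\{x,y\}$, the ideal $\widetilde J=(x^2+y^2)\Oo_W$ is already radical, so the complexification is reduced. But $f=x$ lies in $I(X)$ while $\widetilde f=x$ does not vanish on $Z$, so the linking lemma fails; and indeed $J\neq I(X)\Oo_{\R^2}$, whose stalk at the origin is $(x,y)$, so this example also contradicts the $(\Leftarrow)$ implication of the proposition as literally stated. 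Your proof of the lemma breaks exactly at the assertion that each irreducible component $Z_\alpha$ of $Z$ is the complexification of a corresponding irreducible component $X_\alpha$ of $X$: here $Z$ has two one-dimensional components while $X$ is a single point, and neither complex line is the complexification of $\{0\}$. The underlying difficulty is that $Z=\supp(\Oo_W/\widetilde J)$ depends on $J$, and when $J\subsetneq I(X)\Oo_{\R^n}$ the set $Z$ can be strictly larger than the genuine complexification of the real set $X$, acquiring conjugate components whose real trace has strictly smaller dimension.

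This breaks your $(\Leftarrow)$ argument. For $(\Rightarrow)$ you do not actually need the lemma in the form you stated: once $J=I(X)\Oo_{\R^n}$, every $f\in I(X)$ satisfies $\widetilde f_x\in\widetilde J_x$ tautologically, hence $\widetilde f$ vanishes on $Z$, and the rest of that direction goes through. Note finally that the paper's own proof of $(\Leftarrow)$ makes the very same leap, at the sentence ``So, as a germ at $\R^n$, $H^0(Z,\Ii_Z)$ contains all real analytic functions vanishing on $X$'': that claim is exactly your linking lemma, and the example above shows it does not follow from what precedes it.
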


\begin{proof} By hypothesis $X$ is a closed subset of $\R^n$ and $\Oo_X=
\Oo/\Ii$. Then $ \Ii\otimes\C$ defines a germ at $\R^n$ of complex
analytic set which can be realised in an open neighborhood $\Omega$ of $\R^n$ 
which can be taken Stein and stable for the complex conjugation. Assume that
$Z\subset\Omega\subset\C^n$ has got its reduced structure $\Oo_Z=\Oo_\Omega/ \Ii_Z$. By
Theorem A all fibres $\Ii_{Z,z}$ are generated by $H^0(\Omega, \Ii_Z)$ which is
the set of holomorphic functions on $\Omega$ that vanish on $Z$. Since
$\Omega$ and $Z$ are invariant by conjugation the same holds for $H^0(\Omega,
\Ii_Z)$ hence it is generated by invariant holomorphic functions vanishing on
$Z$ which extend real analytic functions vanishing on $X$. So, as a germ at
$\R^n$ 
$H^0(Z, \Ii_Z)$ contains all real analytic functions vanishing on $X$, hence
$\Ii$ gives 
$X$ its well reduced structure. 

Conversely if $X$ has its well reduced structure, then for any point $x\in X$ 
the ideal $I(X)\Oo_x$ is radical. It remains radical when tensoring with $\C$.
But then, by R\"uckert Nullstellensatz (see Chapter 3),  as a germ at $\R^n$ 
$(Z, \Oo_Z)$ has the reduced structure.
\end{proof}
\bigskip
\bigskip

The well reduced structure on $X$ allows to define the set $\Reg (X)$ in a similar way as in the complex case. But in the real case there are some differences.

\begin{defn}
Let $X\subset\R^n$ be a C-analytic set and $\Oo_X = \Oo_{\R^n}/I(X)\Oo_{\R^n}$ be its well reduced structure so that the fiber $\Ii_x =(I(X)\Oo_{\R^n})_x$ is generated by a finite number of germs of functions in $I(X)$.
Then $x\in X$ is {\em regular} if the ring $\Oo_{X,x}$ is a regular ring. It is {\em smooth } of dimension $k$ if there is an open neighbourhood $V$ of $x$ in $X$ such that $V$ is a manifold of dimension $k$. 
\end{defn} 

Note that a regular point is smooth, while the converse is not true in general.
In fact when $x$ is regular and $\Ii_x$ is generated by $f_1, \ldots , f_s \in I(X)$, the jacobian matrix of  $f_1, \ldots , f_s $ has constant rank $r$ in a neighbourhood $U$ of $x$ so that $X\cap U$ has a natural structure of $n-r$ dimensional manifold. When $x$ is smooth there are $n-k$ analytic functions on $U$, whose jacobian has rank $n-k$, such that their zeroset is a manifold, but these functions are not in general in  $I(X)\Oo_{\R^n}(U)$. For instance in Whitney umbrella $X$ $\Reg(X)$ is the set $\{x^2-zy^2 =0, z\geq 0\}\setminus\{x=0, y=0\}$ while the points $\{(0,0,t), \quad t<0\}$ are smooth points of dimension $1$,       
Note also that a regular point of $X$, endowed with its well reduced structure, is regular for its complexification $\widetilde X$. It is an easy exercise to prove that in this case $\Reg (X) = \Reg(\widetilde X)\cap \R^n$.

For a general C-analytic space $(X, \Oo_X)$ we get the  same definition of $\Reg(X)$ and of smooth points, but it may happen that $\Reg(X)= \varnothing$. 
\bigskip

Let us summarize what we got in this chapter.
\smallskip

Take a real analytic space $(X,\Oo_X)$ with its local models. If in all local models the structural sheaf is {\em coherent}, then these real structures, tensorized by $\C$ glue together in a complex structure of a complex analytic space (the {\em $\Oo_X$-complexification} of $(X,\Oo_X)$) whose real part is $X$. In this case we call $(X,\Oo_X)$ a $C$-analytic space. If the coherent structure is the reduced one we call $(X,\Oo_X)$ a coherent analytic space and its $\Oo_X$-complexification is a true complexification in the sense that the germ of the complex space at any real point is the complexification of the real germ. In both cases Theorems A and B hold true for $(X,\Oo_X)$, since it admits in its $\Oo_X$-complexification a fundamental system of open Stein neighborhoods.

We have also seen the equivalence between the property of being "real part" of a complex space and being "the fixed point set" of an antiholomorphic involution on a complex space which in turn is equivalent to be C-analytic.
 
 Since, as a consequence of embedding theorems for Stein spaces, a C-analytic space is a subspace of some $\R^n$ under the condition that the dimension of its Zariski tangent spaces is bounded, we have that C-analytic spaces are in general 
the zeroset of finitely many global analytic functions in some euclidean space because to be C-analytic in $\R^n$ is equivalent to have global equations.

Finally we have seen that 
not all real analytic spaces admit coherent structures, that is there exist real analytic spaces that are not $C$-analytic.

\subsection*{Bibliographic and Historical Notes.} The content of this chapter is completely classical.  Literature about complex analytic spaces is very rich and one can refer for exemple to \cite{gr} or \cite{gare}: we refer mainly to S\'eminaires Cartan where in particular a proof of Oka's coherence theorems can be found \cite{c1}.  Behnke and Stein took analytic covers as local models for complex analytic spaces. Grauert and Remmert proved the equivalence between these local models and the reduced ones   due to Cartan-Serre that we used in Section  1. 
Criteria for an analytic space to be Stein of Theorem \ref{Narcrit} can be found in \cite{n1,n4,n5}. Whitney's approximation theorem cited in the proof of  Theorem 3.14 is in \cite{Wh} and 
can be found also for instance in \cite{n}.
 Concerning embedding theorem for Stein spaces it can be found in \cite{n2}. The fact that an analytic space locally embeds in its Zariski tangent space can be found in \cite{gare1}.

Passing to the real framework,   a very rich collections of examples of "bad" spaces  are in the papers  \cite{bc1},\cite {bc2}. The characterisation of closed C-analytic subsets of $\R^n$ and examples 1 and 2 in Section 3C are due to  Cartan \cite{c}, which is the first work where real analytic spaces appear and are  treated in detail, in particular in this paper Theorems A and B are extended to  direct limits of Stein spaces and so they are proved for real analytic sets globally defined in $R^n$. Also here clearly appears the central role of complexification, or better the importance to be the fixed part of a complex space under an anti-involution. Whitney and Bruhat extended this idea to real analytic manifolds in \cite{wb}. There appears the word \em C-analytic \rm for real analytic sets admitting, as in  Cartan's paper,  a complexification, i.e. a coherent structure. In the same paper they defined irreducible components of a complex analytic set and hence of a C-analytic set.  Our proof of the \em glueing lemma \em is inspired by the complexification of a real analytic manifold of \cite{wb}.  Finally Tognoli in  \cite {t} extended these ideas in an abstract context  for general real analytic spaces admitting a coherent structure. A large part of  Section \ref{ras} is based on \cite{t}.

In Section 3D  the notion of  {\em well reduced structure } of a C-analytic set is introduced follwing  \cite {abt}. It appears also in \cite{gal} and  can be found also in  Chapter VIII of  \cite{abr}.

\newpage
\parindent=0pt

\chapter{More on analytic sets.}

In this chapter we develop more properties of real and complex analytic sets.
We begin with the notion of irreducible component of an analytic set. We will see in Chapter 3 the same concept from a more algebraic point of view. Then we deal with the normalization of a complex analytic set in Sections 3 and 4. 
This notion does not behave so well in the real case, but is still a useful tool.
Finally we will look at divisors, that is codimension one analytic subsets, in a C-analytic set.

\section{Irreducible components.}

We saw irreducible components of complex or real analytic set germs in
Sections 1 and 3 of Chapter 1.
In this section  we describe global irreducible components of a complex
analytic set, mainly generalising the ideas of the local structure. We will
see by several old examples the difficulties arising in the real case, which
force once again to pass to the smaller class of real C-analytic sets.

\subsection{Irreducible components of a complex analytic set.}

Let $(X, \Oo_X)$ be a reduced complex analytic set. As usual we always assume  that $X$,
as a topological space, is paracompact with countable topology. 

We know from the local
theory that the set $\Reg(X)$ of regular points of $X$ is an open dense subset of
$X$ and its complement $\Sing(X)$ is an analytic subset of $X$ of smaller
dimension.

\begin{defn} We say that $X$ is {\em reducible} if there are $X_1,X_2 \subset
  X$ analytic subsets such that $X=X_1\cup X_2$ and both are different from
  $X$. It is {\em irreducible}\index{irreducible complex analytic set} if $X= X_1 \cup X_2$ implies $X_1=X$ or
  $X_2=X$.
\end{defn}    

Next proposition characterizes irreducible analytic subsets of a Stein open set.

\begin{prop}\label{sottosp.irriducibili} Let $\Omega$ be a Stein open set in $\C^n$ and  $X\subset \Omega$ be a closed analytic subset of $\Omega$.
$X$ is irreducible if and only if the ideal $I(X) =
\{f\in \Oo(\Omega): X\subset \ceros(f)\}$ is a prime ideal.
\end{prop}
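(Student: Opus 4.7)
The plan is to prove both implications using Cartan's Theorem B (in the form already exploited in Proposition \ref{globalequations}) to pass between analytic subsets of $X$ and global holomorphic equations in $\Oo(\Omega)$.

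First I would handle the easy direction: assume $I(X)$ is prime and suppose, for contradiction, that $X=X_1\cup X_2$ with $X_i\subsetneq X$ closed analytic subsets of $\Omega$. Pick a point $p_i\in X\setminus X_i$. Since $\Omega$ is Stein and $X_i\cup\{p_i\}$ is a closed analytic (hence Stein) subspace of $\Omega$, the short exact sequence
$$0\to\Ii_{X_i\cup\{p_i\}}\to\Ii_{X_i}\to\Ii_{X_i}/\Ii_{X_i\cup\{p_i\}}\to 0$$
of coherent $\Oo_\Omega$-ideal sheaves together with Theorem B ($H^1(\Omega,\Ii_{X_i\cup\{p_i\}})=0$) yields a global holomorphic $f_i\in\Oo(\Omega)$ vanishing on $X_i$ with $f_i(p_i)=1$. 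Then $f_i\in I(X_i)\setminus I(X)$, while the product $f_1f_2$ vanishes on $X_1\cup X_2=X$, i.e.\ $f_1f_2\in I(X)$. This contradicts the primality of $I(X)$; therefore $X$ is irreducible.

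For the converse, assume $X$ is irreducible and take $f,g\in\Oo(\Omega)$ with $fg\in I(X)$. Set
$$X_1=X\cap\ceros(f),\qquad X_2=X\cap\ceros(g),$$
both closed analytic subsets of $\Omega$ contained in $X$. For every $x\in X$, either $f(x)=0$ or $g(x)=0$, so $X=X_1\cup X_2$. Irreducibility forces $X_1=X$ or $X_2=X$, i.e.\ $f\in I(X)$ or $g\in I(X)$, proving that $I(X)$ is prime.

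The only nontrivial ingredient is the existence, for each $i$, of a global holomorphic function on $\Omega$ vanishing on $X_i$ but not identically on $X$; this is precisely the point where the Stein hypothesis on $\Omega$ enters, via Theorems A and B, and it is the main obstacle in the argument. The rest is a purely formal primality-versus-irreducibility dance, identical to its algebraic-geometry counterpart.
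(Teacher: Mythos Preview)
Your proof is correct and follows essentially the same approach as the paper: both directions are proved by the standard dictionary between decompositions $X=X_1\cup X_2$ and factorizations $f_1f_2\in I(X)$, with Theorem~B supplying the global functions $f_i$ vanishing on $X_i$ but not on all of $X$. The paper phrases both implications as contrapositives rather than one by contradiction, and is terser about the invocation of Theorem~B, but the argument is the same.
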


\begin{proof}
Assume $X= X_1 \cup X_2$ to be reducible. Since $\Omega$ is Stein, by Theorem B,  there are
holomorphic functions $f_1, f_2 \in \Oo(\Omega)$ such that $f_i$ vanishes on
$X_i$ but not on the whole $X$, for instance it takes the value $1$ on a point in $X \setminus X_i$. Hence $f_1f_2 \in I(X)$ but $f_i$ does not belong to $I(X)$, hence $I(X)$
is not prime. Conversely if $I(X)$ is not prime there are $f,g$ not in $I(X)$
such that $fg\in I(X)$, so $X= (X\cap\{f=0\}) \cup (X\cap \{g=0\})$ is not
irreducible.    
\end{proof}

Next we characterize irreducible components of an analytic set $X$ in terms of the set $\Reg(X)$.

\begin{prop}\label{chiusure}Let $A$ be a connected component of $\Reg(X)$ and
  denote by $V$ its closure. Then $V$ is an irreducible analytic subset of
  $X$. 
\end{prop}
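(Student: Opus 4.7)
The plan is to prove the two assertions of the proposition in sequence: first that $V = \overline{A}$ is an analytic subset of $X$, and then that it is irreducible. Throughout, note that $\Reg(X)$ is open in $X$ and locally connected, so the connected component $A$ is itself open in $X$.

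For the analytic structure of $V$, I would argue pointwise. If $x \in X \setminus V$ there is nothing to check because $V$ is closed. Fix $x \in V$ and a small neighborhood $U$ of $x$ in which $X \cap U = X_1 \cup \cdots \cup X_r$ is the decomposition into the irreducible components of the germ $X_x$, with smooth connected manifold $M_i$ dense in each $X_i$ as in Proposition \ref{localmanifold} and its surrounding discussion. Shrinking $U$, the regular locus $\Reg(X) \cap U$ equals $M = \bigcup_i M_i \setminus \bigcup_{i \neq j}(M_i \cap M_j)$, whose connected components $M_i^\circ = M_i \setminus \bigcup_{j \neq i}(M_i \cap M_j)$ are in bijection with the $X_i$. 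Each $M_i^\circ$ is connected, hence lies entirely in a single connected component of $\Reg(X)$. Putting $J = \{i : M_i^\circ \subset A\}$, the openness of $A$ yields $A \cap U = \bigcup_{i \in J} M_i^\circ$, and since closure commutes with finite unions and each $M_i^\circ$ is dense in $X_i$,
\begin{equation*}
V \cap U = \overline{A \cap U} = \bigcup_{i \in J} \overline{M_i^\circ} = \bigcup_{i \in J} X_i
\end{equation*}
is a finite union of analytic sets in $U$. Hence $V$ is analytic in $X$.

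For irreducibility, suppose $V = V_1 \cup V_2$ with $V_1, V_2$ analytic subsets of $V$. Intersecting with $A$ gives $A = (V_1 \cap A) \cup (V_2 \cap A)$, where each $V_i \cap A$ is closed analytic in the connected complex manifold $A$. A closed analytic subset $Z$ of a connected complex manifold with non-empty interior equals the whole manifold: the set of points admitting a neighborhood inside $Z$ is clearly open, and it is closed by the local identity principle for holomorphic functions. Assume toward a contradiction that $V_1 \neq V$; then $V_1 \cap A \neq A$, because $A \subset V_1$ would force $V = \overline{A} \subset V_1$ (as $V_1$ is closed in $X$). So $V_1 \cap A$ has empty interior in $A$, and $A \setminus V_1 \subset V_2 \cap A$ is open and non-empty; therefore $V_2 \cap A$ has non-empty interior, which by the above forces $V_2 \cap A = A$, hence $A \subset V_2$ and $V = \overline{A} \subset V_2$, giving $V_2 = V$.

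The main technical point is the local identification $V \cap U = \bigcup_{i \in J} X_i$. It rests on the local theory summarised in Proposition \ref{localmanifold}, which provides the decomposition of $\Reg(X) \cap U$ into the connected manifolds $M_i^\circ$ (one per irreducible component of $X_x$) and the density statement $\overline{M_i^\circ} = X_i$. Once this local picture is in place, the irreducibility argument reduces to a clean application of the identity principle on connected complex manifolds.
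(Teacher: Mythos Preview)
Your argument is correct and follows the paper's outline for analyticity, fleshing out what the paper dismisses in one line as ``known from the local theory''. Two small points to tighten: first, the identification $\Reg(X)\cap U = \bigcup_i M_i\setminus\bigcup_{i\ne j}(M_i\cap M_j)$ is not literally what the local description around Proposition~\ref{localmanifold} gives (the $M_i$ there are complements of a discriminant locus, possibly strictly smaller than $\Reg(X_i)$); what you actually need, and what does hold, is that for $U$ small enough the connected components of $\Reg(X)\cap U$ biject with the irreducible components $X_i$ of $X_x$ and each is dense in its $X_i$. Second, in your displayed chain the closures should be read in $U$, i.e.\ $V\cap U$ equals the closure of $A\cap U$ in $U$ (not in $X$).

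For irreducibility your route differs from the paper's. The paper asserts $V_1\cap V_2\subset\Sing(V)$ and then argues that $V_1\cap A$ is open and closed in the connected set $A$. That inclusion, as stated, is not valid for an arbitrary analytic decomposition $V=V_1\cup V_2$ with both $V_i\ne V$ (e.g.\ $V=\{z_1z_2=0\}\subset\C^2$, $V_1=\{z_1=0\}\cup\{(1,0)\}$, $V_2=\{z_2=0\}$ gives $(1,0)\in V_1\cap V_2\cap\Reg(V)$); of course such a reducible $V$ cannot actually arise as $\overline{A}$, but the paper does not justify the step independently. Your version --- a proper closed analytic subset of the connected complex manifold $A$ has empty interior by the identity principle, so $A\setminus V_1\subset V_2\cap A$ forces $V_2\cap A=A$ --- is self-contained and sidesteps this entirely.
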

\begin{proof}
We know fom the local theory that $V$ is analytic in a neighbourhood of each
point of $X$. Hence $V\subset X$ is a closed analytic subset. We only have to
prove that $V$ is irreducible. Assume $V = V_1 \cup V_2, V_i \neq V$. 
Then $V_1 \cap V_2
\subset \Sing(V)$ and at least one of them contains an open set $B$ of $V$, say
$V_1$.

 Hence $V_1 \cap A\subset  V_1\setminus {\rm Sing}(V)$ is closed in $A$, being a closed analytic set, but also
open. Hence $V_1\cap A =A$ which in
turn implies $V_1= V$. 
\end{proof}

\begin{thm}\label{decomposition} Any reduced analytic space $(X,\Oo_X)$ has a unique locally finite
  decomposition $X= \bigcup_i X_i$ where $\{X_i\}$ is a locally finite family
  of irreducible analytic subsets of  $X$ and $X_i$ is not included in $ X_{i'}$ for
  $i\neq i'$.
\end{thm}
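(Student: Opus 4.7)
My approach is to build the decomposition from the connected components of $\Reg(X)$, extending the local structure theorems already exploited in Chapter~1. Set $\{A_i\}_{i\in I}$ to be the connected components of $\Reg(X)$ and put $X_i := \overline{A_i}$; by Proposition~\ref{chiusure} each $X_i$ is an irreducible analytic subset of $X$. I will check that this family is locally finite, covers $X$, contains no pair with $X_i\subseteq X_{i'}$ for $i\neq i'$, and is uniquely determined by these properties.

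\textbf{Local finiteness and coverage.} Fix $x\in X$. Since $\Oo_{X,x}$ is noetherian, the germ $X_x$ has only finitely many irreducible components $W_1,\dots,W_r$. On a sufficiently small neighborhood $U$ of $x$ in which each $W_l$ has a representative $W_l^U$, the regular locus $\Reg(X)\cap U$ decomposes as the disjoint union of the sets $W_l^U\setminus\bigcup_{l'\neq l}W_{l'}^U$ intersected with the smooth locus of $W_l^U$; each piece is a connected complex manifold (a connected manifold minus a thin analytic subset is still connected), so $\Reg(X)\cap U$ has at most $r$ connected components. Since every component of $\Reg(X)\cap U$ is contained in a unique $A_i$, only finitely many $A_i$ meet $U$, and by density of $A_i$ in $X_i$ the same holds for the family $\{X_i\}$, proving local finiteness. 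Local finiteness then gives $X=\overline{\Reg(X)}=\overline{\bigcup_i A_i}=\bigcup_i\overline{A_i}=\bigcup_i X_i$.

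\textbf{No inclusions.} Suppose $X_i\subseteq X_{i'}$ with $i\neq i'$. Choose $a\in A_i$: since $a\in\Reg(X)$, the germ $X_a$ is an irreducible manifold germ, and $A_i$ being open in $X$ near $a$ yields $(X_i)_a=X_a$. The hypothesis forces $(X_{i'})_a=X_a$, so $X_{i'}$ contains a neighborhood of $a$ in $X$, and in particular $X_{i'}\cap A_i\neq\varnothing$. On the other hand, $\overline{A_{i'}}\cap\Reg(X)=A_{i'}$: a point of the open set $\Reg(X)$ lying in $\overline{A_{i'}}$ admits a connected neighborhood in $\Reg(X)$ meeting $A_{i'}$, hence contained in the component $A_{i'}$. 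Consequently $X_{i'}\cap A_i\subseteq A_{i'}\cap A_i=\varnothing$, a contradiction.

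\textbf{Uniqueness.} For another such decomposition $X=\bigcup_j Y_j$, I argue $\{X_i\}=\{Y_j\}$. For each $a\in A_i$, irreducibility of $X_a$ applied to the finite union $X_a=\bigcup_j(Y_j)_a$ singles out a unique $j_0(a)$ with $(Y_{j_0(a)})_a=X_a$, and that $Y_{j_0(a)}$ contains a neighborhood of $a$ in $X$; local constancy of $j_0$ on the connected set $A_i$ yields a well-defined $j_0(i)$ and $X_i\subseteq Y_{j_0(i)}$. Running the symmetric argument inside each $Y_j$, the open set $Y_j^\circ:=Y_j\setminus\bigcup_{k\neq j}Y_k$ is dense in $Y_j$ (an irreducible complex analytic set is pure-dimensional, so its proper analytic subsets are nowhere dense and a locally finite union of such sets remains nowhere dense by the Baire property), so picking $b\in Y_j^\circ$ regular on $Y_j$ places $b$ in $\Reg(X)$, hence in some $A_i$ with $A_i\subseteq Y_j$ and $j_0(i)=j$. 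The main obstacle is upgrading $X_i\subseteq Y_{j_0(i)}$ to equality: here one uses that an irreducible complex analytic set $Y$ has pure dimension and connected regular locus, so any proper closed analytic subset of $Y$ has strictly smaller dimension (otherwise it would contain a non-empty open subset of $\Reg(Y)$, which being open and closed in a connected manifold would be all of $\Reg(Y)$, contradicting properness); but the construction just described forces $\dim X_i=\dim Y_{j_0(i)}$, so the inclusion is an equality and $i\mapsto j_0(i)$ is the sought bijection.
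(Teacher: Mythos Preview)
Your proof is correct and follows the same construction as the paper: take the closures of the connected components of $\Reg(X)$. You supply more detail than the text does on local finiteness (via the noetherian germ $\Oo_{X,x}$) and you prove the ``no inclusions'' clause explicitly, which the paper leaves implicit.

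The one place where you diverge is the uniqueness argument. The paper's version is shorter and entirely symmetric: from $X_i=\bigcup_j(Y_j\cap X_i)$ and irreducibility of $X_i$ one gets $X_i\subset Y_{j(i)}$; applying the same step with the roles reversed yields $Y_{j(i)}\subset X_{i'}$, hence $X_i\subset X_{i'}$ and $i=i'$ by the no-inclusions hypothesis. Your route through regular points and a dimension comparison works, but note that the ``unique $j_0(a)$'' claim is not quite right (two $Y_j$'s could a priori share the germ at $a$); what your argument really establishes is that for each $j$ there is some $i$ with $X_i=Y_j$, and then the no-inclusions property of $\{X_i\}$ finishes. The paper's symmetric argument avoids the dimension discussion and the appeal to pure-dimensionality of irreducible complex analytic sets altogether.
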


\begin{proof} The open set $\Reg(X)$ has countably many connected components. Their
  closures are locally finite, since
 there are only finitely many components entering in a neighbourhood of a
 point $x\in X$, and are irreducible analytic sets whose union is $X$. 

To prove
 the uniqueness assume $X=\bigcup_i X_i = \bigcup_j Y_j$ are different
 decompositions. Then for each $i, X_i = \bigcup_j (Y_j\cap X_i)$. Since $X_i$ is
 irreducible there is an index $j(i)$ such that $X_i\subset Y_{j(i)}$. By the
 same argument there is an $i'= i'(j(i))$ such that  $X_i\subset Y_{j(i)}
 \subset X_{i'}$, hence $i=i'$ and $X_i=Y_{j(i)}$. 
\end{proof}

We can caracterize irreducible analytic spaces from an algebraic point of view.

\begin{prop}Let  $(X,\Oo_X)$ be a reduced Stein space. Then $X$ is
  irreducible if and only if $(0)$ is a prime ideal in $\Oo_X(X)$. 
\end{prop}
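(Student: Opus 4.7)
The plan is to adapt the proof of Proposition \ref{sottosp.irriducibili} to the abstract Stein setting, the only genuine novelty being that we must produce the separating global sections without an ambient Stein open set in $\C^n$; for that we reuse the argument employed in Proposition \ref{globalequations}, which relies on Theorem B. Throughout, I use the fact that $X$ is reduced in the following form: if $f\in\Oo_X(X)$ vanishes at every point of $X$, then $f=0$ in $\Oo_X(X)$. Indeed, at each $x\in X$ the stalk of such an $f$ lies in $\Ii_{X,x}/\Ii_{X,x}=0$, so $f_x=0$ for all $x$ and hence $f=0$ as a global section.

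\textbf{Direction $(\Rightarrow)$ by contrapositive.} Suppose $X$ is reducible, $X=X_1\cup X_2$ with $X_1,X_2\subsetneq X$ closed analytic subsets. Pick $p_i\in X\setminus X_i$. Since $X_i$ is a closed analytic, hence Stein, subspace of the Stein space $X$, the set $X_i\cup\{p_i\}$ is again a Stein subspace of $X$, so by Cartan's Theorem B (exactly as used in Proposition \ref{globalequations}) there exists $f_i\in\Oo_X(X)$ with $f_i|_{X_i}=0$ and $f_i(p_i)=1$. Then $f_i\neq 0$ in $\Oo_X(X)$, while $f_1f_2$ vanishes at every point of $X=X_1\cup X_2$ and so equals $0$ in $\Oo_X(X)$. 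Hence $(0)$ fails to be prime.

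\textbf{Direction $(\Leftarrow)$.} Assume $X$ irreducible and take $f,g\in\Oo_X(X)$ with $fg=0$. The subsets $\ceros(f)=\{x\in X:f(x)=0\}$ and $\ceros(g)=\{x\in X:g(x)=0\}$ are closed analytic subsets of $X$, and the equation $fg=0$ forces $X=\ceros(f)\cup\ceros(g)$. By irreducibility, one of these, say $\ceros(f)$, equals $X$, i.e.\ $f$ vanishes at every point of $X$; by reducedness this gives $f=0$ in $\Oo_X(X)$. Thus $(0)$ is prime.

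The only potential obstacle is the existence of the separating functions $f_i$ in the forward direction, but this is precisely the ingredient already developed in Proposition \ref{globalequations} as a consequence of Theorem B for Stein spaces, so no further work is required.
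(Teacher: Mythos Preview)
Your proof is correct and follows essentially the same approach as the paper, which simply says that the argument of Proposition~\ref{sottosp.irriducibili} goes through verbatim after replacing $I(X)$ by the zero ideal. Your version is more explicit about where Theorem~B enters (via the construction in Proposition~\ref{globalequations}) and about how reducedness is used to pass from ``$f$ vanishes at every point'' to ``$f=0$ in $\Oo_X(X)$'', but the underlying idea is identical.
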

\begin{proof}It works the same as the proof of Proposition
  \ref{sottosp.irriducibili} replacing $I(X)$ by the ideal $(0)$. 
\end{proof}

\subsection{Irreducible components of a C-analytic set.}

Coming to the real setting once again we have to reduce to C-sets in order to
avoid the bad examples that can be found in \cite{wb, bc1,bc2}.
Let us see one such  example of a real analytic set which is not C-analytic.

\begin{example} Consider the irreducible cone  $X= \{(x,y,z): z(x^2+y^2) -x^3=0\}$ over the plane cubic $\{x^2+y^2-x^3=0\}$ in the plane $\{z=1\}$. The singular set of $X$ is the line $D= \{x=0, y=0\}$. But, being $X$ a cone, the union of the  regular part $S= X\setminus D $ and the vertex $(0,0,0)$ contains a sequence of distinct lines $\{D_k, k\geq 1\}$. Next consider all finite sequence $I$ of positive integers and define inductively on the number of terms in $I$ a translation $T_I$ with the following properties.
\begin{itemize}
 \item[(i)] $T_\varnothing$ is the identity.
 \item[(ii)] If $I = \{J,k\}$ one has:
\begin{enumerate} 
\item $T_I(\overline S) \cap \{x^2+y^2+z^2 \leq s(J)+k\} = \varnothing$, where $s(J)$ is the sum of the elements  in $J$.
\item $T_I(D) = T_{J}(D_k)$
      \end{enumerate}
\item[(iii)] The images $X_I = T_I(X)$ are all distinct.
\end{itemize}

Let $Y = \bigcup_I T_I(X)$. Then $Y$ is irreducible. Indeed $\overline{Y\setminus X_I}$ is not analytic because $X_I$ contains the tails of several $X_{I'}$. Nevertheless $Y$ contains countably many C- analytic subsets of dimension $2=$dim $Y$.  
In particular $Y$ is not C-analytic.  
\end{example}
\bigskip

Let $(Y,\Oo_Y)$ be a reduced Stein space endowed with an anti-involution $\sigma$
and let $X$ be its fixed part. Starting from the irreducible components of $Y$
we want to find a decomposition of $X$ into irreducible C-analytic subsets.


\begin{thm}\label{realcomponents} Let $X$ be a C-analytic set which is the  fixed point set of an anti-involution on a
  reduced Stein space. Then $X$ admits a unique decomposition into irreducible
  C-analytic sets.
\end{thm}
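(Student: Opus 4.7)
My approach is to lift the decomposition problem to $Y$ using Theorem \ref{decomposition}, exploit the action of $\sigma$ on complex irreducible components, and then descend to $X$, with swapped component pairs handled by induction on complex dimension.

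First, applying Theorem \ref{decomposition} to $Y$ yields the unique locally finite decomposition $Y=\bigcup_{i\in I}Y_i$ into irreducible complex analytic subsets with $Y_i\not\subset Y_{i'}$ for $i\neq i'$. Since $\sigma$ is an antiholomorphic homeomorphism, it sends each $Y_i$ to an irreducible complex analytic subset of $Y$, and by the uniqueness in Theorem \ref{decomposition} it permutes the family $\{Y_i\}$ via an involution $\tau:I\to I$. The orbits of $\tau$ are either singletons $\{i\}$ (where $\sigma(Y_i)=Y_i$) or swapped pairs $\{i,\tau(i)\}$.

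For a singleton, $\sigma|_{Y_i}$ is an anti-involution on the irreducible reduced Stein space $Y_i$, so by Theorem \ref{c-analytic} the set $X_i:=Y_i\cap X$ is C-analytic. For a swapped pair, any fixed point $p\in Y_i$ satisfies $p=\sigma(p)\in Y_{\tau(i)}$, so the pair contributes to $X$ only inside $Y_i\cap Y_{\tau(i)}$, a $\sigma$-invariant closed analytic subset of $Y$ strictly contained in $Y_i$ (since $Y_i$ and $Y_{\tau(i)}$ are distinct irreducible components), hence of strictly smaller complex dimension. Being closed in the Stein space $Y$ it is itself Stein, and the induction hypothesis on $\dim_\C Y$ (the case $\dim Y=0$ being trivial) provides a unique irreducible C-analytic decomposition of its fixed part. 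Combining the $X_i$ from singleton orbits with the components produced recursively from the pairs, and discarding any piece contained in another, yields the required locally finite decomposition of $X$.

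The principal obstacle is showing that each $X_i$ obtained from an invariant $Y_i$ is irreducible as a C-analytic set. My plan is first to establish Zariski density of $X_i$ in $Y_i$; if $X_i\subset\Sing(Y_i)$, the induction hypothesis applied to the $\sigma$-invariant analytic set $\Sing(Y_i)$ disposes of that case. Otherwise $X_i$ meets $\Reg(Y_i)$ at some fixed point $p$, where $\sigma$ linearizes in suitable complex local coordinates to ordinary complex conjugation, so $X_i$ locally contains a totally real submanifold of real dimension $\dim_\C Y_i$ through $p$. For any holomorphic $f$ on $Y_i$ vanishing on $X_i$, the $\sigma$-invariant functions $\tfrac{1}{2}(f+\overline{f\circ\sigma})$ and $\tfrac{1}{2i}(f-\overline{f\circ\sigma})$ (cf.\ Definition \ref{re e im}) vanish on this totally real submanifold, hence vanish on a neighborhood of $p$ by the identity principle, and then on all of $Y_i$ by irreducibility, so $f\equiv 0$. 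From Zariski density, any putative decomposition $X_i=A\cup B$ into proper C-analytic subsets leads, via the $\sigma$-invariant ideals of global holomorphic functions in $\Oo(Y_i)$ vanishing on $A$ and on $B$, to proper $\sigma$-invariant analytic subsets $\widetilde A,\widetilde B\subsetneq Y_i$ whose union still covers $X_i$; by irreducibility of $Y_i$ the union $\widetilde A\cup\widetilde B$ is then a proper analytic subset containing $X_i$, contradicting Zariski density. Uniqueness of the decomposition of $X$ finally follows by the standard mutual-inclusion argument employed in Theorem \ref{decomposition}.
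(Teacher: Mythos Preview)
Your approach is essentially the same as the paper's: decompose $Y$ into complex irreducible components, handle $\sigma$-swapped pairs via their intersection (recursing on dimension), and for each $\sigma$-invariant $Y_i$ either recurse on $\Sing(Y_i)$ when $X_i$ misses $\Reg(Y_i)$ or else take $X_i$ as a component. The paper organizes this as an iterative refinement $Y\supset Y_1\supset\cdots\supset Y^*$ rather than an explicit induction on $\dim_\C Y$, and it simply declares the fixed parts of the irreducible components of $Y^*$ to be the C-irreducible components without arguing irreducibility or uniqueness; your Zariski-density argument (via the totally real local model at a fixed regular point and the identity principle) fills in precisely that gap and is correct.
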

\begin{proof}
Let $(Y,\Oo_Y)$ be the Stein space and $\sigma$ its anti-involution. For any
irreducible component $Y_i$ of $Y$ do the following:
\begin{itemize}
\item If $\sigma(Y_i) = Y_j, j\neq i$, replace $Y_i\cup Y_j$ by $Y_i\cap Y_j$
  which is now invariant by $\sigma$.
\item If $Y_i$ is $\sigma$ invariant, consider $\sigma$ restricted to
  Reg$(Y_i)$. $X\cap \Reg(Y_i)$ can be empty. In this case replace
  $Y_i$ by its singular part $\Sing(Y_i)$. If $X\cap \Reg(Y_i)$ is not empty it is made by regular points of $X$: so, leave $Y_i$ unchanged.
  \end{itemize}  

This way we get a closed subspace $Y_1 \subset Y$ still endowed with an
antiinvolution having $X$ as fixed point set. We can apply again the same
procedure to the irreducible components of $Y_1$. After a finite number of
steps we are left with a Stein space $Y^*$ such that all its components are
$\sigma$ invariant and their fixed point set has the same real dimension as
the complex dimension of that component.
Then we can define the C-irreducible components as the fixed point sets of the
irreducible components of $Y^*$. They are uniquely determined by $X$.  
\end{proof}

\begin{remark}\hfill\label{doppioombrello}
\begin{itemize}
\item An irreducible C-analytic set can be reducible as analytic 
  set: consider for instance the zeroset $X$ of the polynomial $x^2 -
  (z^2-1)y^2$ in $\R^3$.
 The polynomial  $p = x^2-(z^2-1)y^2$  is irreducible as analytic  function and
generates $I(X)$.   Nevertheless, $X$ is  the union of two  analytic subspaces
$X_1$  and $  X_2$  that  are not  global,  each one  isomorphic  to a  Whitney
umbrella. More precisely we get

$$\displaystyle X_1\cap \{z > -\frac{1}{2}\} = \{x^2-(z^2-1)y^2 = 0 \} \cap \{z > -\frac{1}{2}\}$$
while 
$$\displaystyle X_1\cap \{z<0\} = \{x=0, y=0\} \cap  \{z<0\};$$
$$\displaystyle X_2\cap \{z <\frac{1}{2}\} =  \{x^2-(z^2-1) y^2 = 0\} \cap \{z <\frac{1}{2}\}$$
while 
$$\displaystyle X_2\cap \{z >0 \}  = \{x=0, y=0\}  \cap \{z> 0\}.$$
Both $X_1$ and $X_2$  are analytic sets but they are not C-analytic.
{\parindent=0pt
So, we get that  a C-analytic set  $X\subset \R^n$ is  {\it irreducible} if
$I(X)$ is a real prime ideal in $\Oo(\R^n)$.}
\item To define C-irreducible components of a C-analytic set $X$ we cannot avoid
  to consider a bigger space in which $X$ is embedded. Other approaches as the
  one of Whitney and Bruhat  also consider a real subspace $X$ of a real analytic
  manifold $M$. Then $M$ has a complexification $N$, well defined as a germ at
  $M$, and inside $N$ lies a smaller complex subspace $Y$ whose real part is $X$.\end{itemize}
\end{remark}

\section{Normalization.}\label{norm}

In this section we recall the notion of   normalization of a reduced complex analytic space and then we look at what can be generalised in the real case.
As we did in Chapter 1, we recall the main facts on normalization of complex spaces while we give  complete proofs for real C-analytic spaces.

\subsection{The normalization sheaf $\widecheck{\Oo}_X$.}

Let $(X,\Oo_X)$ be a reduced complex analytic space. Denote by $\mathcal{ M}$ its sheaf of germs of meromorphic functions. For every point $x\in X$ consider the set of  elements $h_x\in \mathcal{M}_x$ which are integral over $\Oo_{X,x}$, that is, they satisfy an equation

$$h_x^p+ a_{1,x}h_x^{p-1} +\cdots + a_{p,x} =0 .$$ 
Here $p\geq 1$ and $a_{1,x}, \ldots ,a_{p,x}$ are germs in $\Oo_{X,x}$.

The set of integral elements is a ring $\check{\Oo}_{X,x}$ and one has $\Oo_{X,x}\subset \check{\Oo}_{X,x} \subset \mathcal{M}_x$. Also one knows from commutative algebra that $\check{\Oo}_{X,x}$ is the largest extension of $\Oo_{X,x}$ which is a finitely generated $\Oo_{X,x}$-submodule of $\mathcal{M}_x$.

Note that if $h\in \mathcal{M}(U)$ is a meromorphic function on an open set $U\subset X$ which verifies at some point $x\in U$

$$h_x^p+ a_{1,x}h_x^{p-1} +\cdots + a_{p,x} =0 $$ 
 
then, there is a neighbourhood $V\subset U$ of $x$ and holomorphic functions $a_1, \ldots , a_p \in \Oo_X(V)$ such that on $V$ one has  
$$h^p+ a_{1}h^{p-1} +\cdots + a_{p} =0 $$
Indeed if $a_1, \ldots , a_p \in \Oo_X(V)$ are representative of $a_{1,x}, \ldots ,a_{p,x}$ on a suitable open set $V$, the meromorphic function $h^p+ a_{1}h^{p-1} +\cdots + a_{p} $ has germ $0$ at $x$, hence it is identically $0$ on an open connected neighbourhood of $x$.

This means that the relation of integral dependence for meromorphic germs induces a relation of integral dependence for meromorphic functions on a small neighbourhood.

Define $\check{\Oo}_X = \bigcup_{x\in X} \check{\Oo}_{X,x}$. By the remark above it is an open subset of the sheaf $\mathcal M$, hence it is an analytic subsheaf containing $\Oo_X$. It is called the {\em normalization of the sheaf $\Oo_X$} or the {\em normalization sheaf}. 

\begin{defn} We say that a complex analytic space $(X,\Oo_X)$  is {\em normal} at a point $x\in X$ if the germ $X_x$ is irreducible and the ring $\Oo_{X,x}$ is integrally closed in its field of quotient, i.e. $\Oo_{X,x} = \check{\Oo}_{X,x}$.
\end{defn} 

\begin{remark}\label{normalgerms} 
Assume the germ at $x$ of an analytic space $(X,\Oo_X)$ to be irreducible. Then we get $\Oo_{X,x} \subset \check{\Oo}_{X,x}\subset \mathcal M_x$. This proves that 
$\Oo_{X,x} $ and $\check{\Oo}_{X,x}$ are both integral domains and share the same quotient field. Moreover 
the local algebra $\check{\Oo}_{X,x}$  is the local algebra of a well defined normal  analytic germ $\check{ X}_x$ 
(see Remark \ref{algebraanalitica} in Chapter 1). The inclusion homomorphism is injective, hence we get a surjective finite analytic map $p:\check X_x\to X_x$. The germ $\check X_x$ is called the {\em normalization} of the germ $X_x$.
When $X_x = X_1 \cup \cdots \cup X_l$ is reducible, each irreducible component $X_i$ gets a normalization $\check {X_i}$ and the normalization of $X_x$ is  defined as the disjoint union of $ \check{ X_1},  \ldots,  \widecheck{X_l}$.
\end{remark} 

As a matter of fact  the normalization sheaf is coherent as proved by Oka. The proof  can be found in \cite{c1} and \cite{gare1}. 
This implies the following remarkable fact.

\begin{cor}\label{okacor}
The set $Y= \{x\in X: X \,\mbox{\rm is not normal at}\, x\}$ is an analytic subset of $X$ and $Y\subset \Sing(X)$
\end{cor}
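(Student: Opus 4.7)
The strategy is to realize $Y$ as the support of a coherent quotient sheaf and then check non-normality fails at regular points.

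First I would observe that $Y$ is exactly the support of the quotient sheaf $\widecheck{\Oo}_X/\Oo_X$. For an irreducible germ $X_x$, the condition $\Oo_{X,x}=\widecheck{\Oo}_{X,x}$ is by definition equivalent to normality at $x$. For a reducible germ, $\widecheck{\Oo}_{X,x}$ already contains the idempotents coming from the distinct irreducible components of the normalization (cf.\ Remark \ref{normalgerms}), hence it strictly contains $\Oo_{X,x}$, so $x$ is again in the support and, by definition, non-normal. Thus $Y=\supp(\widecheck{\Oo}_X/\Oo_X)$.

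Next I would invoke Oka's coherence theorem for the normalization sheaf, which is cited just above the corollary: $\widecheck{\Oo}_X$ is a coherent $\Oo_X$-module. Since $\Oo_X$ is itself $\Oo_X$-coherent and sits inside $\widecheck{\Oo}_X$ as a subsheaf of finite type, the quotient $\widecheck{\Oo}_X/\Oo_X$ is coherent. The support of a coherent analytic sheaf on $X$ is an analytic subset (locally it is cut out by the annihilator ideal, which is of finite type). This gives that $Y$ is an analytic subset of $X$.

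Finally, for the inclusion $Y\subset\Sing(X)$, I would argue that every regular point is normal. If $x\in\Reg(X)$, then by Proposition \ref{localmanifold} the germ $X_x$ is a complex manifold germ, in particular irreducible. Moreover, by the characterization given after Proposition \ref{localmanifold}, $\Oo_{X,x}$ is a regular local ring, hence a unique factorization domain, and therefore integrally closed in its field of fractions. Consequently $\widecheck{\Oo}_{X,x}=\Oo_{X,x}$ and $x$ is a normal point, so $x\notin Y$.

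The only non-trivial input is the coherence of $\widecheck{\Oo}_X$; once that is accepted, the rest reduces to bookkeeping about the definition of normal point and the standard fact that regular local rings are integrally closed. The most delicate point to make precise is the identification $Y=\supp(\widecheck{\Oo}_X/\Oo_X)$ at reducible germs, which requires noting that the integral closure of a reduced but reducible analytic local ring inside its total ring of fractions genuinely strictly contains the ring (one picks up the characteristic functions of the irreducible components).
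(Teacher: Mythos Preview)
Your argument is correct and follows essentially the same route as the paper: identify $Y$ with the support of the coherent sheaf $\widecheck{\Oo}_X/\Oo_X$ via Oka's theorem, and use that regular local rings are integrally closed. Your added care about reducible germs is a genuine improvement over the paper's terse proof, since the definition of normality requires irreducibility and one must check separately that $\widecheck{\Oo}_{X,x}\neq\Oo_{X,x}$ there.
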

\begin{proof}$Y$ is the set of points where $\check{\Oo}_{X,x}$ is different from 
$\Oo_{X,x}$, that is, it is the support of the coherent sheaf \  $\displaystyle \frac{\check{\Oo}_X}{\Oo_X}$. Hence it is an analytic set. Moreover $X$ is normal at any regular point, since $\Oo_{\C^k}$ is integrally closed in its field of fractions.
\end{proof}

Next we would like to find a global normalization of a given complex analytic space. Let us look more deeply at the local situation.

Let $(X,\Oo_X)$ be an analytic space and assume  its germ at the point $a\in X$ to be irreducible. The germ $\check{X_a}$ is a germ at a point $a'$ of a complex analytic space which is normal at $a'$. We call it $\check X$. Moreover there is a  map $p:\check X \to X$. Since $p$ is surjective from  $\check{ X_{a'}}$ to the germ $X_a$, up to shrink the neighbourhoods of $a$ and $a'$, we can assume $p$ to be surjective. Also by Corollary \ref{okacor} we can assume $\check X$ to be normal, since it is normal at $a'$.    

One has the following properties of the map $p$.

\begin{itemize}
\item[(i)] $p$ is proper, 
\item[(ii)] the fibers of $p$  are finite,
\item[(iii)] there is an open dense set $X_0\subset X\cap U$ (containing all regular points of $X\cap U$) such that the fiber is a singleton over $X_0$.  
\end{itemize}  

\begin{prop}\label{prop2} 
Let $(Y,\Oo_Y), (X,\Oo_X)$ be analytic spaces and  $p:Y\to X$ be a surjective analytic map. Assume $Y$ is normal and of constant dimension and $p$  satisfies properties (i), (ii), (iii) above.
  Then, if $p^{-1}(x) = \{x_1,\ldots, x_l\}$, the irreducible components $X^i_x$ of $X_x$ are precisely the images $p(Y_{x_i})$. Moreover, the restriction 
$p:Y_{x_i} \to X^i_x$ identifies $Y_{x_i}$ with the normalization of the germ $X^i_x$. 
\end{prop}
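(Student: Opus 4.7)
First I would exploit properness of $p$ to localise the problem. Since $p$ is proper with finite fibres, for a small enough open neighbourhood $U$ of $x$ in $X$ the preimage $p^{-1}(U)$ decomposes as a disjoint union $\bigsqcup_{i=1}^l U_i$, where each $U_i$ is an arbitrarily small neighbourhood of $x_i$ in $Y$. Restricting the germ, this gives at once the set-theoretic decomposition $X_x=\bigcup_{i=1}^l p(Y_{x_i})$. Since $Y$ is normal and of constant dimension $n$, each local ring $\Oo_{Y,x_i}$ is an integrally closed domain, hence each germ $Y_{x_i}$ is irreducible of dimension $n$. Because $p|_{U_i}$ is proper with finite fibres, it is a finite holomorphic map, so $p(Y_{x_i})$ is an analytic germ of the same dimension $n$, and the image of an irreducible germ under a finite surjective holomorphic map is still irreducible.

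Next I would check that the analytic germs $p(Y_{x_1}),\ldots,p(Y_{x_l})$ are pairwise not contained in one another, which together with the previous step will identify them with the irreducible components $X^i_x$ by the uniqueness statement of Theorem \ref{decomposition}. Suppose $p(Y_{x_i})\subset p(Y_{x_j})$ with $i\neq j$; since both germs are irreducible of the same dimension $n$, they must coincide. Shrinking the representatives so that $U_i\cap U_j=\varnothing$, every point $y$ in a representative of this common image would then have at least one preimage in $U_i$ and one in $U_j$, so $\#p^{-1}(y)\geq 2$. However, property (iii) provides an open set $X_0$ containing all regular points of $X$ and over which $p$ is one-to-one; because $\Reg(X)$ is dense in the irreducible germ $p(Y_{x_i})$ (this germ has the same dimension as $X$ near $x$, and the points of $p(Y_{x_i})$ lying in no other component and regular in $p(Y_{x_i})$ are regular in $X$), the intersection $X_0\cap p(Y_{x_i})$ is dense in $p(Y_{x_i})$, producing the contradiction. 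Thus the $p(Y_{x_i})$ are exactly the irreducible components $X^i_x$ of $X_x$.

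For the normalization statement I would work at the level of local rings. The restriction $p\colon Y_{x_i}\to X^i_x$ is a finite surjective holomorphic map between irreducible germs, and it induces an injective ring homomorphism $p^*\colon\Oo_{X^i,x}\hookrightarrow\Oo_{Y,x_i}$ making $\Oo_{Y,x_i}$ a finitely generated $\Oo_{X^i,x}$-module (finiteness of $p$). Using the density argument above, $p$ restricts to a bijection between dense open subsets of $Y_{x_i}$ and of $X^i_x$, so $p$ is birational and the fields of fractions of $\Oo_{X^i,x}$ and $\Oo_{Y,x_i}$ coincide. Since $\Oo_{Y,x_i}$ is integrally closed (by normality of $Y$) and is a finite extension of $\Oo_{X^i,x}$ inside this common field of fractions, it equals the integral closure $\check\Oo_{X^i,x}$. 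By Remark \ref{algebraanalitica}, an analytic germ is determined by its local algebra, so $Y_{x_i}$ is isomorphic to the normalization $\check{X^i}_x$ and $p$ coincides with the normalization map.

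The main technical obstacle, in my view, is the density assertion $\ol{X_0\cap X^i_x}=X^i_x$ needed in step two and reused in step three: it requires carefully reducing to regular points of $X$ lying only on the component $X^i$, and implicitly uses that $X$ itself has pure dimension $n$ near $x$ (which follows from the existence of the finite surjective map $p$ from the pure-dimensional space $Y$). Once this density is secured, the rest of the argument is a rather formal combination of the local algebra of normalization and the uniqueness of the decomposition into irreducible components.
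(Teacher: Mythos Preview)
Your proposal is correct and follows essentially the same route as the paper: show each $p(Y_{x_i})$ is an irreducible analytic germ of full dimension, use property~(iii) to exclude $p(Y_{x_i})\subset p(Y_{x_j})$ for $i\neq j$, and then identify $\Oo_{Y,x_i}$ with $\check\Oo_{X^i,x}$ via the finite birational extension $p^*$. The paper's version is terser, in particular it does not spell out the density of $X_0$ in each $p(Y_{x_i})$ that you rightly flag as the delicate point, and it compresses your birationality argument into the single phrase ``since $p$ is finite, the degree of the bigger field over the smaller one has to be $1$''---your unpacking of this via the generic bijectivity from~(iii) is the honest justification.
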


\begin{proof} Since $p$ is proper, the image $p(Y_{x_i})$ is an irreducible analytic subset of $X_x$ of the same dimension, hence an irreducible component. It  never happens $p(Y_{x_i}) \subset p(Y_{x_j})$ when $j\neq i$, because this would contradict property (iii) of $p$. 
As $p$ is surjective, all irreducible components of $X_x$ are obtained like this.
The last assertion can be proved in this way. The surjection $p:Y_{x_i}\to X^i_x$ induces an inclusion of the corresponding analytic algebras, hence an inclusion of the corresponding quotient fields. Since $p$ is finite, the degree of the bigger field over the smaller one has to be $1$. Hence both fields coincide. 
Since $ \Oo_{Y,x_i}$ is integrally closed, because it is normal,  it contains the integral closure of $\Oo_{X^i,x}$. Now if $R$ is a domain with quotient field $F$, its integral closure $\check R$ is the biggest extension that is a finite $R$-module.
Since $ \Oo_{Y,x_i}$ is a finite extension of  $\Oo_{X^i,x}$, we conclude $ \Oo_{Y,x_i}= \check{\Oo}_{X^i,x}$.
\end{proof} 

The following proposition gives uniqueness of normalization up to isomorphism.

\begin{prop}\label{prop3}
Let $p_1:Y_1 \to X, p_2:Y_2 \to X$ satisfy Proposition \ref{prop2}. 
Then there exists a unique isomorphism $f:Y_1\to Y_2$ such that $p_2\circ f =p_1$.
\end{prop}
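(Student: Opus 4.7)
The plan is to build $f$ first on a dense open subset where both projections are biholomorphisms, verify uniqueness there, and then extend using the universal property of the local normalization described in Proposition~\ref{prop2}.

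By property (iii) of Proposition~\ref{prop2} applied to both $p_1$ and $p_2$, there is an open dense subset $X_0\subset X$ (take, for instance, the intersection of the two single-fiber loci, which remains dense since the complements are nowhere dense analytic sets) such that $Y_i^0:=p_i^{-1}(X_0)$ is open and dense in $Y_i$ and $p_i\colon Y_i^0\to X_0$ is a biholomorphism for $i=1,2$. Set
$$
f_0:=p_2^{-1}\circ p_1\colon Y_1^0\longrightarrow Y_2^0,
$$
which is a biholomorphism with $p_2\circ f_0=p_1|_{Y_1^0}$. For uniqueness, note that any map $f\colon Y_1\to Y_2$ with $p_2\circ f=p_1$ necessarily agrees with $f_0$ on $Y_1^0$, because $p_2$ is injective on $Y_2^0$; then on each irreducible germ of $Y_1$ two candidates agree on a dense open set, so they coincide by the identity principle on normal (hence locally irreducible) spaces.

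For existence, I would extend $f_0$ to all of $Y_1$ germ-by-germ. Fix $y_1\in Y_1$, set $x=p_1(y_1)$, and let $X^i_x\subset X_x$ be the irreducible component with $p_1((Y_1)_{y_1})=X^i_x$. By Proposition~\ref{prop2} applied to $p_2$, there is a unique point $y_2\in p_2^{-1}(x)$ with $p_2((Y_2)_{y_2})=X^i_x$, and both $(Y_1)_{y_1}$ and $(Y_2)_{y_2}$ are identified with the normalization $\widecheck{X^i_x}$ of $X^i_x$. Concretely, $\Oo_{Y_1,y_1}$ and $\Oo_{Y_2,y_2}$ both equal the integral closure of $\Oo_{X^i,x}$ in its field of fractions; hence there is a unique local isomorphism of analytic algebras over $\Oo_{X^i,x}$, which by Remark~\ref{algebraanalitica} induces a unique biholomorphism of germs $f_{y_1}\colon (Y_1)_{y_1}\to(Y_2)_{y_2}$ satisfying $p_2\circ f_{y_1}=p_1$ and agreeing with $f_0$ wherever both are defined.

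Choose a representative of $f_{y_1}$ on an open neighbourhood $U^{y_1}\subset Y_1$ of $y_1$; this is possible because $p_1$ and $p_2$ are proper with finite fibres, so a sufficiently small neighbourhood of $x$ in $X$ has preimage in $Y_2$ that decomposes as a disjoint union of neighbourhoods of the fibre points. The local representatives glue to a globally defined holomorphic map $f\colon Y_1\to Y_2$ by the uniqueness established in the previous paragraph, and $p_2\circ f=p_1$ by construction. Applying the same argument with the roles of $Y_1$ and $Y_2$ exchanged produces a map $g\colon Y_2\to Y_1$ with $p_1\circ g=p_2$; by the uniqueness clause, $g\circ f=\id_{Y_1}$ and $f\circ g=\id_{Y_2}$, so $f$ is the desired isomorphism. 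The main subtlety is precisely the gluing step: it relies on Proposition~\ref{prop2} identifying the fibre points of $p_2$ with the irreducible components of $X_x$, together with the uniqueness of the normalization of an irreducible germ up to $\Oo_{X,x}$-isomorphism, both of which are already in hand.
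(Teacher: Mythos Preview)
Your proof is correct and follows essentially the same route as the paper: both define $f$ by matching fibre points via the irreducible components of $X_x$ (Proposition~\ref{prop2}) and invoke uniqueness of the normalization of each irreducible germ. The only minor difference is in the final step: the paper argues that $f$ is an isomorphism because it extends a biholomorphism between dense open subsets of normal spaces, whereas you construct the inverse $g$ symmetrically and use the uniqueness clause to get $g\circ f=\id$ and $f\circ g=\id$; your version is a bit more explicit but the content is the same.
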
 
\begin{proof}
Both $p_1,p_2$ are isomorphisms on the inverse image of an open dense subset of $X$. Only   the points where $X$ is reducible are left, because if $x\in X$ is a singular point and $X_x$ is irreducible, both $Y_1$ and $Y_2$ induce on $x$ the germ of the normalization of $X_x$. Take $x_1 \in Y_1$. Then $p_1((Y_1)_{x_1})$ is an irreducible component of the germ $X_{p_1(x_1)}$. There is a unique $x_2\in Y_2$ such that $p_2(x_2) = p_1(x_1)$ and $p_2 ((Y_2)_{x_2})$ is the same irreducible component. Define $f(x_1) =x_2$. It is straightforward to check that $f$ is well defined  and satisfies  $p_2\circ f =p_1$. It is an isomorphism because extends an isomorphism on an open dense subset of the normal space $Y_1$  onto an open dense subset of the normal space $Y_2$.
\end{proof}

\begin{defn}
Let $(X,\Oo_x)$ be an analytic space with costant dimension at all its points. A {\em normalization} of the space  $(X,\Oo_X)$ is a normal analytic space $(Y, \Oo_Y)$ together with an analytic map $p:Y \to X$ satisfying the hypothesis of Proposition \ref{prop2}. 
\end{defn}

By Proposition \ref{prop3} if $X$ has a normalization, it is unique up to isomorphisms.   

\begin{prop}\label{prop4}
Any analytic space  $(X,\Oo_X)$ of pure dimension admits a normalization.
\end{prop}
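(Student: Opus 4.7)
The plan is to construct $Y$ by gluing together local normalizations, using as the main input the coherence of the sheaf $\check{\Oo}_X$ (Oka's theorem, quoted just before Corollary \ref{okacor}) together with the germ-wise normalization described in Remark \ref{normalgerms} and the uniqueness statement of Proposition \ref{prop3}.

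First I would cover $X$ by a locally finite family of open sets $\{U_\alpha\}$ small enough that on each $U_\alpha$ the coherent $\Oo_X$-module $\check{\Oo}_X|_{U_\alpha}$ is generated by finitely many sections $h_1^\alpha,\ldots,h_{k_\alpha}^\alpha\in\check{\Oo}_X(U_\alpha)$, each $h_i^\alpha$ satisfying a monic integral equation $(h_i^\alpha)^{p_i}+a_{i,1}(h_i^\alpha)^{p_i-1}+\cdots+a_{i,p_i}=0$ with $a_{i,j}\in\Oo_X(U_\alpha)$; by coherence we may also pick finitely many polynomial relations among the $h_i^\alpha$ with coefficients in $\Oo_X(U_\alpha)$ that generate, at every point, the relations between the $h_i^\alpha$. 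Thinking of the $h_i^\alpha$ as coordinates on a fibre $\C^{k_\alpha}$, these equations cut out a closed complex analytic subspace $\widetilde{U}_\alpha\subset U_\alpha\times\C^{k_\alpha}$ together with the projection $p_\alpha:\widetilde{U}_\alpha\to U_\alpha$, which is proper (its fibre over $x$ is compact because each coordinate satisfies a monic equation) and finite (its fibres are zero-dimensional for the same reason). By construction, for each $x\in U_\alpha$ the stalk $p_{\alpha*}\Oo_{\widetilde{U}_\alpha,x}=\check{\Oo}_{X,x}$, so fibrewise over $x$ we recover exactly the normalization of the germ $X_x$ described in Remark \ref{normalgerms}.

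Next I would verify that each $\widetilde{U}_\alpha$ is normal and that properties (i)--(iii) of Proposition \ref{prop2} hold for $p_\alpha$: (i) and (ii) follow from the previous paragraph; for (iii), take $X_0=U_\alpha\setminus(\Sing(X)\cup\{x:X_x\text{ reducible}\})$, which is open and dense in $U_\alpha$ because $\Oo_X$ and $\check{\Oo}_X$ coincide at regular points (whence $\{x:\Oo_{X,x}\neq\check{\Oo}_{X,x}\}\subset\Sing(X)$ by Corollary \ref{okacor}) and on this open dense set the germ is irreducible, forcing the fibre of $p_\alpha$ to be a singleton. On overlaps $U_\alpha\cap U_\beta$, the two constructions both satisfy the hypotheses of Proposition \ref{prop2} over $U_\alpha\cap U_\beta$, so by Proposition \ref{prop3} there is a unique isomorphism $\theta_{\alpha\beta}:p_\alpha^{-1}(U_\alpha\cap U_\beta)\to p_\beta^{-1}(U_\alpha\cap U_\beta)$ with $p_\beta\circ\theta_{\alpha\beta}=p_\alpha$; the uniqueness in Proposition \ref{prop3} forces the cocycle condition $\theta_{\beta\gamma}\circ\theta_{\alpha\beta}=\theta_{\alpha\gamma}$ on triple overlaps.

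I would then apply the gluing lemma (Proposition \ref{incollamentiT2}) to the locally compact spaces $\widetilde{U}_\alpha$ and the transition maps $\theta_{\alpha\beta}$, using as guiding paracompact space the topological disjoint union of the fibres (or, more concretely, starting from the underlying topological structure of $X$ and refining via the finite fibres). This produces a Hausdorff paracompact space $Y$ together with a proper finite analytic map $p:Y\to X$ restricting to $p_\alpha$ on each piece. The space $Y$ is normal because at each $y\in Y$ the stalk $\Oo_{Y,y}$ equals the integrally closed ring $\check{\Oo}_{X^i,x}$ corresponding to the irreducible component $X^i_x$ singled out by $y$, and properties (i)--(iii) of the definition of normalization follow from the corresponding local statements together with properness.

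The main obstacle will be the gluing step: the pieces $\widetilde{U}_\alpha$ are not subsets of a common ambient space and near singular reducible points one has several sheets lying over the same point of $X$, so one must check that Proposition \ref{incollamentiT2} really applies, i.e.\ that the $\theta_{\alpha\beta}$ extend to genuine homeomorphisms of relatively compact open neighbourhoods of $p_\alpha^{-1}(\overline{U_\alpha\cap U_\beta})$ and that the resulting identification space is Hausdorff. Both facts ultimately come from the canonicity of the local normalization (Proposition \ref{prop3}) combined with the finiteness of the fibres of $p_\alpha$, which prevents distinct sheets from accidentally being glued together.
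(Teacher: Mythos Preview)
Your approach is essentially the paper's: build local normalizations over a locally finite open cover, invoke Proposition~\ref{prop3} on overlaps to obtain unique transition isomorphisms satisfying the cocycle condition, and glue. The paper's proof is much terser---it cites the germ-wise construction (Remark~\ref{normalgerms}) directly rather than your explicit embedded model in $U_\alpha\times\C^{k_\alpha}$, and it does not route the gluing through Proposition~\ref{incollamentiT2}; your instinct that this lemma does not fit verbatim is correct (there is no pre-existing guiding space to play the role of its $X$, since $Y$ is precisely what you are constructing), but the Hausdorff property of the glued space follows more directly from properness of the $p_\alpha$ and the uniqueness in Proposition~\ref{prop3}, which is all that is really needed.
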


\begin{proof}
Let $a\in X$ be a point. Both when $X_a$ is irreducible or when it is reducible we can construct the normalization  of $X_a$ and we know that it works in a neighbourhood of $a$. We can find a locally finite open covering of $X$ by open sets $U_\alpha$ such that for each $\alpha$ there is a normal analytic space $(Y_\alpha, \Oo_\alpha)$ together with a map $p_\alpha: Y_\alpha\to U_\alpha$ such that it induces the normalization of $X_a$ for all $a\in U_\alpha$, that is, $p_\alpha$ verifies Proposition \ref{prop2}. By Proposition \ref{prop3} all these normal spaces glue together in a unique way to form a normal analytic space $(Y,\Oo_Y)$. Also the maps $p_\alpha$ glue together to a map $p:Y\to X$ satisfying Proposition \ref{prop2}.    
\end{proof}

\subsection{Properties of the normalization.}

We list some properties of the normalization of a complex analytic space of pure
 dimension.
Let $p: Y\to X$ be a normalization of $X$ and $\Sing(X)$ its singular part.

\begin{itemize}
\item $\Sing(Y)$ has codimension at least $2$ in $Y$.
\item  $p: Y\setminus p^{-1}(\Sing(X)) \to X\setminus \Sing(X)$ is a biholomorphism between open dense subsets of $Y$ and $X$.
\item for each open  set $V\subset X$ there is a natural isomorphism
between $\Gamma(V, \check{\Oo}_X)$ and $\Gamma(p^{-1}(V),\Oo_Y)$.  The space  
$\Gamma(V, \check{\Oo}_X)$ is the ring of {\em weakly holomorphic functions}, that is, holomorphic functions   on $V \setminus \Sing(X)$ that are bounded in a neighbourhood of any singular point.
\item (Riemann extension theorem.) Each weakly holomorphic function on $V$ becomes holomorphic in $p^{-1}(V)$ when composed with $p$.
\item If $X_x$ is irreducible, $p^{-1}(x)$ is only one point in $Y$, while if $X_x$ has several irreducible components, the number of such components coincides with the cardinality of  $p^{-1}(x)$.
\end{itemize}

\subsection{The real case.}

The definition of normal point in a C-analitic space is analogous to the one above. A point is normal when  its local algebra is integrally closed in its total ring of fractions.

The first remark is that the properties of the normalization of a complex analytic space do not hold even for analytic manifolds. For instance $\R^2$ is normal but there are rational functions that are bounded but not analytic, for instance, $\displaystyle \frac{x^4+y^4}{x^2 +y^2}$, so Riemann extension theorem fails in the real case. Hence, we cannot hope to get a notion of real normalization enjoying the good properties of the complex one. Nevertheless since we deal with C-analytic spaces we can consider the normalization of their complexifications and look to what can be said.

Let $(X,\Oo_X)$ be a C-analytic space. Assume $X\subset \R^n$ and $\Oo_X$ is its well reduced structure, as defined in Chapter 1. Hence its complexification $(Y, \Oo_Y)$ is a reduced Stein space. Let $(\check{ Y}, \Oo_{\check{ Y}})$ be the normalization of  $(Y, \Oo_Y)$. We prove first that the anti-involution $\sigma$ on $Y$ (whose fixed part is $X$) lifts to an anti-involution on $\check {Y}$.
   
\begin{prop}\label{lifting} 
Let $(Y,\Oo_Y)$ be a reduced Stein space endowed with an anti-involution $\sigma$
with not empty fixed part. Then there is an anti-involution $\check {\sigma}$ on $\check {Y}$ which makes commutative the following diagram 

$$
\begin{xy}
\xymatrix{
\check{ Y}\ar[r]^{\check{\sigma}} \ar[d]^p&\check{Y}\ar[d]^p\\
Y \ar[r]^{\sigma}&Y
}
\end{xy}
$$
where $p$ is the projection.
\end{prop}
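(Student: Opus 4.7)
The plan is to combine the uniqueness statement of Proposition \ref{prop3} with a \emph{conjugate-structure} trick: by replacing the structure sheaf with its complex conjugate, the antiholomorphic involution $\sigma$ becomes a genuine holomorphic map, so the existence and the involution property of the lift both become applications of the universal property of normalization.

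I would first introduce the conjugate ringed space $Y^c$ with the same underlying topological space as $Y$ but structure sheaf $\ol{\Oo_Y}$ (that is, $f\in\Oo_{Y^c}(U)$ iff $\ol f\in\Oo_Y(U)$). Then $Y^c$ is again a reduced Stein space and, tautologically, $\sigma:Y\to Y^c$ is a biholomorphism. The first step of the proof would be to check that $(\check Y)^c$ with the same projection $p$ is a normalization of $Y^c$: an equation of integral dependence $h^p+a_1h^{p-1}+\cdots+a_p=0$ in $\mathcal M_y$ becomes, after coordinate-wise complex conjugation, a valid equation of integral dependence in the conjugated rings, so $\check\Oo_{Y^c}=\ol{\check\Oo_Y}$; hence $(\check Y)^c$ is normal of pure dimension and $p:(\check Y)^c\to Y^c$ satisfies the three conditions preceding Proposition \ref{prop2}.

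Next I would consider the composition $\sigma\circ p:\check Y\to Y^c$, which is holomorphic, finite, surjective and a biholomorphism off $\Sing(Y^c)$, hence also endows $\check Y$ with the structure of a normalization of $Y^c$. By the uniqueness in Proposition \ref{prop3} applied to the two normalizations $p$ and $\sigma\circ p$ of $Y^c$, there is a unique holomorphic isomorphism $\check\sigma:\check Y\to(\check Y)^c$ with $p\circ\check\sigma=\sigma\circ p$. Read as a self-map of the underlying set $\check Y$, this $\check\sigma$ is antiholomorphic and makes the required diagram commute. To confirm that $\check\sigma$ is an involution, I would note that $\check\sigma\circ\check\sigma$ is a holomorphic self-map of $\check Y$ lifting $\sigma\circ\sigma\circ p=p$, so the uniqueness clause of Proposition \ref{prop3} applied with both legs equal to $p$ forces $\check\sigma\circ\check\sigma=\id_{\check Y}$.

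The main subtlety is purely the conjugate-structure bookkeeping in the first step: one must verify that $\Oo_{Y^c}=\ol{\Oo_Y}$ really is a sheaf of $\C$-algebras making $Y^c$ a reduced Stein space, and that integral closure intertwines with conjugation. Both facts follow formally from the observation that complex conjugation is a ring involution of $\C$ preserving integral-dependence relations; once they are recorded, everything else reduces to the holomorphic universal property of normalization already established in Propositions \ref{prop2} and \ref{prop3}. The hypothesis that the fixed part is non-empty is not used in the construction itself; it serves only to make the statement non-vacuous.
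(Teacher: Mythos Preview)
Your proof is correct and takes a genuinely different route from the paper. The paper defines $\check\sigma$ explicitly on points, using the description of $\check Y$ as pairs $(Y_i,y)$ with $Y_i$ an irreducible component of $Y_y$: one sets $\check\sigma(Y_i,y)=(\sigma(Y_i),\sigma(y))$, checks this is an involutive homeomorphism, and then verifies antiholomorphy by hand on the regular locus, invoking the Riemann extension theorem to pass to all of $\check Y$. Your conjugate-structure trick instead packages $\sigma$ as a holomorphic isomorphism $Y\to Y^c$ and reduces everything to two invocations of the uniqueness of normalization (Proposition~\ref{prop3}), so antiholomorphy and the involution property come for free. This is cleaner and more conceptual; the paper's approach has the minor advantage of giving an explicit pointwise formula for $\check\sigma$, which is convenient in the proof of Proposition~\ref{cohe}, but that formula can equally well be recovered from your construction via the commutativity $p\circ\check\sigma=\sigma\circ p$ and the description of fibers of $p$. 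Your closing remark is right that the non-empty fixed part hypothesis plays no role in either argument.
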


\begin{proof}
Recall that points in $\check{Y}$ are in one to one correspondence with couples $(Y_i,y)$ where $Y_i$ is an irreducible component of the germ $Y_y$. Since $\sigma(Y_i)$ is an irreducible component of $Y_{\sigma(y)}$, we can define $\check{ \sigma}(Y_i,y) = (\sigma(Y_i), \sigma(y))$. Now it holds that $\check{ \sigma}$ is an involutive homeomorphism of $\check {Y}$. To see that it is antiholomorphic consider an antiholomorphic function $h$ on $Y$ and an  open set $V\subset Y\setminus \Sing(Y)$. Let $p:\check {Y}\to Y$ be the projection. Since $p$ is a biholomorphism on $\check{ Y} \setminus p^{-1}(\Sing(Y))$ and $h \circ \sigma$ is holomorphic on $V$, we get $h\circ \sigma\circ p$ is holomorphic on $p^{-1}(V)$. But $\sigma\circ p= \check{ \sigma}$, so $ h\circ  \check{ \sigma}$ is holomorphic on $ p^{-1}(V)$ and $\check{ \sigma}$ is antiholomorphic. This argument works on $\check{ Y}\setminus p^{-1}(\Sing Y)$. Since $\check{ Y}$ is normal, the holomorphic function $ h\circ  \check{ \sigma}$ extends holomorphically to $\check {Y}$ because it is bounded on a neighbourhood of any point, so $\check{\sigma}$ is antiholomorphic as required.      
\end{proof}

\begin{remark}\hfill
\begin{itemize}
\item In general the fixed  point set $\check{Y}^{\check {\sigma}}$ of
  $\check {\sigma}$ is strictly  contained in $p^{-1}(Y^\sigma)$, that
  is,   $p:  \check{   Y}^{\check{  \sigma}}   \to  Y^\sigma$   is  not
  surjective.  However we   call  $\check{  Y}^{\check{  \sigma}}$   {\em  the
    normalization}  of $Y^\sigma$.  It  depends on  the complex  space
  $Y$. Of course if $X$ is embedded as a
closed subset of $\R^n$ it has a well defined well reduced structure, which 
makes $Y$ uniquely defined as germ at $X$, hence also $\check{ Y}$ is uniquely 
determined. 
\item One can define a  well reduced structure as soon as $X$ is given as a closed subspace of a real analytic manifold $M$ (or of a coherent C-analytic space). It is given by the  'biggest'  coherent sheaf of ideals in $\Oo_M$ having $X$ as zero set. One can show that this biggest sheaf of ideals is given by ${\rm I}(X) \Oo_M$ because global sections of such sheaves of ideals are analytic functions on $M$ that vanish at $X$.
\end{itemize} 
\end{remark}

When $(X,\Oo_X)$ is a coherent real analytic space, then it gets a true normalization as we show in the next proposition. We use the same notations as in Proposition \ref{lifting}. 

\begin{prop}\label{cohe} Let $(X,\Oo_X)$ be a coherent C-analytic space. Then $p: \check{ Y}^{\check{ \sigma}} \to X$ is surjective and $p^{-1}(X)= \check{ Y}^{\check{ \sigma}}$.
\end{prop}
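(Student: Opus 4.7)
The plan is to exploit the explicit description of $\check{\sigma}$ given in the proof of Proposition \ref{lifting} together with the coherence hypothesis, which, via Proposition \ref{Noncoerenza} and the decomposition of the complexification into irreducible components, will force every irreducible component of each germ $Y_x$ (for $x\in X$) to be $\sigma$-invariant. Concretely, recall that points of $\check{Y}$ are in bijection with pairs $(Y_i,y)$, where $Y_i$ is an irreducible component of the germ $Y_y$, and $\check{\sigma}(Y_i,y)=(\sigma(Y_i),\sigma(y))$. Thus $(Y_i,y)\in\check{Y}^{\check{\sigma}}$ if and only if $y\in Y^{\sigma}=X$ and $\sigma(Y_i)=Y_i$.

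The inclusion $\check{Y}^{\check{\sigma}}\subset p^{-1}(X)$ is then immediate from the above characterization, since a fixed point $(Y_i,y)$ satisfies $p(Y_i,y)=y=\sigma(y)\in X$. So the substantial content is the reverse inclusion $p^{-1}(X)\subset\check{Y}^{\check{\sigma}}$, from which surjectivity of $p\colon\check{Y}^{\check{\sigma}}\to X$ will follow at once (each $x\in X$ has a nonempty fiber in $\check{Y}$, and every point of that fiber will turn out to be $\check{\sigma}$-fixed).

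To prove $p^{-1}(X)\subset\check{Y}^{\check{\sigma}}$, fix $x\in X$ and let $Y_i$ be any irreducible component of the complex germ $Y_x$. Because $(X,\Oo_X)$ is coherent, Proposition \ref{Noncoerenza} says that $Y_x$ is the complexification of $X_x$. By the proposition on complexification of irreducible components, the irreducible components of $Y_x$ are exactly the complexifications $\widetilde{X^j_x}$ of the real irreducible components $X^j_x$ of $X_x$. Hence $Y_i=\widetilde{X^j_x}$ for some $j$. Now $X^j_x$ is a real analytic set germ at a real point, so it is $\sigma$-invariant; by the uniqueness part of the axiomatic definition of complexification (and the fact that $\sigma$ preserves ``germ of holomorphic function vanishing on $X^j_x$''), its complexification $\widetilde{X^j_x}$ is also $\sigma$-invariant. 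Therefore $\sigma(Y_i)=Y_i$ and $(Y_i,x)\in\check{Y}^{\check{\sigma}}$.

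The main obstacle is really a bookkeeping one: making sure that coherence is used at the right place, namely to pass from the identification of real irreducible components to the statement that their complexifications exhaust the complex irreducible components of $Y_x$; without coherence, a complex irreducible component of $Y_x$ might mix two real branches (or fail to be $\sigma$-invariant), and the argument collapses, which is exactly why the statement fails in the merely C-analytic case and $p\colon\check{Y}^{\check{\sigma}}\to Y^{\sigma}$ is in general not surjective.
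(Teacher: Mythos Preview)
Your proof is correct and follows essentially the same approach as the paper: both use that coherence guarantees $Y_x$ is the complexification of $X_x$, then invoke the proposition that the irreducible components of the complexification are precisely the complexifications of the real irreducible components, which are therefore $\sigma$-invariant, forcing every point of $p^{-1}(x)$ to be $\check{\sigma}$-fixed. One minor attribution point: the fact that $Y_x$ is the complexification of $X_x$ is part of Definition~\ref{Complessificazione} of complexification (which exists here because $(X,\Oo_X)$ is coherent), rather than a consequence of Proposition~\ref{Noncoerenza}; the paper also simply recalls this fact rather than deriving it.
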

\begin{proof}We can assume $X$ to be globally irreducible of pure dimension $s$.
Denote by $Y$ its complexification and by $\check {Y}$ the normalization of $Y$.
Recall that for all $x\in X$ the germ $Y_x$ is the complexification of $X_x$.
Let $x\in X$ be a point where the germ $X_x$ is irreducible. Then $Y_x$ is irreducible and $\sigma$-invariant, hence $p^{-1}(x)$ is a single point $y\in \check{ Y}^{\check {\sigma}}$ and $p(y)= x$. 

Next assume $X_x = X_1 \cup X_2\cup\ldots \cup X_l$ is reducible. Then $Y_x$ has irreducible components $Y_1, \ldots , Y_l$ where  $Y_i$ is the complexification of $X_i$ and is $\sigma$-invariant for each $i$. So $p^{-1}(x)$  is a $l$-uple of points $\{y_1, \ldots ,y_l\}$ where $\check{ Y}_{y_i}$ is $\check{\sigma}$-invariant. So $\{y_1, \ldots ,y_l\}\subset \check{ Y}^{\check{\sigma}}$ and $p(\{y_1, \ldots ,y_l\}) =x$.    
\end{proof}

We will see in Chapter 5 the  role of the set $p^{-1}(X) \setminus \check {Y}^{\check{\sigma}}$ for a general C-analytic space.
Next we show an example of a normal C-analytic space which is not coherent. This
disproves  an old conjecture saying that normal spaces are always   coherent spaces.

\begin{example}\label{alberto}  Define 

$$X= \{x\in \R^4: x_3(x_1^2 + x_2^2) -x_1^2 x_4^2 +x_4^2 =0\}.$$

The polynomial that defines $X$ is irreducible in $\C[z_1,z_2,z_3,z_4]$ and the germ of its complex zeroset $Y$  at $(0,0,0,0)$ is the complexification of the real set germ of $X$ at $(0,0,0,0)$. Now $Y$ is a hypersurface in $\C^4$ and $\Sing Y$ has codimension $2$ in $Y$. Indeed   a simple calculation shows that $\Sing Y = \{z_1^2 +z_2^2 =0, z_3=0, z_4=0)\}\bigcup \{z_1=z_2=0, z_4=0\}$. 

By a criterion due to Oka \footnote{Oka's Criterion: An irreducible hypersurface $X\subset \C^n$ is normal if and only if Sing$(X)$ has codimension at least 2, see \cite{ab}.}and generalised by Abhyankar, $Y$ is a normal space. Also $X_0$ is a normal germ. Indeed if a meromorphic  germ $m_0$ satisfies a relation of integral dependence in $\Oo_{X,0}$, it satisfies a relation of integral dependence in $\Oo_{Y,0}$. But $Y_0$ is normal, hence (the complexification of) $m_0\in\Oo_{Y,0}$. So $m_0\in \Oo_{X,0}$, because it is invariant.
Now $Y_0$ is irreducible and of constant dimension $3$ in a neighbourhood of $0$. Also $X_0$ is irreducible but not equidimensional because at points $(0,0,t,0)$ with $0<t<1$ the space $X$ has dimension $1$. Then it cannot be coherent because $Y_x$ is not the complexification of $X_x$ at points arbitrarily close to $0$.
 \end{example}

\section{Divisors  in  C-analytic  Sets.}

In  this section  we consider  the analogous of second Cousin's problem   for  a  C-analytic  set $X\subset \R^n$, that is,  deciding whether a given divisor is the divisor of a meromorphic function on $X$.

\subsection{Multiplicities.}

Let $X \subset \R^n$  be an irreducible  C-analytic set,  endowed with its well reduced  structure as defined in Lemma \ref{wellreduced} of Chapter 1.

\medskip
 Let  $Y  \subset  X$  be  an irreducible C-analytic  subset  of
codimension  1.  We define  as before the  coherent  sheaf  of ideals  $\Ii_Y$  as
$\Ii_Y = I(Y) \Oo_X$, where $I(Y) = \{ f \in \Oo(X) : \, f = 0
\mbox{ \rm on } Y   \}$.

Suppose that at  some point $x \in Y$ the ideal  $\Ii_{Y,x}$ is principal, say,
$\Ii_{Y,x} = g_x \Oo_{X,x}$ for some $g_x  \in \Oo_{X,x}$. Then, the germ  of any $f
\in  \Oo(X)$ at  $x$ can  be written  as $f_x  = g_x^r  v_x$ for  some non-negative
integer $r$ and  some $v_x \notin \Ii_{Y,x}$.
\smallskip

Note that $r$ does not depend on the generator $g_x$. Indeed, suppose $f_x  =
h_x^s  w_x$ for another generator $h_x$ and, say, $s\leq r$. Then there is a unit
$u_x$ such that $h_x= u_x g_x$, hence $w_x u^s_x g_x^s = v_x g^r_x$, so $v_xg^{r-s}_x =w_x u^s_x \notin
\Ii_{Y,x}$. This implies $s=r$.
\smallskip

Also, since $\Ii_{Y,x}$ is the stalk of a  coherent sheaf, the relation $f  = u g^r$ holds in a neighbourhood of $x$ and $g$ generates
$\Ii_{Y,y}$ for $y$ close to $x$ outside the zero set of $u$. In particular the integer $r$ is the same for $x$ and $y$.
\smallskip

The integer $r$  will be called
{\it multiplicity of $f$ along $Y$ at the point $x$}\index{multiplicity of a function!along a divisor at a point} and will be denoted as
$m_{Y,x}(f)$.  The multiplicity of a meromorphic function $\displaystyle f =
\frac{f_1}{f_2} \in {\mathcal M}(X)$ where $f_1, f_2
\in \Oo(X)$ (and $f_2$ is not  a zero divisor of $\Oo(X)$) is defined as
$m_{Y,x}(f) =  m_{Y,x}(f_1) - m_{Y,x}(f_2)$. It  is straightforward to
check that
$$V_{Y,x} = \{ f \in {\mathcal M}(X) : \, m_{Y,x}(f) \geq 0 \} \supset
\Oo(X)$$ is a discrete valuation ring.\footnote{Let $K$ be a field. A {\em valuation ring} $V\subset K$ is a ring such that for each $ a \in K$ either $a$ or $a^{-1}$ belongs to $V$. Associated to $V$ there is a {\em valuation group }$\Gamma = K^*/V^*$ together with a {\em valuation map} $v: K^* \to \Gamma$. The valuation ring is called {\em discrete} if $\Gamma =\Z$.} 
\medskip

First of all  we prove that given $Y$ as above we can  find a {\em uniformizer}
of the multiplicity along $Y$, that is an $h \in
\Oo(X)$  generating  $\Ii_{Y,x}$ for  almost all points  $x \in  Y$. We
recall  that a  C-analytic  subset  $W \subset  X$ always  admits a  {\it
positive equation}, that  is, a nonnegative function $g \in \Oo(X)$ whose zero
set  is $\ceros(g) =  W$.  One  can take,  for instance,  $g  = f_1^2+\dots
+f_q^2$, where $ f_1,\ldots, f_q  \in \Oo(X)$ are such that $W=\{f_1=0, \ldots,
f_q=0\}$. Note that any such equation has multiplicity greater than 1 over $Y$.
In particular we will get
that $m_{Y,x}$ and consequently also $V_{Y,x}$ do not
depend on the point  $x \in Y$, provided $\Ii_{Y,x}$ is  principal.

\begin{lem}\label{uniformizer}
Let $Y  \subset X$ be an  irreducible C-analytic subset  of codimension $1$
such  that $\Ii_{Y,p}$  is  principal  for some  $p  \in Y$.  Then  there is  a
uniformizer $h \in \Oo(X)$  such that $h_x \Oo_{X,x} = \Ii_{Y,x}$ for all
$x \in Y$ up to a real analytic set of codimension $1$ in $Y$.
Moreover,  given  any global  analytic  subset $Y'  \subset  X$  such that  $Y
\not \subset Y'$ the uniformizer $h$ can be chosen so that $\ceros(h) \cap Y'$
has no components of codimension $1$ in $X$.
\end{lem}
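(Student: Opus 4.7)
My plan is to first produce a global section of $\Ii_Y$ that uniformizes at $p$, then use coherence to propagate this property to a codimension-$\geq 1$ subset of $Y$, and finally absorb the ``moreover'' condition by a perturbation inside $I(Y)^2$. Since $\Ii_Y$ is coherent on the C-analytic space $(X,\Oo_X)$, Theorem~\ref{ABreali} (Cartan~A) supplies finitely many $f_1,\ldots,f_k\in I(Y)$ generating the stalk $\Ii_{Y,p}$. Writing $\Ii_{Y,p}=g_p\Oo_{X,p}$ and $f_i=b_ig_p$, the expression $g_p=\sum c_if_i$ gives $(1-\sum c_ib_i)g_p=0$, so a standard Nakayama-type argument forces some $b_i$ to be a unit; one of the $f_i$, call it $h_0$, already generates $\Ii_{Y,p}$.

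Next I would pin down where $h_0$ still does the job. The coherent cokernel $\Gg=\Ii_Y/h_0\Oo_X$ has C-analytic support in $X$, and $E:=\supp(\Gg)\cap Y$ is precisely the set of $x\in Y$ where $h_0\Oo_{X,x}\subsetneq\Ii_{Y,x}$. Since $p\notin E$ and $Y$ is C-irreducible, $E\subsetneq Y$ is a proper C-analytic subset of $Y$, hence of codimension $\geq 1$ in $Y$. This already settles the main statement with $h=h_0$.

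For the ``moreover'' part, I would decompose $Y'$ into its locally finite family of C-irreducible components (Theorem~\ref{realcomponents}); the hypothesis $Y\not\subset Y'$ gives $Y\ne Y'_j$ for every codimension-$1$ component $Y'_j$ of $Y'$. A codimension-$1$-in-$X$ irreducible component of $\ceros(h)\cap Y'$ must coincide with one of those $Y'_j$ lying in $\ceros(h)$, so the task reduces to selecting $h$ with $h\not\equiv 0$ on every such $Y'_j$. My plan is to set $h=h_0+\gamma$ with $\gamma\in I(Y)^2$: such a perturbation preserves the uniformization on $Y\setminus E$ because at $x\in Y\setminus E$ one has $\gamma_x\in\Ii_{Y,x}^2=h_0^2\Oo_{X,x}$, hence $h=h_0(1+h_0u)$ with $1+h_0u$ a unit since $h_0(x)=0$. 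For each relevant $Y'_j$ I pick $q_j\in Y'_j\setminus Y$ and, using $\ceros(I(Y))=Y$, an element $\beta_j\in I(Y)$ with $\beta_j(q_j)\ne 0$; then $\gamma_j:=\beta_j^2\in I(Y)^2$ satisfies $\gamma_j(q_j)\ne 0$, and I look for $h=h_0+\sum_j c_j\gamma_j$ with coefficients $c_j\in\R$ making $h(q_j)\ne 0$ for every $j$.

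The hard part will be the case in which the collection $\{Y'_j\}$, although locally finite, is infinite: I must simultaneously realize countably many open nonvanishing conditions while keeping the series real analytic on $X$. The plan is to invoke Theorem~\ref{intornist} to view each $\gamma_j$ as a holomorphic function on a Stein neighborhood $\widetilde U$ of $X$, exhaust $\widetilde U$ by compacts $K_n$, and choose $c_{j_n}\in\R$ inductively: small enough in absolute value to force uniform convergence of $\sum_j c_j\gamma_j$ on each $K_n$ (so that the sum is real analytic on $X$), and lying in an open dense subset of $\R$ so as to preserve the finitely many already achieved conditions $h(q_{j_m})\ne 0$ for $m<n$ while making $h(q_{j_n})\ne 0$. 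This standard Weierstrass-type balancing then closes the argument.
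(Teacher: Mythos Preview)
Your first step---Cartan~A to produce $f_1,\ldots,f_k\in H^0(X,\Ii_Y)$ generating the stalk at $p$, Nakayama to isolate a single $h_0$, then the coherent cokernel $\Ii_Y/h_0\Oo_X$ to cut out the bad locus---is the paper's argument. The paper uses the transporter $(h_0\Oo_X:\Ii_Y)$ instead of the cokernel and passes explicitly through the complexification $\widetilde X,\widetilde Y$ to justify the codimension bound, but the content is identical.

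For the ``moreover'' clause your $I(Y)^2$--perturbation scheme is workable but substantially heavier than what the paper does, and two points in your sketch need more care than you indicate. First, to run the convergence argument you need all the $\gamma_j=\beta_j^2$ to extend holomorphically to one common Stein neighborhood $\widetilde U$; this does not follow just from $\beta_j\in I(Y)\subset\Oo(X)$, and you would have to pick the $\beta_j$ from $H^0(\widetilde U,\widetilde\Ii)$ for a fixed extension $\widetilde\Ii$ of $\Ii_Y$. Second, at step $n$ you can make each partial sum satisfy $h_n(q_{j_m})\ne0$ for $m\le n$, but the limit $h(q_j)=\lim_n h_n(q_j)$ could still vanish; you need an extra smallness condition on $|c_n|$ (say $|c_n\gamma_n(q_{j_m})|<2^{-n}|h_m(q_{j_m})|$ for $m<n$) to keep the tails from cancelling. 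Both are fixable, but the paper avoids all of this with a one-line formula: taking positive equations $f_Y,f_{Y'}$ of $Y,Y'$, set $h=f_{Y'}h_0+f_Y$. Since $f_Y$ is a sum of squares of elements of $I(Y)$ it lies in $\Ii_{Y,x}^2=h_0^2\Oo_{X,x}$ wherever $h_0$ generates, so $h$ remains a uniformizer off $E\cup(Y\cap Y')$; and on $Y'$ one has $h|_{Y'}=f_Y|_{Y'}$, so $\ceros(h)\cap Y'=Y\cap Y'$, which has no codimension-one components in $X$ because $Y$ is irreducible and $Y\not\subset Y'$.
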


\begin{proof}
Assume $\Ii_{Y,p} = g \Oo_{X,p}$.
By Theorem A there is a finite number of global analytic functions on
$X$ which generate the ideal $\Ii_{Y,p}$. At least one of these functions, call
it $f$, has multiplicity $1$ at $p$ along $Y$.

Let $\widetilde  X, \widetilde  Y $ be complexifications of  $X$ and
$Y$ respectively, in an open Stein neighbourhood  $\Omega\subset \C^n$ of $\R^n$.    Up  to shrinking  $\Omega$ the function $f  \in \Oo(X)$ can be
extended to an  analytic function on $\widetilde X$, which will still be called
$f$. The ideal $\Ii_{\widetilde {Y},p} = I(\widetilde Y) \Oo_{\widetilde{ X},p}$ is also principal,
generated by the same $g$ and
$f_p = v_p g_p$, where $v_p \in \Oo_{\widetilde {X},p}
\setminus\Ii_{\widetilde{ Y},p}$.
Then, in a small complex neighbourhood $U$ of $p$ where $v$ is defined,
  $f_x$
generates $\Ii_{\widetilde{Y},x}$ for all  $x \in \widetilde Y \cap U \setminus \{v=0\}$.
This last set is not empty, because
$\widetilde Y$ is  pure dimensional. Thus, the set of points at  which $f_x$ is a
generator of $\I_{Y,x}$ is not empty.

Consider the coherent sheaf of ideals  $\Jj$ defined by ${\Jj}_x = (f_x \Oo_{\widetilde{
X},x}:\Ii_{\widetilde {Y},x})$, where  $x \in \Omega$, that is, $h_x  \in {\Jj}_x$ if and
only if $h_x\Ii_{\widetilde {Y},x} \subset f_x \Oo_{\widetilde{ X},x}$.  Thus ${\Jj}_x = \Oo_{\widetilde{X},x}$ if and only if $f_x$ generates $\Ii_{\widetilde {Y},x}$.  Therefore, the support

$$\mbox{supp} \,(\Oo_{\widetilde X}/  {\Jj}) = \{x  \in \widetilde X \, :  \, f_x \mbox{
does not generates }\I_{\widetilde{Y},x} \}$$
is a closed analytic set $\widetilde W$ that does not contain $\widetilde Y$. As $\widetilde Y$
is  irreducible, $\widetilde Y  \cap \widetilde  W$ has  at least  codimension $1$  in $\widetilde Y$. Hence, $f_x$ generates $\Ii_{\widetilde{Y},x}$ for all $x \in \widetilde Y \setminus \widetilde W$.  Then, also $f_x$ generates $\Ii_{Y,x}$ for all $x \in Y \setminus W$, where
$W = \widetilde W \cap \R^n$.

Note that $W \cap Y$ is a subset of codimension at least $1$ in $Y$. Suppose
that $W  \supset Y_{\rm max}$, where  $Y_{\rm max}$ denotes the  part of maximal  dimension of
$Y$. Then it would be $W \supset Y$ and so $\widetilde W \supset Y$. But as $\widetilde Y$
is the complexification  of $Y$, this  would imply  $\widetilde W \supset 
\widetilde Y$, which is a contradiction.

Now, let $Y'$ be any analytic  set not containing $Y$. Take positive equations
$f_Y, f_{Y'}  \in \Oo(X)$ of  $Y$ and $Y'$,  respectively. Then $f' =
f_{Y'}f + f_Y$ has the required properties.
\end{proof}

Thus, we write  $m_Y$ and  $V_Y$ for the multiplicity along $Y$\index{multiplicity of a function!along a divisor} and
its valuation ring.
The following proposition  gives another characterization of $V_Y$.

\begin{prop}\label{valuation}
Let  $Y  \subset  X$  be  an irreducible global  analytic  subset  of
codimension $1$ such  that $Y \cap \mbox{\rm Reg}( X) \neq \varnothing$. Then
$V_Y = \Oo(X)_{I(Y)} = S^{-1}\Oo(X)$ where $S$ is the complement of $I(Y)$  in $\Oo(X)$.
In particular $m_Y$ is a real valuation.
\end{prop}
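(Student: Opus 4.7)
The plan is to establish the two inclusions between the valuation ring $V_Y$ and the localization $\Oo(X)_{I(Y)}$, and then read off formal reality of the residue field. For $\Oo(X)_{I(Y)}\subseteq V_Y$ the argument is direct: if $f=g_1/g_2$ with $g_2\in\Oo(X)\setminus I(Y)$, then $g_2$ does not vanish identically on $Y$, so at any point $x\in Y$ where the uniformizer $h$ of Lemma~\ref{uniformizer} generates $\Ii_{Y,x}$ and $g_2$ is nonzero on $Y$ near $x$, one has $g_{2,x}\notin\Ii_{Y,x}$; hence $m_Y(g_2)=0$, and together with $m_Y(g_1)\geq 0$ this gives $m_Y(f)\geq 0$, so $f\in V_Y$.

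For the harder inclusion $V_Y\subseteq\Oo(X)_{I(Y)}$, take $f=f_1/f_2\in V_Y$ and set $s=m_Y(f_2)\leq t=m_Y(f_1)$. I would introduce the coherent $\Oo_X$-ideal sheaf $\Ff$ defined by $\Ff_x=(f_{2,x}\Oo_{X,x}:f_{1,x}\Oo_{X,x})$, that is, the germs $g_x\in\Oo_{X,x}$ with $g_x f_{1,x}\in f_{2,x}\Oo_{X,x}$; it is coherent as the kernel of multiplication by $f_1$ on the coherent quotient $\Oo_X/f_2\Oo_X$. The key step is to produce a global section of $\Ff$ not lying in $I(Y)$. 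Pick $x_0\in Y\cap\Reg(X)$ satisfying the conclusion of Lemma~\ref{uniformizer}, so that $\Ii_{Y,x_0}=h_{x_0}\Oo_{X,x_0}$ and $m_{Y,x_0}(f_i)=m_Y(f_i)$; such a point exists because $Y\cap\Reg(X)$ is a nonempty open subset of $Y$ by hypothesis, while the exceptional locus in Lemma~\ref{uniformizer} has codimension at least one in $Y$. Writing $f_{i,x_0}=h_{x_0}^{m_i}u_{i,x_0}$ with $u_{i,x_0}\notin\Ii_{Y,x_0}$ and using that $\Oo_{X,x_0}$ is a regular (hence integral) local ring, the identity $u_{2,x_0}\,f_{1,x_0}=h_{x_0}^{t-s}u_{1,x_0}\cdot f_{2,x_0}$ exhibits $u_{2,x_0}\in\Ff_{x_0}\setminus\Ii_{Y,x_0}$. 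Theorem~A for $C$-analytic spaces (Theorem~\ref{ABreali}) then yields a global section $g\in\Ff(X)$ with $g_{x_0}\notin\Ii_{Y,x_0}$, so $g\notin I(Y)$. The defining property of $\Ff$ provides local $k_x\in\Oo_{X,x}$ with $g_x f_{1,x}=f_{2,x}k_x$; since $X$ is irreducible $\Oo(X)$ is a domain and $f_2$ is a nonzerodivisor, so $k_x$ is uniquely determined and the germs patch to $k\in\Oo(X)$ with $gf_1=f_2k$. Hence $f=k/g\in\Oo(X)_{I(Y)}$.

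For the final assertion, $V_Y=\Oo(X)_{I(Y)}$ is a discrete valuation ring whose residue field is $\qf(\Oo(X)/I(Y))$. The restriction map $\Oo(X)\to\Oo(Y)$ has kernel $I(Y)$ and embeds $\Oo(X)/I(Y)$ as a subring of the real-valued analytic functions on $Y$; pointwise nonnegativity of sums of squares prevents $-1$ from being a sum of squares in $\Oo(X)/I(Y)$, and this property transfers to its fraction field. Therefore the residue field is formally real and $m_Y$ is a real valuation. The main obstacle in the whole argument is the existence of the global section $g$ in the previous paragraph: it relies on the correct choice of the generic base point $x_0$, the explicit stalk computation showing $\Ff_{x_0}\not\subseteq\Ii_{Y,x_0}$, and the availability of Cartan's Theorem~A for $C$-analytic spaces; everything else is essentially formal.
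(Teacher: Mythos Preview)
Your argument is correct, but it takes a genuinely different route from the paper's. For the hard inclusion $V_Y\subset\Oo(X)_{I(Y)}$ the paper does not build a conductor sheaf at all: it picks $x\in\Reg(Y)\cap\Reg(X)$, observes that $\Oo(X)_{I(Y)}=(\Oo(X)_{\gtM_x})_{I(Y)}$, and uses that $\Oo(X)_{\gtM_x}$ is a \emph{regular} local ring, so its localization at the height-one prime $I(Y)\Oo(X)_{\gtM_x}$ is a DVR. Since a DVR has no proper overring inside its fraction field other than the field itself, the already-established inclusion $\Oo(X)_{I(Y)}\subset V_Y$ forces equality. This is shorter and purely algebraic, but it leans on the regularity (hence excellence) of $\Oo(X)_{\gtM_x}$. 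Your sheaf-theoretic approach via the colon ideal $\Ff=(f_2\Oo_X:f_1\Oo_X)$ and Theorem~A is more constructive: it actually produces the denominator $g\in\Oo(X)\setminus I(Y)$, and it bypasses the need to know that the global localizations are regular.

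One small point deserves tightening. When you patch the germs $k_x$ you invoke that ``$f_2$ is a nonzerodivisor in $\Oo(X)$'' to conclude uniqueness of $k_x$; but uniqueness of $k_x$ lives in $\Oo_{X,x}$, where $f_{2,x}$ can be a zerodivisor at singular points of $X$ (the germ $X_x$ may be reducible even though $X$ is C-irreducible). The clean fix is to note instead that $g\in H^0(X,\Ff)$ gives $gf_1\in H^0(X,f_2\Oo_X)$, and since principal ideals of $\Oo(X)$ are saturated (Remark~\ref{idealifiniti}), one has $H^0(X,f_2\Oo_X)=f_2\Oo(X)$; hence $gf_1=f_2k$ with $k\in\Oo(X)$ directly. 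Your treatment of the residue field matches the paper's.
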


\begin{proof}
It   is  straightforward  to   check   that   $V_Y \supset\Oo(X)_{I(Y)}$.

To prove the converse inclusion take  some  point  $x \in  \mbox{\rm Reg}( Y)  \cap
\mbox{\rm Reg}( X)$. Such a point exists because otherwise $\mbox{Reg} (Y) \subset
\mbox{\rm Sing}( X)$ and then $Y  \subset \mbox{\rm Sing}( X)$. Let ${\gtM}_x \subset  \Oo(X)$ be the  ideal of analytic  functions on $X$  vanishing at
$x$.  As  ${\gtM}_x \supset I(Y)$,  we have that  $\Oo(X)_{I(Y)} = (\Oo(X)_{{\gtM}_x})_{I(Y)}$.  The ring  $\Oo(X)_{{\gtM}_x}$ is  regular,
 so its  localization at $I(Y) \Oo(X)_{{\gtM}_x}$ ( which
is  a  prime  ideal  of  height  one) is  a  discrete  valuation  ring.  Hence,
$\Oo(X)_{I(Y)} \supset V_Y$.

Finally,  note  that the  residue  field of  $\Oo(X)_{I(Y)}$  is  the field  of
meromorphic functions on $Y$, which is a real field.
\end{proof}

\subsection{Divisors.}

Let $X$ be a C-analytic set in $\R^n$. Set $q =$ dim $X$. 

\begin{defn}
Let $\{Y_i\}_{ i\in J}$ be a locally finite family of irreducible C-analytic subsets of $X$, with for every $i$
${\rm dim} Y_i = q-1$ and $Y_i \cap \Reg(X) \neq \varnothing$ for every $i$. We  call a {\em divisor} in $X$ the 
formal sum $  \sum_{i\in J} n_i Y_i$ where $n_i\in \Z$.
As usual,
the divisor is called {\em reduced} if for all $i$ $n_i =1$ and {\em positive}
when $n_i >0$.
The {\em support} of a divisor is the C-analytic set $Y= \bigcup_i Y_i$. 
It is a C-analytic subset of $X$,
because the family $\{Y_i\}_{ i\in J}$ is locally finite.
The $Y_i$ in the family are called {\em components } of the divisor.

Finally, we say  that two divisors $Y, Y'$ are {\em  coprime} if their supports
do not share any irreducible component.
\end{defn}

\smallskip
The set $\mathcal D$ of divisors has a natural structure of abelian group.

\smallskip

The  multiplicities $m_{Y_i}$  along  the  components of  a  divisor are  well
defined. We  shall say that $Y  = \sum_{i\in J} n_i  Y_i$ is the  divisor of an
analytic function $g$ and we shall write  $Y = {\rm div} (g)$ if $m_{Y_i}(g) =
n_i$ and the zero set of $g$ is the support of $Y$. In this case we  call
$Y$ {\em principal}.  \smallskip

Let $Y$ be (the support of) a divisor. 
By classical results  on triangulations,  we may
find a locally finite triangulation of  the couple $(X,Y)$. This means that we
have  a  simplicial complex  $K$  together with  a  subcomplex  $K_Y$ and  a
homeomorphism $f:K \to  X$ such that $f(K_Y)=Y$ and  for each simplex $\sigma$
of  $K$,   the  restriction  $f_{|\interior{\sigma}}$   is  an  analytic
isomorphism.    So,   for   any    $j$   we   have   isomorphisms   
$$f_*:{\rm
H}^{\infty}_{j}(K,\Z/2\Z)\to  {\rm H}^{\infty}_{j}(X,\Z/2\Z)$$

where H$^{\infty}_{j}(X,\Z/2\Z)$ is the  homology group based on
infinite chains. 

 Also, by  the construction  above,  each component  $Y_i$ of  the
 divisor  defines in  a natural  way  an element  $[Y_i]$ in  the group  ${\rm
 H}^{\infty}_{q-1}(X,   \Z/2\Z)$. Note that a real analytic set carries a fundamental class.   Since
 any   two  such   triangulations  are
 PL-equivalent by Hauptvermutung, this fact allows to define
 a group homomorphism

$$ {\mathcal D}  \to {\rm H}^{\infty}_{q-1}(X, \Z/\Z2)$$

sending the divisor  $Y$ to the class $\sum_i n_i [Y_i]$.
\smallskip

From now on we  use the same symbol for both a divisor and its support
when there is no risk of confusion.
\smallskip

The following theorem  finds for any  reduced divisor $Y$  of $X$ what  we will call  a {\it
generic equation}, that  is, an  analytic function $h$ vanishing
on $Y$ with multiplicity 1 along each component $Y_i$ of $Y$.

\begin{thm}\label{generic equation} 
Let $Y =  \sum_i Y_i $ be a reduced divisor of $X$. Then there  exists $h \in
\Oo(X)$ such that $m_{Y_i}(h) = 1$ for all $i \in I$.
In particular, $h$ changes sign at every point of maximal dimension of $Y$ and
it is a local generator of $\Ii_{Y,x}$ for  all $x \in Y$ up to an analytic set
of codimension $1$ in $Y$.

Moreover, given  any C-analytic subset  $W \subset X$  not containing any
component of $Y$, the function $h$ can be  chosen  such that ${\ceros}(h) \cap W$
has codimension at least $2$ in $X$.
\end{thm}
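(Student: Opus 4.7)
For each index $i\in I$ I apply Lemma \ref{uniformizer} to the divisor $Y_i$ with the auxiliary set $Y'_i=W\cup\bigcup_{j\neq i}Y_j$ in the role of $Y'$. Since $Y_i$ is irreducible of codimension one, it is a component of neither $W$ nor any $Y_j$ with $j\neq i$, so $Y_i\not\subset Y'_i$; and $Y'_i$ is C-analytic by local finiteness of $\{Y_j\}_{j\in J}$. The lemma yields $h_i\in\Oo(X)$ generating $\Ii_{Y_i,x}$ off an analytic set of codimension one in $Y_i$, with $\ceros(h_i)\cap Y'_i$ having no component of codimension one in $X$. In particular $m_{Y_i}(h_i)=1$ and $m_{Y_j}(h_i)=0$ for every $j\neq i$.

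The plan is then to assemble the $h_i$ into a single $h$ through an infinite Weierstrass-type product. Formally $h=\prod_i h_i$ would have, by the valuation property, $m_{Y_j}(h)=\sum_i m_{Y_j}(h_i)=1$. To make the product convergent I pass to a $\sigma$-invariant open Stein neighborhood $\Omega\subset\widetilde{X}$ of $X$ inside the $\Oo_X$-complexification (Theorem \ref{intornist} and Proposition \ref{antinvolution}), shrunk so that the family of complexifications $\{\widetilde{Y}_i\}$ is locally finite in $\Omega$; extend each $h_i$ to an invariant $\widetilde{h}_i\in\Oo(\Omega)$; and fix a $\sigma$-invariant exhaustion $K_1\Subset K_2\Subset\cdots$ of $\Omega$ by holomorphically convex compacta. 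Inductively I choose invariant holomorphic functions $\varphi_i\in\Oo(\Omega)$ with $\|\widetilde{h}_i e^{\varphi_i}-1\|_{K_i}<2^{-i}$: on each $K_i$ only finitely many $\widetilde{h}_j$ vanish, so for $i$ large enough $\widetilde{h}_i$ is nowhere zero on $K_i$, a branch of $-\log\widetilde{h}_i$ is holomorphic in a Runge neighborhood of $K_i$, it can be approximated by global holomorphic functions on $\Omega$ (Cartan's Theorems A and B plus Oka--Weil approximation on Stein spaces), and the approximant can then be symmetrized under $\sigma$. The corrected product
\[
\widetilde{h}=\prod_{i}\widetilde{h}_i\,e^{\varphi_i}
\]
converges uniformly on compacta of $\Omega$ to a $\sigma$-invariant holomorphic function, and $h=\widetilde{h}|_{X}$ is the desired element of $\Oo(X)$. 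Because each $e^{\varphi_i}$ is nowhere zero, multiplying by these units preserves all multiplicities, so $m_{Y_j}(h)=1$ for every $j$.

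The remaining conclusions follow from standard properties of convergent products: $\ceros(\widetilde{h})=\bigcup_i\ceros(\widetilde{h}_i)$, so $\ceros(h)\cap W=\bigcup_i(\ceros(h_i)\cap W)$ is a locally finite union of sets of codimension $\geq 2$ in $X$, and is therefore itself of codimension $\geq 2$. At a generic point $x\in Y_i$ (away from $\bigcup_{j\neq i}Y_j$ and from the codimension-one exceptional set for $h_i$) only the factor $\widetilde{h}_ie^{\varphi_i}$ is not a local unit, and it is a uniformizer of $Y_i$, so $h$ generates $\Ii_{Y,x}=\Ii_{Y_i,x}$. At a point of maximal dimension of $Y_i$, the set $Y_i$ is locally a smooth real hypersurface along which $h$ vanishes to order one, so $h$ changes sign across it. The main obstacle is the convergence step: producing invariant correction factors $\varphi_i\in\Oo(\Omega)$ that match $\widetilde{h}_i^{-1}$ to arbitrary precision on arbitrary compacta while preserving $\sigma$-invariance is the technical heart of the proof, and relies essentially on the Stein structure of the complexification together with Cartan's Theorems A and B.
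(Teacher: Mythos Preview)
Your infinite-product approach is genuinely different from the paper's, but there is a real gap at the convergence step. You assert that ``on each $K_i$ only finitely many $\widetilde{h}_j$ vanish,'' but this does not follow from the local finiteness of $\{\widetilde{Y}_j\}$. Lemma~\ref{uniformizer} gives you no control over the \emph{extra} zero locus $\ceros(h_j)\setminus Y_j$: the uniformizer $h_j$ may vanish on hypersurfaces far from $Y_j$, and nothing prevents these extraneous loci from accumulating. Without local finiteness of the full family $\{\ceros(\widetilde{h}_j)\}$ you cannot take a holomorphic logarithm of $\widetilde{h}_j$ near $K_j$, so the correction factors $\varphi_j$ cannot be constructed and the product does not converge. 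The same gap reappears in your ``moreover'' argument, where you need $\bigcup_i(\ceros(h_i)\cap W)$ to be a locally finite union. The gap can probably be repaired by an inductive choice of the $h_j$---for instance replacing $h_j$ by $h_j+c_j\,p_j$ for a positive equation $p_j$ of $Y_j$ and a generic $c_j\in\R$, which preserves all multiplicities along the $Y_i$ while making $h_j$ nonzero on any prescribed compact set disjoint from $Y_j$---but this is genuine additional work you have not carried out.

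The paper avoids convergence issues altogether. It packages the local uniformizers into a coherent sheaf of ideals $\Jj$ whose stalk at $x$ is generated by the finite product $\prod_{i\in J_x}(h_i)_x$ together with $\prod_{i\in J_x}(g_{Y_i})_x$ for positive equations $g_{Y_i}$. Since $\Jj$ is everywhere generated by at most two germs, Coen's theorem yields \emph{finitely} many global generators $f_1,\ldots,f_r$. Each $Y_i$ has some $f_j$ of multiplicity one along it; after partitioning the $Y_i$ accordingly and adding to each $f_j$ a positive equation of its assigned batch of components, one obtains $f_j'$ with $\ceros(f_j')$ exactly that batch, and the finite product $h=f_1'\cdots f_r'$ does the job. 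The ``moreover'' clause is then the one-line positive-equation trick $h\mapsto g_W h + g_Y$.
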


\begin{proof}
For each $x\in X$ we set $J_x$ for the finite set of indices $i\in J$ such
that $x\in Y_i$. Then
we define the coherent sheaf of ideals ${\Jj}$ as
$$
{\Jj}_x = \left( \prod_{i\in J_x }(h_i)_x\ ,\ \prod_{i\in J_x}
(g_{Y_i})_x \right),
$$
where each $h_i$ is a uniformizer of $m_{Y_i}$ not vanishing on $Y_j$ for $j \ne i$, given by  Lemma \ref{uniformizer}, and
$g_{Y_i}$ is a positive equation of $Y_i$.  Since the stalk ${\Jj}_x$ is generated by at most two functions for every $x$, the sheaf ${\Jj}$
is globally generated by finitely many global sections $f_1, \ldots, f_r$ (see \cite {Co}).  Note that for each $Y_i$ at
least one $f_j$ has multiplicity one along $Y_i$.

Set $I_0 = \varnothing$ and define $ \displaystyle I_j = \{i \in  J \,:\,
m_{Y_i}(f_j)  = 1  \} \setminus  \bigcup_{t=0}^{j-1} I_t$.

We define the functions $f'_j = f_j + e_j$, for $j= 1, \ldots, r$ where $e_j$ is a
positive equation  of   $\bigcup_{i \in  I_j} Y_i$.

Note that $\displaystyle {\ceros} (f_j') =\bigcup_{i \in I_j} Y_i$  and $m_{Y_i}(f'_j) = 1$
 for  $i  \in I_j$.  Moreover  for  each $Y_i$  there  is  exactly one  $f'_j$
 vanishing on $Y_i$. So $f = f'_1 \cdot f'_2 \cdots f'_r$ has multiplicity one along each $Y_i$.

Finally,  if  $g_Y, g_{W} \in
\Oo(X)$ are  positive equations of $Y$  and $W$, respectively, the  zero set of
$ f' = g_{W}f + g_Y$ cuts $W$ along a set of codimension at least $2$, as in the  proof of Lemma \ref{uniformizer}.
\end{proof}

This theorem  says that the zeroset of $h$ can be written as
$Y \cup Y'$  for some analytic set
  $Y'$. In general  we can say few about the set
$Y'$ of ``extra'' zeroes of the function $h$, except that, if it is a divisor,
it is coprime with $Y$ and can be chosen coprime with any divisor $W$ fixed in
advance.

Thus, two  questions arise. 

\begin{quotation}
Is it possible  to find a generic  equation of $Y$
being a local  generator of $\I_{Y,x}$ at  every $x \in Y$?   And,  what can be
said about  $Y'$? 
\end{quotation}

In  the next  section we will  answer these  questions under
some additional hypotheses on the sheaf $\I_Y$ and on the space $X$.

If $Y$  is a  divisor, then a  positive equation  of $Y$ has  even multiplicity
along each component  $Y_i$ of $Y$. What  we show in the next  theorem is that
given any sequence of even positive integers $\{m_i = 2n_i\}_{i \in J}$ we can
find a positive  equation of $Y$ with precisely  multiplicity $m_i$ along each
$Y_i$.

\begin{thm}\label{poseq} 
Let $Y = \sum_i 2 n_i Y_i$ be a positive  even divisor.   Then, there  is a
positive analytic function  $f$  such that  $Y = {\rm div}(f)$.
\end{thm}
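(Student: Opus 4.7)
The plan is to produce $f$ by a cohomological gluing argument: first build local nonnegative analytic functions $f_\alpha$ on a locally finite open cover $\{U_\alpha\}$ of $X$, each realizing the prescribed divisor inside $U_\alpha$, and then assemble them into a global function by invoking Cartan's Theorem B for C-analytic spaces (Theorem \ref{ABreali}). The key feature exploited is that the ratio of two nonnegative local equations for the same divisor is a strictly positive real-analytic unit, so taking logarithms reduces the patching to an additive Cousin problem for the coherent sheaf $\Oo_X$, which is solvable because $H^1(X,\Oo_X)=0$.

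For the local step, for each $i\in J$ apply Lemma \ref{uniformizer} with $Y'=\bigcup_{j\neq i}Y_j$ (a C-analytic set by local finiteness of the family) to obtain a global uniformizer $h_i\in\Oo(X)$ with $m_{Y_i}(h_i)=1$ and with $\ceros(h_i)\cap Y_j$ of codimension at least $2$ in $X$ for every $j\neq i$; in particular $m_{Y_j}(h_i)=0$ for $j\neq i$. Choose then a locally finite covering $\{U_\alpha\}_{\alpha\in A}$ of $X$ by relatively compact open sets such that $I_\alpha:=\{i\in J:\,Y_i\cap U_\alpha\neq\varnothing\}$ is finite for every $\alpha$, and construct on each $U_\alpha$ a nonnegative $f_\alpha\in\Oo(U_\alpha)$ with $\ceros(f_\alpha)=\bigcup_{i\in I_\alpha}(Y_i\cap U_\alpha)$ and $m_{Y_i}(f_\alpha)=2n_i$ for each $i\in I_\alpha$. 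On a $U_\alpha$ contained in $\Reg(X)$ one simply takes $f_\alpha:=\prod_{i\in I_\alpha}u_{i,\alpha}^{2n_i}$, where $u_{i,\alpha}$ is a local principal generator of $\Ii_{Y_i}$ on $U_\alpha$; all exponents are even, so $f_\alpha$ is nonnegative with the required generic multiplicities and exact local zero set. Near points of $\Sing(X)$, where $\Ii_{Y_i}$ need not be locally principal, one instead works with the coherent sheaf of ideals $\prod_{i\in I_\alpha}\Ii_{Y_i}^{2n_i}$, extracts via Cartan's Theorem A (Theorem \ref{ABreali}) sections of the correct generic order along each $Y_i$, and then removes any extraneous zero components in $U_\alpha$ by the adjustment trick used in the proof of Theorem \ref{generic equation} (adding a nonnegative function that vanishes to sufficiently high order along each $Y_i$ but is strictly positive on the extraneous loci).

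With the $f_\alpha$ in hand, on each overlap $U_\alpha\cap U_\beta$ the two nonnegative functions share zero set and generic multiplicities, so $f_\alpha/f_\beta$ is a strictly positive real-analytic unit and $c_{\alpha\beta}:=\log(f_\alpha/f_\beta)\in\Oo_X(U_\alpha\cap U_\beta)$ is well defined. The family $\{c_{\alpha\beta}\}$ is an additive Cech $1$-cocycle for the cover with values in the coherent sheaf $\Oo_X$, and Theorem \ref{ABreali} gives $H^1(X,\Oo_X)=0$. Hence $c_{\alpha\beta}=b_\beta-b_\alpha$ for some $b_\alpha\in\Oo_X(U_\alpha)$, and the local functions $f_\alpha\cdot e^{b_\alpha}$ agree on overlaps and glue to a global real-analytic function $f$ on $X$. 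It is nonnegative, being the product of a nonnegative function and the strictly positive unit $e^{b_\alpha}$, and its divisor on each $U_\alpha$ coincides with that of $f_\alpha$ (since multiplication by a strictly positive unit does not affect the divisor), so $\operatorname{div}(f)=Y$. The main obstacle will be the local construction of $f_\alpha$ at points of $\Sing(X)$ and at points where $\Ii_{Y_i}$ fails to be locally principal; the cohomological descent via Theorem \ref{ABreali} is essentially formal once that technical step is in place.
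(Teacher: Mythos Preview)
Your gluing step contains a genuine gap. You assert that on an overlap $U_\alpha\cap U_\beta$ the quotient $f_\alpha/f_\beta$ is a strictly positive real-analytic unit, on the grounds that $f_\alpha$ and $f_\beta$ share the same zero set and the same multiplicities $m_{Y_i}$. But the multiplicity $m_{Y_i}$ is only a \emph{generic} invariant along $Y_i$: two nonnegative functions with identical divisor need not generate the same principal ideal at points where $\Ii_{Y_i}$ fails to be locally principal, and hence their ratio need not extend analytically there. Concretely, take $X=\{z^2=x^2+y^2\}\subset\R^3$ and $Y_1=\{x=0,\ z=y\}$, a line on the cone whose ideal sheaf is not principal at the origin (see the remark following Proposition~\ref{fratello}). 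Both $x^2+(z-y)^2$ and $2x^2+(z-y)^2$ are nonnegative, vanish exactly on $Y_1$, and have $m_{Y_1}=2$; yet along the ray $y=0,\ z=|x|$ their ratio tends to $2/3$, while along $x\to0$ with $y>0$ it tends to $1/2$, so the quotient is not even continuous at the vertex. Thus your cocycle $c_{\alpha\beta}=\log(f_\alpha/f_\beta)$ is simply not defined as a section of $\Oo_X$, and the additive Cousin problem never gets off the ground.

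This is exactly why the paper avoids gluing altogether. Instead of producing local positive equations and patching, it builds a single coherent sheaf $\Jj$ whose stalk at $x$ is generated by $\prod_{i\in J_x}(h_i^{n_i})_x$ and $\prod_{i\in J_x}(g_{Y_i}^{n_i})_x$, uses the bound on the number of local generators (Coen) to find finitely many \emph{global} sections $f_1,\dots,f_r$ generating $\Jj$, and takes $f=f_1^2+\cdots+f_r^2$. The point is that realness of the valuation $m_{Y_i}$ gives $m_{Y_i}(f)=2\min_\ell m_{Y_i}(f_\ell)=2n_i$ directly, and the zero set is automatically correct; no compatibility of local data is ever required. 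Your approach could be rescued only under the additional hypothesis that every $\Ii_{Y_i}$ is locally principal, which is the setting of the next subsection, not of this theorem.
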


\begin{proof}
With the same notations of the previous theorem we define the coherent sheaf
$$
{\Jj}_x = \left( \prod_{i\in J_x}{(h_i^{n_i}})_x\ ,\ \prod_{i\in J_x}
(g_{Y_i}^{n_i})_x \right).
$$
Again    this sheaf  is  generated by  a finite  number of  global
sections  $f_1,  \ldots,f_r$.  Let  $f=   f^2_1  +  \ldots  +  f^2_r$.  It  is
straightforward to see that $f$ is a positive equation of $Y = \bigcup_i Y_i$.

Now, for a given $Y_i$ take some  point $x \in Y_i \setminus \bigcup_{j \neq i}
Y_j$  such  that  ${h_i}_x$  generates  $\Ii_{Y_i,x}$.   Then,  ${h_i^{n_i}}_x$
generates  ${\Jj}_x$   so,  the ideal $ ({h_i^{n_i}}_x)=  {\Jj}_x   =  ({f_1}_x,  \ldots,
{f_r}_x)$.  Thus, $m_{Y_i}(f_\ell) \geq n_i$ for all $\ell = 1, \ldots, r$ and
$m_{Y_i}(f_k) = n_i$ for some $k$.   As $m_{Y_i}$ is a real valuation, we have
 $\displaystyle m_{Y_i}(f) = 2 \min_\ell \{m_{Y_i}(f_\ell)\} = 2n_i$.
\end{proof}

As a  corollary of the  last two theorems, we prove that for any divisor $Y
=\sum_i n_i Y_i$ there  is a meromorphic  function $f$ such that $m_{Y_i} (f) =
n_i$ for each $i \in  I$.  But  note again that, unless all multiplicities
are  even, the set of points where $f$ is zero or not analytic can be 
larger than $\supp Y$.

\begin{cor}
Let $Y =  \sum_i m_i Y_i $ be  a divisor in $X$ where $\{m_i\}$ is
any  sequence of integers.  Then, there  is $f  \in \mathcal M(X)$  such that
$m_{Y_i}(f)=m_i$ for all $i \in I$.
\end{cor}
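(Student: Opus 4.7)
The idea is to reduce the arbitrary integer multiplicities $m_i$ to two situations already solved in the chapter: multiplicity one along each component of a reduced divisor (Theorem \ref{generic equation}), and prescribed even positive multiplicities (Theorem \ref{poseq}). For each $i\in I$, write $m_i=2k_i+\delta_i$ with $\delta_i\in\{0,1\}$ and $k_i\in\Z$, and split $k_i=k_i^+-k_i^-$ with $k_i^\pm\in\N$. Set $I_{\mathrm{odd}}=\{i\in I:\delta_i=1\}$ and $I_{\mathrm{even}}=I\setminus I_{\mathrm{odd}}$.

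\emph{Step 1: Handle the odd parts.} Consider the reduced divisor $Y_{\mathrm{odd}}=\sum_{i\in I_{\mathrm{odd}}}Y_i$ and the C-analytic set $W=\bigcup_{j\in I_{\mathrm{even}}}Y_j$ (locally finite, so well-defined as a C-analytic subset, and $W$ contains no component of $Y_{\mathrm{odd}}$). Apply Theorem \ref{generic equation} to $Y_{\mathrm{odd}}$ with this $W$: we get $h\in\Oo(X)$ such that $m_{Y_i}(h)=1$ for every $i\in I_{\mathrm{odd}}$, and such that $\ceros(h)\cap W$ has codimension at least $2$ in $X$. The second condition ensures that no $Y_j$ with $j\in I_{\mathrm{even}}$ is contained in $\ceros(h)$, so $m_{Y_j}(h)=0$ for all $j\in I_{\mathrm{even}}$. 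In other words, $m_{Y_i}(h)=\delta_i$ for every $i\in I$.

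\emph{Step 2: Handle the even parts.} Consider the positive even divisors $D_+=\sum_{i\in I} 2k_i^+\,Y_i$ and $D_-=\sum_{i\in I} 2k_i^-\,Y_i$ (where indices with zero coefficient are simply omitted; the remaining families are still locally finite). By Theorem \ref{poseq} there exist positive analytic functions $f_+,f_-\in\Oo(X)$ with $\mathrm{div}(f_\pm)=D_\pm$, in particular $m_{Y_i}(f_\pm)=2k_i^\pm$ for all $i\in I$. Since $f_-$ is a positive equation of its support, it does not vanish on any whole irreducible component of $X$ and hence is not a zero divisor, so $f:=h\cdot f_+/f_-\in\mathcal{M}(X)$ is a well-defined meromorphic function. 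Using the additivity of $m_{Y_i}$ on $\mathcal{M}(X)^*$ (Proposition \ref{valuation}), we compute
\[
m_{Y_i}(f)=m_{Y_i}(h)+m_{Y_i}(f_+)-m_{Y_i}(f_-)=\delta_i+2k_i^+-2k_i^-=\delta_i+2k_i=m_i
\]
for every $i\in I$, which is exactly what is required.

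\emph{Main obstacle.} The only delicate point is Step 1: Theorem \ref{generic equation} a priori only controls the multiplicity of $h$ along the prescribed components of $Y_{\mathrm{odd}}$, and leaves an extraneous zero set $Y'$. Without care, $Y'$ could pick up some of the even-index components $Y_j$ and produce $m_{Y_j}(h)\neq 0$, which would spoil the computation. This is precisely why one uses the freedom in Theorem \ref{generic equation} to impose $\ceros(h)\cap W$ of codimension $\geq 2$ in $X$, with $W$ equal to the union of the even-index components; this guarantees $h$ is a unit along each $Y_j$, $j\in I_{\mathrm{even}}$. Beyond this, the remaining steps are a straightforward bookkeeping argument with valuations.
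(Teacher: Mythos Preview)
Your proof is correct and follows essentially the same approach as the paper: decompose each $m_i$ into its parity bit plus an even integer, handle the odd part via Theorem \ref{generic equation} (choosing $W$ so the extraneous zeros avoid the even-indexed components), and handle the even part as a quotient of two positive equations produced by Theorem \ref{poseq}. The paper's write-up is terser and splits the even part according to the sign of $n_i$ rather than via an arbitrary $k_i=k_i^+-k_i^-$, but the content is identical.
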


\begin{proof}
Write $m_i=2n_i$  or $m_i=2n_i +1$ according  to the parity of  $m_i$.  By 
Theorem  \ref{poseq}  there  is  a  sum  of  squares  $h_-  \in  \Oo(X)$  such  that
$m_{Y_i}(h_-) = 2\vert  n_i\vert$ for all $i  \in I$ such that $n_i  < 0$ with
$\ceros(h_-) =
\bigcup_{n_i< 0}Y_i$. Similarly there is a sum of squares $h_+ \in \Oo(X)$ such
that $m_{Y_i}(h_+) = 2n_i$ for all $i \in I$ such that $n_i > 0$.  Take $g \in
\Oo(X)$ such  that $m_{Y_i}(g)=1$ when  $m_i$ is odd  and not vanishing  on any
$Y_i$ such that $m_i$ is even.

Then,  $f=g \dot h_+/ h_-$ has the  required multiplicities.  To check  this just  note that
$m_{Y_i}(f) = m_{Y_i}(g)+ m_{Y_i}(h_+) - m_{Y_i}(h_- )$.
\end{proof}

\subsection{Locally principal divisors.}

Let $X$ be again an irreducible  C-analytic set in  $\R^n$ of dimension  $q$ endowed with its well reduced structure.

Let $Y \subset  X $ be a  reduced divisor. We are interested  in the following
 question.
\begin{quotation}   
\rm Under  what hypotheses   the ideal  $I(Y)$ is a principal  ideal in
 $\Oo(X)$, that is,  there is $g\in \Oo(X)$ such that $Y  = {\rm div}(g)$?  
\end{quotation}

Note
 that if a  global function $g$ generates $I(Y)$, then $X  \setminus Y$ has at
 least two  connected components and the set  $\{y\in Y | \ {\rm dim}_y Y=
 {\rm dim}_y X -1\}$ bounds one of the  regions where $g$ has a given sign.  So, in
 order to have  $Y = {\rm div}(g)$   $Y$  disconnects $X$ and
 the  class  $[Y]$ must vanish  in  the  group ${\rm  H}^{\infty}_{q-1}(X,\Z_2)$.
 However these conditions are not sufficient, as the following example shows.

\bigskip

\begin{example}\label{bicoppa}
\rm 
Consider the set

$$X = \{(x,y,z) \in \R^3 | \ z^2 = x^4 +y^4 \}$$

It is an irreducible algebraic variety and also an irreducible C-analytic set. This can be seen as follows. Since the only singularity of $X$ is the point $(0,0,0)$ if $X$ were reducible, its components would be $X\cap \{z \geq 0\}$ and $X\cap \{z \leq 0\}$. Since both have codimension $1$ the only possibility is to split the equation as $z^2 = x^4 +y^4 = (z- \sqrt{x^4+y^4})(z+\sqrt{x^4+y^4})$ but the factors are not analytic at $(0,0,0)$.

If $Y =\{p \in X |\  x=0, z \geq 0\}$, then
$Y$ is  a parabola  and $[Y] =  0$ in  H$^{\infty}_{q-1}(X, \Z/2\Z)$.
This  is clear
because $Y$ is the boundary of the open set $\{ x>0, z>0\} \cap X$. Nevertheless
the ideal $I(Y) \Oo_{X,0}$ is not principal; this is  because $X$ is irreducible,  so the function $x\in \Oo(X)$ defines a double parabola $Y\cup Y'$. 
\end{example}

\bigskip

The previous  example shows  that  another  necessary  condition for  $Y$ to be
 principal is that the ideal sheaf $I(Y)\Oo_X$ is {\em locally principal}, that
 is, for  any point $x\in  X$ there is  a function germ $f\in  \I_{Y,x}$ that
 generates the  stalk $\Ii_{Y,y}$ for any  $y$ in a neighbourhood  of $x$.  So,
 from now  on we  will assume  this condition.

In order to generalise this result
 in the singular case we shall  use some classical exact sequences of coherent
 sheaves and the vanishing of the cohomology  with coefficients in a coherent
 sheaf.

As a first step we improve Theorem \ref{generic equation}.  We will find a
function $h$ vanishing with multiplicity $1$  not only along $Y$, but along its
whole zero set.

\begin{prop}\label{fratello}
Let $X \subset \R^n$ be  a C-analytic set and let $Y \subset
X$ be  a reduced divisor such  that $\I_Y$ is locally  principal.  Then, there
are  an open  neighbourhood  $U$ of  $X$ in  $\R^n$  and global  analytic
hypersurfaces $W$,  $W'$ of $U$  such that $I(W\cup  W') $ is generated  by an
analytic  function $h\in  \Oo (U)$,  $W\cap  X= Y$  and $W'$  is an  analytic
manifold transversal  to $X$ and  $Y$. Hence, setting  $Y'= W'\cap X$  we have
that  $I(Y\cup  Y')\subset  \Oo(X)$  is  generated by  $h_{|X}$,  and  $h$ generates the stalk $\Ii_{Y,x}$  at any point $x \in Y \setminus
Y'$.
\end{prop}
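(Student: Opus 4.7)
The plan is to transfer the problem to a Stein complex neighborhood via the complexification, build there a principal hypersurface containing $\widetilde Y$ up to a transversal correction, and then descend to the real picture using the natural anti-involution.

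\textbf{Step 1 (Complexification).} By Proposition~\ref{antinvolution} and Theorem~\ref{intornist}, $X$ admits, inside an $\Oo_X$-complexification $\widetilde X$ of $X$, a fundamental system of $\sigma$-invariant open Stein neighborhoods $V\subset\Omega\subset\C^n$, where $\sigma$ is the complex conjugation restricted to $V$ and $X=V^\sigma\cap\widetilde X$. Complexify $Y$ inside $\widetilde X$ to a $\sigma$-invariant divisor $\widetilde Y$; since $\Ii_Y$ is locally principal and the extension to $\widetilde Y$ commutes with tensoring by $\C$, the ideal sheaf $\Ii_{\widetilde Y}\subset\Oo_{\widetilde X}$ is locally principal as well.

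\textbf{Step 2 (Local lifts to $\Omega$).} For each $x\in\widetilde Y$ choose a germ $g_x\in\Oo_{\widetilde X,x}$ generating $\Ii_{\widetilde Y,x}$, lift it to a holomorphic function $\tilde g_x$ on an open neighborhood $U_x\subset\Omega$ of $x$, and consider the complex hypersurface $\tilde W_x=\{\tilde g_x=0\}\subset U_x$. By construction $\tilde W_x\cap\widetilde X$ is the germ of $\widetilde Y$ at $x$. Define a coherent subsheaf $\Hh\subset\Oo_\Omega|_{U_x}$ locally by $\Hh_{U_x}=\tilde g_x\,\Oo_\Omega|_{U_x}$. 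Two such lifts $\tilde g_x$, $\tilde g_y$ differ on $U_x\cap U_y$ by an equality of the form $\tilde g_y=u\,\tilde g_x+k$ with $u\in\Oo^*_{\Omega,\cdot}$ and $k\in\Ii_{\widetilde X,\cdot}$; hence on a shrunk Stein neighborhood $V$ of $\widetilde X$ the quotients $u_{xy}=\tilde g_y/\tilde g_x$ are meromorphic, holomorphic invertible along $\widetilde X$, and define a cocycle in $H^1(V,\Oo^*_V)$. This is the obstruction to gluing the local $\tilde g_x$ into a single holomorphic equation, and it is precisely the class of the locally principal extension of $\Ii_{\widetilde Y}$.

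\textbf{Step 3 (Killing the obstruction with a transversal divisor).} By Cartan's Theorem B on the Stein space $V$ and the exponential sequence $0\to\Z\to\Oo_V\to\Oo_V^*\to0$, one has $H^1(V,\Oo_V^*)\hookrightarrow H^2(V,\Z)$. Applying Theorem~\ref{generic equation} in the complex setting produces a nonidentically zero section $f\in\Oo(V)$ with $\{f=0\}\cap\widetilde X=\widetilde Y\cup\widetilde Y''$ for some auxiliary analytic set $\widetilde Y''$. Using Cartan~A one can moreover choose $f$ so that, outside a thin analytic set in $\widetilde Y$, $f$ restricts to a local generator of $\Ii_{\widetilde Y}$; the residual zero locus $\{f=0\}\setminus\widetilde W$ is then a complex hypersurface $\widetilde W'\subset V$ whose line bundle cancels the obstruction class coming from Step~2, so that $\widetilde W\cup\widetilde W'=\{f=0\}$ globally, with $\widetilde W\cap\widetilde X=\widetilde Y$.

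\textbf{Step 4 (Transversality and genericity of $\widetilde W'$).} After shrinking $V$ and, if necessary, adding to $f$ a term of the form $g_{\widetilde X}\cdot\ell$ with $g_{\widetilde X}$ a positive equation of $\widetilde X$ and $\ell$ a suitable holomorphic function (exactly the trick used at the end of Lemma~\ref{uniformizer} and Theorem~\ref{generic equation}), one can arrange that $\widetilde W'$ meets $\widetilde X$ transversally along a smooth divisor $\widetilde Y'$ transversal to $\widetilde Y$, and that $\widetilde W'$ itself is smooth. This uses Bertini-type genericity of the auxiliary functions $g_{\widetilde X},\ell$ and the fact that transversality is an open condition.

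\textbf{Step 5 (Descent to the reals).} Replacing $f$ by $\tfrac12(f+\overline{f\circ\sigma})$ (or by $f\cdot\overline{f\circ\sigma}$ if this real combination fails to be nonzero) gives a $\sigma$-invariant holomorphic function on $V$ whose zero locus remains of the form $\widetilde W\cup\widetilde W'$ with the same intersection properties with $\widetilde X$. Restricting to $U=V\cap\R^n$ produces $h\in\Oo(U)$, and $W$, $W'$ as the real zero sets; the properties $W\cap X=Y$, $W'$ a manifold transversal to $X$ and $Y$, and $I(W\cup W')=h\,\Oo(U)$ are inherited from the complex picture together with the local principality of $\Ii_Y$. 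The final conclusion $I(Y\cup Y')=h|_X\,\Oo(X)$, and the fact that $h$ generates $\Ii_{Y,x}$ for every $x\in Y\setminus Y'$, follows by restricting to $X$ and using that $\widetilde W'\cap\widetilde X=\widetilde Y'$ is transversal to $\widetilde Y$.

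\textbf{Main obstacle.} The delicate point is Step~3: showing that a locally principal extension of $\Ii_{\widetilde Y}$ to an ideal on a Stein neighborhood $V$ of $\widetilde X$ always becomes principal after multiplying by the ideal of a carefully chosen transversal hypersurface. This is where Cartan's Theorems~A and~B intervene essentially, through the exponential sequence, to turn the cohomological obstruction into a concrete Cousin~II datum that can be solved by adding the extra hypersurface~$\widetilde W'$.
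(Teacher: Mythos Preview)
Your overall architecture (line-bundle obstruction, kill it with an auxiliary divisor, get principality) is the right shape, but Step~3 is not a proof: you assert that the residual hypersurface $\widetilde W'$ ``cancels the obstruction class,'' yet give no mechanism for why this should happen. Invoking Theorem~\ref{generic equation} only produces a function of multiplicity one along the divisor; it does not tell you that the leftover zero locus carries the inverse line-bundle class. And passing to the complex picture makes things harder, not easier: on a Stein neighborhood $V$ the obstruction sits in $H^2(V,\Z)$, which you have no direct handle on, whereas the problem is intrinsically real.

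The paper's argument stays entirely real and exploits the key fact you are missing: for \emph{real} analytic line bundles the relevant exponential sequence is $0\to\Oo_X\to\Oo_X^*\to\Z/2\Z\to 0$, so by Theorem~B the obstruction lives in $H^1(X,\Z/2\Z)$ and is $2$-torsion. Concretely, one extends the local generators $f_i$ of $\Ii_Y$ to $F_i$ on a neighborhood $U\subset\R^n$, obtaining a line bundle $\mathcal G$ on $U$; then one simply takes a \emph{second} analytic section $G=\{G_i\}$ of the \emph{same} bundle $\mathcal G$, chosen transversal to the zero section so that $W'=\{G=0\}$ is a manifold. The product section $\{F_iG_i\}$ has cocycle $(F_i/F_j)^2$, a square, hence trivial in $\Z/2\Z$, so $\{F_iG_i\}$ glues to a global $h$ with $\{h=0\}=W\cup W'$. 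This single observation replaces your Steps~2--5 and avoids the complexification, the Cousin~II detour, and the descent issues (your averaging in Step~5 could easily destroy both principality and transversality).
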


\begin{proof}
Since  $\Ii_Y$  is  locally  principal, for  any  $x$  there  is  a  germ  $f_x\in\Oo_{X,x}$ that generates $\Ii_{Y,y}$ for any $y$ in an open neighbourhood
$U_x$ of $x$.  So, refining the  open covering $\{U_x\}_{x\in X}$,
we  can find a countable locally finite open covering $\{U_i\}$
of $X$, analytic  functions $f_i$ on $U_i$ such  that $f_i \Oo_{X,x} =\Ii_{Y,x}$
for any $x\in  U_i$. Hence, $f_i/f_j$ is invertible on  $U_i \cap U_j$.  These
functions define an  analytic cocycle in $H^1(X, \Oo^*)$,  that is, an analytic
line bundle  $\mathcal{F}$ on  $X$. The collection  $\{f_i\}_i$ defines a  section of
$\mathcal{F}$ vanishing exactly on $Y$.

We  may find   open sets
$V_i \subset \R^n$ such that $V_i \cap X = U_i$.
Since $\Oo_X$  is  a coherent  sheaf  each $f_i$  extends to  an
analytic function $F_i$ on the open set $V_i$ of $\R \sp n$.

AS $f_i/f_j$ is invertible,  after shrinking the $V_i$'s, we
may assume that $F_i / F_j$ is  invertible on $V_i \cap V_j$. So, $V =
\bigcup_i  V_i$ is  an open  neigbourhood  of $X$  in $\R^n$  and the
functions
$F_i/F_j  : V_i  \cap  V_j  \to \Oo^*$ define  an  analytic line  bundle
$\mathcal{G}$ on  $V$ that extends $\mathcal{F}$.  The  collection $\{F_i\}$ defines  an analytic  section of
$\mathcal{G}$ whose zero set $W$ cuts $X$ along $Y$.

We may find  an analytic section $G  = \{ G_i \}_i$ of  $\mathcal{G}$ transversal to the  zero  section,  hence, its  zero  set  is an  analytic
manifold $W'$ in  $V$.  We want to  prove that $I(W \cup W')$  is principal in
$\Oo(V)$. Consider the  line bundle  defined by
$F_i G_i$  whose cocycle is $(F_i/ F_j)^{2}$. We have  to prove that
this cocycle  (and hence the  line bundle) is  trivial, that is, $(F_i  G_i) /(F_j
G_j) =
\lambda_j^{-1} \lambda_i$,  where $\lambda_i  \in \Oo^*_n (V_i)$.   If this happens,
$\{F_i G_i \lambda_i\}$  glue together and give a generator  $h$ for $I(W \cup
W')$. Define $Y' = W' \cap X$: then, ${h}_{|X}$ generates $I(Y \cup Y')$.

Consider  the exponential  map  and  the associated  usual  exact sequence  of
coherent sheaves.
$$
0 \rightarrow \Oo _X \rightarrow \Oo _X ^{\ast}
\rightarrow \Oo _X ^{\ast} / \Oo_X ^+ =
\Z/2\Z \rightarrow 0.
$$
where $\Oo _X ^{\ast}$ is the sheaf of units and $\Oo_X ^+$ is the sheaf of positive units.

Since $H^i(X, \Oo_X) =  0$ for $i >0$, it induces an  isomorphism between $ H^1
(X, \Z/2\Z) $ and $ H^1 (X, \Oo _X ^{\ast} )$.  Under this isomorphism the image
of a line bundle is the cocycle  of the signs of its transition functions.

In our case $(F_i  G_i) /(F_j G_j) = F^2_i/F^2_j$, so the line bundle is trivial and the proof is complete.
\end{proof}

\begin{remark}
 It is  easy to check that if $\{ Y_i \}_{i  \in I}$ is a locally
finite  family of  locally principal  divisors then  $\bigcup_i Y_i$  is  also a
locally principal  divisor. On the contrary, there are locally principal
divisors with some components that are not locally principal.

For example, take $X \subset \R^3$ as the cone of equation $z^2 = x^2 + y^2$
and consider the divisor $Y = \{ x = 0 \} \cap X$ which is locally principal
with generator $g=x$. The divisor $Y$ splits into two straight lines $Y_1$
and $Y_2$ neither of which are locally principal.
\end{remark}
\bigskip

The following is the main result of this section.

\begin{thm}\label{mainth}
Let $X$ be a $q$-dimensional irreducible  C-analytic set in $\R^n$, so that    
the ideal $I(X) \subset \Oo(\R^n)$ is prime.  Let $Y\subset X$ be a reduced divisor such that its  ideal sheaf \,  $\Ii_Y =I(Y)\Oo_X$ is  locally principal. 

Assume that
$[Y] = 0$ in ${\rm H}^{\infty}_{q-1}(X,\Z/2\Z)$  and that $X \setminus Y$ is not
connected.  Then, there is an open neighbourhood $U$ of the set $X_{\rm max} =
\{x\in X :\,  {\rm dim}_x X=q\}$  and $g \in \Oo_X(U)$ such that
$\Ii_{Y,x} = g \Oo_{X,x}$,
for any $x\in U$. In particular, if $X$ has pure dimension, then $Y = {\rm
div} (g)$.
\end{thm}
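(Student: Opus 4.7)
The plan is to realize the locally principal ideal sheaf $\Ii_Y$ as an analytic line bundle $L_Y$ on $X$, reduce triviality of $L_Y$ to a $\Z/2\Z$-cohomological statement via Theorems A and B (Theorem \ref{ABreali}), and then produce an explicit sign trivialization from the topological hypotheses. Concretely, cover $X_{\max}$ by a locally finite family $\{U_i\}$ on which $\Ii_Y|_{U_i}$ is generated by an analytic function $g_i\in\Oo_X(U_i)$; the units $c_{ij}=g_i/g_j\in\Oo_X^*(U_i\cap U_j)$ form a $1$-cocycle whose class is $L_Y\in H^1(X,\Oo_X^*)$. The real-analytic exponential $\exp:\Oo_X\to\Oo_X^+$ is an isomorphism of sheaves of abelian groups, while Theorem \ref{ABreali} gives $H^i(X,\Oo_X)=0$ for $i\geq 1$, hence $H^i(X,\Oo_X^+)=0$ as well. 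The sign short exact sequence
$$
0\longrightarrow\Oo_X^+\longrightarrow\Oo_X^*\xrightarrow{\mathrm{sgn}}\Z/2\Z\longrightarrow 0
$$
then produces an isomorphism $H^1(X,\Oo_X^*)\cong H^1(X,\Z/2\Z)$, so $L_Y$ is analytically trivial if and only if the sign cocycle $s_{ij}=\mathrm{sgn}(c_{ij})$ is a $\Z/2\Z$-coboundary.

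To trivialize $(s_{ij})$, I would fix a nonempty proper union of connected components $A\subsetneq X\setminus Y$ (available since $X\setminus Y$ is disconnected) and let $\epsilon:X\setminus Y\to\{\pm 1\}$ equal $+1$ on $A$ and $-1$ on the complement. The vanishing $[Y]=0$ in $H^\infty_{q-1}(X,\Z/2\Z)$, read through the locally finite triangulation of $(X,Y)$ recalled earlier in this chapter, forces $\partial A=Y$ as a mod $2$ chain; equivalently, $\epsilon$ changes sign at every top-dimensional smooth point of $Y$. Since each $g_i$ has multiplicity one along $Y$, $\mathrm{sgn}(g_i)$ also switches across $Y$ at such points, so $\epsilon\cdot\mathrm{sgn}(g_i)$ is locally constant across $Y$ on $U_i\setminus\Sing(X)$. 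After refining $\{U_i\}$ so that $\epsilon\cdot\mathrm{sgn}(g_i)$ takes a single value $\epsilon_i\in\{\pm 1\}$ on $U_i\setminus(Y\cup\Sing(X))$, a direct computation yields $s_{ij}=\epsilon_i\epsilon_j$ on every overlap, and $(s_{ij})$ is a coboundary.

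Triviality of $L_Y$ then provides positive analytic units $u_i\in\Oo_X^+(U_i)$ with $\epsilon_ig_i=u_i\cdot g$ for a single $g\in\Oo_X(U)$ defined on an open neighbourhood $U$ of $X_{\max}$, and $\Ii_{Y,x}=g\Oo_{X,x}$ for every $x\in U$; when $X$ is purely $q$-dimensional, $U=X$ and $Y=\mathrm{div}(g)$. The main obstacle lies in the second paragraph: one must combine $[Y]=0$ with the disconnection of $X\setminus Y$ and with the codimension-one smooth structure of $X$ along $Y$ (via triangulation) to ensure that $\epsilon$ genuinely switches sign across $Y$ at every top-dimensional smooth point, and to refine the cover so that $\epsilon\cdot\mathrm{sgn}(g_i)$ is truly constant on each $U_i$ without cancellation across singular strata. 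Once this sign matching is in place, the cohomological machinery of the first paragraph delivers the global generator $g$ automatically.
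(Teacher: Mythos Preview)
Your overall architecture is exactly the paper's: realize $\Ii_Y$ as a line bundle, use the sign exact sequence $0\to\Oo_X^+\to\Oo_X^*\to\Z/2\Z\to 0$ together with Theorem~B to reduce triviality to a $\Z/2\Z$-coboundary condition, and then produce the trivialization from the topological hypotheses via a triangulation of $(X,Y)$. So the strategy is correct and matches the paper.

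The one substantive discrepancy is in how you build the sign function. You fix an arbitrary nonempty proper union $A$ of connected components of $X\setminus Y$ and assert that $[Y]=0$ forces $\partial A=Y$ mod~$2$. This does not follow: $[Y]=0$ gives $Y=\partial H$ for \emph{some} $q$-chain $H$ in the triangulation, but $H$ need not be a union of connected components of $X\setminus Y$, even when $X\setminus Y$ is disconnected (this is precisely the phenomenon behind Remark~\ref{moreabout}). Conversely, an arbitrary union of components need not have $Y$ as its mod~$2$ boundary. The paper therefore does \emph{not} define $\epsilon$ via components; instead it takes the subcomplex $H$ with $\partial H=Y_{\max}$ directly and prescribes the sign of each local generator $g_j$ so that $g_j>0$ on the $f(H)$-side. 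The disconnectedness hypothesis enters to guarantee that the ``otherwise'' clause (the region outside the component containing $f(H)$, where one sets $g_j\le 0$) is nonempty, so that the sign prescription is globally consistent on overlaps. Once you replace your component-based $\epsilon$ by the $f(H)$-based sign choice, your argument coincides with the paper's and goes through.
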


\begin{proof}
Arguing as in  Proposition \ref{fratello} we have to prove that there exists $U$ as stated
such that the line bundle $\mathcal{F}$  defined by $Y$ is trivial when restricted
to $U$. To do  this it is enough to find local  generators $\{f_i\}$ of $\Ii_Y$
on a countable open covering $\{U_i\}$ of a neighbourhood of $X_{\rm max}$ such
that ${f_i}_{|U_i \cap U_j}$ and ${f_j}_{|U_i \cap U_j}$ have the same
sign.

Take a  locally finite triangulation $f:K\to X$ of the  couple $(X,Y)$, where
$K$  is a  simplicial complex  and  there is  a subcomplex  $K_Y$ such  that
$f(K_Y)=Y$.   In  particular, for  any  $j$  we  have isomorphisms  $f_*:{\rm
H}^{\infty}_{j}(K,\Z/2\Z)\to {\rm H}^{\infty}_{j}(X,\Z/2\Z)$.

The fact  that $[Y] =0$  means that the union of all $q-1$ simplexes in $K_Y$
bounds some subcomplex $H$ of $K$. The boundary of the region $f(H) \subset X$
is the set $Y_{\rm max}$ of points in $Y$ where $Y$ has dimension $q-1$.

  Thus for each  $j$ such that $U_j  \cap X_{\rm max} \neq \varnothing  $, we may
choose local generators $g_j \in \Oo_X(U_j)$ in such a way that:

\begin{itemize}
\item if $f(H)\cap U_j \neq \varnothing$, then  $g_j> 0$    on $(f(H) \cap U_j) \setminus  Y$;
\item if $f(H) \cap U_j =\varnothing$, then $g_j \geq 0$ when $U_j$ and $f(H)$ lie in the same connected component of $X\setminus Y$ and  $g_j\leq 0$  on $U_j$ otherwise.
\end{itemize}

Hence,  $g_i/g_j >  0$  on  $U_i \cap  U_j$  and $\mathcal  {F}$  is trivial,  when
restricted  to  $U$.    This  means  that  we  can   find  analytic  functions
$\lambda_i  \in \Oo^*(U_i)$  such that  $g_i/g_j =  \lambda_j/ \lambda_i$.
So,    the    sections    $g_i     \lambda_i:    U_i    \to    \Oo_X$    satisfy
${g_i\lambda_i}_{|U_i  \cap U_j} =  {g_j \lambda_j}_{|U_i  \cap U_j}$,
that  is, they  define  an analytic  function $g$  on  $U =  \bigcup U_i$.  By
construction $ g_x$ generates $\Ii_{Y,x}$ for any $x\in U$.
\end{proof}

\begin{cor}
Under the hypothesis  of Theorem \ref{mainth}, if $X$ is pure dimensionsional, then
$Y$  is principal. In  particular, when  $X$ is  coherent, $Y  = {\rm
div}(g)$ for some $g \in \Oo(X)$.
\end{cor}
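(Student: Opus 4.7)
The plan is to read off the corollary directly from Theorem~\ref{mainth}: all the real work has been done there, and the pure-dimensional hypothesis is what converts the theorem's conclusion on a neighborhood of $X_{\max}$ into an honest global statement.

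First I would observe that if $X$ has pure dimension $q$, then $\dim_x X = q$ for every $x \in X$, hence $X_{\max} = X$. The open neighborhood $U$ of $X_{\max}$ in $X$ produced by Theorem~\ref{mainth} can therefore be taken equal to $X$, and we obtain a section $g \in \Oo_X(X)$ such that $\Ii_{Y,x} = g\,\Oo_{X,x}$ for every $x \in X$. Since $g$ is a local generator of the sheaf of ideals $\Ii_Y$ at every point of $X$, its zero set equals $Y$ and $m_{Y_i}(g) = 1$ along each irreducible component $Y_i$. Consequently $Y = {\rm div}(g)$, so $Y$ is principal.

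For the ``in particular'' clause, when $X$ is coherent the reduced structure sheaf coincides with the well reduced structure sheaf $\Oo_X = \Oo_{\R^n}/I(X)\Oo_{\R^n}$ and is itself coherent. By Theorem~\ref{ABreali} (Theorems A and B for C-analytic spaces), the exact sequence
$$
0 \to I(X)\Oo_{\R^n} \to \Oo_{\R^n} \to \Oo_X \to 0
$$
gives a surjection $\Oo(\R^n) \twoheadrightarrow \Oo_X(X)$, so any element of $\Oo_X(X)$ is represented by an analytic function on $\R^n$ and in particular by an element of $\Oo(X)$. Applying this to the $g$ produced in the previous step yields $g \in \Oo(X)$ with $Y = {\rm div}(g)$.

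I do not expect any real obstacle: the only point worth being careful about is the identification of the sheaf-theoretic section $g \in \Oo_X(X)$ supplied by Theorem~\ref{mainth} with a genuine global analytic function on $X$. This identification is where coherence of $\Oo_X$ (either the coherence built into C-analyticity via the well reduced structure, or, in the stronger setting of the ``in particular'' clause, the coherence of the reduced structure) enters through Theorems A and B.
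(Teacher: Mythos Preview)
Your first paragraph is exactly right and is all that is needed for the pure-dimensional case: once $X_{\max}=X$ the neighborhood $U$ in Theorem~\ref{mainth} is all of $X$, so $g\in\Oo_X(X)=\Oo(X)$ and $Y=\mathrm{div}(g)$. This is why the paper states the corollary without proof.

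Your treatment of the ``in particular'' clause, however, misidentifies the role of coherence. The identification $\Oo_X(X)=\Oo(X)$ already holds for \emph{any} C-analytic set endowed with its well-reduced structure $\Oo_X=\Oo_{\R^n}/I(X)\Oo_{\R^n}$: this sheaf is coherent by construction (Lemma~\ref{wellreduced}), Theorems A and B apply (Theorem~\ref{ABreali}), and $I(X)$ is saturated, so the surjectivity argument you sketch goes through without any further hypothesis on $X$. Nothing new is gained here from assuming $X$ coherent. What the coherence hypothesis actually buys is \emph{pure-dimensionality}: if $X$ is coherent and irreducible, then for every $x\in X$ the germ $\widetilde X_x$ of the complexification is the complexification of $X_x$ (Proposition~\ref{Noncoerenza}); since $\widetilde X$ is irreducible of complex dimension $q$, every $\widetilde X_x$ has dimension $q$, hence $\dim_\R X_x=q$ for all $x$. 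This is the missing link that lets you invoke the first part of the corollary in the coherent case. Without it, you have not explained why the $g$ ``produced in the previous step'' exists on all of $X$ when only coherence is assumed.
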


\begin{remark}\label{moreabout}
If  $X$ is a  manifold, the condition  $[Y] = 0$ implies  that $Y$
 divides $X$  in two or  more connected components  and it is the  boundary of
 some of them. This is not  true in general, not even in the case
 of a coherent singular space $X$. As  an example, one can consider $X$ as  a
 real $2$-dimensional  torus with  one meridian collapsed  to a point:  one can
  write an analytic function on $\R^3$ with such a zeroset. Take as $Y$
 any other meridian. Then $[Y] = 0$  since it is homotopic to one point, and
 of course  $Y$ is locally principal,  but it cannot be  principal because its
 complement is connected.
\end{remark}
\smallskip

As a consequence  of Theorem \ref{mainth} we have the  following 
result, similar to the one on polynomial ideals proved  by Shiota. Recall that an  ideal $\mathfrak {p}$ is \index{real ideal} {\em real} when $\sum_i a_i^2 \in \mathfrak{p}$ implies $a_i \in  \mathfrak {p}$ for all $i$.

\begin{cor}
Let $X$ be a  coherent C-analytic set in $\R^n$ and assume that the ideal $I(X) \subset\Oo(\R^n)$ is prime. Let $\mathfrak {p}_i$ for $i\geq 1$ be  prime real ideals in
$\Oo(X)$ of height $1$ for $i\geq 1$. Denote by $Y_i$ the associated divisor, that is the zero set
of $\mathfrak {p}_i$,  and assume that the family $\{  Y_i\}_i$ is locally finite  and that for any $i$ the
ideal sheaf\,  $\mathfrak{p}_i
\Oo_X$ is locally principal. 
Then, the ideal
$ \prod_i \mathfrak{p}^{m_i}_i$
is principal if and only if the cycle
$ \sum_i m_i [Y_i]$ vanishes in $ {\rm H}^{\infty}_{q-1}(X, \Z/2\Z)$
and
$X \setminus \bigcup_{m_i \, odd} Y_i$
is not connected.
\end{cor}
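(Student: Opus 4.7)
The plan is to reduce the Corollary to Theorems \ref{mainth} and \ref{poseq} via the parity decomposition $m_i = 2n_i + \varepsilon_i$ with $\varepsilon_i \in \{0,1\}$. Setting $Y^{\mathrm{odd}} := \bigcup_{m_i\text{ odd}} Y_i$, this is a reduced C-analytic divisor whose class in ${\rm H}^{\infty}_{q-1}(X,\Z/2\Z)$ equals $\sum_i m_i [Y_i]$ mod $2$. By the remark following Proposition \ref{fratello}, a locally finite union of locally principal divisors is locally principal, so $\Ii_{Y^{\mathrm{odd}}}$ is locally principal. Since $X$ is coherent and $I(X)$ is prime, the discussion of C-irreducible components in Chapter 1 forces $X$ to be pure dimensional, so that the corollary to Theorem \ref{mainth} applies.

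I would handle the ``only if'' direction first. Assuming $\prod_i \mathfrak{p}_i^{m_i} = (f)\Oo(X)$, I pick local generators $g_{i,x}$ of the locally principal sheaves $\Ii_{Y_i,x}$; the product ideal sheaf $\bigl(\prod_i \mathfrak{p}_i^{m_i}\bigr)\Oo_{X,x}$ is then principal, generated by $\prod_i g_{i,x}^{m_i}$, so the equality $(f_x) = \bigl(\prod_i g_{i,x}^{m_i}\bigr)$ forces $\mathrm{div}(f) = \sum_i m_i Y_i$. Near a maximal-dimensional point of a component $Y_i$ with $m_i$ odd, $f$ changes sign, while across components with $m_i$ even it does not. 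Hence $Y^{\mathrm{odd}}$ is the topological boundary of the open set $\{f>0\}$, which simultaneously proves that $X\setminus Y^{\mathrm{odd}}$ is disconnected and that $[Y^{\mathrm{odd}}] = \sum_i m_i[Y_i]$ vanishes in ${\rm H}^{\infty}_{q-1}(X,\Z/2\Z)$.

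For the ``if'' direction I would apply the corollary of Theorem \ref{mainth} to the reduced locally principal divisor $Y^{\mathrm{odd}}$ (its hypotheses being precisely the two that are assumed) to obtain $g \in \Oo(X)$ with $\mathrm{div}(g) = Y^{\mathrm{odd}}$, and Theorem \ref{poseq} to the positive even divisor $2\sum_i n_i Y_i$ to get a nonnegative $h \in \Oo(X)$ with $\mathrm{div}(h) = 2\sum_i n_i Y_i$. Setting $f := gh$ gives $\mathrm{div}(f) = \sum_i m_i Y_i$, and one concludes by identifying the ideals: at every $x$ both $(f)\Oo_{X,x}$ and $\bigl(\prod_i \mathfrak{p}_i^{m_i}\bigr)\Oo_{X,x}$ are principal and have the same divisor, so they coincide up to a unit in $\Oo_{X,x}$, and Theorems A and B for the coherent space $X$ (Theorem \ref{ABreali}) promote this stalkwise equality of coherent ideal sheaves to an equality of global ideals in $\Oo(X)$.

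The main obstacle I foresee is the last step: justifying the passage from the sheaf-theoretic stalkwise identification to the equality of global ideals $\prod_i \mathfrak{p}_i^{m_i} = (f)\Oo(X)$. This is exactly the point where coherence of $\Oo_X$ and Cartan's Theorem A are indispensable, since they are what let us recover a global generator of $\prod_i \mathfrak{p}_i^{m_i}\Oo_X$ as a single element of $\Oo(X)$ with the prescribed multiplicities. A minor subtlety, already used tacitly in the ``only if'' direction, is the verification at crossing points $x \in Y_i \cap Y_j$ that $\prod_i g_{i,x}^{m_i}$ genuinely has multiplicity $m_i$ along each $Y_i$; this follows from local principality of each $\Ii_{Y_i,x}$ and the fact that the discrete valuations $m_{Y_i}$ are multiplicative.
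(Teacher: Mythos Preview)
Your approach follows the same parity decomposition as the paper, and your treatment of the ``only if'' direction and of the odd part (via Theorem \ref{mainth}) matches it. The difference lies in the even part. The paper does not invoke Theorem \ref{poseq}; instead it observes directly that the sheaf $\Jj=(\prod_i\mathfrak{p}_i^{2k_i})\Oo_X$ is locally principal and locally generated by a \emph{square} (since each $\mathfrak{p}_i\Oo_X$ is locally principal by hypothesis), so its associated line bundle has transition functions that are squares and hence trivial image in $H^1(X,\Z/2\Z)$. By the same exponential-sequence argument used in Theorem \ref{mainth}, $\Jj$ then admits a global section $g$ with $g_x\Oo_{X,x}=\prod_i(\mathfrak{p}_i\Oo_{X,x})^{2k_i}$ at every point, and $fg$ generates $\prod_i\mathfrak{p}_i^{m_i}$.

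This is cleaner than your route and, crucially, it sidesteps your ``main obstacle''. Theorem \ref{poseq} only yields a function $h$ with the prescribed multiplicities $m_{Y_i}(h)=2n_i$ and zero set $\bigcup_i Y_i$; it does \emph{not} assert that $h_x$ generates $\prod_i(\mathfrak{p}_i\Oo_{X,x})^{2n_i}$ at every stalk (its proof uses uniformizers that fail to generate $\Ii_{Y_i,x}$ on a codimension-one locus). Your proposed fix ``same divisor implies same principal ideal in $\Oo_{X,x}$'' is not valid in general unless $\Oo_{X,x}$ is normal, which is not assumed: at a singular point of $X$ two elements with identical multiplicities along all height-one primes can generate distinct principal ideals, so Theorem \ref{ABreali} cannot be invoked until the stalkwise equality is already in hand. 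The paper's direct line-bundle trivialization furnishes a genuine stalkwise generator of $\Jj$ from the outset, which is precisely what the argument needs.
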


\begin{proof}
Since the family of irreducible divisors $\{ Y_i\}_i$ is locally finite,
the ideal sheaf $
(\prod_i \mathfrak{p}^{m_i}_i)\Oo_X$ is also locally  principal. Write $m_i=2k_i$
or $m_i=2k_i+1$ according to the parity of $a_i$.
 Split the  class $ \sum_i
m_i [Y_i]$  as 
$$\sum_i 2k_i [Y_i] +  \sum_{i}(2k_i+1) [Y_i].$$

  The ideal  sheaf $ \Jj=\ (\prod_i  \mathfrak{p}^{2k_i}_i)\Oo_X$ is  principal. In
fact, it is  locally generated by a  square, hence, arguing as in  the proof of
Theorem \ref{mainth},  its associated line bundle is  trivial, so  we can find  a global section  $g$ of $\Jj$ such  that $g_x$
generates its stalk at any  point $x\in X$. In addition $Y=\sum_i(2k_i+1) Y_i$
satisfies   the  hypothesis  of   Theorem  \ref{mainth},   so  its   ideal  is
principal, say  generated by $f$. Thus $fg$ generates $  \prod_i \mathfrak{p}^{m_i}_i$ as required.

The converse is clear.
\end{proof}

\bigskip

\subsection*{ Bibliographic and Historical Notes.}\rm

Our definition of irreducible components of a complex analytic set comes in a natural way from the local description of a complex analytic set germ $X_x$ given in Chapter 1. There we saw that the closure of a connected component of an open set of regular points, or better of its germ at $x$, provides an irreducible component of $X_x$.

A more sophisticated notion of irreducible component can be found in \cite{wb}, where irreducible components of a C-analytic set were defined.

When $X$ is a closed subspace of $\C^n$ or of an analytic manifold $M$ one can look at the ideal $I(X)$ of holomorphic functions vanishing on $X$ and it isstraightforward to prove that $X$ is irreducible if and only if  $I(X)$ is a prime ideal.

\smallskip

Concerning normalisation the main reference is \cite{gare}, but this notion was already considered in the works of Oka, who proved the coherence of the normalization sheaf, and in S\'eminaire Cartan, 1953-1954, expos\'e 10 \cite{c1}. The criterion for an analytic space to be normal can be found in \cite{ab} pag 435.

\smallskip

Multiplicity along a divisor was introduced for curves in a normal analytic surface by Andradas, Diaz-Cano and Ruiz in \cite{AnDCRz} and their argument extends to divisor in a C-analytic set, see \cite{acdc}.  
For more details on valuation rings see \cite[Chapter 10]{bcr}.
The argument in the proof of Remark  \ref{moreabout}  comes from \cite{moreabout}. Proposition \ref{fratello} was already proved in \cite{BrPie} when the ambient space is a real analytic manifold. For analytic vector bundles on open sets of $\R^n$ one can see \cite{tog}.
For the definition  and generalities on the groups
${\rm  H}^{\infty}_{j}(X,\Z/2\Z)$  we  refer  to  \cite{massey}.
The fact that a sheaf of ideals that is locally finitely generated with a bounded number of generators is in fact globally finitely generated is a classical result of Coen \cite{Co}.

Corollary 3.13 is the analogous of an algebraic result of  Shiota \cite{sh} that may be found in \cite[12.4.1]{bcr}.

Hauptvermutung, that is, uniqueness of the P-L type of triangulations, was proved in \cite{shio}.

The section on divisors is strongly inspired by \cite{acdc}.

\newpage

\parindent=0pt

\chapter{Nullstellens\"atze.}


The main result of this Chapter is the Nullstellensatz for the algebra $A=
H^0(X,\Oo_X)$ where $X$ is either a Stein space or a C-analytic space.

If $A$ is an algebra of real or complex functions on a space $X$,
for any  subset $E\subset A$ we can look at its   zeroset, that is,  $\ceros (E)= \{x\in X |\ f(x)= 0\  \forall  f \in E\}$.  Conversely given a subset $C\subset X$ we consider $ I (C)=\{f\in A |\ f(x)=0\  \forall x\in C\}$. It is clear that $ I(C)$ is  an ideal  in $A$ and the ideal generated by $E$ has the same zeroset  as $E$.

The word  {\it  Nullstellensatz} \index{Nullstellensatz} refers to a  characterization of
the ideal $ I(\ceros (\gta))$, where $\gta$ is an ideal in $A$. An ideal 
$\gta$ is said to have {\it the zero property} if $I(\ceros(\gta)) = \gta$.

 The  first Nullstellensatz is  Hilbert's one 
for complex polynomials. For the real case the result came later and is quite different. More precisely, 

\smallskip

{\rm(Hilbert, 1893).}   If $\gta  \subset \C[x]=\C[x_1,\ldots ,x_n]$ then
 $$ I(\ceros(\gta))= \{f\in \C[x] |\  \exists n \ \mbox{\rm such that} \ f^n\in \gta\}= \sqrt{\gta}.$$

\smallskip

{\rm(Risler, 1970).}   If $\gta \subset \R[x]=\R[x_1,\ldots ,x_n]$  then 
$$  I(\ceros (\gta))= \{f\in \R[x] |  \exists n,  f_1,\ldots , f_k \in \R[x] \ \mbox{ such that }   f^{2n}+ f_1^2+\cdots +f_k^2\in \gta\}= \sqrt[r]{\gta}.$$ 

\smallskip

The results for convergent power series  are very similar. 

\smallskip

 {\rm (R\"uckert, 1933).}   If $\gta  \subset \C\{x\}=\C\{x_1,\ldots ,x_n\}$
then
 $$I(\ceros(\gta))= \{f\in \C\{x\} | \exists n \ \mbox{\rm   such that } f^n\in \gta\}= \sqrt{\gta}.$$

\smallskip

 {\rm (Risler, 1976).} If  $ \gta \subset \R\{x\}= \R\{x_1,\ldots ,x_n\}$   then
 $$I(\ceros (\gta))= \{f\in  \R\{x\}  |  \exists n, f_1,\ldots , f_k \in \R\{x\}  \ \mbox{\rm   such that }    f^{2n}+ f_1^2+\cdots +f_k^2\in \gta\}= \sqrt[r]{\gta}.$$ 

\medskip

The  results for the local analytic and polynomial settings are of the same nature  because   the ring of germs of holomorphic  (resp.  real analytic) functions is noetherian and UFD.

When passing from rings of germs to rings of holomorphic  (resp. real analytic) functions on an open set of $\C^n$ (resp.  $\R^n$) the situation is very different. These algebras are neither noetherian nor UFD, but deeper difficulties arise.
In fact, in these algebras there are  proper prime ideals
 with empty zeroset  and 
 the multiplicity of an holomorphic (or real analytic) function on a sequence of points  can be unbounded, as we see in the following examples.

\vfill\eject

{\sc Examples.} \null \hfill
 
\begin{enumerate} \parindent=0pt
\item Let  $\mathfrak A$  be an  ultrafilter\footnote{A collection $\mathfrak F$ of subset of a given set $X$ is called a {\it filter} if
\begin{enumerate}
\item $\mathfrak F$  is nonempty.
\item  If  $A, B \in\mathfrak F$ then $A\cap B \in \mathfrak F $  
 \item    If  $A \in \mathfrak F$ and $A \subset B$ then $B \in \mathfrak F$.
 \end{enumerate}

Remark that  by (b) $\mathfrak F$ coincides with the entire $\mathcal P(X)$ if and only if $\varnothing \in \mathfrak F$  because  $\forall A\subset X $ $ \varnothing \subset A$. So a proper filter cannot contain simultaneously $A$ and $X\setminus A$. If  $\mathfrak F$  and  $\mathfrak G$ are two filters we say that $\mathfrak F   \prec \mathfrak G  $ if for any $A\in \mathfrak F$  there exists a $B\in \mathfrak G$ such that $B\subset A$.

An {\it ultrafilter} is a filter which is maximal with respect to this order: a filter is an {\it ultrafilter} if and only if for any $A\in \mathcal P(X)$  either $A$ or $X\setminus A$ belongs in $\mathfrak F$.

If $X$ is not finite, the filter generated by the collection of all complements of finite subsets of $X$ is called {\it cofinite filter}. In general this is not an ultrafilter: for instance if $X=\N$ neither the set $A=\{\mbox{odd numbers}\}$  nor its complement, the set of even ones,  belong to the filter.}\index{ultrafilter}  
of  subsets  of $\N$  containing  all cofinite subsets.
 Let $\K$ denote either $\C$ or $\R$. For an analytic
function  $f\in\an(\K)$,  we  denote  by ${\rm  mult}_z(f)$  the  {\em
  multiplicity}    of    $f$    at    the    point    $z\in\K$ \index{multiplicity of a function! at a point} and write
$M(f,m)=\{n\in\N:\  {\rm  mult}_n(f)  \geq  m\}$. Consider  the
non-empty set
$$
\gta=\{f\in\an(K):\ M(f,m)\in\gtA\ \forall m> 0 \}.
$$

Let us  check that $\gta$ is  a prime ideal.  

Indeed, if $f,g\in\gta$,
then    $M(f,m)\cap    M(g,m)\subset    M(f+g,m)$,    because    ${\rm
  mult}_n(f+g)\geq\min\{{\rm  mult}_n(f),{\rm  mult}_n(g)\}$, so
$M(f+g,m)\in\mathfrak A$ for  all $m\geq0$. On the other  hand, if $f\in\gta$
and   $g\in\an(\K)$  then   ${\rm   mult}_n(fg)={\rm  mult}_n(f)+{\rm
  mult}_n(g)$ and so $M(fg,m)\supset M(f,m)\in\gtA$ for all $m\geq0$. Hence, $\gta$ is an ideal.

Next,  suppose that  $f_1f_2\in\gta$,  but $f_1,f_2\not\in\gta$.  Thus,
there         exists          $m_1,m_2\geq0$         such         that
$M(f,m_1),M(g,m_2)\notin\gtA$.  Take  $m_0=\min\{m_1,m_2\}$ and  note
that  $M(f,m_0),M(g,m_0)\notin\gtA$.  Hence, $\N\setminus(M(f,m_0)\cup
M(g,m_0))\in\gtA$, but $M(f,m_0)\cup M(g,m_0)\supset M(fg,m_0)\in\gtA$, which is
a contradiction. Thus, $\gta$ is a prime ideal.

A similar argument shows for $\K=\R$ that $\gta$ is  \sl real,  \rm 
 that is,  if  $\sum_{i=1}^p  a_i^2  \in \gta$  then  $a_i  \in  \gta$, for
$i=1,\ldots,p$.       Indeed,      assume     that      $a=a_1^2+\cdots
+a_p^2\in\gta$. Since
$ \displaystyle 
{\rm mult}_n(a_1^2+\cdots+a_p^2)=2\min\{{\rm mult}_n(a_1),\ldots,{\rm mult}_n(a_
p)\},
$  then $M(a,2m)\subset   M(a_i,m)$  for  all  $m\geq0$  and
$i=1,\ldots,p$.   Thus, since  each $M(a,2m)\in\gtA$,  we  deduce that
$M(a_i,m)\in\gtA$ for all $m> 0$, that is, each $a_i\in\gta$.

Finally, observe  that $\ceros(\gta)=\varnothing$. For  each $k\geq1$,
let    $g_k\in\an(\K)$   be   an    analytic   function    such   that
$\ceros(g_k)=\{n\in\N:\ n\geq k\}$  and ${\rm mult}_n(g_k)=2n$ for all
$n\geq k$. Since $\gtA$ contains  all cofinite subsets, we deduce that
each                 $g_k\in\gta$                and                so
$\ceros(\gta)\subset\bigcap_{k\geq1}\ceros(g_k)=\varnothing $.\qed

\item  
Let $f,g \in \an(\K)$ be the analytic functions given by 
$$f(x) = \prod _n \big( 1 - \frac{x}{n^2} \big) \quad 
\mbox{ and } \quad g(x) = \prod _n \big( 1 - \frac{x}{n^2} \big) ^n.$$ 

We have  that $\ceros  (f) = \ceros(g)  = \{ n^2  \mid n  \in \N ^+  \}$.  Let
$\gta$ be  the ideal generated by  $g$.  Now, if the  Nullstellensatz held for
$\an(\K )$, there would exist an integer $m \geq 0$, a global analytic function
$t  \in \an(\K)  $ and,  in the  real case,  a sum  of squares  $s$  such that
respectively $f^m=gt$ or  $f^{2m} + s= gt$. In both cases  if we compare orders
at the  point $(m+1) ^2$ we  achieve a contradiction.   This computation shows
that $I (\ceros (\gta) ) \neq \sqrt{ (\gta)}$ (resp. $I (\ceros (\gta) ) \neq
\sqrt[r]{\gta}$).
\end{enumerate}

These exemples are  in a certain sense paradigmatic:  in fact, one gets 
results  for an algebra  $A=H^0(X,\Oo)$ of  global sections  of a  Stein space
when one avoids these two difficulties.  One may use  that such an
algebra has  a natural structure of  Fr\'echet space and  that for
this  topology {\em  closed} prime  ideals cannot  have  empty  zeroset and
 they have the zero property, that is, $ I(\ceros(\gtp))=\gtp$
(Corollaries \ref{primary1}  and \ref{primary2}  below). To avoid  the second
difficulty one proves  that any {\em closed} ideal $\gta$  admits, in a similar
way  as an  ideal  in  a  noetherian   ring,  a  locally  finite   irredundant  primary
decomposition $\gta=\bigcap_{i\in  I}\gtq_i$, where each $\gtq_i$  is a closed
primary  ideal and  the prime  ideals $\gtp_i=\sqrt{\gtq_i}$  are  closed and
pairwise different.

In  general infinitely many $\gtq_i$ may appear and ``locally finite'' refers to their zerosets.

\section{Nullstellensatz for Stein spaces.}

By \index{Stein algebra} a  \sl Stein algebra \rm one  means  in general the algebra of global sections of a reduced Stein space. We keep  the same name   in the real case for global sections  of a coherent or C-analytic space.

As we  said in the introduction,  these algebras are neither  UFD nor   noetherian rings as one can prove  in the case of an open set in $\C$. Nevertheless using  their  natural  Fr\'echet structure one can recover some algebraic properties as a consequence of Cartan's theorem about closure of modules.

\subsection{Complex Stein algebras as Fr\'echet spaces.}

Let $X$ be a Stein space and $A=H^0(X,\Oo)$.

\begin{prop} The compact-open topology makes $A$ a Fr\'echet space\footnote{A Fr\'echet space is a topological vector space which is  Hausdorff, locally convex and metric complete. One can see that this structure can be induced by a countable family of semi-norms $\|.\|_k, k = 0,1,2,...$.\index{Fr\'echet space}}. 
\end{prop}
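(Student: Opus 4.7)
The plan is to exhibit an explicit countable family of seminorms that both define the compact-open topology and verify the four defining properties of a Fréchet space: Hausdorff, locally convex, metrizable, and complete.

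First I would use that a Stein space is paracompact and second-countable, hence $\sigma$-compact: there exists an exhaustion $K_1 \subset \mathrm{int}(K_2) \subset K_2 \subset \mathrm{int}(K_3) \subset \cdots$ by compact subsets with $\bigcup_n K_n = X$. For each $n$, set $\|f\|_n = \sup_{x \in K_n} |f(x)|$, which is finite because $f \in H^0(X,\Oo)$ is continuous on a compact set. These seminorms are clearly convex (so the topology they induce is locally convex), and because each compact subset $K \subset X$ is contained in some $K_n$ (by an open cover / Lebesgue-type argument using the exhaustion), the family $\{\|\cdot\|_n\}_n$ induces exactly the compact-open topology. A countable family of seminorms induces a metrizable topology, for instance via $d(f,g)=\sum_n 2^{-n}\|f-g\|_n/(1+\|f-g\|_n)$.

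Next I would verify the Hausdorff property. If $\|f\|_n = 0$ for every $n$, then $f$ vanishes on $\bigcup_n K_n = X$, so $f = 0$; equivalently, the metric $d$ separates points. Local convexity has already been noted, so it remains only to show completeness.

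For completeness, take a Cauchy sequence $\{f_\nu\}$ in $(A, d)$. By construction, $\{f_\nu\}$ is uniformly Cauchy on every compact $K \subset X$, so it converges uniformly on compacta to a continuous function $f : X \to \C$. The main point — and the one requiring actual input from complex analysis rather than general topology — is that $f$ must be holomorphic on the Stein space $X$. For this I would work locally: around any point $x \in X$, pick an open neighborhood $U$ that is a local model, i.e.\ a closed analytic subset of an open $\Omega \subset \C^N$ with $\Oo_U = \Oo_\Omega/\Ii_U$. Each $f_\nu|_U$ extends (by Theorem B / coherence, as discussed earlier in the excerpt) to a holomorphic function on a relatively compact Stein open subset of $\Omega$; after possibly shrinking $U$, one may then use the classical Weierstrass theorem in $\C^N$ (uniform limit of holomorphic functions on compact sets is holomorphic) together with the quotient structure of $\Oo_U$ to conclude that $f|_U$ is the restriction to $U$ of a holomorphic function on a neighborhood in $\Omega$, hence $f \in H^0(U, \Oo_U)$. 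Patching these local conclusions gives $f \in A$, proving completeness and hence the Fréchet property.

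The only genuinely nontrivial step is verifying that uniform convergence on compacta preserves holomorphy on a possibly singular Stein space; everything else (metrizability, convexity, Hausdorff) is formal once the exhaustion is in place.
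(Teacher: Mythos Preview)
Your setup—exhaustion by compacta, countable family of sup-seminorms, metrizability via the standard series, Hausdorff from the exhaustion covering $X$—is exactly the paper's sketch, just with more detail filled in. The paper itself only asserts completeness without argument.

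There is, however, a genuine gap in your completeness step. You extend each $f_\nu|_U$ to a holomorphic $F_\nu$ on an open set of $\C^N$ and then want to apply the classical Weierstrass theorem to the $F_\nu$. But the extensions are not canonical: even though the restrictions $f_\nu$ converge uniformly on compacta of $U$, there is no reason the chosen $F_\nu$ converge on compacta of $\Omega$. Different lifts differ by sections of $\Ii_U$, and these can be arbitrarily large away from $U$. So ``uniform limit of holomorphic functions is holomorphic'' cannot be invoked on the $F_\nu$ as stated.

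The fix is precisely Cartan's Closure of Modules theorem, which the text states immediately afterwards as Theorem~\ref{modulichiusi}. Its strong form gives extensions \emph{with bounds}: on a suitable polydisc system $\{U_m\}$, a section $h$ of the quotient with $\|h\|_{U_m}<\infty$ lifts to $\Oo(U_m)$ with controlled norm. Using this, one lifts the differences $f_{\nu+1}-f_\nu$ to $G_\nu\in\Oo(\Delta')$ with $\|G_\nu\|\le C\|f_{\nu+1}-f_\nu\|_{Y\cap K}$, sets $F_{\nu+1}=F_\nu+G_\nu$, and obtains a Cauchy sequence of lifts; Weierstrass in $\C^N$ then applies legitimately. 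Equivalently: Closure of Modules shows $H^0(\Delta,\Ii_Y)$ is closed in $\Oo(\Delta)$, so the quotient is Fr\'echet, and the bound above identifies the quotient topology with the compact-open topology on $\Oo(Y\cap\Delta)$. Either way, the non-trivial input you need is Cartan's theorem, not just the classical Weierstrass theorem.
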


\begin{proof}
We give just a sketch of proof.
Recall that in this topology a  system of neighbourhoods for a function $f$
is given by the sets $U(K,f,\varepsilon)=\{g: \, \sup_K |g-f|< \varepsilon\} $
where $K$ is compact.
For any compact set $K\subset X$ there is  a seminorm on $A$, namely  $\|f\|_K = \sup_{K}|f|$. Since $X$ has an exhaustion $\{K_m\}$ of compact sets with $K_m \subset \interior K_{m+1}$ there is a countable family of seminorms  
$$ \|f\|_m = \sup_{K_m}|f|.$$

The algebra $A$ is complete with respect to the metric induced by this family, hence it is a
 Fr\'echet space.   

\end{proof}

Next, we state  two classic theorems.

\begin{thm}\label{modulichiusi}{\rm (Closure of Modules)}
Let $\mathcal M_0$ be a submodule of $(\Oo_{n,0})^q$. Then there are generators $\{f_1, \ldots, f_p \}$ of $\mathcal M_0$ over $\Oo_{n,0}$ and a fundamental system of open neighbourhoods $\{U_m\}_{m\in \N}$ of $0\in \C^n$ such that $f_1, \ldots, f_p $ are defined on $U_m$ for any $m$. Moreover
\begin{enumerate}
\item If $h$ is a $q$-uple of holomorphic functions on $U_m$ such that its germ $h_0$ belongs to $ \mathcal M_0$ and  $\|h\|_{U_m} <\infty$, then there are $\alpha_1, \ldots, \alpha_p \in \Oo(U_m)$ such that $h= \sum_{i=1}^p \alpha_i f_i$. 
\item If $\{h_k\}_{k\in \N}$ is a sequence of $q$-uples of holomorphic functions on an open set $U\ni 0$ that converges uniformely on the compact sets of $U$  to $h$ such that $(h_k)_0 \in \mathcal M_0$, then $h_0 \in \mathcal M_0$. 
\end{enumerate}  
\end{thm}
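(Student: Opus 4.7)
The strategy is classical: use Noetherianity of $\Oo_{n,0}$ to pick a finite system of generators, prove a quantitative Weierstrass-type division with $L^\infty$ control on prescribed polydiscs, and finally deduce (2) from (1) via the open mapping theorem. To begin, since $\Oo_{n,0}$ is Noetherian (a consequence of the Weierstrass preparation theorem), $\mathcal M_0$ admits a finite system of generators $f_1,\ldots,f_p$; I would pick representatives $f_i$ as $q$-tuples of holomorphic functions on some polydisc $\Delta(0,\veps_0)$, and take $U_m=\Delta(0,\veps_0/2^m)$ as the fundamental system, so that each $f_i$ is defined and bounded on every $U_m$.

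The core of the proof is (1). Given $h$ holomorphic on $U_m$ with $h_0\in\mathcal M_0$ and $\|h\|_{U_m}<\infty$, the goal is to produce $\alpha_i\in\Oo(U_m)$ with $h=\sum\alpha_if_i$ globally on $U_m$. I would proceed by induction on $n$; the case $n=0$ is trivial. For the inductive step, after a generic linear change of coordinates one of the nonzero components of the $f_i$ becomes regular in $z_n$, so by the Weierstrass preparation theorem it may be replaced by a distinguished polynomial in $z_n$ of some degree $d$ with coefficients in $\Oo_{n-1}$. Weierstrass division of each component of $h$ by this distinguished polynomial gives a decomposition $h=qf_{i_0}+r$, where $r$ is a polynomial in $z_n$ of degree strictly less than $d$. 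The central quantitative input is that, via the Cauchy integral form of Weierstrass division, both $q$ and $r$ satisfy sup-norm estimates on a slightly smaller polydisc $U_{m+1}$ bounded by a constant (depending only on the shrinkage) times $\|h\|_{U_m}$. The problem for $r$ then reduces to one about modules of polynomials of bounded $z_n$-degree with coefficients in $\Oo_{n-1}$, to which the inductive hypothesis applies. Iterating this scheme and using the quantitative estimates to guarantee convergence of the resulting series produces the required global $\alpha_i$ on $U_m$.

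Part (2) then follows from (1) by a functional-analytic argument. The space $\Oo(U_m)^q$ with the topology of uniform convergence on compacta is Fr\'echet, and the linear map $T:\Oo(U_m)^p\to\Oo(U_m)^q$, $(\alpha_1,\ldots,\alpha_p)\mapsto\sum\alpha_if_i$, is continuous. By (1) its image is precisely the set of $h\in\Oo(U_m)^q$ with $h_0\in\mathcal M_0$. Given a sequence $\{h_k\}$ as in the statement, fix $m$ with $U_m\subset U$; the $h_k$ are uniformly bounded on compacta of $U$. The Banach--Schauder open mapping theorem applied to $T$ (modulo $\ker T$) yields representations $h_k=\sum\alpha_i^{(k)}f_i$ with $\|\alpha_i^{(k)}\|_{U_{m+1}}$ bounded uniformly in $k$. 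A Montel normal-family argument then extracts a subsequence converging uniformly on compacta of $U_{m+2}$ to some $\alpha_i\in\Oo(U_{m+2})$ satisfying $h=\sum\alpha_if_i$ there, so $h_0\in\mathcal M_0$.

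The main obstacle is the quantitative Weierstrass division combined with the iterative scheme of the induction: ensuring that the successive division coefficients can be summed to a genuine holomorphic function on a prescribed polydisc requires careful bookkeeping of radii shrinkage and of the constants in the division estimates, so that the series actually converges on a fixed $U_m$ rather than only on an undetermined smaller neighbourhood. Once this estimate is in place, the rest of the argument is essentially formal.
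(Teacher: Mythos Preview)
The paper does not give a proof of this theorem: it is stated as one of ``two classic theorems'' and used as a black box, with the bibliographic notes referring to Gunning--Rossi for ``classical results without proof.'' So there is no paper proof to compare against.

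Your outline is the standard Cartan argument and is essentially correct. One bookkeeping point: in your sketch of (1) you start with $h$ bounded on $U_m$ and after Weierstrass division end up with coefficients on $U_{m+1}$, whereas the statement asserts $\alpha_i\in\Oo(U_m)$ on the \emph{same} $U_m$. This is not a real issue, since the theorem lets you choose the fundamental system $\{U_m\}$ in advance; you just need to build the required shrinkage into the definition of the $U_m$ (e.g.\ take each $U_m$ already small enough that the division estimates hold from $U_m$ back to $U_m$ itself, or re-index). As you note, this is exactly the ``careful bookkeeping of radii shrinkage'' you flag as the main obstacle, and it is the only place where the argument requires real care.
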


\begin{thm}\label{Runge}{\rm (Runge's approximation Theorem)}
 Let $(X, \Oo)$ be a reduced Stein space and $U\subset X$ be an open set. The following statements are equivalent.
\begin{enumerate}
\item For any $f\in H^0(U, \Oo)$, any compact $K\subset U$ and any positive $\varepsilon$ there is $g \in H^0(X, \Oo)$ such that $\|f-g\|_K <\varepsilon$.
\item $U$ is Stein.
\end{enumerate}  
\end{thm}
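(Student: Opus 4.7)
I would prove both directions separately; the forward implication $(1)\Rightarrow(2)$ is the more elementary, while $(2)\Rightarrow(1)$ is the substantive Runge-type approximation. For $(1)\Rightarrow(2)$, the plan is to verify the three axioms of Definition \ref{Stein} for $U$. Separation of points and the existence of local coordinates (axiom (ii)) follow at once by restricting global sections of $\Oo_X$ to $U$, since $X$ is Stein. Non-compactness (axiom (iii)) is automatic: a compact open subset of $X$ is a disjoint union of finitely many compact connected components of $X$, each of which must be finite because $X$ itself is Stein. The subtle axiom is (i): given an unbounded sequence $\{x_n\}\subset U$, either the sequence is already unbounded in $X$ --- and then the restriction of a global function on $X$ blowing up along $\{x_n\}$ does the job --- or it accumulates at some $y\in X\setminus U$. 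In the latter case I would invoke (1) to approximate, on the finite compact sets $\{x_1,\ldots,x_N\}$, a function $f\in H^0(U,\Oo)$ crafted to have pole-type growth towards $y$, extracting from these approximations a function in $H^0(U,\Oo)$ that diverges along the sequence.

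For $(2)\Rightarrow(1)$ the natural strategy is an exhaustion argument in the spirit of Oka--Weil. Using Theorem \ref{Narcrit}, choose an $\Oo(X)$-convex exhaustion $K_1\subset K_2\subset\cdots$ of $X$ and, in parallel, an $\Oo(U)$-convex exhaustion $L_1\subset L_2\subset\cdots$ of $U$ with the given compact $K$ contained in $L_1$. Given $f\in H^0(U,\Oo)$, I would construct inductively a sequence $g_j\in H^0(X,\Oo)$ such that $g_j$ approximates $g_{j-1}$ on $K_{j-1}$ within error $\varepsilon/2^j$ and simultaneously approximates $f$ on $L_j\cap K_j$ within error $\varepsilon/2^j$. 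Summing the geometric error estimates shows that $\{g_j\}$ is Cauchy on $K\subset L_1$ in the compact-open topology, and its limit is the global holomorphic function on $X$ approximating $f$ on $K$ to within $\varepsilon$.

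The main obstacle is the inductive step: given $g_{j-1}\in H^0(X,\Oo)$ and a prescribed approximation of $f$ on $L_j$, produce $g_j\in H^0(X,\Oo)$ matching both targets. On a Stein manifold this is the classical Oka--Weil theorem, accomplished via $\bar\partial$-estimates, but here $X$ and $U$ may be singular, so I would replace $\bar\partial$-analysis by sheaf cohomology: encode the extension problem as the vanishing of $H^1$ of a coherent sheaf of ideals on $X$ adapted to the pair $(K_{j-1},L_j)$, and invoke Theorem B. The remaining delicate piece is quantitative control of the uniform norm of the extension on $K_{j-1}$; here one exploits the Fr\'echet-space structure of $H^0(X,\Oo)$ recorded in the preceding proposition, together with the open mapping theorem, to convert the mere existence of the extension into the norm estimate required for the telescoping series to converge.
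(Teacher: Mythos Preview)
The paper does not prove this theorem: it is announced as one of two ``classic theorems'' and the end-of-chapter notes refer the reader to \cite{gr}. So there is no proof in the text to compare against, and I can only evaluate your argument on its own merits.

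Unfortunately the statement as printed is false in both directions, so your proof cannot be completed. For $(2)\Rightarrow(1)$, take $X=\C$ and $U=\{1<|z|<2\}$. The annulus is a domain of holomorphy, hence Stein, yet $1/z\in H^0(U,\Oo)$ cannot be uniformly approximated on the circle $|z|=\tfrac32$ by entire functions (the contour integral $\oint g\,dz$ vanishes for every entire $g$ but equals $2\pi i$ for $1/z$). In your outline the inductive step --- producing $g_j\in H^0(X,\Oo)$ that simultaneously matches $g_{j-1}$ on $K_{j-1}$ and $f$ on $L_j$ --- is exactly where this obstruction bites: no appeal to Theorem~B or the open mapping theorem can manufacture such a $g_j$ when a topological obstruction is present.

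For $(1)\Rightarrow(2)$, take $X=\C^2$ and $U=\C^2\setminus\{0\}$. By Hartogs every $f\in H^0(U,\Oo)$ extends holomorphically to $\C^2$, so condition (1) holds trivially; but $U$ is not Stein (axiom (i) of Definition~\ref{Stein} fails for any sequence tending to the origin, again by Hartogs). Your verification of axiom (i) assumes you can ``craft'' an $f\in H^0(U,\Oo)$ with pole-type growth toward a boundary point $y$; the Hartogs example shows that in general no such $f$ exists, and hypothesis (1) does not help you produce one --- it only lets you approximate functions you already have.

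The classical result the authors evidently intend (and what they actually invoke later when they speak of Runge pairs) replaces (2) by the condition that $U$ be $\Oo(X)$-convex, i.e.\ that the $\Oo(X)$-hull of every compact $K\subset U$ remain in $U$. With that correction your exhaustion-and-Theorem-B strategy for the approximation direction becomes the standard Oka--Weil argument.
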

A first important result is the following. 

\begin{thm}\label{Moduli Chiusi Stein}  Let $(X,\Oo)$ be a Stein space. 
\begin{enumerate}
\item Let $\Ii$ be  a sheaf of ideals of $\Oo$. Then $H^0(X,\Ii)$ is closed in $H^0(X,\Oo)$.
\item Let $E$ be a subset of $H^0(X,\Oo)$. Then the sheaf\, $\Ff$ generated by $E$ is coherent.
\end{enumerate} 
\end{thm}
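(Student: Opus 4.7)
The plan is to prove both statements by reducing to the pointwise assertions and applying the Closure of Modules Theorem (Theorem \ref{modulichiusi}) in a local chart around each point of $X$.

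For (1), I would take a sequence $f_k\in H^0(X,\Ii)$ converging in the compact-open topology to some $f\in H^0(X,\Oo)$, and show $f_x\in\Ii_x$ for every $x\in X$. Fix such an $x$. Since $X$ is Stein, embed a neighborhood of $x$ as a local model in some $\C^n$, and regard $\Ii_x$ as a submodule (with $q=1$) of $\Oo_{X,x}\hookrightarrow\Oo_{n,x}$. Convergence in the Fréchet topology gives uniform convergence of $f_k$ to $f$ on a compact neighborhood of $x$, so Theorem \ref{modulichiusi}(2), applied to the submodule $\Ii_x$, yields $f_x\in\Ii_x$. As this holds at every $x$, $f\in H^0(X,\Ii)$, establishing closedness.

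For (2), define $\Ff\subset\Oo$ by $\Ff_x=\sum_{e\in E}e_x\Oo_{X,x}$. I would fix $x\in X$ and construct a coherent sheaf of ideals agreeing with $\Ff$ on an entire neighborhood of $x$. Since $\Oo_{X,x}$ is noetherian, $\Ff_x$ is finitely generated; choose $e_1,\dots,e_r\in E$ whose germs at $x$ generate $\Ff_x$, and let $\Jj=(e_1,\dots,e_r)\Oo$. Being a finitely generated subsheaf of the coherent sheaf $\Oo$, $\Jj$ is itself coherent, $\Jj\subset\Ff$, and $\Jj_x=\Ff_x$. The crux is to promote this equality at $x$ to equality on an open neighborhood, uniformly over $e\in E$. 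Applied to the submodule $\Jj_x\subset\Oo_{X,x}$ (again working in a local embedding into $\C^n$), Theorem \ref{modulichiusi}(1) produces a fundamental system of neighborhoods $\{U_m\}$ of $x$ with the property that any holomorphic function $h$ on $U_m$ with $\|h\|_{U_m}<\infty$ and germ $h_x\in\Jj_x$ can be written as $h=\sum_i\alpha_i e_i$ with $\alpha_i\in\Oo(U_m)$. Now pick $U=U_m$ with $\overline{U}$ compact in the chart. For every $e\in E$ we have $e_x\in\Ff_x=\Jj_x$ and $\|e\|_U<\infty$ (since $e$ is globally defined on $X$), hence $e$ is a section of $\Jj$ on $U$. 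Therefore $\Ff_y\subset\Jj_y$ for all $y\in U$, and combined with $\Jj\subset\Ff$ one gets $\Ff|_U=\Jj|_U$. Thus $\Ff$ coincides locally with the coherent sheaf $\Jj$ and is itself coherent.

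The main obstacle, as I see it, is obtaining uniformity in $e\in E$: a naive argument produces, for each $e$, a neighborhood $U_e$ on which $e=\sum_i\alpha_i^e e_i$, but without a common open set one cannot conclude coherence. This is exactly what Theorem \ref{modulichiusi}(1) fixes, because the neighborhoods $U_m$ depend only on the submodule $\Jj_x$, not on the particular element $e$, and the finiteness of $\|e\|_U$ is automatic once $\overline U$ is compact and $e$ is a global section. The argument for (1) is then the direct sheaf-theoretic counterpart, relying on part (2) of the same theorem.
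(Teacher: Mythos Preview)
Your proposal is correct and follows essentially the same route as the paper: both parts reduce to the pointwise statements and invoke the Closure of Modules Theorem in a local chart, with part~(2) of that theorem handling closedness and part~(1) supplying the uniform neighborhood needed for coherence. The paper differs only in making the reduction to open sets $\Omega\subset\C^n$ an explicit preliminary step (via extension of functions from a closed analytic subspace using Theorem~B), whereas you fold this into the phrase ``working in a local embedding into $\C^n$''; conversely, your discussion of the uniformity in $e\in E$ is more careful than the paper's somewhat terse argument (which in fact cites the wrong part of Theorem~\ref{modulichiusi}).
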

\begin{proof} \null\hfill

\begin{itemize}\parindent= 0pt
\item [Step 1.] \it It is enough to prove {\rm (1)} and {\rm (2)} for a Stein open set $\Omega$ in $\C^n$.\rm

In fact  (1) and (2) for  $ \Omega$ imply (1) and (2) for any closed analytic subspace $Y\subset \Omega$, since any analytic function on $Y$ extends to $\Omega$ by Theorem B. So we get  (1) and (2) for any local model of $(X, \Oo_X)$. This is enough to get (2), because the statement concerns a local property. 
For (1), we get $H^0(U,\Ii)$ is closed in $H^0(U,\Oo_U)$ for any local model $U$ of $X$. But this is enough to get   $H^0(X,\Ii)$ closed, because if $f\in A$ is the limit of a sequence in $H^0(X,\Ii)$, then for each local model $U$ the restriction of $f$ is the limit of the restricted sequence, hence it belongs to $H^0(U,\Ii)$ for each $U$, which means $f\in H^0(X,\Ii)$.   

\item [Step 2.]\it Case   $X=\Omega\subset\C^n .$\rm

To prove (1) let $\{f_k\} \subset H^0(\Omega,\Ii)$ be a sequence converging to $f$.   Then the  germ $(f_k)_x\in \Ff_x$ for any $k$.  By Theorem \ref{modulichiusi} also the germ $f_x$ belongs to the fiber $\Ff_x$ and since this is true  for any $x\in \Omega$, we get  $f\in H^0(\Omega,\Ff)$.

To prove (2) since $\Ff$ is a subsheaf of a coherent sheaf,   it is enough to prove that it is finitely generated. At any point $x$ the fiber $\Ff_x$ is generated by finitely many function germs, because  $\Oo_x$ is noetherian and these functions are global because they can be taken in $E$.

If $y$ is another point, the generators of $\Ff_y$, that belong to $E$, have their germs at $x$ in $\Ff_x$, hence  by the second part of Theorem \ref{modulichiusi} they   are combinations   of the generators of $\Ff_x$ with  holomorphic coefficients in a suitable neighbourhood $V$ of $x$. So the  generators of $\Ff_x$ generate the fiber $\Ff_y$ for any $y\in V$.
\end{itemize}
\end{proof}

As a consequence we have Cartan's characterization of the closure of ideals in a Stein algebra.

\begin{thm}\label{Closure}Let $X$ be a Stein space and $\gta$ an ideal in
 $H^0(X,\Oo)$. Then $\overline \gta = H^0(X,\Oo \gta)$.
\end{thm}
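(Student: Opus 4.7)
The plan is to verify both inclusions separately. First, the sheaf $\Oo\gta$ is by definition the $\Oo$-ideal subsheaf generated by the subset $\gta\subset A:=H^0(X,\Oo)$; by Theorem \ref{Moduli Chiusi Stein}(2) it is coherent, and by Theorem \ref{Moduli Chiusi Stein}(1) its module of global sections $H^0(X,\Oo\gta)$ is closed in $A$. Since $\gta\subset H^0(X,\Oo\gta)$ trivially, one gets $\overline{\gta}\subset H^0(X,\Oo\gta)$ for free.

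For the reverse inclusion I would fix $f\in H^0(X,\Oo\gta)$ and a basic Fr\'echet neighbourhood $U(K,f,\veps)=\{h\in A:\sup_K|h-f|<\veps\}$ of $f$, and look for an element of $\gta$ lying in it. The characterisation of Stein spaces (Theorem \ref{Narcrit}) allows one to choose a relatively compact Stein open set $\Omega\subset X$ containing $K$ that is moreover Runge in $X$; this is the main technical point, as the plurisubharmonic exhaustion must be used to produce Stein subdomains whose holomorphic functions are approximable by global ones.

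Next, since $\Oo\gta$ is coherent and, by construction, each stalk $(\Oo\gta)_x$ is generated (as an $\Oo_x$-module) by germs of elements of $\gta$, the noetherianity of $\Oo_x$ provides, for every $x\in\overline{\Omega}$, a finite set of elements of $\gta$ whose germs generate $(\Oo\gta)_x$; by Theorem \ref{modulichiusi} these same sections generate $\Oo\gta$ on an entire neighbourhood of $x$. A compactness argument on $\overline{\Omega}$ then yields finitely many $g_1,\ldots,g_N\in\gta$ generating $\Oo\gta|_{\Omega'}$ on an open Stein set $\Omega'$ with $\overline{\Omega}\subset\Omega'\subset X$. Applying Theorem B to the exact sequence of coherent sheaves
$$
0\longrightarrow\Rr\longrightarrow\Oo|_{\Omega'}^N\longrightarrow(\Oo\gta)|_{\Omega'}\longrightarrow 0
$$
on the Stein open $\Omega'$ gives surjectivity of $\Oo(\Omega')^N\to H^0(\Omega',\Oo\gta)$, hence a decomposition $f|_{\Omega'}=\sum_{i=1}^N\alpha_i\,g_i|_{\Omega'}$ with $\alpha_i\in\Oo(\Omega')$.

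Finally, Runge's Theorem \ref{Runge} applied to $\Omega$ (Stein and Runge in $X$) produces $a_i\in A$ with $\sup_K|\alpha_i-a_i|$ as small as desired; setting $g=\sum_i a_i g_i\in\gta$ one gets $\sup_K|f-g|\leq\sum_{i=1}^N\sup_K|g_i|\,\sup_K|\alpha_i-a_i|<\veps$, so that $g\in\gta\cap U(K,f,\veps)$ and $f\in\overline{\gta}$. Besides the delicate construction of the Runge Stein subdomain $\Omega$, the rest is a routine combination of coherence, Theorem B on $\Omega'$, and Runge approximation.
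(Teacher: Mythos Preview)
Your proof is correct and follows essentially the same route as the paper: show $H^0(X,\Oo\gta)$ is closed via Theorem \ref{Moduli Chiusi Stein}, then for the density pick a relatively compact Runge Stein open set containing the given compact, use coherence and compactness to find finitely many generators of $\Oo\gta$ from $\gta$, apply Theorem B to write $f$ locally as a combination with holomorphic coefficients, and finish by Runge approximation of those coefficients. The only cosmetic difference is that the paper works directly with an exhaustion $\{U_m\}$ by Runge Stein opens rather than your single $\Omega\subset\Omega'$, and it justifies the existence of such Runge opens more briefly (via local models in polydiscs) than your appeal to the plurisubharmonic exhaustion of Theorem \ref{Narcrit}.
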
  
\begin{proof} By 1) of Theorem \ref{Moduli Chiusi Stein} it is enough to prove that $\gta$ is dense in $H^0(X,\Oo \gta)$.

Let $U_m$ an exhaustive sequence of relatively compact open sets in $X$ such that $(X,U_m)$ is a Runge pair for each $m$, that is,   Theorem\ref{Runge} applies. 
Note that such an exhaustion always exists. It is  enough to take polydisks as open sets where local models are defined and to argue as in (1) of Theorem \ref{Moduli Chiusi Stein}. 

Let $f\in H^0(X,\Oo \gta)$. For any $x$ there exists a neighbourhood $V_x$ and finitely many germs of elements in $\gta$ such that $f_{|V_x}=\sum_i m_ia_i$. Now $\overline {U}_m$ is compact, hence there exist finitely many $a_1,\ldots , a_{i_m}$ that generate the fiber at any point of $U_m$, that is, one gets  $\displaystyle f=\sum_{i=1}^{i_m} m_ia_i$ for any $U_m$. In other words, we have a surjective morphism 

$$\eta :  \Oo^{i_m} \to \Oo \gta$$

given by 
$\eta(g_1, \ldots g_{i_m}) = \sum_j g_ja_j$. Since  $U_m$ is a Stein domain by Theorem B, we get a surjective morphism $H^0(U_m,\Oo^{i_m}) \to H^0(U_m,\Oo \gta)$.

For any $f\in H^0(X,\Oo \gta)$ we obtain that $\displaystyle f= \sum_{i=1}^{i_m} h_i a_i$ where each $h_i$ is holomorphic in $U_m$ and each $a_i$ belongs to $\gta$.

By Theorem\ref{Runge} there exist global $\tilde {m}_i $ such that $|\tilde {m}_i -m_i|<\varepsilon $ on $\overline U_m$ so that $\displaystyle \sum\tilde {m}_i a_i$ is an element of $\gta$ that approximates $f$  as required.
\end{proof}

\begin{remarks}\label{idealifiniti}\null \hfill

\begin{enumerate}\parindent=0pt 
\item The ideal $\gta$ in the first example of this chapter  is not closed because the constant function $1$ belongs to $H^0(\C,\Oo\gta)$.

\item The last argument proves, as a consequence of Theorem B, that  a finitely generated ideal $\gta \subset \Oo(X)$ is closed. Indeed let $f_1,\ldots ,f_k$ be generators of $\gta$, then we get a surjective map $ \Oo^{k} \to \Oo \gta$ that maps the $k$-uple $  (a_1,\ldots ,a_k)$ to $\displaystyle \sum a_if_i$. 
Since $X$ is Stein this map induces a surjective map 
$$\varphi:H^0(X,\Oo^k) \to   H^0(X,\Oo\gta), \quad \displaystyle \varphi(a_1,\ldots,a_k) =\sum a_if_i.$$ Hence $H^0(X,\Oo\gta)=\gta $.

\end{enumerate}
\end{remarks}

We have another characterization of the closure of an ideal in $A$. 

\begin{defn} Let $(X,\Oo)$ be a Stein space  and 
$\gta\subset \Oo(X)$ be an ideal.
Define:
\begin{itemize}
\item  $\mathfrak C_1(\gta) = \{g\in \Oo(X): \forall K\subset X \mbox{\rm\ 
    compact} \ \exists f\in \Oo(X) \  \mbox{ \rm  such  that } \ \ceros(f)\cap K =\varnothing \  \mbox{\rm 
    and} \ fg\in \gta\}$.
\item $\mathfrak C_2(\gta) = \{g\in \Oo(X): \forall x\in X \ \exists f\in \Oo(X) 
\ \mbox{\rm such that} \  f(x)\neq 0 \  \mbox{\rm and} \  fg\in \gta\}$. 
\end{itemize} 
\end{defn}

\begin{lem}
Let $(X,\Oo)$ be  a Stein space and 
$\gta\subset \Oo(X)$ be an ideal. 
Then,$$\overline \gta = \mathfrak
C_1(\gta)=\mathfrak C_2(\gta).$$
\end{lem}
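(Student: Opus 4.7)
First I would verify $\mathfrak{C}_1(\gta) \subset \mathfrak{C}_2(\gta)$, which is immediate by applying the defining condition of $\mathfrak{C}_1$ to the compact set $K=\{x\}$. For the inclusion $\mathfrak{C}_2(\gta) \subset \overline{\gta}$ I would invoke Theorem \ref{Closure} to rewrite $\overline{\gta} = H^0(X,\Oo\gta)$ and argue stalkwise: given $g\in\mathfrak{C}_2(\gta)$ and $x\in X$, pick $f\in\Oo(X)$ with $f(x)\neq 0$ and $fg\in\gta$. Then $f_x$ is a unit in $\Oo_x$ while $(fg)_x\in(\Oo\gta)_x$, so $g_x = f_x^{-1}(fg)_x \in (\Oo\gta)_x$. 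Since this holds for every $x$, we get $g\in H^0(X,\Oo\gta)=\overline\gta$.

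\textbf{The main inclusion and its coherent setup.} The heart of the proof is $\overline\gta \subset \mathfrak{C}_1(\gta)$. Fix $g\in H^0(X,\Oo\gta)$ and a compact $K\subset X$. Using the coherence of $\Oo\gta$ together with the noetherian property of the stalks $\Oo_x$ and the compactness of $K$, I would select finitely many $a_1,\dots,a_N\in\gta$ and an open neighborhood $V$ of $K$ such that $g_y\in(a_1,\dots,a_N)\Oo_y$ for every $y\in V$. Then I would introduce the coherent ideal sheaf
$$\mathcal{I}_x=\{c\in\Oo_x:\ cg_x\in(a_1,\dots,a_N)\Oo_x\},$$
which is coherent (as an ideal quotient of coherent sheaves) and satisfies $\mathcal{I}|_V=\Oo|_V$. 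If I can produce $f\in H^0(X,\mathcal{I})$ non-vanishing on $K$, then $(fg)_x\in(a_1,\dots,a_N)\Oo_x$ at every $x$, so $fg$ is a global section of the finitely generated sheaf $(a_1,\dots,a_N)\Oo$, and Remark \ref{idealifiniti}(2) yields $fg\in(a_1,\dots,a_N)\Oo(X)\subset\gta$, as required. Everything therefore reduces to finding such an $f$.

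\textbf{Globalising the non-vanishing by Runge approximation.} Theorem A furnishes, for each $x\in K$, a section of $\mathcal{I}$ non-zero at $x$, and compactness of $K$ then produces finitely many $f_1,\dots,f_M\in H^0(X,\mathcal{I})$ with no common zero on $K$. The real difficulty is to combine them into a single globally defined, non-vanishing section. For this I would choose a Stein open neighborhood $V'\subset V$ of $K$ that is Runge in $X$ (standard: take a Stein neighborhood of the holomorphically convex hull $\hat K$, which is compact since $X$ is Stein); after shrinking $V'$ we may also assume the $f_i$ have no common zero on $V'$. Since $\mathcal{I}|_{V'}=\Oo|_{V'}$, Theorem B applied on the Stein space $V'$ yields $\tilde h_i\in\Oo(V')$ with $\sum_i\tilde h_i f_i = 1$ on $V'$. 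Now Runge's Theorem \ref{Runge} allows me to approximate the $\tilde h_i$ by global $h_i\in\Oo(X)$, uniformly on $K$, closely enough that $f=\sum_i h_i f_i$ differs from $1$ by less than $\tfrac12$ on $K$ and is therefore non-vanishing there. Because $H^0(X,\mathcal{I})$ is an $\Oo(X)$-ideal, we automatically have $f\in H^0(X,\mathcal{I})$, which completes the construction. The main obstacle throughout is precisely this last step: Theorem A only gives \emph{local} non-vanishing at each point of $K$, and the Runge approximation is the key ingredient that globalises it into a single section of $\mathcal{I}$ non-vanishing on all of $K$.
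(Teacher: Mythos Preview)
Your argument is correct and follows the same strategy as the paper: build a conductor-type coherent ideal sheaf that equals $\Oo$ near $K$, then Runge-approximate the section $1$ by a global section. The paper conducts the whole ideal $\overline\gta$ into a finitely generated $\gtg=(A_1,\dots,A_m)$ via the closed ideal $(\gtg:\overline\gta)$ (citing Forster for its closure), while you conduct only the single element $g$ via the sheaf $\mathcal I$; both variants work. Your explicit breakdown of the approximation step --- Theorem~A to get $f_1,\dots,f_M$, Theorem~B to solve $\sum\tilde h_if_i=1$ on $V'$, then Runge on the coefficients --- is exactly what underlies the paper's one-line ``there is a global section $h$ close to~$1$ on $K$''.

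One detail to tighten: Runge Stein neighborhoods exist around \emph{holomorphically convex} compacts, not arbitrary ones. When you say ``take a Stein neighborhood of $\hat K$'' and then require $V'\subset V$ and no common zero of the $f_i$ on $V'$, you implicitly need $\hat K\subset V$ and no common zero of the $f_i$ on $\hat K$ --- but you only arranged these conditions relative to $K$. The paper handles this in its first sentence by replacing $K$ with its (compact) hull $\hat K$ at the outset; do the same when choosing $V$, the $a_i$, and the $f_i$, and your argument goes through unchanged.
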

\begin{proof} As the chain of inclusions $\mathfrak C_1(\gta) \subseteq \mathfrak
  C_2(\gta)\subseteq \overline \gta$ is clear, it only remains to check $\overline
  \gta \subseteq \mathfrak C_1(\gta)$. Take a compact set $K\subset X$. Since the convex hull of $K$ is  compact, we can assume    $K$ to be 
  holomorphically convex. By Cartan's Theorem A there is an open
  neighbourhood $\Omega$ and elements $A_1,\ldots, A_m \in \gta$ such that for
  all $x\in \Omega$ the ideal $\overline\gta\Oo_x = \gta\Oo_x$ is generated by their 
germs at  $x$. Consider the closed ideal $\gtg = (A_1,\ldots,
A_m)\Oo(X)$. The ideal $(\gtg :\overline\gta) = \{h \in\Oo(X): h\overline\gta\subset
\gtg \}$ is closed too, see \cite[2, Satz3]{of}. Since $\overline\gta\Oo_x
=\gta\Oo_x$, we deduce $(\gtg :\overline\gta)\Oo_x = \Oo_x$ for all $x\in
\Omega$. In particular $1$ is a section of $(\gtg :\overline\gta)\Oo$ on
$\Omega$. Since $K$ is holomorphically convex there is a global section $h$ of
$(\gtg :\overline\gta)\Oo$ close to $1$ on $K$. Thus $h$ does not vanish on $K$ and  for any $f\in \overline\gta$ we have $hf\in \gtg\subseteq \overline\gta$, that is, $f\in  
\mathfrak C_1(\gtg)\subseteq \mathfrak C_1(\gta)$.
\end{proof}

\subsection{Nullstellensatz for closed primary ideals.}
The following proposition is a key step.

\begin{prop} \label{principioidentita} Let $X$ be a Stein space and  $\gtq$ a primary closed ideal in $H^0(X,\Oo)$. For $f\in H^0(X,\Oo)$ the following  facts are equivalent
\begin{enumerate}
\item $f\in \gtq.$
\item There exists a point $\overline x$ in the zeroset  $V(\gtq)$ of $\gtq$ such that $f_{\overline x}\in \Oo_{\overline x}\gtq$.
\end{enumerate}
\end{prop}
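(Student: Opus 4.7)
The direction $(1)\Rightarrow(2)$ is immediate: if $f\in \gtq$, then its germ at any $\overline x\in \ceros(\gtq)$ (which is nonempty by the Corollaries \ref{primary1}--\ref{primary2} referenced above, applied to $\sqrt{\gtq}$) already lies in $\gtq_{\overline x}\subseteq \Oo_{\overline x}\gtq$. So the entire content of the proposition lies in $(2)\Rightarrow(1)$, and my plan is to derive a contradiction by combining the primary hypothesis with Cartan's Theorem A and Theorem \ref{Closure} on the closure of ideals.

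The plan is to consider the coherent sheaf of ideals $\Oo\gtq\subset \Oo$ generated by $\gtq$ and the colon sheaf $\Jj:=(\Oo\gtq:f)$, whose stalk at each $x\in X$ is $\{h_x\in\Oo_x:h_xf_x\in (\Oo\gtq)_x\}$. The sheaf $\Jj$ is coherent as a colon of coherent sheaves. The hypothesis $(2)$ says exactly that $\Jj_{\overline x}=\Oo_{\overline x}$, so in particular the germ $1_{\overline x}$ is a section of $\Jj_{\overline x}$. By Cartan's Theorem A applied to the Stein space $X$ and the coherent sheaf $\Jj$, I can produce a global section $g\in H^0(X,\Jj)$ whose germ at $\overline x$ differs from $1_{\overline x}$ by an element of $\mm_{\overline x}\Jj_{\overline x}$; in particular $g(\overline x)\neq 0$.

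By construction $gf$ is a global section of $\Oo\gtq$, so $gf\in H^0(X,\Oo\gtq)$. Since $\gtq$ is assumed closed, Theorem \ref{Closure} gives $H^0(X,\Oo\gtq)=\overline{\gtq}=\gtq$, hence $gf\in\gtq$. Now the primary hypothesis kicks in: either $f\in\gtq$ (which is precisely $(1)$ and ends the proof), or else $g\in\sqrt{\gtq}$, meaning $g^m\in \gtq$ for some $m\geq 1$. In the latter case $g^m$ must vanish on $\ceros(\gtq)$, and in particular $g^m(\overline x)=0$; but $g(\overline x)\neq 0$ forces $g^m(\overline x)\neq 0$, contradiction. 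So only the first alternative can occur, and $f\in\gtq$.

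The step I expect to be most delicate is the passage from the stalk-level information at $\overline x$ to the existence of a \emph{global} multiplier $g$ with $g(\overline x)\neq 0$; it is precisely the closedness of $\gtq$ (needed so that $gf$ lying in the global sections of $\Oo\gtq$ actually returns us to $\gtq$) combined with Theorem A on the Stein space $X$ that makes this work. Everything else is a routine application of the definition of a primary ideal.
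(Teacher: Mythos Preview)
Your proof is correct and follows essentially the same route as the paper: form the coherent colon sheaf $\Jj=(\Oo\gtq:f)$, use Theorem~A to produce a global section $g$ with $g(\overline x)\neq 0$, conclude $gf\in H^0(X,\Oo\gtq)=\gtq$ via closedness and Theorem~\ref{Closure}, and finish with the primary property. One small point: in the trivial direction you invoke Corollaries~\ref{primary1}--\ref{primary2} for the nonemptiness of $\ceros(\gtq)$, but those are stated \emph{after} this proposition and are derived from it; the nonemptiness for a proper closed ideal follows directly from Theorem~\ref{Closure} (if $\ceros(\gtq)=\varnothing$ then $\Oo\gtq=\Oo$ and $\gtq=H^0(X,\Oo\gtq)=\Oo(X)$), so the argument is sound once you cite the right ingredient.
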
   
\begin{proof} (1) $\Rightarrow$ (2) is straightforward. 

To prove the converse, let $\Gg$ be  the sheaf $(\Oo \gtq:f)$. 
This sheaf is   coherent because  it can be understood
as  the kernel of a morphism between coherent sheafs. In fact, consider the applications $\psi:\Oo \to \Oo$ induced by multiplication by $f$, that is, for each $x$ we have
 $\psi(h_x)= h_x f_x$ and the projection $\pi: \Oo\to \displaystyle \frac{\Oo}{\Oo \gtq}$.
The fiber $\Gg_x$ is $\{h_x\in \Oo_x | h_x f_x\in \Oo \gtq\}$; so $\Gg$ is exactly the kernel of the composition $\pi \circ \psi$.

Statement (2) implies that $\Gg_{\overline x}= \Oo_{\overline x}$, because $1_{\overline x}\in \Gg_{\overline x}$. By  Theorem A
 there exist global setions  $g_1,\ldots ,g_k$ of $\Gg$
such that  $1_{\overline x}= \sum m_j(g_j)_{\overline x}$ so it cannot be $ g_j(\overline x)= 0$ for each $j$. Hence there exists  $g_i$ such that $g_if\in H^0(X,\Oo \gtq)= \gtq$  but no powers of $g_i$ can belong to $\gtq$ because $g_i(\overline x) \neq 0$. So  $f\in \gtq$ because $\gtq$ is primary.
\end{proof}

The first consequence is that closed proper prime ideals cannot have empty zeroset  and this fact solves the first obstruction. The converse is also true.

\begin{cor}\label{primary1}
Let $X$ be a Stein space, $\gtq \subset \Oo(X)$  a primary ideal and $F\in \Oo(X)$. Then
\begin{itemize} 
\item[(i)] If $x\in \ceros(\gtq)$, then $F\in \gtq$ if and only if $F_x\in \gtq\Oo_{X,x}$
\item[(ii)] $\gtq$ is closed if and only if $\ceros(\gtq) \neq \varnothing$.
\item[(iii)]$ \ceros(\gtq)$ is connected.
\end{itemize}
\end{cor}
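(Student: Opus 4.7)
My plan is to prove (ii) first, since (i) is immediate from Proposition \ref{principioidentita} once closedness is in hand, and (iii) rests on (i) applied to the prime radical $\sqrt{\gtq}$.

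For (ii), the direction ``$\gtq$ closed $\Rightarrow\ceros(\gtq)\neq\varnothing$'' is contrapositive: if $\ceros(\gtq)=\varnothing$ then the coherent sheaf $\Oo\gtq$ equals $\Oo$, so by Theorem \ref{Closure} we get $1\in\Gamma(X,\Oo\gtq)=\overline{\gtq}=\gtq$, contradicting properness of the primary ideal. For the converse, fix $\overline x\in\ceros(\gtq)$ and $F\in\overline{\gtq}$, so $F_{\overline x}\in\gtq\Oo_{X,\overline x}$. By noetherianity of $\Oo_{X,\overline x}$, choose $a_1,\ldots,a_m\in\gtq$ generating the extended ideal $\gtq\Oo_{X,\overline x}$, and set $\gta=(a_1,\ldots,a_m)\Oo(X)\subseteq\gtq$; by Remark \ref{idealifiniti}(2) the finitely generated ideal $\gta$ is closed, hence $\Gamma(X,\Oo\gta)=\gta$. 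The coherent sheaf $\Gg=(\Oo\gta:F)\subseteq\Oo$ has stalk $\Gg_{\overline x}=\Oo_{X,\overline x}$ since $F_{\overline x}\in(\Oo\gta)_{\overline x}=\gtq\Oo_{X,\overline x}$, so Theorem A yields $g\in\Gamma(X,\Gg)=(\gta:F)$ with $g(\overline x)\neq 0$. Then $gF\in\gta\subseteq\gtq$, and since every element of $\sqrt{\gtq}$ vanishes at $\overline x$ we have $g\notin\sqrt{\gtq}$; the primary property of $\gtq$ forces $F\in\gtq$. The crucial step --- and the one I expect to be the main obstacle --- is the replacement of $\Oo\gtq$ by the finitely generated sub-sheaf $\Oo\gta$: applying Theorem A directly to $(\Oo\gtq:F)$ would only place $gF$ in $\Gamma(X,\Oo\gtq)=\overline{\gtq}$, yielding mere circularity.

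Once (ii) holds, (i) is immediate from Proposition \ref{principioidentita}, since $x\in\ceros(\gtq)$ forces $\gtq$ to be closed. For (iii), note that $\gtp=\sqrt{\gtq}$ is a prime (hence primary) ideal with $\ceros(\gtp)=\ceros(\gtq)\neq\varnothing$, so by (ii) $\gtp$ itself is closed. Combining (i) for $\gtp$ with R\"uckert's local Nullstellensatz $I(\ceros(\gtp\Oo_{X,\overline x}))=\sqrt{\gtp\Oo_{X,\overline x}}$ yields the zero property $\gtp=I(\ceros(\gtp))$: for $F\in I(\ceros(\gtp))$ the germ $F_{\overline x}$ lies in $\sqrt{\gtp\Oo_{X,\overline x}}$, so some $F^n_{\overline x}\in\gtp\Oo_{X,\overline x}$, giving $F^n\in\gtp$ by (i) and $F\in\gtp$ by primality. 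Finally, suppose $\ceros(\gtp)=C_1\sqcup C_2$ with disjoint non-empty clopen pieces; both are analytic subsets of $X$, so Theorem A applied to the coherent ideal sheaves $\Ii_{C_1}$ and $\Ii_{C_2}$ produces $F\in I(C_1)\setminus\gtp$ and $G\in I(C_2)\setminus\gtp$. Since $FG$ vanishes on $C_1\cup C_2=\ceros(\gtp)$ we have $FG\in I(\ceros(\gtp))=\gtp$, contradicting primality. Hence $\ceros(\gtq)=\ceros(\gtp)$ is connected.
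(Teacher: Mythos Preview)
Your proof is correct and follows the same overall strategy as the paper: the transporter-sheaf argument of Proposition~\ref{principioidentita} for (i)/(ii), and separation of disjoint closed analytic sets for (iii). Two points of comparison are worth noting.

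First, on (i) and (ii): the paper asserts that the proof of Proposition~\ref{principioidentita} ``does not use the closure of $\gtq$'' and derives (i) directly, then gets (ii) by writing $\gtq=\{F:F_x\in\gtq\Oo_{X,x}\}$, which is closed by Lemma~\ref{chiuso}. But as you correctly observe, the proof of Proposition~\ref{principioidentita} \emph{as written} concludes $g_if\in H^0(X,\Oo\gtq)=\gtq$, and that last equality is precisely closure. Your fix --- passing to a finitely generated $\gta\subseteq\gtq$ with $\gta\Oo_{X,\overline x}=\gtq\Oo_{X,\overline x}$, so that $H^0(X,\Oo\gta)=\gta$ by Remark~\ref{idealifiniti} --- is the standard and correct way to remove this dependence, and it makes explicit what the paper leaves implicit.

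Second, on (iii): the paper's one-line argument (``disjoint analytic closed sets in a Stein space are separated by holomorphic functions'') is terse; to turn the separating function into an element of $\gtq$ (or $\sqrt{\gtq}$) one still needs the local R\"uckert Nullstellensatz together with (i), which is what you spell out by first proving the zero property $\gtp=I(\ceros(\gtp))$ for the prime radical. This is essentially the content of the paper's next Corollary~\ref{primary2}(1), so your proof of (iii) is more self-contained but follows the same line.
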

\begin{proof} (i) followss from  Proposition \ref{principioidentita}, whose proof does not use the closure of $\gtq$.
The only if  implication of (ii) is clear. The converse implication  holds, because if $x\in \ceros(\gtq)$, then $\gtq = \{F\in \Oo(x): F_x\in \gtq\Oo_{X,x}\}$, that is, $\gtq$ is the saturation of a local ideal, so it is closed.

(iii) is true because disjoint analytic closed sets in a Stein space are separated by holomorphic functions, so the zero set of a primary ideal cannot be disconnected. 
\end{proof}

The second important consequence  is  that for the class of closed primary ideals   Nullstellensatz holds true.

\begin{cor}\label{primary2}If $\gtq$ is a closed primary ideal in $A=H^0(X,\Oo)$, then 
\begin{enumerate}
\item $ I(\ceros(\gtq))=\sqrt{\gtq}$
\item There exists $n$ such that $(\sqrt{\gtq})^n\subset \gtq$
\end{enumerate}
\end{cor}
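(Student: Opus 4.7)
The plan is to reduce both statements to properties of the noetherian local ring $\Oo_{X,x}$ by means of Corollary \ref{primary1}, which turns $\gtq$ into the saturation of $\gtq\Oo_{X,x}$ at any $x\in\ceros(\gtq)$. Since $\gtq$ is closed and primary, Corollary \ref{primary1}(ii) guarantees $\ceros(\gtq)\neq\varnothing$, so I may fix such a point $x$. Moreover, by Theorem \ref{Moduli Chiusi Stein}(2) the sheaf of ideals $\Oo\gtq$ is coherent, so there is an open neighbourhood $U$ of $x$ and finitely many $g_1,\ldots,g_k\in\gtq$ whose germs generate $\gtq\Oo_{X,y}$ for every $y\in U$. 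In particular $\ceros(\gtq)\cap U=\ceros(g_1,\ldots,g_k)\cap U$, hence the germ of $\ceros(\gtq)$ at $x$ coincides with the germ $\ceros(\gtq\Oo_{X,x})$.

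For (1) the inclusion $\sqrt{\gtq}\subset I(\ceros(\gtq))$ is immediate. Conversely, if $f\in I(\ceros(\gtq))$, then $f_x$ vanishes on the germ $\ceros(\gtq\Oo_{X,x})$ by the observation above. Now R\"uckert's Nullstellensatz applied in the noetherian local ring $\Oo_{X,x}$ (recalled in Chapter~1) gives $f_x\in\sqrt{\gtq\Oo_{X,x}}$, i.e.\ $f_x^n\in\gtq\Oo_{X,x}$ for some $n\geq1$. Since $x\in\ceros(\gtq)$ and $\gtq$ is primary, Corollary \ref{primary1}(i) upgrades this to $f^n\in\gtq$, so $f\in\sqrt{\gtq}$.

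For (2), the ring $\Oo_{X,x}$ being noetherian, the ideal $\sqrt{\gtq\Oo_{X,x}}$ is finitely generated and there exists $n\geq1$ with $\bigl(\sqrt{\gtq\Oo_{X,x}}\bigr)^n\subset\gtq\Oo_{X,x}$. Given $F_1,\ldots,F_n\in\sqrt{\gtq}$, each $F_i$ satisfies $F_i^{m_i}\in\gtq$ for some $m_i$, hence $(F_i)_x\in\sqrt{\gtq\Oo_{X,x}}$. Multiplying, $(F_1\cdots F_n)_x\in\gtq\Oo_{X,x}$, and Corollary \ref{primary1}(i) again yields $F_1\cdots F_n\in\gtq$. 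Thus $(\sqrt{\gtq})^n\subset\gtq$.

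There is no real obstacle in this outline: the entire content is the reduction to the local ring at a point of $\ceros(\gtq)$, which is precisely what Corollary \ref{primary1} was designed to enable. The only delicate point to verify carefully is the identification of germs $\ceros(\gtq)_x=\ceros(\gtq\Oo_{X,x})$, which I would justify by invoking the coherence of $\Oo\gtq$ to obtain a \emph{uniform} choice of finitely many generators $g_1,\ldots,g_k$ on a whole neighbourhood $U$ of $x$, rather than only at $x$.
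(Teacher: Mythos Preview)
Your argument is correct and follows essentially the same route as the paper: both parts are reduced, via Corollary~\ref{primary1}(i), to the corresponding facts in the noetherian local ring $\Oo_{X,x}$ at a point $x\in\ceros(\gtq)$, where R\"uckert's Nullstellensatz and noetherianity do the work. The paper's proof is more terse (it just says ``both statements are true in a point of the zeroset of $\gtq$''), but your added care about identifying the germ $\ceros(\gtq)_x$ with $\ceros(\gtq\Oo_{X,x})$ via coherence of $\Oo\gtq$ is a welcome clarification rather than a deviation.
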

\begin{proof}It is  a consequence of Proposition \ref{principioidentita}.  Both statements are true in a point of the zeroset of $\gtq$.
\end{proof}

\subsection{Primary decomposition.} 

In this section we deal with lattice properties of families of ideals in a Stein algebra $A$. 

\begin{defn} A family $\{\gta_i\}_{i\in I}$  of ideals in a Stein algebra $A$ is {\em locally finite} if the family $\ceros(\gta _i)$ is locally finite in the Stein space $X$. 
\end{defn}

Denote by $\mathcal M$ the family of closed maximal ideals in $A$.  If $\gta$ is an ideal in $A$ we can define
\index{radical} the {\em radical }\index{Radical of an ideal } of $\gta$ by
$\displaystyle Rad (\gta) = \bigcap_{\gta\subset M\in \mathcal M}M$. 

 Even if $\gta$ is not, $Rad (\gta)$ is closed because it is  intersection of closed ideals. Since a maximal ideal is prime, if it is closed it must be the ideal of one point $x_0\in X$. So there is a bijection between $X$ and the set  of closed maximal ideals given by $x\mapsto M_x= \{f\in A: f(x)=0\}$. Note also that $x\in \ceros(\gta)$ if and only if $M_x \supset \gta$. Thus  
$$ I( \ceros(\gta)) = Rad (\gta).$$    

We get $\gta \subset \sqrt{\gta} \subset Rad (\gta) = I(\ceros(\gta))$. 

The following theorem deals with the radical of the intersection of a locally finite family of ideals.

\begin{thm}\label{radical} Let $\{\gta_i\}_{i\in I}$ be a locally finite family of ideals in a Stein algebra $A$. Then
\begin{enumerate}
\item $Rad (\bigcap_i \gta_i) = \bigcap_i Rad(\gta_i)$.
\item $\ceros(\bigcap_i \gta_i) = \bigcup_i \ceros(\gta_i)$.
\end{enumerate}
\end{thm}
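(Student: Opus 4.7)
The plan is to prove (2) first—namely $\ceros(\bigcap_i \gta_i)=\bigcup_i \ceros(\gta_i)$—and then to deduce (1) from it essentially for free. The inclusion ``$\supseteq$'' in (2) is immediate: any $f\in\bigcap_i\gta_i$ lies in each $\gta_i$, so it vanishes on every $\ceros(\gta_i)$. For (1), once (2) is known we simply compute
$$\text{Rad}\Big(\bigcap_i \gta_i\Big)=I\Big(\ceros\Big(\bigcap_i \gta_i\Big)\Big)=I\Big(\bigcup_i \ceros(\gta_i)\Big)=\bigcap_i I(\ceros(\gta_i))=\bigcap_i \text{Rad}(\gta_i).$$
So the whole statement reduces to the nontrivial inclusion ``$\subseteq$'' in (2).

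To prove that inclusion, I would work sheaf-theoretically. Let $\mathcal F$ be the sheaf on $X$ defined by $\mathcal F_y=\bigcap_i(\Oo_X\gta_i)_y$. The key observation is that, because for each $i$ one has $(\Oo_X\gta_i)_y=\Oo_y$ whenever $y\notin\ceros(\gta_i)$, the local finiteness of the family $\{\ceros(\gta_i)\}$ implies that near any given point $y_0$ only finitely many of the subsheaves $\Oo_X\gta_i$ differ from $\Oo_X$. Thus on a suitable neighbourhood of $y_0$ the sheaf $\mathcal F$ agrees with a \emph{finite} intersection of coherent subsheaves of $\Oo_X$ (each $\Oo_X\gta_i$ is coherent by Theorem \ref{Moduli Chiusi Stein}(2)), hence $\mathcal F$ is itself coherent.

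Now fix $x\notin\bigcup_i\ceros(\gta_i)$. Then $x$ lies outside every $\ceros(\gta_i)$, so $(\Oo_X\gta_i)_x=\Oo_x$ for all $i$, which gives $\mathcal F_x=\Oo_x$. By Cartan's Theorem A applied to the coherent sheaf $\mathcal F$ on the Stein space $X$, there exist global sections $h_1,\dots,h_r\in H^0(X,\mathcal F)$ generating $\mathcal F_x=\Oo_x$; in particular some $h_j(x)\neq 0$. Because clearly $H^0(X,\mathcal F)=\bigcap_i H^0(X,\Oo_X\gta_i)$ and $H^0(X,\Oo_X\gta_i)=\overline{\gta_i}$ by Theorem \ref{Closure}, we obtain $h_j\in\bigcap_i\overline{\gta_i}$. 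Assuming the $\gta_i$ are closed (which is the setting relevant for the primary decomposition being developed), $\overline{\gta_i}=\gta_i$, so $h_j\in\bigcap_i\gta_i$ with $h_j(x)\neq 0$, proving $x\notin\ceros(\bigcap_i\gta_i)$.

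The main obstacle is the coherence of $\mathcal F$: without the local finiteness hypothesis an infinite intersection of coherent ideal sheaves need not be coherent, and everything downstream (applying Theorem A to $\mathcal F$ at $x$) would fail. Local finiteness is precisely what reduces $\mathcal F$ to a \emph{finite} intersection near each point, unlocking coherence and with it Theorems A and B for $\mathcal F$. Once coherence is in hand, identifying $H^0(X,\mathcal F)$ with $\bigcap_i\overline{\gta_i}$ via Theorem \ref{Closure} is routine, and the remainder is bookkeeping.
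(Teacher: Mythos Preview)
Your overall plan (prove (2), deduce (1)) matches the paper, and your sheaf-theoretic route is clean and correct \emph{for closed ideals}. But the theorem is stated for an arbitrary locally finite family of ideals, and your argument does not reach that generality. Your section $h_j$ lies in $H^0(X,\mathcal F)=\bigcap_i H^0(X,\Oo_X\gta_i)=\bigcap_i\overline{\gta_i}$, not in $\bigcap_i\gta_i$; you flag this and assume closedness, but that is an extra hypothesis, not a step in the proof. One cannot simply pass to closures afterwards: from $\bigcap_i\gta_i\subset\bigcap_i\overline{\gta_i}$ one only gets $\ceros(\bigcap_i\gta_i)\supseteq\ceros(\bigcap_i\overline{\gta_i})$, which is the easy direction again.

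The paper avoids this by a genuinely different mechanism. Using Lemmas \ref{intorni di 1} and \ref{indici finiti}, it chooses a basis $\{U_n\}$ of neighbourhoods of $1$ with $U_{n+1}\cdots U_m\subset U_n$, decomposes $I$ as a finite set $K$ together with finite packets $L_n$, picks $f_n\in U_n\cap\bigcap_{i\in L_n}\gta_i$ and $g\in\bigcap_{i\in K}\gta_i$ with $g(x)\neq0$, and forms the convergent infinite product $f=g\prod_n f_n$. For each fixed $i$, one of the finitely many factors already lies in $\gta_i$, and the remaining (convergent) product is an honest element of $A$, so $f\in\gta_i$ on the nose---no closure needed. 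This is exactly the ingredient your sheaf argument cannot supply. If you want to salvage your approach in full generality, you would need to prove $\overline{\bigcap_i\gta_i}=\bigcap_i\overline{\gta_i}$ for locally finite families, which is essentially as hard as the theorem itself.
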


Before proving Theorem \ref{radical} we need some preliminary lemmas.

\begin{lem}\label{intorni di 1} Let $A$ be a Stein algebra. There is a countable basis of  neighbourhoods  $\{U_n\}_{n\in \N}$ of the constant function $1$ such that for any $n\in \N$ and any $m>n$ one has $U_n \supset U_{n+1} \cdot U_{n+2}\cdot \ldots \cdot U_m$.
In addition if for any $n$ we choose $f_n\in U_n$, the infinite product $\prod_n f_n$ converges in $A$.     
\end{lem}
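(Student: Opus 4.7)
The plan is to exploit the Fréchet structure of $A$ via the seminorms coming from a compact exhaustion of $X$, and to choose the neighborhoods of $1$ as sublevel sets of these seminorms with radii that shrink fast enough to control infinite products.

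First I would fix a compact exhaustion $K_1\subset K_2\subset\cdots$ of $X$ with $K_n\subset\interior(K_{n+1})$ and $X=\bigcup_nK_n$, and set the seminorms $\|f\|_n=\sup_{K_n}|f|$, which define the Fréchet topology on $A$. The sets $V(n,\delta)=\{f\in A:\|f-1\|_n<\delta\}$ form a countable basis of neighborhoods of the constant function $1$. The candidate basis is then $U_n=V(n,\varepsilon_n)=\{f\in A:\|f-1\|_n<\varepsilon_n\}$ for a positive sequence $\varepsilon_n\to0$ to be chosen.

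The key algebraic tool is the pointwise inequality, proved by a straightforward induction on $k$,
\[
\Bigl|\prod_{i=1}^{k} g_i-1\Bigr|\;\leq\;\prod_{i=1}^{k}(1+|g_i-1|)-1,
\]
valid for any complex numbers $g_i$. I would choose $\{\varepsilon_n\}$ strictly decreasing to $0$ and so rapidly that
\[
\prod_{i>n}(1+\varepsilon_i)\;\leq\;1+\varepsilon_n\qquad\text{for every }n\geq 1,
\]
which is easily arranged by taking for instance $\varepsilon_n=3^{-n}$, since then $\sum_{i>n}\varepsilon_i=3^{-n}/2$ while $\log(1+\varepsilon_n)\geq\varepsilon_n/(1+\varepsilon_n)\geq3^{-n}/2$. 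With this choice, if $f_i\in U_i$ for $i=n+1,\dots,m$ then on $K_n\subset K_i$ one has $|f_i-1|\leq\|f_i-1\|_i<\varepsilon_i$, so by the inequality above
\[
\Bigl\|\prod_{i=n+1}^m f_i-1\Bigr\|_n\;\leq\;\prod_{i=n+1}^m(1+\varepsilon_i)-1\;\leq\;\varepsilon_n,
\]
which gives the required inclusion $U_{n+1}\cdot U_{n+2}\cdots U_m\subset U_n$.

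For the convergence statement, given $f_n\in U_n$, I would show that the partial products $P_m=\prod_{n=1}^m f_n$ form a Cauchy sequence in every seminorm $\|\cdot\|_N$. For $m>m'>N$, writing $P_m-P_{m'}=P_{m'}\bigl(\prod_{n=m'+1}^m f_n-1\bigr)$ and using $\|f_n\|_N\leq 1+\varepsilon_n$ for $n>N$, one gets
\[
\|P_{m'}\|_N\;\leq\;\Bigl(\prod_{n=1}^{N}\|f_n\|_N\Bigr)\prod_{n>N}(1+\varepsilon_n)=:C_N<\infty,
\]
and then by the same product inequality
\[
\|P_m-P_{m'}\|_N\;\leq\;C_N\Bigl(\prod_{n>m'}(1+\varepsilon_n)-1\Bigr)\;\xrightarrow[m'\to\infty]{}\;0.
\]
Since $A$ is complete, $\{P_m\}$ converges in $A$, proving that $\prod_n f_n$ exists. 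The genuinely delicate point, though it is only a calculation, is the simultaneous calibration of the $\varepsilon_n$ so that both the multiplicative containment $U_{n+1}\cdots U_m\subset U_n$ and the uniform bound on the partial products on every fixed $K_N$ hold; the rest of the argument is formal once this choice has been made.
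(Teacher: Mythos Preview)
Your proof is correct and follows essentially the same strategy as the paper: both choose the radii $3^{-n}$ and rely on the elementary product inequality $|\prod g_i-1|\le\prod(1+|g_i-1|)-1$ together with completeness of $A$. The paper's version is slightly more indirect---it passes through local models of $X$ as closed subspaces of polydiscs $\Delta_i$ of polyradius $1$, defines $\tilde U_n^{(i)}=\{f:|f(z)-1|<3^{-n}\text{ on the polydisc of polyradius }1-1/n\}$ in each model, and then sets $U_n=\{f\in A:r_i(f)\in U_n^{(i)}\ \forall i<n\}$ via the restriction maps---whereas you work directly with sup seminorms on a compact exhaustion. Your route is cleaner for this lemma and your verification of the inclusion $U_{n+1}\cdots U_m\subset U_n$ via $\log(1+3^{-n})\ge 3^{-n}/2\ge\sum_{i>n}3^{-i}$ is more explicit than the paper's, which leaves that check to the reader.
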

\begin{proof} There is an open covering of $X$ by local models $X_i$. We can assume $(X_i ,\Oo_{X_i})$ to be realised as a closed subspace of a polydisc $\Delta_i \subset \C^{m_i}$ of polyradius $1$ so that $ A_i=H^0(X_i, \Oo_{X_i})$ is the quotient of $\tilde A_i = H^0(\Delta_i, \Oo_{m_i})$ by a closed ideal $\gtb_i$.
Let us define in $\tilde A_i$ a countable basis of neighbourhoods  of $1$. For each $n\in \N$ define 
$$\tilde U_n^{(i)} =\left\{ f\in \tilde A_i : |f(z)-1|< 3^{-n} \forall z\quad  \mbox{\rm such that}\quad |z_j|<1-\frac{1}{n}, j=1,\ldots,m_i\right\}\quad (*)$$

Let $U^{(i)}_n$ be the image of $\tilde U^{(i)}_n$ in $A_i$. Next consider the restriction map $r_i: A\to A_i$ and define $U_n =\{f\in A: r_i(f)\in U^{(i)}_n \forall i<n\}$. One can check that the family $\{U_n\}_{n\in \N}$ has the required property.

Take now $f_n \in U_n$ and let $P_n$ be the finite product $f_1\cdots f_n$. Using equation $(*)$  one proves that $\{P_n\}_{n\in \N}$ is a Cauchy sequence, hence it converges in the Fr\'echet space $A$.   
\end{proof}

\begin{lem}\label{indici finiti}
Let $\{\gta_i\}_{i\in I}$ be a locally finite family of ideals in a Stein algebra $A$.
Then for any open set $U\subset A$ there is a finite set $K\subset I$ such that for any finite subset $L\subset I\setminus K$ one has $U\cap \bigcap_{i\in L}\gta_i \neq \varnothing$.
\end{lem}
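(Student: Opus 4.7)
The plan is to show that any prescribed $f\in U$ can be perturbed to an element of $U\cap\bigcap_{i\in L}\gta_i$ by multiplying it with a product of elements $h_{i_j}\in\gta_{i_j}$ that are very close to the constant function $1$ in the Fr\'echet topology of $A$.

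First I would fix $f\in U$. Since multiplication by $f$ is continuous in the Fr\'echet algebra $A$, and $\{U_n\}_{n\in\N}$ is a basis of neighbourhoods of $1$ as in Lemma \ref{intorni di 1}, there exists an index $m_0$ with $f\cdot U_{m_0}\subset U$. The neighbourhood $U_{m_0}$ is governed by a single compact $C\subset X$ coming from the construction in Lemma \ref{intorni di 1}, and the local finiteness hypothesis on $\{\ceros(\gta_i)\}_{i\in I}$ forces
\[
K\;:=\;\{\, i\in I : \ceros(\gta_i)\cap C\neq\varnothing\,\}
\]
to be finite. This $K$ will be the exceptional set appearing in the statement.

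Next, for each $i\in I\setminus K$ the compact $C$ is disjoint from $\ceros(\gta_i)$, so the coherent ideal sheaf $\Oo\gta_i$ coincides with $\Oo$ on an open neighbourhood of $C$. Cartan's Theorem A, applied to $\Oo\gta_i$ and combined with Theorem B and the identification $\overline{\gta_i}=H^0(X,\Oo\gta_i)$ from Theorem \ref{Closure}, allows the constant section $1$ (which is a local section of $\Oo\gta_i$ on a Stein neighbourhood of $C$) to be approximated arbitrarily well on $C$ by a global section in $\overline{\gta_i}$; a Runge-type approximation inside $X$ is the key tool here. Since $\gta_i$ is dense in $\overline{\gta_i}$, this produces, for every neighbourhood $U_m$ of $1$, an element $h_i\in\gta_i\cap U_m$. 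Given any finite $L=\{i_1,\ldots,i_s\}\subset I\setminus K$, I would then pick $h_{i_j}\in\gta_{i_j}\cap U_{m_0+j}$ for $j=1,\ldots,s$. The product $h:=h_{i_1}h_{i_2}\cdots h_{i_s}$ lies in $U_{m_0+1}\cdot U_{m_0+2}\cdots U_{m_0+s}\subset U_{m_0}$ by the absorption property of Lemma \ref{intorni di 1}, while $h\in\gta_{i_1}\gta_{i_2}\cdots\gta_{i_s}\subset\bigcap_{j=1}^{s}\gta_{i_j}$ because each $\gta_{i_j}$ is an ideal. Hence $g:=fh$ satisfies $g\in f\cdot U_{m_0}\subset U$ and $g\in\bigcap_{i\in L}\gta_i$, giving the required non-empty intersection.

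\textbf{Main obstacle.} The delicate point is the construction in the second step: producing, for every $i\notin K$, an element of $\gta_i$ itself (not merely of $\overline{\gta_i}$) that is arbitrarily close to $1$ on the fixed compact $C$. This requires combining Cartan's Theorems A and B with the identification of $\overline{\gta_i}$ as $H^0(X,\Oo\gta_i)$ (Theorem \ref{Closure}) and the density of $\gta_i$ in its closure. Once such approximate units are available, the rest of the argument is a clean assembly that relies only on the telescoping product structure of the neighbourhoods $\{U_n\}$.
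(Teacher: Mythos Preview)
Your argument is correct and complete; the approximate-unit step you flag as the ``main obstacle'' is handled exactly as you outline (sections of $\Oo\gta_i$ generate the stalk $\Oo_x$ at every $x\in C$, Runge approximation plus the density of $\gta_i$ in $\overline{\gta_i}=H^0(X,\Oo\gta_i)$ furnishes the needed $h_i\in\gta_i\cap U_m$).

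However, your route differs from the paper's. The paper does not use Lemma~\ref{intorni di 1} or any product construction here. Instead it argues as follows: the Fr\'echet topology on $A$ is the coarsest making all restrictions $r_n:A\to A_n=H^0(X_n,\Oo)$ continuous for a fixed exhaustion $\{X_n\}$ by relatively compact Stein opens, so any open $U\subset A$ contains some $r_n^{-1}(W)$ with $W\subset A_n$ open. One then sets $K=\{i:\ceros(\gta_i)\cap X_n\neq\varnothing\}$; for finite $L\subset I\setminus K$ the ideal $\gtb=\bigcap_{i\in L}\gta_i$ has empty zero set on $X_n$, hence $r_n(\gtb)\Oo_{X_n}=\Oo_{X_n}$, hence $r_n(\gtb)$ is dense in $A_n$ and meets $W$, so $\gtb$ meets $r_n^{-1}(W)\subset U$. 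This is shorter and avoids building approximate units in each $\gta_i$ separately: it treats the whole finite intersection $\gtb$ at once and invokes only the elementary fact that an ideal with empty zero set on a Stein space has dense image. Your approach, by contrast, is more constructive---it pinpoints an explicit element $fh$ of $U\cap\gtb$---and it foreshadows the infinite-product technique that the paper will use later in the proof of Theorem~\ref{radical}.
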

\begin{proof} Take an exhaustion $X_n$ of $X$ by relatively compact open Stein sets. The topology of $A$ is the less fine making all restrictions $r_n$ continuous.
This means that for any open set $U\subset A$ there is an $n$ and an open set $W\subset A_n = H^0(X_n, \Oo)$ such that $r_n^{-1}(W) \subset U$. Since the family is locally finite, we can consider the finite set $K$ of those indices $k\in I$ for which $\ceros(\gta_k)\cap X_n \neq\varnothing$. Take $L\subset I\setminus K$ finite and denote $\gtb = \bigcap_{i\in L}\gta_i$. So $\ceros(\gtb) \cap X_n=\varnothing$, which implies $r_n(\gtb)\Oo_{X_n} = \Oo_{X_n}$. Thus  $r_n(\gtb)$ is dense in $A_n$. So $r_n(\gtb) \cap W \neq \varnothing$, and since  $r_n^{-1}(W) \subset U$, we get $U\cap \gtb \neq \varnothing$. 
\end{proof}

{\sc Proof of Theorem \ref{radical}.}
The first statement is a consequence of the second. Also in the second statement the inclusion $\ceros\bigcap_{i\in I}\gta_i \supset \bigcup_{i\in I}(\ceros(\gta_i))$ is clear. 

Take $\displaystyle x\notin  \bigcup_{i\in I}\ceros(\gta_i)$. We
have to prove $x\notin\ceros (\bigcap_{i\in I}\gta_i)$. Let $\{U_n\}$
be  a basis ofneighbourhoods  of  $1\in A$  as in  Lemma \ref{intorni  di
  1}.  We can  assume for  any  $n$ that no function in  $U_n$ vanishes  at
$x$. By  Lemma \ref{indici  finiti} for all $n$ there is a finite set $L_n \subset I\setminus K$ such that $\displaystyle U_n\cap \bigcap_{i\in L_n}\gta_i\neq \varnothing$. This means that we can decompose $I$ into finite sets $I=  K\cup \bigcup_n L_n, L_n\subset  I\setminus K$.  Take elements  $\displaystyle f_n \in  \bigcap_{i\in L_n}\gta_i\cap U_n$ and  $\displaystyle g \in \bigcap_{i\in K} \gta_i$ such that  $g(x)\neq 0$. 
 By Lemma \ref{intorni  di 1} the  infinite product $g\cdot \prod  f_n$
converges to an element $ f\in  \bigcap_{i\in I}\gta_i$ such that $f(x)\neq 0$,  
hence $x\notin \ceros(\bigcap_{i\in I}\gta_i)$.
\qed

\bigskip

Next step is to erase redundant ideals.

\begin{prop}\label{primochiuso}
Let $\{\gta_i\}_{i\in I}$ be a not empty locally finite family of ideals in a Stein algebra $A$  and $\gtp$ be a closed prime ideal. If for all $i$ the ideal $\gta_i$ is not included in $ \gtp$, then $\bigcap_i \gta_i$ is not included in $\gtp$.
\end{prop}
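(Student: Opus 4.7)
\emph{Proof plan.} I would adapt the scheme used in the proof of Theorem \ref{radical}, replacing the non-vanishing condition ``$f(x)\neq 0$'' at a point $x$ by the global condition ``$f\notin\gtp$.'' The key facts I rely on are: (a) $\gtp$ is closed and proper, so $A\setminus\gtp$ is an open neighbourhood of $1\in A$ in the Fr\'echet algebra $A$; (b) $\gtp$ is prime, so for any finite $S\subset I$ the ideal $\bigcap_{i\in S}\gta_i$ is not contained in $\gtp$; and (c) Lemmas \ref{intorni di 1} and \ref{indici finiti}, which together permit the construction of convergent products lying in prescribed intersections of the $\gta_i$. The main obstacle is to pass from the finite-intersection statement (immediate from primality) to the countable intersection $\bigcap_{i\in I}\gta_i$ while keeping the resulting convergent product outside the closed ideal $\gtp$.

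Following the proof of Theorem \ref{radical}, I would decompose $I=K\cup\bigcup_nL_n$ into a finite set $K$ and pairwise disjoint finite sets $L_n$ such that, for a suitable basis $\{U_n\}$ of neighbourhoods of $1$ as in Lemma \ref{intorni di 1}, Lemma \ref{indici finiti} yields $U_n\cap\bigcap_{i\in L_n}\gta_i\neq\varnothing$ for every $n$. Since $\gtp$ is closed and $1\notin\gtp$, I fix in advance an open neighbourhood $V$ of $1$ with $V\subset A\setminus\gtp$; by shrinking the thresholds $3^{-n}$ in the construction of Lemma \ref{intorni di 1} to a sufficiently fast-decreasing sequence $\delta_n$ (small enough in the seminorm defining $V$ so that $\prod_n(1+\delta_n)-1$ falls within the radius of $V$), one ensures that for every choice $f_n\in U_n$ the convergent product $\prod_nf_n$ lies in $V$.

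By primality of $\gtp$ applied to the finite collection $\{\gta_i\}_{i\in K}$, I then pick $g\in\bigcap_{i\in K}\gta_i\setminus\gtp$, and for each $n$ I pick $f_n\in U_n\cap\bigcap_{i\in L_n}\gta_i$; set $f=g\cdot\prod_nf_n\in A$. The verification that $f\in\bigcap_{i\in I}\gta_i$ is identical to the one in Theorem \ref{radical}: for any $i\in I$ there is $N$ with $i\in K\cup L_1\cup\cdots\cup L_N$, and the factorization $f=(g\cdot f_1\cdots f_N)\cdot\prod_{k>N}f_k$ displays $f$ as the product of an element of $\gta_i$ and an element of $A$. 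On the other hand, $\prod_nf_n\in V\subset A\setminus\gtp$ by the choice of $\{U_n\}$, and $g\notin\gtp$; since $\gtp$ is prime, $f=g\cdot\prod_nf_n\notin\gtp$. Hence $f\in\bigcap_{i\in I}\gta_i\setminus\gtp$, proving that $\bigcap_{i\in I}\gta_i\not\subset\gtp$.
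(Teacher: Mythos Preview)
Your approach works, but the paper's is considerably shorter. Rather than rebuilding the infinite-product machinery of Theorem~\ref{radical}, the paper plugs $U=A\setminus\gtp$ directly into Lemma~\ref{indici finiti}: this yields a finite $K\subset I$ and an element $f\in\bigl(\bigcap_{i\in I\setminus K}\gta_i\bigr)\setminus\gtp$ (the passage from finite $L\subset I\setminus K$ to the full cofinite intersection is implicit and rests on Theorem~\ref{radical}(2), already proved). Then one picks $f_i\in\gta_i\setminus\gtp$ for each $i\in K$, and the \emph{finite} product $f\cdot\prod_{i\in K}f_i$ lies in $\bigcap_{i\in I}\gta_i\setminus\gtp$ by primality alone---no convergent product, no need to steer a limit into a prescribed open set. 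Your route is faithful to the proof of Theorem~\ref{radical} but misses this shortcut. One small caveat on your execution: the claim that shrinking the thresholds $3^{-n}$ to $\delta_n$ forces $\prod_n f_n\in V$ is imprecise, since the $U_n$ of Lemma~\ref{intorni di 1} bound $|f-1|$ only over the first $n-1$ local models, which for small $n$ need not cover the compact defining $V$; the clean remedy is to use the multiplicative property $U_{n+1}\cdots U_m\subset U_n$, choose $N$ with $\overline{U_N}\subset A\setminus\gtp$, and then handle the finite head $g\cdot f_1\cdots f_N$ via primality (after intersecting each $U_n$ with the open set $A\setminus\gtp$).
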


\begin{proof}
The difference $U = A\setminus \gtp$ is an open set in $A$. By Lemma \ref{indici finiti} there is a finite subset $K\subset I$ such that $U\cap\bigcap_{i\in I\setminus K} \gta_i \neq \varnothing$. Hence there is a function  $f\in \bigcap_{i\in I\setminus K} \gta_i \setminus \gtp$ and functions $f_i \in \gta_i \setminus \gtp$,for each $i\in K$. Thus $f\cdot \prod_{i\in K}f_i \in \bigcap_{i\in I}\gta_i \setminus \gtp$. 
\end{proof}

\begin{prop}\label{locale0}
Let $\{\gta_i\}_{i\in I}$ be a locally finite family of closed ideals in a Stein algebra $A = H^0(X,\Oo)$. Then for all $x\in X$
$$\left( \bigcap_i \gta_i\right) \Oo_x = \bigcap_i (\gta_i \Oo_x).$$
\end{prop}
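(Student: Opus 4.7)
The inclusion $(\bigcap_i \gta_i)\Oo_x \subseteq \bigcap_i (\gta_i\Oo_x)$ is immediate from the definitions, so the entire work is in the reverse inclusion. My plan breaks the argument into a reduction step (exploiting local finiteness) and a finite-intersection step (using coherence plus Cartan's theorems).

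First I would use local finiteness to cut down to a finite index set. Let $K=\{i\in I:x\in\ceros(\gta_i)\}$; by local finiteness $K$ is finite. For $i\notin K$ there exists $g\in\gta_i$ with $g(x)\neq 0$, so $g_x$ is a unit in $\Oo_x$ and hence $\gta_i\Oo_x=\Oo_x$. Thus the right-hand side satisfies $\bigcap_{i\in I}(\gta_i\Oo_x)=\bigcap_{i\in K}(\gta_i\Oo_x)$, and on the left I would like to replace $\bigcap_{i\in I}\gta_i$ by $\bigcap_{i\in K}\gta_i$ at the level of stalks. The upgrade goes like this: the family $\{\gta_i\}_{i\notin K}$ is still locally finite, and Theorem~\ref{radical}(2) gives $\ceros(\bigcap_{i\notin K}\gta_i)=\bigcup_{i\notin K}\ceros(\gta_i)$, which does not contain $x$. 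Hence there is $e\in\bigcap_{i\notin K}\gta_i$ with $e(x)\neq 0$, so $e_x$ is a unit in $\Oo_x$. For any $f\in\bigcap_{i\in K}\gta_i$ the product $ef$ belongs to $\bigcap_{i\in I}\gta_i$ and has the same germ at $x$ as $f$ up to the unit $e_x$; this shows $(\bigcap_{i\in K}\gta_i)\Oo_x\subseteq(\bigcap_{i\in I}\gta_i)\Oo_x$.

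It remains to prove the \emph{finite} statement $(\bigcap_{i\in K}\gta_i)\Oo_x=\bigcap_{i\in K}(\gta_i\Oo_x)$. Here I would switch to sheaves. For each $i\in K$ the sheaf $\Ff_i=\Oo\gta_i$ is coherent by Theorem~\ref{Moduli Chiusi Stein}(2), and $(\Ff_i)_x=\gta_i\Oo_x$ by definition. The finite intersection $\Ff=\bigcap_{i\in K}\Ff_i$ is therefore coherent (it is the kernel of the morphism $\Oo\to\bigoplus_{i\in K}\Oo/\Ff_i$), and taking stalks commutes with finite intersections, so $\Ff_x=\bigcap_{i\in K}(\gta_i\Oo_x)$. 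On global sections, since each $\gta_i$ is closed, Theorem~\ref{Closure} yields $H^0(X,\Ff_i)=\gta_i$, whence $H^0(X,\Ff)=\bigcap_{i\in K}\gta_i$. Applying Cartan's Theorem~A to the coherent sheaf $\Ff$ on the Stein space $X$, the stalk $\Ff_x$ is generated as an $\Oo_x$-module by $H^0(X,\Ff)=\bigcap_{i\in K}\gta_i$, giving $\Ff_x=(\bigcap_{i\in K}\gta_i)\Oo_x$.

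Chaining the two reductions finishes the proof: $(\bigcap_{i\in I}\gta_i)\Oo_x=(\bigcap_{i\in K}\gta_i)\Oo_x=\bigcap_{i\in K}(\gta_i\Oo_x)=\bigcap_{i\in I}(\gta_i\Oo_x)$. The only potentially delicate point is the ``gluing'' of the outside ideals into a single multiplier $e$; but this is exactly the content of Theorem~\ref{radical}, so no new work is needed beyond invoking it. The finite step is essentially a direct application of Theorems~A and~B via coherence of $\Oo\gta_i$, so I do not anticipate serious obstacles.
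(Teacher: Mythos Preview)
Your proof is correct and follows essentially the same strategy as the paper: reduce to a finite index set $K$ using local finiteness together with the fact that $(\bigcap_{i\notin K}\gta_i)\Oo_x=\Oo_x$ (the paper leaves this implicit, while you spell it out via Theorem~\ref{radical}), and then handle the finite case by coherence of the sheaves $\Oo\gta_i$ and Cartan's Theorem~A. Your version is simply more explicit than the paper's terse one-line reduction.
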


\begin{proof} For $I$ finite it is a consequence of Theorem A. For the general case consider the coherent sheaf $ \big( \bigcap_i \gta_i\big) \Oo$. For any $x\in X$ one has $\gta_i \Oo_x = \Oo_x$ except for a finite set of indices. Hence $\bigcap_{i\in I}\gta_i \Oo_x$ is a finite intersection. In other words for all $x$ there is a finite set $K_x\subset I$ such that $x\notin \ceros(\gta_i)$ for $i\notin K_x$. Hence 

\begin{eqnarray*}
 \left( \bigcap_{i\in I} \gta_i\right)\Oo_x= \left( \bigcap_{i\in K_x} \gta_i\right)\Oo_x \cap\left(\bigcap_{i\in I\setminus K_x} \gta_i\right)\Oo_x =\\
=\left (\bigcap_{i\in K_x} \gta_i \Oo_x\right)\cap \Oo_x= \left(\bigcap_{i\in K_x} \gta_i \Oo_x\right)\cap\left(\bigcap_{i\in I\setminus  K_x} \gta_i \Oo_x\right)=\bigcap_{i\in I} (\gta_i \Oo_x).
\end{eqnarray*}
\end{proof}

\begin{defn} \index{Irreducible decomposition} We say that a decomposition $\gta=\bigcap _{i\in I}\gtb_i$ of an ideal  $\gta$  in a Stein algebra $A$  is {\em irreducible} if for any  proper subset $J\subset I$ one has $\gta \neq \bigcap_{i\in J} \gtb_i$.
\end{defn}
Not all decompositions can be made irreducible making $I$ smaller. For instance, 
this is not possible when $I$ is not finite and the family $\gtb_i$ is not locally finite as the following example shows.

\begin{example}
Let  $A$ be the Stein algebra of holomorphic functions on $\C$. Consider $\gtq_i = \{f\in A : m_0(f) \geq i\}$. The family $\{\gtq_i\}_{i\in \N}$ is a family of closed primary ideals but the corresponding family of their zerosets is not locally finite. One has $\bigcap_i \gtq_i = (0)$ but the decomposition cannot be made irreducible.
\end{example}

\begin{thm}\label{irreducibl} Let $\{\gtb_i\}_{i\in I}$ be a locally finite family of closed ideals in a Stein algebra $A$. Then there is a minimal subset $I_0\subset I$ such that $\gta= \bigcap _{i\in I_0} \gtb_i$ and this decomposition is irreducible.
\end{thm}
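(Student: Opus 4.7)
The plan is to apply Zorn's lemma to the family
$$\mathcal{S}=\{J\subseteq I:\, \textstyle\bigcap_{i\in J}\gtb_i=\gta\},$$
partially ordered by inclusion, and to extract a minimal element. Any such $I_0$ is automatically irreducible: if some $J\subsetneq I_0$ still satisfied $\bigcap_{j\in J}\gtb_j=\gta$, then $J\in\mathcal{S}$ would contradict the minimality of $I_0$.

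To apply Zorn's lemma in the form giving a minimal element, I have to check that every chain in $\mathcal{S}$ under reverse inclusion has an upper bound in $\mathcal{S}$. Equivalently, given a totally ordered family $\{J_\alpha\}$ in $\mathcal{S}$ with $J_\alpha\supseteq J_\beta$ whenever $\alpha\le\beta$, the intersection $J_\infty=\bigcap_\alpha J_\alpha$ again belongs to $\mathcal{S}$. The inclusion $\gta\subseteq\bigcap_{i\in J_\infty}\gtb_i$ is trivial; everything hinges on the reverse inclusion.

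This limit step is the only non-trivial point and will be the main obstacle. The key is to combine local finiteness with the closedness of each $\gtb_i$. Since $\gta$ is an intersection of closed ideals it is itself closed, so Theorem \ref{Closure} gives $f\in\gta$ iff $f_x\in\gta\Oo_x$ for every $x\in X$. Fix $f\in\bigcap_{i\in J_\infty}\gtb_i$ and a point $x$, and set $I_x=\{i\in I:\,x\in\ceros(\gtb_i)\}$, which is finite by local finiteness. Since $\gtb_i\Oo_x=\Oo_x$ for $i\notin I_x$, Proposition \ref{locale0} yields, for every $J\subseteq I$,
$$\Big(\bigcap_{i\in J}\gtb_i\Big)\Oo_x=\bigcap_{i\in J\cap I_x}\gtb_i\Oo_x.$$
The decreasing family $\{J_\alpha\cap I_x\}_\alpha$ of subsets of the finite set $I_x$ stabilizes, say at $J_{\alpha_0}\cap I_x=J_\infty\cap I_x$. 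Hence
$$\gta\Oo_x=\Big(\bigcap_{i\in J_{\alpha_0}}\gtb_i\Big)\Oo_x=\bigcap_{i\in J_\infty\cap I_x}\gtb_i\Oo_x=\Big(\bigcap_{i\in J_\infty}\gtb_i\Big)\Oo_x,$$
so $f_x\in\gta\Oo_x$. Since $x$ was arbitrary this gives $f\in\gta$, so $J_\infty\in\mathcal{S}$.

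Zorn's lemma then supplies a minimal $I_0\in\mathcal{S}$, which by the opening paragraph is the desired irreducible subset. Thus the only substantial work in the proof is the chain-stability argument, which converts the hypothesis of local finiteness into the topological continuity needed to pass to the limit of a (possibly transfinite) decreasing family of subsets of $I$.
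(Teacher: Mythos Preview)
Your proof is correct and follows essentially the same route as the paper's: Zorn's lemma on the family $\mathcal{S}$ of index subsets realizing $\gta$, with the chain condition verified pointwise by using local finiteness to reduce to a finite index set $I_x$, finding a member $J_{\alpha_0}$ of the chain that already agrees with $J_\infty$ on $I_x$, and invoking Proposition~\ref{locale0} plus closedness of $\gta$ to pass from local to global. Your writeup is in fact slightly more explicit than the paper's about why $\gta$ closed is needed at the end and about why the chain stabilizes on $I_x$.
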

\begin{proof} Denote by $\Gamma $ the family of subsets $J\subset I$ such that $\gta = \bigcap _{j\in J}\gtb_j$ which is partially ordered by inclusion. We have to find a minimal element in $\Gamma$. By Zorn lemma it is enough to consider a linearly ordered chain $C\subset \Gamma$. Let $J_0 = \bigcap_{J\in C} J$. If $J_0\in \Gamma$, we are done.

Take $x\in X$. Since the family $\{\gtb_i\}$ is locally finite, $x$ belongs to finitely many zerosets, say $x\in \ceros(\gtb_{i_1}) \cap \dots \cap \ceros(\gtb_{i_k})$. Now for each $l= 1,\dots, k$, either $i_l \in J_0$ or not. In the last case there is an element in $C$ not containing $i_l$. So there is an element $J_1 \in C$ such that $x\notin \bigcup_{j\in J_1\setminus J_0} \ceros(\gtb_j)$.

Apply Proposition \ref{locale0}. We get:  

$$\left(\bigcap_{j\in J_o} \gtb_j\right) \Oo_x =\bigcap_{j\in J_o} \left(\gtb_j \Oo_x\right) = \left(\bigcap_{j\in J_1} \gtb_j\right) \Oo_x = \gta \Oo_x.$$ 
 
This is true for any $x\in X$ (for a suitable $J_1$) hence $\bigcap_{j\in J_0}\gtb_j = \gta$ and $J_0 \in \Gamma$.
\end{proof}

\begin{defn}\index{normal primary decomposition} Let $\gta = \bigcap_i \gtq_i$ be a primary locally finite decomposition of a closed ideal $\gta$ in a Stein algebra $A$. We say that the decomposition is {\em normal} if it is irreducible and the prime ideals $\gtp_i = \sqrt{\gtq_i}$ are pairwise different.  
\end{defn}

\begin{lem}\label{normal} If a closed ideal has a primary decomposition, then it has a normal decomposition. 
\end{lem}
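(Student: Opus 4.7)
The plan is to produce the normal decomposition in two stages: first merge primary components that share a radical, and then apply the irreducibility refinement of Theorem \ref{irreducibl}.

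Starting from a given primary locally finite decomposition $\gta=\bigcap_{i\in I}\gtq_i$, I would set $\gtp_i=\sqrt{\gtq_i}$ and partition $I$ by the value of $\gtp_i$. For each distinct prime $\gtp_\alpha$ that appears, let $I_\alpha=\{i\in I:\gtp_i=\gtp_\alpha\}$. The first observation is that $I_\alpha$ is finite: indeed, since $\gtq_i$ is primary with $\ceros(\gtq_i)=\ceros(\gtp_i)=\ceros(\gtp_\alpha)$ for every $i\in I_\alpha$, and since the family $\{\ceros(\gtq_i)\}_{i\in I}$ is locally finite in $X$, any point of $\ceros(\gtp_\alpha)$ would otherwise have no neighborhood meeting only finitely many members of the family.

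Then I would define $\gtQ_\alpha=\bigcap_{i\in I_\alpha}\gtq_i$. This is a finite intersection of closed ideals, hence closed, and a standard commutative algebra computation shows that a finite intersection of $\gtp_\alpha$-primary ideals is again $\gtp_\alpha$-primary: if $fg\in\gtQ_\alpha$ and $g\notin\gtQ_\alpha$ then $g\notin\gtq_j$ for some $j\in I_\alpha$, so some power of $f$ lies in $\gtq_j$, and therefore $f\in\sqrt{\gtq_j}=\gtp_\alpha=\sqrt{\gtQ_\alpha}$. By construction the zerosets $\{\ceros(\gtQ_\alpha)\}=\{\ceros(\gtp_\alpha)\}$ form a subfamily of the original zerosets of the $\gtq_i$, hence are still locally finite, and $\gta=\bigcap_\alpha\gtQ_\alpha$.

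Now I would invoke Theorem \ref{irreducibl} applied to the locally finite family of closed ideals $\{\gtQ_\alpha\}$: it produces a minimal subset $A_0$ of indices such that $\gta=\bigcap_{\alpha\in A_0}\gtQ_\alpha$ and this intersection is irreducible. Because the $\gtp_\alpha$ were chosen pairwise distinct from the outset, the radicals of the $\gtQ_\alpha$ for $\alpha\in A_0$ remain pairwise distinct, and the collapsed family inherits local finiteness. Consequently $\gta=\bigcap_{\alpha\in A_0}\gtQ_\alpha$ is a normal primary decomposition.

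The only non-routine point I expect is the verification that $I_\alpha$ must be finite, since this is where local finiteness of the zerosets rules out pathological repetitions of the same radical; everything else is either a standard primary-ideal identity or a direct appeal to Theorem \ref{irreducibl}.
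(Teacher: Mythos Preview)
Your proof is correct and uses the same two ingredients as the paper's proof: the finiteness of each $I_\alpha$ from local finiteness, and Theorem \ref{irreducibl} to pass to an irreducible subfamily. The only difference is the order: the paper first applies Theorem \ref{irreducibl} to make the decomposition irreducible and then merges the (finitely many) components sharing a radical, whereas you merge first and then apply Theorem \ref{irreducibl}; both orders work and the argument is essentially the same.
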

\begin{proof} By Theorem \ref{irreducibl} we can assume the decomposition $\gta = \bigcap \gtq_i$ is irreducible. Since the family $\{\gtq_i\}$ is locally finite, only finitely many primary ideals $\gtq_i$ can have the same radical ideal $\gtp$. Their intersection is still primary and has the same radical ideal. 
\end{proof}

Let $\gta$ be a closed  ideal and  $\gta = \bigcap_{i\in I}\gtq_i$ be a normal decomposition in closed primary ideals.  We introduce the following definitions.
\begin{enumerate}
\item $\gtq_i$ is a {\em primary component} of $\gta$.
\item $\gtp_i = \sqrt{\gtq_i}$ is {\em a prime ideal associated} to $\gta$.
\item A primary component $\gtq_l$ is called {\em immersed} if there is $m\neq l$ such that $\ceros(\gtq_l)\subset \ceros(\gtq_m)$, otherwise it is called {\em isolated}.
\item A set of primary components $\{\gtq_m\}_{m\in M\subset I}$ is {\em isolated} 
if 
$$   \ceros(\gtq_m)\subset \ceros(\gtq_i) \Rightarrow  i\in  M, \forall m\in M. $$ 
The intersection  $\bigcap_{m\in M} \gtq_m $  is also an isolated component of $\gta$.
\end{enumerate}

To prove the existence of a primary decomposition for closed ideals in a Stein algebra we use the following fact.

\begin{lem}\label{chiuso}Let $\gta$ be an ideal in $\Oo_x$ and $\gtN_x=\{f\in A |\, f_x\in \gta\}$. Then $\gtN_x$ is a closed ideal in $A$.
\end{lem}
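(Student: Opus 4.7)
First I would observe that the ideal statement is immediate. The stalk map $A=H^0(X,\Oo)\to\Oo_{X,x}$, $f\mapsto f_x$, is a ring homomorphism, and $\gtN_x$ is exactly the preimage of $\gta$, so it is automatically an ideal of $A$. The real content is the topological closedness.

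For closedness, the cleanest route is to apply Cartan's characterization of the closure proved just before (Theorem \ref{Closure}): $\overline{\gtN_x}=H^0(X,\Oo\,\gtN_x)$, where $\Oo\,\gtN_x$ is the sheaf of $\Oo$-ideals generated by $\gtN_x$ (coherent by Theorem \ref{Moduli Chiusi Stein}(2)). The key step is to read this identity at the single point $x$. By construction $(\Oo\,\gtN_x)_x$ is the $\Oo_{X,x}$-ideal generated by $\{f_x : f\in \gtN_x\}$, and each such generator lies in $\gta$ by the very definition of $\gtN_x$. Since $\gta$ is already an $\Oo_{X,x}$-ideal, it absorbs all $\Oo_{X,x}$-linear combinations of those generators, so $(\Oo\,\gtN_x)_x\subset\gta$. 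Consequently, for any $h\in\overline{\gtN_x}=H^0(X,\Oo\,\gtN_x)$ one has $h_x\in(\Oo\,\gtN_x)_x\subset\gta$, hence $h\in\gtN_x$; this proves $\overline{\gtN_x}\subset\gtN_x$, and we are done.

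The real obstacle here is not any computation but spotting that the stalk-at-$x$ condition is preserved when we pass to the sheaf-theoretically generated ideal, precisely because $\gta$ is closed under multiplication by $\Oo_{X,x}$. Proceeding in this order sidesteps having to invoke Krull's intersection theorem on $\Oo_{X,x}$, to appeal directly to the closure-of-modules Theorem \ref{modulichiusi}, or to lift sections to an ambient complex manifold. The argument is uniform in $x$, requires no regularity assumption on $X$ at $x$, and makes no demand on $\gta$ beyond being an ideal of the local ring.
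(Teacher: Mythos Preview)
Your argument is correct and follows the same approach as the paper: both invoke Theorem \ref{Closure} to identify $\overline{\gtN_x}$ with $H^0(X,\Oo\,\gtN_x)$ and then test membership via the stalk at $x$. The only difference is cosmetic: the paper also computes $(\Oo\,\gtN_x)_y=\Oo_y$ for $y\neq x$ to describe the sheaf completely, whereas you observe (correctly) that the single inclusion $(\Oo\,\gtN_x)_x\subset\gta$ already suffices for closedness.
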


\begin{proof} The fact that $\gtN_x$ is an ideal is evident. To prove that this ideal is closed, observe that  $(\gtN_x\Oo)_y$ is equal to $\Oo_y$ if $y\neq x$
 and it is equal to  $\gtN_x$ if $y=x$, because in a Stein space, given two different points $x$ and $y$, there is  always a functions taking  arbirtrary values in $x$ and $y$. So $H^0(X, \gtN_x\Oo)=\{ f \in A |\  \forall y\  f_y \in ( \gtN\Oo_x)_y \}= \gtN_x$. 
\end{proof}

Now we can prove the main result of this section.
\begin{thm}\label{decprimaria}
Any closed ideal $\gta$ in a Stein algebra $A$ has a locally finite normal primary decomposition into closed primary ideals.
\end{thm}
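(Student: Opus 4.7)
The plan is to reduce the global problem to the Noetherian primary decomposition in each stalk $\Oo_x$ and then globalise using Cartan's Theorems. The starting point is that since $\gta$ is closed, Theorems \ref{Moduli Chiusi Stein} and \ref{Closure} give $\gta = H^0(X, \gta\Oo)$ with $\gta\Oo$ a coherent sheaf of ideals, and at each $x$ the Noetherian ring $\Oo_x$ yields a finite primary decomposition of $\gta\Oo_x$ whose associated primes correspond to irreducible analytic germs $W_{x,j}$ at $x$. I would first assemble these germs into a locally finite family $\{W_\alpha\}_{\alpha \in I}$ of irreducible analytic subsets of $X$: the minimal primes contribute the irreducible components of $\ceros(\gta)$ (locally finite by Theorem \ref{decomposition}), while embedded primes contribute irreducible analytic subsets of strictly smaller dimension, to which Theorem \ref{decomposition} applies iteratively.

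Next, for each $W_\alpha$ I would pick a generic point $x_\alpha \in W_\alpha$ lying on no other $W_\beta$ (possible by local finiteness), let $\gtq_\alpha \subset \Oo_{x_\alpha}$ denote the primary component of $\gta\Oo_{x_\alpha}$ whose radical is $I(W_\alpha)\Oo_{x_\alpha}$, and define
$$\gtQ_\alpha = \{f \in A : f_{x_\alpha} \in \gtq_\alpha\}.$$
This is closed by Lemma \ref{chiuso} and primary: if $fg \in \gtQ_\alpha$ with $f \notin \gtQ_\alpha$, then $f_{x_\alpha}g_{x_\alpha} \in \gtq_\alpha$ while $f_{x_\alpha} \notin \gtq_\alpha$, so $g_{x_\alpha}^n \in \gtq_\alpha$ for some $n$, giving $g^n \in \gtQ_\alpha$. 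By construction the family $\{\ceros(\gtQ_\alpha)\}$ is essentially $\{W_\alpha\}$ up to the immaterial contribution of points outside a neighbourhood of $x_\alpha$, and is therefore locally finite.

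To verify $\gta = \bigcap_\alpha \gtQ_\alpha$, the inclusion $\gta \subseteq \bigcap_\alpha \gtQ_\alpha$ is clear. For the reverse, take $f$ in the intersection and fix $y \in X$; I need $f_y \in \gta\Oo_y$. Only finitely many $W_\alpha$ pass through $y$ and their defining ideals $I(W_\alpha)\Oo_y$ are exactly the associated primes of $\gta\Oo_y$. The condition $f_{x_\alpha} \in \gtq_\alpha$ at the generic point should propagate, via coherence of $\gta\Oo$ and the sheaf-theoretic version of primary decomposition, to the $W_\alpha$-primary component of $\gta\Oo_y$. Intersecting over all $\alpha$ through $y$ yields $f_y \in \gta\Oo_y$; applying this at every $y$ and invoking $\gta = H^0(X,\gta\Oo)$ gives $f \in \gta$. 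The local-to-global intersection step is justified by Proposition \ref{locale0} applied to the locally finite family $\{\gtQ_\alpha\}$. Finally, Lemma \ref{normal} converts this decomposition into a normal one.

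The main obstacle is the propagation step: the primary component $\gtq_\alpha$ is defined only at the single generic point $x_\alpha$, and transferring membership in $\gtq_\alpha$ to the correct primary component of $\gta\Oo_y$ at every other $y \in W_\alpha$ requires a coherent sheaf-theoretic version of primary decomposition for $\gta\Oo$ along $W_\alpha$. Establishing this—either by constructing for each $\alpha$ a coherent ideal subsheaf $\widetilde{\gtq}_\alpha \subset \Oo$ with $\gtQ_\alpha = H^0(X, \widetilde{\gtq}_\alpha)$ and $\gta\Oo = \bigcap_\alpha \widetilde{\gtq}_\alpha$ stalkwise, or by a direct argument combining Cartan's Theorem A with the primary-avoidance trick in Noetherian local algebra—is the technical heart of the proof.
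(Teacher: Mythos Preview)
Your approach has two genuine gaps. The first, which you don't flag, is the assembly step: you assert that the local associated primes of $\gta\Oo_x$ organise into a locally finite family of global irreducible analytic subsets $\{W_\alpha\}$. For minimal primes this works (they give the irreducible components of $\ceros(\gta)$), but embedded primes need not globalise this way---an embedded prime visible at $x$ may disappear, split, or merge as you move to nearby points, and ``Theorem \ref{decomposition} applied iteratively'' does not apply since there is no evident analytic set to decompose at each stage. The second gap is the one you identify: even granting the $W_\alpha$, knowing $f_{x_\alpha}\in\gtq_\alpha$ at a single generic point does not place $f_y$ in the corresponding primary component of $\gta\Oo_y$ at another $y\in W_\alpha$; for embedded primes that component is not even canonically defined. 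Proposition \ref{locale0} only gives $(\bigcap_\alpha\gtQ_\alpha)\Oo_y=\bigcap_\alpha(\gtQ_\alpha\Oo_y)$, and there is no reason for $\gtQ_\alpha\Oo_y$ to coincide with any primary component of $\gta\Oo_y$.

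The paper avoids both problems by never globalising individual primary components. It writes $\gta=\bigcap_x\gtN_x$ with $\gtN_x=\{f:f_x\in\gta\Oo_x\}$, uses coherence of $\gta\Oo$ to reduce to a countable family $\{\gtN_{x_i}\}$ with the $U_{x_i}$ covering $X$, and splits each $\gtN_{x_i}$ by the Noetherian primary decomposition of $\gta\Oo_{x_i}$, obtaining a (highly redundant, not yet locally finite) decomposition $\gta=\bigcap_{i,j}\gtq_{i_j}$. Local finiteness is then achieved by a combinatorial refinement: discard $\gtq_{i_p}$ whenever $\ceros(\gtq_{i_p})$ meets some earlier $U_j$, and prove by induction on $i$---using Proposition \ref{principioidentita} to transfer membership in a primary ideal from one point of its zero set to the whole ideal---that the surviving family still intersects to $\gta$. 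Lemma \ref{normal} then yields normality. The crucial point is that $\gta=\bigcap_i\gtN_{x_i}$ follows directly from $\gta=H^0(X,\gta\Oo)$ and coherence, requiring no compatibility whatsoever between the primary decompositions at different points.
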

\begin{proof}
Consider the coherent sheaf $\Oo\gta$ generated by the closed ideal $\gta$. For any $x\in X$ define $\gtN_x = \{f\in A| f_x\in  \gta\Oo_x\}$ and $\mathcal M_x$ the sheaf generated by $\gtN_x$. By Lemma \ref{chiuso} $\gtN_x$ is closed and by definition
$\gta = \bigcap_{x\in X}\gtN_x$. Since $\gta\Oo$ is coherent, for any $x\in X$ there is a neighbourhood $U_x$ such that $\gtN_x \subset \gtN_y$ for all $y\in U_x$. Indeed $f_x\in \gta\Oo_x$ implies $f_y\in \gta\Oo_y$ for all $y\in U_x$.   The topology of $X$ has a countable basis, so we can find a sequence $\{x_i\}_{i\in \N}\subset X$ such that $X= \bigcup_i U_{x_i}, \gtN_{x_i} = \gtN_x \forall x \in U_{x_i}$ and $\gta = \bigcap_i \gtN_{x_i}$. Hence $\gta\Oo_x  =  \gtN_{x_i}\Oo_x \forall x\in U_{x_i}$.

Now  $\Oo_{x_i}$   is  a  noetherian  ring,   hence  $\gta\Oo_{x_i}  =
\gtQ_{i_1}\cap  \dots \cap  \gtQ_{i_{r_i}}$,  where $\gtQ_{i_j}$  is a  primary
ideal  in  $\Oo_{x_i}$.  Define  $\gtq_{i_j} =  \{f\in  A  |f_{x_i}\in
\gtQ_{i_j}\}$. Then $\gtq_{i_j}$  is closed by the same  argument as before
and  it is  primary. Indeed  if $f  g \in  \gtq_{i_j}$ and  $f^k\notin
\gtq_{i_j}$ for each  $k$,  then  $f_{x_i}  g_{x_i}  \in  \gtQ_{i_j}$  but
$f_{x_i}^k\notin \gtQ_{i_j}$ for each $k$. So  $g_{x_i} \in \gtQ_{i_j} $ which
implies  $g\in  \gtq_{i_j}$.   Hence  we  get  $\displaystyle \gtN_{x_i}  =\bigcap_1^{r_i}\gtq_{i_j}$ and so
$$\gta = \bigcap_i \left(\bigcap_{j=1}^{r_i}\gtq_{i_j}\right)$$
is a primary decomposition of $\gta$.
But it could be not locally finite.

Put $U_i = U_{x_i}$ and $R_i = \{p: 1\leq p\leq r_i \mbox{ \rm and } \ceros(\gtq_{i_p})\cap U_j = \varnothing \, \forall j <i\}$. Then the family $\{\gtq_{i_p}, i\in \N, p\in R_i\}$ is locally finite and its intersection $\gtb$ contains $\gta$. We want to show $\gta = \gtb$. Since both are closed it is enough to prove $\gta\Oo_x  = \gtb\Oo_x$ for all $x\in X$. We proceed by induction on the smallest $i$ such that $x\in U_i$.

{\em Initial step}. $i=1$ and $R_1 = \{1,\ldots, r_1\}$. Hence 
$$ \gtb\Oo_x =  \left(\bigcap_{i=1}^\infty \left(\bigcap_{p\in R_i} \gtq_{i_p}\right)\right)\subset\bigcap_{p\in R_1} \gtq_{1_p}\Oo_x  = \gtN_{x_1}\Oo_x = \gta\Oo_x.$$

{\em Inductive step}. We assume $$\gtb\Oo_y \subset  \bigcap_{j=1}^{i-1} \left(\bigcap_{p\in R_j} \gtq_{j_p}\Oo_y\right)  \subset \gta\Oo_y $$

for all $y\in U_1 \cup U_2\cup \dots \cup U_{i-1}$.       

Take any index $\sigma \in S_i = \{1, \ldots, r_i\} \setminus R_i$. 
Then there is a point  $\displaystyle a\in \ceros(\gtq_{i_\sigma})\cap \left(\bigcup_{j=1}^{i-1} U_j\right)$. For any $\displaystyle f\in \bigcap_{j=1}^{i-1} \left(\bigcap_{p\in R_j} \gtq_{j_p}\right)$ we have $f_a \in \gta\Oo_a$ by the induction hypothesis. Thus

$\gta\Oo_a  \subset  \gtq_{i_\sigma}\Oo_a$ So $f_a\in  \gtq_{i_\sigma} $ and $f\in \gtq_{i_\sigma}\Oo_a$. This implies 

$$\bigcap_{j=1}^{i-1} \left(\bigcap_{p\in R_j} \gtq_{j_p}\right) \subset \bigcap_{\sigma\in S_i}\gtq_{i_\sigma}.$$
Hence
$$\bigcap_{j=1}^i \bigcap_{p\in R_j} \gtq_{j_p} \subset  \left(\bigcap_{\sigma\in S_i}\gtq_{i_\sigma}\right) \bigcap \left( \bigcap_{p\in R_i}\gtq_{i_p}\right) = \bigcap_{p=1}^{r_i} \gtq_{i_p} \Oo_x.$$
  For $x\in U_i$ one gets 

$$\gtb\Oo_x =  \left(\bigcap_{j=1}^i \left(\bigcap_{p\in R_j} \gtq_{j_p}\right)\right) \Oo_x \subset  \left( \bigcap_{p=1}^{r_i} \gtq_{i_p}\Oo_x\right) =  \gtN_{x_i}\Oo_x =\gta.$$

This concludes the proof that $\gta = \gtb$. Hence there is a subfamily $J= \bigcup_j R_j\subset I$ such that $\gta = \bigcap_{j\in J}\gtq_j$ is a locally finite primary decomposition. 
\end{proof}

\subsection{Nullstellensatz for closed ideals.}

Using the previous fact and an application of Baire's theorem to the Fr\'echet space $\Oo(X)$ one gets the following result.

\begin{thm}{\rm  (Closed general case)}\label{fo}
  Let $\gta\subset\Oo(X)$  a closed ideal and  let $\gta=\bigcap_{i\in
    I}\gtq_i$ be  a normal primary  decomposition of $\gta$.  For each
  $i\in I$ define
\begin{align*}
\h(\gtq_i,\gta)=&\inf\left\{k\in\N:\ f^k\in\gtq_i,\ \forall f\in Rad(\gta)\right\},\\
\h(\gtq_i)=&\inf\{k\in\N:\ f^k\in\gtq_i,\ \forall f\in\sqrt{\gtq_i}\},\\ 
\h(\gta)=&\inf\left\{k\in\N:\ f^k\in\gta,\ \forall f\in Rad(\gta)\right\}. 
\end{align*}
Then we have
\begin{itemize}
\item[(i)]     $\h(\gta)=\sup_{i\in     I}\{\h(\gtq_i,\gta)\}$     and
  $\sqrt{\gta}$ is closed if and only if $\h(\gta)<+\infty$.
\item[(ii)]  If $\gta$  does  not have  immersed primary  components,
  $\h(\gta)=\sup_{i\in I}\{\h(\gtq_i)\}$.
\item[(iii)] $I(\ceros(\gta))=\sqrt{\gta}$ if and only if $\h(\gta)<+\infty$
  and in such case $(\sqrt{\gta})^{\h(\gta)}\subset\gta$.
\end{itemize}
\end{thm}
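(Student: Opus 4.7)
The plan is to derive the three assertions from the locally finite normal primary decomposition $\gta=\bigcap_{i\in I}\gtq_i$ provided by Theorem \ref{decprimaria}, combined with a Baire category argument for the closedness equivalence in part (i) and the primary-ideal properties in Corollary \ref{primary2}.

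For the equality in (i), the identity $f\in\gta\iff f\in\gtq_i$ for every $i\in I$ shows that an exponent $k$ satisfies $f^k\in\gta$ for every $f\in Rad(\gta)$ precisely when it satisfies $f^k\in\gtq_i$ for every $f\in Rad(\gta)$ and every $i$ simultaneously; taking $\inf$ in $k$ and $\sup$ in $i$ yields $\h(\gta)=\sup_i\h(\gtq_i,\gta)$. The implication ``$\h(\gta)<\infty\Rightarrow\sqrt{\gta}$ closed'' is immediate: $\h(\gta)=N<\infty$ gives $Rad(\gta)^N\subseteq\gta\subseteq\sqrt{\gta}\subseteq Rad(\gta)$, so $\sqrt{\gta}=Rad(\gta)=\bigcap_{x\in\ceros(\gta)}M_x$ is closed as an intersection of closed maximal ideals. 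The converse is where Baire enters: assuming $\sqrt{\gta}$ is closed in the Fr\'echet algebra $A$, and hence Fr\'echet itself, I would write $\sqrt{\gta}=\bigcup_{k\geq1}F_k$ with $F_k=\{f\in\sqrt{\gta}:\ f^k\in\gta\}$. Each $F_k$ is closed in $\sqrt{\gta}$ (preimage of the closed ideal $\gta$ under the continuous map $f\mapsto f^k$), and the binomial expansion gives $F_k+F_k\subseteq F_{2k-1}$. Baire's theorem furnishes some $F_{k_0}$ with nonempty interior; translating an interior point to $0$ by subtraction and rescaling by arbitrary scalars then shows $\sqrt{\gta}=F_{2k_0-1}$, i.e.\ $\sqrt{\gta}^{\,2k_0-1}\subseteq\gta$.

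For (ii), let $\gtq_i$ be isolated. By definition $\gtp_m\not\subseteq\gtp_i$ for every $m\neq i$, so Proposition \ref{primochiuso} applied to the locally finite family $\{\gtp_m\}_{m\neq i}$ yields an element $g_0\in\bigcap_{m\neq i}\gtp_m\setminus\gtp_i$. For any $f\in\gtp_i=\sqrt{\gtq_i}$, the product $fg_0$ lies in every $\gtp_j$, hence in $Rad(\gta)$; therefore $(fg_0)^N\in\gtq_i$ when $N=\h(\gtq_i,\gta)$. Since $g_0\notin\sqrt{\gtq_i}$ and $\gtq_i$ is primary, $g_0^N$ is not a zero divisor modulo $\gtq_i$, so $f^N\in\gtq_i$, proving $\h(\gtq_i)\leq\h(\gtq_i,\gta)$; the reverse inequality follows from $Rad(\gta)\subseteq\gtp_i$. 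Combined with part (i) this gives $\h(\gta)=\sup_i\h(\gtq_i)$. Part (iii) then follows formally: $I(\ceros(\gta))=Rad(\gta)$ by definition and $Rad(\gta)=\sqrt{\gta}$ is exactly what $\h(\gta)<\infty$ provides, while the bound $(\sqrt{\gta})^{\h(\gta)}\subseteq\gta$ is the very definition of $\h(\gta)$ once $Rad(\gta)=\sqrt{\gta}$.

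The main obstacle is the ``$\Rightarrow$'' half of the equivalence in (i). The Baire argument only produces $\sqrt{\gta}^N\subseteq\gta$, bounding powers uniformly on $\sqrt{\gta}$ but a priori not on the possibly larger ideal $Rad(\gta)$, whereas $\h(\gta)<\infty$ requires the uniform bound on $Rad(\gta)$. I need to bridge this gap by showing that closedness of $\sqrt{\gta}$ alone forces $\sqrt{\gta}=Rad(\gta)$. The route I would try is to apply Theorem \ref{decprimaria} to the closed radical ideal $\sqrt{\gta}$, obtaining a locally finite normal primary decomposition $\sqrt{\gta}=\bigcap_j\gtQ_j$, then use radicality together with Proposition \ref{locale0} to pass to stalks and invoke R\"uckert's Nullstellensatz to conclude that each $\gtQ_j$ is in fact prime; comparing the resulting expression $\sqrt{\gta}=\bigcap_j\gtp_j'$ with $Rad(\gta)=\bigcap_i\gtp_i$ via their common zero sets $\ceros(\sqrt{\gta})=\ceros(\gta)$ should then force the two families of closed primes to coincide.
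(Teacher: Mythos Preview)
Your proposal is correct; parts (ii) and (iii) are essentially the paper's arguments. For (i), both you and the paper ultimately rely on the identity $\overline{\sqrt{\gta}}=\bigcap_i\sqrt{\gtq_i}=Rad(\gta)$, the complex analogue of Proposition~\ref{deb}(3), which follows by a direct stalk computation from the locally finite decomposition of $\gta$ itself. Your gap closes immediately from this: $\sqrt{\gta}$ closed gives $\sqrt{\gta}=\overline{\sqrt{\gta}}=Rad(\gta)$. Your proposed detour through a fresh primary decomposition of $\sqrt{\gta}$ also works but is longer: proving its primary components prime requires $\sqrt{\gta}\Oo_{X,x}$ to be radical, and establishing that is exactly the same local computation $\sqrt{\gta}\Oo_{X,x}=\bigcap_{i\in J_x}\gtp_i\Oo_{X,x}$ that yields the identity directly.

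Where you genuinely diverge from the paper is the Baire step. The paper runs Baire inside the \emph{always} closed ideal $Rad(\gta)$ and uses a Vandermonde determinant to show that each $R_{\gtq_i}=\{f\in Rad(\gta):\ f^{\h(\gtq_i,\gta)-1}\in\gtq_i\}$ is nowhere dense; Baire then produces a single generic $f_0\in Rad(\gta)$ avoiding every $R_{\gtq_i}$, and when $\h(\gta)=\infty$ this $f_0$ lies in $\overline{\sqrt{\gta}}\setminus\sqrt{\gta}$, forcing non-closedness. Your ``countable closed cover'' Baire on $\sqrt{\gta}=\bigcup_k F_k$, together with $F_k+F_k\subseteq F_{2k-1}$ and homogeneity to spread an interior point, is more elementary and proves the implication directly once closedness is assumed. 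The paper's route buys a single witness $f_0$ valid simultaneously for all $i$ without assuming closedness; yours trades that uniformity for a simpler nowhere-density argument. Both are sound.
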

\begin{proof}We can assume without loss of generality that $\gtq_i \neq A$ for
  all $i$. Put $\rho = \sup \h(\gtq_i, \gta)$ and assume $\rho <\infty$. Hence
  if $f\in Rad(\gta)$ then $f^\rho \in \gta$ so $Rad(\gta) \subset
  \sqrt{\gta}\subset Rad(\gta)$ and we got the {\em if} part of (i). In order to get the {\em only if} part we
  have to prove that $\rho$  is minimal in the following sense.
\begin{quotation}
For all $\sigma < \rho$ there is $f_0\in Rad(\gta) = \bigcap_i Rad(\gtq_i) =
  \bigcap_i \sqrt{\gtq_i}$ such that $f_0^\sigma \notin \gta$.
\end{quotation}       
Define $\rho_{\gtq_i} =$ the smallest integer $\rho_i$ such that for all $f\in
Rad(\gta)$ one has $f^{\rho_i} \in \gtq_i$.

We want to prove $\rho_{\gtq_i}= \h(\gtq_i,\gta)$.

Trivially $\rho_{\gtq_i}\leq \h(\gtq_i,\gta)$, because $Rad(\gta) \subset
Rad(\gtq_i) = \sqrt{\gtq_i}$.

Define $M_{\gtq_i} =\{j\in I $ such that $\ceros( \gtq_j )\subset \ceros(\gtq_i)\}$.   
For all $\tau < \h(\gtq_i,\gta)$ by definition there is $\displaystyle f\in \bigcap_{j\in M_{\gtq_i}} \sqrt{\gtq_j}$ such that $f^\tau \notin \gtq_i$. We can find
  $\displaystyle g\in \bigcap_{j\notin M_{\gtq_i}}\sqrt{\gtq_j}$ but $g\notin
  \sqrt{\gtq_i}$. Indeed $A \setminus \sqrt{\gtq_i}$ is an open dense subset
  that does intersect the closed set $\displaystyle \bigcap_{j\notin
    M_{\gtq_i}}\sqrt{\gtq_j}$.
 The function $\displaystyle gf\in \bigcap_{i\in I}Rad(\gtq_i) = Rad(\gta)$, while $(gf)^\tau
\notin \gtq_i$ because $f^\tau\notin \gtq_i$ and no power of $g$ belongs to
$\gtq_i$. This implies $\tau < \rho_{\gtq_i}$. Since $\tau$ was any positive
number smaller than $\h(\gtq_i,\gta)$,  we get $\h(\gtq_i,\gta)\leq \rho_{\gtq_i}
\leq \h(\gtq_i,\gta)$.
\smallskip

Next define $R_{\gtq_i}= \{f\in Rad(\gta): f^{\rho_{\gtq_i-1}}\in \gtq_i\}$.
We claim that $R_{\gtq_i}$ is closed with empty interior in the closed
set $Rad(\gta)\subset A$. 
\smallskip

Take $f\in R_{\gtq_i}$ and $g\in Rad(\gta) \setminus R_{\gtq_i}$. Consider the
complex line $f+\lambda g$, for $\lambda\in \C$. This line cuts $R_{\gtq_i}$ in at
most $n=\rho_{\gtq_i} -1$ points. Indeed, if there were $n+1$ points we would
get $n+1$ distinct complex numbers $\lambda_0, \dots \lambda_n$ such that
$(f+\lambda_h g)^n \in \gtq_i$. Write this as a linear system.

$$\sum_{k=0}^n \binom{n}{k} \lambda_h^k f^{n-k}g^k =F_h\in \gtq_i$$

for each $h=0,\dots,n$

Write $y_k =  f^{n-k}g^k$ . Then the matrix of the system above has
coefficients $\displaystyle \binom{n}{k}\lambda_h^k$ and determinant $\displaystyle \prod_{k=0}^n
\binom{n}{k}\prod_{h<l} (\lambda_h -\lambda_l)$. So the system has a
unique solution. Note that $y_n = g^n \in \gtq_i$, because it is a linear
combination with complex coefficients of $F_0, \dots, F_n$ and all belong to
$\gtq_i$. This is a contradiction, because $g\notin \sqrt{\gtq_i}$.

Thus in any open neighbourhood of $f\in R_{\gtq_i}$
there are elements of the complement and this proves that $R_{\gtq_i}$ has
empty interior, so $Rad(\gta) \setminus R_{\gtq_i}$ is open and dense in $Rad(\gta)$.

\smallskip
We can apply Baire's theorem, so $\displaystyle Rad(\gta)\setminus \bigcap_{i\in I} R_{\gtq_i}= \bigcap_{i\in I}
(Rad(\gta)\setminus R_{\gtq_i})$ is dense in $Rad(\gta)$. Take an element
$f_0$ in $\displaystyle Rad(\gta)\setminus \bigcap_{i\in I} R_{\gtq_i}$. If $\sigma < \sup_i \h(\gtq_i,\gta)$, there is $j\in I$ such that
$\sigma < \rho_{\gtq_j}$. Since $f_0\notin  R_{\gtq_j}$, one gets
$f_0^\sigma\notin \gtq_j$, so $f_0^\sigma\notin \cap_i \gtq_i = \gta$ as required.  \end{proof}

\begin{remark}Let $\gta=\bigcap \gtq_i$ be a primary normal decomposition of a closed ideal $\gta$. If the sequence $\{\h(\gtq_i,\gta)\}$ is not bounded, the ideal $\sqrt \gta$
is not closed. Indeed, its closure is precisely $\ideal(\ceros(\gta))=\bigcap
\sqrt{\gtq_i} $. Recall that  $\sqrt{\gtq}= Rad(\gtq)$ is 
closed when $\gtq$ is primary and closed. Since the sequence $\{\h(\gtq_i,\gta)\}$ is not bounded,   by  Theorem \ref{fo}  $\overline{\sqrt \gta}$ is   strictly greather than $\sqrt \gta$.  We  conclude 
$$\ideal(\ceros(\gta)= \overline{\sqrt{\overline{\gta}}}.$$

In particular  $\ideal(\ceros(\gta)) =\sqrt \gta$ if and only if $\sqrt{\gta}$ is closed and $\ideal(\ceros(\gta)) =\gta$, that is, ${\gta}$ is radical and
  closed.

\end{remark}

\section{The  real case.}

Let  $(X,\Oo_X)$  be  a  $C$-analytic set in $\R^n$ endowed  with  its 
well reduced structure  and  let $\Oo(X)$ be its  algebra of
global analytic functions. 
The ring  $\Oo(X)=H^0(X,\Oo_X)=\Oo(\R^n)/I(X)$ can
be understood as a subset of the Stein algebra $\Oo(\widetilde{X})$ of
its \em complexification \em  $\widetilde{X}$ (understood as a complex
analytic set germ at $X$). We stress that $X$ needs not to be coherent
as an  analytic set,  but it  is by definition the  support of  a coherent  sheaf of
$\Oo_{\R^n}$-modules. 
We  define the saturation of an ideal $\gta\subset \Oo(X)$ 
as
$\tilde{\gta}=H^0(X,\gta\Oo_X)=\{f\in\Oo(X):\    f_x\in\gta\Oo_{X,x}\
\forall\,x\in X\}$. 
Of course,  $\gta\subset\tilde{\gta}\subset I(\ceros(\gta))$. Note that finitely generated ideals are saturated by the same argument as in Remarks \ref{idealifiniti}.
Note that, unlike in the complex case, the saturation is not the closure of $\gta$ with respect to the topology induced by the Fr\'echet topology on $\Oo(\widetilde {X})$. Nevertheless, \index{Saturated ideal} saturated ideals admit a {\em normal} primary decomposition similar to the one  in the complex case. Indeed, 
by Theorem \ref{Closure}, an ideal in a complex Stein algebra is closed if and only if it is saturated and it is this second fact that allows primary decomposition.

The previous definition of saturation is similar to the one proposed by Whitney for ideals in the ring of smooth functions over a real smooth manifold.

\begin{prop}\label{deb}Let  $(X,\Oo_X)$  be  a  $C$-analytic set in $\R^n$.
\begin{enumerate}
\item Proposition \ref{principioidentita} and Corollary \ref{primary1} hold true  replacing {\em closed} by {\em saturated}.
\item Let $\gta \subset \Oo(X)$ be a saturated ideal. Then, $\gta$ admits a normal primary decomposition $\displaystyle \gta= \bigcap_{i\in I} \gtq_i$ and all primary ideals $\gtq_i$ are saturated. Moreover, the prime ideals $\gtp_i = \sqrt{\gtq_i}$ and the primary isolated components are uniquely determined by $\gta$ and do not depend on the normal primary decomposition of $\gta$.
\item If $\displaystyle \gta= \bigcap_{i\in I} \gtq_i$ is a normal primary decomposition of a saturated ideal $\gta \subset \Oo(X)$, then $\widetilde{\sqrt{\gta}} = \bigcap_i \sqrt{\gtq_i}$. 
\end{enumerate}
\end{prop}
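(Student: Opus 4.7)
The overall strategy is to transplant the arguments from the complex Stein setting of Section~3.A to the real C-analytic setting, using \emph{saturation} as the replacement for topological closure. The crucial point is that, for a saturated ideal $\gta\subset\Oo(X)$, one has by definition $\gta=H^0(X,\Oo_X\gta)$, which plays exactly the role that Theorem \ref{Closure} played in the complex case; meanwhile Theorems A and B remain available via Theorem \ref{ABreali}.

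For (1), the proof of Proposition \ref{principioidentita} can be repeated verbatim. One forms the coherent sheaf $\Gg=(\Oo_X\gtq:f)$, observes that $\Gg_{\ol x}=\Oo_{X,\ol x}$ under hypothesis (2), and invokes Theorem A to produce global sections $g_1,\dots,g_k\in H^0(X,\Gg)$ with $g_i(\ol x)\neq 0$ for some $i$ and $g_if\in H^0(X,\Oo_X\gtq)$. In the complex case the last membership was converted into $g_if\in\gtq$ via closedness; in the real case the saturation assumption gives the same conclusion directly, and primality of $\gtq$ forces $f\in\gtq$. The three parts of the analog of Corollary \ref{primary1} then follow: (i) is immediate; for (ii), $\ceros(\gtq)=\varnothing$ combined with saturation gives $\Oo_X\gtq=\Oo_X$, hence $1\in\gtq$, while conversely, for $x_0\in\ceros(\gtq)$, the description $\gtq=\{f:f_{x_0}\in\gtq\,\Oo_{X,x_0}\}$ exhibits $\gtq$ as the pullback of a local ideal, hence saturated; for (iii), a disconnection $\ceros(\gtq)=V_1\sqcup V_2$ of disjoint closed C-analytic subsets would, via Theorem B, produce $F_1,F_2\in\Oo(X)$ with $F_i$ vanishing on $V_i$, $F_1F_2\in\gtq$, and neither $F_i$ in $\sqrt\gtq$, contradicting primality.

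For (2), the entire proof of Theorem \ref{decprimaria} carries over unchanged. Starting from the coherent sheaf $\Oo_X\gta$, at each point $x$ one uses noetherianity of $\Oo_{X,x}$ to write $\gta\,\Oo_{X,x}=\gtQ_{x,1}\cap\cdots\cap\gtQ_{x,r_x}$ and defines $\gtq_{x,j}=\{f\in\Oo(X):f_x\in\gtQ_{x,j}\}$; these ideals are saturated by the argument of Lemma \ref{chiuso} (which uses no Fr\'echet structure) and are primary. The locally finite pruning via a countable exhaustion and the inductive argument over the covering transfer identically, as do Lemma \ref{normal} and the existence of a normal primary decomposition. For the uniqueness of the associated primes $\gtp_i$, any normal decomposition restricts at a generic point $x_0\in\ceros(\gtp_i)$ to a primary decomposition in the noetherian local ring $\Oo_{X,x_0}$, whose associated primes are unique; since each $\gtp_i$ is recovered from $\gtp_i\,\Oo_{X,x_0}$ by the analog of Proposition \ref{principioidentita} applied to $\gtp_i$ (a prime ideal being primary for itself), the primes match globally. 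Uniqueness of the isolated primary components follows in the same spirit by combining the classical noetherian statement with the analog of Proposition \ref{locale0} for saturated ideals.

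For (3), the inclusion $\widetilde{\sqrt\gta}\subset\bigcap_i\sqrt{\gtq_i}$ is the easy direction: given $f\in\widetilde{\sqrt\gta}$ and a fixed $i$, at any $\ol x\in\ceros(\gtq_i)$ one has $f_{\ol x}\in\sqrt\gta\cdot\Oo_{X,\ol x}\subset\sqrt{\gtq_i\,\Oo_{X,\ol x}}$, so $f_{\ol x}^{\,n}\in\gtq_i\,\Oo_{X,\ol x}$ for some $n$, and the \emph{principio di identit\`a} of part (1) applied to the saturated primary $\gtq_i$ promotes this to $f^n\in\gtq_i$, whence $f\in\sqrt{\gtq_i}$. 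The converse $(\supset)$ is the main obstacle: given $f\in\bigcap_i\sqrt{\gtq_i}$ the exponents $n_i$ with $f^{n_i}\in\gtq_i$ need not be bounded in $i$, yet one must produce $f_x\in\sqrt\gta\cdot\Oo_{X,x}$ at every $x$. Local finiteness of the family is decisive here: for each $x$ the set $I_x=\{i:x\in\ceros(\gtq_i)\}$ is finite, so with $N_x=\max_{i\in I_x}n_i$ one has $f^{N_x}\in\bigcap_{i\in I_x}\gtq_i$, and the saturated analog of Proposition \ref{locale0} gives $f_x^{N_x}\in\gta\,\Oo_{X,x}$, i.e. $f_x\in\sqrt{\gta\,\Oo_{X,x}}$. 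The technical heart of the argument is the remaining equality $\sqrt\gta\cdot\Oo_{X,x}=\sqrt{\gta\,\Oo_{X,x}}$, to be established by invoking coherence of the radical sheaf $\sqrt{\gta\Oo_X}$ together with Theorem A to realize a finite system of generators of the local radical as germs at $x$ of elements of $\sqrt\gta$; this step is where I expect the proof to require the most care.
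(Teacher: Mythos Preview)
Your treatment of parts (1) and (2) is correct and is exactly what the paper does: it simply says these ``can be proved in the same way as was done for Stein algebras'', and your outline fills in precisely those details. The $(\subset)$ direction of (3) is also fine.

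The gap is in the $(\supset)$ direction of (3). You correctly arrive at $f^{N_x}\in\bigcap_{i\in J_x}\gtq_i$ and hence $f_x\in\sqrt{\gta\,\Oo_{X,x}}$, and then reduce to the equality $\sqrt{\gta}\cdot\Oo_{X,x}=\sqrt{\gta\,\Oo_{X,x}}$. Your proposed justification---coherence of $\sqrt{\gta\Oo_X}$ plus Theorem~A---does not close this. Theorem~A only tells you that the stalk $\sqrt{\gta\,\Oo_{X,x}}$ is generated by global sections of $\sqrt{\gta\Oo_X}$, i.e.\ by functions $g$ with $g_y^{n_y}\in\gta\,\Oo_{X,y}$ for every $y$. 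Since the exponents $n_y$ are in general unbounded, such a $g$ need \emph{not} lie in $\sqrt{\gta}$ (the radical is typically not saturated; compare the discussion after Theorem~\ref{fo}). So Theorem~A produces generators in $H^0(X,\sqrt{\gta\Oo_X})$, not in $\sqrt{\gta}$, and passing from the former to the latter is exactly the difficulty you are trying to resolve---the argument is circular.

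The paper bypasses this entirely with a short multiplicative trick that you can graft onto your argument. Having $f^{N_x}\in\bigcap_{i\in J_x}\gtq_i$, choose $h\in\bigcap_{i\notin J_x}\gtq_i$ with $h(x)\neq0$; such an $h$ exists because $x\notin\bigcup_{i\notin J_x}\ceros(\gtq_i)$ and the intersection $\bigcap_{i\notin J_x}\gtq_i$ is saturated, so Theorem~A applied to its generating sheaf produces a global element not vanishing at $x$. Then $f^{N_x}h$ lies in every $\gtq_i$ (for $i\in J_x$ because $f^{N_x}\in\gtq_i$, for $i\notin J_x$ because $h\in\gtq_i$), hence $f^{N_x}h\in\gta$, so $(fh)^{N_x}\in\gta$ and $fh\in\sqrt{\gta}$. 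Since $h_x$ is a unit, $f_x=(fh)_x\,h_x^{-1}\in\sqrt{\gta}\,\Oo_{X,x}$, which is what you need. No statement about coherence of the radical sheaf is required.
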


\begin{proof}
(1) and (2) can be proved in the same way as was done for Stein algebras.
To prove (3) we apply Corollary \ref{primary1} (ii) to prove that each $\sqrt{\gtq_i}$ is a prime saturated ideal. On the other hand, for each $x\in X$ there is a finite set $J_x \subset I$ such that $ \gtq_i\Oo_{X,x}= \Oo_{X,x}$ for each $\forall i\notin J_x$ ( so $\sqrt{\gtq_i}\Oo_{X,x}= \Oo_{X,x}$).

Let us first show 
$$\left(\bigcap_{i\in I}\sqrt{\gtq_i}\right)\Oo_{X,x} = \sqrt{\gta}\Oo_{X,x}.$$

Since $J_x$ is finite, we get

$$\sqrt{\gta} \subset  \left(\bigcap_{i\in I}\sqrt{\gtq_i}\right)\Oo_{X,x} \subset \bigcap_{i\in I}( \sqrt{\gtq_i}\Oo_{X,x}) = \bigcap_{i\in J_x}( \sqrt{\gtq_i}\Oo_{X,x}).$$

The last term is equal to 

$$\bigcap_{i\in J_x}( \sqrt{\gtq_i}\Oo_{X,x})= \sqrt{\bigcap_{i\in J_x}\gtq_i}\Oo_{X,x}= \gtb \Oo_{X,x}.$$

Thus it is enough to prove $\gtb \Oo_{X,x} \subset \sqrt{\gta}\Oo_{X,x}$.
One has $\bigcup_{i\in I\setminus J_x} \ceros(\gtq_i) = \ceros (\bigcap_{i\in I\setminus J_x}\gtq_i)$, hence there is $h\in \bigcap_{i\in I\setminus J_x}\gtq_i$ such that $h(x)\neq 0$. If $f_x \in \gtb \Oo_{X,x}$, we get 

$$f_xh_x \in \left(\bigcap_{i\in I}\gtq_i\right)\Oo_{X,x} = \sqrt{\gta}\Oo_{X,x}.$$

As $h_x$ is a unit  $f_x \in \sqrt{\gta}\Oo_{X,x}$ as required.    
\qed

One can  check the good properties of normal primary decompositions as stated in the following corollary.

\begin{cor}\label{goodproperties} 
Let $\gta \subset \Oo(X)$ be a saturated ideal and $\gta = \bigcap_i \gtq_i$  a normal primary decomposition of $\gta$. We have.
\begin{itemize}
\item[(i)] If $\gta$ is radical, then each $\gtq_i$ is prime and the decomposition is unique.
 \item[(ii)] If $\gta$ is a real  ideal, then each $\gtq_i$ is a real  prime ideal and the decomposition is unique.    
\end{itemize}
\end{cor}

\bigskip

It seems  difficult to prove a real
Nullstellensatz for  $\Oo(X)$  considering  the real radical as in the results  of Risler quoted at the beginning of this chapter. One can try an
alternative approach that  involves  `convexity' for  ideals.  

\begin{defn}\index{Convex ideal}An ideal $\gta$ of $\Oo(X)$ is \em convex \em if each $g\in\Oo(X)$ satisfying $|g|\leq f$ for some $f\in\gta$ belongs to $\gta$. We define the \em convex hull $\hat{\gta}$ of an ideal $\gta$ of $\Oo(X)$ \em by
$$
\hat{\gta}=\{g\in\Oo(X):\ \exists f\in\gta\text{ such that }|g|\leq f\}.
$$
\end{defn}

Notice that $\hat{\gta}$ is the smallest convex ideal of $\Oo(X)$ that
contains $\gta$ and $\hat{\gta}\subset I(\ceros(\gta))$. 

\begin{defn}\index{ \L ojasiewicz radical ideal} We define the {\em
  \L ojasiewicz radical ideal} of an ideal $\gta\subset\Oo(X)$ as
$\sqrt[\text{\L}]{\gta}=\sqrt{\hat{\gta}}$. 
\end{defn}
In particular, \L ojasiewicz's 
radical is a  convex radical ideal. 

This radical is appropriate to describe a Nullstellensatz in the real global analytic context. In Section \ref{NSSgenerale} we will prove the following result. 

\begin{thm}{\rm (Real Nullstellensatz)}\label{perfect2}
Let $X\subset\R^n$ be a $C$-analytic set and 
$\gta$ an ideal of the ring $\Oo(X)$. Take  $\gtb = \widetilde{\gta}$  Then, 
$I (\ceros (\gta)) =\widetilde{\sqrt[\text{\L}]{\ {\gtb}\, }}$.
\end{thm}

\subsection{A global \L ojasiewicz's inequality.}
The classic \L ojasiewicz's inequality for continuous semialgebraic functions  states the following.

\begin{thm}{\rm(Classic \L ojasiewicz's inequality)}
Let $K$ be a compact semialgebraic set and let $f,g$ be continuous semialgebraic functions on $K$ such that $\{f=0\}\subset\{g=0\}$. Then there exist an integer $m\geq 1$ and a constant $c\in\R$ such that $|g|^m\leq c|f|$ on $K$.
\end{thm}

We need a more global result. Let $Z$ be a $C$-analytic subset of $\R^n$ and let $f,g\in\Oo(Z)=\Oo(\R^n)/I(Z)$ be such that $\{f=0\}\subset\{g=0\}$.  Fixed a compact set $K\subset Z$, there exist a proper $C$-analytic subset $X_1\subset\{f=0\}\setminus K$ and an open neighborhood $U$ of $\{f=0\}\setminus X_1$ on which $|g|^m\leq|f|$ for some integer $m\geq 1$. The precise statement of the weak \L ojasiewicz's inequality is the following.

\begin{thm}{\rm (Weak \L ojasiewicz's inequality)}\label{weakLoj}
Let $Z$ be a $C$-analytic subset of $\R^n$ and let $A\subset Z$ be a global
semianalytic subset.\footnote{A global semianalytic set is a semianalytic set
  of the form $A= \bigcup_{i=1}^t \{f_i=0, g_{i_1} >0, \dots, g_{i_{s_i}}>0\}$
  where all the involved functions are analytic on $Z$. We will provide more details in Chapter 5.} Let $f,g\in\Oo(Z)$ be such that $\{f=0\}\cap\overline A\subset\{g=0\}\cap \overline A$. Fix a compact set $K\subset Z$ and denote $X=\{f=0\}$. Then there exist a proper $C$-analytic subset $X_1\subset X$ such that $X_1 \cap K=\varnothing$, an open neighborhood $U\subset Z$ of $X\setminus X_1$ and a positive integer $m\geq 1$ such that $|g|^m<|f|$ on $(U\cap\overline A)\setminus X$. 
\end{thm}

The first step in the proof is the following elementary lemma.

\begin{lem}\label{elem}
Let $p:M \to N$ be a surjective proper map between affine C-analytic
sets $M\subset \R^m$, $N\subset \R^n$.
Let  $X\subset  M$  be  a   saturated  set, that is,  $p^{-1}(p(X))=X$,  and  let $U\supset  X$  be  a  neighbourhood  of $X$.  Then  $p(U)$  is  a
neighbourhood of $p(X)$.
\end{lem}

\begin{proof} Assume $U$ is open and  let $C$ be the complement of $U$.  Then,
$p(C)$ is  closed, since $p$  is proper. Hence  its complement is an  open set
$V$.  Since $X$ is saturated one has $p(X)\subset V\subset p(U).$
\end{proof}

{\sc Proof of Theorem \ref{weakLoj}.} As both $Z$ and $A$ are global semianalytic sets they are defined respectively
  by a finite set of analytic functions on an open neighbourhood $\Omega$ of $Z$ in $\R^n$, say
  $f_1,\dots, f_t \in \Oo(\Omega)$. We may assume  $f,g$ are also analytic on $\Omega$.

Apply   Hironaka's  desingularization  theorem    to the function
$$F = f_1\cdot \ldots \cdot f_t \cdot  h\cdot f.$$
We find  an analytic manifold $M$ of  dimension  $n$ and a  proper surjective analytic
map $p:M\to \Omega$  such that for any $a\in M$  there exist an open
neighbourhood $U_a$ of $a$ and  a coordinate system $x_1,\ldots , x_n$
on $U_a$ centred at $a$ such that:
$$\ristr {F\circ p}{U_a} = x_1^{\gamma _1}\ldots x_n^{\gamma _n}.$$

This means that:

\begin{enumerate}

\item $p^{-1}(A)  \cap U_a$  is a  union of  open quadrants
 each  one lying in  some coordinate $l$-plane, for some $l                      \leq  n$.
\item $f\circ p =\varphi  x_1^{\alpha _1}\ldots x_n^{\alpha  _n},             
\, h\circ  p =\psi x_1^{\beta _1}\ldots x_n^{\beta _n} $ on $U_a$,
with $\varphi(0) \neq 0$, $\psi(0)\neq  0$;
\item  for  any  $i$ such  that  the hyperplane  $\{x_i=0\}$
intersects $p^{-1}(A)$ or lies in its  boundary, one has
$\beta_i \neq 0 \, \mbox{ \rm if } \, \alpha_i \neq 0.$
This is because $\{f=0\} \cap \overline A \subset \{ h =0\}\cap \overline A.$
\end{enumerate}

 For $a\in M$, define  $\delta(a) = \alpha _1 + \ldots + \alpha _n$
the degree of $f \circ p$ at $a$ as a monomial.

It is well defined, in the sense that it  does not depend on
the choice of the  local coordinate system  at the point $a$.  In fact,
the  admissible changes of coordinates  keeping $F\circ p$ as a monomial
 are only  permutations of the variables appearing  in $F\circ p$ (that is, for
 those variables such that $\gamma_i \neq 0$)  up  to  a scaling factor.
Moreover, $\delta(b)\leq \delta(a) $ for $b\in U_a$, that is,  $\delta$ is
semicontinuous.

A computation shows that for each $j<\delta(a)$ and for each
multiindex $\gamma =(\gamma_1,\ldots ,\gamma_n)$ with $\sum_i \gamma_i= j$ one has
$\displaystyle \frac {\partial ^\gamma (f\circ p)}{\partial                     
x_1^{\gamma_1}\ldots                                                            
\partial x_n^{\gamma_n}}(0) =0$,
while  $\displaystyle \frac {\partial ^{\delta(a)} (f\circ p)}{\partial         
x_1^{\alpha_1}\ldots                                                            
\partial x_n^{\alpha_n}}(0) =k\varphi(0) \neq 0$ where $k$ is a
suitable positive integer.

Now we do the following.
Consider  a  complexification    $(\widetilde  {\Omega},\widetilde              
{X})$ of the couple $(\Omega,  X)$ and let $\widetilde {F}= \widetilde {f}_1\cdot  
\ldots   \cdot\widetilde  f_k   \cdot\widetilde  h\cdot\widetilde   f$   be  a
holomorphic  extension of  $F$  to  a suitable  neighbourhood  of $\Omega$  in
$\widetilde  \Omega$. Up  to shrink  $\widetilde  \Omega$ we  can assume  that
$\widetilde F$  is defined on  $\widetilde\Omega $. We  can perform the
same blowings-up that reduce $F$ to monomial form on the manifold $\widetilde  
{\Omega}$.  So, we  get a proper holomorphic map $\widetilde  p : \widetilde M  
\to \widetilde {\Omega}$ such that $M$ is the real part of $\widetilde M $ and
$ {\widetilde p}_{|M}= p$.
We define
$$Y_\delta = \{ a\in \widetilde p^{\,-1}(\widetilde X) | J^\delta               
(\widetilde f\circ \widetilde p) (a)\equiv 0\}$$
where $J^\delta (\widetilde f\circ \widetilde p) (a)$ denotes the
$\delta$-jet of the
function $\widetilde f\circ \widetilde p$ at the point $a$.
The set $Y_\delta$  is a complex analytic subset of $\widetilde p^{\,-1}(\widetilde              
X)$. Moreover, the family $\{Y_\delta\}_{\delta \in \N}$ is locally finite,
since not all the derivatives of $\widetilde f\circ \widetilde p$ can vanish
simultaneously at a point. Thus, if $K\subset X$ is compact, so is
$\widetilde p^{\,-1}(K)$, so it meets only finitely many
$Y_\delta$'s. In other words, $Y_\delta \cap \widetilde p^{\,-1}(K)=\varnothing$
for $\delta$ large enough. In particular,
$$X_1 = \widetilde p(Y_\delta)\cap \Omega \cap X$$

is a proper C-analytic subset of $X$ and it is the one we were looking for.

We look at
the expression of $f\circ p$ and $h\circ p$ on $U_a$,for $a\in p^{-1} (X)\setminus
Y_{\delta}$. Since they are both monomials  and the
degree $\delta(a)$ of $f\circ p$ at $a$ is less or equal to  $\delta$, one can
find an integer $m$ that does not depend on $a$ such that
$$|(h\circ p)|^m\ < |f\circ p| \quad \mbox{\rm on} \quad (V_a\cap                
p^{-1}(\overline{A}))\setminus p^{-1}(X) $$
where $V_a$ is a perhaps smaller neighbourhood contained in $U_a$.
This is because  $f$  does not vanish on $\overline{A} \setminus X$ .

The union $\displaystyle  \bigcup _{a\in p^{-1}(X)\setminus                     
Y_{\delta}}V_a$ is a neighbourhood $V_1$ of
$p^{-1}(X)\setminus Y_{\delta}$ and  one has
$$|(h\circ p)|^m < |f\circ p|$$
on $(V_1 \cap p^{-1}(\overline{A}))\setminus \ p^{-1}(X).$

Finally $ U_1  =p(V_1)$ is a neighbourhood of  $X\setminus X_1$. Indeed, for
any  $ x  \in  X\setminus X_1$,  $p^{-1}(x)$  is a  saturated  compact set  contained in
$p^{-1}(X)$, which does not meet $Y_\delta$.  We can apply  Lemma \ref{elem}.
By construction, one has $|h|^m < |f|$ on $(p(V_1)\cap \overline{A})\setminus X\
$.     \qed

Given analytic functions $f,g\in\Oo(Z)$ such that $\{f=0\}\subset\{g=0\}$, the previous result provides a recursive procedure to construct an analytic function $t$ on $Z$ whose zeroset does not meet a compact set $K$ and satisfies $|t||g|^{m}\leq|f|$ for some integer $m\geq 1$. More precisely we have the following result.

\begin{prop}\label{Loj} 
Let $Z$ be a $C$-analytic set in $\R^n$ and $f,g\in\Oo(Z)$ such that $\{f=0\}\subset\{g=0\}$. Let $K\subset Z$ be a compact set. Then there exist an integer $m\geq1$
and an analytic function $t\in\Oo(Z)$ such that $\{t=0\}\cap K=\varnothing$ and $|t||g|^{m}\leq |f|$.
\end{prop}
\begin{proof}Let $\{K_i,\}_{i\in \N}$ be an exhaustion of $\R^n$ by compact sets such that
$K_j \subset \Int{K}_{j+1}$ and $K_1 =K$.
By Theorem \ref{weakLoj} we find a proper analytic subset $X_1\subset X$,
an integer $m_0\geq 1$ and an open  neighbourhood
$U_0$ of $X\setminus X_1$ such that $X_1 \cap K=\varnothing$ and $$|g|^{m_0} <
|f| \quad \mbox{\rm on}\quad U_0\setminus X.$$ 
We can 
assume that $U_0 \setminus X =\{ x\in A\, : \                        
|g|^{m_0}(x)< |f(x)|  \}$ and  $U_0\cap X_1 =\varnothing$. 

Let $h_1$ be a positive equation of $X_1$. We may assume $h_1  
<1$ by replacing it with $\displaystyle \frac{h_1}{1+h_1^2}$. 
Define $A_1 = \{x\in Z : |g|^{m_0}(x)< |f(x)|  \}\setminus U_0$.
So $\{f=0\} \cap A_1 \subset X_1 = \{h_1 =0\}$. We  apply again Theorem
\ref{weakLoj} and find an analytic subset $X_2\subset X_1$ such that $X_2\cap K_2 =\varnothing$, an integer $m_1$ and a neighbourhood $U_1$ of $X_1\setminus X_2$
such that $h_1^{m_1} <|f|$ on $(U_1\cap A_1) \setminus X_1$.

We can start again with a positive equation $h_2<1 $ of $X_2$.
Arguing as before and writing $\displaystyle \frac{g}{g^2+1}= h_0$ and $m=m_0$ we find inductively countably many proper
analytic subsets $X_i \subset X_{i-1}$ with positive equations $h_i <1$, open
semianalytic neighbourhods $U_i$ of $X_{i}\setminus X_{i+1}$ and integers
$m_i>0$ such that
$X_i \cap K_i = \varnothing$ and

$$ h_{i}^{m_i}< |f| \quad \mbox{\rm on }\quad (U_i\cap \overline{A_{i}})\setminus X_{i}.$$

Since  $X_i \cap K_i =\varnothing$, the family
$\{X_i\}$ is locally finite.

 We  define the sheaf  $\Ii$ generated at each
$a\in \R^n$ by the (finite) product of those   $ h_{j}^{m_{j}}$ that
vanish at $a$.

This is possible since the family of their zero sets   is
locally finite. By construction $\Ii$ is coherent and locally principal,
because all but a finite number of $h_i$ are invertible at $a$.
Since $H^1(\R^n, \Oo^*) = H^1(\R^n,{\Z}/{2\Z}) =0$, any locally principal
sheaf of ideals
is actually principal. Let $H$ be a global generator for $\Ii$, that is, $\Ii=     
(H)\Oo$ and take $h=H^2$.
Then,
$\{h=0\}= \mbox{supp }  ({\Oo}/{\Ii}) = X_1$ and $h\geq 0$. By
construction $h_x$ is divisible by $(h_j)_x^{2 m_{j}}$ for any $j$.

Outside  $X_i$,  $h$ is given  by the product  $\prod _{j=0}^{i-1}        
h_j^{n_j}$  times a positive unit $v_i$, where $n_j = 2 m_{j}> m_{j}$.

We want to find a positive analytic function $u(x)$ on $\R^n$ such that
$u(x) v_i(x) < 1$  on $U_i$ for any $i$ so that the function $uh$ satisfies 
$uh<1$ and $ uh <|f|$ on $U_i \setminus X_i$ for all $i$.

Let $M_j$ be the maximum of $v_j$ on the compact set $K_j$, which does not
intersect $X_j$. Note that for any $i < j$ one has outside $X_j$
$$ v_i(x) = h_{i+1}(x)\cdots h_{j-1}(x) v_j(x) <v_j(x),$$
because $h_l(x) <1$ for any $l$.

Take a positive continuous function $\varepsilon (x)$ on $\R^n$ such that
$0 < \varepsilon (x) < 1/M_j$ for $x\in K_j$. By Whitney's approximation theorem
we may find an analytic function $u(x)$ on $\R^n$ satisfying

$$ |u(x) -\varepsilon (x)| < \displaystyle \frac{1}{2M_j} $$

for $ x\in K_j$.
Then, on $U_i $ we get $u h = u v_i \prod _{j=1}^{i-1}h_j^{n_j}$.
So, since all the $h_j$ are smaller than 1,  one has

$${uh}_{|(U_i\setminus X_{i})}                          
< h_{i-1}^{n_{i-1}} <  h_{i-1}^{m_{i-1}} < {|f|}_{|(U_i\setminus X_{i-1})} \qquad \mbox{and}\qquad                                   
{h}_{|U_i}  \leq {|f|}_{|U_i}.$$

This  implies $h<|f|$ on
$ \displaystyle \bigcup _{i\in \N}U_i\setminus  X$. Now  $\displaystyle U =                  
\bigcup _{i\in  \N}U_i$ is a  neighbourhood of  $X$. Indeed, $\bigcap_i X_i =     
\varnothing$, because it is a locally finite family. 
Note also that $g^m = g^{m_0}=
h_0^{m_0}(g^2+1)^{m_0}$ divides $h$, because $g^2 +1$ is a unit, that is, $h = g^mt$, where $t$ is the
analytic function we were looking for.

We have only to extend the previous inequality  to the whole  $Z$
 and this is done in the next lemma.
\end{proof}

\begin{lem}\label{unita}Let $f, h$  and $X$  be as before.
Assume $h$ is a positive semidefinite  analytic  function on $Z$  and that
$h < |f|$ on $V\setminus X$, where $V$ is a neighbourhood of
$X$. Then, there is $h_1 \in \Oo(Z)$, $h_1>0$ such that
$$h_1h < |f| \quad \mbox{\rm on} \quad \ Z\setminus X$$
\end{lem}
\begin{proof}Consider the open covering
$ U_1 = V$, $ U_2 = Z \setminus \overline{U}$, where $                         
\overline{U}\subset V$ and $U$ is a smaller neighbourhood of
$X$. Define $f_i:U_i\to \R$  as $ \displaystyle f_1 = \frac {1}{2}$,
$\displaystyle f_2 = \frac {1}{2}\frac{|f|}{h}$,
which  is defined and continuous on $U_2$. Let $\{\sigma_1,                    
\sigma_2\}$ be a
smooth partition of  unity subordinated to the given covering and
consider the function $\varphi = \sigma_1 f_1 + \sigma_2 f_2$.

Note that $\varphi \geq 0$ on $Z$ and $\varphi $ does not vanish on
$Z$. In fact, it can vanish only outside the support of
$\sigma_1$ and where $f_2$ vanishes. Now, if $\varphi(x) =0$,
then $x \in \{f=0\}= X \subset U$, hence $x\notin U_2$
and $\sigma_2(x)=0$, which implies $\sigma_1(x)=1$, which is a contradiction.
So we can extend ${\varphi}_{|\overline U}$ to a
continuous strictly
positive function $\Phi$.

Now, take an analytic function $h_1$ on $Z$ such that
$\displaystyle |h_1 - \Phi|< \frac {1}{2} \Phi$.
Then $hh_1 \leq  h|h_1 - \Phi| + h \Phi < \frac {3}{2} h \Phi$,  so
one has:
$$ h h_1 \leq \frac {3}{2} h \Phi = \frac {3}{2} h \varphi = \frac {3}{4}      
(\sigma_1 h + \sigma_2 |f|).$$

Hence, $\displaystyle  h h_1 < \frac {3}{4} |f|$ on $ V \setminus X$ and
$\displaystyle hh_1 \leq                                                      
\frac {3}{4} |f|$ on $ Z\setminus V$, that is,
 $\displaystyle  hh_1 \leq \frac {3}{4} |f| < |f|$ on $Z\setminus X\
$.
\end{proof}

Proposition \ref{Loj} is a key result to prove Theorem \ref{perfect2}, but it has the Nullstellensatz for principal ideals as a direct consequence. 

Indeed, we have the following corollary.

\begin{cor}\label{principale}  Theorem \ref{perfect2} holds true for principal ideals in a real Stein algebra.
\end{cor}
\begin{proof} Let $\gta =(f)$ be a principal ideal in $\Oo(X)$. In particular it is saturated. Let $g\in I(\ceros (f))$. By Proposition \ref{Loj} for any compact  set $K\subset X$ there is an analytic function $t$ on $X$ that does not vanish on $K$ such that $|f|\geq|t||g|^{m}$. This means that $g\in \widetilde{\sqrt[\text{\L}]{(f)}}$, because its germ belongs to $\sqrt[\text{\L}]{(f)}\Oo_{X,x}$ for all $x\in X$.  
\end{proof}

\subsection {Nullstellensatz for general ideals.}\label{NSSgenerale}
 
We want to reduce the general problem to the case of a principal ideal. To that end  given an ideal $\gta\subset\Oo(X)$, we are looking for  an analytic function $f\in \gta$ that has the same zeroset as $\gta$. 
The following proposition provides the analytic function we need,
but there is  a price to pay. The analytic  function $f$ does not
belong in general to the ideal $\gta$ but  to $\tilde{\gta}$. 
Observe that $I(\ceros(\gta))=I(\ceros(f))$, so each $g\in I(\ceros(\gta))$ satisfies $\ceros(f)\subset \ceros(g)$ and $g\in \widetilde{\sqrt[\text{\L}]{(f)}}$. which is a subset of the saturation of the \L ojasiewicz radical of $\widetilde{\gta}$
and the proof will be done. 

\begin{prop}\label{crespina}
Let $\gta\subset \Oo(X)$ be an ideal. There is $f\in \widetilde{\gta}$ such that $\ceros(f) = \ceros(\gta)$.
\end{prop}
\begin{proof} Consider the sheaf of ideals   $\gta\Oo_X$. We extend it as a coherent invariant sheaf of ideals $\Jj$ on an open invariant Stein neighbourhood $\Omega$ of $\R^n$ in $\C^n$. Take an exhaustion $\{L_l\}_l$ of $\Omega$ by compact sets. By  Theorem A there is a countable family $\{G_k\}\subset H^0(\Omega,\Jj)$  of invariant holomorphic functions such that for each $l$ there is $k(l)$ such that for all $z\in L_l$, the germs of $G_1,\ldots ,G_{k(l)}$ generate $\Jj_z$.

For $k\geq 1$ define $\mu_k = \mbox{\rm max}_{L_k}|G_k^2| +1$ and $\displaystyle \gamma_k = \frac{1}{\sqrt{2^k\mu_k}}$. Then, the series $F= \sum_{k\geq 1}\gamma_k^2 G_k^2$ converges uniformely on compact sets of $\Omega$. Indeed, if $L$ is such a compact set,  there is $k_0\geq 1$ such that $L\subset L_k$ for all $k\geq k_0$. On $L_k$ one has $\displaystyle \gamma_k^2|G_k^2| < \frac{1}{2^k}$. Thus for $z\in L$

$$\left |\sum_{k\geq k_o}\gamma_k^2 G_k(z)^2\right | \leq \sum_{k\geq k_o}\gamma_k^2
\left |G_k(z)^2\right | \leq \sum_{k\geq k_o} \frac{1}{2^k} \leq 1.$$

Consider the partial sums $S_m = \sum_1^m\gamma_k^2 G_k^2 \in H^0(\Omega,\Jj)$ of the series $F$. Then, $F= \displaystyle \lim_{m\to \infty}S_m$ in the Fr\'echet topology of $\Oo(\Omega)$. Since $H^0(\Omega,\Jj)$ is closed, we get $F\in H^0(\Omega,\Jj)$ and $F$ is invariant. So its restriction $f$ to $\R^n$ belongs to $\tilde{\gta}$. Denote $f_k =\gamma_k{G_k}_{|\R^n}$. Then $f=\sum_k f_k^2$ and $\ceros (f) = \ceros (\gta)$. Indeed, 

$$\ceros (f) = \bigcap_k \ceros (f_k) =\bigcap_k \left(\ceros (G_k)\cap \R^n\right) = \left(\bigcap_k \ceros (G_k)\right)\cap \R^n = \mbox{\rm supp}(\Jj) \cap \R^n = \ceros(\gta).$$
\end{proof}

Theorem \ref{perfect2} is proved as a consequence of  Proposition \ref{crespina}and Proposition \ref{Loj}.

\subsection{Real radical and \L ojasiewicz radical.}\label{rilo}

In general, if $\gta\subset\Oo(X)$ is an ideal, $\widetilde{\sqrt[{r}]{\gta}}\subset\widetilde{\sqrt[\text{\L}]{\gta}}$ and it is a natural question to determine under which conditions the two ideals coincide. This question has a close relation with Hilbert $17^{th}$ Problem for the ring of global analytic functions. Indeed, if we compare the radical ideals $\sqrt[{r}]{\gta}$ and $\sqrt[\text{\L}]{\gta}$, we obtain the following:
\begin{itemize}
\item $g\in\sqrt[\text{\L}]{\gta}$ if and only if there exist $f\in\gta$ and $m\geq1$ such that $f-g^{2m}\geq0$.
\item $g\in\sqrt[{r}]{\gta}$ if and only if there exists $m\geq1$ and $a_1,\ldots,a_s\in\Oo(X)$ such that $g^{2m}+a_1^2+\cdots+a_k^2=f\in\gta$ or equivalently if there exist $f\in\gta$ and $m\geq1$ such that $f-g^{2m}$ is a sum of squares in $\Oo(X)$.
\end{itemize}
Thus, we would have $\sqrt[\text{\L}]{\gta}=\sqrt[{r}]{\gta}$ if any non-negative analytic
function were a sum of squares. Unfortunately, this is not true even for polynomials and the best result one can afford is the following: {\em a polynomial $f\in\R[x_1\ldots,x_n]$ such that $f(x)\geq 0$ for each $x\in\R^n$ is a sum of squares in the field $\R(x_1,\ldots, x_n)$ of rational functions} (Artin 1927). In other words, in general denominators are needed to obtain representations as sum of squares. We formulate Hilbert $17^{th}$ Problem for analytic functions as follows.

\begin{quest}
Let $f\in\Oo(\R^n)$ be such that $f(x)\geq 0$ for each $x\in\R^n$. Are there  analytic functions $g,a_1,\ldots, a_k$ such that $Z(g) \subset Z(f)$ and $g^2f=a_1^2+\ldots+a_k^2$?
\end{quest}

The answer is not known in general, but there are partial results related to the topological properties of the zeroset of the given non-negative function $f\in\Oo(\R^n)$. See next chapter for further details. 

 This lack of global information suggests the following definition.

\begin{defn} 
A C-analytic set $Z\subset\R^n$ is an {\em H-set} if each positive semidefinite analytic function $f\in\Oo(\R^n)$ whose zeroset is $Z$ can be represented as a  sum of squares of meromorphic functions on $\R^n$.
\end{defn} 

\begin{remark}\label{Hset}\hfill

\begin{itemize}
\item[(i)] Let $Y\subset Z\subset \R^n$ be C-analytic sets. If $Z$ is an H-set, then also $Y$ is an H-set. Indeed, let $f$ be a positive semidefinite analytic function such that $\ceros(f)=Y$ and $g$ an analytic function with zero set $Z$. Then $h=g^2f$ is positive semidefinite and its zero set is $Z$, hence $h$ is a sum of squares of meromorphic functions and this implies the same for $f$. In particular irreducible components of an H-set are H-sets.
\item[(ii)] An equivalent  condition for $Z$ to be an H-set is the following. 
\begin{quotation}{\em 
  There exists a positive semidefinite analytic function $f$ with $\ceros(f) = Z$ and each $h\in \Oo(\R^n)$ with $\ceros(h)=Z$ and $0\leq h\leq f$ is a sum of squares of meromorphic functions on $\R^n$.}\end{quotation}
{\parindent=0pt

One direction is clear. Conversely, assume there exists $f$ with the properties of the statement and let $g$ be another positive equation for $Z$. Note that
$\displaystyle f - \left(\frac{f}{\sqrt{1+fg}} \right)^2g 
= \displaystyle f\left(1-\frac{fg}{1+fg}\right)\geq 0 $  and $\displaystyle  \ceros\left (\left(\frac{f}{\sqrt{1+fg}}\right)^2 g\right) =Z$.

Thus  $\displaystyle 0\leq h= \left(\frac{f}{\sqrt{1+fg}}\right)^2g\leq f$, so $h$ is a sum of squares of meromorphic functions and the same holds true for $g$. Hence, $Z$ is an H-set.  }
\end{itemize}
\end{remark}
\bigskip

The result we want to prove is the following.

\begin{thm}\label{h17nss}
Let $X\subset\R^n$ be a C-analytic set and $\gta$ an ideal of $\Oo(X)$ such that $\ceros(\gta)$ is a H-{set}. Then $I(\ceros(\gta))=\widetilde{\sqrt[{r}]{\gta}}$.
\end{thm}

As we will see in the next chapter,
the previous result applies if $X$ is either an analytic curve, a coherent analytic surface or a C-analytic set with finitely many compact connected components, so the real Nullstellensatz holds for such an $X$ in terms of the real radical.

Since in general denominators cannot be avoided, before proving Theorem \ref{h17nss}, we give a lemma that allows to move denominators outside a given ideal.

\begin{lem}\label{denominators}
Let $b,f \in \Oo(\R^n)$ be not constant analytic functions. Let $\Omega$ be  an open invariant neighbourhood  of $\R^n$ in $\C^n$ to which both extend to holomorphic invariant functions $B,F$. Let $Z\subset \Omega$ be a closed analytic subset such that its germ $Z_{x_0}$ at $x_0$ is not included in $\ceros(F)_{x_0}$ for some point $x_0 \in \ceros(f)$. Then, there exists an analytic diffeomorphism $\varphi:\R^n \to \R^n$ such that:

\begin{itemize}
\item[(i)] $f\circ \varphi =fu$ for some unit $u\in \Oo(\R^n)$.
\item[(ii)] $Z_{x_0}$ is not included in $\ceros(B_0)_{x_0}$, where $B_0$ is a holomorphic extension of $b \circ \varphi$ to a perhaps smaller neighbourhood $\Omega'$. 
\end{itemize}
\end{lem} 
\begin{proof}
We can assume $Z_{x_0}\subset \ceros(B)_{x_0}$, because otherwise we could take $\varphi$ to be the identity.
We divide the proof into two steps.

Step 1. {\em We construct a family of analytic diffeomorphisms $\varphi_\lambda:\R^n\to \R^n$ that satisfy condition (i).}

Take a strictly positive analytic function $\varepsilon \in \Oo(\R^n)$. 
For each n-uple $(\lambda_1, \ldots ,\lambda_n)=\lambda \in [-1,1]^n$ consider the analytic map

$$\varphi_\lambda:\R^n\to \R^n, x\mapsto x+f^2(x)\varepsilon(x) \lambda.$$
 
If $\varepsilon$ is sufficientely small, $\varphi_\lambda $ is a diffeomorphism close to the identity for all $\lambda \in [-1,1]^n$. Now the function 
$$f_0:\R^n\times \R^n\times \R \to \R, (x,y,t) \mapsto f(x+ty) -f(x)$$
vanishes identically on $\R^n\times \R^n\times \{0\}$, hence there exists an analytic function $h\in \Oo(\R^n\times \R^n\times \R)$ such that $f_0 = ht$. Thus,

$$f\circ \varphi_\lambda(x) = f(x)+ f(x)^2 \varepsilon(x)h(x,\lambda,f(x)^2\varepsilon (x))= f(x) u_\lambda(x)$$

where $u_\lambda(x)= 1+ f(x)\varepsilon (x) h(x,\lambda, f(x)^2 \varepsilon(x))$.

Note that $\ceros(f\circ \varphi_\lambda) \subset \ceros(f)$, hence  $\ceros(f\circ \varphi_\lambda) = \ceros(f)$. Indeed, if $f\circ \varphi_\lambda(x) =0$, then $\varphi_\lambda(x)=y \in \ceros(f)$. But $\varphi_\lambda$ is bijective and $\varphi_\lambda (y) =y$, because $f(y)=0$. So $x=y\in \ceros(f)$.

Also $u_\lambda$ is a unit in a neighbourhood of $\ceros(f)$ by definition and does not vanish outside $\ceros(f)= \ceros(f\circ \varphi_\lambda)$, hence it is a unit in $\Oo(\R^n)$ for all $\lambda \in [-1,1]^n$. So, $\varphi_\lambda$ satisfies property (i) for all $\lambda \in [-1,1]^n$.

Step 2. {\em We find $\lambda_0\in [-1,1]^n$ such that $\varphi_{\lambda_0}$ satisfies also property (ii).} 

We can consider the family of analytic diffeomorphisms $\varphi_\lambda$ as a map

$$\varphi:\R^n\times [-1,1]^n \to \R^n, (x,\lambda) \mapsto \varphi_\lambda(x).$$ 
 
After shrinking $\Omega$, we may assume that $\varepsilon , b$ extend holomorphically to $E,B\in \Oo(\Omega)$. Assume also $\Omega$ to be connected. Thus $\varphi$ extends to a holomorphic map 

$$\Phi:\Omega\times \C^n \to \C^n,  (z,\mu) \mapsto z+ F(z)^2 E(z)\mu. $$

Define $U=\Phi^{-1}(\Omega)$ and consider the holomorphic function 

$$B \circ \Phi:U \to \C, (w,\mu) \mapsto B\circ\Phi(w,\mu)= B\circ \Phi_\mu(w).$$

Fix a polydisc $\Delta_0 \times \Delta_1 \subset \Omega \times \C^n$ with center $(x_0,0)$ and radius $0< \rho <1$ contained in $U$.

We want to prove that the map $(B\circ \Phi)_w:\Delta_1\to \C, \mu \mapsto B\circ \Phi(w,\mu)$ is not identically zero for each $w\in \Delta_0$.

Otherwise, there would be a point $w\in \Delta_0$ such that

$$B\circ \Phi_w(\mu)= B\circ \Phi(w,\mu) = B(w+ F(w)^2E(w) \mu)$$

were identically zero on the polydisc $\Delta_1$. By the identity principle we would get that $B$ is identically zero, contradicting the hypothesis that $b$ is not constant.

Next we use that $Z_{x_0}$ is not included in $\ceros(F)_{x_0}$. By the complex curve selection lemma there is a complex analytic curve $\gamma$ from a small disc $\D_\delta$ to $Z$  such that $\gamma(\D_\delta) \subset \Delta_0, \gamma(0) =x_0$ and $\gamma(s) \notin \ceros(F)$ for all $s\neq 0$.

Consider the holomorphic function 

$$G:\D_\delta \times \Delta_1 \to \C, (s,\mu) \mapsto  B\circ \Phi(\gamma(s), \mu).$$

We  know fom the remark above that $G_s:\Delta_1 \to \C, \mu\mapsto G(s,\mu)$ is not identically zero for all $s\in \D_\delta$. Choose a sequence $\{s_k\}_k \subset \D_\delta$ converging to $0$ and remember that $F(\gamma(s_k)) \neq 0$ for all $k$. Define $W_k = (\Delta_1\cap \R^n)\setminus \ceros(G_{s_k})= [-\rho, \rho]\setminus \ceros(G_{s_k})$ and note that this set is open and dense in $\Delta_1\cap \R^n$ because $\ceros(G_{s_k})$ is a proper analytic subset of $\Delta_1$. So, $W=\bigcap_k W_k$ is dense in $\Delta_1\cap \R^n$ and we choose $\lambda_0 \in W$.

Put $b_0 = b\circ \varphi_{\lambda_0}$ and let $B_0 = B\circ \Phi_{\lambda_0}$ its holomorphic extension. By the choice of $\lambda_0$ we have $B_0(\gamma(s_k)) \neq 0$ for all $k$. Hence $B_0\circ \gamma$ is not identically zero on $\D_\delta$ which implies that the germ $(B_0\circ \gamma)_0\neq 0$. We conclude $Z_{x_0}$ is not included in $\ceros(B_0)_{x_0}$ as required.      
\end{proof}

{\sc Proof of Theorem \ref{h17nss}.}
 Assume first that $\gta =\gtp \subset \Oo(\R^n)$ is a saturated real  prime ideal whose zero set is an H-set. One implication is clear because 
$I(\ceros(\gtp))$ is  real and saturated. For the converse consider the ideal sheaf $\gtp \Oo_{\R^n}$ which can be extended to a coherent ideal sheaf $\J$ on an invariant connected open Stein neighbourhood $\Omega$ of $\R^n$ in $\C^n$.
Of course, for all $x\in \R^n$ one has $\J_x = \gtp\Oo_{\R^n,x} \otimes \C = \gtp\Oo_{\C^n,x}$. Also $\gtp = H^0(\R^n, \gtp\Oo_{\R^n})$, because $\gtp$ is saturated. Define $Z$ to be the support of  $\J$ in $\Omega$, that is, $Z=\{z\in \Omega: \J_z \neq \Oo_{\C^n,z}\}$. 

Let us check that $\gtb = H^0(\Omega, \J)$ is a prime closed ideal of $\Oo(\Omega)$ and that its zero set is $Z$. 

Pick $F_1, F_2 \in \Oo(\Omega)$ such that $F_1F_2 \in \gtb$. Write $F_j =\re(F_j) +i \ima(F_j), j=1,2$,
 and observe

$$(\re(F_1)^2 + \ima(F_1)^2)(\re(F_2)^2 + \ima(F_2)^2) = F_1F_2 \overline{F_1\circ\sigma}\overline{F_2\circ\sigma} \in \gtb.$$ 

Restricting to $\R^n$ we deduce that 
$$(\re(F_1)^2 + \ima(F_1)^2)_{|\R^n}(\re(F_2)^2 + \ima(F_2)^2)_{|\R^n} \in H^0(\R^n, \gtp\Oo_{\R^n}) = \gtp.$$

As $\gtp$ is prime and real, we may assume $\re(F_1)_{|\R^n}, \ima(F_1)_{|\R^n} \in \gtp$, so  $\re(F_1), \ima(F_1) \in \gtb$ and $F_1 = \re(F_1) +i \ima(F_1) \in \gtb$. Thus $\gtb$ is prime. It is also closed because the zeroset of $\gtp$  is not empty, so the zeroset of $\gtb$ is not empty.  By Corollary \ref{primary1} it is closed. 
The equality $Z= \ceros(\gtb)$ holds because the zero set of $\gtb$ is precisely the support of $\J= \gtb\Oo_{\C^n}$.

Assume now  that there exists a function $g\in I(\ceros(\gtp))\setminus \gtp$. After shrinking $\Omega$ we may assume that $g$ extends to an holomorphic function $G$ on $\Omega$. 

Now we claim there is $x\in \R^n$ such that $Z_x$ is not contained in $\ceros(G)_x$. Otherwise, $Z\subset \ceros(G)$ in a smaller neighbourhood. Hence $G\in \I(Z)= \I(\ceros (\gtb))=\gtb$ because $\gtb$ is prime and closed. So $g\in \gtp$,which is  a contradiction. Hence we have a point $x_0\in \R^n$ such that $Z_{x_0}$ is not contained in $\ceros(G)_{x_0}$. 

By Proposition \ref{crespina} there exists $f\in \tilde \gtp =\gtp$ such that $\ceros(f) = \ceros(\gtp)$. By Proposition \ref{Loj} there are $h\in \Oo(\R^n)$ and  a positive integer $m$  such that $h(x_0) \neq 0$ and $f_0= f-h^2g^{2m}\geq 0$. As $h(x_0)\neq 0$ we have $h\notin \gtp$.  Substitute $f_0$ by $f_1= f -h_1^2g^{2m}$ where $\displaystyle h_1= \frac{h}{\sqrt{1+h^2g^{2m}}}$. Thus we get $\ceros (f_1) = \ceros(f) = \ceros(\gtp)$ which is an H-set. Indeed, since $h_1\leq h$ we have $f_1 \geq 0$. In addition  $f\geq 0$, because the difference between $f$ and a square is $\geq 0$. So we get

$$\ceros(f_1) =\ceros ((f-h^2g^{2m}) +fh^2g^{2m}) =\ceros(f-h^2g^{2m}) \cap \ceros(fh^2g^{2m})$$
$$= \ceros(f)\cap \ceros(hg) = \ceros(f).$$   

Since $\ceros(\gtp) = \ceros(f_1)$ is an H-set there is $b$ not identically zero such that $b^2f_1= \sum_{i\geq 1}a_i^2$ for some $a_i\in \Oo(\R^n)$.

After shrinking $\Omega$, we can assume that $f_1, h$ have holomorphic extensions $F_1, H$ to $\Omega$. 
We want to apply Lemma \ref{denominators} to $b, f_1, Z, \Omega$. 
We have to show first that $Z_{x_0}$ is not included in $\ceros(F_1)_{x_0}$. Otherwise, as $F\in \gtb$ and $H(x_0) \neq 0$, we get 

$$
Z_{x_0} \subset \ceros(F)_{x_0}\cap \ceros(F_1)_{x_0} \subset \ceros (F-F_1)_{x_0}= \ceros (H_1^2G^{2m})_{x_0}=$$
$$= \ceros (H_1)_{x_0} \cup \ceros(G)_{x_0}= \ceros(G)_{x_0},
$$
which is a contradiction.

By Lemma \ref{denominators} there is an analytic diffeomorphism $\varphi:\R^n\to \R^n$ such that:
\begin{enumerate}
\item $f_1\circ \varphi = f_1u$ for some unit $u\in \Oo(\R^n)$
\item $Z_{x_0}$ is not included in $\ceros(B_1)_{x_0}$ where $B_1:\Omega_1\to \C$ is the holomorphic extension of $b_1= b\circ \varphi$ to an enough smaller neighbourhood $\Omega_1\subset \Omega$ of $\R^n$ in $\C^n$.
\end{enumerate}

Take a positive unit $v\in \Oo(\R^n)$ such that $v^2 =u^{-1}$. Then 
 
$$b_1^2f = b_1^2h_1^2g^{2m}+b_1^2f_1 = b_1^2h_1^2g^{2m} + \sum _{i\geq 1}((a_i\circ \varphi)v)^2. $$

Now $b_1\notin \gtp$ because the germ at $x_0$ of the complex zero set of $\gtb$ is not included in $\ceros(B_1)_{x_0}$. As $f\in \gtp$ and $\gtp$ is real, we deduce $b_1h_1g^m \in \gtp$, which is a contradiction, because $\gtp$ is prime and $b_1, h_1, g \notin \gtp$. We conclude $I(\ceros(\gtp)) =\gtp$.

Now assume $\gta$ is a saturated real ideal in $\Oo(\R^n)$ whose zero set is an H-set. As we know,  $\gta$ has a normal primary decomposition
$\gta = \bigcap_i \gtq_i$ where each $\gtq_i$ is a saturated real prime ideal, because the real ideal $\gta$ is a radical ideal. 

So $\ceros(\gta) = \bigcup_i \ceros (\gtq_i)$. Since this set is an H-set, also $\ceros(\gtq_i)$ is an H-set. Hence, we get by the previous step $\I(\ceros(\gtq_i))= \gtq_i$. Thus,

$$I(\ceros(\gta)) = I\left(\bigcup_i \ceros(\gtq_i)\right)= \bigcap_i I(\ceros(\gtq_i)) = \bigcap_i \gtq_i = \gta.$$    

We are left with the general case. We have an ideal $\gta \subset \Oo(\R^n)$ whose zero set is an H-set.

Since $I(\ceros(\gta)) = I(\ceros(\widetilde{\sqrt[{r}]{\gta} }))$, it is enough to check that $\widetilde{\sqrt[{r}]{\gta} }$ is a real  ideal, so that we are in  the previous case.

Assume  $\sum^r_{k=1}a_k^2 \in \widetilde{\sqrt[{r}]{\gta} }$ and let $K$ be a compact set. Then there is $h\in \Oo(\R^n)$ such that $\ceros(h) \cap K =\varnothing$ and $h (\sum_{k=1}^ra_k^2) \in \sqrt[{r}]{\gta}$. So  $\sum_{k=1}^r(ha_k)^2\in \sqrt[{r}]{\gta}$.
As $\sqrt[{r}]{\gta}$ is real  we get $ha_k \in \sqrt[{r}]{\gta}$. This happens for all compact sets so, $a_k \in \widetilde{\sqrt[{r}]{\gta} }$ for all $k$ and $\widetilde{\sqrt[{r}]{\gta} }$ is real, as required.
\end{proof}

\subsection*{ Bibliographic and Historical Notes.} \rm

Hilbert's  Nullstellensatz can be found in any text of commutative algebra, for instance \cite[pg.85]{amd}. The same result for rings of germs of holomorphic functions was proved by R\" uckert in \cite{ru}, see also \cite{gr}.

Real Nullstellensatz for $\R [x_1,\ldots ,x_n]$ and for  $\R\{x_1, \ldots ,x_n \}$ were both proved by Risler in \cite{r2} and \cite{r3}.

For further details on the Fr\'echet structure of $H^0(X,\Oo_X)$ and for classical results without proof we refer the reader to \cite{gr}.

A Nullstellensatz for Stein algebras was proved in \cite{of}, where Forster developed  
  classical results of Commutative Algebra of noetherian rings in the setting of a Stein algebra.
His  main results can also be found  in \cite{siu}.
Primary decomposition for closed ideals in a Stein algebra was extended to the real case by P. de Bartolomeis in \cite{db} and \cite{db1},
where closed ideals are replaced by saturated ideals. \index{saturated ideals}

Whitney's closure of ideals of smooth functions can be found for instance in \cite[II.1.3]{m1}.
 
The notion of \L ojasiewicz's radical 
has been used by many authors to approach different
problems mainly related to rings of germs, see for instance
\cite[p. 104]{d}, \cite[1.21]{k} or \cite[\S6]{dm}, but also in the
global smooth case \cite{abn}.

 Classical \L ojasiewicz's inequality for continous semialgebraic functions can be found in \cite[2.6.7]{bcr}.

The weak \L ojasiewicz's inequality is  the main result of \cite{abs}, up to a slight modification. Hironaka's
 desingularization theorem for a single analytic function appears in \cite{hi1}. It can be found also in \cite{bm}. 

The proof of the real Nullstellensatz \ref{perfect2} is a simplification of the proof of \cite{abf1} where there is also  the comparison between real  and \L ojasiewicz radicals.   

\newpage

\parindent=0pt

\chapter{ The $17^{th}$ Hilbert Problem for real analytic functions.}

Classical  $17^{th}$  Hilbert's Problem concerns polynomials and asks whether a positive semidefinite polynomial on $\R^n$ is a sum of squares in the field $\R(x_1, \ldots ,x_n) = \R(x)$  of 
rational functions. Namely

$$ \{\forall x\in \R^n \ p(x)\geq 0\} \ \Longrightarrow \exists \   g_1,\ldots ,g_k \in 
\R(x) \ |\  p=\sum_i g_i^2$$

This problem was solved by E. Artin in 1927. One main tool in the proof is the characterization of sums of squares in a real field $F$, that is, a field  admitting a total ordering,  as the elements of $F$ that are positive  with respect to  all orderings on $F$. This caracterization  comes from Artin-Schreier theory of real fields.
   
In this chapter we consider the same problem for analytic functions on $\R^n$ or more generally for  the algebra $\an(M)$ of analytic functions on a real connected analytic manifold $M$. Clearing denominators we can reformulate the problem as

\begin{problem}[$17^\text{th}$ Hilbert's Problem for analytic functions.]

 $$\{\forall x\in \R^n \  f(x)\geq 0\}  \ \Longrightarrow \exists \  g_0,\ldots, g_k \in \an(\R^n) \ | \ g_0^2f =\sum_{i=1}^k g_i^2$$

where (in order to avoid trivial cases) we assume $\ceros(g_0) \subset \ceros(f)$. 
\end{problem}

In the analytic setting also infinite convergent sums of squares have a meaning, but we need to make precise what means convergent. We will approach this point in Section \ref{infinito}.

In the analytic case 
Hilbert's $17^\text{th}$  problem (H17 for short) was completely solved in the local case, that is, for germs. In the global case only partial results are known and they depend on the zeroset of the given semidefinite analytic function. In the following sections we will rewiew some important facts from Artin-Schreier theory and the solution of H17 for germs. Then we will see the particular global cases when H17 has a positive solution. We will discuss some extensions of the problem. In particular we will see in Section \ref{p(M)} that the two questions, ``whether a positive semidefinite function is a sum of squares'' and ``the number of squares needed to represent a sum of squares'', which are completely different questions in the algebraic case, are   unexpectedly closely related in the real analytic case. 

\section{Artin-Schreier theory and the local case.}

The theory of  ordered fields was developed by  E. Artin and O. Schreier
in the 1920s  with the aim of solving 
Hilbert's $17^{\rm th}$ problem. We rewiew some facts of this theory.

\begin{defn} A {\em real} field $F$  is a field that admits at least one
  total ordering $\leq$, compatible with the field structure, i.e.
\begin{itemize}
\item If $x\leq y$, then for each $z\in F \ z+x \leq z+y $.
\item If $x\geq 0$ and $y\geq 0$, then $xy \geq 0$.
\end{itemize}
\end{defn}    

In a  real field squares are  positive with  respect to  any ordering,
because either $x\geq 0$ hence $x^2\geq 0$ or $-x \geq 0$ hence $x^2 =
(-x)(-x) \geq  0$.  As a  consequence, a real field  has characteristic
$0$. Indeed  $1+1 + \ldots +1 >1>0$.  So a real field  contains rational
numbers.

If we look at  the set $P$ of nonnegative elements  with respect to an
ordering in $F$, we find the following properties.

\begin{enumerate}
\item $P + P \subset P, \ P\cdot P \subset P$.
\item $F^2 \subset P$. 
\item $-1 \notin P$.
\item $P\cup -P = F$. 
\end{enumerate}

We call {\em cone} a subset of $F$ that satisfies (1) and (2). The cone is {\em proper} if it satisfies  also (3). A proper cone is the set  nonnegative elements of an ordering if it verifies also (4). 

Property (3) implies that $P$ is not trivial. Indeed  if $-1\in P$ then for all $x\in F$ we get $\displaystyle x = \left(\frac{x+1}{2}\right)^2  +(-1) \left(\frac{x-1}{2}\right)^2$, so $x\in P$. Observe also that property (3) implies $P\cap -P = \{0\}$.

The first example of proper cone is the set $\sum F^2$ of sums of squares.  

\begin{prop} Let $P$ be a proper cone of $F$. If $-a \notin P$, then $P[a] = \{x+ay: x,y \in P\}$ is also a proper cone.
\end{prop}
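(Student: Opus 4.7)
The plan is to verify the three defining conditions of a proper cone for $P[a]$: closure under addition and multiplication, containment of the squares, and the non-containment $-1\notin P[a]$. The first two are essentially bookkeeping using the cone axioms for $P$; the third requires a small trick, and that is where the hypothesis $-a\notin P$ is spent.

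First I would check closure under sum and product. For addition, $(x_1+ay_1)+(x_2+ay_2)=(x_1+x_2)+a(y_1+y_2)$ lies in $P[a]$ because $P+P\subset P$. For multiplication,
\[
(x_1+ay_1)(x_2+ay_2)=\bigl(x_1x_2+a^2y_1y_2\bigr)+a\bigl(x_1y_2+x_2y_1\bigr),
\]
and since $a^2\in F^2\subset P$, the terms $x_1x_2$, $a^2y_1y_2$ and $x_1y_2+x_2y_1$ all belong to $P$ by properties (1) and (2) of $P$. Hence $P[a]\cdot P[a]\subset P[a]$. The inclusion $F^2\subset P[a]$ is immediate, since already $F^2\subset P\subset P[a]$ (take $y=0$).

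The main obstacle, and the only place where the hypothesis $-a\notin P$ is used, is to show $-1\notin P[a]$. I would argue by contradiction: suppose $-1=x+ay$ with $x,y\in P$. If $y=0$, then $-1=x\in P$, contradicting that $P$ is proper. Hence $y\neq 0$, so $y^{-1}\in F$ exists, and
\[
-a=\frac{1+x}{y}=(1+x)\,y\cdot y^{-2}.
\]
Now $1+x\in P$ since $1\in F^2\subset P$, then $(1+x)y\in P\cdot P\subset P$, and $y^{-2}=(y^{-1})^2\in F^2\subset P$. Therefore $-a\in P\cdot P\subset P$, contradicting the hypothesis $-a\notin P$. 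This forces $-1\notin P[a]$, completing the verification that $P[a]$ is a proper cone. The key idea I would highlight is the denominator-clearing rewrite $1/y=y\cdot y^{-2}$, which keeps everything inside $P$ without needing any further structure on the ordering.
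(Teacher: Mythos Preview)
Your proof is correct and follows essentially the same route as the paper's: the paper dismisses conditions (1) and (2) as ``clear'' and then handles $-1\notin P[a]$ by the identical denominator-clearing trick, writing $-a=\dfrac{x+1}{y}=\dfrac{y(x+1)}{y^2}\in P$. Your version is simply more explicit about the routine verifications.
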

\begin{proof} Properties (1) and (2) are clear. Assume $-1 \in P[a]$. Then $-1 =x+ay$ for some $x,y \in P$. One has $y\neq 0$ because $-1 \notin P$, hence $\displaystyle -a = \frac{x+1}{y} = \frac{y(x+1)}{y^2} \in P$, which is a  contradiction. 
\end{proof}

\begin{prop} If a field $F$ has a proper cone $P$, then $F$ is real.
\end{prop}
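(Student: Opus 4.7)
The plan is to use Zorn's lemma to enlarge $P$ to a maximal proper cone $Q$, and then show that such a maximal $Q$ automatically satisfies the fourth axiom $Q \cup -Q = F$, so that the relation $x \leq y \iff y - x \in Q$ defines a total ordering of $F$ compatible with its field structure.

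First I would verify that the family $\mathcal{C}$ of proper cones of $F$ containing $P$, partially ordered by inclusion, satisfies the hypothesis of Zorn's lemma. Given a totally ordered chain $\{Q_\lambda\}_{\lambda\in\Lambda}\subset\mathcal{C}$, the union $Q=\bigcup_\lambda Q_\lambda$ is clearly closed under sum and product (any two elements lie in a common $Q_\lambda$) and contains all squares, and $-1\notin Q$ because $-1\notin Q_\lambda$ for every $\lambda$. Hence $Q\in\mathcal{C}$ and Zorn yields a maximal proper cone $Q\supseteq P$.

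Next I would show that this maximal $Q$ satisfies $Q\cup(-Q)=F$. Pick any $a\in F$. If $-a\notin Q$, then by the previous proposition $Q[a]=\{x+ay:x,y\in Q\}$ is again a proper cone, and it contains $Q$ (take $y=0$) as well as $a=0+a\cdot 1$ (since $1=1^2\in Q$). Maximality of $Q$ forces $Q[a]=Q$, so $a\in Q$. Thus $a\in Q$ or $-a\in Q$ for every $a\in F$, which is property (4).

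Finally, define $x\leq y$ to mean $y-x\in Q$. Reflexivity follows from $0=0^2\in Q$; totality is exactly property (4); compatibility with addition is immediate from closure of $Q$ under sums; and compatibility with multiplication follows from closure of $Q$ under products, using that for any $z\in F$ one has $z\in Q$ or $-z\in Q$. Antisymmetry uses $Q\cap(-Q)=\{0\}$, which holds because if $b$ and $-b$ both lie in $Q$ with $b\neq 0$, then $-1=(-b)b^{-2}b=(-b)(b^{-1})^2\in Q$, contradicting $-1\notin Q$. Transitivity is immediate. So $\leq$ is a total ordering of $F$ compatible with its field operations, and $F$ is real.

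There is essentially no serious obstacle here: the only delicate points are the verification that the union of a chain of proper cones remains proper (which is immediate from $-1\notin Q_\lambda$ for every $\lambda$) and the antisymmetry step, which requires the small algebraic trick $-1=(-b)(b^{-1})^2$ to deduce $Q\cap(-Q)=\{0\}$ from $-1\notin Q$.
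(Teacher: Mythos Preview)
Your proof is correct and follows essentially the same approach as the paper: Zorn's lemma to produce a maximal proper cone $Q$, then the previous proposition to show $Q\cup(-Q)=F$. The paper omits your final verification that $Q$ defines an ordering because it has already remarked, just before this proposition, that a proper cone satisfying property (4) is the set of nonnegative elements of an ordering.
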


\begin{proof} The set of proper cones is partially ordered by inclusion. Any chain of proper cones has a maximal element, which is the union of the cones in the chain.  By Zorn's lemma there is a maximal proper cone $Q$. We have to prove that $Q\cup -Q =F$. Take $a\in F$. If $a\notin Q$ then $Q[-a]$ is a proper cone that contains $Q$. As  $Q$ is maximal, we get $Q= Q[-a]$, hence $-a \in Q$ as required.   
\end{proof}

Next theorem  is a characterization of real fields.

\begin{thm} Let $F$ be a field. The following assertions are equivalent:
\begin{enumerate}
\item $F$ is real.
\item $F$ has a proper cone.
\item $ -1 \notin \sum F^2$
\item For all $x_1, \ldots, x_n \in F$
$$\sum_i x_i^2 =0 \Longrightarrow x_1= \dots = x_n=0$$.
\end{enumerate}
\end{thm}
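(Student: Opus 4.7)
The plan is to establish the cycle (1) $\Rightarrow$ (2) $\Rightarrow$ (3) $\Rightarrow$ (4) $\Rightarrow$ (3) $\Rightarrow$ (2) $\Rightarrow$ (1), leveraging what is already proved in the excerpt (namely that a field possessing a proper cone is real, and that $\sum F^2$ enjoys properties (1) and (2) of a cone).

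First I would dispatch (1) $\Rightarrow$ (2): if $F$ is real, fix an ordering and let $P=\{x\in F:x\geq0\}$; the four properties listed before the theorem show at once that $P$ is a proper cone. The implication (2) $\Rightarrow$ (1) is the proposition already proved above the theorem via Zorn's lemma, so nothing new is needed there.

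Next I would handle (2) $\Leftrightarrow$ (3) by observing that $\Sigma:=\sum F^2$ automatically satisfies conditions (1) and (2) in the definition of a cone, so it is a proper cone precisely when $-1\notin\Sigma$. Hence if $-1\notin\sum F^2$, then $\Sigma$ itself witnesses (2). Conversely, if $F$ has any proper cone $P$, then $F^2\subset P$ forces $\sum F^2\subset P$ (using closure under addition), and since $-1\notin P$ we obtain $-1\notin\sum F^2$.

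Finally I would settle (3) $\Leftrightarrow$ (4). For (4) $\Rightarrow$ (3): if $-1=\sum_{i=1}^n y_i^2$ for some $y_i\in F$, then $1^2+\sum_{i=1}^n y_i^2=0$ is a nontrivial relation of the type forbidden by (4). For the converse (3) $\Rightarrow$ (4), assume $\sum_{i=1}^n x_i^2=0$ and that some $x_j\neq0$; dividing by $x_j^2$ yields
\[
-1=\sum_{i\neq j}\bigl(x_i/x_j\bigr)^2\in\sum F^2,
\]
contradicting (3). This closes the circle of equivalences. The argument is essentially formal bookkeeping; the only mildly subtle point, already carried out in the excerpt, is the Zorn's lemma extension of a proper cone to an ordering, so there is no genuine obstacle left in the present statement.
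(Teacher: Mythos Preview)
Your proof is correct and follows essentially the same route as the paper: the paper's argument is the terse one-liner ``(1) implies (2) and (3). Also (3) implies (4) and (4) implies (3), which implies (2). Finally (2) implies (1) by the previous proposition,'' and you have simply spelled out each of these implications with the natural arguments (the ordering gives a proper cone, $\sum F^2$ is a proper cone when $-1\notin\sum F^2$, and the division-by-$x_j^2$ trick for (3) $\Leftrightarrow$ (4)).
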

\begin{proof}  (1) implies (2) and (3).  Also (3) implies (4)
and  (4) implies (3), which  implies (2). Finally (2) implies (1) by the previous proposition. 
\end{proof}

Sums of squares are caracterized by the following theorem.

\begin{thm}\label{AS} Let $F$ be a real field. Then $\sum F^2$ is precisely the set of elements that are not negative with respect to all orderings of $F$.
\end{thm}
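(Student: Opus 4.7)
The plan is to prove the two inclusions separately. The easy direction is that every sum of squares lies in every proper cone: if $a=\sum b_i^{2}$ and $P$ is the set of non-negative elements of an ordering, then each $b_i^{2}\in P$ (since squares are always non-negative), so $a\in P$ and in particular $a$ is not negative with respect to that ordering. This gives one inclusion immediately.

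For the converse I would argue by contrapositive. Suppose $a\in F$ is not a sum of squares; I want to produce an ordering in which $a<0$, equivalently, a proper cone containing $-a$. The natural candidate is
$$
T = \sum F^{2} + (-a)\sum F^{2} = \{\,x - ay : x,y\in\sum F^{2}\,\}.
$$
I first would check that $T$ satisfies the cone axioms (closed under sum and product, and containing every square): closure under sum is immediate, and for products one expands $(x_{1}-ay_{1})(x_{2}-ay_{2}) = (x_{1}x_{2}+a^{2}y_{1}y_{2}) - a(x_{1}y_{2}+x_{2}y_{1})$, which has the required form because $\sum F^{2}$ is itself closed under sum and product. Taking $y=0$ shows $\sum F^{2}\subset T$, hence $F^{2}\subset T$.

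The heart of the argument is to show $T$ is proper, i.e.\ $-1\notin T$. Suppose on the contrary $-1 = x - ay$ with $x,y\in\sum F^{2}$. If $y=0$ then $-1=x\in\sum F^{2}$, contradicting that $F$ is real. If $y\neq0$, solving gives
$$
a = \frac{1+x}{y} = \frac{y(1+x)}{y^{2}}.
$$
Writing $y=\sum b_{i}^{2}$ and $1+x=\sum c_{j}^{2}$ (note $1+x\in\sum F^{2}$ since $1$ is a square), one has $y(1+x)=\sum_{i,j}(b_{i}c_{j})^{2}$, and therefore $a=\sum_{i,j}(b_{i}c_{j}/y)^{2}\in\sum F^{2}$, contradicting the hypothesis on $a$. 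Hence $T$ is a proper cone.

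Having a proper cone containing $-a$, I would invoke Zorn's lemma exactly as in the material preceding the theorem: the family of proper cones containing $T$ is inductively ordered by inclusion, so it has a maximal element $Q$. The argument already given in the excerpt (that for any $b\notin Q$ one has $-b\in Q$, using the adjunction $Q[-b]$) shows $Q\cup(-Q)=F$, so $Q$ is the positive cone of an ordering. Since $-a\in T\subset Q$, in this ordering $a\leq 0$; and since $a\neq 0$ (as $0=0^{2}\in\sum F^{2}$ while $a\notin\sum F^{2}$), we actually get $a<0$. Thus $a$ is negative with respect to this ordering, completing the contrapositive. The only delicate step is the product computation showing $T$ is closed under multiplication and the manipulation of $a=y(1+x)/y^{2}$ to land back in $\sum F^{2}$; everything else is bookkeeping and the Zorn's lemma argument already available in the text.
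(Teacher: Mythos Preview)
Your proof is correct and follows the same approach as the paper: the paper simply invokes the earlier proposition that $(\sum F^{2})[-a]$ is a proper cone when $a\notin\sum F^{2}$, and then extends it to an ordering via Zorn, whereas you unpack and reprove those two steps in detail. The substance and logical structure are identical.
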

\begin{proof} We know that $\sum F^2 \subset \bigcap P$. We have to prove the 
converse inclusion. Assume $a \in \bigcap P \setminus \sum F^2$. Since the last is a proper cone,  $ (\sum F^2)[-a]$ is also a proper cone which is contained in an ordering where $a$ is negative, which is a contradiction.
\end{proof}

Another important notion from Artin-Schreier theory is the one of {\em real closed field}.

\begin{defn}
A real field $F$ is {\em real closed} if no algebraic extension of $F$ is real.
\end{defn}

A caracterization of real closed fields is the following.

\begin{thm}\label{realclosed} For a field $F$ the following assertions are equivalent:
\begin{enumerate}
\item $F$ is real closed.
\item $F$ admits a unique ordering whose positive elements are squares and all polinomials of odd degree have a root in $F$.
\item $F$ is real and the algebraic extension $\displaystyle F[i]  = \frac{F[x]}{(x^2 +1)}$ is algebraically closed.\footnote{As usual we denote $i$ the imaginary unit, that is, a square root of $-1$}
\end{enumerate}
\end{thm}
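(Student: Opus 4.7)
The plan is to prove the cyclic chain $(1)\Rightarrow(2)\Rightarrow(3)\Rightarrow(1)$. The central algebraic tool is the same one that underlies Theorem \ref{AS}: in a real field, any relation of the form $\sum_{i} c_i^2=0$ forces each $c_i=0$, and the set of sums of squares $\sum F^2$ is a proper cone contained in every ordering. I will use this repeatedly, together with elementary Galois theory for the step into $F[i]$.

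For $(1)\Rightarrow(2)$, the key lemma will be that for every nonzero $a\in F$, exactly one of $\{a,-a\}$ is a square. If $a$ is not a square then $F[\sqrt{a}]$ is a proper algebraic extension of the real closed field $F$, hence not real, so $-1$ is a sum of squares in $F[\sqrt{a}]$. Expanding $-1=\sum(x_i+y_i\sqrt{a})^2$ and separating the ``rational'' and ``$\sqrt{a}$'' parts yields $1+\sum x_i^2+a\sum y_i^2=0$ with $\sum y_i^2\neq 0$ (otherwise $-1\in\sum F^2$, contradicting that $F$ is real), so $-a$ is a sum of squares in $F$. Combined with the Brahmagupta-type identity $(c^2+d^2)(e^2+f^2)=(ce+df)^2+(cf-de)^2$ and the reality of $F$, this shows that every nonzero sum of squares is itself a square, and that $a,-a$ cannot both be squares. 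Hence the set $P$ of squares is a positive cone, and uniqueness is immediate because any ordering must contain all squares. For odd-degree polynomials I will induct on the degree $n$: pick an irreducible factor $q$ of odd degree $m$, and if $m\geq 3$ use that $F[x]/(q)$ is not real to write $1+\sum p_i(x)^2=q(x)r(x)$ with $\deg p_i<m$; a parity count shows $\deg r$ is odd and $\leq m-2$, so the inductive hypothesis gives $r(\alpha)=0$ for some $\alpha\in F$, producing $-1=\sum p_i(\alpha)^2$ and contradicting reality.

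For $(2)\Rightarrow(3)$, I will verify $F[i]$ is algebraically closed via Galois theory. First, a direct square-root computation in $F[i]$ (solving $c^2-d^2=a$, $2cd=b$ using that non-negative elements in $F$ are squares) shows every element of $F[i]$ admits a square root, hence $F[i]$ has no quadratic extensions. Now let $L/F$ be a finite Galois extension containing $F[i]$, with Galois group $G$, and fix a Sylow $2$-subgroup $H\le G$; the fixed field $L^H$ has odd degree over $F$, so by the primitive element theorem it is generated by a root of an irreducible polynomial of odd degree, which by (2) must be linear. Thus $L^H=F$ and $G=H$ is a $2$-group. The subgroup $\mathrm{Gal}(L/F[i])$ is also a $2$-group, and if nontrivial it would have an index-$2$ subgroup, giving a quadratic extension of $F[i]$, which is impossible. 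Hence $L=F[i]$, and since every algebraic extension of $F$ embeds into some such $L$, $F[i]$ is algebraically closed.

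The implication $(3)\Rightarrow(1)$ is then essentially formal: any algebraic extension $K$ of $F$ embeds into the algebraic closure $F[i]$, so $[K:F]\in\{1,2\}$, and any proper such $K$ is isomorphic to $F[i]$, which contains $i$ with $i^2=-1$ and is therefore not real. I anticipate the main obstacle to be the $(1)\Rightarrow(2)$ direction---specifically, upgrading ``every nonzero element or its negative is a sum of squares'' to ``every nonzero element or its negative is a square,'' which requires the sum-of-squares-is-a-square lemma, and then the inductive parity argument for odd-degree polynomials. The Galois-theoretic step $(2)\Rightarrow(3)$ is standard but requires the two separate inputs of (2) (odd-degree roots and square roots of positives) to be used in distinct places.
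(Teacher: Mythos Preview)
Your proof is correct and follows the classical Artin--Schreier approach. The paper, however, does not prove this theorem at all: it is stated as background in the review of Artin--Schreier theory at the start of Chapter 4, and the bibliographic notes refer the reader to standard algebra texts such as Lang for the full development. So there is no paper proof to compare against; your argument is essentially the textbook one (the square/sum-of-squares dichotomy via non-reality of $F[\sqrt{a}]$, the parity induction for odd-degree roots, and the Sylow/$2$-group Galois argument for algebraic closedness of $F[i]$), and it would serve perfectly well as a self-contained proof were one wanted here.
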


Of course $\R$ is  a real closed field, but it is  not the smallest. The
smallest  is $\R_{alg}$, the field of real algebraic numbers, which is  real  closed because  $ \R_{alg}[i]  =
\C_{alg}$ that is the algebraic closure of $\Q$.

Real closed  fields and  $\R$ share several  results as  zeroes theorem,
Rolle's theorem and all their  consequences, in particular Sturm's theorem
holds true for real closed  fields. Theorem \ref{realclosed} implies that
 polynomial in one variable with coefficients in a real  closed field 
factorize into irreducible factors in the usual way  as in $\R[x]$.

All  real  fields  equipped  with  an ordering  admit  a  real  closed
algebraic  extension. More  precisely  Artin and  Schreier proved  the
following result.

\begin{thm}  Let $F$  be  a  real field  and  $\beta$  an ordering  on
  $F$. There exists an algebraic extension  $R$ of $F$ which is a real
  closed  field and  whose ordering  extends $\beta$.  Moreover, it  is
  unique up to isomorphisms that preserve the ordering.
\end{thm}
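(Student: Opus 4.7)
The plan is to prove existence by a Zorn's lemma argument inside a fixed algebraic closure, and uniqueness by a transfer principle that matches roots of $F$-polynomials across two real closures using the intrinsic characterization of the ordering.

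For existence, fix an algebraic closure $\overline F$ of $F$ and consider the family $\mathcal F$ of pairs $(E,\alpha)$ with $F\subseteq E\subseteq \overline F$ algebraic over $F$ and $\alpha$ an ordering on $E$ restricting to $\beta$ on $F$. Partially order $\mathcal F$ by $(E,\alpha)\leq (E',\alpha')$ iff $E\subseteq E'$ and $\alpha'\cap E=\alpha$; since any chain has the obvious union as upper bound and $(F,\beta)\in\mathcal F$, Zorn's lemma yields a maximal $(R,\tilde\beta)$. To see $R$ is real closed, I would invoke the characterization in Theorem \ref{realclosed}: it suffices to show that every positive element of $R$ is a square in $R$ and that every odd-degree polynomial in $R[x]$ has a root in $R$. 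Both facts reduce to a single extension lemma: if $(E,\alpha)$ is any ordered real field sitting between $F$ and $\overline F$ and $a\in E$ is $\alpha$-positive, then $E(\sqrt a)$ is real and $\alpha$ extends to it (by the cone construction $P[\sqrt a]$); similarly, any irreducible odd-degree $p\in E[x]$ generates an extension $E[x]/(p)$ that is real, because $-1$ being a sum of squares there would, by descent on the degree (Artin's argument), force the same relation in $E$. Applying these to $R$ and using maximality gives the two properties and hence real-closedness.

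For uniqueness, let $R_1,R_2$ be two real closures of $(F,\beta)$. By Theorem \ref{realclosed}(2) the ordering on each $R_i$ is intrinsic (positive $=$ nonzero square), so any field embedding $R_1\hookrightarrow R_2$ fixing $F$ pointwise automatically preserves the orderings; hence it suffices to build such an embedding and show it is surjective. The construction proceeds by Zorn over partial $F$-embeddings $\varphi:E\to R_2$, $F\subseteq E\subseteq R_1$, compatible with the orderings. To extend a maximal such $\varphi$ to any $\alpha\in R_1$ algebraic over $E$ with minimal polynomial $p\in E[x]$, I would use Sturm's theorem in real closed fields: the number of roots of $p$ in $R_1$ that are $\leq \alpha$ (and the number of roots of $\varphi(p)$ in $R_2$ lying in any prescribed Sturm interval) is computed by the same algebraic Sturm sequence in $E$. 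This produces a canonical root $\alpha'\in R_2$ of $\varphi(p)$ matching $\alpha$'s order-position among the roots, and $\varphi$ extends by $\alpha\mapsto\alpha'$, contradicting maximality unless $E=R_1$. A symmetric argument gives surjectivity.

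The main obstacle is the uniqueness step, specifically showing that the root-matching prescribed by Sturm's theorem is consistent with field operations, i.e.\ defines a ring homomorphism rather than just a set-map on each simple extension. The cleanest way around this is to do the induction over finite subextensions using the primitive element theorem in characteristic zero: at each stage one extends $\varphi$ to a single element $\alpha$ of $R_1$ by choosing the unique root of $\varphi(p_\alpha)$ in $R_2$ occupying the same ordinal position among the real roots, and checks that this choice is forced (and hence compatible) by the order-theoretic data carried over from $F$ via Sturm sequences. Existence is comparatively routine once the two extension lemmas (adjoining square roots of positive elements and roots of odd-degree polynomials while preserving realness) are in place.
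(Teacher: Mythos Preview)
Your proposal is correct and follows essentially the same strategy as the paper, which in fact only gives a bare sketch: Zorn's lemma on ordered algebraic extensions inside a fixed algebraic closure for existence, and for uniqueness the paper merely says it ``can be proved in the same way as for algebraic closures.'' Your version supplies the details the paper omits, namely the two extension lemmas (adjoining $\sqrt{a}$ for $a$ positive, and roots of odd-degree irreducibles) that show a maximal element is real closed, and the Sturm-based root-matching for uniqueness, which is the standard Artin--Schreier argument; the concern you flag about the root-matching defining a ring homomorphism is not an issue, since sending $\alpha$ to \emph{any} root of $\varphi(p_\alpha)$ already gives a field map $E(\alpha)\to R_2$, and the Sturm count is only needed to ensure such a root exists and that the resulting extension remains order-preserving.
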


We give just an idea of the proof.

\begin{proof}  Take an  algebraic  closure $\overline  F$  of $F$  and
  consider the  family of  real fields $K$,  equipped with  an ordering
  that extends  $\beta$, such   that  $F\subset  K\subset  \overline
  F$. Apply Zorn's lemma to this family and take a maximal element $R$.
  One verifies directly that $R$  is real closed.  Uniqueness of the real closure can be proved in the  same way as for
  algebraic closures.
\end{proof}

Next we study  rings of germs of analytic functions. We  consider only the ring $\an_n$ of germs of analytic functions at $0\in \R^n$, that is, the ring of convergent power series with coefficients in $\R$, instead of considering a point in a C-analytic space $(X,\an_X)$. The main reason is that, as we saw in Chapter 1, C-analytic spaces can be embedded into an euclidean space $\R^N$ in such a way that the imbedding is a real analytic isomorphism in a neighbourhood of a given pont $x_0$. It is not so difficult to deduce H17 for $\Oo_{X,x_0}$ once we got H17 for $\Oo_N$.

Another important remark is that what we do can be repeated verbatim for germs at $0\in R^n$ where $R$ is an arbitrary real closed field and its topology is defined by the  ordering, that is, topology on $R$ is generated by open intervals.     

As we know from Theorem \ref{AS}, to prove that a germ $f\in \an_n$ is a sum of squares in the field $\mathcal M_0$ of germs of meromorphic functions, we have to show that $f$ is positive in all orderings of $\mathcal M_0$. Next theorem, which can be considered as {\em Artin-Lang homomorphism theorem} for the local analytic case, is the crucial tool. 

\begin{thm}\label{al} Let $\beta$ be an ordering in $\mathcal M_0$ and $f\in \an_n, \ f\neq0$. Assume $f$ is the germ of an analytic  function in an open neighbourhood  $U$ of $0$. Then $f$ is positive with respect to $\beta$ if and only if in all neighbourhoods of $0$ sufficientely small there is a point $y$ such that $f$ is defined in $y$ and $f(y)>0$.
\end{thm}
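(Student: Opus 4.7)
The statement is the local-analytic counterpart of the Artin--Lang homomorphism theorem. The plan is to combine Hironaka's desingularization of $f$ (already invoked for Theorem~\ref{weakLoj}) with a specialisation step that realises the abstract ordering $\beta$ as a concrete analytic half-branch through $0$, and then to compute the sign of $f$ along that branch in the monomial coordinates produced by the desingularization. The substantive implication is ``$f>_\beta 0 \Rightarrow$ $f$ takes positive values arbitrarily close to $0$''; the converse follows by applying the same implication to $-f$, which is nonzero by hypothesis, together with the fact that $\beta$ totally orders $\mathcal M_0$.

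First I would apply Hironaka's desingularization to $f$ on a small neighbourhood $V$ of $0$: this produces a real analytic manifold $M$ of dimension $n$ and a proper surjective analytic map $\pi\colon M\to V$ such that at every $a\in\pi^{-1}(0)$ there is a chart $(y_1,\dots,y_n)$ centred at $a$ in which $f\circ\pi=v(y)\,y_1^{\alpha_1}\cdots y_n^{\alpha_n}$, with $v$ an analytic unit at $a$. This reduces the problem to the monomial model, where sign computations are transparent.

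Second I would realise $\beta$ by an analytic half-branch. Let $R$ be the real closure of $(\mathcal M_0,\beta)$. By induction on $n$, using Weierstrass preparation to present $\an_n$ as a finite extension of $\an_{n-1}[\x_n]$ and invoking the classical Artin--Lang theorem for a polynomial over the real closure of $\mathcal M_{0,n-1}$, one obtains an analytic map germ $\gamma\colon(R,0^+)\to(\R^n,0)$ with the property that for every $g\in\an_n$ the sign of $g(\gamma(t))$ for small positive $t\in R$ equals the sign of $g$ in $\beta$. Since $\pi$ is proper, $\gamma$ lifts to $\tilde\gamma\colon(R,0^+)\to M$ with $\pi\circ\tilde\gamma=\gamma$ and $\tilde\gamma(0)=a\in\pi^{-1}(0)$. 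Substituting into the monomial expression gives
\[
f(\gamma(t))\;=\;v(\tilde\gamma(t))\prod_{i=1}^{n}\tilde\gamma_i(t)^{\alpha_i},
\]
in which every factor has a definite sign for small positive $t$ and $v(\tilde\gamma(t))\to v(a)\neq 0$. Hence $f\circ\gamma$ has constant sign near $0^+$, equal to the sign of $f$ in $\beta$. If $f>_\beta 0$ then $f(\gamma(t))>0$ for all small $t>0$, so the first-order sentence ``for every $\varepsilon>0$ there exists a point at distance $<\varepsilon$ from $0$ at which $f$ is positive'' holds over $R$; Tarski's transfer principle returns the same sentence over $\R$, producing the required points.

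The main obstacle is the half-branch realisation of $\beta$, which is the technical core of the argument. The base case $n=1$ is immediate: $\an_1$ is a DVR admitting only the two orderings $t\to 0^\pm$, each given by the obvious analytic half-branch. The inductive step requires Weierstrass preparation to split off one variable, together with a careful tracking of the real closure through the resulting finite extension and a classical Artin--Lang argument for polynomials over a real closed field. The desingularization and transfer steps are essentially formal once this curve is in place.
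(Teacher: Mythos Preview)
Your core idea—realise the ordering $\beta$ by an analytic half-branch through the origin—is exactly right, and it is also the heart of the paper's proof. But the architecture you have built around it is over-engineered and in places confused.

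First, the Hironaka step is superfluous. Once you have an arc $\gamma$ with the property that the sign of $g(\gamma(t))$ for small $t>0$ equals the sign of $g$ in $\beta$ \emph{for every} $g\in\an_n$, you apply this to $g=f$ and you are done: $f(\gamma(t))>0$ for small real $t>0$ gives points arbitrarily close to $0$. No monomialisation, no lifting through $\pi$, no computation in the chart on $M$ is needed; those steps recover information you already assumed.

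Second, the Tarski transfer is unnecessary for the same reason, and your use of $R$ is muddled. You write $\gamma\colon(R,0^+)\to(\R^n,0)$ with $R$ the abstract real closure of $(\mathcal M_0,\beta)$, but an analytic arc is a map from a real interval to $\R^n$, not from $R$. What the inductive argument actually produces is a sign-preserving $\R$-algebra homomorphism $\an_n\to\R\{t\}$ (possibly after a Puiseux reparametrisation), i.e.\ $n$ convergent real power series $\gamma_1,\dots,\gamma_n$; evaluating at small real $t>0$ already yields real points, so there is nothing to transfer.

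Third, your sketch of the inductive step is too coarse. The ring $\an_n$ is not a finite extension of $\an_{n-1}[x_n]$; Weierstrass preparation only turns the finitely many functions under consideration into distinguished polynomials in $x_n$. The paper's proof (Theorem~\ref{anbn}) organises the induction via three auxiliary statements $(A_n),(B_n),(C_n)$: one locates $x_n$ among the real roots of those polynomials in the real closure of $\operatorname{Frac}\an_{n-1}$, chooses an element $\theta$ in the same root-interval, adjoins $\theta$ and the square roots certifying the sign conditions to obtain a finitely generated $\an_{n-1}$-algebra, and applies the inductive hypothesis to that algebra. The passage back uses Sturm sequences rather than a direct appeal to ``classical Artin--Lang''. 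Your outline points in this direction but does not supply these mechanisms.

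In short: drop Hironaka and Tarski transfer, fix the target of $\gamma$ to be $\R\{t\}$, and flesh out the induction along the lines of the $(A_{n-1})\Rightarrow(C_n)\Rightarrow(A_n)$ scheme; then your proof coincides with the paper's.
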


Artin-Lang homomorphism theorem has H17 as a consequence. Indeed if $f$  is negative with respect to a given ordering, there is a point $y$ close to $0$ such that $f(y) < 0$ while $f\geq 0$. 

\begin{remark}\label{al2} \hfill

\begin{enumerate}
\item The classical formulation of Artin-Lang theorem says that if we take an ordering $\beta$ on the quotient field $F$  of $\R[x_1,\ldots ,x_n]$ and an algebra  homomorphism from $\R[x_1,\ldots ,x_n]$ to the real closure of $(F, \beta)$  which respect the ordering, then there is an algebra homomorphism from $\varphi:\R[x_1,\ldots ,x_n] \to \R$ respecting the ordering. This means that for a polynomial $f$ positive with respect to $\beta$ there is a point $y= (\varphi(x_1),\ldots ,\varphi(x_n))\in \R^n$ such that $f(y) >0$. This is analogous to the statement of Theorem \ref{al}.  
\item The conclusion of Artin-Lang theorem is equivalent to the existence of  a germ of analytic arc $\gamma(t)$ at $0$ such that $f(\gamma(t)) >0$ for $t>0$. The arc  $\gamma(t)$ has $n$  components $ (\gamma_1(t),\ldots, \gamma_n(t)) \in \R\{t\}^n$.
 The quotient field of $\R\{t\}$ is a real field; an ordering on it is induced by the position of $t$ with respect to $\R$, for instance  by putting  $0<t< a$ where $a$ is any positive real number. This ordering is frequently called {\em standard ordering}. It is not real closed. 
Its real closure is the field of convergent Puiseux series $\widehat{\R\{t\}} $.
\end{enumerate}
\end{remark}

We will prove Theorem \ref{al} in a slightly different formulation. 
Consider the following statements.

\begin{itemize}
\item[$ (A_n)$] {\em Let $A$ be a finitely generated $\R$-algebra of dimension $n$ and assume that $A$ can be ordered. Let $f\in A$ be not a zero divisor. Then there is a homomorphism of $\R$-algebre $\varphi: A\to {\R\{t\}} $ such that  $\varphi(f)\neq 0$.}

\item[$(B_n)$] {\em Let $A$ be an analytic $\R$-algebra, of dimension $n$. Assume A is an integral domain and totally ordered  and let $f_1, \ldots f_k \in A\setminus\{ 0\}$. Then, there is a homomorphism of $\R$-algebre $\varphi: A\to {\R\{t\}} $ such that $\varphi (f_j) \neq 0$ for $j=1,\ldots, k$.}

\item[$ (C_n)$] {\em Let $\beta$ be an ordering in $\mathcal M_0$ and  $f_1, \ldots f_k \in \an_n$ be different from $0$. Then there is a $\R$-homomorphism $\varphi:\an_n \to {\R\{t\}} $ such that the sign of $\varphi(f_i)$ with respect to the standard ordering   of $\R\{t\}$ is the same as the sign of $f_i$ with respect to $\beta$ for $i= 1, \ldots, k$.}

\end{itemize}
An {\em analytic algebra} is
a quotient $A = \displaystyle  \frac{\an_m}{\gta}$. If $A$ is an integral
domain, the ideal $\gta$ is a prime  ideal. If $A$ is totally ordered
then  $\gta$  is  a  real  ideal.   Up  to  a  linear  change  of
coordinates, if $A$ has dimension $n$   we can assume $\an_n  \subset A$ and
$A$  is integral  over  $\an_n$.   The  quotient  field  of $A$  is
generated over the quotient field of $\an_n$ by an element $\theta \in
A$ (primitive element theorem). By Noether's normalization theorem, if $\delta \in \an_n$ is the 
discriminant of the minimal polynomial $p(T) \in \an_n[T]$ of $\theta$ then 
$\delta A \subset \an_n[\theta]$.  If $A$ is not an integral domain, up to a linear change of coordinates we can assume $\an_n  \subset A \subset K$, where is the total ring of fractions of $A$ and one has $K=\prod K_j$, where $K_j =$ frak $A/\gtp_j$ and $\gtp_1, \ldots ,\gtp_s$ are the minimal prime ideals of $A$.

Note that $(C_n)$ is Artin-Lang homomorphism theorem.

\begin{thm}\label{anbn}
For all $n>0$ one has
$$ (A_{n-1}) \Longrightarrow (C_n) \Longrightarrow (A_n), (A_n) \Longrightarrow (B_n).$$
Since $(A_0)$ and $(C_0)$  are trivial,   $(A_n),(B_n)$ and $(C_n)$ hold for all $n$.  
\end{thm}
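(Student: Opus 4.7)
The plan is to establish the three implications in the cycle $(A_{n-1})\Rightarrow(C_n)\Rightarrow(A_n)\Rightarrow(B_n)$ and then close the induction on $n$. The base cases $(A_0)$ and $(C_0)$ are trivial: a zero-dimensional finitely generated real $\R$-algebra that admits a total ordering must be $\R$ itself, and the identity is the required homomorphism. The technical heart is the step $(A_{n-1})\Rightarrow(C_n)$; the two remaining implications transfer information between finitely generated and analytic algebras of the same dimension.

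For $(A_{n-1})\Rightarrow(C_n)$, fix an ordering $\beta$ on $\mathcal{M}_0=\operatorname{frac}(\an_n)$ and nonzero germs $f_1,\ldots,f_k\in\an_n$. A generic linear change of coordinates makes every $f_i$ regular in $x_n$, after which Weierstrass Preparation yields $f_i=u_iP_i$ with $u_i$ a unit and $P_i\in\an_{n-1}[x_n]$ a distinguished polynomial. A unit $u_i=u_i(0)(1+g_i)$ with $g_i\in\mm$ satisfies $1+g_i=h_i^2$ because the binomial series for $\sqrt{1+g_i}$ converges in the $\mm$-adic topology; hence the sign of $u_i$ in any ordering equals $\operatorname{sgn}(u_i(0))$, and the sign of $f_i$ is controlled by that of $P_i(x_n)$. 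Let $R$ be the real closure of the ordered field $(\operatorname{frac}(\an_{n-1}),\beta|_{\operatorname{frac}(\an_{n-1})})$. Over $R$, each $P_i$ splits as $\prod_j(x_n-\alpha_{ij})$ times positive quadratic factors, with finitely many roots $\alpha_{ij}\in R$, so $\operatorname{sgn} P_i(x_n)$ is determined by the ordering of $x_n$ relative to the $\alpha_{ij}$. The finite set of $\alpha_{ij}$ lies in a finitely generated $\R$-subalgebra $B\subset R$ of transcendence degree $n-1$, obtained from $\R[x_1,\ldots,x_{n-1}]$ by adjoining the $\alpha_{ij}$ and inverting the relevant leading coefficients. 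Applying $(A_{n-1})$ to $B$, ordered by the restriction of $\beta$, with ``$f$'' equal to the product of all the separating data (the nonzero differences $\alpha_{ij}-\alpha_{i'j'}$, the leading coefficients of the $P_i$, and the coordinates $x_i$ themselves), produces an $\R$-homomorphism $\psi:B\to\R\{t\}$ preserving the relevant signs. One then extends $\psi$ to $\varphi:\an_n\to\R\{t\}$ by sending $x_i\mapsto\psi(x_i)$ for $i<n$ and choosing $\varphi(x_n)$ to be a convergent series whose position among the $\psi(\alpha_{ij})\in\R\{t\}$ replicates that of $x_n$ among the $\alpha_{ij}$ under $\beta$. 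The main difficulty lies in this last matching: one must construct $\varphi(x_n)$ so that it sits in $\R\{t\}$ exactly as $x_n$ sits in $R$ relative to the $\alpha_{ij}$, which requires a delicate Puiseux analysis and a careful choice of the parameter $t$; this is the heart of the proof.

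The remaining implications are more routine. For $(C_n)\Rightarrow(A_n)$, let $A$ be a finitely generated $\R$-algebra of dimension $n$ with an ordering $\beta$ and a nonzero-divisor $f$; passing to a quotient by a minimal prime we may assume $A$ is an integral domain and $f\neq 0$. Noether normalization presents $A$ as finite over $\R[y_1,\ldots,y_n]$; choose a primitive element $\theta$ with minimal polynomial $p\in\R[y_1,\ldots,y_n][T]$ and discriminant $\delta$. After a translation or inversion of coordinates placing the generic point in the real closure infinitesimally near the origin, the ordering on $\R[y_1,\ldots,y_n]$ extends to an ordering on $\mathcal{M}_0$; applying $(C_n)$ to $\delta$, $f$, and the coefficients of $p$ yields $\varphi_0:\an_n\to\R\{t\}$ preserving their signs. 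Since $\varphi_0(\delta)\neq 0$, Hensel's lemma in $\R\{t\}$ lifts $\theta$ to an element $\tilde\theta$ in a finite extension of $\R\{t\}$ and the resulting composite delivers $A\to\R\{t\}$ not vanishing on $f$. For $(A_n)\Rightarrow(B_n)$, given an ordered integral analytic algebra $A\supset\an_n$ of dimension $n$ and nonzero $f_1,\ldots,f_k\in A$, approximate the power-series coefficients of the minimal polynomial of a primitive element $\theta$ and of polynomial representations of the $f_i$ by polynomial truncations of sufficiently high order; the resulting finitely generated $\R$-subalgebra $A_0\subset A$ of dimension $n$ inherits a compatible ordering. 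Applying $(A_n)$ to $A_0$ and to $f=\prod_i f_i\cdot\delta$ produces a homomorphism $A_0\to\R\{t\}$, which Artin's approximation theorem lifts to the required $\varphi:A\to\R\{t\}$ with $\varphi(f_i)\neq 0$ for all $i$. Since $(A_0)$ and $(C_0)$ are trivial, induction through the cycle above proves $(A_n)$, $(B_n)$, $(C_n)$ for every $n\geq 0$.
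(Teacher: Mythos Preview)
Your argument has a structural gap arising from reading $(A_n)$ as a statement about finitely generated $\R$-algebras. The paper's wording is misleading, but its proof makes the intended meaning clear: the algebra $A$ in $(A_n)$ contains $\an_n$ and is finitely generated over it (note that in the proof of $(C_n)\Rightarrow(A_n)$ one writes $\an_n\subset A$ after a linear change of coordinates, and in $(A_{n-1})\Rightarrow(C_n)$ the algebra to which one applies $(A_{n-1})$ is generated over $\an_{n-1}$). With your literal reading the step $(A_{n-1})\Rightarrow(C_n)$ fails: the Weierstrass polynomials $P_i$ have coefficients in $\an_{n-1}$, so their roots $\alpha_{ij}$ are algebraic over $\operatorname{frac}(\an_{n-1})$ but generically transcendental over $\R(x_1,\dots,x_{n-1})$. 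For $n=2$ and $f_1=x_2-x_1e^{x_1}$ your algebra $B$ contains $\R[x_1,x_1e^{x_1}]$, which has Krull dimension $2$, not $n-1=1$, so $(A_{n-1})$ cannot be invoked. Even over the correct base, your claim that $\psi$ ``preserves the relevant signs'' is unjustified: $(A_{n-1})$ only guarantees $\psi(f)\neq0$. The paper encodes the signs by first choosing $\theta\in F_{n-1}$ in the same $\beta$-interval as $x_n$ and then adjoining elements $e_l$ with $\theta-\alpha_l=\pm e_l^2$ to the algebra; then each $\varphi(\theta)-\varphi(\alpha_l)=\pm\varphi(e_l)^2$ automatically has the correct sign, and one simply sets $\psi(x_n)=\varphi(\theta)$ with no Puiseux work required at this stage.

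The other two implications are also mishandled. For $(C_n)\Rightarrow(A_n)$, since $A\supset\an_n$ already, there is no need to centre an ordering of a polynomial ring near the origin or to extend it to $\mathcal M_0$; one applies $(C_n)$ directly to a list of elements of $\an_n$ consisting of the discriminant $\delta$, a suitable nonzero $h\in\an_n$ obtained via B\'ezout, and the values of a Sturm sequence for $p(T)$ at $\pm M$. Sturm's theorem over the real closed field $\widehat{\R\{t\}}$ (not Hensel's lemma, which says nothing about reality of roots) then locates a root of the specialised polynomial in the correct position, and a substitution $t^{1/k}\mapsto s$ brings it back into $\R\{t\}$. For $(A_n)\Rightarrow(B_n)$, Artin approximation is both unnecessary and misapplied (it produces analytic solutions from formal ones, not from polynomial truncations of analytic data): in the real closure of $\operatorname{frac}(A)$ each nonzero $f_j$ equals $\pm e_j^2$; adjoining the $e_j$ to $A$ yields an ordered algebra finitely generated over $\an_n$, and applying $(A_n)$ with $f=\prod_je_j$ immediately gives $\varphi$ with $\varphi(f_j)=\pm\varphi(e_j)^2\neq0$ of the correct sign.
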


\begin{proof}\hfill

$(A_{n-1}) \Longrightarrow (C_n)$. 

 Let $F_n, F_{n-1}$ be the real closures respectively  of $\mathcal M_0$ endowed with the ordering $\beta$ and of the quotient field of $\an_{n-1}$ endowed with the restriction of $\beta$. We have $f_1, \ldots f_k \in \an_n$. Up to  a linear change of coordinates we can assume they are regular with respect to the last coordinate. Hence there are Weierstrass polynomials $p_1,\ldots,p_k\in \an_{n-1}[x_n]$ and units $u_1,\ldots, u_k$ such that $f_j = u_j p_j, j=1,\cdots ,k$. The sign of $u_j$ is constant, we can assume $f_j=p_j, j =1,\ldots ,k$.
We can write $p_j(x_n) = \prod_i(x_n -\alpha_{j,i})^{n_{j,i}} \prod_l((x_n +b_{j,l})^2 +a_{j,l}^2)$. Hence the sign of $p_j$ only depends on the position of $x_n$ with respect to the  roots of $p_j$. 
 
Consider all the roots $\alpha_1< \cdots< \alpha_s$  in $F_{n-1}$ of the polynomials $p_1,\ldots, p_k$ and put $\alpha_0= -\infty, \alpha_{s+1} = +\infty$.  Assume
$\alpha_r<x_n<\alpha_{r+1}$. We can fix $\theta \in F_{n-1}$ in the same interval as $x_n$, that is, $\alpha_r< \theta <\alpha_{r+1}$. Hence $\theta - \alpha_l = \pm e_l^2$ for each $l$.

Next consider the $\R$-algebra $A$ generated by $\an_{n-1}, \theta, e_1, \ldots ,e_s, a_{j,l}, b_{j,l}, j=1,\ldots, k$. Let $f$ be the product of all these elements.  Since $A\subset F_{n-1}$, it is an integral domain, it has dimension $n-1$ because it is finitely generated over $\Oo_n$ and it is totally ordered by $\beta$. We apply $A_{n-1}$ and  we get $\varphi: A\to {\R\{t\}} $ such that $\varphi(f) \neq 0$.

Thus we  define $\psi: \an_n \to {\R\{t\}} $ such that
\begin{itemize}
  \item $\psi(x_i) = \varphi(x_i), i= 1,\ldots,n-1$.
\item  $\psi(x_n) = \varphi(\theta)$.
\end{itemize}
By construction the sign of $p_j$ is preserved as well as the one of $f_j, j=1,\ldots,k$.

\smallskip

$ \,(C_n) \Longrightarrow (A_n)$

 If $f\in A$ is not divisor of $0$, there is a $j$ such that $f\neq 0 $ mod $\gtp_j$. Thus, we can replace $A$ by $A/\gtp_j$, that is we can assume $A$ is an integral domain and $f\in A \setminus \{0\}$. After a linear change of coordinates we assume $\an_n  \subset A \subset K$, where $K$ is the quotient field of $A$ and a finite algebraic extension of the quotient field $F$ of $\Oo_n$. Let $\theta$ be a generator of $K$ over $F$ and denote $p(T)$ the minimal polynomial of $\theta$.
If $\delta$ is the discriminant of $p(T)$,   
then $\delta f= q(\theta)$, where $q\in \an_n[T]$ is a polynomial of degree smaller than the degree of $p$.  Since $p$ is irreducible $q$ and $p$ are coprime. By Bezout's theorem in $F[T]$ one has  $a(T)q(T) + b(T)p(T) =h$ where $h\in \an_n$ is not $0$. Substituting $T=\theta$ one gets $a(\theta) \delta f= h\neq 0$. So to find $\varphi$ such that  $\varphi(f) \neq 0$ it is enough to have $\varphi (\delta h)\neq 0$ and that $\varphi(p)$ gets a root in ${\R\{t\}}$. To do this we use Sturm's theorem, but the price is that the root will be in the real closure of $\R\{t\}$, that is, the field of Puiseux series. 
Let $f_1, \ldots, f_r$ be a Sturm sequence for $p(T)$ and take $M\in \R$ such that $-M<\theta<M$. Since $p(T) \in \an_n[T]$, also the coefficients of $f_1,\ldots, f_r$  are in $\an_n$.  So we have a list of elements in $\an_n$, namely: $\delta, h, f_1(-M), \ldots, f_r(-M), f_1(M),\ldots, f_r(M)$. By $(A_n)$ there is 
$\psi:\an_n \to {\R\{t\}} $ such that all elements in the list have the same sign as their images. Since $\psi(f_1),\ldots,\psi(f_r)$ is a Sturm sequence for $\psi(p)$, it has a root $\alpha \in [-M,M] \subset \widehat{\R\{t\}} $ as $p(T)$ had the root $\theta$ in the interval $ [-M,M]$.

Thus we  define:
\begin{itemize}
\item $\varphi(x_j) = \psi(x_j), j=1\dots, n$
\item $\varphi(\delta^{-1}) = \psi(\delta)^{-1}$
\item $\varphi(\theta)= \alpha$.
\end{itemize}     
 By costruction $\varphi(f) \neq 0$. But since $\alpha \in \widehat{\R\{t\}}$, it may happen that $\varphi$ does not take values in  $\R\{t\}$. Nevertheless the series $\varphi(\theta)$ is a series in $\displaystyle
t^{\frac{1}{k}}$ for some positive integer $k$, so it is enough to compose  $\varphi$ with the map $$ \displaystyle q:\R\left\{t^{\frac{1}{k}}\right\}\longrightarrow \R\{s\}, \ q\left(t^{\frac{1}{k}}\right) = s$$
to prove $A_n$.

\smallskip

$(A_n) \Longrightarrow (B_n)$

Let $f_1, \ldots, f_k \in A$ as in $B_n$ and let $K$ be the quotient field of $A$, ordered by the total ordering of $A$. Since for $j=1,\ldots, k, f_j\neq 0$ one has $f_j =\pm e_j^2, e_j\in K$.

Let $B$ the algebra generated by $A$ and $e_1, \ldots, e_k$. By $A_n$ there is $\varphi: B\to \R\{t\}$ such that $\psi(e_j)\neq 0$ for $j=1,\ldots, k$. In particular for all $j$ one has $\varphi(f_j) = \pm (\varphi(e_j))^2$ and it has the same sign as $f_j$. 
\end{proof} 

\section{Low dimensional global case.}\label{lowdim}

In this section we consider analytic manifolds of dimension $1$ or $2$. As we will see in these cases not only the $17^{\rm th}$ Hilbert's Problem has a positive solution, but also denominators are not needed. 

\subsection{Dimension 1.}

A connected real analytic manifold of dimension $1$ is isomorphic either to the real line or to a circle $S^1$.
In the first case it is an easy exercise to prove that a real analytic function $f:\R\to\R$ such that $f(t)\geq 0 \  \forall t\in \R$ is a square in $\an (\R)$.

In the second case we can argue as follows. Let  $f: S^1\to \R$ be a positive semidefinite analytic function.  Assume $f$ is not the zero function. At any point $a\in S^1$ there is an analytic germ $g_a$ such that $f_a= g_a^2$. 
Consider the sheaf of ideals $\Ff_x = g_x\Oo_{S^1,x}$. It is coherent because, up to the sign, the square root is unique.  $\Ff$ is locally principal, hence, by a result of Coen, it is globally generated by $g_1, g_2 \in \Oo(S^1)$. Thus $g(_1^2+g_2^2)\Ff_x = f_x\Oo_{S^1,x}$. The quotient $\displaystyle \frac{f}{g_1^2+g_2^2}$ is a positive unit $u\in \Oo(S^1)$. Thus there is $v>0$ such that $u=v^2$ and $f= (vg_1)^2 + (vg_2)^2$ is a sum of two squares in $\Oo(S^1)$.

\subsection{Dimension 2.} 
Let $M$ be a real analytic manifold of dimension $2$ and $f\in \an(M)$ be a positive semidefinite  analytic function. The zero set $\ceros (f)$ will be in general an analytic curve with countably many global irreducible components and a discrete set $D$. This implies that the germ $f_a$ at a point of $M\setminus D$ is  a positive unit times a square, while at the points of $D$ it may be that two squares are needed. 

 We costruct a vector bundle of rank $2$ associated to $f$. If $D=\varnothing$ 
there is a countable  open covering $\{U_j\}_j$ of $M$ with the following properties.
\begin{itemize}
\item For all $j$ $U_j$ is connected.
\item For $j\neq l$ $U_j\cap U_l$ is connected
\item There is  $f_j\in \Oo(U_j)$ such that $f=f_j^2$ on $U_j$.
\end{itemize}
We take the $\R$-vector space $W$ generated by ${\bf e_1}$ and ${\bf e_2}$ and the quadratic form $f(t_1^2 +t_2^2)$. Since on $U_j$ the function $f$ has a well defined square root $f_j$ there is an ortonormal basis ${\bf g_{j,1}}$,  ${\bf g_{j,2}}$ such that ${\bf e_1}= f_j{\bf g_{j,1}},{\bf e_2}= f_j{\bf g_{j,2}}$ 

Consider now the vector bundle $E_f$ of rank 2 which is trivial on $U_j$ for all $j$ and whose transiction matrices  are diagonal with $\displaystyle \frac{f_j}{f_l}$ on the diagonal. As $f_j^2= f_l^2 =f$, these matrices are ortogonal. So the scalar product induced by the quadratic form induces a scalar product on the space of sections of the fiber bundle. Assume now there is a section $t:M \to E_f$ that does never vanish and that verifies $\langle t, t\rangle =1$. Consider the functions $T_1 = \langle t, {\bf e_1}\rangle, T_2 = \langle t, {\bf e_2}\rangle$. On $U_j$ we have $T_1 = t_{j,1}f_j, T_2 = t_{j,2}f_j$, so $T_1^2+T_2^2 = f(t_1^2 +t_2^2) =f$. Hence $f$ is a sum of two squares, provided $E_f$ has a never vanishing section. 

If $D\neq \varnothing$ we costruct an open covering $\{U_i\}$ of $M\setminus D$ as before. 
For a point $a\in D$ there are germs $h_a, \varepsilon_a$ such that $\varepsilon_a$ vanishes only at $a$ and $f_a = h_a^2 \varepsilon_a$. Moreover $\varepsilon_a =\alpha_a^2 + \beta_a^2$. These equalities hold true in an open neighbourhood $V_a$. We can assume $\alpha_a^2 + \beta_a^2 >0$ on $V_a \setminus \{a\}$ and that all these open sets $V_a$ are disjoint. 

We have  an open  covering $\{U_i\}_i \cup\{V_a\}_{a\in D}$ of $M$. The fiber bundle $E_f$ will be trivial on this open covering.
We have to construct the transition matrices. We consider the vector space $W$ generated by $\mbox{\bf e}_1, \mbox{\bf e}_2$ with respect to the field ${\mathcal M}(M)$ of meromorphic functions on $M$ endowed with  the quadratic form $f(t_1^2 + t_2^2)$. Hence there is a scalar product on $W$ and $\langle\mbox{\bf e}_1,\mbox{\bf e}_1\rangle = \langle\mbox{\bf e}_2,\mbox{\bf e}_2\rangle =f$

On each open set $U_j$ the function  $f$ has a square root $f_j$.        The transition matrices are costructed as follows. 

\begin{itemize}
\item On $U_k\cap U_j\quad \quad
g_{k,j} = \left( \begin{array}{cc}
\frac{f_j}{f_k}& 0\cr
0&\frac{f_j}{f_k}
\end{array} \right) $

\item On $U_j\cap V_a \quad \quad
g_{j,a}= \displaystyle \frac{h_a}{f_k} 
\left( \begin{array}{cc}
\alpha_a & \beta_a \cr
-\beta_a & \alpha_a
\end{array} \right) $

\end{itemize}

  All transition matrices are orthogonal and their determinant is 1. Hence the standard scalar product on $\R^2$ induces a well defined  Riemannian structure on $E_f$. 

The vectors $\mbox{\bf e}_1,\mbox{\bf e}_2$ define analytic sections $e_1,e_2$ of $E_f$, more precisely 
on $U_ j \ e_{1,j}(x) = (f_j(x), 0), e_{2,j}(x) = (0, f_j(x))$, while on $V_a \ e_{1,a}(x) = (h_a(x)\alpha_a(x), -h_a(x)\beta_a(x))$  and $ e_{2,a}(x) =   (h_a(x) \beta_a(x), h_a(x) \alpha_a(x))$. Moreover $\langle{e}_1,{ e}_1\rangle = \langle{ e}_2,{e}_2\rangle = f$.
\newpage
\begin{prop} \hfill
\begin{enumerate}
\item If $E_f$ has a never  vanishing analytic section, then $f $ is a sum of $2$ squares in $ \an(M)$.   
\item If $E_f$ has an analytic section which does not vanish on the zero set of $f$, then $f $ is a sum of $3$ squares in $ \an(M)$. 
\end{enumerate}
\end{prop}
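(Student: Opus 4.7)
The plan is to exploit the formal identity $T_1^2+T_2^2=\langle\sigma,\sigma\rangle\cdot f$ valid for any global analytic section $\sigma$ of $E_f$, where $T_i:=\langle\sigma,e_i\rangle\in\an(M)$. Once this identity is in hand, both parts reduce to extracting a positive analytic square root of a certain nowhere-vanishing positive analytic function, so that the denominators can be absorbed into the numerators.

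First I would establish the key identity. The functions $T_1$ and $T_2$ are globally well-defined because $\sigma,e_1,e_2$ are global sections and $\langle\cdot,\cdot\rangle$ is a global Riemannian metric on $E_f$. On a trivialization $U_j$ with $\sigma=(s_1,s_2)$ one has $T_i=s_i f_j$ so $T_1^2+T_2^2=(s_1^2+s_2^2)f_j^2=\langle\sigma,\sigma\rangle f$. On $V_a$ with $\sigma=(s_1,s_2)$, the explicit form of $e_1,e_2$ and a direct expansion give
$$T_1^2+T_2^2=h_a^2\bigl((s_1\alpha_a-s_2\beta_a)^2+(s_1\beta_a+s_2\alpha_a)^2\bigr)=h_a^2(s_1^2+s_2^2)(\alpha_a^2+\beta_a^2)=\langle\sigma,\sigma\rangle f,$$
since $h_a^2\varepsilon_a=f$. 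Thus $T_1^2+T_2^2=\langle\sigma,\sigma\rangle\cdot f$ on every chart and hence globally on $M$.

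For (1), take $\sigma=t$ a nowhere vanishing section and set $g:=\langle t,t\rangle$. Then $g>0$ everywhere on $M$, so $g$ admits a positive analytic square root $v\in\an(M)$ (as $g>0$ is analytic and never vanishes, $v=\sqrt{g}$ is real analytic). Dividing the identity by $v^2$ yields $f=(T_1/v)^2+(T_2/v)^2$, a representation by two analytic squares.

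For (2), take $\sigma=t$ not vanishing on $\ceros(f)$ and again set $g=\langle t,t\rangle$. Now $g\geq 0$ with $\ceros(g)\cap\ceros(f)=\varnothing$, hence $g+f>0$ everywhere, and admits a positive analytic square root $u\in\an(M)$ with $u^2=g+f$. Multiplying the identity by $f$ and adding $f^2$ to both sides gives
$$f\cdot u^2=f(f+g)=f^2+\langle t,t\rangle\cdot f=f^2+T_1^2+T_2^2,$$
so $f=(f/u)^2+(T_1/u)^2+(T_2/u)^2$; since $1/u\in\an(M)$ this is a representation as a sum of three analytic squares. The main obstacle is conceptual rather than computational, namely recognizing that one needs only the elementary fact that a strictly positive analytic function has a positive analytic square root in order to clear the ``denominator'' $\langle t,t\rangle$ (in (1)) or $\langle t,t\rangle+f$ (in (2)) that appears after pairing $t$ against the sections $e_1,e_2$.
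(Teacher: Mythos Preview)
Your proof is correct and follows essentially the same approach as the paper. You establish the identity $T_1^2+T_2^2=\langle\sigma,\sigma\rangle\,f$ by an explicit check on both types of trivializing charts $U_j$ and $V_a$, then clear the positive factor $\langle t,t\rangle$ (respectively $\langle t,t\rangle+f$) by extracting an analytic square root; the paper does the same, though it normalizes the section to $\langle s,s\rangle=1$ at the outset in part~(1) rather than dividing by $\sqrt{g}$ at the end, and it phrases the conclusion as ``$f=u\cdot(\text{sum of squares})$ with $u$ a positive unit'' rather than writing out the quotient directly.
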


\begin{proof} (1) Let $s = (s_1,s_2)$ be a never vanishing section of $E_f$. We can assume $<s,s> =1 = s_1^2+ s_2^2.$  Define $S_1 = \langle s, { e}_1\rangle, S_2= \langle s, e_2\rangle$. Then
there is a positive unit $u$ such that $f=u (S_1^2 +S_2^2)$. Indeed this is true where $f\neq 0$. So, fix a point $x$ such that $f(x) = 0$. Then $\displaystyle \frac{f}{S_1^2 +S_2^2} = \frac{f}{f(s_1^2+ s_2^2)} = \frac{1}{ s_1^2+ s_2^2} >0$.

(2) Let $s$ be an analytic section that does not vanish on the zero set of $f$. Define $S_1,S_2$ as before. Then, there is a positive unit $u$ such that $f= u(S_1^2 +S_2^2+f^2)$. Again this is true where $f$ does not vanish, but in any case $ \displaystyle \frac{f}{S_1^2 +S_2^2+f^2} = \frac{1}{s_1^2+ s_2^2 +f} >0$.   
\end{proof}

As a corollary we prove.

\begin{cor}\hfill

\begin{itemize}
\item Let $M$ be a not compact analytic surface and $f\in \an(M)$ be positive semidefinite. Then $f$ is a sum of $2$ squares in $\an(M)$. 
\item Let $M$ be a compact analytic surface and $f\in \an(M)$ be positive
semidefinite. Then $f$ is a sum of $3$ squares in $\an(M)$.
\end{itemize}
\end{cor}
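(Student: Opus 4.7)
The plan is to apply the proposition immediately preceding the corollary, so in both cases the task reduces to exhibiting an analytic section of the rank--$2$ oriented bundle $E_f$ with the required non-vanishing: nowhere-zero when $M$ is non-compact, and nowhere-zero on $\ceros(f)$ when $M$ is compact. The bridge from topology to analytic geometry will be the classification of real analytic vector bundles on a real analytic manifold by their topological type; this follows from the complexification $\widetilde M$ of $M$ (Theorem \ref{TognoliComplessificazione}), which is a Stein neighborhood of $M$ carrying an anti-involution, together with the Oka--Grauert principle on $\widetilde M$ and the descent to the real part via averaging under the anti-involution. Global existence/approximation of sections then comes from Theorems A and B for C-analytic spaces (Theorem \ref{ABreali}).

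First I would handle the non-compact case. Since $M$ is a non-compact surface, $H^{2}(M,\Z)=0$, so the Euler class of the oriented rank--$2$ bundle $E_f$ vanishes. Topologically $E_f$ therefore admits a nowhere-zero continuous section. By the analytic/topological classification quoted above, $E_f$ is analytically trivial on $M$, and in particular carries a nowhere-zero analytic section $s$. Applying part (1) of the proposition gives $f$ as a sum of two squares in $\an(M)$.

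For the compact case, let $Y=\ceros(f)$. Since $f\not\equiv 0$, $Y$ is a proper analytic subset of $M$, hence $\dim Y\le 1$. Any sufficiently small open neighborhood $U$ of $Y$ deformation retracts onto a $1$-dimensional CW complex, so $H^{2}(U,\Z)=0$ and by the same principle $E_f|_{U}$ admits a nowhere-zero analytic section $\sigma$. Choose a relatively compact open $V$ with $Y\subset V\Subset U$. I would then approximate $\sigma$ on $\overline V$ by a global analytic section $s$ of $E_f$ on $M$: this uses Runge-type approximation (Theorem \ref{Runge}) applied on an invariant Stein neighborhood of $\widetilde M$ to the coherent sheaf of sections of the complexified bundle, keeping only the invariant part of the approximant to land back in $\an(M)$. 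Because $Y$ is compact and $\sigma$ is nowhere-zero on $Y$, a fine enough approximation $s$ is still nowhere-zero on $Y$. Applying part (2) of the proposition then gives $f$ as a sum of three squares in $\an(M)$.

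The main obstacle is the analytic-versus-topological classification of $E_f$, i.e.\ realizing the topological section obtained from the vanishing of the Euler class by an \emph{analytic} one. Once this is in place (through the Stein complexification, where Grauert's theorem is available, followed by taking invariant parts under the anti-involution provided by Proposition \ref{antinvolution}), the remaining obstruction theory — Euler class computations on a non-compact $2$-manifold and on a neighborhood of a $1$-complex — is routine, and the passage from a local section to a global one is the usual Theorem A/Runge machinery already developed in the monograph.
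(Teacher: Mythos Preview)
For the non-compact case your argument matches the paper's: $E_f$ has transition matrices in $SO(2)$, hence is oriented; $H^2(M,\Z)=0$ on a non-compact surface kills the Euler class; topological triviality then upgrades to analytic triviality, yielding a nowhere-zero analytic section.

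For the compact case your Runge step has a gap. To approximate the local analytic section $\sigma\in H^0(U,E_f)$ by a global one on $M$ you need $(\widetilde M,\widetilde U)$ to be a \emph{Runge pair}: the $\Oo(\widetilde M)$-convex hull of every compact in $\widetilde U$ must remain in $\widetilde U$. Merely having $\widetilde U$ Stein is not sufficient---the annulus $\{\tfrac12<|z|<2\}$ is Stein in $\C$, yet $1/z$ is not uniformly approximable on $|z|=1$ by entire functions. For a tubular neighborhood of an arbitrary analytic curve $Y$ in a compact surface there is no reason the Runge condition holds, and enlarging $U$ to contain its holomorphic hull may destroy the retraction onto a $1$-complex that you used to trivialize $E_f|_U$ in the first place.

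The paper avoids this entirely and argues more directly: since $\dim Y\le 1$ and $E_f$ has rank $2$, elementary obstruction theory produces a \emph{smooth} section of $E_f$ on all of $M$ that is nowhere zero on $Y$; then Whitney approximation (analytic sections are dense in $C^\infty(M,E_f)$ for the compact-open topology, $M$ being compact) gives an analytic section still nonvanishing on the compact set $Y$. No Runge condition is needed, and part~(2) of the preceding proposition applies.
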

\begin{proof}
If $M$ is not compact and dim $M =2$, rank $2$ bundles on $M$ are trivial and admit analytic never vanishing sections.
If $M$ is compact
 consider the vector bundle $E_f$. Since the zeroset of $f$ has at most dimension $1$, one can  construct a smooth section $\sigma$ that does not vanish on $\ceros(f)$. Then it is enough to approximate $\sigma$ by an analytic section $s$ on the compact open topology of $C^\infty(M, E_f)$. 
\end{proof}

\section{Pytagoras number for curves and surfaces.}\label{pitagora} 

\begin{defn} For a ring $R$ the {\em Pytagoras number} $p(R)$ is the minimum integer $p$ such that all sums of squares in $R$ can be written as sums of at most $p$ squares or $p(R) =\infty$ if such minimum does not exist. 
\end{defn}

For instance, what we got in Section 2 is that for a non-singular connected real analytic curve $X$ either $p(\Oo(X)) =1$ if $X$ is not compact or $p(\Oo(X)) =2$ if $X$ is compact. Also we get either $p(\Oo(X)) =2$ or $p(\Oo(X))=3$ if $X$ is a connected non-singular analytic surface which is  either not compact or compact.

In this section we racall what happens for singular curves or surfaces.

We begin by singular curves. Remember that the structural sheaf $\Oo_X$ of an analytic curve is coherent and that its normalization is not singular. Since the normalization $\check X$ and $X$ share the same ring of meromorphic functions, applying what we got for non-singular curves, we get $p(\mathcal M(X))=1$ if all connected components of $\check X$ are not compact, while $p(\mathcal M(X))=2$
if some of them are compact. 

So we look at $p(\Oo(X))$.
First of all there is a characterization of singular curves for which H17 holds true without denominators, namely.

\begin{prop}
Let $X$ be a real analytic curve. Then the following conditions are equivalent.
\begin{enumerate}
\item H17 holds in $\Oo(X)$ without denominators. 
\item H17 holds in $\Oo_{X,x}$ for all $x\in X$ without denominators.
\item Each germ $X_x$ is a finite union of nonsingular independent branches.
\end{enumerate} 
\end{prop}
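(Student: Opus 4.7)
The plan is to split the three conditions into two equivalences: the local--global equivalence (1)$\Leftrightarrow$(2), which uses the coherence of $\Oo_X$ together with the fact that $\dim X=1$ makes the bad set of $f$ discrete, and the local equivalence (2)$\Leftrightarrow$(3), which is a statement about curve analytic algebras.

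For (1)$\Rightarrow$(2) I would argue by localization: given a positive semidefinite germ $f_x\in\Oo_{X,x}$, extend it to a section on a neighborhood $U$ of $x$, then glue to a positive semidefinite $F\in\Oo(X)$ whose germ at $x$ is $f_x$ (multiply the extension by a positive analytic function that ``bumps down'' near $x$, constructed by running the same argument used in Chapter 1 to produce global strongly convex functions, adapted to the real curve). A representation $F=\sum_j G_j^2$ in $\Oo(X)$ then restricts to the required local representation of $f_x$. The converse (2)$\Rightarrow$(1) is the substantial direction: given positive semidefinite $f\in\Oo(X)$, the zero set $\ceros(f)$ is discrete since $\dim X=1$ and $f$ does not vanish on a component; outside $\ceros(f)$ the argument for nonsingular curves in Section~\ref{lowdim} exhibits $f$ as a sum of two squares of meromorphic functions whose only possible poles lie in $\ceros(f)$. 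At each point $p\in\ceros(f)$ use (2) to write $f_p=\sum_{j=1}^{p(p)}g_{j,p}^2$; then assemble a global representation by a finite open cover argument, using that only finitely many points of $\ceros(f)$ lie in each relatively compact open set, together with the Theorem~A/B machinery of Chapter~1 applied to the coherent sheaf of ``square-root data'' to extend local squares to global ones. The technical step here is showing that the finite $p$ obtained at each point can be absorbed into a uniform global bound; this is where the classical curve identity $(a^2+b^2)(c^2+d^2)=(ac-bd)^2+(ad+bc)^2$ (Brahmagupta--Fibonacci) is used to keep the number of squares bounded.

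For (3)$\Rightarrow$(2) I would pick coordinates so that the $k$ nonsingular independent branches $X_{i,x}$ are the coordinate axes through $0$, whence $\Oo_{X,x}$ embeds into $\prod_{i=1}^k\R\{t_i\}$ as the subring of $k$-tuples with equal constant term. A positive semidefinite $f\in\Oo_{X,x}$ restricts on each branch to a positive semidefinite element of $\R\{t_i\}$, hence to a square $h_i^2$ since positive units in $\R\{t\}$ are squares and the order of vanishing is even. Independence of branches lets me prescribe the tuple $(\pm h_1,\ldots,\pm h_k)$ by an element $g\in\Oo_{X,x}$ provided the $h_i(0)$ are chosen compatibly; if $h_i(0)\neq 0$ for all $i$ one choice of signs gives $f=g^2$, and otherwise a fixed small number of additional squares (absorbing the cases where $h_i(0)=0$ by the Brahmagupta identity on each branch separately) yields a representation of $f$ as a sum of squares in $\Oo_{X,x}$.

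For (2)$\Rightarrow$(3), I would prove the contrapositive by producing explicit positive semidefinite germs requiring denominators in two situations. If some branch $X_{i,x}$ is singular, let $\pi:\check{X}_{i,x}\to X_{i,x}$ be its normalization (which is nonsingular of dimension one) and let $\gamma(t)$ be the conductor exponent; a suitable monomial in $\Oo_{X_i,x}$ whose pullback to $\R\{t\}$ has odd order at some step below $2\gamma(t)$ is positive semidefinite on $X_{i,x}$ but cannot be written as a sum of squares in $\Oo_{X_i,x}$, because pulling back to the normalization yields an element whose orders on different squared summands would have to match an odd total order. If all branches are nonsingular but two of them, say $X_{1,x}$ and $X_{2,x}$, share a tangent line, then $\Oo_{X_1\cup X_2,x}$ contains an element of the form $f=(\ell^2,\ell^2)$ where $\ell$ is a uniformizer common to both branches in the shared tangent direction; any representation $f=\sum g_j^2$ would force each $g_j$ to vanish to the same order on both branches, contradicting a valuation computation on the branches' difference, which shows a denominator is unavoidable. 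The main obstacle is this last verification: one must rule out \emph{all} finite sums of squares, not just one, which is handled by a valuation-theoretic orders argument along the two branches analogous to the one used in Section~\ref{rilo} when comparing the real and \L ojasiewicz radicals.
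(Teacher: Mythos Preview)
The paper does not prove this proposition; it is stated without proof and deferred to the reference \cite{abfr2}. So there is no ``paper's own proof'' to compare against, and I evaluate your sketch on its merits.

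There is a concrete error in your $\neg(3)\Rightarrow\neg(2)$ argument for tangent branches: the element $f=(\ell^2,\ell^2)$ you propose is literally the square of $\ell$ in the ring, so it is a sum of squares. A correct witness for, say, $X=\{y(y-x^2)=0\}\subset\R^2$ is the restriction of $x^2-y$: identifying $\Oo_{X,0}$ with $\{(a,b)\in\R\{t\}^2:a(0)=b(0),\ a'(0)=b'(0)\}$, this is the element $(t^2,0)$, which is psd, but any SOS decomposition forces all second components to vanish, hence all first components to lie in $t^2\R\{t\}$, giving order $\ge 4$ on the first branch---contradiction. Your singular-branch case is also misstated: psd germs on a smooth branch have \emph{even} order, never odd. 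The actual obstruction is the value semigroup $\Gamma\subsetneq\N$: taking the Frobenius number $g\notin\Gamma$, one has $2g\in\Gamma$, and any element of order $2g$ with positive leading coefficient is psd but not SOS since every square has order in $2\Gamma\not\ni 2g$.

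You also miss the main structural simplification: under (3), every psd germ is in fact a \emph{single square}. Writing $\Oo_{X,x}\cong\{(a_1,\ldots,a_k)\in\prod\R\{t_i\}:a_i(0)\text{ all equal}\}$, a psd $(a_i)$ has $a_i=b_i^2$ with $b_i(0)=+\sqrt{a_i(0)}$, and $(b_1,\ldots,b_k)$ lies in the ring; so $p(\Oo_{X,x})=1$. This kills the need for your Brahmagupta bookkeeping in (3)$\Rightarrow$(2) and is the real entry point for (3)$\Rightarrow$(1): one lifts $f$ to the smooth normalization $\check X$, writes $f\circ\pi$ as a sum of at most two squares there, and descends. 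Your ``finite open cover'' argument for (2)$\Rightarrow$(1) does not work---$X$ need not be compact and the singular locus can be infinite---and Brahmagupta is irrelevant since it controls products, not the count of squares at a single germ. Finally, your (1)$\Rightarrow$(2) step (``extend the psd germ to a global psd $F$ with $F_x=f_x$'') is not generally possible: on an irreducible curve any analytic correction with vanishing germ at $x$ is identically zero. The workable route is the contrapositive: the ``not SOS'' obstruction for $f_x$ is an order condition stable under perturbations in $\mathfrak m_x^N$ for large $N$; extending $f_x$ to $F$ and adding $K=h^N A$ with $h$ a positive equation of $\{x\}$ and analytic $A>\max(0,-F)/h^N$ yields a global psd $G=F+K$ whose germ at $x$ is still not SOS.
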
 

Hence for singular curves whose singularities are more involved, denominators are needed. Concerning Pytagoras number, one has the following result.

\begin{thm} Let $X$ be a real analytic curve. Then $p(\Oo(X)) =\displaystyle \sup_{x\in X}p(\Oo_{X,x}) +\varepsilon$, where $\varepsilon$  can be $0$ or $1$.
\end{thm}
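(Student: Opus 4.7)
Set $p := \sup_{x\in X} p(\Oo_{X,x})$. The first task is to verify that $p$ is finite, which is known for real analytic curves since the local Pythagoras number is controlled by the number and shape of the branches of the germ $X_x$ (and in particular is bounded by a function of the local embedding dimension and the number of irreducible components, both of which are uniformly bounded on $X$). The plan then has three parts: the lower bound, the upper bound, and the analysis of the error $\varepsilon$.

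For the lower bound $p(\Oo(X))\geq p$, I would show that for each $x_0\in X$ there exists a global sum of squares in $\Oo(X)$ whose germ at $x_0$ requires exactly $p_{x_0}:=p(\Oo_{X,x_0})$ squares. One picks a germ $h_{x_0}\in\Oo_{X,x_0}$ realizing $p_{x_0}$, represents it by an analytic function $\tilde h$ on some open neighbourhood, and then uses Theorems A and B for the C-analytic curve $X$ (Theorem \ref{ABreali}) to extend $\tilde h$ to a global analytic function whose germ at $x_0$ is still $h_{x_0}$; any global representation as a sum of squares restricts to a local one, so no fewer than $p_{x_0}$ squares can suffice. Taking the sup over $x$ yields $p(\Oo(X))\geq p$.

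For the upper bound I would imitate the vector bundle construction used in Section \ref{lowdim} for surfaces, but adapted to rank $p$. Given a positive semidefinite $f\in\Oo(X)$, cover $X$ by open sets $\{U_j\}$ on which $f=\sum_{i=1}^p f_{j,i}^{\,2}$ (padding with zeros where the local number of squares is less than $p$). On each overlap $U_j\cap U_l$ the two $p$-tuples $(f_{j,1},\ldots,f_{j,p})$ and $(f_{l,1},\ldots,f_{l,p})$ have the same square sum $f$; after passing to the field of meromorphic functions and using the Pfister-type theory of quadratic forms over $\mathcal{M}(X)$ (which behaves well because this field is real and of transcendence degree $1$, so its Pythagoras number is controlled), one obtains an orthogonal transition cocycle $g_{jl}\in O(p)$. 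This defines a rank-$p$ orthogonal analytic vector bundle $E_f\to X$ together with distinguished sections $e_1,\ldots,e_p$ satisfying $\langle e_i,e_i\rangle=f$. A global orthonormal section $s$ of $E_f$ produces, via $S_i=\langle s,e_i\rangle$, functions with $f=u\,(S_1^2+\cdots+S_p^2)$ for a positive unit $u$, and hence $f$ as a sum of $p$ squares in $\Oo(X)$; a section that merely does not vanish on $\ceros(f)$ gives $f=u(S_1^2+\cdots+S_p^2+f^2)$, i.e.\ a sum of $p+1$ squares.

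The final step, and the one I expect to be the main obstacle, is the analysis of when $E_f$ admits a nonvanishing orthonormal section versus only a section nonvanishing on $\ceros(f)$. This is a purely topological/cohomological question about orthogonal rank-$p$ bundles on the one-dimensional C-analytic space $X$: the obstructions live in $H^1$ with coefficients related to $O(p)$, and by the classification of analytic vector bundles on curves they are controlled by the compact part of $X$ together with the behaviour along the singular locus. The dichotomy $\varepsilon\in\{0,1\}$ reflects exactly the two possibilities: when every rank-$p$ orthogonal bundle on (the relevant components of) $X$ admits a nowhere vanishing section, $f$ is a sum of $p$ squares and $\varepsilon=0$; otherwise the construction of a smooth section nonvanishing only on $\ceros(f)$, followed by analytic approximation (using that $\ceros(f)$ is discrete or at most $0$-dimensional in a curve, so generic sections avoid it), yields a representation with $p+1$ squares and $\varepsilon=1$. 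Combining the lower bound with these two alternative upper bounds gives the claimed equality.
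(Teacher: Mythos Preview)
First, note that the paper does not actually prove this theorem: it states the result and refers to \cite{abfr2} in the bibliographic notes for the proof. So there is no in-paper argument to compare against directly.

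Your lower bound is essentially right, with one clarification: you should extend each summand of a local witness $h_{x_0}=\sum a_i^2$ separately (via Theorem~B) and then form $\sum A_i^2$ globally; extending $\tilde h$ as a single function would not guarantee that the extension is a global sum of squares.

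The upper bound, however, has a genuine gap. The rank-$2$ bundle in \S\ref{lowdim} works because the local data are either $f=f_j^2$ (so transitions are the scalars $f_j/f_l=\pm1$) or $f=h_a^2(\alpha_a^2+\beta_a^2)$ (so transitions are explicit $2\times2$ rotations coming from complex multiplication). In both cases the transition matrices are \emph{canonical}, and the cocycle condition $g_{jk}g_{kl}=g_{jl}$ holds automatically. For an arbitrary $p$ there is no such canonical choice: two $p$-tuples with the same square sum are related by \emph{some} orthogonal matrix over each local ring $\Oo_{X,y}$ (Witt extension, since $2$ is a unit), but the choice is highly non-unique and nothing forces compatibility on triple overlaps. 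Your appeal to ``Pfister-type theory over $\mathcal M(X)$'' does not address this: Pfister's formulae concern \emph{products} of sums of $2^r$ squares and say nothing about relating two representations of the \emph{same} element by an analytic orthogonal matrix, let alone about cocycles. Without a cocycle there is no bundle $E_f$, and the rest of the argument cannot start.

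One can try to salvage the idea by choosing a very specific cover---the smooth locus $U_0$ (on which $f$ is a single square, since $p(\R\{t\})=1$) together with small disjoint neighbourhoods $U_x$ of the singular points---so that nontrivial triple overlaps disappear. One must then build $M_x\in O\bigl(p,\Oo(U_0\cap U_x)\bigr)$ branch by branch (showing the quotients $f_{x,i}/g_0$ are analytic and can be completed to an orthogonal frame on each contractible arc) and analyse the resulting bundle on $X$, whose components may have compact normalization; the $\varepsilon\in\{0,1\}$ comes out of that analysis. None of this is in your plan, and it is precisely the substance of the proof in \cite{abfr2}.
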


Concerning singular analytic surfaces we consider only a coherent analytic surface $X$, because in this case  the normalization map $\pi:\check X\to X$ is surjective, so that both spaces share the ring of meromorphic functions $\mathcal M(X)$. Then we get.

\begin{thm} \hfill
\begin{enumerate}
\item  Let $X_x$ be a germ of  real analytic surface and $f\in \Oo(X_x)$ be a positive semidefinite function germ. Then there are analytic germs $g, h_1,h_2, h_3, h_4$ such that 
$$ g^2f = h_1^2 +h_2^2+h_3^2+ h_4^2$$
and $\{g=0\}\subset \{f=0\}$ 
 \item Let $f\in \Oo(X)$ be a positive semidefinite function on a normal analytic surface $X$.Then, there are analytic function  $g, h_1,h_2, h_3, h_4, h_5 \in \Oo(X)$ such that 
$$ g^2f = h_1^2 +h_2^2+h_3^2+ h_4^2 +h_5^2$$
and $\{g=0\}\subset \{f=0\}$  
\end{enumerate}
\end{thm}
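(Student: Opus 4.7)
The proof divides naturally into the field-theoretic local bound and a sheaf-theoretic globalization.

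For part (1), let $K=\mathcal{M}(X_x)$ denote the field of meromorphic germs at $x$ on the real analytic surface $X$; this is a formally real field with $\operatorname{tr.deg}_{\R}K=2$. I would first apply the local Hilbert $17^{\text{th}}$ problem solution obtained from the Artin--Lang homomorphism theorem (Theorem \ref{al}) combined with the Artin--Schreier characterization (Theorem \ref{AS}): since $f$ is positive semidefinite, it is positive in every ordering of $K$, hence a sum of squares in $K$. Next I would invoke Pfister's theorem on the Pythagoras number of function fields over real closed fields, which gives $p(K)\leq 2^{\operatorname{tr.deg}_{\R}K}=4$. Thus $f=\sum_{i=1}^4(h_i/g)^2$ in $K$; clearing the common denominator yields $g^2f=h_1^2+h_2^2+h_3^2+h_4^2$ in $\Oo(X_x)$. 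To enforce $\ceros(g)\subset\ceros(f)$, I would factor from $g$ any irreducible divisor whose zero germ is not contained in $\{f=0\}$: such a factor, if present, would force the right hand side to vanish identically along a curve where $f\neq 0$, contradicting the reality of $f$; the UFD property of the two-dimensional local analytic ring $\Oo(X_x)$ makes this cancellation clean.

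For part (2), the strategy globalizes the local decomposition by a vector-bundle construction patterned on Section \ref{lowdim}. Using (1) on a suitable open covering $\{U_\alpha\}$ of the normal analytic surface $X$, the meromorphic data $(h_{1,\alpha},\ldots,h_{4,\alpha})$ with $\sum_i h_{i,\alpha}^2=g_\alpha^2f$ assemble into a rank-$4$ analytic vector bundle $E_f$ over $X$, with orthogonal transition functions preserving a scalar product for which the sections $e_1,\ldots,e_4$ induced by the standard basis satisfy $\langle e_i,e_i\rangle=f$. Exactly as in the smooth case, a global analytic section $s$ of $E_f$ not vanishing on $\ceros(f)$ produces a representation $f=u(S_1^2+S_2^2+S_3^2+S_4^2+f^2)$ with $u$ a positive unit and $S_i=\langle s,e_i\rangle$ meromorphic; after multiplying by a suitable square-unit this yields the desired sum of $5$ squares of meromorphic functions (the extra square, arising as $(gf)^2$, is the same ``+1 trick'' that produced $3$ squares from $2$ in the smooth compact case). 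Existence of the section is secured by approximating a smooth section avoiding $\ceros(f)$ by an analytic one via Whitney's approximation theorem, applied inside the Stein setting provided by Theorem \ref{intornist}; the rank-$5$ target (one more than the codimension of the bad set) guarantees generic avoidance.

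The main obstacle lies in combining these two steps on a possibly singular surface. Pfister's bound is purely field-theoretic and gives no information on where the denominator $g$ vanishes, so the local cleanup requires exploiting the two-dimensional factorial structure of $\Oo(X_x)$; this is delicate at non-normal points but works at every point for the statement as given. Globally, the role of normality is essential: it forces $\Sing(X)$ to have codimension $\geq 2$, so the rank-$4$ bundle $E_f$ constructed on the regular locus extends coherently across the singular set by Hartogs/Theorem A (Theorem \ref{ABreali}), and the candidate analytic section can be controlled near $\Sing(X)$ without losing the non-vanishing property along $\ceros(f)$. The precise control of this extension and the verification that the patching cocycle indeed defines a genuine analytic bundle on all of $X$ (not merely on $\Reg(X)$) is the technically most demanding step.
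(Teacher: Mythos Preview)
The paper does not prove this theorem; it is stated in Section~\ref{pitagora} as a known result, with the proof deferred to the references \cite{abfr1} and \cite{abfr2} in the end-of-chapter notes. So there is no in-text argument to compare against, and I can only assess your proposal on its own merits.

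Your approach to part~(1) contains a fundamental error. The field $K=\mathcal{M}(X_x)$ of meromorphic germs on a real analytic surface does \emph{not} have transcendence degree $2$ over $\R$: already $\R\{x\}\subset\Oo(X_x)$ contains $x,\,e^x,\,e^{e^x},\ldots$, which are algebraically independent over $\R$, so $\operatorname{tr.deg}_\R K=\infty$. Pfister's bound $p(K)\le 2^{\operatorname{tr.deg}_\R K}$ therefore yields nothing. The number $2$ you have in mind is the Krull dimension of $\Oo(X_x)$, not the transcendence degree of its fraction field, and Pfister's function-field theorem does not transfer along that invariant. The bound $4$ in \cite{abfr1} is obtained by genuinely different techniques---an analysis of fans and orderings on two-dimensional excellent henselian local rings, together with resolution of singularities---not by a reduction to finitely generated function fields. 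Your denominator-control step is separately problematic: you invoke the UFD property of $\Oo(X_x)$, but this fails at singular points of $X$ (height-one primes need not be principal), and the theorem is stated for arbitrary surface germs. The cancellation you describe simply does not go through there.

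For part~(2) your bundle-theoretic outline and the ``$+1$ trick'' producing the fifth square are in the right spirit, and this is indeed how the globalization in \cite{abfr1} proceeds once the local bound is in hand. But the construction rests entirely on the local input from part~(1), which you have not secured; and even granting that, the extension of the orthogonal cocycle across $\Sing(X)$ requires more than a Hartogs-type appeal to Theorem~\ref{ABreali}---one must show the transition functions extend as \emph{orthogonal} matrices, not merely as analytic maps, and this is where the normality hypothesis does real work in the cited paper.
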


\begin{cor} The Pythagoras number $p(\mathcal M(X)) =5$ for all coherent real analytic surfaces.
\end{cor}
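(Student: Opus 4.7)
The plan is to establish both $p(\mathcal{M}(X))\leq 5$ and $p(\mathcal{M}(X))\geq 5$. The upper bound is essentially a direct application of part~(2) of the preceding theorem once one reduces to the normal case by normalization; the lower bound is the more delicate step and requires producing an explicit sum of squares in $\mathcal{M}(X)$ that cannot be written with fewer than five summands.

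For the upper bound, coherence of $X$ gives by Proposition~\ref{cohe} that the normalization $p\colon\check{X}\to X$ is surjective, and as remarked at the outset of Section~\ref{pitagora} this forces $\mathcal{M}(X)\cong\mathcal{M}(\check{X})$; since $\check{X}$ is normal I may replace $X$ by $\check{X}$. Given any sum of squares $f=\sum_{i=1}^{r}(p_i/q_i)^2\in\mathcal{M}(X)$ with $p_i,q_i\in\Oo(X)$, set $q=\prod_{j=1}^{r}q_j$. Then
$$
q^2 f\;=\;\sum_{i=1}^{r}\Bigl(p_i\prod_{j\neq i}q_j\Bigr)^2\;\in\;\Oo(X)
$$
is positive semidefinite, and part~(2) of the preceding theorem supplies $g,h_1,\dots,h_5\in\Oo(X)$ with $g^2(q^2f)=h_1^2+\cdots+h_5^2$ and $\{g=0\}\subset\{q^2f=0\}$, whence
$$
f\;=\;\Bigl(\frac{h_1}{gq}\Bigr)^2+\cdots+\Bigl(\frac{h_5}{gq}\Bigr)^2
$$
displays $f$ as a sum of five squares in $\mathcal{M}(X)$.

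For the lower bound I would fix a regular point $x_0\in X$ of local dimension two, choose analytic coordinates $(u,v)$ on a small neighborhood $U$, and transport to $X$ a positive semidefinite polynomial $P(u,v)\in\R[u,v]$ (a Motzkin-type form, or a suitable perturbation thereof) known to require five squares in the appropriate local analytic field attached to $x_0$. Using Cartan's Theorem~B applied to the coherent sheaf $\Oo_X$, I would extend $P$, possibly after multiplying by a large positive equation of $X\setminus U$, to a global positive semidefinite $f\in\Oo(X)$ whose germ at $x_0$ coincides with $P$. Any purported four-square representation $f=\sum_{i=1}^{4}(A_i/B)^2$ in $\mathcal{M}(X)$ would, on passing to germs at $x_0$ and then to the completion $\widehat{\Oo}_{X,x_0}\cong\R[[u,v]]$, yield a four-square representation of $P$ in $\operatorname{Frac}(\widehat{\Oo}_{X,x_0})$, contradicting the known sharp local lower bound.

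The principal obstacle is precisely this lower bound, and within it the local-to-global transfer: an $\mathcal{M}(X)$-identity need not \emph{a priori} restrict to the completion because $B$ may vanish at $x_0$. The resolution combines Artin-style analytic approximation (so that formal and convergent four-square identities coincide) with a judicious choice of the test polynomial $P$, one whose zero set allows one to arrange that the denominator $B$ can be normalized to a unit at $x_0$ after clearing the common factor along $\{P=0\}$.
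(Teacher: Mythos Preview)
Your upper-bound argument is correct and is exactly the paper's proof: reduce to the normalization $\check X$ via $\mathcal{M}(X)=\mathcal{M}(\check X)$ (valid because $X$ is coherent, so the normalization map is surjective on real points), then invoke part~(2) of the preceding theorem on the normal surface $\check X$. The paper's one-line proof is precisely this reduction, and nothing more.

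The lower-bound argument, however, cannot work. You plan to localize at a \emph{regular} point $x_0$ and exhibit a form needing five squares in the quotient field of $\R\{u,v\}$ (or of $\R[[u,v]]$). But the Pythagoras number of that local field is at most~$2$: any sum of squares there clears to a positive semidefinite germ, and every such germ is already a sum of two squares in $\R\{u,v\}$ (see \cite{br}, recalled in the notes to this chapter). Even at a singular point, part~(1) of the theorem gives a local bound of~$4$. So no Motzkin-type form ever needs five squares locally, and no local obstruction can force $p(\mathcal{M}(X))\geq 5$. In fact the literal equality fails: for $X=\R^2$ the results of Section~\ref{lowdim} give $p(\mathcal{M}(\R^2))=2$. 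The corollary should be read as $p(\mathcal{M}(X))\leq 5$, sharp for suitable singular surfaces (see \cite{abfr1}); the paper's proof addresses only this inequality.
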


\begin{proof} Indeed this is true for $\check X$ and $\mathcal M(\check X)= \mathcal M(X)$.
\end{proof}

\section{Excellent rings: some recalls.}\label{excellent}
The notion of excellent ring is rather relevant in the theory of real analytic functions. Here we recall definition and main properties.
 The general reference for this section is \cite{mat1} where proofs can be found.
\newpage
\begin{defn}\label{exc} A noetherian ring $A$ is \em excellent \rm if it has the following properties.
\begin{enumerate}
\item $A$ is {\em universally catenary}, that is, for any finitely generated $A$-algebra $B$ and any prime ideals $\gtq \subset \gtp$ in $B$ one has $\mbox{\rm ht}(\gtp) =\mbox{\rm ht}(\gtp/\gtq) +\mbox{\rm ht}(\gtq)$.   
\item For any localization $B=A_{\gtp}$ at a prime ideal $\gtp\subset A$ the homomorphism $B\to \widehat B$ of $B$ in its completion is regular. 
\item For any finitely generated $A$-algebra $B$, there exist $h_1,\ldots ,h_s\in B$ such that $B_{\gtp}$ is regular if and only if $h_i\notin \gtp$ for some $i$, that is, the {\em singular locus} is the zeroset of $h_1,\ldots ,h_s$.  
\end{enumerate}
\end{defn}
Several properties of excellent rings are collected in the following proposition.
 
\begin{prop}\label{excpro}\hfill

\begin{enumerate}
\item Let $S$ be a multiplicative subset of the ring $A$. If $A$ is excellent, then $S^{-1}A$ is excellent.
\item Let $B$ a finitely generated $A$-algebra. If $A$ is excellent, also $B$
is excellent.
\item Let $A$ be an excellent domain and $B$ a domain and a finitely generated $A$-algebra. Let $K,L$ be their fields of fractions. Let $\gtq \subset B$ be a prime ideal with residue field $k(\gtq)$, consider the prime ideal  $\gtp= \gtq\cap A$ with residue field $k(\gtp)$. Then,
$$\mbox{\rm ht}(\gtq)=\mbox{\rm ht}(\gtp) + \mbox{\rm deg}(L:K) -\mbox{\rm deg}(k(\gtq):k(\gtp)$$
\item Let $A$ be a local excellent ring. Then, its adic completion $\widehat A$ is reduced (resp. normal, regular) if and only if $A$ is reduced (resp. normal, regular).
\item Let $A$ be a local excellent ring and $\widehat A$ be its adic completion.
Let $\gtq$ be a prime ideal in $\widehat A$ and set $\gtp= \gtq\cap A$. Then, if $A_{\gtp}$ is regular $\widehat A_{\gtq}$ is regular too.
\item Let $A$ be an excellent domain. Then, there is $h\in A, h\neq 0$ such that $A_ {\gtp}$ is regular whenever $h\notin \gtp$. 
\item Let $A$ be an excellent domain with quotient field $K$ and let $L$ be a finite extension of $K$. Let $\check A$ be the integral closure of $A$ in $L$. Then $\check A$ is a finite $A$-module. 
\item Let $A$ be a local noetherian ring that verifies Condition 1 in Definition \ref{exc} and such that the homomorphism $A\to \widehat A$ is regular. Then $A$ is excellent.
\item The quotient of an excellent ring $A$ by an ideal is excellent.
\end{enumerate}
\end{prop}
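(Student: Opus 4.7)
Since the three defining conditions of excellence in Definition \ref{exc} (universal catenarity, regularity of the formal--fibre maps $A_\gtp\to\widehat{A_\gtp}$, and closedness of the singular locus in finitely generated algebras) are largely independent, my strategy is to verify each of (1)--(9) by showing that the relevant defining condition is inherited under the operation in question, relying on the standard machinery of \cite{mat1} (essentially Chapter~13 and \S32--33) whenever the technicalities become heavy.

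For (1), (2) and (9) I would check each defining condition separately. Universal catenarity is already phrased over an arbitrary finitely generated $A$-algebra, so it is automatic for $S^{-1}A$ (any test algebra over $S^{-1}A$ is a localization of a test algebra over $A$) and for any finitely generated $B$-algebra when $B$ is finitely generated over $A$; quotients are a special case of finite-type extensions, giving (9). Regularity of the completion maps transfers because regular ring homomorphisms are stable under localization and under tensor products with finite-type algebras, see \cite[Thm.~32.1]{mat1}. Closedness of the singular locus transfers because the defining class of test algebras is stable under the same operations.

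Properties (4) and (5) both follow from the fact that $A\to\widehat A$ is faithfully flat with geometrically regular fibres. The descent direction (reducedness/normality/regularity of $\widehat A$ implies the same for $A$) is a general property of faithfully flat extensions. The ascent direction uses that a flat extension with regular (resp.\ reduced, normal) fibres preserves each of these properties on the total space, via Serre's criteria $R_0+S_1$ and $R_1+S_2$. For (5), the same reasoning applied to $A_\gtp\to\widehat A_\gtq$ (which is flat and has regular fibre at $\gtq$ by excellence of $A_\gtp$, itself a consequence of (1)) yields that $\widehat A_\gtq$ is regular whenever $A_\gtp$ is. The dimension formula (3) is the standard Cohen--Seidenberg/Nagata formula for universally catenary integral domains, reducing to $\trg(L:K)=\trg(k(\gtq):k(\gtp))+\hgt(\gtq)-\hgt(\gtp)$, see \cite[Thm.~15.6]{mat1}. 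Property (6) is simply condition (iii) of Definition \ref{exc} applied to $B=A$, together with the remark that at the generic point $A$ is regular so at least one of the exhibited $h_i$ is non-zero. Property (7) is the Nagata ($N$-2) property, derived from excellence in \cite[Thm.~33.3]{mat1}.

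The main obstacle will be (8), which asserts that in the local case the closed--singular--locus condition is \emph{automatic} once one has universal catenarity and regularity of $A\to\widehat A$. The plan is to apply Nagata's criterion: in a local noetherian ring whose formal fibres are geometrically regular, the singular locus in any finitely generated $A$-algebra is closed. The proof, which I would follow step by step from \cite[Thm.~32.2]{mat1}, proceeds by reducing to a finite type extension, passing to the completion where the singular locus is a closed subscheme cut out by finitely many minors of a Jacobian, and descending to $A$ using faithful flatness together with the regularity of the formal fibres to ensure the minors already detect non-regularity before completion. Once (8) is in hand, the other items become essentially formal consequences of the defining conditions.
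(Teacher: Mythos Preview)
The paper does not actually prove this proposition: the section opens with ``The general reference for this section is \cite{mat1} where proofs can be found,'' and Proposition~\ref{excpro} is stated as a list of standard facts without any argument. Your proposal therefore does considerably more than the paper --- you sketch genuine proofs for each item with specific references into \cite{mat1}, whereas the paper simply quotes the results. In that sense your approach is entirely compatible with (and more detailed than) what the authors intend.
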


One can prove that rings in the following list are excellent rings.
\begin{itemize} 
\item Finitely generated algebras over a field.
\item Analytic algebras over $\R$ or $\C$.
\item Formal or algebraic algebras over a field $k$ of characteristic $0$,  i.e homomorphic images of the ring of formal power series or the ring of algebraic power series.  
\end{itemize}

Next theorem is a characterization of excellent rings.

\begin{thm}\label{derivazioni}
Let $k$  be a field of characteristic 0 and let $A$ be a regular ring containing $k$ such that 
\begin{enumerate}
\item  For any maximal ideal $\gtm \subset A$ the extension $k \to A/\gtm $ is algebraic. 
\item All maximal ideals of $A$ have the same height, say $n$.
\item There exist derivations $D_1, \ldots ,D_n$ of $A$ over $k$ and elements $x_1,\ldots ,x_n \in A$ such that $ D_ix_j= \delta_{ij}$
\end{enumerate} 
then $A$ is an excellent ring.
\end{thm}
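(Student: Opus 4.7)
The plan is to verify, one by one, the three conditions of Definition \ref{exc} that define an excellent ring, using the derivations $D_1, \ldots, D_n$ together with their dual elements $x_1, \ldots, x_n$ as a ``differential coordinate system'' on $A$.

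Condition (1), universally catenary, should be automatic: since $A$ is regular it is Cohen-Macaulay, and every Cohen-Macaulay ring is universally catenary (a classical fact, see Matsumura).

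For condition (2) I would first reduce to a maximal ideal $\gtm$, since regularity of $A_\gtm \to \widehat{A_\gtm}$ implies the same for any localization at a prime contained in $\gtm$. Setting $K = A/\gtm$, hypothesis (1) together with $\mathrm{char}\,k = 0$ gives that $K/k$ is separable algebraic, so Cohen's structure theorem provides a lift of $K$ inside $\widehat{A_\gtm}$. Next I would choose elements $\tilde{a}_i \in \widehat{A_\gtm}$ with $\tilde{a}_i \equiv x_i \pmod{\gtm}$ and set $y_i = x_i - \tilde{a}_i$; by hypothesis (2), namely $\hgt(\gtm) = n$, combined with regularity of $A$, the $y_i$ should form a regular system of parameters of $\widehat{A_\gtm}$. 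Extending each $D_i$ to a continuous derivation $\widehat{D}_i$ on $\widehat{A_\gtm}$ and verifying $\widehat{D}_i(y_j) = \delta_{ij}$, the formal Taylor expansion
\[
f \;=\; \sum_{\alpha \in \N^n} \frac{\widehat{D}^\alpha(f)(\mathbf{0})}{\alpha!}\, y^\alpha
\]
will provide an isomorphism $\widehat{A_\gtm} \cong K[[y_1, \ldots, y_n]]$. This power series presentation, together with the characteristic-zero hypothesis, ensures that $A_\gtm \to \widehat{A_\gtm}$ has geometrically regular fibres, hence is a regular morphism; by Proposition \ref{excpro}(8) the local ring $A_\gtm$ is then itself excellent.

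For condition (3), given a finitely generated $A$-algebra $B = A[t_1, \ldots, t_r]/(f_1, \ldots, f_s)$, I would extend each $D_i$ to a derivation of $A[t_1, \ldots, t_r]$ by $D_i(t_j) = 0$. The resulting $n + r$ derivations have $x_1, \ldots, x_n, t_1, \ldots, t_r$ as a dual basis, and a Jacobian criterion adapted to this differential structure should show that a prime $\gtq \subset B$ corresponds to a regular local ring $B_\gtq$ precisely when the matrix $(D_i f_k \mid \partial f_k/\partial t_j)$ has the expected rank modulo $\gtq$. The singular locus will then be cut out by the vanishing of finitely many minors, establishing condition (3).

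The hardest part will be step (2): one needs to construct the lift $K \hookrightarrow \widehat{A_\gtm}$ and the elements $\tilde{a}_i$ so that the extended derivations $\widehat{D}_i$ really compute $\delta_{ij}$ on the $y_j$, to show convergence of the Taylor series in the $\gtm$-adic topology, and finally to deduce geometric regularity of the generic formal fibre from the power series presentation. The characteristic-zero hypothesis enters decisively in this last step, as it does in making the residue extension $K/k$ separable at the very beginning.
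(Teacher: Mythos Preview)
The paper does not prove this theorem: Section~\ref{excellent} is an explicit list of recalls, with the remark that ``the general reference for this section is \cite{mat1} where proofs can be found.'' So there is nothing in the paper to compare against; what follows is an assessment of your sketch relative to the standard argument in Matsumura.

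Your three-part structure is the right one, and steps (1) and (3) are sound: regular $\Rightarrow$ Cohen--Macaulay $\Rightarrow$ universally catenary, and the Jacobian criterion with the extended derivations handles J-2. The reduction in (2) to maximal ideals is also correct, though it relies on the nontrivial fact that a local G-ring stays a G-ring under localization. The real gap is the core of step (2). Your Taylor-expansion argument yields $\widehat{A_\gtm}\cong K[[y_1,\ldots,y_n]]$, but this only confirms that the completion is a regular ring---something already known from the regularity of $A$. It does \emph{not} give regularity of the morphism $A_\gtm\to\widehat{A_\gtm}$: for that you must show that for every prime $\gtp\subset A_\gtm$ and every prime $\gtq\subset\widehat{A_\gtm}$ over it, the local ring $(\widehat{A_\gtm})_\gtq$ is regular (geometric regularity is then automatic in characteristic~$0$). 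The standard proof uses the extended derivations $\widehat D_i$ in a different way: since $A_\gtp$ is regular of height $h$, the Jacobian criterion furnishes $h$ elements of $\gtp$ whose $D$-Jacobian has rank $h$ modulo $\gtp$; this rank is preserved modulo $\gtq$, and the Jacobian criterion applied inside $\widehat{A_\gtm}$ then gives $(\widehat{A_\gtm})_\gtq$ regular. You correctly flag step (2) as the hardest, but the power-series presentation you propose is not the tool that closes it.
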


\section{Compact  zeroset.} 

As we saw in the previous sections, Artin-Lang theorem, as stated in Remark \ref{al2},   caracterizes positive elements with respect to an ordering on a real  field $F$. This theorem holds true for the ring of analytic function germs, ( Theorem \ref{anbn}), but it is not proved for analytic functions on a fixed open set. We have to use a different tool to decide whether a  function $f$ is  positive with respect  to a  given  ordering. 

Let $\sigma$ be an ordering on a real field $F$ that contains $\R$. Associated to $\sigma$ there is
a subring $W_\sigma$ of $F$ consisting of  bounded elements: $W_\sigma= \{f\in F: \exists n \in \N \ \mbox{ \rm such that } 
|f| < n\}$. It is a valuation ring with maximal ideal $\mathcal M_\sigma = \{f\in F:  |f|< \frac{1}{n}\, \forall n\in \N\}$. 

Note that the residue field of $W_\sigma$ is precisely $\R$. The residue homomorphism maps positive units onto positive real numbers and negative units onto negative real numbers. It is called {\em the place associated to} $\sigma$.

Let $M$ be a connected real analytic manifold   and let $\an(M)$  be  its ring of analytic function with quotient field  $F$. Consider  an ordering $\sigma$ on $F$.
\begin{defn}The ordering $\sigma$ is  {\em centered at the point $p\in M$} if all analytic functions that  are positive  at $p$ are positive with respect to $\sigma$. Otherwise, $\sigma$ is called {\em free}.
\end{defn}

If $\sigma$ is centered at $p$ we can consider the ring $L= \an(M)_{\gtM_p}$, where $\gtM_p$ is the maximal ideal of functions vanishing at $p$. It is a regular local ring. A system of parameters for $L$ is given by a minimal set of generators of $\gtM_p$. Also $L$ is an excellent ring.

\begin{prop} The local ring $L$ is dominated by $W_\sigma$, that is $L\subset W_\sigma$ and $\mathcal M_\sigma \cap L$ is the maximal ideal of $L$ .
\end{prop}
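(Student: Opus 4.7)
The plan is to verify the two claims separately, in both cases using only the defining property of a centered ordering: an analytic function $f\in\an(M)$ with $f(p)>0$ satisfies $f>0$ in $\sigma$.

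First I would show $L\subset W_\sigma$. Take a typical element $h/g\in L$, where $h,g\in\an(M)$ and $g\notin\gtM_p$, i.e.\ $g(p)\neq 0$. Choose a positive integer $n$ with $n^2g(p)^2>h(p)^2$; this is possible because $g(p)\neq 0$. Then the analytic function $n^2g^2-h^2$ is strictly positive at $p$, and since $\sigma$ is centered at $p$ we get $n^2g^2-h^2>0$ in $\sigma$. The function $g^2$ is a nonzero square, hence $>0$ in $\sigma$, so dividing gives $(h/g)^2<n^2$ in $\sigma$, i.e.\ $|h/g|<n$ and therefore $h/g\in W_\sigma$.

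Next I would identify $\mathcal M_\sigma\cap L$ with $\gtM_pL$, the maximal ideal of $L$. For the inclusion $\gtM_pL\subset \mathcal M_\sigma$, take $h/g\in \gtM_pL$, so $h(p)=0$ and $g(p)\neq 0$. Given any integer $m\geq 1$, apply the same trick to the function $g^2-m^2h^2$: at $p$ its value is $g(p)^2>0$, so by centering it is positive in $\sigma$, which after dividing by $g^2>0$ yields $(h/g)^2<1/m^2$ in $\sigma$, hence $|h/g|<1/m$. Since $m$ is arbitrary, $h/g\in\mathcal M_\sigma$. For the reverse inclusion, let $f=h/g\in \mathcal M_\sigma\cap L$ with $f\neq 0$ and $g(p)\neq 0$; I claim $h(p)=0$. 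Otherwise $h(p)\neq 0$, so the inverse $g/h$ again lies in $L$, hence in $W_\sigma$ by the first step, say $|g/h|<N$; but $f\in\mathcal M_\sigma\setminus\{0\}$ means $|f|<1/N$ in $\sigma$, forcing $|g/h|=1/|f|>N$, a contradiction. Therefore $h\in\gtM_p$ and $f\in\gtM_pL$.

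Putting the two containments together gives $\mathcal M_\sigma\cap L=\gtM_pL$, which is exactly the maximal ideal of the local ring $L$; combined with $L\subset W_\sigma$ this says precisely that $W_\sigma$ dominates $L$. There is no real obstacle here beyond being careful with signs and with the fact that $L$ need not be contained in $\an(M)$: the whole argument rests on the single observation that for $g\in\an(M)$ with $g(p)\neq 0$, both $g^2$ and suitable polynomials in $g$ and $h$ are strictly positive at $p$ and therefore positive in $\sigma$ by the centering hypothesis.
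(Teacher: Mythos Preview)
Your proof is correct and follows essentially the same idea as the paper's: both show that the place associated to $W_\sigma$, restricted to $L$, agrees with evaluation at $p$. The paper states this identification in one line and deduces the result; you unpack it into explicit inequality manipulations using the centering hypothesis, which is exactly what is needed to justify that line. Your version is more detailed but not a different route.
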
  

\begin{proof} The residue homomorphism in $L$ is the evaluation at $p$. But also the place associated to $W_\sigma$  when restricted to $W_\sigma$ is the evaluation at $p$. So  $f\in L \mapsto f(p)< \infty$, hence $L\subset W_\sigma$.
Also $\mathcal M_\sigma \cap L = \gtM_\sigma$ because $g\in \mathcal M_\sigma \cap L$ if and only if $g(p)=0$.
\end{proof}

We have a chain of inclusions: $L\subset \an_p \subset \widehat{\an_p}$, where $\widehat{\an_p}$ is the completion of $\Oo_p$. The first inclusion follows  taking  germs at $p$ of functions in $L$. Note that $\widehat{\an_p}$ is also the completion of $L$.
 
\begin{lem}\label{estensione} The ordering $\sigma$ extends to an ordering on $\an_p$.
\end{lem}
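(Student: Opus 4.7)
The strategy is to argue by contradiction, using the Artin--Schreier criterion (Theorem \ref{AS}), the Henselization of $L$, and Artin approximation. By Theorem \ref{AS}, $\sigma$ extends to an ordering on $\qf(\an_p)$ if and only if the preordering generated by the positive cone $P_\sigma$ of $\sigma$ inside $\qf(\an_p)$ is proper. Suppose, for a contradiction, that
\begin{equation*}
g_0^2 + a_1 g_1^2 + \cdots + a_s g_s^2 = 0 \quad \text{in } \an_p,
\end{equation*}
with $a_i\in L$ positive for $\sigma$, $g_i\in\an_p$, and $g_0\neq 0$ (having cleared denominators from an assumed identity $-1=\sum a_i(g_i/g_0)^2$ in $\qf(\an_p)$).

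The first move is to transport this relation into a setting where Artin approximation can be applied. Since the local coordinates at $p$ form a regular system of parameters both for $L$ and for $\an_p$, one has $\widehat L=\widehat{\an_p}$; the displayed identity therefore also holds in $\widehat L$. Now $L$ is excellent, being a localization of the excellent ring $\an(M)$ (Proposition \ref{excpro}(1)), so its Henselization $L^h$ is an excellent Henselian local ring whose completion is again $\widehat L$. Applying Artin approximation to $L^h$ and to the single polynomial equation $Y_0^2+a_1Y_1^2+\cdots+a_sY_sY_s^2=0$ with coefficients in $L$ (whose solution $(g_0,\dots,g_s)\in\widehat L^{s+1}$ satisfies $g_0\not\equiv 0 \bmod \gtm^c$ for some $c$), I obtain a solution $(g_0',\dots,g_s')\in(L^h)^{s+1}$ with $g_0'\neq 0$. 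This gives $-1=\sum a_i(g_i'/g_0')^2$ in $\qf(L^h)$.

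To finish, I would show that $\sigma$ admits an extension to $\qf(L^h)$, yielding the desired contradiction. The valuation ring $W_\sigma\subset F$ dominates $L$, and its Henselization $W_\sigma^h$ is a valuation ring of some algebraic extension $F'\supset F$ with unchanged residue field $\R$; in particular $F'$ is formally real and $\sigma$ extends to an ordering $\sigma'$ on $F'$. By the universal property of $L^h$, the inclusion $L\hookrightarrow W_\sigma^h$ factors through a local embedding $L^h\hookrightarrow W_\sigma^h$, so $\qf(L^h)\subset F'$; restricting $\sigma'$ produces an ordering on $\qf(L^h)$ in which each $a_i$ is positive, contradicting $-1=\sum a_i(g_i'/g_0')^2$. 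Hence no such identity exists in $\qf(\an_p)$ and $\sigma$ extends. The main obstacle will be this last step: marrying the algebraic Henselization $L^h$ (needed to invoke Artin approximation) with the order-theoretic Henselization $W_\sigma^h$ (needed to carry the ordering along an algebraic extension) via their respective universal properties. Everything else---the clearing of denominators, the equality $\widehat L=\widehat{\an_p}$, the excellence of $L$, and Artin approximation itself---is essentially formal.
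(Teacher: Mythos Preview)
Your approach is correct and takes a genuinely different route from the paper's. The paper works via the real spectrum: since $L$ and $\an_p$ are excellent local rings sharing the completion $\widehat{\an_p}$, the restriction map $\Sper(\widehat{\an_p})\to\Sper(L)$ is surjective (a theorem of Ruiz, cited in the chapter notes from \cite{abr}), so $\sigma$ lifts to an ordering on $\widehat{\an_p}$ and then restricts to $\an_p$. You instead unwind this by contradiction: push the hypothetical bad relation from $\an_p$ into $\widehat L$, Artin-approximate it down to $L^h$, and derive a contradiction by extending $\sigma$ to $\qf(L^h)$. Your route is longer but more self-contained; the paper's is a three-line appeal to a black box whose own proof, in fact, runs along lines very close to yours. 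For your last step a slightly cleaner alternative avoids $W_\sigma^h$ and Baer--Krull: pass to the real closure $R$ of $(F,\sigma)$; the convex hull of $\R$ in $R$ is a Henselian valuation ring dominating $L$, so the universal property embeds the domain $L^h$ into $R$ and you simply restrict the ordering of $R$.

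One small correction: $\an(M)$ is not excellent---it is not even Noetherian---so Proposition~\ref{excpro}(1) is not the right citation. That $L=\an(M)_{\gtM_p}$ is excellent is nonetheless true and is asserted in the paper just before the lemma; both arguments rely on it.
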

\begin{proof}In this proof we use some properties of the real spectrum of  $L =\an(M)_{\gtM_p}$ and $\widehat{\an_p}$. References can be found at the end of this chapter. Both $L$ and ${\an_p}$ are excellent local rings and share the same completion. Since the residue field of $L$ is $\R$, the ordering 
$\sigma$ induces on $\R$ its unique ordering.  the unique ordering of $\R$, which is also the residue field of the completion $\widehat{ \Oo}_p$, induces on $\widehat{ \Oo}_p$ several orderings, but the map induced by the inclusion $L\subset \widehat{\Oo_p}$ between their real spectra is surjective. It associates to every ordering of  $\widehat{ \Oo}_p$ its restriction to $\an(M)_{\gtM_p}$, thus there is an ordering   $\widehat \sigma$ that extends $\sigma$. 
  The restriction of this ordering to $\an_p$ extends $\sigma$.
\end{proof}

Next we associate to $\sigma$ an ultrafilter of closed semianalytic sets. 
Consider  for all $f\in \mathcal M_\sigma$ the closed set $f^{-1}([-\delta,\delta])$, where $\delta $ is a positive real number, and let $G_\sigma$ be the collection of these sets.

\begin{prop}\label{ultrafiltro} 
The family $G_\sigma$ has the following properties.
\begin{enumerate}
\item $\varnothing \notin G_\sigma$.
\item  If $U,V \in G_\sigma$, then $ \exists W \in G_\sigma, W\subset U\cap V$.
\item If $U,V$ are closed sets and both intersect all elements in $G_\sigma$ then $U\cap V \neq \varnothing$. 
\item Let $V_0$ be compact. If it intersects all elements in $G_\sigma$, then 
$\bigcap_{V\in G_\sigma} V= \{p_0\}$ and $p_0 \in V_0$.
\end{enumerate}   
\end{prop}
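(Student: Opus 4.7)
I verify each property in turn. For (1), I argue by contradiction: if $V_f^\delta := f^{-1}([-\delta,\delta])$ were empty for some $f\in\mathcal M_\sigma\cap\an(M)$, then $|f|>\delta$ on the connected manifold $M$, and by continuity and connectedness either $f>\delta$ or $f<-\delta$ throughout; replacing $f$ by $-f$ if necessary, $f-\delta$ is a strictly positive analytic function on $M$. Then $f-\delta>0$ in $\sigma$ (by reducing to local Artin--Lang, Theorem~\ref{al}, at a centered place of a suitable extension of $\sigma$), so $f>\delta$ in $\sigma$, contradicting $f\in\mathcal M_\sigma$. For (2), given $V_f^{\delta_1}$ and $V_g^{\delta_2}$ in $G_\sigma$, the element $h:=f^2+g^2$ lies in $\mathcal M_\sigma$ because $\mathcal M_\sigma$ is the maximal ideal of $W_\sigma$; taking $\delta:=\min\{\delta_1^2,\delta_2^2\}$, the containment $V_h^\delta\subset V_f^{\delta_1}\cap V_g^{\delta_2}$ is immediate, since $h(x)\le\delta$ forces $f(x)^2\le\delta_1^2$ and $g(x)^2\le\delta_2^2$.

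The main work lies in (3). I argue by contradiction: assume $U,V\subset M$ are closed, disjoint, and both intersect every element of $G_\sigma$. Exploiting the coherence of $\an(M)$ in an invariant Stein neighborhood of $M$ in its complexification (Theorem~\ref{AandBreal}), together with Whitney-type analytic approximation of the continuous separator guaranteed by normality of $M$, I construct $\phi\in\an(M)$ with $|\phi|\le 1/3$ on a neighborhood of $U$ and $\phi\ge 2/3$ on a neighborhood of $V$. Combining $\phi$ with a suitable $f_0\in\mathcal M_\sigma\cap\an(M)$ (which exists nontrivially by (1) and the hypothesis), I aim to produce $h\in\mathcal M_\sigma$ whose sublevel set $V_h^\delta$ for small $\delta$ avoids either $U$ or $V$, contradicting the assumption. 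The delicate point is to make $h$ simultaneously infinitesimal in $\sigma$ and genuinely separating; this is the main technical obstacle and rests on the interaction between the semianalytic structure of the sets in $G_\sigma$ and the ideal-theoretic properties of $\mathcal M_\sigma$ in $W_\sigma$.

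Finally, (4) follows cleanly once (3) is in hand. Existence of $p_0$ comes from (1)--(2): the family $\{V_0\cap V:V\in G_\sigma\}$ consists of compact sets (as $V_0$ is compact and each $V$ is closed) with the finite intersection property, so its total intersection is nonempty by compactness of $V_0$; pick $p_0$ in it. Uniqueness uses (3) and point separation: $\an(M)$ separates points of $M$ (via Theorem~\ref{AandBreal}), so if $p_0\ne p_1$ both lay in $\bigcap G_\sigma$, I could pick $\varphi\in\an(M)$ with $\varphi(p_0)=0$ and $\varphi(p_1)=1$; then $A=\{\varphi\le 1/3\}$ and $B=\{\varphi\ge 2/3\}$ are disjoint closed sets containing $p_0$ and $p_1$ respectively, hence each meets every $V\in G_\sigma$ (because $p_0,p_1$ do), contradicting (3). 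To see $\bigcap G_\sigma\subset V_0$, apply (3) to $\{q\}$ (for any $q\in\bigcap G_\sigma$; it trivially meets every $V\in G_\sigma$) and $V_0$ to conclude $q\in V_0$.
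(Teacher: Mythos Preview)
Your arguments for (2) and (4) are fine and essentially match the paper (your version of (4) is actually more carefully spelled out than the paper's). There are, however, real problems with (1) and especially (3).

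For (1), you invoke ``local Artin--Lang at a centered place of a suitable extension of $\sigma$'' to conclude $f-\delta>0$ in $\sigma$. This is both unnecessary and not obviously available at this stage of the development. The paper's argument is direct and elementary: once you know $f-\delta>0$ everywhere on $M$, the analytic function $f-\delta$ has an analytic square root on $M$, so $f-\delta$ is a nonzero square in $\an(M)$ and hence positive in \emph{every} ordering of $F$, in particular in $\sigma$. No Artin--Lang or extension argument is needed.

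For (3), you correctly build an analytic function $\phi$ separating $U$ and $V$, but you then admit that producing an element $h\in\mathcal M_\sigma$ whose sublevel set avoids one of $U,V$ is ``the main technical obstacle,'' and you do not resolve it. This is a genuine gap. The paper's trick is short and uses precisely the valuation-ring structure of $W_\sigma$: choose the separating function $f$ to be \emph{bounded} on $M$ (say $f>2\delta$ on $U$, $f<-2\delta$ on $V$, and $|f|\le 4\delta$ everywhere, achievable by Whitney approximation of a bounded smooth separator), so that automatically $f\in W_\sigma$. Let $r\in\R$ be the residue of $f$ under the place $W_\sigma\to W_\sigma/\mathcal M_\sigma\cong\R$. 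Then $f-r\in\mathcal M_\sigma$, so $W=(f-r)^{-1}([-\delta,\delta])$ lies in $G_\sigma$. On $W$ one has $f\in[r-\delta,r+\delta]$; but $f>2\delta$ on $U$ forces $r>\delta$ if $W$ meets $U$, and $f<-2\delta$ on $V$ forces $r<-\delta$ if $W$ meets $V$. These are incompatible, so $W$ misses at least one of $U,V$. Your approach of ``combining $\phi$ with a suitable $f_0\in\mathcal M_\sigma$'' by multiplication or otherwise does not give this, and without the residue subtraction the argument does not close.
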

 Note that the first two properties imply that $G_\sigma$ is a filter basis, hence it is contained in an ultrafilter of closed semianalytic sets, unique by property (3).
\begin{proof}
(1) If for some $f\in \mathcal M_\sigma$ and some $\delta$ one gets $f^{-1}([-\delta,\delta]) =\varnothing$, then 
$f$ has constant sign, assume $f(x) >\delta$ for each $x$. Then $f-\delta$ is a square, so $f>\delta$ with respect to $\sigma$. This is a contradiction because $f\in \mathcal M_\sigma$, that is, $f$ is infinitesimal with respect to $\sigma$.   

(2) If $U= f^{-1}([-\delta,\delta])$ and $V= g^{-1}([-\eta,\eta])$, take $W= (f^2 + g^2)^{-1}([-\varepsilon,\varepsilon])$, where $\varepsilon$ is the minimum between $\delta, \eta$. 

(3) Assume $U\cap V =\varnothing$. Then one can find an analytic function $f\in W_\sigma$ such that $f>2\delta$ on $U$ and $f<-2\delta$ on $V$. Let $r$ be the image of $f$ mod $\mathcal M_\sigma$ and take $W = (f-r)^{-1}([-\delta,\delta])$. Then $W$ cannot meet both $U$ and $V$.

(4) Consider the family $V_0\cap V$, where  $V\in G_\sigma$. The intersection of all elements  in this family cannot be empty because $V_0$ is compact. Hence the ultrafilter generated by $G_\sigma$ is not free and there is a unique point $p_0$ in the intersection of all the elements in the ultrafilter. But then $p_0 \in V_0$.
\end{proof}

Next result caracterizes when an analytic function $f$ is positive 
with respect to $\sigma$ in terms of $G_\sigma$.

\begin{thm}\label{units}
Let $f\in W_\sigma$ be a unit. Then $f$ is positive with respect to $\sigma$ if and only if  there is an element $V\in G_\sigma$ such that $f$ is strictly positive on $V$.
\end{thm}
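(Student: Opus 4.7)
The plan is to reduce everything to the residue $r \in \R$ of $f$ modulo the maximal ideal $\mathcal M_\sigma$ of the valuation ring $W_\sigma$. Since $f$ is a unit in $W_\sigma$, we have $r\neq 0$, and the key observation is that $f-r\in\mathcal M_\sigma$, so $f$ is positive with respect to $\sigma$ if and only if $r>0$. Thus both implications will follow from comparing the algebraic sign of $r$ with the geometric behaviour of $f$ on sets of $G_\sigma$.

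For the ``only if'' direction, I would proceed as follows. Assume $f$ is positive with respect to $\sigma$, so $r>0$. Since $f-r\in\mathcal M_\sigma$, the set
\[
V=(f-r)^{-1}\Bigl(\bigl[-\tfrac{r}{2},\tfrac{r}{2}\bigr]\Bigr)
\]
belongs by definition to $G_\sigma$. On $V$ one has $|f-r|\leq r/2$, hence $f\geq r/2>0$, which is what was required.

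For the ``if'' direction the plan is a proof by contradiction. Suppose $V\in G_\sigma$ is such that $f>0$ on $V$, but $f$ is not positive with respect to $\sigma$; since $f$ is a unit, this forces $r<0$. Then $f-r\in\mathcal M_\sigma$ again, and the set
\[
V'=(f-r)^{-1}\Bigl(\bigl[-\tfrac{|r|}{2},\tfrac{|r|}{2}\bigr]\Bigr)
\]
lies in $G_\sigma$, and on $V'$ one has $f\leq r+|r|/2=-|r|/2<0$. By property (2) of Proposition~\ref{ultrafiltro} there exists $W\in G_\sigma$ with $W\subset V\cap V'$; by property (1) such a $W$ is non-empty. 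But on $W$ the function $f$ must be simultaneously strictly positive and strictly negative, a contradiction. Therefore $r>0$ and $f$ is positive with respect to $\sigma$.

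No step looks genuinely hard; the only place requiring care is to verify that $f-r$ actually belongs to $\mathcal M_\sigma$ (which is immediate from the definition of the residue via the place associated to $\sigma$) and that the cutoff $\delta=r/2$ is compatible with the definition of $G_\sigma$ as a family of sublevel sets of elements of $\mathcal M_\sigma$. Once these are in place, properties (1) and (2) of $G_\sigma$ do all the remaining work.
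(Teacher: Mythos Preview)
Your argument is correct and follows essentially the same strategy as the paper: both proofs hinge on the residue $r=p_\sigma(f)$ and the fact that $f-r\in\mathcal M_\sigma$, and the ``only if'' direction is identical (the set $(f-r)^{-1}([-r/2,r/2])$). For the converse the paper first establishes the auxiliary claim $\bigcap_{V\in G_\sigma} f(V)=\{p_\sigma(f)\}$ and concludes $p_\sigma(f)\in f(Y)\subset(0,\infty)$, whereas you bypass this by constructing the second set $V'$ on which $f<0$ and invoking properties (1) and (2) of Proposition~\ref{ultrafiltro} directly; your route is slightly more elementary and sidesteps any concern about whether $f(V)$ is closed.
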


\begin{proof}
First of all note that a unit in $W_\sigma$ is positive if and only if its image  $p_\sigma(f) \in \R = W_\sigma/\mathcal M_\sigma$ is positive.
Consider now the image by $f$ of $G_\sigma$. We have

$$\bigcap_{V\in G_\sigma} f(V) = \{p_\sigma(f)\}$$

Indeed $f= p_\sigma(f) +g$ where $g\in \mathcal M_\sigma$, so it is enough to prove the claim for $g$. Let $R\in g(G_\sigma)$. For all $n\in \N$, we have $g^{-1}([-1/n, 1/n]) \in G_\sigma$, hence $R\cap [-1/n, 1/n] \neq \varnothing$. But $R$ is closed, so $0 = p_\sigma(g)\in R$.

Next take a positive unit $f\in W_\sigma$. The set $Y= (f- p_\sigma(f))^{-1}([-\delta,\delta]) \in G_\sigma$. Taking $\displaystyle \delta = \frac{p_\sigma(f)}{2}$ one can check that $f$ is positive on $Y$. 

Conversely, assume $f$ is positive on an element $Y\in G_\sigma$. Then $p_\sigma(f) \in f(Y)$ and $f$ is positive with respect to $\sigma$.  
\end{proof}

\begin{cor}[Ultrafilter Theorem]
Let $f\in F$. If there is $Y\in G_\sigma$ such that $f$ is strictly positive on $Y$, then $f$ is positive with respect to $\sigma$. 
 
\end{cor}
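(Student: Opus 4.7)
The plan is to reduce to Theorem \ref{units} via two successive normalizations; the residual pathology is eliminated by the compactness hypothesis implicit in the section. First I would move from the arbitrary $f\in F$ to an analytic representative. Write $f=g/h$ with $g,h\in \an(M)$, $h\neq 0$. Since $f$ is defined and strictly positive on $Y$, the denominator $h$ does not vanish there, and because $h^2$ is a nonzero square, $h^2>_\sigma 0$; hence the sign of $f$ in $\sigma$ matches the sign of $\psi:=gh\in \an(M)$, and $\psi = fh^2 > 0$ pointwise on $Y$. So it suffices to prove the statement for the global analytic function $\psi$.

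Next I would confine $\psi$ to $W_\sigma$ by introducing $v:=\psi/(1+\psi^2)\in \an(M)$, which is globally analytic since $1+\psi^2\geq 1$ is a positive unit of $\an(M)$. The algebraic identity
\[
1 - 4v^2 \;=\; \Big(\frac{1-\psi^2}{1+\psi^2}\Big)^2
\]
is a square in $F$, hence $\geq_\sigma 0$, which yields $|v|_\sigma\leq 1/2$, so $v\in W_\sigma$; since $1+\psi^2 >_\sigma 0$ the sign of $v$ in $\sigma$ coincides with the sign of $\psi$, and $v>0$ strictly on $Y$.

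The proof concludes by a case split on whether $v$ is a unit of $W_\sigma$. If $v$ is a unit, Theorem \ref{units} applied to $v$ together with the hypothesis $v>0$ on $Y\in G_\sigma$ immediately gives $v>_\sigma 0$, hence $f>_\sigma 0$. The main obstacle is the remaining case $v\in\mathcal M_\sigma$ (infinitesimal), corresponding to $\psi$ either infinitesimal or infinite in $\sigma$: the naive trick of applying Theorem \ref{units} to the units $v+\epsilon$ for real $\epsilon>0$ only yields $v+\epsilon >_\sigma 0$ for every such $\epsilon$, which still permits $v$ to be a negative infinitesimal. To rule this out I invoke Proposition \ref{ultrafiltro}(4), tacit in the title \emph{Compact zeroset}: the existence of a compact member of $G_\sigma$ forces $\bigcap_{V\in G_\sigma}V=\{p_0\}$ to be a single point; since $v\in \an(M)\cap \mathcal M_\sigma$ the set $v^{-1}([-\delta,\delta])$ belongs to $G_\sigma$ for every real $\delta>0$, so $p_0$ lies in all of them, which forces $v(p_0)=0$; but $p_0\in Y$ and $v>0$ strictly on $Y$ give $v(p_0)>0$, a contradiction. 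Hence only the units case can occur, and the proof reduces to Theorem \ref{units}.
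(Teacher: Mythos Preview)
Your reduction to a bounded element is correct and parallels the paper's one-step move $g:=f/(1+f^2)$; the extra passage through $\psi=gh$ is harmless (and has the minor advantage that your $v$ is globally analytic rather than merely meromorphic). The problem is your handling of the case $v\in\mathcal M_\sigma$.

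The compactness hypothesis you invoke is \emph{not} available. The section title ``Compact zeroset'' refers to the target Theorem~\ref{compatto}; it is not a standing assumption for every intermediate result. In fact the very next corollary applies the Ultrafilter Theorem to \emph{free} orderings, for which $G_\sigma$ has no compact member and Proposition~\ref{ultrafiltro}(4) simply does not fire. So your argument, as written, fails exactly in the case where the Corollary is actually used.

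The paper avoids the case split altogether. Having placed $g=f/(1+f^2)$ in $W_\sigma$ with $g>0$ on $Y$, it appeals to the intermediate fact established inside the proof of Theorem~\ref{units}: for every $g\in W_\sigma$ and every $Y\in G_\sigma$ one has $p_\sigma(g)\in g(Y)$ (the unit hypothesis in the \emph{statement} of Theorem~\ref{units} is used only for the forward implication, not for this containment). Since $g(Y)\subset(0,\infty)$, this forces $p_\sigma(g)>0$, so $g$ is automatically a positive unit and the infinitesimal case never occurs. That displayed fact---not compactness---is what closes the argument.
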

\begin{proof}
Consider the function $g= \displaystyle \frac{f}{1+f^2}$. Then $g$ and $f$ have the same sign with respect to $\sigma$. Now $g\in W$ and it is positive on $Y$. Hence its sign is positive.  
\end{proof}

\begin{cor} Let $f\in \an(M)$ be such that  the set $\{x\in M |\  f(x) \leq
  0\}$ is compact. Then $f$ is  positive in all orderings of $F$ whose
  associated ultrafilter is free.
\end{cor}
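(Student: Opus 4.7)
The plan is to combine property (4) of Proposition \ref{ultrafiltro} with the Ultrafilter Theorem corollary. Let $\sigma$ be an ordering of $F$ whose associated ultrafilter is free, and set $K=\{x\in M : f(x)\leq 0\}$, which is compact by hypothesis.

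First I would observe that freeness of the ultrafilter generated by $G_\sigma$ means that $\bigcap_{V\in G_\sigma}V$ is empty (or at least contains no single accumulation point). Then I would invoke property (4) of Proposition \ref{ultrafiltro} in its contrapositive form: since $K$ is compact, if $K$ were to meet every $V\in G_\sigma$, the intersection of $G_\sigma$ would reduce to a single point lying in $K$, contradicting the freeness assumption. Hence there exists some $V_0\in G_\sigma$ with $V_0\cap K=\varnothing$, which means precisely that $f(x)>0$ for every $x\in V_0$.

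Finally, I would apply the Ultrafilter Theorem corollary to conclude: since $f$ is strictly positive on the element $V_0\in G_\sigma$, it is positive with respect to $\sigma$. As $\sigma$ was an arbitrary ordering with free associated ultrafilter, this gives the claim.

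There is no real obstacle here: the two ingredients (item (4) of Proposition \ref{ultrafiltro} and the corollary of Theorem \ref{units}) combine directly. The only point requiring some care is unwinding the precise meaning of ``associated ultrafilter is free'' in the vocabulary of the preceding discussion, namely that $\bigcap_{V\in G_\sigma}V=\varnothing$, so that property (4) can be applied in contrapositive form to the compact set $K$.
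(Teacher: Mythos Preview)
Your proposal is correct and follows essentially the same approach as the paper: both argue by the contrapositive of item (4) of Proposition \ref{ultrafiltro} that some element of $G_\sigma$ misses the compact set $\{f\leq 0\}$, and then invoke the Ultrafilter Theorem corollary to conclude positivity with respect to $\sigma$.
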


\begin{proof} Let $\sigma$ be an ordering whose associated ultrafilter is free.
Then there is an element of $G_\sigma$ that does not meet the compact set  $C= \{f \leq 0\}$. Otherwisw $C$ would intersect all elements of  $G_\sigma$ which implies that all such elements would meet at a point of $C$. This is a contradiction because the ordering is free. So $f$ is strictly positive on some $Y\in G_\sigma$, hence positive with respect to $\sigma$ by the Ultrafilter Theorem.
\end{proof} 

Thus, the condition {\em  $\{f\leq 0\}$ compact} is enough to assure  positiveness with respect to free orderings. What about centered orderings?

\begin{prop}Let $f\in \an(M)$ be positive semidefinite. Then $f$ is positive with respect to any ordering of $F$ centered at some point of $M$. 
\end{prop}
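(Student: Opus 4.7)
The plan is to push the ordering $\sigma$ on $F$ up to the field of fractions $\mathcal{M}_p$ of the local ring $\an_p$ of analytic germs at $p$, and there invoke the local version of Hilbert's $17^{\text{th}}$ Problem that was already noted as a consequence of Theorem \ref{al}.

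First I would extend $\sigma$. Since $\sigma$ is centered at $p$, the proposition preceding Lemma \ref{estensione} shows that $L = \an(M)_{\gtM_p}$ is dominated by $W_\sigma$, so $\sigma$ restricts to an ordering of $L$. By Lemma \ref{estensione} this restriction extends to an ordering on $\an_p$, which, since $\an_p$ is a domain (as $M$ is a manifold), extends uniquely to an ordering $\tilde{\sigma}$ on $\mathcal{M}_p$. Because $M$ is connected, the identity principle makes the germ map $\an(M) \hookrightarrow \an_p$ injective; passing to fraction fields gives an inclusion $F \hookrightarrow \mathcal{M}_p$, and the construction of $\tilde{\sigma}$ through the common completion $\widehat{\an_p}$ ensures that $\tilde{\sigma}$ restricts to $\sigma$ on $F$.

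Next I would argue that $f_p \geq_{\tilde{\sigma}} 0$ in $\mathcal{M}_p$. The hypothesis $f \geq 0$ on $M$ means that $f_p$ has a representative which is non-negative in every neighbourhood of $p$. If $f_p$ were negative with respect to some ordering of $\mathcal{M}_p$, the local Artin--Lang theorem (Theorem \ref{al}) would produce a point $y$ arbitrarily close to $p$ at which $f(y) < 0$, contradicting positive semidefiniteness. Hence $f_p$ is non-negative with respect to every ordering of $\mathcal{M}_p$, in particular with respect to $\tilde{\sigma}$. Combining both steps, the image of $f \in F$ in $\mathcal{M}_p$ is $f_p \geq_{\tilde{\sigma}} 0$, and restricting back along $F \hookrightarrow \mathcal{M}_p$ yields $f \geq_\sigma 0$. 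There is no serious obstacle: everything is reduced to Lemma \ref{estensione} and the local Hilbert $17^{\text{th}}$ consequence of Theorem \ref{al}; the only point deserving a line of care is checking that the extended ordering on $\mathcal{M}_p$ agrees with $\sigma$ on $F$, which is immediate from how the extension is built through $\widehat{\an_p}$.
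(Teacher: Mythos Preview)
Your proposal is correct and follows essentially the same route as the paper: extend the centered ordering to $\an_p$ via Lemma \ref{estensione} and then invoke the local Artin--Lang theorem (Theorem \ref{al}) to rule out $f<0$. The paper's proof differs only cosmetically: it first disposes of the trivial case $f(p_\sigma)>0$ and, in the case $f(p_\sigma)=0$, phrases the application of Theorem \ref{al} in the forward direction (exhibiting an arc with $f>0$) rather than by contradiction; your verification that $\tilde\sigma$ restricts to $\sigma$ on $F$ is a point the paper leaves implicit.
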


\begin{proof}
Let $\sigma$ be an ordering on $F$ such that the intersection of all elements in its associated ultrafilter is a point $p_\sigma \in M$. Hence every $g\in \Oo(M)$ such that $g(p_\sigma) >0$ is positive with respect to $\sigma$. So, if $f(p_\sigma) > 0$, we are done.

If $f(p_\sigma) = 0$ we argue as follows.
Consider the local ring $\an(M)_{\mathcal N}$, where $\mathcal N$ is the maximal ideal of functions vanishing at $p_\sigma$. It embeds in the ring of germs $\an_{p_\sigma}$. We know by Lemma \ref{estensione} that the ordering $\sigma$ extends to a total ordering $\tau$  on $\an_{p_\sigma}$. Consider the germ of $f$ at $p_\sigma$.. Since $f\geq 0$ there is an arc germ $\gamma(t)$ such that $\gamma(0) = p_\sigma$ and $f(\gamma(t)) >0$ for
$t>0$. By Artin-Lang theorem the germ of $f$ is positive with respect to $\tau$. Thus  $f$ is positive with respect to $\sigma$. 
\end{proof}

We can prove the following result.

\begin{thm}\label{compatto} Let $f\in \an(M)$ be a positive semidefinite analytic function with compact zeroset. Then there are $g_0,\ldots ,g_k \in \an(M)\setminus\{0\}$ such that $g_0^2f= g_1^2+\dots +g_k^2$.   
\end{thm}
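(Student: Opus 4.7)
The strategy is the one advertised by the chapter: reduce the problem, via Artin–Schreier's criterion (Theorem \ref{AS}) applied to the quotient field $F$ of $\an(M)$, to checking that $f$ is non‑negative in every ordering $\sigma$ of $F$, and then dispatch the two possible kinds of orderings (centered vs. free) using exactly the two results proved just above the statement.

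First I would note that the hypothesis ``$f$ is positive semidefinite'' gives $\{f\le 0\}=\ceros(f)$, and this set is compact by assumption. Now pick an arbitrary ordering $\sigma$ on $F$ and look at the associated filter $G_\sigma$ of Proposition \ref{ultrafiltro}. There are two mutually exclusive cases. \emph{Case 1 (centered ordering).} If the ultrafilter generated by $G_\sigma$ is fixed, then by property (4) of Proposition \ref{ultrafiltro} it converges to a single point $p\in M$, i.e.\ $\sigma$ is centered at $p$; the proposition immediately preceding the statement then yields that $f$ is positive with respect to $\sigma$. \emph{Case 2 (free ordering).} If the associated ultrafilter is free, then by the corollary preceding the statement (applied to $f$, whose set $\{f\le0\}=\ceros(f)$ is compact) we again have that $f$ is positive with respect to $\sigma$.

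Thus $f$ is positive in every ordering of $F$, so Theorem \ref{AS} produces meromorphic elements $h_1,\dots,h_k\in F$ with $f=h_1^2+\cdots+h_k^2$. Write $h_i=p_i/q_i$ with $p_i,q_i\in\an(M)$, $q_i\not\equiv 0$ (the field $F$ is by definition the field of fractions of $\an(M)$), and set $g_0$ to be the product of the $q_i$ and $g_i=g_0 h_i\in\an(M)$. Clearing denominators gives the desired identity
\[
g_0^2\,f=g_1^2+\cdots+g_k^2,
\]
and after discarding any $g_i$ that happens to be identically zero we may assume all $g_i\ne 0$ (if $f\equiv0$ the statement is empty, so we assume $f\not\equiv 0$, which forces at least one $g_i\ne 0$).

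I do not expect any serious obstacle here: the entire substance of the argument has been distilled into the two preliminary results on centered and free orderings, and the dichotomy centered/free is guaranteed by Proposition \ref{ultrafiltro}. The only point that deserves a moment of care is verifying that the dichotomy is exhaustive, which is precisely what property (4) of that proposition gives, together with the observation that a filter whose elements have empty common intersection must have a free generated ultrafilter. Once this is noted, the rest is just Artin–Schreier plus clearing denominators.
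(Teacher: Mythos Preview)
Your proposal is correct and follows essentially the same route as the paper: split the orderings of $F$ into centered and free, handle the centered case with the proposition on centered orderings and the free case with the corollary using compactness of $\{f\le0\}=\ceros(f)$, then apply Theorem~\ref{AS} and clear denominators. The paper's proof is exactly this, stated in three lines.
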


\begin{proof} As $f$ is positive semidefinite, it is positive with respect to all orderings of $F$ that have a center. Since $\{f\leq 0\}= \{f=0\}$ is compact it is positive with respect to all free orderings of $F$. By  Theorem \ref{AS}, we conclude $f$ is a sum of squares in $F$.    
\end{proof}

\begin{remark} If $M$ is compact all orderings on the quotient field $F$ of the ring $\Oo(M)$ are centered by property (4) of their associated ultrafilters (Proposition \ref{ultrafiltro}). Hence any positive semidefinite  function is a sum of squares in $F$. The same result as Theorem \ref{compatto} is true if we consider a compact set $K\subset M$ and the ring $A$ of germs of analytic functions at $K$. 
\end{remark}

\section{Infinite sums of squares.}\label{infinito}

As we said, in the analytic case also infinite sums of squares make sense, but we have to precise what means convergence.  Let us prove that in the compact open topology of $\Oo(\R^n)$ all positive semidefinite functions are sums of a convergent series of squares without denominators.

\begin{prop}\label{fabrizio} Let $f\in \Oo(\R^n)$ be positive semidefinite. Then there is a series of squares which converges uniformely to $f$ on all compact subsets of $\R^n$.
\end{prop}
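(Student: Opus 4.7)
The strategy is a telescoping construction. Fix a compact exhaustion $K_1\subset K_2\subset\cdots$ of $\R^n$ with $K_m\subset\Int K_{m+1}$ and $\bigcup_m K_m=\R^n$. Setting $f_0:=f$, I will build inductively analytic functions $g_m\in\Oo(\R^n)$ and nonnegative residues $f_m\in\Oo(\R^n)$ with $f_{m-1}=g_m^2+f_m$ and $f_m\leq 2^{-m}$ on $K_m$. Telescoping then gives $\sum_{k=1}^N g_k^2=f-f_N$, and for any compact $K\subset\R^n$ one has $K\subset K_N$ for $N$ large, so $f-\sum_{k=1}^N g_k^2=f_N\leq 2^{-N}$ on $K$, which tends to $0$ uniformly as desired.

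The inductive step rests on the following explicit formula. Given a nonnegative $h\in\Oo(\R^n)$ and a positive constant $\varepsilon$, the function $h+\varepsilon$ is strictly positive on $\R^n$; since $\R^n$ is simply connected, $\log(h+\varepsilon)\in\Oo(\R^n)$ and hence $\sqrt{h+\varepsilon}:=\exp\bigl(\tfrac{1}{2}\log(h+\varepsilon)\bigr)\in\Oo(\R^n)$. Set
\[
g_\varepsilon:=\sqrt{h+\varepsilon}-\sqrt{\varepsilon}\in\Oo(\R^n).
\]
Expanding $g_\varepsilon^2$ one obtains the identity
\[
h=g_\varepsilon^2+2\sqrt{\varepsilon}\,g_\varepsilon,
\]
so in particular $g_\varepsilon\geq 0$, $g_\varepsilon^2\leq h$ everywhere, and the nonnegative remainder $h-g_\varepsilon^2=2\sqrt{\varepsilon}\,g_\varepsilon$ is bounded above by $2\sqrt{\varepsilon(M+\varepsilon)}$ on any compact set where $h\leq M$; this quantity is as small as we wish once $\varepsilon$ is small enough.

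Applying this with $h=f_{m-1}$, $M_m:=\max_{K_m}f_{m-1}$ and picking a positive constant $\varepsilon_m$ so small that $2\sqrt{\varepsilon_m(M_m+\varepsilon_m)}\leq 2^{-m}$, I define $g_m:=\sqrt{f_{m-1}+\varepsilon_m}-\sqrt{\varepsilon_m}$ and $f_m:=f_{m-1}-g_m^2$. The identity above then yields $g_m\in\Oo(\R^n)$, $f_m\in\Oo(\R^n)$ nonnegative, and $f_m\leq 2^{-m}$ on $K_m$; thus the induction continues indefinitely and produces the required series representation $f=\sum_{m\geq 1}g_m^2$ converging uniformly on every compact subset of $\R^n$.

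There is no serious obstacle in this argument. The one point that might look delicate, namely the existence of an analytic square root of the strictly positive $f_{m-1}+\varepsilon_m$, comes for free from the simple connectedness of $\R^n$. In effect the whole proof reduces to the elementary algebraic identity displayed above, applied iteratively along a compact exhaustion.
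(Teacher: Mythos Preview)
Your proof is correct. The identity $h=g_\varepsilon^2+2\sqrt{\varepsilon}\,g_\varepsilon$ with $g_\varepsilon=\sqrt{h+\varepsilon}-\sqrt{\varepsilon}$ checks out, and iterating it along a compact exhaustion produces the desired series. One small remark: invoking simple connectedness to justify $\sqrt{h+\varepsilon}\in\Oo(\R^n)$ is unnecessary; since $t\mapsto\sqrt{t}$ is real analytic on $(0,\infty)$ and $h+\varepsilon>0$ everywhere, the composite is real analytic on any open set, connected or not.

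The paper's argument is different and in one respect sharper. It uses the purely algebraic identity
\[
h=\Big(\frac{h}{\sqrt{h+1}}\Big)^2+\frac{h}{h+1},
\]
so that starting from $g_0=f$ and setting $q_k=g_{k-1}/\sqrt{g_{k-1}+1}$, $g_k=g_{k-1}/(g_{k-1}+1)$, one obtains the closed formula $g_k=f/(kf+1)$ for the remainder. Since $f/(kf+1)<1/k$ holds on all of $\R^n$, the resulting series converges \emph{uniformly on $\R^n$}, not merely on compacta, and no exhaustion or choice of $\varepsilon_m$ is needed. Your construction, by contrast, is tailored to each compact set and yields only compact-open convergence, which is all the statement asks for. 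What your method buys is flexibility: the square-root trick would adapt to other settings where one lacks an explicit rational identity of the paper's type.
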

\begin{proof} 

From the identity $$f=f\frac{f+1}{f+1}=\frac {f^2}{f+1}+\frac {f}{f+1},$$
as  $f$ is  positive semidefinite on all $\R^n$, we have that the first addend $\frac {f^2}{f+1}$ is a square, say $q_1^2$,  and  the second one is an analytic function 
$g_1=\frac {f}{f+1}$ satsfying $0\leq g_1<1$.
Thus, $f=q_1^2+g_1$. 

Applying the previous procedure to $g_1$ we get $g_1=q_2^2+g_2$ and iterating the procedure, we get after $k$ steps  
$$f= q_1^2+q_2^2+q_3^2 +\cdots +q_k^2+g_k$$
where $\displaystyle q_l^2= \frac{f^2}{((l-1)f+1)(lf+1)}$ and $g\displaystyle _k=\frac {f}{kf+1}$.

As $f\geq 0$ on  $\R^n$, one has $kf<kf+1$, so $\frac{f}{kf+1}<\frac {1}{k}$ on $\R^n$. Consequently $\displaystyle f- \sum_{h=1}^k q_h^2 =\frac {f}{kf+1}< \frac{1}{k}$, that is, the series of squares converges uniformely in $\R^n$ to $f$.

Observe that one can write this series as

$$\sum _{l\geq 1}\frac{f^2}{((l-1)f+1)(lf+1)}= \sum _{l\geq 1} \frac{f}{(l-1)f+1} -\frac{f}{lf+1}$$

As $\displaystyle \lim_{l\to \infty} \frac {f^2}{lf+1}=0$,   it holds that the infinite  sum converges to $f$.
\end{proof}

Convergence in the compact open topology endows $\Oo(\C^n)$ with a structure of Fr\'echet space as we saw in Chapter 3. This is not true in the real case, because $\Oo(\R^n)$ is a dense subspace of the Fr\'echet space $\mathcal C(\R^n)$ of continuous functions. Even if a series of analytic functions converges to an analytic function as in Proposition \ref{fabrizio} there are several unpleasant behaviours. For instance.

\begin{itemize}
\item The multiplicity of the sum is not the infimum  of the multiplicities of the addends. 
\item The Taylor series at a point of the sum is not the sum of the Taylor series of the addends.
\item Theorem \ref{modulichiusi} of Chapter 3 does not hold true. All partial sums of the series in Proposition \ref{fabrizio} belong to the ideal generated by $f^2$, but their uniform limit $f$ does not. 
\end{itemize}

Thus, we prefer to use a stronger concept of convergence.

\begin{defn}\label{strongconv}
A sequence of analytic functions $\{g_i\}_{i\geq 1} \subset \an(\R^n)$  \em strongly converges \rm to $f$ if 
there is an invariant open neighbourhood $\Omega$ of $\R^n$ in $\C^n$ to which  each $g_i$ admits   a holomorphic extension $G_i$ and the sequence 
$ \{G_i\}_{i\geq 1}$ converges uniformely on the compact subsets of $\Omega$ to a (holomorphic) function $F$ and $f=F_{|\R^n}$. 
\end{defn}

The topology associated to this convergence avoids the difficulties above because any convergent series comes from a complex one on a suitable neighbourhood $\Omega$ of $\R^n$ in $\C^n$ and $\Oo(\Omega)$ satisfies Thorem \ref{modulichiusi}.

Observe that if the  series $\sum g_i^2$ strongly  converges, it converges to  a positive semidefinite analytic function on $\R^n$.

From now on, by  convergence we mean the strong convergence introduced in Definition \ref{strongconv}.

Considering  infinite sums suggests to revisit some definitions. 
First we consider  the formulation of the Hilbert's $17^{th}$ Problem.  

\smallskip

\begin{problem}(Hilbert's $17^{th}$ Problem for analytic functions)

Let $f\in \Oo(\R^n)$ be a positive semidefinite function. Then there exists  
 a sequence  $\{g_k\}_{k\geq 0} \subset 
\an(\R^n)$ such that $\ceros(g_0) \subset \ceros(f)$,  $g_0^2f =\sum_{k\geq 1} g_k^2$
and the series  converges.
\end{problem}
\smallskip

In the statement above only finitely many  different  denominators are allowed in the representation as sum of squares.
Even if the number of addends is finite, denominators are needed, as we will see in Section \ref{esempi}.

We will find several situations where this problem has    a positive answer. As H17  is closely related to Nullstellensatz, it seems natural to consider a  new definition of real radical, where the finite sum of squares is replaced by an infinite sum of squares. 

\begin{defn} The {\em real analytic radical} of an ideal $\gta \subset \Oo(\R^n)$ is defined as

$$\sqrt[{ra}]{\gta} = \left\{g\in \Oo(\R^n) | \exists m\in \N: g^{2m} + \sum_{i=1}^\infty a_i^2 \in \gta\right\}.$$ 

We can define  an $H^{ ra}$-{set} as a C-analytic set $Z$ such that each $g\in \Oo(\R^n)$ with $\ceros(g) =Z$ is  an infinite sum of squares of meromorphic functions.
\end{defn} 

Coming back to Nullstellensatz, if the zeroset $Z$ of an ideal $\gta$ is an  $H^{ ra}$-{set} one wonders whether $\I(Z) =\widetilde{ \sqrt[{ra}]{\gta}}$.

If we use the compact open topology, then $\sqrt[{ra}]{\gta}= \sqrt[\text{\L}]{\gta}$ because of Proposition \ref{fabrizio} for each ideal $\gta$. 
If we  consider strong convergence we obtain the same. The proofs of Remark \ref{Hset} and Lemma \ref{denominators} in Chapter 3 can be modified to hold for $H^{ ra}$-{sets}. Thus, Theorem \ref{h17nss}   holds true if we replace $\sqrt[{r}]{\gta}$ with $\sqrt[{ra}]{\gta}$, after suitable modifications.  

We get  the following corollary.

\begin{cor}\label{ranss}Let $X\subset\R^n$ be a C-analytic set and $\gta$ a saturated ideal of $\Oo(X)$ such that $\ceros(\gta)$ is a $H^{ra}$-{set}. Then $I(\ceros(\gta))=\widetilde{\sqrt[{ra}]{\gta}}$.
\end{cor}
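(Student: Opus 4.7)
The plan is to adapt the proof of Theorem \ref{h17nss} by replacing everywhere the finite sums of squares with infinite (strongly convergent) sums of squares, and to check that the whole chain of reductions survives this replacement. The inclusion $\widetilde{\sqrt[{ra}]{\gta}}\subset I(\ceros(\gta))$ is immediate: if $g^{2m}+\sum_{i\geq1}a_i^2\in\gta$ strongly then on $\ceros(\gta)$ one has $g^{2m}=-\sum a_i^2\leq0$, so $g$ vanishes on $\ceros(\gta)$, and saturating preserves this.

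For the converse, first I would reduce to the case of a saturated real prime ideal. By Proposition \ref{deb} (and Corollary \ref{goodproperties}), a saturated ideal admits a locally finite normal primary decomposition; if $\gta$ equals its $ra$-radical then, exactly as at the end of the proof of Theorem \ref{h17nss}, one checks that $\widetilde{\sqrt[{ra}]{\gta}}$ is itself a real ideal (here the convergence of $\sum(ha_i)^2$ from convergence of $\sum a_i^2$ and the product by a single $h$ is immediate in the strong topology, since $h$ extends holomorphically to an invariant Stein neighbourhood and multiplication by a holomorphic function is continuous on compacts). Thus one reduces to the case where $\gta=\gtp$ is a saturated real prime ideal with $\ceros(\gtp)$ an $H^{ra}$-set, and one further reduces, by pulling back to $\Oo(\R^n)$, to ideals in a Stein algebra.

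Next I would follow the core argument of Theorem \ref{h17nss}. Given $g\in I(\ceros(\gtp))\setminus\gtp$, extend $g$ holomorphically to $G$ on an invariant Stein neighbourhood $\Omega$ of $\R^n$. As in Theorem \ref{h17nss}, the coherent extension $\Jj$ of $\gtp\Oo_{\R^n}$ defines a closed prime ideal $\gtb\subset\Oo(\Omega)$ with zeroset $Z=\ceros(\gtb)$, and there exists $x_0\in\R^n$ with $Z_{x_0}\not\subset\ceros(G)_{x_0}$. By Proposition \ref{crespina} pick $f\in\tilde\gtp=\gtp$ with $\ceros(f)=\ceros(\gtp)$. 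Proposition \ref{Loj} produces $h\in\Oo(\R^n)$ with $h(x_0)\neq0$ and an integer $m\geq1$ so that, replacing $h$ by $h_1=h/\sqrt{1+h^2g^{2m}}$, the function $f_1=f-h_1^2g^{2m}$ is still non-negative with $\ceros(f_1)=\ceros(\gtp)$. Since $\ceros(f_1)$ is an $H^{ra}$-set, there exists $b\in\Oo(\R^n)$, $b\not\equiv0$, and a strongly convergent representation $b^2f_1=\sum_{i\geq1}a_i^2$; I would check, using the $H^{ra}$-analogue of Remark \ref{Hset}(ii) (which goes through with the same proof as for H-sets, since only the elementary inequality $0\le h\le f$ and strong convergence of the series on a common complex neighbourhood are used), that $b$ may be taken so that its holomorphic extension $B$ satisfies $Z_{x_0}\not\subset\ceros(B)_{x_0}$.

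The last step is an $H^{ra}$-version of Lemma \ref{denominators}: we need an analytic diffeomorphism $\varphi:\R^n\to\R^n$ with $f_1\circ\varphi=f_1u$ for some unit $u$ and $Z_{x_0}\not\subset\ceros(B\circ\varphi)_{x_0}$. The proof of Lemma \ref{denominators} uses only Baire-type density and holomorphic extensions, so it applies verbatim; the only additional point is that the infinite sum $\sum((a_i\circ\varphi)v)^2$, with $v^2=u^{-1}$, still converges strongly, which is clear because $\varphi$ extends to a biholomorphism of a neighbourhood of $\R^n$ and multiplication by the holomorphic unit $V$ extending $v$ preserves uniform convergence on compacts. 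Then the identity
\[
b_1^2 f = b_1^2 h_1^2 g^{2m}+\sum_{i\geq1}\bigl((a_i\circ\varphi)v\bigr)^2
\]
holds strongly, and the same contradiction as in Theorem \ref{h17nss} (using that $\gtp$ is real prime, so $b_1h_1g^m\in\gtp$, contradicting $b_1,h_1,g\notin\gtp$) forces $I(\ceros(\gtp))=\gtp$.

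The main obstacle I expect is not conceptual but technical: verifying carefully that the strong convergence of the series of squares is preserved under each manipulation (multiplication by analytic functions, composition with $\varphi$, replacing $h$ by $h_1$), which requires tracking that everything extends holomorphically to a common invariant Stein neighbourhood on which uniform convergence on compacts is stable under these operations.
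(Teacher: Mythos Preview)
Your approach is exactly the one the paper takes: it states that the proofs of Remark \ref{Hset} and Lemma \ref{denominators} go through for $H^{ra}$-sets, so that Theorem \ref{h17nss} holds with $\sqrt[r]{\gta}$ replaced by $\sqrt[ra]{\gta}$, and your write-up simply unpacks those ``suitable modifications'' in detail. One small slip: Remark \ref{Hset}(ii) does not let you \emph{choose} $b$ with $Z_{x_0}\not\subset\ceros(B)_{x_0}$ --- that is precisely the job of Lemma \ref{denominators}, which you apply in the next paragraph anyway, so the argument is fine once you drop that clause (and you should also record, as in the original proof, that $Z_{x_0}\not\subset\ceros(F_1)_{x_0}$ before invoking the lemma).
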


\section{Countably many compact sets.}

In this section we develop some strategies  to provide a positive answer to H17 in some  general cases. We consider representations of positive semidefinite analytic  functions as $g^2f = \sum_{i=1}^q f_i^2$, where $g, f_i \in \an(\R^n)$ and $q\in \N \cup \{\infty\}$. We will achieve the following two facts.

\begin{itemize} 
\item If $g^2f = \sum_{i=1}^q f_i^2$ we can get $\ceros(g)\subset \ceros(f)$ increasing the number of squares until  $2^{n+1}q$.
\item If $f$ has a representation as sum of $q$ squares in a neighbourhood of its zeroset (by global analytic functions), it is a sum of $q+1$ squares of analytic functions  on $\R^n$.
 \end{itemize}
 
These two facts have several consequences. For instance if all the connected components of the zeroset of $f$ are compact, then $f$ is a (possibly infinite) sum of  squares of meromorphic functions.

. 
\subsection{Controlling denominators.}

In this section we show how to control the zeroset of the denominator in a 
representation of an analytic function as sum of squares of meromorphic functions.

\begin{prop}\label{bad} Let $f:\Omega \to \R$ be an analytic function on the open set $\Omega \subset \R^n$ and $h$ be a nonzero analytic function such that $h^2f$ is a sum of $p\leq\infty$ squares of analytic functions on $\Omega$. Set $d=\mbox{\rm dim}\{h=0, f\neq 0\}$. Then there exists an analytic function $g: \Omega \to \R$ such that $g^2f$ is a sum of $q\leq 2^{d+1}p$ squares of analytic functions and $\{g=0\}\subset \{f=0\}$. Moreover on a smaller neighbourhood of $ \{f=0\}$ we can get $ q\leq 2^{d}p$.
\end{prop}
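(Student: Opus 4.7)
From $h^2 f = \sum_{i=1}^p a_i^2 \geq 0$ on the dense open set $\{h \neq 0\}$, continuity yields $f \geq 0$ on all of $\Omega$. I proceed by induction on $d = \dim\{h=0, f \neq 0\}$. The base case is $d = -\infty$ (the bad set is empty): then $\{h=0\} \subset \{f=0\}$, and the original representation already satisfies the conclusion with $g := h$ and $q = p = 2^{0}p$ squares, so both bounds hold trivially.

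\textbf{Inductive decrement lemma.} The heart of the argument is the following reduction: starting from $h^2 f = \sum_{i=1}^p a_i^2$ with $d \geq 0$, construct $h'$ and $a'_1,\ldots,a'_{2p} \in \an(\Omega)$ such that $(h')^2 f = \sum_{j=1}^{2p}(a'_j)^2$ and $\dim\{h'=0,\,f\neq 0\} \leq d-1$. The local model is the following: at a smooth point $x_0$ in the top-dimensional stratum of the closure of the bad set $Z$, we have $f(x_0) > 0$, so $f = u^2$ locally for some analytic unit $u$; the identity $\sum a_i^2 = (hu)^2$ then forces each germ $a_{i,x_0}$ to be divisible by a local defining equation $e$ of the irreducible branch of $\{h=0\}$ through $x_0$, since a sum of squares equal to $(eh_0u)^2$ must, along the smooth hypersurface $\{e=0\}$, have every summand vanishing identically. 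Writing $h = eh_0$ and $a_i = ec_i$ locally, we get $h_0^2 f = \sum c_i^2$ with \emph{no} increase in the number of squares. The hard part, and the main technical obstacle, is globalising this ``local division by $e$'' past the singular locus of $\bar Z$ and past the irreducible components of $\{h=0\}$ of lower dimension: I do this by Hironaka-desingularising the product $hf$ on an invariant Stein neighbourhood $\Omega^{\C}\subset \C^n$ of $\Omega$, then reassembling the divided local sums of squares into a single global one via the two-square composition identity $(x^2+y^2)(a^2+b^2) = (xa-yb)^2+(xb+ya)^2$. It is this gluing step that accounts for the doubling $p \to 2p$.

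\textbf{Iteration and the sharpened bound near $\{f=0\}$.} Applying the decrement lemma $d+1$ times in succession, I lower $\dim\{h=0,\,f\neq 0\}$ from $d$ to $-\infty$, producing at each stage a representation with twice as many squares. This yields $g$ and $q \leq 2^{d+1}p$ with $\{g=0\}\subset\{f=0\}$, proving the first assertion. For the refined bound $q \leq 2^d p$ on a smaller neighbourhood $U$ of $\{f=0\}$: after only $d$ applications of the decrement lemma we have an intermediate representation whose bad set has dimension $\leq 0$, hence is a locally finite discrete subset $D \subset \{f\neq 0\}$. Because $\{f=0\}$ is closed and $D$ is locally finite and disjoint from $\{f=0\}$, every point of $\{f=0\}$ has an open neighbourhood avoiding $D$; taking the union gives an open $U \supset \{f=0\}$ with $U \cap D = \varnothing$. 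On $U$ the intermediate denominator already has its zero set contained in $\{f=0\}\cap U$, so no further doubling is required and we obtain $q \leq 2^{d}p$ squares over $U$.
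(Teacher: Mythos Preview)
Your inductive framework is right and matches the paper: a decrement lemma that lowers $\dim\{h=0,f\neq0\}$ by one at the cost of doubling the number of squares, applied $d+1$ times (or $d$ times for the refined bound near $\{f=0\}$, since after $d$ steps the residual bad set is discrete and can be avoided by an open neighbourhood of $\{f=0\}$). The final paragraph is fine.

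The gap is in the decrement lemma itself. Your local observation is correct: at a smooth point of the top stratum where $f>0$, each $a_i$ is divisible by the local equation $e$ of the bad hypersurface, so one can cancel. But the sentence ``I do this by Hironaka-desingularising the product $hf$ \ldots\ then reassembling the divided local sums of squares into a single global one via the two-square composition identity'' is not a proof. Desingularisation puts $hf$ in normal-crossings form on some $M\xrightarrow{\pi}\Omega$, but the locally divided data on $M$ are not of the form $\pi^*(\text{something})$, and the two-square identity is a statement about \emph{products}, not about patching local representations across charts. As written there is no mechanism producing a single global $h'$ and global $a_j'$ on $\Omega$.

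The paper's decrement avoids any patching: it perturbs the \emph{denominator} globally. For each irreducible component $Y_i$ of $\{h=0\}$ not contained in $\{f=0\}$ pick a point $y_i\in Y_i\setminus\{f=0\}$; build a smooth diffeomorphism equal to the identity near $\{f=0\}$ moving each $y_i$ off $\{h=0\}$, write it as $\mathrm{Id}+f^2\mu$, and approximate $\mu$ analytically to get an analytic diffeomorphism $G=\mathrm{Id}+f^2\eta$. Then $f\circ G=u^2 f$ for a positive unit $u$, so $u^2(h\circ G)^2 f=\sum(h_j\circ G)^2$. Adding this to the original representation and multiplying through by $b:=h^2+u^2(h\circ G)^2$ gives
\[
b^2 f=\sum_{j=1}^{p}\bigl(h_j^2+(h_j\circ G)^2\bigr)\bigl(h^2+u^2(h\circ G)^2\bigr),
\]
and \emph{here} is where the two-square identity enters: each summand is a product of two sums of two squares, hence itself a sum of two squares, giving $2p$ squares in total. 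The new bad set is $\{b=0,f\neq0\}\subset\{h=0\}\cap G^{-1}(\{h=0\})$, and since $G$ moved a point of each $Y_i$ off $\{h=0\}$, no $Y_i$ is contained in $G^{-1}(\{h=0\})$, so the dimension drops. Everything is global from the start; no gluing is needed.
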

 \begin{proof} Consider the C-analytic set $Y= \{h=0\}$ and let $\{Y_i\}_i$ be its (global) irreducible components. For each irreducible component $Y_i$  not contained in $\{f=0\}$ pick a point $y_i\in Y_i\setminus \{f=0\}$. We can assume that these points form a discrete countable subset of $\Omega$, because the family $\{Y_i\}_i$ is locally finite.

For each $y_i$ take a small open disc $B_i$ in such a way that  $B_i \cap B_j = \varnothing$ for $i\neq j$ and  $(\bigcup_i B_i) \cap U  = \varnothing$ where $U$ is an open neighbourhood of $\{f=0\}$. For each $i$ choose a point $y_i' \in B_i \setminus  Y$ and let $\phi_i: \Omega \to \Omega$ be a difffeomorphism which is the identity outside $B_i$ and moves $y_i$ to $y'_i$. In this way we construct a diffeomorphism  $\varphi:\Omega \to \Omega$ which is the identity outside $\cup B_i$ and moves $y_i$ to $y_i'$ for each $i$. In particular $\varphi$ is the identity  on $U$. Thus, $\varphi - {\rm Id}$ is divisible by $f^2$, because it is  identically $0$ on a neighbourhood of  $\{f=0\}$. So we can write $\varphi  = {\rm Id} +f^2\mu$, for a suitable smooth map $\mu$.     
By Whitney's approximation theorem we can find an analytic map $\eta:\Omega \to \Omega$ close to $\mu$. Note that the analytic map $G = {\rm Id} +f^2\eta$ is 
close to $\varphi$ and hence it is an analytic diffeomorphism, becaise to be a diffeomorphism is an open condition in the topological vector space ${\mathcal E}(\Omega,\R^n)$ of smooth maps from $\Omega$ to $\R^n$. 

Consider now the analytic function $f\circ G :\Omega\to \R$ and observe that it has the same zeroset as $f$, because $G$ is the identity on the zeroset of $f$. Also, for each $x\in \{f=0\}$ we get $(f\circ G)_x =f_xu_x$ where $u_x$ is a unit in $\an_x$. Indeed, assume $x=0$ and look at the Taylor expansion of $f$ on a neighbourhood of $0$. We get
$$f(x+ty) = f(x) +t g(x,y,t), g\in \R\{x,y,t\}.$$ 

Replace $t$ by $f^2$ and $y$ by $\eta$and  we get
$$ f\circ G = f(x+f^2\eta) = f(x)g(x,\eta,f^2) =f(x)(1+fg(x,\eta,f^2)).$$ 

Note that, since $f(0)=0$, $1+fg(x,\eta,f^2)$ is a unit on a neighbourhood of $0$. 
Thus, $f\circ G -f$ is divisible by $f^2$ on a neighbourhood of $\{f=0\}$ and also outside where $f$ is a unit. Consequently we find an analytic function $\alpha$ such that $f\circ G = f +f^2\alpha = f(1+f \alpha)$. Now $ v=1+f \alpha$ is a unit on $\Omega$. Indeed, either  $f(x)= 0$ hence $v(x) =1$, or $f(x)\neq 0$ hence $f\circ G(x) \neq 0$ and so $v(x) = 1 +f(x)\alpha(x) \neq 0$.

Since $f, f\circ G$ are positive semidefinite we have  $v>0$, so we can write $v=u^2$ and $f\circ G = u^2 f$ on $\Omega$.
 By hypothesis $h^2 f =\sum_{j=1}^p h_j^2$, which provides
$$ u^2 (h\circ G)^2 f = (h\circ G)^2 (f\circ G) = \sum_{j=1}^p(h_j\circ G)^2.$$ 
Hence,
$$(h^2 +u^2(h\circ G)^2)f = \sum_j(h_j^2 +(h_j\circ G)^2).$$

Thus, multiplying both sides by $h^2 +u^2(h\circ G)^2$, we get 

$$b^2 f = \sum_{j=1}^p (h_j^2 +(h_j\circ G)^2)(h^2 +u^2(h\circ G)^2)$$
where $b= h^2 +u^2(h\circ G)^2$. When the sum is infinite, we get another infinite sum. When it is finite, using that the product of two sums of two squares is again the sum of two squares, we get the sum of $2p$ squares. 

 The zeroset of $b$ is $\{h=0\}\cap \{h\circ G =0\}=  \{h=0\} \cap G^{-1}( \{h=0\})$. Thus 
$$\displaystyle \{b=0\}\setminus\{f=0\}\subset \bigcup_{Y_i\not\subset \{f=0\}} Y_i\cap G^{-1}(Y).$$ 
But no irreducible component $Y_i$ is contained in $G^{-1}(Y)$, because $G$ moves $y_i\in Y_i$ off $Y$.  Hence dim $(Y_i\cap G^{-1}(Y))<$ dim $Y_i\leq d$. Thus, dim$(\{b=0\}\setminus \{f=0\})  <d$. So, repeating this procedure at most $d+1$ times, we get the first assertion. To get the second one, note that after $d$ steps the zero set of the denominator outside $\{f=0\}$ has at most dimension $0$, hence it is a discrete set $D$. So on $\Omega \setminus D$ we get $2^dp$ squares as needed. 
\end{proof}

\subsection{Globalisation of sums of squares.}
In this section we show that if an analytic  function $f$ is a sum of squares of meromorphic functions   as a germ at its zeroset, then it is globally a sum of squares  and the number of involved squares increases at most by $1$. Also the denominator remains controlled, that is its zeroset is contained in the zeroset of $f$.

We begin with two technical lemmata.

\begin{lem}\label{divide}
Let $\Omega$ be a an invariant open Stein neighbourhood of $\R^n$ in $\C^n$,  $\Phi: \Omega \to \C$ be an invariant holomorphic function and $K\subset \Omega$ be an invariant compact set. Let $\Omega_0 \subset \Omega$ be an invariant  open neighbourhood  of $\{\Phi =0\}$ in such a way that $\{\Phi =0\}\cap \Omega_0$ is closed in $\Omega$.  Then, there exist a real constant $\mu >0$ and an invariant compact set $L\subset \Omega_0$ such that for each invariant holomorphic function $C:\Omega_0 \to \C$ there exists an in\-va\-riant holomorphic function $A:\Omega \to \C$ such that $\Phi_{|{\Omega_0}}$ divides $A_{|{\Omega_0}} -C$ and {\rm sup}$_K|A|\leq \mu$ {\rm sup}$_L|C|$.   
\end{lem}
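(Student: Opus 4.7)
My plan is to produce $A$ by applying Cartan's Theorem B on the Stein space $\Omega$, and to extract the quantitative bound from the open mapping theorem for Fréchet spaces.

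First I consider the coherent analytic sheaf $\mathcal{S}:=\Oo_\Omega/\Phi\Oo_\Omega$. Its support is contained in $\{\Phi=0\}\subset\Omega_0$, and because sections of $\mathcal{S}$ are determined by their stalks the restriction map $\mathcal{S}(\Omega)\to\mathcal{S}(\Omega_0)$ is bijective: injectivity is immediate, and surjectivity follows by gluing any section on $\Omega_0$ with the zero section on $\Omega\setminus\{\Phi=0\}$, which agree on the overlap $\Omega_0\setminus\{\Phi=0\}$ since all stalks of $\mathcal{S}$ vanish outside $\{\Phi=0\}$. By Theorem \ref{AandBreal} applied on the Stein space $\Omega$ to $0\to\Phi\Oo_\Omega\to\Oo_\Omega\to\mathcal{S}\to 0$, the induced map $q:\Oo(\Omega)\to\mathcal{S}(\Omega)$ is surjective. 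Hence for any $C\in\Oo(\Omega_0)$, its class $[C]\in\mathcal{S}(\Omega_0)$ corresponds under the above bijection to a class in $\mathcal{S}(\Omega)$ which lifts to some $A\in\Oo(\Omega)$; then $A|_{\Omega_0}-C$ has trivial image in $\mathcal{S}(\Omega_0)$, i.e.\ $\Phi|_{\Omega_0}$ divides $A|_{\Omega_0}-C$ as required.

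For the bound I equip $\Oo(\Omega)$ and $\Oo(\Omega_0)$ with the Fréchet topology of uniform convergence on compacta, and $\mathcal{S}(\Omega)$ and $\mathcal{S}(\Omega_0)$ with their natural Fréchet topologies as spaces of sections of a coherent sheaf. The principal ideal $\Phi\Oo(\Omega)$ is closed in $\Oo(\Omega)$ by Remark \ref{idealifiniti} and Theorem \ref{Moduli Chiusi Stein}, so $q$ is a continuous surjection of Fréchet spaces, hence open by the open mapping theorem. The continuous bijection $\mathcal{S}(\Omega)\to\mathcal{S}(\Omega_0)$ is a topological isomorphism by the same principle, so the composition $\alpha:\Oo(\Omega_0)\to\mathcal{S}(\Omega_0)\cong\mathcal{S}(\Omega)$ is continuous. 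Openness of $q$ applied to the seminorm $\sup_K$ on $\Oo(\Omega)$ yields a continuous seminorm $\beta$ on $\mathcal{S}(\Omega)$ and a constant $\mu'>0$ such that each class $s\in\mathcal{S}(\Omega)$ admits a lift $A\in\Oo(\Omega)$ with $\sup_K|A|\leq\mu'\beta(s)$; pulling $\beta\circ\alpha$ back to $\Oo(\Omega_0)$ provides a compact $L\subset\Omega_0$ and $\mu>0$ with $\sup_K|A|\leq\mu\sup_L|C|$.

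Finally, invariance is enforced by averaging. After replacing $K$ and $L$ by $K\cup\sigma(K)$ and $L\cup\sigma(L)$ the bound persists, and given an invariant $C$ and any lift $A_0$ produced above, setting $A:=\tfrac12(A_0+\overline{A_0\circ\sigma})$ gives an invariant holomorphic function on $\Omega$. Since $\Phi$, $\Omega_0$ and $C$ are invariant, $\overline{A_0\circ\sigma}$ is again a lift of $C$ modulo $\Phi$ on $\Omega_0$, so $A$ is; and $\sup_K|A|\leq\sup_K|A_0|\leq\mu\sup_L|C|$ by $\sigma$-invariance of $K$. The main subtle point I anticipate is the Fréchet-theoretic bookkeeping---specifically, checking that the restriction $\mathcal{S}(\Omega)\to\mathcal{S}(\Omega_0)$ is a topological isomorphism rather than a mere set-theoretic bijection---but this reduces to the open mapping theorem once the Fréchet structures are correctly set up.
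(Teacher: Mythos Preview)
Your argument is correct and follows essentially the same route as the paper: both use that sections of $\an_\Omega/\Phi\an_\Omega$ over $\Omega_0$ extend by zero to $\Omega$ (because the support is already closed in $\Omega$), then apply Theorem~B to lift to $\an(\Omega)$, and obtain the uniform bound from the open mapping theorem for the resulting continuous surjection of Fr\'echet spaces; invariance is handled in both by averaging with $\sigma$. The only difference is packaging: the paper works directly with the single surjection $\an(\Omega)\to\an(\Omega_0)/\Phi\an(\Omega_0)$ and makes the choice of $\mu$ and $L$ explicit via the Fr\'echet metric, whereas you factor through $\mathcal{S}(\Omega)\cong\mathcal{S}(\Omega_0)$ and quote the open mapping theorem more abstractly.
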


\begin{proof} Consider the sheaf of ideals $\Ii\subset \an$ generated by $\Phi$ and the exact sequence of coherent sheaves:
$$0\to \Ii \to \an_{\Omega} \to \frac{\an_\Omega}{\Ii} \to 0$$
Since $\Omega$ is Stein, we have the following commutative diagram:

$$\xymatrix{
H^0(\Omega,\Ii) \ar@{->}[r]\ar@{->}[d]& H^0(\Omega, \an) \ar@{->}[r]\ar@{->}[d]& H^0(\Omega, \frac{\an_\Omega}{\Ii})\ar@{->}[r]\ar@{->}[d]& 0\\
H^0(\Omega_0,\Ii)\ar@{->}[r]&  H^0(\Omega_0, \an) \ar@{->}[r] &H^0(\Omega_0, \frac{\an_\Omega}{\Ii})\ar@{->}[r]&0
}
$$ 

where the vertical maps are the restriction maps from $\Omega$ to $\Omega_0$.

Since $\{\Phi = 0\}\cap \Omega_0$ is closed in $\Omega_0$, each section of $\displaystyle \frac{\an_\Omega}{\Ii}$ on $\Omega_0$ can be extended by $0+\Ii$ outside $\Omega_0$. Hence, the last restriction is surjective and  we get a linear surjective homomorphism $\displaystyle \varphi: H^0(\Omega, \an) \to H^0(\Omega_0, \frac{\an_\Omega}{\Ii}) = \frac{H^0(\Omega_0,\an)}{H^0(\Omega_0, \Ii)}$. We equip these vector spaces with their natural Fr\'echet topology, namely the compact open topology on the first one and the quotient topology on the second one. Here we use that $H^0(\Omega_0, \Ii)$ is a closed subspace of $H^0(\Omega_0,\an)$ so the quotient is also Fr\'echet. Finally $\varphi$ is a continuous surjective homomorphism between Fr\'echet spaces, so it is also open.

Consider an exhaustion of $\Omega$ by compact sets $\{K_i\}$, where $K_i$ is contained in the interior part of $K_{i+1}$ (risp. an exhaustion of $\Omega_0$ by compact sets $\{L_j\}$, where  $L_j$ is contained in the interior part of $L_{j+1}$).
Next fix a compact set $K\subset \Omega$ and consider the open set $W= \{H\in H^0(\Omega,\an):$ sup$_K|H| <1\}$. Since $\varphi$ is open, $\varphi(W)$ is an open neighbourhood of $0$ in $\displaystyle \frac{H^0(\Omega_0,\an)}{H^0(\Omega_0,\Ii)}$, hence there is $\varepsilon >0$ such that $V= \{\xi : ||\xi||<\varepsilon \} \subset \varphi(W)$.
Choose $\displaystyle \mu= \frac{2}{\varepsilon}$ and $L=L_i$ with $i$ such that $\displaystyle \sum_{j\geq i}\frac{1}{2^j} < \frac{\varepsilon}{2}$. Note that $\mu, L\subset \Omega_0$ only depend on $\Phi, \Omega_0, K$. Let us check that they satisfy the desired condition.    
Let $C\in H^0(\Omega_0, \an)\setminus\{0\}$. Since the interior part of $L$ is not empty, $a= \sup_L|C| >0$. Denote $G= \frac{1}{a\mu}C\in H^0(\Omega_0,\an)$. Thus,   $\sup_L|G|= \frac{1}{\mu}<\frac{\varepsilon}{2}$ and we have

$$\|G\| = \sum_{j\geq 1}\frac{1}{2^j}\frac{\sup_{L_j}|G|}{1+\sup_{L_j}|G|} = \sum_{j=1}^i\frac{1}{2^j}\frac{\sup_{L_j}|G|}{1+\sup_{L_j}|G|} + \sum_{j>i}\frac{1}{2^j}\frac{\sup_{L_j}|G|}{1+\sup_{L_j}|G|} $$
$$< \sup_L|G|\cdot \sum_{j=1}^i \frac{1}{2^j} +\sum_{j>i}\frac{1}{2^j} <\frac{\varepsilon}{2}\cdot 1 +\frac{\varepsilon}{2} = \varepsilon$$ 
 
Thus, setting $\xi=G+H^0(\Omega_0,\Ii)$, we get
$\|\xi\| \leq \|G\| <\varepsilon$  that is, $\xi\in V\subset \varphi(W)$.
Hence there is $H\in W$ with $\varphi(H) = \xi$, consequently $H_{|\Omega_0} - G\in H^0(\Omega_0,\Ii)$. So the holomorphic function $F=a\mu H$ satisfies:
\begin{itemize}
 \item $F_{|\Omega_0} - C= a\mu(H_{|\Omega_0} -G)\in H^0(\Omega_0,\Ii)$, hence $\Phi$ divides $F_{|\Omega_0} - C$ in $H^0(\Omega_0, \an)$. 
\item $\sup_K|F| =a\mu \sup_K(H)<a\mu = \mu \sup_L|C|$, because $H\in W$.  
\end{itemize}

Finally, if $C$ is $\sigma$-invariant, we take $\displaystyle
A=\frac{1}{2}(F(z)+ \overline{F(\overline z)})=  \Re F$.
Indeed, $A_{|\Omega_0} - C =\Re (F_{|\Omega_0} -C)= \Re(B \Phi_{|\Omega_0}) =\Re B \Phi_{|\Omega_0}$ for some $B\in H^0(\Omega_0,\an)$, that is, $\Phi$ divides $A-C$. Moreover $\displaystyle \sup_K |A| = \sup_K \Re F = \sup_K \frac{1}{2}|F+ \overline {F\circ \sigma}| \leq \sup_K \frac{1}{2}(|F| +|\overline {F\circ \sigma}|) = \sup_K |F|\leq \mu\sup_L C$, as required.   
\end{proof}

The second lemma concerns infinite sums of squares of holomorphic functions.

\begin{lem}\label{serie}  Let $\Omega,\Phi,\Omega_0$ be as in Lemma \ref{divide}. Let $C_k:\Omega_0\to\C$, for $k=1,2,\dots$ be invariant holomorphic functions such that $\sum_k \sup_L |C_k|^2 <\infty$ for each compact set $L\subset\Omega_0 $. Then, there exist invariant holomorphic functions $A_k:\Omega\to \C$ such that $\sum_k \sup_K |A_k|^2 <\infty$ for each compact set $K\subset\Omega $  and $\Phi_{|{\Omega_0}}$ divides $A_{k|{\Omega_0}} -C_k$. 
\end{lem}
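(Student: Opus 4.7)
The plan is to iterate Lemma~\ref{divide} with a diagonal choice of compact sets, using the $\ell^2$-summability of the $\sup$-norms over any fixed compact of $\Omega_0$ to force a $2^{-k}$ decay on compacts of $\Omega$. Fix an exhaustion $\{K_m\}_{m\geq 1}$ of $\Omega$ by invariant compact sets with $K_m\subset\operatorname{int}(K_{m+1})$. For each $m$, Lemma~\ref{divide} supplies a constant $\mu_m>0$ and an invariant compact set $L_m\subset\Omega_0$; enlarging the $L_m$ (and the $\mu_m$ correspondingly, keeping them monotone) we may assume $L_m\subset L_{m+1}$ and $\bigcup_m L_m=\Omega_0$.

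The hypothesis $\sum_k \sup_{L_m}|C_k|^2<\infty$ holds for every fixed $m$, so in particular $\sup_{L_m}|C_k|\to 0$ as $k\to\infty$ for each $m$. Hence, for every $m\geq 1$, there is an integer $k_m$ with
\[
\mu_m\,\sup_{L_m}|C_k|\ \leq\ 2^{-k}\qquad\text{for all } k\geq k_m,
\]
and we may arrange $k_1<k_2<\cdots$ with $k_m\to\infty$. Define
\[
m(k)=\max\{m\geq 1:\ k_m\leq k\},
\]
(and $m(k):=1$ if $k<k_1$); then $m(k)$ is non-decreasing, $m(k)\to\infty$, and by construction $\mu_{m(k)}\sup_{L_{m(k)}}|C_k|\leq 2^{-k}$ once $k\geq k_1$.

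Now apply Lemma~\ref{divide} to each $C_k$ with the choice $K=K_{m(k)}$: this gives an invariant holomorphic $A_k:\Omega\to\C$ such that $\Phi|_{\Omega_0}$ divides $A_k|_{\Omega_0}-C_k$ and
\[
\sup_{K_{m(k)}}|A_k|\ \leq\ \mu_{m(k)}\,\sup_{L_{m(k)}}|C_k|\ \leq\ 2^{-k}\qquad(k\geq k_1).
\]
Given any invariant compact $K\subset\Omega$, pick $m_0$ with $K\subset K_{m_0}$. Because $m(k)\to\infty$, we have $K\subset K_{m_0}\subset K_{m(k)}$ for all sufficiently large $k$, so $\sup_K|A_k|^2\leq 4^{-k}$ eventually. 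The finitely many exceptional terms are finite since each $A_k$ is holomorphic on $\Omega$, hence bounded on $K$, so $\sum_k\sup_K|A_k|^2<\infty$, which is what we needed.

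The main obstacle is precisely this balancing act in the choice of $m(k)$: we need $m(k)\to\infty$ so that every compact of $\Omega$ is eventually contained in $K_{m(k)}$, yet we must prevent $\mu_{m(k)}\sup_{L_{m(k)}}|C_k|$ from blowing up — note that $\mu_m$ may grow and $L_m$ swells to $\Omega_0$. The square-summability hypothesis is used exactly to guarantee, for each fixed $m$, the decay $\sup_{L_m}|C_k|\to 0$, giving us the slack to let $m(k)$ tend to infinity slowly enough that the estimate $\mu_{m(k)}\sup_{L_{m(k)}}|C_k|\leq 2^{-k}$ can be enforced. Anything stronger (e.g.\ an a priori bound uniform in $L$) is not needed.
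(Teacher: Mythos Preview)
Your overall strategy---apply Lemma~\ref{divide} to each $C_k$ with a compact $K_{m(k)}$ that climbs slowly to exhaust $\Omega$---is exactly the paper's, but your implementation contains a genuine gap. You assert that for each fixed $m$ there is $k_m$ with
\[
\mu_m\,\sup_{L_m}|C_k|\ \leq\ 2^{-k}\qquad\text{for all }k\geq k_m.
\]
This would force $\sup_{L_m}|C_k|$ to decay \emph{exponentially} in $k$, which the hypothesis does not give: square-summability only says $\sup_{L_m}|C_k|\to 0$, with no rate. A concrete counterexample is $\sup_{L_m}|C_k|=1/k$: then $\sum_k 1/k^2<\infty$, yet $\mu_m/k\leq 2^{-k}$ is equivalent to $\mu_m\,2^k\leq k$, which fails for all large $k$. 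So the inequality you need simply cannot be arranged in general, and your subsequent bound $\sup_K|A_k|^2\leq 4^{-k}$ collapses.

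The fix---and this is what the paper does---is to replace the pointwise demand by a tail estimate, which is all the $\ell^2$ hypothesis actually provides. Choose the increasing sequence $k_m$ so that
\[
\sum_{k\geq k_m}\mu_m^2\,\sup_{L_m}|C_k|^2\ \leq\ 2^{-m},
\]
which is possible because for each fixed $m$ the full series $\sum_k\mu_m^2\sup_{L_m}|C_k|^2$ converges. For $k\in[k_m,k_{m+1})$ take $A_k$ from Lemma~\ref{divide} applied with the compact $K_m$, so $\sup_{K_m}|A_k|\leq\mu_m\sup_{L_m}|C_k|$. Then for any $K\subset K_{i}$,
\[
\sum_{k\geq k_i}\sup_{K}|A_k|^2\ \leq\ \sum_{m\geq i}\ \sum_{k_m\leq k<k_{m+1}}\sup_{K_m}|A_k|^2\ \leq\ \sum_{m\geq i}\ \sum_{k\geq k_m}\mu_m^2\sup_{L_m}|C_k|^2\ \leq\ \sum_{m\geq i}2^{-m}<\infty,
\]
and the finitely many terms with $k<k_i$ are harmless. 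This is the same diagonal scheme you set up, but with the estimate calibrated to what the hypothesis can deliver.
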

\begin{proof} Let $\{K_i\}_i$ be an exhaustion of $\Omega$ by invariant compact sets  such that $K_i$ is contained in the interior part of $K_{i+1}$ and the union of the interior parts of these compact sets is $\Omega$. By Lemma \ref{divide} for each $i$ there exist $\mu_i >0$ and a compact set $L_i\subset \Omega_0$ such that if $C\in \an(\Omega_0)$ is invariant, there exists an invariant holomorphic function $A\in \an(\Omega)$ such that $\Phi_{|\Omega_0}$ divides $A_{|\Omega_0} -C$ and $\sup_{K_i} |A|\leq \mu_i\sup_{L_i} |C|$. We can assume $L_i \subset L_{i+1}$ for all $i$. 

Since $\displaystyle \sum_{k\geq 1} \sup_{L_i}|C_k|^2 <\infty$ for all $i$, there exists a strictly increasing sequence $\{k_i\}_i$ of positive integers such that $\displaystyle\sum_{k\geq k_i}\sup_{L_i}|C_k|^2 \leq \frac{1}{2^i\mu_i^2}$. Now for each $k$ with $k_i\leq k<k_{i+1}$ we choose an invariant holomorphic function $A_k :\Omega \to \C$ such that for each $i \, \sup_{K_i}|A_k|\leq \mu_i \sup_{L_i}|C_k|^2$ and $\Phi_{| \Omega_0}$ divides $A_{k|\Omega_0} -C_k$.
Let us check that for each compact sets $K\subset \Omega$ the series $\displaystyle \sum_{k\geq 1} \sup_K |A_k|^2$ converges. Since $\bigcup_i \Int_{\C^n}(K_i) =\Omega$, it is enough to check that for each $i$ we have $\displaystyle \sum_{k\geq 1}\sup_{K_i}|A_k|^2 <\infty$. Indeed,
$$\sum_{k\geq1} { \sup}_{K_i}|A_k|^2 = \sum_{1\leq k<k_i}{ \sup}_{K_i}|A_k|^2 + \sum_{j\geq i} \sum _{k_j\leq k<k_{j+1}}{ \sup}_{K_i}|A_k|^2$$
$$ \leq T + \sum _{j\geq i} \sum_{k\geq k_i}{ \sup}_{K_i}|A_k|^2\leq T+  \sum _{j\geq i} \sum_{k\geq k_i}\mu_i^2{ \sup}_{L_i}|C_k|^2$$
$$\leq T+ \sum_{j\geq i} \frac{\mu_i^2}{2^j\mu_i^2}\leq T+1 <\infty$$  
as required.
\end{proof}

Next theorem is the announced globalization result.

\begin{thm}\label{globalization}
Let $f:\R^n\to \R$ be a positive semidefinite analytic function and let $Z$ be its zeroset. Suppose that $f$ is a sum of $p\leq \infty$ squares of meromorphic functions with controlled denominator on a neighbourhood $W$ of $Z$. Then, $f$ is a sum of $p+1$ squares of meromorphic functions with controlled denominator, that is $g^2 f= \sum_{k=1}^{p+1} f_k^2$ and $\{g=0\} \subset Z$.
\end{thm}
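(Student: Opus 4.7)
The strategy is to complexify the given local data, apply the approximation Lemma~\ref{serie} modulo $F$ to both the numerators and the denominator simultaneously, and then show that the resulting global error is, after a $F^2$-factorization, itself a single global square.

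\emph{Complexification and global approximation.} Extend $f$ to an invariant holomorphic function $F$ on an invariant open Stein neighbourhood $\Omega\subset\C^n$ of $\R^n$, and extend the local data $h,c_1,\ldots,c_p\in\an(W)$ to invariant holomorphic $H,C_1,\ldots,C_p$ on an invariant Stein open neighbourhood $\Omega_0\subset\Omega$ of $Z$, so that
$$H^2F=\sum_{k=1}^p C_k^2\quad\text{on }\Omega_0.$$
After a further shrinking, assume that $\{F=0\}\subset\Omega_0$ and that $\{F=0\}$ is closed in $\Omega$ (so that $F$ is a unit on $\Omega\setminus\Omega_0$), and that $\{H=0\}\cap\Omega_0\subset\{F=0\}$. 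Now apply Lemma~\ref{serie} with $\Phi=F$ to the family $\{H,C_1,\ldots,C_p\}$ (treating $H$ as an additional element to be approximated). This yields invariant holomorphic functions $G,A_1,\ldots,A_p\colon\Omega\to\C$ with $F\mid G-H$ and $F\mid A_k-C_k$ on $\Omega_0$, together with uniform convergence of $\sum_k\sup_K|A_k|^2$ on compacts. Write $G=H+FT$ and $A_k=C_k+FB_k$ on $\Omega_0$.

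\emph{Divisibility by $F^2$ and extraction of one extra square.} A direct computation using $\sum C_k^2=H^2F$ yields, on $\Omega_0$,
$$G^2F-\sum_{k=1}^p A_k^2=F\Bigl(2FHT+F^2T^2-2\sum C_kB_k-F\sum B_k^2\Bigr).$$
Since $F$ is a unit on $\Omega\setminus\Omega_0$, this identity gives global divisibility by $F$. To upgrade to divisibility by $F^2$, note that $G^2F-\sum A_k^2$ vanishes on $\{F=0\}$: on the real part $Z$ each $C_k$ vanishes (because $\sum c_k^2=h^2f\equiv0$ on $Z$), and invariance together with the identity principle propagate this vanishing to the complexification of $Z$, which by choice of $\Omega_0$ coincides with $\{F=0\}$; a further differentiation of $\sum C_k^2=H^2F$ along $\{F=0\}$ forces the first-order term to vanish as well. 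Combined with the coherence of the ideal sheaf $(F)$ on the Stein space $\Omega$ and invariance under complex conjugation, we conclude
$$G^2F-\sum_{k=1}^p A_k^2=F^2\,W$$
for some invariant holomorphic $W\in\an(\Omega)$. The technical heart is to extract a global holomorphic square root $W=S^2$: locally on $\Omega_0$ the explicit expression for $W$ in terms of $T,B_k,H,C_k$, together with a Lagrange-type identity applied to $(C_k,B_k)$ and the relation $\sum C_k^2=H^2F$, exhibits $W$ as a square of a holomorphic function up to a correction absorbed by $F$; the associated cocycle lives in $\an^*/(\an^*)^2\cong H^1(\Omega,\Z/2\Z)=0$ and is therefore trivial, producing a global invariant $S$.

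\emph{Conclusion.} Setting $B=FS$, the identity $G^2F=\sum_{k=1}^pA_k^2+B^2$ holds on $\Omega$; restricting to $\R^n$ gives the announced representation $g^2f=\sum_{k=1}^p a_k^2+(fs)^2$ as a sum of $p+1$ squares. The controlled denominator $\{g=0\}\cap\R^n\subset Z$ follows from $G=H+FT$ on $\Omega_0$: on $Z$ one has $G=H$, so $\{G=0\}\cap Z=\{H=0\}\cap Z\subset Z$, while off $Z$ a small adjustment of the approximation (e.g.\ replacing $G$ by $G$ multiplied by a nowhere-vanishing unit obtained by Whitney-approximating a positive smooth function) removes any spurious zeros without enlarging the square count. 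The principal obstacle I anticipate is the global square-root extraction in the previous step; should the naive $\Phi=F$ approximation prove insufficient, the natural refinement is to apply Lemma~\ref{serie} with $\Phi=F^N$ for $N$ large enough to guarantee that the quotient $W$ is manifestly a global square of an invariant holomorphic function on $\Omega$.
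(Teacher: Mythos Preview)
Your approach has a genuine gap at the step you yourself flag as the ``technical heart'': the extraction of a global square root of $W$ in $G^2F-\sum A_k^2=F^2W$. There is simply no reason for $W$ to be a square, and the sketch you give does not provide one. A Lagrange identity controls $(\sum C_kB_k)^2$ in terms of $(\sum C_k^2)(\sum B_k^2)$, but the error term here is linear in the $B_k$, not quadratic, so no such identity applies. The cohomological remark $H^1(\Omega,\Z/2\Z)=0$ would only be relevant once you already know $W$ is \emph{locally} a square up to units, which you have not established. Even the sign is wrong: restricting to $\R^n$ you would need $g^2f-\sum a_k^2\geq 0$, and nothing in the approximation guarantees this. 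Your proposed fallback $\Phi=F^N$ does fix the divisibility issue (with $\Phi=F^2$ the difference $G^2F-\sum A_k^2$ is automatically divisible by $F^2$, since $A_k^2-C_k^2=(A_k-C_k)(A_k+C_k)$), but it does nothing for the square-root problem.

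There is also a subsidiary error in your divisibility argument with $\Phi=F$: you claim that $\{F=0\}$ coincides with the complexification of $Z$, but this is false in general (take $f=x^2+y^2$ in $\R^2$: then $Z=\{0\}$ while $\{F=0\}$ is a complex curve), so the $C_k$ need not vanish on $\{F=0\}$.

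The paper avoids the square-root problem entirely by a different organization. First it globalizes the denominator $g$ once and for all via a locally principal sheaf, so only the numerators $C_k$ need approximation. Then it applies Lemma~\ref{serie} with $\Phi=(G^2F)^2$ rather than $F$, which yields $\sum a_k^2=(g^2f)\cdot w$ on a neighbourhood of $Z$ with $w$ a positive analytic unit there. The extra square is then chosen \emph{explicitly} as $(g^2f)^2$: the point is that $\bigl(\sum a_k^2+(g^2f)^2\bigr)/(g^2f)$ equals $w^{-1}+g^2f$ near $Z$ and is strictly positive off $Z$, hence is a global positive analytic unit $u^2$, giving $(ug)^2f=\sum a_k^2+(g^2f)^2$. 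No square root of an unknown function is ever needed; the $(p+1)$-st square is written down directly.
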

\begin{proof} The proof is conducted in several steps.

{ \sc Step 1: global denominator.}
We start with a representation of $f$ as a sum of squares $g^2 f= \sum_{k=1}^q f_k^2$ on an open neighbourhood $W$ of $Z$
  and $\{g=0\} \subset Z$.
The set $\{g=0\}$ is a closed subset of $\R^n$. Then, we can consider the locally principal coherent sheaf of ideals defined by
$\Ii_x= g_x \an_{\R^n,x}$ if $x\in \{g=0\}$ and by $\Ii_x= \an_{\R^n,x}$ otherwise.

This sheaf of ideals is  globally principal, that is $\Ii = g_0 \an_{\R^n}$ for some global analytic function $g_0$ on $\R^n$. The zeroset of $g_0$ coincides with  the one of $g$. Moreover, $\displaystyle v=\frac{g_0}{g}$ is a unit on $W$. Hence, on $W$ we can write:
$$g_0^2 f = (vg)^2f= \sum_{k=1}^q (vf_k)^2 = \sum_{k=1}^q b_k^2,\,  b_k\in \an(W).$$

Thus we can assume that the denominator $g$ is a global analytic function on $\R^n$.

{\sc  Step 2: global sum of squares.} 
Let $U$ be a Stein open neighbourhood of $\R^n$ in $\C^n$ to which the function $f'=g^2f$ extends to a holomorphic function $F'$.  There are $q$ invariant holomorphic function $C_k:\Omega_0' \to \C$ defined on an invariant open neighbourhood $\Omega_o'$ of $W$ in $\C^n$ such that $\displaystyle F'_{|\Omega_0'} = \sum_{k=1}^q C_k^2$, where the series converges in the compact open topology of $\an(\Omega_0')$. 
 
Now $\Omega_0' \cup \{F'\neq 0\}$ is an open neighbourhood of $\R^n$ in $\C^n$, hence $\R^n$ has an invariant Stein open neighbourhood $\Omega \subset \Omega_0' \cup \{F'\neq 0\}$. Denote $\Omega_0 = \Omega_0' \cap \Omega$. Note that $\Omega_0 \cap \{F'=0\}$ is closed in $\Omega_0$. We apply Lemma \ref{serie} to $\Phi = F'^2$ and find  invariant holomorphic functions $A_k: \Omega \to \C$ such that $\displaystyle \sum_{k=1}^q { \sup}_K|A_k|^2 <\infty$ for each  compact subset $K$ of  $\Omega$ and $F'^2$ divides $A_{k|\Omega_0} -C_k$ on $\Omega_0$. We have:
$$ \displaystyle \sum_{k=1}^q A_k^2 -F' =  \sum_{k=1}^q A_k^2 - \sum_{k=1}^q C_k^2 = \sum_{k=1}^q(A_k -C_k)(A_k +C_k)$$ 
on $\Omega_0$ and the series converges uniformely on the compact subsets of $\Omega_0$. Since $F'^2$ divides each term $A_k -C_k$  on $\Omega_0$, it divides the sum $\displaystyle \sum_{k=1}^q A_k^2 -F'$. Thus, there exists a holomorphic function $H:\Omega_0 \to \C$ such that
$$\displaystyle \sum_{k=1}^q A_k^2 = F' +H F'^2 = F'(1+HF')$$
on $\Omega_0$.
Clearly $1+HF'$ does not vanish on $\{F'=0\}$. Thus, it is a unit in a perhaps smaller neighbourhood that we denote again  $\Omega_0$.
Write $a_k = A_{k|\R^n}$ and note that in a perhaps smaller neighbourhood $W'$ of $Z= \{f=0\}$ in $\R^n$ we have 
$$f' = g^2f = \left(\sum_{k=1}^q a_k^2\right) w,$$
where $w\in \an(W')$ is a positive unit, but all the other functions appearing in the previous formula belong to $\an(\R^n)$.

{\sc Step 3: conclusion.}
Consider the global analytic function $ \displaystyle \sum_{k=1}^q a_k^2 +(g^2f)^2 $ and note that its zeroset is $Z$ and on $W'$ the quotient
$$\displaystyle \frac{\sum_{k=1}^q a_k^2 +(g^2f)^2}{g^2f}= \frac{\frac{g^2f}{w}+(g^2f)^2} {g^2f} = \frac{1}{w} +g^2f,$$
  which is a positive unit on $W'$.

Hence there exist a positive unit $u\in \an(\R^n)$ such that 
$$(ug)^2f = u^2g^2f = \sum_{k=1}^q a_k^2 +(g^2f)^2$$
as required.
\end{proof}

\subsection{Consequences.}
 
As we said before, Theorem \ref{globalization} has several consequences.

\begin{cor}\label{compatti} Let $f:\R^n \to \R$ be a positive semidefinite analytic function whose zeroset is a countable union of disjoint compact sets. Then, $f$ is a (possibly infinite) sum of squares of meromorphic functions.
\end{cor}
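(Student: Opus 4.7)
The plan is to localise around each compact component of the zeroset via Theorem~\ref{compatto}, assemble the resulting local representations into one on a neighbourhood $W$ of $Z=\ceros(f)$, and then apply the globalisation result Theorem~\ref{globalization}.

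First I would observe that since $Z$ is closed in $\R^n$ and written as a disjoint union of compacts $\{K_i\}$, the family $\{K_i\}$ must be locally finite: any accumulation point of $\bigcup_i K_i$ would belong to $Z$ yet to no $K_i$. Using paracompactness of $\R^n$, I would choose pairwise disjoint open neighbourhoods $V_i\supset K_i$ whose family is again locally finite, and set $W=\bigsqcup_i V_i$, which is an open neighbourhood of $Z$.

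Next, on each $V_i$ the zeroset of $f|_{V_i}$ is the compact set $K_i$, so I would apply Theorem~\ref{compatto} in the form for germs at a compact set (the Remark following that theorem) to obtain analytic functions $g_i,f_{i,1},\dots,f_{i,q_i}$ on a neighbourhood of $K_i$ (taken inside $V_i$) with $g_i^2 f=\sum_{j=1}^{q_i} f_{i,j}^2$. Invoking Proposition~\ref{bad} I would then arrange $\{g_i=0\}\subset K_i\subset Z$. Shrinking each $V_i$ to lie inside the domain of the corresponding local representation preserves pairwise disjointness and local finiteness.

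Then I would glue: define $g\in\an(W)$ by $g|_{V_i}=g_i$ and collect $\{f_{i,j}\}_{i,\,j\leq q_i}$ into a single sequence $\{f_k\}_{k\geq 1}$ (each $f_k$ extended by $0$ outside its $V_i$). Since the $V_i$ are disjoint, $g$ and each $f_k$ are genuine analytic functions on $W$, the identity $g^2 f=\sum_k f_k^2$ holds on $W$, and $\{g=0\}\subset Z$. When there are infinitely many $K_i$ and the total sum becomes infinite, I would verify strong convergence (Definition~\ref{strongconv}) by extending each $V_i$ to a pairwise disjoint invariant complex neighbourhood $\widetilde V_i$ still forming a locally finite family; on any compact subset of $\widetilde W=\bigsqcup_i\widetilde V_i$ only finitely many $\widetilde V_i$ contribute, so the series is locally a finite sum and convergence on compacta is automatic.

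Finally I would apply Theorem~\ref{globalization} to this representation on the neighbourhood $W$ of $Z$, obtaining $G^2 f=\sum_k A_k^2$ with $G,A_k\in\an(\R^n)$ and $\{G=0\}\subset Z$; this exhibits $f$ as a (possibly infinite) sum of squares $(A_k/G)^2$ of meromorphic functions on $\R^n$. The main point of care is the infinite case: one must check that the piecewise-defined $g$ and $f_k$ form a \emph{strongly} convergent sum of squares in the sense required by Theorem~\ref{globalization}, and this is exactly what the disjointness and local finiteness of $\{V_i\}$ deliver.
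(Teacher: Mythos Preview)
Your approach matches the paper's terse proof: use Theorem~\ref{compatto} (its remark for germs at a compact set) to write $f$ as a finite sum of squares of meromorphic functions near each compact piece of $Z$, assemble these on disjoint neighbourhoods, and then invoke Theorem~\ref{globalization}. You have supplied the details the paper omits, including the use of Proposition~\ref{bad} to control denominators and the verification of strong convergence.

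One correction is needed in your first step. The claim ``any accumulation point of $\bigcup_i K_i$ would belong to $Z$ yet to no $K_i$'' is false as a purely topological statement: in $\R$ take $K_0=\{0\}$ and $K_i=\{1/i\}$ for $i\geq1$; then $Z=\bigsqcup_i K_i$ is closed and $0$ is an accumulation point of infinitely many $K_i$, yet $0\in K_0$. What saves you is that $Z=\ceros(f)$ is analytic: pass to the \emph{connected components} of $Z$, each of which lies in a single $K_i$ (since the $K_i$ are disjoint and closed) and is therefore compact, and these components form a locally finite family because a semianalytic set has only finitely many connected components near any point. The paper itself speaks of ``each compact connected component of its zeroset''. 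With this adjustment the rest of your construction goes through unchanged.
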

\begin{proof} The germ of $f$ at each compact connected component of its zeroset is a finite sum of squares (with denominator), but the number of such squares may be  not uniformely bounded on the connected components of the zeroset of $f$.
\end{proof}

\begin{remark} Corollary \ref{compatti} includes the case of a discrete zeroset. But in this case, as we will see in the following sections, $f$ is always a finite sum of squares (with denominator), because the number of squares needed at each point to represent a positive semidefinite function is uniformely bounded.
\end{remark}

We recall the definition of {\em Nash function}.

\begin{defn}\hfill 
\begin{enumerate}
\item An analytic function $g\in\an(\R^n)$ is called a \sl Nash function \rm if there is a monic  polynomial $P= y^d+a_{d-1}(x)y^{d-1}+\ldots +a_0(x)$ with $a_i\in \R[x_1,\ldots ,x_n]$, such that $g^d+ a_{d-1}(x)g^{d-1}+\ldots +a_0(x)$ is identically $0$. 
\item An analytic function $f:\R^n \to \R$ is of {\em Nash type} if the ideal $f\an(\R^n)$ can be generated by a Nash function $g$, that is, there is an analytic unit $u$ such that $f=ug$.
\end{enumerate}
\end{defn}

Denote by  ${\mathcal N}(\R^n)$ the ring of Nash functions on $\R^n$.

\begin{cor}Let $f:\R^n \to \R$ be a positive semidefinite analytic function. Consider for each connected component $X_k$ of the zeroset of $f$ the ideal sheaf $\Ii_k$ defined by $(\Ii_k)_x = f\an_x$ if $x\in X_k$ and $(\Ii_k)_x = \an_x$ otherwise. Assume for all $k$\,  $\Ii_k(\R^n)$ to be generated by a Nash function $g_k$. Then,
\begin{itemize}
\item  Each $g_k$ is a sum of $2^n$ squares with controlled denominator in the quotient field of  the ring ${\mathcal N}(\R^n)$ of Nash functions on $\R^n$.
\item $f$ is a sum of $2^n +1$ squares of meromorphic functions with controlled denominator.
\end{itemize}  
\end{cor}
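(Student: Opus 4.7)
The plan is to prove the first bullet by invoking Pfister's bound on the Pythagoras number of the quotient field of $\mathcal{N}(\R^n)$, and then to bootstrap the resulting local representation of $f$ near its zero set into a global sum of squares via Theorem \ref{globalization}.

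For the first bullet, I would first adjust each $g_k$ to a positive semidefinite Nash generator of $\Ii_k(\R^n)$. Near $X_k$ we have $f=u_k g_k$ with $u_k$ a local analytic unit, and since $f\geq 0$ we may take $u_k>0$. Moreover the ratio $u:=f/g_k$, a priori defined only away from $X_k$, extends analytically across $X_k$ to $u_k$, so $u$ is a globally defined analytic function with $u g_k = f\geq 0$, which forces $g_k$ to have the sign of $u$ pointwise. Multiplying $g_k$ by a suitable Nash square of (a global approximation of) $u$ converts it into a positive semidefinite Nash function, still a generator of $\Ii_k(\R^n)$. Pfister's theorem then applies in the quotient field of $\mathcal{N}(\R^n)$, whose Pythagoras number is at most $2^n$, giving
\[
g_k \;=\; \sum_{i=1}^{2^n}\bigl(a_{k,i}/b_k\bigr)^2, \qquad a_{k,i}, b_k\in\mathcal{N}(\R^n),
\]
and the controlled-denominator clause $\{b_k=0\}\subset X_k$ is enforced by multiplying both sides through by a positive Nash equation of $X_k$.

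For the second bullet, I would globalize these local formulas. Since $u_k$ is a positive analytic unit on a neighbourhood $W_k$ of $X_k$, one has $u_k=c_k^2$ analytically, so
\[
b_k^2 f \;=\; b_k^2 u_k g_k \;=\; \sum_{i=1}^{2^n}(c_k a_{k,i})^2 \quad\text{on }W_k.
\]
The components $X_k$ being pairwise disjoint, the $W_k$ may be chosen disjoint too, and the $b_k$ and $c_k a_{k,i}$ glue into analytic functions $b$ and $\varphi_i$ on $W=\bigsqcup_k W_k$, with $\{b=0\}\subset Z=\{f=0\}$ and $b^2 f=\sum_{i=1}^{2^n}\varphi_i^2$ on the neighbourhood $W$ of $Z$. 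This exhibits $f$ as a sum of $2^n$ squares of meromorphic functions near $Z$ with controlled denominator, and Theorem \ref{globalization} upgrades it to a global representation $g^2 f=\sum_{i=1}^{2^n+1} f_i^2$ on $\R^n$ with $\{g=0\}\subset Z$.

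The main obstacle is the sign adjustment of $g_k$ -- verifying that the corrective factor can be kept within $\mathcal{N}(\R^n)$ rather than only in $\an(\R^n)$ -- together with the (classical but non-trivial) appeal to the Pfister bound $p\leq 2^n$ on the Nash quotient field; the disjoint patching and the invocation of Theorem \ref{globalization} are then organizational.
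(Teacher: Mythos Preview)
Your approach matches the paper's: invoke Nash H17 with Pfister's $2^n$ bound for the first bullet, then patch the local representations over disjoint neighbourhoods $W_k$ and apply Theorem~\ref{globalization} for the second. Two small corrections. First, the sign adjustment is easier than you suggest: after replacing $g_k$ by $-g_k$ if necessary (still Nash), one has $g_k\geq 0$ near $X_k$, and since every connected component of $\R^n\setminus X_k$ has $X_k$ in its closure (as $\R^n$ is connected), this forces $g_k\geq 0$ globally---no approximation or corrective factor is needed. Second, your proposed enforcement of controlled denominators by ``multiplying both sides through by a positive Nash equation of $X_k$'' does not achieve what you want (it enlarges rather than shrinks the zero set of the denominator); the controlled-denominator clause is instead part of the Nash H17 result itself (Mostowski, Efroymson; cf.\ the bibliographic notes), which the paper simply cites.
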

\begin{proof}
Since H17 has a positive answer for Nash functions, 
 $g_k$ is a sum of $2^n$ squares with controlled denominator in the quotient field of  ${\mathcal N}(\R^n)$. 
By Theorem \ref{globalization} $f$ is a sum of $2^n +1$ squares of meromorphic functions with controlled denominator, as required.  
\end{proof}

\subsection{Consequences on Pythagoras numbers.}\label{p(M)}

We defined the Pytagoras number of a ring in Section \ref{pitagora}.

In the algebraic setting both problems, H17 and to estimate the Pythagoras number of a ring of polynomials, are quite independent  problems with independent answers. In the analytic setting there is a strong relation between them as the following result shows. We will denote by $p(\R^n)$ the Pythagoras number of the field $\mathcal M(\R^n)$ of meromorphic functions on $\R^n$.

\begin{thm}\label{pytagora}  
If H17 has a positive solution for $\an(\R^n)$, then the Pythagoras number $p(\R^n) <\infty$
\end{thm}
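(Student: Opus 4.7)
The plan is to argue by contradiction: suppose H17 holds in $\an(\R^n)$ but $p(\R^n)=\infty$, and derive a contradiction via a Baire category argument on the cone of positive semidefinite analytic functions.

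First, I would reduce the problem to analytic functions. Every element $\varphi\in\mathcal M(\R^n)$ that is a sum of squares can be written as $\varphi=F/b^2$ with $F=b^2\varphi\in\an(\R^n)$ positive semidefinite, and the minimum number of squares in a representation of $\varphi$ in $\mathcal M(\R^n)$ coincides with the minimum integer $N$ such that there exist $a_1,\dots,a_N,b\in\an(\R^n)$ with $b^2F=a_1^2+\cdots+a_N^2$. Denoting this integer $N(F)$, we obtain $p(\R^n)=\sup_{F\in P}N(F)$, where $P\subset\an(\R^n)$ is the closed cone of positive semidefinite analytic functions. Under H17, each $N(F)$ is finite, and the goal is uniform finiteness of the supremum.

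Next, I would filter $P=\bigcup_{N\geq1}P_N$ with $P_N=\{F\in P:N(F)\le N\}$ and apply the Baire category theorem. The set $P$ is closed in the Fr\'echet space $\an(\R^n)$ (compact-open topology), hence a Baire space. The technical heart of the argument is the claim that each $P_N$ is closed in $P$. Given $F_j\in P_N$ with $F_j\to F$ and representations $b_j^2F_j=\sum_{i=1}^N a_{j,i}^2$, one must normalize the $(N+1)$-tuples $(b_j,a_{j,1},\dots,a_{j,N})$ and extract a convergent subsequence whose limit yields $b^2F=\sum_{i=1}^Na_i^2$. The natural tool here is Cartan's Theorem B on an appropriate Stein neighborhood in $\C^n$ applied to the coherent sheaf of relations of the $a_{j,i}$'s, combined with a rescaling that controls the Fr\'echet seminorms along an exhausting sequence of compact sets. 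Once closedness is established, Baire provides an integer $N_0$ and an interior point $f_0\in\mathrm{int}_P(P_{N_0})$.

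Finally, I would exploit the interior point together with Pfister's theorem to promote the local bound into a global one. For $\varepsilon>0$ small and any $f\in P$ one has $f_0+\varepsilon f\in P_{N_0}$. Choosing $k$ with $2^k\ge N_0$ and using Pfister's multiplicativity, the set of sums of $2^k$ squares in the field $\mathcal M(\R^n)$ is closed under multiplication and under inversion of nonzero elements, so the ratio $(f_0+\varepsilon f)/f_0=1+\varepsilon f/f_0$ is a sum of $2^k$ squares. A further manipulation (multiplying by suitable powers of $f_0$ and of $f_0+\varepsilon f$, both sums of $2^k$ squares by Pfister, and absorbing $\varepsilon$ into the summands) expresses $f$ as a sum of $M=M(N_0)$ squares in $\mathcal M(\R^n)$, with $M$ independent of $f$. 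This contradicts $p(\R^n)=\infty$.

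The main obstacle will be closedness of $P_N$. Since $\an(\R^n)$ is neither noetherian nor a UFD there is no naive greatest common divisor to normalize against, and the denominators $b_j$ may degenerate in several incompatible ways; a successful argument requires a coherent-sheaf based normalization (in the spirit of Chapter 3) together with sharp Fr\'echet-topological control. A secondary subtlety is to make the final Pfister manipulation yield a bound depending only on $N_0$; this step is genuinely algebraic and relies on the field structure of $\mathcal M(\R^n)$ rather than on its analytic topology.
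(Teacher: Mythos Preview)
Your approach is quite different from the paper's, and the final Pfister step has a genuine gap. The paper argues constructively: assuming $p(\R^n)=\infty$, for each $p$ it picks $f_p$ requiring at least $p$ squares, passes to the square-free part so that $\dim\{f_p=0\}\le n-2$, and then uses Sard's theorem together with explicit analytic diffeomorphisms to push each zero set $Z_p$ into a thin tube about the vertical line through $(p,0,\dots,0)\in\R^{n-1}$. The tubes being pairwise disjoint, the family $\{Z_p\}$ is locally finite, and the $f_p$'s glue to a single positive semidefinite $f$ via a locally principal sheaf of ideals (using $H^1(\R^n,\an^*)=0$). Near $Z_p$ one has $f=v_p^2 f_p$, so any finite representation of $f$ would give one for every $f_p$; thus $f$ is not a finite sum of squares, contradicting H17. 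No Baire or closedness argument is used.

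The gap in your proposal is the passage from an interior point to a global bound. Granting $f_0\in\mathrm{int}_P(P_{N_0})$, for each $f\in P$ you only get $f_0+\varepsilon f\in P_{N_0}$ for some $\varepsilon=\varepsilon(f)$. Pfister's theorem controls \emph{products} of sums of $2^k$ squares, not differences: from $(f_0+\varepsilon f)/f_0$ being a sum of $2^k$ squares you obtain $f=f_0(g-1)/\varepsilon$ with $g$ a sum of $2^k$ squares, but $g-1$ has no reason to be a sum of a bounded number of squares. There is no transitive action on $P$ preserving the filtration $\{P_N\}$ that would promote a local bound near $f_0$ to a uniform one. A secondary issue is topological: $\an(\R^n)$ with the compact-open topology is not complete (it is dense in $C(\R^n)$, cf.\ Section~\ref{infinito}), so you would need the strong topology or a fixed Stein neighborhood even to run Baire, and the closedness of $P_N$ in any such topology remains an unproved and nontrivial claim.
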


\begin{proof}
Assume  $p(\R^n)= \infty$. We will construct a positive semidefinite analytic function that is not a finite sum of squares of meromorphic functions.
By definition for all $p>0$ there exists an analytic function $f_p:\R^n \to \R$ which is a finite sum of squares of meromorphic functions but cannot be represented with less than $p$ squares.

{\sc First reduction.} 
We can assume dim $\{f_p =0\} \leq n-2$ 

Denote $g=f_p$. For each $x\in \{g =0\}$ we may write $g_x = a_x^2b_x$ where the germ $b_x$ has no multiple factors. Its germ of zeroset has codimension at least $2$, because otherwise some irreducible factor $c_x$ of $b_x$ would change sign at $x$, so $g_x$ would change sign too, against the fact that $g$ is positive semidefinite, because it is  a sum of squares (with denominator). Now the germs $\displaystyle\{a_x\}_{ x\in \{g=0\}}$ generate the locally principal coherent analytic sheaf of ideals  $\Ff$ given by
$\Ff_x = a_x\an_x$ for $x\in \{g=0\}$ and $\Ff_x = \an_x$ otherwise.

So the sheaf $\Ff$ is globally principal, generated by an analytic function $a:\R^n\to \R$. Hence $g=a^2b'$ where $b'$ is positive semidefinite and for $ x\in \{g=0\}$ verifies $b'_x = b_xu_x$ with $u_x$ a positive unit in $\an_x$. Thus, dim$\{b'=0\} \leq n-2$.

We take $b'$ instead of $g$ and we are done.       

{\sc Second reduction.} 
We may assume that the zeroset $Z_p = \{f_p=0\}$ is contained in the tube $V_p = \{x = (x',x_n) \in \R^n: \|x'-a_p\| <\frac{1}{4}\}$, where $a_p = (p,0,\ldots,0) \in \R^{n-1}, p\geq 1$

Since $Z_p$ has codimension at least $2$, by Sard theorem there is a line that does not intersect $Z_p$. After an affine change of coordinates we can assume that this line is the $x_n$-axis $\{x_1= \cdots =x_{n-1}=0\}$. Note that the function dist$(-, Z_p)$, that is, the  distance function to $Z_p$,  is a continuous positive function on the  $x_n$-axis. Thus, by Whitney's approximation theorem we can find an analytic function $\delta:\{x_1= \cdots =x_{n-1}=0\}\to \R$ close to dist$(-, Z_p)$, hence also strictly positive.

Consider the analytic diffeomorphism $\Psi:\R^n\to\R^n$ given by 
$$\Psi(x',x_n) = \left(\frac{\sqrt{1+x_n^2}}{\delta(x_n)}x', x_n\right).$$ 
Note that $\Psi$ moves $Z_p$ outside the set $\{\|x'\|<1+x_n^2\}$.
Next, consider the analytic diffeomorphism $\Phi:\R^n\to \R^n, \Phi(x',x_n) = (x', 2\|x'\|^2 -x_n)$ which maps the set $\{2\|x'\|^2 -x_n >0\}$ onto the set $\{2\|x'\|^2 -x_n <0\}$ and viceversa. Thus $\Phi\circ \Psi(Z_p) \subset \{2 \|x'\|^2 -x_n <0\}$. Finally consider $ \displaystyle \Theta:\R^n \to \R^n, \Theta(x',x_n) = \left(\frac{x'}{4(1+x_n^2)}, x_n\right)$. It maps the set $\{2\|x'\|^2 -x_n <0\}$ to the tube $V_p$ because for each point of the first one has 
$$\left\|\frac{x'}{4(1+x_n^2)}\right\| = \frac{1}{4}\cdot \frac{\|x'\|}{1+x_n^2} < \frac{1}{4}\cdot\frac{\|x'\|}{1+4\|x'\|^4} <\frac{1}{4}.$$

So the composition of these $3$ analytic diffeomorphisms moves $Z_p$ inside the tube $V_p$.

{\sc Conclusion.}
After our reductions the $(Z_p)'^s$ form a locally finite family, so  $Z= \bigcup_p Z_p$ is a (closed) analytic subset of $\R^n$. Also it is the zero set of the sheaf of ideals $\Ii$ defined as $\Ii_x = f_p\an_x$ if $x\in Z_p$ and $\Ii_x = \an_x$ if $x\notin Z$. Again $\Ii$ has a global generator $f$. Since its zeroset $Z$ has codimension at least $2$, the function $f$ has constant sign that we can assume positive.  Now on $V_p$ one has $f=f_pv_p^2$, hence $f$ is a sum of squares of meromorphic functions
in a neighbourhood $\bigcup_p V_p$ of its zeroset, but the number of squares is not bounded. Hence,  by Theorem \ref{globalization}, $f$ is an infinite sum of squares of meromorphic functions but it cannot be a finite sum.  
\end{proof}

\section{The discrete case.}\label{Milnor}

Let $f$ be a positive semidefinite analytic function whose zeroset is a singleton  $x_0$. Then we know that $f$ is a finite sum of squares of meromorphic functions (a point is a compact set). Alternatvly considering  germs at $x_0$ we find a neighbourhood $U$ of $x_0$ where $f$ admits a representation as finite sum of squares of meromorphic functions and then use Theorem \ref{globalization}  to get a similar representation on $\R^n$. 

The same argument shows that when the zeroset of $f$ is a countable discrete set  $f$ is a (possibly infinite) sum of squares.

We can improve the previous result. We can prove that in fact the number of squares needed to represent $f_x$ at one of its zeroes is smaller than a bound that does not depend on $f$ or $x$. This is in a sense the converse of Theorem \ref{pitagora}. We have a uniform bound on the number of squares needed to represent $f$ at its zeroset and we prove H17 for $f$. 

What we will prove is that an analytic  germ $f_a$ vanishing only at $a$ admits, after a small modification,   {\em finite determinacy} that roughly speaking means that $f_a$ is locally equivalent to a polynomial. Since the ring of real polynomial has finite Pythagoras number, this will provide the uniform bound. One way to prove finite determinacy is to check that the holomorphic extension $F$ of $f$ has an isolated singularity at $a$, because the Milnor number $\mu(F)$ provides the degree of a polynomial equivalent to $f$.

Let us begin by some definitions. We consider the ring of germs $\R\{x_1,\ldots, x_n\}$ canonically embedded in $\C\{x_1,\ldots, x_n\}$. Let $\gtm$ denote the maximal ideal of both rings.

\begin{defn}\hfill

\begin{itemize}
\item Two elements $f, g \in \gtm$ are {\em right equivalent} it there is an automorphism $\varphi$ of $\C\{x_1,\ldots, x_n\}$ (resp. of  $\R\{x_1,\ldots, x_n\}$) such that $\varphi(f) =g$
\item $f\in \gtm$ is {\em  $k$-determined} if for each $ g\in\C\{x_1,\ldots, x_n\}$ such that $f-g \in \gtm^{k+1}, g$ is right equivalent to $f$. 
\item $f$ is {\em finitely determined} if it is $k$-determined for some $k$.
\end{itemize}
\end{defn}

Thus,  finitely determined analytic functions are right equivalent to polynomials. 

The main result we will use  can be found in the book of de Jong and Pfister \cite{jp}. Denote $J(f)$ the ideal generated by the partial derivatives of $f$.

\begin{thm}\label{finitedeterminacy}  Let $f\in \gtm^2$. Assume $\gtm^{k+1} \subset \gtm J(f)^2$. Then $f$ is $k$-determined. In particular functions with an isolated singularity are finitely determined.  
\end{thm}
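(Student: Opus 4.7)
The plan is to prove Theorem \ref{finitedeterminacy} by the classical homotopy (Mather) method: given $g\in\C\{x_1,\dots,x_n\}$ with $f-g\in\gtm^{k+1}$, I will interpolate between $f$ and $g$ by the linear family
$F_t=f+t(g-f),\ t\in[0,1]$,
and construct a one-parameter family of local analytic automorphisms $\varphi_t$ with $\varphi_0=\mathrm{id}$ and $F_t\circ\varphi_t=f$ for all $t$. The automorphism $\varphi_1$ will then realise the right equivalence between $f$ and $g$. The construction of $\varphi_t$ is done by solving an ODE for an auxiliary time-dependent analytic vector field; the crux lies in finding such a vector field with the right regularity and vanishing at the origin, and this is exactly where the hypothesis $\gtm^{k+1}\subset\gtm J(f)$ (read as stated, modulo the apparent typo in the exponent) is used.

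Differentiating $F_t\circ\varphi_t=f$ in $t$ and writing $\dot\varphi_t=X_t\circ\varphi_t$ for a time-dependent vector field $X_t=\sum_i X_{t,i}\,\partial/\partial x_i$, the equation becomes, after composing with $\varphi_t^{-1}$,
\[
(g-f)+\sum_{i=1}^n X_{t,i}\,\frac{\partial F_t}{\partial x_i}=0.
\]
So the first key step is to exhibit, for each $t$, germs $X_{t,i}\in\gtm$ depending analytically on $t$ that solve this equation, that is, to show $g-f\in\gtm\cdot J(F_t)$ with coefficients jointly analytic in $(x,t)$. Because $f-g\in\gtm^{k+1}$, I have $\partial F_t/\partial x_i\equiv\partial f/\partial x_i\pmod{\gtm^k}$, hence $J(F_t)+\gtm^k=J(f)+\gtm^k$ as submodules of $\Oo_n^n$. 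Combined with the hypothesis, a Nakayama-type argument in the $\Oo_n\{t\}$-module $\gtm^{k+1}\Oo_n\{t\}$ gives $\gtm^{k+1}\subset\gtm\cdot J(F_t)$ uniformly in $t$, so the $X_{t,i}$ exist as convergent power series in $(x_1,\dots,x_n,t)$ vanishing on $\{x=0\}$.

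Second, with such an $X_t$ in hand, the ODE $\dot\varphi_t=X_t\circ\varphi_t$, $\varphi_0=\mathrm{id}$, is solved by the usual Picard iteration. Since $X_t$ is analytic in $(x,t)$ and vanishes at $x=0$, the flow $\varphi_t$ exists on a common neighborhood of $0$ for all $t\in[0,1]$, is analytic, and fixes the origin; hence each $\varphi_t$ is an analytic automorphism of $(\C^n,0)$ and the identity $F_t\circ\varphi_t=f$ holds. Specialising at $t=1$ gives $g\circ\varphi_1=f$. If $f$ and $g$ are real, one runs exactly the same construction over $\R\{x_1,\dots,x_n\}$: the solutions $X_{t,i}$ can be chosen real (by averaging with complex conjugation, or by doing the Nakayama step over the real coefficient ring), and the flow stays real. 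The main obstacle in the whole argument is precisely this analytic solvability step---in the formal category everything is automatic, but one needs the coefficients produced by Nakayama to depend analytically on the parameter $t$ and on $x$, which requires working in the ring $\Oo_{n+1}$ rather than merely $\Oo_n[[t]]$.

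Finally, the ``in particular'' statement follows easily: if $f$ has an isolated singularity at $0$, then $\ceros(J(f))=\{0\}$, so by R\"uckert's Nullstellensatz $\sqrt{J(f)\,\Oo_n}=\gtm$, hence $\gtm^N\subset J(f)$ for some $N$. Therefore $\gtm^{N+1}\subset \gtm\cdot J(f)$ (and, a fortiori, any higher power lands inside $\gtm J(f)^2$ as soon as $\gtm^{2N}\subset J(f)^2$, which again holds for $N$ large), so the first part applies with $k=N$ and $f$ is $N$-determined, hence finitely determined.
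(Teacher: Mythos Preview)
The paper does not give its own proof of this theorem; it simply quotes the result from de Jong--Pfister (Theorem 9.1.3 of \cite{jp}, as acknowledged in the bibliographic notes to Chapter~4). Your argument via the Mather homotopy method is exactly the classical proof found there, and it is correct in outline: the key Nakayama step in $\C\{x,t\}$ to get $\gtm^{k+1}\subset\gtm\,J_x(F_t)$ uniformly in $t$, followed by integration of the resulting time-dependent vector field vanishing at the origin, is the standard route. One small remark: the hypothesis as printed, $\gtm^{k+1}\subset\gtm J(f)^2$, is almost certainly a transcription slip for the standard condition $\gtm^{k+1}\subset\gtm^2 J(f)$; you noticed this and worked around it, and in fact either reading suffices for the Nakayama step since $f\in\gtm^2$ forces $J(f)\subset\gtm$, hence $\gtm J(f)^2\subset\gtm^2 J(f)$.
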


For our purposes, in order to obtain finite determinacy for $f$ it is enough to prove that, up to a finite sum of squares of analytic function germs, its complex holomorphic extension $F$ has an isolated singularity. Then, if its Milnor number is $\mu$, the function germ $f -\sum h_i^2$ will be  $(\mu+1)-$determined.

We need a preliminary lemma.   

\begin{lem}\label{pseudosard}
Let $X\subset\C^m$ be a complex analytic manifold and let $V\subset\C^n$ be an open set of $\C^n$
that intersect $\R^n$. Let $f:X\rightarrow V$ be an analytic function and let $Y\subset X$ be an
analytic subset of $X$ such that for all $y\in Y$ the map $d_yf:T_yX\rightarrow \C^n$ is not
surjective. Then the interior in $\R^n$ of $f(Y)\cap\R^n$ is empty.
\end{lem}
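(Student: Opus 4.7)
The plan is to show that $f(Y)$ is contained in a countable union of proper complex analytic subsets of $V$, and then deduce via the identity principle and Baire's theorem that such a union cannot cover a nonempty open subset of $\R^n$.

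First I would stratify $Y$ into a countable locally finite disjoint union $Y=\bigsqcup_i Y_i$ of connected complex analytic submanifolds of $X$, obtained by iterating the $\Reg/\Sing$ decomposition from Chapter 1. On each smooth stratum $Y_i$ the restriction $f|_{Y_i}:Y_i\to V$ is holomorphic, and for every $y\in Y_i$ its rank is bounded above by $\rk(d_yf:T_yX\to\C^n)$, which by hypothesis is at most $n-1$. The complex analogue of the constant-rank theorem, applied inductively on the proper analytic subset where the rank drops, then shows that $f(Y_i)$ is contained in a locally finite union of complex analytic subsets $W_{i,k}\subset V$ of complex dimension at most $n-1$. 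Taking all $i,k$ together we obtain a countable family of proper complex analytic subsets of $V$ whose union contains $f(Y)$.

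The heart of the matter is to prove that each $W_{i,k}\cap\R^n$ has empty interior in $\R^n$. Suppose, for contradiction, that some real open set $U\subset\R^n$ were contained in $W_{i,k}$. Pick a point $x_0\in U$; on a connected complex neighborhood $\Omega\subset V$ of $x_0$ the analytic set $W_{i,k}$ is contained in the zero set of some holomorphic function $g\in\Oo(\Omega)$ with $g\not\equiv 0$. Then $g$ vanishes on the real open set $U\cap\Omega$, so all real (hence complex) partial derivatives of $g$ vanish at each point of $U\cap\Omega$; the Taylor expansion of $g$ at such a point is identically zero, and by the identity principle $g\equiv 0$ on $\Omega$, a contradiction.

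Finally, since $V\cap\R^n$ is an open subset of $\R^n$, hence a locally complete metric space, Baire's theorem implies that the countable union $\bigcup_{i,k}(W_{i,k}\cap\R^n)$ of closed nowhere dense subsets has empty interior in $V\cap\R^n$, whence $f(Y)\cap\R^n$ does too. The delicate step will be the second one, namely producing the covering $f(Y)\subset\bigcup_{i,k}W_{i,k}$ by analytic sets of lower complex dimension: one must carefully combine the stratification of $Y$ with the rank bound, keeping track of which open subsets of each stratum fall under the constant-rank theorem and which belong to lower-dimensional analytic subsets where the argument is then iterated.
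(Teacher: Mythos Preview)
Your overall strategy---stratify $Y$ into constant-rank pieces, apply the rank theorem, then combine Baire with the identity principle---is exactly what the paper does. However, the covering step as you state it does not go through: $f(Y)$ need \emph{not} be contained in a countable union of proper complex analytic subsets of $V$ (closed in all of $V$). Take $X=Y=\C$, $V=\C^2$, and $f(z)=(e^z,e^{iz})$; here $d_zf$ has rank $1<2$ everywhere. If $g\in\Oo(\C^2)$ satisfies $g\circ f\equiv 0$, then since the image accumulates on $\{0\}\times\{|y|=1\}$ one gets $g(0,y)\equiv 0$, hence $x\mid g$, and iterating forces $g\equiv 0$. So $f(\C)$ lies in no proper analytic subset of $\C^2$; nor in any countable union of such, because each $g_j\circ f$ would then have discrete zero set and $\C$ would be countable. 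Thus your sets $W_{i,k}$ cannot be taken as analytic subsets of $V$, and your Baire step (``closed nowhere dense'') breaks down.

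The paper fixes this by keeping everything local. Each piece $Y_i$ is cut small enough that there exist rank-theorem charts $\varphi_i:Y_i\to U_i\subset\C^{d_i}$ and $\psi_i:W_i\to W_i'$ with $W_i\subset V$ open, putting $f|_{Y_i}$ into the canonical form $(z_1,\dots,z_{d_i})\mapsto(z_1,\dots,z_{r_i},0,\dots,0)$, and---this is the key shrinking condition---arranging $\overline{f(Y_i)}\subset W_i$. One then argues by contradiction: Baire produces an index $i_0$ with $\overline{f(Y_{i_0})}\cap\R^n$ of nonempty interior in $\R^n$; the holomorphic function $\pi_n\circ\psi_{i_0}$ on $W_{i_0}$ vanishes on $f(Y_{i_0})$, hence on its closure (which is inside $W_{i_0}$), hence on a real open set, hence identically---contradicting that $\psi_{i_0}$ is a biholomorphism. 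The condition $\overline{f(Y_i)}\subset W_i$ is precisely what lets the closures needed for Baire remain inside the domain where a nonzero local equation is available; this is the ingredient your outline is missing.
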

\begin{proof}
 Suppose, for a contradiction, that the interior in
$\R^n$ of $f(Y)\cap\R^n$ is not empty.

{\sc Claim}: \em There exists a countable collection of subsets $Y_i\subset Y$, where $i\in\N$,
such that:
\begin{itemize}
\item $Y_i$ is an analytic manifold of constant dimension $d_i$ or a point.
\item $Y=\bigcup_{i\in\N}Y_i$. 
\item If \mbox{\rm dim} $Y_i=d_i$ then the rank
$\rg(f_{|Y_i})$ is a constant $r_i<n$ in $Y_i$. Also there exist open sets
$W_i\subset V$, $U_i\subset \C^{d_i}$ and $W_i'\subset\C^n$ and two biholomorphisms
$\varphi_i:Y_i\rightarrow U_i$ and $\psi_i:W_i\rightarrow W_i'$ such that
$\psi_i\circ f_{|Y_i}\circ\varphi_i^{-1}$ is the map 
$$
(z_1,\ldots,z_{d_i})\mapsto(z_1,\ldots,z_{r_i},0,\ldots,0).
$$
\item $\ol{f({Y_i})}\subset W_i$.
\end{itemize}

\rm
 We begin by proving by induction on the dimension of $Y$ that  
$Y$ can be written as the union of a countable set $D\subset X$
and countably many analytic manifolds $Z_i$ of constant dimension $d_i$ such that $f_{|Z_i}$ has constant rank $r_i<n$.

Indeed, if $\dim Y=0$, then the analytic set $Y$  is a discrete set in $X$, hence countable. 
Suppose now
that the result is true for dimension less or equal to $d-1$ and suppose that the dimension of $Y$ is $d$.

Let $r=\max \{\rg_p(f_{|Y}):p\in Y\setminus\Sing(Y)\}$ and let $C=\{z\in Y:\ \rg_p(f_{|Y})<r\}$.
Then $C$ is an analytic subset of the analytic manifold $Y \setminus \Sing(Y)$ of dimension
$\leq d-1$ and  $\Sing(Y)$ is an analytic subset of the analytic manifold $X$. By induction
hypotheses, $C$ and $\Sing(Y)$ can be written as the union of 
countably many analytic manifolds of constant dimension such that the restriction of $f$ to each
manifold has constant rank $<n$. Thus, $Y$ can be written as the union of a countable set $D\subset X$
and countably many analytic manifolds $Z_j$ of constant dimension $d_j$ such that $f_{|Z_j}$ has
constant rank $r_j<n$.

Take $j\in\N$ and let $p\in Z_j$. By the Rank Theorem  applied to
$f_{|Z_j}:Z_j\rightarrow V$, there exists open sets
$U_p\subset \C^{d_j}$, $A_p\subset Z_j$, $W_p\subset V$, and $W_p'\subset\C^n$, and two biholomorphisms
$\varphi_p:Y\rightarrow U$ and $\psi_p:W\rightarrow W'$ such that $p\in A_p$ and 
$\psi_p\circ f|_{A_p}\circ\varphi_p^{-1}$ is the map: 
$$
(z_1,\ldots,z_{d_j})\mapsto(z_1,\ldots,z_{r_j},0,\ldots,0),
$$
Note that, shrinking $U_p$ and $A_p$ if necessary, we can suppose that $\ol{f(A_p)}\subset W_p$.
Note also that each $Z_j$ can be covered by countably many open sets in $Z_j$ which are contained
in the open set $A_p$ for some $p\in Z_j$ (to prove this use the fact that $Z_j$ is second countable). Combining this with the fact that $Y=D\bigcup\left(\bigcup_{j\in\N}Z_j\right)$, the claim is proved.

Next, we prove that there exists $i_0\in\N$ such that the interior of
$\ol{f(Y_{i_0})}\cap\R^n$ in $\R^n$ is not empty.

Suppose that the interior $\Int(D_i)$ of $D_i=\ol{f(Y_{i})}\cap\R^n$ in
$\R^n$ is empty for all $i\in\N$. Thus, the set $\R^n\setminus D_i$ is open and dense in $\R^n$
for all $i\in\N$. Since $\R^n$ is a Baire space, the intersection
$\bigcap_{i\in\N}(\R^n\setminus D_i)=\R^n\setminus\bigcup_{i\in\N}D_i$ is dense in $\R^n$. Since
$$
f(Y)\cap\R^n=\bigcup_{i\in\N}(f(Y_i)\cap\R^n)\subset\bigcup_{i\in\N}D_i,
$$
we conclude that $\R^n\setminus (f(Y)\cap\R^n)$ is dense in $\R^n$, which is a contradiction (because we
are supposing that the interior in $\R^n$ of $f(Y)\cap\R^n$ is not empty). Hence, there exists
$i_0\in\N$ with the desired property.

Finally, let $\pi_n:\C^n\rightarrow\C; z=(z_1,\ldots,z_n)\mapsto z_n$ be the projection over
the last coordinate. Note that $W_{i_0}$ contains $\ol{f(Y_{i_0})}$ and that the map
$\pi_n\circ\psi_{i_0}:W_{i_0}\rightarrow\C$ vanishes on $f(Y_{i_0})$ (see our initial claim).
Thus, it vanishes on $\ol{f(Y_{i_0})}$. Since $\ol{f(Y_{i_0})}\cap\R^n$ has not empty interior
in $\R^n$, we conclude that the real part and the imaginary part of $\pi_n\circ\psi_{i_0}$ vanish
in an open set of $\R^n$. By the Identity Principle, they are identically $0$. Thus,
$\pi_n\circ\psi_{i_0}$ is identically $0$, a contradiction because it is a coordinate of an
analytic diffeomorphism between open sets of $\C^n$.

Hence, the interior in $\R^n$ of $f(Y)\cap\R^n$ is empty, as required.
\end{proof}

\begin{thm}\label{isolated}
Let $f,h_1,\ldots,h_n\in\R\{x\}=\R\{x_1,\ldots,x_n\}$ be analytic germs such that
$h_1,\ldots,h_n$ generate an ideal of $\R\{x\}$ that contains a power of the maximal ideal
$\gtm_n$ of $\R\{x\}$. Then for every open set $B\subset\R^n$ there exists $d=(d_1,\ldots,d_n)\in
B$ such that the germ
$f-\sum_{i=1}^nd_ih_i^2$ has finite Milnor number.
\end{thm}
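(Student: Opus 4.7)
The plan is to complexify the problem and invoke the pseudo-Sard lemma (Lemma~\ref{pseudosard}) to show that the set of $d \in \R^n$ for which $g_d = f - \sum_i d_i h_i^2$ fails to have an isolated critical point at the origin has empty interior in $\R^n$. I would fix holomorphic representatives $F, H_1, \ldots, H_n$ of $f, h_1, \ldots, h_n$ on a Stein open neighborhood $U \subset \C^n$ of the origin, set $G_d = F - \sum_i d_i H_i^2$, and note that the Milnor number $\mu(g_d, 0) = \dim_\C \C\{x\}/J(G_d)$ is finite precisely when $0$ is an isolated zero of $\nabla G_d$ in $U$.

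The key algebraic observation is that $\nabla G_d(x) = \nabla F(x) - A(x)\,d$, where $A(x)$ is the $n \times n$ matrix with entries $A(x)_{j,i} = \partial (H_i^2)/\partial x_j = 2H_i(x)\,\partial H_i/\partial x_j(x)$. Its determinant factors as
$$\det A(x) = 2^n H_1(x) \cdots H_n(x)\,\det\bigl(\partial H_i/\partial x_j(x)\bigr).$$
The hypothesis $(h_1,\ldots,h_n) \supset \gtm_n^k$ forces $H = (H_1,\ldots,H_n)$ to be a finite map germ at $0$: in particular each $H_i$ is not identically zero (otherwise the remaining $n-1$ generators would generate an ideal of height at most $n-1$, contradicting its being $\gtm_n$-primary), and in characteristic zero $H$ is generically \'etale so $\det(\partial H_i/\partial x_j) \not\equiv 0$. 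Hence $\det A \not\equiv 0$. On the dense open set $U^* = \{x \in U : \det A(x) \neq 0\}$ the equation $A(x)d = \nabla F(x)$ has the unique solution $d = \Psi(x) := A(x)^{-1}\nabla F(x)$, so the incidence variety
$$Z = \bigl\{(x,d) \in (U\setminus\{0\}) \times \C^n : \nabla G_d(x) = 0\bigr\}$$
coincides over $U^*\setminus\{0\}$ with the graph of $\Psi$ and is a complex manifold of dimension $n$ there.

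A parameter $d_0 \in \C^n$ is \emph{bad} (that is, $G_{d_0}$ has non-isolated critical point at $0$) precisely when there is an analytic arc $\gamma(t)$ in $U$ with $\gamma(0) = 0$, $\gamma(t) \neq 0$ for $t \neq 0$, and $\nabla G_{d_0}(\gamma(t)) \equiv 0$; this is the curve selection lemma applied to the complex analytic set of critical points of $G_{d_0}$. Choosing $\gamma$ generically we may arrange $\gamma(t) \in U^*$ for small $t \neq 0$, so that $\{(\gamma(t),d_0) : t \neq 0\}$ is a positive-dimensional fiber of the projection $\pi : Z \cap (U^* \times \C^n) \to \C^n$ at smooth points; since source and target both have complex dimension $n$, this forces $d\pi$ to have rank strictly less than $n$ there. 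Hence every bad $d_0$ lies in $\pi(Y)$, where $Y = \{p \in Z \cap (U^*\times\C^n) : d_p\pi \text{ is not surjective}\} = \{\det d\pi = 0\}$ is an analytic subset. Lemma~\ref{pseudosard} then yields that $\pi(Y) \cap \R^n$ has empty interior in $\R^n$, so any open $B \subset \R^n$ contains a real $d \notin \pi(Y)$, and for that $d$ the germ $g_d$ has finite Milnor number.

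The main technical obstacle I foresee is the step where $\gamma$ is forced to lie in $U^*$: if the critical set of some $G_{d_0}$ near $0$ happens to be entirely contained in the thin locus $\{\det A = 0\}$, the generic choice of $\gamma$ is not available, and one must instead stratify $Z$ into complex analytic manifolds (or resolve its singularities) and apply Lemma~\ref{pseudosard} separately on each stratum, verifying that every lower-dimensional stratum projects to a subset of $\R^n$ of empty interior.
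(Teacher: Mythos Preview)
Your instinct to build an incidence variety and invoke Lemma~\ref{pseudosard} is right, and so is your diagnosis of the obstacle --- it is a genuine gap. Take $h_i=x_i$ and $f=x_1^2x_2$, so $A(x)=2\operatorname{diag}(x_1,\ldots,x_n)$. For any $d$ with $d_2=0$ and $d_3,\ldots,d_n\neq 0$, the critical set of $G_d$ is the entire $x_2$-axis, which lies in $\{x_1=0\}\subset\{\det A=0\}$. Your graph-over-$U^*$ argument never sees the hyperplane $\{d_2=0\}$ of bad parameters: those $d$ are not in the image of $\Psi$ at all. The stratification fix is not straightforward either, since the pieces of $Z$ over the degeneracy locus of $A$ need not be manifolds of dimension $\le n$, and you would still have to argue that a positive-dimensional fiber over a bad $d_0$ forces a rank drop of $\pi$ on whichever stratum carries it.

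The paper sidesteps this with a different incidence variety. Instead of the $n$ equations $\nabla_xG_d=0$, it uses the single equation $\Phi(x,d)=F(x)-\sum_jd_jH_j^2(x)=0$. Since $D_{(x,d)}\Phi=(\nabla_x\Phi_d,\,-H_1^2,\ldots,-H_n^2)$ vanishes only where $x=0$ (all $H_j$ vanish there) and $\nabla F(0)=0$, the set $X=\Phi^{-1}(0)\setminus(\{0\}\times\C^n)$ is automatically a smooth complex hypersurface of dimension $2n-1$, and Lemma~\ref{pseudosard} applies cleanly to $\pi:X\to\C^n$. For $d^0$ avoiding the resulting critical-value set one checks via the implicit function theorem that every $w\neq 0$ with $\Phi_{d^0}(w)=0$ has $\nabla\Phi_{d^0}(w)\neq 0$. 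The bridge back to Milnor number is that (after replacing $f$ by $f-f(0)$, which leaves the Jacobian ideal unchanged) any positive-dimensional component of $\{\nabla\Phi_{d^0}=0\}$ through the origin would lie in the level set $\{\Phi_{d^0}=0\}$, contradicting its smoothness away from $0$.
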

\begin{proof}
Indeed, let $U\subset\C^n$ be an open set such that: 
\begin{itemize}
\item $0\in U$,
\item $f,h_1,\ldots,h_n$ have holomorphic extensions $F,H_1,\ldots,H_n:U\rightarrow\C$, 
\item $\bigcap_{j=1}^nH_j^{-1}(0)=\{0\}$.\footnote{This last condition is possible because
$H_1,\ldots,H_n$ generate an ideal of $\C\{x\}$ that contains a power of the maximal ideal
$\gtm_n'=\gtm_n\C\{x\}$ of $\C\{x\}$.}
\end{itemize}
It is enough to prove that there exists $d\in B$ such that the analytic function
$F-\sum_{j=1}^nd_jH_j^2$ has an isolated singularity at the origin. To check this we proceed as
follows. Let $W$ be an open set of $\C^n$ such that $W\cap\R^n=B$ and let
$$
\Phi:U\times W\rightarrow\C,\ (z,d)\mapsto F(z)-\sum_{j=1}^nd_jH_j^2(z).
$$
Note that 
$$
D_{(z,d)}\Phi=\left(\ldots,\frac{\partial F}{\partial z_j}-2\sum_{k=1}^nd_kH_k\frac{\partial
H_k}{\partial z_j},\ldots,H_1^2,\ldots,H_n^2\right)
$$
vanishes in a point $(z,d)\in U\times W$ if and only if $z=0$ and $D_0F=\left(\frac{\partial
F}{\partial z_1},\ldots,\frac{\partial F}{\partial z_n}\right)=0$. If $D_0F\neq 0$, then for any
$d\in B$, the function $\Phi_d=F-\sum_{j=1}^nd_jH_j^2$ has an isolated singularity at $0$ and we
are done. Hence, we can assume that $D_0F=0$.

Consider the analytic manifold $X=\Phi^{-1}(0)\setminus(\{0\}\times W)$ which has (complex)
dimension $2n-1$. Consider the projection $\pi:X\rightarrow W,\ (z,d)\mapsto d$. Notice that
$\pi(X)=W$. Let 
$$
Y=\{y\in X: d_y\pi:T_y X\rightarrow\C^n \text{ is not surjective}\}.
$$ 
One can check that $Y$ is analytic subset of $X$. By Lemma \ref{pseudosard}, there exists $d^0\in
B\setminus\pi(Y)$. Let us see that $0$ is an isolated singularity of $\Phi_{d^0}$. Let
$(w,d^0)\in X\cap\pi^{-1}(d^0)$. Since $w\neq 0$, there exists
$1\leq j
\leq n$ such that
$H_j(w)\neq 0$. We can assume, to simplify notations, that $j=n$. 

By the Implicit Function Theorem applied to the analytic function $\Phi(z,d',T)$ (where
$d'=(d_1,\ldots,d_{n-1})$) and the point $(w,d^0)$, there exist open sets $V_1\subset X$,
$V_2\subset\C^{n-1}$ and $V_3\subset\C$ such that $w\in V_1$ and $(w,d^0)\in V_2\times V_3$ and
a unique analytic function $g:V_1\times V_2\rightarrow V_3$ such that $\Phi(z,d',g(z,d'))\equiv
0$. In particular, $g(w,d_1^0,\ldots,d_{n-1}^0)=d_n^0$. Note that the function
$$
\varGamma:V_1\times V_2\rightarrow X, (z,d')\mapsto(z,d',g(z,d'))
$$
is an analytic diffeomorphism over its image. Since $\pi$ has rank $n$ at $(w,d^0)$, we have that
$\pi\circ\varGamma$ has also rank $n$ at $(w,d_1^0,\ldots,d_{n-1}^0)$. Note that
$\pi\circ\varGamma(z,d')=(d',g(z,d'))$. Hence, the jacobian matrix of $\pi\circ\varGamma$ is of
the form
$$
\left(\begin{array}{c|c}
0&I_{m-1}\\
\\
\hline
\\
\cdots \frac{\partial g}{\partial z_j}\cdots&\cdots \frac{\partial g}{\partial d_k}\cdots
\end{array}\right).
$$
Since it has rank $n$ at $(w,d_1^0,\ldots,d_{n-1}^0)$, there exists $1\leq j\leq n$ such that
$$
\frac{\partial g}{\partial z_j}(w,d_1^0,\ldots,d_{n-1}^0)\neq 0.
$$
On the other hand, since $\Phi(z,d',g(z,d'))\equiv 0$, we have, taking partial derivatives with
respect to $z_j$,
$$
0=\frac{\partial F}{\partial z_j}-2\sum_{k=1}^{n-1}d_kH_k\frac{\partial
H_k}{\partial z_j}-2gH_n\frac{\partial
H_n}{\partial z_j}+H_n^2\frac{\partial g}{\partial z_j}.
$$
In particular, for $(w,d^0)$ we get:
$$
\frac{\partial F}{\partial z_j}(w)-2\sum_{k=1}^{n}d_k^0H_k(w)\frac{\partial
H_k}{\partial z_j}(w)=-H_n^2(w)\frac{\partial g}{\partial z_j}(w,d_1^0,\ldots,d_{n-1}^0)\neq 0.
$$
Thus, $w$ is a regular point of $\Phi_{d^0}^{-1}(0)$ and we conclude that $0$ is an isolated
singularity of $\Phi_{d^0}=F-\sum_{j=1}^nd_j^0H_j^2$, as required.
\end{proof}

\begin{remark} If $\ceros (f) =\{x_0\}$ then $f_{x_0}$ is a sum of $n+2^n$ squares. Indeed,  we need to modify $f$ using a sum of $n$ squares to make it right equivalent to a polynomial. Then this polynomial will be a sum of at most $2^n$ squares of rational functions. 
If the zeroset of $f$ is discrete, we apply Theorem \ref{globalization} to represent $f$  as a sum $n+2^n +1$ squares of meromorphic functions on $\R^n$.
\end{remark}

\section{Pfister trick.}

It is well known that the ring of analytic functions on a real analytic manifold $M$ is far from being a unique factorization domain because factorization involves, in the non compact case, infinitely many factors. In addition if $H^1(M,\Z_2)\neq0$, there are analytic functions on $M$ that admit different factorizations. However, if we focus on $\R^n$, we are able to associate to a real analytic function on $\R^n$ a (possibly  infinite) collection of irreducible factors (see \S\ref{irredfactorsc}), unique up to analytic units, and we provide some procedure involving sheaf theory (via a \em sheaf-product\em, see \S\ref{sheafprod}) to recover the initial analytic function from its irreducible factors. Thus, we manage an additional tool: \em factorization\em.  One of the main purposes of this  section is to reduce the representation of an analytic function $f:\R^n\to\R$ as a sum of squares (of meromorphic functions on $\R^n$) to the representation of the irreducible factors of $f$ that divide $f$ with odd multiplicity. This involves the development of  {\it countable  Pfister's multiplicative formulae} for the field of meromorphic functions on $\R^n$.

We begin with some well known useful constructions.

\subsection{ Globally principal ideal sheafs.}

Consider the standard  exponential sheaf sequences  for Stein spaces  and for C-analytic spaces

$$0\to \Z\to \Oo_X \to \Oo^*_X\to  0$$

$$0\to \Oo_X \to \Oo^*_X\to \frac{\Oo^*_X}{\Oo_X^+}= \frac{\Z}{2\Z} \to 0$$

where $\Oo_X^+\subset \Oo^*_X$ denote the sheaf of positive unit germs.

The corresponding long cohomology exact sequences together with Cartan's Theorem B give isomorphisms:
\begin{itemize}
\item $H^1(X, \Oo_X^*) \cong H^2(X,\Z)$ for a Stein space $(X,\Oo_X)$; 
\item $H^1(X, \Oo_X^*) \cong H^1(X, \Z/2\Z)$ for a C-analytic space.
\end{itemize}

We are mainly concerned with $X=\R^n$ or a contractible open Stein subsets $\Omega$ of $\C^n$. In both cases $H^1(X, \Oo_X^*)=0$. 

The vanishing of $H^1(X, \Oo_X^*)$ implies that a locally principal sheaf  $\Ff$ on $X$  is in fact globally principal. Indeed, consider a locally finite open covering $\{U_j\}_j$ of $X$ and assume that $\Ff$ is generated by $f_j$ on $U_j$. Then the family $\frac{f_j}{f_k}_{j,k}$ is an element of $H^1(X, \Oo_X^*)=0$. This implies that we can modify $f_j$ by a suitable unit $\lambda_j$ in such a way that for all $(j,k)$ one has $\lambda_jf_j= \lambda_kf_k$ on $U_j\cap U_k$. Thus all these generators glue together and give a global generator of $\Ff$.

\subsection{ Sheaf-products.}\label{sheafprod}

For our purposes, given a (countable) collection of either analytic functions on $\R^n$ or holomorphic functions on a contractible open Stein subsets $\Omega$ of $\C^n$ whose zerosets constitute a locally finite family, we need to construct an either analytic or holomorphic function that behaves (up to multiplication by a unit) as the (possibly infinite) product of all the functions in the collection. To that end, we recall a well-known procedure that will be called \em sheaf-product \em for short. More precisely,

\begin{defn}\label{realprod}

Let $\{f_k\}_{k\geq1}$ be a collection of analytic functions on $\R^n$ such that the family $\{X_k=\{f_k=0\}\}_{k\geq1}$ of their zerosets is locally finite in $\R^n$. Consider the locally principal analytic sheaf of ideals

$$
\Ff_x=
\left\{\begin{array}{ll}
\prod_{k\geq1,x\in X_k}f_{k,x}\an_{\R^n_x}&\text{ if $x\in X=\bigcup_{k\geq1}X_k$,}\\[4pt]
\an_{\R^n,x}&\text{ if $x\in\R^n\setminus X$.}
\end{array}\right.
$$ 

So, $\Ff$ is globally generated by $f$ which is unique up to units. 
We call this generator $f$ the \em sheaf-product $\shprod f_k$ of the analytic functions $f_k$\em. Note that if each $f_k$ is positive semidefinite, we may always assume that so is $\shprod f_k$.

In a similar way, given a collection $\{F_k\}_{k\geq1}$ of holomorphic functions on a contractible open Stein subset $\Omega$ of $\C^n$ such that the family $\{Z_k=\{F_k=0\}\}_{k\geq1}$ of their zerosets is locally finite in $\Omega$, we define the \em sheaf-product $\shprod F_k$ of the holomorphic functions $F_k$\em.
\end{defn}
 Note that, by means of the following lemma, if $\Omega\cap\R^n\neq\varnothing$,  $\Omega$ and each $F_k$ are invariant, we may always choose $\shprod F_k$ also invariant.

\begin{lem}\label{suit}
Let $\Omega$ be an invariant contractible open subset of $\C^n$ such that $\Omega\cap\R^n\neq\varnothing$ and let $F:\Omega\to\C$ be a holomorphic function. Suppose that there exists a unit $u\in\hol(\Omega)$ such that $\ol{F\circ\sigma}=uF$. Then, there exists a unit $v\in\hol(\Omega)$ such that $vF:\Omega\to\C$ is invariant.
\end{lem}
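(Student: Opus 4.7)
The plan is to reduce the problem to finding a holomorphic square root of the unit $u$ (more precisely, a holomorphic logarithm of $u$) and to exploit contractibility to produce it globally. If $F\equiv 0$ there is nothing to prove, so I assume $F\not\equiv 0$.

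First I would derive a compatibility relation on $u$. Applying the anti-involution $\sigma$ and conjugation to the hypothesis $\overline{F\circ\sigma}=uF$ yields
\[
F=\overline{\overline{F\circ\sigma}\circ\sigma}=\overline{(uF)\circ\sigma}=\overline{u\circ\sigma}\cdot\overline{F\circ\sigma}=\overline{u\circ\sigma}\cdot uF,
\]
so $u\cdot\overline{u\circ\sigma}=1$ on $\Omega$. Heuristically, this says that $u$ is a $\sigma$-cocycle and what I want is a corresponding coboundary: a unit $v$ with $v=u\cdot\overline{v\circ\sigma}$, because then $\overline{(vF)\circ\sigma}=\overline{v\circ\sigma}\cdot uF=vF$.

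Next I would pass to logarithms. Since $\Omega$ is contractible and $u\in\hol(\Omega)^*$, there exists $g\in\hol(\Omega)$ with $u=e^{g}$. The identity $u\cdot\overline{u\circ\sigma}=1$ becomes $e^{\,g+\overline{g\circ\sigma}}=1$, hence $g+\overline{g\circ\sigma}$ is a locally constant integer multiple of $2\pi i$, and by connectedness it equals $2\pi i k$ for a single $k\in\Z$. Evaluating at any point $x_0\in\Omega\cap\R^n$ (where $\sigma(x_0)=x_0$) gives $g(x_0)+\overline{g(x_0)}=2\,\Re g(x_0)\in\R$, which forces $k=0$. Therefore $\overline{g\circ\sigma}=-g$ on $\Omega$.

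Finally I define $v=e^{g/2}\in\hol(\Omega)^*$. Then
\[
\overline{v\circ\sigma}=\overline{e^{(g\circ\sigma)/2}}=e^{\overline{g\circ\sigma}/2}=e^{-g/2}=u\,e^{-g}\cdot e^{g/2}\cdot u^{-1}\cdot u = u^{-1}v,
\]
so $u\cdot\overline{v\circ\sigma}=v$, and consequently
\[
\overline{(vF)\circ\sigma}=\overline{v\circ\sigma}\cdot\overline{F\circ\sigma}=u^{-1}v\cdot uF=vF,
\]
which shows $vF$ is invariant. The only nontrivial point is the existence of the global logarithm $g$ of $u$; this is precisely what contractibility of $\Omega$ provides (through the exponential sheaf sequence, since $H^1(\Omega,\Z)=0$), and pinning down the additive integer constant by means of a real point of $\Omega$ is what requires the hypothesis $\Omega\cap\R^n\neq\varnothing$.
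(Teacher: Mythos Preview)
Your proof is correct and follows essentially the same route as the paper: both arguments take a holomorphic square root $v$ of $u$ on the contractible set $\Omega$ (you via a logarithm $g$ and $v=e^{g/2}$, the paper directly), derive the cocycle relation $u\cdot\overline{u\circ\sigma}=1$, and use a real point of $\Omega$ to kill the remaining $\pm1$ (equivalently, integer) ambiguity so that $v\cdot\overline{v\circ\sigma}=1$. One cosmetic remark: the displayed chain $e^{-g/2}=u\,e^{-g}\cdot e^{g/2}\cdot u^{-1}\cdot u=u^{-1}v$ is garbled in the middle (the intermediate product equals $e^{g/2}$, not $e^{-g/2}$); the clean identity you want is simply $u^{-1}v=e^{-g}e^{g/2}=e^{-g/2}=\overline{v\circ\sigma}$.
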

\begin{proof}
We may assume that $F$ is not identically zero. 
Since $\ol{F\circ\sigma}=uF$ and $|\ol{F\circ\sigma}| = |F|$ we get $|u|^2 = u \u\circ \sigma =1$, hence $F=\ol{u\circ\sigma}\ol{F\circ\sigma}$.
 Since $\Omega$ is contractible, there exists a unit $v\in\hol(\Omega)$ such that $v^2=u$. Notice that $v\ol{v\circ\sigma}=1$, because $(v\ol{v\circ\sigma})^2=u\ol{u\circ\sigma}=1$ and its restriction to $\Omega\cap\R^n\neq\varnothing$ is positive. Then,
$$
\ol{(vF)\circ\sigma}=\ol{v\circ\sigma}\ol{F\circ\sigma}=v(\ol{v\circ\sigma})^2\ol{F\circ\sigma}=v\ol{u\circ\sigma}\ol{F\circ\sigma}=vF,
$$
so, $vF$ is invariant, as required.
\end{proof}

\subsection{Sheaf-extension.}\label{rcext}

Let $X\subset\R^n$ be a C-analytic set and let $U$ be a neighborhood of $X$ in $\R^n$. Given an analytic function $g:U\to\R$ such that $\{g=0\}\subset X$, we would like to find a global analytic function $f:\R^n\to\R$ that behaves as an extension of $g$ (up to multiplication by a unit). We also approach the analogous situation in the complex case.

\begin{defn}\label{realext}
Let $X\subset\R^n$ be a C-analytic set and let $f:W\to\R$ be an analytic function on a neighborhood $W$ of $X$ in $\R^n$ such that $\{f=0\}\subset X$. Then, the unique global generator $\widehat{f}\in\an(\R^n)$ (up to multiplication by a unit) of the locally principal analytic sheaf of ideals 
$$
\Ff_x=\left\{
\begin{array}{ll}
f_x\an_{\R^n,x}&\text{ if $x\in X$,}\\[4pt]
\an_{\R^n,x}&\text{ if $x\in\R^n\setminus X$.}
\end{array}\right.
$$
will be  called  the \em sheaf-extension $\widehat{f}$ of $f$ to $\R^n$\em. Of course, if $f$ is positive semidefinite, we may assume that so is $\widehat{f}$.

Analogously, given a contractible open Stein subset $\Omega$ of $\C^n$, an analytic subset $Z$ of $\Omega$, an open neighborhood $U$ of $Z$ in $\Omega$ and a holomorphic function $F:U\to\C$ such that $\{F=0\}\subset Z$, we define the \em sheaf-extension $\widehat{F}$ of $F$ to $\Omega$\em. As before if $\Omega\cap\R^n\neq\varnothing$ and $\Omega$, $U$ and $F$ are invariant, we may always choose $\widehat{F}$ also invariant.
\end{defn}
For our purposes, it will be crucial to ``sheaf-extend'' holomorphically to a suitable common domain countable families of either analytic or holomorphic functions whose zerosets constitute a locally finite family. For this we need some lemmas. 

\begin{lem}\label{neighs}
Let $X$ be a paracompact second countable topological space and let $\{T_k\}_{k\geq1}$ be a locally finite family of closed sets in $X$. For each $k\geq1$, let $V_k$ be an open neighborhood of $T_k$ in $X$. Then, there exists open neighborhoods $U_k\subset V_k$ of $T_k$ in $X$ such that the family $\{U_k\}_{k\geq1}$ is locally finite in $X$.
\end{lem}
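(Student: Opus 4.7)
My idea is to shrink each $V_k$ by intersecting it with the union of precisely those members of a suitable locally finite open refinement of $X$ that actually touch $T_k$. First, local finiteness of $\{T_k\}_{k\geq 1}$ gives, around each $x\in X$, an open neighborhood $W_x$ meeting only finitely many of the $T_k$. Applying paracompactness of $X$ to the open cover $\{W_x\}_{x\in X}$, I obtain a locally finite open refinement $\{W_\alpha\}_{\alpha\in A}$; since each $W_\alpha$ is contained in some $W_x$, it also meets only finitely many of the $T_k$.

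Then I would set
$$
U_k=V_k\cap\bigcup\{W_\alpha:\ W_\alpha\cap T_k\neq\varnothing\}.
$$
This set is open and contained in $V_k$ by construction. It contains $T_k$: given $t\in T_k$, pick any $\alpha$ with $t\in W_\alpha$; then $W_\alpha\cap T_k\neq\varnothing$ (witnessed by $t$) and $t\in T_k\subset V_k$, so $t\in U_k$.

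The only step requiring a bit of care is checking that $\{U_k\}_{k\geq 1}$ is locally finite. Given $x\in X$, I choose an open neighborhood $N$ of $x$ meeting only finitely many $W_\alpha$, say $W_{\alpha_1},\ldots,W_{\alpha_m}$. If $N\cap U_k\neq\varnothing$, unwinding the definition of $U_k$ forces $N$ to meet some $W_\alpha$ with $W_\alpha\cap T_k\neq\varnothing$, and such an $\alpha$ must lie in $\{\alpha_1,\ldots,\alpha_m\}$; since each $W_{\alpha_j}$ meets only finitely many $T_k$, altogether only finitely many indices $k$ can satisfy $N\cap U_k\neq\varnothing$. I do not anticipate any real obstacle here: paracompactness is doing all the work, and the second countability hypothesis plays essentially no role in this argument.
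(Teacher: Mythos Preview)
Your proof is correct and follows essentially the same approach as the paper: take a locally finite refinement $\{W_\alpha\}$ of the cover $\{W_x\}$, set $U_k'=\bigcup_{W_\alpha\cap T_k\neq\varnothing}W_\alpha$, and then $U_k=U_k'\cap V_k$. Your observation that second countability is not actually needed is also correct; the paper only uses it to make the refinement countable, which plays no role in the argument.
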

\begin{proof}
For each $x\in X$ there is an open neighborhood $W^x$ of $x$ that meets only finitely many $T_k$. The family $\{W^x\}_{x\in X}$ is an open covering of $X$. Thus, it has an open refinement $\{W_\ell\}_{\ell\geq1}$ that is countable and locally finite in $X$. We define $U_k'=\bigcup_{W_\ell\cap T_k\neq\varnothing}W_\ell$ and observe that $T_k\subset U_k'$. We claim that the family $\{U_k'\}_{k\geq1}$ is locally finite in $X$.

Indeed, fix a point $x\in X$ and let $V^x$ be a neighborhood of $x$ that intersects finitely many $W_\ell$, say $W_{\ell_1},\ldots,W_{\ell_r}$. Now, the union $\bigcup_{j=1}^rW_{\ell_j}$ meets only finitely many $T_k$, say $T_{k_1},\ldots,T_{k_s}$. Thus, if $k\neq k_1,\ldots,k_s$, the intersection $U_k'\cap V^x=\varnothing$. To finish, just take $U_k=U_k'\cap V_k$.
\end{proof}

\begin{lem}\label{bigneigh}
Let $\{\Omega_k\}_{k\geq1}$ be a locally finite family of open subsets of $\C^n$ and for each $k\geq1$ let $T_k\subset\Omega_k$ be a closed subset of $\Omega_k$ such that $\cl_{\C^n}(T_k)\cap\R^n=T_k\cap\R^n$. Then, there exists a ($\sigma$-invariant contractible Stein) open neighborhood $\Omega$ of $\R^n$ in $\C^n$ such that $\Omega\cap T_k$ is a closed subset of $\Omega$ for all $k\geq1$.
\end{lem}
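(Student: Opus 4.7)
The key observation is a reformulation of the conclusion. Set $B_k=\cl_{\C^n}(T_k)\setminus T_k$. Since $T_k$ is closed in $\Omega_k$, the open set $\Omega_k\setminus T_k$ cannot contain limit points of $T_k$, so $B_k\subset\C^n\setminus\Omega_k$; consequently $B_k=\cl_{\C^n}(T_k)\cap(\C^n\setminus\Omega_k)$ is closed in $\C^n$. Moreover, by the standing hypothesis,
$$
B_k\cap\R^n=\big(\cl_{\C^n}(T_k)\cap\R^n\big)\setminus T_k=(T_k\cap\R^n)\setminus T_k=\varnothing.
$$
For any open set $\Omega$, the equality $\Omega\cap\cl_{\C^n}(T_k)=\Omega\cap T_k$ holds if and only if $\Omega\cap B_k=\varnothing$; so it suffices to construct $\Omega$ with $\Omega\cap B_k=\varnothing$ for every $k$.

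The next step is to use local finiteness. Since $\{\Omega_k\}_{k\ge1}$ is locally finite, so is the family of closures $\{\cl_{\C^n}(\Omega_k)\}_{k\ge1}$ (for open $V$, one has $V\cap\cl(\Omega_k)\neq\varnothing$ iff $V\cap\Omega_k\neq\varnothing$), and therefore the family $\{B_k\}_{k\ge 1}$ is locally finite in $\C^n$. Fix $x\in\R^n$: by local finiteness there is an open neighborhood $V^x_0$ of $x$ meeting only finitely many $B_k$, say for indices in the finite set $F(x)$. For each $k\in F(x)$ we have $x\notin B_k$ (because $B_k\cap\R^n=\varnothing$) and $B_k$ is closed, so we can choose an open neighborhood $W_k^x$ of $x$ with $W_k^x\cap B_k=\varnothing$. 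Then $V^x=V^x_0\cap\bigcap_{k\in F(x)}W_k^x$ is an open neighborhood of $x$ satisfying $V^x\cap B_k=\varnothing$ for every $k\ge1$. Setting $U=\bigcup_{x\in\R^n}V^x$ produces an open neighborhood of $\R^n$ in $\C^n$ disjoint from every $B_k$.

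Finally we upgrade $U$ to an $\Omega$ with the announced additional properties. Replacing $U$ by $U\cap\sigma(U)$ makes it $\sigma$-invariant without losing the property $U\cap B_k=\varnothing$ (for it only shrinks $U$). Now $\R^n$ admits a fundamental system of $\sigma$-invariant open Stein neighborhoods in $\C^n$ that are contractible: one may take, for a sufficiently small strictly positive smooth function $\delta$ on $\R^n$, the tube
$$
\Omega=\Big\{z\in\C^n:\ \sum_{j=1}^n(\ima z_j)^2<\delta(\re z)^2\Big\}\cap U,
$$
which is $\sigma$-invariant, retracts onto $\R^n$ via $(x+iy,t)\mapsto x+i(1-t)y$ (hence is contractible), and is Stein provided $\delta$ is chosen small enough so that $-\log(\delta(\re z)^2-\sum(\ima z_j)^2)$ is plurisubharmonic and exhausts $\Omega$ together with $\sum(\re z_j)^2$ (Theorem \ref{Narcrit}). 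Choosing $\delta$ small enough that this tube lies inside $U$ gives the desired $\Omega$.

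The only delicate step is the verification that $\{B_k\}$ is locally finite and that we can simultaneously avoid all $B_k$ near a given real point; once this is in place, the construction of a Stein contractible invariant tube is standard. I do not anticipate serious obstacles.
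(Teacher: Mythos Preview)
Your proof is correct and shares the same core idea as the paper's: the obstruction to $T_k\cap\Omega$ being closed in $\Omega$ lies exactly in the set $B_k=\cl_{\C^n}(T_k)\setminus T_k=\cl_{\C^n}(T_k)\setminus\Omega_k$, which is closed, disjoint from $\R^n$, and forms a locally finite family. The paper reaches the same conclusion by writing down the explicit open set $\Omega=(\C^n\setminus\bigcup_k\cl(T_k))\cup\bigcup_k(\Omega_k\setminus\bigcup_{j\neq k}B_j)$ and verifying directly that it contains $\R^n$ and that each $T_\ell\cap\Omega$ is closed; your reformulation (``closed in $\Omega$ iff $\Omega\cap B_k=\varnothing$'') makes the argument more transparent and lets you simply take $U=\C^n\setminus\bigcup_kB_k$, which in fact contains the paper's $\Omega$. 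Both then defer the upgrade to a $\sigma$-invariant contractible Stein neighborhood to standard facts.
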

\begin{proof}
The obstruction to take $\Omega_0=\bigcup_{k\geq1}\Omega_k$ as the open neighborhood of $\R^n$ in $\C^n$ in the statement appears essentially at the sets $\cl_{\C^n}(T_i)\cap\Omega_j$ for $j\neq i$. To get rid of such obstruction, we proceed as follows. 

First, since the family $\{\Omega_k\}_{k\geq1}$ is locally finite in $\C^n$, so are the families $\{T_k\}_{k\geq1}$, $\{\cl_{\C^n}(T_k)\}_{k\geq1}$ and $\{\cl_{\C^n}(T_k)\setminus\Omega_k\}_{k\geq1}$. Thus,
$$
S=\bigcup_{k\geq1}\cl_{\C^n}(T_k)\qquad\text{and}\qquad C_k=\bigcup_{j\neq k}\cl_{\C^n}(T_j)\setminus\Omega_j
$$
are closed subset of $\C^n$. Consider the open set of $\C^n$
$$
\Omega=(\C^n\setminus S)\cup\bigcup_{k\geq1}\Omega_k\setminus C_k.
$$
We check first that $\R^n\subset\Omega$. Since $\cl_{\C^n}(T_k)\cap\R^n=T_k\cap\R^n$, we have
$$
C_k\cap\R^n=\bigcup_{j\neq k}(\cl_{\C^n}(T_j)\cap\R^n)\setminus\Omega_j=\bigcup_{j\neq k}(T_j\cap\R^n)\setminus\Omega_j=\varnothing,
$$
for all $k\geq1$ and $S\cap\R^n=\bigcup_{k\geq1}T_k\cap\R^n$. Thus, since $T_k\subset\Omega_k$,
$$
\R^n\cap\Omega=\Big(\R^n\setminus\bigcup_{k\geq1}T_k\cap\R^n\Big)\cup\bigcup_{k\geq1}\Omega_k\cap\R^n=\R^n.
$$
Next, we show that each intersection $T_\ell\cap\Omega$ is closed in $\Omega$ for $\ell\geq1$. Fix $\ell\geq1$ and observe that since $\Omega$ is open in $\C^n$, we have $\cl_{\Omega}(T_\ell\cap\Omega)=\cl_{\C^n}(T_\ell)\cap\Omega$. Hence, it is enough to show that $\cl_{\C^n}(T_\ell)\cap\Omega\subset T_\ell$ for each $\ell\geq1$. Indeed, since $T_\ell$ is closed in $\Omega_\ell$, we have $\cl_{\C^n}(T_\ell)\cap\Omega_\ell=T_\ell$. Since $\cl_{\C^n}(T_\ell)\setminus\Omega_\ell\subset C_k$ if $k\neq\ell$, we deduce
\begin{multline*}
\cl_{\C^n}(T_\ell)\cap\Omega=\bigcup_{k\geq1}\cl_{\C^n}(T_\ell)\cap(\Omega_k\setminus C_k)\\[-10pt]
\subset\cl_{\C^n}(T_\ell)\cap\Omega_\ell\cup\bigcup_{k\neq\ell}\cl_{\C^n}(T_\ell)\setminus(\cl_{\C^n}(T_\ell)\setminus\Omega_\ell)=T_\ell,
\end{multline*}
as required. As always, we may assume that $\Omega$ is invariant, contractible and Stein.
\end{proof}
\begin{lem}\label{piccolo}
Let $V$ be an open neighborhood of $\R^n$ in $\C^n$ and let $Z\subset V$ be a closed subset of $V$. Let $\Omega$ be an open subset of $\C^n$ such that $Z\cap\R^n\subset\Omega\subset V$. Then, $\cl_{\C^n}(Z\cap\Omega)\cap\R^n=(Z\cap\Omega)\cap\R^n$.
\end{lem}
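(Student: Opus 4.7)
The plan is to prove the two inclusions separately, where one is completely formal and the other uses the two hypotheses on $Z$ and $\Omega$ in a symmetric way.

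First I would note the trivial inclusion $(Z\cap\Omega)\cap\R^n\subset \cl_{\C^n}(Z\cap\Omega)\cap\R^n$, which follows immediately from $Z\cap\Omega\subset\cl_{\C^n}(Z\cap\Omega)$ and does not use any hypothesis.

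For the reverse inclusion, I would take $x\in\cl_{\C^n}(Z\cap\Omega)\cap\R^n$ and exhibit $x$ as a point of $Z\cap\Omega\cap\R^n$. Write $x$ as the limit in $\C^n$ of a sequence $\{z_k\}\subset Z\cap\Omega$. Since $x\in\R^n\subset V$ and $V$ is open in $\C^n$, the sequence $\{z_k\}$ eventually lies in $V$; but in fact $\{z_k\}\subset Z\subset V$ already, so the $V$-limit of $\{z_k\}$ exists and equals $x$. Because $Z$ is closed in $V$, this forces $x\in Z$, and therefore $x\in Z\cap\R^n$. The inclusion $Z\cap\R^n\subset\Omega$ now gives $x\in\Omega$, so $x\in Z\cap\Omega\cap\R^n$, as required.

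I do not expect any obstacle here: the key observation is simply that openness of $V$ in $\C^n$ lets us transfer a $\C^n$-limit into a $V$-limit, at which point closedness of $Z$ in $V$ and the containment $Z\cap\R^n\subset\Omega$ finish the argument.
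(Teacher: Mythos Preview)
Your proof is correct and follows essentially the same route as the paper: both use that $Z$ closed in $V$ together with $\R^n\subset V$ gives $\cl_{\C^n}(Z)\cap\R^n=Z\cap\R^n$, and then the hypothesis $Z\cap\R^n\subset\Omega$ finishes. The only cosmetic difference is that the paper phrases this via the set identity $\cl_{\C^n}(Z)\cap V=Z$ and a chain of inclusions, whereas you unwind it with a sequence argument.
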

\begin{proof}
Since $Z$ is closed in $V$, we have $\cl_{\C^n}(Z)\cap V=Z$. Since $\R^n\subset V$, we have $\cl_{\C^n}(Z)\cap\R^n=\cl_{\C^n}(Z)\cap V\cap\R^n=Z\cap\R^n$. Thus,
$$
Z\cap\R^n\subset\cl_{\C^n}(Z\cap\Omega)\cap\R^n\subset\cl_{\C^n}(Z)\cap\R^n=Z\cap\Omega\cap\R^n,
$$
as required.
\end{proof}

Next, we point out here a sufficient and necessary condition to assure that a countable family of closed subsets of an open set $\Omega\subset\C^n$ is locally finite.
\begin{lem}\label{lfc}
Let $\Omega\subset\C^n$ be an open set and let $\{T_\ell\}_{\ell\geq1}$ be a family of closed subsets of $\Omega$. Then, the family $\{T_\ell\}_{\ell\geq1}$ is locally finite if and only if there exists an exhaustion $\{K_\ell\}_{\ell\geq1}$ of $\Omega$ by compact sets such that $T_\ell\cap K_\ell=\varnothing$ for all $\ell\geq1$.
\end{lem}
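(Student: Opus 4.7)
\textbf{Proof proposal for Lemma \ref{lfc}.}

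The plan is to prove the two implications separately, with the forward (exhaustion $\Rightarrow$ local finiteness) being essentially formal and the backward (local finiteness $\Rightarrow$ existence of exhaustion) requiring an explicit construction. I will interpret ``exhaustion'' in the standard sense: $K_\ell$ compact with $K_\ell\subset\mathrm{int}_{\Omega}(K_{\ell+1})$ and $\bigcup K_\ell=\Omega$.

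For the easy direction, assume such an exhaustion $\{K_\ell\}$ exists. Given any point $x\in\Omega$, the interior $\mathrm{int}_{\Omega}(K_m)$ is an open neighbourhood of $x$ for some $m$. For every $\ell\geq m$ the monotonicity $K_m\subset K_\ell$ combined with $T_\ell\cap K_\ell=\varnothing$ gives $\mathrm{int}_{\Omega}(K_m)\cap T_\ell=\varnothing$, so this neighbourhood meets at most the finitely many $T_\ell$ with $\ell<m$.

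The substantive step is the converse. Given $\{T_\ell\}_{\ell\geq1}$ locally finite, I would first form the \emph{tail unions}
$$
\mathcal T_\ell=\bigcup_{\ell'\geq \ell} T_{\ell'},
$$
and observe two key properties: each $\mathcal T_\ell$ is closed in $\Omega$ (a locally finite union of closed sets is closed) and the decreasing sequence $\{\mathcal T_\ell\}$ satisfies $\bigcap_\ell\mathcal T_\ell=\varnothing$, since by local finiteness no point of $\Omega$ lies in $T_{\ell'}$ for infinitely many $\ell'$. Next, fix any usual exhaustion $\{L_\ell\}$ of $\Omega$ by compact sets with $L_\ell\subset\mathrm{int}_{\Omega}(L_{\ell+1})$, and define the shrinking open neighbourhoods
$$
W_\ell=\bigl\{x\in\Omega:\ d_{\C^n}(x,\mathcal T_\ell)<1/\ell\bigr\}
$$
(with the convention $d(x,\varnothing)=+\infty$). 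Finally set $K_\ell=L_\ell\setminus W_\ell$, which is closed in the compact $L_\ell$, hence compact.

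It remains to verify the four properties. Compactness and the inclusion $T_\ell\subset\mathcal T_\ell\subset W_\ell$, which gives $K_\ell\cap T_\ell=\varnothing$, are immediate. For the strict exhaustion, the inclusion $\mathcal T_{\ell+1}\subset\mathcal T_\ell$ together with $1/(\ell+1)<1/\ell$ yields $\overline{W_{\ell+1}}\cap\Omega\subset W_\ell$, and combined with $L_\ell\subset\mathrm{int}_{\Omega}(L_{\ell+1})$ this gives $K_\ell\subset\mathrm{int}_{\Omega}(L_{\ell+1})\setminus\overline{W_{\ell+1}}\subset\mathrm{int}_{\Omega}(K_{\ell+1})$. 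The step I expect to be the most delicate is showing $\bigcup K_\ell=\Omega$: for $x\in\Omega$, local finiteness produces an open neighbourhood $V$ of $x$ meeting only $T_{\ell'_1},\dots,T_{\ell'_k}$, so $V\cap\mathcal T_\ell=\varnothing$ whenever $\ell>\max_j\ell'_j$; setting $r=d(x,\C^n\setminus V)>0$ one gets $d(x,\mathcal T_\ell)\geq r$, and for $\ell$ sufficiently large one has simultaneously $\ell\geq 1/r$ (so $x\notin W_\ell$) and $x\in L_\ell$, hence $x\in K_\ell$. This is where local finiteness is used in its sharpest form and is the only point where the argument could go wrong if the $\mathcal T_\ell$ failed to shrink to the empty set uniformly near $x$.
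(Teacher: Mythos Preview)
Your proof is correct. Both directions are argued cleanly, and the delicate point you flagged (that $\bigcup_\ell K_\ell=\Omega$) is handled properly: local finiteness gives a neighbourhood $V$ of $x$ disjoint from the tail $\mathcal T_\ell$ for $\ell$ large, and the metric bound $d(x,\mathcal T_\ell)\geq d(x,\C^n\setminus V)>0$ then keeps $x$ out of $W_\ell$.

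The paper proceeds by a different, purely combinatorial route. Starting from a standard exhaustion $\{L_k\}$, it defines $\ell(k)=\sup\{\ell:L_k\cap T_\ell\neq\varnothing\}$ (finite by local finiteness, since a compact set meets only finitely many $T_\ell$) and then sets $K'_\ell=L_k$ for $\ell(k)+1\leq\ell\leq\ell(k+1)$; a small initial ball $L_0$ handles the indices $\ell\leq\ell(1)$. This reindexing immediately gives $K'_\ell\cap T_\ell=\varnothing$ and $\bigcup K'_\ell=\Omega$, but the resulting sequence is constant on blocks, so the strict inclusion $K'_\ell\subset\Int_\Omega(K'_{\ell+1})$ fails within each block and the paper finishes with an unspecified ``slight modification''. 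Your metric construction via the shrinking neighbourhoods $W_\ell$ of the tail unions $\mathcal T_\ell$ produces a genuine strict exhaustion in one pass, at the cost of using the ambient distance rather than pure index bookkeeping. Both arguments are short; yours is a bit more self-contained.
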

\begin{proof}
The if part is straightforward and we do not include the concrete details. For the converse, we begin by fixing an exhaustion $\{L_k\}_{k\geq1}$ of $\Omega$ by compact sets, that is, $\Int_{\Omega}(L_1)\neq\varnothing$, $L_k\subset\Int_{\Omega}(L_{k+1})$ and $\Omega=\bigcup_{k\geq1}L_k$. For each $k\geq1$, we define
$$
\ell(k)=\sup\{\ell\geq1:\ L_k\cap T_\ell\neq\varnothing\}.
$$
and notice that $\ell(k)<+\infty$ because the family $\{T_\ell\}_{\ell\geq1}$ is locally finite. Morever, the sequence $\{\ell(k)\}_{k\geq1}$ is increasing and it tends to $+\infty$ because $\Omega=\bigcup_{k\geq1}L_k$. Next, we choose a closed ball $L_0$ contained in $\Int_{\Omega}(L_1)\setminus\bigcup_{j=1}^{\ell(1)}T_j$ and define
$$
K_\ell'=\left\{\begin{array}{ll}
L_0&\text{ if $\ell(0)=1\leq\ell\leq\ell(1)$}\\[4pt]
L_k&\text{ if $\ell(k)+1\leq\ell\leq\ell(k+1)$}
\end{array}\right.
$$
Observe that $K_\ell'\cap T_\ell=\varnothing$, $K_{\ell(k)}'\subset\Int_{\Omega}(K_{\ell(k)+1}')$ for all $k\geq0$ and $\bigcup_{\ell\geq1}K_\ell'=\Omega$. Finally, modifying slightly the compact sets $K_\ell'$, we obtain the desired exhaustion $\{K_\ell\}_{\ell\geq1}$.
\end{proof}

After   the  previous   preparation,  we   approach  the   holomorphic
``sheaf-extension''  to  a  suitable  common  (complex)  domain  of  a
collection  of  either analytic  functions  on  $\R^n$ whose  zerosets
constitute a locally finite family  of $\R^n$ or holomorphic functions
whose  (complex) domains  of  definition constitute  a locally  finite
family of open subsets of $\C^n$. Namely,

\begin{prop}\label{commonext}
Let $\{f_k\}_{k\geq1}$ be a collection of analytic functions on $\R^n$ such that the family $\{X_k=\{f_k=0\}\}_{k\geq1}$ of their zerosets is locally finite in $\R^n$. Then, there exist an invariant open neighborhood $\Omega$ of $\R^n$ in $\C^n$ and invariant holomorphic functions $\widehat{F}_k\in\hol(\Omega)$ such that the family $\{\widehat{F}_k=0\}_{k\geq1}$ of their zerosets is locally finite in $\C^n$ and for each $k\geq 1$ there is a positive unit $u_k\in\an(\R^n)$ satisfying $\widehat{F}_{k|_{\R^n}}=u_kf_k$.
\end{prop}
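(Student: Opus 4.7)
My plan is to combine the sheaf-extension machinery of \S\ref{rcext} with the shrinking argument of Lemma \ref{neighs} to manufacture the extensions on a common invariant open Stein neighborhood of $\R^n$ in $\C^n$. The main obstacle I foresee is arranging that the complex zerosets $\{\widehat F_k=0\}$, which a priori could spread far into $\C^n$, form a locally finite family there; this is precisely what I can control by pasting together Lemmas \ref{neighs}, \ref{piccolo} and \ref{bigneigh}.

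First, I would extend each $f_k$ to a $\sigma$-invariant holomorphic function $F_k^0:V_k^0\to\C$ on some invariant open neighborhood $V_k^0$ of $\R^n$ in $\C^n$ (invariance is arranged by averaging with $\overline{F_k^0\circ\sigma}$). Since the sets $X_k\subset\R^n$ are closed in $\C^n$ and $\{X_k\}_{k\geq1}$ is locally finite in $\C^n$, Lemma \ref{neighs} produces open neighborhoods $\Omega_k\subset V_k^0$ of $X_k$ in $\C^n$ with $\{\Omega_k\}_{k\geq1}$ locally finite in $\C^n$; replacing $\Omega_k$ by $\Omega_k\cap\sigma(\Omega_k)$ preserves local finiteness and yields invariance. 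Set $T_k=\{F_k^0|_{\Omega_k}=0\}$, which is closed in $\Omega_k$, $\sigma$-invariant, and locally finite in $\C^n$. Because $\{F_k^0=0\}$ is closed in $V_k^0$ and $X_k\subset\Omega_k\subset V_k^0$, Lemma \ref{piccolo} gives $\cl_{\C^n}(T_k)\cap\R^n=T_k\cap\R^n=X_k$, so the hypotheses of Lemma \ref{bigneigh} are met; it yields the desired invariant contractible open Stein neighborhood $\Omega$ of $\R^n$ in $\C^n$ in which every $T_k\cap\Omega$ is closed.

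On $\Omega$ I would define, for each $k$, the locally principal, $\sigma$-invariant sheaf of ideals
$$
\Gg_{k,z}=\begin{cases}F_{k,z}^0\,\hol_{\Omega,z}&\text{if }z\in T_k\cap\Omega,\\ \hol_{\Omega,z}&\text{if }z\in\Omega\setminus T_k.\end{cases}
$$
As $\Omega$ is contractible and Stein, $H^1(\Omega,\hol^*)\cong H^2(\Omega,\Z)=0$, so $\Gg_k$ is globally principal; pick a generator $\widehat F_k\in\hol(\Omega)$. The invariance of $\Gg_k$ forces $\overline{\widehat F_k\circ\sigma}=w_k\widehat F_k$ for some unit $w_k\in\hol(\Omega)$, and Lemma \ref{suit} lets me replace $\widehat F_k$ by a unit multiple that is itself $\sigma$-invariant. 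By construction $\{\widehat F_k=0\}=T_k\cap\Omega$, so the family $\{\widehat F_k=0\}_{k\geq1}$ is locally finite in $\C^n$; moreover at each $x\in\R^n$ the germ $\widehat F_{k,x}$ generates the same ideal as $F_{k,x}^0$, and hence, restricting to $\R^n$, the same ideal as $f_{k,x}$, so $\widehat F_k|_{\R^n}=u_kf_k$ for some real analytic unit $u_k\in\an(\R^n)$. Finally, since $\R^n$ is connected $u_k$ has constant sign, and replacing $\widehat F_k$ by $-\widehat F_k$ if needed I obtain $u_k>0$, as required.
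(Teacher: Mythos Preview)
Your argument is correct and follows essentially the same route as the paper: extend each $f_k$ holomorphically, use Lemma \ref{neighs} to get a locally finite family of invariant neighborhoods $\Omega_k$ of the $X_k$, apply Lemmas \ref{piccolo} and \ref{bigneigh} to produce the common invariant contractible Stein neighborhood $\Omega$ with each $T_k\cap\Omega$ closed, and then take $\sigma$-invariant sheaf-extensions. The only difference is cosmetic: where the paper simply invokes the sheaf-extension of Definition \ref{realext}, you unpack that construction explicitly (the locally principal sheaf $\Gg_k$, the vanishing of $H^1(\Omega,\hol^*)$, and Lemma \ref{suit}), and you spell out the sign adjustment for $u_k>0$ that the paper leaves implicit.
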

\begin{proof}
We may assume that all the $f_k$ are non identically zero. For each $k\geq1$, let $V_k\subset\C^n$ be an invariant open neighborhood of $\R^n$ in $\C^n$ such that $f_k$ extends to an invariant holomorphic function $F_k:V_k\to\C$. By Lemma \ref{neighs}, there is a locally finite family in $\C^n$ of invariant open sets $\{\Omega_k\}_{k\geq1}$ such that $X_k\subset\Omega_k\subset V_k$. Let $T_k=\{F_k=0\}\cap\Omega_k$ which is invariant and, by Lemma \ref{piccolo}, satisfy $\cl_{\C^n}(T_k)\cap\R^n=X_k=T_k\cap\R^n$. 

Next, by Lemma \ref{bigneigh}, there is an invariant open neighborhood $\Omega$ of $\R^n$ in $\C^n$ such that each intersection $\Omega\cap T_k$ is a closed subset of $\Omega$. Next, for each $k\geq1$ let $\widehat{F}_k\in\hol(\Omega)$ be a $\sigma$-invariant sheaf-extension of $F_k$ to $\Omega$. A straightforward computation shows that the holomorphic functions $\{\widehat{F}_k\}_{k\geq1}$ satisfy the desired conditions.
\end{proof}
\begin{prop}\label{commonext2}
Let $\{\Omega_k\}_{k\geq1}$ be a locally finite family in $\C^n$ of $\sigma$-invariant open sets and for each $k\geq1$ let $G_k:\Omega_k\to\C$ be a $\sigma$-invariant holomorphic function. Denote $S_k=\{G_k=0\}$ and assume that $S_k\cap\R^n$ is a (closed) global analytic set of $\R^n$ and $\cl_{\C^n}(S_k)\cap\R^n=S_k\cap\R^n$ for each $k\geq1$. Then, there are a $\sigma$-invariant open neighborhood $\Omega$ of $\R^n$ in $\C^n$ such that $S_k\cap\Omega$ is closed in $\Omega$ and $\sigma$-invariant sheaf-extensions $\widehat{G}_k\in\hol(\Omega)$ of the functions $G_k$.
\end{prop}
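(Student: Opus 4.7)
The plan is to follow closely the template of Proposition \ref{commonext}, but starting from the complex side, using Lemma \ref{bigneigh} to produce the common domain and then invoking a cohomology vanishing on a contractible Stein set to realize each sheaf-extension as a holomorphic function.

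First I would apply Lemma \ref{bigneigh} to the locally finite family $\{\Omega_k\}_{k\geq 1}$ with $T_k=S_k$. The hypotheses are satisfied: $S_k$ is closed in $\Omega_k$ since it is the zeroset of a holomorphic function, and $\cl_{\C^n}(S_k)\cap\R^n=S_k\cap\R^n$ by assumption. Lemma \ref{bigneigh} then provides a $\sigma$-invariant contractible Stein open neighborhood $\Omega$ of $\R^n$ in $\C^n$ such that $S_k\cap\Omega$ is closed in $\Omega$ for every $k\geq 1$.

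Next, for each $k$ I would construct the sheaf-extension $\widehat{G}_k$ of $G_k|_{\Omega_k\cap\Omega}$ to $\Omega$ following Definition \ref{realext}. Namely, consider the locally principal sheaf of ideals on $\Omega$ defined by
$$
\Ff_{k,x}=\begin{cases}G_{k,x}\,\Oo_{\C^n,x}&\text{if }x\in S_k\cap\Omega,\\ \Oo_{\C^n,x}&\text{if }x\in\Omega\setminus S_k.\end{cases}
$$
Because $\Omega$ is contractible, the isomorphism $H^1(\Omega,\Oo_\Omega^*)\cong H^2(\Omega,\Z)$ recalled in \S 5.1 forces $H^1(\Omega,\Oo_\Omega^*)=0$, so $\Ff_k$ is globally principal, generated by some $\widehat{G}_k\in\hol(\Omega)$, and by construction the zeroset $\{\widehat{G}_k=0\}=S_k\cap\Omega$; the family of these zerosets inherits local finiteness from $\{\Omega_k\}$.

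Finally, I would arrange $\sigma$-invariance. Since $G_k$, $\Omega_k$ and $S_k$ are $\sigma$-invariant, the sheaf $\Ff_k$ is itself $\sigma$-invariant, so $\ol{\widehat{G}_k\circ\sigma}$ is a second global generator of $\Ff_k$ and hence differs from $\widehat{G}_k$ by a unit $u_k\in\hol(\Omega)$, i.e.\ $\ol{\widehat{G}_k\circ\sigma}=u_k\widehat{G}_k$. Lemma \ref{suit} then supplies a unit $v_k\in\hol(\Omega)$ such that $v_k\widehat{G}_k$ is $\sigma$-invariant, and replacing $\widehat{G}_k$ by $v_k\widehat{G}_k$ completes the proof. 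The main technical subtlety is choosing $\Omega$ once and for all so that every $S_k$ stays closed in it — this is precisely what the hypothesis $\cl_{\C^n}(S_k)\cap\R^n=S_k\cap\R^n$ is designed to guarantee and is handled cleanly by Lemma \ref{bigneigh}; once $\Omega$ is in hand the rest is cohomological bookkeeping together with Lemma \ref{suit}.
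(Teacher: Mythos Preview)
Your proof is correct and follows essentially the same approach as the paper: apply Lemma \ref{bigneigh} to obtain the common $\sigma$-invariant contractible Stein neighborhood $\Omega$, then take the $\sigma$-invariant sheaf-extension of each $G_k$ to $\Omega$. The paper's proof is terser---it simply invokes Definition \ref{realext} and the surrounding discussion (including Lemma \ref{suit}) by saying ``let $\widehat{G}_k\in\hol(\Omega)$ be a $\sigma$-invariant sheaf-extension of $G_k$''---whereas you unpack that step explicitly via the locally principal ideal sheaf and the vanishing of $H^1(\Omega,\Oo_\Omega^*)$, but the substance is the same.
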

\begin{proof}
We may assume that $G_k$ is not identically zero in any of the connected components of $\Omega_k$. Next, by Lemma \ref{bigneigh}, there is a $\sigma$-invariant contractible open Stein neighborhood $\Omega$ of $\R^n$ in $\C^n$ such that $Z_k=S_k\cap\Omega$ is a closed subset of $\Omega$ for all $k\geq 1$. Next, for each $k\geq1$ let $\widehat{G}_k\in\hol(\Omega)$ be a $\sigma$-invariant sheaf-extension of $G_k$ to $\Omega$. A straightforward computation shows that the holomorphic functions $\{\widehat{G}_k\}_{k\geq1}$ satisfy the required conditions.
\end{proof}

\subsection{Infinite sums of squares of analytic functions.}\label{strong}

A relevant consequence of the notion of sheaf extension and Theorem \ref{globalization} is the following result which concerns holomorphic extension of infinite sums of squares of analytic functions to the same complex neighbourhood.

\begin{prop}\label{extension}
Let $\{f_k\}_{k\geq1}$ be a collection of analytic functions on $\R^n$ such that: 
\begin{itemize}
\item the family $\{X_k=\{f_k=0\}\}_{k\geq1}$  is locally finite in $\R^n$, 
\item each $f_k$ is a sum of $p_k$ squares at $X_k$, where $1\leq p_k\leq +\infty$. 
\end{itemize}
Then, there are an invariant open neighborhood $\Omega$ of $\R^n$ in $\C^n$, positive real constants $c_k$ for $k\geq1$ and $\sigma$-invariant homolomorphic functions $G_k,\widehat{F}_k,A_{kj}\in\hol(\Omega)$ for $k\geq1$ and $1\leq j\leq q_k=2^{n+1}p_k$ such that: 
\begin{itemize}
\item[(i)] $\widehat{F}_k|_{\R^n}=u_kf_k$ for a positive unit $u_k\in\an(\R^n)$.
\item[(ii)] The family $\{G_k^2\widehat{F}_k=0\}_{k\geq1}$ is locally finite in $\Omega$ and $\{G_k^2\widehat{F}_k=0\}\cap\R^n=X_k$.
\item[(iii)] The sum $A_k=(d_kG_k^2\widehat{F}_k)^2+\sum_{j=1}^{q_k}A_{kj}^2$ converges in $\Omega$ (in the sense of Definition  \ref{strongconv} if $q_k$ is not finite) for each family of real values $\{d_k\}_{k\geq1}$ satisfying $d_k\geq c_k$. 
\end{itemize}

Moreover,
\begin{itemize}
\item $\{A_{k|\R^n}=0\}=X_k$ and the family $\{A_k=0\}_{k\geq1}$ is locally finite in $\Omega$; 
\item $A_k=G_k^2\widehat{F}_kQ_k$ for some $\sigma$-invariant $Q_k\in\hol(\Omega)$, which does not vanish on a neighborhood $V_k$ of $\{G_k^2\widehat{F}_k=0\}\cup\R^n$.
\end{itemize}
\end{prop}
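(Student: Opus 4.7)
The plan is to perform the per-$k$ globalization argument of Theorem \ref{globalization} simultaneously for all $k$, coordinated on a common invariant Stein neighbourhood of $\R^n$. First, I would apply Proposition \ref{commonext} to the family $\{f_k\}$ to obtain an invariant open neighbourhood $\Omega_0$ of $\R^n$ and invariant holomorphic extensions $\widehat{F}_k\in\hol(\Omega_0)$ with $\widehat{F}_{k|\R^n}=u_kf_k$ for positive units $u_k\in\an(\R^n)$ and such that $\{\widehat{F}_k=0\}_{k\geq1}$ is locally finite in $\Omega_0$. Next, for each $k$ the hypothesis gives a representation of $f_k$ as a sum of $p_k$ squares on a neighbourhood $W_k$ of $X_k$ in $\R^n$; applying Proposition \ref{bad} I may assume this representation has the form $g_k^2f_k=\sum_{j=1}^{q_k}h_{kj}^2$ on $W_k$ with $q_k\leq 2^{n+1}p_k$ and $\{g_k=0\}\subset X_k$. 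Sheaf-extending $g_k$ to an invariant $G_k\in\hol(\Omega_0)$ via Definition \ref{realext} and Lemma \ref{suit}, I obtain invariant holomorphic functions $F'_k=G_k^2\widehat{F}_k$ such that $F'_{k|\R^n}=v_k g_k^2 f_k$ for positive units $v_k$; by Proposition \ref{commonext2} and Lemma \ref{bigneigh} I can then shrink $\Omega_0$ to an invariant Stein $\Omega$ in which the family $\{F'_k=0\}_{k\geq1}$ is locally finite with $\{F'_k=0\}\cap\R^n=X_k$.

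Fix $k$. Near $X_k$ in $\C^n$ the equality $g_k^2f_k=\sum h_{kj}^2$ extends (as in Step 2 of the proof of Theorem \ref{globalization}) to an invariant holomorphic identity $F'_k=\sum_{j=1}^{q_k}C_{kj}^2$ on some invariant open neighbourhood $\Omega'_k\subset\Omega$ of $X_k$, where the series converges strongly if $q_k=\infty$. I would then apply Lemma \ref{serie} with $\Phi=F_k'^2$ and the sequence $\{C_{kj}\}_j$ to obtain invariant holomorphic $A_{kj}\in\hol(\Omega)$ such that $F_k'^2$ divides $A_{kj|\Omega'_k}-C_{kj}$ and $\sum_j\sup_K|A_{kj}|^2<\infty$ for each compact $K\subset\Omega$, yielding strong convergence of $B_k=\sum_{j=1}^{q_k}A_{kj}^2\in\hol(\Omega)$. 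Exactly as in Step 2 of the proof of Theorem \ref{globalization}, telescoping $\sum A_{kj}^2-\sum C_{kj}^2=\sum(A_{kj}-C_{kj})(A_{kj}+C_{kj})$ gives $B_k=F_k'+F_k'^2 R_k$ on $\Omega'_k$ for some $R_k\in\hol(\Omega'_k)$; since $1+F_k'R_k$ is a unit near $\{F_k'=0\}$, absorbing the unit and extending by sheaf-extension one obtains a global identity $B_k=F'_k\cdot P_k$ on $\Omega$ with $P_k\in\hol(\Omega)$ invariant and $P_k(z)\neq 0$ on a neighbourhood $V_k^{(0)}$ of $\{F'_k=0\}$.

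With $B_k$ in hand, I would define
$$
A_k=(d_kF'_k)^2+\sum_{j=1}^{q_k}A_{kj}^2=F'_k\bigl(d_k^2F'_k+P_k\bigr)=F'_k\,Q_k,
$$
so $Q_k=d_k^2F'_k+P_k$. On $V_k^{(0)}$ the value $Q_k$ is close to $P_k\neq 0$, so $Q_k$ is nonvanishing there. On $\R^n$ we have $F'_{k|\R^n}=v_kg_k^2u_kf_k\geq 0$ and $P_{k|\R^n}$ is real-valued and continuous; writing $P_{k|\R^n}=\alpha_k+F'_{k|\R^n}\beta_k$ with $\alpha_k$ a positive unit near $X_k$ and $\beta_k$ locally bounded, Whitney approximation produces a continuous positive function $c_k$ on $\R^n$ such that, for any $d_k\geq c_k$, $Q_{k|\R^n}=\alpha_k+F'_{k|\R^n}(d_k^2+\beta_k)>0$; by continuity $Q_k$ is then nonvanishing on an invariant open neighbourhood $V_k\supset\{F'_k=0\}\cup\R^n$. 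The identities $\{A_{k|\R^n}=0\}=X_k$ and the factorization $A_k=F'_kQ_k=G_k^2\widehat{F}_kQ_k$ are then immediate.

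The main obstacle will be the fourth claim, the local finiteness of $\{A_k=0\}_{k\geq 1}$ in $\Omega$: outside $V_k$ the function $Q_k$ may vanish, contributing extra zeros of $A_k$ beyond $\{F'_k=0\}$. To deal with this I would use Lemma \ref{lfc} to pick an exhaustion $\{K_\ell\}$ of $\Omega$ by invariant compact sets with $F'_\ell\cap K_\ell=\varnothing$, and replace $c_k$ by a larger constant so that the continuous bound also forces $Q_k\neq 0$ on $K_k$; then $\{A_k=0\}\cap K_k\subset\{F'_k=0\}\cap K_k=\varnothing$, giving local finiteness of $\{A_k=0\}_{k\geq 1}$ in $\Omega$ again by Lemma \ref{lfc}. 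After this additional adjustment of the constants $c_k$ (which only strengthens the hypothesis $d_k\geq c_k$), all the required properties hold simultaneously.
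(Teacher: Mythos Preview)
Your approach is essentially the paper's: common extension of the $f_k$ via Proposition~\ref{commonext}, control of denominators via Proposition~\ref{bad}, sheaf-extension of the $g_k$, application of Lemma~\ref{serie} with $\Phi=(F'_k)^2$, and the exhaustion argument for local finiteness. Two corrections are needed.

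First, your treatment of $Q_k$ on $\R^n$ is confused: you let $c_k$ be a continuous positive \emph{function} on $\R^n$, whereas the statement requires $c_k$ to be a real constant, and your decomposition $P_{k|\R^n}=\alpha_k+F'_{k|\R^n}\beta_k$ is only valid near $X_k$. In fact no condition on $d_k$ is needed here at all. Since $A_k|_{\R^n}$ is a sum of real squares, one of which is $(d_kF'_k|_{\R^n})^2$, one has $A_k|_{\R^n}>0$ on $\R^n\setminus X_k$ and $F'_k|_{\R^n}\neq 0$ there, so $Q_k=A_k/F'_k\neq 0$ on $\R^n\setminus X_k$; on $X_k$ the local formula $Q_k=1+(d_k^2+H_k)F'_k$ (which is what your $P_k$ computation actually gives on $\Omega'_k$) yields $Q_k=1$. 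The constant $c_k$ enters only through the local-finiteness argument of your last paragraph; the paper takes explicitly
\[
c_k=1+\frac{\max_{K_k}\bigl|\sum_{j}A_{kj}^2\bigr|}{\min_{K_k}|F'_k|^2}
\]
and shows by a two-line contradiction that $\{A_k=0\}\cap K_k=\varnothing$ whenever $d_k\geq c_k$.

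Second, Lemma~\ref{serie} requires the domain $\Omega'_k$ to be a neighbourhood of the \emph{full} complex zero set $\{F'_k=0\}$, not merely of $X_k$. You must shrink $\Omega$ once more (as the paper does in its Step~1, via Lemmas~\ref{neighs} and~\ref{bigneigh}) so that $\{F'_k=0\}\cap\Omega\subset\Omega'_k$ for every $k$. Also, the ``sheaf-extension'' of $P_k$ is unnecessary: once you know $F'_k$ divides $B_k=\sum_jA_{kj}^2$ near $S_k$, the quotient $B_k/F'_k$ is automatically holomorphic on all of $\Omega$, since $F'_k$ is a unit off $S_k$.
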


\begin{proof}
We will split the proof into several steps.

\vspace{1mm}
\noindent{\sc Starting point.} By Theorem \ref{globalization} we can find a representation of $f_k$ as a sum of $q_k=2^{n-1}p_k$ squares on $\R^n$ with controlled denominators. This provides the following:

\vspace{1mm}
\begin{enumerate}
\item By Lemma \ref{commonext},  there are an invariant open neighborhood $\Omega_0$ of $\R^n$ in $\C^n$ and invariant holomorphic functions $\widehat{F}_k\in\hol(\Omega_0)$ such that the family $\{\widehat{F}_k=0\}_{k\geq1}$ is locally finite in $\C^n$, $\widehat{F}_{k|{\R^n}}=u_kf_k$ and $u_k\in\an(\R^n)$ is a positive unit.
\item  There are invariant open neighborhoods $U_k$ of $\R^n$ in $\C^n$ and invariant holomorphic functions $G_k',B_{kj}:U_k\to\C$ such that $G_k'^2\widehat{F}_k=\sum_{j=1}^{q_k}B_{kj}^2$ and $\{G_k'=0\}\cap\R^n\subset X_k$.
\end{enumerate}
\vspace{1mm}
\noindent{\sc Step 1: Extension of denominators.} 
The family $\{T_k=\{\widehat{F}_k=0\}\cap U_k\}_{k\geq1}$ is, by (1) above, locally finite in $\C^n$. By Lemma \ref{neighs}, there is a locally finite family in $\C^n$ of invariant open sets $\{\Omega_k\}_{k\geq1}$ in $\C^n$ such that $T_k\subset\Omega_k\subset U_k$. Consider the invariant sets $S_k=\{G_k'^2\widehat{F}_{k|{U_k}}=0\}\cap\Omega_k$. By Lemma \ref{piccolo}, we have $\cl_{\C^n}(S_k)\cap\R^n=X_k=S_k\cap\R^n$ for all $k\geq1$. 

By Lemma \ref{commonext2}, we find an invariant open neighborhood $\Omega\subset\Omega_0$ of $\R^n$ in $\C^n$ such that $\Omega\cap S_k$ is a closed subset of $\Omega$ and invariant sheaf-extensions $G_k=\widehat{G}_k'\in\hol(\Omega)$ of the functions $G_k'\,$. To simplify notations, denote again by $\widehat{F}_k$ its restriction to $\Omega$ and by $\Omega_k$ and $S_k$ their respective intersections with $\Omega$.
Let $v_k\in\hol(\Omega_k)$ be an invariant unit such that $G_k|_{\Omega_k}=v_kG'_{k|{\Omega_k}}$. Thus,  if we write $C_{kj}=v_kB_{{kj}|{\Omega_k}}$, we have
$$
(G_k^2\widehat{F}_k)_{|{\Omega_k}}=v_k^2(G_k'^2\widehat{F}_k)_{|{\Omega_k}}=\sum_{j=1}^{q_k}(v_kB_{{kj}|{\Omega_k}})^2=\sum_{j=1}^{q_k}C_{kj}^2.
$$
Moreover, the family $\{S_k=\{G_k^2\widehat{F}_k=0\}\}_{k\geq1}$ is locally finite in $\Omega$ and $\{G_k^2\widehat{F}_k=0\}\cap\R^n=X_k$ for all $k\geq1$.

\vspace{1mm}
\noindent{\sc Step 2: Globalization of sums of squares.} In this Step  we replace the sums of squares $\sum_{j=1}^{q_k}C_{kj}^2$, which are defined only on the sets $\Omega_k$, by global sum of squares $\sum_{j=1}^{q_k}A_{kj}^2$ satisfying the conditions in the statement.

Fix $k\geq1$ and observe that $\Omega_k\cap\{G_k^2\widehat{F}_k=0\}=S_k$ which is a closed subset of $\Omega$. We apply Lemma \ref{serie} to $\Phi=(G_k^2\widehat{F}_k)^2$, $V=\Omega_k$ and $C_j=C_{kj}$ and we find invariant holomorphic functions $A_{kj}$ on $\Omega$ such that: 
\begin{itemize}
\item $\sum_{j=1}^{q_k}\sup_K|A_{kj}|^2<+\infty$ for all compact sets $K\subset\Omega$,  
\item $(G_k^2\widehat{F}_k)^2$ divides $A_{kj}-C_{kj}$ on $\Omega_k$. 
\end{itemize}
Next, let $\{K_k\}_{k\geq1}$ be an exhaustion of $\Omega$ by compact sets such that $S_k\cap K_k=\varnothing$ for all $k\geq1$ (see Lemma \ref{lfc}). Denote $A_k'=\sum_{j=1}^{q_k}A_{kj}^2$ and let 
$$
c_k=1+\frac{\max_{K_k}(|A_k'|)}{\min_{K_k}(|G_k^2\widehat{F}_k|^2)}>0.
$$
Write $A_{k0}=d_kG_k^2\widehat{F}_k$ for $d_k\geq c_k$ and define $A_k=A_{k0}^2+A_k'=\sum_{j\geq0}A_{kj}^2$. Each function $A_k$ is holomorphic and invariant on $\Omega$ and its restriction to $\R^n$ vanishes only at $X_k$, because so does $A_{{k0}|{\R^n}}$. Moreover, let us check that for such choice of $d_k$'s the family $\{A_k=0\}_{k\geq1}$ is locally finite in $\Omega$. By Lemma \ref{lfc} it is enough to prove that $\{A_k=0\}\cap K_k=\varnothing$ for all $k\geq1$. 
Indeed, suppose  there exists $z\in\{A_k=0\}\cap K_{\ell(k)}$. Then, $(d_kG_k^2\widehat{F}_k)^2(z)=-A_k'(z)$ and since $(G_k^2\widehat{F}_k)(z)\neq0$, we have 
$$
1\leq c_k^2\leq d_k^2=\frac{|A_k'(z)|}{|(G_k^2\widehat{F}_k)(z)|^2}<1+\frac{\max_{K_{\ell(k)}}(|A_k'|)}{\min_{K_{\ell(k)}}(|G_k^2\widehat{F}_k|^2)}=c_k\leq d_k,
$$
which is a contradiction.

\vspace{1mm}
Next, we claim that the meromorphic function $Q_k=A_k/(G_k^2\widehat{F}_k)$ is holomorphic on $\Omega$ and it does not vanish on a neighborhood of $\{G_k^2\widehat{F}_k=0\}\cup\R^n$.

\vspace{1mm}
Since the zeroset of $G_k^2\widehat{F}_k$ in $\Omega$ is $S_k\subset\Omega_k$, we have
$$
\{A_k=0\}\cap\R^n=X_k\subset S_k\subset\Omega_k
$$
and our claim reduces to show that $Q_k$ is a holomorphic unit on an open neighborhood of $S_k$. Indeed, on $\Omega_k$ we have:
$$
\sum_{j=1}^{q_k}A_{kj}^2-G_k^2\widehat{F}_k=\sum_{j=1}^{q_k}A_{kj}^2-\sum_{j=1}^{q_k}C_{kj}^2=\sum_{j=1}^{q_k}(A_{kj}^2-C_{kj}^2).
$$
Of course, all involved series are convergent on the compact sets of $\Omega_k$. Also, by construction, $(G_k^2\widehat{F}_k)^2$ divides on $\Omega_k$ each term $A_{kj}^2-C_{kj}^2=(A_{kj}+C_{kj})(A_{kj}-C_{kj})$. Thus, it divides their sum $\sum_{j=1}^{q_k}A_{kj}^2-G_k^2\widehat{F}_k$ and we can write
$$
A_k=\sum_{j=0}^{q_k}A_{kj}^2=A_{k0}^2+\sum_{j=1}^{q_k}A_{kj}^2=G_k^2\widehat{F}_k+H_k{(G_k^2\widehat{F}_k)}^2=Q_kG_k^2\widehat{F}_k,
$$
where $H_k\in\hol(\Omega_k)$ and $Q_k=1+H_k{G_k^2\widehat{F}_k}$ in $\Omega_k$; hence, shrinking $\Omega_k$ if necessary, we get that $Q_k$ is an invariant holomorphic unit on $\Omega_k$, as required.
\end{proof}
 
\subsection{Bounded analytic Pfister's formula.}\label{s3}

Classical Pfister's formula  says that in a field $K$ the subset of sums of $2^r$ squares is closed under multiplication. In order to ease the writing of what follows we will use the standard notation due to Pfister. Given an element $f$ of a ring $A$, the expression $f=\nsq{p}$ means that $f$ is a sum of $p$ squares in $A$. When several $\nsq{p}$'s appear in the same formula, the squared functions need not to be the same. For instance, Pfister's formula can be re-written as $\nsq{2^r}\nsq{2^r}=\nsq{2^r}$. 

In particular note that for $r=1,2,3$ (by means of complex numbers, quaternions and octonions) we have  $\nsq{2^r}\nsq{2^r}=\nsq{2^r}$  for each ring $A$. The main purpose of this section is to prove the following results.

\begin{thm}[Analytic Pfister's formulae]\label{vmain}
Let $\{f_k\}_{k\geq1}$ be a collection of positive semidefinite analytic functions on $\R^n$ such that the family $\{X_k=\{f_k=0\}\}_{k\geq1}$ of their zerosets is locally finite. Let $f$ be a positive semidefinite sheaf-product of the functions $f_k$, that is, a positive semidefinite analytic function on $\R^n$ such that 
$$
f\an_{\R^n,x}=\left\{\begin{array}{cl}
\prod_{k\geq1, x\in X_k}f_{k,x}\an_{\R^n,x}&\text{if $x\in\bigcup_{k\geq1}X_k$,}\\[4pt]
\an_{\R^n,x}&\text{otherwise.}
\end{array}\right.
$$
Then,
\begin{itemize}
\item[(i)] If each $f_k$ is a finite sum of squares of meromorphic functions on $\R^n$, $f$ is a possibly infinite sum of squares of meromorphic functions on $\R^n$.
\item[(ii)] If there is $r$ such that  each $f_k$ is a sum of $2^r$ squares of meromorphic functions, $f$ is a sum of $2^{n+r}$ squares of meromorphic functions  on $\R^n$.
\item[(iii)] If some $f_k$ is an infinite sum of squares, there is a positive semidefinite analytic function $q$ on $\R^n$ such that $\{q=0\}\subset\{f=0\}$, $\dim\{q=0\}<\dim\{f=0\}$ and $qf$ is an infinite sum of squares of meromorphic functions.
\end{itemize}
\end{thm}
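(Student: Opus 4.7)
\emph{Plan of proof.} The strategy is to pass to a common invariant complex neighbourhood $\Omega$ of $\R^n$, package the data uniformly via Proposition \ref{extension}, and then apply classical Pfister's multiplicative identity $\nsq{2^r}\cdot\nsq{2^r}=\nsq{2^r}$ \emph{locally}, exploiting the fact that the family $\{X_k\}$ is locally finite, so at each point $x\in\R^n$ only finitely many factors of the sheaf-product are non-units and Pfister's formula can be iterated a finite number of times. First I would invoke Proposition \ref{extension} applied to $\{f_k\}$: it produces on a common invariant open $\Omega\supset\R^n$ invariant holomorphic $\widehat{F}_k,G_k,A_{kj}$ with $A_k=G_k^2\widehat{F}_k Q_k=\sum_{j\geq0}A_{kj}^2$, $Q_k$ a holomorphic unit on a neighbourhood of $\{G_k^2\widehat{F}_k=0\}\cup\R^n$, and such that the family $\{A_k=0\}$ is locally finite in $\Omega$. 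Restricting to $\R^n$, the function $a_k:=A_{k|\R^n}$ has zeroset $X_k$, equals $f_k$ times a positive unit times $g_k^2$ (with $g_k=G_{k|\R^n}$), and is written as a (possibly infinite) sum of squares of analytic functions on $\R^n$.

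For \textbf{(ii)} I would use that each $a_k$ inherits a representation as $2^r$ squares of meromorphic functions on $\R^n$ (absorbing the extra ``denominator'' $A_{k0}=d_kG_k^2\widehat{F}_k$ into the $a_k$). The sheaf-product $\widehat{A}=\shprod A_k$ in $\hol(\Omega)$ is well defined and differs from $f$ on $\R^n$ by a positive unit times a square. At every $x\in\R^n$ only finitely many $A_k$ vanish, say $A_{k_1},\dots,A_{k_m}$, and iterating Pfister $m-1$ times gives $A_{k_1}\cdots A_{k_m}=\nsq{2^r}$ as meromorphic germs. These local representations are not unique, but Cartan's Theorem A on $\Omega$ (available by Theorem \ref{AandBreal} since $\Omega$ is Stein) lets me glue a \emph{globally} defined $2^r$-tuple of meromorphic functions whose sum of squares equals the sheaf-product up to a unit. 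Passing to $\R^n$ this realises $b^2f$ as a sum of $2^r$ squares of analytic functions on a neighbourhood of $\{f=0\}$ for some $b\in\an(\R^n)$ with $\{b=0\}\subset\{f=0\}$; applying Proposition \ref{bad} to reduce the dimension of $\{b=0\}\setminus\{f=0\}$ costs a factor of $2^{n-1}$ near $\{f=0\}$, giving $\nsq{2^{n+r-1}}$ there, and Theorem \ref{globalization} then adds one square to globalise, yielding the desired $f=\nsq{2^{n+r}}$.

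For \textbf{(i)} the same gluing works but without a uniform bound on the number of squares, so Pfister cannot be used to bound the count; instead I assemble, using Proposition \ref{extension}(iii), the countable family of all finite products $A_{i_1}\cdots A_{i_s}$ with $s\geq 1$ and $\bigcap X_{i_j}\neq\varnothing$, and convert each local product into a locally convergent sum of squares of meromorphic functions. The local finiteness of $\{X_k\}$ makes the combinatorics only locally finite, and the strong convergence of Proposition \ref{extension}(iii) guarantees that the resulting series of squares converges on $\Omega$; Theorem \ref{globalization} then promotes the near-$\{f=0\}$ representation to a global one on $\R^n$. For \textbf{(iii)}, if some $f_{k_0}$ is genuinely an infinite sum of squares, Theorem \ref{pytagora} forbids avoiding a denominator; but the denominator factor $g_{k_0}$ arising from Proposition \ref{extension}, after reducing via Proposition \ref{bad} to $\dim\{g_{k_0}=0\}\setminus\{f_{k_0}=0\}<\dim\{f_{k_0}=0\}\leq\dim\{f=0\}$, yields a positive semidefinite $q\in\an(\R^n)$ with $\{q=0\}\subsetneq\{f=0\}$ of strictly smaller dimension such that $qf$ is an infinite sum of squares. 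The \textbf{main obstacle} I expect is the coherent gluing in (ii): the local $2^r$-tuples of meromorphic functions coming from Pfister's identity are not canonical, so one must set up an appropriate coherent sheaf (of $2^r$-fold sum-of-squares representations modulo the orthogonal action) on $\Omega$ and use Theorem \ref{AandBreal} to pick a global section — the combination of Pfister composition with Cartan A/B on a sufficiently small Stein tube of $\R^n$ is where the argument really has to work hard.
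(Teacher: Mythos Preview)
Your main obstacle is correctly identified, but your proposed resolution does not work, and this is the genuine gap in the argument. The local $2^r$-tuples of meromorphic functions coming from Pfister's identity differ from one another by \emph{orthogonal} transformations, not by multiplication by sections of a coherent sheaf of $\an_\Omega$-modules; there is no ``coherent sheaf of sum-of-squares representations modulo $O(2^r)$'' to which Cartan's Theorems A/B apply. What the paper does instead is exactly what this non-linear transition data calls for: it builds, from Pfister matrices $M_k\in\mathfrak{M}_{2^r}(\an(\R^n))$ satisfying $M_k^tM_k=h_kI_{2^r}$ (Lemma \ref{matrix}), an honest analytic vector bundle of rank $2^{r+1}$ over $\R^n$ with orthogonal transition functions $g_{ij}(x)=(h_j\cdots h_{i+1})^{-1/2}M_j(x)\cdots M_{i+1}(x)$ on the exhaustion $W_i=\R^n\setminus\bigcup_{k>i}X_k$. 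This bundle carries a Riemannian metric and a tautological section $s$ with $\langle s,s\rangle=g^2f$; since $\R^n$ is contractible the bundle is analytically trivial, and an orthonormal frame then reads off the global $2^{r+1}$ squares (Lemma \ref{pfister}). No coherent-sheaf argument can substitute for this topological triviality.

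The same gap recurs in your sketch of (i): the paper handles the unbounded case by passing to a holomorphic Hilbert bundle with fibre $\ell^2(\C)$ and bounded operators $T_{M_k}$ (Proposition \ref{tm1}), again exploiting triviality over a contractible base; your ``assemble all finite products'' does not produce a convergent global representation without such a bundle-theoretic organisation. Your argument for (iii) is also off: Theorem \ref{pytagora} concerns the Pythagoras number of $\mathcal{M}(\R^n)$ globally and says nothing about an individual $f_{k_0}$ forcing a denominator, and your construction of $q$ via Proposition \ref{bad} applied to a single $g_{k_0}$ does not give $\{q=0\}\subset\{f=0\}$ with the required dimension drop. The paper instead builds an explicit weighted series $H=G_0^4\widehat{F}^2+\sum_i(\gamma_i G_0^2\widehat{F}/P_i)^2B_i$ (Theorem \ref{main3}), shows $G_0^2\widehat{F}$ divides $H$, and controls $\{q=0\}$ through the combinatorics of the index sets $\mathfrak{F}_i,\mathfrak{G}_i$.
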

\begin{thm}\label{nash}
Let $\{f_k\}_{k\geq1}$ be a collection of positive semidefinite analytic functions on $\R^n$ such that the family $\{X_k=\{f_k=0\}\}_{k\geq1}$ of their zerosets is locally finite and each $f_k$ is of Nash type. Let $f$ be a positive semidefinite sheaf-product of the functions $f_k$. Then, $f$ is a sum of $2^{n+1}$ squares of meromorphic functions on $\R^n$.
\end{thm}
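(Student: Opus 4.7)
The strategy is to reduce to Theorem \ref{globalization} by exploiting Pfister's multiplicative formula for Nash functions. First I would write each $f_k = u_k g_k$ with $u_k \in \an(\R^n)$ a positive analytic unit and $g_k$ a positive semidefinite Nash function; the sheaf-product $f$ then agrees, up to a positive analytic unit, with a positive semidefinite sheaf-product of the $g_k$'s, and I may replace the $f_k$ by the $g_k$. Since H17 has a positive answer in the quotient field of $\mathcal N(\R^n)$ (Artin) and the Pythagoras number of $\R(x_1,\ldots,x_n)$ equals $2^n$ (Pfister), each $g_k$ is a sum of $2^n$ squares of meromorphic functions on $\R^n$: explicitly $g_k = \sum_{j=1}^{2^n} (a_{kj}/b_k)^2$ with $a_{kj},b_k$ Nash and $\{b_k=0\}\subset\{g_k=0\}=X_k$.

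Next I would invoke Pfister's multiplicative formula in the field $\mathcal M(\R^n)$: the set of elements representable as sums of $2^n$ squares is closed under multiplication. Hence for every finite subset $F$ of indices, the product $\prod_{k\in F} g_k$ is a sum of $2^n$ squares of meromorphic functions with denominators supported in $\bigcup_{k\in F} X_k$. In particular, for each $x\in\R^n$ the germ $f_x$ (being a finite product of $g_{k,x}$'s by local finiteness) is a sum of $2^n$ squares of meromorphic germs at $x$. The central task is to assemble these local Pfister identities into a single representation of $f$ as a sum of $2^n$ squares on an open neighborhood $W$ of the zeroset $\{f=0\}$. For this I would follow the scheme of Proposition \ref{extension}: extend each $g_k$ to a $\sigma$-invariant holomorphic sheaf-extension $\widehat G_k$ on a common Stein neighborhood $\Omega$ of $\R^n$ (Proposition \ref{commonext}), represent each $\widehat G_k$ as a sum of $2^n$ squares of meromorphic functions on $\Omega$ using Pfister multiplicativity over the complexified field, and glue these partial products by the sheaf-product construction so that the bound $2^n$ is preserved through the transitions between adjacent finite sub-products.

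The main obstacle will be precisely this gluing step, namely arranging the denominators in the local Pfister representations so that passing from one finite sub-product to a neighbouring one does not inflate the number of squares beyond $2^n$; here one needs both the multiplicativity of $2^n$-fold sums of squares and the control on zerosets of denominators provided by the Nash hypothesis (to ensure the sheaf of ideals generated by the squared numerators is globally principal). Once the representation $f = \sum_{j=1}^{2^n} r_j^2$ is obtained on a neighborhood $W$ of $\{f=0\}$ with $\{r_j=0\}\subset\{f=0\}$, Theorem \ref{globalization} promotes it to a representation $g^2 f = \sum_{j=1}^{2^n+1} s_j^2$ on all of $\R^n$ with $\{g=0\}\subset\{f=0\}$. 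Since $2^n+1 \leq 2^{n+1}$, this proves the theorem.
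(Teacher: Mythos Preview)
Your proposal misses the key tool available in the paper: Theorem \ref{main1} (the Bounded analytic Pfister's formula). The paper's proof is a two-line application of that theorem. Once each $f_k$ is known to be a sum of $2^n$ squares of meromorphic function germs at $X_k$ with controlled denominators (which follows from H17 for Nash functions together with Pfister's bound $p(\R(x_1,\ldots,x_n))=2^n$), Theorem \ref{main1} immediately gives that the sheaf-product $f$ is a sum of $2^{n+1}$ squares of meromorphic functions with controlled denominators. There is no need to invoke Theorem \ref{globalization} or to redo any gluing.

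The ``gluing step'' you flag as the main obstacle is precisely the content of Theorem \ref{main1}, and your sketch of it is where the gap lies. You assert that one can assemble the local Pfister identities for finite sub-products into a representation of $f$ as a sum of $2^n$ squares on a neighbourhood of $\{f=0\}$, preserving the bound $2^n$ across transitions. But Pfister's multiplicative formula $\nsq{2^n}\cdot\nsq{2^n}=\nsq{2^n}$ in a field does not by itself make the local representations compatible: the formula involves non-canonical choices, and to organise them coherently one builds the Pfister bundle of \S\ref{s3}. The construction of that bundle (Lemma \ref{matrix}) requires passing from $2^r$ to $2^{r+1}$ squares in order to produce the transition matrices $M_k$ with $M_k^tM_k = h_k I$; the bound $2^r$ is only preserved for $r\leq 3$ via the explicit matrices in Remark \ref{matrixr}(ii). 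So even a correct execution of your gluing would yield $2^{n+1}$ squares on $\R^n$ directly (matching the statement), not $2^n$ squares on a neighbourhood followed by $2^n+1$ via Theorem \ref{globalization}. Your route is therefore not wrong in spirit, but it amounts to reproving Theorem \ref{main1} rather than citing it, and your expectation of keeping the bound at $2^n$ through the gluing is not justified.
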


By means of Proposition \ref{bad}, Theorem \ref{vmain} (ii) reduces to prove the following statement.

\begin{thm}[Bounded analytic Pfister's formula]\label{main1}
Let $\{f_k\}_{k\geq1}$ be a collection of analytic functions on $\R^n$ such that the family $\{X_k=\{f_k=0\}\}_{k\geq1}$ of their zerosets is locally finite. Assume that each $f_k$ is a sum of $2^r$ squares of meromorphic function germs  at $X_k$ with controlled denominators (for a fixed $r$). Then, $\shprod f_k$ is a sum of $2^{r+1}$ squares of meromorphic functions with controlled denominators. 
\end{thm}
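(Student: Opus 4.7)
The overall strategy is to combine the local $2^r$-square representations at each $X_k$ into a single global representation of $\shprod f_k$ with $2^{r+1}$ squares, by extending everything to a common complex neighborhood, applying the iterated Pfister multiplicative identity to partial products, and controlling the convergence of the resulting infinite Pfister product in the Fr\'echet space of holomorphic functions on the neighborhood.

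First I would arrange common holomorphic extensions. By hypothesis, for each $k$ there exist $g_k,h_{k1},\ldots,h_{k,2^r}\in\an(W_k)$ on a neighborhood $W_k$ of $X_k$ in $\R^n$ with $g_k^2 f_k=\sum_{j=1}^{2^r}h_{kj}^2$ and $\{g_k=0\}\cap W_k\subset X_k$. Passing to invariant holomorphic extensions and applying Lemmas \ref{neighs}, \ref{bigneigh}, \ref{piccolo} together with Propositions \ref{commonext} and \ref{commonext2}, I obtain an invariant open Stein neighborhood $\Omega$ of $\R^n$ in $\C^n$, invariant open subsets $\Omega_k\subset\Omega$ with $\{\Omega_k\}$ locally finite in $\Omega$, and invariant $\widehat F_k,G_k,H_{kj}\in\hol(\Omega)$ such that $\widehat F_{k|\R^n}=u_k f_k$ with $u_k\in\an(\R^n)$ a positive unit, $\{G_k^2\widehat F_k=0\}\cap\R^n=X_k$, and $G_k^2\widehat F_k=\sum_{j=1}^{2^r}H_{kj}^2$ on $\Omega_k$.

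Next I globalize each local representation without amplifying the square count the way Proposition \ref{extension} does (there the factor $2^{n+1}$ comes from later invoking Proposition \ref{bad}; here we do not). Apply Lemma \ref{serie} to $\Phi_k=(G_k^2\widehat F_k)^2$ on the pair $(\Omega,\Omega_k)$ with $C_j=H_{kj}$, obtaining invariant $A_{kj}\in\hol(\Omega)$ ($j=1,\ldots,2^r$) such that $(G_k^2\widehat F_k)^2$ divides $A_{kj}-H_{kj}$ on $\Omega_k$ and $\sup_K|A_{kj}|$ can be prescribed on any compact $K\subset\Omega$. Mimicking the conclusion of Proposition \ref{extension}, set $A_{k0}=d_k G_k^2\widehat F_k$ with $d_k\geq c_k$ large enough that $A_k:=\sum_{j=0}^{2^r}A_{kj}^2=Q_k\,G_k^2\widehat F_k$ for an invariant holomorphic unit $Q_k$ near $\{G_k^2\widehat F_k=0\}\cup\R^n$, and that the family $\{A_k=0\}_{k\geq1}$ is locally finite in $\Omega$. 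Thus each $A_k$ is a sum of $2^r+1\leq 2^{r+1}$ invariant holomorphic squares on $\Omega$, and padding with zeros expresses it as a sum of exactly $2^{r+1}$ squares.

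The key step is then to multiply these representations together. The classical Pfister multiplicative identity for $2^{r+1}$ squares provides universal bilinear forms $z_1,\ldots,z_{2^{r+1}}$ with integer coefficients in two groups of $2^{r+1}$ variables such that
\[
\Bigl(\sum_{j=1}^{2^{r+1}}x_j^2\Bigr)\Bigl(\sum_{j=1}^{2^{r+1}}y_j^2\Bigr)=\sum_{j=1}^{2^{r+1}}z_j(x,y)^2.
\]
Iterating yields, for each $N$, a decomposition $P_N:=\prod_{k=1}^N A_k=\sum_{j=1}^{2^{r+1}}B_{N,j}^2$ with invariant $B_{N,j}\in\hol(\Omega)$. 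The main obstacle is convergence as $N\to\infty$: the bilinearity of the $z_j$ gives estimates of the form $\sup_K|B_{N+1,j}|\leq \kappa_r\max_\ell\sup_K|B_{N,\ell}|\cdot\max_\ell\sup_K|A_{{N+1},\ell}|$, so I must tune the choices in Lemma \ref{serie} (along a fixed exhaustion $\{K_\ell\}$ of $\Omega$, in the spirit of the bookkeeping in the proof of Lemma \ref{serie}) so that $\sup_{K_\ell}|A_{k,j}-\delta_{j0}|$ is summable in $k$ uniformly on each $K_\ell$; then $\{P_N\}$ converges in $\hol(\Omega)$, and the Cauchy estimates above force the $B_{N,j}$ to be Cauchy too, with invariant limits $B_j\in\hol(\Omega)$ satisfying $\sum_j B_j^2=\lim_N P_N=:A$.

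Finally, restriction to $\R^n$ identifies $A$ with $\shprod f_k$ up to a controlled denominator. On $\R^n$ we have $A_{k|\R^n}=Q_{k|\R^n}G_{k|\R^n}^2 u_k f_k$ with $Q_{k|\R^n}$ and $u_k$ positive units, so the defining locally principal sheaf of $A_{|\R^n}$ agrees with that of $g^2\cdot\shprod_k f_k$, where $g$ is a sheaf-product of the $G_{k|\R^n}$; thus $A_{|\R^n}=v\cdot g^2\cdot\shprod_k f_k$ for a positive analytic unit $v\in\an(\R^n)$ and $\{g=0\}\subset\bigcup_k X_k=\{\shprod_k f_k=0\}$. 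Setting $M_j=\sqrt{v}\,B_{j|\R^n}/g$ (with $\sqrt{v}\in\an(\R^n)$ since $v>0$) gives $\shprod_k f_k=\sum_{j=1}^{2^{r+1}}M_j^2$, a representation as a sum of $2^{r+1}$ squares of meromorphic functions on $\R^n$ with controlled denominators, as required.
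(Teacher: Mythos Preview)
Your argument has a genuine gap at the ``key step''. You claim that Pfister's multiplicative identity for $2^{r+1}$ squares is given by \emph{bilinear} forms $z_j$ with integer coefficients. That is only true for $2^{r+1}\in\{1,2,4,8\}$; Hurwitz's theorem forbids such a bilinear composition for $2^{r+1}\geq 16$. What Pfister actually gives, over a field, is $z_j$ linear in the $y$-variables with coefficients that are \emph{rational} in the $x$-variables. Iterating this to form $\prod_{k=1}^N A_k$ therefore introduces new denominators at every step, drawn from the partial products themselves, and these denominators need not stay controlled (their zero sets need not lie in $\bigcup_k X_k$) nor permit any uniform $\sup_K$ estimate. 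Your convergence bookkeeping relies precisely on the bilinearity that is unavailable, so the argument breaks for general $r$. A secondary issue: even granting bilinearity, your proposed tuning $\sup_{K_\ell}|A_{k,j}-\delta_{j0}|$ summable is incompatible with your choice $A_{k0}=d_k G_k^2\widehat F_k$, which vanishes on $\{G_k^2\widehat F_k=0\}$ and is far from $1$ there.

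The paper avoids the infinite-product problem entirely. Using Lemma~\ref{matrix} (which builds in the needed denominator $g_{1k}$), each $h_k=(g_{0k}g_{1k})^2 f_k$ comes with a matrix $M_k\in\mathfrak{M}_{2^{r+1}}(\an(\R^n))$ satisfying $M_k^tM_k=h_k I_{2^{r+1}}$. These matrices are used as transition functions of a rank-$2^{r+1}$ analytic vector bundle over $\R^n$, with respect to the open covering $W_k=\R^n\setminus\bigcup_{i>k}X_i$; the orthogonality $g_{ij}g_{ij}^t=I$ of the transitions makes the standard inner product well defined on the bundle. A natural section $s$ satisfies $\langle s,s\rangle = h=g^2\shprod f_k$. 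Since $\R^n$ is contractible the bundle is analytically trivial, and expanding $s$ in a global orthonormal frame yields $h=\sum_{j=1}^{2^{r+1}}a_j^2$ with controlled denominator $g$ (Lemma~\ref{pfister}). In effect, the triviality of the bundle replaces the infinite Pfister product you attempt; the denominators arising from Lemma~\ref{matrix} are absorbed once, per factor, rather than accumulated through iterated composition.
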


\subsubsection{Pfister's multiplicative formulae.}

One way to prove Pfister's multiplicative formulae consists of providing, given an element $a=\nsq{2^r}$ in a field $K$ (resp. a ring  $A$), a matrix $M_a\in{\mathfrak M}_{2^r}(K)$ of order $2^r$ and coefficients in $K$ (resp. in $A$) such that $M_a^tM_a=aI_{2^r}$. Once this is done, if $a,b$ are sums of $2^r$ squares, we have $(M_aM_b)^t(M_aM_b)=abI_{2^r}$ and this provides a representation of $ab$ as a sum of $2^r$ squares. Of course, $I_{2^r}$ denotes the identity matrix of the ring ${\mathfrak M}_{2^r}(K)$ (resp. ${\mathfrak M}_{2^r}(A)$).

\begin{lem}\label{matrix0}
Let $A$ be a unitary commutative ring and let $g_i,f_i\in A$ for $i=1,2$. Suppose that there exist matrices $M_1,M_2\in{\mathfrak M}_{2^r}(A)$ such that $M_i^tM_i=g_i^2f_iI_{2^r}$. Then, there is a matrix $M\in{\mathfrak M}_{2^{r+1}}(A)$ such that $M^tM=g^2fI_{2^r}$ where $g=g_1g_2^2f_2$ and $f=f_1+f_2$.
\end{lem}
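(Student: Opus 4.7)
The lemma is a Pfister-style doubling: starting from two $2^r\times2^r$ matrices whose ``Gram matrices'' are the scalars $u_i:=g_i^2f_i$, we build a $2^{r+1}\times2^{r+1}$ matrix whose Gram matrix is the scalar $g^2f$. I would proceed in three steps.

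First I would rewrite the target so that it matches the sum of two squared block-column norms. Using $g=g_1g_2^2f_2$ and $f=f_1+f_2$,
\[
g^2f \;=\; (g_2^2f_2)^2\,(g_1^2f_1)\,+\,g_1^2(g_2^2f_2)^2\,f_2 \;=\; u_2^{\,2}u_1\;+\;(g_1g_2f_2)^{2}\,u_2.
\]
The first summand is $(u_2)^2$ times ``a sum of $2^r$ squares encoded by $M_1$,'' and the second is the single square $(g_1g_2f_2)^2$ times ``a sum of $2^r$ squares encoded by $M_2$''; this is exactly the decomposition a Pfister doubling needs.

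Next I would take
\[
M \;=\; \begin{pmatrix} u_2\,M_1 & g_1g_2f_2\,M_2 \\[2pt] -g_1g_2f_2\,M_2^{\,t} & u_2\,M_1^{\,t}\end{pmatrix}\in\mathfrak M_{2^{r+1}}(A)
\]
(the usual Cayley--Dickson block form) and compute $M^tM$ as a $2\times2$ array of $2^r\times2^r$ blocks. The two diagonal blocks both come out, using $M_i^tM_i=u_iI$ and the companion identity $M_iM_i^t=u_iI$, to $(u_2^{\,2}u_1+(g_1g_2f_2)^2u_2)I=g^2fI_{2^r}$, which is precisely the decomposition above. The companion identity $M_iM_i^t=u_iI$ is obtained by taking determinants in $M_i^tM_i=u_iI$: $(\det M_i)^2=u_i^{2^r}$, so $M_i$ is invertible whenever $u_i$ is a non-zero-divisor, whence $M_i^{-1}=u_i^{-1}M_i^t$ and $M_iM_i^t=u_iI$ follows; the degenerate case can be handled by a generic-point specialization argument.

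The main obstacle — and the step I would spend the most care on — is the vanishing of the off-diagonal blocks. A direct computation with the ansatz above reduces this to the identity $M_1^tM_2=M_2M_1^t$, which is not available for arbitrary matrices satisfying only $M_i^tM_i=u_iI$. To get around this, I would replace the given $M_i$ by the canonical Pfister matrices attached to the representations of $u_i$ as sums of $2^r$ squares (these are built inductively by the same Cayley--Dickson recipe and have the required anti-commutation relations between $M_i$ and $M_i^t$ by construction). With these canonical $M_i$ in hand, the off-diagonal blocks collapse to zero by the standard Pfister identities, and the block-matrix verification goes through as planned. The existence of such canonical $M_i$ is guaranteed simply by reading off, from the first column of the given $M_i$, that $u_i$ is a sum of $2^r$ squares in $A$, and then invoking the inductive Cayley--Dickson construction (which is the case $r\mapsto r$ of the very lemma being proved, so the whole argument fits naturally into an induction on $r$).
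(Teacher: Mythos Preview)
Your decomposition $g^2f=u_2^{\,2}u_1+(g_1g_2f_2)^2u_2$ is exactly right, and you correctly isolate the obstacle: with the Cayley--Dickson ansatz you write down, the off-diagonal blocks of $M^tM$ vanish only if $M_1^tM_2=M_2M_1^t$. The workaround you propose, however, does not go through. Over an arbitrary commutative ring there is no ``canonical Pfister matrix'' attached to a sum of $2^r$ squares carrying the required commutation relations once $r\ge4$: Pfister's multiplicative formulae over fields use denominators, and Hurwitz' theorem already rules out a \emph{bilinear} $2^r$-squares identity for $r\ge4$, which is precisely what such canonical matrices would encode. Your inductive scheme (``read off the first column, then invoke the lemma at level $r$'') is also not well-formed: the lemma at level $r-1$ consumes two $2^{r-1}\times2^{r-1}$ matrices and produces one $2^r\times2^r$ matrix, it does not manufacture matrices with prescribed mutual commutation. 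Incidentally, your ``generic-point specialization'' for the companion identity $M_iM_i^t=u_iI$ also fails: already for $2\times2$ matrices the universal scheme $\{M^tM=cI\}$ has irreducible components on which $c\equiv0$ and $MM^t\neq0$, so the identity does not propagate from the locus where $c$ is invertible.

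The paper avoids the commutation problem entirely by a different choice of the bottom-right block. Instead of $u_2M_1^t$, take
\[
M=\begin{pmatrix}u_2\,M_1 & -g_1g_2f_2\,M_2\\[2pt] g_1g_2f_2\,M_2 & M_2M_1^{\,t}M_2\end{pmatrix}.
\]
Now the $(1,2)$ block of $M^tM$ is $-u_2g_1g_2f_2\,M_1^tM_2+g_1g_2f_2\,(M_2^tM_2)M_1^tM_2$, which vanishes using only $M_2^tM_2=u_2I$; no relation between $M_1$ and $M_2$ is needed. The $(1,1)$ block is your $u_2^{\,2}u_1+(g_1g_2f_2)^2u_2=g^2f$, and the $(2,2)$ block becomes $(g_1g_2f_2)^2u_2I+u_2\,M_2^t(M_1M_1^t)M_2$. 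So the only extra input is $M_1M_1^t=u_1I$ for the single matrix $M_1$; in the paper's applications this holds because the construction starts from the explicit $r\le3$ matrices (which satisfy both $M^tM=MM^t=uI$) and one checks that the displayed $M$ preserves this two-sided property, so it propagates through the induction.
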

\begin{proof}
A straighforward computation shows that
$$
M=\begin{pmatrix}
g_2^2f_2M_1&-g_1g_2f_2M_2\\[4pt]
g_1g_2f_2M_2&M_2M_1^tM_2
\end{pmatrix}\in{\mathfrak M}_{2^r}(A)
$$
is the matrix we sought.
\end{proof}

In our case $A=\an(\R^n)$, we are concerned about ``controlling denominators''. This is done by means of the following result.

\begin{lem}\label{matrix}
Let $f,a\in\an(\R^n)$ be analytic functions such that $f=a^2+\nsq{2^{r-1}}$ in $\an(\R^n)$ and $\{a=0\}=\{f=0\}$. Then, there exist $g\in\an(\R^n)$ with $\{g=0\}\subset\{f=0\}$ and a matrix $M\in{\mathfrak M}_{2^r}(\an(\R^n))$ such that $M^tM=g^2fI_{2^r}$.\enlargethispage{0.4mm}
\end{lem}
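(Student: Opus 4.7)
The plan is to prove Lemma \ref{matrix} by induction on $r \geq 1$, using Lemma \ref{matrix0} as the combining tool and a clever splitting of $a^2$ to propagate the zero-set control through the induction.

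\textbf{Base case $r=1$.} Here $f = a^2 + b^2$ for some $b \in \an(\R^n)$, and the Cauchy-Riemann style matrix $M = \begin{pmatrix} a & -b \\ b & a \end{pmatrix}$ satisfies $M^t M = (a^2+b^2)I_2 = fI_2$, so $g=1$ works (note $\{g=0\}=\varnothing\subset\{f=0\}$).

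\textbf{Inductive step $r \geq 2$.} Write $f = a^2 + h$ with $h = \sum_{j=1}^{2^{r-1}} b_j^2$. The hypothesis $\{a=0\}=\{f=0\}$ combined with $f = a^2 + h$ and $h\geq 0$ forces $h$ (and hence each $b_j$) to vanish on $\{a=0\}$. I would then split the index set $\{1,\ldots,2^{r-1}\}$ into two disjoint halves $J_1, J_2$ of cardinality $2^{r-2}$, define $h_k = \sum_{j\in J_k} b_j^2$, and set
$$
f_k = \Bigl(\tfrac{a}{\sqrt{2}}\Bigr)^2 + h_k \qquad (k=1,2).
$$
Each $f_k$ has exactly the form $\tilde{a}^2 + \nsq{2^{r-2}}$ required by the inductive hypothesis, with $\tilde a = a/\sqrt{2}$. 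The crucial verification is that $\{\tilde a = 0\} = \{f_k=0\}$: indeed $\{\tilde a=0\}=\{a=0\}$, and since each $b_j$ vanishes on $\{a=0\}$ we have $\{a=0\}\subset\{h_k=0\}$, hence $\{f_k=0\}=\{a=0\}=\{f=0\}$. By the inductive hypothesis, there exist $g_k\in\an(\R^n)$ with $\{g_k=0\}\subset\{f_k=0\}=\{f=0\}$ and matrices $M_k\in\mathfrak{M}_{2^{r-1}}(\an(\R^n))$ such that $M_k^t M_k = g_k^2 f_k I_{2^{r-1}}$.

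\textbf{Combining.} Apply Lemma \ref{matrix0} (with its $r$ equal to $r-1$) to $(g_1,f_1,M_1)$ and $(g_2,f_2,M_2)$. This yields a matrix $M \in \mathfrak{M}_{2^r}(\an(\R^n))$ with $M^t M = g^2 f' I_{2^r}$, where $g = g_1 g_2^2 f_2$ and $f' = f_1 + f_2 = a^2 + h_1 + h_2 = f$. Finally,
$$
\{g=0\} = \{g_1=0\}\cup\{g_2=0\}\cup\{f_2=0\} \subset \{f=0\},
$$
the first two inclusions coming from the inductive hypothesis and the third from $\{f_2=0\} = \{a=0\}\cap\{h_2=0\} \subset \{a=0\} = \{f=0\}$. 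This closes the induction.

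\textbf{Expected difficulty.} There is no deep analytic obstacle; the only thing to get right is the choice of splitting. The naive attempt to use $f_1 = h$ and $f_2 = a^2$ in Lemma \ref{matrix0} forces a Pfister-type matrix for the sum of $2^{r-1}$ squares $h$ alone, for which one cannot control the denominator's zero set. Writing $a^2 = \bigl(\tfrac{a}{\sqrt{2}}\bigr)^2 + \bigl(\tfrac{a}{\sqrt{2}}\bigr)^2$ and distributing the $a$-contribution evenly between $f_1$ and $f_2$ is exactly what makes the inductive hypothesis applicable to both pieces simultaneously, so that every factor appearing in $g = g_1 g_2^2 f_2$ vanishes only on $\{f=0\}$.
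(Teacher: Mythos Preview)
Your proof is correct and follows essentially the same approach as the paper: induction on $r$, splitting $a^2$ as a sum of two squares so that each half $f_k$ again satisfies the hypothesis of the lemma, and then combining via Lemma~\ref{matrix0}. The only cosmetic difference is that the paper writes $a^2=(\tfrac{3}{5}a)^2+(\tfrac{4}{5}a)^2$ (a rational Pythagorean split) where you use $a^2=(\tfrac{a}{\sqrt{2}})^2+(\tfrac{a}{\sqrt{2}})^2$; both choices serve the identical purpose of ensuring $\{f_1=0\}=\{f_2=0\}=\{a=0\}$.
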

\begin{proof}
Indeed, if $r=1$, we have $f=a^2+b^2$ and it is enough to take $g=1$ and 
$$
M=\begin{pmatrix}
a&-b\\
b&a
\end{pmatrix}\in{\mathfrak M}_2(\an(\R^n)).
$$
Assume now $r>1$, and let $\displaystyle f_1=\left(\frac{3}{5}a\right)^2+\nsq{2^{r-2}}$ and
$\displaystyle f_2=\left(\frac{4}{5}a\right)^2+\nsq{2^{r-2}}$ such that $f=f_1+f_2$. Notice that $\{f_1=0\}=\{f_2=0\}=\{a=0\}=\{f=0\}$. By induction, there are analytic functions $g_1,g_2\in\an(\R^n)$ such that $\{g_i=0\}\subset\{f_i=0\}$ and matrices $M_i\in{\mathfrak M}_{2^{r-1}}(\an(\R^n))$ satisfaying $M_i^tM_i=g_i^2f_iI_{2^{r-1}}$ for $i=1,2$. By Lemma \ref{matrix0}, there is a matrix $M\in{\mathfrak M}_{2^{r+1}}(\an(\R^n))$ such that $M^tM=g^2fI_{2^r}$ where $g=g_1g_2^2f_2$. Observe that $\{g=0\}=\{g_1=0\}\cup\{g_2=0\}\cup\{f_2=0\}=\{f=0\}$, as required.
\end{proof}

\begin{remarks}\label{matrixr}

\begin{itemize}\parindent =0pt 
\item[(i)] {\em If $f=\nsq{p}$ in $\an(\R^n)$, there exists $a\in\an(\R^n)$ such that $f=a^2+\nsq{p}$ in $\an(\R^n)$ and $\{a=0\}=\{f=0\}$.}

Indeed, $1+f$ is a positive unit of $\an(\R^n)$, so $\displaystyle u=\frac{1}{\sqrt{1+f}}\in\an(\R^n)$. Thus, if we denote $a=fu$, we have $f=(f+f^2)u^2=a^2+u^2\nsq{p}=a^2+\nsq{p}$ and $\{a=0\}=\{f=0\}$.

\item[(ii)] {\em  If $f=\nsq{2^{r}}$ in $\an(\R^n)$ with $r\leq3$, we may choose a matrix $M_f\in{\mathfrak M}_{2^r}(\an(\R^n))$ such that $M_f^tM_f=fI_{2^r}$.} 
Namely,

$
\begin{array}{lc}
f=a_1^2+a_2^2, &   M_f={\Large \left(\begin{smallmatrix}
a_1&-a_2 \\[4 pt]
a_2&a_1
\end{smallmatrix}\right)}    \\                            \\
 f=a_1^2+a_2^2+a_3^2+a_4^2 &  M_f={\Large\left(\begin{smallmatrix}
a_1&-a_2&-a_3&-a_4\\[4pt]
a_2&a_1&a_4&-a_3\\[4pt]
a_3&-a_4&a_1&a_2\\[4pt]
a_4&a_3&-a_2&a_1
\end{smallmatrix}\right)}        \\               \\
 f=a_1^2 +a_2^2+ \cdots   +a_8^2 &         
M_f={\Large\left(\begin{smallmatrix}
a_1&-a_2&-a_3&-a_4&-a_5&-a_6&-a_7&-a_8\\[4pt]
a_2&a_1&-a_5&-a_8&a_3&-a_7&a_6&a_4\\[4pt]
a_3&a_5&a_1&-a_6&-a_2&a_4&-a_8&a_7\\[4pt]
a_4&a_8&a_6&a_1&-a_7&-a_3&a_5&-a_2\\[4pt]
a_5&-a_3&a_2&a_7&a_1&-a_8&-a_4&a_6\\[4pt]
a_6&a_7&-a_4&a_3&a_8&a_1&-a_2&-a_5\\[4pt]
a_7&-a_6&a_8&-a_5&a_4&a_2&a_1&-a_3\\[4pt]
a_8&-a_4&-a_7&a_2&-a_6&a_5&a_3&a_1
\end{smallmatrix}\right)}.\\
\end{array}
$
\end{itemize}
\end{remarks} 

\subsubsection{Pfister's bundles.}

As we did in Section \ref{lowdim} to approach the representation of a positive semidefinite analytic function on an analytic surface as sum of squares of analytic functions, we  associate a suitable analytic vector bundle endowed with a Riemannian metric to each finite sum of squares in $\an(\R^n)$.

\begin{defn} 
Let $f\in\an(\R^n)$ be an analytic function. We define a \em Pfister's bundle for $f$ \em as a tuple $\psd(f)=((E(f),\pi,\R^n), \qq{\cdot,\cdot},s)$ consisting of: 
\begin{itemize} 
\item an analytic vector bundle $(E(f),\pi,\R^n)$ of finite rank,
\item a Riemannian metric $\qq{\cdot,\cdot}$ on $E$, 
\item a global analytic section $s:\R^n\to E(f)$ such that $\qq{s,s}=f$.
\end{itemize}
\end{defn}

\begin{remark}\label{gsr} 
An analytic vector bundle over $\R^n$ is topologically, and so analitically,  trivial because $\R^n$ is contractible. Thus, a Pfister's bundle $\psd(f)$ for $f$ of rank $p$ admits $p$ unitary analytic sections $s_1,\ldots,s_p:\R^n\to E$ which are pairwise orthogonal, that is, $\qq{s_i,s_j}=\delta_{ij}$ for $1\leq i,j\leq n$.
\end{remark}
Now we characterize the finite sums of squares in terms of Pfister's bundles. Namely,
\begin{lem}\label{pfister}
Let $f\in\an(\R^n)$ be an analytic function. Then, $f$ is a finite sum of squares if and only if there exist $g\in\an(\R^n)\setminus\{0\}$ and a Pfister's bundle $\psd(g^2f)$ of finite rank for $g^2f$. Moreover, if $\psd(g^2f)$ has rank $p$, then $f$ can be represented as a sum of $p$ squares of meromorphic functions and the zeroset of the denominators of such representation is contained in $\{g=0\}$.
\end{lem}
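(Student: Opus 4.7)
The plan is to treat the two implications separately and, in both directions, to exploit the triviality of analytic vector bundles over the contractible base $\R^n$.

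First I would dispose of the easy direction. If $f$ is a finite sum of squares of meromorphic functions whose denominators can be cleared by a single analytic factor, then there exist $g\in\an(\R^n)\setminus\{0\}$ and $a_1,\ldots,a_p\in\an(\R^n)$ with $g^2f=\sum_{i=1}^p a_i^2$. I would then take the trivial bundle $E(g^2f)=\R^n\times\R^p$ endowed with the standard Euclidean metric $\qq{\cdot,\cdot}$ and the global analytic section $s:\R^n\to E(g^2f)$ defined by $s(x)=(a_1(x),\ldots,a_p(x))$. By construction $\qq{s,s}=\sum_{i=1}^p a_i^2=g^2f$, so $\psd(g^2f)=((E,\pi,\R^n),\qq{\cdot,\cdot},s)$ is a Pfister's bundle for $g^2f$ of rank $p$.

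For the converse, suppose that $\psd(g^2f)=((E,\pi,\R^n),\qq{\cdot,\cdot},s)$ is a Pfister's bundle of finite rank $p$ for $g^2f$. Since $\R^n$ is contractible, the analytic vector bundle $E$ is analytically trivial, and by Remark \ref{gsr} it admits $p$ global analytic sections $s_1,\ldots,s_p:\R^n\to E$ that are pairwise orthonormal with respect to $\qq{\cdot,\cdot}$, i.e.\ $\qq{s_i,s_j}=\delta_{ij}$. Setting $\lambda_i=\qq{s,s_i}\in\an(\R^n)$, we may expand $s=\sum_{i=1}^p\lambda_i s_i$ and compute
$$
g^2f=\qq{s,s}=\sum_{i=1}^p\lambda_i^2.
$$
This already displays $g^2f$ as a sum of $p$ squares in $\an(\R^n)$, and on the open set $\R^n\setminus\{g=0\}$ it yields the meromorphic representation
$$
f=\sum_{i=1}^p\Big(\frac{\lambda_i}{g}\Big)^{\!2},
$$
whose denominators vanish only inside $\{g=0\}$. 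This gives both the implication and the quantitative ``moreover'' statement.

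The only step that requires more than a routine verification is the production of the analytic orthonormal frame $s_1,\ldots,s_p$, which is where I expect any mild obstacle to sit; however this is precisely the content of Remark \ref{gsr}, which reduces to the fact that an analytic positive-definite symmetric bilinear form on a trivial analytic bundle over $\R^n$ admits an analytic Gram--Schmidt orthonormalization. Once this frame is available, the two directions are essentially formal manipulations.
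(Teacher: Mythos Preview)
Your proof is correct, and the converse direction is essentially identical to the paper's. In the forward direction you take a slightly more direct route: given $g^2f=\sum_{i=1}^p a_i^2$, you simply use the trivial bundle $\R^n\times\R^p$ with the standard metric and the section $s=(a_1,\ldots,a_p)$. The paper instead passes through the Pfister matrix machinery: starting from a representation of $g_0^2f$ as a sum of $2^{r-1}$ squares, it invokes Remark~\ref{matrixr}(i) and Lemma~\ref{matrix} to obtain a matrix $M_h\in\mathfrak{M}_{2^r}(\an(\R^n))$ with $M_h^tM_h=hI_{2^r}$ for $h=(g_0g_1)^2f$, and then takes the section $s(x)=M_h(x)(u)$ for a unit vector $u$. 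Your construction is more economical for this lemma in isolation; the paper's detour through $M_h$ is motivated by the subsequent multiplicative formulae (Theorem~\ref{main1}), where these matrices are what actually gets composed, but for the bare equivalence stated here your argument suffices without loss.
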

\begin{proof}
Suppose first that $f$ is a sum of $2^{r-1}$ squares, that is, there is $g_0\in\an(\R^n)\setminus\{0\}$ such that $g_0^2f$ is a sum of $2^{r-1}$ squares in $\an(\R^n)$. By Remark \ref{matrixr}(i) and Lemma \ref{matrix}, there exist $g_1\in\an(\R^n)\setminus\{0\}$ and a matrix $M_h\in{\mathfrak M}_{2^r}(\an(\R^n))$ such that $M_h^tM_h=hI_{2^r}$ where $h=(g_0g_1)^2f$. Denote $g=g_0g_1$. 

Write $p=2^r$ and consider the analytic vector bundle $(E,\pi,\R^n)$ where $E=\R^n\times\R^p$ and $\pi:\R^n\times\R^p\to\R^n$ is the projection onto the first space. Consider the analytic Riemannian metric on $(E\oplus E,\pi',\R^n)=(\R^n\times\R^{2p},\pi',\R^n)$  given by
$$
\qq{(x,(v_1,\ldots,v_p)),(x,(w_1,\ldots,w_p))}=\sum_{\ell=1}^pv_\ell w_\ell.
$$

Let $u\in\R^p$ be any unitary vector and consider the global section 
$$
s:\R^n\to E,\ x\mapsto(x,M_h(x)(u)).
$$ 
Since $\qq{s,s}=h$, the tuple $\psd(h)=((E,\pi,\R^n),s)$ is a Pfister's bundle for $h=g^2f$.

Conversely, suppose that there exists a Pfister's bundle $\psd(g^2f)=((E,\pi,\R^n),\qq{\cdot,\cdot},s)$ of rank $p$ for $g^2f$, where $g\in\an(\R^n)\setminus\{0\}$. By Remark \ref{gsr} there exists a family of analytic sections $\{s_1,\ldots,s_p\}$ of $(E,\pi,\R^n)$ such that $\qq{s_i,s_j}=\delta_{ij}$ for $1\leq i,j\leq p$. Write now $s=\sum_{i=1}^pa_is_i$, where $a_i=\qq{s,s_i}\in\an(\R^n)$, and notice that 
$$
g^2f=\qq{s,s}=a_1^2+\cdots+a_p^2
$$
is a sum of $p$ squares in $\an(\R^n)$. Thus, $f$ is a sum of $p$ squares and the zeroset of the denominators of such representation is contained in $\{g=0\}$, as required.
\end{proof}

\begin{proof}[Proof of Theorem \ref{main1}]
By Lemma \ref{pfister} it is enough to provide $g\in\an(\R^n)$ with $\{g=0\}\subset\{f=0\}$ and a Pfister's bundle of rank $p=2^{r+1}$ for $g^2f$. We proceed in several steps.
\vspace{1mm}

\noindent{\sc Step 1.} 
For each $k\geq 1$ there exist by Proposition \ref{bad} analytic functions $g_{0k},a_k\in\an(\R^n)$ such that $\{g_{0k}=0\}\subset\{a_k=0\}=\{f_k=0\}$ and $g_{0k}^2f_k=a_k^2+\nsq{2^r}$ in $\an(\R^n)$. For each $k\geq1$ there are by Lemma \ref{matrix} an analytic function $g_{1k}\in\an(\R^n)$ with $\{g_{1k}=0\}\subset\{f_k=0\}$ and a matrix $M_k\in{\mathfrak M}_p(\an(\R^n))$ such that $M_k^tM_k=h_kI_p$ where $h_k=(g_{0k}g_{1k})^2f_k$.

Denote $f=\shprod f_k$ and $g=\shprod (g_{0k}g_{1k})$ and observe that $\{g=0\}\subset\{f=0\}$ and $h=g^2f=\shprod h_k$ where $h_k=(g_{0k}g_{1k})^2f_k$. Thus, it is enough to construct a Pfister's bundle $\psd(h)=((E,\pi,\R^n),s)$ of rank $p$ for $h$.

For each $k\geq0$ consider the set $W_k=\R^n\setminus\bigcup_{i>k}X_i$. Observe that since the family $\{X_k\}_{k\geq1}$ is locally finite, each set $W_k$ is open in $\R^n$ and $\R^n=\bigcup_{k\geq1}W_k$. 

\vspace{1mm}
\noindent{\sc Step 2: Construction of the vector bundle.} 
Note that since $M_k^tM_k=h_kI_p$, we have  $M_k(x)\in \GL (\R^p)$ if and only if $x\in\R^n\setminus\{h_k=0\}$. Observe that if $0\leq i<j$, then $W_i\cap W_j=W_i$ and we define the following \em transition maps\em:
\begin{align*}\displaystyle
 g_{ij}:W_i\cap W_j=W_i\to\GL(\R^p),\  x\mapsto\tfrac{1}{\sqrt{(h_{j|W_i})(x)\cdots(h_{{i+1}|W_i})(x)}}M_j(x)\cdots M_{i+1}(x),\\
g_{ji}:W_i\cap W_j=W_i\to\GL(\R^p),\ x\mapsto\tfrac{1}{\sqrt{(h_{j|W_i})(x)\cdots(h_{{i+1}|W_i})(x)}}M_{i+1}^t(x)\cdots M_j^t(x).
\end{align*}
We define $g_{ii}:W_i\cap W_i=W_i\to\GL(\R^p),\ x\mapsto I_p$. Using the fact that $M_k^tM_k=h_kI_p$ for all $k\geq 1$, we have $g_{jk}(x)\cdot g_{ij}(x)=g_{ik}(x)$ for all $0\leq i,j,k$ and all $x\in W_i\cap W_j\cap W_k=W_{\min\{i,j,k\}}$. The functions $g_{ij}:W_i\cap W_j\to\GL(\R^p)$ are the transition maps of an analytic vector bundle $(E,\pi,\R^n)$ with fiber $\R^p$. For each $i\geq 0$ let $\phi_i:\pi^{-1}(W_i)\to W_i\times\R^p$ be homeomorphisms such that $(\phi_j\circ\phi_i^{-1})(z,v)=(z,g_{ij}(z)(v))$ for all $(z,v)\in(W_i\cap W_j)\times\R^p$ and all $j\geq0$. Moreover, for each $x\in W_i$ and each $j\geq0$, the matrices $g_{ij}(x)\in\GL(\R^p)$ are in fact ortogonal matrices, that is, $g_{ij}(x)g_{ij}(x)^t=I_p$.

\vspace{1mm}
\noindent{\sc Step 3: Construction of the Riemaniann metric.}
Consider the analytic Riemannian metric on $(E\oplus E,\pi',\R^n)$ given by
$$
\qq{\varphi_i^{-1}(x,(v_1,\ldots,v_p)),\varphi_i^{-1}(x,(w_1,\ldots,w_p))}=\sum_{\ell=1}^pv_\ell w_\ell,
$$
 Since each matrix $g_{ij}(x)\in\GL(\R^p)$ is orthogonal, we deduce that $\qq{\cdot,\cdot}$ is well-defined.

\vspace{1mm}

\noindent{\sc Step 4: Construction of the analytic section.}

Let $u\in\R^p$ be any unitary vector and let $s:\R^n\to E$ be the global analytic section satisfying
$$
\varphi_0\circ s_{|W_0}:W_0\to W_0\times\R^p\subset W_i\times\R^p,\ x\mapsto(x,\sqrt{(h_{|W_0})(x)}u).
$$
Let us check that such section exists. Indeed, for each $i\geq1$, we have
$$\displaystyle
(\varphi_i\circ s_{|W_0})=(\varphi_i\circ\varphi_0^{-1})\circ(\varphi_0\circ s_{|W_0}):W_0\to W_0\times\R^p,\ x\mapsto(x,\sqrt{(h_{|W_0})(x)}g_{0i}(x)u)
$$
where 
$$\displaystyle
\sqrt{(h_{|W_0})(x)}g_{0i}(x)(u)=\sqrt{\tfrac{(h_{|W_0})(x)}{(h_{1|W_0})(x)\cdots(h_{i|W_0})(x)}}M_i(x)\cdots M_1(x)(u).\qquad(\star)\label{*}
$$
Moreover, in $W_i=\R^n\setminus\bigcup_{j>i}X_j$ the function $\frac{h|_{W_i}}{(h_1|_{W_i})\cdots(h_i|_{W_i})}$ is analytic and vanishes nowhere. Thus, it admits an analytic square root in $W_i$. Since $\R^n=\bigcup_{i\geq0}W_i$, we have, by means of equality \eqref{*} above, an analytic section $s:\R^n\to E$ of $(E,\pi,\R^n)$ such that
$$
\phi_i\circ s:W_i\mapsto W_i\times\R^p,\ x\mapsto\Big(x,\sqrt{\tfrac{(h|_{W_i})(x)}{(h_1|_{W_i})(x)\cdots(h_i|_{W_i})(x)}}M_i(x)\cdots M_1(x)(u)\Big).
$$
Moreover, the analytic function $\qq{s,s}:\R^n\to\R$ is determined by its value on the points $x\in W_0$, so
$$
\qq{s,s}_{|W_0}(x)=\qq{\sqrt{(h_{|W_0})(x)}u,\sqrt{(h_{|W_0})(x)}u}=(h_{|W_0})(x)\qq{u,u}=(h_{|W_0})(x).
$$
Consequently  $\qq{s,s}=h$, as equired. 
\end{proof}

\begin{remark}\label{main1r}
By means of Remark \ref{matrixr}(ii), one can adapt the previous proof to show that: \em If each $f_k$ in the statement of Theorem \em \ref{main1} \em is a sum of $2^r$ squares in $\an(\R^n)$ with $r=0,1,2,3$, then also $f$ is a sum of $2^r$ squares in $\an(\R^n)$\em. 
\end{remark}

 Since positive semidefinite Nash functions are sums of squares with controlled denominators, we get an improvement of  Theorem \ref{nash}, that is, 
the representation as sum of squares of the involved sheaf-product $f$ has controlled denominators. 

\begin{proof}[Proof of Theorem \ref{nash}]
First, let $W_k$ be an open neighborhood of $\{f_k=0\}$ in $\R^n$ and let $\varphi_k:M_k\to W_k$ be an analytic diffeomorphism such that $M_k$ is a Nash manifold and $f_k\circ\varphi_k=u_kg_k$, where $g_k:M_k\to\R$ is a positive semidefinite Nash function and $u_k\in\an(M_k)$ is a positive unit. 

 Each Nash function $g_k$ is a sum of $2^n$ squares with controlled denominators in the quotient field of the ring ${\mathcal N}(M_k)$ of Nash functions on $M_k$. Thus, each $f_k$ is a sum of $2^n$ squares at $\{f_k=0\}$ with controlled denominators. Consequently, $f$ is by Theorem \ref{main1} a sum of $2^{n+1}$ squares of meromorphic functions with controlled denominators.
\end{proof}

\subsection{Unbounded analytic Pfister's formula.}\label{s4}

In this section we prove the first part of Theorem \ref{vmain}. More precisely we  prove the following.

\begin{thm}[Unbounded analytic Pfister's formula]\label{main2}
Let $\{f_k\}_{k\geq1}$ be a collection of positive semidefinite analytic functions on $\R^n$ such that the family $\{X_k\}_{k\geq1}$ of their zerosets is locally finite and each $f_k$ is a finite sum of squares at $X_k$. Then, $\shprod f_k$ is an infinite sum of squares (with controlled denominators).
\end{thm}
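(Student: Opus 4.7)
The plan is to reduce the problem to representing a holomorphic sheaf-product on a complex neighbourhood of $\R^n$ as a strongly convergent infinite sum of squares of invariant holomorphic functions, in the spirit of the bounded Pfister argument of Theorem \ref{main1} but now combined with the convergence machinery provided by Proposition \ref{extension}.

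First, by Proposition \ref{bad} together with Theorem \ref{globalization}, I may upgrade the hypothesis ``each $f_k$ is a finite sum of squares at $X_k$'' to the stronger global one: for each $k$ there exist $g_k\in\an(\R^n)$ with $\{g_k=0\}\subset X_k$ and analytic functions $h_{k,j}\in\an(\R^n)$ with $g_k^2f_k=\sum_{j=1}^{p_k}h_{k,j}^2$ on all of $\R^n$. I then apply Proposition \ref{extension} to the collection $\{f_k\}$, obtaining an invariant open neighbourhood $\Omega$ of $\R^n$ in $\C^n$, positive units $u_k\in\an(\R^n)$ and invariant holomorphic functions $G_k,\widehat F_k,A_{k,j}\in\hol(\Omega)$ such that $\widehat F_k|_{\R^n}=u_kf_k$, each $A_k=\sum_{j=0}^{q_k}A_{k,j}^2=G_k^2\widehat F_k\,Q_k$ with $Q_k$ a unit on an invariant neighbourhood of $\{A_k=0\}\cup\R^n$, and the family $\{A_k=0\}_{k\ge1}$ is locally finite in $\Omega$. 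Form the invariant sheaf-product $\widehat A=\shprod_{k\ge1}A_k$, which is well defined on $\Omega$ (up to a unit) thanks to this local finiteness. A direct verification, using that for a locally finite family of holomorphic functions the sheaf-product agrees pointwise with the genuine product of those factors which vanish at the point, shows that $\widehat A|_{\R^n}$ differs from $(\shprod_k G_k)^2\cdot\shprod_k f_k$ only by a positive invariant analytic unit. It therefore suffices to exhibit $\widehat A$ as a strongly convergent infinite sum of squares in $\hol(\Omega)$ of invariant functions.

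To build such a representation, I fix an invariant exhaustion $\{K_n\}_n$ of $\Omega$ by compact sets with $K_n\subset\operatorname{Int}_{\C^n}(K_{n+1})$ and set $I_n=\{k:K_n\cap\{A_k=0\}\ne\varnothing\}$, a finite increasing family. On an invariant neighbourhood $W_n$ of $K_n$ the sheaf-product $\widehat A$ equals the ordinary finite product $\prod_{k\in I_n}A_k$ times an invariant holomorphic unit $U_n\in\hol(W_n)$; expanding this product by distributivity as in the elementary Pfister identity ``(sum of $r$ squares)$\times$(sum of $s$ squares)$=$(sum of $rs$ squares)'' produces a finite invariant representation $U_n^{-1}\widehat A=\sum_{\alpha\in\Lambda_n}C_{n,\alpha}^2$ with $C_{n,\alpha}\in\hol(W_n)$ and $\Lambda_n\subset\Lambda_{n+1}$. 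The key technical step is then to patch these local finite representations into one global strongly convergent infinite sum on $\Omega$: applying Lemma \ref{serie} to $\Phi=\widehat A^2$ and to the local invariant holomorphic data $(C_{n,\alpha}-C_{n-1,\alpha})$ for each $\alpha\in\Lambda_n\setminus\Lambda_{n-1}$, one extracts global invariant $B_\alpha\in\hol(\Omega)$ which agree with $C_{n,\alpha}$ modulo $\widehat A^{\,2}$ on $W_n$ and satisfy $\sum_\alpha\sup_K|B_\alpha|^2<+\infty$ for every compact $K\subset\Omega$. A computation parallel to the last step of the proof of Theorem \ref{globalization} then gives $\sum_\alpha B_\alpha^2=\widehat A\bigl(1+\widehat AH\bigr)$ for some $H\in\hol(\Omega)$, so that, for a positive real $d$ large enough, $(d\widehat A)^2+\sum_\alpha B_\alpha^2=\widehat A\cdot V$ with $V\in\hol(\Omega)$ an invariant positive unit. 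Restriction to $\R^n$, division by $(\shprod_k G_k)^2$, and absorption of $V^{1/2}$ into the denominators yield $\shprod_k f_k$ as a strongly convergent infinite sum of squares of meromorphic functions with common denominator contained in $\{\shprod_k G_k=0\}\subset\{f=0\}$, as required.

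The main obstacle is not the elementary distributive expansion of a finite product of sums of squares, but rather the reconciliation of the local representations of $\widehat A$, whose number of squares $|\Lambda_n|$ grows without bound as $n\to\infty$, with a single globally convergent series. It is precisely the package of properties of $A_k$ provided by Proposition \ref{extension} --- the local finiteness of $\{A_k=0\}$ in the complex domain $\Omega$, the factorization $A_k=G_k^2\widehat F_k Q_k$ with $Q_k$ a unit near the zeroset, and the uniform convergence of the coefficient sums on compact subsets of $\Omega$ --- that makes Lemma \ref{serie} applicable and allows the infinite patching to succeed; without this, the varying finite representations simply cannot be assembled into a convergent global series.
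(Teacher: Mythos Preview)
Your setup via Proposition \ref{extension} and the reduction to representing the holomorphic sheaf-product $\widehat A=\shprod_k A_k$ as a strongly convergent invariant sum of squares on $\Omega$ is correct and matches the paper's Step 1. The gap is in the patching argument.

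The local finite representations are \emph{not} compatible in the way you need. On $W_n$ you have $U_n^{-1}\widehat A=\prod_{k\in I_n}A_k=\sum_{\alpha\in\Lambda_n}C_{n,\alpha}^2$ with $\Lambda_n=\prod_{k\in I_n}\{0,\ldots,q_k\}$. These index sets do not nest: an $\alpha\in\Lambda_n$ has no canonical lift to $\Lambda_{n+1}$, and under any embedding (e.g.\ padding by $0$'s) one gets $C_{n+1,\alpha}=C_{n,\alpha}\cdot\prod_{k\in I_{n+1}\setminus I_n}A_{k,0}$, which differs from $C_{n,\alpha}$ by a genuinely varying factor. So asking a single $B_\alpha$ to agree with $C_{n,\alpha}$ modulo $\widehat A^{\,2}$ for all relevant $n$ is inconsistent. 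Moreover the units $U_n$ change with $n$; on $W_n\cap W_{n+1}$ the two finite representations are related by a linear transformation of rank $|\Lambda_{n+1}|$ acting on the vector of square roots, not by inclusion of terms. Lemma \ref{serie} extends each function separately modulo $\Phi$ and cannot absorb such a cocycle of transformations; it is designed for a \emph{single} $\Omega_0$-representation, not for a tower of representations connected by nontrivial changes of basis. Consequently your conclusion $\sum_\alpha B_\alpha^2=\widehat A(1+\widehat A H)$ with a \emph{global} $H\in\hol(\Omega)$ has no justification.

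This obstruction is exactly why the paper must pass to an infinite-rank bundle. After Step 1 the paper invokes Lemma \ref{matrix2} to attach to each $A_k$ an invariant matrix $M_k\in\mathfrak M_{2q_k}(\hol(\Omega))$ with $M_k^tM_k=A_kI$, and promotes $M_k$ to a bounded operator $T_{M_k}$ on $\ell^2(\C)$ (Proposition \ref{tm1}). The normalized compositions $\sqrt{B_{ij}}\,T_{M_j}\circ\cdots\circ T_{M_{i+1}}$ form a coherent cocycle with values in $\bounded_*(\ell^2(\C))$, defining a holomorphic Hilbert bundle $\mathscr E$ over (a shrunk) $\Omega$ together with a Hermitian pairing on $\mathscr E\oplus\mathscr E^\sigma$. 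Sections $s_\ell$ built from the standard basis satisfy $\qq{s_k,s_\ell\circ\sigma}=A\,\delta_{k\ell}$, and a nowhere-vanishing section $\tau$ (the bundle is trivial) yields $Av^2=\sum_{\ell\ge1}\qq{\tau,s_\ell\circ\sigma}\bigl(\ol{\qq{\tau,s_\ell\circ\sigma}}\circ\sigma\bigr)$. The Pfister matrices $M_k$ are precisely the bookkeeping device your argument is missing: they encode the transition between successive local finite representations as honest invertible operators satisfying an orthogonality relation, and the $\ell^2(\C)$ fiber accommodates the unbounded growth of $|\Lambda_n|$.
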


We have shown in Lemma \ref{pfister} that if $f\in\an(\R^n)$ is a finite sum of squares, there exist $g\in\an(\R^n)\setminus\{0\}$ and a Pfister's bundle $\psd(g^2f)$ of finite rank $p$ for $g^2f$. In fact, $p$ is an upper bound for the number of squares needed to represent $f$ as a sum of squares. Thus, to approach the unbounded situation presented in Theorem \ref{main2}, it seems convenient to work with a bundle of infinite rank. An initial candidate for the fiber could be the Hilbert space $\ell^2(\R)$ of real square-summable sequences. However, since we also need to take care about the uniform convergence of infinite sums of squares, we must work in a complex open Stein neighborhood $\Omega$ of $\R^n$ in $\C^n$ and we must build up a vector bundle with fiber the complex Hilbert space $\ell^2(\C)$ of complex square-summable sequences. We begin by fixing and recalling some notation and terminology from Hilbert spaces and vector bundles with fiber a Hilbert space.

\subsubsection{Hilbert spaces, holomorphic functions and vector bundles.}\label{hshf}

The Hilbert space $\ell^2(\C)$ is the vector space
$$
\ell^2(\C)=\Big\{\mbox{\bf w} =(w_k)_{k\geq1}:\ \sum_{k\geq1}|w_k|^2<+\infty\Big\}
$$
endowed with the usual (hermitian) inner product given by the formula
$$
\qq{\mbox{\bf w},\mbox{\bf v}}=\sum_{k\geq1}w_k\ol{v_k}.
$$

 Denote by $\Env =\{\mbox{\bf e}_m=(\delta_{mk})_{k\geq1}\}_{m\geq1}$ the \em standard 
\rm orthonormal system of $\ell^2(\C)$ and by $\bounded(\ell^2(\C))$ the $\C$-linear space of bounded (linear) operators $\ell^2(\C)\to\ell^2(\C)$. The set of those bounded operators which are invertible and whose inverse is also bounded will be denoted by $\bounded_*(\ell^2(\C))$. Given a bounded operator $T\in\bounded(\ell^2(\C))$,  its \em adjoint \rm is defined as the unique bounded operator $T^*\in\bounded(\ell^2(\C))$ such that $\qq{\mbox{\bf v},T(\mbox{\bf w})}=\qq{T^*(\mbox{\bf v}),\mbox{\bf w}}$ for all $\mbox{\bf v},\mbox{\bf w}\in\ell^2(\C)$.

Let $\Omega$ be an open subset of $\C^n$. From the general theory of holomorphic functions with values in a Banach space, one deduces easily  that a function $F:\Omega\to\ell^2(\C)$ is \em holomorphic \rm if and only if $F$ belongs to the $\hol(\Omega)$-module
$$
\Big\{F=(F_k)_{k\geq1}:\ F_k\in\hol(\Omega),\ \sum_{k\geq1}\sup_K|F_k|^2<+\infty\ \forall K\subset\Omega\text{ compact}\Big\}.
$$
Observe that if $G,H:\Omega\to\ell^2(\C)$ are holomorphic, then $\qq{G,H\circ\sigma}:\Omega\to\C$ defines a  holomorphic function on $\Omega$. Moreover  if $\Omega$ is invariant and $G=H$ this product is invariant and  an infinite sum of squares.  

\begin{prop}\label{tm1}
Let $M=(m_{ij})\in{\mathfrak M}_p(\C)$ be a matrix and consider the linear operator $T_{M}:\ell^2(\C)\to\ell^2(\C)$ whose matrix with respect to the standard basis $\stb$ of $\ell^2(\C)$ is the infinite matrix
$$
\left(\begin{array}{c|c|c|c}
M&0&0&\cdots\\
\hline
0&M&0&\cdots\\
\hline
0&0&M&\cdots\\
\hline
\vdots&\vdots&\vdots&\ddots
\end{array}\right).
$$
Then, 
\begin{itemize}
\item[(i)] $T_M$ is a bounded operator of $\ell^2(\C)$.
\item[(ii)] $T_{M_1\cdot M_2}=T_{M_1}\circ T_{M_2}$ for all $M_1,M_2\in{\mathfrak M}_p(\C)$.
\item[(iii)] $T_{I_p}=\id_{\ell^2(\C)}$, where $I_p$ denotes the identity matrix of ${\mathfrak M}_p(\C)$.
\item[(iv)] If $M$ is invertible, then $T_M$ is an invertible bounded operator whose inverse is the bounded operator $T_{M^{-1}}$.
\item[(v)] $T_M^*=T_{M^*}$, where $M^*$ denotes the transpose conjugated of $M$.
\end{itemize}
\end{prop}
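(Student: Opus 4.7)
The plan is to unpack the block structure of $T_M$ by decomposing $\ell^2(\C)$ as the orthogonal Hilbert sum $\bigoplus_{k\geq 1}H_k$ of the subspaces $H_k=\qq{\mbox{\bf e}_{(k-1)p+1},\ldots,\mbox{\bf e}_{kp}}\cong \C^p$. Under this identification a vector $\mbox{\bf w}\in\ell^2(\C)$ writes as a square-summable sequence of blocks $\mbox{\bf w}=(\mbox{\bf w}^{(k)})_{k\geq 1}$ with $\mbox{\bf w}^{(k)}\in\C^p$ and $\|\mbox{\bf w}\|^2=\sum_{k\geq 1}\|\mbox{\bf w}^{(k)}\|^2$, and by construction $T_M(\mbox{\bf w})=(M\mbox{\bf w}^{(k)})_{k\geq 1}$. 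Every assertion will then reduce to the corresponding (finite-dimensional) statement for $M$ acting on $\C^p$.

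For (i), I would let $\|M\|$ be the operator norm of $M:\C^p\to\C^p$ (with the standard hermitian product). Then
$$
\|T_M(\mbox{\bf w})\|^2=\sum_{k\geq 1}\|M\mbox{\bf w}^{(k)}\|^2\leq\|M\|^2\sum_{k\geq 1}\|\mbox{\bf w}^{(k)}\|^2=\|M\|^2\|\mbox{\bf w}\|^2,
$$
which shows that $T_M$ is well defined and bounded with $\|T_M\|\leq\|M\|$ (in fact equal, by testing on a single block). Part (ii) is immediate from the blockwise formula, since $(T_{M_1}\circ T_{M_2})(\mbox{\bf w})=(M_1M_2\mbox{\bf w}^{(k)})_{k\geq 1}=T_{M_1\cdot M_2}(\mbox{\bf w})$, and part (iii) is obvious because $I_p\mbox{\bf w}^{(k)}=\mbox{\bf w}^{(k)}$.

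For (iv), if $M$ is invertible in ${\mathfrak M}_p(\C)$, apply (ii) and (iii) to the pairs $(M,M^{-1})$ and $(M^{-1},M)$, obtaining $T_M\circ T_{M^{-1}}=T_{I_p}=\id_{\ell^2(\C)}=T_{M^{-1}}\circ T_M$, so $T_M\in\bounded_*(\ell^2(\C))$ with inverse $T_{M^{-1}}$, which is bounded by (i). For (v), I would verify the defining relation of the adjoint directly: for $\mbox{\bf v},\mbox{\bf w}\in\ell^2(\C)$,
$$
\qq{T_M\mbox{\bf w},\mbox{\bf v}}=\sum_{k\geq 1}\qq{M\mbox{\bf w}^{(k)},\mbox{\bf v}^{(k)}}=\sum_{k\geq 1}\qq{\mbox{\bf w}^{(k)},M^*\mbox{\bf v}^{(k)}}=\qq{\mbox{\bf w},T_{M^*}\mbox{\bf v}},
$$
where each intermediate equality uses the finite-dimensional adjoint identity on $\C^p$ (the series converge absolutely by Cauchy--Schwarz together with (i)). By uniqueness of the adjoint, $T_M^*=T_{M^*}$.

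There is no real obstacle here: once the blockwise description $T_M(\mbox{\bf w})=(M\mbox{\bf w}^{(k)})_{k\geq 1}$ is recorded, every assertion reduces to a one-line finite-dimensional computation. The only point that requires a modicum of care is to justify interchanging the sum with the inner product in (v), but this is routine from the square-summability of the blocks and the bound $\|M\mbox{\bf w}^{(k)}\|\leq\|M\|\|\mbox{\bf w}^{(k)}\|$ already used in (i).
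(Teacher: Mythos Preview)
Your proposal is correct and follows the same block-diagonal idea as the paper. The only difference worth noting is in part (i): the paper carries out an explicit coordinatewise estimate to obtain the cruder bound $\qq{T_M(\mbox{\bf v}),T_M(\mbox{\bf v})}\leq p^3\max_{i,j}|m_{ij}|^2\qq{\mbox{\bf v},\mbox{\bf v}}$, whereas your use of the operator norm $\|M\|$ on $\C^p$ gives the sharp bound $\|T_M\|=\|M\|$ in one line; for (ii)--(v) the paper, like you, treats them as straightforward.
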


\begin{proof}
The linear operator $T_{M}$ is given by the formula 
$$\mbox{\bf v}=(v_k)_{k\geq1}\mapsto\sum_{k\geq1}v_kT_M(\mbox{\bf e}_k),$$ 
where
$$
T_M(\mbox{\bf e}_k)=\sum_{i=1}^p m_{ij}\mbox{\bf e}_{pq+i}\quad\text{if $k=pq+j$ with $q\geq 0$ and $1\leq j\leq p$}.
$$
Notice that $T_M(\mbox{\bf v})=(u_k)_{k\geq1}$ is a well defined sequence of complex numbers because $u_{pq+i}=\sum_{j=1}^p m_{ij}v_{pq+j}$ for all $q\geq 0, 1\leq i\leq p$.

Only the fact that $T_M(\ell^2(\C))\subset\ell^2(\C)$ and the boundness of $T_M$ require some comment. The other statements (ii)-(v) are straightforward. From the inequality

$$
\qq{T_M(\mbox{\bf v}),T_M(\mbox{\bf v})}\leq p^3\max_{1\leq i,j\leq p}\{|m_{ij}|^2\}\qq{\mbox{\bf v},\mbox{\bf v}}\qquad\forall\,\mbox{\bf v}\in\ell^2(\C),\qquad(\ast) 
$$\label{2*}
follows that $T_M(\mbox{\bf v})\in\ell^2(\C)$ for all $\mbox{\bf v}\in\ell^2(\C)$ and that $T_M$ is a bounded operator. Thus, we proceed with the proof of \eqref{2*}.

Indeed, denote $m=\max_{1\leq i,j\leq p}\{|m_{ij}|^2\}$ and observe first that if $1\leq r,s\leq p$ and $q\geq 0$,
$$
|\qq{T_M(\mbox{\bf e}_{pq+r}),T_M(\mbox{\bf e}_{pq+s})}|=|\qq{T_M(\mbox{\bf e}_r),T_M(\mbox{\bf e}_s)}|=\Big|\sum_{i=1}^pm_{ir}m_{is}\Big|\leq\sum_{i=1}^p|m_{ir}m_{is}|\leq pm.
$$

Consider the sequence of positive real numbers $v^*=(|v_k|)_{k\geq 1}$ and $w_s=(w_{sk})_{k\geq 1}$ for $1\leq s\leq p$, where $w_{sk}=|v_{pj+s}|$ if $k=pj+r$ with $1\leq r\leq p$ and $j\geq 0$. We get
$$\qq{v^*,v^*}= \sum_{k\geq 1} |v_k|^2= \qq{v,v} \quad \mbox{\rm  and }\quad \qq{w_s,w_s}= p \sum_{j\geq 1} |v_{pj+s}|^2\leq p \qq{v,v}$$.

Hence, in particular $\mbox{\bf v}^*,\mbox{\bf w}_s\in\ell^2(\C)$. Moreover
$$
\qq{\mbox{\bf v}^*,\mbox{\bf w}_s}=\sum_{k\geq1}v_k\ol{w}_{sk}=\sum_{j\geq0}\sum_{r=1}^p|v_{pj+r}||w_{s,pj+r}|=\sum_{j\geq0}\sum_{r=1}^p|v_{pj+r}||v_{pj+s}|.
$$
Next, observe that if $\ell_1=pq_1+r_1$ and $\ell_2=pq_2+r_2$ where $1\leq r_1,r_2\leq p$ and $q_1\neq q_2$, then $\qq{T_M(\mbox{\bf  e}_{\ell_1}),T_M(\mbox{\bf e}_{\ell_2})}=0$. Thus, for each $q\geq1$, we have (using Schwarz's inequality)

\begin{multline*}
0\leq\Big|\qq{\sum_{k=1}^{pq}v_kT_M(\mbox{\bf  e}_k),\sum_{\ell=1}^{pq}v_\ell T_M(\mbox{\bf  e}_\ell)}\Big|=\Big|\sum_{k=1}^{pq}\sum_{\ell=1}^{pq}v_k\ol{v}_\ell \qq{T_M(\mbox{\bf  e}_k),T_M(\mbox{\bf  e}_\ell)}\Big|\\
=\Big|\sum_{j=0}^q\sum_{r=1}^p\sum_{s=1}^pv_{pj+r}\ol{v}_{pj+s}\qq{T_M(\mbox{\bf  e}_r),T_M(\mbox{\bf e}_s)}\Big|\leq\sum_{j=0}^q\sum_{r=1}^p\sum_{s=1}^p|v_{pj+r}||\ol{v}_{pj+s}|pm\\
=pm\sum_{s=1}^p\Big(\sum_{j=0}^q\sum_{r=1}^p|v_{pj+r}||v_{pj+s}|\Big)\leq pm\sum_{s=1}^p\qq{\mbox{\bf v}^*,\mbox{\bf w}_s}\\
\leq pm\sum_{s=1}^p\sqrt{\qq{\mbox{\bf v}^*,\mbox{\bf v}^*}}\sqrt{\qq{\mbox{\bf w}_s,\mbox{\bf w}_s}}\leq p^3m\qq{\mbox{\bf v},\mbox{\bf v}}.
\end{multline*}
Since the previous inequality holds for all $q\geq 1$, we have $\qq{T_M(\mbox{\bf  v}),T_M(\mbox{\bf  v})}\leq p^3m\qq{\mbox{\bf v},\mbox{\bf v}}$,as required.
\end{proof}

\begin{defn}\label{tm2}
We call $T_M$ the {\em bounded operator of $\ell^2(\C)$ associated to $M$}.
\end{defn}

\begin{remark}\label{tm}
Let $\Omega\subset\C^n$ be an invariant open set and  $M=(m_{ij})\in{\mathfrak M}_p(\hol(\Omega))$ be an invariant matrix such that $M(z)^tM(z)=I_p$ for all $z\in\Omega$. Then, $M^t\circ\sigma=\sigma\circ M^t$, so $T_{M(z)}^*=T_{M^t\circ\sigma(z)}$ and $T_{M\circ\sigma(z)}^*\circ T_{M(z)}=T_{M(z)}\circ T_{M\circ\sigma(z)}^*=\id$ for each $z\in\Omega$.
\end{remark}

\subsubsection{Pfister's multiplicative formulae.}
After  the   introduction  of  the  bounded   operator  of  $\ell^2(\C)$
associated  to  a   complex  square  matrix,  we   associate  to  each
invariant    sum    of    squares   of    $\hol(\Omega)$    an
invariant Pfister matrix in ${\mathfrak M}_{2^{r+1}}(\hol(\Omega))$ that is almost unitary. This result can
be understood as the counterpart  of Lemma \ref{matrix} in the setting
of this section. More precisely,

\begin{lem}[Pfister's complex matrix]\label{matrix2}
Let $\Omega\subset\C^n$ be an invariant open set and let $B\in\hol(\Omega)$ be invariant such that its zeroset does not meet the compact set $K\subset\Omega$. Let $A=\nsq{2^r}$ in $\hol(\Omega)$ be invariant. Then, there exists a  real constant $c>0$ such that for each $d\geq c$, there exist a $\sigma$-invariant $G\in\hol(\Omega)$ satisfying $\{G^2=0\}\cap K=\varnothing$ and an invariant matrix $M\in{\mathfrak M}_{2^{r+1}}(\hol(\Omega))$ such that $M^tM=G^2((dB)^2+A)I_{2^{r+1}}$.
\end{lem}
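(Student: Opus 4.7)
The plan is to mimic the proof of Lemma \ref{matrix} — an induction on $r$ combined with Lemma \ref{matrix0} — while tracking two new features: invariance under $\sigma$ (which the objects in the ring $\hol(\Omega)$ require, but the real-analytic setting of Lemma \ref{matrix} did not), and non-vanishing of the ``denominator'' $G$ on the compact set $K$ (replacing the earlier condition $\{g=0\}\subset\{f=0\}$). Throughout, I will assume that an invariant sum of $2^r$ squares can be written with invariant squarands $A=\sum_{i=1}^{2^r}A_i^2$ with each $A_i\in\hol(\Omega)$ invariant, as is implicit in the surrounding framework (Lemma \ref{suit}).

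Base case $r=0$: here $A=A_1^2$ with $A_1$ invariant. Take $G=1$ and the standard $2\times 2$ Pfister matrix
$$
M=\begin{pmatrix} dB & -A_1 \\ A_1 & dB \end{pmatrix},
$$
which is invariant and satisfies $M^tM=((dB)^2+A_1^2)I_2$ by direct computation; any $c>0$ works because $G=1$ never vanishes. For the inductive step, assume the lemma for $r-1$. Split $A=A'+A''$ with $A'=\sum_{i=1}^{2^{r-1}}A_i^2$ and $A''=\sum_{i=2^{r-1}+1}^{2^r}A_i^2$ and write
$$
(dB)^2+A=\underbrace{\bigl(\tfrac{3}{5}dB\bigr)^2+A'}_{=:F_1}+\underbrace{\bigl(\tfrac{4}{5}dB\bigr)^2+A''}_{=:F_2}.
$$
Each $F_i$ has the inductive form $(d_iB)^2+A_i^*$ with $d_1=\tfrac{3d}{5}$, $d_2=\tfrac{4d}{5}$, and $A_i^*$ an invariant sum of $2^{r-1}$ invariant squares. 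The inductive hypothesis applied to $F_1$ and $F_2$ supplies constants $c_1',c_2'>0$ (depending only on $B$, $A'$, $A''$, $K$) such that whenever $d_i\geq c_i'$ there exist invariant $G_i\in\hol(\Omega)$ with $\{G_i=0\}\cap K=\varnothing$ and invariant $M_i\in\mathfrak{M}_{2^r}(\hol(\Omega))$ with $M_i^tM_i=G_i^2F_iI_{2^r}$. Apply Lemma \ref{matrix0} in the ring $\hol(\Omega)$: the block formula there produces $M\in\mathfrak{M}_{2^{r+1}}(\hol(\Omega))$ with $M^tM=G^2((dB)^2+A)I_{2^{r+1}}$ where $G=G_1G_2^2F_2$. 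Invariance of $M$ and $G$ is immediate from the invariance of $M_1,M_2,G_1,G_2,F_2$.

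The crux is then to arrange $\{G=0\}\cap K=\varnothing$. Since $G_1$ and $G_2$ already avoid $K$ by induction, the only remaining point is to force $F_2=((\tfrac{4d}{5})B)^2+A''$ to be nonvanishing on $K$. Set $m=\min_K|B|^2>0$ (strict, since $B$ is continuous and nonvanishing on the compact $K$) and $M_K=\max_K|A''|$; for $\tfrac{16}{25}d^2m>M_K$, that is $d>\tfrac{5}{4}\sqrt{M_K/m}$, the inequality $|(\tfrac{4d}{5}B)^2|>|A''|$ on $K$ gives $F_2\neq 0$ on $K$. Choosing
$$
c=\max\!\Bigl(\tfrac{5}{3}c_1',\ \tfrac{5}{4}c_2',\ \tfrac{5}{4}\sqrt{M_K/m}\Bigr)+1
$$
ensures simultaneously that $d_1\geq c_1'$, $d_2\geq c_2'$, and $F_2$ does not vanish on $K$, so the induction closes.

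The main obstacle in carrying out this plan is the bookkeeping of the constant $c$: it must depend only on $\Omega,B,A,K$ (never on $d$), and must absorb both the inductive thresholds $c_1',c_2'$ and the analytic threshold $\tfrac{5}{4}\sqrt{M_K/m}$ needed to make the complex function $F_2$ nonvanishing on $K$. This is the novelty compared with the real-analytic proof of Lemma \ref{matrix}, where one used positivity of real squares rather than a size estimate on a compact set. The other ingredient, invariance under $\sigma$, flows through routinely because the algebraic operations used in Lemma \ref{matrix0} and in the standard Pfister construction all commute with complex conjugation.
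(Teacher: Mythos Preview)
Your proof is correct and follows essentially the same inductive strategy as the paper. The only cosmetic differences are that the paper uses the split $(dB)^2=(\tfrac{d}{\sqrt{2}}B)^2+(\tfrac{d}{\sqrt{2}}B)^2$ rather than your $(3/5,4/5)$ split, and the paper implicitly carries a slightly stronger inductive hypothesis (that $G_i^2F_i$, not just $G_i$, avoids $K$), which absorbs your separate size estimate for $F_2$ into the induction; your explicit treatment of the $F_2$-nonvanishing step is equally valid.
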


\begin{proof}
If $r=0$, then $A=A_0^2$ and taking $\displaystyle c=1+\frac{\max_K(|A|)}{\min_K(|B|^2)}$, $G=1$ and
$$
M=\left(
\begin{array}{cc}
dB&-A_0\\
A_0&dB
\end{array}
\right)\in{\mathfrak M}_2(\hol(\Omega))
$$
we are done.

Assume $r\geq 1$, and let $A_1=\nsq{2^{r-1}}$ and $A_2=\nsq{2^{r-1}}$ such that $A=A_1+A_2$. By induction there are positive real constants $c_i>0$ such that for each $d_i\geq c_i$, there are invariant $G_i\in\hol(\Omega)$ satisfying $\{G_i^2((d_iB)^2+A_i)=0\}\cap K=\varnothing$ and an invariant matrix $M_i\in{\mathfrak M}_{2^{r}}(\hol(\Omega))$ such that $M_i^tM_i=G_i^2((d_iB)^2+A_i)I_{2^{r}}$. Take $c=\max\{c_0,\sqrt{2}c_1,\sqrt{2}c_2\}$ where $\displaystyle c_0=1+\frac{\max_K(|A|)}{\min_K(|B|^2)}$, and for each $d\geq c$ consider the invariant functions $G_i\in\hol(\Omega)$ and the invariant matrices $M_i\in{\mathfrak M}_{2^{r}}(\hol(\Omega))$ corresponding to the constants $\displaystyle d_i=\frac{d}{\sqrt{2}}\geq c_i$. Next, take $G=G_1G_2^2F_2$, where $F_2=(d_2B)^2+A_2$, and 
$$
M=
\left(\begin{array}{cc}
G_2^2F_2M_1&-G_1G_2F_2M_2\\
G_1G_2F_2M_2&M_2M_1^tM_2
\end{array}\right)
$$
and notice that $\{G^2((dB)^2+A)=0\}\cap K=\varnothing$. A straighforward computation shows that these are the $M$ and $G$ we sought.
\end{proof}

The following result assures the existence of invariant open Stein neighborhoods of $\R^n$ in $\C^n$ that admit open exhaustions, whose members (not necessarily contractible) have a good behaviour with respect to the square root operator. Namely,

\begin{lem}\label{reliable}
Let $\Omega$ be an invariant open neighborhood of $\R^n$ in $\C^n$ and let $\{\Delta_i\}_{i\geq1}$ be an open covering of $\Omega$ such that $\Delta_i\subset\Delta_{i+1}$ for all $i\geq1$. Then, there exist an invariant contractible open Stein neighborhood $\Omega'\subset\Omega$ of $\R^n$ in $\C^n$ and an open covering $\{\Omega_i\}_{i\geq1}$ of $\Omega'$ such that 
\begin{itemize}
\item $\Omega_i\subset\Omega_{i+1}$ and $W_i=\R^n\cap\Delta_i\subset\Omega_i\subset\Delta_i$ for each $i\geq1$.
\item Each invariant holomorphic unit $u\in\hol(\Omega_i)$ admits an invariant holomorphic square root on $\Omega_i$.
\end{itemize}
\end{lem}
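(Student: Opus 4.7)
The overall strategy is to first shrink $\Omega$ to an invariant contractible open Stein neighborhood $\Omega'$ of $\R^n$, and then to build inside $\Omega'$ the exhaustion $\{\Omega_i\}$ as a sequence of invariant Stein tubular neighborhoods of the $W_i$. For the first step, the methods in the proof of Theorem \ref{intornist} apply verbatim: adding the invariant ``tube potential'' $\sum y_j^2$ (vanishing to order two along $\R^n$) to a strongly plurisubharmonic exhaustion of $\Omega$ carves out an invariant Stein neighborhood of $\R^n$ inside $\Omega$, and this neighborhood is contractible because it equivariantly retracts onto $\R^n$ via $z \mapsto \re z$.

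For the exhaustion, I fix a smooth positive invariant function $\rho : \R^n \to (0,\infty)$ decaying fast enough near $\partial\Omega$ (so that the ``$\sqrt{\rho(x)}$-tube'' at each $x$ stays in $\Omega$), together with smooth cutoffs $\chi_i : \R^n \to [0,1]$ supported inside $W_{i+1}$ and equal to $1$ on a neighborhood of $\overline{W_i}$, and set
\begin{equation*}
\Omega_i \;=\; \bigl\{ z \in \Omega' : \re z \in W_{i+1},\ \|\ima z\|^2 < \chi_i(\re z)\,\rho(\re z) \bigr\}.
\end{equation*}
The defining inequality is $\sigma$-invariant, so each $\Omega_i$ is invariant; tuning $\rho$ and the $\chi_i$ secures $\Omega_i \subset \Delta_i$, $\Omega_i \subset \Omega_{i+1}$, and $W_i \subset \Omega_i$; and Steinness of each $\Omega_i$ is obtained by adding $\|\ima z\|^2 - \chi_i(\re z)\rho(\re z)$ to the plurisubharmonic exhaustion of $\Omega'$, in the spirit of Step 2 of the proof of Theorem \ref{intornist}. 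The union $\bigcup_i \Omega_i$ is an increasing union of Stein open sets, hence Stein; it retracts equivariantly onto $\R^n$, hence is contractible. Up to replacing $\Omega'$ by this union, one delivers the required global properties of $\Omega'$.

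For the square-root property, observe that $\Omega_i$ deformation retracts equivariantly onto $W_i$ via $(x+iy,t) \mapsto x+i(1-t)y$, so $H^q(\Omega_i,\Z) \cong H^q(W_i,\Z)$ for every $q$. Given an invariant unit $u \in \hol(\Omega_i)$, its restriction to $W_i$ is continuous and real-valued, hence of constant sign on each connected component; on such a component one picks a branch of $\log u$ that is real on $W_i$, and the identity principle on $\Omega_i$ forces it to be $\sigma$-invariant. The function $\exp\bigl((\log u)/2\bigr)$ is then an invariant square root on that component, and the local branches glue across components because the parity of the class in $H^1(W_i,\Z)$ measuring the obstruction is controlled by the real trace of $u$ on $W_i$.

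The main obstacle I expect is the simultaneous calibration in the second paragraph: guaranteeing Steinness of each $\Omega_i$, the strict inclusions $\Omega_i \subset \Delta_i$, the nesting $\Omega_i \subset \Omega_{i+1}$, and the topological property needed for the square-root step. This interplay is governed by the joint behaviour of the tube-width $\rho$, the cutoffs $\chi_i$, and the plurisubharmonic exhaustion of $\Omega'$, and it is the analytical heart of the argument, adapting the plurisubharmonic juggling already performed in Theorem \ref{intornist}.
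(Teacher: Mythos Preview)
Your plan is workable in spirit but vastly overbuilt, and it contains a concrete construction error. The paper's proof is completely elementary: for each real point $x\in W_i\setminus W_{i-1}$ choose an invariant polydisc $B_x\subset\Delta_i$ centered at $x$, set $\Omega_i'=\bigcup_{x\in W_i}B_x$, and at the end intersect everything with a single invariant contractible Stein neighborhood $\Omega'\subset\bigcup_i\Omega_i'$ of $\R^n$. No plurisubharmonic exhaustion, no cutoffs, no Steinness of the individual $\Omega_i$ is needed. The square-root property is immediate: on each polydisc $B_x$ an invariant unit $u$ has an invariant holomorphic square root $v_x$ whose restriction to $B_x\cap\R^n$ is positive; two such branches on overlapping polydiscs $B_x,B_y$ agree on the (nonempty, connected) real slice $B_x\cap B_y\cap\R^n$, hence agree on all of $B_x\cap B_y$ by the Identity Principle, and therefore glue to a global invariant square root on $\Omega_i'$. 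This bypasses any cohomological obstruction argument entirely.

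Your tube construction fails as written: you take $\chi_i$ supported in $W_{i+1}$ with $\chi_i\equiv 1$ on a neighborhood of $\overline{W_i}$, so $\Omega_i\cap\R^n=\{\chi_i>0\}$ contains real points of $W_{i+1}\setminus W_i$, and such points do not lie in $\Delta_i$ (recall $W_i=\R^n\cap\Delta_i$); hence $\Omega_i\not\subset\Delta_i$. Fixing this forces the tube width to vanish at $\partial W_i$, which then conflicts with the smooth extension across $\overline{W_i}$ that your nesting argument needs. Your square-root paragraph is also imprecise: the obstruction to a square root lives in $H^1(\Omega_i,\Z/2\Z)$, and the relevant fact is that $u|_{W_i}$ lands in $\R^*$, which is homotopically trivial, so the obstruction vanishes---but your sentence about ``gluing across components'' and ``parity of the class'' mixes this with the separate (and trivial) issue of choosing signs on distinct connected components. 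All of this dissolves once you use polydiscs centered at real points: the local positive branch is canonically determined, and the Identity Principle does the gluing for free.
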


\begin{proof}
Denote $W_0=\varnothing$ and observe that the sets $\{W_i\setminus W_{i-1}\}_{i\geq1}$ constitute a partition of $\R^n$. For each $x=(x_1,\ldots,x_n)\in W_i\setminus W_{i-1}$ choose an invariant open polydisc 
$$
B_x=\{z=(z_1,\ldots,z_n)\in\C^n:\ |z_i-x_i|<\veps_x,\ 1\leq i\leq n\}\subset \Delta_i
$$ 
centered at $x$ and of radius $\veps_x>0$. For each $i\geq1$ define $\Omega_i'=\bigcup_{x\in W_i}B_x$, which is an invariant open neighborhood of $W_i$ in $\Delta_i$. Of course, since $W_i\subset W_{i+1}$ for $i\geq1$, also $\Omega_i'\subset\Omega_{i+1}'$. Moreover, $\R^n=\bigcup_{i\geq1}W_i\subset\bigcup_{i\geq1}\Omega_i'\subset\bigcup_{i\geq1}\Delta_i=\Omega$.

Let $u\in\hol(\Omega_i')$ be an invariant holomorphic unit. For each $x\in W_i$ the restriction $u_{|B_x}$ admits an invariant holomorphic square root $v_x\in\hol(B_x)$ whose restriction to $B_x\cap\R^n$ is strictly positive. By the Identity Principle the functions $v_x$ glue properly to provide an invariant (no-where vanishing) holomorphic function on $\Omega_i'$
$$
v:\Omega_i'\to\C,\ z\mapsto v_x(z)\quad\text{if $z\in B_x$,}
$$
satisfying $v^2=u$. Thus, $u$ admits an invariant holomorphic square root in $\Omega_i'$. 

Finally, we choose an invariant contractible open Stein neighborhood $\Omega'\subset\bigcup_{i\geq1}\Omega_i'$ of $\R^n$ in $\C^n$. A straightforward computation shows that the invariant open sets $\Omega_i=\Omega_i'\cap\Omega'$ are the ones we sought.
\end{proof}

The proof of Theorem \ref{main2}
runs rather similar to the one of Theorem \ref{main1}, but it is quite more involving and  more technical, so  we provide  all  its  details.

\begin{proof}[Proof of Theorem \ref{main2}]
The proof is conducted in several steps.

\vspace{1mm}
\noindent{\sc Step 1.} 
We may assume that $p_k=2^{r_k}$ and we write $q_k=2^{n+1}p_k$. By Lemma \ref{extension} there exist an invariant open neighborhood $\Omega$ of $\R^n$ in $\C^n$ and invariant homolomorphic functions $G_k,\widehat{F}_k,A_{kj}:\Omega\to\C$ for $k\geq1$ and $1\leq j\leq q_k$ such that: 
\begin{itemize}
\item[(i)] $\widehat{F}_{k|\R^n}=u_kf_k$ for a positive unit $u_k\in\an(\R^n)$.
\item[(ii)] The family $\{G_k^2\widehat{F}_k=0\}_{k\geq1}$ is locally finite in $\Omega$ and $\{G_k^2\widehat{F}_k=0\}\cap\R^n=X_k$.
\item[(iii)] For each family of values $\{d_k\}_{k\geq1}$ satisfying $d_k\geq c_k$, the finite  sums $A_k=(d_kG_k^2\widehat{F}_k)^2+\sum_{j=1}^{q_k}A_{kj}^2$ satisfy
\begin{itemize} 
\item $\{A_{k|\R^n}=0\}=X_k$ and the family $\{A_k=0\}_{k\geq1}$ is locally finite in $\Omega$; 
\item $A_k=G_k^2\widehat{F}_kQ_k$ for some $\sigma$-invariant $Q_k\in\hol(\Omega)$, which does not vanish on a neighborhood $V_k$ of $\{G_k^2\widehat{F}_k=0\}\cup\R^n$.
\end{itemize}
\end{itemize}
Since the family $\{G_k^2\widehat{F}_k=0\}_{k\geq1}$ is locally finite, there exists by Lemma \ref{lfc} an exhaustion $\{K_k\}_{k\geq1}$ of $\Omega$ by compact sets such that $\{G_k^2\widehat{F}_k=0\}\cap K_k=\varnothing$ for all $k\geq 1$. By Lemma \ref{matrix2} applied to $B_k=G_k^2\widehat{F}_k$, the sum of squares $\sum_{j=1}^{q_k}A_{kj}^2$ and the compact set $K_k$, there exist a real number $d_k\geq c_k$, an invariant $G_k'\in\hol(\Omega)$ satisfying $\{G_k'^2=0\}\cap K_k=\varnothing$ and an invariant matrix $M_k\in{\mathfrak M}_{2q_k}(\hol(\Omega))$ such that $M_k^tM_k=G_k'^2A_kI_{2q_k}$, where $A_k=(d_kB_k)^2+\sum_{j=1}^{q_k}A_{kj}^2$. 

To simplify notations, we denote again by $G_k$ the function $G_kG_k'$ and by $A_k$ the function $G_k'^2A_k$. Observe that, by Lemma \ref{lfc}, the family $\{G_k=0\}_{k\geq1}$ is locally finite. Let $\widehat{F}=\shprod\widehat{F}_k$, $G=\shprod G_k$ and $A=\shprod A_k$ and denote $f=\widehat{F}_{|\R^n}$, $g=G_{|\R^n}$ and $a=A_{|\R^n}$. Observe that $f=\shprod f_k$ and in fact, by (iii) above, $g^2f=ua$ for a positive unit $u\in\an(\R^n)$. Thus, we are reduced to prove that $A$ is an infinite sum of squares of invariant holomorphic functions on $\Omega$ and this is done in the subsequent steps.

\vspace{1mm}
\noindent{\sc Step 2: Construction of a suitable vector bundle with fiber $\ell^2(\C)$.} 
For each $i\geq0$, denote by $\Delta_i$ the open subset $\Omega\setminus\bigcup_{k>i}\{A_k=0\}$ of $\C^n$. By Lemma \ref{reliable} there exist an invariant contractible open Stein neighborhood $\Omega'\subset\Omega$ of $\R^n$ in $\C^n$ and an open covering $\{\Omega_i\}_{i\geq1}$ of $\Omega'$ such that
\begin{itemize}
\item $\Omega_i\subset\Omega_{i+1}$ and $W_i=\R^n\cap\Delta_i\subset\Omega_i\subset\Delta_i$ for each $i\geq0$.
\item Each invariant holomorphic unit $u\in\hol(\Omega_i)$ admits an invariant holomorphic square root on $\Omega_i$.
\end{itemize}
In particular, the restrictions to $\Omega_i$ of the quotients (which vanish nowhere in $\Omega_i$)

$$
H_i=\frac{A}{A_1\cdots A_i}\quad\text{ and }\quad B_{ij}=\frac{1}{A_{i+1}\cdots A_j}
$$ 
admit invariant holomorphic square roots in $\Omega_i$ for $0\leq i<j$. To simplify notations, we denote $\Omega'$ again by $\Omega$.
 For each $k\geq0$ consider the product $\Omega_k\times\ell^2(\C)$. Since the invariant matrix $M_k\in{\mathfrak M}_{2q_k}(\hol(\Omega))$ constructed in the previous step satisfies $M_k^tM_k=A_kI_{2q_k}$, we deduce that $T_{M_k(z)}:\ell^2(\C)\to\ell^2(\C)$ is an isomorphism for each $z\in\Omega\setminus \{A_k=0\}$.

Observe that if $0\leq i<j$, then $\Omega_i\cap\Omega_j=\Omega_i$ and we define the following transition functions:
\begin{align*}
\gamma_{ij}:\Omega_i\cap\Omega_j=\Omega_i\to\bounded_*(\ell^2(\C)),\ &z\mapsto\sqrt{B_{ij|\Omega_i}(z)}\,T_{M_j(z)}\circ\cdots\circ T_{M_{i+1}(z)},\\
\gamma_{ji}:\Omega_i\cap\Omega_j=\Omega_i\to\bounded_*(\ell^2(\C)),\ &z\mapsto\sqrt{B_{ij|\Omega_i}(z)}\,T^*_{M_{i+1}\circ\sigma(z)}\circ\cdots\circ T^*_{M_j\circ\sigma(z)}.
\end{align*}
Obviously, we define $\gamma_{ii}:\Omega_i\cap\Omega_i=\Omega_i\to\bounded_*(\ell^2(\C)),\ z\mapsto\id$. Using the fact that
$$
T_{M_k\circ\sigma(z)}^*\circ T_{M_k(z)}=T_{M_k(z)}\circ T_{M_k\circ\sigma(z)}^*=A_k(z)\id
$$
for all $k\geq 1$ and all $z\in\Omega$, one checks that $\gamma_{jk}(z)\cdot\gamma_{ij}(z)=\gamma_{ik}(z)$ for all $0\leq i,j,k$ and all $z\in\Omega_i\cap\Omega_j\cap\Omega_k=\Omega_{\min\{i,j,k\}}$. Thus, the holomorphic functions $\gamma_{ij}:\Omega_i\cap\Omega_j\to\bounded_*(\ell^2(\C))$ are the transition functions of a holomorphic Hilbert vector bundle $\mathscr{E}=(E,\pi,\Omega)$ with fiber $\ell^2(\C)$. For each $i\geq 0$ let $\psi_i:\pi^{-1}(\Omega_i)\to\Omega_i\times\ell^2(\C)$ be a homeomorphism such that $\psi_j\circ\psi_i^{-1}(z,v)=(z,\gamma_{ij}(z)(v))$ for all $(z,v)\in\Omega_{\min\{i,j\}}\times\ell^2(\C)$ and all $i,j\geq0$. Observe that for each $z\in\Omega_i$ and each $j\geq0$ the bounded operators $\gamma_{ij}(z)\in\bounded_*(\ell^2(\C))$ satisfy the equality 
$$
\gamma_{ij}(z)\cdot\gamma_{ij}^*(\sigma(z))=\gamma_{ij}^*(\sigma(z))\cdot\gamma_{ij}(z)=\id.\qquad(\ast)
$$\label{3*}

\vspace{1mm}
\noindent{\sc Step 3: Construction of a suitable Hermitian structure.} 
Next, consider the Whitney sum $\mathscr{E}\oplus\mathscr{E}^\sigma=(E\oplus E^\sigma,\pi',\Omega)$ of $\mathscr{E}=(E,\pi,\Omega)$ with $\mathscr{E}^\sigma=(E^\sigma,\pi,\Omega)$, where this last vector bundle corresponds to the antiholomorphic structure of $\mathscr{E}$. Recall that given $0\leq i,j$ the transition function of $\mathscr{E}\oplus\mathscr{E}^\sigma$ corresponding to the intersection $\Omega_i\cap\Omega_j$ is
$$
\Gamma_{ij}:\Omega_i\cap\Omega_j\to\bounded_*(\ell^2(\C)\oplus\ell^2(\C)),\ z\mapsto
\gamma_{ij}(z)\oplus\gamma_{ij}(\sigma(z)).
$$
For each $i\geq 0$ let $\Psi_i:\pi'^{-1}(\Omega_i)\to\Omega_i\times(\ell^2(\C)\oplus\ell^2(\C))$ be a homeomorphisms such that $\Psi_j\circ\Psi_i^{-1}(z,v,w)=(z,\Gamma_{ij}(z)(v,w))$ for all $(z,v,w)\in\Omega_{\min\{i,j\}}\times(\ell^2(\C)\oplus\ell^2(\C))$ and all $i,j\geq1$. We define the Hermitian metric as follows. For each $i\geq0$ and each $z\in\Omega_i$, consider 
$$
\qq{\cdot,\cdot}\circ\Psi_{i}^{-1}:\Omega_i\times(\ell^2(\C)\times\ell^2(\C))\to\C,\ (z,v,w)\mapsto\qq{v,w}.
$$

Let us see  that $\qq{\cdot,\cdot}$ is well defined. To  that end, fix
$i,j\geq1$   and
$(z,v,w)\in\Omega_{\min\{i,j\}}\times(\ell^2(\C)\oplus\ell^2(\C))$ and
let us check the following equality
$$
\qq{\cdot,\cdot}\circ\Psi_{j}^{-1}(z,v,w)=\qq{\cdot,\cdot}\circ\Psi_{i}^{-1}(z,v,w).
$$
Indeed, using \eqref{3*} above we deduce
\begin{equation*}
\begin{split}
\qq{\cdot,\cdot}\circ\Psi_{j}^{-1}(z,v,w)&=\qq{\cdot,\cdot}\circ(\Psi_i^{-1}\circ\Psi_i)\circ\Psi_{j}^{-1}(z,v,w)=\qq{\cdot,\cdot}\circ\Psi_i^{-1}\circ\Gamma_{ij}(z,v,w)=\\&=\qq{\cdot,\cdot}\circ\Psi_i^{-1}(z,\gamma_{ij}(z)(v),(\gamma_{ij}\circ\sigma)(z)(w))=\\
&=\qq{\gamma_{ij}(z)(v),(\gamma_{ij}\circ\sigma)(z)(w)}=\qq{\gamma_{ij}^*(\sigma(z))\cdot\gamma_{ij}(z)(v),w}=\\
&=\qq{v,w}=\qq{\cdot,\cdot}\circ\Psi_{i}^{-1}(z,v,w).
\end{split}
\end{equation*}
Given holomorphic sections $\alpha,\beta:\Omega\to\mathscr{E}$, we define in the obvious way the section $\alpha\oplus (\beta\circ\sigma):\Omega\to\mathscr{E}\oplus\mathscr{E}^{\sigma}$, to which we can apply $\qq{\cdot,\cdot}:\mathscr{E}\oplus\mathscr{E}^{\sigma}\to\C$, and we get the holomorphic function $\qq{\alpha,\beta\circ\sigma}:\Omega\to\C$.

\vspace{1mm}
\noindent{\sc Step 4: Construction of suitable holomorphic sections.} 
Denote by $\stb=\{\mbox{\bf e}_\ell\}_{\ell\geq1}$ the standard basis of $\ell^2(\C)$. Let us construct holomorphic sections $s_\ell:\Omega\to\mathscr{E}$ such that
$$
\psi_0\circ s_{\ell_|\Omega_0}:\Omega_0\to\Omega_0\times\ell^2(\C)\subset\Omega_i\times\ell^2(\C),\ z\mapsto(z,\sqrt{A_{|\Omega_0}(z)}\mbox{\bf e}_\ell).
$$
Since $A$ vanishes nowhere in $\Omega_0$, the square root $\sqrt{A_{|\Omega_0}}$ is well-defined and holomorphic in $\Omega_0$. For each $i\geq1$ we have
$$
(\psi_i\circ s_{\ell|\Omega_0})=(\psi_i\circ\psi_0^{-1})\circ(\psi_0\circ s_{\ell|\Omega_0}):\Omega_0\to\Omega_0\times\ell^2(\C),\ z\mapsto(z,\sqrt{A_{|\Omega_0}(z)}\gamma_{0i}(z)(\mbox{\bf e}_\ell))
$$
where 
$$
\sqrt{A_{|\Omega_0}(z)}\gamma_{0i}(z)(\mbox{\bf e}_\ell)=\sqrt{H_{i|\Omega_0}(z)}(T_{M_i(z)}\circ\cdots\circ T_{M_1(z)})(\mbox{\bf e}_\ell).
$$
Moreover, the square roots $\sqrt{H_{i|\Omega_i}}:\Omega_i\to\C$ are well-defined and holomorphic on $\Omega_i$. Since $\Omega=\bigcup_{i\geq0}\Omega_i$, we define $s_\ell:\Omega\to\mathscr{E}$ such that
$$
\psi_i\circ s_\ell:\Omega_i\to\Omega_i\times\ell^2(\C),\ z\mapsto\Big(z,\sqrt{H_{i|\Omega_i}(z)}(T_{M_i(z)}\circ\cdots\circ T_{M_1(z)})(\mbox{\bf e}_\ell)\Big),
$$
which is a holomorphic section of $\mathscr{E}$. 

Consider the holomorphic functions $\qq{s_k,s_\ell\circ\sigma}:\Omega\to\C$. Notice that, by the Identity Principle, the holomorphic functions $\qq{s_k,s_\ell\circ\sigma}:\Omega\to\C$ are determined by their value on the points $z\in \Omega_0$. Thus, since
\begin{multline*}
\qq{s_k,s_\ell\circ\sigma}_{|\Omega_0}(z)=\qq{(\psi_0\circ s_k)(z),(\psi_0\circ s_\ell\circ\sigma)(z)}=\\
=\qq{\sqrt{A_{|\Omega_0}(z)}\mbox{\bf e}_k,\sqrt{A_{|\Omega_0}\circ\sigma(z)}\mbox{\bf e}_\ell}=(\sqrt{A_{|\Omega_0}(z))}^2\qq{\mbox{\bf e}_k,\mbox{\bf e}_\ell}=A_{|\Omega_0}(z)\delta_{k\ell},
\end{multline*}
we deduce that $\qq{s_k,s_\ell\circ\sigma}=A\delta_{k\ell}$ for $k,\ell\geq1$.

\vspace{1mm}
\noindent{\sc Step 5: Representation of $A$ as a sum of squares.} 
Since the fiber bundle is trivial on $\Omega$, there exists 
a nowhere-vanishing holomorphic section $\tau:\Omega\to E$. Thus, the restriction to $\R^n$ of the invariant holomorphic function $u=\qq{\tau,\tau\circ\sigma}:\Omega\to\C$ is strictly positive. Thus, shrinking $\Omega$ if necessary (but keeping all its properties) there exists a no-where vanishing invariant holomorphic function $v:\Omega\to\C$ such that $u=v^2$.

Next, let us check that on $\Omega^*=\Omega\setminus\{A=0\}$ the following equality holds
\begin{equation}\label{bullet}
A_{|\Omega^*}\tau_{|\Omega^*}=\sum_{\ell\geq1}\qq{\tau, {s_\ell\circ\sigma}_{|\Omega^*}{s_\ell}_{|\Omega^*}}\qquad(\bullet)
\end{equation}
Indeed, since $\Omega^*=\bigcup_{i\geq0}\Omega_i^*$, where $\Omega_i^*={\Omega^*\cap\Omega_i}$, it is enough to check that
$$
A_{|\Omega_i^*}\tau_{|\Omega_i^*}=\sum_{\ell\geq1}\qq{\tau,s_\ell\circ\sigma}_{|\Omega_i^*}{s_{\ell}}_{|\Omega_i^*}\qquad(\star)
$$\label{star}
for all $i\geq0$. Let $\eta_i:\Omega_i^*\to\ell^2(\C)$ be a nowhere vanishing holomorphic function such that $\psi_i\circ\tau_{|\Omega_i^*}(z)=(z,\eta_i(z))$ for each $z\in\Omega_i^*$. Consider the holomorphic function
$$
\beta_i:\Omega_i^*\to\ell^2(\C),\ z\mapsto\frac{B_{0i}(z)}{\sqrt{H_{i|\Omega_i}(z)}}(T_{M_1(z)}^*\circ\cdots\circ T_{M_i(z)}^*)(\eta_i(z))
$$
and write $\beta_i=\sum_{\ell\geq1}\beta_{i\ell}\mbox{\bf e}_\ell$, where $\beta_{i\ell}\in\hol(\Omega_i^*)$ and $\sum_{\ell\geq1}\sup_K|\beta_{i\ell}|^2<+\infty$ for all $K\subset\Omega_i^*$ compact. Then,

\begin{multline*}
\psi_i\circ\tau_{|\Omega_i^*}(z)=(z,\eta_i(z))=\Big(z,\sqrt{H_{i|\Omega_i}(z)}(T_{M_i(z)}\circ\cdots\circ T_{M_1(z)})(\beta_i(z))\Big)=\\
=\Big(z,\sum_{k\geq1}\beta_{ik}(z)\Big(\sqrt{H_{i|{\Omega_i}(z)} }(T_{M_i(z)}\circ\cdots\circ T_{M_1(z)})(\e_k)\Big)\Big)=\sum_{\ell\geq1}\beta_{i\ell}(z)(\psi_i\circ {s_\ell}_{|\Omega_i^*})(z).
\end{multline*}

Thus, $\tau_{|\Omega_i^*}=\sum_{\ell\geq1}\beta_{i\ell}(z)s_{\ell|\Omega_i^*}$. Now, using the fact that $\qq{s_k,s_\ell\circ\sigma}=A\delta_{k\ell}$ for $k,\ell\geq1$, we deduce that

$$
\qq{\tau_{|\Omega_i^*},s_k\circ\sigma_{|\Omega_i^*}}=\qq{\sum_{\ell\geq1}\beta_{i\ell}(z)s_{\ell|\Omega_i^*},s_k\circ\sigma_{|\Omega_i^*}}=\beta_{ik}\qq{s_{k|\Omega_i^*},s_k\circ\sigma_{|\Omega_i^*}}=\beta_{ik}A_{|\Omega_i^*},
$$

from which follows equality \eqref{star} and \eqref{bullet}. 
Extending formula \eqref{bullet} by zero, we deduce that $A\tau=\sum_{\ell\geq1}\qq{\tau,s_\ell\circ\sigma}s_\ell$. Consequently,
\begin{multline*}
Av^2=A\qq{\tau,\tau\circ\sigma}=\qq{A\tau,\tau\circ\sigma}=\sum_{\ell\geq1}\qq{\tau,s_\ell\circ\sigma} \ \qq{s_\ell,\tau\circ\sigma}=\\
=\sum_{\ell\geq1}\qq{\tau,s_\ell\circ\sigma}(\ol{\qq{\tau,s_\ell\circ\sigma}}\circ\sigma).
\end{multline*}
Since each addend $\qq{\tau,s_\ell\circ\sigma}(\ol{\qq{\tau,s_\ell\circ\sigma}}\circ\sigma)$ is a sum of two squares of invariant holomorphic functions on $\Omega$, we conclude that $A$ is an infinite sum of squares of invariant holomorphic functions on $\Omega$, as required.
\end{proof}

\subsection{Sums of countably many squares.}\label{s5}

In this section we prove the following result, from which Theorem \ref{vmain}(iii) follows straightforwardly. 

\begin{thm}\label{main3}
Let $\{f_k\}_{k\geq1}$ be a collection of positive semidefinite analytic functions on $\R^n$ such that the family $X_k=\{f_k=0\}$ of their zerosets is locally finite in $\R^n$ and each $f_k$ is an infinite sum of squares at $X_k$. Let $\{Y_i\}_{i\geq1}$ be the collection of the global irreducible components  of the global analytic set $X=\bigcup_{k\geq1}X_k$ and for each $i\geq1$ define 
$$
{\mathfrak F}_i=\{k\geq1:\ Y_i\cap X_k\neq\varnothing\}\quad\text{ and }\quad I=\{i\geq1:\ \#{\mathfrak F}_i=+\infty\}. 
$$
Then, there exist a positive semidefinite analytic function $q$ on $\R^n$ such that
\begin{itemize}
\item[(i)] $\{q=0\}\subset\bigcup_{j,\ell\in I,j\neq\ell}(Y_j\cap Y_\ell)$.
\item[(ii)] $q \ \shprod f_k$ is an infinite sum of squares with controlled denominators.
\end{itemize}
\end{thm}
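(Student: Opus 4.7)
I would prove this by adapting the construction in the proof of Theorem \ref{main2} to the setting where each $f_k$ is itself an infinite (rather than finite) sum of squares at $X_k$, isolating precisely where the accumulation of infinitely many $X_k$'s onto the same irreducible component forces the denominator $q$ supported on the claimed set.

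First I would apply Proposition \ref{extension} (which is stated for $p_k\leq+\infty$ and so covers this case) to pass to an invariant open Stein neighborhood $\Omega$ of $\R^n$ in $\C^n$, obtaining invariant holomorphic $G_k,\widehat{F}_k\in\hol(\Omega)$ and invariant holomorphic families $\{A_{kj}\}_{j\geq1}$ such that $\widehat{F}_k|_{\R^n}=u_kf_k$ for positive units $u_k$, the family $\{G_k^2\widehat{F}_k=0\}$ is locally finite in $\Omega$ with trace $X_k$ on $\R^n$, and the series $A_k=(d_kG_k^2\widehat{F}_k)^2+\sum_{j\geq1}A_{kj}^2$ converges on $\Omega$ to $G_k^2\widehat{F}_kQ_k$ for an invariant holomorphic unit $Q_k$ on a neighborhood of $\{G_k^2\widehat{F}_k=0\}\cup\R^n$. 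Denoting $A=\shprod A_k$, the problem reduces to exhibiting an invariant positive semidefinite $q$ with $\{q=0\}\subset\bigcup_{j,\ell\in I,\,j\neq\ell}(Y_j\cap Y_\ell)$ such that $q\cdot A$ can be written as an infinite sum of squares of invariant holomorphic functions on $\Omega$; restricting to $\R^n$ will yield the conclusion for $q\cdot\shprod f_k$.

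In the proof of Theorem \ref{main2}, Lemma \ref{matrix2} produced, for each $A_k$ (then a finite sum of squares), an invariant Pfister matrix $M_k$ of finite size with $M_k^tM_k=A_kI$, and the associated bounded operator $T_{M_k}$ on $\ell^2(\C)$ served as a building block for the transition functions of a holomorphic Hilbert bundle $\mathscr{E}$ over $\Omega$. Here no such finite Pfister matrix exists, so I would iterate Lemma \ref{matrix2} on the truncations $A_k^{(N)}=(d_kG_k^2\widehat{F}_k)^2+\sum_{j=1}^NA_{kj}^2$, obtaining invariant Pfister matrices of growing size $2^{r_N+1}$ together with auxiliary invariant denominators $G_k^{(N)}$ not vanishing on a chosen compact exhaustion. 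Assembling these block-wise and passing to a strong-operator limit should yield an invariant holomorphic operator-valued function $T_k:\Omega\to\bounded(\ell^2(\C))$ with $T_k^*T_k=\widetilde{A}_k\cdot\id$, where $\widetilde{A}_k$ agrees with $A_k$ up to an invariant holomorphic unit on a neighborhood of $\{A_k=0\}\cup\R^n$. With these $T_k$ in hand, the Hilbert bundle $\mathscr{E}$, its Hermitian structure on $\mathscr{E}\oplus\mathscr{E}^\sigma$, and the holomorphic sections $s_\ell$ satisfying $\qq{s_k,s_\ell\circ\sigma}=A\delta_{k\ell}$ are constructed exactly as in Steps 2--5 of the proof of Theorem \ref{main2}, producing an identity $Av^2=\sum_{\ell\geq1}\qq{\tau,s_\ell\circ\sigma}(\overline{\qq{\tau,s_\ell\circ\sigma}}\circ\sigma)$ for a nowhere-vanishing invariant section $\tau$ and an invariant holomorphic unit $v$.

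The denominator $q$ enters as the global obstruction to this procedure. Near a generic point of $Y_i$ with $i\notin I$ only finitely many $X_k$'s accumulate, so the limit step is trivially convergent; near $Y_i$ with $i\in I$ but away from other components indexed by $I$, the accumulation happens along a single irreducible direction and can be handled by ordering the relevant $k$'s coherently; but at a point of $Y_j\cap Y_\ell$ with $j,\ell\in I$ distinct, infinitely many $X_k$'s accumulate simultaneously along two competing directions, and no coherent ordering exists for the iterated Pfister construction. Choosing $q$ to be a positive semidefinite sheaf-product of positive equations of the pairwise intersections $Y_j\cap Y_\ell$ with $j,\ell\in I$ distinct cancels this obstruction and yields the required representation of $q\cdot\shprod f_k$ as an infinite sum of squares with controlled denominators. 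The hardest part will be the construction of the operators $T_k$ as strong-operator limits of finite Pfister matrices while preserving holomorphy, invariance and norm bounds, together with the precise verification that the obstruction locus is indeed contained in $\bigcup_{j,\ell\in I,\,j\neq\ell}(Y_j\cap Y_\ell)$.
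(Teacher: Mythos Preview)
Your approach diverges sharply from the paper's and contains a genuine gap. The paper does \emph{not} attempt to build Pfister operators $T_k$ for the infinite sums $A_k$, and for good reason: your proposed construction---iterating Lemma \ref{matrix2} on truncations $A_k^{(N)}$ and then passing to a strong-operator limit---is not well-defined. The matrices produced at each stage have different sizes $2^{r_N+1}$ and come with auxiliary denominators $G_k^{(N)}$ that grow with $N$; there is no evident way to embed these coherently in $\bounded(\ell^2(\C))$ while controlling the denominators uniformly, let alone to preserve holomorphy and the identity $T_k^*T_k=\widetilde{A}_k\cdot\id$ in the limit. Your subsequent localization of the obstruction at $\bigcup_{j,\ell\in I,\,j\neq\ell}(Y_j\cap Y_\ell)$ is asserted rather than proved, and the heuristic about ``ordering the relevant $k$'s coherently'' along a single $Y_i$ is not an argument.

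The paper's route is far more direct and avoids the Hilbert bundle machinery entirely. After Step 1 (Proposition \ref{extension}, as you correctly invoke), the key move is to index the construction by the irreducible components $Y_i$ rather than by the functions $f_k$. For each $i$ one defines the \emph{finite} set ${\mathfrak H}_i$ to be ${\mathfrak F}_i$ if that is finite and ${\mathfrak G}_i=\{k:\ Y_i\subset X_k\}$ otherwise (the latter is finite by local finiteness). Then the finite products $P_i=\prod_{k\in{\mathfrak H}_i}G_k^2\widehat{F}_k$ and $B_i=\prod_{k\in{\mathfrak H}_i}A_k$ make sense, and $B_i$ is itself a convergent sum of squares $\sum_\ell B_{i\ell}^2$. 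One then writes down an explicit weighted series
\[
H=G_0^4\widehat{F}^2+\sum_{i,\ell\geq1}\Big(\gamma_i\,\frac{G_0^2\widehat{F}}{P_i}B_{i\ell}\Big)^2,
\]
with constants $\gamma_i$ chosen for convergence. This $H$ is an infinite sum of squares by construction, and one checks that $G_0^2\widehat{F}$ divides $H$; the quotient $Q=H/(G_0^2\widehat{F})$ restricts to the desired $q$. The inclusion $\{q=0\}\subset\bigcup_{j,\ell\in I,\,j\neq\ell}(Y_j\cap Y_\ell)$ is then verified by subtracting the $i$-th summand from $H$ and exploiting the dichotomy in the definition of ${\mathfrak H}_i$. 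Note that $q$ is \emph{discovered} as a quotient, not chosen a priori as a sheaf-product of equations of the intersections.
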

\begin{remarks}\label{main3r}
(i) If each irreducible component $Y_i$ is compact, then $I=\varnothing$ and $q$ is just a positive unit of $\an(\R^n)$. Hence, $f$ is an infinite sum of squares of analytic functions on $\R^n$. See Example \ref{examples} (1), for an example of a function whose zeroset has countably many global irreducible components and all of them are compact.

(ii) If for each $j,\ell\in I$ with $j\neq\ell$ the intersection $Y_j\cap Y_\ell$ is a discrete set, then $f$ is an infinite sum of squares on $\R^n$. This follows from the fact that $\{q=0\}$ is contained in a discrete set, so by Section  \ref{Milnor} $q$ is a finite sum of squares. 

(iii) If $n=3$, then $\dim Y_i=1$ for all $i\geq1$. Thus, each intersection $Y_j\cap Y_\ell$ with $j\neq\ell$ is a discrete set,  so by (ii) $f$ is an infinite sum of squares. 

(iv) Let $f:\R^n\to\R$ be a positive semidefinite analytic function and suppose that there exist non-identically zero positive semidefinite analytic functions $q_1,q_2\in\an(\R^n)$ such that $h_1=fq_1$ and $h_2=fq_2$ are infinite sum of squares. Then, $q_1$ is an infinite sum of squares if and only if so is $q_2$. This, follows straightforwardly from the fact that $q_1q_2f^2=h_1h_2$ is an infinite sum of squares.
\end{remarks}
\vspace{1mm}
\begin{proof}[Proof  of Theorem \ref{main3}]

\noindent For our purposes it is enough to construct analytic functions $g,h,q$, which are  restrictions to $\R^n$ of $\sigma$-invariant holomorphic functions on a suitable invariant open neighborhood $\Omega$ of $\R^n$ in $\C^n$, satisfying the following conditions:
\begin{itemize}
\item $\{g=0\}\subset X$.
\item $h$ is an infinite sum of squares  such that $\{h=0\}\subset X$.
\item $q$ is positive semidefinite and $\{q=0\}\subset\bigcup_{j,\ell\in I,j\neq\ell}(Y_j\cap Y_\ell)$.
\item $g^2\ \shprod f_k=qh$.
\end{itemize}
The proof is conducted in several steps.

\vspace{1mm}
\noindent{\sc Step 1.} 
By Lemma \ref{extension} there are an invariant open neighborhood $\Omega$ of $\R^n$ in $\C^n$, positive real constants $c_k$ for $k\geq1$ and invariant homolomorphic functions $G_k,\widehat{F}_k,A_{kj}\in\hol(\Omega)$ for $k,j\geq1$ such that: 
\begin{itemize}
\item $\widehat{F}_{k|\R^n}=u_kf_k$ for a positive unit $u_k\in\an(\R^n)$.
\item The family $\{G_k^2\widehat{F}_k=0\}_{k\geq1}$ is locally finite in $\Omega$ and $\{G_k^2\widehat{F}_k=0\}\cap\R^n=X_k$.
\item The sum $A_k=(c_kG_k^2\widehat{F}_k)^2+\sum_{j=1}^{p_k}A_{kj}^2$ converges uniformely on the compact subsets of $\Omega$ . 
\item $\{A_{k|\R^n}=0\}=X_k$ and the family $\{A_k=0\}_{k\geq1}$ is locally finite in $\Omega$. 
\item $A_k=G_k^2\widehat{F}_kQ_k$ for some $\sigma$-invariant $Q_k\in\hol(\Omega)$, which does not vanish on a neighborhood $V_k$ of $\{G_k^2\widehat{F}_k=0\}\cup\R^n$.
\end{itemize}

Denote $\widehat{F}=\shprod\widehat{F}_k$, $G_0=\shprod G_k$, $\widehat{f}=\widehat{F}_{|\R^n}$ and $g_0=G_{0 |\R^n}$. Since each $f_k$ is positive semidefinite so is each $\widehat{F}_{k|\R^n}$ and we may assume that $\widehat{f}$ is positive semidefinite. In fact, since $\widehat{F}_{k|\R^n}=u_kf_k$ for each $k\geq1$, there exists a positive unit $u\in\an(\R^n)$ such that $\widehat{f}=uf$. Observe also that $\{g_0=0\}\subset\bigcup_{k\geq1}X_k=X$.

For each $i\geq1$, we define ${\mathfrak G}_i=\{k\geq1:\ Y_i\subset X_k\}$ and notice that ${\mathfrak G}_i$ is a finite set for each $i\geq1$. For each $i\geq1$, consider
$$
{\mathfrak H}_i=\left\{\begin{array}{ll}
{\mathfrak F}_i&\text{ if ${\mathfrak F}_i$ is finite},\\[4pt]
{\mathfrak G}_i&\text{ otherwise},
\end{array}\right.
$$
and define $P_i=\prod_{k\in{\mathfrak H}_i}G_k^2\widehat{F}_k$ and $B_i=\prod_{k\in{\mathfrak H}_i}A_k$, which is a convergent sum of squares $\sum_{\ell\geq1}B_{i\ell}^2$ on $\Omega$. Each function $P_i$ divides $G_0^2\widehat{F}$ in $\hol(\Omega)$ for all $i\geq1$. Moreover, $B_i=P_i\prod_{k\in{\mathfrak H}_i}Q_k=P_iD_i$ and the restriction of $D_i=\prod_{k\in{\mathfrak H}_i}Q_k$ to $\R^n$ is strictly positive on $\R^n$. Thus, $B_{i|\R^n}$ is $P_{i|\R^n}$ times a positive unit of $\an(\R^n)$.

\vspace{1mm}
\noindent{\sc Step 2.}
In this step we construct the sum of squares $h$ announced above. 
 Fix an exhaustion of $\Omega$ by compact sets $\{K_i\}_{i\geq1}$, that is, $K_1\neq\varnothing$, $K_i\subset\Int_{\C^n}(K_{i+1})$ and $\bigcup_{i\geq1}K_i=\Omega$. For each $i\geq1$, we set
$$
\mu_i=\sup_{K_i}\Big|\frac{G_0^2\widehat{F}}{P_i}\Big|^2\sum_{\ell\geq1}\sup_{K_i}|{B_{i\ell}}^2|\quad\text{and}\quad\gamma_i=\frac{1}{\sqrt{2^i\mu_i}}.
$$ 
For each $i\geq1$ we have the following inequalities
$$
\sum_{\ell\geq1}\sup_{K_i}\Big|\gamma_i\,\frac{G_0^2\widehat{F}}{P_i}B_{i\ell}\Big|^2
\le\gamma_i^2\sup_{K_i}\Big|\frac{G_0^2\widehat{F}}{P_i}\Big|^2\sum_{\ell\geq1}
\sup_{K_i}|{B_{i\ell}}^2|\le\frac{1}{2^i}\,.\quad{(\blacktriangleright)}
$$\label{tri}
Let $K$ be a compact subset of $\Omega$. As $\Omega\subset\bigcup_{i\ge1}\Int_{\C^n}(K_i)$, $K$ is contained in some $K_{i_0}$, hence in all $K_i$ for $i\ge i_0$, so:
\begin{equation*}
\begin{split}
\sum_{i,\ell\geq1}\sup_K\Big|\gamma_i\,\frac{G_0^2\widehat{F}}{P_i}B_{i\ell}\Big|^2&=\sum_{i=1}^{i_0-1}\sum_{\ell\geq1}\sup_K\Big|\gamma_i\,\frac{G_0^2\widehat{F}}{P_i}B_{i\ell}\Big|^2+
\sum_{i\geq i_0}\sum_{\ell\geq1}\sup_K\Big|\gamma_i\,\frac{G_0^2\widehat{F}}{P_i}B_{i\ell}\Big|^2\\
&\leq\sum_{i=1}^{i_0-1}\sup_K\Big|\gamma_i\,\frac{G_0^2\widehat{F}}{P_i}\Big|^2
\sum_{\ell\geq1}\sup_K |{B_{i\ell}}^2|+\sum_{i\geq
i_0}\sum_{\ell\geq1}\sup_{K_i}\Big|\gamma_i\,\frac{G_0^2\widehat{F}}{P_i}B_{i\ell}\Big|^2\\
&\leq\sum_{i=1}^{i_0-1}\sup_K\Big|\gamma_i\,\frac{G_0^2\widehat{F}}{P_i}\Big|^2
\sum_{\ell\geq1}\sup_K |{B_{i\ell}}^2 |+\sum_{i\geq i_0}\frac{1}{2^i}<+\infty.
\end{split}
\end{equation*}
Consequently, we define the invariant holomorphic function we were looking for as
$$
H=G_0^4\widehat{F}^2+\sum_{i,\ell\geq1}\Big(\gamma_i\,\frac{G_0^2\widehat{F}}{P_i}B_{i\ell}\Big)^2.
$$ 
Hence, $h=H_{|\R^n}$ is an infinite sum of squares of analytic functions on $\R^n$ and 
$$
\{h=0\}\subset\{g_0=0\}\cup\{\widehat{f}=0\}\subset X.
$$ 
For our later purposes it is more convenient to have a different representation of H. Namely, let us check that
$$
H=G_0^4\widehat{F}^2+\sum_{i\geq1}\Big(\gamma_i\,\frac{G_0^2\widehat{F}}{P_i}\Big)^2B_i.\quad(\star) 
$$\label{stella}
Indeed, since $B_i=\sum_{\ell\geq1}B_{i\ell}^2$ for each $i\geq1$, it is enough to check that the series $\displaystyle \sum_{i\geq1}\left(\gamma_i\,\frac{G_0^2\widehat{F}}{P_i}\right)^2B_i$ converges. Indeed, observe that for each $i,j\geq1$, we have $\sup_{K_j}|B_i|\leq\sum_{\ell\geq1}\sup_{K_j}|B_{i\ell}|^2$, so using \eqref{tri}, we deduce 
$$
\sup_{K_j}\Big|\gamma_i\,\frac{G_0^2\widehat{F}}{P_i}\Big|^2|B_i|=\sup_{K_j}\Big|\gamma_i\,\frac{G_0^2\widehat{F}}{P_i}\Big|^2\sup_{K_j}|B_i|
\le\gamma_i^2\sup_{K_j}\Big|\frac{G_0^2\widehat{F}}{P_i}\Big|^2\sum_{\ell\geq1}
\sup_{K_j}|{B_{i\ell}}^2|\leq\frac{1}{2^i}.
$$
Let $K$ be a compact subset of the open set $\Omega$ and let $i_0\geq1$ be such that $K\subset K_i$ for $i\ge i_0$. We have
\begin{equation*}
\begin{split}
\sum_{i\geq1}\sup_K\Big|\gamma_i\,\frac{G_0^2\widehat{F}}{P_i}\Big|^2|B_i|&=\sum_{i=1}^{i_0-1}\sup_K\Big|\gamma_i\,\frac{G_0^2\widehat{F}}{P_i}\Big|^2|B_i|+
\sum_{i\geq i_0}\sup_K\Big|\gamma_i\,\frac{G_0^2\widehat{F}}{P_i}\Big|^2|B_i|\\
&\leq\sum_{i=1}^{i_0-1}\sup_K\Big|\gamma_i\,\frac{G_0^2\widehat{F}}{P_i}\Big|^2
\sup_K |B_i|+\sum_{i\geq
i_0}\frac{1}{2^i}<+\infty.
\end{split}
\end{equation*}

\vspace{1mm}
\noindent{\sc Step 3.}
In this step we end the proof constructing the positive semidefinite function $q$ and the ``denominator'' $g$. 

Let us check first that $G_0^2\widehat{F}$ divides H in $\an(\Omega)$.  It is enough to check that $G_0^2\widehat{F}$ divides each addend of H on a neighborhood of $Z=\{G_0^2\widehat{F}=0\}=\bigcup_{k\geq1}Z_k$ where $Z_k=\{G_k^2\widehat{F}_k=0\}$. Fix $i\geq 1$ and observe that since $B_i=P_iD_i$, we have
$$
\Big(\gamma_i\,\frac{G_0^2\widehat{F}}{P_i}\Big)^2B_i=\gamma_i^2\frac{G_0^2\widehat{F}}{P_i}\frac{G_0^2\widehat{F}}{P_i}B_i=G_0^2\widehat{F}\Big(\gamma_i^2D_i\frac{G_0^2\widehat{F}}{P_i}\Big).
$$

Thus, there exists a holomorphic function $Q:\Omega\to\C$ such that $QG_0^2\widehat{F}=H$. Since $\Omega$ and $G_0,\widehat{F},H$ are invariant, so is $Q$. Since $g_0^2={G_0^2}_{|\R^n},\widehat{f}=\widehat{F}_{|\R^n}$ and $h=H_{|\R^n}$ are positive semidefinite analytic functions, so is $q=Q_{|\R^n}$. In fact, since $qg_0^2\widehat{f}=h$, we have $\{q=0\}\subset\{h=0\}\subset X$. Denote $g=qg_0\sqrt{u}$, where $u$ is the positive analytic unit satisfying $\widehat{f}=uf$ already constructed in {\sc Step 1}. Observe that
$$
g^2f=q^2g_0^2uf=q^2g_0^2\widehat{f}=qh\quad\text{ and }\quad\{g=0\}=\{g_0=0\}\cup\{q=0\}\subset X
$$
Thus, we have got the required representation $g^2f=hq$ once we prove that 
$$
\{q=0\}\subset\bigcup_{j,\ell\in I,j\neq\ell}(Y_j\cap Y_\ell).
$$
Indeed, fix $i\geq1$ and recall that $B_i=P_iD_i$, where $D_i\in\an(\Omega)$ is a holomorphic function on $\Omega$ whose restriction to $\R^n$ is a positive analytic unit. Moreover,
$$
H_i=\Big(Q-\gamma_i^2D_i\frac{G_0^2\widehat{F}}{P_i}\Big)G_0^2\widehat{F}=H-\Big(\gamma_i\,\frac{G_0^2\widehat{F}}{P_i}\Big)^2B_i=G_0^4\widehat{F}^2+\sum_{j\geq1,j\neq i}\Big(\gamma_j\,\frac{G_0^2\widehat{F}}{P_j}\Big)^2B_j.
$$
is an invariant holomorphic function on $\Omega$ whose restriction to $\R^n$ is an infinite sum of squares of analytic functions on $\R^n$. Write $\displaystyle Q_i'=Q-\gamma_i^2D_i\frac{G_0^2\widehat{F}}{P_i}$, which is an invariant holomorphic function on $\Omega$ such that
$$
H_i=Q_i'G_0^2\widehat{F}\quad\text{ and }\quad Q=Q_i'+\gamma_i^2D_i\frac{G_0^2\widehat{F}}{P_i}.
$$
Since the restrictions to $\R^n$ of $H_i,G_0^2,\widehat{F}$ are positive semidefinite also the restriction of $Q_i'$ to $\R^n$ is positive semidefinite and so 
$$
\{q=0\}\subset\left\{\frac{G_0^2\widehat{F}}{P_i}=0 \right\}\cap\R^n=\bigcup_{k\not\in{\mathfrak H}_i}X_k.
$$
Now, if $i\not\in I$, that is, if ${\mathfrak F}_i$ is a finite set, we have that ${\mathfrak H}_i={\mathfrak F}_i$ and 
$$
Y_i\cap\{q=0\}\subset Y_i\cap\bigcup_{k\not\in{\mathfrak H}_i}X_k=\varnothing;
$$
Putting all together, we deduce
$$
\{q=0\}=X\cap\{q=0\}=\bigcup_{i\geq1}Y_i\cap\{q=0\}=\bigcup_{i\in I}Y_i\cap\{q=0\}\subset\bigcup_{i,j\in I,i\neq j}Y_i\cap Y_j,
$$
as required.
\end{proof}

\subsection{Applications to Hilbert's 17th Problem.}\label{s6}
Let $\Omega \subset \C^n$ be a Stein open neighbourhood of $\R^n$ in $\C^n$ and assume that $\Omega$ retracts on $\R^n$.

Although the ring $\Oo(\Omega)$ is far from being a unique factorization domain, we begin this section giving a  reasonable definition  of what we mean as {\em irreducible holomorphic function} and what we mean as {\em irreducible factors} of a holomorphic function in $\Oo(\Omega)$. We give similar definitions for real analytic functions.

\subsubsection{Irreducible factors.}\label{irredfactorsc} 

As we know from Chapter 2, every complex analytic subset $Z$ of an open set $\Omega\subset \C^n$ admits a unique countable locally finite family of {\em irreducible components}, that is $Z=\bigcup_{k\geq 1} Z_k$ where for each $k$ $Z_k$ is an irreducible complex analytic subset of $\Omega$ for each $k\geq 1$.

 We can give a similar notion for holomorphic functions in $\Oo(\Omega)$. We say that $F\in \Oo(\Omega)$ is {\em irreducible} if it cannot be written as $F=G_1 G_2$ where the zerosets of $G_1$ and $G_2$ are both not empty, that is both are not units in $\Oo(\Omega)$.

Also we get the notion of {\em divisibility}. We say that $G$ {\em divides $F$ with multiplicity $k$} if $F=G^k H$ whereas  $G^{k+1}$ does not divide $F$.

Multiplicity can be computed taking germs at points where both $F$ and $G$ vanish.

Take now $F\in \Oo(\Omega)$. Its zeroset $Z$ is of pure dimension $n-1$, that is, if $\{Z_k\}_{k\geq 1}$ are the irreducible components of $Z$ all of them have dimension $n-1$. So for each $z\in Z_k$ there is a holomorphic function germ $\xi_{k,z}$ that generates the ideal of the germ $Z_{k,z}$ in $\Oo_{\Omega,z}$. Consider the  locally principal coherent analytic sheaf of ideals $\Jj_k$ defined as    

 $$
\Jj_{k,z}=\left\{
\begin{array}{lcl}
\xi_{k,z}\an_{\Omega,z}&\text{ if}& z\in Z_k,\\[4pt]
\an_{\Omega,z}&\text{ if}& z\in \Omega\setminus Z_k.\\ 
\end{array}
\right.
$$     

Since $\Jj_k$ is coherent and H$^1(\Omega, \Oo_\Omega^*)= 0$, $\Jj$ is generated by a global holomorphic function $H_k$.

This provides a way to decompose $F$ into irreducible factors. Indeed $H_k$ is irreducible for each $k$ because its zeroset is the irreducible analytic set $Z_k$. Each $H_k$ divides $F$ with some multiplicity $m_k$. A straighforward computation shows that $F=\shprod H_k^{m_k}$.     

We call $\{H_k\}_{k\geq1}$ the collection of the {\em irreducible factors of $F$}.

The factorization of a real analytic functions is similar but quite more involving.

We say that an analytic function $f\in\Oo(W)$ on an open set $W\subset\R^n$ is {\em irreducible}  if it cannot be written as the product of two analytic functions with nonempty zeroset. Similarly, given analytic functions $g,f\in\an(\R^n)$, we say that \em $g$ divides $f$ with multiplicity $k\geq 0$ \em if $g^k$ divides $f$ but $g^{k+1}$ does not. Again such a $k$ exists and it is finite.

Given a closed set $Z\subset\C^n$, germs (of sets or of holomorphic functions) at $Z$ are defined exactly as germs at a point, through neighborhoods of $Z$ in $\C^n$. We denote by $F_Z$ the germ at $Z$ of a holomorphic function $F$ defined in some neighborhood of $Z$. If $F\in\Oo(\Omega)$ is  invariant, $\R^n\subset\Omega$ and $Z=\R^n$, then the germ $F_{\R^n}$ can be identified with the function $f=F_{|\R^n}$, because each of them determines uniquely the other one.

We define the {\em irreducible factors} of $f\in\Oo(\R^n)$ as follows. 

Let $F:\Omega\to\C$ be a holomorphic extension of $f$ to an open neighborhood $\Omega$ of $\R^n$ in $\C^n$ and consider the irreducible components of its zero set.  Their germs at $\R^n$ define
  a unique locally finite family of irreducible germs $\{Y_k\}_{k\geq 1}$ at $\R^n$ such that $Y_j\not\subset Y_k$ if $j\neq k$ and $\{F=0\}_{\R^n}=\bigcup_{k\geq1}Y_k$\footnote{ It may happen that the same irreducible component  of $Z$ induces several irreducible germs $Y_k$}. For each $k\geq1$ there exists an invariant contractible open Stein neighborhood $\Omega_k$ of $\R^n$ in $\C^n$ and an irreducible analytic set $Z_k$ in $\Omega_k$ such that $Z_{k,\R^n}=Y_k$. Since $\dim Z_k=n-1$, there exists $H_k\in\Oo(\Omega_k)$ that generates the ideal $\Jj_{\Omega_k}(Z_k)$ of holomorphic functions on $\Omega_k$ vanishing identically on $Z_k$. Notice that if $Z_k$ is invariant, we may further assume by Lemma \ref{suit} that $H_k$ is also invariant. On the other hand, if $Z_k$ is not invariant, then $\ol{H_k\circ\sigma}$ divides $F$ in $\Omega_k$ with the same multiplicity as $H_k$. For each $k\geq1$, we define
\begin{center}
$$ 
F_k=\left\{\begin{array}{cl}

H_k&\text{if $Z_k$ is invariant}\\[4pt]
H_k\ol{H_k\circ\sigma}&\text{if $Z_k$ is not invariant}
\end{array}\right.
$$\end{center} 

\vspace{1mm}
Observe that $f_k=F_{k |\R^n}\in\Oo(\R^n)$ is irreducible and divides $f$ for all $k\geq1$. We eliminate  repetitions and denote again by $\{f_k\}_{k\geq1}$ the collection of the previous irreducible factors. Denote by $m_k$ the multiplicity of$f_k$ as a  divisor of $f$. A straighforward computation shows that $f=\shprod f_k^{m_k}$. We call $\{f_k\}_{k\geq1}$ the collection of the {\em real irreducible factors of} $f$.

We focus a special attention in a particular type of irreducible factor.
\begin{defn}\label{special}
  We say that $f_k$ is a \em special factor \em of $f$ if: 
\begin{itemize}
\item[(i)] The germ $Z_{k,\R^n}$ is $\sigma$-invariant.
\item[(ii)] $f_k$ divides $f$ with odd multiplicity. 
\item[(iii)] The dimension $d$ of $\{f_k=0\}$ satisfies $1\leq d\leq n-2$.
\end{itemize}
\end{defn}

Of course, we may always assume that the special factors $f_k$ of $f$ are positive semidefinite analytic functions. The reason for asking that the dimension of the zeroset of a special factor is strictly positive is the following: as we know, an irreducible factor $f_k$ of $f$ satisfying conditions (i) and (ii) above and whose zero set has dimension zero is a finite sum of squares as we saw in Section \ref{Milnor}, hence, it produces no obstruction to represent $f$ as an either finite or infinite sum of squares. Also the factors having zeroset of dimension $n-1$ have necessarily even multiplicity because if not $f$ cannot be positive semidefinite, so they are not special.

Contrary to what happens in the complex case the zeroset of a special factor can be reducible as a real global analytic set. See next section for an example.

\subsubsection{Applications.}

Now, we apply all we have done concerning  factorization of analytic functions to localize the obstructions to represent a positive semidefinite analytic function as an either finite or infinite sum of squares.

\begin{lem}\label{2decom}
Let $f:\R^n\rightarrow\R$ be a positive semidefinite analytic function. Then, $f=gh$
where $g$ is a finite sum of squares and $h$ is a square free positive semidefinite analytic function whose irreducible factors are the special factors of $f$. In fact, $g$ is a sum of $2^{n+1}$ squares.
\end{lem}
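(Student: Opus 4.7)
The plan is to extract the ``obstruction'' $h$ from the factorization of $f$ into real irreducible factors and to show that the complementary part $g = f/h$ is a sum of $2^{n+1}$ squares via the Bounded Analytic Pfister's formula. Write $f = \shprod_{k\ge 1} f_k^{m_k}$ as a sheaf-product of its real irreducible factors, let $S = \{k : f_k \text{ is a special factor of } f\}$, and set
\[
h = \shprod_{k\in S} f_k, \qquad g = \shprod_{k\in S} f_k^{m_k-1}\cdot\shprod_{k\notin S} f_k^{m_k}.
\]
By construction $h$ is square-free, its irreducible factors are exactly the special factors of $f$, and $m_k - 1$ is even for every $k\in S$ because special factors appear with odd multiplicity. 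Since each special factor has zeroset of codimension at least $2$ it can be chosen positive semidefinite, so $h\ge 0$; then $f = gh$ (up to absorbing a positive unit) and $g$ extends to a nonnegative analytic function on $\R^n$.

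The central step is to write $g$ as a sum of $2^{n+1}$ squares. The zerosets of the contributions $\{f_k^{m_k}\}_{k\notin S}$ together with $\{f_k^{m_k-1}\}_{k\in S}$ form a locally finite family, so Theorem~\ref{main1} applies once each contribution is shown to be a sum of $2^n$ squares of meromorphic germs at its zero set with controlled denominators. We classify non-special factors by multiplicity, invariance of the complex germ $Z_{k,\R^n}$, and dimension of the real zeroset: (a) even $m_k$ gives $f_k^{m_k}$ a perfect square; (b) $m_k$ odd with non-invariant germ yields $f_k = H_k\overline{H_k\circ\sigma}$, which restricts on $\R^n$ to $\Re(H_k)^2 + \Im(H_k)^2$, a sum of two squares, so $f_k^{m_k}$ is still a sum of two squares; (c) $m_k$ odd with invariant germ and $\dim\{f_k=0\} = n-1$ cannot occur, because at a regular point of $\{f_k=0\}$ the irreducible $f_k$ behaves like a local coordinate and would change sign, forcing $f$ to change sign; (d) $m_k$ odd, invariant, and $\dim\{f_k=0\} = 0$ is handled at each isolated zero by the finite-determinacy machinery of Section~\ref{Milnor} (Theorems~\ref{finitedeterminacy}, \ref{isolated}), which right-equivalences $f_k$ after a perturbation to a positive semidefinite polynomial, to which Pfister's theorem for $\R(x_1,\ldots,x_n)$ assigns a representation as $2^n$ squares of rational functions whose denominators vanish only on the zero set. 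The contributions $f_k^{m_k-1}$ with $k \in S$ are squares. With each contribution bounded by $2^n$ squares at its zeroset with controlled denominators, Theorem~\ref{main1} applied with $r = n$ yields $g$ as a sum of $2^{n+1}$ squares.

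The main obstacle is case (d): the naive count from finite determinacy is $n + 2^n$ squares ($n$ perturbation terms from Theorem~\ref{isolated} plus the $2^n$ Pfister squares from the reduction to a polynomial), whereas one needs the uniform bound $2^n$ to feed into the Pfister multiplicative formula. The trick is to absorb the $n$ perturbation squares into the Pfister block by applying the identity $\nsq{2^n}\cdot\nsq{2^n} = \nsq{2^n}$ at the level of the local ring of meromorphic germs at each isolated zero, keeping both the number of squares and the control of the denominator at the required value and closing the argument.
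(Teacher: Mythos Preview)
Your decomposition $f=gh$ and the case analysis (a)--(d) coincide with the paper's proof; the paper also sets $g=(\shprod h_k^{n_k})^2(\shprod g_j^{m_j})$, treats the even-multiplicity and non-invariant cases exactly as you do, and invokes Theorem~\ref{main1} for the conclusion. You are right to single out case~(d) as the bottleneck.

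Your proposed fix for case~(d), however, does not work. The identity $\nsq{2^n}\cdot\nsq{2^n}=\nsq{2^n}$ is \emph{multiplicative}: it says that a product of two sums of $2^n$ squares in a field is again such a sum. What finite determinacy hands you is an \emph{additive} splitting: if $\varphi$ is the right-equivalence with $(g_j-\sum_{i=1}^n d_ih_i^2)\circ\varphi=P$ a polynomial, then $g_j\circ\varphi=P+\sum_{i=1}^n d_i(h_i\circ\varphi)^2$, so $g_j$ is (after the coordinate change) a sum of at most $2^n$ squares coming from Pfister applied to $P$ \emph{plus} $n$ further squares from the perturbation. There is no product here to which Pfister's formula applies, and no mechanism is given to collapse $\nsq{n}+\nsq{2^n}$ down to $\nsq{2^n}$ in the field of meromorphic germs. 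The honest local bound is $n+2^n\le 2^{n+1}$, and feeding $r=n+1$ into Theorem~\ref{main1} yields $2^{n+2}$ for $g$, one power of~$2$ worse than the stated $2^{n+1}$. The paper is equally terse here: it records the isolated-zero factor as a sum of $2^{n+1}$ squares and then asserts $g=\nsq{2^{n+1}}$ via Theorem~\ref{main1}, which on the face of it gives $2^{n+2}$. The qualitative content---$g$ is a finite sum of squares with a bound depending only on~$n$---is secured by your argument, but the precise constant $2^{n+1}$ is not, and the ``absorption trick'' you sketch does not close that gap.
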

\begin{proof}
First, let $\{h_k\}_{k\geq1}$ be the special factors of $f$ and let $\{g_j\}_{j\geq1}$ be all the others irreducible factors. Denote $2n_k+1$ the multiplicity of division of $f$ by $h_k$ and $m_j$ be the multiplicity of division of $f$ by $g_j$. Notice that if $m_j$ is odd and $\{g_j=0\}$ is not a singleton, then there exists an invariant open neighborhood $\Omega_j$ of $\R^n$ in $\C^n$, a holomorphic extension $G_j$ of $g_j$ to $\Omega_j$ and an (irreducible) holomorphic function $H_j\in\hol(\Omega_j)$ such that $G_j=H_j\ol{H_j\circ\sigma}$. Thus, in particular $g_j$ and so $g_j^{m_j}$ are sums of two squares in $\an(\R^n)$. On the other hand, if $\{g_j=0\}$ has dimension zero (and so it is a point) we know that $g_j$ and so $g_j^{m_j}$ are sums of $2^{n+1}$ squares (with controlled denominators). Now, the claim follows from Theorem \ref{globalization}.

Next, consider the sheaf-product $g=(\shprod h_k^{n_k})^2(\shprod g_j^{m_j})$ and observe that, by Theorem \ref{main1}, $g$ is a sum of $2^{n+1}$ squares. Next, let $h=\shprod h_k$ and observe that we may assume that $f=gh$. Clearly, the irreducible factors of $h$ are the special factors of $f$, as required.
\end{proof}

\begin{remark}\label{2decomr}
If $n=2$ it is clear that the zerosets of the irreducible factors have either dimension $0$ or $1$. Of course, if $f$ is positive semidefinite, the zerosets of the irreducible factors $g_j$ that divide $f$ with odd multiplicity have dimension $0$, and so they are singletons $\{a_j\}$. In fact as we saw in Section \ref{lowdim}   $g_{j}$ and $f$  is a sum of two squares of analytic functions. 

Moreover, the previous discussion proves that the special factors only appear for $n\geq3$. Hence, obstructions for a positive answer to H17 and H17$_\infty$ can only arise for $n\geq 3$ and, of course, they are focused on the special factors.\qed 
\end{remark}

As a consequence we give a simple proof of the following well known result.

\begin{thm}\label{bksj} Let $f\in\Oo(\R^n)$ be a semidefinite positive analytic function whose zero set is the union of a compact set  and a discrete set. Then $f$ is a finite sum of squares. 
\end{thm}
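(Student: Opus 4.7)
The plan is to combine the structural decomposition of Lemma \ref{2decom} with the compactness hypothesis to reduce the problem to a finite product of compactly-supported special factors, each of which is known to be a finite sum of squares, and then to apply the bounded Pfister formula.

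First I would invoke Lemma \ref{2decom} to write $f = gh$, where $g$ is a sum of $2^{n+1}$ squares (so already disposed of) and $h = \shprod_k h_k$ is a square-free positive semidefinite analytic function whose irreducible factors $\{h_k\}$ are exactly the special factors of $f$ in the sense of Definition \ref{special}. The whole question therefore reduces to showing that $h$ is a finite sum of squares of meromorphic functions on $\R^n$.

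The key geometric observation is that every special factor $h_k$ has \emph{compact} zeroset. Indeed, by the definition of special factor $\dim\{h_k=0\}\geq 1$, while $\{h_k=0\}\subset\{f=0\}=K\cup D$, with $K$ compact and $D$ discrete. Since $\{h_k=0\}$ is an irreducible C-analytic set of positive dimension, no irreducible component of it can be contained in the $0$-dimensional set $D$, so $\{h_k=0\}\subset K$. Now the family $\{\{h_k=0\}\}_k$ of zerosets of the irreducible factors is locally finite in $\R^n$, and it is entirely contained in the compact set $K$; therefore this family is actually \emph{finite}, say $k=1,\dots,N$. Thus $h$ is, up to a unit, the finite product $h_1\cdots h_N$.

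Next, each $h_k$ has compact zeroset, so by Theorem \ref{compatto} there exist $\sigma_{k,0},\dots,\sigma_{k,p_k}\in\an(\R^n)$ with $\sigma_{k,0}\not\equiv 0$ and $\sigma_{k,0}^2 h_k=\sum_{j=1}^{p_k}\sigma_{k,j}^2$; that is, each $h_k$ is a finite sum of squares of meromorphic functions with controlled denominators. Choose $r\in\N$ with $2^r\geq\max_k p_k$, so that every $h_k$ is a sum of $2^r$ squares with controlled denominators. By Theorem \ref{main1} (bounded analytic Pfister's formula), the sheaf-product $h=\shprod_{k=1}^N h_k$ is then a sum of $2^{r+1}$ squares of meromorphic functions with controlled denominators. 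Finally, combining with the representation of $g$ as a sum of $2^{n+1}$ squares and one more application of the classical Pfister multiplicative formula to the product $gh$, we conclude that $f$ is a finite sum of squares of meromorphic functions on $\R^n$, as required.

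No step really presents a genuine obstacle here: the only point that requires care is the finiteness of the collection of special factors, which hinges on combining the local finiteness of the sheaf-product decomposition with the compactness of $K$; everything else is a direct application of the machinery already developed (Lemma \ref{2decom}, Theorem \ref{compatto}, and Theorem \ref{main1}).
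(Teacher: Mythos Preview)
Your argument is correct and follows the same strategy as the paper (decompose via Lemma \ref{2decom}, then exploit compactness to invoke Theorem \ref{compatto}), but you take an unnecessary detour: rather than applying Theorem \ref{compatto} to each $h_k$ separately and then recombining via the bounded Pfister formula, the paper simply observes that $\{h=0\}$ itself is compact (each special factor has zero set of dimension $\geq 1$, hence lying in $K$) and applies Theorem \ref{compatto} a single time directly to $h$. One small slip: you call $\{h_k=0\}$ an ``irreducible C-analytic set'', but the paper remarks explicitly (right after Definition \ref{special}) that the real zero set of a special factor can be reducible; fortunately your argument only uses that it has positive dimension, so this does not affect the conclusion.
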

\begin{proof}
First by Lemma \ref{2decom} we write $f=gh$ where $g$ is a finite sum of squares and $h$ is a square free positive semidefinite analytic function whose irreducible factors are the special factors of $f$. Since the zero set of $f$ is the union of a discrete set and a compact set, it follows that the zero set of $h$ is compact. Hence, $h$ is by Theorem \ref{compatto} a finite sum of squares,  so also $f$ is a finite sum of squares, as required.
\end{proof}

Theorems \ref{compatto},\ref{vmain} and \ref{nash} together with Lemma \ref{2decom} provide the following results.

\begin{cor}\label{con1}
Let $f:\R^n\to\R$ be a positive semidefinite analytic function and let $\{h_k\}_{k\geq1}$ be its special factors. We have,
\begin{itemize}
\item[(i)] If each $h_k$ is a sum of $2^r$ squares at $\{h_k=0\}$, then $f$ is a sum of $2^{n+r}$ squares (with controlled denominators).
\item[(ii)] If each $h_k$ is a finite sum of squares at $\{h_k=0\}$, then $f$ is an infinite sum of squares.
\item[(iii)] If each $h_k$ is an infinite sum of squares at $\{h_k=0\}$, there is a positive semidefinite $q\in\an(\R^n)$ such that $\{q=0\}\subset\{f=0\}$, $\dim\{q=0\}<\dim\{f=0\}$ and $qf$ is an infinite sum of squares.
\end{itemize}
\end{cor}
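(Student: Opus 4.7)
The plan is to reduce each of (i), (ii), (iii) to the corresponding case of Theorem \ref{vmain} by means of the factorization supplied by Lemma \ref{2decom}. First I would write $f = g h$, where $g$ is a finite sum of $2^{n+1}$ squares (with controlled denominators) and $h = \shprod h_k$ is the sheaf-product of the special factors of $f$. Since Lemma \ref{2decom} guarantees that the irreducible factors of $h$ are exactly the special factors of $f$, the family $\{h_k\}$ satisfies the hypotheses of Theorem \ref{vmain} and the representation of $h$ controls that of $f$ modulo multiplication by the finite sum of squares $g$.

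For (i), applying Theorem \ref{vmain}(ii) to $\{h_k\}$ yields $h$ as a sum of $2^{n+r}$ squares of meromorphic functions with controlled denominators; since $2^{n+1} \leq 2^{n+r}$ for $r \geq 1$, one can regard $g$ as a sum of $2^{n+r}$ squares (padding with zeros) and invoke Pfister's multiplicative formula in the field of meromorphic functions to conclude that $f = g h$ is a sum of $2^{n+r}$ squares, with denominators still controlled by $\{f=0\}$. For (ii), applying Theorem \ref{vmain}(i) gives $h$ as an infinite sum of squares, and if $g = \sum_{i=1}^p g_i^2$ and $h = \sum_{j \geq 1} h_j^2$ then $f = g h = \sum_{i,j}(g_i h_j)^2$ is manifestly an infinite sum of squares (the convergence is preserved because $g$ contributes only finitely many factors).

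For (iii), the hypothesis that each $h_k$ is an infinite sum of squares at its zero set means in particular that some $f_k$ in the sheaf-product $h$ is an infinite sum of squares, so Theorem \ref{vmain}(iii) produces a positive semidefinite $q \in \an(\R^n)$ with $\{q=0\} \subset \{h=0\}$, $\dim\{q=0\} < \dim\{h=0\}$, and $qh$ an infinite sum of squares. Then $qf = g \cdot (qh)$ is a finite-times-infinite sum of squares, hence infinite. The chain of inclusions $\{q=0\} \subset \{h=0\} \subset \{f=0\}$ and the inequality $\dim\{h=0\} \leq \dim\{f=0\}$ together give the required $\{q=0\} \subset \{f=0\}$ and $\dim\{q=0\} < \dim\{f=0\}$. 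The substantial content of the corollary has already been absorbed into Lemma \ref{2decom} and Theorem \ref{vmain}; the only thing to verify carefully is that the dimension drop in (iii) transfers from $\{h=0\}$ to $\{f=0\}$, which is immediate since $\{h=0\} \subset \{f=0\}$.
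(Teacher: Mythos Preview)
Your proof is correct and follows exactly the route the paper intends: the text states just before the corollary that it is a consequence of Lemma \ref{2decom} and Theorem \ref{vmain}, and you have spelled out that deduction. One small refinement for part (i): when you ``invoke Pfister's multiplicative formula in the field of meromorphic functions'' to multiply $g=\nsq{2^{n+1}}$ and $h=\nsq{2^{n+r}}$, the abstract field-level formula for $2^s$ with $s\geq 4$ can introduce new denominators, so to keep them controlled you should pass through the matrix form (Lemma \ref{matrix} and Remark \ref{matrixr}) --- build matrices $M_g,M_h\in\mathfrak{M}_{2^{n+r}}(\an(\R^n))$ with $M_g^tM_g=d_g^2\,g\,I$ and $M_h^tM_h=d_h^2\,h\,I$, whence $(M_gM_h)^t(M_gM_h)=(d_gd_h)^2 f\,I$ with $\{d_gd_h=0\}\subset\{f=0\}$.
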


\begin{cor}\label{con2}
Let $f:\R^n\to\R$ be a positive semidefinite analytic function and let $\{h_k\}_{k\geq1}$ be its special factors. We have,
\begin{itemize}
\item[(i)] If each $h_k$ is of Nash type, then $f$ is a sum of $2^{n+1}$ squares (with controlled denominators).
\item[(ii)] If each $\{h_k=0\}$ is either compact or of Nash type, then $f$ is an infinite sum of squares.
\end{itemize}
\end{cor}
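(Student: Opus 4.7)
The plan is to obtain both parts as essentially direct corollaries of Lemma \ref{2decom}, which cleanly separates the ``easy'' part of $f$ from the special factors, together with the bounded Pfister machinery (Theorem \ref{nash}) for (i) and Corollary \ref{con1}(ii) for (ii).

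For part (i), I would start by invoking Lemma \ref{2decom} to write $f = gh$, where $g$ is a sum of $2^{n+1}$ squares of analytic functions on $\R^n$ and $h = \shprod h_k$ is positive semidefinite with irreducible factors equal to the special factors $\{h_k\}_{k\geq 1}$ of $f$. Since the special factors arise from the irreducible decomposition of $\{f=0\}$, the family $\{\{h_k=0\}\}_{k\geq 1}$ is locally finite. By hypothesis each $h_k$ is of Nash type, so Theorem \ref{nash} applies directly to this collection and yields that $h$ is a sum of $2^{n+1}$ squares of meromorphic functions with controlled denominators. The final step is then Pfister's classical multiplicative formula $\nsq{2^{n+1}} \cdot \nsq{2^{n+1}} = \nsq{2^{n+1}}$ in the field $\mathcal{M}(\R^n)$, which combines the representations of $g$ and $h$ into a representation of $f = gh$ as a sum of $2^{n+1}$ squares of meromorphic functions. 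For the controlled-denominator claim, one notes that the Pfister identity expresses the denominator of the product as a product of the two original denominators, whose zeroset is contained in $\{g=0\}\cup\{h=0\} \subset \{f=0\}$.

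For part (ii), the strategy is to verify the hypothesis of Corollary \ref{con1}(ii), namely that each special factor $h_k$ is a finite sum of squares of meromorphic functions on a neighborhood of $\{h_k=0\}$, and then invoke that corollary. Given a special factor $h_k$, either $\{h_k=0\}$ is compact, in which case Theorem \ref{compatto} yields a representation $g_0^2 h_k = \sum_{j=1}^N g_j^2$ with $g_0, \ldots, g_N \in \an(\R^n)$, so $h_k$ is a finite sum of squares of meromorphic functions at its zeroset; or $h_k$ is of Nash type, in which case the Artin-type representation of positive semidefinite Nash functions as sums of $2^n$ squares in the field of Nash meromorphic functions (together with the unit relating $h_k$ to its underlying Nash representative) gives a finite sum of squares representation at $\{h_k=0\}$. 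With both alternatives producing finite sums of squares at each $\{h_k=0\}$, Corollary \ref{con1}(ii) immediately yields that $f$ is an infinite sum of squares.

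Neither part really has a ``hard step'' in its own right: the genuine technical content (the unbounded Pfister formula in Theorem \ref{main2}, the Hilbert space bundle construction, the sheaf-extension apparatus, and Lemma \ref{2decom}) has already been carried out. The only point that requires mild care is bookkeeping in (i): one must check that Pfister's identity preserves the inclusion $\{\text{denominator} = 0\} \subset \{f=0\}$, and one must confirm that the $2^{n+1}$ bound coming from Theorem \ref{nash} matches the $2^{n+1}$ bound coming from Lemma \ref{2decom} so that a single application of Pfister suffices rather than iterated multiplication.
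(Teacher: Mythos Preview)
Your plan matches the paper's own terse indication (``Theorems \ref{compatto}, \ref{vmain} and \ref{nash} together with Lemma \ref{2decom} provide the following results''), and part (ii) is correct exactly as you describe it: Theorem \ref{compatto} and the Nash $2^n$-bound place each special factor under the hypothesis of Corollary \ref{con1}(ii).

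For part (i), however, your justification of the controlled-denominator clause contains an inaccuracy. You assert that ``the Pfister identity expresses the denominator of the product as a product of the two original denominators''. This is true only for $r\leq 3$ (Remark \ref{matrixr}(ii), via complex numbers, quaternions, octonions). For $2^r$-square identities with $r\geq 4$, Pfister's multiplicative formula in a field necessarily introduces extra rational denominators built from \emph{partial} sums of the input squares $a_i^2$, and the zero loci of such partial sums need not lie in $\{f=0\}$. So while the field identity does give $f=gh$ as a sum of $2^{n+1}$ squares in $\mathcal{M}(\R^n)$, the inclusion $\{\text{denominator}=0\}\subset\{f=0\}$ does not come for free. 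One repair is to use the matrix machinery of Lemmas \ref{matrix0}--\ref{matrix}, but that costs an extra doubling of the exponent; another is to bypass the splitting $f=gh$ and feed \emph{all} irreducible factors $f_\alpha^{e_\alpha}$ of $f$ into a single application of Theorem \ref{main1}, after checking that each is already a sum of $2^n$ squares at its zero set (the Nash-type special factors by the Nash $2^n$-bound, the non-invariant factors as sums of two squares, the even-multiplicity factors as single squares). The paper does not spell any of this out, so your oversight is understandable, but the sentence as written is wrong for $n\geq 3$.
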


We have another corollary that can be considered as a converse to what we saw in Section \ref{pitagora}.

\begin{cor}\label{pit-1}
Assume the Pythagoras number $p(\mathcal M (\R^n) =p<\infty$. Let $f$ be a positive semidefinite analytic function and let $\{h_k\}_{k\geq1}$ be its special factors. Assume each $h_k$ is a sum of squares. Then $f$ is a finite sum of squares of meromorphic functions.
\end{cor}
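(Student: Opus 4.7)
The strategy is to combine Lemma \ref{2decom} with the Pythagoras hypothesis to feed the situation into Corollary \ref{con1}(i). By Lemma \ref{2decom} I can write $f = g\, h$, where $g$ is already a finite sum of (at most $2^{n+1}$) squares of meromorphic functions, and $h = \shprod h_k$ has the special factors of $f$ as its irreducible factors. Consequently $f$ will be a finite sum of squares of meromorphic functions provided $h$ is, and representing $h$ is exactly the situation addressed by Corollary \ref{con1}(i), once uniform control on the number of squares needed for each $h_k$ is established.

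To obtain this uniform control, I would invoke the hypothesis on the Pythagoras number. By assumption each $h_k$ is a sum of squares in $\mathcal{M}(\R^n)$, and by definition of $p = p(\mathcal{M}(\R^n)) < \infty$ every such sum in the field $\mathcal{M}(\R^n)$ can be rewritten as a sum of at most $p$ squares of meromorphic functions. Choose an integer $r$ with $2^r \geq p$; then each $h_k$ is in particular a sum of $2^r$ squares of meromorphic functions on $\R^n$. Restricting such a global representation to an arbitrarily small neighborhood of $\{h_k=0\}$ shows that $h_k$ is also a sum of $2^r$ squares at $\{h_k=0\}$ in the germ sense required by Corollary \ref{con1}(i).

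With this uniform bound in hand, Corollary \ref{con1}(i) applies directly to $f$ and yields that $f$ is a sum of $2^{n+r}$ squares of meromorphic functions on $\R^n$ (with controlled denominators), which is the desired conclusion.

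The main conceptual point, rather than an obstacle, is simply to recognize that finiteness of the Pythagoras number is exactly the missing ingredient needed to convert the ``each $h_k$ is a (possibly huge) finite sum of squares'' hypothesis into the ``each $h_k$ is a sum of $2^r$ squares for a fixed $r$'' hypothesis of Corollary \ref{con1}(i); no further analytic or sheaf-theoretic work beyond the machinery already developed (Lemma \ref{2decom}, Theorem \ref{main1} via Corollary \ref{con1}(i)) is required. In this sense the corollary is a clean converse to Theorem \ref{pytagora}, extracting from the single numerical invariant $p$ a uniform representation that propagates through the factorization of $f$.
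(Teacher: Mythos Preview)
Your proof is correct and follows essentially the same route as the paper: use the finiteness of the Pythagoras number to bound the number of squares for each $h_k$ uniformly by some $2^r \geq p$, then invoke Corollary \ref{con1}(i) directly to obtain $f$ as a sum of $2^{n+r}$ squares. The detour through Lemma \ref{2decom} in your first paragraph is unnecessary since Corollary \ref{con1}(i) already applies to $f$ with its special factors (as you correctly do in your third paragraph), but this is harmless.
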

\begin{proof} Since $p(\mathcal M (\R^n)) =p$, each $h_k$ is a sum of $p$ squares of meromorphic functions. Assuming $p\leq 2^r$, by Cor \ref{con1} (i) we get $f$ is a sum of $2^{n+r}$ squares with controlled denominator.
\end{proof}

\section{Examples.}\label{esempi}

It is well known that the first example of a positive semidefinite polynomial which is not a sum of squares of polynomials is the celebrated Motzkin's polynomial, namely 

$$m(x,y) = x^4y^2 + x^2y^4 -3x^2y^2 +1.$$

Of course, by Artin's solution of H17 it is a sum of squares of rational functions. Such a representation is obtained in several ways. For both $(x^2+y^2+1)m(x,y)$ and $(x^2+y^2)^2 m(x,y) $ are sums of squares of polynomials.

From an analytic view point the first representation gives also $m(x,y)$ as a sum of squares of analytic functions (accordingly to what we saw in Section \ref{lowdim}).  

Using  the homogeneous Motzkin's polynomial $\tilde m (x,y,z) = x^4y^2 + x^2y^4 -3x^2y^2 z^2 + z^6$ one realizes that denominators are needed also in the analytic case. Indeed  let $f$ be a positive semidefinite analytic function in $\Oo(\R^n) \, n\geq 3$ and assume that the Taylor expansion of $f$ at some point of its zeroset is a sum of homogeneous polynomials  that begins with   $\tilde m$. If $f$ is a sum of squares of meromorphic functions, this representation is {\em with denominators. } 
Otherwise, we would get a representation of $\tilde m$ as a sum of squares of homogeneous polynomials. In particular, $\tilde m$ is not a sum of squares of analytic functions on $\R^3$ and  is a special analytic function whose zeroset is reducible.

Before giving a list of more sophisticated special factors we need a lemma.

\begin{lem}\label{irreducible}
The homogeneous polynomial $F(x,y,z)=(z+ax)^2(z+by)^2+z^4+c^2x^2y^2\in\R[x,y,z]$ is irreducible in $\C[x,y,z]$ for all
$a,b,c\in\R$ such that $a,b,c\neq 0$, $c^2\neq a^2b^2$.
\end{lem}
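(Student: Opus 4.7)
\textbf{Proof strategy for Lemma \ref{irreducible}.}

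Since $F$ is homogeneous of degree $4$ in three variables, any nontrivial factorization in $\C[x,y,z]$ must consist of homogeneous factors of degrees $(1,3)$ or $(2,2)$. Using the hypothesis that $a,b,c$ are \emph{real} and nonzero, I note once and for all that $a^{2}b^{2}+c^{2}>0$, so $F(x,y,0)=(a^{2}b^{2}+c^{2})x^{2}y^{2}\neq 0$; in particular $z\nmid F$. Expanding $F$ as a polynomial in $z$ gives
$$
F = 2z^{4}+2(ax+by)z^{3}+\bigl(a^{2}x^{2}+4abxy+b^{2}y^{2}\bigr)z^{2}+2abxy(ax+by)z+(a^{2}b^{2}+c^{2})x^{2}y^{2},
$$
whose leading coefficient in $z$ is the nonzero constant $2$. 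Consequently any linear factor must have the form $z+\ell(x,y)$ with $\ell\in\C[x,y]$ linear.

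The first step is to rule out the $(1,3)$ case. If $F=(z+\ell)H$, substituting $z=-\ell$ yields the identity $u^{2}=-\ell^{4}-c^{2}x^{2}y^{2}$ in $\C[x,y]$, where $u=\ell^{2}-(ax+by)\ell+abxy$. Writing $\ell=\alpha x+\beta y$, one has $u=Ax^{2}+Bxy+Cy^{2}$ with $A=\alpha(\alpha-a)$, $C=\beta(\beta-b)$, and $B=\alpha\beta+(\alpha-a)(\beta-b)$. Matching the coefficients of $x^{4}$ and $y^{4}$ forces $(\alpha-a)^{2}=-\alpha^{2}$ and $(\beta-b)^{2}=-\beta^{2}$ (unless $\alpha=0$ or $\beta=0$, which are handled separately and lead to contradictions in the $xy$-coefficient). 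This pins down $\alpha=a(1+\epsilon i)/2$, $\beta=b(1+\delta i)/2$ with $\epsilon,\delta\in\{\pm 1\}$. A direct computation gives $A=-a^{2}/2$ and $C=-b^{2}/2$. Then $B=\alpha\beta(1-\epsilon\delta)$: if $\epsilon\delta=1$, then $B=0$, which contradicts $2AB=-4\alpha^{3}\beta\neq 0$; if $\epsilon\delta=-1$, then $B=2\alpha\beta=ab$ and a short computation of $\alpha^{3}\beta$ gives $-2\alpha^{3}\beta=-\epsilon i\,a^{3}b/2$, while $AB=-a^{3}b/2$, forcing $\epsilon=-i$, impossible.

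The second step is to rule out the $(2,2)$ case. Writing $F=G_{1}G_{2}$ with $G_{i}=A_{i}z^{2}+L_{i}(x,y)z+Q_{i}(x,y)$, comparison of the constant (in $z$) term gives $Q_{1}Q_{2}=(a^{2}b^{2}+c^{2})x^{2}y^{2}$. Since $a^{2}b^{2}+c^{2}\neq 0$, unique factorization in $\C[x,y]$ leaves (up to swapping $G_{1},G_{2}$) only two subcases:
\begin{itemize}
\item[(A)] $Q_{1}=\lambda x^{2}$, $Q_{2}=\mu y^{2}$ with $\lambda\mu=a^{2}b^{2}+c^{2}$. Matching the coefficient of $z$ forces $L_{1}=(2ab^{2}/\mu)x$ and $L_{2}=(2a^{2}b/\lambda)y$; matching the coefficient of $z^{3}$ (using $A_{1}A_{2}=2$) gives $\lambda=A_{1}a^{2}$ and $\mu=A_{2}b^{2}$, hence $\lambda\mu=2a^{2}b^{2}$, so $c^{2}=a^{2}b^{2}$, contradicting the hypothesis.
\item[(B)] $Q_{1}=Q_{2}=\lambda xy$ with $\lambda^{2}=a^{2}b^{2}+c^{2}$. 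The coefficient of $z$ gives $\lambda(L_{1}+L_{2})=2ab(ax+by)$ and the coefficient of $z^{3}$ gives $A_{1}L_{2}+A_{2}L_{1}=2(ax+by)$, so both $L_{i}$ are scalar multiples of $ax+by$. Comparing $L_{1}L_{2}$ with the $z^{2}$ coefficient one obtains $L_{1}L_{2}=(ax+by)^{2}$ and $(A_{1}+A_{2})\lambda=2ab$. Writing $L_{i}=c_{i}(ax+by)$, the relations $c_{1}c_{2}=1$, $c_{1}+c_{2}=A_{1}+A_{2}$, $A_{1}c_{2}+A_{2}c_{1}=A_{1}A_{2}=2$ force $(A_{1}-A_{2})^{2}=-4$, so $A_{1}+A_{2}=\pm 2$ and thus $\lambda=\pm ab$; but then $\lambda^{2}=a^{2}b^{2}$ gives $c^{2}=0$, contradicting $c\neq 0$.
\end{itemize}

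The main technical obstacle is the $(1,3)$ case with $\alpha\beta\neq 0$: the pairs of coefficient conditions (of $x^{4}$ and $x^{3}y$ together with those of $y^{4}$ and $xy^{3}$) are the essential ones, and their simultaneous resolution produces the impossible equation $\epsilon=-i$. The $(2,2)$ case is conceptually easier because the $z^{0}$-term $Q_{1}Q_{2}=(a^{2}b^{2}+c^{2})x^{2}y^{2}$ drastically limits the possible shapes of $Q_{1},Q_{2}$, and in both resulting subcases the hypothesis $c^{2}\neq a^{2}b^{2}$ (respectively $c\neq 0$) is precisely what is needed to reach a contradiction.
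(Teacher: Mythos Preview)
Your argument is correct and complete up to one small omission in case~(B): the claim that both $L_i$ are scalar multiples of $ax+by$ follows from the pair of equations $L_1+L_2=(2ab/\lambda)(ax+by)$ and $A_1L_2+A_2L_1=2(ax+by)$ only when $A_1\neq A_2$. If $A_1=A_2=A$ those two equations become proportional and immediately give $\lambda=Aab$, hence $\lambda^2=2a^2b^2=a^2b^2+c^2$, i.e.\ $c^2=a^2b^2$, contradicting the hypothesis. With this one-line patch your proof is sound.

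Your route is genuinely different from the paper's. You expand $F$ in $z$ (where it has degree~$4$) and treat the two possible degree splits $(1,3)$ and $(2,2)$ by direct coefficient comparison; the computation is systematic and uses nothing beyond the arithmetic of the coefficients. The paper instead writes $F$ as a polynomial in $x$, where it has degree~$2$: $F=Ax^2+2Bx+C$ with $A,B,C\in\R[y,z]$ pairwise coprime. A real linear factor in $x$ is excluded geometrically, because its zero set would be $2$-dimensional whereas $\{F=0\}\cap\R^3$ is a union of two lines. A pair of complex-conjugate linear factors would force the discriminant $B^2-AC$ to be a square in $\C[y,z]$; since $B^2-AC$ is real this square root is purely imaginary, so $AC=B^2+H^2=(B+iH)(B-iH)$ for some real $H$, and comparing this with the explicit irreducible factorizations of $A$ and $C$ over $\C$ forces $c=\pm ab$. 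The paper's argument is shorter because viewing $F$ as quadratic in $x$ collapses the case analysis into a single discriminant condition, but it imports the real-geometric observation about $\dim_\R\{F=0\}$; your approach is longer but entirely self-contained over~$\C$.
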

\begin{proof}
Write $F$ as polynomial in $\C[y,z][x].$

$$
F=Ax^2+2Bx+C
$$

where
$$
\begin{array}{l}
A=a^2z^2+2a^2byz+(a^2b^2+c^2)y^2= a^2(z+by)^2+c^2y^2\\
B=az(z+by)^2\\
C=z^2((z+by)^2+z^2).
\end{array}
$$

Observe that $A,B,C$ as polynomials  in $y,z$ are coprime. Indeed, $\gcd(B,C)=z$ and $z$ does not divide $A$.

Assume $a,b,c,a^2b^2-c^2\neq 0$. Suppose that $F$ is reducible and
write  $F=G_{1}G_{2}$.

If $G_{1}$ (or $G_2)$ is a polynomial of degree $0$ with respect to $x$ then,  either it has real coefficients or not, in the second case consider  $G_1\overline{G_1}$. Observe that in both cases we find   a common divisor of $A,B,C$, which is a contradiction.

  Next, we see that $F$ cannot be written as the product of two linear factors with respect to $x$. Assume 
$$
F=(\alpha_{1}x+\beta_{1})(\alpha_{2}x+\beta_{2}).
$$
If the factors are real, the set $\{\alpha_{1}x+\beta_{1}=0\}\cap \R^3$, which has dimension $2$, would be a subset of $\{F=0\}\cap \R^3$, while the real zeroset of $F$      is the union of the two lines $x=0,z=0$ and $y=0,z=0$, so it has dimension $1$, which is a contradiction. 

Thus,  $F$ is irreducible as a real polynomial.

Next suppose the factors are not real.   Hence $F$, as polynomial in $x$,  has  
 two conjugated roots  which are rational functions in $\C(y,z)$.
This implies that 
 $\sqrt{B^2-AC}$ is a rational function $H'$ whose square is the real polynomial  $B^2 -AC$. But $H'$ satisfies an integral dependence equation, namely $T^2 -(B^2-AC)=0$, hence it is in fact a polynomial because the ring of polynomials is integrally closed in its quotient field. 
A computation shows that $H'$ must be either a real polynomial or a polynomial with pure imaginary coefficients.  The first case was already considered, so we can assume $H'=\sqrt{B^2-AC}\in i \R[y,z]$.  So $H = \frac{1}{i}H' = \sqrt{AC-B^2} \in \R[y,z]$.
 Therefore, $AC=B^2+H^2$, where $H\in\R[y,z]$ is a quadratic form. Look at the factors of $B^2+H^2$, $A$ and $C$ and observe that

\begin{align*}
B^2+H^2 &=  (B+iH)(B-iH) \\ 
A  &= (az+aby+icy)(az+aby-icy) \\ 
C &= z^2(z+by+iz)(z+by-iz).
 \end{align*}

Hence, we have essentially the following possibilities.
\begin{itemize}\parindent= 0pt
\item[(i)] $C$ divides $B+iH$. Since $C\in\R[y,z]$, we have that $C$ divides $B$, which is a contradiction.
\item[(ii)] $E_{1}=(az+aby+icy)(z+by+iz)= az^2+(2ab -c)yz+a^2b^2y^2+i(bcy^2+az^2 +(c+ab)yz)$ (which is a factor of $A$ times a factor of $C$) divides $B+iH$, that is, $B+iH=\eta E_{1}$ for some $\eta= \lambda +i \mu \in\C$. Then, 
$$\lambda(az^2+(2ab-c)yz+b^2ay^2)+\mu(az^2+(ab+c)yz+cby^2)=B=$$
 $$=a(z+by)^2=az^2+2abyz+ab^2y.$$
Thus, $\lambda+\mu=1$, $\lambda(2ab-c)+\mu(ab+c)=2ab$ and $\lambda ab^2+\mu cb=ab^2$. Therefore, 
$$
\lambda=1-\mu,\qquad \mu(2c-ab)=c,\qquad b\mu(c-ab)=0.
$$
As $b,c$ are not $0$, we deduce that $c=ab$, which is a contradiction.
\item[(iii)] $E_{2}=(az+aby+icy)(z+by-iz)$ divides $B+iH$. Proceeding as in the previous case, we deduce that $c=-ab$, which is a contradiction.
\end{itemize} 
We conclude that $F$ is irreducible in $\C[x,y,z]$.
\end{proof}

\begin{examples}\label{examples} 

\begin{enumerate}{\parindent=0pt
\item For each $k\geq 0$ consider the polynomial functions on $\R^2$ and $\R^3$ respectively given by the formulae $f_k(x,y)=(x-k)^2+y^2-1$ and $g_k(x,y,z)=z+f_k$. Let $f:\R^2\to\R$ and $g:\R^3\to\R$ be the sheaf-products of the $f_k$'s and the $g_k's$ respectively. For each $k,\ell\geq0$ with $\ell\neq k$, we have $\{f_k=0\}\cap\{f_\ell=0\}\neq\varnothing$ if and only if $|\ell-k|\leq1$; and, if such is the case, the two intersection points of $\{f_k=0\}$ and $\{f_\ell=0\}$ are 
$$
p_{\ell,k,\pm}=\Big(\frac{\ell+k}{2},\pm\frac{\sqrt{3-(k-\ell)^2}}{2}\Big).
$$ 
For each real number $M>0$, consider the analytic function 
$$
h_M:\R^3\to\R,\ (x,y,z)\mapsto g^2+z^4+M^2f^2(z).
$$
Observe that $\{h_M=0\}=\bigcup_{k\geq0}\{(x,y,0):\ (x-k)^2+y^2-1\}$, which is a countable locally finite union of circunferences, which are compact analytic subsets of $\R^3$. Observe that for each $k,\ell\geq0$ with $k\neq\ell$ and $|\ell-k|\leq1$, the initial form of the germ of $h_M((x,y,z)+q_{\ell,k,\pm})$ at $q_{\ell,k,\pm}=(p_{\ell,k,\pm},0)$ is a homogeneous polynomial of the type
$$
P_{\ell,k,\pm}=\alpha^2(z+u)^2(z+v)^2+z^4+M\beta^2u^2v^2
$$
where $u=(\ell-k)x\pm\sqrt{3-(k-\ell)^2}y$ and $v=-(\ell-k)x\pm\sqrt{3-(k-\ell)^2}y$ and $\alpha,\beta$ are nonzero real numbers depending of $q_{\ell,k,\pm}$. By Lemma \ref{irreducible} we may choose $M>0$ in such a way that all the homogeneous polynomials $P_{\ell,k,\pm}$ are irreducible. Hence for this choice of $M$  we get that $h_M$ is irreducible, so it is a  special factor whose zeroset is a locally finite union of compact analytic sets. \qed

\item Let $H:\C\rightarrow\C$ be an invariant holomorphic function such that
$H^{-1}(0)=\{k\in\Z:\ k\geq 0\}$ and $H$ has a zero of order one at each point of its zero set. Such a function
exists by the Weierstrass Factorization Theorem. Let $a_k=H'(k)\neq 0$ for all integer $k\geq 0$ and let $M>0$
be a real number such that $M^2\neq\dfrac{1}{a_k^2a_\ell^2}$  for all couple of integers $k,\ell\geq 0$. Let
$$
F(x,y,z)=(z+\sin(\pi x))^2(z+\sin(\pi y))^2+z^4+M^2H(x)^2H(y)^2,
$$
which is an invariant holomorphic function on $\C^3$.

Let us show that $f=F|_{\R^3}$
is a special analytic function whose real zero set is the net
$$
S=\bigcup_{k\geq 0}\{x=k,z=0\}\cup\bigcup_{\ell\geq 0}\{y=\ell,z=0\},
$$
which has infinitely many irreducible components. 

 Indeed, a straighforward computation shows that $\{F=0\}\cap \R^n=S$. To show that $f$ is a special analytic function we have to check that the restrictions of $F$ to small enough invariant neighbourhoods $U$ of $\R^n$ in $\C^n$ cannot be written as the product of two holomorphic functions $G_{1},G_{2}:U\to\C$ such that $G_{i}^{-1}(0)\cap\R^n\neq \varnothing$.
First, we show some crucial properties of $F$ to prove the irreducibility of $f$.
\begin{itemize}{\parindent=0pt
\item[(a)] For each pair of non-negative integers $k,\ell\geq 0$ consider the point $p_{k,\ell}=(k,\ell,0)$. Then the function germs $F_{p_{k,\ell}}$ are irreducible  in the ring $\an_{\C^3,p_{k,\ell}}$ for all $k,\ell$. This is because their initial forms at the points $p_{k,\ell}$ are irreducible by Lemma \ref{irreducible}.
\item[(b)] For each integer $k\geq 0$ and each $\lambda\in\R\setminus\Z$ consider the point $p_{k,\lambda}=(k,\lambda,0)$. Then, for all $k,\lambda$ as before, the function germs $F_{p_{k,\lambda}}$ are the product  in $\an_{\C^3,p_{k,\lambda}}$ of two irreducible factors of order $1$ that vanish at the line germ $x=k,z=0$.

  Indeed, after  translating the point $p_{k,\lambda}$  to the origin,
  the initial form of  $F_{p_{k,\lambda}}$ is $(z+ax)^2b^2+c^2x^2$ for
  some   real   number   $a,b,c>0$.   Thus,   by   classification   of
  singularities, $F_{p_{k,\lambda}}$ is analytically equivalent either
  to  $x^2+z^2$ or  to a  polynomial of  the type  $x^2+z^2+\veps y^k$
  where  $k\geq  3$   and  $\veps=\pm  1$.  Since  the   zero  set  of
  $F_{p_{k,\lambda}}$  is the  line germ  $x=k,z=0$, we  conclude that
  $F_{p_{k,\lambda}}$ is analytically equivalent to $x^2+z^2$. Hence,
$$
F_{p_{k,\lambda}}=F_{1}^2+F_{2}^2=(F_{1}+i F_{2})(F_{1}- iF_{2}),
$$ 
where the factors $F_{1}+iF_{2},F_{1}-i F_{2}$ have order $1$, are irreducible and  vanish at the line germ $x=k,z=0$. 
}\end{itemize}

Suppose now that there exist an open invariant neighbourhood $U$ of
$\R^n$ in $\C^n$ and two holomorphic functions $G_{1},G_{2}:U\to\C$
such that $G_{i}^{-1}(0)\cap\R^n\neq \varnothing$ and
$F=G_{1}G_{2}$. Then, for each $p\in S=F^{-1}(0)\cap\R^n$ we have that
$F_{p}=G_{1,p}G_{2,p}$. Since the line $x=0,z=0$ is irreducible and it
is contained in $\{F=0\}$, we may assume that it is also contained in
$\{G_{1}=0\}$. As the germ $F_{p}$ is irreducible for the points
$p=p_{0,\ell}=(0,\ell,0)$, we have that all the lines $y=\ell,z=0$ are
contained in $\{G_{1}=0\}$. Furthermore, since the germ $F_{p}$ is
irreducible for the points $p=p_{k,0}=(k,0,0)$, we have that all the
lines $x=k,z=0$ are contained in $\{G_{1}=0\}$. Hence, $S$ is a subset of
$\{G_{1}=0\}$. Again, since the germ $F_{p}$ is irreducible for the points
$p=p_{k,\ell}=(k,\ell,0)$, we have that no line of $S$ can be
contained in $\{G_{2}=0\}$. As the lines are irreducible, we deduce
that $G_{2}^{-1}(0)\cap \R^n$ is a discrete set contained in
$S$ but which does not intersect the set $\{(k,\ell,0):\ k,\ell\geq
0\}$.

Next,   we   take   $p\in  G_{2}^{-1}(0)\cap\R^n$.   We   may   assume
$p=(k,\lambda,0)$   for  certain   integer  $k\geq   0$  and   certain
$\lambda\in\R$   which  is   not   a  non   negative  integer.   Since
$F_{p}=G_{1,p}G_{2,p}$  is a  product  of two  irreducible factors  of
order $1$  that vanish at  the line  germ $x=k,z=0$, we  conclude that
$G_{2,p}$ must vanish at the line germ $x=k,z=0$, which is a contradiction.

Thus, $f$ is a special analytic function. 
}\end{enumerate}
\end{examples}

We provide some examples of positive semidefinite analytic functions whose special factors are costructed using the examples above.

\vspace{2mm}
\begin{examples}\label{eh17}
\begin{enumerate}{\parindent=0pt
\item Let $f_{0}:\R^3\rightarrow\R$ be the analytic function given by
$$
f_{0}(x,y,z)=(z+x)^2(z+y)^2+z^4+4x^2y^2,
$$
which by Lemma \ref{irreducible} is a special analytic function.  Note that $f_0$ is a sum of three squares of analytic functions. 

For each integer $\ell\geq 1$, let $f_{\ell}(x,y,z)=f_{0}(x-q_\ell,y-(\ell-q_\ell),z)$ where $q_{\ell}=\left [{\frac{\ell}{2}}\right ]$. The zero set of $f_\ell$ is 
$$
X_\ell= \{x=q_\ell,z=0\}\cup\{y=\ell-q_\ell,z=0\}.
$$
Since the family $\{X_\ell\}_\ell$ is locally finite, the set $X=\bigcup_{\ell}X_\ell$ is closed in $\R^3$. Hence,
the sheaf
$$
\J_x=
\left\{
\begin{array}{lcl}
\prod_{\ell,x\in X_\ell}f_\ell\cdot \an(\R^3)_x&\text{if}& x\in X\\[5pt]
\an(\R^3)_x&\text{if}& x\not\in X\\ 
\end{array}
\right.
$$
is a subsheaf of the structure sheaf $\an_{\R^3}$. Observe that $\J$ is a locally principal coherent sheaf of ideals whose zeroset is $X$, hence it is  principal. Let  $f$ be a global generator.  Note that since $X$ has codimension $\geq 2$ we may assume that $f$ is positive semidefinite on $\R^3$. Clearly, the special factors of $f$ are the functions $f_\ell$, for $\ell\geq 1$. 

Next, note that 
$$
X_{\ell}\cap X_{\ell+1}=
\left\{
\begin{array}{ll}
\{x=q_\ell,z=0\}&\text{if $\ell$ is even,}\\[5pt]
\{y=\ell-q_\ell,z=0\}&\text{if $\ell$ is odd,}
\end{array}
\right.
$$
which is not a discrete set for all $\ell\geq 1$. 

\item
Let $f_{0}:\R^3\rightarrow\R$ be the special analytic function described in \ref{examples} (2), which is a sum of three squares in $\an(\R^3)$.  For each integer $\ell\geq 1$ consider the analytic function $f_{\ell}(x,y,z)=f_{0}(x-\ell,y-\ell,z)$, whose zero set is 
$$
X_\ell=\bigcup_{k\geq \ell}\{x=k,z=0\}\cup\bigcup_{k\geq \ell}\{y=k,z=0\}
$$
Since the family $\{X_\ell\}_\ell$ is locally finite,   there exists a positive semidefinite analytic function $f:\R^n\to\R$ whose special factors are the functions $f_\ell$, for $\ell\geq 1$ (and each one divides $f$ with multiplicity one). 
\item To describe next example, consider the following distribution of the natural numbers in infinite arrays.
$$
\begin{array}{ccccccc}
\quad\swarrow&\quad\swarrow&\quad\swarrow&\quad\swarrow&\quad\swarrow&\quad\swarrow\\
1&2&4&7&11&16&\cdots\\
3&5&8&12&17&\cdots\\
6&9&13&18&\cdots\\
10&14&19&\cdots\\
15&20&\cdots\\
21&\cdots\\
\vdots&\ddots
\end{array}
$$
and the finite sets $S_k=\{a_k+1,\ldots,a_k+k\}$ where $a_k=\frac{1}{2}k(k-1)$, for $k\geq 1$ corresponding to the $k^{th}$ oblique line. The set $S_k$  (in matrix notation) is given by $\{\alpha_{1,k},\alpha_{2,k-1},\ldots,\alpha_{k,1}\}$ of the previous array. 

For each $k\geq 1$ we also construct (inductively) a set 
$$T_k=\left\{b_{kj}:\ k-\left [{\frac{k}{2}}\right ]\leq j\leq k-1\right\}$$ 
such that
$b_{kj}\in S_j\setminus\bigcup_{\ell=1}^{k-1}T_\ell$. We take $T_1=\varnothing$ and $T_2=\{1\}$. By the definition of the sets $T_k$, for any given $j$, we have $S_j\cap T_k\neq\varnothing$ if and only if $k-\left [{\frac{k}{2}}\right] \leq j\leq k-1$. Thus, $j+1\leq k\leq 2j$, and this means that $S_j$ intersects exactly $j$ of the $T_k$'s, which are $T_{j+1},\ldots,T_{2j}$. Since the set $S_j$ has $j$ different elements then the $T_k$ can be constructed with the desired conditions. We denote $C_k=S_k\cup T_k$.

Next, for each $k\geq 1$ we consider the holomorphic function
$$
F_k=(\sin(\pi x)+z)^2(\sin(\pi y)+z)^2+z^4+M^2(x-k)^2\prod_{\ell\in C_k}(y-\ell)^2
$$
where $M>0$ is a positive real number such that $M^2\cdot\prod_{\ell\in C_k,\ell\neq j}(j-\ell)^2\neq 1$ for all $j\in C_k$. We have that the real analytic function $f_k={F_k}_{|\R^n}$ is a special analytic function whose real zero set is 
$$
X_k=\{x=k,z=0\}\cup\bigcup_{\ell\in C_k}\{y=\ell,z=0\}.
$$
One can check, proceeding similarly to  Exemple \ref{examples}(2) , that $f$ is a special factor.

Once again, the family $\{X_k\}_k$ is locally finite. Hence, the set $X=\bigcup_{k}X_k$ is closed in $\R^3$ and there exists a positive semidefinite analytic function $f:\R^n\to\R$ whose special factors are the functions $f_k$, for $k\geq 1$.

}\end{enumerate}
\end{examples} 

As a consequence of Theorem \ref{main1} all examples  in \ref{eh17} are sums of at most  $2^3$  squares.

\subsection*{ Bibliographic and Historical Notes.} \rm

The theory of real fields was developped by Artin and Schreier (\cite{arsc}) in order to solve  Hilbert's $17^\text{th}$ problem, that was indeed solved by Artin in \cite{ar}. One can found the whole theory  in many textbooks of algebra, for example \cite{lang}. The whole theory in the real algebraic setting is excellentely exposed in \cite{bcr}. Concerning Artin-Lang homomorphism E. Artin proved the homomorphism theorem for $\R$ , S. Lang extended his result to any real closed field.

The proof of Theorem \ref{anbn} follows the one  of Lassalle \cite{la}. Note that  the same type of ideas allows to prove the Nullstellensatz for the ring $\Oo_n$.

The idea of using  fiber bundles to solve H17 is inspired by Jaworski \cite{j1}

Classification of 1-dimensional real analytic manifolds  can be found in \cite{mi}.
For topological triviality of vector bundles of rank 2 over $S^1$ see \cite{st}, which is also a good reference for generalities on vector bundles .

 Analytic triviality for topological trivial bundles on analytic manifolds follows by  Cartan's invertible holomorphic matrices theorem \cite{c3} whose real version was  proved by Tognoli \cite {t3}.

The number of squares needed to describe a nonnegative analytic function germ of 2 variables (that is $p(\R\{x_1,x_2\})$ appears in \cite{br}.

Section  2.B is strongly inspired by  \cite{j1}.

Pytagoras numbers of singular curves and surfaces can be found in \cite{abfr2} and \cite{abfr1}.

 The proof of Lemma \ref{estensione} follows from   Corollary 3.3 in Chapter 7 of \cite{abr}. It was firstly proved by Ruiz in \cite{rz2}. 

Theorem \ref{compatto} was proved indipendentely by Jaworski \cite{j2} and Ruiz \cite {rz0}.

Section 4  contains the results of \cite{abfr3}. The case of semidefinite analytic functions with discrete zeroset was proved  in \cite{bks}.
The criterion for an analytic function to be $k$-determined is Theorem 9.1.3
of the book \cite{jp}.

The multiplicative Pfister's formula says that in a field $F$   if $n=2^k$ then the identity $(x_1^2+\cdots +x_n^2 )(y_1^2+\cdots +y_n^2) =(z_1^2+\cdots +z_n^2)$ holds true.  It can be found in \cite{pf}. 

In Section 6 are exposed mainly the results in \cite{abf3}.  Functions admitting different factorizations can be found in \cite{sh}.

The positive answer to H17 for Nash functions was proved by Mostowski \cite{m} and Efroymson \cite{e}, see also \cite[8.5.6, 8.8.4]{bcr} and \cite[7.4]{mh1}.

The proof of Lemma \ref{matrix0}   is  a slight modification of the classical procedure to construct such matrices (see \cite[7.1]{mh1}).

Artin-Mazur Theorem can be found in \cite[8.8.4]{bcr} and \cite[7.4]{mh1}.

For unbounded Pfister formula we use bundles having as fiber a not finite dimensional Hilbert space: general references for this subject are \cite{hp} and \cite{zkkp}.

Several examples of Section \ref{esempi} come from  \cite{fe2}.

\newpage

\chapter{Analytic inequalities.}



In Real  Geometry it is natural to consider also inequalities. In this way arose    the concept of {\em semianalytic set}, due to \L ojasiewicz, which generalises to the real analytic setting the concept of semialgebraic set, imitating the local definition of a real analytic set. 

{\sc Definition.}
Let $M$ be a real analytic manifold. A set $S\subset M$ is a {\em semianalytic subset of} $M$ if for each $x\in M$ there is an open neighborhood $U^x\subset M$ such that the intersection $S\cap U^x$ is a finite union of sets of the form $\{f=0, g_1>0,\ldots , g_s>0\}$ where $f,g_1,\ldots ,g_s\in\Oo(U^x)$.

The class of semianalytic sets behaves well with respect to boolean and topological
 operations, but it is not stable under proper analytic maps. This fact led 
 \L ojasiewicz  and Hironaka and many others to introduce
  and develop the theory of {\em subanalytic sets}.

The definition of C-analytic set that we saw in Chapter 1 is of global nature while  semianalytic sets are  locally defined as complex analytic sets.

 Some remarkable subsets of a C-analytic set $X$  are known to be semianalytic. For instance, the set of points  where the local dimension of $X$ is equal to a certain integer, or the set of points  where $X$ is not coherent. The semianalytic nature of these sets makes no reference to the global nature of the C-analytic set $X$. Thus, it seems reasonable to ask whether there exists a notion of global semianalytic set that restricts the class of semianalytic sets and mimics the definition of C-analytic sets proposed by Cartan. More precisely, we wonder whether there exists a class of semianalytic sets defined using only global analytic functions defined on $M$, but having a similar behavior with respect to boolean and topological operations as that of \L ojasiewicz semianalytic sets. In addition, we would like that such class satisfies also some reasonable properties with respect to images under proper analytic maps. 

\section{Global semianalytic sets.}\label{gas}

A first tentative in the  direction above was explored by Andradas, Broecker and Ruiz under compactness assumptions and by Andradas and Castilla in the general approach but only for low dimension. 
We recall their definition.

\begin{defn} \label{glas} A {\em global semianalytic subset} of a real connected analytic manifold $M$  or a C-analytic space $Z$ is a definable subset with respect to the ring $\Oo(M)$, resp. $\Oo(Z)$,  that is  a finite boolean combination of equalities and inequalities involving global analytic functions on $M$ or $Z$. 
\end{defn}
\smallskip

This class is clearly closed with respect to finite unions or intersections and under complement. However it is not clear whether this class  is closed under topological operations as taking connected components, interior part or closure, results that are proved only for subsets  of low dimensional C-analytic sets or under some compactness assumpion. 

However this class has the so called {\em finiteness property}.

\begin{prop}[Finiteness property] 
Let $S\subset Z$ be a global semianalytic set in a real C-analytic space $Z$. 
\begin{itemize}
\item[(i)] Suppose $S$ to be open in $Z$. Then it is a finite union of open basic global semianalytic sets, that is, $S$ is a finite union of global semianalytic sets of  type $\{f_1>0,\ldots,f_r>0\}$ where each $f_i\in\Oo(Z)$.
\item[(ii)] Suppose $S$ to be closed in $Z$. Then it is a finite union of closed basic global semianalytic sets, that is, $S$ is a finite union of global semianalytic sets of  type $\{f_1\geq0,\ldots,f_r\geq0\}$ where each $f_i\in\Oo(Z)$.
\end{itemize}
\end{prop}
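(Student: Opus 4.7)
I would derive (ii) from (i) by a De Morgan argument, and prove (i) via the real spectrum of $\Oo(Z)$ together with the Br\"ocker--Scheiderer finiteness theorem for constructible subsets of a real spectrum. For (ii): if $S$ is closed, then $Z\setminus S$ is open and globally semianalytic, so by (i) one writes $Z\setminus S=\bigcup_i\{f_{i,1}>0,\ldots,f_{i,r_i}>0\}$; taking complements and distributing finitely many intersections over unions expresses $S$ as a finite union of basic closed sets of the form $\{-f_{i_1,j_1}\geq 0,\ldots,-f_{i_N,j_N}\geq 0\}$.

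For (i), I would start from a disjunctive normal form
\[
S=\bigcup_{\ell}\{f_\ell=0,\ g_{\ell,1}>0,\ldots,g_{\ell,s_\ell}>0\}
\]
(allowing $f_\ell\equiv 0$). To each global semianalytic subset $T\subset Z$ one associates the constructible set $\widetilde T\subset\Sper\Oo(Z)$ defined by the same boolean formula interpreted on prime cones; the canonical injection $Z\hookrightarrow\Sper\Oo(Z)$ sending $x$ to the maximal ideal of functions vanishing at $x$ (with the unique ordering inherited from $\R$) satisfies $\widetilde T\cap Z=T$. Once one knows that $\widetilde S$ is open in the spectral topology of $\Sper\Oo(Z)$, the abstract Br\"ocker--Scheiderer theorem applies: every open constructible subset of $\Sper A$ is a finite union of basic open sets $\{h_1>0,\ldots,h_r>0\}$ with $h_i\in A$. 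Intersecting such a representation with $Z$ yields the desired finite union of open basic global semianalytic sets.

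The main obstacle is to upgrade Euclidean openness of $S\subset Z$ to spectral openness of $\widetilde S\subset\Sper\Oo(Z)$; this is a global analogue of Artin--Lang in the analytic setting, and is where the hypothesis of globality (rather than local semianalyticity) is essential. My plan is a curve selection argument: if a prime cone $\alpha\in\widetilde S$ were adherent to $\Sper\Oo(Z)\setminus\widetilde S$, specialization along the valuation ring associated to $\alpha$ would, after passing to the complexification of $Z$ via Theorem~\ref{intornist} and invoking the ultrafilter description of orderings on $\Oo(Z)$ developed in Chapter~4, produce an analytic arc $\gamma:[0,\varepsilon)\to Z$ with $\gamma(0)\in S$ but $\gamma(t)\notin S$ for $t>0$, contradicting the openness of $S$. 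Reducing first to the irreducible C-analytic components of $Z$, and then to their well reduced structures, handles the residual difficulties caused by non-coherence or reducibility of the ambient space.
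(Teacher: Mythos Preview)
Your reduction of (ii) to (i) matches the paper's. For (i), your route through $\Sper\Oo(Z)$ is genuinely different, and the step you call ``the main obstacle'' is a real gap that your sketch does not close.

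Abstract finiteness for spectral spaces does give that an open constructible subset of $\Sper\Oo(Z)$ is a finite union of basic opens, so once $\widetilde S$ is spectrally open you are done. But spectral openness of $\widetilde S$ is \emph{equivalent} to the statement you are proving: if $S$ has a description as a finite union of strict inequalities, that formula tildes to an open constructible set, and conversely your argument applies. So the entire content of the proposition has been pushed into the obstacle. Your proposed resolution---curve selection from a specialization $\beta\rightsquigarrow\alpha$ in $\Sper\Oo(Z)$---would amount to a global Artin--Lang theorem for $\Oo(Z)$, which the text explicitly marks as unavailable (Chapter~4, opening of the section on compact zerosets: Artin--Lang ``holds true for the ring of analytic function germs\ldots but it is not proved for analytic functions on a fixed open set''). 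The ultrafilter criterion you invoke only certifies positivity at an ordering from positivity on a member of the filter, not the converse; and for free orderings---which exist whenever $Z$ is non-compact---there is no center in $Z$ from which to launch an arc. Even when $\alpha$ is centered at a point $p$, one cannot infer $p\in S$ from $\alpha\in\widetilde S$ (the ordering $0^{+}$ on $\Oo(\R)$ lies in $\widetilde{\{x>0\}}$ while $0\notin\{x>0\}$), so the conclusion ``$\gamma(0)\in S$'' is unjustified.

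The paper's proof avoids the real spectrum entirely and is constructive. Given one piece $\{f=0,\,g_1>0,\ldots,g_k>0\}\subset S$ with $f\not\equiv0$, set $g=g_1\cdots g_k$ and $V=\{g_1\ge0,\ldots,g_k\ge0\}\setminus S$. On $V$ one has $\{f^2+g^2=0\}\subset\{f=0\}$ (if $x\in V$ with $f(x)=0$ then some $g_i(x)=0$, since $x\notin S$), so the weak \L ojasiewicz inequality (Theorem~\ref{weakLoj}) produces a positive semidefinite $h\in\Oo(Z)$ with $\{h=0\}=\{f^2+g^2=0\}$ and $h\le|f|$ on $V$. The basic open set $B=\{f^2-h^2<0,\,g_1>0,\ldots,g_k>0\}$ then contains the thin piece and is contained in $S$, and one replaces the piece by $B$.
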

\begin{proof}It is enough to prove finiteness for an open global semianalytic set since the complement of a closed one is open and global. So assume $S$ is open and described as

$$S = \bigcup_{j=1}^t\{f_j=0, g_{1,j}>0,\cdots, g_{k(j),j} >0\}$$
where $f_j, g_{i,j}$ are global analytic functions on $Z$. We need to replace, for each $j$ such that $f_j$ is not identically zero, the set $Z_j = \{f_j=0\}\cap \{g_{1,j}>0,\cdots, g_{k(j),j} >0\} $ by an open global basic neighbourhood $B$ of $Z_j$  such that $B\subset S$. So we can assume we are in the following situation.

$$U= \{g_1>0,\cdots,g_k>0\}, Y=\{f=0\}$$

where $U\cap Y\subset S$. 

Consider the closed set $V= \{g_1\geq 0, \cdots,g_k\geq 0\}\setminus S$ and note that $\overline U \setminus S\subset V$. Set $g=g_1\cdot g_2 \cdots g_k$ and consider the function $f^2+g^2$. Then 

$$\{f^2+g^2 =0\}\cap V = \{f=0\}\cap V$$

In fact if $x\in V$ and $f(x)=0$ then at least one $g_i$ vanishes at $x$ because $x\notin S$. Thus we can apply  Theorem \ref{weakLoj} of Chapter 3 and we find
a global function $h$ that is positive semidefinite on $Z$, whose zeroset is $\{f^2 +g^2=0\}$ and satisfies $h\leq |f|$ on $V$. 

Next consider the basic open set $B= \{f^2-h^2 <0, g_1 > 0, \ldots , g_k > 0\}$. We get $B\subset S$. Indeed $B\subset U$ and $f^2-h^2 \geq 0$ outside $S$. Also $Y\cap U\subset B$ because inside $Y\cap U$ one has $f=0$ and $h^2 >0$. 
The proof is complete.
\end{proof}

\subsection{Global semianalytic subsets of an analytic curve.}

The first result is that semianalytic subsets of a C-analytic set of dimension $1$ are all global.

\begin{thm}\label{dim1}
Let $Z\subset \R^n$ be a C-analytic curve and $S\subset Z$ be a semianalytic set. Then $S$ is global.  
\end{thm}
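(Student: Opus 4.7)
The key structural feature of the problem is that $S$ is a semianalytic subset of a $1$-dimensional C-analytic set, so its topological frontier $\partial S$ (a semianalytic set of empty interior in $Z$) has dimension $0$, hence is a locally finite discrete set of points. Together with $\Sing(Z)$, also discrete by dimension reasons, $D := \partial S \cup \Sing(Z)$ is a $0$-dimensional closed C-analytic subset of $Z$, and by Proposition \ref{globalequations} applied to the complexification $\widetilde Z$, admits a global analytic defining function $h \in \Oo(Z)$. On the complement $Z \setminus D$, which is a smooth $1$-dimensional analytic manifold, $S \setminus D$ is a (locally finite) union of connected components; so the problem reduces to producing a global analytic function on $Z$ whose sign distinguishes those components belonging to $S$ from the others, together with further global equations accounting for the finitely many isolated-point adjustments at points of $D$.

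To construct such a sign function I would pass to the normalization $\pi : \check Z \to Z$ of Chapter 2: the source $\check Z$ is a smooth $1$-dimensional analytic manifold, each connected component of which is analytically isomorphic to $\R$ or to $S^1$, and $\pi$ is a proper finite map that is a bijection over $\Reg(Z) \supset Z \setminus D$. The pullback $\check S := \pi^{-1}(S)$ is semianalytic, and restricted to any component of $\check Z$ is a locally finite union of open arcs and isolated points. Using the Weierstrass product theorem on $\R$, and its compact analogue on $S^1$ (where the total number of sign changes around the circle is forced to be even, a combinatorial constraint that $\check S$ automatically satisfies), I would build an analytic function $\check g \in \Oo(\check Z)$ with zeros exactly at the frontier points of $\check S$ and with multiplicities whose parities realize the required alternation of signs, so that $\check g > 0$ precisely on the arcs that lie in $\check S$. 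Under the bijection $\pi$ restricted to $\check Z \setminus \pi^{-1}(D)$, this sign pattern corresponds to the one we want on $Z \setminus D$.

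To descend $\check g$ to $Z$, I would first replace it by $(\pi^* h)^{2N} \check g$ for $N$ large, which vanishes to high order on $\pi^{-1}(D)$ while preserving its sign on the smooth stratum; then apply the trace $g := \operatorname{Tr}_{\check Z / Z}\bigl((\pi^* h)^{2N} \check g\bigr) \in \Oo(Z)$, well defined because $\pi$ is finite and hence $\pi_* \Oo_{\check Z}$ is a coherent $\Oo_Z$-module making $\Oo(\check Z)$ a finitely generated $\Oo(Z)$-module. Over any $x \in Z \setminus D$, where $\pi$ is bijective, $g(x)$ is the value of $(\pi^* h)^{2N} \check g$ at the unique preimage, and since $h$ has constant sign on each component of $Z \setminus D$ this inherits, up to multiplication by a strictly positive unit, the desired sign pattern; hence $S \cap (Z \setminus D) = \{g > 0\} \cap (Z \setminus D)$. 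Since $S \cap D$ and $D \setminus S$ are locally finite discrete (hence C-analytic) subsets of $Z$ with global defining functions $h_1, h_2 \in \Oo(Z)$, one finally recovers $S = (\{g > 0\} \setminus \{h_2 = 0\}) \cup \{h_1 = 0\}$, which is a global semianalytic description.

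The main technical obstacle is precisely the descent step: at a singular point of $Z$ the fibre of $\pi$ contains several points at which the values of $\check g$ may disagree, so a naive pushforward would mix incompatible signs from different branches. The high-order multiplication by $(\pi^* h)^{2N}$ forces $\check g$ to vanish at every preimage of $D$ and eliminates this mixing, but it still has to be checked that the resulting analytic function on $Z \setminus D$ extends across $D$; this follows from the coherence of $\pi_* \Oo_{\check Z}$ as an $\Oo_Z$-module and the fact that symmetric combinations of holomorphic germs at the preimages of a point descend through a finite analytic map to a holomorphic germ on the base. An alternative route, working irreducible-component by irreducible-component and extending the local descriptions via Theorems A and B on the Stein complexification, leads to the same conclusion but requires additional bookkeeping to preserve local finiteness.
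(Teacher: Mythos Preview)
Your approach via normalization is genuinely different from the paper's. The paper never leaves $\R^n$: after reducing to closed $S$ and $Z$ of pure dimension $1$, it takes the discrete boundary $D$ (the non-interior points of $S$), chooses at each $p\in D$ a \emph{polynomial} $h_p\in\R[x_1,\dots,x_n]$ separating the half-branches of $S_p$ from those of $\overline{Z\setminus S}_p$, invokes Theorem~B to produce a single $G\in\Oo(\R^n)$ with $G_p-h_p\in\gtm_p^{\nu_p}$ for suitably large $\nu_p$, and finishes with two analytic Urysohn-type separations of disjoint closed sets. Your route replaces this jet-matching by an explicit sign function on the smooth curve $\check Z$ followed by a single descent, which is structurally cleaner and makes the role of the discrete locus transparent.

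The descent step, however, has a real gap. The normalization $\pi:\check Z\to Z$ is finite and birational but \emph{not flat}: over $\Reg(Z)$ the fibre has one point, over a node it has several, so neither the naive fibre-sum $x\mapsto\sum_{y\in\pi^{-1}(x)}f(y)$ nor the ``symmetric combinations'' you invoke are analytic on $Z$---that principle needs locally constant fibre length. Coherence of $\pi_*\Oo_{\check Z}$ alone does not manufacture an $\Oo_Z$-linear map $\pi_*\Oo_{\check Z}\to\Oo_Z$ restricting to the identity over $\Reg(Z)$. What does work is the conductor $\mathfrak c=\operatorname{Ann}_{\Oo_Z}(\pi_*\Oo_{\check Z}/\Oo_Z)$, a coherent ideal sheaf supported on $\Sing(Z)\subset D$: any $c\in\Oo(Z)$ whose germ at each $p\in\Sing(Z)$ lies in $\mathfrak c_p$ (for instance one vanishing to order $\ge M_p$ at $p$, where $\gtm_p^{M_p}\subset\mathfrak c_p$; such $c$ exists by Theorem~B since $\Sing(Z)$ is discrete) satisfies $c\cdot\Oo(\check Z)\subset\Oo(Z)$. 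Then $g:=c^2\check g\in\Oo(Z)$ has the required sign on $Z\setminus D$ and the rest of your argument goes through. Your multiplier $(\pi^*h)^{2N}$ with a single $N$ would only suffice if the exponents $M_p$ were uniformly bounded, which can fail on a noncompact $Z$.
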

\begin{proof} It is easy to prove that   discrete subsets of $\R^n$ are C-sets using a local polynomial equation and Theorem B. So we may assume that $Z$ has pure dimension $1$ and that $S$ is closed. Then $D =  S \setminus\stackrel{\circ}{S}$ is a discrete set and for each $p\in D$ the closed semianalytic set germs $S_p$ and $\overline {Z\setminus S}_p$ meet only in $\{p\}$. Hence there is a polynomial $h_p \in \R[x_1,\ldots,x_n]$ negative on $\overline {Z\setminus S}_p \setminus\{p\}$ and positive on $S_p\setminus \{p\}$. This is true in a small open ball $B_p$ and we may assume that for different points in $D$ the corresponding balls are disjoint.
Next we apply density of global analytic functions in the ring of germs. If $G\in \Oo(\R^n)$ verifies, for all $p\in D$,  $G_p -h_p\in \gtm_p^{\nu_p}$ for $\nu_p$ sufficiently bigger than the degree of $h_p$ then, possibly shrinking the ball $B_p$, one has $B_p\cap S = \{x\in B_p: G(x) \geq 0\}$.
Take smaller balls $B''_p\subset B'_p \subset B_p$ with $\overline {B''_p} \subset B'_p$. Then the sets $\bigcup_p \overline {B'_p}$ and $\R^n \setminus \bigcup B_p$ are disjoint closed sets, hence there is an analytic function $g$ positive on the first set and negative on the second set.

The curve $Z$ has global equations in $\R^n$. Taking $f$ as the sum of squares of these equations we can write $Z= \{x\in \R^n : f(x) = 0\}$. Hence $S \cap \{g\leq 0\} = \{G\leq 0, g\leq 0, f=0\}$. To describe $S$ outside the union of the balls $B'_p$ we do a similar trick two more times. We consider $\bigcup_p \overline {B''_p}$ and $\R^n \setminus \bigcup_p B'_p$. Since they are disjoint closed sets there is $b\in \Oo(\R^n)$ positive on the first set and negative on the second. Finally we consider $S\setminus (\overline {\R^n \setminus \bigcup_p \overline {B''_p}})$ and $(Z\setminus S) \bigcap \overline {\R^n \setminus \bigcup_p \overline {B''_p}}$. Again they are closed disjoint sets and we can find $a\in \Oo(\R^n)$ which is positive on the first set and negative on the second one. Finally we get

$$S= \{f=0, g\leq 0, G\leq 0\} \cup \{f=0, b\geq 0, a \geq 0\}$$

as wanted.         
\end{proof}

\begin{remark}
In the previous proposition the condition on $S$ to be a subset of a real analytic curve cannot be avoided. Indeed there are $1$-dimensional semianalytic sets in $\R^2$ that are not global as the following example shows. 
\end{remark}

\begin{example}\label{segmenti}
Consider the set
$$ S = \bigcup_{m\in \N} S_m\ \mbox{\rm where} \ S_m  =  \{(x,y) \in \R^2: y=mx, m\leq x \leq m+1\}.$$

$S_m$  is a  segment, contained in the line $\{y=mx\}$. $S$ is of course a semianalytic set, but if an analytic function vanishes on $S$ it has to vanish on all lines $\{y=mx\}$ and since these lines accumulate on the $y$-axis, it has to be the zero function. Hence $S$ is not global.   
\end{example}
\smallskip

\subsection{Global semianalytic subsets of $2$-dimensional manifolds.}

We consider now a connected real analytic manifold $M$ of dimension $2$ and a global semianalytic set $S\subset M$. The questions we want to answer are the following.
\begin{itemize}  
\item  The connected components of $S$ are still global semianalytic sets.
\item  The closure and the interior part of $S$ are still global.
\end{itemize}

By Grauert's  embedding theorem, we can assume $M$ to be a closed submanifold of $\R^n$. Hence $M$ is defined as the zeroset of finitely many analytic functions $f_1,\ldots, f_l$ on $\R^n$ and we can write $M= \{f_1^2+ \cdots+ f_l^2 =0\}$.
Also it holds: $\Oo_M = \Oo_{\R^n}/\Ii_M$ where $\Ii_M$ is the sheaf of ideals of germs vanishing on $M$. It is a coherent sheaf since $M$ is a manifold.
\smallskip

Let $T$ be a connected component of a global semianalytic set $S\subset M$. If $T$ has dimension $0$ it is a point, hence it is global. If $T$ has dimension $1$, if $S=\bigcup_{j=1}^t\{f_j=0, g_{1,j}>0,\cdots, g_{k(j),j} >0\}$ and $f$  is the product of all  $f_j$ that are not identically $0$, we have $T\subset \{f=0\}$, so $T$ is global  by Proposition \ref{dim1}. So, we can assume that $T$ has dimension $2$. Also we can assume $S$ not to be closed. Indeed unions of  connected components of a closed $S$ are closed sets disjoint from their complement in $S$. Hence it is easy to separate them  from their complement by an analytic function so that they are global semianalytic sets.
\smallskip

We get the following result.

\begin{thm}\label{cc}
Any union $T$ of connected components of a global semianalytic set $S$ in a $2$-dimensional manifold $M$ is a global semianalytic set.
\end{thm}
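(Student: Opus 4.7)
My plan is to reduce Theorem \ref{cc} to the construction of a single analytic function on $M$ that is positive on $T$ and negative on $S\setminus T$ up to a lower-dimensional correction that can be handled by hand. First, decompose $S = \bigcup_{j=1}^t\{f_j=0,\, g_{1,j}>0,\ldots,g_{k(j),j}>0\}$. If $f_j\not\equiv 0$, then the corresponding piece lies on the analytic curve $\{f_j=0\}$, so the union of the components of $T$ contained in such curves is global by Theorem \ref{dim1}. After removing these, it suffices to treat the case where $S$ is open (each $f_j\equiv 0$) and $T$ is a union of two-dimensional connected components of the open global semianalytic set $\stackrel{\circ}{S}$.

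Put $g=\prod_{j,i}g_{i,j}$ and $Z=\{g=0\}\subset M$; this is a closed analytic curve, hence a C-analytic subset of $M$ of dimension $\leq 1$, and $\partial T\subset Z$. Now the sign vector $(\mathrm{sign}(g_{i,j}))$ is locally constant on $M\setminus Z$, so every connected component of $S$ lies inside exactly one ``sign stratum'' of $M\setminus Z$; the issue is that a single sign stratum may split into several components of $M\setminus Z$, so $T$ is not automatically a union of sign strata. The key step is to build, using the structure of $Z$, finitely many auxiliary global analytic functions $h_1,\ldots,h_N\in\Oo(M)$ whose signs distinguish the components of $M\setminus Z$ that belong to $T$ from those belonging to $S\setminus T$.

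To construct the $h_\ell$, I would apply Hironaka's desingularization to $g$ to obtain a proper surjective analytic map $\pi:\widetilde M\to M$ from a real analytic surface $\widetilde M$ such that $\pi^{-1}(Z)$ is a normal-crossings divisor with irreducible components $E_1,\ldots,E_r$ (locally finite). On $\widetilde M$ each $E_\ell$ is a locally principal analytic hypersurface, and since $\dim \widetilde M=2$ and $H^1(\widetilde M,\Oo^*_{\widetilde M})\cong H^1(\widetilde M,\Z/2\Z)$ by the exponential sequence, after passing to appropriate double covers or squaring we can find global analytic functions on $\widetilde M$ changing sign exactly across prescribed unions of the $E_\ell$. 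Combining these with the sign data of the $g_{i,j}\circ\pi$, one writes $\pi^{-1}(T)$ as a global semianalytic subset of $\widetilde M$; pushing down via $\pi$ (which is proper and an isomorphism off $Z$), and using that $\pi$ behaves algebraically on global analytic functions (compose with $\pi^{-1}$ on the dense open $M\setminus Z$ and sheaf-extend), one obtains the desired global description of $T$ away from $Z$.

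The final step handles what happens on $Z$ itself: the part of $T$ contained in $Z$ is a semianalytic subset of the one-dimensional C-analytic set $Z$, hence global by Theorem \ref{dim1}, and this is added back in. The main obstacle I expect is precisely the combinatorial/cohomological step in the middle paragraph — controlling which unions of the $E_\ell$ can actually be realized as sign-change loci of a \emph{single} global analytic function on $\widetilde M$ (and thus producing enough $h_\ell$ to distinguish all relevant components); this is where the dimension hypothesis $\dim M=2$ is essential, because it forces $E_\ell$ to be one-dimensional and makes the relevant cohomology groups computable, whereas in higher dimensions no analogous separation is available (consistent with the author's comment that the general case requires the intermediate class of \emph{C-semianalytic} sets).
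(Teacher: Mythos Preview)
Your proposal has two genuine gaps, one of which you identify and one you underestimate.

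The gap you identify — producing global analytic functions on $\widetilde M$ whose sign loci realize prescribed unions of the $E_\ell$ — is real, and resolution of singularities does not make it easier. After blowing up you still face exactly the original problem: separating infinitely many connected components of an open set in a surface by signs of \emph{finitely many} global analytic functions. Normal crossings help locally but say nothing about the global combinatorics of which regions can be cut out; the cohomological condition $[Y]=0$ in $H^\infty_{1}(\widetilde M,\Z/2\Z)$ from Chapter~2 is a necessary obstruction to principality of a single divisor, and there is no reason it should vanish for the unions you need.

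The gap you underestimate is the push-down. Even granting a global semianalytic description of $\pi^{-1}(T)$ on $\widetilde M$ by functions $F_1,\ldots,F_k\in\an(\widetilde M)$, these do \emph{not} descend to functions in $\an(M)$: your suggestion to ``compose with $\pi^{-1}$ on $M\setminus Z$ and sheaf-extend'' fails because an analytic function on $M\setminus Z$ has no reason to extend across the curve $Z$. Proper push-forward of a global semianalytic set gives at best a C-semianalytic or subanalytic set (cf.\ the Direct Image Theorem later in the chapter), not a global one. So even a complete success upstairs would not finish the argument.

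The paper takes a completely different route, avoiding resolution. The key observation is that $\overline T\cap\overline{S\setminus T}$ lies in a C-analytic \emph{curve} $Y$ (the common boundary), and one can find a single $g\in\an(M)$ generating $\Ii_Y$ at all but a discrete set of points (via Theorem~A, coherence of $\Ii_Y$, and a Baire argument over the generators). Intersecting with $\{g>0\}$ and $\{g<0\}$ then reduces to the situation where $D=\overline T\cap\overline{S\setminus T}$ is \emph{discrete}. At each $p\in D$ one separates the two closed germs by an open semianalytic germ using Br\"ocker's bounds in dimension~2; an $\gtm$-adic stability lemma shows the local describing functions can be perturbed to high order, and Theorem~B then produces global functions with the prescribed jets at every point of $D$ simultaneously. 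The pieces near $D$ and away from $D$ are glued by one more separating analytic function. The dimension hypothesis enters twice: to make the overlap a curve (so the reduction to discrete $D$ works) and to invoke Br\"ocker's two-dimensional complexity bounds for the local separation.
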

\begin{proof}
The strategy to prove this result  is as follows.

First of all we search a global analytic functions $g$ such that both

$$\overline{T\cap \{g>0\}}\cap \overline{(S\setminus T)\cap  \{g>0\}}   \mbox{\rm   \ and \  } 
\overline{T\cap \{g<0\}}\cap \overline{(S\setminus T)\cap  \{g<0\}}$$
 are discrete sets.

Since $T= (T\cap \{g>0\}) \cup (T\cap \{g=0\})\cup (T\cap \{g<0\})$ and $T\cap \{g=0\}$ is global because it is a semianalytic subset of an analytic curve, we are reduced to prove that the other two pieces are global. 

So, it is enough to prove the following statement.

\begin{quotation}
{\em  Let $S\subset M$ be a global semianalytic set and $T\subset S$ be a union of connected components of $S$. If the set $D =\overline T \cap \overline{S\setminus T}$ is discrete then, $T$ is a global semianalytic set.}
\end{quotation}

 In general $\overline T \cap \overline{S\setminus T}$ is a semianalytic set $F$ of dimension $1$ contained in the curve $Y$ whose equation is the product af all the functions appearing in a given description of $S$. So $F$ is global. Consider  the discrete set  $A= F\setminus \stackrel{\circ}{F}$. 
   
The subsheaf of ideals $\Ii_Y$ of $\Oo_M$ of germs vanishing on $Y$ is a coherent locally principal ideal sheaf. So $H^0(M, \Ii_Y)$ is finitely generated, say, by global analytic functions $h_1, \ldots, h_k$. For each irreducible component $Y^l$ of $Y$ take a regular point $a_l \in Y^l \setminus \bigcup_{j\neq l}Y^j$. The fiber of $\Ii_Y$ at $a_l$ is principal, generated by one $h_i$ among $h_1, \ldots, h_k$.    

Next  consider the function $\lambda_1h_1 +\cdots +\lambda_kh_k$, where $(\lambda_1,\ldots,\lambda_k) \in \R^k$. 

The subset of $\R^k$ of those parameters such that the germ of $\lambda_1h_1 +\cdots +\lambda_kh_k$ at $a_l$ is not in $\Ii_{Y,a_l}$ is an hyperplane of $\R^k$.
Indeed assume $\Ii_{Y,a_l}= (h_1)_{a_l}\Oo_{M,a_l}$. So there are units $u_i, i=1,\ldots, k$ such that $(h_i)_{a_l}= u_i(h_1)_{a_l}$, hence $(\lambda_1h_1 +\cdots +\lambda_kh_k)_{a_l} =(\lambda_1u_1 +\cdots +\lambda_ku_k) (h_1)_{a_l}$. 
So, the germ at $a_l$ of the sum $\sum_i\lambda_i h_i$ generates $\Ii_{Y,a_l}$ if and only if $\lambda_1u_1 +\cdots +\lambda_k u_k$ is a unit, that is, $(\lambda_1u_1 +\cdots +\lambda_ku_k)(a_l) \neq 0$. 

We can use Baire's  theorem and find $(\lambda_1,\ldots,\lambda_k) \in \R^k$ such that $\sum \lambda_ih_i$ generates $\Ii_{Y,a_l}$ for all $l$. But this implies that this sum generates all the fibers of $\Ii_{Y^l}$ except for a discrete countable set of points $D^l$. So we get.

\begin{lem}\label{discreto}
Take $D= \bigcup_l D^l \cup \Sing(Y)$
and $g= \sum_i\lambda_ih_i$.
Then both sets
$$\overline{T\cap \{g>0\}}\cap \overline {(S\setminus T)\cap\{g>0\}} \mbox{\ and\ } 
\overline{T\cap \{g<0\}}\cap \overline {(S\setminus T)\cap\{g<0\}}$$
are   subsets of $A\cup (D\cap F) =D'$ which is a discrete set.
\end{lem}
\begin{proof}
take  $a\in F\setminus D'$. Then $a$ belongs to the interior of $F$ and is regular for the curve $Y$. The function $g$ generates the fiber $\Ii_{y,a}$ hence it changes sign at $a$, that is  a small neighbourhood $U_a$ is the union $U_a= \{g>0\} \cup Y\cup \{g<0\}\cap U_a$. So, if $a\in \overline {T\cap\{g>0\}} \cap U_a$ then $(S\setminus T)\cap \{g>0\} \cap U_a =\varnothing$. The same works for  $a\in \overline {T\cap\{g<0\}}$
\end{proof}

This fact is not enough to separate $T$ from $S\setminus T$, not even locally. Nevertheless it is possible to separate germs meeting only in one point by a suitable global semianalytic set and this is done in the following lemmas. To prove the first one we use the bounds on complexity of semianalytic set germs in dimension $2$, proved by Br\"ocker.  We will use also a result of Br\"ocker that was given for a couple of basic closed semialgebraic sets, where at least one has dimension $\leq 2$. But his argument works whenever there is a one-to-one correspondence between the boolean algebra of costructible sets in the real spectrum of the ring and the boolean algebra of sets definible by the ring and this is the case for the ring $\Oo_{M,p}$ and the boolean algebra of semianalytic set germs.

\begin{lem}\label{locale} Let $p$ be a point in $M$ and $F_1, F_2$ be closed semianalytic set germs at $p$ such that $F_1\cap F_2= \{p\}$. Then, there is an open semianalytic set germ $S_p$ such that:
\begin{itemize}
\item $F_1 \subset S_p\cup \{p\}$,
\item $F_2 \cap S_p =\varnothing$.
\end{itemize} 
Moreover $S_p$ can be written as 
$$ S_p = \bigcup_{i=1}^4 \{a_{i,1} >0, a_{i,2}>0\}$$ 
\end{lem}

\begin{proof}By the finiteness property the closed germs $F_1,F_2$ are finite union of closed basic sets, $F_1 = \bigcup_{i=1}^q A_i, F_2 = \bigcup_{j=1}^s B_j$.

By the quoted result of Br\"ocker for each couple $A_i, B_j$ there is an analytic function germ $f_{i,j}$ such that it is $\geq 0$ on $A_i$, it is $\leq 0$ on $B_j$ and $\{f_{i,j} =0\} \cap (A_i \cup B_j) = \{p\}$.  

Then we can take $S_p = \bigcup _{i=1}^q \{f_{i,1}>0, \ldots, f_{i,s}>0\}$.
Using the bounds on complexity of semianalytic set germs in dimension $2$  we get the desired expression.
\end{proof}

Now  we want to prove that we can move the functions defining $S_p$ up to a certain order without loosing the  properties of Lemma \ref{locale}.

\begin{lem}\label{m-adic} Let $F$ be a closed semianalytic set germ and $S$ be an open germ defined as 
$$ S = \bigcup_{i=1}^q \{f_{i,1} >0, \ldots, f_{i,s_i} >0 \}$$
We assume $F \subset S \cup \{p\}$. Then, there exist $t\in \N$ such that if $f'_{i,j} -f_{i,j} \in \mathfrak M_p^t$ (where $\mathfrak M_p$ is the maximal ideal of germs vanishing at $p$) and $S' =\bigcup_{i=1}^q \{f'_{i,1} >0, \ldots, f'_{i,s_i}>0\}$ 
we get again  $F \subset S' \cup \{p\}$.   
\end{lem}
\begin{proof} Assume first $S$ to be basic, that is $S =\{f_1>0, \ldots, f_s>0\}$. Since $F \subset \{f_i>0\} \cup \{p\}$, we can work with only one function $f$, find $t$ and then take the biggest over $i=1,\ldots,s$.

The situation being local,  we can put $M=\R^2, p=0 $ and $\Oo_{M,p}= \R\{x,y\}$. Up to a linear change of coordinates, using Weierstrass Preparation Theorem, we can assume $f$ and all germs defining $F$ are polynomials in $y$ with coefficients in $\R\{x\}$. 

Consider now the curve germ $F\setminus \{0\}$. Each one of its components is the zeroset of a root of a polynomial with coefficients in $\R\{x\}$, that is a Puiseux series. So we get Puiseux series $\eta_1(x), \ldots ,\eta_m(x)\in \R\{x^{1/q}\}$, for a certain positive integer $q$. Also we get the roots $\xi_1(x), \ldots, \xi_r(x) \in \R\{x^{1/q}\}$ of $f$. Since $F\setminus \{0\} \subset S$ we get $\xi_j \neq \eta_i$ for each $i,j$. Now roots are continuous with respect to the coefficients of the polynomial, so we can find a natural number $t_0$ such that if $f'-f \in \mathfrak M^{t_0}$ then the $\mathfrak M$-adic distance between the roots of $f'$ and the corresponding ones of $f$ is less than the distance between $\eta_i$ and $\xi_j$ for all $i,j$.  
Consider now the connected components $F_1,\ldots, F_n$ of $F\setminus \{0\}$ and take a curve germ $\beta_i(t) = (x_i(t), y_i(t)) \in (\R\{t\})^2$ in $F_i \cup 
\{0\}$. Then $f(x_i(t), y_i(t))>0$ for $t>0$ sufficientely small. But this fact depends only in a truncation of $f$. So for each $i$ there is a $t_i$ such that $f'-f \in \mathfrak {M}^{t_i}$ implies  $f'(x_i(t), y_i(t))>0$. Let $t=\max(t_0, \ldots ,t_n)$. Then if $ f'-f \in \mathfrak {M}^{t}$ we get $F\setminus \{0\}\subset \{f' >0\}$ because the condition on the roots implies $f'$ has constant sign on the components $F_1,\ldots, F_n$.

As for the general case, write $S= S_1\cup \ldots \cup S_l$ where all $S_i$ are basic open sets. Then we can decompose $F$ as a union of closed sets $F_i$ such that $F_i\setminus \{0\}\subset S_i$, $F_i =\overline{F\setminus \{0\}\cap S_i}$.
Then for each $i$ we find $t_i$ as before. The biggest among these numbers works for all $i$'s and the result follows.    
\end{proof}

\smallskip

Of course we will apply the previous lemma to all points in the discrete set $D$ of the statement above. But before we have to define globally a neighbourhood basis for $D$.

\begin{lem}\label{multiloc} For each $p\in D$ fix an open ball $B(p,\varepsilon)$ in such a way that $B(p,\varepsilon) \cap B(q,\varepsilon) =\varnothing$ and take $U_{p,\varepsilon} = B(p,\varepsilon)\cap M$. Then for any $\varepsilon$ there is an analytic function $h_\varepsilon$ on $M$ such that
$$\bigcup_{p\in D}U_{p,\varepsilon/2} \subset \{h_\varepsilon >0\}, \quad M\setminus \bigcup_{p\in D}U_{p,\varepsilon} \subset  \{h_\varepsilon <0\}$$    
\end{lem}
\begin{proof}By construction the two sets have disjoint closures. Hence there is a smooth function separating them and $h_\varepsilon$ is just a suitable analytic approximation.
\end{proof} 
\smallskip

Next we have to globalize the germs $S_p, p\in D$.

\begin{lem}\label{globale} There are global analytic functions $A_{i,1},A_{i,2}, i=1,2,3,4$ such that the germ at $p\in D$ of the  global semianalytic set $G=  \bigcup_{i=1}^4  \{A_{i,1}>0, A_{i,2}>0\}$ verifies the conditions of Lemma \ref{locale} for all $p\in D$ with respect to the semianalytic set germs $\overline {T}_p$ and $\overline{S\setminus T}_p$ 
\end{lem}
\begin{proof} 
 Remember that for each $p\in D$ we found an integer $t_p$ for which the conclusion of Lemma \ref{m-adic} hold true.
We define a sheaf $\Gg$ on $M$ by putting $\Gg_p = \mathfrak M_p^{t_p}\Oo_{M,p}$ if $p\in D$, $\Gg_x =\Oo_{M,x}$ otherwise.  Then we define $2$ sections of $\Oo_M  /\Gg$ as $s_i(p) =0$ if $p\notin D, s_i(p)= a^p_{k,i} +\mathfrak M_p^{t_p}$ if $p\in D, i=1,2$, where $a^p_{k,1},a^p_{k,2}$ are the germs in the description of $S_p$ in Lemma \ref{locale}. By Theorem B there are global functions $A_{k,i}, i=1,2$ inducing the given sections. Consider the set $G= \bigcup_k \{A_{k,1}>0,  A_{k,2}>0 \}$. Then its germ at $p\in D$ verifies Lemma \ref{locale} with respect to the sets $\overline {T}_p$ and $\overline{S\setminus T}_p$.     
\end{proof}

We have got the following statement:
\begin{quotation}
  $\overline {T\cap \{h_\varepsilon >0\}} \subset G \cup D$ and   $\overline{S\setminus T}\cap G=\varnothing$
\end{quotation}

Hence we got 
\begin{quotation}
 $T\cap \{h_\varepsilon >0\} = \{x\in S: A_{k,1}(x)>0,  A_{k,2}(x) >0, k=1,2,3,4\}$ so, it is global. 
\end{quotation}

We are left with $T\cap \{h_\varepsilon \leq 0\}$. But the closure of this set is disjoint from the closure of $S\setminus T$, so there is a global analytic function $h$ such that $T\cap \{h_\varepsilon \leq 0 \}= \{x\in S : h(x)> 0\}$. 

The proof of Theorem \ref{cc} is complete.  
\end{proof}

Next we come to closure of global semianalytic sets. We prove that the closure of a global semianalytic set in a real analytic manifold $M$ is global as soon as $\mbox{\rm dim }M \leq 3$. The main tool will be a description of the set with {\em almost square free} analytic functions.
We begin with some definitions.

\begin{defn} 

1) A germ $\xi \in \Oo_{M,x}$ is {\em elliptic} if its zeroset has codimension at least $2$.

2) A germ $f \in \Oo_{M,x}$ is {\em almost square free} if its factorization in   $\Oo_{M,x}$ is $f=f_1\cdots f_s\cdot \xi_1\cdots \xi_t$ where $\xi_1,\ldots,\xi_t$ are elliptic germs and the zerosets of all $f_i$ have codimension $1$. It is {\em square free} if moreover all $\xi_j$ are units.

3) A function $f\in \Oo(M)$ is almost square  free (resp. square free) if $f_x$ is almost square free (resp. square free) for all $x\in M$. 
\end{defn}
\smallskip

Let $f$ be an analytic function on $M$ and denote $\ceros (f)$ its zeroset. There are points $x\in \ceros (f)$ where the sign of $f$ changes, that is both germs $\{f>0\}_x$ and $\{f<0\}_x$ are not empty. Denote by $\ceros'(f)$ the set of these points and by  $\ceros''(f)$ its complement. 

As a first step we prove that $\ceros'(f)$ is a global semianalytic set when dim $M =2$.

\begin{prop} Assume dim $M =2$. Then, $\ceros'(f)$ and $\overline{\ceros''(f)}$ are global semianalytic sets.
\end{prop}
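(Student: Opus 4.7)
The plan is to reduce both sets to semianalytic subsets of the C-analytic curve $\ceros(f)$ and then invoke Theorem~\ref{dim1}.

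First I would dispose of the trivial case: if $f\equiv 0$ on $M$ then $\ceros'(f)=\varnothing$ and $\overline{\ceros''(f)}=M$, both obviously global. Otherwise, since $\dim M=2$, the zeroset $\ceros(f)$ is a C-analytic subset of $M$ of dimension at most $1$, i.e.\ a C-analytic curve. Using the embedding of $M$ as a closed analytic submanifold of some $\R^n$ already fixed in this section, $\ceros(f)$ becomes a C-analytic curve in $\R^n$, so Theorem~\ref{dim1} will be applicable to any semianalytic subset of it.

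The next step is the set-theoretic identity
$$
\ceros'(f)=\overline{\{f>0\}}\cap\overline{\{f<0\}}.
$$
This is immediate by unwinding the definition: ``both germs $\{f>0\}_x$ and $\{f<0\}_x$ are nonempty'' is exactly ``every neighborhood of $x$ meets both half-spaces'', i.e.\ $x$ lies in both closures; continuity of $f$ forces $f(x)=0$ at any such $x$, so automatically $\ceros'(f)\subset\ceros(f)$. Now $\{f>0\}$ and $\{f<0\}$ are (open) semianalytic subsets of $M$, and by the classical theorem of \L ojasiewicz the closure of a semianalytic set is semianalytic; hence $\overline{\{f>0\}}$ and $\overline{\{f<0\}}$ are semianalytic, and so is $\ceros'(f)$. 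Consequently $\ceros''(f)=\ceros(f)\setminus\ceros'(f)$ is semianalytic as well, and its closure remains semianalytic; because $\ceros(f)$ is closed we also have $\overline{\ceros''(f)}\subset\ceros(f)$.

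At this point both $\ceros'(f)$ and $\overline{\ceros''(f)}$ are semianalytic subsets of the C-analytic curve $\ceros(f)\subset\R^n$, so Theorem~\ref{dim1} delivers global semianalytic descriptions. The descriptions produced by that theorem involve functions in $\Oo(\R^n)$; restricting to $M$ (equivalently, invoking coherence of $\Oo_M$ together with Theorem~B, which gives $\Oo(M)=\Oo(\R^n)/I(M)\Oo(\R^n)$) converts them into descriptions using functions in $\Oo(M)$, as required. The only nontrivial ingredient is \L ojasiewicz's closure theorem for semianalytic sets; that is the main point of the argument, although it is a well-established classical result rather than something that has to be proved here. All subsequent work in this section (building up the almost square-free description of $f$) will then have this proposition as its basic local-to-global bridge.
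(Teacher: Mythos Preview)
Your argument is correct but follows a different route from the paper. You reduce to Theorem~\ref{dim1} by observing that $\ceros'(f)=\overline{\{f>0\}}\cap\overline{\{f<0\}}$ and invoking the classical \L ojasiewicz closure theorem to see that both $\ceros'(f)$ and $\overline{\ceros''(f)}$ are semianalytic subsets of the curve $\ceros(f)$. The paper instead proves a strictly stronger fact: both sets are unions of irreducible components of $\ceros(f)$, hence are C-analytic and not merely global semianalytic. The mechanism is sheaf-theoretic: for a one-dimensional component $Y$ meeting $\ceros'(f)$ off the other components, one uses Theorem~A to pick $g\in\Oo(M)$ generating $\Ii(Y_a)$ at some point, notes that the exponent $\alpha_a$ in $f_a=u_ag_a^{\alpha_a}$ is odd, and then shows via the coherent sheaf $(g^{\alpha_a}\Oo_M+f\Oo_M)/f\Oo_M$ that this odd exponent (hence the sign change) persists along $Y$ outside a discrete set; closedness of $\ceros'(f)$ then swallows the whole component. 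Your approach is shorter and more elementary, but it imports \L ojasiewicz's closure theorem as a black box, whereas the paper's argument stays inside its own toolkit and yields the extra structural information that sign-change is a component-by-component phenomenon.
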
  
\begin{proof} We will prove that both sets are union of irreducible components of the C-analytic set $\ceros(f)$. Let $Y$ be one such component. If dim $Y=0$ then of course $Y\subset \ceros''(f)$. If dim $Y=1$ it is of pure dimension $1$ and connected. Denote $D_1 = Y\cap Y'$ where $Y'$ is the union of the irreducible components of $\ceros(f)$ different from $Y$. $D_1$ is a discrete analytic subset of $Y$.

Assume now $(Y\setminus D_1)\cap \ceros' (f) \neq \varnothing$ and take a point $a\in (Y\setminus D_1)\cap \ceros'(f)$. The ideal $\Ii(Y_a)$ is a principal ideal. Applying  Theorem A  we find $g\in \Oo(M)$ such that $g\in \Ii(Y)$ and  $\Ii(Y_a) =g_a\Oo_{M,a}$. The set of points in $Y$ where the germ of $g$ does not generate the fiber $\Ii(Y_b)$ is a discrete set $D_2\subset Y$.

Now $f_a\in g_a\Oo_{M,a}$, but $f$ changes sign at $a$ so, $f_a =u_a g_a^{\alpha_a}$ where $u_a$ is a unit and $\alpha_a$ is odd.

Let us consider the coherent analytic sheaf $\Ff$ defined by

$$ \Ff= (g^{\alpha_a}\Oo_M + f\Oo_M)/f\Oo_M$$
 
The support of this sheaf is a closed analytic set and it is  the set of points
in $x\in \ceros(f)$ where $g^{\alpha_a}\Oo_{M,x}$ is not included in $f \Oo_{M,x}$. Since $a\notin$ Supp$\Ff$, Supp$\Ff \cap Y$ is a discrete set $D_3$. 

Put $D=D_1\cup D_2\cup D_3$. Then $Y\setminus D \subset \ceros'(f)$ because $f$ changes sign at all points of $Y\setminus D$. But $\ceros'(f)$ is closed so, $Y\subset \ceros'(f)$.

Assume now $(Y\setminus D_1)\cap \ceros''(f) \neq \varnothing$. The same argument as before shows that $Y\setminus \ceros''(f)= Y\cap \ceros'(F)$ is a discrete set and so, $Y\subset \overline{\ceros''(f)}$.
The proof is complete.   
\end{proof}

\begin{prop}\label{quasilibera} Let $M$ be a real analytic manifold of dimension $p$. Let $f$ be an analytic function on $M$  and $\ceros'(f)$ be the set of points where $f$ changes sign. Then, there is a function $\tilde f\in \Oo(M)$ with the following properties.
\begin{enumerate}
\item $\tilde f$ is almost square free.
\item $f=\tilde f \sum_{i=1}^q A_i^2, A_i\in  \Oo(M)$.
\item $\ceros'(f)\subset \ceros(\tilde f)\subset \ceros(f)$ and $\mbox{\rm dim }\ceros(\tilde f)_x\leq p-2$ for all $x\in \ceros(\tilde f)\setminus \ceros'(f)$.
\item There is a C-analytic subset $Y\subset \ceros(\tilde f)$ of codimension $\geq 2$ in $M$ such that $\Ii(\ceros(\tilde f)_x)= \tilde f_x\Oo_{M,x}$ for all $x\notin Y$ and $\ceros(\tilde f)\setminus\ceros'(f)\subset Y$
\item $\tilde f$ is unique up to units.
\end{enumerate} 
\end{prop}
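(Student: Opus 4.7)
The plan is to construct $\tilde{f}$ locally at each point via unique factorization in $\Oo_{M,x}$, glue these local pieces into a globally defined analytic function using the machinery of coherent sheaves on $C$-analytic spaces, and finally identify the quotient $f/\tilde{f}$ as a finite sum of squares in $\Oo(M)$.

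At each $x\in M$ I would use unique factorization in the noetherian UFD $\Oo_{M,x}$ to write
$$f_x = u_x \prod_i h_{i,x}^{n_i}\prod_j \xi_{j,x}^{m_j},$$
where $u_x$ is a unit, the $h_{i,x}$ are the irreducible factors with codimension one zeroset, and the $\xi_{j,x}$ are the irreducible elliptic factors. Every elliptic germ is sign-definite on a small ball, since removing its codimension $\geq 2$ zero locus from a ball leaves a connected set, so the only local source of a sign change of $f$ is an $h_{i,x}$ appearing with odd multiplicity $n_i$. I would then define the sheaf of ideals $\tilde{\Ii}$ by
$$\tilde{\Ii}_x = \Big(\prod_{n_i\text{ odd}} h_{i,x}\cdot\prod_{m_j\text{ odd}}\xi_{j,x}\Big)\Oo_{M,x},$$
verify that $\tilde{\Ii}$ is a locally principal coherent subsheaf of $\Oo_M$ (via a comparison of irreducible factorizations at $x$ and at nearby points through Weierstrass preparation), and observe that the quotient of $f_x$ by a local generator of $\tilde{\Ii}_x$ is always a unit times a square in $\Oo_{M,x}$.

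For the globalization step, Theorems A and B for $C$-analytic spaces (Theorem~\ref{ABreali}), applied to the exponential sequence
$$0\to\Oo_M\to\Oo_M^*\to\Oo_M^*/\Oo_M^+\cong\Z/2\Z\to 0,$$
identify the obstruction to $\tilde{\Ii}$ being globally principal with a class in $H^1(M,\Z/2\Z)$, which can be absorbed into the sum-of-squares factor (at worst one replaces the candidate $\tilde{f}$ by $\tilde{f}\cdot\epsilon$, where $\epsilon$ is a locally constant sign already expressible as a sum of squares after passing to a rank-two bundle). This produces $\tilde{f}\in\Oo(M)$, and after adjusting $\tilde{f}$ by a suitable global unit I can arrange $f/\tilde{f}\geq 0$ on $M$. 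Since $f/\tilde{f}$ is locally the product of a positive analytic unit and an analytic square, the Pfister bundle construction of Section~\ref{lowdim} assigns to it a rank-two analytic vector bundle with a Riemannian metric whose (nowhere vanishing, when possible) sections yield the required global $A_1,\ldots,A_q\in\Oo(M)$ with $f/\tilde{f}=\sum_{i=1}^q A_i^2$.

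Properties (3)--(5) then fall out of the construction. Sign changes of $f$ only occur across codimension one factors with odd multiplicity, so $\ceros'(f)\subset\ceros(\tilde{f})\subset\ceros(f)$; at points $x\in\ceros(\tilde{f})\setminus\ceros'(f)$ the factorization contains only elliptic contributions, whence $\dim\ceros(\tilde{f})_x\leq p-2$. The $C$-analytic set $Y$ is taken to be the zeroset of the elliptic odd-multiplicity part of the factorization: outside $Y$ the germ $\tilde{f}_x$ is, up to a unit, a reduced equation of $\ceros(\tilde{f})_x$, and uniqueness up to a unit is immediate because any other $\tilde{f}'$ satisfying (1)--(4) generates the same ideal sheaf $\tilde{\Ii}$. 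The main technical obstacle I expect is the coherence check in step two: as $y$ varies near $x$, irreducible factors of $f_y$ may split into several branches (and, exceptionally, acquire extra square structure along a thin set), and one must show that the parity of multiplicities of the codimension one components is semi-continuous enough that a local generator of $\tilde{\Ii}_x$ continues to generate $\tilde{\Ii}$ on an open neighborhood of $x$.
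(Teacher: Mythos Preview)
Your local construction and the resulting $\tilde f$ agree with the paper's, but the globalization is needlessly indirect and has a gap. The paper works with the \emph{complementary} sheaf: writing $f_y=g_y^2\,h_y\rho_y$ with $h_y\rho_y$ square-free, it considers $\Ff_y=g_y\Oo_{M,y}$ rather than your $\tilde\Ii_y=(h_y\rho_y)\Oo_{M,y}$. Since $\Ff$ is locally principal, by Coen's result it is generated by finitely many global sections $l_1,\dots,l_s$; setting $l=\sum_i l_i^2$ one finds $l_y=g_y^2v_y$ with $v_y$ a positive unit (one $l_i$ generates $\Ff_y$, all others lie in $g_y\Oo_{M,y}$), so $\tilde f:=f/l$ is analytic on $M$ and the identity $f=\tilde f\sum_i l_i^2$ already gives (2). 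No cohomological obstruction appears and no separate sum-of-squares argument is required.

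Along your route the $H^1(M,\Z/2\Z)$ obstruction is a phantom: from $\Ff^2\cdot\tilde\Ii=(f)\Oo_M$ and $H^1(M,\Oo_M^*)\cong H^1(M,\Z/2\Z)$ one gets $[\tilde\Ii]=-2[\Ff]=0$, so $\tilde\Ii$ is automatically globally principal; your remark about absorbing a locally constant sign $\epsilon$ into a sum of squares cannot work as stated, since $-1$ is not one. The concrete gap is the next step: to express $f/\tilde f$ as a finite sum of squares you invoke Section~\ref{lowdim}, but that construction is specific to $\dim M\le2$ and relies on triviality of rank-two bundles, which fails for a general $M$ of dimension $p$. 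Fixing this amounts to applying exactly the paper's trick to the sheaf $\Ff$. For (4) the paper takes $Y=\Sing(\ceros(F))\cap M$ with $F$ a holomorphic extension of $\tilde f$; outside $Y$ the complex germ $\ceros(F)_x$ is smooth, so $F_x$ generates its ideal and hence $\tilde f_x$ generates $\Ii(\ceros(\tilde f)_x)$.
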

\begin{proof}Let $y$ be a point in $M$. Since $\Oo_{M,y}$ is a UFD, a local factorization of $f$ is given by $f_y= g_y^2h_y\rho_y$ where $h_y$ is the product of all factors of $f$ with codimension $1$ zerosets and $\rho_y$ is the product of the ones with higer codimension zerosets, say $h_y =h_1\cdots h_t, \rho = \xi_1\cdots \xi_s$. 

Consider the sheaf $\Ff$ whose fiber at $y$ is $\Ff_y = g_y \Oo_{M,y}$. It is a sheaf because $h_y\rho_y$ does not have quadratic factors, so, up to a unit, it is  the not quadratic part  of the germ $f_z$ for $z$ close to $y$. This implies that $g_y$ defines an analytic function $g$ in a neighbourhood of $y$ and $f_z =g_z^2h'_z\rho'_z u_z$. Being $\Ff$ locally principal $H^0(M, \Ff)$ is finitely generated by, say, $l_1,\ldots,l_s$. From $(l_1,\ldots,l_s)\Oo_{M,y}= g_y\Oo_{M,y}$ we deduce there is $i\in \{1,\ldots,s\}$ such that $l_i\Oo_{M,y}= g_y\Oo_{M,y}$. This implies that taking $l=\sum_{i=1}^sl_i^2$ we get $l_y = g_y^2v_y$ where $v_y$ is a unit. Hence the quotient $\tilde f= f/l$ is a well defined analytic function which verifies $\tilde f_y = h_y\rho_yw_y$ for a suitable unit $w_y$ for all $y\in M$. 

This proves 1, 2 and 3. To prove 4 we consider the complexification $\tilde M$ of $M$ and a Stein neighbourhood $\Omega$ of $M$ in $\tilde M$ to which $\tilde f$ extends to a holomorphic function $F$. Then the zeroset $\ceros(F)$ is an invariant complex space whose real part is $\ceros(\tilde f) \subset M$. Denote $Y= \Sing \ceros(F)\cap M$. Then $Y$ is a C-analytic set of codimension $\geq 2$ in $M$. Take $x\in \ceros (\tilde f)\setminus Y$. It is a regular point both for $\ceros(\tilde f)$ and $\ceros(F)$ and the ideal of $ \ceros(\tilde f)_x$ is generated by $\tilde f_x$ because $F_x$ generates the complex ideal of $\ceros(F)_x$. So $\tilde f$, and hence $f$, changes sign at all points in $\ceros (\tilde f)\setminus Y$ as wanted.

Statement 5 is clear from the definition of $\tilde f$.
\end{proof}

\begin{cor}
Let $f\in \Oo_M(M)$ be an analytic function on a real analytic manifold.
Assume dim $M \leq 3$. Then, $\ceros'(f)$ is a global semianalytic set.
\end{cor}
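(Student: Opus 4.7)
The plan is to reduce everything to Proposition \ref{quasilibera} and the one-dimensional case handled by Theorem \ref{dim1}. Apply Proposition \ref{quasilibera} to $f$ to produce an almost square free $\tilde{f}\in\Oo(M)$ together with the C-analytic subset $Y\subset \ceros(\tilde{f})$ of codimension $\geq 2$ in $M$ such that $\ceros(\tilde{f})\setminus\ceros'(f)\subset Y$ and $\tilde{f}_x$ generates $\Ii(\ceros(\tilde{f})_x)$ for every $x\notin Y$. Since $\dim M\leq 3$, we obtain $\dim Y\leq 1$, so $Y$ is a C-analytic curve (possibly of dimension $0$).

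From the inclusion $\ceros(\tilde{f})\setminus\ceros'(f)\subset Y$ I get $\ceros(\tilde{f})\setminus Y\subset \ceros'(f)$, while $\ceros'(f)\subset\ceros(\tilde{f})$ by Proposition \ref{quasilibera}(3); hence
\[
\ceros'(f) \;=\; \bigl(\ceros(\tilde{f})\setminus Y\bigr)\,\cup\,\bigl(\ceros'(f)\cap Y\bigr).
\]
The first piece is visibly global semianalytic: since $Y$ is C-analytic in $M$, there exist $h_1,\ldots,h_r\in \Oo(M)$ with $Y=\{h_1=0,\ldots,h_r=0\}$, so, setting $h=h_1^2+\cdots+h_r^2\geq 0$, we have $\ceros(\tilde{f})\setminus Y=\{\tilde{f}=0,\;h>0\}$, a basic global semianalytic set.

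For the second piece, observe that $\ceros'(f)$ is a (local) semianalytic subset of $M$, because at each point it is the zeroset of the product of the odd-multiplicity codimension-one factors of $f$ in the UFD $\Oo_{M,x}$. Hence $\ceros'(f)\cap Y$ is a semianalytic subset of the C-analytic curve $Y$, and Theorem \ref{dim1} applies to make it global semianalytic in $Y$ by means of functions in $\Oo(M)$ (restricted to $Y$, or equivalently combined with global equations of $Y$). Combining the two descriptions yields a global semianalytic expression for $\ceros'(f)$.

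The only delicate point that requires a little care is verifying that $Y$ really is C-analytic in $M$ (so that it admits finitely many global equations, via a sum of squares) and that Theorem \ref{dim1}, stated for curves in $\R^n$, transfers to the curve $Y\subset M$: this is harmless because $M$ itself can be taken as a closed C-analytic subset of some $\R^n$ (embedding $M$ and using Grauert), and then $Y$ is a C-analytic curve in $\R^n$ whose globalizing equations restrict to global analytic functions on $M$. Apart from this bookkeeping, no further work is needed beyond invoking Proposition \ref{quasilibera} and Theorem \ref{dim1}.
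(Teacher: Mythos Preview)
Your proof is correct and follows the same strategy as the paper: reduce to $\tilde f$ via Proposition \ref{quasilibera}, observe that the discrepancy between $\ceros(\tilde f)$ and $\ceros'(f)$ lives in the C-analytic set $Y$ of dimension $\leq 1$, and invoke Theorem \ref{dim1}. The paper writes $\ceros'(f)=\ceros(\tilde f)\setminus C$ with $C=\ceros(\tilde f)\setminus\ceros'(f)\subset Y$, while you decompose as $(\ceros(\tilde f)\setminus Y)\cup(\ceros'(f)\cap Y)$; these are equivalent rearrangements of the same idea.

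One small correction: your justification that $\ceros'(f)$ is locally the zeroset of the product of the odd-multiplicity codimension-one factors of $f$ is not quite right as stated. For $f=x^2-zy^2$ (Whitney's umbrella) in $\R^3$, the germ $f_0$ is itself irreducible with codimension-one zeroset and multiplicity one, yet the tail $\{x=y=0,\ z<0\}\subset\ceros(f)_0$ is not in $\ceros'(f)_0$. The semianalyticity you need follows more directly from $\ceros'(f)=\overline{\{f>0\}}\cap\overline{\{f<0\}}$, which is a finite boolean combination of closures of (global) semianalytic sets; this is all that is required to feed $\ceros'(f)\cap Y$ into Theorem \ref{dim1}.
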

\begin{proof} From Proposition \ref{quasilibera} we know that $\ceros'(f)= \ceros'(\tilde f)$ and the set of points where $\tilde f$ does not change sign is the set of points where the dimension of its zeroset drops. This is a semianalytic subset $C$  of $Y$ and dim $Y\leq 1$. So $C$ is global semianalytic and $\ceros'(f) = \ceros(\tilde f) \setminus C$ is also global semianalytic.    
\end{proof}

Next proposition is the key for the closure of a global semianalytic set.

\begin{prop}\label{chiusurabasico}
Let $f_1,\ldots, f_r$ be not identically zero analytic functions on $M$ and let $S= \{x\in M: f_1(x) >0, \dots, f_r(x)>0\}$ be a basic semianalytic set. Put $g_{i,j}= f_if_j, 1\leq i<j\leq r$ and consider $T= \{x\in M: \tilde f_1(x) \geq 0, \ldots , \tilde f_r(x)\geq 0, \tilde g_{1,2}(x) \geq 0, \ldots, \tilde g_{r-1,r}(x)\geq 0\}$. Then, $T\setminus \overline S$ is a semianalytic subset of a C-analytic subset of $M$ of codimension $\geq 2$. 
\end{prop}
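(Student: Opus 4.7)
The plan is to identify a C-analytic set $Y \subset M$ of codimension $\geq 2$ that contains $T \setminus \overline S$; since both $T$ and $\overline S$ are semianalytic (the latter by \L ojasiewicz's classical theorem on closures of semianalytic sets), the difference $T \setminus \overline S$ is then automatically semianalytic, and sitting inside $Y$ gives the conclusion.

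First I would verify the easy inclusion $\overline S \subset T$. On $S$ we have $f_i > 0$, and since by Proposition \ref{quasilibera}(2) we may write $f_i = \tilde f_i \cdot \sum_k (A_k^{(i)})^2$ with the sum of squares nonnegative, this forces $\tilde f_i > 0$ on $S$. The same computation applied to $g_{ij}$ gives $\tilde g_{ij} > 0$ on $S$, so by continuity $\tilde f_i \geq 0$ and $\tilde g_{ij} \geq 0$ on $\overline S$, as required.

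Next I would define $Y$. Let $Y_i \subset \ceros(\tilde f_i)$ and $Y_{ij} \subset \ceros(\tilde g_{ij})$ be the C-analytic sets of codimension $\geq 2$ produced by Proposition \ref{quasilibera}(4) applied to $\tilde f_i$ and to $\tilde g_{ij}$ respectively, and set $Y := \bigcup_i Y_i \cup \bigcup_{i<j} Y_{ij}$, which is C-analytic of codimension $\geq 2$ in $M$. Outside $Y$, each $\tilde f_i$ (respectively each $\tilde g_{ij}$) either is non-zero or has germ generating the ideal of a smooth codimension-one hypersurface along which it changes sign. The whole point is then to establish the inclusion $T \setminus Y \subset \overline S$, from which $T \setminus \overline S \subset Y$ is immediate.

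To prove $T \setminus Y \subset \overline S$, I would fix $x_0 \in T \setminus Y$ and work in a small neighborhood $V$ on which the above local description holds. Put $I_0 = \{i : \tilde f_i(x_0) = 0\}$; for each $i \in I_0$ we have a local factorization $\tilde f_i = u_i \phi_i$ with $u_i$ a unit and $\phi_i$ a sign-changing generator of a smooth hypersurface through $x_0$, whereas for $i \notin I_0$ one has $\tilde f_i > 0$ throughout $V$. Consider the open "positive sector" $W := \{y \in V : \tilde f_i(y) > 0 \text{ for all } i\}$. On $W$ each $f_i = \tilde f_i \cdot \sum_k (A_k^{(i)})^2$ is nonnegative, vanishing only on the proper analytic subset $Z_i := \bigcap_k \{A_k^{(i)} = 0\}$ (proper since $f_i \not\equiv 0$), so $W \setminus \bigcup_i Z_i \subset S$ is a nonempty open set as soon as $W$ itself is nonempty; and if $x_0 \in \overline W$, then $x_0 \in \overline S$.

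The main obstacle, and the one place where the pairwise inequalities $\tilde g_{ij} \geq 0$ play an essential role, is to show that $W$ is non-empty with $x_0 \in \overline W$. When the germs $\{\phi_i\}_{i \in I_0}$ at $x_0$ are mutually non-associate in $\Oo_{M,x_0}$ (no shared irreducible factors), the closed half-spaces $\{\tilde f_i \geq 0\}$ are in general position and intersect in a cone whose interior is exactly the desired $W$, with $x_0$ on its boundary. When some of the $\phi_i$'s share an irreducible factor at $x_0$, say $\tilde f_i = u_i \phi$ and $\tilde f_j = u_j \phi$, then the two half-spaces $\{\tilde f_i \geq 0\}$ and $\{\tilde f_j \geq 0\}$ select the same side of $\{\phi = 0\}$ iff $u_i(x_0) u_j(x_0) > 0$; but a direct computation of the almost-square-free part shows that $\tilde g_{ij}(x_0)$ has precisely the sign of $u_i(x_0) u_j(x_0)$, so the assumption $x_0 \in T$ (i.e. $\tilde g_{ij}(x_0) \geq 0$) rules out the incompatible case. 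A straightforward induction over the shared-factor equivalence classes of $\{\phi_i\}_{i \in I_0}$, combining these pairwise compatibilities, gives the non-emptiness of $W$ and the fact that $x_0 \in \overline W$, completing the proof.
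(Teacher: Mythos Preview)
Your approach is essentially the paper's: both confine $T\setminus\overline S$ to a C-analytic set of codimension $\geq 2$ and then argue that outside it every point of $T$ lies in $\overline S$, using the sign constraints from the $\tilde g_{ij}$. The paper takes as bad set the real trace of $\Sing\ceros(\tilde F)$ with $F=\prod_i\tilde f_i\prod_{i<j}\tilde g_{ij}$, while you take the union $Y=\bigcup_iY_i\cup\bigcup_{i<j}Y_{ij}$ of the individual bad sets from Proposition \ref{quasilibera}(4). Your $Y$ is contained in the paper's, so your claim is slightly sharper and still true --- but your argument for it has a gap.

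The gap is the ``general position'' step. You assert that when the local generators $\phi_i$ ($i\in I_0$) are mutually non-associate, the half-spaces $\{\tilde f_i\geq 0\}$ meet in a cone with $x_0$ on its boundary. This is false: in $\R^2$ take $\tilde f_1=x$, $\tilde f_2=-x-y^2$; the two germs are non-associate irreducibles with smooth zero-sets, yet $\{x>0\}\cap\{-x-y^2>0\}=\varnothing$ near the origin. Non-associate does not force linearly independent gradients, so no ``general position'' conclusion is available.

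The correct observation --- and this is exactly what the paper's larger bad set encodes automatically --- is that this case never occurs outside your $Y$. If $i,j\in I_0$, $x_0\notin Y_i\cup Y_j$, and $\phi_i,\phi_j$ are non-associate, then since $\phi_i$ (resp.\ $\phi_j$) is the \emph{only} odd-multiplicity prime of $f_i$ (resp.\ $f_j$) at $x_0$, the square-free part of $g_{ij}=f_if_j$ there is a unit times $\phi_i\phi_j$. Hence the complex zero-set of $\tilde G_{ij}$ has two distinct branches through $x_0$, so $x_0\in\Sing\ceros(\tilde G_{ij})\cap M=Y_{ij}\subset Y$, a contradiction. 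Therefore outside $Y$ all the $\phi_i$ with $i\in I_0$ coincide, only your ``shared factor'' case survives, and there your compatibility argument is correct (in that case $\tilde g_{ij}$ is a unit at $x_0$, of the same sign as $u_i(x_0)u_j(x_0)$). With this fix your proof goes through and agrees with the paper's.
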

\begin{proof}Define $F= \big( \prod_i \tilde f_i\big)\big(\prod_{i<j}\tilde g_{i,j}\big)$. We want to prove that $T\setminus \overline S$ is a subset of $\ceros(F)$ and of the singular part of the complex zero set of $\tilde F$, where $\tilde F$ is the holomorphic extension of $F$ to an open neighbourhood of $M$ in its complexification. This way the statement would be proved. 

So assume for a contradiction that there is a point $x\in T\setminus \overline S$ which is regular as a point of $\ceros (F)$. Hence there is a unique irreducible component $Y$ of $\ceros (F)$ such that $x\in Y$. Assume $Y\subset \ceros (\tilde f_1)$.  If $Y$ is not a subset of $\ceros (\tilde f_i), i>1$ then, since $\tilde f_1$ changes sign at $x$ and $\tilde f_i(x)>0$, we get $x\in \overline S$. 

So there is another function, say $\tilde f_2$ such that $Y\subset \ceros (\tilde f_2)$. But $x$ is regular, hence also $f_2$ changes sign at $x$. This implies $\tilde g_{1,2}$ does not contain $Y$ because it has constant sign at $x$ and in turn this implies that $\tilde g_{1,2}(x)\neq 0$ , hence $ \tilde g_{1,2}(x)>0$. Again, if $\tilde f_i(x)>0$ for all $i>2$, we get $x\in \overline S$. 

So there is another function, say $\tilde f_3$, such that $Y\subset \ceros (\tilde f_3)$. Arguing as before we get that for all neighborhoouds $U$ of $x$, $U\cap \{\tilde f_1>0, \tilde f_2>0, \tilde f_3>0\} \neq \varnothing$ so that $x\in \overline S$. After a finite number of steps we conclude that $x$ must be a singular point.   
\end{proof}

We are ready to prove that in dimension $\leq 3$ the closure of a global semianalytic set is a closed global semianalytic set.

\begin{thm}\label{chiusura}
Let $M$ be a real analytic manifold, dim $M\leq 3$, and $S\subset M$ be a global semianalytic set. Then $\overline S$ is a global semianalytic set. 
\end{thm}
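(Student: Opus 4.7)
The plan is to reduce to the basic case and then to use the semianalytic ``error'' bound provided by Proposition \ref{chiusurabasico} together with the curve result Theorem \ref{dim1}. First, by the finiteness property any global semianalytic set is a finite union of basic open and basic closed global semianalytic sets. Closed basic sets are already closed, and closure commutes with finite unions, so it suffices to prove that $\overline S$ is global when $S=\{f_1>0,\dots,f_r>0\}$ with $f_i\in\Oo(M)$.

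Given such a basic $S$, I would attach to it the closed global semianalytic set
$$
T=\{\tilde f_1\geq0,\dots,\tilde f_r\geq0,\ \tilde g_{1,2}\geq0,\dots,\tilde g_{r-1,r}\geq0\},
$$
where $\tilde f_i$, $\tilde g_{i,j}$ are the almost square free representatives of $f_i$, $g_{i,j}=f_if_j$ provided by Proposition \ref{quasilibera}. The inclusion $\overline S\subset T$ is immediate: on $S$ each $f_i$ is positive, hence by $f_i=\tilde f_i\sum_kA_{ik}^2$ so is each $\tilde f_i$, and similarly for the products $\tilde g_{i,j}$; passing to limits gives the weak inequalities defining $T$. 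By Proposition \ref{chiusurabasico} the difference $T\setminus\overline S$ is a semianalytic subset of some C-analytic set $Z\subset M$ of codimension $\geq 2$, so $\dim Z\leq\dim M-2\leq 1$ by the assumption $\dim M\leq 3$.

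At this point I invoke the classical result that the closure of a semianalytic set in a real analytic manifold is again semianalytic, so that $T\setminus\overline S$ is honestly semianalytic in $M$, and hence a semianalytic subset of the C-analytic curve $Z$. Theorem \ref{dim1} then upgrades it to a global semianalytic subset of $Z$, which is in turn a global semianalytic subset of $M$ (one just multiplies the defining inequalities by equations cutting out $Z$ in $M$). Since $T$ itself is closed global semianalytic by construction, we conclude that
$$
\overline S=T\setminus(T\setminus\overline S)
$$
is global semianalytic, as desired.

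The only real work is the dimension bookkeeping: the step that makes the hypothesis $\dim M\leq 3$ essential is precisely the passage from ``semianalytic subset of a C-analytic set of codimension $\geq 2$'' to ``global semianalytic,'' since Theorem \ref{dim1} is only available in dimension one. In higher dimension the error set $T\setminus\overline S$ would sit in a surface or solid C-analytic piece and one could not appeal to Theorem \ref{dim1}; this is the expected obstacle for generalising the statement beyond $\dim M=3$.
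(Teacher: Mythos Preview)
Your treatment of the basic open case is essentially the same as the paper's, and that part is fine. The gap is in your reduction step. The finiteness property does \emph{not} say that every global semianalytic set is a finite union of basic open and basic closed sets; it only says that \emph{open} ones are unions of basic opens and \emph{closed} ones are unions of basic closeds. A general global semianalytic set is a finite union of pieces of the form $\{f=0,\ g_1>0,\dots,g_s>0\}$ with $f$ possibly nonzero, and such a piece need not be a finite union of open and closed global semianalytic sets (the half-line $\{y=0,\ x>0\}\subset\R^2$ already shows this). So your argument covers only the case $f\equiv 0$ and leaves the mixed case untouched.

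The paper handles the mixed case $\{f=0,\ g_1>0,\dots,g_s>0\}$ with $f\not\equiv0$ separately, by comparing $\overline S$ with $\overline{S'}\cap X$, where $S'=\{g_1>0,\dots,g_s>0\}$ and $X$ is the union of those irreducible components of $\{f=0\}$ that actually meet $S'$. One then shows that the discrepancy $T=(\overline{S'}\cap X)\setminus\overline S$ sits inside a C-analytic set of dimension $\le1$ (here again the bound $\dim M\le3$ enters), so Theorem \ref{dim1} applies. This piece of the argument is not optional: without it your proof does not cover all basic global semianalytic sets, and hence not all global semianalytic sets.
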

\begin{proof} Let $S$ be described as $S= \bigcup_{i=1}^t\{f_i =0, g_{i,1}>0, \ldots, g_{i,s_i} >0 \}$. It is enough to prove that the closure of each piece is global. So consider the set $S=\{f =0, g_{1}>0, \ldots, g_{s} >0 \}$. 

If $f$ is the zero function, then $S$ is basic open and Proposition \ref{chiusurabasico} applies. So we get that $T\setminus \overline S =C$ is a semianalytic set in a C-analytic set of dimension at most $1$ and so it is a global semianalytic set. Hence $\overline S = T\setminus C$ is a global semianalytic set.

If $f$ is not identically zero it is clear that $\overline S\subset \overline{\{g_1>0, \ldots, g_s >0\}} \cap \ceros(f)$. Denote $S' = \{x\in M:g_1(x)>0, \ldots, g_s(x) >0\}$   

Consider the decomposition $\ceros(f) =\bigcup Y_\alpha$ in irreducible components and denote $L$ the set $\{\alpha: Y_\alpha \cap S' \neq \varnothing\}$. Then, $X= \bigcup_{\alpha\in L}Y_\alpha$ is a C-analytic set and $\overline S = \overline{\bigcup_{\alpha\in L}S'\cap Y_\alpha} = \bigcup_{\alpha\in L}\overline{S'\cap Y_\alpha}\subset \overline S' \cap X$. 
It is enough to prove that $T= \overline S'\cap X\setminus \overline S$ is a global semianalytic set.

Note that $T\subset \bigcup_{\alpha\in L}(\overline S'\cap Y_\alpha)\setminus \overline{S'\cap Y_\alpha}$. Now $(\overline S'\cap Y_\alpha)\setminus \overline{S'\cap Y_\alpha}\subset Y_\alpha$ does not contain any open set $W$ of maximal dimension. Indeed if so, there would be an $i\in\{1,\ldots,s\}$ such that $g_i$ would vanish on $W$, hence on $Y_\alpha$ because $Y_\alpha$ is irreducible. But then we would get $Y_\alpha \cap S' =\varnothing$ in contradiction with the definition of $L$.

So $T$ is a subset of $\bigcup_{\alpha\in L}(\overline S'\cap Y_\alpha)\setminus \overline{S'\cap Y_\alpha}\subset \bigcap_{\alpha \in L}Y_\alpha\cap \ceros(g_1\cdots g_s)$. But being $Y_\alpha$ irreducible its intersection with $\ceros(g_1\cdots g_s)$ has dimension $\leq 1$. So $T$ is semianalytic in an analytic curve, hence it is global as wanted.  

Of course if dim $M=2$ the proof simplifies because in this case dim $T= 0$.     \end{proof}

\begin{remark}\label{basicochiuso}
In the case dim $M=2$ since $T\setminus \overline S$ is a discrete set $D$ it is easy to find a global analytic function on $M$ such that $g$ is strictly negative on $D$ and positive on $\overline S$. So, if $S$ is basic open, $\overline S$ is basic closed. It is enough to add to the inequalities defining $T$ the inequality $g\geq 0$. A similar result holds true for dim $M=3$ but it requires more work, as we see in the next proposition.
\end{remark} 

\begin{prop}\label{basico3} 
Let $M$ be a real analytic manifold of dimension $3$ and $S\subset M$ be a not empty basic open set. Then $\overline S$ is a  union of two  basic closed semianalytic sets .
\end{prop}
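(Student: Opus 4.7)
The starting point is Proposition \ref{chiusurabasico}: write $S=\{f_1>0,\ldots,f_r>0\}$, introduce $\tilde f_i$ and $\tilde g_{ij}=\widetilde{f_if_j}$, and form the basic closed set
$$
T=\{\tilde f_1\geq 0,\ldots,\tilde f_r\geq 0,\ \tilde g_{ij}\geq 0 \text{ for } i<j\}.
$$
Then $\overline S\subset T$ and $C:=T\setminus\overline S$ is contained in a C-analytic subset $Y\subset M$ of codimension $\geq 2$, so $\dim Y\leq 1$. My plan is to realise $\overline S$ as a union of two basic closed sets of the form $B_i=T\cap\{h_i\geq 0\}$: concretely, to produce $h_1,h_2\in\Oo(M)$ with the two properties
$$
h_1(p)<0 \ \text{and}\ h_2(p)<0\ \forall p\in C,\qquad \forall q\in\overline S:\ h_1(q)\geq 0 \ \text{or}\ h_2(q)\geq 0.
$$
Under these conditions $B_1\cup B_2=T\setminus(\{h_1<0\}\cap\{h_2<0\})=T\setminus C=\overline S$, which is exactly what is needed.

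The construction of $h_1,h_2$ follows the three-step scheme used in the proof of Theorem \ref{cc} (Lemmas \ref{locale}, \ref{m-adic}, \ref{globale}), now carried out along the 1-dimensional locus $\overline C\subset Y$ rather than a discrete set. First, a \emph{local separation lemma} at each $p\in\overline C$: since the closed semianalytic germs $\overline S_p$ and $C_p$ are disjoint (note $C\cap\overline S=\varnothing$ while $F:=\overline C\cap\overline S$ consists of limit-from-$C$ points that lie in $\overline S$ but not in $C$), Bröcker's complexity bound for closed semianalytic set germs in three variables yields germs $a_{p,1},a_{p,2}\in\Oo_{M,p}$ such that at every point of $C_p\setminus\{p\}$ both $a_{p,1}$ and $a_{p,2}$ are strictly negative, while on $\overline S_p$ at least one of them is non-negative. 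Second, a \emph{finite determinacy step} analogous to Lemma \ref{m-adic}: there exists an integer $t_p$ such that any pair of germs agreeing with $(a_{p,1},a_{p,2})$ modulo $\mathfrak m_p^{t_p}$ still separates $\overline S_p$ from $C_p$ in the same manner. Third, \emph{globalisation} as in Lemma \ref{globale}: since $\overline C\subset Y$ is a global C-semianalytic set of dimension $\leq 1$ (which we handle via Theorem \ref{dim1}), the set of points where the local data must be prescribed can be organised as the stalks of a coherent sheaf $\Gg$ on $M$ with $\Gg_p=\mathfrak m_p^{t_p}$ on a suitable distinguished (in general, locally finite, but a priori 1-dimensional) locus and $\Gg_p=\Oo_{M,p}$ elsewhere; Cartan's Theorem B then promotes the two sections of $\Oo_M/\Gg$ determined by $(a_{p,1},a_{p,2})$ to global analytic functions $h_1,h_2\in\Oo(M)$.

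The main obstacle is the first step, the local pair-separation at points of $\overline C$: in dimension $2$ (Remark \ref{basicochiuso}) the analogous locus $C$ is discrete and a \emph{single} global analytic function suffices; in dimension $3$, $C$ genuinely spreads along the curve $Y$, and simultaneously separating $\overline S_p$ from $C_p$ by only two basic pieces is the real content. I would carry this out by reducing along the analytic branches of $Y$ at $p$: at a generic point of a branch, a normal slice makes the problem two-dimensional (solved by Remark \ref{basicochiuso}), and only at the finitely many critical points of $Y$ (singular points, points of $F$, points where the germs of $C$ and $\overline S$ fit together non-generically) does the full strength of the dimension-three complexity bound enter, producing the second function $h_2$ needed to handle configurations that no single $h_1$ can resolve. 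The globalisation step (Theorem B plus the Baire-category argument from the proof of Theorem \ref{cc}) is then routine once the uniform local data has been prepared.
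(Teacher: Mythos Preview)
Your proposal has a genuine gap, and it stems from missing the key simplification that drives the paper's argument.

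You correctly identify $C=T\setminus\overline S$ as lying in a C-analytic set $Y$ of dimension $\le 1$. What you do not exploit is that, since $\overline S\cap C=\varnothing$ and $\dim(\overline C\setminus C)<\dim C\le 1$, the set $D:=\overline S\cap\overline C$ is \emph{discrete}. This is the whole point. The paper does not attempt to separate $\overline S$ from $C$ uniformly along the curve $\overline C$; instead it splits the problem into a neighbourhood of the discrete set $D$ and its complement. Near $D$ one prescribes, at each $p\in D$, a single polynomial $h_p$ with $h_p\ge 0$ on $\overline S_p$ and $h_p<0$ on $C_p\setminus\{p\}$ (here $C_p$ is a germ of a curve, so such an $h_p$ exists), and globalises to $H$ via Theorem~B exactly as in Theorem~\ref{dim1}. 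Away from $D$ the closures $\overline S$ and $\overline C$ are disjoint, so a single analytic function $b$ separates them. A third function $g$ cuts $M$ into the two regions, and the two basic closed sets are $T\cap\{g\ge0,\ H\ge0\}$ and $T\cap\{-g\ge0,\ b\ge0\}$.

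Your scheme breaks down at the globalisation step. You propose a sheaf $\Gg$ with $\Gg_p=\mathfrak m_p^{t_p}$ along the one-dimensional locus $\overline C$. Such a sheaf is not coherent, and more fundamentally the germs $(a_{p,1},a_{p,2})$ chosen independently at each point of a curve do not define a section of $\an_M/\Gg$: sections along a curve must vary analytically, they are not arbitrary pointwise assignments of jets. Theorem~B lets you lift prescribed Taylor data only at a \emph{discrete} set of points, which is exactly what the paper does at $D$. Your local step is also not justified: the Br\"ocker complexity bounds in dimension~3 do not directly yield two germs with the sign pattern you describe; you would need a specific separation statement for the pair $(\overline S_p, C_p)$, and the paper avoids this by using that $C_p$ is one-dimensional, which makes a \emph{single} separating germ available.
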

\begin{proof}
We know with the usual notations that $\overline S = \{\tilde f_1\geq 0, \ldots,\tilde f_r\geq 0, \tilde g_{1,2}\geq 0, \ldots \tilde g_{r-1,r}\geq 0\} \setminus Y$ where $Y$ is global semianalytic and dim $Y=1$. In particular $\overline S \cap \overline Y= D$ is a discrete set.  We can use a similar argument as the one of Lemmas \ref{locale} and \ref{multiloc}. Assume $M$ to be embedded in $\R^n$ as a closed submanifold. For all $p\in D$ there is a polynomial $h_p\in \R[x_1,\ldots,x_n]$ such that it is $\geq 0$ on $\overline S_p$ and $<0$ on $Y_p\setminus \{p\}$. So there is a small ball $B_p$ such that $B_p\cap \overline S = \{x\in B_p: h_p(x)\geq 0\}$. Also we can assume $B_p\cap B_q =\varnothing$ if $p$ and $q$ are different points of $D$. Applying Theorems A and B we find $H\in \Oo(\R^n)$ such that $H_p= h_p +t_p$ where $t_p$ is a series of order sufficientely big with respect to the degree of $h_p$ for all $p\in D$ in such a way that,   shrinking $B_p$ if needed, we get $B_p\cap \overline S = \{x\in B_p: H_p(x)\geq 0\}$ for all $p\in D$. Take now smaller balls $B'_p\subset \overline{B'_p}\subset B_p$.Then, there is an analytic function $g\in \Oo(\R^n)$ such that $g>0$ on $\bigcup_p\overline{B'_p}$ and $g<0$ on $\R^n \setminus \bigcup_pB_p$. So, $\overline S \cap \{g\geq 0\} = \{H\geq 0, g\geq 0\}$

Finally remark that $\overline S \cap \{g\leq 0\}$ and $\overline Y \cap \{g\leq 0\}$ are  closed disjoint sets, hence there is $b\in \Oo(\R^n)$ such that it is positive on the first and negative on the second. As a consequence we get.

$$ \overline S = \overline S \cap \{g\leq 0\} \cup \overline S \cap \{g\geq 0\}=$$
$$= \{-g\geq 0, b\geq 0, \tilde f_1\geq 0, \ldots \tilde f_r \geq 0, g_{1,2} \geq 0, \ldots , g_{r-1,r} \geq 0\} \cup$$
 $$ \cup \{g\geq 0, H\geq 0, \tilde f_1\geq 0, \ldots \tilde f_r \geq 0, g_{1,2} \geq 0, \ldots , g_{r-1,r} \geq 0\}$$

as wanted.     
\end{proof}

\subsection{General dimension.} 

As said there are not results on closure and connected components of a global semianalytic set of dimension greather than $2$. In this section we prove that both problems have a positive answer  locally, i.e. if $S$ is global semianalytic set in $\R^n$, then, for all points $x\in \overline S \setminus \stackrel{\circ}{S}$ there is an open neighbourhood $U^x$ such that 
\begin{itemize} 
\item $\overline S \cap U^x$ is a global semianalytic set (Theorem \ref{fujitagabrie}).
\item Any union of connected components of $S\cap U^x$ is a global semianalytic set (Theorem \ref{ccruiz}).
\end{itemize}

When  the semianalytic set $S$ has  relatively compact boundary, then both, the connected components and the closure of $S$ are global semianalytic sets.

\bigskip

In what follows we consider an analytic manifold $M$ of dimension $p$ embedded as a closed submanifold in $\R^n$.

We begin with closures.

\begin{thm}\label{fujitagabrie}
Let $S\subset M$ be a global semianalytic set. Then, for all $x\in \overline S \setminus \stackrel{\circ}{S}$ there is a neighbourhood $U_x$ such that $\overline S \cap U_x$ is a global semianalytic set.
\end{thm}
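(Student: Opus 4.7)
The plan is to reduce, by finite union, to the case where $S=\{g_1>0,\ldots,g_s>0\}$ is a basic open global semianalytic set (the non-open pieces $\{f=0,g_{i,j}>0\}$ of a general description sit in a proper C-analytic subset of $M$, so they feed the induction step below). Applying Proposition \ref{chiusurabasico} produces a closed basic global semianalytic set $T\supseteq\overline S$ for which $R:=T\setminus\overline S$ is a semianalytic subset of a C-analytic set $X\subseteq M$ of codimension at least $2$. Locally at $x$, showing that $\overline S$ is global is then equivalent to showing that $R$ is locally global, since $\overline S\cap U_x=(T\cap U_x)\setminus(R\cap U_x)$ and $T$ is already global.

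I would then proceed by induction on $p=\dim M$. The base cases $p\leq 3$ are Theorems \ref{cc} and \ref{chiusura}. For the inductive step the key structural input is Theorem \ref{ABreali} (Theorems A and B for C-analytic spaces): the natural restriction map $\Oo(M)\twoheadrightarrow\Oo(X)$ is surjective, so global semianalytic subsets of the C-analytic space $X$ (with respect to $\Oo(X)$) coincide with global semianalytic subsets of $M$ contained in $X$. This legitimises applying the induction hypothesis inside $X$, which has dimension at most $p-2$.

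To circumvent the apparent circularity that $R=T\setminus\overline S$ is defined via the yet-unknown $\overline S$, I would work at the level of germs at $x$. By \L ojasiewicz's classical theory the germ $\overline S_x$ is semianalytic, hence describable by finitely many analytic function germs $h_1,\ldots,h_k\in\Oo_{M,x}$ together with boolean sign conditions. Using Theorem A, for any prescribed $N$ each $h_i$ can be approximated modulo $\gtm_x^N$ by a global analytic function $H_i\in\Oo(M)$. The goal becomes to show that, for $N$ large enough, the global basic semianalytic set built from the $H_i$ with the same sign conditions has germ at $x$ exactly equal to $\overline S_x$; once this is established, the very definition of germ equality furnishes a neighbourhood $U_x$ on which $\overline S\cap U_x$ coincides with this global set. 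The residual part of the analysis inside $X$, which needed the hypothesis that closure be global only in lower dimension, is taken care of by the inductive step together with the surjectivity of $\Oo(M)\to\Oo(X)$.

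The main obstacle is exactly this $\gtm_x$-adic stability statement in arbitrary dimension, which is the higher-dimensional analogue of Lemma \ref{m-adic}. In dimension two the argument rested on Weierstrass preparation and the continuity of Puiseux roots with respect to the coefficients; in dimension $p\geq 3$ neither tool is directly available. The natural line of attack is to first apply Hironaka's desingularization theorem to $\prod h_i$ so as to lift the situation to a proper analytic modification $\pi:M'\to M$ on which the pullbacks $h_i\circ\pi$ become normal-crossing monomials. On $M'$ the semianalytic set described by sign conditions on monomials depends only on the exponents, and perturbations of sufficiently high $\gtm_x$-adic order pull back to perturbations that preserve these monomial shapes; pushing the resulting equality of germs back through $\pi$ yields the required stability. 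This is the technical heart of the theorem, and the place where the proof differs most substantially from the dimension $\leq 3$ case already worked out in the excerpt.
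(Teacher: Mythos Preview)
Your proposal has a genuine gap at the $\gtm_x$-adic stability step, and the Hironaka argument does not close it.

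The stability statement you need is false. Take $M=\R^2$, $x=0$, $h=y$, $H=y+x^N$; then $H-h\in\gtm_0^N$ but the germs of $\{h\geq 0\}$ and $\{H\geq 0\}$ at $0$ differ for every $N$. The two-dimensional Lemma~\ref{m-adic} you invoke as precedent does \emph{not} assert that perturbing the defining functions of $S$ preserves the germ of $S$; it only asserts that a \emph{fixed} closed germ $F\subset S\cup\{p\}$ remains inside the perturbed $S'\cup\{p\}$. That containment is strictly weaker than the germ equality your argument requires, and the stronger statement fails already in the plane.

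Hironaka does not repair this. For $\varepsilon_i\in\gtm_x^N$ to leave the monomial shape of $h_i\circ\pi=m_i\cdot u_i$ intact at $a\in\pi^{-1}(x)$, one needs $\varepsilon_i\circ\pi\in(m_i)\cdot\gtm_a$. But $(\pi^*\gtm_x)^N$ is supported on the exceptional locus $\pi^{-1}(x)$, whereas $m_i$ typically involves strict transforms of components of $\{h_i=0\}$ that are \emph{not} contained in $\pi^{-1}(x)$. In the example above, blowing up the origin gives $h\circ\pi=xv$ while $(\pi^*\gtm_0)^N=(x^N)$, and $xv\nmid x^N$; indeed $H\circ\pi=x(v+x^{N-1})$, so the zero divisor has moved. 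Since the germ/Hironaka step is precisely your remedy for the acknowledged circularity in the inductive scheme (and since that scheme also needs the theorem for C-analytic spaces, not manifolds), the whole argument collapses here.

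The paper's route is entirely different and involves no perturbation-stability. It encodes $x\in\overline S$ (for $S=\{f=0,g_1>0,\ldots,g_s>0\}$) by the auxiliary condition that there exist $y$ near $x$ with $|\hat f(x,y)|<c'\|x-y\|^{q'}$ and $\hat g_i(x,y)>c\|x-y\|^q$, where $\hat f,\hat g_i$ are the Taylor polynomials of $f,g_i$ centred at $x$ of orders $q',q$. \L ojasiewicz's inequality is used to choose $q,q'$ so that this auxiliary condition is equivalent to $x\in\overline S$ on a neighbourhood of the given boundary point. The resulting condition on $x$ is a first-order formula in the dummy variable $y$ whose coefficients are finitely many partial derivatives of $f,g_i$, all lying in $\Oo(M)$; Tarski--Seidenberg (Theorem~\ref{TS}) then eliminates $y$ and yields an $\Oo(M)$-constructible description of $\overline S\cap U_x$.
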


Before proving this theorem we recall  Tarski-Seidenberg's  result on  projections of semialgebraic sets in the form proved by \L ojasiewicz. 

Consider an algebra $\Aa$ of real functions on a real manifold $M$. A subset $B\subset M$ is called {\em costructible} by $\Aa$ if it is described by functions in $\Aa$, namely

$$ B=\bigcup_{i=1}^t \{f_i =0, g_{i,1} >0, \ldots , g_{i,s_i} >0\}$$
where  $f_i, g_{i,j} \in \Aa$.

\begin{thm}\label{TS}  
Let $A$ be costructible in $M\times \R^k$ by the algebra $\Aa[t_1,\ldots, t_k]$. Then its projection to $M$ is costructible by $\Aa$.
\end{thm}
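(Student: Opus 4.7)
The plan is to reduce Theorem \ref{TS} to the classical Tarski--Seidenberg principle for real closed fields, which is essentially an algebraic statement about sign conditions on polynomials with variable coefficients. Since $A$ is described by finitely many polynomials $f_i,g_{i,j}\in\Aa[t_1,\ldots,t_k]$, the key point is to show that the question ``does there exist $t\in\R^k$ with $(x,t)\in A$?'' is decided, as $x$ varies over $M$, by a finite Boolean combination of sign conditions on elements of $\Aa$.

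First I would induct on $k$. Writing $M\times\R^k=(M\times\R^{k-1})\times\R$, one views $A$ as constructible in $(M\times\R^{k-1})\times\R$ by the algebra $\Aa[t_1,\ldots,t_{k-1}][t_k]$. If the $k=1$ case is known, its projection to $M\times\R^{k-1}$ is constructible by $\Aa[t_1,\ldots,t_{k-1}]$, and the inductive hypothesis applied to the remaining projection $M\times\R^{k-1}\to M$ then finishes the reduction. Thus it suffices to treat $k=1$, and moreover, since $\pi$ commutes with finite unions, it suffices to handle a single basic piece
$$
A_0=\{(x,t)\in M\times\R:\ f(x,t)=0,\ g_1(x,t)>0,\ldots,g_s(x,t)>0\},
$$
with $f,g_j\in\Aa[t]$.

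For the one-variable case I would use parametric sign-counting via Sturm (or subresultant) sequences. Writing $f(x,t)=a_n(x)t^n+\cdots+a_0(x)$ with $a_i\in\Aa$, and similarly for each $g_j$, one forms the Sturm remainder sequence of $f$ against $f'\cdot g_1\cdots g_s$ (or, equivalently, the generalized subresultants needed to count roots of $f$ weighted by the signs of the $g_j$). The entries of these sequences are polynomial expressions in $a_0(x),\ldots,a_n(x)$ and the analogous coefficients of the $g_j$, hence again elements of $\Aa$. The classical theorem of Sturm (Sylvester--Tarski in its parametric form) asserts that the number of real $t$ at which $f(x,t)=0$ and every $g_j(x,t)>0$ equals an explicit alternating count of sign variations of these sequences evaluated at $\pm\infty$; in particular, the condition ``there exists such $t$'' becomes a Boolean combination of sign conditions $h_\alpha(x)\square_\alpha 0$ with $h_\alpha\in\Aa$ and $\square_\alpha\in\{=,>\}$.

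The main obstacle is handling degeneracies in this process. Sturm's formulas are valid only when leading coefficients do not vanish and roots remain simple, whereas a priori $a_n(x)$ or discriminants may vanish on subsets of $M$. I would therefore first stratify $M$ by the sign conditions $a_n=0$, $a_n\neq 0$, $a_{n-1}=0$, etc., together with sign conditions on the leading coefficients and discriminants of the $g_j$, so that on each stratum the degrees of $f,g_1,\ldots,g_s$ in $t$ are constant and the Sturm construction applies uniformly; the degenerate stratum where $f\equiv 0$ in $t$ reduces to projecting a purely open basic set defined by the $g_j$, treated by the same method. Each stratum is itself constructible by $\Aa$, and its contribution to $\pi(A_0)$ is constructible by $\Aa$ by the Sturm-sequence argument above; taking the finite union over all strata gives the required constructible description of $\pi(A_0)$, completing the induction.
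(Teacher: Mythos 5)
The paper itself does not prove Theorem \ref{TS}: it is simply recalled as the Tarski--Seidenberg theorem ``in the form proved by \L ojasiewicz'' and then used as an ingredient in the proof of Theorem \ref{fujitagabrie} (and, in a Stein-space variant, as Theorem \ref{TS'}). So your proposal supplies a proof where the text only cites one, and the route you take is exactly the classical one: induct on $k$ to reduce to a single variable $t$, split $A$ into basic pieces, and decide the existence of $t$ by parametric root- and sign-counting. The load-bearing observation, which you state and which is the whole point of \L ojasiewicz's algebraic formulation, is that every quantity entering the decision procedure is a universal polynomial (with constant coefficients) in the coefficients of $f$ and the $g_j$ viewed as polynomials in $t$; since those coefficients lie in $\Aa$ and $\Aa$ is an algebra, the resulting sign conditions are conditions on elements of $\Aa$, which is why the statement holds for an arbitrary algebra of functions and not only for polynomials.

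Two steps of your sketch need the standard extra care, though neither is a conceptual gap. First, the single Sturm sequence of $f$ against $f'g_1\cdots g_s$ computes only the signed count (roots of $f$ where the product $g_1\cdots g_s$ is positive minus those where it is negative); to isolate the number of roots of $f$ at which \emph{every} $g_j>0$ you need the sequences attached to all products $g_1^{e_1}\cdots g_s^{e_s}$ with $e_j\in\{0,1,2\}$ and the inversion of the resulting linear system, and, as you note parenthetically, one must use subresultant or pseudo-remainder sequences rather than true Sturm remainders so that the entries stay polynomial in the coefficients, i.e.\ in $\Aa$. Second, on the stratum where all coefficients of $f$ vanish the problem is no longer a root count for $f$, so ``the same method'' needs the usual extra reduction: the set $\{t:\ g_j(x,t)>0 \text{ for all } j\}$ is nonempty iff either all $g_j$ are positive near $+\infty$ or near $-\infty$ (sign conditions on leading coefficients on that stratum), or it contains a root of $(g_1\cdots g_s)'$, which is again a root-counting problem of the type already treated (with the trivial subcase where every $g_j$ is constant in $t$). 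With these routine adjustments your argument is complete and proves the theorem in the generality stated.
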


The strategy to prove Theorem \ref{fujitagabrie} is the following. 
First of all it is enough to prove the statement for each piece of $S$, that is we can assume $S$ to be described by one equality $f=0$ and several strict inequalities $g_1>0, \ldots, g_s>0$.  

Then we construct a set in $M\times S$ defined as $Y_{q,c} = \{(x,y)\in M\times S: f(y)=0, g_i(y)> c||x-y||^q\}$ which, for suitable $q$ and any $c$,  has the following property: 
\begin{center}
 $$x \in  \overline S  \Leftrightarrow  x \in \overline{Y}_{x;q,c} = 
\overline{p_2(p_1^{-1}(x)\cap Y_{q,c})}$$ 
\end{center}
where $p_1$ and $p_2$ are the projections on $M$ and on $S$.

The suitable $q$ is found using \L ojasiewicz inequality in a neighbourhood of $x\in \overline S\setminus S$.

Next step is to find $q,q'$ in a neighbourhood of $x$ such that substituting in $Y_{q,c}$ the Taylor expansion $\hat f$ of $f$ up to the order $q'$ and the Taylor expansions $\hat g_i$ of the  $g_i$'s up to the order  $q$ again we get

$$x\in \overline S \Leftrightarrow x\in \overline {T}_{x; q,q', c,c'}$$

where $T_{q,q', c,c'}= \{(x,y) : |\hat {f}| \leq c'||x-y||^{q'}, \hat g_i \geq c||x-y||^q\}$ and $T_{x;q,q',c,c'}=p_2 (p_1^{-1} (x) )\cap T_ {q,q',c,c'}.$

Now $T_{q,q', c,c'} \subset U_x\times U_x$ where $U_x$ is a neighbourhood of $x$ (where the \L ojasiewicz inequality holds true) and it is costructible in the algebra $\Oo(M)[y_1,\ldots ,y_n]$. Hence we can apply Theorem \ref {TS} and we get $\overline S \cap U_x$ is a global semianalytic set.   

{\sc Proof of Theorem \ref{fujitagabrie}.}
 \parindent = 0pt
The proof is done in several steps.

Consider the set $Y_{q,c} = \{(x,y)\in M\times M: y\in S, g_i(y)>c ||y-x||^q\}$ and let $p_1,p_2$ be  the two projections of $Y_{q,c}$ on $M$. Denote $Y_{x;q,c}$ the set 
$p_2(p_1^{-1}(x) \cap Y_{q,c})$.  
\smallskip
 \parindent = 0pt

{\sc Step 1} {\em There is $q_0$ such that for all $q\geq q_0$ and for all positive $c\in \R$ we get $x\in \overline S \Leftrightarrow x\in \overline {Y}_{x;q,c}$.} 

\smallskip

If $x\in S$ then  clearly $x\in \overline {Y}_{x;q,c}$. Also if $x\notin \overline S$ then $x\notin \overline {Y}_{x;q,c}$ because  $Y_{x;q,c}\subset S$. 
We are left with the case $x\in \overline S \setminus S$. 

Consider the set  $\{y\in S: ||y-x||=t\}$. It is not empty. 
Define
$$\delta(x,t) = \max_{y\in S, ||y-x||=t}(\min_i g_i(y)).$$

$\delta$ is positive on $\{y\in S: ||y-x||=t\}$ because $g_i(y) >0, i=1,\ldots,s$. Since $\{y\in \overline S: ||y-x||=t\}$ is compact there is a point $y_t$ such that $\delta$ has its maximum value at $y_t$. $\delta(x,0) =0$ so $\delta$ verifies a \L ojasiewicz inequality for $t$ sufficientely small, namely $\delta(x,t)> ct^{q_0}$. Here is where $q_0$ appears.

Now the points $y_t$ are in $Y_{x;q,c} $ because of the definition of $\delta$ and converge to $x$ for $t\to 0$ hence $x\in \overline {Y}_{x;q,c}$ for all $q\geq q_0$ and for all $c>0$.

\smallskip

Substitute in the definition of $Y_{x;q,c}$ the functions $g_i$ with their Taylor expansions $\hat {g}_i$ of order $q\geq q_0$ at the point $x$.
More precisely put
 
$Z_{q,c} =\{(x,y) \in M\times M:  \hat g_i(x,y)>c ||y-x||^q, i=1,\ldots ,s \}$, 

$$Z_{x; q,c} = p_2(p_1^{-1}(x) \cap Z_{q,c}), \quad\quad Z'_{x; q,c} = Z_{x; q,c} \cap \{f=0\}.$$

{\sc Step 2} 
 {\em $x\in \overline S \Leftrightarrow x\in \overline {Z'}_{x; q,c}$.} 
\parindent = 0pt 

 \smallskip

 $(\Leftarrow)$ If $x\in \overline S$, by Step 1, $x\in \overline{Y}_{x,q,2c}$ and there is a sequence $\{y_n\}$ in $Y_{x,q,2c}$ converging to $x$. 

We have $|g_i(y) -\hat g_i(x,y)| \geq C||x-y||^{q+1}$. But for $t$ sufficientely small we have $c ||x-y||^q > C ||x-y||^{q+1}$, so $| \hat g_i(x,y)|> \half (2c) ||x-y||^q$. This implies $y_n \in Z'_{x; q,c}$ because $y_n \in S$ and so $x\in \overline{Z'}_{x; q,c}$.
\smallskip

$(\Rightarrow)$  $x\in \overline{Z'}_{x; q,c}$. It is enough to prove $x\in \overline{Y}_{x;q, \frac{c}{2}}\subset \overline S$ by Step 1.

Since $x\in \overline{Z'}_{x; q,c}$, there is a sequence $\{y_n\}$ in $Z'_{x; q,c}$ converging to $x$. But  $|g_i(y) -\hat g_i(x,y)| \geq C||x-y||^{q+1}$ and we fixed $c$ in such a way that $c||x-y||^q >C||x-y||^{q+1}$. So $g_i(y)$ verifies $g_i(y) >\frac{c}{2} ||x-y||^q$, and so $y_n \in Y_{x;q, \frac{c}{2}}$.

Now we substitute $f$ with its Taylor $\hat f $  up to an order $q' \geq q'_0$ where $q'_0$ is to be defined.
 Define $Y_{q,q',c,c'} = \{(x,y) \in M\times M: \hat {g}_i(x,y)>c||x-y||^q, i=1,\ldots, s, |\hat f(y)|\leq c'||x-y||^{q'}\}$ and as usual  $Y_{x; q,q',c,c'}= p_2(p_1^{-1}(x)\cap Y_{q,q',c,c'})$.
\smallskip

{\sc Step 3} $x\in \overline S \Leftrightarrow x\in \overline {Y}_{x; q,q',c.c'}$.

\smallskip

$(\Rightarrow)$ If $x\in \overline S$, by Step 2 $x\in \overline{Z'}_{x; q,c}$. and $Z'_{x; q,c} \subset Y_{x; q,q',c,c'} \forall c'$.   
\smallskip

$(\Leftarrow)$ We have to prove $x\notin \overline S \Rightarrow x\notin \overline{Y}_{x; q,q',c,c'}$.

This is clear if $f(x) \neq 0$. If $f(x) =0$ but $x\notin \overline S$ there is at least one $i$ such that $g_i(x)<0$ so $x\notin \overline { Y}_{x; q,q',c,c'} $.

We are left with the case $f(x)=0$, $\min_i g_i(x)=0$.
\smallskip

Put $\displaystyle V= \{x\in M: f(x) =0, \min_i g_i(x) =0, x\in \overline {Z}_{x;q,c}\}\setminus \overline S$. We have to prove $x\in V\Rightarrow x\notin \overline{ Y}_{x; q,q',c,c'}$.

Define,  for $x\in V$, $\displaystyle \delta'(x,t) = \min_{y\in Z_{x;q,c}, ||x-y||=t}|f(y)|$.

On the compact set $\{y\in M:||x-y||=t, \hat g_i(x,y)\geq c||x-y||^q, i=1,\ldots ,s \}$ there is a point $y_t$ where $\delta '$ has its minimum value. 

We claim this minimum value is positive, that is $\delta '(x,t)>0$.
If not, there would be $x\in V$ such that $\delta'(x,t) =0$. We would get a sequence $\{y_n\}$ in $Z_{x;q,c}$ converging to $x$ as $t\to 0$ with $f(y_n) =0$ so $y_n\in Z'_{x;q,c}$ hence $x \in \overline {Z'}_{x;q,c}\Rightarrow x\in \overline S$,
and this is impossible because $V\cap \overline S =\varnothing$.
\smallskip

Since $\delta'(x,t)>0$ for $x\in V$ there is $q'_0$ such that for all $q'\geq q'_0$ and for all $c'>0$ we get $\delta'(x,t) >c' \ t^{q'}$ for $t$ sufficientely small and this implies $x\notin \overline { Y}_{x; q,q',c,c'} $ because in $ Y_{x; q,q',c,c'}$ we have $|f(x)|<c'\ t^{q'}$.

\smallskip
 
We have got the integer $q_0'$.
Next we substitute $f$ with its Taylor series $\hat f$ of order $q'\geq q'_0$ at the point $x$.

We put $T_{q,q',c,c'} = \{(x,y) \in M\times M: \hat g_i(x,y)>c||x-y||^q, i=1,\ldots,s, |\hat f(x,y)|<c'||x-y||^{q'}\}$ and $T_{x;q,q',c,c'} = p_2(p_1^{-1}(x)\cap T_{q,q',c,c'})$.

\smallskip

{\sc Step 4 } $x\in \overline S \Leftrightarrow x\in \overline {T}_{x;q,q', c,c'}$.

\smallskip

The proof is similar to the one of {Step 2}. 

$(\Rightarrow)$ $x\in \overline S$ implies $x\in \overline {Y}_{x; q,q',c,c'} $ for all $c,c'$ by Step 3. 
Assume $|f(y)- \hat f(x,y)| <C'||x-y||^{q'+1}$ and take $c'$ such that $c'||x-y||^{q'}> C'||x-y||^{q'+1}$. Then if $x\in \overline {Y}_{x; q,q',c,2c'}$ there is a sequence $\{y_n\}$ in $Y_{x; q,q',c,2c'}$ converging to $x$ but $|f(y_n)|< 2c'||x-y||^{q'}$ implies $|\hat f(y_n)| < c'||x-y||^{q'}$, hence $y_n \in T_{x; q,q',c,c'}$ and $x\in \overline {T}_{x; q,q',c,c'}$ as wanted.

\smallskip

$(\Leftarrow)$  Assume $x\in \overline {T}_{x; q,q',c,c'}$. It is enough to prove $x\in \overline {Y}_{x;q,\varepsilon}$ for $q\geq q_0$ and some $\varepsilon >0$. We have a sequence $\{y_n\}$ in $T_{x; q,q',c,c'}$ converging to $x$. 

Remember that 
$|g_i(y) -\hat g_i(x,y)| <c||x-y||^{q+1}$. 

If $2\varepsilon ||x-y||^q > c||x-y||^{q+1}$ then $g_i(y)>\varepsilon ||x-y||^q$. So, $y_n \in Y_{x;q,\varepsilon}$ and $x\in \overline {Y}_{x;q,\varepsilon}$. 

This implies $x\in \overline S$ by Step 1.

{\sc Conclusion} The previous steps imply that in a small neighbourhood $U_{x_0} = \{y\in M: ||x_0-y||<t_0\}$  of $x_0\in  \overline S\setminus S$ we have $ x\in \overline S \cap U_{x_0}$ if and only if  $(x,y)\in  \overline {T}_{q,q',c,c'} \cap U_{x_0}\times U_{x_0}$ for all $y\in p_1^{-1}(x)\cap T_{q,q',c,c'}\cap U_{x_0}\times U_{x_0}$.

Since  $T_{ q,q',c,c'}$ and its closure are semialgebraic sets and $T_{q,q',c,c'}$ is constructible by $\Oo (M)[t_1,\ldots,t_n]$ by Theorem \ref{TS}, we get $\overline S\cap U_{x_0}$ is costructible by  $\Oo (M)$, because the condition {\em being in $\overline S$} is equivalent to a first order sentence in $f,g_1,\ldots, g_s$ and their derivatives up to the order $\max \{q,q'\}$. 

This implies that $\overline S\cap U_{x_0}$ is a global semianalytic set. 
 \qed

\smallskip

Next we prove a similar result concerning connected components.
\smallskip

\begin{thm}\label{ccruiz}
Let $S\subset M$ be a global semianalytic set. For each $x\in M$ there is a global semianalytic neighbourhood $U_x$ of $x$ such that the connected components of $S\cap U_x$ are global semianalytic sets.   
\end{thm}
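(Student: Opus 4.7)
The plan is to adapt the Taylor approximation technique that drove the proof of Theorem \ref{fujitagabrie}. By the finiteness property we may assume $S$ is basic, $S=\{f=0,\,g_1>0,\ldots,g_s>0\}$, and we embed $M$ as a closed analytic submanifold of $\R^n$. Fix $x_0\in M$; the idea is to encode ``$y$ and $z$ lie in the same connected component of $S\cap U_{x_0}$'' as a first-order sentence in the algebra $\Oo(M)[y_1,\ldots,y_n,z_1,\ldots,z_n]$, after which Theorem \ref{TS} will yield that the relation is costructible by $\Oo(M)$, and hence that the equivalence classes are global semianalytic.

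First I would choose $U_{x_0}=\{\|y-x_0\|<t_0\}\cap M$ with $t_0$ small enough that the \L ojasiewicz inequalities used in Steps 1--4 of the proof of Theorem \ref{fujitagabrie} apply simultaneously to $f$ and to each $g_j$. This gives integers $q,q'$ and constants $c,c'>0$ such that the semialgebraic set
\[
T_{x_0;q,q',c,c'}=\{y\in U_{x_0}:\ |\hat f(x_0,y)|\le c'\|y-x_0\|^{q'},\ \hat g_j(x_0,y)\ge c\|y-x_0\|^q\}
\]
(where $\hat f,\hat g_j$ are the Taylor truncations of $f,g_j$ at $x_0$) satisfies $\overline{S\cap U_{x_0}}=\overline{T_{x_0;q,q',c,c'}}$. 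A refinement of the same \L ojasiewicz estimates shows that one can squeeze $S\cap U_{x_0}$ between two nested semialgebraic approximations $T^-\subset S\cap U_{x_0}\subset T^+$ built from Taylor truncations of the $f,g_j$, with $T^\pm$ having the same connected components in $U_{x_0}$. By the semialgebraic local conic structure theorem, every connected component of $T^\pm\cap U_{x_0}$ is the image under a semialgebraic homeomorphism of the cone over its trace on a small sphere, and hence connected components of $S\cap U_{x_0}$ correspond bijectively to connected components of the semialgebraic set $T^\pm\cap U_{x_0}$.

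Now I would use that, for a semialgebraic set defined by polynomial (in)equalities whose coefficients depend polynomially on a parameter $x$, the relation ``$y$ and $z$ lie in the same connected component'' is itself first-order definable over the parameter algebra (two points lie in the same component iff they can be joined by a semialgebraic arc, and the existence of such an arc is a first-order sentence in the polynomial coefficients by the semialgebraic curve selection and triviality theorems). Apply this to the Taylor coefficients of $f$ and $g_j$ at a varying base point: they belong to $\Oo(M)$, so the defining predicate for $T_{q,q',c,c'}$ is a formula in the algebra $\Oo(M)[y_1,\ldots,y_n,z_1,\ldots,z_n]$, as are the additional $\|y-x\|^q$-type comparisons. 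Projecting out the parameters that describe the connecting arc via Theorem \ref{TS} produces a subset of $M\times M$ costructible by $\Oo(M)$; intersecting its section at $\{x_0\}\times M$ with the already global semianalytic set $S\cap U_{x_0}$ yields each connected component as a global semianalytic subset of $M$.

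The main obstacle will be the second step: rigorously identifying the connected components of $S\cap U_{x_0}$ with those of a single semialgebraic approximation $T^\pm$, because $S$ itself is only semianalytic and its local topology near $\ceros(f)$ can be delicate. One has to argue that for $q,q'$ sufficiently large, any semianalytic arc inside $S$ can be perturbed to a semialgebraic arc inside $T^+$ and vice versa, using \L ojasiewicz estimates uniformly along the arc rather than just at the endpoint $x_0$. Once this comparison of local path-components is in place, the Tarski--Seidenberg step and the reduction to basic pieces are essentially routine; the theorem then follows, and as a byproduct any global semianalytic set with relatively compact boundary has each of its connected components globally semianalytic (cover the boundary by finitely many such $U_{x_0}$ and glue via Theorem \ref{cc}-type arguments, which apply because the obstructions are supported on a compact set).
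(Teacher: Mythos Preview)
Your overall strategy — reduce to a basic piece, replace $f,g_j$ by Taylor truncations, and invoke Tarski--Seidenberg — is the right one and is exactly what the paper does. But your implementation has a genuine gap at the ``squeezing'' step, and the gap is more serious than the obstacle you flag at the end.

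You propose to trap $S\cap U_{x_0}$ between two \emph{semialgebraic} sets $T^-\subset S\cap U_{x_0}\subset T^+$ built from Taylor polynomials $\hat f,\hat g_j$. This cannot be arranged when $f$ is genuinely transcendental: $S$ lives inside the analytic hypersurface $\{f=0\}$, while any nontrivial set described using $\hat f$ lives inside $\{\hat f=0\}$ (or a polynomial thickening of it), and these two loci generically meet only in a set of strictly smaller dimension near $x_0$. So there is no nonempty semialgebraic $T^-$ contained in $S\cap U_{x_0}$ with the same dimension, and the comparison of connected components via $T^\pm$ cannot get off the ground. For the same reason your arc-perturbation idea fails: a semianalytic arc in $S$ sits inside $\{f=0\}$, and any semialgebraic perturbation will leave $\{f=0\}$, so it no longer witnesses connectedness of $S$.

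The paper's fix is simple and decisive: never approximate the equation $f=0$; keep it exact. One first strengthens the open conditions to $g_i(y)>c\|y-x_0\|^q$ (this does not disconnect, by a \L ojasiewicz argument), then relaxes $f=0$ to $|f(y)|<c'\|y-x_0\|^{q'}$, and only then passes to Taylor polynomials, obtaining the semialgebraic set
\[
T_{q,q',c,c'}=\{\,|\hat f(y)|<c'\|y-x_0\|^{q'},\ \hat g_i(y)>c\|y-x_0\|^q\,\}.
\]
The comparison is then made not with $T_{q,q',c,c'}$ alone but with $T_{q,q',c,c'}\cap\{f=0\}$: an arc in $S\cap U_{x_0}$ is already an arc in $T_{q,q',c,c'}\cap\{f=0\}$ (no perturbation needed), and conversely an arc in $T_{q,q',c,c'}\cap\{f=0\}$ lands back in $Y_{q,c}\subset S$ because the Taylor errors are dominated by the buffers $c\|y-x_0\|^q$, $c'\|y-x_0\|^{q'}$. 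Since the connected components of the semialgebraic set $T_{q,q',c,c'}$ are first-order in the derivatives of $f,g_j$ up to bounded order, intersecting with the \emph{global analytic} equation $\{f=0\}$ gives each component of $S\cap U_{x_0}$ as a global semianalytic set. Rewriting your argument along these lines removes the obstacle entirely.
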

\begin{proof} 
We can assume, as in the proof of Theorem \ref{fujitagabrie}, that $S=\{f=0, g_1>0, \ldots ,g_s>0\}$.
Fix a point $x_0\in M$ and consider the set 

$$Y_{q,c} = \{y\in S: g_i(y) > c ||y-x_0||^q, \, i=1,\ldots,s\}$$

Note that this set is not the same as in Theorem \ref{fujitagabrie}. Indeed since $x_0$ is fixed we do not need to consider the product $M\times S$. We determine an integer $q_0$ as in Step 1 of Theorem \ref{fujitagabrie} and a $t_0$ such that we get.

\begin{quotation}
$y_1,y_2$ are in the same connected component of $ \{z\in S: ||z-x_0||<t\leq t_0\}$ if and only if they are in the same component of $Y_{q,c} \cap \{||z-x_0||<t\leq t_0\}$.
\end{quotation}

\smallskip

Indeed if so there is an analytic arc $\gamma: [0,1] \to S\cap \{z\in S: ||z-x_0||<t\}$ with $\gamma(0)=y_1, \gamma(1)=y_2$. The same arc lives in $Y_{q,c}$ by the choice of $q_0\leq q$ and conversely.
\smallskip

Next determine $q_0'$ as in Step 3 of Theorem \ref{fujitagabrie} and define for $q\geq q_0 , q'\geq q_0'$
 
$$Y_{q,q',c,c'} = \{y\in M: |f(y)|< c'||y-x_0||^{q'}, g_i(y) >c ||y-x_0||^q, i=1,\ldots,s\}.$$

Again both $y_1, y_2$ are in  $ Y_{q,q',c,c'}$ and the arc $\gamma$ takes values in $Y_{q,q',c,c'}\cap \{f=0\}$, because $Y_{q,c} \subset Y_{q,q',c,c'}$. 

Now approximate $g_i, i=1,\ldots,s$ and $f$ by their Taylor expansion $\hat g_i, \hat f$ at the point $x_o$ up to the order $q$ resp. $q'$ and consider the set 

$$T_{q,q',c,c'} = \{|\hat f| < c'||y-x_0||^{q'}, \hat g_i >c ||y-x_0||^q, i=1,\ldots,s\}.$$ 

For $t< t_0$ sufficientely small one has $|f(y)-\hat f(y)| < C'||y-x_0||^{q'+1}, |g_i(y) - \hat g_i(y)| <C ||y-x_0||^{q+1}$. So, if $c,c'$ are sufficientely small $y_1,y_2 \in T_{q,q',c,c'}$ together with the arc $\gamma$ as soon as $||y_j-x_0||
<t, j=1,2$. This implies $y_1, y_2$ are in the same connected component of $T_{q,q',c,c'}$ or, better, they are in the same connected component of $T_{q,q',c,c'}\cap \{f=0\}$.

Conversely if $y_1, y_2$ are in the same connected component of $T_{q,q',c,c'}\cap \{f=0\}$ they are in the same connected component of  $ Y_{q,q',c,c'}$, because 
$\hat g_i(y) >c||y-x_0||^q$ implies $g_i(y)>c||y-x_0||^q$ and $f(y_1) = f(y_2) =f(\gamma(t))=0$. This in turn implies $y_1,y_2$ are in the same connected component of $Y_{q,c}$ and hence of $S\cap \{||y-x_0||<t\}$.

Now a connected component of $T_{q,q',c,c'}$ is given by a first order formula on $g_1, \ldots , g_s, f$ and their derivatives up to a finite order, so it is a global semianalytic set. Hence $T_{q,q',c,c'}\cap \{f=0\}$ is also global semianalytic. Consequently the condition {\em to belong to a connected component of $S\cap \{||y-x_0||<t\}$} is equivalent to a first order formula on $g_1,\ldots,g_s, f$ and their derivatives plus the condition $f=0$. This proves that connected components of $S\cap \{||y-x_0||<t\}$ are global semianalytic sets. 
\end{proof}
\begin{thm}\label{bb} Assume $S\subset M$ is a global semianalytic set such that its boundary $\overline S\setminus \stackrel{\circ} S$ is compact. Then $\overline S$ is a global semianalytic set and any union of connected components of $S$ is a global semianalytic set.
\end{thm}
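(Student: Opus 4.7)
The plan is to use the compactness of $B:=\overline S\setminus\stackrel{\circ}{S}$ to reduce both conclusions to finitely many applications of the local Theorems \ref{fujitagabrie} and \ref{ccruiz} and to patch the local descriptions by means of an analytic cut-off built from Whitney's approximation theorem. For each $x\in B$ I would invoke those theorems to obtain an open neighborhood $U^x$ on which $\overline S\cap U^x$ is global semianalytic and every union of connected components of $S\cap U^x$ is global semianalytic; using local finiteness of connected components of semianalytic set germs I may shrink further so that $S\cap U^x$ has only finitely many connected components. Compactness of $B$ gives a finite subcover $U_1,\dots,U_k$; I would then choose open shrinkings $V_i$ with $\overline{V_i}\subset U_i$ compact and $B\subset V_1\cup\dots\cup V_k$, and apply Whitney's analytic approximation theorem to a smooth function separating the disjoint closed sets $\overline{V_i}$ and $M\setminus U_i$ to produce $h_i\in\Oo(M)$ strictly positive on $\overline{V_i}$ and strictly negative on $M\setminus U_i$. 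Setting $W_i:=\{h_i>0\}$ and $W:=W_1\cup\dots\cup W_k$, I obtain $B\subset W\subset\bigcup_i U_i$ with $W$ open global semianalytic and $M\setminus W=\{h_1\le 0,\dots,h_k\le 0\}$ closed global semianalytic.

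For the closure I would use the decomposition $\overline S=(\overline S\cap W)\cup(\overline S\setminus W)$. The first piece equals $\bigcup_i(\overline S\cap U_i)\cap W_i$, a finite union of global semianalytic sets. For the second piece the inclusion $B\subset W$ forces $\overline S\setminus W\subset\overline S\setminus B=\stackrel{\circ}{S}\subset S$, while $S\setminus W\subset\overline S\setminus W$ is trivial; hence $\overline S\setminus W=S\setminus W=S\cap(M\setminus W)$ is an intersection of two global semianalytic sets. Consequently $\overline S$ is global, and applying the same argument to $M\setminus S$ (which has boundary $B$) yields that $\stackrel{\circ}{S}=M\setminus\overline{M\setminus S}$ is global as well.

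The main obstacle will be the statement on connected components. The natural strategy is the same decomposition $T=(T\cap W)\cup(T\setminus W)$ for a union $T$ of connected components of $S$: the piece $T\cap W=\bigcup_i(T\cap U_i)\cap W_i$ is global because $T\cap U_i$ is the union of those (finitely many) connected components of $S\cap U_i$ whose ambient connected component of $S$ belongs to $T$. The hard point is $T\setminus W$. Here one first uses compactness of $B$ together with the local finiteness already established to see that $S$ has only finitely many connected components: a component $C$ meeting $B$ is detected by a local component of some $S\cap U_i$, so there are only finitely many such; a component $C\subset\stackrel{\circ}{S}$ satisfies $\overline C\setminus C\subset B$, and local finiteness at each point of the compact set $B$, combined with the fact that $S$ is defined by finitely many global analytic equalities and inequalities, shows that only finitely many such components can occur. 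With this finiteness in hand, $T$ is a finite union of components of $S$, and each such component $C_j$ is individually global semianalytic by applying the decomposition $C_j=(C_j\cap W)\cup(C_j\cap(M\setminus W))$, combining its finite local description in $W$ with the already-proved global semianalyticity of $S\cap(M\setminus W)$. A finite union of global semianalytic sets being global, this yields that $T$ is global and completes the proof.
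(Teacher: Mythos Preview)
Your treatment of $\overline S$ is correct and matches the paper's argument. The gap is in the connected-components part.

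First, the claim that $S$ has only finitely many connected components is false in general: if $M$ has infinitely many connected components each entirely contained in $\stackrel{\circ}{S}$, these give infinitely many components of $S$ whose closures miss $B$ altogether. Even for connected $M$ your justification (``local finiteness \dots combined with the fact that $S$ is defined by finitely many global analytic equalities and inequalities'') is not the right reason; the correct argument would be that a component $C$ with $\overline C\cap B=\varnothing$ is clopen in $M$, hence equals $M$.

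Second, and more to the point, even granting finiteness you never explain how to pass from the global semianalyticity of $S\cap(M\setminus W)$ to that of the individual piece $C_j\cap(M\setminus W)$. Being a subset of a global semianalytic set is not enough; this is precisely the place where a separation argument is required, and you have not supplied one.

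The paper bypasses the finiteness detour entirely with the following observation. For \emph{any} union $T$ of components of $S$ and any open neighborhood $V$ of $B$ with $\overline V\subset U$, the closed sets $\overline{T\setminus\overline V}$ and $\overline{(S\setminus T)\setminus\overline V}$ are disjoint: a common point would lie in $\overline T\cap\overline{S\setminus T}\subset\overline S\setminus S\subset B\subset V$, yet also in $\overline{M\setminus\overline V}\subset M\setminus V$, a contradiction. One analytic function $h$ then separates them, giving
\[
T\setminus\overline V=(S\setminus\overline V)\cap\{h>0\},
\]
which is global semianalytic. Together with $T\cap U$ (global by your correct local argument near $B$), this yields $T=(T\cap U)\cup(T\setminus\overline V)$ global for an arbitrary, possibly infinite, union $T$ of components.
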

\begin{proof} By Theorem \ref{fujitagabrie}  for each $x\in \overline S\setminus \stackrel{\circ} S$ there is a neighbourhood $U_x$ such that $\overline S\cap U_x$ is a global semianalytic set. $\bigcup_x U_x$ is an open covering of $\overline S\setminus \stackrel{\circ} S$, it has a finite subcovering $U_{x_1}, \ldots, U_{x_l}$ and hence there is an open neighbourhood $U$ of $\overline S\setminus \stackrel{\circ} S$ which is a global semianalytic set. Take a smaller neighbourhood $V$ such that $\overline V \subset U$. Then $\overline V$ and $M\setminus U$ are closed disjoint sets. Hence there is a global analytic function $h$ which is negative on $\overline V$ and positive on $M\setminus U$ . So $\overline S = (S\cap \{h>0\}) \cup (\overline S\cap U)$ is global semianalytic.

A similar argument works for connected components.  

Let $T$ be a union of connected components of $S$. Then the argument above shows that there is an open global semianalytic neighbourhood $U$ of $B = \overline S\setminus \stackrel{\circ} S$ such that $T\cap U$ is global semianalytic.  Consider a smaller global neighbourhood $V$ of $B$ with $\overline V \subset U$. Since $T= (T\cap U) \cup (T\setminus  \overline V)$ it is enough to prove that the latter is global. 
Now $S\setminus \overline V$ is a global semianalytic set and $(S\setminus T) \setminus \overline V$ and $T\setminus \overline V$ have disjoint closures. So there is an analytic function $h\in \Oo(M)$ which is positive on $T\setminus \overline V$ and negative on $(S\setminus T) \setminus \overline V$. So $T\setminus \overline V = (S\setminus \overline V) \cap \{h>0\}$ and hence it is a global semianalytic set.
\end{proof}

\section{Strict Positivstellensatz.}

A problem that can be considered in a global setting is the so called {\em Positivstellensatz}.

Let $S \subset \R^n$ be a basic closed semianalytic set, that is $S=\{x\in \R^n: f_1(x)\geq 0,\ldots ,f_k(x)\geq 0\}$ where $f_1,\ldots ,f_k \in \Oo(\R^n)$.
Let $f\in \Oo(\R^n)$ be positive on $S$.  Is there a relation between $f$ and the functions $f_1,\ldots ,f_k$ ?

When $f,f_1, \ldots ,f_k$ are polynomials one has the following result, due to K.Schm\" udgen.

\begin{thm}\label{SCH}  
Assume $S$ to be a compact basic closed semialgebraic set, $S= \{x\in \R^n: f_1(x)\geq 0,\ldots ,f_k(x)\geq 0\}$ and let $f$ be a polynomial strictly positive on $S$. Then, 

$$f= \sum a_\varepsilon f_1^{\varepsilon_1}\cdots f_k^{\varepsilon_k}$$

where each $\varepsilon_i$ is $0$ or $1$ and $a_\varepsilon$ is a sum of squares of polynomials. In other words $f$ belongs to the cone generated by $f_1,\ldots ,f_k$ over the set $\Sigma$ of sums of squares.
\end{thm}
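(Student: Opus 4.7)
The plan is to deduce the result from two ingredients: an abstract Positivstellensatz (Krivine-Stengle) obtainable by Artin-Schreier / Artin-Lang methods, and the archimedean property of the preordering, which is forced by compactness of $S$. Let $T=T(f_1,\ldots,f_k)$ denote the preordering generated by $f_1,\ldots,f_k$, that is, the set of polynomial expressions of the shape appearing in the statement; I want to show $f\in T$.

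First, I would record the abstract Positivstellensatz: if $g\in\R[x]$ satisfies $g>0$ on $S$, then there exist $p,q\in T$ with $pg=1+q$. This is proved by contradiction using the real spectrum — if no such relation exists, Artin-Schreier theory produces an ordering on $\R(x_1,\ldots,x_n)$ (or a real-closed extension) compatible with $T$ in which $g\leq 0$, and then Artin-Lang homomorphism theorem (as recalled in Section 1 of Chapter 4) supplies a point of $S$ at which $g\leq 0$, contradicting $g>0$ on $S$. This part is formal and not the main difficulty.

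Second, and more substantially, I would exploit compactness to establish that $T$ is \emph{archimedean}: for every $g\in\R[x]$ there is $N\in\N$ with $N\pm g\in T$. Because $S$ is compact, $N-(x_1^2+\cdots+x_n^2)>0$ on $S$ for some $N$, and I would upgrade the abstract relation $p(N-|x|^2)=1+q$ produced above to actual membership $N'-|x|^2\in T$ for some larger $N'$, using elementary manipulations with sums of squares (writing $1=\bigl(\tfrac{1+t}{2}\bigr)^2-\bigl(\tfrac{1-t}{2}\bigr)^2\cdot 1$ and similar tricks). Once $N'-\sum x_i^2\in T$, each coordinate $x_i$ is bounded modulo $T$, and by induction on degree, multiplying out and applying the identity $4AB=(A+B)^2-(A-B)^2$, one propagates boundedness to arbitrary polynomials.

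The hard part is the final passage from archimedeanity and $f>0$ on $S$ to $f\in T$. The cleanest route is the Kadison-Dubois representation theorem for archimedean preordered rings: the set $X_T$ of ring homomorphisms $\varphi\colon\R[x]\to\R$ with $\varphi(T)\subset[0,\infty)$ coincides with $S$ (archimedeanity forces each such $\varphi$ to be bounded on every polynomial, hence evaluation at a real point, and the constraint $\varphi(f_i)\geq 0$ places that point in $S$), and an element $g\in\R[x]$ positive on $X_T$ satisfies $g+\varepsilon\in T$ for every $\varepsilon>0$. Applying this to $g=f$ and leveraging strict positivity $f\geq\delta>0$ on $S$, one chooses $\varepsilon$ so that $f-\varepsilon$ is still positive on $S$, concludes $(f-\varepsilon)+\varepsilon=f\in T$. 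The technical crux that will absorb most of the work is the verification of the Kadison-Dubois representation theorem in this archimedean setting — identifying $X_T$ with $S$ and controlling the $\varepsilon$-representations uniformly enough to conclude genuine (not merely approximate) membership in $T$.
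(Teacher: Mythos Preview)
The paper does not prove this theorem; it is stated as a known result of Schm\"udgen with a citation to \cite{sch}, and the paper then proceeds to prove an \emph{analytic} analogue (Theorem~\ref{spss}) by entirely different, elementary means (partitions of unity and analytic approximation). So there is no proof in the paper to compare your proposal against.

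That said, your outline is the standard algebraic route to Schm\"udgen's theorem (essentially W\"ormann's argument): abstract Positivstellensatz, then archimedeanity of $T$ forced by compactness, then Kadison--Dubois. The sketch is broadly correct, but you underplay the one genuinely delicate step: passing from the Positivstellensatz identity $p(N-|x|^2)=1+q$ with $p,q\in T$ to actual membership $N'-|x|^2\in T$. Your phrase ``elementary manipulations with sums of squares'' hides the real work; the standard trick is to observe that $p(N-|x|^2)=1+q$ gives $(N-|x|^2)\cdot p\in 1+T$, then square and rearrange to bound $|x|^2$ in $T$ --- but this requires first knowing that $p$ itself is bounded, which is circular unless one argues more carefully (e.g.\ via the ring of bounded elements being integrally closed, or via an explicit degree-reduction). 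Your final $\varepsilon$-trick is fine but redundant: Kadison--Dubois in the archimedean case already gives $f\in T$ directly when $f>0$ on $X_T$.
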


We prove a similar result in the analytic setting with some differences. Namely $S$ is not supposed to be compact and  $f$ is proved to be a linear combination of $1,f_1,\ldots ,f_k$ with coefficients in $\Sigma$.

We denote $\mathcal T$ the cone generated by $f_1,\ldots f_k$ over the set $\Sigma$ of sums of squares of global analytic functions and by $\mathcal M$ the module generated over the squares by $1,f_1,\ldots ,f_k$ .

Assume $S\neq \varnothing$.
Consider on the space $C(\R^n,\R)$ of continuous functions on $\R^n$ the  
Whitney topology. An open  neighbourhood of a function $g$ is given by all functions $f$ verifying $|f(x) - g(x)| < \varepsilon(x)$, where $\varepsilon$ is a strictly positive continuous function on $\R^n$. Remember that $\Oo(\R^n)$ is dense in $C(\R^n,\R)$ with respect to this topology. 

We begin by an easy lemma that will be used several times.

\begin{lem}\label{moltiplicatore}
Let $\varphi$ be a continuous function on $\R^n$ and assume $\varphi$ is strictly positive on a closed set $\Omega \subset \R^n$. Then,  for all  $ \psi$ continuous on $\R^n$, there is a strictly positive continuous function $\varepsilon$ on $\R^n$ such that on $\Omega$ one has $\varepsilon \psi <\varphi$.
\end{lem}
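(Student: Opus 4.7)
The statement is a purely topological/continuous fact about $\R^n$ (no analyticity is involved), so the plan is to build $\varepsilon$ by dominating the ratio $\varphi/(1+|\psi|)$ on $\Omega$ and then extending continuously to all of $\R^n$ while staying strictly positive. The harmless case $\Omega=\varnothing$ can be dispatched by $\varepsilon\equiv 1$, so I will assume $\Omega\neq\varnothing$.

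First I would define
$$
\mu:\Omega\to\R,\qquad \mu(x)=\frac{\min(\varphi(x),1)}{1+|\psi(x)|}.
$$
Since $\varphi$ and $\psi$ are continuous on $\R^n$ and $\varphi>0$ on the closed set $\Omega$, the function $\mu$ is continuous and strictly positive on $\Omega$, so $1/\mu$ is a well-defined continuous function on $\Omega$. The key observation that makes $\mu$ the right choice is the pointwise inequality
$$
\mu(x)\,|\psi(x)|=\frac{|\psi(x)|}{1+|\psi(x)|}\cdot\min(\varphi(x),1)<\min(\varphi(x),1)\leq\varphi(x)\qquad\forall x\in\Omega,
$$
so any continuous $\varepsilon$ on $\R^n$ which is strictly positive and satisfies $\varepsilon\leq\mu$ on $\Omega$ will do the job, because then $\varepsilon\psi\leq\varepsilon|\psi|\leq\mu|\psi|<\varphi$ on $\Omega$.

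Next I would produce such an $\varepsilon$ by a Tietze-type extension. Apply the Tietze extension theorem to the continuous real-valued function $1/\mu$ on the closed subset $\Omega\subset\R^n$ to obtain a continuous extension $\nu:\R^n\to\R$. Set $\tilde\nu(x)=\max(\nu(x),1)$, which is continuous on $\R^n$, satisfies $\tilde\nu\geq 1$ everywhere, and $\tilde\nu\geq 1/\mu$ on $\Omega$ (because $\mu\leq 1$ on $\Omega$ forces $1/\mu\geq 1$, so truncating from below by $1$ preserves the inequality on $\Omega$). Finally, define
$$
\varepsilon=\frac{1}{\tilde\nu}.
$$
Then $\varepsilon$ is continuous on $\R^n$ and strictly positive (since $\tilde\nu\geq 1$), and $\varepsilon\leq\mu$ on $\Omega$, which by the displayed inequality above yields $\varepsilon\psi<\varphi$ on $\Omega$.

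The only potentially delicate point — the ``main obstacle'' in an otherwise routine argument — is the need to keep the extension strictly positive on all of $\R^n$, not merely on a neighborhood of $\Omega$; a direct Tietze extension of $\mu$ itself could vanish or even become negative outside $\Omega$. Passing to $1/\mu$ (which is bounded below by $1$ on $\Omega$ thanks to the $\min(\varphi,1)$ in the numerator of $\mu$) and then truncating by $\max(\cdot,1)$ is the device that circumvents this: it produces a global continuous upper bound for $1/\mu$ that is bounded away from $0$, whose reciprocal is the desired $\varepsilon$.
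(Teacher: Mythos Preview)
Your proof is correct and follows essentially the same approach as the paper: define a function of the form $\varphi/(1+|\psi|)$ on (a part of) $\Omega$ and extend it to a strictly positive continuous function on all of $\R^n$. The paper is slightly slicker in that it only defines $\varepsilon=\varphi/(1+\psi)$ on $\{\psi\geq 0\}\cap\Omega$ (the inequality being trivial where $\psi<0$) and simply asserts the positive extension exists, whereas you supply the mechanism explicitly via Tietze on the reciprocal and a $\max(\cdot,1)$ truncation; your $\min(\varphi,1)$ is a harmless device to make that truncation work.
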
  

\begin{proof}
Define $\displaystyle \varepsilon(x)  =\frac{\varphi(x)}{1+\psi(x)}$ on the closed set $\{\psi\geq 0\} \cap \Omega $. 

Since $\varepsilon$ is strictly positive on this set it can be extended to the whole $\R^n$ as a strictly positive function. Then $\varepsilon \psi <\varphi $ on $\Omega$ since it is so on $\{\psi \geq 0 \} \cap \Omega$ and
trivially outside. 
\end{proof}

We prove first the Positivstellensatz when $S$ is a principal closed set.

\begin{prop}\label{principal} 
Let $f,g \in \Oo(\R^n)$ and assume $g$ is strictly positive on $\{f\geq 0\}$. Then there are strictly positive analytic functions $s,t \in \Oo(\R^n)$ such that $g=s+tf$.
\end{prop}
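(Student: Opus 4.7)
The plan is to reduce to finding a strictly positive analytic function $t$ with $g - tf > 0$ on all of $\R^n$: once such $t$ is in hand, set $s = g - tf$, and then $s, t$ are strictly positive analytic with $g = s + tf$. The first key observation is that $\{f \geq 0\}$ and $\{g \leq 0\}$ are disjoint closed sets, since by hypothesis $g > 0$ on $\{f \geq 0\}$; in particular $\{g \leq 0\} \subset \{f < 0\}$.

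First I would build a strictly positive \emph{continuous} $t_0$ with $t_0 f < g$ on $\R^n$. Apply Lemma \ref{moltiplicatore} with $\varphi = g$ (strictly positive on the closed set $\Omega_1 = \{f \geq 0\}$) and $\psi = |f|$ to produce a continuous $\varepsilon_1 > 0$ on $\R^n$ with $\varepsilon_1 f < g$ on $\{f \geq 0\}$. Next apply Lemma \ref{moltiplicatore} with $\varphi = -f$ (strictly positive on the closed set $\Omega_2 = \{g \leq 0\}$, since on that set $f < 0$) and $\psi = 1 - g$ (which is $\geq 1$ there) to obtain a continuous $\beta_0 > 0$ on $\R^n$ with $\beta_0 f < g - 1 < g$ on $\{g \leq 0\}$. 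Now choose by Urysohn a continuous $\lambda : \R^n \to [0,1]$ with $\lambda \equiv 0$ on $\{f \geq 0\}$ and $\lambda \equiv 1$ on $\{g \leq 0\}$, and set
$$
t_0 = (1 - \lambda)\,\varepsilon_1 + \lambda\,\beta_0.
$$
On $\{f \geq 0\}$ one has $t_0 = \varepsilon_1$ and on $\{g \leq 0\}$ one has $t_0 = \beta_0$, so the inequality $t_0 f < g$ holds there. On the remaining region $\{f < 0, g > 0\}$ both $\varepsilon_1 f$ and $\beta_0 f$ are negative, so $t_0 f < 0 < g$. Thus $t_0 > 0$ is continuous with $g - t_0 f > 0$ strictly on $\R^n$.

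Finally I would pass to an analytic $t$ by Whitney's approximation theorem, controlling the size of the error continuously. Apply Lemma \ref{moltiplicatore} once more to the strictly positive continuous functions $t_0$ and $g - t_0 f$ to produce a continuous $\delta > 0$ on $\R^n$ with $\delta < t_0/2$ and $\delta\,|f| < (g - t_0 f)/2$ everywhere. Whitney's theorem yields an analytic $t$ on $\R^n$ with $|t - t_0| < \delta$; then $t > t_0 - \delta > t_0/2 > 0$ and $|tf - t_0 f| < \delta|f| < (g - t_0 f)/2$, so $tf < g$. Setting $s = g - tf$ concludes the proof.

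The only mildly delicate step is the construction of $t_0$, because the two inequalities $\varepsilon_1 f < g$ and $\beta_0 f < g$ are initially only available on the two \emph{closed} sets $\{f \geq 0\}$ and $\{g \leq 0\}$. What makes the gluing painless is precisely that on the complementary region $\{f < 0,\, g > 0\}$ any positive $t_0$ trivially satisfies $t_0 f < 0 < g$, so no further compatibility between $\varepsilon_1$ and $\beta_0$ is needed; the convex combination via Urysohn simply transports the two inequalities across an innocuous region.
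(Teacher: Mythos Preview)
Your overall strategy—construct a strictly positive continuous $t_0$ with $g - t_0 f > 0$ on $\R^n$, then approximate analytically via Whitney with error controlled through Lemma~\ref{moltiplicatore}—is exactly the paper's, and your first lemma application (for $\varepsilon_1$) and final approximation step are correct.

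The gap is in the construction of $\beta_0$. Applying Lemma~\ref{moltiplicatore} with $\varphi=-f$ and $\psi=1-g$ on $\{g\le 0\}$ gives $\beta_0(1-g)<-f$, which does \emph{not} rearrange to $\beta_0 f<g-1$; it only says $\beta_0$ is small. But on $\{g\le 0\}$ (where $f<0$) the condition $\beta_0 f<g$ reads $\beta_0>|g|/|f|$, which may be large: with $n=1$, $f(x)=x$, $g(x)=2x+1$, at $x=-10$ one needs $\beta_0>1.9$, whereas $\beta_0(1-g)<-f$ forces $\beta_0<1/2$ there, and then $\beta_0 f>-5>-19=g$. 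The lemma is the wrong tool here—you need a large multiplier, not a small one. The fix is to define $\beta_0$ explicitly on $\{g\le 0\}$, for instance $\beta_0=(f+g)/f$ (positive since $f<0$ and $f+g<0$ there), which gives $\beta_0 f=f+g<g$; then extend positively. This is precisely the paper's construction: it sets $u=(f+g)/f$ on $\{g\le 0\}$, $u=g/(f+g)$ on $\{f\ge 0\}$, $u=1$ elsewhere, and checks $g-uf>0$ directly before approximating. Your Urysohn gluing is a valid alternative once $\beta_0$ is built correctly.
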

\begin{proof}
Set $F= \{f\geq 0\}$ and $G= \{g\leq 0\}$. Then $F\cap G =\varnothing$. Define 

\begin{center}
\obeylines
$ u= (f+g)/f$ over $G$,
$ u= g/(f+g)$ over $F$,
$ u=1$ otherwise.
\end{center}

It is an easy verification that $u$ is continuous and strictly positive on $\R^n$. Also $g-uf>0 $ on $\R^n$. By Lemma \ref{moltiplicatore} there is $\varepsilon >0$ on $\R^n$ such that $\varepsilon f < g-uf$ on $\R^n$. Take an analytic approximation $t$ of $u$ verifying $|t-u| < {\rm min}\{u/2,\varepsilon\}$. Then $t>0$ and $s= g-tf >0$ as wanted. 
\end{proof}

Next lemma reduces the general case to the one showed above.  

\begin{lem}\label{riduzione} 
Let $S=\{f_1 \geq 0, \ldots , f_k\geq 0\}$. For any open set $U\supset S$ there is $h\in \mathcal M$ such that $S\subset \{h\geq 0\}\subset U$ and we can take $h= \sum s_if_i$  with $s_i >0$ on $\R^n$. 
\end{lem}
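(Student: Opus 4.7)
The inclusion $S\subset\{h\geq 0\}$ will hold automatically for any $h=\sum s_if_i$ with each $s_i>0$, since on $S$ every $f_i\geq 0$. Thus the whole content is to produce positive analytic $s_i$ such that $h<0$ on the closed set $C:=\R^n\setminus U$. Since $C\cap S=\varnothing$, we have $C\subset\bigcup_{i=1}^k\{f_i<0\}$, so together with $U$ the open sets $W_i:=\{f_i<0\}$ form an open cover of $\R^n$.

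First I would work at the continuous level. Take a continuous partition of unity $\{\rho_0,\rho_1,\dots,\rho_k\}$ subordinate to $\{U,W_1,\dots,W_k\}$. Since $\mathrm{supp}(\rho_0)\subset U$, we have $\rho_0\equiv 0$ on $C$ and hence $\sum_{i=1}^k\rho_i\equiv 1$ there; moreover $\mathrm{supp}(\rho_i)\subset\{f_i<0\}$ forces $\rho_i f_i\leq 0$ everywhere. Consequently $\varphi:=\sum_{i=1}^k\rho_i f_i$ satisfies $\varphi(x)<0$ at every $x\in C$, because at each such $x$ at least one $\rho_i(x)>0$ with the corresponding $f_i(x)<0$. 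The $\rho_i$ are only nonnegative (not strictly positive), so I would correct them by setting $u_i:=\rho_i+\delta$ for a suitably small strictly positive continuous $\delta$ on $\R^n$. Then $u_i>0$ on $\R^n$ and
\[
\sum_i u_i f_i \;=\; \varphi+\delta\sum_i f_i \quad\text{on } C.
\]
Applying Lemma \ref{moltiplicatore} to the strictly positive continuous function $-\varphi$ on the closed set $C$ and to $\psi:=|\sum_i f_i|$, I obtain a strictly positive continuous $\delta$ on $\R^n$ with $\delta\,|\sum_i f_i|<-\varphi/2$ on $C$. This gives $\sum_i u_i f_i<\varphi/2<0$ on $C$.

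Finally, I would pass from continuous $u_i>0$ to analytic $s_i>0$ by Whitney's approximation theorem. The inequalities to be preserved are $s_i>0$ on $\R^n$ and $\sum s_i f_i<0$ on $C$; both are open strict conditions that can be controlled by a single strictly positive continuous error function $\varepsilon$. Using Lemma \ref{moltiplicatore} once more on the continuous strictly positive function $-\sum u_i f_i$ on $C$ (with $\psi:=\sum_i|f_i|$), I can choose $\varepsilon$ small enough that any analytic $s_i$ with $|s_i-u_i|<\varepsilon$ satisfies both $s_i\geq u_i-\varepsilon>0$ and
\[
\sum_i s_i f_i\;\leq\;\sum_i u_i f_i+\sum_i\varepsilon|f_i|\;<\;\tfrac12\sum_i u_i f_i\;<\;0 \quad\text{on } C.
\]
Then $h:=\sum_i s_i f_i$ is analytic, $h\geq 0$ on $S$ and $h<0$ on $C$, giving $S\subset\{h\geq 0\}\subset U$, as required.

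The main obstacle is the non-compactness of $C$: one cannot just take constants for $\delta$ and $\varepsilon$, so the estimates must be done with strictly positive \emph{continuous} multipliers, which is precisely what Lemma \ref{moltiplicatore} is designed to supply. Once one is disciplined about applying that lemma at each step (for the correction $\delta$ and for the analytic approximation error $\varepsilon$), the construction goes through directly.
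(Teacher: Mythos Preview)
Your proof is correct and follows essentially the same strategy as the paper: cover $\R^n$ by $U$ and the sets $\{f_i<0\}$, take a partition of unity to produce a continuous combination $\sum\rho_if_i$ that is strictly negative on $C=\R^n\setminus U$, perturb the coefficients to be strictly positive using Lemma~\ref{moltiplicatore}, and then pass to analytic coefficients by Whitney approximation with the error again controlled via Lemma~\ref{moltiplicatore}. The only cosmetic differences are that the paper uses a smooth partition of unity and adds a separate $\varepsilon_j$ to each $\varphi_j$ (bounding $\varepsilon_jf_j<-\varphi/k$) rather than your single $\delta$ applied to $\sum_if_i$, but the logic and the invocations of Lemma~\ref{moltiplicatore} are the same.
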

\begin{proof}
Put $U_0 = U$ and $U_j = \{f_j < 0\}, j=1,\ldots , k$. Then $\{U_0, U_1, \ldots ,U_k\}$ is an open covering of $\R^n$. Let $\{\varphi_0, \ldots , \varphi_k\}$ be a smooth partition of the unity subordinate to this covering. Then $\varphi = \sum_{j=1}^k \varphi_j f_j$ is a smooth function which is strictly negative on the closed set $\Omega = \R^n\setminus U_0$, that is $\{\varphi \geq 0 \}\subset U_0$.

We can further assume $\varphi_j > 0$ for all $j=1,\ldots , k$. In fact by Lemma \ref{moltiplicatore} there are for  $j=1,\ldots , k$ strictly positive functions $\varepsilon_j$ such that $\varepsilon_j f_j < -\varphi/k$ on $\Omega$. Then on $\Omega$ we get      

$$\varphi +\sum_{j=1}^k \varepsilon_j f_j <\varphi +k(-\varphi/k) =0.$$

Replace $\varphi$ by 
$$\varphi + \sum_{j=1}^k \varepsilon_j f_j =\sum_{j=1}^k (\varphi_j +\varepsilon_j)f_j$$

whose coefficients are strictly positive on $\R^n$.

Thus we may assume $\varphi_j> 0, j=1,\ldots , k$ and we can approximate them by strictly positive analytic functions $r_j$ such that the function $h=\sum r_jf_j$ verifies $S\subset \{h\geq 0\} \subset U$.

More precisely by Lemma \ref{moltiplicatore} we find for $j=1,\ldots , k$ continuous strictly positive functions $\delta_j$ such that $\delta_j|f_j|<-\varphi/2k$ on $\Omega$. Then we take analytic functions $r_j$ such that $|r_j-\varphi_j| < { \min}\{\varphi_j/2,\delta_j\}$. So, $r_j> 0$ and

$$|\varphi - h| =\left| \sum_{j=1}^k \varepsilon_j f_j - \sum_{j=1}^k r_jf_j\right| =
 \left|\sum_{j=1}^k (\varphi_j -r_j)f_j\right| \leq \sum_{j=1}^k |\varphi_j -r_j||f_j| < \sum_{j=1}^k \varphi/2k = \varphi/2$$ 

on $\Omega$. So we get $S\subset \{h\geq 0\} \subset U$ as wanted.
\end{proof}

Finally we get.

\begin{thm}[Strict analytic Positivstellensatz]\label{spss}
Assume $g\in \Oo(\R^n)$ to be positive  on $S= \{f_1\geq 0, \ldots , f_k\geq 0\} \neq \varnothing$. Then $g\in \mathcal M$. Moreover we get $g= s_0^2 + s_1^2f_1 +\cdots + s_k^2 f_k$  with $s_j \in \Oo(\R^n)$  strictly positive on $\R^n$ for $j=1,\ldots , k$.
\end{thm}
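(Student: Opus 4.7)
The plan is to combine Lemma \ref{riduzione} (which replaces the closed basic system by a single principal inequality contained in a prescribed open set) with Proposition \ref{principal} (the strict Positivstellensatz for a single inequality), and then take analytic square roots of strictly positive analytic functions.

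First, since $g$ is assumed positive on $S$, the open set $U=\{g>0\}$ is an open neighborhood of $S$. I would apply Lemma \ref{riduzione} to this $U$ to produce an analytic function $h\in\mathcal{M}$ of the form
$$
h=\sigma_1 f_1+\cdots+\sigma_k f_k,
$$
where each $\sigma_i\in\Oo(\R^n)$ is strictly positive on $\R^n$ and $S\subset\{h\geq0\}\subset\{g>0\}$. Thus $g$ is strictly positive on the principal closed set $\{h\geq0\}$.

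Next I would apply Proposition \ref{principal} to the pair $(h,g)$: there exist $s,t\in\Oo(\R^n)$, both strictly positive on $\R^n$, such that $g=s+th$. Substituting the expression for $h$ yields
$$
g=s+\sum_{i=1}^k (t\sigma_i)\, f_i,
$$
where every coefficient $s$, $t\sigma_i$ is a strictly positive analytic function on $\R^n$. At this point $g$ already belongs to the module $\mathcal{M}$; to arrive at the precise form claimed in the statement, each of the positive coefficients must be exhibited as a square of a strictly positive analytic function. This is the only step that is not immediately supplied by the previous lemmas, but it is harmless: if $u\in\Oo(\R^n)$ is strictly positive, then $\log u$ is a well-defined analytic function on $\R^n$, so $\sqrt{u}=\exp(\tfrac12\log u)$ is analytic and strictly positive. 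Setting $s_0=\sqrt{s}$ and $s_i=\sqrt{t\sigma_i}$ for $i=1,\ldots,k$, we obtain
$$
g=s_0^2+s_1^2 f_1+\cdots+s_k^2 f_k
$$
with each $s_j$ strictly positive on $\R^n$, as required.

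I do not anticipate a serious obstacle in this argument: the real work has already been done in Proposition \ref{principal} (via a continuous construction of a separating function and Whitney approximation) and in Lemma \ref{riduzione} (via a smooth partition of unity followed by analytic approximation). The only point that requires a moment of reflection is that the nonemptiness hypothesis $S\neq\varnothing$ is used implicitly to apply Lemma \ref{riduzione}, and that strict positivity of the coefficients is preserved along the way precisely so that analytic square roots exist globally on $\R^n$.
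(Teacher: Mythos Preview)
Your proof is correct and follows exactly the same route as the paper: apply Lemma~\ref{riduzione} with $U=\{g>0\}$ to get $h=\sum\sigma_i f_i$ with $\sigma_i>0$, then apply Proposition~\ref{principal} to obtain $g=s+th$ with $s,t>0$, and substitute. You have in fact been more explicit than the paper, which simply says ``we get the thesis'' after the substitution, while you spell out the square-root step needed to pass from strictly positive coefficients to squares of strictly positive analytic functions.
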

\begin{proof}
By Lemma \ref{riduzione} there exist an analytic function $h= \sum r_jf_j$ such that $S \subset \{h\geq 0\} \subset \{g> 0\}$ and $ r_j >0$ on $\R^n$ for $j=1,\ldots , k$.  By Proposition \ref{principale} we get $g=s+th$ and $s,t >0$ on $\R^n$. Substituting $h$ by its expression we get the thesis. 
\end{proof}

\section{C-semianalytic sets.}

\rm
We consider now  a class of globally defined semianalytic subsets of a real analytic manifold $M$.

\begin{defn}\label{Csemi} 
 A subset $S\subset M$ is {\em C-semianalitic} if $S$ is a locally finite union of global basic semianalytic sets, that is, sets of the form $\{f=0,g_1>0,\ldots,g_s>0\}$ where $f,g_j\in\Oo(M)$. 
\end{defn}

The previous definition is equivalent to the following one, which is more similar to the one provided by \L ojasiewicz for classical semianalytic sets.

\begin{defn}\label{Csemibis} 
 A subset $S\subset M$ is {\em C-semianalytic} if for each $x\in M$ there is an open neighborhood $U^x\subset M$ such that $S\cap U^x$ is a global semianalytic set in $M$ (in the sense of Section \ref{gas}).
\end{defn}

To prove the equivalence we need  an easy lemma on countable locally finite refinements of open coverings, in the spirit of Lemma \ref{riduzione}. 

\begin{lem}\label{refinement}
Let $M$ be a real analytic manifold and ${\mathfrak U}= \{U_i\}_{i\in I}$ be an open covering of $M$. Then there exists a countable locally finite refinement ${\mathfrak V} = \{V_j\}_{j\geq 1}$ where for all $j$ $V_j = \{g_j >0\}$ for some analytic function $g_j\in \Oo(M)$.
\end{lem}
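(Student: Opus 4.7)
The plan is to combine paracompactness and second countability of $M$ with Whitney's analytic approximation theorem to produce the required refinement. I note that $M$ is a real analytic manifold, so in particular it is paracompact and second countable; by Grauert's embedding theorem it may be realized as a closed real analytic submanifold of some $\R^N$, which is convenient for invoking approximation results but is not strictly needed.

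First I would take a countable locally finite open refinement $\{W_k\}_{k\geq 1}$ of $\mathfrak{U}$, together with a shrinking $\{W_k'\}_{k\geq1}$ satisfying $\overline{W_k'}\subset W_k$ which still covers $M$. Both steps are standard consequences of paracompactness plus second countability. For each $k$, the closed sets $\overline{W_k'}$ and $M\setminus W_k$ are disjoint, so there is a smooth function $\widetilde{g}_k:M\to\R$ which is $\geq 1$ on $\overline{W_k'}$ and $\leq -1$ on $M\setminus W_k$ (for instance, via a smooth partition of unity subordinate to the open covering $\{W_k, M\setminus\overline{W_k'}\}$ followed by an affine rescaling).

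Next I would apply Whitney's analytic approximation theorem (quoted and used earlier in the excerpt, e.g.\ in the proof of Theorem \ref{TognoliComplessificazione} and of Proposition \ref{Loj}) to obtain an analytic function $g_k\in\Oo(M)$ satisfying $|g_k-\widetilde{g}_k|<\tfrac12$ on all of $M$. This guarantees $g_k>\tfrac12$ on $\overline{W_k'}$ and $g_k<-\tfrac12$ on $M\setminus W_k$. Setting $V_k=\{g_k>0\}$, we get
\[
W_k'\ \subset\ V_k\ \subset\ W_k,
\]
so $\{V_k\}_{k\geq1}$ covers $M$ (because $\{W_k'\}$ does), is locally finite (because $\{W_k\}$ is), and each $V_k$ is contained in some $U_{i(k)}\in\mathfrak{U}$, hence it is a refinement. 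By construction each $V_k$ has the desired form $\{g_k>0\}$ with $g_k\in\Oo(M)$.

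The only subtle point is the global analytic approximation in the Whitney topology: a uniform approximation with fixed error $\tfrac12$ on all of $M$ might fail for an arbitrary noncompact manifold if one asked for uniform pointwise error, but Whitney's theorem grants approximation in the strong (fine) topology governed by an arbitrary strictly positive continuous function $\varepsilon$, which suffices here once we choose $\varepsilon\equiv\tfrac12$. This is the expected technical hurdle, and it is handled entirely by the approximation theorem already invoked several times in the paper; no further geometric input is required.
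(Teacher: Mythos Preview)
Your proof is correct and follows essentially the same approach as the paper: take a countable locally finite refinement with a shrinking, separate the two disjoint closed sets $\overline{W_k'}$ and $M\setminus W_k$ by a continuous (or smooth) function, and apply Whitney's analytic approximation to obtain $g_k$ with $V_k=\{g_k>0\}$ sandwiched between $W_k'$ and $W_k$. The only cosmetic difference is that the paper uses a continuous separating function rather than a smooth one.
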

\begin{proof}
Since $M$ is paracompact there exist countable locally finite open refinements ${\mathfrak W}=\{W_j\}_{j\geq 1}$ and $ {\mathfrak W'}=\{W'_j\}_{j\geq 1}$ such that $\overline {W'_j} \subset W_j$ for all $j\geq 1$. Now the closed sets $ \overline {W'_j}$ and $M\setminus W_j$ are disjoint, hence there is a continuous function $\eta_j$ which takes the value $1$ on $ \overline {W'_j}$ and the value $-1$ on $M\setminus W_j$. So it is enough to take an analytic approximation $g_j$ of $\eta_j$ such that $|g_j - \eta_j| <1/2$. Then  $ \overline {W'_j}\subset V_j=\{g_j >0\} \subset W_j$. Thus ${\mathfrak V}= \{V_j\}_{j\geq 1}$ is the refinement we asked for.
\end{proof}

{\sc Proof of the equivalence of the two definitions.}
Assume first $S$ satisfies Definition \ref{Csemi}. Let $x\in M$ and assume $x\in U = \{g >0\}$ where $U$ intersect only finitely many $S_i$, say $S_1,\ldots,S_r$. Thus, $S\cap U = \bigcup^r_{i=1} S_i \cap \{g>0\}$  is a global semianalytic set.
\smallskip

Conversely assume $S$  satisfies Definition \ref{Csemibis}.
For each $x\in M$ let $W_x$ be the open neighbourhood of $x$ such that $S\cap W_x$ is a global semianalytic set. Then, by Lemma \ref{refinement} there is countable locally finite refinement ${\gtV} = \{V_j\}_{j\geq 1}$ of $\{W_x, x\in M\}$ such that $V_j= \{g_j >0\}$ for some $g_j \in \Oo(M)$. Assume $V_j\subset W_{x_j}$ and let $S'_{j_1}, \ldots, S'_{j_{r_j}}$ be  C-basic semianalytic sets such that $S\cap W_{x_j} =  S'_{j_1}\cup \ldots \cup S'_{j_{r_j}}$. Then, $S\cap V_{j} =  S_{j_1}\cup \ldots \cup S_{j_{r_j}}$ where $S_j = S'_j\cap V_j = S'_j \cap \{g_j >0\}$. So, $S_j$ is a basic C-semianalytic set and

$$S= S\cap \bigcup_{j\geq 1}V_j = \bigcup_{j\geq 1} S\cap V_j$$ 

where $S\cap V_j$ is a basic C-semianalytic set and the union is locally finite because so is the open covering $\gtV$.
\qed

\begin{examples}\hfill

\begin{enumerate}
\item[(i)] Consider the real analytic set $X$ of the Example \ref{a(z)} of Chapter 1, that is $X$ is the zeroset of $f= z(x^2+y^2)-x^3a(z)$.

\noindent Remember the definition of $a(z)$.
$$
a(z)=\begin{cases} \exp{\frac{1}{z^2-1}} &\text{ if } -1<z<1 \\ 0 &\text{ if } z \leq -1 \text{ or } z \geq 1 \end{cases}
$$

\noindent Since $f$ 
is analytic in $\R^3\setminus \{z=1\}\cup\{z=-1\}$ and $X\cap \{z^2\geq 1\}$ is contained in the line $\{x=0,y=0\}$, we get that $X$ is semianalytic in $\R^3$ . However $X$ is not a C-semianalytic subset of any of its open neighbourhoods in $\R^3$. 
Fix an open neighbourhood $U$ of $X$ in $\R^3$. We know from Example \ref{a(z)} of Chapter 1 that every analytic function in $\Oo(U)$ vanishing identically on $X$ is the zero function. Assume $X$ to be C-semianalytic. Then there is an open neighbourhood $V$ of $0\in \R^3$ such that $X\cap V$ is a finite union of basic global semianalytic sets. One of them 
$$S=\{h=0, g_1>0, \ldots, g_s>0 \}, \ h,g_1,\ldots ,g_s \in \Oo(U)$$
contains a not empty open set of the connected real analytic manifold $N= X\setminus \{x=0, y=0\}$. As $h$ vanishes on an open set of $N$ it vanishes identically on $N$. As the germ of $X$ at $0$ and the line $\{x=0, y=0\}$ are irreducible, $h$ is identically zero on $X\cap V$, hence it is the zero function of $\Oo(V)$, and consequentely of $\Oo(U)$, a contradiction.   
 
\item[(ii)] 
Consider the closed subset $X\subset \R^3$ defined by the equation,   already considered in Chapter 1, Examples \ref{a(z)}, 
$$\quad \quad (1-4(x^2+y^2+z^2))((x^2+z^2 - 1)^2 +y^2)=((x^2 +z^2 -1)^4 +y^4)a(z) $$
where  $a(z)$ is the same function as before. Note that again $X$ is semianalytic and it is compact.

\noindent  As before  $X$ cannot be C-semianalytic because no global analytic function can vanish on $X$, without beeing the zero function.   
\end{enumerate}
\end{examples}
\smallskip

Next  we  prove that the class of C-semianalytic sets is closed under boolean and topological operations. Infact we get

\begin{prop}\label{booletopological}
The class of C-semianalytic sets in $M$ is closed under the following boolean and topological operations 
\begin{enumerate}
\item locally finite unions, intersections and complement,
\item inverse image under analytic maps between real analytic manifolds, 
\item taking closure, interior and  connected components.
\end{enumerate}
\end{prop}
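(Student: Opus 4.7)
The plan is to verify each item by means of the local characterization given in Definition \ref{Csemibis}, which reduces every statement to a corresponding property of global semianalytic sets together with a local finiteness check.

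First I would dispatch item (1). Locally finite unions are immediate from Definition \ref{Csemi}, since a locally finite union of locally finite unions of basic global semianalytic sets is again locally finite. For intersection and complement I prefer the local characterization: given C-semianalytic sets $S,T$ and a point $x\in M$, choose a common open neighborhood $U^x$ on which both $S\cap U^x$ and $T\cap U^x$ are global semianalytic in $M$, and take $U^x$ of the form $\{g>0\}$ with $g\in\Oo(M)$ (as in Lemma \ref{refinement}) so that $U^x$ is itself global semianalytic. Then $(S\cap T)\cap U^x$ and $(M\setminus S)\cap U^x=U^x\setminus(S\cap U^x)$ are finite boolean combinations of global semianalytic sets of $M$, hence global semianalytic in $M$.

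For item (2), let $\varphi:N\to M$ be analytic and $S\subset M$ be C-semianalytic. Given $y\in N$, set $x=\varphi(y)$ and choose $U^x$ with $S\cap U^x$ global semianalytic in $M$. Since analytic functions pull back to analytic functions, $\varphi^{-1}(S\cap U^x)$ is a finite boolean combination, using the pulled back functions, of sets of the form $\{F\circ\varphi=0\}$ and $\{G\circ\varphi>0\}$, hence global semianalytic in $N$. Taking $V^y=\varphi^{-1}(U^x)$, which is an open neighborhood of $y$ in $N$, one has $\varphi^{-1}(S)\cap V^y=\varphi^{-1}(S\cap U^x)$, so $\varphi^{-1}(S)$ satisfies Definition \ref{Csemibis}.

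For item (3), closure is handled as follows: pick $x\in M$ and $U^x$ with $S\cap U^x$ global semianalytic in $M$. By Theorem \ref{fujitagabrie} applied to the global semianalytic set $S\cap U^x$ there is an open $V^x\subset U^x$ on which $\overline{S\cap U^x}\cap V^x$ is global semianalytic in $M$. Because $V^x$ is open and contained in $U^x$, one has $\overline S\cap V^x=\overline{S\cap U^x}\cap V^x$, so the local description of $\overline S$ is global semianalytic. The interior is then the complement of the closure of the complement, and so is C-semianalytic by items (1) and the closure case. For connected components, fix $x\in M$ and a neighborhood $V^x$ supplied by Theorem \ref{ccruiz} on which each connected component of $S\cap V^x$ is a global semianalytic set of $M$; after shrinking $V^x$, the set $S\cap V^x$ has only finitely many connected components (this uses the classical local finiteness of semianalytic sets, available via a Grauert embedding of $M$ as a closed submanifold of some $\R^n$). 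If $C$ is a connected component of $S$, then $C\cap V^x$ is a finite union of connected components of $S\cap V^x$, hence global semianalytic in $M$, and the same argument applies to any union of connected components of $S$.

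The main obstacle is the connected components step: one needs to control that, after passing to the local neighborhood $V^x$ furnished by Theorem \ref{ccruiz}, the trace on $V^x$ of a global component of $S$ splits as only finitely many local components. This requires invoking local finiteness of connected components of semianalytic sets (inherited from the ambient $\R^n$ via Grauert embedding) rather than anything specific to C-semianalytic structure, and it is the only place where the argument is not a purely formal transcription of the definitions plus Theorems \ref{fujitagabrie} and \ref{ccruiz}.
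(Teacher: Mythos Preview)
Your proposal is correct and follows essentially the same route as the paper's proof: item (1) is dispatched directly from the definitions, item (2) via pullback of basic sets, closure via Theorem \ref{fujitagabrie} applied to the local global-semianalytic trace together with the identity $\overline S\cap V^x=\overline{S\cap U^x}\cap V^x$, interior by complementation, and connected components via Theorem \ref{ccruiz} plus local finiteness of semianalytic germs. The only cosmetic difference is that the paper phrases the connected-components step as ``only finitely many connected components of $S$ are adherent to $x$, their number being the number of components of the germ $S_x$'', whereas you invoke classical local finiteness via a Grauert embedding; these are the same fact. One small point: Theorem \ref{ccruiz} is stated for \emph{global} semianalytic sets, so strictly speaking you should first pick $U^x$ with $S\cap U^x$ global and then apply Theorem \ref{ccruiz} to that trace, but this is implicit in your setup.
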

\begin{proof}
The first property is clear.

As for the second one assumes to have an analytic map $f:N\to M$ between real analytic manifolds and let $S= \{h=0, g_1>0,\ldots, g_r>0\}$ be a basic C-semianalytic set in $M$. Then, $f^{-1}(S) = \{x\in N: f(x)\in S\} = \{h\circ f =0, g_1\circ f >0, \ldots, g_r\circ f >0\}$ is again a basic C-semianalytic set in $N$.

For the third first we look at {\em closure}. Note that if $M= \bigcup_{j\in J}U_j$, then 
$$\overline S= \bigcup_{j\in J} \overline S \cap U_j =  \bigcup_{j\in J} \overline{(S\cap U_j)}\cap U_j.$$

 Hence it is enough to check that for each point $x\in M$ there is a neighbourhood $U_x$ such that $ \overline{(S\cap U_x)}\cap U_x$ is a global semianalytic set. But this is a consequence of Definition \ref{Csemibis} and  Theorem \ref{fujitagabrie}.  

As for the {\em interior}, the complement of $S$ is C-semianalytic and the interior part of $S$ is the complement of the closure of its complement.

We are left with {\em connected components}.

Let $S$ be a C-semianalytic set and $x\in M$ be a point. Only finitely many connected components of $S$ can be adherent to $x$. The number of such connected components is the number of connected components of the germ $S_x$. So, take a global neighborhood $U_x =\{g>0\}$ of $x$ (if $M$ is embedded in some $\R^n$ as a closed submanifold $U_x$ could be a small ball intersected with $M$). Then $S\cap U_x$ is a finite union of connected global semianalytic sets by Theorem \ref{ccruiz}. This proves that any union of connected components of $S$ is a C-semianalytic set.  
\end{proof}

Concerning closure and connected components there are deeper results that we will use later. Consider a reduced Stein space $(X,\Oo_X)$ endowed with an anti-involution $\sigma$ and assume its fixed set $X^\sigma$ is not empty. Denote by $\mathcal A$ the ring of $\sigma$-invariant holomorphic functions on $X^\sigma$ and consider, for a tuple $u= (u_1,\ldots, u_m)$, the polynomial rings $\Oo(X^\sigma)[u]$ and $\mathcal A[u]$ that we will call both as $\Aa$. A C-semianalytic set $S$ in $X^\sigma \times \R^m$  will be called $\Aa$-definible if all the functions in a description of $S$ are taken from $\Aa$.  Then we get the following.

\begin{prop}\label{sigmainvariante}
Let $S\subset X^\sigma \times \R^m$ be either a bounded global semianalytic set or a C-semianalytic set. Assume $S$ to be $\Aa$-definable. Then
\begin{enumerate}
\item For each point $x\in X^\sigma$ there is an open $\Aa$-definable basic C-semianalytic neighbourhood $U_x$ such that $S\cap U_x$ is the union of $t$ connected disjoint $\Aa$-definable C-semianalytic sets and $t$ is precisely the number of connected components of the germ $S_x$.
\item Connected components and closure of $S$ are $\Aa$-definable global semianalytic sets in the first case, $\Aa$-definable C-semianalytic sets in the second one. 
\end{enumerate}
\end{prop}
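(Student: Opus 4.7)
The plan is to reduce part (2) to part (1) by a covering-and-gluing argument and then to prove part (1) by tracking $\Aa$-definability through the proofs of Theorems \ref{ccruiz} and \ref{fujitagabrie}. For the reduction, if $S$ is a bounded $\Aa$-definable global semianalytic set, then its topological boundary $\overline{S}\setminus\Int(S)$ is relatively compact, so only finitely many of the local neighborhoods $U_x$ produced in part (1) are needed to cover it. The glueing step of Theorem \ref{bb} then adapts once we have $\Aa$-definable analytic functions separating disjoint closed $\sigma$-invariant subsets of $X^\sigma\times\R^m$, yielding a global $\Aa$-definable description of $\overline{S}$ and of any union of connected components of $S$. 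For a general $\Aa$-definable C-semianalytic set, the same local-to-global argument is performed with a countable locally finite refinement furnished by (a suitable $\Aa$-definable version of) Lemma \ref{refinement}, delivering a C-semianalytic (not necessarily global) output.

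For part (1), fix $x\in X^\sigma\times\R^m$ and reduce, as in the proof of Theorem \ref{ccruiz}, to a single basic piece $S=\{f=0,g_1>0,\ldots,g_s>0\}$ with $f,g_i\in\Aa$. The auxiliary sets $Y_{q,c}$, $Y_{q,q',c,c'}$ and $T_{q,q',c,c'}$ introduced there, as well as the analogous sets in the proof of Theorem \ref{fujitagabrie}, are defined by the Taylor polynomials $\hat f,\hat g_i$ of $f,g_i$ at $x$ together with polynomial expressions in the coordinates. Since the coefficients of $\hat f$ and $\hat g_i$ are derivatives of $\Aa$-functions evaluated at $x$ (hence real numbers, and polynomial expressions in the $u$-variables when $f,g_i\in\mathcal A[u]$), these auxiliary sets are $\Aa[y]$-constructible. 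Thus the Tarski–Seidenberg projection theorem (Theorem \ref{TS}) gives an $\Aa$-definable description of their projection, and the equivalences proved in Steps 1–4 of Theorem \ref{fujitagabrie} show that a sufficiently small $\Aa$-definable basic neighborhood $U_x=\{g>0\}$ (for an appropriate invariant analytic $g$) decomposes $S\cap U_x$ into exactly as many connected $\Aa$-definable pieces as there are connected components of the germ $S_x$.

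The main obstacle I expect is preserving $\Aa$-definability through the various approximation steps: the original proofs use smooth partitions of unity and Whitney approximation by general analytic functions, and I need these approximations to land inside $\mathcal A$, i.e.\ to come from $\sigma$-invariant holomorphic functions on an invariant Stein neighborhood of $X^\sigma$. This is available because Theorem \ref{intornist} provides a fundamental system of $\sigma$-invariant open Stein neighborhoods of $X^\sigma$, on which Cartan's Theorems A and B apply to invariant coherent sheaves; taking real parts (in the sense of Definition \ref{re e im}) yields the required density of $\mathcal A$ in the Whitney topology on $\Oo(X^\sigma)$ and the required separation of disjoint closed $\sigma$-invariant sets by invariant analytic functions. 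Once this $\Aa$-definable approximation and separation toolbox is in place, every step of the proofs of Theorems \ref{ccruiz}, \ref{fujitagabrie} and \ref{bb} goes through verbatim within the algebra $\Aa$, establishing both assertions of the proposition.
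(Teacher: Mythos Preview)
Your approach is correct and follows essentially the same strategy as the paper: both rely on the fact that the proofs of Theorems \ref{fujitagabrie} and \ref{ccruiz} (hence Proposition \ref{booletopological} and Theorem \ref{bb}) use only Taylor polynomials, derivatives, and polynomial projections via Tarski--Seidenberg, all of which preserve $\Aa$-definability since derivatives of $\sigma$-invariant holomorphic functions remain $\sigma$-invariant.

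The paper, however, is considerably more terse and takes a shortcut you miss for part (1). Rather than tracking $\Aa$-definability through the full machinery of Theorem \ref{ccruiz}, the paper simply embeds $X^\sigma$ locally in $\R^n$ (so $X\subset\C^n$) and takes $U_x = B(x,\varepsilon)\cap X^\sigma$ for a small Euclidean ball. The ball is given by a single polynomial inequality, and polynomials are automatically in $\Aa$; for $\varepsilon$ small enough the connected components of $S\cap U_x$ are exactly those of the germ $S_x$. This disposes of part (1) in two lines. For part (2) the paper just says ``done exactly as for Proposition \ref{booletopological}'' (with Theorem \ref{bb} in the bounded case), leaving the $\Aa$-preservation implicit. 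Your worry about Whitney approximation and partitions of unity landing in $\mathcal A$ is therefore largely unnecessary here: the separation functions needed are already handled by polynomials (balls) locally, and the global gluing in the bounded case follows Theorem \ref{bb} where the separating functions can be taken polynomial as well once the boundary is compact.
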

\begin{proof} Since the situation is local around $x$ we can assume $X^\sigma\subset \R^n$ and $X\subset \C^n$. Then a neighbourhood basis of $x$ is given by small balls $B(x,\varepsilon) \cap X^\sigma$. These are given by a polynomial inequality and polynomials belong to $\Aa$. If $\varepsilon$ is sufficientely small, the connected components of $S\cap B(x,\varepsilon)$ correspond to the connected components of $S_x$. This proves  (1).

The proof of (2) is done exactly as for Proposition \ref{booletopological}. In the first case being $\overline S$ compact we can use also Corollary \ref{bb}. 
\end{proof}

\bigskip
Open C-semianalytic sets in a real analytic manifold $M$ verify a {\em locally finiteness property} similar to finiteness property of global open semianalytic sets.

\begin{lem}\label{odes}
Let $U\subset M$ be an open C-semianalytic set. Then there exists a locally finite countable family $\{U_j\}_{j\geq1}$ of global open basic semianalytic sets such that $U=\bigcup_{j\geq1}U_j$.
\end{lem}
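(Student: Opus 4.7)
The plan is to combine the local description of a C-semianalytic set (Definition \ref{Csemibis}) with the finiteness property for open global semianalytic sets and with Lemma \ref{refinement}, which provides countable locally finite refinements consisting of open sets of the form $\{g>0\}$ with $g\in\Oo(M)$.

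First, by Definition \ref{Csemibis} each point $x\in M$ has an open neighbourhood $W^x\subset M$ such that $U\cap W^x$ is a global semianalytic set. Applying Lemma \ref{refinement} to the open covering $\{W^x\}_{x\in M}$, I would obtain a countable locally finite open refinement $\{V_j\}_{j\geq 1}$ with $V_j=\{g_j>0\}$ for suitable $g_j\in\Oo(M)$; by construction each $V_j$ is contained in some $W^{x_j}$, so $U\cap V_j=(U\cap W^{x_j})\cap\{g_j>0\}$ is again a global semianalytic set in $M$.

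Next, since $U$ and $V_j$ are both open, $U\cap V_j$ is an \emph{open} global semianalytic set. By the finiteness property stated right after Definition \ref{glas}, each $U\cap V_j$ can be written as a finite union
\[
U\cap V_j=U_{j,1}\cup\cdots\cup U_{j,r_j},
\]
where every $U_{j,k}$ is of the form $\{h_{j,k,1}>0,\ldots,h_{j,k,s_{j,k}}>0\}$ with $h_{j,k,\ell}\in\Oo(M)$, that is, an open basic global semianalytic set. Relabelling the double-indexed family $\{U_{j,k}\}$ as a single countable family $\{U_j\}_{j\geq 1}$ gives the desired decomposition $U=\bigcup_{j\geq 1}U_j$.

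It only remains to verify local finiteness of $\{U_{j,k}\}$: given $x\in M$ choose a neighbourhood that meets only finitely many $V_j$; since each such $V_j$ contains only finitely many $U_{j,k}$, the same neighbourhood meets only finitely many $U_{j,k}$. There is essentially no obstacle here: every nontrivial step is already in the paper (the two equivalent definitions of C-semianalytic set, the finiteness property for open global semianalytic sets, and the smooth/analytic refinement lemma), and the argument is purely a bookkeeping assembly of these tools. The only point that requires a small check is that the $g_j$ produced by Lemma \ref{refinement} can indeed be used to intersect a global semianalytic set and still keep it global, which is immediate because $\{g_j>0\}$ is itself a basic open global semianalytic set.
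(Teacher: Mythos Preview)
Your proof is correct and follows essentially the same approach as the paper's: both use the local characterization of C-semianalytic sets to obtain neighbourhoods where $U$ is global semianalytic, refine to a locally finite family of sets $V_j=\{g_j>0\}$, and then apply the finiteness property to each open global semianalytic set $U\cap V_j$. The only cosmetic difference is that you invoke Lemma~\ref{refinement} directly, while the paper reproduces its content inline.
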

\begin{proof}
By finiteness property it is enough to prove: \em there exists a locally finite countable family $\{U_j\}_{j\geq1}$ of open global semianalytic sets such that $U=\bigcup_{j\geq1}U_j$\em.
 
For each $x\in M$ pick an open neighborhood $U^x$ such that $U\cap U^x$ is an open global semianalytic set. As $M$ is paracompact there exists a countable locally finite open refinements ${\mathscr W}=\{W_j\}_{j\geq1}$ and ${\mathscr W}'=\{W_j'\}_{j\geq1}$ of ${\mathscr U}=\{U^x\}_{x\in M}$ such that $\ol{W_j'}\subset W_j$ for each $j\geq1$. As  the closed sets $\ol{W_j'}$ and $M\setminus W_j$ are disjoint, there exists an analytic function $g_j:M\to\R$ such that $g_{j|\ol{W_j'}}> 0$ and $f_{j|M\setminus W_j}<0$. So
$$
\ol{W_j'}\subset V_j=\{g_j>0\}\subset W_j
$$ 
for each $j\geq1$. Thus, the family $\{U_j=U\cap V_j\}_{j\geq1}$ is countable, locally finite, its members are open global C-semianalytic sets and satisfies $U=\bigcup_{j\geq1}U_j$, as required.
\end{proof}

The {\em dimension} of a C-semianalytic set $S$ is defined as dim $\displaystyle S= \sup_{x\in M}$ dim $S_x$. This dimension is the same as dim$\overline{S_x}^{\zar}$, where $\overline{S_x}^{\zar}$ means the Zariski closure of $S_x$, that is the smallest analytic set germ containing $S_x$. It is easy to prove that a global semianalytic set and its Zariski closure get the same dimension.

We prove now that the set of points of dimension $k$ of a C-semianalytic set is a C-semianalytic set.

\begin{prop} Let $S\subset M$ be a C-semianalytic set. Denote by $S_{(k)}$ the subset of points of $S$ of local dimension $k$. Then $S_{(k)}$ is a C-semianalytic set.
\end{prop}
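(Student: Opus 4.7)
Being C-semianalytic is a local property (by the equivalence of Definitions \ref{Csemi} and \ref{Csemibis}), so it suffices to produce, for each $x_0\in M$, an open neighbourhood $U$ on which $S_{(k)}\cap U$ is a global semianalytic subset of $M$. Shrinking $U$, we may write
\[
S\cap U=\bigcup_{i=1}^{t}T_i,\qquad T_i=\{f_i=0,\,g_{i,1}>0,\ldots,g_{i,s_i}>0\},
\]
as a finite union of basic global semianalytic sets, with all $f_i,g_{i,j}\in\Oo(M)$. Because $\dim S_y$ depends only on the germ at $y$, we have $\dim S_y=\dim(S\cap U)_y$ for each $y\in U$.

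The key reduction is that every point $y\in T_i$ lies in the open set $V_i:=\{g_{i,1}>0,\ldots,g_{i,s_i}>0\}$, on which $T_i=V_i\cap\{f_i=0\}$; hence the germ $(T_i)_y$ coincides with that of the (locally) C-analytic set $X_i:=V_i\cap\{f_i=0\}$, giving $\dim(T_i)_y=\dim(X_i)_y$. Therefore
\[
\dim S_y=\max_{i:\,y\in T_i}\dim(X_i)_y,
\]
and writing $X_i^{(\geq k)}:=\{y\in X_i:\dim(X_i)_y\geq k\}$,
\[
S_{(\geq k)}\cap U=\bigcup_{i=1}^{t}\bigl(T_i\cap X_i^{(\geq k)}\bigr),\qquad S_{(k)}=S_{(\geq k)}\setminus S_{(\geq k+1)}.
\]
Since C-semianalytic sets are stable under boolean operations (Proposition \ref{booletopological}) and each $T_i$ is already C-semianalytic, the problem reduces to showing that $X^{(\geq k)}$ is C-semianalytic for every C-analytic set $X\subset M$.

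The hard part is this last step, and it is where the main obstacle lies. The plan is to decompose $X$, near $x_0$, into finitely many irreducible C-analytic components $X_1,\ldots,X_r$ of generic dimensions $d_1,\ldots,d_r$ by means of the irreducible decomposition for C-analytic sets (Theorem \ref{realcomponents}), so that
\[
X^{(\geq k)}=\bigcup_{j:\,d_j\geq k}X_j^{(\geq k)}.
\]
For each irreducible $X_j$ of generic dimension $d\geq k$ one must then identify $X_j^{(\geq k)}$, whose complement in $X_j$ consists of the points where the local real dimension drops strictly below $k$ --- the phenomenon exemplified by the stick of Cartan's umbrella (Examples \ref{Whitneyumbrella}) and governed by the failure of coherence (Proposition \ref{Noncoerenza}). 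This ``dimension-drop'' locus would be described using the rank of the Jacobian of a finite set of generators of the ideal of $X_j$: the set $\Reg_d(X_j)$ of regular points of maximal dimension is global semianalytic, its closure in $X_j$ is C-semianalytic, and when $d=k$ it coincides with $X_j^{(\geq k)}$; the remaining cases $d>k$ are then handled by an induction on dimension, using that $\Sing(X_j)$ is a C-analytic set of strictly smaller dimension. Assembling these descriptions exhibits $X^{(\geq k)}$ as a boolean combination of C-analytic sets and closures of global semianalytic sets, hence as a C-semianalytic set, which via the earlier reductions yields that $S_{(k)}$ is C-semianalytic.
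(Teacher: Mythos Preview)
Your reduction to the problem ``show $X^{(\geq k)}$ is C-semianalytic for every C-analytic $X$'' is correct, and the irreducible-decomposition plan for that step can be made to work. However, the inductive step for $d_j>k$ is under-specified: passing to $\Sing(X_j)$ only helps once you observe that for any $y\in X_j$ with $\dim(X_j)_y<d_j$, the set of nearby points also has local dimension $<d_j$ (upper semicontinuity), hence consists entirely of singular points, so that $(X_j)_y=(\Sing X_j)_y$ as germs. This yields
\[
X_j^{(\geq k)}=X_j^{(\geq d_j)}\ \cup\ \bigl((\Sing X_j)^{(\geq k)}\setminus X_j^{(\geq d_j)}\bigr),
\]
and then the induction on $\dim$ goes through. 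Without this germ identity the phrase ``handled by an induction on dimension'' is not yet a proof.

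The paper's argument is shorter and avoids both the detour through C-analytic sets and the irreducible decomposition. It proceeds top-down on $S$ itself: for $d=\dim S$ it shows directly that $S_{(d)}\cap U_x=\bigcup_i\overline{T_i}\cap S_i$, where $T_i$ is the union of the $d$-dimensional connected components of $S_i\setminus\Sing(\overline{S_x}^{\zar})$; this uses only closures and connected components (Proposition~\ref{booletopological}). Then $S'=S\setminus S_{(d)}$ is again C-semianalytic of strictly smaller dimension, and $S_{(k)}=S'_{(k)}$ for $k<d$, so one recurses. Your route gives a more structural picture (it isolates the dimension-drop locus inside each irreducible component via $\Sing$), but at the cost of heavier machinery; the paper's peel-off-the-top-stratum recursion gets there with fewer ingredients.
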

\begin{proof} Let $d$ be the dimension of $S$. We first prove that $S_{(d)}$ is C-semianalytic. Take $x\in \overline{S_{(d)}}$ and let $U_x\subset M$ be a neighbourhood of $x$ such that $S\cap U_x = \bigcup_{i=1}^r S_i$ where $S_i$ is global and basic. Assume dim $S_i=d$ exactly for $i=1,\ldots,l$. Let $T_i$ be the union of the connected components of $S_i \setminus {\rm Sing}(\overline {S_x}^{\zar})$ of dimension $d$. So $T_i$ is a C-semianalytic set by Proposition \ref{booletopological}.

The set of points of dimension $d$ of $S_i$ is the closure of $T_i$ intersected with $S_i$. Thus,

$$S_{(d)} \cap U_x= \bigcup_{i=1}^l \overline{T_i}\cap S_i$$

is a C-semianalytic set.

Next consider $S' =S\setminus S_{(d)}$ which is again is C-semianalytic set and let $d'<d$ be the dimension of $S'$, Note that $S_{(d')}= S'_{(d')}$ so, by the previous step it is a C-semianalytic set. Proceeding recursively we get $S_{(k)}$ is a C-semianalytic set for $k\geq 0$ as required.
\end{proof}

We end this section with a characterization of C-semianalytic sets of dimension less or equal to $k$.

\begin{prop}\label{BM}
Let $S\subset M$. Then $S$ is a C-semianalytic set of dimension $\leq k$ if and only if for each $x\in M$ there is a C-semianalytic neighbourhood $U_x$ of $x$ in $M$ and a C-analytic set $Z\subset M$ of dimension $\leq k$ such that $S\cap U_x\subset Z$ and the closure of $S\cap U_x$ minus $S\cap U_x$ and $S\cap U_x$ minus the interior part of $S\cap U_x$ are both C-semianalytic sets of dimension less or equal than $k-1$.
\end{prop}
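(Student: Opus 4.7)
The plan is to prove both implications, with the forward direction relying on the local representation of C-semianalytic sets as finite unions of basic global semianalytic pieces, and the backward direction proceeding by induction on $k$. Throughout, the "interior part" should be read as the interior in the C-analytic set $Z$ (equivalent to the ambient interior when $k=\dim M$ and $Z=M$), since with the topological interior in $M$ the corresponding condition would be vacuous whenever $k<\dim M$.

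For the direct implication, fix $x\in M$ and take, by Definition \ref{Csemi}, an open neighbourhood $U_x$ where $S\cap U_x=\bigcup_{i=1}^{r}S_i$ is a finite union of basic global semianalytic pieces $S_i=\{f_i=0,g_{i,1}>0,\ldots,g_{i,s_i}>0\}$. Each $S_i$ is contained in the C-analytic set $\{f_i=0\}$, and since $\dim S_i\le\dim S\le k$, only the irreducible components of $\{f_i=0\}$ that meet $S_i$ in a set of their dimension actually contain $S_i$. Collecting these ``essential'' components over all $i$ gives a locally finite (and hence C-analytic) set $Z\subset M$ of dimension $\le k$ with $S\cap U_x\subset Z$. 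The frontier $\overline{S\cap U_x}\setminus(S\cap U_x)$ is C-semianalytic by Proposition \ref{booletopological} and its dimension is strictly less than $\dim(S\cap U_x)\le k$ by the classical dimension drop for frontiers of semianalytic sets; the same argument, now applied to the relative boundary inside $Z$, shows that $(S\cap U_x)\setminus\mathrm{Int}_{Z}(S\cap U_x)$ is C-semianalytic of dimension $\le k-1$ (each $S_i$ is open in the regular part of its Zariski closure, and the singular locus together with the intersections of components has dimension $\le k-1$).

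For the converse, I would argue by induction on $k$. When $k=0$ the frontier and interior-complement have dimension $\le -1$, hence are empty, so $S\cap U_x$ is simultaneously closed and relatively open in the discrete C-analytic set $Z$; this forces $S\cap U_x$ to be a locally finite set of points, which is a C-analytic subset of $M$ and in particular C-semianalytic. For the inductive step with $k\ge 1$, decompose
\[
S\cap U_x \;=\; \mathrm{Int}_{Z}(S\cap U_x)\,\cup\, B,
\qquad B=(S\cap U_x)\setminus\mathrm{Int}_{Z}(S\cap U_x).
\]
The open-in-$Z$ piece has the form $Z\cap W$ for some open $W\subset U_x$, and $W$ is C-semianalytic by Lemma \ref{odes} while $Z$ is C-analytic, so their intersection is C-semianalytic by Proposition \ref{booletopological}. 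The set $B$ and the frontier both have dimension $\le k-1$, and one then has to invoke the inductive hypothesis on them.

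The main obstacle is precisely this last step: verifying that $B$ and the frontier satisfy the local hypothesis with $k$ replaced by $k-1$ so that the induction closes. My strategy would be to work inside the ambient $Z$: the C-analytic set
\[
Z' \;=\; \Sing(Z)\,\cup\,\bigcup_{\dim Z_\alpha<k}Z_\alpha\,\cup\,\overline{B\,\cup\,(\overline{S\cap U_x}\setminus(S\cap U_x))}^{\,\mathrm{zar}}
\]
(where $Z_\alpha$ ranges over the irreducible components of $Z$) is C-analytic of dimension $\le k-1$ and contains both lower-dimensional pieces; its frontier and relative interior-complements can be controlled by the corresponding data for $S$, which provides the inductive descent. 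Once $B$ and the frontier are known to be C-semianalytic, writing $S\cap U_x$ as $\mathrm{Int}_Z(S\cap U_x)\cup B$ exhibits it as a finite union of C-semianalytic sets in the neighbourhood $U_x$, and gluing these local descriptions over a locally finite covering of $M$ yields that $S$ itself is C-semianalytic of dimension $\le k$.
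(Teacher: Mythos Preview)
Your forward direction is essentially the paper's argument: take the local Zariski closure as $Z$ and use the standard frontier dimension drop. This is fine.

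The backward direction, however, has a real gap at the step where you claim that $\mathrm{Int}_Z(S\cap U_x)=Z\cap W$ with $W$ C-semianalytic. Lemma~\ref{odes} does not say this: it takes as \emph{input} an open C-semianalytic set and refines it; it does not produce C-semianalyticity out of nowhere. The only result in the paper that would give $W$ C-semianalytic is Lemma~\ref{openX}, but that lemma assumes the open-in-$Z$ set is already known to be C-semianalytic, which is precisely what you are trying to prove. So the argument is circular at this point. Your proposed induction does not help either: the frontier and $B=(S\cap U_x)\setminus\mathrm{Int}_Z(S\cap U_x)$ are \emph{assumed} C-semianalytic in the hypothesis of the proposition, so there is nothing to gain by showing they satisfy the level-$(k-1)$ condition. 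The missing piece is exactly $\mathrm{Int}_Z(S\cap U_x)$, and the induction gives you no handle on it.

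The paper sidesteps this entirely with a short direct argument (its Lemma~\ref{BM1}, applied with $T=Z$). Write $S_1=\overline{S\cap U_x}$ and $S_2=\mathrm{Int}_Z(S\cap U_x)$. Then $S_1\setminus S_2=(S_1\setminus (S\cap U_x))\cup((S\cap U_x)\setminus S_2)$ is C-semianalytic by hypothesis, so $Z\setminus(S_1\setminus S_2)$ is C-semianalytic. But this last set is the disjoint union of the two relatively open sets $Z\setminus S_1$ and $S_2$, hence each is a union of connected components of a C-semianalytic set and therefore C-semianalytic by Proposition~\ref{booletopological}. Now $S\cap U_x=S_2\cup((S\cap U_x)\setminus S_2)$ is C-semianalytic. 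No induction is needed, and the dimension bound follows immediately from $S\cap U_x\subset Z$.
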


The proof of Proposition \ref{BM} requires a lemma.

\begin{lem}\label{BM1} Let $S\subset T \subset M$ where $T$ is C-semianalytic in $M$. Let $S_1$ (resp. $S_2$) be the closure (resp. the interior part) of $S$ in $T$. Then $S$ is C-semianalytic if and only if $S_1\setminus S$ and $S\setminus S_2$ are C-semianalytic.
\end{lem}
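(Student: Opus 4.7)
The forward implication is routine: if $S$ is C-semianalytic, then so are $\cl_M(S)$ and $M\setminus S$ by Proposition \ref{booletopological}(1)--(3). Writing the closure and interior of $S$ in $T$ as
\[
S_1=\cl_M(S)\cap T,\qquad S\setminus S_2=S\cap \cl_M(T\setminus S),
\]
(the second equality uses that $T\setminus S_2=\cl_T(T\setminus S)=\cl_M(T\setminus S)\cap T$), and noting that $T$ and $T\setminus S$ are C-semianalytic, both $S_1\setminus S=(\cl_M(S)\setminus S)\cap T$ and $S\setminus S_2$ fall out as intersections/differences of C-semianalytic sets.

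For the converse, I will argue locally, exploiting the fact that being C-semianalytic is a local property (Definition \ref{Csemibis}). Fix $x\in M$ and choose an open neighborhood $V$ of $x$ small enough that the three sets
\[
T\cap V,\qquad A:=(S_1\setminus S)\cap V,\qquad B:=(S\setminus S_2)\cap V
\]
are each global semianalytic in $M$. The key remark is that
\[
A\cup B=(S_1\setminus S_2)\cap V,
\]
so, setting $U:=(T\cap V)\setminus(A\cup B)$, we have $U=((T\setminus S_1)\cup S_2)\cap V$, and $U$ is global semianalytic. Moreover $S_2\cap V$ and $(T\setminus S_1)\cap V$ are disjoint and each is open in $T$ (the first is $\Int_T(S)\cap V$; the second is the complement in $T$ of the $T$-closed set $S_1$, intersected with $V$). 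Hence they are clopen in $U$, so each is a union of connected components of $U$.

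The main technical step is now to conclude that $S_2\cap V$ is C-semianalytic. Since $U$ is global semianalytic, in particular C-semianalytic, the stability of the class of C-semianalytic sets under taking unions of connected components (Proposition \ref{booletopological}(3)) yields that $S_2\cap V$ is C-semianalytic. Consequently $S\cap V=(S_2\cap V)\cup B$ is C-semianalytic, and then a neighborhood $U^x\subseteq V$ of $x$ on which $S\cap U^x$ is global semianalytic is furnished by the definition applied to the C-semianalytic set $S\cap V$ at the point $x$. Since $x\in M$ was arbitrary, $S$ is C-semianalytic.

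\textbf{Main obstacle.} The delicate point is the passage from the \emph{equality} $U=(S_2\cap V)\sqcup((T\setminus S_1)\cap V)$ to the recognition that $S_2\cap V$ is a union of connected components of $U$. This hinges on observing that both pieces of the partition are open in $T$ (not merely in $V$), so they become clopen in $U$; without this observation one cannot invoke the connected-components stability of Proposition \ref{booletopological}(3) to bypass the fact that we only know $U$ to be global semianalytic, not $S_2\cap V$ directly.
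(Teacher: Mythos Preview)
Your proof is correct and follows essentially the same idea as the paper: recognize that $(S_1\setminus S)\cup(S\setminus S_2)=S_1\setminus S_2$, so that $T\setminus(S_1\setminus S_2)=(T\setminus S_1)\sqcup S_2$ is C-semianalytic, and then recover $S_2$ as a union of connected components of this set via Proposition~\ref{booletopological}(3), whence $S=S_2\cup(S\setminus S_2)$ is C-semianalytic. The only difference is that you localize to a neighborhood $V$ to make the pieces global semianalytic before invoking the connected-components argument; this detour is unnecessary, since Proposition~\ref{booletopological} already applies to C-semianalytic sets globally, and the paper carries out the identical argument directly on $T$, $S_1\setminus S$, and $S\setminus S_2$ without passing to a local chart.
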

\begin{proof} If $S$ is C-semianalytic  so are $S_1,S_2, S_1\setminus S, S\setminus S_2$. Conversely assume the last two to be C-semianalytic. Then $S_1 \setminus S_2 = (S_1\setminus S) \cup (S\setminus S_2)$ is C-semianalytic. Hence $T\setminus (S_1\setminus S_2)$ is C-semianalytic and it is the union of $T\setminus S_1$ and $S_2$ which are open and closed disjoint sets in a C-semianalytic set. Hence both are C-semianalytic because they are unions of connected components of a C-semianalytic set. Then $S=S_2 \cup (S\setminus S_2)$ is C-semianalytic.
\end{proof}

Recall that if $S\subset M$ is a semianalytic set then $(\overline S)_x = \overline{S_x}$ and dim $\overline{S_x} \setminus S_x <$ dim $S_x$ for each $x\in \overline {S_x}$. We will use this statement for germs in the   proof of Proposition \ref{BM}.

\begin{proof}[Proof of Proposition \ref{BM}] We prove first the "if" part. By Lemma \ref{BM1} $S\cap U_x$ is C-semianalytic. On the other hand $S\cap U_x$ is the union of its interior part in $Z$ and its complement in $S\cap U_x$. Hence its dimension is less or equal to dim $Z \leq k$ while the complement of its interior part has dimension less or equal to $k-1$ because this dimension is equal to the one of its germ.
\smallskip

Next assume $S$ to be C-analytic of dimension $\leq k$. We show first that each point $x\in M$ has an open C-semianalytic neighbourhood $U_x$ with the following properties.

\begin{enumerate}
\item dim $S_x =$ dim $S\cap U_x =$ dim $\overline{S\cap U_x}^{\zar}$.
\item $\overline S\cap U_x$ is a C-semianalytic set.
\end{enumerate}

Since $S$ is C-semianalytic, for each $x\in M$ there is an open neighbourhood such that $S\cap U_x$ is a finite union of global basic semianalytic sets $S_1, \ldots, S_r$, that is $S\cap U_x$ is a global semianalytic set. Up to shrinking $U_x$ we can assume $x$ to be adherent to at least one $S_i$ Also $U_x$ can be taken open basic, hence ${\rm dim} S\cap U_x =$ dim $\overline{S_i\cap U_x}^{\zar}$. So we get:

 $$\dim S_x =\max_{i=1,\ldots,r}\{{\dim}S_i\cap U_x\}=
\max_{i=1,\ldots,r}\{\dim \overline{S_i\cap U_x}^{\zar}\}.$$

Hence dim $S_x ={\dim}(S\cap U_x)= {\dim}(\overline{S\cap U_x}^{\zar})$.

Take $Z= \overline{S\cap U_x}^{\zar}$ and put $A= S\cap U_x$. Then $Z$ and $A$ get the same dimension and dim $\overline A_y\setminus A_y \leq k-1$ for all $y\in \overline A$

We are left with $A$ minus its interior part  $\stackrel{\circ}A=Z \setminus \overline { (Z\setminus A)}$ and again its dimension is equal to the one of the corresponding germ at a point $y\in \overline A$. So we get
$$A\setminus\stackrel{\circ}A =A \cap (Z\setminus \stackrel{\circ}A) =A\cap \overline{Z\setminus A}= (\overline{Z\setminus A})\setminus({Z\setminus A}).$$ 

For $y\in \overline A$ we get dim $A_y \setminus (\stackrel{\circ}A)_y < {\rm dim}A_y ={\rm dim} Z\leq k$, hence both sets $\overline A\setminus A$ and $A\setminus  \stackrel{\circ}A$ are C-semianalytic sets of dimension $\leq k-1$.  
\end{proof}

\subsection{The direct image theorem.}

C-semianalytic sets verify a deeper property, analogous to the {\em Direct Image Theorem } of Grauert, stating that the family of complex analytic sets is stable 
under proper holomorphic maps between complex analytic spaces.

Let $(X,\Oo_X)$ and $(Y,\Oo_Y)$ be reduced Stein spaces. Let $\sigma:X\to X$ and $\tau:Y\to Y$ be anti-holomorphic involutions. Assume $X^\sigma$ and $Y^\tau$ are not empty. As defined in Section 2,  a C-semianalytic set $S\subset X^\sigma$ is \em ${\mathcal A}(X^\sigma)$-definable \em if for each $x\in X^\sigma$ there exists an open neighborhood $U^x$ such that $S\cap U^x$ is a finite union of sets of the type $\{F_{|X^\sigma}=0,{G_1}_{|X^\sigma}>0,\ldots,{G_r}_{|X^\sigma}>0\}$ where $F,G_i\in\Oo(X)$ are invariant holomorphic sections. We denote the set of $\sigma$-invariant holomorphic functions of $X$ restricted to $X^\sigma$ with ${\mathcal A}(X^\sigma)$.  We prove:

\begin{thm}[Direct image under proper holomorphic maps]\label{directimage}
Let 
$$
F:(X,\Oo_X)\to(Y,\Oo_Y)
$$ 
be an invariant proper holomorphic map, that is, $\tau\circ F=F\circ\sigma$. Let $S\subset X^\sigma$ be an ${\mathcal A}(X^\sigma)$-definable C-semianalytic set. We have 
\begin{itemize}
\item[(i)] $F(S)$ is a C-semianalytic subset of $Y^\tau$ of the same dimension as $S$. 
\item[(ii)] If $E=\overline{F^{-1}(Y^\tau)\setminus X^\sigma}$, then $F(E\cap S)$ is a C-semianalytic subset of $Y^\tau$.
\item[(iii)] If $S$ is a C-analytic set and $F^{-1}(Y^\tau)=X^\sigma$, then $F(S)$ is also a C-analytic subset of $Y^\tau$.
\end{itemize}
\end{thm}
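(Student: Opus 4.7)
The plan is to reduce the problem to a local, bounded, basic case via properness and local finiteness, then apply Grauert's direct image theorem in the complex setting and descend to the real part, using parts (iii) and (ii) as the building blocks for part (i).

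First I would use properness of $F$ to localize on $Y^\tau$: for each $y \in Y^\tau$, pick a relatively compact open neighborhood $V$ of $y$ in $Y$ whose preimage $F^{-1}(V)$ is relatively compact in $X$. By the local finiteness of a decomposition of $S$ into $\Aa(X^\sigma)$-definable basic global semianalytic pieces $S_j$, only finitely many $S_j$ meet $\ol{F^{-1}(V)}$, so $F(S) \cap V$ is a finite union of the form $F(S_j \cap F^{-1}(V))$. This reduces (i) to showing, for each bounded basic piece $S_j = \{F_0|_{X^\sigma} = 0,\, G_1|_{X^\sigma} > 0,\, \ldots,\, G_r|_{X^\sigma} > 0\}$ with invariant holomorphic $F_0, G_i \in \Oo(X)$, that $F(S_j)$ is global semianalytic in a neighborhood of $y$.

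I would attack the basic case via the invariant complex analytic set $\tilde Z = \{(x, t) \in X \times \C^r : F_0(x) = 0,\, G_i(x) = t_i^2,\, i = 1,\ldots,r\}$ with anti-involution $\sigma'(x, t) = (\sigma(x), \ol{t})$, together with the equivariant proper holomorphic map $\tilde F(x, t) = (F(x), t)$ from $X \times \C^r$ to $Y \times \C^r$; properness of $\tilde F$ is clear since $F$ is proper in $x$ and the $t$-coordinates preserve compacts. The projection $\pi: \tilde Z^{\sigma'} \cap \{t_1 \neq 0, \ldots, t_r \neq 0\} \to S_j$, $(x, t) \mapsto x$, is surjective and $2^r$-to-one, so $F(S_j)$ equals the image of this set under $F \circ \pi$. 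This reduces the problem to applying part (iii) to the C-analytic set $\tilde Z$ with the map $\tilde F$ on the piece where the preimage condition $\tilde F^{-1}(Y^\tau \times \R^r) \subset X^\sigma \times \R^r$ holds; the remaining piece is governed by part (ii). Part (iii) itself is essentially immediate from Grauert: a C-analytic subset $S$ of $X^\sigma$ equals $\tilde S \cap X^\sigma$ for an invariant complex analytic $\tilde S \subset X$; properness of $F$ gives $F(\tilde S)$ complex analytic and $\tau$-invariant in $Y$, and the hypothesis $F^{-1}(Y^\tau) = X^\sigma$ forces $F(\tilde S) \cap Y^\tau = F(S)$. Part (ii), for the C-analytic set $E = \ol{F^{-1}(Y^\tau) \setminus X^\sigma}$, then follows from the same scheme applied to $E \cap S_j$ once (iii) is available. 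The dimension equality in (i) follows from Grauert's dimension preservation for complex analytic images combined with the equivariance of $F$.

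The hard part will be the descent in (i): Grauert produces only a complex image, and its intersection with $Y^\tau$ generically strictly contains $F(S)$ because a real point of $Y$ may admit non-real preimages. Closing this gap requires separating the contribution from the set $E \cap S$ (which is precisely the content of part (ii)) and carefully tracking which branches of the square-root equations $t_i^2 = G_i(x)$ come from genuine real points $x \in X^\sigma$ with $G_i(x) > 0$, as opposed to complex $x$ whose image happens to land in $Y^\tau$. It is this sign-tracking, made possible by the auxiliary variables and the $\Aa(X^\sigma)$-definability of $S$, that keeps $F(S)$ inside the C-semianalytic category rather than merely the subanalytic one.
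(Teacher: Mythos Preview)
Your reduction to part (iii) has a real gap at the projection step. Applying (iii) to $\tilde Z$ and $\tilde F(x,t)=(F(x),t)$ would, under the appropriate preimage hypothesis, yield that $\tilde F(\tilde Z^{\sigma'})$ is C-analytic in $Y^\tau\times\R^r$. But $F(S_j)$ is the image of $\tilde Z^{\sigma'}\cap\{t_i\neq0\}$ under $F\circ\pi$, not under $\tilde F$; to pass from one to the other you must project $Y^\tau\times\R^r\to Y^\tau$, and this projection is not proper. The image of a C-semianalytic set under a non-proper analytic map is in general only subanalytic, which is exactly the phenomenon the theorem is meant to exclude. The squaring trick $t_i^2=G_i$ converts inequalities into equalities, but it contributes nothing toward controlling this final projection. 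Invoking part (ii) does not close the gap either: (ii) tracks the discrepancy between $F^{-1}(Y^\tau)$ and $X^\sigma$, which is orthogonal to the question of why a projection off an auxiliary $\R^r$ stays C-semianalytic.

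The paper handles this by a genuinely different mechanism. For $y_0\in Y^\tau$ with fiber $\{x_1,\ldots,x_\ell\}$ one first proves an algebraic finiteness statement (Theorem~\ref{finite76}): the semilocal ring $\mathcal S^{-1}\Oo(X)$ is a finitely generated module over $F^*(\Oo(Y)_{\gtn_{y_0}})$, generated by invariant functions $H_1,\ldots,H_m\in\Oo(X)$. The map $H\colon x\mapsto(F(x),H_1(x),\ldots,H_m(x))$ then embeds a neighbourhood of the fiber into $Y\times\C^m$ as the zero set of \emph{polynomials} in the fiber variables with coefficients in $\Oo(Y)_{\gtn_{y_0}}$, and Artin approximation (Lemma~\ref{artin}) upgrades the formal isomorphism of local rings to an honest analytic diffeomorphism. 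Under $H$ the functions describing $S$ become elements of $\Oo(Y^\tau)[u_1,\ldots,u_m]$, so the image $h(S\cap W)$ is an $\Oo(Y^\tau)[u]$-definable set, and now Tarski--Seidenberg (Theorem~\ref{TS'}) legitimately handles the projection $Y^\tau\times\R^m\to Y^\tau$. It is this polynomial-in-the-fiber structure---supplied by the finite-module theorem, not by any squaring construction---that makes the projection step go through; your proposal is missing an analogue of this ingredient.
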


We present here some examples that justify the hypothesis of Theorem \ref{directimage}.
\newpage
\begin{examples}\hfill

\begin{enumerate}
\item Consider the map $f: \R^2 \to \R^2, f(x,y)=((x^2+y^2)x,(x^2+y^2)y)$. Note that $f$ is an analytic map with continuous inverse map $g$ given by

$$\displaystyle g(u,v) \begin{cases} =(\frac{u}{\sqrt{u^2+v^2}},\frac{v}{\sqrt{u^2+v^2}})&
 {\mbox if}\quad (u,v)\neq (0,0)\\ =(0,0) &{\mbox if} \quad (u,v)=(0,0).
\end{cases}$$

 \parindent=0pt  Thus $f$ is an homeomorphism, hence a proper map with finite fibers. However $f$ cannot be extended to a proper holomorphic map on a neighbourhood of $\R^2$ in $\C^2$.
\item Let $Y = \{x^2 -zy^2 =0\}\subset \C^3$ be the complex Whitney's umbrella endowed with the complex conjugation $\tau$, so $Y^\tau = \{x^2 -zy^2 =0\}\subset \R^3$. 
Let $X=\C^2$ endowed with the complex conjugation $\sigma$. Consider the finite holomorphic map $F:\C^2\to Y, F(s,t)=(st,t,s^2)$. Then $X^\sigma =\R^2$ is strictly contained in the inverse image of $Y^\tau$ and  $F(X^\sigma)= \{x^2 -zy^2 =0\}\setminus \{z<0\}\subset \R^3$ which is not a C-analytic subset of $Y^\tau$.
\end{enumerate}
\end{examples}
\smallskip

The   following  result,   which   is  the   key   to  prove   Theorem
\ref{directimage}, analyzes  the local structure of  proper surjective
holomorphic  morphisms between  Stein  spaces. In its proof we use deep algebraic results on completions that can be found in \cite[23.K,L, 24.C]{mat}.  

For  each  $x\in X$  we
denote the maximal  ideal of $\Oo(X)$ associated to  $x$ with $\gtm_x$
and  for  each $y\in  Y$  we  denote  the  maximal ideal  of  $\Oo(Y)$
associated to $y$  with $\gtn_y$. Compact analytic subsets  of a Stein
space  are finite  sets, so  the fibers  of a  proper holomorphic  map
between Stein spaces are finite sets. Let $F:(X,\Oo_X)\to(Y,\Oo_Y)$ be
a surjective proper  holomorphic map between reduced  Stein spaces and
write $F^*(\Oo(Y))=\{G\circ F:\ G\in\Oo(Y)\}\subset\Oo(X)$ and
$$
F^*(\Oo(Y)_{\gtn_y})=\Big\{\frac{G\circ F}{H\circ F}:\ G,H\in\Oo(Y)\ \text{and}\ H\notin\gtn_y\Big\}.
$$

Then we get

\begin{thm}\label{finite76}
  Let  $y_0\in Y$  with fiber  $F^{-1}(y_0)=\{x_1,\ldots,x_\ell\}$ and
  denote  $\mathcal S=\Oo(X)\setminus(\gtm_{x_1}\cup\cdots\cup\gtm_{x_\ell})$.      Then
  $\mathcal  S^{-1}\Oo(X)$     is    a     finitely     generated
  $\Oo(Y)_{\gtn_{y_0}}$-module     and    there     exist    invariant
  $H_1,\ldots,H_m\in\Oo(X)$ such that
\begin{center}
$\mathcal S^{-1}\Oo(X)=F^*(\Oo(Y)_{\gtn_{y_0}})[H_1,\ldots,H_m].$
\end{center}
\end{thm}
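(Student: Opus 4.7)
The argument rests on Grauert's direct image theorem together with two further ingredients: Cartan's Theorem A on the Stein base and Forster's comparison of localized Stein algebras with analytic germ rings. First, since $F:X\to Y$ is a proper holomorphic map between reduced Stein spaces, $F_*\Oo_X$ is a coherent sheaf of $\Oo_Y$-modules. The standard stalk computation for a proper map with finite fibre gives
$$
(F_*\Oo_X)_{y_0}\;\cong\;\bigoplus_{i=1}^\ell\Oo_{X,x_i},
$$
and coherence makes this a finitely generated $\Oo_{Y,y_0}$-module. Applying Cartan's Theorem A to $F_*\Oo_X$ on the Stein space $Y$, one finds global sections $H_1,\ldots,H_m\in H^0(Y,F_*\Oo_X)=\Oo(X)$ whose germs at the points $x_i$ generate $\bigoplus_{i}\Oo_{X,x_i}$ as an $\Oo_{Y,y_0}$-module.

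The core step is transferring this germ-level generation to generation of $\mathcal S^{-1}\Oo(X)$ over $\Oo(Y)_{\gtn_{y_0}}$. Here one invokes Forster's analysis of Stein algebras: each localization $\Oo(X)_{\gtm_{x_i}}$ is a Noetherian local ring whose $\gtm_{x_i}$-adic completion agrees with $\widehat{\Oo}_{X,x_i}$, and analogously for $\Oo(Y)_{\gtn_{y_0}}$. Hence $\mathcal S^{-1}\Oo(X)$ is a semi-local Noetherian ring with
$$
\widehat{\mathcal S^{-1}\Oo(X)}\;\cong\;\prod_{i=1}^\ell\widehat{\Oo}_{X,x_i},\qquad\widehat{\Oo(Y)_{\gtn_{y_0}}}\;\cong\;\widehat{\Oo}_{Y,y_0}.
$$
The images of the $H_j$ generate $\prod_i\widehat{\Oo}_{X,x_i}$ as $\widehat{\Oo}_{Y,y_0}$-module, and by faithful flatness of completion for Noetherian (semi-)local rings combined with a Nakayama-type descent, they also generate $\mathcal S^{-1}\Oo(X)$ as an $\Oo(Y)_{\gtn_{y_0}}$-module. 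A set of module generators a fortiori generates the associated subring, yielding
$$
\mathcal S^{-1}\Oo(X)\;=\;F^*(\Oo(Y)_{\gtn_{y_0}})[H_1,\ldots,H_m].
$$

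It remains to make the $H_j$ invariant. Since $\tau\circ F=F\circ\sigma$ and $y_0\in Y^\tau$, the anti-involution $\sigma$ permutes the fibre $F^{-1}(y_0)$, so for every $H\in\Oo(X)$ the conjugate $\overline{H\circ\sigma}$ again belongs to $\Oo(X)$ and has well-defined germs at each $x_i$. Replacing each $H_j$ by its invariant real and imaginary parts
$$
\tfrac{1}{2}\bigl(H_j+\overline{H_j\circ\sigma}\bigr),\qquad \tfrac{1}{2i}\bigl(H_j-\overline{H_j\circ\sigma}\bigr),
$$
at most doubles the number of generators. Since $H_j$ is a $\C$-linear combination of the new pair and $i\in\Oo(Y)_{\gtn_{y_0}}$, the invariant collection still generates the same $\Oo(Y)_{\gtn_{y_0}}$-module and therefore the same subring.

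The main obstacle is precisely the descent in the second paragraph: relating finite generation of the germ-level module $\bigoplus_i\Oo_{X,x_i}$ over $\Oo_{Y,y_0}$ to finite generation of $\mathcal S^{-1}\Oo(X)$ over the genuinely smaller ring $\Oo(Y)_{\gtn_{y_0}}$. Since $\Oo(Y)_{\gtn_{y_0}}$ is in general a strict subring of $\Oo_{Y,y_0}$, coefficients from the local-analytic decomposition cannot be lifted term-by-term into $\Oo(Y)_{\gtn_{y_0}}$; the descent must be carried out at the level of completions and then pulled back using faithful flatness and the coincidence of completions guaranteed by Forster's theorem.
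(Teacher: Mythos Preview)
Your proposal is correct and follows essentially the same route as the paper: coherence of $F_*\Oo_X$ plus Theorem~A to obtain global generators $H_1,\ldots,H_m$ of the stalk $\prod_i\Oo_{X,x_i}$ over $\Oo_{Y,y_0}$, then descent via the coincidence of completions $\widehat{\Oo(X)_{\gtm_{x_i}}}\cong\widehat{\Oo}_{X,x_i}$ and $\widehat{\Oo(Y)_{\gtn_{y_0}}}\cong\widehat{\Oo}_{Y,y_0}$ together with faithful flatness of completion, followed by the $\Re/\Im$ trick for invariance. The paper makes the descent step more explicit by exhibiting the chain $M_1=\sum H_j\,F^*(\Oo(Y)_{\gtn_{y_0}})\subset M_2=F^*(\Oo(Y)_{\gtn_{y_0}})[H_1,\ldots,H_m]\subset B=\mathcal S^{-1}\Oo(X)$ and checking that all three become equal after $-\otimes_{\Oo(Y)_{\gtn_{y_0}}}\widehat{\Oo(Y)_{\gtn_{y_0}}}$; this is exactly the mechanism behind your ``faithful flatness combined with a Nakayama-type descent,'' and you have correctly identified it as the crux of the argument.
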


\begin{proof}
 Consider the sheaf $F_*(\Oo_X)$ of $\Oo_Y$-modules. Recall that if $V\subset Y$ is open then $H^0(V,F_*(\Oo_X) ) =H^0(F^{-1}(V),\Oo_X)$. 

 Fix  $y_0  \in Y$  and  let  $F^{-1}(y_0)= \{x_1,\ldots,x_l\}$.  Then
 $F_*(\Oo_X)_{y_0}=\prod_{i=1}^l  \Oo_{X,x_i}$ and  $F_*(\Oo_X)$ is  a
 coherent  sheaf of  $\Oo_Y$-mo\-du\-les.   By Theorem  A there  exist
 $H_1,\ldots,   H_m   \in   H^0(Y,F_*(\Oo_X)   )=\Oo(X)$   such   that
 $F_*(\Oo_X)_{y_0}$  is   generated  by   $H_1,\ldots,  H_m  $   as  a
 $\Oo_{Y,y}$-module.  Then  we  get   a  diagram  of  faithfully  flat
 homomorphism between local excellent rings.

$$ 
\xymatrix{
\Oo(X)_{\gtm_{x_i}}\ar[d] \ar@{^{(}->}[r] &\Oo_{X,x_i}\ar[d]\\
 \widehat{\Oo(X)_{\gtm_{x_i}}} \ar[r]^{\widetilde{=}} &\widehat{\Oo_{X,x_i}}
}
$$

Consider the inclusion of $F^*(\Oo(Y)_{\gtn_{y_0}})$-modules

$$M_1 =\sum_{i=1}^m H_i\cdot F^*(\Oo(Y)_{\gtn_{y_0}})\subset M_2 =F^*(\Oo(Y)_{\gtn_{y_0}})[H_1,\ldots ,H_m]\subset \mathcal S^{-1}\Oo(X),$$ 
where $\mathcal S= \Oo(X)\setminus (\gtm_{x_1} \cup \ldots \cup \gtm_{x_l})$

Denote $B=\mathcal S^{-1}\Oo(X)$. We have to prove $M_1 = M_2=  B$

In what  follows completions of  rings are considered with  respect to
the Jacobson  radical. In case of  a local ring, its  Jacobson radical
coincide  with its  unique maximal  ideal. The  maximal ideals  of the
semilocal ring $B$ are $\gtm_{x_1}B,\dots ,\gtm_{x_l}B$.

 The  completion   $\hat  B$  of  $B$  is
isomorphic  to  $\prod_{i=1}^l \widehat{B_{\gtm_{x_i}B}}$.  Note  that
$B_{\gtm_{x_i}B}$ is isomorphic to $\Oo(X)_{\gtm_{x_i}}$. So
$B\subset     \prod_{i=1}^l     B_{\gtm_{x_i}B}$   is  isomorphic     to
$\prod_{i=1}^l\Oo(X)_{\gtm_{x_i}}\subset
\prod_{i=1}^l\Oo_{X,x_i}=(F_*(\Oo_X))_{y_0}.$

Since  $\Oo(X)_{{\gtm_{x_i}}}$   and  $\Oo_{X,x_i}$  share   the  same
completion which is isomorphic  to the completion of $B_{\gtm_{x_i}}$,
we get that the completion  of $F_*(\Oo_X)_{y_0}$ is isomorphic to the
completion of $B$.

Now $(F_*(\Oo_X))_{y_0}$ is  a finitely generated $\Oo_{Y,y_0}$-module
(it      is     generated      by      $H_1,\ldots     ,H_m$), its completion is

$$\widehat {F_*(\Oo_X)_{y_0}}= F_*(\Oo_X)_{y_0}\otimes_{\Oo_{Y,y_0}}\widehat{\Oo_{Y,y_0}}.$$

So $\widehat{F_*(\Oo_X)_{y_0}}$ is generated by $H_1,\ldots ,H_m$ as a $\widehat{\Oo_{Y,y_0}}$-module. 
By the same argument the completion of $M_1$ with respect to the maximal ideal of $\Oo(Y)_{\gtn_{y_0}}$ is
isomorphic to $M_1 \otimes_{\Oo(Y)_{\gtn_{y_0}}}\widehat{\Oo(Y)_{\gtn_{y_0}}}$.

As $M_1$ is generated by $H_1,\ldots ,H_m$ as a $\Oo(Y)_{\gtn_{y_0}}$-module its completion $\widehat{M_1}$ is generated by the same elements as a $\widehat{\Oo(Y)_{\gtn_{y_0}}}$-module. As a consequence, using that $\Oo(Y)_{\gtn_{y_0}}$
and $\Oo_{Y,y_0}$ have the same completion, we get that $\widehat{M_1}$ is isomorphic 
to $\widehat{F_*(\Oo_X)_{y_0}}$.  As the inclusion of $\Oo(Y)_{\gtn_{y_0}}$ in its completion is faithfully flat because $\Oo(Y)_{\gtn_{y_0}}$ is a local excellent ring, we get the following commutative diagram of $\widehat{\Oo(Y)_{\gtn_{y_0}}}$-modules.

$$
\xymatrix{
M_1\otimes_{\Oo(Y)_{\gtn_{y_0}}}\widehat{\Oo(Y)_{\gtn_{y_0}}}\ar[d]\ar@{^{(}->}[r]&
M_2\otimes_{\Oo(Y)_{\gtn_{y_0}}}\widehat{\Oo(Y)_{\gtn_{y_0}}}\ar@{^{(}->}[r]&
B\otimes_{\Oo(Y)_{\gtn_{y_0}}}\widehat{\Oo(Y)_{\gtn_{y_0}}}\ar@{^{(}->}[d]            \\
\widehat{M_1}\ar[u]_{\widetilde =}\ar[r]^{\widetilde =}&\widehat{  F_*(\Oo_X)_{y_0}}\ar[l]\ar[r]^{\widetilde =}&F_*(\Oo_X)_{y_0}\otimes{\Oo_{Y,y_0}} \widehat{\Oo_{Y,y_0}}\ar[l]
}
$$

So, all inclusions are isomorphisms, hence

$$M_1\otimes_{\Oo(Y)_{\gtn_{y_0}}} \widehat{\Oo(Y)_{\gtn_{y_0}}}=M_2\otimes_{\Oo(Y)_{\gtn_{y_0}}} \widehat{\Oo(Y)_{\gtn_{y_0}}}=
B\otimes_{\Oo(Y)_{\gtn_{y_0}}} \widehat{\Oo(Y)_{\gtn_{y_0}}}.$$

As the inclusion $\Oo(Y)_{\gtn_{{y_0}}}\subset \widehat{\Oo(Y)_{\gtn_{y_0}}}$ is faithfully flat we get $M_1 =M_2 =B$ as wanted.
\end{proof}

\begin{remark}
If $F:(X,\Oo_X)\to (Y,\Oo_Y)$ in addition is invariant, we can assume each $H_i$ to be invariant by taking $\displaystyle \Re(H_i) =\frac{H_i(z)+\overline{H_i(z)}}{2},
 \Im (H_i)=\frac{H_i(z)-\overline{H_i(z)}}{2i}$, which are invariant, instead of $H_i$
\end{remark}

In order to prove Theorem \ref{directimage} we need a more suitable version of Tarsky-Seidenberg Theorem and an application of M.Artin' approximation. 

 Let $(X,\Oo_X)$ be a reduced Stein space endowed with an anti-involution $\sigma$ such that its fixed part $X^\sigma$ is not empty. Let $u=(u_1,\ldots, u_m), m\geq 0$ be a tuple. Denote by $\mathcal A$ either $\Oo(X^\sigma)$ or $\Aa(X^\sigma)$.

\begin{thm}\label{TS'}
Let $S\subset X^\sigma \times \R^n$ be an $\mathcal A[u]-$definable global semianalytic set. Consider the projection $\pi: X^\sigma \times \R^n \to X^\sigma$ onto the first factor. Then, $\pi(S)$ is an $\mathcal A$-definable global semianalytic set. 
\end{thm}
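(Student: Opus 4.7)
The plan is to deduce Theorem \ref{TS'} as a direct application of the classical Tarski--Seidenberg projection theorem, which appears here as Theorem \ref{TS}, once a suitable dictionary between ``definability'' and ``constructibility'' is set up. The first observation is that an $\mathcal{A}[u]$-definable global semianalytic subset of $X^\sigma\times\R^m$ is exactly a set constructible by the algebra $\mathcal{A}[u_1,\ldots,u_m]$ in the sense used in Theorem \ref{TS}: both notions describe finite unions of basic sets cut out by one equality and finitely many strict inequalities whose defining functions lie in $\mathcal{A}[u]$, that is, are polynomials in the $u_i$ with coefficients in $\mathcal{A}$. In the same way, an $\mathcal{A}$-definable global semianalytic subset of $X^\sigma$ is precisely a set constructible by $\mathcal{A}$.

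With this identification, the plan is to apply Theorem \ref{TS} with $M=X^\sigma$, $k=m$ and ambient algebra $\mathcal{A}$ chosen as $\Oo(X^\sigma)$ or $\Aa(X^\sigma)$ according to the case considered, to conclude that $\pi(S)$ is constructible by $\mathcal{A}$. Both $\Oo(X^\sigma)$ and $\Aa(X^\sigma)$ are genuine $\R$-algebras of real-valued functions on $X^\sigma$, which is the only hypothesis Theorem \ref{TS} requires. The argument underlying Theorem \ref{TS} proceeds by induction on $m$, eliminating one variable at a time via Sturm sequences (or Thom encoding): the existence of $u_m$ satisfying a prescribed configuration of equalities and strict inequalities is translated into a quantifier-free boolean combination of sign conditions on certain polynomial expressions in the original coefficients, and these expressions belong to $\mathcal{A}[u_1,\ldots,u_{m-1}]$ because $\mathcal{A}$ is a ring.

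The main point that has to be checked, and which I expect to be the only non-automatic ingredient, is the preservation of the algebra $\mathcal{A}$ under the elimination procedure. For $\mathcal{A}=\Oo(X^\sigma)$ this is standard. For $\mathcal{A}=\Aa(X^\sigma)$ the preservation also holds because the elimination only manufactures the Sturm sequences and their sign data by $\R$-polynomial operations applied to the coefficients, and such operations commute with the anti-involution $\sigma$: if the input functions are restrictions to $X^\sigma$ of $\sigma$-invariant holomorphic functions on $X$, so are the outputs, since sums and products of invariant holomorphic functions on $X$ are again invariant holomorphic on $X$. Once this is observed, no further work is needed, and Theorem \ref{TS'} follows.
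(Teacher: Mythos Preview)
Your proposal is correct and matches the paper's treatment: the paper states Theorem \ref{TS'} as a ``more suitable version'' of Theorem \ref{TS} without giving a separate proof, precisely because, as you explain, an $\mathcal A[u]$-definable global semianalytic set is the same as a set constructible by $\mathcal A[u_1,\ldots,u_m]$, and Theorem \ref{TS} applied with $M=X^\sigma$ yields the result directly. Your remark that the elimination procedure only uses ring operations, and hence preserves both $\an(X^\sigma)$ and $\mathcal A(X^\sigma)$, is exactly the content needed.
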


\begin{lem}\label{artin}
Let $X_0\subset \C^n_0$ and $Y_0\subset \C^m_0$ be complex analytic set germs at the origin. Let $f:\Oo_{X,0}\to \Oo_{Y,0}$ be a local analytic homomorphism such that its extension to completions $\hat f :\widehat{\Oo_{X,0}} \to \widehat{\Oo_{Y,0}}$ is an isomorphism. Then $f$ is also an isomorphism.
\end{lem}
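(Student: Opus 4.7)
The plan is to deduce the two halves of the statement separately, exploiting the fact that analytic algebras are noetherian local rings whose completion map is faithfully flat. The injectivity of $f$ will be essentially formal, while surjectivity will rely on M.~Artin's approximation theorem, mentioned just before the statement as one of the tools we may use.

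For injectivity, I would argue as follows. The ring $\Oo_{X,0}$ is a noetherian local ring with maximal ideal $\gtm_{X,0}$, and by Krull's intersection theorem $\bigcap_{k\geq 0}\gtm_{X,0}^k=0$, so the canonical map $\iota_X:\Oo_{X,0}\to\widehat{\Oo_{X,0}}$ is injective. The diagram
\[
\xymatrix{
\Oo_{X,0}\ar[r]^{f}\ar@{^{(}->}[d]_{\iota_X}&\Oo_{Y,0}\ar@{^{(}->}[d]^{\iota_Y}\\
\widehat{\Oo_{X,0}}\ar[r]^{\hat f}&\widehat{\Oo_{Y,0}}
}
\]
commutes, so if $f(a)=0$ then $\hat f(\iota_X(a))=0$; since $\hat f$ is an isomorphism this forces $\iota_X(a)=0$, hence $a=0$.

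For surjectivity, the plan is to show that every generator of $\Oo_{Y,0}$ lies in the image of $f$. Realize $\Oo_{Y,0}=\Oo_{m,0}/\J_{Y,0}$ so that it is generated as an analytic algebra by the classes $y_1,\ldots,y_m$ of the coordinate functions. Because $\hat f$ is surjective, for each $i$ there exists a formal power series $\hat h_i\in\widehat{\Oo_{X,0}}$ with $\hat f(\hat h_i)=y_i$ (viewed in $\widehat{\Oo_{Y,0}}$). Writing $\Oo_{X,0}=\Oo_{n,0}/\Ii_{X,0}$ and choosing formal lifts $\widehat H_i$ of $\hat h_i$ in $\C[\![x_1,\ldots,x_n]\!]$, the condition $f(h_i)=y_i$ becomes an analytic system of equations $F_i(H_1,\ldots,H_m;x)=0$ (in the unknowns $H_1,\ldots,H_m$) with analytic coefficients depending on $x$, and we have a formal solution $(\widehat H_1,\ldots,\widehat H_m)$. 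M.~Artin's approximation theorem for convergent power series then provides a convergent solution $(H_1,\ldots,H_m)$, whose classes $h_i\in\Oo_{X,0}$ satisfy $f(h_i)=y_i$ exactly. Hence $y_1,\ldots,y_m$ lie in the image of $f$; since they generate $\Oo_{Y,0}$ as an analytic algebra, $f$ is surjective.

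The delicate point, and the step I expect to require most care, is setting up the equations to which Artin's approximation is applied: one must phrase ``$f(h_i)=y_i$ in $\Oo_{Y,0}$'' as vanishing of analytic expressions in the unknowns, which means also introducing auxiliary unknowns encoding membership in $\Ii_{X,0}$ and $\J_{Y,0}$ (through finitely many generators, available by noetherianity), so that a formal solution exists exactly when $\hat f$ sends the classes of the unknowns to the classes of the $y_i$. Once this system is written down, the formal solution coming from $\hat f^{-1}$ is at our disposal, and Artin's theorem finishes the argument.
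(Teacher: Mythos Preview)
Your injectivity argument is correct. The surjectivity argument, however, has a genuine gap in the place you yourself flag as delicate. Artin's approximation theorem solves analytic systems $F(x,Y)=0$ for unknown power series $Y=Y(x)$ over the \emph{same} base variables $x$. But the equation $f(h_i)=y_i$ lives in $\Oo_{Y,0}$: writing $G_j=f(x_j)\in\Oo_{Y,0}$, one has $f(h_i)=H_i(G_1(y),\ldots,G_n(y))$, so the condition reads $H_i(G(y))\equiv y_i \pmod{\J_{Y,0}}$, an identity in the $y$-variables. The unknowns $H_i$ you seek are power series in $x$. There is no way to recast this as an analytic system over $x$ (with unknowns in $x$) without already knowing how to invert $y\mapsto G(y)$, which is precisely the content of the lemma. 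Your sentence ``with analytic coefficients depending on $x$'' is where the mismatch hides.

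The paper fixes this by reversing the roles: instead of looking for preimages of the $y_i$ under $f$, it constructs a \emph{left inverse} $g:\Oo_{Y,0}\to\Oo_{X,0}$. Setting $g(y_j)=F_j$, the condition $g\circ f=\mathrm{id}$ on generators becomes $G_i(F_1(x),\ldots,F_m(x))=x_i$ in $\Oo_{X,0}$. Now both the equations and the unknowns $F_j$ are over $x$; the formal solution is $F_j=\hat f^{-1}(y_j)$, and Artin applies directly (together with the auxiliary equations you anticipated, encoding membership in $\Ii_{X,0}$ and the vanishing of generators of $\J_{Y,0}$ under $y_j\mapsto F_j$, so that $g$ is well defined). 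Once $g\circ f=\mathrm{id}$, completion gives $\hat g=\hat f^{-1}$, hence $\widehat{f\circ g}=\mathrm{id}$, and injectivity of $\Oo_{Y,0}\hookrightarrow\widehat{\Oo_{Y,0}}$ forces $f\circ g=\mathrm{id}$ as well.
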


\begin{proof}
Denote the variables of $\C^n$ as $(x_1,\ldots,x_n )$ and the variables of $\C^m$ as $(y_1,\ldots ,y_m)$. Put $G_i =f(x_i)$ for $i=1,\ldots ,n$. By the universal property of local homomorphisms between power series rings, $\hat f$ is given by 

$$\hat f :\widehat{\Oo_{X,0}} \to \widehat {\Oo_{Y,0}}$$
 $$\hat f (\zeta) = \zeta(G_1,\ldots ,G_n),$$

in particular $f(F)=F(G_1,\ldots ,G_n)$. Denote $\eta_j = \hat f^{-1}(y_j)$ for $j=1,\ldots ,m$. As before $\hat f^{-1}(\xi)=\xi(\eta_1,\ldots ,\eta_m)$.
So, the tuple $(\eta_1,\ldots ,\eta_m)\in (\widehat{\Oo_{X,0}})^m$ is a formal solution of the system

$$\begin{cases}
G_1(y_1,\ldots ,y_m)= x_1\\
\phantom{G_1(y_1,\ldots ,y_m)\,}{\cdots} \\
G_n(y_1,\ldots ,y_m)= x_n
\end{cases}
$$

By Artin's approximation Theorem there exists a solution $(F_1,\dots, F_m)\in
 (\Oo_{X,0})^m$.  Consider the local homomorphism $g: \Oo_{Y,0} \to \Oo_{X,0}$ defined by
 $g(\xi)= \xi(F_1,\cdots , F_m)$.  Since $(F_1,\ldots , F_m)$ is a solution of the
 previous system one gets $g\circ f$ is the identity of $\Oo_{X,0}$, and $f$  is an isomorphism.   
\end{proof}

\begin{proof}[Proof of Theorem \ref{directimage}]
 There are, by hypothesis, an invariant proper holomorphic map $F:(X,\Oo_X)\to (Y,\Oo_Y)$ and $S\subset X^\sigma$ which is a C-semianalytic set defined by invariant holomorphic functions in $\Aa(X^\sigma)$. 
As $F$ is proper, we may assume in the proof that $X$ is irreducible and $F$ is surjective, because if $\{X_\alpha\}$ is the locally finite family of the irreducible components of $X$, $Y_\alpha =F(X_\alpha)$ is, by Grauert's Theorem on the direct image, a locally finite family of irreducible complex analytic subsets of $Y$.   

Fix a point $y_0 \in Y^\tau $. We have to find  an open neighbourhood $A\subset Y^\tau $ of $y_0$ such that $F(S)\cap A$ and $F(E\cap S)\cap A$ are global semianalytic sets in $Y^\tau$. Call $D = F^{-1}(Y^\tau) \setminus X^\sigma$ so that  $E= \overline{D} $.

{\sc Step 1.}  $F^{-1}(y_0)$ is a finite set, say $ \{x_1,\ldots,x_l\}\subset X$. Consider  $\mathcal S = \Oo(X)\setminus (\gtm_{x_1} \cup \dots \cup \gtm_{x_l})$. By Theorem \ref{finite76} and its Remark, there exist invariant holomorphic functions  $H_1,\ldots,H_m \in \Oo(X)$ such that 

$$\mathcal S^{-1}\Oo(X) = F^*(\Oo(Y)_{\gtn_{y_0}})[H_1,\ldots,H_m]$$

Consider the evaluation epimorphism $\theta: \Oo(Y)_{\gtn_{y_0}}[z_1,\ldots,z_m] \to \mathcal S^{-1}\Oo(X) $ defined by 
$\theta(Q(y,z_1,\ldots,z_m ))= Q(F(x), H_1(x), \ldots, H_m(x)).$

As $X$ is irreducible, $\mathcal S^{-1}\Oo(X)$ is an integral domain, hence the kernel of $\theta$ is a prime ideal $\gtp$ in $\Oo(Y)_{\gtn_{y_0}}[z_1,\ldots,z_m]$. 
So $\theta$ induces a ring isomorphism $$\overline \theta: \Oo(Y)_{\gtn_{{y_0}}}[z_1,\ldots ,z_m]/\gtp \to \mathcal S^{-1}\Oo(X).$$

Note that   $\Oo(Y)_{\gtn_{{y_0}}}$ is excellent, hence noetherian and  so $\gtp$ is generated by a finite number of polinomials, say $\{P_1,\ldots,P_t\}$. 

Consider the set $\Omega =\{(y,z)\in Y\times \C^m: P_j(y,z)=0, j=1,\ldots, t\}\subset Y\times \C^m$
and the map $H:X\to Y\times \C^m$ mapping $x$ to $(F(x), H_1(x),\ldots, H_m(x))$. We get $H(X)\subset \Omega$ and $H^*: \Oo(\Omega)= \Oo(Y)[z_1,\ldots ,z_m]/\gtp\ \to \Oo(X)$  is the restriction of the isomorphism $\overline \theta$.

Denote $\omega_i = H(x_i)$,  $A =\Oo(Y)_{\gtn_{y_0}}[z_1,\ldots ,z_m]/\gtp $ 
and $B= \mathcal S^{-1}\Oo(X)$.
$A$ is a semilocal ring, whose maximal ideals are the ideals $\gtm_{\omega_i}$ associated   to the points $\omega_i, i=1,\ldots,l$.

The isomorphism $\overline \theta$ induces an isomorphism between the local rings $A_{\gtn_{\omega_i}}$ and $B_{\gtm_{x_i}}$ hence an isomorphism between their completions. 

As $(X,\Oo(X))$ and $(Y,\Oo(Y))$ are reduced Stein spaces, one has $\widehat{\Oo(X)_{\gtm_{x_i}}} =\widehat{\Oo_{X,x}}$ and  $\widehat{A_{\gtn_{\omega_i}}} = \widehat{\Oo_{\Omega,\omega_i}}$.

Thus we have   isomorphisms among the four rings

$$ \widehat{\Oo_{\Omega ,\omega_i}}, \ \widehat{A_{\gtn_{\omega_i}}},\  \widehat{B_{\gtm_{x_i}}},\ \widehat{\Oo_{X,x_i}}$$

for $i=1,\ldots,l$. 

By Lemma \ref{artin} the composition homomorphism $\Oo_{\Omega,\omega_i} \to \Oo_{X,x_i}$, which sends a germ $G$ at $\omega_i$ to the germ at $x_i$ of $G\circ H$, is an isomorphism for $i=1,\ldots,l$. Consequentely there exist invariant open neighbourhoods $U$ of $\{x_1,\ldots,x_l\}$ and $V$ of $\{\omega_1,\ldots,\omega_l\}$ such that $H$ is an invariant complex analytic diffeomorphism between $U$ and $V$.

{\sc Step 2.} As $F$ is invariant and each $H_i$ is invariant also $H$ is invariant, so $H(X^\sigma  )\subset \Omega^{\tau'} = \Omega\cap Y^\tau$, where $\tau '$ is the anti-involution 
 $\tau'(y,z)= (\tau(y),\overline z)$. Let $f, h$ be the restrictions of $F, H$ to
 $X^\sigma$, so $f:X^\sigma \to Y^\tau$ and $h:X^\sigma \to \Omega^{\tau'}$. As
 $S$ is $\mathcal A (X^\sigma )$-definable, there exists an open neighbourhood
 $W\subset U\cap X^\sigma$ of the finite set $f^{-1}(y_0)= F^{-1}(y_0)\cap
 X^\sigma$ such that $S\cap W$ is a global $\mathcal A(X^\sigma)$-definable
 semianalytic set, that is,
 $S\cap W =\bigcup_{i=1}^r\bigcap_{j=1}^s S_{i,j}$ where $S_{i,j}$ is either $\{x\in X^\sigma:f_{i,j}>0\}$ or $\{x\in X^\sigma: f_{i,j}=0\}$. In addition we can assume $W\subset U\cap X^\sigma$, so  $h_{|W}:W\to h(W)$ is a real
 analytic diffeomorphism, $h^{-1}(h(W))=W$ and $h(W)$ is an open neighbourhood of
 $h(f^{-1}(y_0))$ in $\Omega^{\tau '}$. 
As $ H^*$ and the composition homomorphism  are isomorphisms, we get
\begin{itemize}
\item $f_{i,j}=q_{i,j}\circ h$ for some $q_{i,j}\in \Oo(Y^\tau)[u_1,\ldots,u_m]$.
\item $h(S\cap W) =T$ where $T= \bigcup_{i=1}^r \bigcap_{j=1}^s T_{i,j}$ and $T_{i,j}$ is either $\{(y,u)\in \Omega^{\tau '}:q_{i,j}>0\}$ or $\{(y,u)\in \Omega^{\tau'}: q_{i,j}=0\}$ according to the corresponding signs of the $f_{i,j}$.
\end{itemize}

Let $\pi:Y^\tau \times \R^m \to Y^\tau$ be the projection on the first factor. As $f =\pi \circ h$ we conclude by Theorem \ref{TS'} that $f(S\cap W)=\pi(h(S\cap W))=\pi (T)$ is a global semianalytic set. 

{\sc Step 3.} Note that
$H(D)= H(F^{-1}(Y^\tau)\setminus X^\sigma)= \{(y,u + i v)\in Y^\tau\times \C^m: P_1(y,u+iv) = 0, \ldots , P_r(y,u+iv)=0, v_1^2 +\cdots +v_m^2 \neq 0 \}.$

Thus we can understand $H(F^{-1}(Y^\tau)\setminus X^\sigma)$ as an $\Oo(Y^\tau)[u,v]$-definable global semianalytic subset of $Y^\tau \times \R^m \times \R^m$ where $u=(u_1,\ldots , u_m)$ and $v=(v_1,\ldots ,v_m)$. Hence its closure is locally global by Theorem \ref{fujitagabrie}. So, there is an open set  
  $V' \subset V\cap (Y^\tau \times \R^m \times \R^m  )$, that we can assume to be   a bounded  $\Oo(Y^\tau)[u,v]$-definable global semianalytic neighbourhood of $(\omega_1, \ldots , \omega_l)$, such that $\overline{H(D)} \cap V'$  
is an $\Oo(Y^\sigma)[u,v]$-definable global semianalytic subset of $Y^\tau \times \R^m \times \R^m$. 

We can assume $W \subset H^{-1}(V')$. So $H$ is an analytic diffeomorphism on $W$ because $W\subset  H^{-1}(V')\subset H^{-1}(V)$. Hence $\overline{H(D)} \cap V'= H(\overline D)\cap V' = H(E)\cap V'$. This proves that the latter is definable and global semianalytic.

Look now at $f(E\cap S\cap W) = \pi \circ h(E\cap S\cap W)$. 

Since $h$ is an analytic diffeomorphism on $W$ we have $h(E\cap S\cap W)=
h(E \cap W) \cap h(S\cap W)$.

Since $W\subset H^{-1}(V')$ we can write $ h(E\cap S\cap W)= H(E \cap H^{-1}(V') \cap W) \cap h(S\cap W)= H(E) \cap V' \cap H(W) \cap h(S\cap W)$. So

$$h(E\cap S\cap W) = H(E)\cap V' \cap h(W) \cap h(S\cap W) =   H(E)\cap V' \cap  h(S\cap W). 
$$
Both pieces are global semianalytic sets. Hence by Theorem \ref{TS} $f(E\cap S\cap W)$ is a definable  global semianalytic set. 

{\sc Step 4.} To complete the proof of (i) and (ii) we need an open C-semianalytic neighbourhood  $A\subset Y^\sigma$ of $y_0$ such that $f(S)\cap A = f(S\cap W)\cap A$ and $f(E\cap S)\cap A = f(E\cap S\cap W)\cap A$, so that $f(S)$ and $f(E\cap S)$ are C-semianalytic sets.

As $W$ is an open neighbourhood of $f^{-1}(y_0)$ in $X^\sigma$ and $f$ is proper, we get $C= f(X^\sigma \setminus W)$ is a closed subset of $Y$ that does not contain $y_0$. Let $A\subset Y^\tau$ be a global open basic semianalytic neighbourhood of $y_0$ which does not intersect C. As

\begin{center}$ f(S) =f(S\cap W)\cup f(S\cap (X^\sigma \setminus W))$ 
\end{center}
and 
\begin{center}
 $f(E\cap S) =f(E\cap S\cap W)\cup f(E\cap S \cap (X^\sigma \setminus W))$,
\end{center}

we conclude $f(S)\cap A = f(S\cap W)\cap A$ and $f(E\cap S)\cap A = f(E\cap S\cap W)\cap A$ as required.

{\sc Step 5.} We prove (iii).
By hypothesis there exists a complexification $\widetilde{Z} \subset X$ of $Z$ that is closed in $X$, hence $\widetilde{Z}$ is a complex analytic subset of $X$ and $\widetilde{Z} \cap X^\sigma = Z$. By Grauert's Theorem $F(\widetilde{Z})$ is a complex analytic subset of $Y$, so $F(\tilde{Z})\cap Y^\tau$ is a C-analytic subset of $Y^\tau$ that contains $F(Z)$. Thus,

$$ F(Z) \subset F(\widetilde{Z})\cap Y^\tau = F(\widetilde{Z}\cap F^{-1}(Y^\tau))= F(\widetilde{Z}\cap X^\sigma) =F(Z)$$

So $F(Z) =F(\widetilde{Z})\cap Y^\tau$ is a C-analytic subset of $Y^\tau$ as required.
\end{proof}

\smallskip

We end this section with a sufficient condition under which $F^{-1}(Y^\tau) = X^\sigma$ (with the notations of  Theorem \ref{directimage}).

\begin{prop} Let $F:(X,\Oo_X) \to (Y,\Oo_y)$ be an invariant surjective proper holomorphic map and assume $Y^\tau$ to be coherent. Let $Z\subset Y$ be a complex analytic set such that $X\setminus F^{-1}(Z)$ is dense in $X$. Assume $F_{|X\setminus F^{-1}(Z)}:X\setminus F^{-1}(Z)\to Y\setminus Z$ is a biholomorphism. Then, $F^{-1}(Y^\tau) = X^\sigma$.  
\end{prop}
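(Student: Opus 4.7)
The inclusion $X^\sigma\subseteq F^{-1}(Y^\tau)$ is immediate from $\tau\circ F=F\circ\sigma$. I plan to prove the reverse inclusion by combining the biholomorphism property of $F$ off the exceptional set with coherence of $Y^\tau$. First I would replace $Z$ by $Z\cup\tau(Z)$, which is still a complex analytic subset of $Y$ (the antiholomorphic image of a complex analytic set is complex analytic), the restricted map $F_{|X\setminus F^{-1}(Z\cup\tau(Z))}$ remains a biholomorphism onto its image, and the density hypothesis is preserved since $\sigma$ is a homeomorphism of $X$; now $F^{-1}(Z)$ is $\sigma$-invariant. For $x\in X\setminus F^{-1}(Z)$ with $F(x)\in Y^\tau$, the identity $F(\sigma(x))=\tau F(x)=F(x)$ combined with injectivity of $F$ off $F^{-1}(Z)$ yields $\sigma(x)=x$. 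So it suffices to show that every preimage $x_0\in F^{-1}(y_0)$ with $y_0\in Y^\tau\cap Z$ is fixed by $\sigma$.

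Fix such a $y_0$ and write $F^{-1}(y_0)=\{x_1,\ldots,x_n\}$, which is finite because $X,Y$ are Stein and $F$ is proper; take $x_0=x_1$. The formula $\sigma(x_i)=x_{\pi(i)}$ defines, since $\sigma^2=\mathrm{id}$, an involution $\pi$ of $\{1,\ldots,n\}$. I would choose a $\tau$-invariant open Stein neighborhood $V$ of $y_0$ together with pairwise disjoint open neighborhoods $U_i$ of $x_i$ such that $F^{-1}(V)=\bigsqcup_i U_i$ and $\sigma(U_i)=U_{\pi(i)}$; the latter is arranged by setting $U_i=W_i\cap\sigma(W_{\pi(i)})$ for arbitrary disjoint neighborhoods $W_i$ of $x_i$ and then shrinking $V$ using properness of $F$. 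By Remmert's proper mapping theorem, each image $S_i:=F(\overline{U_i}\cap F^{-1}(V))$ is a closed analytic subset of $V$; because $F_{|U_i\setminus F^{-1}(Z)}$ is a biholomorphism onto its image, the germ $(S_i)_{y_0}$ has local dimension $\dim_{x_i}X$, and its top-dimensional part is a nonempty union of top-dimensional irreducible components of $Y_{y_0}$. The key disjointness observation is that no such component can lie in two distinct sheets: if $Y_{y_0,k}\subset (S_i)_{y_0}\cap(S_j)_{y_0}$ with $i\neq j$, then points of $Y_{y_0,k}\setminus Z$ close to $y_0$ would have their (single) preimages lying in both $U_i$ and $U_j$, contradicting $U_i\cap U_j=\varnothing$. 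Therefore the top-dimensional irreducible components of $Y_{y_0}$ partition into $n$ nonempty disjoint subsets $I_1,\ldots,I_n$ recording which components belong to each sheet.

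Coherence of $Y^\tau$ now closes the argument. By Proposition \ref{Noncoerenza} together with the fact that complexification commutes with the irreducible decomposition of real analytic germs, every irreducible component of $Y_{y_0}$ is $\tau$-invariant, being the complexification of an irreducible real component of $Y^\tau_{y_0}$. Hence the top-dimensional part of each sheet germ $(S_i)_{y_0}$ is $\tau$-invariant. On the other hand, by construction of the $U_i$, $\tau((S_i)_{y_0})=(F(\sigma(U_i)))_{y_0}=(S_{\pi(i)})_{y_0}$, so $I_i=I_{\pi(i)}$; since the $I_i$ form a partition by nonempty sets, this forces $\pi(i)=i$, i.e.\ $\sigma(x_0)=x_0$.

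The main obstacle I expect is the local-structure step in the second paragraph: producing the neighborhoods $U_i$ compatibly with the $\sigma$-action and precisely verifying that the top-dimensional part of each germ $(S_i)_{y_0}$ is a union of \emph{full} top-dimensional irreducible components of $Y_{y_0}$ (not merely an analytic subgerm), which rests on a dimension argument based on the biholomorphism property plus the fact that top-dimensional irreducible subgerms of $Y_{y_0}$ coincide with top-dimensional irreducible components. Once this local picture is established, the coherence step is a transparent translation of ``$\tau$ acts trivially on the top-dimensional irreducible components of $Y_{y_0}$'' into ``$\sigma$ fixes each $x_i$''.
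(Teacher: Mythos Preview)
Your approach is correct and matches the paper's: both show that distinct preimages of a point $y_0\in Y^\tau$ send the corresponding germs of $X$ onto disjoint collections of irreducible components of $Y_{y_0}$ (your partition $I_1,\ldots,I_n$ is exactly the paper's observation that $\dim_\C\bigl(F(X_z)\cap F(X_{\sigma(z)})\bigr)<d$), and then use coherence of $Y^\tau$ to conclude these components are $\tau$-invariant, forcing $\sigma$ to fix every preimage. The paper first reduces to $X$ (hence $Y$) irreducible, and you should adopt this: it makes $Y$ pure $d$-dimensional and, crucially, forces $Y_{y_0}$ to coincide with the complexification of $(Y^\tau)_{y_0}$, which is precisely what you need for your claim that every component of $Y_{y_0}$ is $\tau$-invariant---Proposition~\ref{Noncoerenza} alone only tells you about $\widetilde{(Y^\tau)_{y_0}}$, not about $Y_{y_0}$.
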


\begin{proof} We may assume $X$ to be irreducible, hence also $Y$ is irreducible. In particular both are pure dimensional and have the same dimension, say $d$.
We have to prove $F^{-1}(Y^\tau) \subset X^\sigma$, since the converse inclusion is always true. 

Suppose by contradiction that there exists $z\in F^{-1}(Y^\tau) \setminus X^\sigma$. As $F$ is invariant $F(\sigma(z)) = F(z) \in Y^\sigma$, so $\sigma(z) \in F^{-1}(Y^\tau) \setminus X^\sigma$. By Grauert's Theorem $F(X_z)$ and $F(X_{\sigma(z)})$ are unions of irreducible components of $Y_{F(z)}$. As $X\setminus F^{-1}(Z)$ is dense in $X$ and the restriction of $F$ to it is a biholomorphism we conclude dim$_\C(F(X_z)\cap F(X_{\sigma(z)})) <d$. But $F$ is invariant, so $F(X_z)\cap Y^\tau = F(X_{\sigma(z)})\cap Y^\tau$ and dim$_\R(F(X_z)\cap Y^\tau) <d$. As $Y$ is irreducible and coherent, it is pure dimensional, so all irreducible components of the germ $Y^\tau_{F(z)}$ have dimension $d$. In addition the irreducible components of $Y^\tau_{F(z)}$ are intersections with $Y^\tau$ of the irreducible components of $Y_{F(z)}$. Here is the contradiction because $F(X_z)$ is a union of irreducible components of $Y_{F(z)}$ hence dim $Y_{F(z)}\cap Y^\tau = d$ and cannot be smaller.     \end{proof}

\subsection{Subanalytic sets as proper images of C-semianalytic sets.}

The family of semianalytic sets is not closed under  proper analytic maps. The class of subanalytic sets was introduced to get rid of this problem. Let us recall the notion of  subanalytic set. 

\begin{defn}
A subset $S\subset M$ is \em subanalytic in $M$  \em if each point $x\in M$ admits a neighborhood $U^x$ such that $S\cap U^x$ is a projection of a relatively compact semianalytic set, that is, there exists a real analytic manifold $N$ and a relatively compact semianalytic subset $A$ of $M\times N$ such that $S\cap U^x=\pi(A)$ where $\pi:M\times N\to M$ is the projection. 
\end{defn}

It could sound reasonable to consider the family of C-subanalytic sets. However, this is useless because, as we will prove, each subanalytic set is the image of a C-semianalytic set under a proper analytic map. Thus, one can replace semianalytic sets by C-semianalytic sets when one defines subanalytic sets.

Let us begin with some examples.

\begin{example} Consider the compact analytic set $Z\subset \R^3$ defined by the equation 
$$f= z^2(1-4(x^2+y^2+z^2)) -a(z)(x^2+ (y-1)^2  -1)^2 =0$$

where, as usual 
$$a(z) = \begin{cases} \exp \frac{1}{z^2-1} & {\rm if}\  |z|<1 \\
                             
           0& {\rm if}\  |z|\geq 1 
\end{cases}
$$

Observe that $ \R^3 \setminus (\{z=1\} \cup \{z=-1\}) \bigcap Z$ is a C-analytic subset of $\R^3 \setminus (\{z=1\} \cup \{z=-1\})$ and the $2$-dimensional part $Z_{(2)}$ of $Z$ is a subset of  $\{g =1-4(x^2+y^2+z^2) \geq 0\}$. Define 
$$ X_1 = \{(x,y,z,t)\in \R^4 : f=0, 4t^2 -g(x,y,z) =0\}$$

wich is a compact $2$-dimensional C-analitic subset of the sphere $S_1 = \{x^2+y^2+z^2 +t^2 = \frac{1}{4}\}$. 

Define $\pi:\R^4 \to \R^3$ by $\pi(x,y,z,t) = (x,y,z)$. The map $\pi_{|S_1}$ is proper, has finite fibers and satisfies $\pi(X_1) = Z\cap\{g\geq 0\}$. 
Consider the sphere $S_2 = \{x^2 +(y-1)^2 +z^2 +(t-2)^2 = 1\}$ and $X_2 = S_2 \cap \{x=0, t=2\}$. The map $\pi_{|S_2}$ is  analytic, has singleton fibers and satisfies $\pi(X_2)= Z\cap\{x=0\} = \{x=0, (y-1)^2 +z^2 =1\}$. Let $M=S_1 \cup S_2$. Of course $X = X_1 \cup X_2$ is a C-analytic subset of $M$, $\pi_{|M}$ is proper, analytic, with finite fibers and satisfies $\pi(X_1\cup X_2) = Z$ which is  analytic hence semianalytic but not C-semianalytic.
\end{example}

Next example is the well known Osgood's example of a subanalytic set which is not semianalytic.

\begin{example} Let $f:\R^2 \to \R^3$ be given by $f(x,y) = (x,xy, xe^y)$. Then, $S= f(\{x^2+y^2 \leq \varepsilon^2\})$ is subanalytic but not semianalytic.

$S$ is subanalytic because it is the image of a compact semianalytic set under an analytic map and has dimension $2$. To see that it is not semianalytic it is enough to prove the following statement.

\begin{quotation}
{\em If $G \in \R[[u,v,w]]$ is a formal power series such that $G(x,xy,xe^y) = 0$ then  $G=0$.}
\end{quotation}

Indeed, if so, the smallest real analytic set containing the  germ of  $S$ at $0$  is the whole $\R^3$ so that $S$ cannot be semianalytic.

Write $G(u,v,w) = \sum_{j\geq 0} G_j(u,v,w)$ where for all $j$ $G_j$ is an  homogeneous polynomial. Then,
$$0 = G(x,xy,xe^y) = \sum_{j\geq 0}G_j(x,xy,xe^y) = \sum_{j\geq0} x^j G_j(1,y, e^y)$$

Therefore all $G_j$ should vanish, hence they are identically zero, so $G=0$ as wanted.
\end{example}
\begin{remark}
The image of a C-semianalytic set under an analytic map is not in general a subanalytic set. For instance take $S= \{(\frac{1}{k},k), k>0 \}$ and let $\pi:\R^2 \to \R$ be the projection on the first factor. Then $\pi(S) = \{\frac{1}{k}\}$ cannot be semianalytic because in any neighbourhood of $0\in \R$ it has not finitely many connected components. It is not even subanalytic. $\pi(S) \subset \R\setminus \{0\}$, if it were subanalytic, in a suitable neighbourhood of $0$ it should be the projection of a relatively compact semianalytic set $A\subset \R\times N$ where $N$ is a real analytic manifold. But again the projection of $A$ should get finitely many connected components.     
\end{remark}

We recall two relevant results about subanalytic sets due to Bierstone and Milman.
\begin{prop}\label{bm1} Let $M$ be a real analytic manifold and let $S\subset M$ be a closed subanalytic set. Then each point of $S$ admits a neighbourhood $U$ such that $S\cap U= \pi(A)$ is the projection on the first factor of a closed analytic subset $A\subset U\times \R^q$ for some $q$. Also {\rm dim}$(A)=$ {\rm dim}$(S\cap U)$ and $\pi_{|A}$ is proper. 
\end{prop}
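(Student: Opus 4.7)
The plan is to upgrade the definition of ``subanalytic'' (projection of a relatively compact semianalytic set) to a projection of a closed analytic set, along a proper map, with matching dimension. The natural strategy is to pass through a uniformization of the closed subanalytic set $S$ by a real analytic manifold $W$ of the same dimension, and then take the graph of the uniformizing map as the set $A$.

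First, I would fix a point $x_0\in S$ and, shrinking $M$ if necessary, use the local definition of subanalyticity to realize $S\cap U'=\pi(B)$ for some relatively compact semianalytic set $B\subset U'\times N$. From this starting point the goal becomes the production of a real analytic manifold $W$ of dimension $d=\dim(S\cap U)$ and a proper real analytic map $\varphi:W\to U$ with $\varphi(W)=S\cap U$. This is exactly the \emph{uniformization theorem} for closed subanalytic sets of Bierstone--Milman (see the paper ``Semianalytic and subanalytic sets''), obtained by combining Hironaka's local flattening/resolution techniques with a sequence of local blowings-up of the ambient space. I would invoke this result directly rather than re-prove it.

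Once $\varphi:W\to U$ is in hand, the remaining steps are almost formal. By Grauert's embedding theorem there exists a proper analytic embedding $\iota:W\hookrightarrow\R^q$ for some $q$ such that $\iota(W)$ is a closed analytic submanifold of $\R^q$. Define
$$
A=\{(x,v)\in U\times\R^q:\ v\in\iota(W)\ \text{and}\ \varphi(\iota^{-1}(v))=x\},
$$
that is, the graph of $\varphi\circ\iota^{-1}$. Then $A$ is a closed analytic subset of $U\times\R^q$: it is cut out on the closed analytic submanifold $U\times\iota(W)$ by the analytic equations $x-\varphi(\iota^{-1}(v))=0$. The projection $\pi:U\times\R^q\to U$ restricted to $A$ coincides, via $\iota$, with $\varphi$, so $\pi(A)=\varphi(W)=S\cap U$ and $\pi|_A$ is proper because $\varphi$ is. Finally, since $\varphi\circ\iota^{-1}$ is a well-defined analytic map from $\iota(W)$ to $U$, its graph $A$ is analytically diffeomorphic to $\iota(W)\cong W$, so $\dim A=\dim W=d=\dim(S\cap U)$.

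The main obstacle is clearly the uniformization step: producing the proper analytic surjection $\varphi:W\to S\cap U$ with $\dim W=\dim(S\cap U)$ is the deep content, resting on Hironaka's resolution theorems and on a careful local-to-global argument (via a locally finite system of blowings-up compatible with the semianalytic data defining $B$). The passage from $\varphi$ to the statement of the proposition, by contrast, is just ``take the graph and embed $W$ properly'', and is essentially bookkeeping.
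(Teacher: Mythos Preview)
The paper does not prove this proposition: it is quoted without proof as one of two results imported from Bierstone--Milman \cite{bm}, together with the Uniformization Theorem (stated immediately after as Theorem~\ref{bm2}). So there is no ``paper's own proof'' to compare against.

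Your argument is correct. Once the uniformization theorem for closed subanalytic sets is granted (applied locally, i.e.\ to $S\cap U$ viewed as a closed subanalytic subset of the open manifold $U$, so that $\dim W=\dim(S\cap U)$ and not merely $\dim S$), the passage to the graph of $\varphi\circ\iota^{-1}$ inside $U\times\R^q$ is clean: $A$ is closed analytic, $\pi|_A$ is proper because it is $\varphi$ up to the diffeomorphism $A\cong W$, and the dimension matches. The only point to watch is the one just flagged: uniformization must be invoked on a neighbourhood $U$ of $x_0$, not on all of $M$, otherwise the resulting $W$ has dimension $\dim S$ rather than $\dim(S\cap U)$.

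One remark on the logic. In Bierstone--Milman's original development the dependency runs the other way: the projection description of closed subanalytic sets is established earlier and by more elementary means (local flattening plus fibre-cutting), while the full uniformization theorem is the deeper endpoint. Your derivation is therefore valid but uses the harder of the two cited results to recover the easier one. Note also that the paper's Theorem~\ref{bm2} is stated only for closed \emph{analytic} sets; you are (correctly) invoking the stronger subanalytic version from \cite{bm}, so there is no circularity, but you should make that explicit.
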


\begin{thm}[Uniformization Theorem]\label{bm2}
Let $X$ be a closed analytic subset of a real analytic manifold $M$. Then there exist a real analytic manifold $N$ of the same dimension as $X$ and a proper real analytic map $p:N\to M$ such that $p(N) = X$.
\end{thm}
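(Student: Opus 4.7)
The plan is to prove the theorem by induction on $d=\dim X$, with Hironaka's resolution of singularities as the central tool, and the dimension filtration of $X$ as the organizing principle.

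For $d=0$ the set $X$ is a closed discrete subset of $M$, and we can simply take $N=X$ with $p$ the inclusion. For the inductive step, I would first isolate the top-dimensional stratum: let $X_d=\overline{\text{Reg}(X)_{(d)}}$, where $\text{Reg}(X)_{(d)}$ denotes the regular points of $X$ at which $X$ has local dimension $d$. Then $X_d$ is a closed analytic subset of $M$, every point of which has local dimension at most $d$, and the complement $X'=\overline{X\setminus X_d}$ is a closed analytic subset of $M$ of dimension strictly less than $d$. This splits the problem into a pure-dimensional top piece and a strictly lower-dimensional remainder.

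Next I would apply Hironaka's resolution to $X_d$ (in the Bierstone--Milman real analytic formulation): there exists a locally finite sequence of blow-ups with smooth analytic centers whose composition $\pi\colon M'\to M$ is a proper real analytic map, isomorphic outside the preimage of $\Sing(X_d)$, and whose strict transform $\widetilde{X}_d\subset M'$ of $X_d$ is a closed real analytic submanifold of $M'$ of pure dimension $d$. The restriction $p_1=\pi|_{\widetilde{X}_d}\colon\widetilde{X}_d\to X_d$ is proper (restriction of a proper map to a closed set) and surjective, since its image is closed in $X_d$ and contains the dense open subset $\Reg(X_d)$ over which $\pi$ is a biholomorphism. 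Invoking the inductive hypothesis on $X'$, we obtain a real analytic manifold $N'$ of dimension $\dim X'<d$ and a proper analytic map $p'\colon N'\to M$ with $p'(N')=X'$. Setting $N=\widetilde{X}_d\sqcup N'$ and $p=p_1\sqcup p'$ gives a real analytic manifold of dimension $d$ and a proper analytic map with $p(N)=X_d\cup X'=X$, completing the induction.

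The main obstacle is the appeal to resolution of singularities in Step 2: we need a version of Hironaka's theorem that applies to arbitrary closed real analytic subsets of a real analytic manifold, not merely to coherent or C-analytic ones. The examples in Chapter 1 (Whitney and Cartan umbrellas, the non-coherent set of Example \ref{a(z)}) show that such subsets may fail to be C-analytic. The resolution must therefore be carried out locally, using the fact that a closed analytic subset of $M$ is locally the common zero set of finitely many analytic functions, and then glued using paracompactness and the local finiteness of the blow-up sequence. Once resolution is available in the required generality, the rest of the argument is bookkeeping about properness (preserved under restriction to closed sets and under disjoint union of proper maps with locally finite image) and dimension (preserved by taking strict transforms and by the inductive splitting $X=X_d\cup X'$).
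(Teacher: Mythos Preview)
The paper does not prove this theorem: it is quoted from Bierstone--Milman \cite{bm} and used as a black box. So there is no argument in the text to compare against, and I assess your proposal on its own.

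Your overall plan---Hironaka's resolution plus induction on $\dim X$---is the right one. The gap is in the decomposition step. You assert that $X_d=\overline{\Reg(X)_{(d)}}$ and $X'=\overline{X\setminus X_d}$ are closed \emph{analytic} subsets of $M$, but this is exactly what fails for the non-coherent analytic sets of Chapter~1. For Whitney's umbrella $X=\{x^2-zy^2=0\}\subset\R^3$ one has $\Reg(X)_{(2)}=\{x^2=zy^2,\ y\neq0\}$, whose closure is $X\cap\{z\ge0\}$, semianalytic but not analytic; likewise $X'=\{x=0,\,y=0,\,z\le0\}$ is only semianalytic. So $X_d$ cannot be handed to Hironaka, and the inductive hypothesis does not apply to $X'$. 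You rightly flag in your last paragraph that \emph{resolution} needs care for non-C-analytic $X$; what you miss is that the same non-coherence already breaks your preliminary splitting.

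The repair is to let the resolution manufacture the lower-dimensional remainder rather than carve it out beforehand. Apply embedded desingularization directly to $X\subset M$: the functorial algorithm chooses its centers by local invariants (maximal order, Hilbert--Samuel, \dots), so it runs on any closed analytic $X$ and glues globally. One obtains a proper $\pi\colon M'\to M$ with smooth strict transform $\widetilde X$, and the centers at every stage lie inside the successive strict transforms of $X$. Hence the non-isomorphism locus $Z$ of $\pi$ is contained in $X$, is analytic (locally the real trace of the complex non-isomorphism locus, via the complexified blow-ups and Remmert's proper mapping theorem), and has dimension $<d$. Then $X=\pi(\widetilde X)\cup Z$, and induction applies to $Z$. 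In the Whitney example the first functorial center is the maximal-order locus of $x^2-zy^2$, namely the whole $z$-axis $\{x=0,y=0\}$; after one blow-up $\widetilde X$ is already smooth, $Z$ is the $z$-axis, and $\pi(\widetilde X)\cup Z=(X\cap\{z\ge0\})\cup\{x=0,y=0\}=X$. The half-umbrella never has to be isolated as an analytic object.
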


We are ready to prove the announced result. 

\begin{thm}\label{sub}
Let $S$ be a subset of a real analytic manifold $N$. The following assertions are equivalent: 
\begin{itemize}
\item[(i)] $S$ is subanalytic.
\item[(ii)] There exists a global basic semianalytic subset $T$ of a real analytic manifold $M$ and an analytic map $f:M\to N$ such that $f_{|{\overline T}}:\overline T \to N$ is proper and $S=f(T)$.
\item[(iii)] There exists a C-semianalytic subset $T$ of a real analytic manifold $M$ and an analytic map $f:M\to N$ such that $f_{|{\overline T}}:\overline T\to N$ is proper and $S=f(T)$.
\end{itemize}
\end{thm}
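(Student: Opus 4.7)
The implication (ii)$\Rightarrow$(iii) is immediate, since any global basic semianalytic set is itself a C-semianalytic set (the one-element locally finite union). For (iii)$\Rightarrow$(i), I would fix $y_0\in N$ and a relatively compact open neighborhood $U$ of $y_0$ in $N$; properness of $f_{|\overline T}$ forces $\overline T\cap f^{-1}(\overline U)$ to be compact in $M$, and because $T$ is a locally finite union of global basic semianalytic sets, only finitely many members of that family meet this compact set, so $T'=T\cap f^{-1}(\overline U)$ is a relatively compact global semianalytic subset of $M$. The graph-like set
\[
A=\{(y,x)\in U\times M\;:\;y=f(x),\ x\in T'\}
\]
is then a relatively compact semianalytic subset of $U\times M$ whose projection to $U$ equals $f(T)\cap U=S\cap U$, realising $S$ locally as the projection of a relatively compact semianalytic set and hence exhibiting $S$ as subanalytic.

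The bulk of the work is (i)$\Rightarrow$(ii). Set $Y=\overline S\setminus S$, so that $\overline S$ and $Y$ are closed subanalytic subsets of $N$ with $\dim Y<\dim\overline S$ and $S=\overline S\setminus Y$. The plan is to combine Proposition \ref{bm1}, the Uniformization Theorem \ref{bm2} and an embedded resolution of $Y$ inside $\overline S$ to produce a real analytic manifold $M$, a proper analytic map $f\colon M\to N$ with $f(M)=\overline S$, and a global analytic function $g\in\an(M)$ such that $f^{-1}(Y)=\{g=0\}$. Granted such $(M,f,g)$, define $T=\{g^2>0\}$, which is a global basic semianalytic subset of $M$; since $f$ is proper, so is $f_{|\overline T}$. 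To verify $f(T)=S$, note that $y\in f(T)$ if and only if some $x\in f^{-1}(y)$ satisfies $g(x)\neq 0$, i.e.\ $f^{-1}(y)\not\subset f^{-1}(Y)$, which is equivalent to $y\in f(M)\setminus Y=\overline S\setminus Y=S$.

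The main obstacle is the global uniformization step, which splits into two tasks: (a) producing a single proper analytic map $f$ from a real analytic manifold onto the closed subanalytic set $\overline S$, and (b) securing a single global analytic equation $g$ for the preimage $f^{-1}(Y)$. Task (a) is the extension of Theorem \ref{bm2} from closed analytic to closed subanalytic sets, obtained by applying Proposition \ref{bm1} locally to represent $\overline S$ as a proper image of a closed analytic subset of $U\times\R^q$ and then gluing through paracompact refinements as in the proof of Lemma \ref{refinement}. Task (b) requires an embedded resolution that turns $f^{-1}(Y)$ into a locally principal C-analytic divisor of $M$, after which one applies the divisor machinery of Chapter 2, Section 3 (Theorem A together with the positive-equation and sum-of-squares techniques used there to realise locally principal coherent ideal sheaves by a single global analytic function) to produce the required global equation $g$. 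This second task is the genuinely delicate point, since the naive candidates $f^{-1}(Y)$ obtained from uniformization are only subanalytic in $M$, and making them C-analytic is precisely what forces the use of embedded resolution.
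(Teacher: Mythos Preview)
Your treatment of (ii)$\Rightarrow$(iii) and (iii)$\Rightarrow$(i) is correct and close to the paper's (the paper also passes through the graph of $f$ for (iii)$\Rightarrow$(i), organised slightly differently).

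The gap is in task~(b) of your (i)$\Rightarrow$(ii). Embedded resolution of singularities applies only to \emph{analytic} (or algebraic) subsets; there is no version that takes a merely subanalytic subset of a manifold and produces, after blow-ups, a C-analytic set. After uniformizing $\overline S$ by a proper analytic $f\colon M\to N$, the preimage $f^{-1}(Y)$ of $Y=\overline S\setminus S$ is only closed subanalytic in $M$, and in general it is not analytic for \emph{any} choice of uniformization. For instance, let $N=\R^2$, let $\overline S$ be the closed unit disc and let $Y\subset\overline S$ be a closed half-disc of radius $\tfrac12$: for the standard uniformization $f\colon S^2\to\R^2$ by orthogonal projection, $f^{-1}(Y)$ is a two-dimensional region in the $2$-sphere, which cannot be the zero set of any analytic function. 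Hironaka's rectilinearization \emph{does} apply to subanalytic sets, but its output is that the set becomes locally a union of coordinate quadrants --- a semianalytic object, not a C-analytic one --- so the divisor machinery of Chapter~2 is still unavailable. Thus there is no global $g\in\an(M)$ with $\{g=0\}=f^{-1}(Y)$, and your candidate $T=\{g^2>0\}$ cannot be produced along this route.

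The paper avoids this by never decomposing $S$ as $\overline S\setminus Y$. Instead it works directly with the \emph{local} presentation of $S$ as a projection of a relatively compact semianalytic set $A$. After shrinking (via Bierstone--Milman) so that $\dim A=\dim(S\cap U)$, the fibre over $x_0$ is finite and $A$ is a finite union of basic pieces $\{f_i=0,\ g_{i,1}>0,\dots,g_{i,s}>0\}$, each strict inequality $g_{i,j}>0$ is traded for an equality $z_j^2-g_{i,j}=0$ in extra real variables. This yields a compact \emph{analytic} set $X\subset W\times\R^{s+2}$ to which Theorem~\ref{bm2} legitimately applies, together with a single analytic function $g'$ whose non-vanishing on $X$ singles out exactly the points mapping into $S\cap U$. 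Composing with the uniformization of $X$ gives a compact manifold $M_{x_0}\subset\R^{2n+1}$, a proper analytic $\pi_{x_0}\colon M_{x_0}\to N$ and $g_{x_0}\in\an(M_{x_0})$ with $\pi_{x_0}(\{g_{x_0}>0\}\cap\pi_{x_0}^{-1}(U))=S\cap U$. The global statement follows by passing to a locally finite refinement $V_j=\{h_j>0\}$ of the covering by the $U$'s, taking $M=\bigsqcup_j M_j$ as a disjoint union of compact manifolds in $\R^{2n+2}$, and setting $T=\{g>0,\ h>0\}$ with $g,h$ defined componentwise; properness of $\pi_{|\overline T}$ is checked directly from compactness of the $M_j$ and local finiteness of the refinement. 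The essential difference is that the paper converts the semianalytic inequalities into analytic equations \emph{before} uniformizing, so that everything fed to Theorem~\ref{bm2} is genuinely analytic.
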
 

\begin{proof} 

(i)$\Rightarrow$ (ii)

This is done in several steps. Let $S\subset N$ be a subanalytic set.

{\sc  Local construction.}
Fix $x_0 \in N$ and denote $n$ the dimension of $N$. In this step we prove:

\begin{quotation}
{\em There exist an open neighbourhood $U\subset N$ of $x_0$, a compact real analytic manifold $M\subset \R^{2n+1}$, an analytic function $g\in \Oo(M)$ and an analytic map $\pi:M\to N$ such that 
$\pi(\{g>0\} \cap \pi^{-1}(U)) =S\cap U$.}
\end{quotation}

As $S$ is subanalytic, there exist an open neighbourhood $U\subset N$ of $x_0$, a real analytic manifold $N'$ and a relatively compact semianalytic subset $A\subset N\times N'$ such that $S\cap U = \pi_1(A)$, where $\pi_1$ is the projection on the first factor. We can assume dim$(S\cap U)=$ dim$(S\cap V)$ for each open neighbourhood $V\subset U$ of $x_0$.

Next we prove that we can assume dim$(A)=$ dim$(S\cap U)$ and that $\pi_1^{-1}(x_0)\cap A$ is a finite set.

By  Theorem \ref{bm2} there exist finitely many smooth semianalytic subsets $B_k$ of $A$ such that

\begin{itemize}
\item $S\cap U = \pi_1(A) = \pi_1(\bigcup_k B_k)$
\item For each $k$ the restriction $\pi_{|B_k}: B_k\to N$ is an immersion (that is has maximum rank at each point of $B_k$).
\end{itemize} 

This means that there exists a relatively compact semianalytic subset $B=\bigcup_k B_k$  of $N\times N'$ of the same dimension as $S\cap U$ such that $\pi_1(B) = S\cap U$ and $\pi_1^{-1}(x_0) \cap B$ is finite. So replacing  $A$ by $B$ we are done.

Since $\pi_1^{-1}(x_0) \cap A$ is finite, we can find an open relatively compact  neighbourhood $W\subset N\times N'$ of this finite set such that $A\cap W = \bigcup_{i=1}^r \{f_i=0, g_{i,1}>0,\ldots , g_{i,s}>0\}$ with $f_i, g_{i,j} \in \Oo(W)$ for all $i,j$. 
Note that we can assume the number of inequalities to be constant by  taking $s$ as the maximum and adding when necessary some trivial inequalities as $1>0$.

Next take an analytic function $h\in \Oo(W)$ such that $\pi_1^{-1}(x_0) \cap A \subset \{h >0\}$ and $\{h\geq 0\}$ is compact. Up to shrinking $U$ we can assume $S\cap U = \pi_1(A\cap  \{h\geq 0\} \cap \pi_1^{-1}(U))$.

Now we transform inequalities into equalities. For each $i=1,\ldots ,r$ define

$$X_i = \{(x,y,z)\in N\times N'\times \R^{s+2}: (x,y)\in W,  f_i(x,y)  =0,$$
$$ z_1^2- g_{i,1}(x,y)=0,\ldots ,z_{s}^2 -g_{i,s}=0, z_{s+1}^2-h(x,y) =0, z_{s + 2} =i\}.$$

Then $X= \bigcup_{i=1}^r X_i$ is a compact analytic subset of $W\times \R^{s+2}$ which has the same dimension as $A$, so dim$(X) \leq $ dim$(N)$. Let $\pi_2:N\times N'\times \R^{s+2} \to N\times N'$ be the projection on the first two factors. 

Put $g_i =\prod_{j=1}^s g_{i,j}$ and $g'= h \prod_{i=1}^r (g_i^2 +(z_{s+2}-i)^2)$. Note that 

$$\pi_2(X\setminus (\{g'=0\}\cap \pi_2^{-1}(\pi_1^{-1}(U))) =A\cap\{h>0\}\cap \pi_1^{-1}(U)$$
$$\pi_1(\pi_2(X\setminus (\{g'=0\}\cap \pi_2^{-1}(\pi_1^{-1}(U))))) =S\cap U$$

By Theorem \ref{bm2} there are a compact real analytic manifold $M$ (of the same dimension as $X$)  and a proper real analytic map $p:M\to W\times \R^{s+2}$ such that $p(M) =X$.
Using Grauert's embedding theorem we can embed $M$ in $\R^{2n+1}$ as a closed submanifold, where $n=$ dim$(N) \geq$ dim$(M)$.

Write $g=(g')^2\circ p$ and $\pi = \pi_1\circ\pi_2 \circ p:M\to N$. Then $\pi(\{g>0\}\cap \pi^{-1}(U))=S\cap U$ as required.

{\sc Global construction.} By Lemma \ref{refinement} there exists a countable locally finite open refinement $\{V_j\}_{j\geq 0}$ of $\{U_x\}_{x\in N}$ such that each $V_j= \{h_j>0\}$ is an open global semianalytic subset of $N$ and $h_j \in \Oo(N)$ for all $j$.
We have seen in Step 1 that for each $j$ there exist a compact real analytic submanifold $M_j \subset \R^{2n+1} \times \{j\} \subset \R^{2n+2}$, an analytic function $g_j\in \Oo(M_j)$ and an analytic map $\pi_j:M_j \to N$ such that

$$S\cap V_j = \pi_j(\{g_j >0\}\cap \pi^{-1}(V_j)) = \pi_j(\{g_j >0, h_j\circ \pi_j >0\}).$$

Consider the real analytic manifold $M= \bigcup_{j\geq 0}M_j \subset \R^{2n+2}$, whose connected components are compact. Let $g,h\in \Oo(M)$ be given by $g_{|M_j}= g_j, h_{|M_j} = h_j\circ \pi_j$. Consider the analytic map $\pi:M\to N$ defined by $\pi_{|M_j} = \pi_j$ and define $T= \{g>0, h>0\}$ which is a global basic semianalytic subset of $M$. We get

$$\pi(T) = \pi(\{g>0, h>0\}) = \bigcup_{j\geq 0}\pi_j(\{g_j>0, (h_j\circ \pi_j)>0\}) = \bigcup_j S\cap V_j =S.$$

It only remains to prove that $\pi_{|\overline T}: \overline T \to N$ is proper.
 
 Let $K_0$ be a compact subset of $N$ and denote $K= K_0\cap \overline S$. As $S= \bigcup_j S\cap V_j$ and the family $\{V_j\}$ is locally finite, we get $\overline S = \bigcup_j \overline {S\cap V_j}$ and the family $\{\overline {S\cap V_j}\}$ is locally finite. As $K$ is compact it does not intersect $\overline{S\cap V_j}$ for $j\geq l$. As the family $\{M_j\}$ is locally finite and $M_j\cap M_k =\varnothing$ for $j\neq k$ we get $\overline T \cap M_j = \overline{T\cap M_j}$. In addition, $\pi(T\cap M_j) = S\cap V_j$. 

We claim $\pi^{-1}(K)\cap \overline T\cap M_j =\varnothing$ for $j\geq l$.
Suppose by contradiction that there exists $x\in \pi^{-1}(K)\cap \overline T \cap M_j$ for some $j\geq l$. Thus, as $\pi$ is continuous, $\pi(x)\in K\cap \pi(\overline T\cap M_j) = K\cap \pi(\overline {T\cap M_j}) \subset K\cap \overline{\pi(T\cap M_j)} = K\cap (S\cap V_j) = \varnothing$ because $j\geq l$, which is a contradiction.

As $\displaystyle \pi^{-1}(K)\subset \bigcup_{j=1}^{l-1}\overline T\cap M_j$ and each $M_j$ is compact, we conclude that $\pi^{-1}(K) $ is compact as required.

(ii)$\Rightarrow$ (iii)

This implication is clear.  

(iii) $\Rightarrow$ (i)

Let $\Gamma_f$ be the graph of $f$ and let $\pi_2: M\times N \to N$ be the second projection. Put $T' = \Gamma_f \cap (T\times N),\  C=\Gamma_f \cap (\overline T \times N)$. Both are semianalytic subsets of $M\times N$. 

First of all we want to prove $C= \overline T'$. $C$  is closed in $\Gamma_f$ hence it contains the closure of $T'$. Let us prove the converse. Pick $(x,f(x)) \in C$ and let $U\times V$ be a neighbourhood of $(x,f(x))$ in $M\times N$. As $f$ is continuous we can assume $f(U) \subset V$. As $x\in \overline T$ there exists $x'\in T\cap U$, hence $(x',f(x')) \in U\times V\cap (\Gamma_f\cap (T\times N))$. So, $(x,f(x)) \in \overline T'$.

Next we prove that ${\pi_2}_{|C}: C\to N$ is proper.

Indeed $\Gamma_f$ is a real analytic submanifold of $M\times N$ and it is analytically isomorphic to $M$ via the first projection $\pi_1: \Gamma_f\to M$. Let $K$ be a compact subset of $N$. We get:
$$f^{-1}(K)\cap \overline T = \pi_1(\Gamma_f\cap (\overline T \times N))\cap(M\times K)=$$     $$= \pi_1(C\cap \pi_2^{-1}(K)) = \pi_1(\pi_{2|C}(K)).$$

As $f_{|\overline T}:\overline T \to N$ is proper $f^{-1}(K)\cap \overline T$ is compact. As $\pi_{1|{\Gamma_f}}$ is an analytic diffeomorphism ${\pi_{2|C}}^{-1}(K)$ is also compact, hence $\pi_{2|C}$ is proper.

Take $y\in N$ and let $U$ be an open semianalytic neighbourhood of $y$ in $N$ such that its closure is compact and put $K=\overline U$. Then ${\pi_{2|C}}^{-1}(K)$ is compact because the map is proper. As $f(T) = S$, it holds $\pi_2(T') = S$, so $\pi_2(T'\cap \pi_2^{-1}(U)) =S\cap U$. It only remains to prove that $A = T'\cap {\pi_2}^{-1}(U)$ is a relatively compact semianalytic set of $M\times N$. Since $T'$ and $U$ are semianalytic, $A$ is semianalytic. To prove that it is relatively compact it is enough to prove that it is a subset of a compact set. Indeed

$$A = T'\cap {\pi_2}^{-1}(U)\subset C\cap {\pi_2}^{-1}(U) = {\pi_{2|C}}^{-1}(U) \subset {\pi_{2|C}}^{-1}(K)$$

which is compact because $\pi_{2|C}$ is proper.
\end{proof}

As a consequence we recover the well known result  that subanalytic subsets of $\R$ are in fact semianalytic.

\begin{cor}\label{subR}
Subanalytic sets in $\R$ are semianalytic i.e. locally finite unions of points and segments.
\end{cor}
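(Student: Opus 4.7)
The plan is to reduce the corollary to the Direct Image Theorem \ref{directimage} combined with the structural description of C-semianalytic sets in dimension one. By Theorem \ref{sub}(ii), any subanalytic $S\subset\R$ can be written as $S=f(T)$ where $T$ is a global basic semianalytic subset of a real analytic manifold $M$ and $f:M\to\R$ is analytic with $f_{|\overline{T}}:\overline{T}\to\R$ proper. Since semianalyticity is a local property, it suffices to show that for every $y_0\in\R$ there is an open interval $I\ni y_0$ such that $S\cap I$ is semianalytic in $I$.

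First I would localize. Pick a compact interval $J$ with $y_0\in\mathrm{Int}(J)$. Properness gives that $K=f^{-1}(J)\cap\overline{T}$ is compact, and $T\cap f^{-1}(J)$ is a relatively compact global semianalytic subset of $M$, namely $T$ intersected with the global semianalytic set $f^{-1}(J)$. Moreover $S\cap\mathrm{Int}(J)=f(T\cap f^{-1}(\mathrm{Int}(J)))$, so we are dealing with the image of a bounded global semianalytic set under a proper analytic map whose target is the $C$-analytic curve $\R$.

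Next I would pass to the complex side in order to invoke Theorem \ref{directimage}. Embed a relatively compact neighborhood $W\subset M$ of $K$ as a closed real analytic submanifold of some $\R^q$ and let $\widetilde{W}\subset\C^q$ be an invariant Stein neighborhood to which $f$ extends to an invariant holomorphic function $F:\widetilde{W}\to\C$; restricting to a smaller invariant Stein open set on which $F$ takes values in an invariant Stein neighborhood of $\mathrm{Int}(J)\subset\R$ in $\C$ and whose real part contains $f^{-1}(\mathrm{Int}(J))\cap\overline{T}$, we can arrange $F$ to be proper over that target because $f_{|\overline{T}}$ is already proper over $J$. The set $T\cap f^{-1}(\mathrm{Int}(J))$ is $\mathcal{A}$-definable in the sense required by Theorem \ref{directimage} (the inequalities defining $T$ come from real analytic, hence invariant holomorphic, functions, and $f^{-1}(\mathrm{Int}(J))$ is a global basic semianalytic set). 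Applying Theorem \ref{directimage}(i) yields that $S\cap\mathrm{Int}(J)$ is a C-semianalytic subset of $\R$.

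Finally I would invoke the structure of C-semianalytic subsets of $\R$. A C-semianalytic subset of $\R$ is, locally, a finite union of global basic semianalytic sets $\{h=0,\,g_1>0,\ldots,g_s>0\}$ with $h,g_i\in\Oo(\R)$; since a nonzero analytic function on $\R$ has only isolated zeros, each such set is either a discrete set of points (if $h\not\equiv0$) or a finite union of open intervals (if $h\equiv0$). Hence $S\cap\mathrm{Int}(J)$ is a locally finite union of points and intervals, and as $y_0$ was arbitrary, so is $S$. The main obstacle is the complexification step: constructing an invariant proper holomorphic extension of $f$ to Stein spaces so that Theorem \ref{directimage} genuinely applies. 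This is where care is needed, although the compactness of $K$ and the fact that the target is one-dimensional make the construction tractable; one could alternatively circumvent this by applying Theorem \ref{TS'} to a graph $\Gamma_f\cap(T\times\mathrm{Int}(J))$ after embedding $M$ in $\R^q$ and projecting onto the $\R$-coordinate, treating $M$ as the ambient Stein part and $\R$ as the $\R^n$-factor.
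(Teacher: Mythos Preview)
Your route through Theorem \ref{directimage} is vastly heavier than what the result needs, and the paper's proof avoids complexification entirely. The paper simply observes that (locally near any $y_0\in\R$) the set $S$ is $\pi_1(T)$ for a relatively compact semianalytic $T\subset\R\times M$; since $\overline{T}$ is compact, $T$ has only finitely many connected components $T_i$, and each $\pi_1(T_i)$ is a connected subset of $\R$, hence a point or an interval. That is the whole argument: the special feature of the target $\R$ is not that C-semianalytic sets there are easy to describe, but that \emph{connected} subsets of $\R$ are already intervals, so no algebraic or analytic machinery is required after localization.

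Beyond being over-engineered, your approach has a real gap at exactly the point you flag. Theorem \ref{directimage} demands a proper invariant holomorphic map $F:X\to Y$ between Stein spaces, with properness on all of $X$, not on a closed subset. You only know $f_{|\overline{T}}$ is proper; a holomorphic extension $F$ of $f$ to a Stein neighborhood of $M$ has no reason to be proper, and shrinking the domain to an invariant Stein open set around $K$ while simultaneously forcing properness of $F$ onto a Stein neighborhood of $J$ in $\C$ is not something you can ``arrange'' without substantial additional work (essentially reproving a chunk of the proper-map machinery). Your fallback via Theorem \ref{TS'} does not work as stated either: that theorem projects $X^\sigma\times\R^n\to X^\sigma$ onto the \emph{Stein} factor, whereas you want to project onto the $\R$-factor, so the graph trick with $M$ as the Stein part and $\R$ as the Euclidean part goes the wrong way.
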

\begin{proof} Let $S \subset \R$ be a subanalytic set. Then there exist a relatively compact C-semianalytic set $T\subset \R\times M$ such that $S= \pi_1(T)$. Here $M$ is a real analytic manifold and $\pi_1$ is the projection on the first factor and it is proper on $\overline T$. The family of its connected components $\{T_i\}$ is locally finite (even finite, since $\overline T$ is compact) and $\pi_1(T_i)$ is a connected subset of $\R$ for each $i$. Hence $S=\bigcup_i \pi_1(T_i)$ is a (finite)  union of points and segments, i.e. it is semialgebraic, hence semianalytic. 
\end{proof}
\smallskip

\begin{defn}\label{c-property}
Let $P$ be a property concerning either C-semianalytic or C-ana\-lytic sets. We say that $P$ is a {\em C-property} if the set of points of an either C-semianalytic or C-analytic set $S$ satisfying $P$ is a C-semianalytic set. 
\end{defn}

For instance  the set of points where the dimension of the C-semianalytic set $S$ is $k$ is a C-semianalytic set, that is, {\em ``to be a point of local dimension $k$''} is a C-property.

In the next sections we prove that in a real analytic manifold 
 the set of local extrema of a real analytic function and  the set of points where a  C-analytic set is not coherent are C-semianalytic, as a consequence 
{\em ``to be a local maximum or  a local minimum''} and  {\em ``to be a point of non-coherence or to be a point of coherence''} are C-properties.

\smallskip

\subsection{Local extrema of a real analytic function.}
In this section we want to prove that the set of local maxima (resp. the set of local minima) of a non costant analytic function is a C-semianalytic set. 
\smallskip

 Let $\Oo(M)$ be the ring of analytic functions on a real analytic manifold $M$.

A point $x\in M$ is {\em critical} for $f$ if the gradient of $f$ vanishes at $x$. The set of critical points $C(f)$ is locally given by the vanishing of the partial derivatives of $f$ with respect to a local system of coordinates on $M$. Thus $C(f)$ is a closed analytic set in $M$ which contains the local extrema of $f$.

\begin{remarks}

\begin{itemize}
\item We can assume $M$ to be imbedded as a closed submanifold in some $\R^n$. Then $M$ has an analytic tubular neighbourood $(\Omega,p)$ so that for each $f\in \Oo(M), f\circ p$ is analytic on $\Omega$. Hence we are reduced to consider analytic functions on open sets of $\R^n$.
\item If $f$ is analytic on $\Omega$, $C(f)$ is a closed C-analytic set in $\Omega$.
\item If $M$ is connected and $f\in \Oo(M)$ is not constant then the set Max$(f)$ of local maxima and the set Min$(f)$ of local minima are disjoint sets. Since Min$(f) =$ Max$(-f)$ it is enough to study the set of local maxima.
\end{itemize}
\end{remarks}
For $\lambda \in \R$ consider the set of local maxima of $f$ having $\lambda$ as value, which we denote Max$_\lambda(f)$. It is easy to describe this set.

$${\rm Max}_\lambda(f) = \{x\in M: f(x)=\lambda\}\setminus \overline {\{x\in M: f(x)>\lambda\}}$$

Hence Max$_\lambda(f)$ is a C-semianalytic set and Max$(f)=\bigcup_{\lambda \in \R}$Max$_\lambda(f)$. So it is enough to prove that the family $\{$Max$_\lambda(f)\}_{\lambda\in \R}$ is locally finite.

\begin{thm}\label{sub1}
The family $\{{\rm Max}_\lambda(f)\}_{\lambda\in \R}$ is locally finite.
\end{thm}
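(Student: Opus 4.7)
The plan is to reduce the statement to the following key claim: for every $x_0\in M$ there is an open neighborhood $V$ of $x_0$ such that $f(C(f)\cap V)$ is a finite subset of $\R$. Once this claim is established, local finiteness follows immediately, because any point $y\in V\cap\mathrm{Max}_\lambda(f)$ is a critical point of $f$ (here we use that $M$ is a smooth manifold, so local extrema are interior critical points) and therefore $\lambda=f(y)\in f(C(f)\cap V)$. Thus only finitely many values $\lambda$ can yield a non-empty intersection $V\cap\mathrm{Max}_\lambda(f)$.

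To prove the claim I would work locally after identifying a neighborhood of $x_0$ in $M$ with an open set in $\R^n$ via a chart, and then complexify. Pick an invariant open neighborhood $\Omega\subset\C^n$ of $x_0$ on which $f$ extends to a holomorphic function $\widetilde f$, and consider the complex critical set $\widetilde C=\{\partial\widetilde f/\partial z_1=0,\ldots,\partial\widetilde f/\partial z_n=0\}$, which is a complex analytic subset of $\Omega$ with $\widetilde C\cap\R^n=C(f)\cap\R^n$. The germ $\widetilde C_{x_0}$ has, by the local theory of complex analytic germs recalled in Chapter~1, only finitely many irreducible components $\widetilde C_{1,x_0},\ldots,\widetilde C_{s,x_0}$, so after shrinking $\Omega$ we may assume $\widetilde C=\bigcup_{i=1}^{s}\widetilde C_i$ with each $\widetilde C_i$ an irreducible complex analytic subset of $\Omega$.

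The heart of the argument is then to show that $\widetilde f$ is constant on each $\widetilde C_i$. For this I would use that the regular locus $\mathrm{Reg}(\widetilde C_i)$ of an irreducible complex analytic set is connected and dense in $\widetilde C_i$. At every point $z\in\mathrm{Reg}(\widetilde C_i)\subset\widetilde C$, the differential $d\widetilde f_z$ vanishes because all partial derivatives do; a fortiori its restriction $d(\widetilde f|_{\widetilde C_i})_z$ to the tangent space $T_z\widetilde C_i$ vanishes. Since $\mathrm{Reg}(\widetilde C_i)$ is a connected complex manifold on which $\widetilde f|_{\widetilde C_i}$ has identically zero differential, $\widetilde f$ is constant on $\mathrm{Reg}(\widetilde C_i)$, and by continuity and density, constant on $\widetilde C_i$, say equal to $\tilde\lambda_i\in\C$. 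Consequently $\widetilde f(\widetilde C)=\{\tilde\lambda_1,\ldots,\tilde\lambda_s\}$, and restricting to $\R^n$ gives $f(C(f)\cap V)\subset\{\tilde\lambda_1,\ldots,\tilde\lambda_s\}\cap\R$, a finite set, where $V=\Omega\cap\R^n$.

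The main obstacle, and the only point requiring care, is the passage from ``all partials of $\widetilde f$ vanish on $\widetilde C_i$'' to ``$\widetilde f$ is constant on $\widetilde C_i$'', i.e.\ ensuring that one really gets a connected dense smooth piece to spread constancy over; this is exactly what forces me to complexify and invoke the local structure theorem for irreducible complex analytic germs rather than try to argue directly on the real side, where an irreducible C-analytic component of $C(f)$ need not have connected regular locus. Everything else is bookkeeping: the identity principle prevents $f$ from being locally constant on an open set of $M$ (which is not needed for the proof but confirms that the $\mathrm{Max}_\lambda(f)$ are genuinely distinct), and the countability of the family $\{\mathrm{Max}_\lambda(f)\}_{\lambda\in\R}$ together with the finiteness on every relatively compact open set gives the locally finite character as stated.
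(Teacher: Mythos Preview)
Your proof is correct and takes a genuinely different route from the paper's. The paper argues globally on a compact piece $K_n$ of an exhaustion: it observes that $\mathrm{Max}(f)\cap K_n$ is a relatively compact semianalytic set, so its image under $f$ is subanalytic in $\R$; by Corollary~\ref{subR} that image is a finite union of points and intervals, and a separate Lemma~\ref{segmento} (using triangulation of the semianalytic set $f^{-1}(J_l)\cap K_n$) rules out intervals. Your argument instead proves the stronger local statement that the \emph{critical} values of $f$ are locally finite, by complexifying and exploiting the fact that on each local irreducible component of the complex critical locus the holomorphic differential $d\widetilde f$ vanishes identically, so $\widetilde f$ is constant there by connectedness of the regular part. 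This bypasses the subanalytic machinery and triangulation entirely and is the classical, more elementary route; the paper's approach, by contrast, stays within the semianalytic/subanalytic toolkit developed in the chapter and in principle adapts to settings where one works with a class of functions lacking a good complexification but still having subanalytic graphs.
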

\begin{proof}
Let $\{K_n\}$ be an exhaustion of $M$ by compact sets such that $K_{n-1}$ is contained in the interior part of $K_{n}$. It is enough to prove that the values of local maxima of $f$ in $K_n$ are finitely many.

Consider the graph $\Gamma$ of the restriction of $f$ to $K_n$. If $I_n$ is a closed interval such that $f(K_n)\subset I_n$, then $\Gamma \subset K_n\times I_n$ is a compact set. Since $\Gamma$ is homeomorphic to $K_n$ by the first projection, it contains a copy  of the set Max$(f) \cap K_n$. Consider a connected component $C$ of Max$(f) \cap K_n$. Its image $f(C)$ is a connected subset of $I_n$ hence it is a point or an interval. Now Max$(f) \cap K_n$ is relatively compact and it is semianalytic because it can be described as the set of points $x\in K_n$ such that there exists $\varepsilon >0$ verifying $f(x)\geq f(y)$ for all $y\in K_n \cap \{||x-y|| <\varepsilon\}$. 
Hence its image is subanalytic in $I_n\subset \R$.  But subanalytic sets in $\R$ are semianalytic (Corollary \ref{subR}), so $f(\mbox{\rm Max}(f)\cap K_n) \subset I_n$ is a finite union of points and segments.

We have to prove that there are only points and this is done in the next lemma.
\end{proof}

\begin{lem}\label{segmento}
$f(\mbox {\rm Max}(f)\cap K_n)$ is a finite set of points.
\end{lem}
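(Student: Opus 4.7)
The plan is to argue by contradiction: suppose $f(\mathrm{Max}(f)\cap K_n)$ contains a non-degenerate segment $[a,b]$ with $a<b$, and set $A=\mathrm{Max}(f)\cap K_n$, a relatively compact subanalytic subset of $M$. The first and decisive step is to produce an analytic arc $\gamma:(0,\varepsilon)\to A$ along which $f$ is non-constant. Since $A$ is subanalytic and $f$ is analytic, a locally finite subanalytic stratification of $M$ compatible with $A$ and with the level sets of $f$ yields at least one stratum $\sigma\subset A$ with $\dim f(\sigma)\geq 1$; otherwise $f(A)$ would be a countable union of points and could not contain $[a,b]$. On such a stratum, $f|_\sigma$ is analytic and non-constant, so any analytic arc inside $\sigma$ provides the desired $\gamma$. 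Equivalently, one may invoke the curve selection lemma for subanalytic sets to extract $\gamma$ directly.

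Once $\gamma$ is in hand, I would observe that every $t\in(0,\varepsilon)$ is a local maximum of the continuous function $f\circ\gamma$: since $\gamma(t)\in A\subset\mathrm{Max}(f)$, there is a neighbourhood $U$ of $\gamma(t)$ in $M$ with $f\leq f(\gamma(t))$ on $U$, and by continuity of $\gamma$ some neighbourhood $V$ of $t$ satisfies $\gamma(V)\subset U$, hence $f\circ\gamma\leq f\circ\gamma(t)$ on $V$. The proof then finishes by the classical one-dimensional fact that a continuous function $g$ on an interval, every point of which is a local maximum, must be constant: given $s<t$ with $g(s)<g(t)$ and $c$ with $g(s)<c<g(t)$, the supremum $t^{\ast}$ of $\{r\in[s,t]:g(r)=c\}$ is strictly less than $t$ and is itself a local maximum of $g$, so $g\leq c$ just to the right of $t^{\ast}$; combined with $g(t)>c$ and the intermediate value theorem, this produces another point past $t^{\ast}$ where $g=c$, contradicting the definition of $t^{\ast}$. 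Applying this to $f\circ\gamma$ forces it to be constant, contradicting the choice of $\gamma$ and thus ruling out the existence of the segment $[a,b]$.

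The main obstacle is producing the analytic arc $\gamma$ inside the subanalytic (but not necessarily analytic) set $A$: this is the only place where genuine subanalytic machinery is needed, and it hinges on knowing that $A$ admits a stratification or triangulation compatible with $f$, or equivalently on a curve selection lemma for subanalytic sets. Everything else—the transfer of the local-maximum property from $M$ to the parameter interval via continuity of $\gamma$, and the elementary IVT argument excluding ``every point is a local max''—is entirely routine.
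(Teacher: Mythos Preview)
Your proof is correct and follows the same overall strategy as the paper: both reduce to a one-dimensional situation by exploiting the subanalytic structure of $A=\mathrm{Max}(f)\cap K_n$ to find a piece on which $f$ is non-constant, then derive a contradiction with the local-maximum property. The paper triangulates (a piece of) $A$, picks a simplex $\sigma$ with $\dim f(\sigma)=1$, finds a segment $S\subset\sigma$ meeting all fibres of $f|_\sigma$, and builds a continuous section $\alpha:f(\sigma)\to S$ of $f$; the contradiction is then immediate, since for $t_1>t_0$ close to $t_0$ the point $\alpha(t_1)$ lies in the neighbourhood where $f\le t_0$ yet has $f$-value $t_1$. You instead pull back the local-maximum property along an analytic arc $\gamma$ and invoke the elementary fact that a continuous function on an interval all of whose points are local maxima must be constant. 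Your finishing move is slightly cleaner in that it avoids constructing the section $\alpha$ (which implicitly requires restricting $S$ so that $f|_S$ is monotone). Two small remarks: your appeal to the curve selection lemma as an alternative is a bit loose---curve selection alone does not obviously produce an arc on which $f$ is non-constant, whereas your stratification argument (or, as in the paper, triangulation) does; and in your IVT argument you tacitly assume $g(s)<g(t)$, but the case $g(s)>g(t)$ follows by the symmetric argument using $\inf$ instead of $\sup$.
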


\begin{proof} Let $J\subset I_n$ be an interval in $f(\mbox{\rm Max}(f) \cap K_n)$. Take off all points  $t \in J$ such that $f^{-1}(t)\cap K_n$ is discrete. We get some smaller intervals. If $J_l$ is one of them,  $f^{-1}(J_l)\cap K_n$ is semianalytic in Max$(f)\cap K_n$ and admits a triangulation. Let $\sigma$ be a simplex in $f^{-1}(J_l)\cap K_n$ whose image has dimension $1$. Then there is a segment $S$ in $\sigma$  which meets all fibers of the restriction  of $f$ to $\sigma$. So,  we can define a continuous map $\alpha:f(\sigma)\to S$ such that $f(\alpha(t)) =t$. Now take $t_0 \in f(\sigma)$, hence $x_0=\alpha(t_0)$ is a local maximum of the restriction of $f$ to $K_n$ and so, it has an open neighbourhood $U$ such that all $y\in K_n\cap U$ verify $f(y)\leq t_0$. But there is in $\sigma$ a point $y\in U\cap \alpha(J_l)$ such that $f(y)=t_1 >t_0$ and this is a contradiction. So, $f(\mbox {\rm Max}(f)\cap K_n)$ is a finite set of points.
\end{proof}

\subsection{ Points where a C-analytic set is not coherent.}

What we want to prove is the following

\begin{thm}\label{ncc}
The set $N(X)$ of points of non-coherence of a $C$-analytic set $X\subset M$ is a $C$-semianalytic set of dimension $\leq\dim(X)-2$.
\end{thm}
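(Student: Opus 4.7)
The plan is to realize $N(X)$ as the image under a proper invariant holomorphic map of a suitable $\tilde\sigma$-invariant C-analytic subset of a normalization of a complexification of $X$, and then to invoke the Direct Image Theorem \ref{directimage}. By Grauert's embedding theorem I reduce to the case $M\subset\R^n$ closed, take a $\sigma$-invariant Stein neighborhood $\tilde M$ of $\R^n$ in $\C^n$ with $\sigma$ complex conjugation, and fix a complexification $\tilde X\subset\tilde M$ of $X$ using Theorem \ref{TognoliComplessificazione}. By Proposition \ref{Noncoerenza}, $N(X)$ is the closure in $X$ of
\[
T=\{y\in X:\ \tilde X_y\text{ is strictly larger than the complexification of }X_y\},
\]
and $y\in T$ exactly when some irreducible component $Z$ of the germ $\tilde X_y$ either fails to be $\sigma$-invariant, or is $\sigma$-invariant but satisfies $\dim_\R(Z\cap\R^n)<\dim_\C Z$.

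The key step is to pass to the normalization $p:\tilde X'\to\tilde X$ (of the reduced structure of $\tilde X$) endowed with the lifted anti-involution $\tilde\sigma$ of Proposition \ref{lifting}; the map $p$ is proper, surjective and invariant, and its fibers over a point $y$ correspond bijectively to the local irreducible components of $\tilde X_y$ by Proposition \ref{prop2}. Translating the two bad conditions above into $\tilde X'$: a non-$\sigma$-invariant local component corresponds to a preimage $z\in p^{-1}(y)$ with $\tilde\sigma(z)\neq z$, while a $\sigma$-invariant component with deficient real trace corresponds to a fixed point $z\in(\tilde X')^{\tilde\sigma}$ lying in $\Sing(\tilde X')$, because at any smooth fixed point of $\tilde\sigma$ on the complex manifold $\Reg(\tilde X')$ the fixed locus of $\tilde\sigma$ is automatically a real analytic submanifold of matching dimension, producing no coherence failure. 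Accordingly I would set
\[
D=p^{-1}(X)\setminus(\tilde X')^{\tilde\sigma},\qquad F=\Sing(\tilde X')\cap(\tilde X')^{\tilde\sigma},\qquad E=\overline D\cup F\subset\tilde X',
\]
which is an invariant C-analytic set. The claim is that $N(X)=p(E)\cap X$, and the Direct Image Theorem \ref{directimage} then yields that $p(E)\cap X$ is a C-semianalytic subset of $X$, so $N(X)$ is C-semianalytic.

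For the dimension bound $\dim N(X)\le\dim X-2$, the two summands are analysed separately. Normality of $\tilde X'$ forces $\Sing(\tilde X')$ to have complex codimension $\ge 2$, so $p(F)$ has real dimension at most $\dim X-2$. The set $\overline D$ is built from $\tilde\sigma$-conjugate pairs of preimages over $X$, so their image in $X$ is locally contained in an intersection $Z\cap\sigma(Z)$ of two distinct complex branches of $\tilde X$, a proper analytic subset of each branch; combined with the $\sigma$-invariance constraint forcing the image to lie in $\R^n$, this produces an additional drop of real dimension and yields real codimension at least $2$ in $X$.

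The main obstacle is the sharp dimension bound for $p(\overline D)$: the naive codimension count based on intersecting two distinct complex analytic branches of $\tilde X$ gives complex codimension $1$, which only translates into real codimension $1$ in $X$, so the extra codimension must be extracted from the equivariance under $\sigma$ and a careful dimension-theoretic analysis of how pairs of conjugate branches meet $\R^n$, rather than by a direct count.
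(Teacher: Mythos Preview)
Your identification $N(X)=p(E)\cap X$ with $E=\overline D\cup F$ is incorrect, and this is not a technicality but the crux of the matter. Test it on Whitney's umbrella $W=\{x^2-zy^2=0\}\subset\R^3$: the normalization of $\tilde W$ is $\C^2\to\C^3$, $(s,t)\mapsto(st,t,s^2)$, which is smooth, so $F=\varnothing$; the set $D=p^{-1}(W)\setminus\R^2$ equals $\{(i r,0):r\in\R\setminus\{0\}\}$, and $p(\overline D)=\{(0,0,z):z\le0\}$ is the whole handle, a one-dimensional set. But $N(W)=\{0\}$. What you have produced is the set of \emph{tails} (the locus where $\tilde X_y$ is strictly larger than the complexification of $X_y$), not the set of points where these tails attach to the full-dimensional part of $X$. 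The non-coherence locus is the latter. This also explains why your dimension count for $p(\overline D)$ stalls at codimension~$1$: that is genuinely the dimension of the tail, and no equivariance trick will lower it.

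The paper's remedy is twofold. First, in the top dimension one replaces your $E$ by $A_i=\overline{C_i}\cap\overline{Y^{\hat\sigma}_{(d)}\setminus\pi^{-1}(R)}$, i.e.\ one intersects the closure of each tail locus with the closure of the $d$-dimensional part of the fixed set (away from a suitable residual $R$); since $C_i\cap Y^{\hat\sigma}_{(d)}=\varnothing$ one has $A_i\subset\overline{C_i}\setminus C_i$, which drops the dimension by one more and yields $\dim\pi(A_1)\le d-2$ (Lemma~\ref{dimensione}). Second, your single normalization of $\tilde X$ only detects non-coherence coming from $d$-dimensional irreducible components of germs $X_x$; to capture lower-dimensional non-coherence one must iterate over the strata $\Sing_l(X)$ and normalize each $Z_k$ separately (Theorem~\ref{ncdes}). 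Example~(ii) following that theorem exhibits a point $(0,0,-1,0)\in N(X_1)$ arising from a Whitney umbrella inside $\Sing(X_1)$ that is invisible to the top-dimensional analysis.
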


Thus analytic  curves are  coherent and surfaces  fail to  be coherent
only at isolated points. Since coherence  is an open property, the set
$N(X)$ is a closed subset of the singular locus of $X$.

Let us  give some general  ideas about how  the set $N(X)$  arises. By
Proposition \ref{Noncoerenza} of Chapter 1, a real analytic set $X$ is
not  coherent at  the  point  $a\in X$  when  there  exist points  $b$
arbitrarily close  to $a$  such that the  complexification of  the set
germ  $X_b$  is  not  induced  by the  complexification  of  the  germ
$X_a$.  Roughly speaking,  this means  that  a branch  of real  points
becomes a branch of complex  points when crossing a non-coherence point
(as real roots  can disappear when passing through a  double root of a
polynomial). Many times this translates in a drop of dimension and the
points of  non-coherence are those  points of  $X$ where the  drops of
dimension arise.  Classical examples  of this situation  are Whitney's
umbrella   $W_1=\{x^2-zy^2=0\}\subset\R^3$   and   Cartan's   umbrella
$W_2=\{x^3-z(x^2+y^2)=0\}\subset\R^3$.    Both   examples    are   two
dimensional $C$-analytic  sets that have $1$-dimensional  tails and in
both cases  the point of non-coherence  is the origin. However,  it is
also possible that the tail  is hidden inside the 2-dimensional part
of     $X$.      An     example      of     this      situation     is
$W_3=\{z(x+y)(x^2+y^2)-x^4=0\}$. The points of the tail are those in
the line $\ell=\{x=0,y=0\}$.  In this case the  point of non-coherence
of $W_3$ is the origin, but if $b\in\ell$ is close to the origin, then
$\dim(W_{3,b})=\dim(W_{3,0})$.

Tails in the  examples above are obtained locally  as intersections of
complex conjugated  analytic germs.  This first  type of  tails cannot
occur in a normal $C$-analytic  set because such $C$-analytic sets are
locally  irreducible  and  their complexifications  are  also  locally
irreducible. There is a second way  to produce tails. Let $X\subset Y$
be a $C$-analytic set inside its complexification $Y$. It could happen
that there exist points $x\in X$ such  that the germ $X_x$ is a subset
of the  singular locus  of $Y_x$ and  $\dim(X_x)\leq\dim(Y_x)-2$. This
situation is reproduced  in the following example. In  what follows we
will call of {\em   type 1} the tails  of the first type and  of {\em type 2}
all the other ones.

\begin{example}\label{tail}
  Consider the  pencil of conics given  by $x^2+y^2=t$ where $t$  is a
  real parameter.  Then $X=\{(x,y,z,t)\in\R^4:\  x^2+y^2-tz^2=0\}$ can
  be understood as the family of  cones of vertex the origin and basis
  the  pencil  of conics  above.  Consider  the complex  analytic  set
  $Y\subset\C^4$ given by the same equation  as $X$. It holds that $X$
  is  a  $C$-analytic set  in  $\R^4$  and it  is  the  fixed part  of
  $Y$. Write $p=(0,0,0,d)$. If $d\geq  0$ the germ $X_p$ has dimension
  $3$,  while for  $d<0$  the germ  $X_p$  is the  germ  at the  point
  $(0,0,0,d)$  of the  line $\ell=\{x=0,y=0,z=0\}$.  Observe that  the
  germ  $X_p$  is  contained  in  the  singular  locus  of  $Y_p$  and
  $\dim(X_p)=1=\dim(Y_p)-2$.  Thus, we  have found  a one  dimensional
  tail  which does  not  come  from the  intersection  of two  complex
  conjugate branches.

  The  set of  singular  points of  $Y$ is  the  complex analytic  set
  $\Sing(Y)=\{x=0,y=0,z=0\}\cup\{x=0,y=0,t=0\}\subset\C^4$,  which has
  codimension $2$  in $Y$.  As $Y$ is  a complex  irreducible analytic
  hypersurface, we  deduce by Oka's criterion  that $Y$ is a  normal complex
  analytic set. Thus, $X$ is a  normal $C$-analytic set. As $X$ is irreducible but not
  pure dimensional, it is not coherent.
\end{example} 

Let  us see  in  an intuitive  way  how we  can  characterize the  set
$N(X)$.  Assume that  $X$  is an  irreducible  $C$-analytic subset  of
$\R^n$. Let  $\widetilde{X}$ be a  complexification of $X$ that  is an
invariant  complex  analytic  subset  of an  open  Stein  neighborhood
$\Omega\subset\C^n$   of    $\R^n$.   Denote   the    restriction   to
$\widetilde{X}$   of   the   complex  conjugation   on   $\C^n$   with
$\sigma:\widetilde{X}\to\widetilde{X}$.            It            holds
$d=\dim_\R(X)=\dim_\C(\widetilde{X})$   and   $X=\{x\in\widetilde{X}:\
\sigma(x)=x\}$.  Let $\pi:Y\to\widetilde{X}$  be the  normalization of
$\widetilde{X}$. As $\widetilde{X}$  is Stein, also $Y$  is Stein. The
complex conjugation of $\widetilde{X}$  extends to an anti-holomorphic
involution $\widehat{\sigma}$ on $Y$  that makes the following diagram
commutative (compare Chapter 2, Section \ref{norm})
$$
\xymatrix{
&Y^{\widehat{\sigma}}\ar[d]_{\pi|_{Y^{\widehat{\sigma}}}}\ar@{^{(}->}[r]&Y\ar[r]^{\widehat{\sigma}}\ar[d]_{\pi}&Y\ar[d]^{\pi}\\
X\ar@{=}[r]&\widetilde{X}^{\sigma}\ar@{^{(}->}[r]&\widetilde{X}\ar[r]^{\sigma}&\widetilde{X}}
$$

Roughly speaking, the inverse images of tails of $X$ correspond to:
\begin{itemize} 
\item The set $\pi^{-1}(X) \setminus Y^{\widehat{\sigma}}$ (this set can be understood intuitively as the inverse image of those tails of type 1, which disappear when irreducible local components of $X$ are separated after we apply normalization).
\item The own tails of $Y^{\widehat{\sigma}}$ (which provide tails of type 2 in $X$, see Example \ref{tail} for further details).
\end{itemize} 

The set $N_d(X)$ of points of $X$ such that the germ $X_x$ has a non-coherent irreducible component of dimension $d$ is obtained as follows. Define 
\begin{align*}
&C_1=\pi^{-1}(X)\setminus Y^{\widehat{\sigma}}&\text{(preimages of the tails of type 1)}\\ 
&C_2=Y^{\widehat{\sigma}}\setminus\overline{Y^{\widehat{\sigma}}\setminus\Sing(Y^{\widehat{\sigma}})}&\text{(preimages of the tails of type 2)}
\end{align*} 
and denote $A_i=\overline{C_i}\cap\overline{Y^{\widehat{\sigma}}\setminus\Sing(Y^{\widehat{\sigma}})}$ (points where the preimages of tails of type $i$ attach to the $d$-dimensional part of $Y^{\widehat{\sigma}}$). Consequently, $N_d(X)=\pi(A_1)\cup\pi(A_2)$ and we deduce that $N_d(X)$ is a $C$-semianalytic set as a consequence of the Direct Image Theorem \ref{directimage}.
 
The construction of the full set $N(X)$ is much more involved, but it follows from the same kind of ideas. When $X$ is not irreducible the costruction is even more complicated and require a more careful discussion.

\smallskip

Let $M$ be a real analytic manifold and $X\subset M$ be a C-analytic set of dimension $d$. For each $0\leq k\leq d$ let $\mathcal{F}_k$ be the collection of all irreducible C-analytic sets $Z\subset X$ of dimension $k$ that are irreducible components of $\Sing_l(X)$ fore some $l\geq 0$. Here $\Sing_0(X) =X$, $\Sing_1(X)=\Sing(X)$, $\Sing_l(X) = \Sing(\Sing_{l-1}(X)$). 

Define
 
\begin{center}
$Z_k = \bigcup_{Z\in \mathcal{F}_k}Z$ \  and  \ $R_k = \bigcup_{j=k+1}^d Z_{j,(j)}$  
\end{center}

where
$Z_{j,(j)} = \{z\in Z_j:{\rm dim}_\R(Z_j)_z =j\} =\overline{Z_j \setminus {\rm Sing}(Z_j)}.$

Let $\widetilde Z_k$ be a complexification of $Z_k$ and let $(Y_k,\pi_k)$ be the normalization of $\widetilde Z_k$ endowed with the anti-involution $\sigma_k$ induced by the natural conjugation of $\widetilde Z_k$.

Let $Y_k^{\sigma_k} = \{y\in Y_k: \sigma_k(y) =y\}$ be the fixed part of $Y_k$, which is a C-analytic space. Define

$$Y^{\sigma_k}_{k,(k)}= \{y\in Y^{\sigma_k}_k:{\rm dim}_\R(Y^{\sigma_k}_k)_z =k\} =\overline{Y^{\sigma_k}_k \setminus {\rm Sing}(Y^{\sigma_k}_k)},$$ 

$$ C_{k,1} = \pi_k^{-1}(Z_k)\setminus Y^{\sigma_k}_k,\quad \quad   C_{k,2} = Y^{\sigma_k}_k\setminus Y^{\sigma_k}_{k,(k)},$$

$$ A_{k,i} = \overline{C_{k,i}} \cap \overline{Y^{\sigma_k}_{k,(k)}\setminus \pi_k^{-1}(R_k)}.$$

Then we get.

\begin{thm}\label{ncdes} Let $N(X)$ be the set of points of non-coherence of a C-analytic set $X$ of dimension $d$ and put $N_k(Z_k,R_k) = \pi_k(A_{k,1}) \bigcup \pi_k(A_{k,2})$.
Then
\begin{itemize}
\item[(i)] $N_k(Z_k,R_k)$ is a C-semianalytic set of dimension $\leq k-2$.
\item[(ii)] $\bigcup_{k=j}^d N_k(Z_k,R_k)$ is the set of points $x \in X$ such that the germ $X_x$ has a non-coherent irreducible component of dimension $\geq j$.
\item[(iii)] $N(X) = \bigcup_{k=2}^d N_k(Z_k,R_k)$.
\end{itemize}
\end{thm}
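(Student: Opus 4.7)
The plan is to proceed in three stages, corresponding to items (i), (ii), (iii). Throughout, the Direct Image Theorem \ref{directimage} applied to the normalization maps $\pi_k: Y_k \to \widetilde{Z}_k$ (which are invariant proper holomorphic maps between reduced Stein spaces) is the main engine; Proposition \ref{Noncoerenza} supplies the local characterization of non-coherence in terms of complexification of germs; and the description of irreducible components through connected components of the regular locus (Proposition \ref{chiusure}) controls how tails attach.

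For (i), I would first check that each $Z_k$ is a C-analytic subset of $M$ (hence admits a complexification $\widetilde{Z}_k$ and a normalization $(Y_k,\pi_k)$ with a lifted anti-involution $\sigma_k$, by Proposition \ref{lifting}), and that $Y_k^{\sigma_k}$ is a C-analytic subset of $Y_k$ whose $k$-dimensional stratum $Y_{k,(k)}^{\sigma_k}$ is C-semianalytic by the results of the previous section on strata of C-semianalytic sets. From this, $C_{k,1}=\pi_k^{-1}(Z_k)\setminus Y_k^{\sigma_k}$ is the difference of two C-analytic sets in $Y_k$ (hence C-semianalytic), and $C_{k,2}=Y_k^{\sigma_k}\setminus Y_{k,(k)}^{\sigma_k}$ is C-semianalytic. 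Taking closures keeps them C-semianalytic (Proposition \ref{booletopological}), and intersecting with $\overline{Y_{k,(k)}^{\sigma_k}\setminus\pi_k^{-1}(R_k)}$ yields the C-semianalytic $A_{k,i}$. Since $\pi_k$ is invariant, proper, and $A_{k,i}$ is $\mathcal{A}(Y_k^{\sigma_k})$-definable, Theorem \ref{directimage}(i) gives that $\pi_k(A_{k,i})$ is C-semianalytic in $\widetilde{Z}_k\cap\R^n\supset Z_k$, hence in $M$. The dimension bound $\dim N_k(Z_k,R_k)\le k-2$ follows because a tail of type 1 (the image under $\pi_k$ of $C_{k,1}$) comes from pairs of complex-conjugate irreducible components of $\widetilde{Z}_k$ meeting along a set of complex dimension $\le k-1$ whose fixed part has real dimension $\le k-2$; a tail of type 2 sits inside $\Sing(Y_{k,(k)}^{\sigma_k})\subset \Sing(Y_k^{\sigma_k})$, which has dimension $\le k-1$, and its attachment set to the $k$-dimensional stratum drops again by one.

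The main obstacle is (ii). I would prove the two inclusions separately. For the easy direction, given $x\in\pi_k(A_{k,i})$ I argue that an irreducible component of $Z_{k,x}$ (hence of $X_x$) of dimension exactly $k$ fails the criterion of Proposition \ref{Noncoerenza}: in case $i=1$ one finds arbitrarily close to $x$ points $y$ where a complex-conjugate branch collapses to $\R^n$, so $(\widetilde{Z}_k)_y$ is strictly larger than the complexification of $(Z_k)_y$; in case $i=2$, one finds points $y$ close to $x$ lying in $Y_{k,(k)}^{\sigma_k}$ whose germ has dimension $<k$, witnessing that the complexification does not remain the complexification locally. Conversely, suppose the germ $X_x$ has a non-coherent irreducible component $W$ of dimension $j\ge k$. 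Applying Proposition \ref{Noncoerenza} to $W\subset Z_j$ with complexification $\widetilde{Z}_j$, I pick a sequence $y_m\to x$ in $Z_j$ where coherence fails, lift it via the normalization $\pi_j$ to $Y_j$ (using properness and the fact that normalization detects irreducible components fiber by fiber, Proposition \ref{prop2}), and show that the lifts cluster either in $C_{j,1}$ (when the failure comes from merging of conjugate branches) or in $C_{j,2}$ (when it comes from a drop of real dimension), with limit in $\overline{Y_{j,(j)}^{\sigma_j}\setminus\pi_j^{-1}(R_j)}$: the subtraction of $\pi_j^{-1}(R_j)$ removes the contribution from strictly higher-dimensional components already accounted for at levels $>j$. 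Hence $x\in\pi_j(A_{j,1})\cup\pi_j(A_{j,2})$, giving the union from $k$ up. Carrying out this dichotomy cleanly, and in particular checking that the subtraction of $R_k$ correctly avoids double-counting between consecutive strata, is where the bookkeeping gets delicate.

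For (iii), I apply (ii) with $j=2$ and observe that an analytic curve germ is a finite union of irreducible curve germs, each of which is coherent (its complexification is a reduced complex curve germ inducing complexifications at nearby points by Proposition \ref{Noncoerenza}), so the set of non-coherence points of $X$ coincides with the set of points where $X_x$ has a non-coherent irreducible component of dimension $\ge 2$, which by (ii) equals $\bigcup_{k=2}^d N_k(Z_k,R_k)$.
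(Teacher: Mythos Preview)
Your outline for (i) and (iii) is sound and matches the paper's route (Direct Image Theorem, Lemma \ref{dimensione} for the dimension drop, curves always coherent). The real gap is in (ii), in what you call the ``easy direction''. You write that a non-coherent irreducible component of $(Z_k)_x$ of dimension $k$ is ``hence'' an irreducible component of $X_x$. That inference is false in general: $Z_k$ is built from iterated singular loci of $X$, and an irreducible component $T_x$ of $(Z_k)_x$ may sit strictly inside a larger irreducible component $A_x$ of $X_x$ of dimension $j>k$. When that happens you have not produced any non-coherent component of $X_x$ at level $k$; what you must show instead is that $A_x$ itself is non-coherent. This is exactly where the set $R_k$ earns its keep: by Proposition~\ref{max}, the hypothesis $x\in N_k(Z_k,R_k)$ gives $\dim(T_x\setminus (R_k)_x)=k$. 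If $A_x$ were coherent it would be pure $j$-dimensional, hence $T_x\subset A_x\subset (Z_{j,(j)})_x\subset (R_k)_x$, contradicting $\dim(T_x\setminus (R_k)_x)=k$. So the subtraction of $R_k$ is not just bookkeeping to ``avoid double-counting''; it is the mechanism that forces the ambient component of $X_x$ to be non-coherent.

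Your sketch of the converse direction also skips two steps the paper carries out explicitly. First, a non-coherent irreducible component $W$ of $X_x$ of dimension $j$ is not a priori an irreducible component of $(Z_j)_x$; one has to trace $W$ through the chain $\Sing_l(X)$ to find a $Z'\in\mathcal{F}_j$ with $W\subset Z'_x$ and then invoke Remark~\ref{componenti}. Second, one must verify $\dim(W\setminus (R_j)_x)=j$; this uses that every irreducible component of $\overline{(Z_{m,(m)})_x}^{\zar}$ for $m>j$ has dimension $m$, so $W$ cannot be absorbed into $R_j$. Both directions therefore hinge on Proposition~\ref{max} (together with the coherence criterion of Lemma~\ref{coherent} in terms of $\pi_k^{-1}(X)\cap Y_k^{\sigma_k}$ and $\dim Y_{k,y}^{\sigma_k}$), rather than on Proposition~\ref{Noncoerenza} and an ad hoc sequence-clustering argument; without that proposition your plan for (ii) does not close.
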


As a consequence we get the statement of Theorem \ref{ncc}.

Before proving Theorem \ref{ncdes} we give some lemmas on germs. Then  we look at the set $N_d(Z_d,R_d)$, that is points of non-coherence in components of maximal dimension. 

\begin{lem}\label{germs}
Let $x\in X$. Then
\begin{itemize}
\item[(i)]If $Z_x$ is an irreducible component of $X_x$  of dimension $d =\mbox{\rm  dim} (X)$,  its complexification $\widetilde{Z_x}$ is an irreducible component of $\widetilde{X}_x$. If $x=\pi(y)$ and $\pi(Y_y) = Z_x$, then $y\in Y^{\hat{\sigma}}$ and {\rm dim}$_\R( Y^{\hat{\sigma}}_y)=d$ 
\item[(ii)]Let $y\in Y^{\hat{\sigma}}$ be such that $\pi(y)=x$ and $\mbox{\rm dim}_\R(\pi(Y_y) \cap X_x)<d$. Then $\mbox{\rm dim}_\R(Y^{\hat{\sigma}}_y)<d$. In particular, if $\mbox{\rm dim}_\R(X_x) < d$, we get $\mbox{\rm dim}_\R(Y^{\hat{\sigma}}_y) < d$ for all $y\in \pi^{-1}(x) \cap Y^{\hat{\sigma}}$. 
\end{itemize}
\end{lem}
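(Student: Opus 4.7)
For part (i), the plan is to combine the behavior of complexification on irreducible components (from Chapter 1) with the one-to-one correspondence between preimages under normalization and local irreducible components (Proposition \ref{prop2} of Chapter 2). Concretely, if $X_x = X^1_x \cup \cdots \cup X^r_x$ is the irreducible decomposition with $Z_x = X^i_x$, then $\widetilde X_x = \widetilde{X^1}_x \cup \cdots \cup \widetilde{X^r}_x$ is the irreducible decomposition of the complexification, each factor being irreducible. Since $\dim_\C \widetilde{Z_x} = \dim_\R Z_x = d$ is maximal, a containment $\widetilde{Z_x} \subset \widetilde{X^j}_x$ with $j \neq i$ would force equality of complex dimensions and then of germs, contradicting the irredundancy of the decomposition; thus $\widetilde{Z_x}$ is an irreducible component of $\widetilde X_x$. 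Now given $y \in \pi^{-1}(x)$ with $\pi(Y_y) = \widetilde{Z_x}$, since $Z_x$ is real the complex germ $\widetilde{Z_x}$ is $\sigma$-invariant, so $\pi(Y_{\hat\sigma(y)}) = \sigma(\pi(Y_y)) = \widetilde{Z_x}$; the one-to-one correspondence forces $\hat\sigma(y) = y$, i.e. $y \in Y^{\hat\sigma}$.

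To get $\dim_\R (Y^{\hat\sigma}_y) = d$, I would use that $\pi$ restricted to the preimage of $\Reg(\widetilde X)$ is a biholomorphism onto an open dense subset of $\widetilde X$. Since $Z_x$ has real dimension $d$ and is the real part of $\widetilde{Z_x}$ (which is of pure complex dimension $d$ and normal at generic points), it contains a real analytic manifold $N$ of dimension $d$ lying in $\Reg(\widetilde X)$. Over such points $\pi$ has single $\hat\sigma$-invariant preimages (the unique preimage of a $\sigma$-fixed regular point is $\hat\sigma$-fixed), so $\pi^{-1}(N)$ is a real analytic manifold of dimension $d$ contained in $Y^{\hat\sigma}_y$, giving $\dim_\R(Y^{\hat\sigma}_y) \geq d$. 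The reverse inequality is automatic because the fixed set of an antiholomorphic involution on a complex space of complex dimension $d$ has real dimension at most $d$, and $\dim_\C Y_y = \dim_\C \widetilde{Z_x} = d$.

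For part (ii), the key fact is that the normalization $\pi: Y \to \widetilde X$ is finite (proper with finite fibers), hence so is its restriction $\pi|_{Y^{\hat\sigma}} : Y^{\hat\sigma} \to X$ as a real analytic map, and finite maps preserve local dimension. Since $\hat\sigma$ lifts $\sigma$, one has $\pi(Y^{\hat\sigma}_y) \subset \pi(Y_y) \cap X_x$, so
\[
\dim_\R(Y^{\hat\sigma}_y) \;=\; \dim_\R \pi(Y^{\hat\sigma}_y) \;\leq\; \dim_\R(\pi(Y_y) \cap X_x) \;<\; d,
\]
which is the assertion. The final ``in particular'' clause follows by taking, for each $y \in \pi^{-1}(x) \cap Y^{\hat\sigma}$, the trivial bound $\dim_\R(\pi(Y_y) \cap X_x) \leq \dim_\R X_x < d$. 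The main obstacle I anticipate is the dimension equality $\dim_\R(Y^{\hat\sigma}_y) = d$ in (i): one must be careful to exhibit enough $\sigma$-fixed regular points of $\widetilde{Z_x}$ inside $Z_x$ whose preimages sit in $Y^{\hat\sigma}_y$, relying on the fact that the real regular locus of a maximal-dimensional real irreducible germ is dense in $Z_x$ and lies in the locus where $\pi$ is a biholomorphism.
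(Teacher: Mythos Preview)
Your proposal is correct and follows essentially the same approach as the paper. For (i) you argue directly from the $\sigma$-invariance of $\widetilde{Z_x}$ and the one-to-one correspondence of Proposition~\ref{prop2}, whereas the paper phrases the same idea by contradiction (if $\hat\sigma(y)\neq y$ then $\widetilde{Z_x}$ and $\sigma(\widetilde{Z_x})$ are distinct components, forcing $\dim Z_x\le\dim(\widetilde{Z_x}\cap\sigma(\widetilde{Z_x}))<d$); your explicit argument for $\dim_\R(Y^{\hat\sigma}_y)=d$ via regular points is a welcome addition that the paper leaves implicit. For (ii) your chain $\dim_\R(Y^{\hat\sigma}_y)=\dim_\R\pi(Y^{\hat\sigma}_y)\le\dim_\R(\pi(Y_y)\cap X_x)<d$ is exactly the paper's reasoning.
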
 
\begin{proof} (i) It is clear that $\widetilde{Z_x}$ is an irreducible component of $\widetilde{X}_x$ of dimension $d$. It determinates a point $y= (\widetilde{Z_x},x) \in Y$ where $x=\pi(y)$. If $y$ were not in $Y^{\hat{\sigma}}$ then $\hat{\sigma}(y) \neq y$ hence $ (\widetilde{Z_x},x) \neq \hat{\sigma}(\widetilde{Z_x},x)$. But $\sigma(x) =x$ so,
 $Z_x = \widetilde{Z_x}\cap \sigma( \widetilde{Z_x})$ would get dimension less than $d$. Contradiction. 

(ii) We know that  {\rm dim}$_\R(\pi(Y_y) \cap X) <d$.  Hence $\mbox{\rm dim}_{\R}(\pi(Y^{\hat{\sigma}}_y) \cap X) <d$. But $\pi$ is finite, so {\rm dim}$_{\R}(Y^{\hat{\sigma}}_y) <d$.
If {\rm dim}$_\R(X_x)<d$, since $\pi(Y^{\hat{\sigma}}_y) \subset X_x$ for all $y\in \pi^{-1}(x)$, $\mbox{\rm dim}_\R(Y^{\hat{\sigma}}_y) <d$ for all $y\in \pi^{-1}(x)$.
\end{proof}

\begin{lem}\label{coherent}
Let $x\in X$ be a point such that all irreducible components of $X_x$ have dimension $d$. The germ $X_x$ is coherent if and only if there exists an invariant open neighbourhood $V$ of $x$ in $\widetilde X$ such that
\begin{itemize}
\item[(i)] $\pi^{-1}(X\cap V)= Y^{\hat{\sigma}}\cap \pi^{-1}(V)$.
\item[(ii)] $\mbox{\rm dim}_\R(Y^{\hat{\sigma}}_y) =d$ for all $y\in \pi^{-1}(x)$.
\end{itemize}
\end{lem}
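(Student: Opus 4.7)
The plan is to reduce the statement to the local criterion for coherence stated in Proposition \ref{Noncoerenza} of Chapter 1, namely that $X_x$ is coherent if and only if for every $\tilde x$ in some invariant neighbourhood of $x$ in $\widetilde{X}$, the germ $\widetilde{X}_{\tilde x}$ is the complexification of $X_{\tilde x}$. The proof then becomes a careful bookkeeping of the correspondence between the preimages $\pi^{-1}(\tilde x)$ (which are in bijection with the irreducible components of $\widetilde{X}_{\tilde x}$) and the irreducible components of $X_{\tilde x}$, using that $\widehat{\sigma}$ fixes $y\in\pi^{-1}(\tilde x)$ iff the associated irreducible component of $\widetilde{X}_{\tilde x}$ is $\sigma$-invariant, and pairs $y$ with the preimage corresponding to the complex-conjugate component otherwise.

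For the direction $(\Rightarrow)$, I would argue as follows. Assume $X_x$ is coherent. Pick an invariant neighbourhood $V$ of $x$ so small that $\widetilde{X}_{\tilde x}$ is the complexification of $X_{\tilde x}$ for every $\tilde x\in V$. To check (i), take $y\in\pi^{-1}(X\cap V)$ with $\pi(y)=\tilde x$; the irreducible component $C$ of $\widetilde{X}_{\tilde x}$ corresponding to $y$ is the complexification of some irreducible component of $X_{\tilde x}$, hence is $\sigma$-invariant, so $\widehat{\sigma}(y)=y$. The reverse inclusion $Y^{\widehat{\sigma}}\cap\pi^{-1}(V)\subset\pi^{-1}(X\cap V)$ is automatic. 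For (ii), since all irreducible components of $X_x$ have dimension $d$, Lemma \ref{germs}(i) supplies for each such component $Z_x$ a preimage $y=(\widetilde{Z_x},x)\in Y^{\widehat{\sigma}}$ with $\dim_\R Y^{\widehat{\sigma}}_y=d$; coherence forces every component of $\widetilde{X}_x$ to arise in this way, so these exhaust $\pi^{-1}(x)$.

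For $(\Leftarrow)$, suppose (i) and (ii) hold on some invariant $V$. Given $\tilde x\in V$, condition (i) shows that every $y\in\pi^{-1}(\tilde x)$ lies in $Y^{\widehat{\sigma}}$, so every irreducible component of $\widetilde{X}_{\tilde x}$ is $\sigma$-invariant. Using (ii) together with the lower-semicontinuity of the dimension of $Y^{\widehat{\sigma}}_{(\cdot)}$ along the pure $d$-dimensional complex space $Y$, and shrinking $V$ if necessary, I get $\dim_\R Y^{\widehat{\sigma}}_{y'}=d$ for every $y'\in\pi^{-1}(V)\cap Y^{\widehat{\sigma}}$. Thus each irreducible component of $\widetilde{X}_{\tilde x}$ is invariant of complex dimension $d$ and meets $\R^n$ in a real germ of dimension $d$, which forces it to be the complexification of its own real part (compare Lemma \ref{germs}(ii), whose contrapositive is exactly what is needed here). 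Taking the union over components identifies $\widetilde{X}_{\tilde x}$ with the complexification of $X_{\tilde x}$; Proposition \ref{Noncoerenza} then gives coherence of $X_x$.

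The main obstacle is the uniform propagation from $x$ to nearby points of the two conditions. Condition (ii) is assumed only at the distinguished preimages over $x$, and one must use the excellent local behaviour of $Y$ (pure dimensionality, and local constancy of the dimension of the real fixed locus along an irreducible invariant germ) to conclude the $d$-dimensionality of $Y^{\widehat{\sigma}}$ at preimages of all nearby real points. Simultaneously, condition (i), although stated for the whole $V$, has to be arranged so that no non-invariant component of $\widetilde{X}$ sneaks in near $x$; this is precisely where the assumption that every irreducible component of $X_x$ has maximal dimension $d$ is used, ruling out the two tail-type phenomena (intersection of conjugate branches, or drop of the real dimension inside an invariant complex component) that would otherwise destroy the equality $\pi^{-1}(X\cap V)=Y^{\widehat{\sigma}}\cap\pi^{-1}(V)$.
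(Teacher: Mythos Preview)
Your approach mirrors the paper's: reduce both directions to Proposition~\ref{Noncoerenza} via the dictionary between $\pi^{-1}(\tilde x)$ and the irreducible components of $\widetilde X_{\tilde x}$. Your $(\Rightarrow)$ is correct and spelled out more carefully than the paper's terse version. The problem lies in $(\Leftarrow)$.

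The step that fails is the propagation of (ii) from $\pi^{-1}(x)$ to all of $Y^{\widehat\sigma}\cap\pi^{-1}(V)$. You invoke ``lower-semicontinuity of $\dim_\R Y^{\widehat\sigma}$'', but no such principle is available: even with $Y$ normal of pure complex dimension $d$, the real fixed locus can drop in dimension arbitrarily close to a point where it has dimension $d$. Example~\ref{tail} shows this and in fact falsifies the lemma as literally stated: there $\widetilde X=\{x^2+y^2-tz^2=0\}\subset\C^4$ is already normal, so $Y=\widetilde X$, $\pi=\mathrm{id}$ and $Y^{\widehat\sigma}=X$; condition (i) becomes a tautology and (ii) holds at the origin since $\dim_\R X_0=3$, yet $X_0$ is not coherent (at $(0,0,0,t)$ with $t<0$ one has $\dim_\R X_{(0,0,0,t)}=1$). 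The paper's own proof has the same slip: the phrase ``since (i) and (ii) hold true in $z$'' tacitly assumes (ii) at every $z\in V\cap X$, not only at $x$. What is actually needed---and what is both obtained in the $(\Rightarrow)$ direction and used in every application inside Proposition~\ref{max}---is the stronger hypothesis $\dim_\R Y^{\widehat\sigma}_y=d$ for all $y\in\pi^{-1}(X\cap V)$. With that replacement your $(\Leftarrow)$ argument is correct once the propagation step is simply deleted.
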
  
\begin{proof} Assume $X_x$ to be coherent. Then there exist an invariant open neighbourhood $V$ of $x$ in $\widetilde X$ such that the complexification $\widetilde{X_x}$ induces the complexification $\widetilde{X_{x'}}$ for all $x'\in V\cap X$. In particular all irreducible components of $\widetilde{X_x}$ are $\sigma$ invariant and of complex dimension $d$. So for all  $y\in \pi^{-1}(x)$ the germ $Y_y$ is $\hat{\sigma}-$invariant of complex dimension $d$. This implies $y\in Y^{\hat{\sigma}}$ for all $y\in \pi^{-1}(x)$ and so $\pi^{-1}(X\cap V)= Y^{\hat{\sigma}}\cap \pi^{-1}(V)$ and $\mbox{\rm dim}_\R(Y^{\hat{\sigma}}_y) =d$ for all $y\in \pi^{-1}(x)$. 

Conversely assume to have $V$ such that (i) and (ii) hold true. By Lemma \ref{germs}  $\pi^{-1}(x) \subset Y^{\hat{\sigma}}$ and $Y^{\hat{\sigma}}\cap \pi^{-1}(V)$ has pure real dimension $d$. Also we get the number of irreducible components of the germ $X_x$ is the same as the one of $\widetilde{X_x}$. Indeed if this number is smaller, it  means that some irreducible  component $T$ of $\widetilde{X_x}$ is not invariant. So  $T\cap\sigma (T)\subset X$  produces an irreducible component of $X_x$ of dimension less than $d$. This is true not only in $x$ but in all points of $V\cap X$. Contradiction.

By shrinking $V$ we can assume that $\widetilde{X_x}$ is the germ of a  $\sigma$ invariant analytic subset $X'$  of $V$.
We have to show that $X'_z$ is the complexification of $X_z$ for $z$ close to $x$. What we know by the remark above is that the number of irreducible components of $X'_z$ is the same as the number of irreducible components of $X_z$. 
So for $z\in V\cap X$ denote $n_z$ the number of irreducible components of both $X_z$ and  $X'_z$. The complexification of the germ $X_z$ is a complex analytic subset of $X'_z$ and since  (i) and (ii) hold true in $z$ it has only components of complex dimension $d$ and the number of such irreducible components is again $n_z$ by the same argument as for $x$. So it coincides with $X'_z$ and this implies that $X_x$ is coherent.   
\end{proof}

\smallskip

Now we describe the set of points of non-coherence of maximal dimension.

Let $X$ be a C-analytic set in $\R^n$ and let $d$ its dimension.
 Let $R\subset X$ be an ${\mathcal A}(Y^{\hat{\sigma}})$-definable closed C-semianalytic set. Let $Y'$ be the union of those irreducible components of $Y$ that have dimension strictly smaller than $d$. Define

$$Y^{\hat{\sigma}}_{(d)} = \{y\in Y^{\hat{\sigma}}: {\rm dim}_\R(Y^{\hat{\sigma}}_y) = d\} = \overline{Y^{\hat{\sigma}}\setminus ({\rm Sing}(Y^{\hat{\sigma}})\cup Y')}$$

$$C_{1} = \pi^{-1}(X)\setminus Y^{\hat{\sigma}}\quad  C_{2} = Y^{\hat{\sigma}}\setminus Y^{\hat{\sigma}}_{(d)}$$

$$A_i = \overline{C_i}\cap \overline{Y^{\hat{\sigma}}_{(d)}\setminus \pi^{-1}(R)} \subset Y^{\hat{\sigma}} \quad {\rm for} \quad  i=1,2.$$

We get $\pi(A_i)$ is a C-semianalytic set.
Indeed
the C-semianalytic set $Y^{\hat{\sigma}}\setminus ({\rm Sing}(Y^{\hat{\sigma}})\cup Y')$ is ${\mathcal A}(Y^{\hat{\sigma}})$-definable, so by Proposition
  \ref{sigmainvariante}  $Y^{\hat{\sigma}}_{(d)}$ is ${\mathcal A}(Y^{\hat{\sigma}})$-definable. The same happens with $\overline{C_2}$ again by Proposition 
  \ref{sigmainvariante}. As $R$ is ${\mathcal A}(Y^{\hat{\sigma}})$-definable, then $\pi^{-1}(R)\cap Y^{\hat{\sigma}}$ is an  ${\mathcal A}     
  (Y^{\hat{\sigma}})
 $-definable C-semianalytic set and the same happens for $\overline{Y^{\hat{\sigma}}_{(d)}\setminus \pi^{-1}(R)}$. By Theorem \ref{directimage} $\pi(A_i)\subset X$
is a C-semianalytic set for $i=1,2$.

Define 
$N_d(X,R) = \pi(A_1) \cup \pi(A_2) \subset X,$ 
which is a C-semianalytic set.

\begin{lem}\label{dimensione}
\begin{itemize}
\item[(i)] {\rm dim}$_\R(C_1) \leq d-1$ and {\rm dim}$_\R(C_2)\leq d-2$.
\item[(ii)] {\rm dim}$_\R(\pi(A_1))\leq d-2$ and  {\rm dim}$_\R(\pi(A_2))\leq d-3$.
\end{itemize}
\end{lem}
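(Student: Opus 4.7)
The plan proceeds in three stages: first bound $\dim_\R C_1$, then $\dim_\R C_2$, and finally derive the bounds on $\pi(A_i)$ from (i) via the classical frontier property of semianalytic sets, using that $\pi$ has finite fibers.

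For the bound $\dim_\R C_1\leq d-1$: a point $y\in C_1$ has $\hat\sigma(y)\neq y$ but $\pi(y)\in X$, so $\sigma(\pi(y))=\pi(\hat\sigma(y))=\pi(y)$. The irreducible complex germ $\pi(Y_y)$ and its conjugate $\sigma(\pi(Y_y))=\pi(Y_{\hat\sigma(y)})$ are then two distinct irreducible components of $\widetilde X_{\pi(y)}$ (distinct because the normalization $\pi$ separates irreducible branches of $Y$, and $y$, $\hat\sigma(y)$ are different points). Their intersection has complex dimension at most $d-1$, and every real point of $\pi(Y_y)$ lies in $\pi(Y_y)\cap\sigma(\pi(Y_y))$. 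Hence locally $\pi(C_1)$ is contained in the real part of a complex analytic set of complex dimension $\leq d-1$, which has real dimension $\leq d-1$. Local finiteness of the family of irreducible components of $\widetilde X$ yields $\dim_\R\pi(C_1)\leq d-1$, and since $\pi$ has finite fibers $\dim_\R C_1\leq\dim_\R\pi(C_1)\leq d-1$.

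For the bound $\dim_\R C_2\leq d-2$: in the setting of Theorem \ref{ncdes} we may take $Y$ of pure complex dimension $d$ (so $Y'=\varnothing$). If $y\in C_2$ were a regular point of $Y$, then $T_yY$ would be a complex vector space of dimension $d$ and $d\hat\sigma_y$ an anti-linear involution on it; such involutions always have a real fixed subspace of real dimension $d$, so the fixed germ $Y^{\hat\sigma}_y$ would be a real $d$-manifold and $y$ would belong to $Y^{\hat\sigma}_{(d)}$, contradicting $y\in C_2$. Hence $C_2\subset\Sing(Y)^{\hat\sigma}$. Since $Y$ is the normalization of a reduced complex analytic space, $\Sing(Y)$ has complex codimension at least $2$ in $Y$, hence complex dimension $\leq d-2$; its $\hat\sigma$-fixed part therefore has real dimension $\leq d-2$, giving $\dim_\R C_2\leq d-2$.

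For (ii): both $C_1$ and $C_2$ are C-semianalytic subsets of the real analytic space underlying $Y$ (built from the C-analytic sets $\pi^{-1}(X)=\pi^{-1}(\R^n)$, $Y^{\hat\sigma}$, $Y^{\hat\sigma}_{(d)}$ and $Y'$). Since $C_1\subset Y\setminus Y^{\hat\sigma}$ and $A_1\subset\overline{Y^{\hat\sigma}_{(d)}\setminus\pi^{-1}(R)}\subset Y^{\hat\sigma}$, we get $A_1\cap C_1=\varnothing$, and since $A_1\subset\overline{C_1}$ this forces $A_1\subset\overline{C_1}\setminus C_1$. By the same token, $C_2\cap Y^{\hat\sigma}_{(d)}=\varnothing$ and $A_2\subset Y^{\hat\sigma}_{(d)}$, whence $A_2\subset\overline{C_2}\setminus C_2$. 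The classical frontier inequality for (C-)semianalytic sets gives $\dim_\R(\overline{C_i}\setminus C_i)\leq\dim_\R C_i-1$; together with (i) this yields $\dim_\R A_1\leq d-2$ and $\dim_\R A_2\leq d-3$, and the finite-fiber property of $\pi$ then gives $\dim_\R\pi(A_i)\leq\dim_\R A_i$.

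The main obstacle is the bound on $\dim_\R C_2$: it depends crucially on the pure-dimensional setup (so that low-dimensional components of $Y$ do not contaminate $C_2$) and on the normality of $Y$ guaranteeing $\operatorname{codim}_\C\Sing(Y)\geq 2$. Once this is in place, the rest is a straightforward combination of the finite-fiber property of $\pi$ with the classical semianalytic frontier lemma.
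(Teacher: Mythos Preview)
Your argument is correct and follows essentially the same route as the paper. For $C_1$ both you and the paper use that $\pi(Y_y)$ and $\sigma(\pi(Y_y))=\pi(Y_{\hat\sigma(y)})$ are distinct irreducible components of $\widetilde X_{\pi(y)}$ so that their intersection (which contains the real points) has complex dimension $\le d-1$; for $C_2$ the paper writes $C_2\subset\Sing(Y^{\hat\sigma})\subset\Sing(Y)\cap Y^{\hat\sigma}$ while you argue the inclusion $C_2\subset\Sing(Y)$ directly via the tangent-space involution, which is precisely the content of the inclusion $\Sing(Y^{\hat\sigma})\subset\Sing(Y)$; and for (ii) both proofs observe $A_i\subset\overline{C_i}\setminus C_i$ and invoke the frontier inequality together with the finite fibers of $\pi$. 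Your explicit remark that $Y'=\varnothing$ in the setting of Theorem~\ref{ncdes} (since each $Z_k$, hence $\widetilde Z_k$ and its normalization, is pure $k$-dimensional) is a welcome precision that the paper leaves implicit.
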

\begin{proof}
We prove that for all $y\in C_1$ dim$_R(C_1)_y \leq d-1$ so that dim$(C_1) \leq d-1$.
If $y\in C_1$ then $\hat{\sigma}(y) \neq y$ and $\hat{\sigma}(Y_y) = Y_{\hat{\sigma}(y)}$. Note that $\pi(y) = \pi( \hat{\sigma}(y))$, so $\pi(Y_y)$ and $\pi(Y_{\hat{\sigma}(y)})$ are two different conjugate components of $\tilde{X}_x$. Hence 

$$X_x \cap \pi(Y_y) = X\cap \pi(Y_y) \cap \sigma(\pi(Y_y)) = X \cap \pi(Y_y)\cap  \pi (Y_{\sigma(y)}),$$ 

and so dim$_\R(X_x\cap\pi(Y_y))\leq d-1$. This implies dim$(C_1)_y \leq d-1$ because $(C_1)_y \subset \pi^{-1}(X)_y\subset Y_y$ and  $\pi$ is proper with  finite fibers.

As $Y$ is a normal space, its singular set has complex dimension less than $d-2$. Since Sing$(Y^{\hat{\sigma}}) \subset$ Sing$(Y) \cap Y^{\hat{\sigma}}$ we get dim$_\R($Sing$(Y^{\hat{\sigma}}))\leq d-2$. As $C_2 \subset$ Sing$(Y^{\hat{\sigma}})$, dim$_\R(C_2) \leq d-2$.

Note that $C_i\cap Y^{\hat{\sigma}}_{(d)} =\varnothing$, so $A_i\subset \overline{C_i} \setminus C_i$ and hence dim$(A_i) \leq$ dim$(C_i) -1$ for $i=1,2$, that means dim$(A_1) \leq d-2$ and dim$(A_2) \leq d-3$. Now, since $\pi$ is proper with finite fibers, $\pi(A_i)$ has the same dimension as $A_i, i=1,2 $. This ends the proof.        
\end{proof}

\begin{prop}\label{max}
A point $x\in X$ belongs to $N_d(X,R)$ if and only if the germ $X_x$ has a non-coherent irreducible component $T_x$ of dimension $d$ such that {\rm dim}$(T_x \setminus R_x) =d$. 
\end{prop}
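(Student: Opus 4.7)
The plan is to prove the two implications by unpacking the definition
$N_d(X,R)=\pi(A_1)\cup\pi(A_2)$ and matching it, via Lemma~\ref{coherent}
(together with Lemma~\ref{germs}), with the presence of a non-coherent
$d$-dimensional irreducible component of $X_x$ whose $d$-dimensional part survives outside $R$. Throughout I will use that $Y^{\hat{\sigma}}_{(d)}$ is, by definition, closed, so any $y\in A_1\cup A_2$ lies in $Y^{\hat{\sigma}}_{(d)}$ and hence satisfies $y\in Y^{\hat{\sigma}}$ with $\dim_\R Y^{\hat{\sigma}}_y=d$.

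First I would prove the implication $(\Rightarrow)$. Take $y\in A_1\cup A_2$ with $\pi(y)=x$. The germ $\pi(Y_y)$ is an irreducible $\sigma$-invariant component of $\widetilde{X}_x$ of complex dimension $d$, so I would set $T_x$ to be the real germ $\pi(Y_y)\cap X$; Lemma~\ref{germs}(i) together with $\dim_\R Y^{\hat\sigma}_y=d$ shows that $T_x$ is an irreducible component of $X_x$ of real dimension $d$ whose complexification is $\pi(Y_y)$. To see that $T_x$ is non-coherent I would apply Lemma~\ref{coherent} to $T$ (with its complexification $\pi(Y_y)$ and normalization obtained by localizing $\pi$ near $y$). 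If $y\in\overline{C_1}$, a sequence $y_k\to y$ in $C_1$ lies in $Y_y$ for large $k$, and the points $x_k=\pi(y_k)\in X$ have preimages outside $Y^{\hat\sigma}$, violating condition (i) of Lemma~\ref{coherent}; if $y\in\overline{C_2}$, a sequence $y_k\to y$ in $C_2$ violates condition (ii). Finally, any sequence $y_n\to y$ in $Y^{\hat{\sigma}}_{(d)}\setminus\pi^{-1}(R)$ projects to points $x_n\in T\setminus R$ at which $T$ has local dimension $d$, so $\dim_\R(T_x\setminus R_x)=d$.

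Next I would prove the implication $(\Leftarrow)$. Given a non-coherent irreducible component $T_x$ of $X_x$ of dimension $d$ with $\dim(T_x\setminus R_x)=d$, let $\widetilde{T}_x$ be the complexification of $T_x$, which is an irreducible $\sigma$-invariant component of $\widetilde{X}_x$ of complex dimension $d$. By the normalization property of $\pi$, there is a unique point $y_T\in\pi^{-1}(x)$ with $\pi(Y_{y_T})=\widetilde{T}_x$, and Lemma~\ref{germs}(i) gives $y_T\in Y^{\hat{\sigma}}$ with $\dim_\R Y^{\hat{\sigma}}_{y_T}=d$. The failure of coherence of $T_x$, read through Lemma~\ref{coherent} applied to the germ $T$, must come either from a sequence $x_k\to x$ in $T$ whose preimages near $y_T$ leave $Y^{\hat\sigma}$ (producing points $y_k\in C_1$ with $y_k\to y_T$) or from a sequence $y_k\to y_T$ in $Y^{\hat\sigma}$ with $\dim_\R Y^{\hat\sigma}_{y_k}<d$ (producing points $y_k\in C_2$). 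Hence $y_T\in\overline{C_1}\cup\overline{C_2}$. The hypothesis $\dim(T_x\setminus R_x)=d$ provides a sequence $x_n\to x$ with $x_n\in(\Reg(T)\cap T)\setminus R$; at such regular points $T$ is coherent, so the unique preimage $y_n$ of $x_n$ close to $y_T$ lies in $Y^{\hat\sigma}_{(d)}\setminus\pi^{-1}(R)$ and converges to $y_T$. Therefore $y_T\in\overline{Y^{\hat{\sigma}}_{(d)}\setminus\pi^{-1}(R)}$, which places $y_T$ in $A_1\cup A_2$ and gives $x=\pi(y_T)\in N_d(X,R)$.

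The main obstacle I foresee is the careful bookkeeping in applying Lemma~\ref{coherent} to a single irreducible component $T_x$ rather than to the whole germ $X_x$: one needs to localize $\pi$ to the branch of $Y$ over $\widetilde{T}_x$, verify that this restriction really is the normalization of $\widetilde{T}$ near $y_T$, and isolate the points $y_k$ produced by the failure of coherence inside $Y_{y_T}$ rather than anywhere in $\pi^{-1}(x)$. Once this is set up, the identification of the two possible types of tails with $C_1$ and $C_2$ becomes a direct translation of Lemma~\ref{coherent}(i) and (ii), and the dimension bookkeeping in the last step is just the elementary observation that at regular points of an irreducible $d$-dimensional real analytic set the normalization is a local analytic isomorphism.
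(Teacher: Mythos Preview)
Your plan is correct and follows essentially the same route as the paper: both arguments hinge on Lemma~\ref{germs} to pass between $d$-dimensional irreducible components of $X_x$ and points $y\in Y^{\hat\sigma}$ with $\dim_\R Y^{\hat\sigma}_y=d$, and on Lemma~\ref{coherent} (localized to the branch $W_1$ over a single component) to translate coherence/non-coherence into the conditions $y\notin\overline{C_1}\cup\overline{C_2}$ versus $y\in\overline{C_1}\cup\overline{C_2}$.

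The only structural difference is that the paper argues both implications by contrapositive (for the `only if' part it assumes $x\notin N_d(X,R)$ and shows every $d$-dimensional component $B_x$ with $\dim(B_x\setminus R_x)=d$ is coherent; for the `if' part it checks $\pi^{-1}(x)\cap(A_1\cup A_2)=\varnothing$ by a case split on each $y\in\pi^{-1}(x)$), whereas you argue directly. This leads to one minor technical divergence: to place $y_T\in\overline{Y^{\hat\sigma}_{(d)}\setminus\pi^{-1}(R)}$, the paper uses the dimension bounds $\dim C_1\le d-1$, $\dim C_2\le d-2$ from Lemma~\ref{dimensione} together with the decomposition $\pi^{-1}(B_x)\cap Y_y=(C_1)_y\cup(C_2)_y\cup(Y^{\hat\sigma}_{(d)})_y$, while you pick regular points of $T$ outside $R$ and lift them; both work. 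Your acknowledgment that the main care is in restricting $\pi$ to a neighbourhood $W_1$ of $y_T$ so that $(W_1,\pi|_{W_1})$ is the normalization of the chosen component is exactly right, and is what the paper does explicitly when it constructs $W_1$ in each half of the proof. One small point to make precise in your write-up: when non-coherence produces, for each shrinking $V$, a bad point in $C_1\cup C_2$, you should extract a subsequence lying entirely in $C_1$ or entirely in $C_2$ before concluding $y_T\in\overline{C_1}\cup\overline{C_2}$.
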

\begin{proof}
We prove first the 'only if' part. To that end, it is enough to show the following
\begin{quotation}
{\em let $x\notin N_d(X,R)$ be such that the germ $X_x$ has an irreducible component $B_x$ of dimension $d$ and {\rm dim}$(B_x \setminus R_x) =d$. Then $B_x$ is coherent.}
\end{quotation}
By Lemma \ref{germs} (i) the complexification $\tilde{B_x}$ is an irreducible component of $\tilde{X}_x$. In addition there is a point $y\in Y^{\hat{\sigma}}$ such that $\pi(Y_y) = \tilde{B_x}$ and dim$(Y^{\hat{\sigma}}_y)=d$.

Let us check that $y\in Y^{\hat{\sigma}} \setminus \overline{C_1}\cup \overline{C_2}$. 

First of all $y\in  Y^{\hat{\sigma}}_{(d)}$. We prove next dim$(Y^{\hat{\sigma}}_{(d)})_y \setminus (\pi^{-1}(R))_y) =d$.

Indeed, $B_x = X_x\cap \pi(Y_y)$, so
$$ \pi^{-1}(B_x)\cap Y_y= \pi^{-1}(X_x)\cap \pi^{-1}(\pi(Y_y))\cap Y_y = \pi^{-1}(X)_y \cap Y_y = \pi^{-1}(X)_y.$$

But $\pi^{-1}(X)_y = (\pi^{-1}(X)\setminus Y^{\hat{\sigma}})_y\cup (Y^{\hat{\sigma}} \setminus Y^{\hat{\sigma}}_{(d)})_y $.

So, $\pi^{-1}(X)_y = (C_1)_y \cup (C_2)_y \cup (Y^{\hat{\sigma}}_{(d)})_y$.

As a consequence 
$$\pi^{-1}(B_x\setminus R_x)= (\pi^{-1}(B_x) \cap Y_y)\setminus \pi^{-1}(R)_y \subset (C_1)_y \cup (C_2)_y \cup((Y^{\hat{\sigma}}_{(d)})_y \setminus \pi^{-1}(R)_y).$$
 
Now $\pi(\pi^{-1}(B_x\setminus R_x) \cap Y_y) =B_x\setminus R_x$ and dim$(B_x\setminus R_x) =d$. As $\pi$ is proper with finite fibers we have dim$((C_1)_y \cup (C_2)_y \cup (Y^{\hat{\sigma}}_{(d)})_y) =d$. Since by Lemma \ref{dimensione} dim$((C_1)_y \cup (C_2)_y)< d$ we get dim$((Y^{\hat{\sigma}}_{(d)})_y \setminus \pi^{-1}(R)_y) =d$.

In particular $y\in \overline{Y^{\hat{\sigma}}_{(d)}\setminus \pi^{-1}(R)}\subset Y^{\hat{\sigma}}_{(d)}$. If $y\in \overline{C_i}$ then $y\in A_i$, hence $\pi(y) =x \in N_d(X,R)$ which is a contradiction. Thus $y\in Y^{\hat{\sigma}}_{(d)}\setminus \overline{C_1} \cup \overline{C_2}$.

Write $\pi^{-1}(x) = \{y_1,\ldots , y_r\}$ and assume $y=y_1$. Take invariant disjoint open neighbourhoods $W_1,\ldots ,W_r$ rispectively of $y_1,\ldots , y_r$  and let $V$ be an open neighbourhood of $x$ such that $\pi^{-1}(V) = \bigcup_i W_i$. We may assume $W_1$ to be connected and disjoint from $\overline{C_1} \cup \overline{C_2}$. As $W_1$ is closed in $\pi^{-1}(V)$ and $\pi$ is proper, also its restriction  $\pi: W_1\to V$ is proper. By Grauert's theorem $\pi(W_1)$ is a complex analytic subset of $V$. Also $\pi:W_1 \to \pi(W_1)$ is the normalization of $\pi(W_1)$. Consider $B= \pi(W_1) \cap X$: it  is a representative of $B_x$. As dim$\pi(W_1) =$ dim$B$ and $\pi(W_1)$ is irreducible, since $W_1$ is normal and connected,  we get $\pi(W_1)$ is a complexification of $B$.

Next we prove that $(\pi_{|W_1})^{-1}(B) = \{y\in W_1: \hat{\sigma}(y) =y\}$. Call this last set $B^{\hat{\sigma}}$ and note that $B^{\hat{\sigma}}= Y^{\hat{\sigma}} \cap W_1$. Also

$$(\pi_{|W_1})^{-1}(B)= \pi^{-1}(\pi(W_1))\cap \pi^{-1}(X) \cap W_1 = \pi^{-1}(X) \cap W_1      
$$  

Thus we have to prove $Y^{\hat{\sigma}} \cap W_1= \pi^{-1}(X) \cap W_1 $. As $W_1\cap \overline{C_1} =\varnothing$, we get $ (\pi^{-1}(X) \setminus Y^{\hat{\sigma}})\cap W_1 =\varnothing$ and this implies  $Y^{\hat{\sigma}} \cap W_1= \pi^{-1}(X) \cap W_1 $.

Finally, since $W_1\cap \overline{C_2} =\varnothing$, we get $ (Y^{\hat{\sigma}}\setminus Y^{\hat{\sigma}}_{(d)} )\cap W_1 =\varnothing$, so dim$(Y^{\hat{\sigma}}_z) =d$ for all $z\in (\pi_{|W_1})^{-1}(a)$ where $a\in B$. By Lemma \ref{coherent} the irreducible component $B_x$ is coherent.

Now we prove the 'if' part of the statement. It is equivalent to prove the following
\begin{quotation}
{\em if all the non-coherent irreducible components $B_x\subset X_x$ of dimension $d$ satisfy {\rm dim}$(B_x\setminus R_x) < d$ then $x\notin N_d(X,R)$}
\end{quotation}
 that is $\pi^{-1}(x)\cap (A_1 \cup A_2) = \varnothing$.

Let $y\in \pi^{-1}(x)$. If $y\notin Y^{\hat{\sigma}}_{(d)}$ then $y\notin A_1\cup A_2$. So assume  $y\in Y^{\hat{\sigma}}_{(d)}$. By Lemma \ref{germs} (ii) we deduce dim$(\pi (Y_y) \cap X_x) =d$. Consequently $B_x = \pi (Y_y)\cap X_x$ is an irreducible analytic germ of dimension $d$ in $X_x$. Thus $B_x$ is an irreducible component of $X_x$ and $\pi (Y_y)$ is its complexification. In particular $\pi (Y^{\hat{\sigma}}_y) \subset B_x$. There are two cases.

{\sc Case 1.} {\em $B_x$ is not coherent.} Then dim$(B_x\setminus R_x) <d$, so dim$(\pi( Y^{\hat{\sigma}}_y)\setminus R_x) < d$. Denote $E_y = (Y^{\hat{\sigma}}_{(d)})_y \setminus \pi^{-1}(R)_y$. As $\pi$ has finite fibers and $\pi(E_y)\subset \pi( Y^{\hat{\sigma}}_y)\setminus R_x$ we get dim$(E_y) <d$. As $R$ is closed and dim$(E_y) <d$ we get 

$$ (Y^{\hat{\sigma}}_{(d)})_y = \overline{(Y^{\hat{\sigma}}_{(d)})_y\setminus E_y} \subset \overline{\pi^{-1}(R)}_y = \pi^{-1}(R)_y$$

As a consequence $(Y^{\hat{\sigma}}_{(d)})_y\setminus \pi^{-1}(R)_y = \varnothing$, hence $y\notin A_1\cup A_2$.

{\sc Case 2.} {\em $B_x$ is coherent.} Proceeding as before we find an invariant neighbourhood $W_1$ of $y$ in $Y$ such that $\pi(W_1)$ is an irreducible complex analytic subset of an invariant neighbourhood $V$ of $x$ in $\tilde X$ and $B= \pi (W_1) \cap X$ is a representative of the germ $B_x$. As $B_x$ is coherent, by Lemma \ref{coherent} we may shrink $V$ and $W_1$ in order to get
\begin{enumerate}
\item $\pi^{-1}(X)\cap W_1 = Y^{\hat{\sigma}}\cap W_1.$
\item dim$(Y^{\hat{\sigma}}_z) =d$ for all $z\in \pi^{-1}(a)$, for $a\in B\cap V.$  
\end{enumerate}
Condition (1) is equivalent to $(\pi^{-1}(X) \setminus Y^{\hat{\sigma}})\cap W_1 =\varnothing$, so $y\notin \overline{C_1}$. Condition (2) means $y\notin \overline{C_2}$ . So $y\notin \overline{C_1}\cup \overline{C_2}$ and $y\notin A_1\cup A_2 \subset \overline{C_1}\cup \overline{C_2}$. 

Thus $\pi^{-1}(x) \cap (A_1 \cup A_2) = \varnothing$ as required.   
\end{proof}

\smallskip

\begin{remark} By definition the set $R_d$ is empty. We introduced the  set $R$ of Proposition \ref{max} because in this way its proof remains valid for the set $N_k(Z_k,R_k)$,  as we will see in the proof of Theorem \ref{ncdes}.   
\end{remark}

\smallskip

Next remark deals with irreducible components of analytic set germs.

\begin{remark}\label{componenti} Let $A_x\subset B_x\subset \R^n_x$ be analytic set germs and let $T_x$ be an irreducible component of $B_x$. Assume $T_x\subset A_x$. Then, $T_x$ is an irreducible component of $A_x$. Indeed since $T_x$ is irreducible there is an irreducible component $S_x$ of $A_x$ such that $T_x\subset S_x$. In the same way $S_x$ being irreducible in $B_x$ is contained in an irreducible component $S'_x $ of $B_x$, hence $T_x\subset S'_x$ and so $T_x = S'_x= S_x$.    
\end{remark}

{\sc Proof of Theorem  \ref{ncdes}.}
Each C-semianalytic set $R_k$ is $\mathcal A (\tilde X^\sigma)$-definable by definition.  Hence $N_k(Z_k,R_k)$ is C-semianalytic of codimension at least $2$, that is of dimension $\leq k-2$ by Lemma \ref{dimensione}. So (i) is proved.

Next we prove (iii). Take $x\in N(X)$ and let $T_x$ be an irreducible component of $X_x$ of dimension $\ell$ which is not coherent. Certainly $\ell\geq 2$ because curves are always coherent. 

We claim { $T_x$ is an irreducible component of $(Z_\ell)_x$.}

In fact let $Z$ be an irreducible component of $X$ such that $T_x\subset Z$. By Remark \ref{componenti}, $T_x$ is an irreducible component of $Z_x$. As $Z=\bigcup_{l\geq 0}{\rm Reg}({\rm Sing}_l(Z))$, $Z_x = \bigcup_{l\geq 0}\overline{{\rm Reg}({\rm Sing}_l(Z))_x}^{\zar}$, there exists $l$ such that $T_x\subset \overline{{\rm Reg}({\rm Sing}_l(Z))_x}^{\zar} \subset Z_x$. Since all irreducible components of $\overline{{\rm Reg}({\rm Sing}_l(Z))_x}^{\zar}$ have the same dimension and $(T_x)$ has dimension $\ell$ we deduce dim ${\rm Sing}_l(Z)=\ell$. By Remark \ref{componenti}, there exists an irreducible component $Z'$ of ${\rm Sing}_l(Z)$ of dimension $\ell$ such that $T_x \subset  Z'_x$. Of course $Z' \in \mathcal{F}_\ell$, so $T_x$ is an irreducible component $(Z_\ell)_x$.

We claim: {\em $x\in N_\ell(Z_\ell,R_\ell)$}. By Proposition \ref{max} this is the same as to claim: 
\begin{quotation}{ The germ $(Z_\ell)_x$ has an irreducible component $B_x$ of dimension $\ell$ which is not coherent and verifies {\rm dim}$(B_x \setminus (R_\ell)_x) =\ell$.}
\end{quotation}

It is enough to check  dim$(T_x \setminus (R_\ell)_x) =\ell$. If not then dim$(T_x \setminus (R_\ell)_x) <\ell$ so 
$$T_x \subset \overline{(R_\ell)_x}^{\zar} = \bigcup_{j=\ell+1}^d\overline{(Z_{j,(j)})_x}^{\zar}$$

As a consequence there exists $\ell+1\leq j\leq d$ such that $T_x \subset \overline{(Z_{j,(j)})_x}^{\zar}\subset X_x$. By Lemma \ref{germs} $T_x$ is an irreducible component of $\overline{(Z_{j,(j)})_x}^{\zar}$ which is a contradiction because all the irreducible components of $\overline{(Z_{j,(j)})_x}^{\zar}$ have dimension  $j>\ell$.

So $N(X)\subset \bigcup N_k(Z_k,R_k)$. To prove the converse inclusion take a point $x\in N_l(Z_l,R_l)$ for some $2\leq l\leq d$. We will prove that the germ $X_x$ has a non-coherent component of dimension $\geq l$. This implies in particular $x\in N(X)$.

As $x\in N_l(Z_l,R_l)$ the germ $(Z_l)_x$ has an irreducible component $T_x$ of dimension $l$ that is not coherent and such that dim$(T_x\setminus (R_l)_x)$ has again dimension $l$. As $(Z_l)_x\subset X_x$, there is an irreducible component $A_x$ of $X_x$ that contains $T_x$. If $T_x=A_x, A_x$ is not coherent and $x\in N(X)$. Otherwise $l={\rm dim}T_x < {\rm dim}A_x =j$. As  
$$X_x =\bigcup_{k=0}^d\overline{(Z_{k,(k)})_x}^{\zar},$$  
we deduce that $A_x$ is an irreducible component of some $\overline{(Z_{k,(k)})_x}^{\zar}$. All components of this set have dimension $k$ hence $j=k$.

Assume for a contradiction that $A_x$ is coherent. Then $A_x$ is pure $j$-dimensional. So $T_x\subset A_x\subset (Z_{j,(j)})_x\subset (R_l)$ and this is a contradiction. (iii) is proved.

Finally using what was proved before, we deduce that if $X_x$ has a non-coherent component of dimension $j$ then $x\in N_j(Z_j,R_j)$, while by (iii) $x\in N_j(Z_j,R_j)$ implies $X_x$ has a non-coherent component of dimension $\geq j$. 
  
So (ii) holds true and the proof is complete.
\qed

\smallskip

We give now some examples illustrating   the ideas under the results above.

\begin{examples}

\begin{itemize}{\parindent =0pt
\item[(i)] Consider the C-analytic set $X$ in $\R^4$ with coordinates $(x,y,w,z)$ defined by the equation
 $x^3-x^2wz -wy^2 =0$. 
 $X$ is constructed as follows. Consider the family of plane cubics $\{x^3-cx^2 -y^2=0\}$ depending on the parameter $c$. For $c<0$ the corresponding curve has a duble nodal point  at $(0,0)$, for $c=0$ it has a cusp while for $c>0$ the curve has an isolated singular point. Consider the cubic with parameter $c$ as a curve in the plane $\{w=1, z=c\}$ in $\R^4$. The set $X$ is the corresponding family of cones from the point $(0,0,0,c)$ over those cubics. Note that for $c=1$ the corresponding cone is the classical Cartan's cone.

We get dim$X =3$, Sing$(X)$ is the plane $\{x=0,y=0\}$ which is the locus of the  lines joining the vertex with the singular point of the cubic. The locus of vertices is the line $\{(0,0,0, c)\}\subset {\rm Sing}(X)$.

The set $X$ is irreducible. $X_x$ is also irreducible except for  points belonging to the double lines minus the corresponding vertex. In particular $X_x$ is irreducible if $x$ is a vertex. 

It is easy to see that $N(X)$ is the union of the line $\{(0,0, w,0)\}$ and the half line $\{(0,0,0,z), z\geq 0\}$. The line joins the cusp of the cubic $\{x^3 -y^2=0\}$ with the corresponding vertex and  is the boundary of the half plane of lines joining vertices and isolated points. The half line contains the vertices of the cones over cubics with isolated points, that is the points where the sections $X\cap \{z=c\geq 0 \}$ are not coherent. Together they give the set $A_1$ where the $2$-dimensional and  the $3$-dimensional part of $X$ meet.         
\item[(ii)]
Let $X_1 =\{(x^2-(z+1)y^2)^2z-u^2 =0\}\subset \R^4$ and $X_2 = \{u=0\}$. Let us prove that 
$$N(X_1)= \{x^2-y^2 =0,z=0,u=0\} \cup \{(0,0,-1,0\}$$ 

while $N(X_1 \cup X_2) = \{x^2-y^2 =0,z=0,u=0\}.$

Compute first $N(X_1)$. Consider $Z_1 = \{(x^2-(z+1)y^2)^2z-u^2 =0\}\subset \C^4$ which is a complexification of $X_1$. The map $\pi: \C^3 \to \C^4$ given by $\pi(x,y,v) = (x,y, v^2, v(x^2-(v^2+1)y^2))$ is the normalization of $Z_1$. The singular locus of $Z_1$ is 

$${\rm Sing}(Z_1) = \{x^2 -(z+1)y^2 =0, u=0\}.$$

It holds 
$$\quad \pi^{-1}(X_1) = \R^3 \cup \{(\pm s\sqrt{1-t^2},s,it):  0<t\leq 1\} \cup \{(0,0,it): |t |>1\}.$$

Consider the images of these three sets                                     
\begin{align*}
\hspace{1,5 truecm} T_1 &= \pi(\R^3) = X_1 \cap \{z\geq 0\}\\
\hspace{1,5 truecm} T_2 &= \pi(\{(\pm s \sqrt{1-t^2},s,it):0<t\leq 1\}) =\\
&= \{(\pm  s \sqrt{1-t^2},s,-t^2,0): 0<t\leq 1\}\\
\hspace{1,5 truecm} T_3 & = \pi(\{(0,0,it): |t |>1\}) = \{(0,0,-t^2,0)\}
\end{align*}

Thus $T_1$ is the set of points of $X_1$ of (maximal) dimension $3$. By Theorem \ref{ncdes} (or better by Proposition \ref{max}) we have
$Y^{\hat{\sigma}}= Y^{\hat{\sigma}}_{(3)} = \R^3$ and
$$\quad \quad \quad C_1 =  \{(\pm s\sqrt{1-t^2},s,it):  0<t\leq 1\} \cup \{(0,0,it): |t |>1\}, \quad \ C_2 = \varnothing.$$

So the set of points where $X_1$ has a non-coherent component of dimension $3$ is 
$\pi (\R^3 \cap \overline{C_1})$, that is

$$ \pi (\R^3 \cap (\overline{\{(\pm s\sqrt{1-t^2},s,it):  0<t\leq 1\}\cup \{(0,0,it): |t |>1\}}))=$$
 $$=\{x-y =0, z=0, u=0\} \cup \{x+y=0, z=0, u=0\}.$$

To find the set of points of $X_1$ that have a non-coherent component of smaller dimension we have to look to $T_2\cap T_3 = \{ x^2 -(z+1) y^2 =0, z<0\}$ which is an open subset of Witney's umbrella. The only point of non-coherence of this set is the point $(0,0,-1,0)$. So

$$N(X_1) = \{x^2-y^2 =0,z=0,u=0\} \cup \{(0,0,-1,0)\}.$$

On the other hand $N(X_1 \cup X_2) = \{x^2-y^2 =0,z=0,u=0\}$ because at these points of $X_1\cup X_2$ the corresponding germ has a non-coherent component of dimension $3$ while at the point $(0,0,-1,0)$ the unique irreducible component is $\{u=0\}$ which is coherent.
}\end{itemize}
\end{examples}

\section{Amenable C-semianalytic sets and irreducible components.}

In this section we would like to develop a theory of irreducibility and irreducible components in the class of C-semianalytic sets. This theory should generalize theories of irreducible components  for C-analytic sets, complex analytic subsets of Stein manifolds and semialgebraic sets because all of them are C-semianalytic sets. In the algebraic, complex analytic or Nash setting a geometric object is irreducible if it is not the union of two proper geometric objects of the same nature. In C-semianalytic setting this definition cannot work because as soon as a C-semianalytic set $S$ has two different points $p,q\in S$ it is enough to take an open C-semianalytic neighbourhood $W$ of $p$ such that $q\notin W$ and $S= (S\cap W) \cup (S\setminus \{p\})$ would be reducible.   

In the previous settings the fact that a geometric object $X$ is irreducible is equivalent to the fact that the corresponding ring of polynomial, analytic or Nash functions on $X$ is an integral domain. This equivalence suggests to attach to each C-semianalytic set $S\subset \R^n$ the ring $\Oo(S)$ of those functions that admit an analytic extension to an open neighbourhood of $S$ in $\R^n$. This means that $\Oo(S)$ is the quotient of $\Oo(R^n)$ by the ideal $I(S)$ of the analytic functions vanishing at $S$.

Here a first difficulty appears, in a certain sense similar to what happens in 
 Cartan's Example \ref{a(z)} in  Chapter 1,  
namely there are C-semianalytic sets $S$ such that any analytic function vanishing on $S$ is the zero function. For instance  in  Example \ref{segmenti} 
the  C-semianalytic set $S$ cannot be considered irreducible even if $I(S)=\{0\}$  is a prime ideal of $\Oo(\R^2)$.  

The zeroset of $I(S)$ is the smallest C-analytic set containing $S$, i.e. the so called {\em Zariski closure} $\ol{S}^{\zar}$ of $S$. Note that in  Example \ref{segmenti} dim $S=1$ while dim $\ol{S}^{\zar} =2$.

So a possible request could be to ask dim $S$ to be the same as  dim $\ol{S}^{\zar}$. But again this cannot work: it is enough to consider the $S$ of the Example \ref{segmenti} in the plain $\{z=0\} \subset \R^3$ and to add to $S$ a different plane. The new $S'$ has dimension $2$ as its Zariski closure. Its Zariski closure has two irreducible components, while $S'$ should get infinitely many components.    

So besides the request on dimension, we give two  attractive properties that a good theory should get. 



\begin{itemize} 
\item[(i)] Irreducible objects should be connected.  
\item[(ii)] If $U$ is a neighbourhood  of an irreducible object $S$, since the restriction map $\an(\ol{S}^{\zar}_U)\hookrightarrow\an(S), f\mapsto f_{|S}$ is injective, the Zariski closure of $S$ in $U$ should be irreducible, too.
\end{itemize}
Consider a C-semianalytic set $S=X\cap U$, where $X$ is a C-analytic set and $U$ is an open C-semianalytic set. If  $\Oo(S)$ is  a domain,
then $S$ verifies all the requests above.
This remark justifies  the following definition.

\begin{defn}
A C-semianalytic subset $S\subset \R^n$ is  \em amenable  \em if it is a finite union of C-semianalytic sets of the type $X\cap U$, where $X\subset \R^n$ is a C-analytic set, and $U\subset \R^n$ is an open C-semianalytic set.  
\end{defn}
By definition, the Zariski closure of an amenable C-semianalytic set $S$ has the same dimension as $S$.

The class of amenable C-semianalytic sets is the smallest family of subsets of $M$ that contains C-analytic sets, open C-semianalytic sets and is closed for finite unions and intersections. A first example of amenable C-semianalytic sets is given by global basic semianalytic sets, that is, sets of the type $S=\{f=0,g_1>0,\ldots,g_s>0\}$ where $f,g_1,\ldots,g_s\in\an(M)$. Consequently, global semianalytic sets and semialgebraic subsets of $\R^n$ are amenable C-semianalytic sets. This class has another remarkable property: as we will see, it has a  theory of irreducible components similar to the one for semialgebraic sets developed in \cite{fg}. This theory  generalizes the classical ones for other families of classical sets as complex algebraic and analytic sets, $C$-analytic sets, Nash sets, semialgebraic sets, etc.

We present first some examples. Namely
\begin{enumerate} 
\item The first example is a C-semianalytic set which is not amenable in any of its neighbourhoods. 
\item The second is a C-semianalytic set that is locally C-analytic but it is not amenable. 
\item The third is an amenable  C-semianalytic set that is not a global C-se\-mi\-a\-na\-ly\-tic set, so that one sees that the class of amenable C-semianalytic sets is strictly larger than the one of global semianalytic sets.
\end{enumerate}

\begin{examples}\label{tame}\hfill 
\begin{enumerate}
\item[(i)] {\parindent = 0pt 
For each integer $k\geq 0$ consider the basic semianalytic set 
$$
S_k=\{z^2-(y-k)x^2=0, y\geq k\}\subset\R^3. 
$$
Observe that the family $\{S_k\}_{k\geq 0}$ is locally finite, so $S=\bigcup_{k\geq 0}S_k$ is a connected  C-semianalytic set. 
Observe that $S$ is not an amenable C-semianalytic subset of any open neighborhood $U$ of $S$ in $\R^3$ because at the points $\{(0,y,0):\ y\geq 0\}$ the family of the Zariski closures $\{\ol{S}^{\zar}_{k,U}\}_{k\geq1}$ is not locally finite, so $\ol{S}^{\zar}=\R^3$. Nevertheless all finite unions  $S=\bigcup_{0\leq k\leq  \ell}S_k$ are amenable.

\item[(ii)]
Let $S_{kj}$ be the subset of $\R^3$ 
$$
\hphantom{S_{kj}=}S_{kj}=\Big\{x^2+2\Big(1+\frac{1}{j}\sin y\Big)xz+z^2=0, 2k\pi\leq y\leq(2k+1)\pi\Big\}\cup\{x=0,z=0\}
$$
for $j\geq 1$ and $k\in\Z$ . 

The Zariski closure of $S_{\ell j}$ is $X_j=\bigcup_{k\in\Z}S_{kj}$. Define $S=\bigcup_{j\geq1}S_{jj}$. It holds that for each $x\in \R^3$ there exists an open neigborhood $U^x$ such that $S\cap U^x=\{f=0\}\cap U^x$ for some $f\in\an(\R^3)$. As $\bigcup_{j\geq1}X_j\subset\ol{S}^{\zar}$, we conclude that $\ol{S}^{\zar}=\R^3$. 

Consequently, $S$ is not an amenable C-semianalytic set.}

\item[(iii)]{\parindent = 0pt 
Consider the C-semi\-analytic set $S=\bigcup_{k\geq1} S_k$ where
$$
S_k=\{0<x<k,\ 0<y<1/k\}\subset\R^2. 
$$
As $S$ is an open C-semianalytic set, it is amenable. 
We claim that  $S$ is not a global C-semianalytic set.
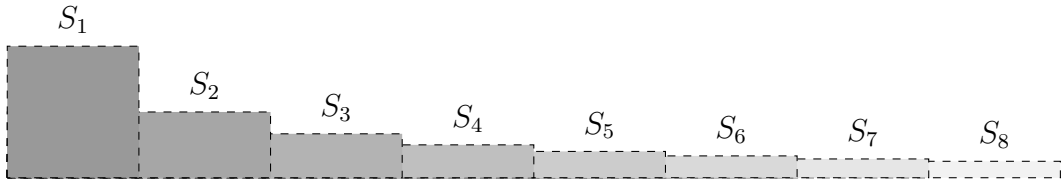
\begin{figure}[ht]
\centering
\begin{tikzpicture}[x=.35cm,y=.35cm]

\draw[dashed,fill=black!5!white] (0,0) -- (0,0.625) -- (40,0.625) -- (40,0) -- (0,0);
\draw[dashed,fill=black!10!white] (0,0) -- (0,0.714) -- (35,0.714) -- (35,0) -- (0,0);
\draw[dashed,fill=black!15!white] (0,0) -- (0,0.83) -- (30,0.83) -- (30,0) -- (0,0);
\draw[dashed,fill=black!20!white] (0,0) -- (0,1) -- (25,1) -- (25,0) -- (0,0);
\draw[dashed,fill=black!25!white] (0,0) -- (0,1.25) -- (20,1.25) -- (20,0) -- (0,0);
\draw[dashed,fill=black!30!white] (0,0) -- (0,1.67) -- (15,1.67) -- (15,0) -- (0,0);
\draw[dashed,fill=black!35!white] (0,0) -- (0,2.5) -- (10,2.5) -- (10,0) -- (0,0);
\draw[dashed,fill=black!40!white] (0,0) -- (0,5) -- (5,5) -- (5,0) -- (0,0);

\draw (2.5,6) node{$S_1$};
\draw (7.5,3.5) node{$S_2$};
\draw (12.5,2.67) node{$S_3$};
\draw (17.5,2.25) node{$S_4$};
\draw (22.5,2) node{$S_5$};
\draw (27.5,1.83) node{$S_6$};
\draw (32.5,1.714) node{$S_7$};
\draw (37.5,1.625) node{$S_8$};

\end{tikzpicture}
\caption{Amenable C-semianalytic set $S=\bigcup_{k\geq1} S_k$  of Example \ref{tame} (iii)}
\end{figure}

Otherwise there exist finitely many analytic functions $g_{ij}\in\an(M)$ not all identically zero such that $S=\bigcup_{i=1}^r\{g_{i1}>0,\ldots,g_{is}>0\}$. Notice that the boundary of $S$ is contained in the the C-analytic set $X=\bigcup_{i=1}^r\bigcup_{j=1}^s\{g_{ij}=0\}$. Thus, there exist indices $i,j$ such that $g_{ij}$ is not identically zero but it vanishes on infinitely many segments of the type $(a_k-\veps_k,a_k+\veps_k)\times\{1/k\}$ for different $k$ where $a_k\in(k,k+1)$ and $\veps_k>0$. But this implies that $g_{ij}$ vanishes identically in infinitely many lines of the type $y=1/k$, so $g_{ij}=0$, which is a contradiction. Thus, $S$ is not a global C-semianalytic set.
}
\end{enumerate}

\end{examples}

Next lemma deals with an open C-semianalytic set in a C-analytic set. 

\begin{lem}\label{openX}
Let $X\subset M$ be a C-analytic set and let $S\subset X$ be a C-semianalytic set that is open in $X$. Then there exists an open C-semianalytic set $W$ such that $S=X\cap W$. In particular, $S$ is an amenable C-semianalytic set.
\end{lem}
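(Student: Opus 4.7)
The plan is to exhibit an explicit candidate for $W$, namely the complement in $M$ of $X\setminus S$, and verify it has the required properties. All the work reduces to checking that $X\setminus S$ is a \emph{closed} C-semianalytic subset of $M$.

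First I would observe that $X\setminus S$ is a C-semianalytic subset of $M$. Indeed, $X$ is C-analytic in $M$ (hence C-semianalytic), $S$ is C-semianalytic in $M$ by hypothesis, and Proposition \ref{booletopological} guarantees that the class of C-semianalytic subsets of $M$ is closed under finite intersections and complements; in particular it is closed under set-theoretic differences, so $X\setminus S=X\cap(M\setminus S)$ is C-semianalytic. Next I would verify that $X\setminus S$ is closed in $M$: since $S$ is open in $X$, the difference $X\setminus S$ is closed in $X$, and since every C-analytic set is closed in the ambient manifold, $X$ is closed in $M$; transitivity of closedness yields that $X\setminus S$ is closed in $M$.

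With this in hand, define $W:=M\setminus (X\setminus S)$. Then $W$ is open in $M$ and, being the complement of a C-semianalytic set, it is itself C-semianalytic by Proposition \ref{booletopological}. A direct set-theoretic computation gives
$$
X\cap W=X\cap\bigl(M\setminus(X\setminus S)\bigr)=X\setminus(X\setminus S)=S,
$$
where in the last equality we use $S\subset X$. This proves the first assertion.

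For the final assertion, the equality $S=X\cap W$ exhibits $S$ as the intersection of a C-analytic set with an open C-semianalytic set, which by definition makes $S$ amenable (taking the trivial finite union with a single summand). There is no real obstacle here; the only subtlety worth flagging is that closedness of $X$ in $M$ is essential, and this is built into the notion of C-analytic set.
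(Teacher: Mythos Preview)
Your proof is correct and in fact more direct than the paper's. The paper proceeds locally: for each $x\in X$ it picks a neighborhood $U^x$ where $S\cap U^x$ is a global semianalytic set, invokes the finiteness property to find a finite union $W^x$ of open basic sets with $S\cap U^x=X\cap W^x$, and then sets $W=(M\setminus X)\cup\bigcup_{x\in X}W^x$. Your argument bypasses this local-to-global construction entirely by taking $W=M\setminus(X\setminus S)$ and using only the stability of C-semianalytic sets under boolean operations (Proposition~\ref{booletopological}) together with the closedness of $X$ in $M$. Your route is shorter and avoids the finiteness property; the paper's construction has the minor advantage of producing $W$ explicitly as a union of open basic sets, but that extra information is not needed for the statement being proved.
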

\begin{proof}
Fix a point $x\in X$ and let $U^x$ be an open neighborhood of $x$ in $M$ such that $S\cap U^x$ is a global C-semianalytic set. By the finiteness property  there exists a finite union $W^x$ (maybe empty) of open basic C-semianalytic subsets of $M$ such that $S\cap U^x=X\cap W^x$. Denote $W=(M\setminus X)\cup\bigcup_{x\in X}W^x$, which is an open C-semianalytic set. Observe that $S=X\cap W$, as required.
\end{proof}

We give now some basic properties of amenable C-semianalitic sets in a real analytic manifold $M$.
\begin{prop}The family of amenable C-semianalytic sets in a real analytic manifold $M$ is closed under
\begin{itemize}
\item finite unions and  finite intersections, 
\item inverse image under analytic maps between real analytic manifolds, 
\item interior part, 
\item connected components or unions of some of them,
\item set of points of pure dimension $k$.
\end{itemize}
\end{prop}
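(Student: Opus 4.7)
The approach is to distill a single auxiliary principle which, together with the explicit decomposition $S=\bigcup_{i=1}^r X_i\cap U_i$ of an amenable set, suffices to verify every item of the statement. The central fact I would prove first is: \emph{a C-semianalytic subset $T$ of an amenable set $S$ which is open in $S$ (in the subspace topology) is itself amenable.} Writing $S=\bigcup_{i=1}^r X_i\cap U_i$ and $T=S\cap W$ for some open $W\subset M$, the intersection $T\cap(X_i\cap U_i)=X_i\cap(U_i\cap W)$ is simultaneously C-semianalytic and open in $X_i$; Lemma \ref{openX} then supplies an open C-semianalytic $V_i\subset M$ with $T\cap(X_i\cap U_i)=X_i\cap V_i$, and summing over $i$ yields $T=\bigcup_i X_i\cap V_i$.

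With this principle in hand, the first group of operations is essentially formal: finite unions are immediate from the definition; finite intersections follow by distributing $(X\cap U)\cap(X'\cap U')=(X\cap X')\cap(U\cap U')$ and using closure of C-analytic sets and of open C-semianalytic sets under finite intersections; inverse images under an analytic map $f:N\to M$ reduce to $f^{-1}(X\cap U)=f^{-1}(X)\cap f^{-1}(U)$ combined with the transfer properties of Proposition \ref{booletopological}; and $\Int(S)$ is open C-semianalytic by Proposition \ref{booletopological}, hence amenable as any open C-semianalytic $V$ equals $M\cap V$.

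The remaining items---connected components and pure dimension---are where the auxiliary principle is decisive. A union $T$ of connected components of $S$ is both open and closed in $S$ and is C-semianalytic by Proposition \ref{booletopological}, so the principle gives its amenability at once. For the set $S_{(k)}$ of points of $S$ of local dimension $k$, I would first refine the description of $S$ by decomposing each $X_i$ into its pure $d$-dimensional parts $X_i^{(d)}$ (unions of the $d$-dimensional irreducible components of $X_i$, still C-analytic), so that after relabeling each $X_i$ may be assumed pure of dimension $d_i$. Since $\dim_x S=\max\{d_i:x\in X_i\cap U_i\}$, the set $S_{(\geq k)}=\bigcup_{d_i\geq k}X_i\cap U_i$ is amenable, and upper semi-continuity of the local dimension on $S$ makes $S_{(\geq k+1)}$ a closed subset of $S_{(\geq k)}$. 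Thus $S_{(k)}=S_{(\geq k)}\setminus S_{(\geq k+1)}$ is open in $S_{(\geq k)}$ and, being C-semianalytic, a further invocation of the auxiliary principle finishes the proof.

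The main obstacle is the auxiliary principle itself, whose subtlety lies in the transition from ``open in $S$'' to ``open in $X_i$'' on each piece $X_i\cap U_i$: openness in $S$ is only openness for the subspace topology, so one must exploit that $X_i\cap U_i$ itself is open in the C-analytic set $X_i$ in order for $T\cap(X_i\cap U_i)$ to be open in $X_i$ and hence eligible for Lemma \ref{openX}. Everything else reduces to bookkeeping with the defining decompositions once this point is settled.
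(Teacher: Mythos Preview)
Your auxiliary principle is correct and the first four items follow from it as you describe; the paper takes a slightly more direct route, simply exhibiting in each case an explicit open C-semianalytic $U\subset M$ with the desired set equal to $S\cap U$ (e.g.\ $U=M\setminus\bigcup_{i\notin\mathfrak{F}}\overline{S_i}$ for a union of connected components), but your formulation via Lemma~\ref{openX} is a legitimate alternative.

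The fifth item, however, contains a genuine error: you have misread the statement. The proposition asks for the set $S^*_{(k)}$ of points $x\in S$ at which the germ $S_x$ is \emph{pure} dimensional of dimension $k$, not the set $S_{(k)}=\{x\in S:\dim_x S=k\}$ that you treat. These differ, and the paper's own remark immediately after the proposition (together with Example~\ref{dimktamenot}) shows that $S_{(k)}$ is \emph{not} amenable in general. So you are attempting to prove a false statement.

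Concretely, your key formula $\dim_x S=\max\{d_i:x\in X_i\cap U_i\}$ fails already in $\R^2$: with $X_1=\{y=0\}$, $U_1=\{x>0\}$, $X_2=\{(0,0)\}$, $U_2=\R^2$ (each $X_i$ pure of dimension $d_i$), the origin lies only in $X_2\cap U_2$ with $d_2=0$, yet $\dim_{(0,0)}S=1$. Thus your description $S_{(\geq k)}=\bigcup_{d_i\geq k}X_i\cap U_i$ is wrong, and so is the chain of reasoning built on it. The paper's argument for $S^*_{(k)}$ instead writes
\[
S^*_{(k)}=S\cap\Bigl(M\setminus\bigcup_{j\neq k}\overline{S_{(j)}}\Bigr),
\]
which is $S$ intersected with an open C-semianalytic set (each $\overline{S_{(j)}}$ is closed C-semianalytic by Proposition~\ref{booletopological}, and only finitely many $j$ occur), hence amenable by the trivial case of your own principle.
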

\begin{proof}
The first  property is clear. For the remaining ones we proceed as follows. 

\em Inverse image\em. Let $f:M\to N$ be an analytic map between real analytic manifolds and let $S\subset N$ be an amenable C-semianalytic set. 
We may assume that $S=X\cap W$ where $X$ is a C-analytic subset of $N$ and $W$ is an open C-semianalytic subset of $N$. As $f^{-1}(S)=f^{-1}(X)\cap f^{-1}(W)$ and $f^{-1}(X)$ is a C-analytic subset of $M$ and $f^{-1}(W)$ is an open C-semianalytic subset of $M$, we get   $f^{-1}(S)\subset M$ is an amenable C-semianalytic set, as required.

\em Interior part \em. Let $S\subset M$ be an amenable C-semianalytic set and let $X$ be its Zariski closure. It holds that $\stackrel{\circ}{S}=S\setminus \overline{X\setminus S}=S\cap U$ where $U=M\setminus\overline{X\setminus S}$ is an open C-semianalytic set (closure and interior part are taken in $X$). Then $\stackrel{\circ}{S}$ is amenable.

\em Connected components\em. Let $S\subset M$ be an amenable C-semianalytic set and let $\{S_i\}_{i\geq1}$ be the family of the connected components of $S$. We know  that $\{S_i\}_{i\geq1}$ is a locally finite family, so the connected components of $S$ are open and closed subsets of $S$. Let ${\mathfrak F}\subset\{i\in \N : i\geq 1\}$ be any non-empty subset. We claim that $T=\bigcup_{i\in{\mathfrak F}}S_i$ is an amenable C-semianalytic set. Notice that
$$
T=S\cap\left (M\setminus\bigcup_{i\not\in{\mathfrak F}}\overline{S_i}\right )=S\cap U
$$
where $U=M\setminus\bigcup_{i\not\in{\mathfrak F}}\overline{S_i}$ is an open C-semianalytic set. This last claim follows because the family $\{S_i\}_{i\not\in{\mathfrak F}}$ is locally finite and each $\overline{S_i}$ is a C-semianalytic set. Consequently $T$ is an amenable C-semianalytic set.

\em Set of points of pure dimension $k$\em. Let $S^*_{(k)}$ be the subset of points $x\in S$ such that the germ $S_x$ is pure dimensional and $\dim S_x=k$. We have $S^*_{(k)}=S\cap(M\setminus\bigcup_{j\neq k}\overline{S_{(j)}})$ for each $k\geq 0$ where $S_{(j)}$ is the set of points of $S$ of local dimension $j$ for $j\geq 0$. Consequently, $S^*_{(k)}$ is amenable C-semianalytic.
\end{proof}

\begin{remarks}
(i) A locally finite union of amenable C-semianalytic sets is not in general an amenable C-semianalytic set. Each C-semianalytic set is a locally finite union of basic semianalytic sets, but there are many C-semianalytic sets that are not amenable. 

(ii) Let $U=\R^3\setminus S$ where $S$ is the closed C-semianalytic set defined in Example \ref{tame} (ii). As $U$ is an open C-semianalytic set, it is amenable. However its complement $S=\R^3\setminus U$ is not amenable.

(iii) As we will see in  Example \ref{dimktamenot}, the (C-semianalytic) set of points of local dimension $k$ of an amenable C-semianalytic set may  be not  amenable.
\end{remarks}

Next example shows the bad behaviour of the closure of amenable C-semianalytic sets.

\begin{examples}\label{ex} \hfill 
\begin{itemize}{\parindent=0pt
\item [(i)]  Consider the open C-semianalytic set
$
S=\bigcup_{k\geq1}S_k$ where 
$$
S_k=\{y<kx+z,y>kx-z,k<x<k+1,0<z<1\}\subset\R^3
$$
Clearly $S$ is an amenable C-semianalytic set. We have
$\ol{S}=\bigcup_{k\geq1}\ol{S_k}$ where
$$\ol{S_k}=\{y\leq kx+z,y\geq kx-z,k\leq x\leq k+1,0\leq z\leq1\}.
$$
Consider the C-analytic set $X=\{z=0\}$ and observe that 
$
\ol{S}\cap X=\bigcup_{k\geq1}\ol{S_k}\cap X$ where $\ol{S_k}\cap X=\{y=kx,k\leq x\leq k+1,z=0\}$
which is not an amenable C-semianalytic set. 

Indeed,  $\ol{\ol{S}\cap X}^{\zar}=\{z=0\}$ has dimension $2$ while $\ol{S}\cap X$ has dimension $1$. As $X$ is amenable, $\ol{S}$ is not amenable.

\item [(ii)] 
The closure $T=\ol{S}$ in $\R^2$ of the amenable C-semianalytic set $S$ in Example \ref{tame}(ii)  is not an amenable C-semianalytic set. To prove this, one can apply Theorem \ref{algor1} below.}
\end{itemize}
\end{examples}

\begin{defn}
An amenable C-semianalytic set $S\subset \R^n$ is {\em irreducible} if $\Oo(S)$ is a domain. 
\end{defn}
One deduces straightforwardly the following facts concerning irreducibility.
\begin{itemize} 

\item[(i)] Irreducible amenable C-semianalytic sets are connected because the ring of analytic functions of a disconnected amenable C-semianalytic set is the direct sum of the rings of analytic functions of its connected components, so it contains zero divisors. In particular, a real analytic manifold is irreducible if and only if it is connected. 
\item[(ii)] Let $U$ be a neighbourhood  of an irreducible amenable C-semianalytic set $S$ and $ \ol{S}^{\zar}_U)$ its Zariski closure in $U$. Since the restriction map $\an(\ol{S}^{\zar}_U)\hookrightarrow\an(S), f\mapsto f_{|S}$ is injective, also $ \ol{S}^{\zar}_U)$ is irreducible. As $S$ is amenable, $\dim \ol{S}^{\zar}_U=\dim S$.
\item[(iii)] An amenable C-semianalytic set that is the image of an irreducible amenable C-semi\-analytic set under an analytic map is irreducible. In particular, the irreducibility of amenable C-semianalytic sets is preserved under analytic diffeomorphisms.
\item[(iv)] Let $T\subset S\subset\R^n$ be amenable C-semianalytic sets such that $T$ is irreducible. Then the ideal
$$
\ideal(T,S)=\{f\in\an(S):\ f_{|T}=0\}
$$ 
is a prime ideal of $\an(S)$ because $\an(T)$ is an integral domain and $\ideal(T,S)$ is the kernel of the restriction homomorphism $\an(S)\to\an(T),\ f\mapsto f|_T$.
\end{itemize}

\begin{lem}\label{charirred}
Let $S$ be an amenable C-semianalytic set. The following assertions are equivalent.
\begin{itemize}
\item[(i)] $S$ is irreducible.
\item[(ii)] For each open neighborhood $U$ of $S$ in $M$, the C-analytic set $\ol{S}^{\zar}_U$ is irreducible. 
\item[(iii)] If $f\in\an(S)$ and $\dim\ceros(f)=\dim S$, then $f$ is identically zero.
\end{itemize}
\end{lem}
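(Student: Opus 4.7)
The plan is to verify the cyclic implications $(i)\Rightarrow(ii)\Rightarrow(iii)\Rightarrow(i)$. For $(i)\Rightarrow(ii)$, fix an open neighbourhood $U$ of $S$ in $M$ and note that by the very definition of the Zariski closure, the ideal $I_U=\{F\in\an(U):F|_S=0\}$ coincides with the ideal $I(\ol{S}^{\zar}_U)$ of analytic functions on $U$ vanishing on $\ol{S}^{\zar}_U$, so $\an(\ol{S}^{\zar}_U)=\an(U)/I_U$. The natural map $\an(\ol{S}^{\zar}_U)\to\an(S)$ induced by restriction to $S$ of representatives is well-defined and injective: any representative $F\in\an(U)$ whose germ at $S$ is trivial belongs to $I_U$. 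Hence if $\an(S)$ is a domain, so is $\an(\ol{S}^{\zar}_U)$, i.e.\ $I(\ol{S}^{\zar}_U)$ is a real prime ideal of $\an(U)$, which by the criterion discussed in Remark \ref{doppioombrello} yields that $\ol{S}^{\zar}_U$ is irreducible as a C-analytic set.

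For $(ii)\Rightarrow(iii)$, take $f\in\an(S)$ with $\dim\ceros(f)=\dim S$, pick an open neighbourhood $U$ of $S$ on which $f$ admits an analytic representative $F$, and consider the C-analytic subset $Y=\{F=0\}\cap\ol{S}^{\zar}_U$ of $\ol{S}^{\zar}_U$. Since $\ceros(f)\subset Y$, we have $\dim Y\geq\dim\ceros(f)=\dim S$; the amenability of $S$ guarantees $\dim\ol{S}^{\zar}_U=\dim S$, so $\dim Y\geq\dim\ol{S}^{\zar}_U$. By $(ii)$ the ambient C-analytic set $\ol{S}^{\zar}_U$ is irreducible, and because an irreducible C-analytic set has a pure-dimensional complexification, any proper C-analytic subset has strictly smaller dimension. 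Therefore $Y=\ol{S}^{\zar}_U$, which forces $F$ to vanish identically on $\ol{S}^{\zar}_U\supset S$ and $f=0$ in $\an(S)$.

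For $(iii)\Rightarrow(i)$, suppose that $fg=0$ in $\an(S)$ for some $f,g\in\an(S)$. Then $\ceros(f)\cup\ceros(g)=\ceros(fg)=S$, and since the dimension of a finite union of C-semianalytic sets equals the maximum of the dimensions of its members, one of $\dim\ceros(f),\dim\ceros(g)$ must attain $\dim S$; applying $(iii)$ the corresponding function is identically zero in $\an(S)$, so $\an(S)$ is a domain.

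The main technical point lies in $(ii)\Rightarrow(iii)$, where amenability is used twice: once to identify $\dim\ol{S}^{\zar}_U$ with $\dim S$, and once implicitly through the fact that, for an amenable $S$, the Zariski closure is controlled locally around $S$ in every neighbourhood $U$. Without amenability, Example \ref{segmenti} shows that the Zariski closure of $S$ may have strictly larger dimension than $S$, in which case $\ceros(f)$ of full dimension in $S$ would not force $F$ to vanish on $\ol{S}^{\zar}_U$, and the argument would collapse; this is precisely why the notion of irreducibility is formulated inside the amenable class.
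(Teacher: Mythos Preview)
Your proof is correct and follows essentially the same cyclic route $(i)\Rightarrow(ii)\Rightarrow(iii)\Rightarrow(i)$ as the paper, with the same key ingredients: the injection $\an(\ol{S}^{\zar}_U)\hookrightarrow\an(S)$ for the first step, and the equality $\dim\ol{S}^{\zar}_U=\dim S$ from amenability for the second. The only minor difference is in $(iii)\Rightarrow(i)$: the paper picks a regular point $x\in S$ of maximal dimension and uses the irreducibility of the germ $S_x$ to force one factor to vanish near $x$, whereas you argue directly from $S=\ceros(f)\cup\ceros(g)$ and the additivity of dimension under finite unions; your version is marginally more direct and avoids invoking local structure.
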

\begin{proof}
(i) $\Longrightarrow$ (ii). The ideal $\ideal(S,U)$ is a prime ideal of $\an(U)$, so $\ol{S}^{\zar}_U=\ceros(\ideal(S,U))$ is an irreducible C-analytic subset of $U$.

(ii) $\Longrightarrow$ (iii). Let $f'\in\an(U)$ be an analytic extension of $f$ to an open neighborhood $U\subset M$ of $S$. As $f'$ vanishes on a subset of maximal dimension of the irreducible C-analytic set $\ol{S}^{\zar}_U$, we have $f'_{|\ol{S}^{\zar}_U}\equiv 0$, so $f=f'_{|S}\equiv0$.

(iii) $\Longrightarrow$ (i).  Let $f_1, f_2\in\an(S)$ be such that $f_1f_2\equiv 0$. Let $x\in S$ be such that the germ $S_x$ is regular of maximal dimension. As $S_x$ is irreducible, we may assume $f_1$ is identically zero on an open neighborhood of $x$ in $S$. Thus, $\dim \ceros(f_1)=\dim S$, so $f_1\equiv0$. Consequently, $S$ is irreducible.
\end{proof}

\begin{remark} Example \ref{segmenti} and Example (i) from Examples \ref{tame} show clearly that the notion of irreducibility cannot be applied to C-semianalytic sets that are not amenable.
\end{remark}

Next natural step is to give the notion of {\em irreducible component} of an amenable C-semianalytic set. This notion should generalize the theory for C-analytic sets, for Stein spaces endowed with their real structure or semialgebrais sets.

Let $S$ be a subset of an  analytic manifold $M$.  We say that $T\subset S$ is an \em $S$-tame C-semianalytic set \em if there exists an open neighborhood $U\subset M$ of $S$ such that $T$ is an amenable C-semianalytic subset of $U$. There is no ambiguity to say that an $S$-tame C-semianalytic set $T\subset S$ is \em irreducible \em if $\an(T)$ is an integral domain. 

We get the following definition.

\begin{defn}\label{irredcomptame0} Let $S\subset M$ be an amenable C-semianalytic. A countable locally finite family $\{S_i\}$ of amenable C-analytic subsets of $S$ is a family of {\em irreducible components} if it satisfies the following properties. 
\begin{itemize}
\item[(1)] Each $S_i$ is irreducible.
\item[(2)] If $S_i\subset T\subset S$ and $T$ is an irreducible $S$-tame C-semianalytic set, then $S_i=T$.
\item[(3)] $S_i\neq S_j$ if $i\neq j$.
\item[(4)] $S=\bigcup_{i\geq1} S_i$.
\end{itemize} 
We call the family $\{S_i\}$  a family of {\em weak} irreducible components if we remove the hypothesis $S_i$ to be amenable and if the local finiteness is supposed true in $S$ and not necessarily in $M$.
\end{defn} 

\begin{remark}\hfill

\begin{itemize}
\item[(i)]  Of course for a set $X$ the notion {\em to be irreducible} depends on the structure considered on it. Recall for instance Example \ref{doppioombrello} in Chapter 2. Nevertheless when  $X$ is a C-analytic set, its irreducible components $\{X_i\}_{i\geq1}$ as a C-analytic set coincide with the ones obtained if we consider $X$ as an amenable C-semianalytic set.
\begin{proof}
Let us check that $\{X_i\}_{i\geq1}$ satisfies the conditions in Definition \ref{irredcomptame0}. Only condition (2) requires a comment. Let $T$ be an irreducible amenable C-semianalytic set such that $X_1\subset T\subset X$. Hence there exists $j\geq1$ such that $X_1\subset T\subset X_j$, so $j=1$ and $T=X_1$, as required.
\end{proof}
\item[(ii)] Recall that if $X$ is an irreducible complex analytic subset of a Stein manifold  then $\Reg(X)=X\setminus\Sing(X)$ is connected as we saw in Chapter 2, Section 1A. If we consider the real structure $(X^\R,\an_X^\R)$ induced on $X$, we deduce that $X^\R$ is irreducible as a C-analytic set. In addition if $X$ is a general complex analytic subset of a Stein manifold, the irreducible components of $X$ are  the closures in $X$ of the connected components of $\Reg(X)$ as proved in Theorem \ref{decomposition} of Chapter 2. Consequently the irreducible components of $X$ and the irreducible components of $X^\R$ as a C-analytic set coincide. Thus, the irreducible components of $X$ as a complex analytic set coincide with the ones obtained if we consider $X$ as an amenable C-semianalytic set. 
\item[(iii)] If $S$ is a semialgebraic set, its irreducible components as  semialgebraic set, described by Fernando and Gamboa, coincide with the ones obtained  considering  $X$ as an amenable
 C-semianalytic set. 
\end{itemize}
\end{remark}

We shall prove the following.
\begin{thm}\label{irredcomp1}
Let $S \subset M$ be an amenable C-semianalytic set. Then there exists a family of irreducible components $\{S_i\}_{i\geq1}$ of $S$ and it is unique. In addition it satisfies 
\begin{itemize}
\item[(i)] $S_i=\ceros(\ideal(S_i,S))$ for $i\geq1$. In particular, $S_i$ is a closed subset of $S$.
\item[(ii)] The ideals $\ideal(S_i,S)$ are the minimal prime (saturated) ideals of $\an(S)$.
\end{itemize}
\end{thm}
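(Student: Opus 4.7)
The strategy is to reduce the existence and uniqueness of irreducible components of the amenable C-semianalytic set $S$ to the already-known theory of irreducible components of C-analytic sets developed in Chapter 2. Fix an open neighborhood $U\subset M$ in which $S$ is amenable and let $Z=\overline{S}^{\zar}_U$, the Zariski closure of $S$ in $U$, which is a C-analytic subset of $U$ with $\dim Z=\dim S$ (by amenability). Write $Z=\bigcup_{i\geq1}Z_i$ as the countable locally finite union of its irreducible C-analytic components, and define $S_i=Z_i\cap S$. Writing $S=\bigcup_\alpha X_\alpha\cap W_\alpha$ as a finite union of C-analytic $\cap$ open C-semianalytic sets, we get $S_i=\bigcup_\alpha(Z_i\cap X_\alpha)\cap W_\alpha$, hence each $S_i$ is amenable. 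Local finiteness of $\{S_i\}$ in $M$ follows from local finiteness of $\{Z_i\}$ in $U$ together with the fact that points of $M\setminus\overline{S}$ have neighborhoods disjoint from $S$.

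The crucial observation is that $\overline{S_i}^{\zar}_U=Z_i$ for each $i$. Indeed, if $\overline{S_i}^{\zar}_U\subsetneq Z_i$ then $\overline{S_i}^{\zar}_U\cup\bigcup_{j\neq i}Z_j$ would be a C-analytic subset of $U$ strictly smaller than $Z$ that still contains $S$, contradicting the minimality of the Zariski closure. From this and $\ideal(S_i,U)=\ideal(Z_i,U)$ we deduce $\an(S_i)=\an(U)/\ideal(Z_i,U)=\an(Z_i)$, which is a domain since $Z_i$ is irreducible; hence each $S_i$ is irreducible. For maximality (condition (2) of Definition \ref{irredcomptame0}), suppose $S_i\subset T\subset S$ with $T$ irreducible and $S$-tame. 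Then $\overline{T}^{\zar}_U$ is an irreducible C-analytic set (Lemma \ref{charirred}) containing $Z_i$ and contained in $Z=\bigcup_j Z_j$; since an irreducible C-analytic set contained in a locally finite union of irreducible C-analytic sets must lie in one of them (by looking at a generic regular point of top dimension and using local irreducibility), we get $\overline{T}^{\zar}_U\subset Z_{i'}$ for some $i'$, forcing $i'=i$ and thus $T\subset Z_i\cap S=S_i$, so $T=S_i$. Distinctness $S_i\neq S_j$ for $i\neq j$ is immediate since their Zariski closures $Z_i,Z_j$ differ, and $S=S\cap Z=\bigcup_i S_i$ is clear.

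Uniqueness follows by the same extraction of Zariski closures: if $\{T_k\}$ is another family of irreducible components, each $\overline{T_k}^{\zar}_U$ is an irreducible C-analytic subset of $Z$, hence contained in some $Z_{i(k)}$, so $T_k\subset S_{i(k)}$; applying property (2) for the family $\{T_k\}$ with the irreducible $S$-tame set $S_{i(k)}$ gives $T_k=S_{i(k)}$. The symmetric argument shows that each $S_i$ arises as some $T_k$, so the two families coincide.

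Finally, for (i), the inclusion $S_i\subset\ceros(\ideal(S_i,S))$ is trivial. For the reverse, take $x\in S\setminus S_i$; then $x\notin Z_i$, and since $Z$ admits a fundamental system of open Stein neighborhoods, Theorem A (or Proposition \ref{globalequations}) yields an $F\in\an(U)$ vanishing on $Z_i$ with $F(x)\neq 0$, whose restriction to $S$ belongs to $\ideal(S_i,S)$ (because $\ideal(S_i,U)=\ideal(Z_i,U)$) and is nonzero at $x$. For (ii), using $\overline{S}^{\zar}_U=\bigcup_i Z_i$ we get $\ideal(S,U)=\bigcap_i\ideal(Z_i,U)$; this is a normal primary decomposition of a saturated ideal in the Stein algebra $\an(U)$ in which each component $\ideal(Z_i,U)$ is prime and saturated (by Corollary \ref{primary1} and Proposition \ref{deb} of Chapter 3). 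Passing to the quotient $\an(S)=\an(U)/\ideal(S,U)$ yields the identification of the minimal prime saturated ideals with $\ideal(Z_i,U)/\ideal(S,U)=\ideal(S_i,S)$. The main obstacle is the lemma that an irreducible C-analytic set contained in a locally finite union of irreducible C-analytic sets lies in one of them; this requires a careful local argument at a regular point of top dimension combined with the global irreducibility criterion of Chapter 2.
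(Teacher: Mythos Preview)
Your argument has a genuine gap at the step ``From this and $\ideal(S_i,U)=\ideal(Z_i,U)$ we deduce $\an(S_i)=\an(U)/\ideal(Z_i,U)$''. The ring $\an(S_i)$ is by definition $H^0(S_i,(\an_M)_{|S_i})/\ideal(S_i)$, a direct limit over \emph{all} open neighborhoods of $S_i$, not just the fixed $U$. The restriction map $\an(U)/\ideal(S_i,U)\to\an(S_i)$ is injective, but in general not surjective, and the target may acquire zero-divisors. Concretely, take $M=\R^2$, $U=\R^2$ and $S=((0,1)\cup(2,3))\times\{0\}$. Then $Z=\ol{S}^{\zar}_U=\{y=0\}$ is irreducible, so your construction yields a single piece $S_1=S$; but $S$ is disconnected, hence $\an(S)$ is not a domain and $S$ is not irreducible. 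The true irreducible components of $S$ are the two intervals. Thus the choice of $U$ matters: one needs a neighborhood small enough that the irreducible components of $\ol{S}^{\zar}_U$ already separate the irreducible pieces of $S$, and proving that such a $U$ exists is precisely the hard part.

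The paper avoids this circularity by working in the opposite order. It first establishes \emph{weak} irreducible components via the normal primary decomposition $\ideal(S)=\bigcap_i\gtp_i$ in the germ ring $H^0(S,(\an_M)_{|S})$ (Theorem \ref{irredcomp2}), setting $S_i=\ceros(\gtp_i)$; irreducibility of each $S_i$ then follows from Lemma \ref{clue3}, since $\ideal(S_i,S)=\gtp_i$ is prime. Only \emph{after} the $S_i$ are known does the paper construct a suitable neighborhood $U$ with $S_i=X_i\cap S$ (Proposition \ref{neatwirred}). The remaining two properties---that each $S_i$ is amenable (not merely $S$-tame) and that $\{S_i\}$ is locally finite in $M$ (not merely in $S$)---are handled separately via the normalization of a complexification of $\ol{S_i}^{\zar}$ and the Direct Image Theorem (Propositions \ref{tamend1} and \ref{tamend2}); your proposal does not address either of these, and the local-finiteness claim you sketch breaks down at points of $\ol{S}\setminus U$.
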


The strategy is to prove first existence and uniqueness of weak irreducible components and then to prove that they are in fact irreducible components. But before we need to develop a more complete theory of amenable C-semianalytic sets.

\subsection{Characterization of amenable C-semianalytic sets.}

In  Definition \ref{Csemi} we defined  C-semi\-analytic set as locally finite countable unions of global basic semianalytic sets. Here we obtain the corresponding result for amenable C-semianalytic sets.

\begin{prop}\label{normal2}
Let $S$ be a C-semianalytic set in a real analytic manifold $M$. Then $S$ is amenable if and only if there exists a countable locally finite family of global basic semianalytic sets $\{S_i\}_{i\geq1}$ such that $S=\bigcup_{i\geq1}S_i$ and the family $\{\ol{S_i}^{\zar}\}_{i\geq1}$ is locally finite (after eliminating repetitions).
\end{prop}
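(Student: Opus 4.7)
The statement is an equivalence with a manageable ``only if'' direction and a more delicate ``if'' direction.

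For $(\Rightarrow)$, start from a finite amenable presentation $S=\bigcup_{j=1}^r X_j\cap V_j$. By Lemma \ref{odes}, each $V_j$ can be written as a countable locally finite union of open basic semianalytic sets $V_{j,k}$; by the irreducible decomposition, each $X_j$ is a countable locally finite union of its irreducible components $X_{j,\alpha}$, each describable as the zero set of a single sum-of-squares analytic function. Then each $S_{j,\alpha,k}=X_{j,\alpha}\cap V_{j,k}$ is a global basic semianalytic set and the total family is locally finite in $M$. The key point is that whenever $X_{j,\alpha}\cap V_{j,k}\neq\varnothing$, the irreducibility of $X_{j,\alpha}$ forces $\ol{S_{j,\alpha,k}}^{\zar}=X_{j,\alpha}$ (via complexification, a nonempty open subset of an irreducible C-analytic set is Zariski-dense in it). Thus, after removing repetitions, the family of Zariski closures is a subfamily of the locally finite family $\{X_{j,\alpha}\}$.

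For $(\Leftarrow)$, given the hypothesized decomposition, first observe that each $S_i=Y_i\cap U_i$ with $Y_i=\ol{S_i}^{\zar}$ and $U_i$ open basic: writing $S_i=\{f_i=0\}\cap U_i$, the inclusion $Y_i\subseteq\{f_i=0\}$ forces $Y_i\cap U_i=S_i$. Grouping indices by their distinct Zariski closures $\{Y'_l\}$ and setting $W_l=\bigcup_{Y_i=Y'_l}U_i$, one obtains $S=\bigcup_l Y'_l\cap W_l$ with $\{Y'_l\}$ locally finite and each $W_l$ open C-semianalytic. Stratify by dimension: writing $X^{(d)}=\bigcup_{\dim Y'_l=d}Y'_l$ and $S^{(d)}=\bigcup_{\dim Y'_l=d}Y'_l\cap W_l$, we have $S=\bigcup_{d=0}^{\dim M}S^{(d)}$, a finite union. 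It remains to show each $S^{(d)}$ is amenable, which is handled by induction on $d$. The base case $d=0$ is immediate, since $S^{(0)}$ is a discrete locally finite set and therefore C-analytic. For the inductive step, distinct pure $d$-dimensional irreducible C-analytic sets meet in dimension $<d$, so choosing a locally finite family of open C-semianalytic neighborhoods $N_l$ of $Y'_l$ and setting $V^{(d)}=\bigcup_l W_l\cap N_l$, the set $X^{(d)}\cap V^{(d)}$ differs from $S^{(d)}$ only in a C-semianalytic subset of $\bigcup_{l\neq l'}Y'_l\cap Y'_{l'}$ of dimension $<d$, which is amenable by the inductive hypothesis, yielding the desired amenable presentation of $S^{(d)}$ by a finite correction.

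The main obstacle lies in the $(\Leftarrow)$ direction: the naive guess $S=\ol{S}^{\zar}\cap V$ with $V=M\setminus(\ol{S}^{\zar}\setminus S)$ fails because $\ol{S}^{\zar}\setminus S$ need not be closed---a point of $Y'_l\cap Y'_{l'}$ lying in $S$ via the $Y'_{l'}$-piece can be approached by points of $Y'_l\setminus Y'_{l'}$ outside $S$, so a single intersection cannot suffice. The dimensional stratification together with the induction on $d$ is precisely what confines these pathologies to the strictly lower-dimensional intersection loci, where they are dispatched at the preceding step.
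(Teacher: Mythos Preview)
Your $(\Rightarrow)$ argument rests on the claim that a nonempty open subset of an irreducible C-analytic set is Zariski-dense in it. This is false for non-coherent sets. Whitney's umbrella $X=\{x^2-zy^2=0\}\subset\R^3$ is irreducible as a C-analytic set (the polynomial is irreducible, so $I(X)=(x^2-zy^2)$ is prime), yet the open piece $X\cap\{z<-1\}=\{x=0,y=0,z<-1\}$ has Zariski closure the line $\{x=0,y=0\}$, not $X$. The identity principle you are implicitly invoking holds for connected real analytic \emph{manifolds} but not for irreducible C-analytic \emph{sets}. The paper's proof addresses exactly this: it first applies Lemma~\ref{decomp} to reduce to $S=Z\cap U$ with $Z\cap U$ a real analytic manifold and $Z=\ol{Z\cap U}^{\zar}$, and only then passes to connected components $C_\ell$ and decomposes via Lemma~\ref{odes}. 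Because each $C_\ell$ is a connected manifold, any nonempty open basic piece $S_{\ell j}\subset C_\ell$ satisfies $\ol{S_{\ell j}}^{\zar}=\ol{C_\ell}^{\zar}$, an irreducible component of $Z$, and these form a locally finite family.

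Your $(\Leftarrow)$ strategy via dimension stratification is in the right spirit but the execution has gaps. The Zariski closure $Y_i=\ol{S_i}^{\zar}$ of a basic set need be neither irreducible nor pure-dimensional (e.g.\ $S_i=\{xy=0,\,z>0\}$ gives $Y_i=\{xy=0\}$), so distinct $Y'_l$ can share top-dimensional components and meet in dimension $d$, undermining your intersection estimate. Moreover, the ``finite correction'' from $X^{(d)}\cap V^{(d)}$ to $S^{(d)}$ is not justified as written: amenable sets are not closed under complement or set difference, so knowing that both $X^{(d)}\cap V^{(d)}$ and the error are amenable does not yield that $S^{(d)}$ is. The paper's route is cleaner: set $X=\bigcup_i\ol{S_i}^{\zar}$ and let $X'$ collect $\Sing(\ol{S_i}^{\zar})$ together with the irreducible components of each $\ol{S_i}^{\zar}$ of dimension $<d$. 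Then $S\cap X'$ satisfies the hypotheses in lower dimension (since $\ol{S_i\cap X'}^{\zar}\subset\ol{S_i}^{\zar}$, local finiteness is inherited) and is handled by induction, while $S\setminus X'$ is an \emph{open} subset of the C-analytic set $X$ and is therefore amenable directly by Lemma~\ref{openX}, with no correction needed.
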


Next result allows  to reduce the proof of Proposition \ref{normal2} to the case of open C-semianalytic subsets of  real analytic manifolds. 

\begin{lem}\label{decomp}
Let $S\subset M$ be an amenable C-semianalytic set. Then there exist finitely many C-analytic sets $Z_1,\ldots,Z_r$ and finitely many open C-semianalytic sets $U_1,\ldots,U_r$ such that $S=\bigcup_{i=1}^r(Z_i\cap U_i)$, each $Z_i\cap U_i$ is a real analytic manifold, $Z_i$ is the Zariski closure of $Z_i\cap U_i$ and $\dim(Z_{i+1}\cap U_{i+1})<\dim(Z_i\cap U_i)$ for $i=1,\ldots,r-1$.
\end{lem}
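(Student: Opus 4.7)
My plan is to prove the lemma by induction on $d = \dim S$, at each step peeling off the top-dimensional smooth part of $S$ as a single $Z_1 \cap U_1$ and leaving an amenable residue of strictly smaller dimension. The base case $d = 0$ is immediate: a $0$-dimensional amenable C-semianalytic set is a locally finite discrete set, hence a $0$-dimensional real analytic manifold equal to its own Zariski closure, so $r = 1$, $Z_1 = S$, $U_1 = M$ works. The hard part is the inductive step, and specifically the design of $U_1$: a single open C-semianalytic set must simultaneously make $Z_1 \cap U_1$ a manifold, have Zariski closure exactly $Z_1$, lie inside $S$, and leave behind a residue that is \emph{amenable}---the naive choice $S \setminus (Z_1 \cap U_1)$ will not work, since the complement in $M$ of an open C-semianalytic set need not be amenable.

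For the inductive step I first write $S = \bigcup_{j=1}^s (X_j \cap W_j)$ with each $X_j$ C-analytic, each $W_j$ open C-semianalytic, and, after replacing each $X_j$ by $\ol{X_j \cap W_j}^{\zar}$, every irreducible component $Y$ of $X_j$ meets $W_j$ in a non-empty, hence Zariski dense, open subset. Let $J_d = \{j : \dim X_j = d\}$, and for each $j$ let $X_j^{(d)}$ (resp.\ $X_j^{(<d)}$) denote the union of the $d$-dimensional (resp.\ lower-dimensional) irreducible components of $X_j$. Set $Z_1 = \bigcup_{j \in J_d} X_j^{(d)}$, a C-analytic set of pure dimension $d$ with finitely many irreducible components $\{Y_i\}_{i \in I}$, and let $J(i) = \{j \in J_d : Y_i \subset X_j\}$ and $W_i^{\ast} = \bigcup_{j \in J(i)} W_j$. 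Define the closed bad set
\[
N_1 = \Sing(Z_1) \cup \bigcup_{i \neq i'}(Y_i \cap Y_{i'}) \cup \bigcup_j X_j^{(<d)},
\]
a C-analytic set of dimension $< d$, and take
\[
U_1 = (M \setminus N_1) \cap \bigcup_i \bigl( W_i^{\ast} \cap (M \setminus \textstyle\bigcup_{k \neq i} Y_k)\bigr),
\]
which is open C-semianalytic.

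A direct computation using $\bigcup_{i' \neq i}(Y_i \cap Y_{i'}) \subset N_1$ yields
\[
Z_1 \cap U_1 = \bigsqcup_i (Y_i \setminus N_1) \cap W_i^{\ast},
\]
a disjoint union of open subsets of the manifolds $Y_i \setminus \Sing(Y_i)$, hence a $d$-dimensional real analytic manifold. Each piece is Zariski dense in $Y_i$ because the minimality $X_j = \ol{X_j \cap W_j}^{\zar}$ forces $Y_i \cap W_j$ to be a non-empty open subset of the irreducible $Y_i$ for any $j \in J(i)$, so $\ol{Z_1 \cap U_1}^{\zar} = Z_1$; and the inclusion $Z_1 \cap U_1 \subset S$ is immediate since $Y_i \subset X_j$ for every $j \in J(i)$. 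Finally I would set
\[
S'' = \bigcup_j (X_j^{(<d)} \cap W_j) \cup \bigcup_{j \in J_d}(X_j^{(d)} \cap W_j \cap N_1),
\]
a finite union of amenable C-semianalytic sets (hence amenable) of dimension $< d$, and verify $S = (Z_1 \cap U_1) \cup S''$ by a short case analysis: any $x \in S$ lies in some $X_{j_0} \cap W_{j_0}$ and is in $S''$ unless $j_0 \in J_d$, $x \in X_{j_0}^{(d)}$, and $x \notin N_1$, in which case $x$ is a smooth point of $Z_1$ sitting on a unique $Y_i$ that must also be an irreducible component of $X_{j_0}^{(d)}$, so $j_0 \in J(i)$ and $x \in (Y_i \setminus N_1) \cap W_i^{\ast} \subset Z_1 \cap U_1$. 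Applying the inductive hypothesis to $S''$ and concatenating gives the required $Z_2 \cap U_2, \ldots, Z_r \cap U_r$ with strictly decreasing dimensions.
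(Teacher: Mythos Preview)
Your proof is correct and follows the same inductive scheme as the paper (peel off the top-dimensional smooth part, leave an amenable residue of strictly smaller dimension), but the execution differs in an interesting way. The paper combines all the $X_j$ into a single $Y=\bigcup_j Y_j$, takes $Z_1=Y'$ (the union of its $d$-dimensional irreducible components), and defines $U_1$ in one stroke via an interior operation: $U_1=M\setminus\ol{M\setminus(S\setminus C)}$ where $C=Y\setminus(Y'\setminus(\Sing(Y')\cup Y''))$, so that $Z_1\cap U_1=\Int_{Y'}(S\setminus C)$; the residue is then simply $S\cap(\Sing(Y')\cup Y'')$. Your construction is more granular, tracking each irreducible component $Y_i$ of $Z_1$ and the indices $J(i)$ that contribute to it, which makes the verification of $Z_1\cap U_1\subset S$ and of the Zariski closure condition completely transparent---indeed the paper's proof does not explicitly check $\ol{Z_1\cap U_1}^{\zar}=Z_1$, whereas you do. On the other hand the paper's interior operation avoids having to assemble $U_1$ component by component.

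One small slip: you write that $Z_1$ has ``finitely many irreducible components $\{Y_i\}_{i\in I}$'', but a C-analytic set can have infinitely many irreducible components---the family is only locally finite. This does not affect your argument: $\bigcup_{i\neq i'}(Y_i\cap Y_{i'})$ is still C-analytic as a locally finite union, and $U_1$ is still open C-semianalytic as a locally finite union of open C-semianalytic sets; just replace ``finitely many'' by ``a locally finite family''.
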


\begin{proof} 
We proceed by induction on the dimension $d$ of $S$. If $S$ has dimension $0$, the result is clearly true. Assume the result true if the dimension of $S$ is less than $d$ and let us check that this implies that it is also true if $S$ has dimension $d$.
 
As $S$ is an amenable C-semianalytic set, there exist C-analytic sets $Y_1,\ldots,Y_s$ and open C-semianalytic sets $V_1,\ldots,V_s$ such that $S=\bigcup_{i=1}^s(Y_i\cap V_i)$. Denote $Y=\bigcup_{i=1}^sY_i$, which has dimension $d$. Let $Y'$ be the union of the irreducible components of $Y$ of dimension $d$ and let $Y''$ the union of those of smaller dimension.
We prove first that  that the C-semianalytic set $Y'\setminus(\Sing(Y')\cup Y'')$ is a union of connected components of $ Y\setminus(\Sing(Y')\cup Y'')$.

 Now  $Y\setminus\Sing(Y)$ is a real analytic manifold of dimension $d$ and $Y'$ is a C-analytic subset of $Y$. Thus, $Y'\setminus\Sing(Y)$ is  open and closed  in  $Y\setminus \Sing(Y)$, so $Y'\setminus\Sing(Y)$ is a union of connected components of $Y \setminus\Sing(Y)$, as required. 
Let $C$ be the complement in $Y$ of $Y'\setminus(\Sing(Y')\cup Y'')$

It holds:
\begin{equation}\label{squnion}
S=\Int_{Y'}(S\setminus C)\cup(S\cap(\Sing(Y')\cup Y'')).
\end{equation}
We only need to prove the inclusion from left to right. Let $x\in S\setminus(\Sing(Y')\cup Y'')$. We have to check that $x$ belongs to the interior part in $Y'$ of $S\setminus C$.

Assume $x\in Y_1\cap V_1$. As $\dim(Y_1)=d$ and $\Sing(Y_1)\subset C$, the difference $Y_1\setminus C$ is a real analytic manifold of dimension $d$ contained in $M\setminus C$, so it is an open subset of $M$. As
$$
S\setminus(\Sing(Y')\cup Y'')\subset Y'\setminus(\Sing(Y')\cup Y'')=Y'\setminus C,
$$
we deduce $x\in (Y_1\setminus C)\cap V_1\subset\Int_{Y'}(S\setminus C)$.

Define  $U_1=M\setminus(\ol{M\setminus(S\setminus C)})$. It holds that $Y_1$ is a C-analytic  subset of $ M$ and $U_1$ is an open  C-semianalytic set by Proposition \ref{booletopological}. We have 
$$
Y_1\cap U_1=Y_1\setminus(\ol{M\setminus(S\setminus C)})=\Int_{Y_1}(S\setminus C),
$$
which is a real analytic manifold.

As $S\cap \ (\Sing(Y')\cup Y'')$ is an  amenable C-semianalytic set of dimension $<d$, there exist by induction hypothesis finitely many  C- analytic subsets $Y_2,\ldots,Y_r$ of $M$ and finitely many open C-semianalytic subsets $U_2,\ldots,U_r$ of $M$ such that $Y_i\cap U_i$ is a real analytic manifold, $\dim(Y_{i+1}\cap U_{i+1})<\dim(Y_i\cap U_i)$ for $i=2,\ldots,r-1$ and
$$
S\cap(\Sing(Y')\cup Y'')=\bigcup_{i=2}^r(Y_i\cap U_i).
$$ 
Observe that $\dim(Y_1\cap U_1)=d>\dim(S\cap(\Sing(Y')\cup Y''))=\dim(Y_2\cap U_2)$. By equation \eqref{squnion} $S=\bigcup_{i=1}^r(Y_i\cap U_i)$, as required.
\end{proof}

\begin{proof}[Proof of Proposition \rm{\ref{normal2}}]
We prove first the `only if' implication. 

By Lemma \ref{decomp} we may assume $S=Z\cap U$ where $Z$ is a C-analytic set, $U$ is an open C-semianalytic set,  $Z$ is the Zariski closure of $S$. Let $\{C_{\ell}\}_{\ell\geq1}$ be the collection of the connected components of $Z\cap U$. Notice that the Zariski closure $\ol{C_{\ell}}^{\zar}$ is an irreducible component of $Z$. Each C-semianalytic set $C_{\ell}$ is an open subset of $Z$, so by Lemma \ref{openX} there exists an open C-semianalytic set $V_{\ell}$ such that $C_\ell=Z\cap V_\ell$. By Lemma \ref{odes} we write $V_\ell$ as a  locally finite countable union $V_\ell=\bigcup_{j\geq1}V_{\ell j}$ of open basic C-semianalytic sets. Consequently $S_{\ell j}=Z\cap V_{\ell j}$ is either empty or a basic C-semianalytic set whose Zariski closure is an irreducible component of $Z$. The collection $\{S_{\ell,j}\}_{\ell,j\geq1}$ satisfies the required conditions. 

We prove now the `if' implication by induction on the dimension of $S$. If $\dim S =0$, then $S$ is a C-analytic set and in particular it is amenable. Assume that the result is true if $\dim S <d$ and let us check that this implies that it is also true for dimension $d$. 

Consider the C-analytic set $X=\bigcup_{i\geq1}\ol{S_i}^{\zar}$ and let $X'$ be the union of $\Sing(\ol{S_i}^{\zar})$ and the irreducible components of $\ol{S_i}^{\zar}$ of dimension $<d$ for $i\geq1$. It holds that $X'$ is a C-analytic set of dimension $<d$. Consequently, each intersection $S_i\cap X'$ is a basic C-semianalytic set of dimension $<d$. In addition, the countable family $\{S_i\cap X'\}_{i\geq1}$ is locally finite and $\{\ol{S_i\cap X'}^{\zar}\}_{i\geq1}$ is locally finite (after eliminating repetitions). By induction hypothesis $S\cap X'=\bigcup_{i\geq1}S_i\cap X'$ is an amenable C-semianalytic set. 

It only remains to check that $S\setminus X'$ is an amenable C-semianalytic set. Notice that $S_i\setminus X'$ is either empty or a basic C-semianalytic set of dimension $d$ and  $S_i\setminus X'$ is an open subset of the real analytic manifold $X\setminus X'$ for each $i\geq1$, so the C-semianalytic set $S\setminus X'=\bigcup_{i\geq1}S_i\setminus X'$ is an open subset of $X$. 

Consequently, $S\setminus X'$ is by Lemma \ref{openX} an amenable C-semianalytic set. Indeed, write $S_i=\ol{S_i}^{\zar}\cap V_i$ where $V_i$ is a basic open C-semianalytic set. As $\Sing(\ol{S_i}^{\zar})$ and the irreducible components of $\ol{S_i}^{\zar}$ of dimension strictly smaller than $d$ are contained in $X'$, the basic C-semianalytic set $S_i\setminus X'=(\ol{S_i}^{\zar}\setminus X')\cap V_i$ is an open subset of the real open analytic manifold $(\ol{S_i}^{\zar}\setminus X')$, which is in turn an open subset of the real analytic manifold $X\setminus X'$, as required. 
\end{proof}

Proposition \ref{normal2} implies  a sufficient condition for a locally finite countable union of amenable C-semianalytic to be amenable.

\begin{cor}\label{lfuok2}
Let $\{S_i\}_{i\geq1}$ be a locally finite collection of amenable C-semia\-na\-ly\-tic sets such that the family $\{\ol{S_i}^{\zar}\}_{i\geq1}$ is locally finite (after eliminating repetitions). Then $S=\bigcup_{i\geq1}S_i$ is an amenable C-semianalytic set.
\end{cor}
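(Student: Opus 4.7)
The plan is to derive Corollary \ref{lfuok2} directly from the ``if'' direction of Proposition \ref{normal2} by refining the decomposition provided for each $S_i$ into a single global decomposition of $S$. The whole argument is essentially bookkeeping about locally finite families, so the main work is to check that the double-indexed family we produce, and the corresponding family of Zariski closures, remain locally finite.

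First I would apply the ``only if'' direction of Proposition \ref{normal2} to each $S_i$. This yields, for every $i\geq1$, a countable locally finite family $\{S_{ij}\}_{j\geq1}$ of global basic semianalytic sets with $S_i=\bigcup_{j\geq1}S_{ij}$ and such that the family $\{\ol{S_{ij}}^{\zar}\}_{j\geq1}$ is locally finite after eliminating repetitions. Clearly $S=\bigcup_{i,j\geq1}S_{ij}$ and each $S_{ij}$ is a global basic semianalytic set, so what remains is to verify the two local finiteness conditions required to invoke the ``if'' direction of Proposition \ref{normal2} for the doubly-indexed collection $\{S_{ij}\}_{i,j\geq1}$.

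Next I would check that $\{S_{ij}\}_{i,j\geq1}$ is locally finite in $M$. Fix $x\in M$. By hypothesis there is an open neighborhood $U^x$ meeting only finitely many $S_i$, say those indexed by $i\in F\subset\N$. For each $i\in F$, local finiteness of $\{S_{ij}\}_{j\geq1}$ furnishes an open neighborhood $U^x_i\subset U^x$ of $x$ meeting only finitely many $S_{ij}$. The intersection $V^x=\bigcap_{i\in F}U^x_i$ is an open neighborhood of $x$ that meets $S_{ij}$ only if $i\in F$ and (for that $i$) $S_{ij}$ meets $U^x_i$, so only finitely many pairs $(i,j)$ occur, as needed.

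For the second local finiteness condition, I would argue analogously but taking into account that we are allowed to eliminate repetitions. Fix $x\in M$ and choose an open neighborhood $W^x$ meeting only finitely many distinct members of $\{\ol{S_i}^{\zar}\}_{i\geq1}$. Since $\ol{S_{ij}}^{\zar}\subset\ol{S_i}^{\zar}$, any $\ol{S_{ij}}^{\zar}$ meeting $W^x$ forces $\ol{S_i}^{\zar}$ to meet $W^x$, which restricts $i$ to a finite set $G$. For each $i\in G$, local finiteness of $\{\ol{S_{ij}}^{\zar}\}_{j\geq1}$ after eliminating repetitions provides an open neighborhood $W^x_i\subset W^x$ of $x$ meeting only finitely many distinct members of this family; intersecting over the finite set $G$ gives a neighborhood of $x$ meeting only finitely many distinct members of $\{\ol{S_{ij}}^{\zar}\}_{i,j\geq1}$.

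With both local finiteness conditions in hand, Proposition \ref{normal2} applied to the family $\{S_{ij}\}_{i,j\geq1}$ shows that $S=\bigcup_{i,j\geq1}S_{ij}$ is an amenable C-semianalytic set. The only subtle point, and what I would regard as the main obstacle, is the careful handling of the ``after eliminating repetitions'' clause in the second local finiteness check: one must remember that the hypothesis does not say that the $\ol{S_{ij}}^{\zar}$ are pairwise distinct, only that distinct members form a locally finite family, and the inclusion $\ol{S_{ij}}^{\zar}\subset\ol{S_i}^{\zar}$ is exactly what makes the reduction to finitely many $i$'s possible.
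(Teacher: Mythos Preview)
Your argument is exactly the paper's strategy: apply Proposition~\ref{normal2} to each $S_i$ to obtain basic pieces $S_{ij}$, then invoke the ``if'' direction of that proposition on the doubly-indexed family, using $\ol{S_{ij}}^{\zar}\subset\ol{S_i}^{\zar}$ to transfer local finiteness of the Zariski closures. The paper's proof is terser but identical in substance.

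There is one genuine subtlety in your second local-finiteness check. You assert that $\ol{S_i}^{\zar}$ meeting $W^x$ ``restricts $i$ to a finite set $G$'', but the hypothesis gives local finiteness of $\{\ol{S_i}^{\zar}\}_i$ only \emph{after eliminating repetitions}: this guarantees that only finitely many \emph{distinct} Zariski closures meet $W^x$, not that only finitely many \emph{indices} $i$ do. If infinitely many $S_i$ happen to share the same Zariski closure (which is not excluded, even with $\{S_i\}_i$ locally finite), your index set $G$ is infinite and the intersection $\bigcap_{i\in G}W_i^x$ need not be a neighborhood of $x$. The paper's one-sentence argument makes exactly the same leap (``we conclude that the family $\{\ol{S_{ij}}^{\zar}\}_{i,j\geq1}$ is also locally finite''), so you are matching its level of rigor; your more explicit write-up simply makes the step visible.
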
 

\begin{proof}
When needed we always consider locally finite unions after eliminating repetitions.
For each $i\geq1$ there exists by Proposition \ref{normal2} a countable family $\{S_{ij}\}_{j\geq1}$ of basic C-semianalytic sets such that $S_i=\bigcup_{j\geq1}S_{ij}$ and the family $\{\ol{S_{ij}}^{\zar}\}_j$ is locally finite. Notice that $\{S_{ij}\}_{i,j\geq1}$ is a countable family of basic C-semianalytic sets. As $\ol{S_{ij}}^{\zar}\subset\ol{S_i}^{\zar}$ for each $i,j\geq1$ and the families $\{\ol{S_i}^{\zar}\}_{i\geq1}$ and $\{\ol{S_{ij}}^{\zar}\}_{j\geq1}$ are locally finite , we conclude that the family $\{\ol{S_{ij}}^{\zar}\}_{i,j\geq1}$ is also locally finite. By Proposition \ref{normal2} we conclude that $S=\bigcup_{i\geq1}S_i=\bigcup_{i,j\geq1}S_{ij}$ is amenable, as required.
\end{proof}

\subsection{Images of amenable C-semianalytic sets under proper holomorphic maps}\setcounter{paragraph}{0} 

Let $(X,\an_X), (Y, \Oo_Y)$ be reduced Stein spaces endowed with  anti-invo\-lu\-tion $\sigma, \tau$ such that their  fixed part $X^\sigma, Y^\tau$ are both not empty. Let ${\mathcal A}(X)\subset\an(X)$ be the subring of all invariant holomorphic sections of $\an(X)$ and 
$$
{\mathcal A}(X^\sigma)=\{F_{|X^\sigma}:\ F\in{\mathcal A}(X)\}\subset\an(X^\sigma). 
$$
In this section we prove the following result.

\begin{thm}\label{properint-tame}
Let $F:(X,\an_X)\to(Y,\an_Y)$ be an invariant proper holomorphic map, that is, $\tau\circ F=F\circ\sigma$. Let $S\subset X^\sigma$ be a ${\mathcal A}(X^\sigma)$-definable and amenable C-semianalytic set and let $S'\subset Y^\tau$ be an amenable C-semianalytic set. We have 
\begin{itemize}
\item[(i)] $F(S)$ is an amenable C-semianalytic subset of $Y^\tau$ of the same dimension as $S$.
\item[(ii)] If $T$ is a union of connected components of $F^{-1}(S')\cap X^\sigma$, then $F(T)$ is an amenable C-semianalytic set.
\end{itemize}
\end{thm}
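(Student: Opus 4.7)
The overall strategy is to reduce Theorem \ref{properint-tame} to the Direct Image Theorem \ref{directimage} by writing $S$ as a suitably controlled locally finite union, applying the direct image theorem piece by piece, and reassembling via the characterization of amenable C-semianalytic sets provided by Proposition \ref{normal2}. The properness of $F$ will propagate the crucial local finiteness conditions from the source to the target, and part (ii) will reduce to part (i) via the stability of amenable C-semianalytic sets under inverse image and under taking connected components.

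For part (i), the first step is to apply Proposition \ref{normal2} to the ${\mathcal A}(X^\sigma)$-definable amenable C-semianalytic set $S$ to obtain a locally finite countable decomposition $S=\bigcup_{i\geq1}S_i$ into ${\mathcal A}(X^\sigma)$-definable basic global semianalytic pieces such that the family of Zariski closures $\{Z_i=\overline{S_i}^{\zar}\}$ is also locally finite in $X^\sigma$. Each $Z_i$ is a C-analytic subset of $X^\sigma$ cut out by invariant holomorphic sections of $X$, so by Proposition \ref{antinvolution} (or the construction of complexifications) it admits an invariant closed complex analytic complexification $\widetilde{Z_i}\subset X$; by a standard shrinking we may assume the family $\{\widetilde{Z_i}\}$ is locally finite in a sufficiently small invariant neighborhood of $X^\sigma$ inside $X$. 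Grauert's direct image theorem, applied to the proper invariant map $F$, yields that each $F(\widetilde{Z_i})$ is an invariant closed complex analytic subset of $Y$, so $F(\widetilde{Z_i})\cap Y^\tau$ is a C-analytic subset of $Y^\tau$ containing $F(S_i)$. By Theorem \ref{directimage}(i), each $F(S_i)$ is a C-semianalytic subset of $Y^\tau$ of dimension $\dim S_i$, hence decomposes as a locally finite countable union of basic global semianalytic sets $T_{i,k}$ in $Y^\tau$.

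The family $\{T_{i,k}\}_{i,k}$ is locally finite in $Y^\tau$: any compact $K\subset Y^\tau$ has a compact preimage $F^{-1}(K)\cap X^\sigma$ meeting only finitely many $Z_i$, so only finitely many $F(S_i)$ meet $K$, and each such $F(S_i)$ contributes only finitely many $T_{i,k}$ near $K$. Likewise the family of Zariski closures $\{\overline{T_{i,k}}^{\zar}\}$ is locally finite, because $\overline{T_{i,k}}^{\zar}\subset F(\widetilde{Z_i})\cap Y^\tau$ and properness of $F$ together with local finiteness of $\{\widetilde{Z_i}\}$ force local finiteness of $\{F(\widetilde{Z_i})\cap Y^\tau\}$. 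Proposition \ref{normal2} then concludes that $F(S)=\bigcup_{i,k}T_{i,k}$ is amenable, and the dimension equality $\dim F(S)=\dim S$ is precisely the dimension statement in Theorem \ref{directimage}(i).

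For part (ii), inverse image under the analytic map $F$ preserves amenability (shown earlier in the section), so $F^{-1}(S')\cap X^\sigma$ is an amenable C-semianalytic subset of $X^\sigma$, and any union $T$ of its connected components is again amenable (union of connected components of an amenable set is amenable). Since $S'$, being C-analytic or an intersection with an open C-semianalytic set, is cut out locally by invariant holomorphic data pulled back via the invariant map $F$, and since Proposition \ref{sigmainvariante} ensures ${\mathcal A}(X^\sigma)$-definability of unions of connected components of ${\mathcal A}(X^\sigma)$-definable C-semianalytic sets, the set $T$ is ${\mathcal A}(X^\sigma)$-definable; applying part (i) to $T$ then gives that $F(T)$ is amenable. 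The main technical obstacle is precisely this bookkeeping: guaranteeing that the complexifications of the various $Z_i$ can be chosen simultaneously to form a locally finite family inside $X$ (so Grauert's theorem applies globally rather than only locally), and keeping track of ${\mathcal A}(X^\sigma)$-definability through the decompositions in part (ii), where the pull-back description of $S'$ must be arranged to use globally defined invariant sections rather than only local ones.
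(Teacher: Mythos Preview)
Your plan has a genuine gap in part (i), precisely at the step where you assert local finiteness of the family $\{\overline{T_{i,k}}^{\zar}\}_{i,k}$. The containment $\overline{T_{i,k}}^{\zar}\subset F(\widetilde{Z_i})\cap Y^\tau$ together with local finiteness of $\{F(\widetilde{Z_i})\cap Y^\tau\}_i$ does \emph{not} force local finiteness of the $\overline{T_{i,k}}^{\zar}$: for a single index $i$, the C-semianalytic set $F(S_i)$ delivered by Theorem~\ref{directimage}(i) is only known to be C-semianalytic, not amenable, so its decomposition into basic pieces $T_{i,k}$ may have infinitely many distinct Zariski closures accumulating at a point of $F(\widetilde{Z_i})\cap Y^\tau$. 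Concretely, nothing in your argument rules out $F(S_i)$ behaving like the closure of the set in Example~\ref{ex}(i), which is a C-semianalytic set sitting inside a C-analytic set of the same dimension yet fails to be amenable. Theorem~\ref{directimage} provides no amenability information, and that is exactly the gap you must bridge.

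The paper closes this gap by a different mechanism: rather than decomposing $F(S)$ a posteriori, it uses Lemma~\ref{pd} to write $S$ as a finite union $\bigcup (Z_i\cap V_i)$ with $Z_i$ an \emph{invariant complex} analytic subset and $V_i$ open, then reduces via irreducible components and induction on $\dim S$ to the case where $X,Y$ are irreducible of the same dimension and $S$ is open. The key geometric input is Lemma~\ref{strata}: off a lower-dimensional invariant complex analytic set, $F$ has constant maximal rank, so by the rank theorem its restriction to $X^\sigma$ is an open map. Thus $F(V\cap M^\sigma)$ is an \emph{open} C-semianalytic subset of $Y^\tau$, hence amenable directly by Lemma~\ref{openX}, while the lower-dimensional remainder is handled by induction. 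Your approach never invokes this openness, and without it the control on Zariski closures in the target is lost.

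Your reduction of (ii) to (i) is also unjustified: $S'$ is only assumed amenable in $Y^\tau$, i.e.\ described by functions in $\an(Y^\tau)$, not in ${\mathcal A}(Y^\tau)$. Pulling these back along $F$ gives analytic functions on $X^\sigma$, not a priori restrictions of invariant holomorphic functions on $X$, so $F^{-1}(S')\cap X^\sigma$ and its connected components need not be ${\mathcal A}(X^\sigma)$-definable. The paper treats (ii) by a parallel but separate induction, again using Lemma~\ref{strata} and arranging ${\mathcal A}$-definability locally by restricting to Stein preimages $X_j=F^{-1}(Y_j)$ over which the open pieces $V_j$ of $S'$ become ${\mathcal A}(Y^\tau_j)$-definable.
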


\begin{lem}\label{zarx}
Let $Y$ be a C-analytic subset of $X^\sigma$ of the same dimension and let $Z$ be the Zariski closure of $Y$ in $X$. Then $\dim_\C Z=\dim_\R Y$, $\Sing(Y)\subset\Sing(Z)$ and $Y\setminus\Sing(Z)$ is a union of connected components of $(X^\sigma\cap Z) \setminus \Sing(Z)$.
\end{lem}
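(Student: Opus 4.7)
The plan is to reduce the three claims to a local analysis of the Zariski closure $Z$ near points of $Y$, where the complex-analytic structure of $Z$ is forced to be the local complexification of $Y$ considered inside $X$.

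First, I would establish that locally at each point $y \in Y$, the germ $Z_y$ coincides with a complexification of $Y_y$ inside $X_y$. Since $X$ is Stein and $Y \subset X^\sigma$ is closed, the Zariski closure $Z$ is the zero set in $X$ of the ideal of holomorphic functions on $X$ vanishing on $Y$; by averaging with $\sigma$ one may restrict to invariant generators. Any such invariant function vanishes on $Y$ iff its restriction to $X^\sigma$ vanishes on $Y$, so locally the ideal of $Z$ at $y$ is generated by (complexifications of) real analytic generators of the C-analytic germ $Y_y$. This identifies $Z_y$ with the local complexification of $Y_y$ and gives $\dim_\C Z_y = \dim_\R Y_y$. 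Taking the supremum over $y \in Y$ (which is where $Z$ concentrates its dimension, since $Y$ is Zariski-dense in $Z$) yields $\dim_\C Z = \dim_\R Y$.

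For the inclusion $\Sing(Y)\subset\Sing(Z)$, I argue by contrapositive: if $y \in Y \cap \Reg(Z)$ then $Z_y$ is a $\sigma$-invariant complex manifold of complex dimension $d = \dim_\C Z$. In suitable $\sigma$-adapted local holomorphic coordinates on $Z$ centered at $y$, the involution $\sigma$ becomes complex conjugation, so the fixed part $X^\sigma \cap Z$ near $y$ is a real analytic submanifold of real dimension $d$. By Step 1 this local fixed part is exactly $Y_y$, hence $y \in \Reg(Y)$.

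Finally, for the third claim, fix $z \in (X^\sigma \cap Z) \setminus \Sing(Z)$. As just observed, near $z$ the set $X^\sigma \cap Z$ is a real analytic manifold $N$ of dimension $d = \dim_\R Y$. If $z \in Y$, then by Step 1 the germs $Y_z$ and $(X^\sigma \cap Z)_z$ both equal the $\sigma$-fixed part of $Z_z$, so $Y$ and $N$ agree in a neighborhood of $z$; thus $Y \setminus \Sing(Z)$ is open in $(X^\sigma \cap Z) \setminus \Sing(Z)$. It is also closed there: if $z \in \overline{Y \setminus \Sing(Z)} \cap N$, then $z \in Y$ (since $Y$ is closed), and local agreement $Y = N$ near $z$ forces $z \in Y \setminus \Sing(Z)$. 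An open-and-closed subset of the manifold $(X^\sigma \cap Z) \setminus \Sing(Z)$ is a union of connected components, which concludes the proof.

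The main obstacle I anticipate is Step 1: justifying rigorously that the Zariski closure $Z$ really realizes the local complexification of $Y$, because a C-analytic set need not be coherent under its reduced structure. The way around this is to use the Stein character of $X$ together with Cartan's Theorems A and B applied to the invariant coherent ideal sheaf defining $Y$, so that invariant global generators extended from $X^\sigma$ match the local invariant ideal of $Z$ at each $y \in Y$; once this is in place, the dimension and regularity statements follow from standard real-vs-complex dimension relations for $\sigma$-invariant germs.
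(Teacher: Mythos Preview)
Your Step 1 has a genuine gap. The claim that $Z_y$ equals the complexification of the reduced germ $Y_y$ for every $y\in Y$ fails at non-coherence points of $Y$. Take for instance $X$ to be the complex Whitney umbrella $\{x^2-zy^2=0\}\subset\C^3$ with $\sigma$ the usual conjugation, and $Y=X^\sigma$ (so the hypothesis on dimensions is satisfied). At $y_0=(0,0,-1)$ the germ $Y_{y_0}$ is the real line $\{x=y=0\}$, whose complexification is one-dimensional, whereas $Z_{y_0}=X_{y_0}$ is the two-dimensional complex surface germ. Your proposed fix --- applying Theorems A and B to ``the invariant coherent ideal sheaf defining $Y$'' --- only produces the well-reduced ideal $I(Y)\an_{X^\sigma}$, whose complexified zero set is $Z$ itself; at a non-coherence point its stalk is strictly smaller than the full ideal $I(Y_{y_0})$ of the reduced germ, so the pointwise identity you want does not follow. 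Since your Steps 2 and 3 both invoke the identity $Y_y=(X^\sigma\cap Z)_y$ that you extract from Step 1, the gap propagates.

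The paper sidesteps local complexification entirely. For the dimension equality it argues by Zariski density: were $\dim_\R Y<\dim_\C Z$ one would have $Y\subset\Sing(Z)$, forcing $Z=\ol{Y}^{\zar}\subset\Sing(Z)$, absurd. The inclusion $\Sing(Y)\subset\Sing(Z)$ is then immediate from the well-reduced convention (under which $\Reg(Y)=\Reg(\widetilde Y)\cap X^\sigma$ with $\widetilde Y=Z$). For the last claim the paper simply observes that $Y\setminus\Sing(Z)$ is a closed C-analytic subset of the real $d$-manifold $(X^\sigma\cap Z)\setminus\Sing(Z)$ which, by the previous inclusion, is itself a $d$-manifold; hence it is open and closed there. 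No identification of germs is needed --- only the equality of dimensions.
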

\begin{proof}
If $\dim_\R Y<\dim_\C Z$, it holds $Y\subset\Sing(Z)\subsetneq Z$, which is a contradiction. Thus, $d=\dim_\C Z=\dim_\R Y$. The inclusion $\Sing(Y)\subset\Sing(Z)$ is clear, so $Y\setminus\Sing(Z)$ is a real analytic manifold of dimension $d$. In addition, $X^\sigma\setminus Z$ is a real analytic set of dimension $d$ and $Y$ is a C-analytic subset of $X^\sigma\cap Z$. Thus, $Y\setminus\Sing(Z)$ is an open and closed subset of $(X^\sigma\cap  Z)\setminus \Sing(Z)$, so $Y\setminus\Sing(Z)$ is a union of connected components of $(X^\sigma\cap Z) \setminus\Sing(Z)$, as required. 
\end{proof}

Next result and its proof generalize Lemma \ref{decomp} to  ${\mathcal A}(X^\sigma)$-definable and amenable C-semianalytic sets.  

\begin{lem}\label{pd}
Let $S\subset X^\sigma$ be an ${\mathcal A}(X^\sigma)$-definable and amenable C-semianalytic set. Then there exist finitely many invariant complex analytic subsets $Z_1,\ldots,Z_r$ of $X$ and finitely many open ${\mathcal A}(X^\sigma)$-definable C-semianalytic subsets $U_1,\ldots,U_r$ of $X^\sigma$ such that $S=\bigcup_{i=1}^r(Z_i\cap U_i)$, each $Z_i\cap U_i$ is a real analytic manifold and $\dim(Z_{i+1}\cap U_{i+1})<\dim(Z_i\cap U_i)$ for $i=1,\ldots,r-1$.
\end{lem}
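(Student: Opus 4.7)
The plan is to adapt the inductive proof of Lemma~\ref{decomp} to the invariant setting by induction on $d=\dim S$, carefully verifying at each step that every auxiliary set introduced is either an invariant complex analytic subset of $X$ or an ${\mathcal A}(X^\sigma)$-definable C-semianalytic subset of $X^\sigma$. The two key tools will be Proposition~\ref{sigmainvariante} (to guarantee that closures and connected components of ${\mathcal A}(X^\sigma)$-definable C-semianalytic sets remain ${\mathcal A}(X^\sigma)$-definable) and Lemma~\ref{zarx} (to pass between a real C-analytic set and its complex Zariski closure in $X$).

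For the base case $d=0$, the set $S$ is a locally finite discrete subset of $X^\sigma$; since each of its points is $\sigma$-fixed, $S$ is itself a $0$-dimensional invariant complex analytic subset of $X$, so one takes $r=1$, $Z_1=S$, $U_1=X^\sigma$.

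For the inductive step, let $Z$ be the complex Zariski closure of $S$ in $X$; since $S$ is $\sigma$-invariant, so is $Z$, and Lemma~\ref{zarx} gives $\dim_\C Z=d$. Decompose $Z=Z'\cup Z''$, where $Z'$ is the union of the irreducible components of $Z$ of complex dimension $d$ and $Z''$ the union of the remaining components; both are invariant, since $\sigma$ permutes irreducible components of the same dimension. Put $E=\Sing(Z')\cup Z''$, an invariant complex analytic subset of $X$ of dimension $<d$, so that $F=X^\sigma\cap E$ is an ${\mathcal A}(X^\sigma)$-definable C-analytic subset of $X^\sigma$ of dimension $<d$. Starting from any amenable decomposition $S=\bigcup_{i=1}^s Y_i\cap V_i$, the difference $S\setminus F=\bigcup_{i=1}^s Y_i\cap(V_i\cap(X^\sigma\setminus F))$ is again amenable, and clearly ${\mathcal A}(X^\sigma)$-definable. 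Set $Z_1=Z'$ and $T=X^\sigma\setminus(S\setminus F)$, which is an ${\mathcal A}(X^\sigma)$-definable C-semianalytic set; by Proposition~\ref{sigmainvariante} its closure $\overline{T}$ is also ${\mathcal A}(X^\sigma)$-definable, so $U_1=X^\sigma\setminus\overline{T}$ is an open ${\mathcal A}(X^\sigma)$-definable C-semianalytic subset of $X^\sigma$. Exactly as in Lemma~\ref{decomp}, one gets $Z_1\cap U_1=\mathrm{Int}_{Z_1}(S\setminus F)\subset(X^\sigma\cap Z')\setminus(\Sing(Z')\cup Z'')$, which is a real analytic manifold by Lemma~\ref{zarx}; the identity $S=(Z_1\cap U_1)\cup(S\cap F)$ is the analog of~\eqref{squnion}, and since $\dim S=d$ and $\dim F<d$ one has $\dim(Z_1\cap U_1)=d$.

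The residual piece $S\cap F$ is ${\mathcal A}(X^\sigma)$-definable, amenable and of dimension $<d$, so by the induction hypothesis it admits a decomposition $\bigcup_{i=2}^r Z_i\cap U_i$ with all the required properties; concatenation yields the desired decomposition of $S$, with $\dim(Z_2\cap U_2)<d=\dim(Z_1\cap U_1)$. The main obstacle will be the bookkeeping: at every step one must verify that amenability and ${\mathcal A}(X^\sigma)$-definability are preserved simultaneously, and the most delicate point is showing that $U_1$ is both open and ${\mathcal A}(X^\sigma)$-definable — it is precisely here that Proposition~\ref{sigmainvariante} enters, replacing the much weaker statement (closure of a C-semianalytic set is C-semianalytic) that sufficed in the proof of Lemma~\ref{decomp}.
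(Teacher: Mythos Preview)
There is a genuine gap: the claimed identity $S=(Z_1\cap U_1)\cup(S\cap F)$ is false in general. Take $X=\C^n$ with the standard conjugation (so $X^\sigma=\R^n$) and $S=\{x_1\geq 0\}$, which is ${\mathcal A}(X^\sigma)$-definable and amenable. The complex Zariski closure of $S$ is $Z=\C^n$, hence $Z'=\C^n$, $Z''=\varnothing$, $E=\Sing(Z')\cup Z''=\varnothing$ and $F=\varnothing$. Your construction gives $T=\{x_1<0\}$, $\overline T=\{x_1\le 0\}$, $U_1=\{x_1>0\}$ and $S\cap F=\varnothing$, so $(Z_1\cap U_1)\cup(S\cap F)=\{x_1>0\}\subsetneq S$: the hyperplane $\{x_1=0\}$ is lost. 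When $\dim S<\dim X^\sigma$ the situation is worse, since $S\setminus F$ then has empty interior in $X^\sigma$ and your $U_1$ is empty from the outset.

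The underlying problem is that the complex Zariski closure of $S$ retains no information about the boundary strata of $S$ sitting inside $\Reg(Z')\cap X^\sigma$; those strata must come from an amenable presentation. The paper therefore does not pass to $\ol S^{\zar}$ at this step: it fixes a decomposition $S=\bigcup_i(Y_i\cap V_i)$, sets $Y=\bigcup_i Y_i$, splits $Y=Y'\cup Y''$ by dimension, and only then takes complex Zariski closures $Z',Z''$ in $X$. Now a point $x\in S$ lying only in pieces $Y_i\cap V_i$ with $\dim Y_i<d$ is automatically in $Y''\subset Z''$, hence in the residual $S\cap(\Sing(Z')\cup Z'')$, and this is exactly what makes \eqref{squnion2} go through. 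Lemma~\ref{zarx} and Proposition~\ref{sigmainvariante} enter at a different juncture than you suggest: by Lemma~\ref{zarx}, the set $Y'\setminus(\Sing(Z')\cup Z'')$ is a union of connected components of the ${\mathcal A}(X^\sigma)$-definable set $(X^\sigma\cap Z)\setminus(\Sing(Z')\cup Z'')$, and Proposition~\ref{sigmainvariante} is what ensures that such a union---and hence the auxiliary closed set $C$ built from it---remains ${\mathcal A}(X^\sigma)$-definable.
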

\begin{proof}
We proceed by induction on the dimension $d$ of $S$. If $S$ has dimension $0$, the result is clearly true. Assume the result true if the dimension of $S$ is less than $d$ and let us check that it is also true if $S$ has dimension $d$.
 
As $S$ is an amenable C-semianalytic set, there exist C-analytic sets $Y_1,\ldots,Y_s$ and open C-semianalytic sets $V_1,\ldots,V_s$ such that $S=\bigcup_{i=1}^s(Y_i\cap V_i)$. Denote $Y=\bigcup_{i=1}^sY_i$, which has dimension $d$. Let $Y'$ be the union of the irreducible components of $Y$ of dimension $d$ and let $Y''$ the union of those of smaller dimension. Let $Z'$ be the Zariski closure of $Y'$ in $X$ and $Z''$ the Zariski closure of $Y''$ in $X$. Then $Z=Z'\cup Z''$ is the Zariski closure of $Y$ in $X$.

By Lemma \ref{zarx} the C-semianalytic set $Y'\setminus(\Sing(Z')\cup Z'')$ is a union of connected components of $(X^\sigma\cap Z)\setminus((\Sing(Z')\cup Z''))$. By Proposition \ref{sigmainvariante} the set $Y'\setminus(\Sing(Z')\cup Z'')$ is a ${\mathcal A}(X^\sigma)$-definable C-semianalytic subset of $X^\sigma$. As $Y'\setminus(\Sing(Z')\cup Z'')$ is an open C-semianalytic subset of $X^\sigma\cap Z$,
$$
C=(X^\sigma\cap Z)\setminus(Y'\setminus(\Sing(Z')\cup Z''))
$$
is a closed ${\mathcal A}(X^\sigma)$-definable C-semianalytic subset of $X^\sigma$.

We claim:
\begin{equation}\label{squnion2}
S=\Int_{X^\sigma\cap Z'}(S\setminus C)\cup(S\cap(\Sing(Z')\cup Z'')).
\end{equation}
We only need to prove the inclusion from left to right. Let $x\in S\setminus(\Sing(Z')\cup Z'')$. We have to check that $x$ belongs to the interior part in $X^\sigma\cap Z'$ of $S\setminus C$.

Assume $x\in Y_1\cap V_1$. As $\dim(Y_1)=d$ and $\Sing(Y_1)\subset C$, the difference $Y_1\setminus C$ is a real analytic manifold of dimension $d$ contained in $(X^\sigma\cap Z')\setminus C$, so it is an open subset of $X^\sigma\cap Z'$. As
$$
S\setminus(\Sing(Z')\cup Z'')\subset Y'\setminus(\Sing(Z')\cup Z'')=(X^\sigma\cap Z')\setminus C,
$$
we deduce $x\in (Y_1\setminus C)\cap V_1\subset\Int_{X^\sigma\cap Z'}(S\setminus C)$.

Define $Z_1=Z'$ and $U_1=X^\sigma\setminus\ol{X^\sigma\setminus(S\setminus C)}$. It holds that $Z_1$ is an invariant complex analytic subset of $X$ and $U_1$ is an open ${\mathcal A}(X^\sigma)$-definable C-semianalytic set by Proposition \ref{sigmainvariante}. We have 
$$
Z_1\cap U_1=(X^\sigma\cap Z')\setminus\ol{X^\sigma\setminus(S\setminus C)}=\Int_{X^\sigma\cap Z'}(S\setminus C),
$$
which is a real analytic manifold.

As $S\cap \ (\Sing(Z')\cup Z'')$ is an ${\mathcal A}(X^\sigma)$-definable and amenable C-semianalytic set of dimension $<d$, there exist by induction hypothesis finitely many invariant complex analytic subsets $Z_2,\ldots,Z_r$ of $X$ and finitely many open ${\mathcal A}(X^\sigma)$-definable C-semianalytic subsets $U_2,\ldots,U_r$ of $X^\sigma$ such that $Z_i\cap U_i$ is a real analytic manifold, $\dim(Z_{i+1}\cap U_{i+1})<\dim(Z_i\cap U_i)$ for $i=2,\ldots,r-1$ and
$$
S\cap(\Sing(Z')\cup Z'')=\bigcup_{i=2}^r(Z_i\cap U_i).
$$ 
Observe that $\dim(Z_1\cap U_1)=d>\dim(S\cap(\Sing(Z')\cup Z''))=\dim(Z_2\cap U_2)$. By equation \eqref{squnion2} $S=\bigcup_{i=1}^r(Z_i\cap U_i)$, as required.
\end{proof}

\begin{lem}\label{strata}
Let $F:(X,\an_X)\to(Y,\an_Y)$ be a proper surjective map between reduced irreducible Stein spaces of the same dimension $d$. Then, there exist complex analytic subsets $X'\subset X$ and $Y'\subset Y$ of dimension $<d$ such that
\begin{itemize}
\item[(i)] $F^{-1}(Y')=X'$,
\item[(ii)]$M=X\setminus X'$ and $N=Y\setminus Y'$ are complex analytic manifolds respectively dense in $X$ and $Y$,
\item[(iii)] $F_{|M}:M\to N$ is an  proper surjective open holomorphic map of constant rank $d$.
\end{itemize}
\end{lem}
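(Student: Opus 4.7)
The plan is to construct $Y'$ as a finite union of ``bad loci'' in $Y$, each a complex analytic subset of dimension $<d$, and then define $X'=F^{-1}(Y')$ so that property (i) holds by construction. The main tools will be Remmert's proper mapping theorem (images of analytic sets under proper holomorphic maps are analytic, and the dimension does not increase) together with upper semicontinuity of fiber dimension. Since $F$ is a proper surjective map between irreducible Stein spaces of the same dimension $d$, the generic fiber of $F$ is $0$-dimensional.

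First, I would isolate the following complex analytic subsets:
\begin{itemize}
\item $\Sigma_X=\Sing(X)$ and $\Sigma_Y=\Sing(Y)$, both of dimension $<d$.
\item $E=\{x\in X:\dim_x F^{-1}(F(x))\geq 1\}$, the locus where the fiber is not $0$-dimensional at $x$. By upper semicontinuity of fiber dimension $E$ is analytic in $X$, and since generic fibers are finite $\dim E<d$.
\item The critical locus $C$ defined on the complex manifold $U=(X\setminus\Sigma_X)\cap F^{-1}(Y\setminus\Sigma_Y)\setminus E$ (where $F$ restricts to a holomorphic map of manifolds with finite fibers) as the zero set of the maximal minors of the Jacobian of $F_{|U}$; this is analytic in $U$ and, upon taking closure in $X$, analytic of dimension $<d$ (a map of finite fibers between manifolds of the same dimension is generically a local biholomorphism on each irreducible component).
\end{itemize}
Now I would set
$$
Y'=\Sigma_Y\cup F(\Sigma_X)\cup F(E)\cup F(C)\quad\text{and}\quad X'=F^{-1}(Y').
$$
By Remmert's proper mapping theorem each of $F(\Sigma_X)$, $F(E)$, $F(C)$ is a complex analytic subset of $Y$, and since proper maps do not increase dimension each has dimension $<d$; hence $Y'$ is analytic of dimension $<d$. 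To check $\dim X'<d$, note that off $E$ the map $F$ has finite fibers, so $F^{-1}(Y')\setminus E$ has dimension at most $\dim Y'<d$, and $E$ itself has dimension $<d$.

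With $M=X\setminus X'$ and $N=Y\setminus Y'$, properties (ii) and (iii) drop out: $M$ avoids $\Sigma_X$ and so is a complex manifold, and similarly for $N$; both are dense because $X$ and $Y$ are irreducible and $X',Y'$ are proper analytic subsets. The restriction $F_{|M}$ is proper as a restriction of a proper map to the saturated closed set $F^{-1}(N)=M$; it is surjective because for $y\in N$, any preimage $x$ of $y$ satisfies $x\notin F^{-1}(Y')=X'$; by construction $M\cap C=\varnothing$ so the differential of $F_{|M}$ has maximal rank $d$ everywhere; and a holomorphic map of constant maximal rank between complex manifolds of equal dimension is a local biholomorphism, hence open.

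The main obstacle I expect is the rigorous handling of the critical locus $C$: one needs $C$ to be analytic in all of $X$ (not just on the open manifold part) and its image under $F$ to be analytic in $Y$ of dimension strictly less than $d$. The cleanest way is to define the ``would-be critical locus'' intrinsically as the degeneracy locus of the sheaf map $F^*\Omega^1_Y\to\Omega^1_X$ on the regular locus of $X$ (with $\Omega^1$ understood as K\"ahler differentials), take analytic closure in $X$, and invoke Remmert to push it forward; the dimension bound then follows because on its complement (together with the other deleted loci) $F$ is a local biholomorphism, so $C$ cannot contain a full irreducible component of $X$.
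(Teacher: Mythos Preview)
Your proposal is correct and follows essentially the same approach as the paper: remove the singular loci of $X$ and $Y$ together with the rank-deficient locus, push forward via the proper mapping theorem to build $Y'$, and pull back to get $X'=F^{-1}(Y')$. The paper packages $\Sigma_X$ and the critical locus together as a single set $X_0=\{z\in\Reg(X):\ \operatorname{rk}_z(F)\le d-1\}\cup\Sing(X)$ and takes $Y'=\Sing(Y)\cup F(X_0)$; your finer decomposition into $\Sigma_X$, $E$, and $C$ amounts to the same thing. One simplification you missed: since $X$ and $Y$ are Stein and $F$ is proper, every fiber is a compact analytic subset of a Stein space and hence \emph{finite}, so your set $E$ is empty from the start and the whole discussion of fiber-dimension semicontinuity is unnecessary here---this also makes the bound $\dim X'<d$ immediate.
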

\begin{proof}
Recall that compact analytic subsets of a Stein space are finite sets, so the fibers of $F$ are finite. So, the set 
$$
X_0=\{z\in\Reg(X):\ {\rm rk}_z(F)\leq d-1\}\cup\Sing(X)
$$
is a complex analytic subset of $X$ of dimension $<d$. 

The singular set $\Sing(Y)$ has dimension $<d$. As $F$ is proper, $F(X_0)$ is by Grauert's Theorem a complex analytic subset of $Y$ of dimension $<d$. Let $Y'=\Sing(Y)\cup F(X_0)$ and $X'=F^{-1}(Y')$, which is a complex analytic subset of $X$ of dimension $<d$ because $F$ has finite fibers. Put $X_1= X'\cup X_0$ and let $M=X\setminus X_1$ and $N=Y\setminus Y_1$, where $Y_1 =Y' = F(X_1)$, which are complex analytic manifolds respectively dense in $X$ and $Y$. As $M=F^{-1}(N)$, the map $F_{|M}:M\to N$ is proper and surjective. In addition, $F_{|M}$ has constant rank $d$, so by the rank theorem $F_{|M}$ is open, as required.
\end{proof}
We recall a topological fact.

\begin{lem}\label{proylc}
Let $\pi:X\to Y$ be a proper map with finite fibers between two topological spaces. Let $\{A_i\}_{i\in I}$ be a locally finite family of $X$. Then $\{\pi(A_i)\}_{i\in I}$ is a locally finite family of $Y$.
\end{lem}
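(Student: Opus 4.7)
The statement is purely topological and the natural strategy is to exhibit, for each $y\in Y$, an open neighborhood meeting only finitely many of the images $\pi(A_i)$. The whole point is to convert local finiteness around the (finite) fiber $\pi^{-1}(y)$ into local finiteness around $y$ itself, and for this the properness of $\pi$ (which I will use in the form: $\pi$ is closed with compact fibers) is exactly what is needed.

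\textbf{Key steps.} First I would fix a point $y\in Y$ and write $\pi^{-1}(y)=\{x_1,\dots,x_r\}$, which is finite by hypothesis. Since the family $\{A_i\}_{i\in I}$ is locally finite in $X$, for each $x_j$ I can pick an open neighborhood $U_j\subset X$ such that the set $I_j=\{i\in I:\ A_i\cap U_j\neq\varnothing\}$ is finite. Setting $U=U_1\cup\cdots\cup U_r$, the set $I_0=I_1\cup\cdots\cup I_r$ is still finite, and $U$ is an open neighborhood of $\pi^{-1}(y)$ meeting only the $A_i$ with $i\in I_0$. The complement $C=X\setminus U$ is closed in $X$ and disjoint from $\pi^{-1}(y)$.

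Next I would use properness. Since $\pi$ is proper (closed with compact fibers), $\pi(C)$ is a closed subset of $Y$, and it does not contain $y$ because $\pi^{-1}(y)\subset U$. Hence $V=Y\setminus\pi(C)$ is an open neighborhood of $y$. To finish, I check that $V$ meets $\pi(A_i)$ only for $i\in I_0$: if $y'\in V\cap\pi(A_i)$, pick $a\in A_i$ with $\pi(a)=y'\notin\pi(C)$, so $a\notin C$, i.e. $a\in U$; thus $A_i\cap U\neq\varnothing$, forcing $i\in I_0$. Since $I_0$ is finite, $V$ witnesses local finiteness of $\{\pi(A_i)\}_{i\in I}$ at $y$.

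\textbf{Main obstacle.} There is no real obstacle; the only subtle point is to make sure that the form of properness used actually yields the closed map property (equivalently, that $\pi(C)$ is closed for $C$ closed), so that $V=Y\setminus\pi(C)$ is open. Since the fibers are finite, hence compact in any Hausdorff setting, this is automatic under the standard definition of properness used throughout the paper, and no further technicalities are required.
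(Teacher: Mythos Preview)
Your proof is correct and follows essentially the same route as the paper: pick open neighborhoods of the finitely many points of the fiber meeting only finitely many $A_i$, let $C$ be the complement of their union, push $C$ forward via properness to get a closed set missing $y$, and take $V=Y\setminus\pi(C)$. Your direct verification at the end is marginally cleaner than the paper's argument by contradiction, but the two proofs are otherwise identical.
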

\begin{proof}
Let $y\in Y$ and write $\pi^{-1}(y)=\{x_1,\ldots,x_r\}$. For each $j=1,\ldots,r$ let $V_j$ be an open neighborhood of $x_j$ that only intersects finitely many $A_i$. Let $C=X\setminus\bigcup_{j=1}^rV_j$, which is a closed subset of $Y$. As $\pi$ is proper, $\pi(C)$ is closed. As $\pi^{-1}(y)\cap C=\varnothing$, we have $y\not\in\pi(C)$, so $U=Y\setminus\pi(C)$ is an open neighborhood of $y$. Let us check that \em if $\pi(A_i)\cap U\neq\varnothing$ then there exists $j=1,\ldots,r$ such that $A_i\cap V_j\neq\varnothing$\em. Thus $\{\pi(A_i)\}_{i\in I}$ is a locally finite family of $X$.

Suppose by contradiction that $A_i\cap V_j=\varnothing$ for all $j=1,\ldots,r$. Then $A_i\subset C$, so $\pi(A_i)\subset\pi(C)$ and $\pi(A_i)\cap U\neq\varnothing$, which is a contradiction, as required.
\end{proof}


\begin{proof}[Proof of Theorem \ref{properint-tame}]\hfill

(i) By Lemma \ref{pd} $S=\bigcup_{i=1}^rZ_i\cap V_i$ where 
\begin{itemize}
\item $Z_i$ is an invariant complex analytic subset of $X$ of (complex) dimension $d_i$ and $Z_i\cap X^\sigma$ has real dimension $d_i$,
\item $V_i$ is an open ${\mathcal A}(X^\sigma)$-definable C-semianalytic subset of $X^\sigma$.
\end{itemize}
As $F(S)=\bigcup_{i=1}^rF(Z_i\cap V_i)$, it is enough to prove that $F(Z_i\cap V_i)$ is an amenable C-semianalytic set, so we assume $S=Z_1\cap V_1$. To soften notation write $Z=Z_1$ and $V=V_1$. 

We proceed by induction on the dimension of $S$. If $S$ has dimension $0$, it is a discrete subset of $X^\sigma$ and as $F$ is proper also $F(S)$ is a discrete subset of $Y^\tau$, so it is amenable. Assume the result true if the dimension of $S$ is $<d$ and we check that this implies it is also true if $S$ has dimension $d$. 

Let $\{Z_\alpha\}_\alpha$ be the locally finite family of the irreducible components of $Z$. By Grauert's Theorem and Lemma \ref{proylc} $\{Y_\alpha=F(Z_\alpha)\}_\alpha$ is a locally finite family of irreducible complex analytic subsets of $Y$. As $F(S)=\bigcup_\alpha F(V\cap Z_\alpha)$ and the family $\{Y_\alpha\}_\alpha$ is locally finite, by Corollary \ref{lfuok2} it is enough to show  that $F(V\cap Z_\alpha)$ is an amenable C-semianalytic set. Thus, we assume $X,Y$ to be irreducible, they have complex dimension $d$, $F$ is surjective and $S=V$.

By Lemma \ref{strata} there exist complex analytic subsets $X'\subset X$ and $Y'\subset Y$ of dimension $<d$ such that
\begin{itemize}
\item $F^{-1}(Y')=X'$,
\item $M=X\setminus X'$ and $N=Y\setminus Y'$ are invariant complex analytic manifolds respectively dense in $X$ and $Y$,
\item $F_{|M}:M\to N$ is a proper open surjective holomorphic map of constant rank equal to $d$.
\end{itemize}
By induction hypothesis $F(V\cap X')$ is an amenable C-semianalytic set, so it is enough to prove that $F(V\cap M)$ is an amenable C-semianalytic set. By Theorem \ref{directimage} $F(V\cap M)$ is a C-semianalytic set. Denote $M^\sigma=M\cap X^\sigma$ and $N^\tau=N\cap Y^\tau$. As $F$ is invariant its restriction $f$ to $M^\sigma$ takes values in $N^\tau$.   As ${\rm rk}_z(F)=d$ for all $z\in M$, it holds ${\rm rk}_x(f)=d$ for all $x\in M^\sigma$. As $\dim_\R(M^\sigma)=\dim_\R(N^\tau)=d$, we deduce by the rank theorem that $f$ is open, so $f(V\cap M)$ is an open C-semianalytic subset of $Y^\tau$, as required.

(ii) After shrinking $Y$ if necessary, we write $S'=\bigcup_{i=1}^rZ_i\cap V_i$ where $V_i$ is an open C-semianalytic set and $Z_i$ is an invariant complex analytic subset of $Y$. As $Y$ is Stein and $F_{|F^{-1}(Y)}:F^{-1}(Y)\to Y$ is proper, also $F^{-1}(Y)$ is Stein. We substitute $X$ by $F^{-1}(Y)$.

Write $Z_i'=F^{-1}(Z_i)$ and $W_i=F^{-1}(V_i)\cap X^\sigma$. Observe that $Z_i'$ is an invariant complex analytic subset of $X$ and $W_i$ is an open C-semianalytic subset of $X^\sigma$. As $T$ is a union of connected components of $F^{-1}(S)=\bigcup_{i=1}^rZ_i'\cap W_i$, the intersection $T\cap Z_i'\cap W_i$ is a union of connected components of $Z_i'\cap W_i$. Thus, we may assume 
\begin{itemize}
\item $S'=Z\cap V$, where $Z$ is an invariant complex analytic set and $V$ is an open C-semianalytic subset of $Y^\tau$.
\item $T$ is a union of connected components of $F^{-1}(S)\cap X^\sigma=Z'\cap W$, where $Z'=F^{-1}(Z)$ is an invariant complex analytic set and $W=F^{-1}(V)\cap X^\sigma$ is an open C-semianalytic subset of $X^\sigma$. 
\end{itemize}
Let $\{Z'_\alpha\}_\alpha$ be the locally finite family of the irreducible components of $Z'$. Again, as before,   $\{Y_\alpha=F(Z_\alpha')\}_\alpha$ is a locally finite family of irreducible complex analytic subsets of $Y$. As $T$ is a union of connected components of $Z'\cap W$, it holds that $T\cap Z'_\alpha$ is a union of connected components of $Z'_\alpha\cap W$. As
$$
F(T)=\bigcup_\alpha F(T\cap Z'_\alpha)
$$
and the family $\{Y_\alpha\}_\alpha$ is locally finite, it is enough to show by Corollary \ref{lfuok2} that $F(T\cap Z'_\alpha)$ is an amenable C-semianalytic set. Thus, we may assume that $X,Y$ are irreducible, they have complex dimension $d$ and $F$ is surjective. We have to prove that if $V$ is an open C-semianalytic subset of $Y^\tau$ and $T$ is a union of connected components of $F^{-1}(V)\cap X^\sigma$, then $F(T)$ is an amenable C-semianalytic subset of $Y^\tau$. Denote $W=F^{-1}(V)$.

We proceed by induction on the dimension of $X$. If $X$ has dimension $0$, then $T$ is a discrete set and as $F$ is proper, $F(T)$ is also a discrete set, so it is amenable. Assume the result for dimension $<d$ and let us check that it is also true for dimension $d$. 

By Lemma \ref{strata} there exist complex analytic subsets $X'\subset X$ and $Y'\subset Y$ of dimension $<d$ such that
\begin{itemize}
\item $F^{-1}(Y')=X'$,
\item $M=X\setminus X'$ and $N=Y\setminus Y'$ are invariant complex analytic manifolds respectively dense in $X$ and $Y$,
\item $F_{|M}:M\to N$ is a proper open surjective holomorphic map of constant rank equal to $d$.
\end{itemize}

As $T\cap X'$ is a union of connected components of $W\cap X'$ and $\dim(X')<d$, by induction hypothesis $F(T\cap X')$ is an amenable C-semianalytic set, so it is enough to prove that $F(T\cap M)$ is an amenable C-semianalytic set. Denote $M^\sigma=M\cap X^\sigma$ and $N^\tau=N\cap Y^\tau$. As $F$ is invariant, $f=F_{|M^\sigma}:M^\sigma\to N^\tau$ and, as we have commented above, $f$ is open. As $T\cap M$ is an open subset of $M^\sigma$, we conclude that $F(T\cap M)=f(T\cap M)$ is an open subset of $Y^\tau$. It only remains to show that $F(T\cap M)$ is a C-semianalytic subset of $Y^\tau$.

By Lemma \ref{odes} $V=\bigcup_{j\geq1}V_j$ where $\{V_j\}_{j\geq1}$ is a locally finite family of open basic C-semianalytic set. Fix $j\geq1$ and observe that $T\cap F^{-1}(V_j\setminus Y')$ is a union of connected components of $F^{-1}(V_j\setminus Y')\cap X^\sigma$. Let $Y_j\subset Y$ be an invariant Stein open neighborhood of $Y^\tau$ such that $V_j$ is ${\mathcal A}(Y^\tau_j)$-definable. Then, $F^{-1}(V_j\setminus Y')\cap X^\sigma$ is ${\mathcal A}(X^\sigma_j)$-definable, where $X_j=F^{-1}(Y_j)$  In addition the map $F_j=F_{|X_j}:X_j\to Y_j$ is proper and surjective. It is a classical result that $X_j$ is a Stein space. As $T\cap F_j^{-1}(V_j\setminus Y')$ is a union of connected components of $F^{-1}(V_j\setminus Y')\cap X^\sigma$, we deduce by Proposition \ref{sigmainvariante} that $T\cap F_j^{-1}(V_j\setminus Y')$ is ${\mathcal A}(X^\sigma_j)$-definable. By Theorem \ref{directimage} 
$$
F_j(T\cap F_j^{-1}(V_j\setminus Y'))=F_j(T\cap M\cap F_j^{-1}(V_j))=F_j(T\cap M)\cap V_j
$$ 
is a C-semianalytic subset of $Y^\tau$. Thus, the locally finite union of C-semianalytic subsets of $Y^\tau$
$$
F(T\cap M)=\bigcup_{j\geq1}F_j(T\cap M)\cap V_j
$$
is a C-semianalytic subset of $Y^\tau$, as required.
\end{proof}

The following example shows that amenability is not preserved by proper analytic maps with finite fibers.

\begin{example}
Let $M=\bigcup_{k\geq1}\sph_k$ where $\sph_k=\{(x,y,z)\in\R^3:\ \left(x-k+\frac{1}{2}\right)^2+y^2+(z-2k)^2=4\}$, which is a locally finite union of pairwise disjoint spheres. Let 
$$
S_k=\{(x,y,z)\in\sph_k:\ k-1\leq x\leq k,\ 0\leq y\leq\tfrac{1}{k}\}.
$$
It holds that $S=\bigcup_{k\geq1}S_k$ is an amenable C-semianalytic set. Let $\pi:\R^3\to\R^2,\ (x,y,z)\mapsto(x,y)$ be the projection onto the first two variables. Observe that $\rho=\pi_{|M}:M\to\R^2$ is a proper map with finite fibers. However 
$$
\rho(S)=\bigcup_{k\geq1}\{(x,y)\in\R^2:\ k-1\leq x\leq k,\ 0\leq y\leq\tfrac{1}{k}\}
$$
is not amenable as we have seen in Example \ref{ex}.
\end{example}

\subsection{Tameness-algorithm for C-semianalytic sets.}\label{algorithmtame}

In this  section we give  an algorithm to determine whether a C-semianalytic set is amenable. 

The announced algorithm is based on the fact that while the behavior of the Zariski closure of general C-semianalytic sets can be wild, as we saw in several examples as Example \ref{segmenti} or Example (i) from Examples \ref{tame}, the Zariski closure of an amenable C-semianalytic set behaves neatly.

\begin{lem}\label{gbzc}
Let $S\subset M$ be an amenable C-semianalytic set and for each $x\in M$ let $U^x\subset M$ be any open C-semianalytic neighborhood. Then 
$$
\ol{S}^{\zar}=\bigcup_{x\in\ol{S}}\ol{S\cap U^x}^{\zar}.
$$
\end{lem}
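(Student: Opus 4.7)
The plan is to split into the two inclusions; the $\supseteq$ direction is essentially a tautology, and the $\subseteq$ direction will rely on the structural description of amenable C-semianalytic sets from Proposition \ref{normal2} together with a generic-point argument on the irreducible components of $\ol{S_i}^{\zar}$.

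For $\supseteq$, for each $x\in\ol{S}$ one has $S\cap U^x\subset S\subset\ol{S}^{\zar}$, and since $\ol{S}^{\zar}$ is itself a C-analytic subset of $M$ containing $S\cap U^x$, the minimality of the Zariski closure forces $\ol{S\cap U^x}^{\zar}\subset\ol{S}^{\zar}$. Taking the union over $x\in\ol{S}$ gives the inclusion.

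For $\subseteq$, I would first invoke Proposition \ref{normal2} to write $S=\bigcup_{i\geq 1}S_i$ with $S_i=\{f_i=0,g_{i,1}>0,\ldots,g_{i,s_i}>0\}$ basic global semianalytic and with $\{\ol{S_i}^{\zar}\}_{i\geq 1}$ locally finite; the latter ensures $\bigcup_i\ol{S_i}^{\zar}$ is a C-analytic subset of $M$ containing $S$, and together with the obvious containments $\ol{S_i}^{\zar}\subset\ol{S}^{\zar}$ this yields $\ol{S}^{\zar}=\bigcup_i\ol{S_i}^{\zar}$. Given $z\in\ol{S}^{\zar}$ I fix $i$ with $z\in\ol{S_i}^{\zar}$, decompose $\ol{S_i}^{\zar}=\bigcup_\alpha Y_\alpha$ into its irreducible C-analytic components (Theorem \ref{realcomponents}) and pick a component $Y$ containing $z$. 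The minimality of $\ol{S_i}^{\zar}$ prevents $S_i\cap Y$ from being contained in any proper C-analytic subset of $Y$; otherwise, replacing $Y$ in the decomposition by such a subset would produce a strictly smaller C-analytic set still containing $S_i$. Because every proper C-analytic subset of an irreducible C-analytic set has strictly smaller dimension, I can then locate $x\in S_i$ lying in the generic locus
$$
Y^{\mathrm{gen}}:=\Reg(Y)\cap Y^{(\dim Y)}\setminus\bigcup_{\beta\neq\alpha}Y_\beta,
$$
which is a $d$-dimensional real analytic manifold with $d=\dim Y$, and in particular $x\in S\subset\ol{S}$.

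To conclude, I would select a neighborhood $W\subset U^x$ of $x$ that misses $\Sing(Y)\cup\bigcup_{\beta\neq\alpha}Y_\beta$, on which every $g_{i,j}$ is strictly positive, and such that $Y\cap W$ is a smooth $d$-dimensional submanifold of $M$. Since $Y\subset\ol{S_i}^{\zar}\subset\{f_i=0\}$ (or $Y=M$ in the degenerate case $f_i\equiv 0$), the chain of inclusions $Y\cap W\subset S_i\cap W\subset S\cap U^x$ holds, so any analytic function on $M$ vanishing on $S\cap U^x$ vanishes on the $d$-dimensional open piece $Y\cap W$ and hence, by irreducibility of $Y$, on all of $Y$. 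This gives $Y\subset\ol{S\cap U^x}^{\zar}$ and in particular $z\in\ol{S\cap U^x}^{\zar}$. The main technical point to justify carefully will be this last passage from vanishing on a top-dimensional open piece of $Y$ to vanishing on all of $Y$, which is exactly where the irreducibility of the C-analytic component $Y$ is essential.
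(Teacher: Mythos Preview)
Your proof is correct and follows a route close in spirit to the paper's, though organised differently. The paper first invokes Lemma \ref{decomp} to reduce to the special case where $S=X\cap W$ is a real analytic manifold with $X=\ol{S}^{\zar}$ and $W$ open; in that situation every point of $S$ is automatically a regular point of $X$ of top dimension, and the connected components $S_i$ of $S$ satisfy $\ol{S_i}^{\zar}$ equal to an irreducible component of $X$. For each $x\in\ol{S}$ the finitely many components $S_1,\ldots,S_r$ adherent to $x$ already give $\bigcup_{i\leq r}\ol{S_i}^{\zar}\subset\ol{S\cap U^x}^{\zar}$, and since every irreducible component of $X$ arises this way the inclusion follows. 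Your argument instead starts from Proposition \ref{normal2} and, for each irreducible component $Y$ of some $\ol{S_i}^{\zar}$, locates a generic point $x\in S_i\cap Y$ by hand and then uses irreducibility of $Y$ to conclude $Y\subset\ol{S\cap U^x}^{\zar}$.

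Both approaches rest on the same underlying fact (a top-dimensional piece of an irreducible C-analytic set has full Zariski closure, essentially Lemma \ref{charirred}(iii)); the paper's reduction just makes the generic-point step automatic. One small simplification available to you: since $Y\subset\ol{S_i}^{\zar}\subset\{f_i=0\}$ gives $S_i\cap Y=Y\cap V_i$ with $V_i=\{g_{i,1}>0,\ldots,g_{i,s_i}>0\}$ open, this intersection is amenable, so $\dim(Y\cap V_i)=\dim\ol{Y\cap V_i}^{\zar}=\dim Y$. Hence it suffices to pick any $x\in Y\cap V_i$ of top local dimension and a small $W\subset U^x\cap V_i$; regularity of $Y$ at $x$ is not actually needed for the last step, which spares you from worrying about $\Sing(Y)$ when $Y$ is not coherent.
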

\begin{proof}
By Lemma \ref{decomp} we may assume $S=X\cap W$ is a real analytic manifold where $X$ is the Zariski closure of $S$ and $W$ is an open C-semianalytic set. Let $\{S_i\}_{i\geq1}$ be the family of the connected components of $S$. Then
\begin{itemize}
\item $S_i$ is a pure dimensional connected amenable C-semianalytic set.
\item Each $\ol{S}_i^{\zar}$ is an irreducible component of $X$.
\item $S=\bigcup_{i\geq1}S_i$ and $X=\bigcup_{i\geq1}\ol{S_i}^{\zar}$.
\end{itemize}
Let $x\in\ol{S}$ and assume that $x\in\ol{S_i}$ only for $i=1,\ldots,r$. Let $C=M\setminus\bigcup_{i\leq r+1}\ol{S_i}$ and let $V=U^x\setminus C$, which is an open C-semianalytic neighborhood of $x$. Notice that $\dim S_x=\dim S\cap V=\dim \ol{S\cap V}^{\zar}$ and
$$
\bigcup_{i=1}^r\ol{S}_i^{\zar}=\ol{S\cap V}^{\zar}\subset\ol{S\cap U^x}^{\zar}\subset X
$$
As $\ol{S}=\bigcup_{i\geq1}\ol{S_i}$, it holds $X=\bigcup_{i\geq1}\ol{S}_i^{\zar}=\bigcup_{x\in\ol S}\ol{S\cap U^x}^{\zar}$, as required.
\end{proof}

Let $S\subset M$ be a C-semianalytic set. Let $X_1$ be the union of the irreducible components of $\ol{S}^{\zar}$ of maximal dimension. Define 
$$
T_1(S)=\Int_{\Reg(X_1)}(S\cap\Reg(X_1)). 
$$
Assume we have already defined $T_1(S),\ldots,T_k(S)$ and let us define $T_{k+1}(S)$. Let $R_{k+1}=S\setminus\bigcup_{j=1}^kT_j(S)$ and let $X_{k+1}$ be the union of the irreducible components of $\ol{R_{k+1}}^{\zar}$ of maximal dimension. Consider 
$$
T_{k+1}(S)=\Int_{\Reg(X_{k+1})}(S\cap\Reg(X_{k+1})).
$$
We describe first some properties of the operators $T_k$.

\begin{prop}\label{operators}
Let $S\subset M$ be a C-semianalytic set.
\begin{itemize}
\item[(i)]  $T_k(S)$ is an amenable C-semianalytic set.
\item[(ii)]  If $T_k(S)=\varnothing$, then $R_{k+\ell}=R_k$ for all $\ell\geq1$ and $T_{k+\ell}(S)=T_k(S)=\varnothing$ for all $\ell\geq1$.
\item[(iii)]  If $T_{k+1}(S)\subset T_k(S)$, then $R_{k+\ell}=R_{k+1}$ for all $\ell\geq1$ and $T_{k+\ell}(S)=T_{k+1}(S)$ for all $\ell\geq1$.
\item[(iv)]  If $T_{k+1}(S)\neq\varnothing$, then $\dim T_{k+1}(S)=\dim X_{k+1}\leq\dim X_k=\dim T_k(S)$  because $R_{k+1}\subset R_k$.
\item[(v)]  If $\dim X_{k+1}=\dim X_k$, then $\varnothing\neq T_{k+1}(S)\setminus\Sing(X_k)\subset T_k(S)$ and $T_{k+2}(S)=T_{k+1}(S)$.
\item[(vi)]  There exists $k_0\geq1$ such that $T_{k_0}(S)=T_{k_0+\ell}(S)$ for all $\ell\geq1$.
\end{itemize}
\end{prop}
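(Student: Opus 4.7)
The proof handles each item in turn; claims (i)--(iv) and (vi) are essentially bookkeeping for the recursive construction, while (v) is the main technical obstacle.

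For (i), I would observe that $T_k(S)$ is obtained from $S$, $X_k$, and $\Sing(X_k)$ by intersections and a topological interior---all operations preserving C-semianalyticity by Proposition~\ref{booletopological}---so $T_k(S)$ is a C-semianalytic set, open in $X_k$ because $\Reg(X_k)$ is open in $X_k$. Lemma~\ref{openX} then immediately gives amenability. For (ii), if $T_k(S)=\varnothing$ then $R_{k+1}=R_k\setminus T_k(S)=R_k$, whence $X_{k+1}=X_k$ and $T_{k+1}(S)=T_k(S)=\varnothing$; an easy induction closes the argument. For (iii), from $T_{k+1}(S)\subset T_k(S)$ I get $\bigcup_{j=1}^{k+1}T_j(S)=\bigcup_{j=1}^{k}T_j(S)$, hence $R_{k+2}=R_{k+1}$ and inductively the stated stabilization. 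For (iv), $R_{k+1}\subset R_k$ gives $\overline{R_{k+1}}^{\zar}\subset\overline{R_k}^{\zar}$ and so $\dim X_{k+1}\le\dim X_k$; when $T_{k+1}(S)\neq\varnothing$ it is a non-empty open subset of the pure $(\dim X_{k+1})$-dimensional manifold $\Reg(X_{k+1})$, giving $\dim T_{k+1}(S)=\dim X_{k+1}$, while the contrapositive of (ii) ensures $T_k(S)\neq\varnothing$ and hence $\dim T_k(S)=\dim X_k$ as well.

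For (v), under the hypothesis $\dim X_{k+1}=\dim X_k=d$, the inclusion $R_{k+1}\subset R_k$ together with maximality of dimension forces every $d$-dimensional irreducible component of $\overline{R_{k+1}}^{\zar}$ to coincide with a $d$-dimensional irreducible component of $\overline{R_k}^{\zar}$. Hence $X_{k+1}\subset X_k$ as a union of components, and $\Sing(X_{k+1})\subset\Sing(X_k)$. To establish $T_{k+1}(S)\setminus\Sing(X_k)\subset T_k(S)$, I would take $x$ on the left, observe that $x\in\Reg(X_{k+1})\cap\Reg(X_k)$, and use that $X_{k+1}$ and $X_k$ coincide near $x$ as $d$-dimensional manifolds (since one is contained in the other and both are regular at $x$); then any open neighborhood of $x$ in $\Reg(X_{k+1})$ lying in $S$ is also one in $\Reg(X_k)$ lying in $S$. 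Non-emptiness will come from the standard principle that a $d$-dimensional C-semianalytic subset of a $d$-dimensional real analytic manifold has non-empty interior, applied to $R_{k+1}\cap\Reg(Y)\cap\Reg(X_k)$ on a component $Y$ of $X_{k+1}$: this subset is $d$-dimensional inside $\Reg(X_{k+1})\setminus\Sing(X_k)$ and contained in $S\cap\Reg(X_{k+1})$. The equality $T_{k+2}(S)=T_{k+1}(S)$ will follow by showing that $R_{k+1}\setminus T_{k+1}(S)$ intersects each component $Y$ of $X_{k+1}$ in a set of dimension $<d$, so $R_{k+2}$ has the same top-dimensional Zariski closure as $R_{k+1}$, forcing $X_{k+2}=X_{k+1}$ and then $T_{k+2}(S)=T_{k+1}(S)$.

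Finally, (vi) follows from (ii)--(v): passing from $k$ to $k+1$ either strictly decreases $\dim X_k$ or triggers one of the stabilization rules (ii), (iii), or (v). Since $\dim X_k$ is a non-negative integer bounded by $\dim M$, strict descent occurs only finitely often, so stabilization must eventually set in, yielding $k_0$. The main obstacle is the local manifold comparison in (v): one must carefully handle that $\Reg(X_{k+1})$ and $\Reg(X_k)$ genuinely differ on $\Sing(X_k)\cap\Reg(X_{k+1})$, which is precisely why the inclusion is stated modulo $\Sing(X_k)$ rather than as $T_{k+1}(S)\subset T_k(S)$, and then one must convert this pointwise, set-theoretic information into the global stabilization $T_{k+2}(S)=T_{k+1}(S)$ via a dimension-count on $R_{k+2}$ restricted to each $d$-dimensional component of $X_{k+1}$.
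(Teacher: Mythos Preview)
Your handling of (i)--(iv) and (vi) matches the paper's, and your argument for the inclusion $T_{k+1}(S)\setminus\Sing(X_k)\subset T_k(S)$ and its non-emptiness in (v) is correct and in fact more detailed than the paper's.

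The gap is in your route to $T_{k+2}(S)=T_{k+1}(S)$. You propose to show that $R_{k+2}=R_{k+1}\setminus T_{k+1}(S)$ meets each $d$-dimensional component $Y$ of $X_{k+1}$ in a set of dimension $<d$, and then conclude that $R_{k+2}$ and $R_{k+1}$ have the same top-dimensional Zariski closure. Both the inference and the premise fail. For the inference: if $\dim(R_{k+2}\cap Y)<d$ for every such $Y$, then no $d$-dimensional component of $\ol{R_{k+1}}^{\zar}$ survives in $\ol{R_{k+2}}^{\zar}$, so $\dim X_{k+2}<d$ and $X_{k+2}\neq X_{k+1}$, the opposite of what you want. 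For the premise: it is actually false. Since $T_{k+1}(S)\setminus\Sing(X_k)\subset T_k(S)$ and $R_{k+1}\cap T_k(S)=\varnothing$, one has $R_{k+1}\cap T_{k+1}(S)\subset\Sing(X_k)$, whence
\[
R_{k+2}=R_{k+1}\setminus T_{k+1}(S)\ \supset\ R_{k+1}\setminus\Sing(X_k),
\]
which is $d$-dimensional on each $Y$.

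This last inclusion is exactly what the paper uses: from $R_{k+1}\setminus\Sing(X_k)\subset R_{k+2}\subset R_{k+1}$ and $\dim\Sing(X_k)<d$ one gets $X_{k+1}\subset\ol{R_{k+2}}^{\zar}\subset\ol{R_{k+1}}^{\zar}$, hence $X_{k+2}=X_{k+1}$ and therefore $T_{k+2}(S)=T_{k+1}(S)$. The fix is to reverse the direction of your dimension-count: show that the \emph{difference} $R_{k+1}\setminus R_{k+2}=R_{k+1}\cap T_{k+1}(S)$ is small (contained in $\Sing(X_k)$), not that $R_{k+2}$ itself is small on $X_{k+1}$.
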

\begin{proof}
Assertion (i) is a consequence of Lemma \ref{openX}. Assertions (ii), (iii), 
(iv) are straightforwardly checked.

(v) As $\dim X_{k+1}=\dim X_k$, then $\dim \ol{R_{k+1}}^{\zar}=\dim \ol{R_k}^{\zar}$. As $R_{k+1}\subset R_k$, it holds that $\ol{R_{k+1}}^{\zar}\subset\ol{R_k}^{\zar}$, so $X_{k+1}\subset X_k$ and $\Reg(X_{k+1})\setminus\Sing(X_k)$ is a non-empty open subset of $\Reg(X_k)$ because $\dim \Sing(X_k)<\dim X_{k+1}$. Thus $T_{k+1}(S)\setminus\Sing(X_k)\subset T_k(S)$.

We claim that $R_{k+1}\setminus\Sing(X_k)\subset R_{k+2}$. 

As $T_{k+1}(S)\setminus\Sing(X_k)\subset T_k(S)$, we have $R_{k+1}\setminus((T_{k+1}(S)\setminus\Sing(X_k))=R_{k+1}$ and
\begin{equation*}
\begin{split}
R_{k+2}=R_{k+1}\setminus T_{k+1}(S)&=R_{k+1}\setminus((T_{k+1}(S)\setminus\Sing(X_k))\cup(T_{k+1}(S)\cap\Sing(X_k)))=\\
&=(R_{k+1}\setminus(T_{k+1}(S)\setminus\Sing(X_k)))\setminus(T_{k+1}(S)\cap\Sing(X_k))=\\
&=R_{k+1}\setminus(T_{k+1}(S)\cap\Sing(X_k))\supset R_{k+1}\setminus\Sing(X_k).
\end{split}
\end{equation*}

As $\dim \Sing(X_k)<\dim X_{k+1}$ and $R_{k+1}\setminus\Sing(X_k)\subset R_{k+2}\subset R_{k+1}$, we have 
$$
X_{k+1}\subset\ol{R_{k+1}\setminus\Sing(X_k)}^{\zar}\subset\ol{R_{k+2}}^{\zar}\subset\ol{R_{k+1}}^{\zar}.
$$
Thus, $X_{k+1}=X_{k+2}$, so $T_{k+2}(S)=T_{k+1}(S)$.

(vi) Let $k\geq1$. If $T_k(S)=\varnothing$, then by (ii) $T_k(S)=T_{k+\ell}(S)$ for all $\ell\geq1$. Thus, we assume $T_k(S)\neq\varnothing$. We know by (iv) that $\dim X_{k+1}\leq\dim X_k$. If $\dim X_{k+1}=\dim X_k$, then by (v) $T_{k+2}(S)=T_{k+1}(S)$, so by (iii) $T_{k+\ell}(S)=T_{k+1}(S)$ for all $\ell\geq1$. The case missing is $\dim X_{k+1}<\dim X_k$. Repeat the previous argument with the index $k+1$ replacing  $k$ and observe that in finitely many steps we achieve the statement.
\end{proof}

The announced algorithm is a consequence of the definited operators $\{T_j\}$ and of  the following theorem.

\begin{thm}\label{algor1}
Let $S\subset M$ be a C-semianalytic set. The following assertions are equivalent
\begin{itemize}
\item[(i)] $S$ is amenable.
\item[(ii)] $S=\bigcup_{j\geq1}T_j(S)$.
\end{itemize}
\end{thm}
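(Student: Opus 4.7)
The plan is to treat the two directions very asymmetrically. For \emph{(ii) $\Rightarrow$ (i)}, I would invoke Proposition \ref{operators}(i) to note that each $T_j(S)$ is amenable, then use Proposition \ref{operators}(vi) to pick $k_0$ so that $T_j(S) = T_{k_0}(S)$ for every $j\ge k_0$. The union in (ii) is then really the finite union $T_1(S)\cup\cdots\cup T_{k_0}(S)$, and since amenable C-semianalytic sets are closed under finite unions, $S$ is amenable.

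For the substantial direction \emph{(i) $\Rightarrow$ (ii)}, I would argue by contradiction. Choose $k_0$ as above, set $R := R_{k_0+1} = S\setminus\bigcup_{j\ge 1}T_j(S)$, and assume $R\neq\varnothing$; I must derive a contradiction. Let $e = \dim\overline{R}^{\zar}$ and $X^*=X_{k_0+1}$, so $\Reg(X^*)$ is a real analytic manifold of dimension $e$. The key claim I would prove along the way is that $R$ is \emph{dim-faithful}, i.e. $\dim R = e$. Granting this, since removing $\Sing(X^*)$ and the lower-dimensional irreducible components of $\overline{R}^{\zar}$ does not drop the dimension, $R\cap\Reg(X^*)$ is a C-semianalytic subset of the $e$-manifold $\Reg(X^*)$ of dimension $e$, hence has nonempty interior in $\Reg(X^*)$ by the standard stratification of semianalytic sets. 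Because $R\subset S$, this interior is contained in $\Int_{\Reg(X^*)}(S\cap\Reg(X^*)) = T_{k_0+1}(S)$, so $T_{k_0+1}(S)\cap R\neq\varnothing$. But by the choice of $k_0$, $T_{k_0+1}(S)=T_{k_0}(S)\subset\bigcup_{j\le k_0}T_j(S)=S\setminus R$, contradiction; so $R=\varnothing$ and (ii) holds.

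The main obstacle is the dim-faithfulness claim. I would prove by induction on $k$ that, whenever $S$ is amenable, $\dim R_k = \dim\overline{R_k}^{\zar}$; equivalently, $R_k$ is contained in a C-analytic subset of $M$ of dimension $\dim R_k$. The base case $k=1$ is immediate since $R_1=S$ is amenable. For the inductive step I would apply Lemma \ref{decomp} to $R_{k-1}$ (which the strategy forces me to verify remains amenable, or at least to replace by a tractable surrogate), writing it as $\bigcup_i(Z_i\cap U_i)$ with decreasing dimensions. A direct computation then gives $T_{k-1}(S)\supset U_1\cap\Reg(Z_1)$, and by maximality of $T_{k-1}(S)$ inside the $d_1$-dimensional manifold $\Reg(X_{k-1})$ this inclusion is an equality; subtracting it strips off exactly the top-dimensional open stratum, leaving $R_k$ inside $\Sing(Z_1)\cup\bigcup_{i\ge 2}Z_i$, a C-analytic set of dimension $\dim R_k$.

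The genuinely delicate point—and the step I expect to be hardest—is propagating the amenable-like structure of $R_{k-1}$ to $R_k$: the obstruction is that ``amenable minus open C-semianalytic'' need not be amenable (cf.\ Example \ref{ex}), so a direct induction on amenability fails. The remedy is to carry the weaker property of dim-faithfulness through the induction, using that the removed set $T_{k-1}(S)$ is an open subset of $\Reg(Z_1)$ and therefore its removal changes only the top-dimensional layer of the Zariski envelope; the lower-dimensional C-analytic upper bound for $R_k$ produced above then matches the dimension of $R_k$ itself. The non-amenable example \ref{tame}(i), where $T_1(S)=\varnothing$ because $\dim S<\dim\overline{S}^{\zar}$, pinpoints why amenability is the exact hypothesis that makes this dimension bookkeeping work.
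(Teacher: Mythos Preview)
Your direction (ii) $\Rightarrow$ (i) is correct and essentially the paper's one-line argument.

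For (i) $\Rightarrow$ (ii), the contradiction framing is sound \emph{provided} the dim-faithfulness claim $\dim R_{k_0+1}=\dim\overline{R_{k_0+1}}^{\zar}$ holds, and your derivation of the contradiction from it is fine. The genuine gap is in your inductive proof of dim-faithfulness. You propose to apply Lemma~\ref{decomp} to $R_{k-1}$ to obtain a decomposition $\bigcup_i(Z_i\cap U_i)$, but that lemma requires amenability, and---as you yourself note---$R_{k-1}$ need not be amenable. Your stated remedy, to carry only the weaker property of dim-faithfulness through the induction, is circular: the step that produces the ``lower-dimensional C-analytic upper bound'' $\Sing(Z_1)\cup\bigcup_{i\ge2}Z_i$ for $R_k$ already presupposes the sets $Z_i$, and those come from Lemma~\ref{decomp} applied to something amenable. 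Dim-faithfulness of $R_{k-1}$ is a dimension statement, not a shape statement; it gives no such structural decomposition. (Incidentally, your claimed equality $T_{k-1}(S)=U_1\cap\Reg(Z_1)$ via ``maximality'' is neither true nor needed: only the inclusion $\supset$ matters, and there is no maximality principle available.)

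What the paper does is precisely your parenthetical ``tractable surrogate'', made explicit. It maintains inductively an \emph{amenable} set $S'_k$ with $R_k\subset S'_k$, $\overline{S'_k}^{\zar}=\overline{R_k}^{\zar}$, $S=\big(\bigcup_{j<k}T_j(S)\big)\cup S'_k$, and $\dim S'_k<\dim T_{k-1}(S)$. The surrogate is built recursively: write $S'_k=\bigcup_i(Z_i\cap U_i)$ via Lemma~\ref{decomp} (legitimate, since $S'_k$ is amenable by induction) and set
\[
S'_{k+1}=S'_k\cap\bigcup_i\overline{Z_i\cap U_i\cap R_{k+1}}^{\zar}.
\]
The strict dimension drop of $\dim S'_k=\dim X_k$ then forces $S'_k=\varnothing$---hence $R_k=\varnothing$---after at most $\dim S+1$ steps. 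Your outline gestures at this construction but does not carry it out; once you do, the contradiction framing via Proposition~\ref{operators}(vi) becomes superfluous, since the strict drop of $\dim X_k$ already gives termination directly.
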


\begin{proof}
(ii) $\Longrightarrow$ (i) follows directely by the properties of $T_k$.
   
 (i) $\Longrightarrow$ (ii). We claim
\begin{quotation}{\em For each $k\geq1$ it holds $S=\bigcup_{j=1}^kT_j(S)\cup S'_{k+1}$ where $S'_{k+1}$ is either empty or an amenable C-semianalytic set such that $\ol{S'_{k+1}}^{\zar}=\ol{R_{k+1}}^{\zar}$ and $\dim S'_{k+1}<\dim T_k(S)$.}
\end{quotation}

Let $k\geq 1$ and assume $S=L_k\cup S'_k$ where 
$$
L_k=\begin{cases}
\varnothing&\text{if $k=1$}\\
\bigcup_{j=1}^{k-1}T_j(S)&\text{if $k\geq2$}
\end{cases}
$$
$R_1=S'_1=S$ and $S'_k$ is either empty or an amenable C-semianalytic set such that $\ol{S'_k}^{\zar}=\ol{R_k}^{\zar}$ and $\dim S_k'<\dim T_{k-1}(S)$ if $k\geq2$. Let us prove that $S=L_{k+1}\cup S'_{k+1}$ where $L_{k+1}$ and $S'_{k+1}$ satisfy the corresponding properties.

If $S_k'=\varnothing$, then $T_k(S)=\varnothing$ and it is enough to take $S_{k+1}'=\varnothing$. Assume $S_k'\neq\varnothing$, so it is an amenable C-semianalytic set such that $\ol{S'_k}^{\zar}=\ol{R_k}^{\zar}$ and $\dim S_k'<\dim T_{k-1}(S)$.

Notice that $S_k'\cap\Reg(X_k)\subset S\cap\Reg(X_k)$ has not empty interior in $\Reg(X_k)$, so $T_k(S)\neq\varnothing$. Write $S_k'=\bigcup_{i=1}^r(Z_i\cap U_i)$ where $Z_i$ is a C-analytic set, $U_i$ is an open C-semianalytic set, each $Z_i\cap U_i$ is a real analytic manifold, $Z_i$ is the Zariski closure of $Z_i\cap U_i$ and $\dim Z_{i+1}\cap U_{i+1}<\dim Z_i\cap U_i$ for $i=1,\ldots,r-1$ (see Lemma \ref{decomp}).
Let $S_{k+1}'=S_k'\cap\bigcup_{i=1}^r\ol{Z_i\cap U_i\cap R_{k+1}}^{\zar}$, which is amenable because it is the intersection of two amenable C-semianalytic sets. In addition $S_k'\cap R_{k+1}\subset S_{k+1}'$, so
$$
S=\bigcup_{j=1}^{k-1}T_j(S)\cup S_k'=\bigcup_{j=1}^kT_j(S)\cup (S_k'\cap R_{k+1})=\bigcup_{j=1}^{k}T_j(S)\cup S_{k+1}'=L_{k+1}\cup S_{k+1}'. 
$$
We have to prove that $\ol{S_{k+1}'}^{\zar}=\ol{R_{k+1}}^{\zar}$. 

For each $i=1,\ldots,r$ we have $\ol{Z_i\cap U_i\cap R_{k+1}}^{\zar}\subset\ol{R_{k+1}}^{\zar}$. Thus $S_{k+1}'\subset\ol{R_{k+1}}^{\zar}$, so $\ol{S_{k+1}'}^{\zar}\subset\ol{R_{k+1}}^{\zar}$. The converse inclusion follows because $R_{k+1}=S\setminus L_{k+1}\subset S_{k+1}'$.

We claim
\begin{quotation}  $\dim\ol{Z_i\cap U_i\cap R_{k+1}}^{\zar}<\dim T_k(S)$ for $i=1,\ldots,r$.\end{quotation} 

For $i=2,\ldots,r$ the result is clear, so let us prove $\dim\ol{Z_1\cap U_1\cap R_{k+1}}^{\zar}<\dim T_k(S)$.

As $\dim Z_1\cap U_1=\dim S_k'=\dim T_k(S)$, we have $(Z_1\cap U_1)\setminus\Sing(X_k)$ is a real analytic manifold of dimension $\dim X_k $, so it is an open subset of $\Reg(X_k)$ and $(Z_1\cap U_1)\setminus\Sing(X_k)\subset T_k(S)$. Thus, $(Z_1\cap U_1)\setminus T_k(S)\subset\Sing(X_k)$, so 
$$
\dim \ol{(Z_1\cap U_1)\setminus T_k(S)}^{\zar}\leq\dim \Sing(X_k)<\dim X_k=\dim S=\dim T_k(S).
$$
We conclude $\dim(S_{k+1}')<\dim(T_k(S))$ and the last claim is proved.

To finish we prove that there exists an index $k\geq1$ such that $S_k'=\varnothing$. For each $k\geq1$, it holds that either $S_k'=\varnothing$ or
$$
\dim X_k=\dim \ol{R_k^{\zar}}=\dim \ol{S_k'}^{\zar}=\dim S_k'<\dim T_{k-1}(S)=\dim X_{k-1}.
$$
As $\dim X_1=\dim S<+\infty$, there exist only finitely many possible $k$'s such that $S_k'\neq\varnothing$. Consequently, there exists $k\geq1$ such that $S=\bigcup_{j=1}^kT_j(S)=\bigcup_{j\geq1}T_j(S)$, as required.

\end{proof}

\begin{example}\label{dimktamenot}
Consider the open C-semianalytic set
$$
S_0=\bigcup_{k\geq1}\{0<x<k, 0<y<1/k\}\subset\R^3
$$
and let $S_1=\{z=0\}$. Define $S=S_0\cup S_1$ which is amenable because it is the union of two amenable C-semianalytic sets. Let $S_{(3)}$ be the set of points of $S$ of local dimension $3$ and let us check that it is not amenable. Indeed, observe first that $S_{(3)}=S_0\cup\ol{S_0\cap\{z=0\}}$. Assume by way of contradiction that $S_{(3)}$ is amenable. Then also $S'=S_{(3)}\cap\{z=0\}=\ol{S_0\cap\{z=0\}}$ should be amenable. To prove that $S'$ is not amenable we apply Lemma \ref{algor1}. Observe that $X_1=\{z=0\}$, $T_1(S')=S_0\cap\{z=0\}$, $R_2=\overline{(S_0\cap\{z=0\})}\setminus(S_0\cap\{z=0\})$, $X_2=\{z=0\}$, $T_2(S')=S_0\cap\{z=0\}$ and $S'\neq T_1(S')$. By Theorem \ref{algor1} $S'$ is not amenable, so $S_{(3)}$ is not amenable.
\end{example}

\
\subsection{Normalization and irreducibility.}
The irreducibility of an amenable C-semianalytic set $S$ has a close relation with the connectedness of a certain subset of the normalization of the Zariski closure of $S$. Let $S\subset M$ be an amenable C-semianalytic set and let $X$ be its Zariski closure. Let $(\widetilde{X},\sigma)$ be a Stein complexification of $X$ together with the anti-involution $\sigma:\widetilde{X}\to\widetilde{X}$ whose set of fixed points is $X$. Let $(Y,\pi)$ be the normalization of $\widetilde{X}$ and let $\widehat{\sigma}:Y\to Y$ be the anti-holomorphic involution induced by $\sigma$ in $Y$, which satisifies $\pi\circ\widehat{\sigma}=\sigma\circ\pi$.

More precisely we have.
\begin{thm}\label{dpm}
The amenable C-semianalytic set $S$ is irreducible if and only if there exists a connected component $T$ of $\pi^{-1}(S)$ such that $\pi(T)=S$.
\end{thm}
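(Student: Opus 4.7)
My plan is to use Lemma~\ref{charirred} to translate the statement into a claim about the Zariski closure $X$: that $X$ is C-irreducible if and only if some connected component $T$ of $\pi^{-1}(S)$ satisfies $\pi(T)=S$. Throughout, I will exploit the correspondence from Section~\ref{norm} between the connected components of the normalization $Y$ and the irreducible components of the complexification $\widetilde X$, together with the lift $\widehat\sigma$ of $\sigma$ provided by Proposition~\ref{lifting}.

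For the reverse direction I will argue by contraposition. Suppose $X$ is C-reducible, so by Theorem~\ref{realcomponents} we can write $X=X_1\cup X_2$ as a non-trivial union of C-analytic sets, where each $X_j$ is a union of C-irreducible components of $X$. Using the description of C-irreducible components in terms of $\sigma$-invariant irreducible components of $\widetilde X$ and pairs of conjugate ones, each $X_j$ is covered by the images under $\pi$ of a disjoint union $Y^{(j)}$ of connected components of $Y$, with $Y^{(1)}\cap Y^{(2)}=\varnothing$ and $\pi^{-1}(X_j)\subset Y^{(j)}$. Since $\pi^{-1}(S)\subset Y^{(1)}\sqcup Y^{(2)}$ and every connected subset of a disjoint union is contained in one of the pieces, any connected component $T$ of $\pi^{-1}(S)$ satisfies $\pi(T)\subset X_j\cap S$ for some $j$. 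But $\overline S^{\zar}=X\not\subset X_j$, so $\pi(T)\subsetneq S$.

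For the forward direction, assume $X$ is C-irreducible. Recall that $S$ is connected (as the irreducibility of $\Oo(S)$ rules out non-trivial idempotents; see the discussion after the definition of irreducibility and the argument used in Lemma~\ref{charirred}). By Theorem~\ref{realcomponents} there is a distinguished $\sigma$-invariant irreducible complex component $\widetilde X_0$ of $\widetilde X$ whose fixed-point set carries the top-dimensional part of $X$; let $Y_0$ be the corresponding connected component of $Y$ (it is $\widehat\sigma$-invariant by Proposition~\ref{lifting}). Over the dense open subset $X^\ast\subset X$ of regular points where $\widetilde X$ is smooth of dimension $d=\dim S$, the normalization $\pi$ restricts to a biholomorphism $\pi^{-1}(X^\ast)\to X^\ast$ landing in $Y_0$, and $S\cap X^\ast$ is open and dense in $S$ with Zariski closure $X$. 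I let $T$ be the connected component of $\pi^{-1}(S)$ that contains the lift inside $Y_0$ of $S\cap X^\ast$. Then $\pi(T)\supset S\cap X^\ast$, which is dense in $S$. Closedness of $\pi(T)$ in $S$ follows from the properness of $\pi|_{\pi^{-1}(S)}:\pi^{-1}(S)\to S$ (as $\pi$ is proper and $T$ is closed in $\pi^{-1}(S)$). Combining density with closedness yields $\pi(T)=S$.

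The main obstacle is justifying that the lifts to $Y_0$ of the various connected components of $S\cap X^\ast$ do indeed lie in a single connected component of $\pi^{-1}(S)$, as $S\cap X^\ast$ can easily be disconnected even when $X$ and $S$ are not. The decisive input here is the C-irreducibility of $X$ combined with the local irreducibility of the normal space $Y$: at a point $x\in S\setminus X^\ast$, the sheets of $S\cap X^\ast$ abutting at $x$ correspond to germs in $Y$ at points of $\pi^{-1}(x)\cap Y_0$, and these are linked through the $\widehat\sigma$-invariant branch structure described in Section~\ref{norm}. Making this rigorous requires a careful local analysis at points where $\pi^{-1}(X)$ meets $Y\setminus Y^{\widehat\sigma}$ (the ``tail'' points of Chapter~2), and will use the description of $\pi^{-1}(X)\setminus Y^{\widehat\sigma}$ together with Proposition~\ref{cohe} as a guide in the coherent case.
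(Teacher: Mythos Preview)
Your reduction via Lemma~\ref{charirred} is misapplied, and this is fatal to both directions. Lemma~\ref{charirred} says that $S$ is irreducible if and only if \emph{for every} open neighborhood $U$ of $S$ the Zariski closure $\ol{S}^{\zar}_U$ is irreducible. You replace this by the single condition that $X=\ol{S}^{\zar}$ (in the ambient manifold $M$) is C-irreducible, which is strictly weaker. A concrete counterexample: take $X=\{y^2=x^2(x+1)\}\subset\R^2$ and $S=X\cap B(0,\varepsilon)$ for small $\varepsilon$. Then $S$ is connected and $\ol{S}^{\zar}_{\R^2}=X$ is C-irreducible, but for $U=B(0,2\varepsilon)$ the two analytic branches $y=\pm x\sqrt{x+1}$ show $\ol{S}^{\zar}_U$ is reducible; hence $S$ is \emph{not} irreducible. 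In the normalization, $\pi^{-1}(S)$ consists of two disjoint arcs over the two branches, and no connected component maps onto $S$. So your proposed equivalence ``$X$ C-irreducible $\Leftrightarrow$ some $T$ with $\pi(T)=S$'' is false, and your contraposition for the reverse direction starts from the wrong hypothesis: you must begin from ``there exists $U$ with $\ol{S}^{\zar}_U$ reducible'', not from ``$X$ is reducible in $M$''. (Incidentally, the claim $\pi^{-1}(X_j)\subset Y^{(j)}$ is also not generally true, since points of $X_1\cap X_2$ have preimages in both pieces.)

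For the forward direction you correctly identify the obstacle---showing that the lifts of the pieces of $S\cap X^\ast$ lie in a single connected component of $\pi^{-1}(S)$---but the ``careful local analysis'' you propose is not how the paper proceeds, and it is unclear it can succeed using only the C-irreducibility of $X$ in $M$ plus connectedness of $S$ (the nodal-cubic example again shows these alone are insufficient). The paper instead makes essential use of irreducibility of $\ol{S}^{\zar}_U$ for a \emph{specially constructed} neighborhood $U$: one first separates the connected components $T_i$ of $\pi^{-1}(S)$ by pairwise disjoint invariant open sets $\Theta_i\subset Y$, sets $C=Y\setminus\bigcup_i\Theta_i$, and takes $U=M\setminus\pi(C)$. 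Over this $U$ the normalization of $\widetilde X\cap\Omega$ is $\pi^{-1}(\widetilde X\cap\Omega)$, the (irreducible) complex Zariski closure $Z$ of $\ol{S}^{\zar}_U$ is $\pi(K)$ for a single connected component $K$, and connectedness of $K$ forces $K\subset\Theta_{i_0}$ for one index, whence $\pi(T_{i_0})=Z\cap S=S$. This global ``separation then squeeze'' argument is the missing idea; your local approach through $Y^{\widehat\sigma}$ and tails does not supply it.
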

\begin{proof}\setcounter{paragraph}{0} 
It is enough to prove: \em For each open neighborhood $U\subset M$ of $S$ the Zariski closure $X=\ol{S}^{\zar}_U$ is irreducible if and only there exists a connected component $T$ of $\pi^{-1}(S)$ such that $\pi(T)=S$\em.

Suppose first $\pi(T)=S$ for some connected component $T$ of $\pi^{-1}(S)$. Assume $M$ is imbedded in $\R^n$ as a closed real analytic submanifold and $\widetilde{X}$ is a complex analytic subset of $\C^n$. Note that $\dim_\C \widetilde{X}=\dim_\R X=d$. Fix an open neighborhood $U\subset M$ of $S$ and let $\Omega\subset\C^n$ be an invariant open neighborhood of $S$ such that $\Omega\cap M=U$. As we will see in  Remark \ref{norm1} $(\pi^{-1}(\widetilde{X}\cap\Omega),\pi_|)$ is the normalization of $\widetilde{X}\cap\Omega$. As $T$ is connected and $\dim_\R T=d$, it is contained in a connected component $Z$ of $\pi^{-1}(\widetilde{X}\cap\Omega)$ of dimension $d$. Moreover $\pi(Z)$ is an irreducible component of $\widetilde{X}\cap\Omega$ of dimension $d$. As $S=\pi(T)\subset\pi(Z)\subset \widetilde{X}$, we have $\ol{S}^{\zar}_U\subset\pi(Z)\cap U$. As $\pi(Z)$ is irreducible and has dimension $d$, it is the complex Zariski closure of $\ol{S}^{\zar}_U$ in $\Omega$. As this holds for all invariant open neighborhood $\Omega\subset\C^n$ of $\ol{S}^{\zar}_U$, we deduce $\ol{S}^{\zar}_U$ is irreducible.
 
Suppose next that $\ol{S}^{\zar}_U$ is irreducible for each open neighborhood $U\subset M$ of $S$. We will construct a suitable open neighborhood $U\subset M$ of $S$ in $M$ to prove the existence of a connected component $T$ of $\pi^{-1}(S)$ such that $\pi(T)=S$. 

Let $\{T_i\}_{i\geq1}$ be the connected components of $\pi^{-1}(S)$. As $S$ is invariant, $\pi^{-1}(S)$ is invariant. Let $\{\Theta_i'\}_{i\geq1}$ be pairwise disjoint open subsets of $Y$ such that $T_i\subset\Theta_i'$ for $i\geq1$. Denote $\Theta'=\bigcup_{i\geq1}\Theta_i'$ and $\Theta=\Theta'\cap\widehat{\sigma}(\Theta')$. Notice that $\Theta$ is an invariant neighborhood of $\pi^{-1}(S)$ in $Y$ and $\Theta_i=\Theta_i'\cap\Theta\subset\widetilde{X}$ is an open neighborhood of $T_i$. Clearly, $\Theta_i\cap\Theta_j=\varnothing$ if $i\neq j$.

Define $C=Y\setminus\Theta$ which is a closed invariant subset of $Y$ that does not intersect $\pi^{-1}(S)$. As $\pi$ is proper and invariant, $\pi(C)$ is an invariant closed subset of $\widetilde{X}$. It holds $S\cap\pi(C)=\varnothing$, so $\pi^{-1}(S)\cap\pi^{-1}(\pi(C))=\varnothing$. Substituting C by the invariant closed set $\pi^{-1}(\pi(C))$, we may assume that $C=\pi^{-1}(\pi(C))$, so the restriction map $\pi_{|Y\setminus C}:Y\setminus C\to \widetilde{X}\setminus\pi(C)$ is proper and surjective. Define $\Omega=\C^n\setminus\pi(C)$ and $U=\Omega\cap M$, which is an open neighborhood of $S$ in $M$.

As $\ol{S}^{\zar}_U$ is irreducible, the complex Zariski closure $Z$ of $\ol{S}^{\zar}_U$ in $\Omega$ is irreducible, it is contained in $\widetilde{X}$ and its dimension equals $\dim_\R S=\dim_\R X=\dim_\C\widetilde{X}$. Thus, it is an irreducible component of $\widetilde{X}'=\widetilde{X}\cap\Omega$. In addition $(Y'=\pi^{-1}(\widetilde{X}'),\pi_{|Y'})$ is by Remark \ref{norm1} the normalization of $\widetilde{X}'$. Thus, there exists a connected component $K$ of $Y'$ such that $Z=\pi(K)$.

As $K\subset\bigcup_{i\geq1}\Theta_i$ is connected, $K\subset\Theta_{i_0}$ for some $i_0\geq1$. As $T_i\cap K\subset T_i\cap\Theta_{i_0}=\varnothing$ if $i\neq i_0$,
$$
\pi(T_{i_0})=\pi\Big(K\cap\bigcup_{i\geq1}T_i\Big)=\pi(K\cap\pi^{-1}(S))=Z\cap S=S,
$$
as required. 
\end{proof}

\noindent\begin{remarks}
 Let $Y^{\widehat{\sigma}}$ be the set of fixed points of $\widehat{\sigma}$.
\begin{itemize}
\item[(i)]  Recall that if $X$ is coherent by   Proposition \ref{cohe} of Chapter 2 we get $\pi^{-1}(X)=Y^{\widehat{\sigma}}$. If such is the case, the connected component $T$ in the statement of Theorem \ref{dpm} is contained in $Y^{\widehat{\sigma}}$.
\item[(ii)] If $X$ is not coherent but $\pi(\pi^{-1}(S)\cap Y^{\widehat{\sigma}})=S$, we cannot assure that the connected component $T$ of $\pi^{-1}(S)$ such that $\pi(T)=S$ satisfies in addition $\pi(T\cap Y^{\widehat{\sigma}})=S$.

Let $X$ be the irreducible C-analytic set of equation $x^4-z^2(4-z^2)y^4=0$. Consider the C-semianalytic set 
$$
S=(X\cap\{0<z<1,y\neq0\})\cup\{x=0,y=0,-1<z<1\}.
$$


The complex analytic set $\widetilde{X}=\{x^4-z^2(4-z^2)y^4=0\}\subset\C^3$ is a complexification of $X$. Its normalization is the non-singular complex analytic set $Y=\{u^2-vz=0,v^2+z^2-4=0\}\subset\C^4$ together with the holomorphic map $\pi:Y\to\widetilde{X},\ (y,z,u,v)\mapsto(uy,y,z)$. We have $\pi^{-1}(S)=T_1\cup T_2\cup T_3\cup T_4$ where
\begin{align*}
T_1&=\Big\{(y,z,\pm\sqrt{z\sqrt{4-z^2}},\sqrt{4-z^2}), 0\leq z<1\Big\}\\
T_2&=\Big\{(0,z,\pm\sqrt{-z\sqrt{4-z^2}}i,\sqrt{4-z^2}),-1<z\leq0\Big\}\\
T_3&=\Big\{(0,z,\pm\sqrt{z\sqrt{4-z^2}}i,-\sqrt{4-z^2}),0\leq z<1\Big\}\\
T_4&=\Big\{(0,z,\pm\sqrt{-z\sqrt{4-z^2}},-\sqrt{4-z^2}),-1<z\leq0\Big\}
\end{align*}
The connected components of $\pi^{-1}(S)$ are $T=T_1\cup T_2$ and $T'=T_3\cup T_4$. It holds $\pi(T)=S$. In addition $\pi^{-1}(S)\cap Y^{\widehat{\sigma}}=T_1\cup T_4$ and it satisfies $\pi(T_1\cup T_4)=S$. However, $T_1\cup T_4$ is not connected and $\pi(T\cap Y^{\widehat{\sigma}})=\pi(T_1)\subsetneq S$.
\end{itemize}
\end{remarks}

\subsection{Irreducible components of an amenable C-semianalytic set.}\label{s5b}

We begin by proving in Theorem \ref{irredcomp2}
existence and uniqueness of the family of weak irreducible components of an amenable C-semianalytic set. 
We recall the definition. 

\begin{defn}[Weak irreducible components]\label{irredcomptame}
Let $S\subset M$ be an amenable C-semi\-analytic set. A countable locally finite family $\{S_i\}_{i\geq1}$ in $S$ of $S$-tame C-semi\-analy\-tic sets is \em a family of weak irreducible components of \em $S$ if the following conditions are fulfilled 
\begin{itemize}
\item[(1)] Each $S_i$ is irreducible.
\item[(2)] If $S_i\subset T\subset S$ and $T$ is an irreducible $S$-tame C-semianalytic set, then $S_i=T$.
\item[(3)] $S_i\neq S_j$ if $i\neq j$.
\item[(4)] $S=\bigcup_{i\geq1} S_i$.
\end{itemize} 
\end{defn}

\begin{thm}\label{irredcomp2}
Let $S \subset\R^n$ be an amenable C-semianalytic set. Then there exists the family of weak irreducible components $\{S_i\}_{i\geq1}$ of $S$ and it is unique. In addition it satisfies 
\begin{itemize}
\item[(i)] $S_i=\ceros(\ideal(S_i,S))$ for $i\geq1$. In particular, $S_i$ is a closed subset of $S$.
\item[(ii)] The ideals $\ideal(S_i,S)$ are the minimal prime (saturated) ideals of $\an(S)$.
\end{itemize}
\end{thm}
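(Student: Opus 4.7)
The plan is to use the normalization machinery developed earlier together with Theorem \ref{dpm} and Theorem \ref{properint-tame} to produce the candidate components, and then use the correspondence with prime ideals of $\an(S)$ to prove uniqueness and the additional statements (i)-(ii).

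First I would set up the geometric framework. Embed $S$ into its Zariski closure $X = \overline{S}^{\zar}$ inside $\R^n$; choose an invariant open Stein neighborhood $\Omega\subset\C^n$ of $X$ and a complexification $\widetilde{X}\subset\Omega$ with anti-involution $\sigma$ whose fixed part is $X$. Let $\pi\colon Y\to\widetilde{X}$ be the normalization, endowed with the lifted anti-involution $\widehat{\sigma}$. Since $\pi$ is a proper analytic map, $\pi^{-1}(S)$ is again an amenable C-semianalytic set, invariant under $\widehat{\sigma}$, and it is covered by its countable locally finite family of connected components $\{T_\alpha\}_{\alpha\in A}$. For each $\alpha$ put $S_\alpha = \pi(T_\alpha)$. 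By Theorem \ref{properint-tame}(ii), each $S_\alpha$ is an amenable C-semianalytic subset of some open neighborhood of $S$ in $\R^n$, hence $S$-tame. Lemma \ref{proylc} guarantees that the family $\{S_\alpha\}$ is locally finite in $\R^n$.

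Next I would verify that every $S_\alpha$ is irreducible. By construction $T_\alpha$ is connected and $\pi_{|T_\alpha}\colon T_\alpha\to S_\alpha$ is proper and surjective. Applied to $S_\alpha$ itself, Theorem \ref{dpm} says that $S_\alpha$ is irreducible provided some connected component of the preimage of $S_\alpha$ under the normalization of $\overline{S_\alpha}^{\zar}$ maps onto $S_\alpha$; the component containing the image of $T_\alpha$ under the natural identification (given by Remark \ref{norm1} applied to a small Stein neighborhood of $\overline{S_\alpha}^{\zar}$) does the job. After removing repetitions and keeping one representative per equivalence class, I get a sub-family $\{S_i\}_{i\geq 1}$; each $S_i$ is irreducible and $S=\bigcup_i S_i$ since $\pi$ is surjective onto $S$. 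This proves conditions (1), (3), (4) of Definition \ref{irredcomptame}.

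The main obstacle is condition (2), i.e.\ maximality. Suppose $S_i\subset T\subset S$ with $T$ irreducible and $S$-tame. Applying Theorem \ref{dpm} to $T$, one finds a connected component $T'$ of $\pi^{-1}(T)\subset\pi^{-1}(S)$ with $\pi(T')=T$. Since $T_i\subset\pi^{-1}(S_i)\subset\pi^{-1}(T)$ and $T_i$ is already connected in $\pi^{-1}(S)$, it lies entirely in one connected component of $\pi^{-1}(T)$; by the locally finite description of $\pi^{-1}(S)$ and the fact that $T_i$ is also a connected component of the (larger) preimage $\pi^{-1}(S)$, we obtain $T' = T_i$, hence $T = \pi(T') = \pi(T_i) = S_i$. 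Uniqueness then follows because any other family $\{S'_j\}$ with the same four properties must coincide, modulo reindexing, with $\{S_i\}$: each $S'_j$ is irreducible and $S$-tame, hence contained in some $S_i$ by covering property (4) and then equal to $S_i$ by maximality (2), and vice versa.

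Finally I would derive (i) and (ii) algebraically. For (i), note $S_i\subset\mathcal{Z}(\ideal(S_i,S))\subset S$ is automatic; if the inclusion were strict, the Zariski-type closure would produce a strictly larger irreducible $S$-tame set containing $S_i$ (using Lemma \ref{charirred} and the fact that $\mathcal{Z}(\ideal(S_i,S))$ inherits irreducibility from the primality of $\ideal(S_i,S)$), contradicting (2); so $S_i = \mathcal{Z}(\ideal(S_i,S))$, which in particular is closed in $S$. For (ii), each $\ideal(S_i,S)$ is prime (since $S_i$ is irreducible and the restriction map $\an(S)\to\an(S_i)$ has kernel $\ideal(S_i,S)$) and saturated (as kernel of a restriction to a closed subset). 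Any prime ideal $\gtp\subsetneq\ideal(S_i,S)$ would have $\mathcal{Z}(\gtp)\supsetneq S_i$, and this zero-set would be an irreducible $S$-tame C-semianalytic set strictly containing $S_i$, again contradicting (2); so $\ideal(S_i,S)$ is a minimal prime of $\an(S)$. Conversely, every minimal prime of $\an(S)$ arises this way because its zero-set in $S$ is contained in some $S_i$ (using condition (4) and the correspondence between primes and irreducible components).
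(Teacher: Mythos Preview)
Your approach has a genuine gap: the images $\pi(T_\alpha)$ of the connected components of $\pi^{-1}(S)$ under the normalization of the complexification of $\overline{S}^{\zar}$ need \emph{not} be irreducible. The paper gives an explicit counterexample (the amenable set $S=S_0\cup S_1\cup S_2\cup S_3\subset\R^4$ immediately preceding Proposition~\ref{dpm2}): there $\pi^{-1}(S)$ has three connected components $C_0,C_1,C_2$ while $S$ has four irreducible components, and in fact $\pi(C_1)=S_1\cup S_2\cup S_3$ is reducible. Your appeal to Theorem~\ref{dpm} for the irreducibility of $S_\alpha$ cannot be made to work: that theorem concerns the normalization of the complexification of $\overline{S_\alpha}^{\zar}$, which is a different object from the normalization of $\overline{S}^{\zar}$, and Remark~\ref{norm1} only lets you restrict to an \emph{open} subset, not to a subvariety. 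The maximality argument for condition~(2) is also incomplete: Theorem~\ref{dpm} gives \emph{some} connected component $T'$ of $\pi^{-1}(T)$ with $\pi(T')=T$, but you have no control over which one, so you cannot conclude $T'=T_i$.

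The paper proceeds in the opposite order. It first constructs the weak irreducible components \emph{algebraically}: take an irredundant locally finite primary decomposition $\ideal(S)=\bigcap_i\gtp_i$ in $H^0(S,(\an_M)_{|S})$; since $\ideal(S)$ is radical the $\gtp_i$ are prime, and one sets $S_i=\ceros(\gtp_i)$. Lemma~\ref{sheaf} makes each $S_i$ $S$-tame, Lemma~\ref{clue3} gives irreducibility via $\ideal(\ceros(\gtp_i),S)=\gtp_i$, and Lemma~\ref{ni} handles maximality and uniqueness. Properties (i) and (ii) then fall out directly from this construction. The normalization machinery you invoke is used only \emph{afterwards}, in Propositions~\ref{tamend1} and~\ref{tamend2}, to show that these weak components are in fact amenable and locally finite in $M$ --- and even there, after first passing to the neat neighborhood of Proposition~\ref{neatwirred} where the components are cut out by the irreducible components of the Zariski closure.
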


Before proving Theorem \ref{irredcomp2} we introduce some auxiliary results.

\begin{lem}\label{sheaf}
Let $S\subset M$ be a semianalytic set and let $\gta$ be an ideal of $\an(S)$. Then $\ceros(\gta)$ is the intersection of $S$ with a C-analytic subset $X$ of an open neighborhood $U\subset M$ of $S$. 
\end{lem}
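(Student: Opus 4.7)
The plan is to realize $\ceros(\gta)$ globally as the trace on $S$ of the zero locus of a coherent sheaf of ideals built out of (extensions of) the elements of $\gta$, and then to recognize that zero locus as a C-analytic set by Cartan's closure-of-modules theorem.

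First I would fix the identification of $\an(S)$ with a quotient of a Stein algebra: via Grauert's embedding theorem realise $M$ as a closed real-analytic submanifold of some $\R^n$; under the standing hypothesis that each element of $\an(S)$ is the restriction of a function analytic on an open neighbourhood of $S$ in $M$, one has
$$
\an(S)\cong\an(M)/\I(S),\qquad \I(S)=\{F\in\an(M):F_{|S}\equiv 0\}.
$$
Lift the given ideal $\gta$ to an ideal $\widetilde{\gta}\subset\an(M)$ with $\I(S)\subset\widetilde{\gta}$. Now apply Theorem \ref{Moduli Chiusi Stein}(2) (Cartan's closure of modules on the Stein space $M$): the subsheaf $\mathcal{J}\subset\an_M$ generated by the family of global sections $\widetilde{\gta}$ is $\an_M$-coherent, and by construction $\mathcal{J}_x=\widetilde{\gta}\,\an_{M,x}$ for every $x\in M$.

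Next I would put $X:=\supp(\an_M/\mathcal{J})=\{x\in M:\mathcal{J}_x\neq\an_{M,x}\}$. Since $\mathcal{J}$ is coherent, $X$ is a closed analytic subset of $M$; and since $\mathcal{J}$ is globally generated by sections in $\widetilde{\gta}\subset\an(M)$, the set $X$ is exactly the common zero set in $M$ of the family $\widetilde{\gta}$, so it is C-analytic. Consequently $X$ is a C-analytic subset of any open neighbourhood $U\subset M$ of $S$ (one may simply take $U=M$, or any prescribed smaller one).

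To finish I would check the equality $\ceros(\gta)=S\cap X$. For $x\in S$, saying $x\in\ceros(\gta)$ means $f(x)=0$ for every $f\in\gta$; picking any lift $F\in\widetilde{\gta}$ of $f$ this is equivalent to $F(x)=0$ for every $F\in\widetilde{\gta}$ (lifts differ by elements of $\I(S)$, which vanish on $S$); this in turn is equivalent to $\mathcal{J}_x\neq\an_{M,x}$, because if some $F\in\widetilde{\gta}$ did not vanish at $x$ its germ would be a unit in $\an_{M,x}$, forcing $\mathcal{J}_x=\an_{M,x}$. Hence $\ceros(\gta)=S\cap X$, as required.

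The one non-formal input, and hence the only real obstacle, is the coherence of the globally generated ideal sheaf $\mathcal{J}$; this is precisely Theorem \ref{Moduli Chiusi Stein}(2) (a consequence of Cartan's closure-of-modules theorem for Stein spaces) and is the reason one needs to pass to the ambient Stein manifold rather than work only with sections on a vague neighbourhood of $S$. Everything else is bookkeeping: the identification $\an(S)\cong\an(M)/\I(S)$ (harmless, especially because for C-semianalytic $S$ the set $S$ admits a fundamental system of open Stein neighbourhoods in $\R^n$ by Theorem \ref{intornist}, so by Theorem \ref{AandBreal} analytic functions on neighbourhoods of $S$ extend to $\R^n$), and the translation between vanishing of germs and non-triviality of stalks of $\mathcal{J}$.
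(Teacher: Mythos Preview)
Your argument hinges on the identification $\an(S)\cong\an(M)/\I(S)$, but this is not valid in general and your justification via Theorem~\ref{intornist} does not apply: that theorem concerns C-analytic spaces inside their complexifications, not arbitrary semianalytic subsets of a real analytic manifold. The identification already fails for $S=\{0\}\subset\R$: here $\an(S)=\R\{x\}$ is the ring of convergent power series at $0$, while $\an(\R)/\I(\{0\})=\an(\R)$, and the germ of $1/(1-x)$ is not the restriction of any entire function. More generally, an element of $\an(S)$ is by definition represented by an analytic function on \emph{some} open neighbourhood of $S$, and there is no mechanism forcing such a function to extend to all of $M$. Without surjectivity of $\an(M)\to\an(S)$ you cannot lift $\gta$ to an ideal of $\an(M)$, and your sheaf $\mathcal{J}=\widetilde\gta\,\an_M$ may have strictly smaller stalks along $S$ than the ideal sheaf generated by $\gta$; then $S\cap X$ could properly contain $\ceros(\gta)$.

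The paper avoids this obstruction by working with the germ description $\an(S)=H^0(S,(\an_M)_{|S})/\I(S)$ directly: it lifts $\gta$ to $\gtb\subset H^0(S,(\an_M)_{|S})$, observes that the ideal sheaf $\gtb(\an_M)_{|S}$ is coherent on $S$, and then invokes Proposition~\ref{tripla} to extend this coherent sheaf to a coherent sheaf $\mathcal{F}$ on an open neighbourhood $U$ of $S$ with $\mathcal{F}_{|S}=\gtb(\an_M)_{|S}$. The zero set $X$ of $\mathcal{F}$ is then C-analytic in $U$, and the equality of stalks along $S$ gives $\ceros(\gta)=S\cap X$ exactly. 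The essential difference is that the paper extends the \emph{sheaf} to a neighbourhood (which is always possible by Proposition~\ref{tripla}), whereas you attempt to extend the \emph{individual functions} to all of $M$ (which is not). Your use of Theorem~\ref{Moduli Chiusi Stein}(2) is the right idea for coherence, but it must be applied to the sheaf on $S$ and then combined with the sheaf-extension step rather than with a global lift of generators.
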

\begin{proof}
We have defined $\an(S)$  as the quotient $H^0(S,(\an_M)_{|S})/\ideal(S)$. Let $\gtb\supset\ideal(S)$ be the ideal of $H^0(S,(\an_M)_{|S})$ such that $\gta=\gtb/\ideal(S)$. The sheaf of ideals $\gtb(\an_M)_{|S}$ is $(\an_M)_{|S}$-coherent.  \em There exists an open neighborhood $U\subset M$ of $S$ and an analytic $\an_U$-coherent sheaf ${\mathcal F}$ such that $\gtb(\an_M)_{|S}={\mathcal F}_{|S}$.\em

Let $X$ be the zero set of the $\an_U$-coherent sheaf ${\mathcal F}$, which is a C-analytic subset of $U$. As $\gtb(\an_M)_{|S}={\mathcal F}_{|S}$, we deduce $\ceros(\gta)=\ceros(\gtb)=S\cap X$, as required.
\end{proof}

\begin{lem}\label{clue2}
Let $S\subset M$ be a semianalytic set and let $\gta$ be an ideal of $\an(S)$. Denote $T=\ceros(\gta)$. Then for each $f\in\an(T)$ there exists $g\in\an(S)$ such that $\ceros(f)=\ceros(g)$.
\end{lem}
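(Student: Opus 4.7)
My plan is to construct $g$ in the form $g = \tilde f^{\,2} + h^2$, where $\tilde f \in \an(S)$ is an extension of $f$ and $h \in \an(S)$ is a positive semidefinite analytic function whose zero set on $S$ is exactly $T$.

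First, applying Lemma \ref{sheaf} to the ideal $\gta \subset \an(S)$, I write $T = S \cap X$ for a $C$-analytic subset $X$ of an open neighborhood $U$ of $S$ in $M$. Using the characterization of $C$-analytic sets by finitely many global equations (the analogue of Proposition \ref{globalequations} for real $C$-analytic sets), I pick $h_1,\ldots,h_k \in \an(U)$ whose common zero set in $U$ is $X$, and set $h := (h_1^2 + \cdots + h_k^2)|_S \in \an(S)$. Then $h \geq 0$ and $\ceros(h) = T$.

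Second, I would produce $\tilde f \in \an(S)$ extending $f$. This amounts to the surjectivity of the restriction homomorphism $\an(S) \to \an(T)$, whose kernel is the ideal $\ideal(T,S)$ of functions vanishing on $T$. To obtain surjectivity I embed the relevant data in a $C$-analytic ambient: Lemma \ref{sheaf} provides the $C$-analytic set $X \supset T$, and Theorems \ref{TognoliComplessificazione} and \ref{intornist} furnish a fundamental system of open Stein neighborhoods $\Omega \subset \widetilde{M}$ of $X$ in a complexification $\widetilde{M}$ of $M$. By Proposition \ref{tripla} the coherent ideal sheaf of germs vanishing on $T$ extends to a coherent sheaf on some such $\Omega$, and Cartan's Theorem B together with the direct-limit argument of Theorem \ref{AandBreal} yields the vanishing of $H^1$ for the relevant ideal sheaf, hence the desired surjectivity. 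Pulling back a section I obtain $\tilde f \in \an(S)$ with $\tilde f|_T = f$. I finally set $g := \tilde f^{\,2} + h^2 \in \an(S)$, which is nonnegative; since $g$ vanishes exactly where both $\tilde f$ and $h$ vanish, $\ceros(g) = \ceros(\tilde f) \cap \ceros(h) = \ceros(\tilde f) \cap T = \ceros(f)$, the last equality because $\tilde f|_T = f$.

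The main obstacle is the extension step: showing that every $f \in \an(T)$ lifts to some $\tilde f \in \an(S)$. The difficulty is that $S$ is only assumed semianalytic (not $C$-analytic), so Cartan's Theorems A and B are not directly available on $S$ itself. One overcomes this by realising $T$ inside the $C$-analytic set $X$ provided by Lemma \ref{sheaf} and transferring cohomological vanishing from a Stein complexification of $X$ back to $S$ via restriction; alternatively, one could bypass the lifting by directly constructing, through a sheaf-extension argument in the spirit of Definition \ref{realext} of Chapter 4, a coherent ideal sheaf on a neighborhood of $S$ in $M$ whose zero set meets $S$ exactly in $\ceros(f)$, and then take $g$ to be a sum of squares of global generators.
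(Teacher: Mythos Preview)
Your overall plan is sound and matches the paper's idea: one builds $g$ out of an equation $h$ of $T$ inside $S$ together with (something playing the role of) an extension of $f$, and the combination $h^2+\tilde f^{\,2}$ picks out exactly $\ceros(f)$. The paper also forms $h^2+{f'}^2$ at the sheaf level.

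The gap is in your extension step. Your cohomological justification for the surjectivity of $\an(S)\to\an(T)$ is too vague to stand on its own: you never say on which space the relevant ideal sheaf lives or why Theorem~\ref{AandBreal} applies there. The hypothesis of Theorem~\ref{AandBreal} is that the closed set has a fundamental system of Stein neighbourhoods; you have this for the $C$-analytic set $X$, but not a priori for $S$, and you never explain how vanishing on $X$ transfers to a lifting over $S$. The subtlety you are skating over is that $f$ comes with an analytic representative $f'$ only on a neighbourhood $V$ of $T$, and $V$ need not contain $S$.

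The paper's proof supplies precisely the missing construction: it replaces the ambient neighbourhood $U_0\supset S$ by
\[
U_1=(U_0\setminus X)\cup V,
\]
checks that $S\subset U_1$, and shows that $X'=X\cap U_1=X\cap V$ is $C$-analytic in $U_1$. On $U_1$ one now has both an equation $h$ of $X'$ and the function $f'$ defined on $V\supset X'$, so the locally principal sheaf generated by $h^2+{f'}^2$ along $X'$ (and by $1$ off $X'$) is coherent on the real analytic manifold $U_1$; its support $X'\cap\ceros(f')$ is then a $C$-analytic subset of $U_1$ with a global equation $g'$, and $g=g'|_S$ works. Note that this bypasses the surjectivity question entirely: $f'$ is never extended beyond $V$. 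If you prefer your formulation with an honest lift $\tilde f\in\an(S)$, the same $U_1$ gives it to you---apply Theorem~B to the $C$-analytic inclusion $X'\subset U_1$ to extend $f'|_{X'}$ to $F\in\an(U_1)$, then set $\tilde f=F|_S$---but the point is that the trick with $U_1$ is the actual content of the argument, not a detail to be waved away.
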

\begin{proof}
By Lemma \ref{sheaf} there exists an open neighborhood $U_0\subset M$ of $S$ and a C-analytic set $X\subset U_0$ such that $T=S\cap X$. Let $V\subset U_0$ be an open neighborhood of $T$ and ${f'}\in\an(V)$ an analytic function such that ${f'}_{|T}=f$. 

We claim: $S\subset U_1$ where $U_1=(U\setminus X)\cup V$. 
 Indeed, $S\cap U_1=(S\setminus X)\cup(V\cap S)\supset(S\setminus T)\cup T=S$, so $S\subset U_1$. 

It holds: \em  $X'=X\cap U_1=X\cap V$ is a C-analytic subset of $U_1$. \em

Indeed let $g$ be an analytic equation of $X'$ in $V$ and consider the $\an_{U_1}$-coherent sheaf of ideals
$$
{\mathcal F}_x=\begin{cases}
g\an_{U_1,x}&\text{ if $x\in V$},\\
\an_{U_1,x}&\text{ otherwise}.
\end{cases}
$$
As $X'$ is the zero set of ${\mathcal F}$, we conclude that $X'$ is a C-analytic subset of $U_1$.

In addition $S\cap X'=S\cap X\cap V=T$. Let $h\in\an(U_1)$ be an analytic equation of $X'$ in $U_1$ and consider the $\an_{U_1}$-coherent sheaf of ideals
$$
{\mathcal F}_x=\begin{cases}
(h^2+{f'}^2)\an_{U_1,x}&\text{ if $x\in X'$},\\
\an_{U_1,x}&\text{ otherwise}.
\end{cases}
$$
Its zero set is a C-analytic subset of $U_1$, so there exists $g'\in\an(U_1)$ such that its zero set coincides with the zero set $\ceros(h,{f'})=X'\cap\ceros({f'})$ of ${\mathcal F}$. Thus, if $g=g'_{|S}$, we have
$$
\ceros(g)=S\cap\ceros(g')=S\cap X'\cap\ceros({f'})=T\cap\ceros({f'})=\ceros(f),
$$
as required.
\end{proof}
\begin{lem}\label{clue3}
Let $S\subset M$ be an amenable C-semianalytic set and let $\gta$ be an ideal of $\an(S)$. Then 
\begin{itemize}
\item[(i)] $\ceros(\gta)$ is an $S$-tame C-semianalytic set.
\item[(ii)] $\ceros(\gta)$ is irreducible if and only if $\ideal(\ceros(\gta),S)$ is a prime ideal of $\an(S)$. 
\end{itemize}
\end{lem}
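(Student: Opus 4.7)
The plan is to treat (i) first using the structural description of zero sets of ideals from Lemma \ref{sheaf}, and then handle the two implications of (ii) separately; the forward direction will be essentially formal, while the backward direction will rely on Lemma \ref{clue2} as the key bridge.

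For (i), I would apply Lemma \ref{sheaf} to write $\ceros(\gta)=S\cap X$ where $X$ is a C-analytic subset of some open neighborhood $U\subset M$ of $S$. Since $S\subset U$ and $S$ is amenable in $M$, one checks directly from the definition that $S$ is also amenable as a subset of $U$ (write $S=\bigcup_{i=1}^r(Z_i\cap V_i)$ and note this equals $\bigcup_{i=1}^r((Z_i\cap U)\cap(V_i\cap U))$). Then $\ceros(\gta)=S\cap X=\bigcup_{i=1}^r((Z_i\cap X)\cap V_i)$ is amenable in $U$ because each $Z_i\cap X$ is C-analytic in $U$ and each $V_i$ is open C-semianalytic in $U$. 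Hence $\ceros(\gta)$ is $S$-tame.

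For the forward direction of (ii), I would observe that the restriction homomorphism $\an(S)\to\an(\ceros(\gta))$, $f\mapsto f_{|\ceros(\gta)}$, has kernel precisely $\ideal(\ceros(\gta),S)$. If $\ceros(\gta)$ is irreducible, then $\an(\ceros(\gta))$ is an integral domain by definition, so the quotient $\an(S)/\ideal(\ceros(\gta),S)$ injects into an integral domain and is therefore itself an integral domain; hence $\ideal(\ceros(\gta),S)$ is prime. (This is precisely item (iv) in the list of basic properties of irreducibility stated earlier in the text.)

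For the backward direction, which is the main substantive step, I would argue as follows. Write $T=\ceros(\gta)$ and assume $\ideal(T,S)$ is prime. Take $f_1,f_2\in\an(T)$ with $f_1f_2=0$ in $\an(T)$, so that $\ceros(f_1)\cup\ceros(f_2)=T$. By Lemma \ref{clue2}, for each $i=1,2$ there exists $g_i\in\an(S)$ with $\ceros(g_i)=\ceros(f_i)$ (where the left-hand side is the zero set in $S$, a subset of $T$). Then $\ceros(g_1g_2)\supset\ceros(g_1)\cup\ceros(g_2)=T$, so $g_1g_2\in\ideal(T,S)$. Primality gives, say, $g_1\in\ideal(T,S)$, i.e., $g_1$ vanishes on $T$, so $T\subset\ceros(g_1)=\ceros(f_1)\subset T$. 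Hence $\ceros(f_1)=T$, which means $f_1=0$ in $\an(T)$. Thus $\an(T)$ is a domain and $T$ is irreducible.

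The only nontrivial step is the backward direction of (ii); the key point is that Lemma \ref{clue2} allows one to lift equations from the possibly wild set $\ceros(\gta)$ back to global analytic data on $S$, which is what makes the primality hypothesis on $\ideal(T,S)$ effective.
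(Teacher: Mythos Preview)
Your proof is correct and follows essentially the same approach as the paper: part (i) via Lemma \ref{sheaf}, the forward direction of (ii) via the restriction homomorphism (item (iv) of the basic irreducibility properties), and the backward direction via Lemma \ref{clue2} exactly as the paper does. Your treatment of (i) is slightly more explicit in verifying amenability of $S\cap X$ in $U$, but this is just spelling out what the paper leaves implicit.
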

\begin{proof}
(i) This statement follows from Lemma \ref{sheaf}.

(ii) Recall that if $T=\ceros(\gta)$ is irreducible, then $\ideal(T,S)$ is prime by definition. Conversely, assume that $\ideal(T,S)$ is prime and let $f_1,f_2\in\an(T)$ be such that $f_1f_2=0$. By \ref{clue2} there exist analytic functions $g_1,g_2\in\an(S)$ such that $\ceros(g_i)=\ceros(f_i)$ for $i=1,2$. Thus, $\ceros(g_1g_2)=\ceros(f_1f_2)=T$, so $g_1g_2\in \ideal(T,S)$. As $\ideal(T,S)$ is a prime ideal, we assume $g_1\in\ideal(T,S)$. Thus, $\ceros(f_1)=\ceros(g_1)=T$, so $f_1=0$. Consequently, $\an(T)$ is an integral domain and $T$ is irreducible.
\end{proof}
\begin{lem}\label{ni}
Let $S\subset T\subset E\subset M$ be $E$-tame C-semianalytic sets such that $S$ is irreducible and let $\{T_i\}_{i\geq1}$ be a family of $E$-tame C-semianalytic sets. Assume 
\begin{itemize}
\item[(i)] $T_i=\ceros(\ideal(T_i,E))$ for $i\geq1$,
\item[(ii)] $T=\bigcup_{i\geq1}T_i$,
\item[(iii)] The family $\{T_i\}_{i\geq1}$ is locally finite in $E$,
\end{itemize}
Then there exists $i\geq1$ such that $S\subset T_i$.
\end{lem}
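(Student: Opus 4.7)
I would argue by contradiction, reducing the statement to the real analog of the Stein-algebra result Proposition \ref{primochiuso} adapted to saturated ideals via Proposition \ref{deb}.

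Suppose $S\not\subset T_i$ for every $i\geq 1$. Because $S$ is irreducible, Lemma \ref{charirred} gives that $\ol{S}^{\zar}_M$ is an irreducible C-analytic set, so the ideal $I(S)=\ideal(S,M)\subset\an(M)$ of analytic functions vanishing on $S$ is a saturated prime ideal. By hypothesis (i) each $T_i$ is the zero set of the ideal $\ideal(T_i,E)$; lifting to $\an(M)$, the ideal $I(T_i)=\ideal(T_i,M)$ is saturated and the assumption $S\not\subset T_i$ translates into $I(T_i)\not\subset I(S)$.

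Next, I would pass to a single open neighborhood $U\subset M$ of $E$ on which $S$, $T$, $E$ and every $T_i$ are amenable (such a $U$ exists by $E$-tameness of all the sets involved, combined with the local finiteness of $\{T_i\}$ in $E$). Using amenability of each $T_i$ together with local finiteness of the family in $E$, Proposition \ref{normal2} and Corollary \ref{lfuok2} show that the C-analytic Zariski closures $\ol{T_i}^{\zar}_U=\ceros(I(T_i))\cap U$ form a locally finite family in $U$. Hence $\{I(T_i)\}_{i\geq 1}$ is a locally finite family of saturated ideals in the Stein algebra $\an(U)$, and $I(S)$ restricts to a saturated prime ideal of $\an(U)$ none of whose $I(T_i)$ are contained in it.

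At this point I would invoke the real version of Proposition \ref{primochiuso}, proved by combining the Fr\'echet/Baire arguments of Lemmas \ref{intorni di 1} and \ref{indici finiti} with the saturated machinery of Proposition \ref{deb}: a locally finite family of saturated ideals in a Stein algebra cannot all be avoided by a saturated prime ideal unless their intersection is also avoided. Applied to $\{I(T_i)\}$ and $I(S)$, this yields an analytic function $f\in\bigcap_{i\geq 1}I(T_i)$ with $f\notin I(S)$. Such $f$ vanishes on every $T_i$, hence on $T=\bigcup_{i\geq 1}T_i$, so $f\in I(T)$; but $S\subset T$ gives $I(T)\subset I(S)$, forcing $f\in I(S)$, a contradiction.

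The main obstacle is carrying out the second step cleanly: lifting the local finiteness of $\{T_i\}$ in $E$ to local finiteness of the (a priori larger) Zariski closures $\{\ol{T_i}^{\zar}\}$ in a uniform Stein open neighborhood of $E$, since in general Zariski closure enlarges sets and can destroy local finiteness (as Example \ref{segmenti} illustrates). It is the amenability of each $T_i$---in particular the equality $\dim\ol{T_i}^{\zar}=\dim T_i$ together with Proposition \ref{normal2}---that makes this step go through and ultimately allows the real Stein-algebra machinery to apply.
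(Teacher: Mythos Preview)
Your overall strategy is right and matches the paper: reduce to the real analog of Proposition \ref{primochiuso} for saturated ideals. But you make the argument harder than necessary by lifting everything to $\an(U)$ for an open $U\supset E$, and this creates exactly the obstacle you flag at the end---an obstacle that is genuine for your route and that the tools you cite do not resolve.

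The paper stays in $\an(E)$ throughout. Since $S$ is irreducible, $\ideal(S,E)$ is a saturated prime ideal. Hypothesis (i) says $\ceros(\ideal(T_i,E))=T_i$, so by the very definition of ``locally finite family of ideals'' the family $\{\ideal(T_i,E)\}$ is locally finite in $\an(E)$ as soon as $\{T_i\}$ is locally finite in $E$---which is hypothesis (iii). From $S\subset T=\bigcup_i T_i$ one gets $\bigcap_i\ideal(T_i,E)=\ideal(T,E)\subset\ideal(S,E)$, and the real version of Proposition \ref{primochiuso} (carried over to saturated ideals via Proposition \ref{deb}) forces $\ideal(T_i,E)\subset\ideal(S,E)$ for some $i$, hence $S\subset\ceros(\ideal(S,E))\subset\ceros(\ideal(T_i,E))=T_i$. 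That is the entire proof.

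Your lifting step has a real gap. Proposition \ref{normal2} and Corollary \ref{lfuok2} concern amenability of unions, not local finiteness of Zariski closures in an ambient open set; amenability of $T_i$ only bounds $\dim\ol{T_i}^{\zar}$, it does not prevent the closures from all accumulating at points of $U\setminus E$. Moreover $E$-tameness furnishes a possibly different neighborhood $U_i$ for each $T_i$, with no mechanism to intersect infinitely many and retain an open set. The whole point of hypothesis (i) is to make the zero sets of the relevant ideals equal to the $T_i$ themselves, so that hypothesis (iii) delivers local finiteness of ideals for free---provided you work in $\an(E)$ rather than in $\an(M)$.
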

\begin{proof}
As $S$ is irreducible, $\ideal(S,E)$ is a saturated prime ideal of $\an(E)$. The family of saturated ideals $\{\ideal(T_i,E)\}_{i\geq1}$ is locally finite and $\bigcap_{i\geq1}\ideal(T_i,E)=\ideal(T,E)\subset \ideal(S,E)$. By Theorem \ref{deb} and its Corollary in Chapter 3  $\ideal(T_i,E)\subset\ideal(S,E)$ for some $i\geq1$, so $S\subset\ceros(\ideal(S,E))\subset\ceros(\ideal(T_i,E))=T_i$, as required.
\end{proof}
As a straightforward consequence of Lemma \ref{ni} and Theorem \ref{irredcomp2} proved below, we have:
\begin{cor}\label{nic}
Let $S\subset M$ be an amenable C-semianalytic set and let $\{S_i\}_{i\geq1}$ be the family of the irreducible components of $S$. Then $S_k\not\subset\bigcup_{i\neq k}S_i$ for each $k\geq1$.
\end{cor}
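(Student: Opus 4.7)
The plan is to argue by contradiction using Lemma \ref{ni} together with the maximality property in the definition of irreducible components. Suppose, for some fixed $k\geq 1$, that $S_k\subset\bigcup_{i\neq k}S_i$. Set $E=S$, $T=\bigcup_{i\neq k}S_i$, and consider the family $\{T_i\}=\{S_i\}_{i\neq k}$ of $E$-tame C-semianalytic sets.

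I would first check that the hypotheses of Lemma \ref{ni} are satisfied. The inclusion $S_k\subset T\subset E$ holds by the assumption and by property (4) of Definition \ref{irredcomptame}; $S_k$ is irreducible by property (1); each $T_i=S_i$ equals $\ceros(\ideal(S_i,S))$ by Theorem \ref{irredcomp2}(i); the equality $T=\bigcup_{i\neq k}T_i$ is trivial; and the family $\{S_i\}_{i\neq k}$ remains locally finite in $S$ as a subfamily of the locally finite family $\{S_i\}_{i\geq 1}$ furnished by the definition of irreducible components. Applying Lemma \ref{ni} then produces an index $j\neq k$ such that $S_k\subset S_j$.

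Now $S_j$ is itself an irreducible $S$-tame C-semianalytic set with $S_k\subset S_j\subset S$, so property (2) of Definition \ref{irredcomptame} applied to the irreducible component $S_k$ forces $S_k=S_j$. This contradicts property (3) of Definition \ref{irredcomptame}, which prescribes $S_i\neq S_j$ whenever $i\neq j$. Hence no such $k$ exists, and we conclude $S_k\not\subset\bigcup_{i\neq k}S_i$ for every $k\geq 1$.

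There is no genuine obstacle here: the only point requiring attention is the bookkeeping verification that all the hypotheses of Lemma \ref{ni} are indeed available in the present setup, in particular that the closedness property $S_i=\ceros(\ideal(S_i,S))$ supplied by Theorem \ref{irredcomp2}(i) is precisely what lets Lemma \ref{ni} apply to the subfamily obtained by omitting the $k$-th component.
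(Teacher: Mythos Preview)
Your proof is correct and follows exactly the approach the paper intends: the paper states the corollary as ``a straightforward consequence of Lemma \ref{ni} and Theorem \ref{irredcomp2}'' without spelling out the details, and your argument is precisely that straightforward consequence made explicit. The verification of the hypotheses of Lemma \ref{ni} and the contradiction via properties (2) and (3) of Definition \ref{irredcomptame0} is the intended route.
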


{\noindent
\begin{proof}[Proof of Theorem \ref{irredcomp2}] \hfill

{\em Existence of the weak irreducible components.}

 Let $\ideal(S)=\bigcap_{i\geq1}\gtp_i$ be a locally finite (irredundant) primary decomposition of $\ideal(S)$. As $\ideal(S)$ is a radical ideal, the ideals $\gtp_i$ are  prime ideals of the ring $H^0(S,(\an_M)_{|_S})$. We have:
\begin{itemize}
\item $S_i=\ceros(\gtp_i)\subset S$ is by Lemma \ref{sheaf} an $S$-tame C-semianalytic set.
\item $S=\bigcup_{i\geq 1}S_i$.
\item The family $\{S_i\}_{i\geq1}$ is locally finite in $S$.
\end{itemize}

We claim: \em Each $S$-tame C-semianalytic set $S_i$ is irreducible and $\ideal(\ceros(\gtp_i),S)=\gtp_i$ for $i\geq1$\em. 

By Lemma \ref{clue3} it is enough to show that $\ideal(\ceros(\gtp_i),S)\subset\gtp_i$ for each $i\geq1$. As the primary decomposition is irredundant, $(\bigcap_{j\neq i}\gtp_j)\setminus\gtp_i\neq\varnothing$. Pick $g_i\in(\bigcap_{j\neq i}\gtp_j)\setminus\gtp_i$. Observe that $h_ig_i\in \ideal(S)\subset\gtp_i$ for each $h_i\in \ideal(\ceros(\gtp_i),S)$ because $h_ig_i$ vanishes identically on $S=S_i\cup\bigcup_{j\neq i}S_j$. As $g_i\not\in\gtp_i$, we conclude $h_i\in\gtp_i$, that is, $\ideal(\ceros(\gtp_i),S)\subset\gtp_i$.

The $S$-tame C-semianalytic sets $S_i$ for $i\geq1$ satisfy conditions (1), (3), (4) in Definition \ref{irredcomptame}. Let us check that they also satisfy condition (2).

Indeed, let $T$ be an irreducible amenable C-semianalytic set such that $S_i\subset T\subset S$. By Lemma \ref{ni} there exists $j\geq1$ such that $S_i\subset T\subset S_j$, so $\gtp_j\subset\gtp_i$. As $\gtp_i$ is a minimal prime ideal between those containing $\ideal(S)$, we deduce $\gtp_j=\gtp_i$, so $S_i=T=S_j$, as required.

\medskip
 {\em Uniqueness of weak irreducible components.} 

Let $\{S_i\}_{i\geq1}$ be the family of weak irreducible components constructed above and let $\{T_j\}_{j\geq1}$ be another family of weak irreducible components of $S$ satisfying the conditions in Definition \ref{irredcomptame}. By Lemma \ref{ni} each $T_i\subset S_j$ for some $j\geq1$. By condition (2) in Definition \ref{irredcomptame} we have $T_i=S_j$. It follows straightforwardly by Lemma \ref{ni} that $\{S_i\}_{i\geq1}=\{T_j\}_{j\geq1}$, as required.
\end{proof}

Next we show that the behavior of the Zariski closure of an amenable C-semianalytic set $S$ in a small enough open neighborhood $U\subset M$ of $S$ with respect to the weak irreducible components is neat.

\begin{prop}\label{neatwirred}
Let $S\subset M$ be an amenable C-semi\-analytic set. There exist an open neighborhood $U\subset M$ of $S$ such that if $X=\ol{S}^{\zar}$ and $\{X_i\}_{i\geq1}$ are the irreducible components of $X$, then $\{S_i=X_i\cap S\}_{i\geq1}$ is the family of the weak irreducible components of $S$ and $X_i=\ol{S_i}^{\zar}$ for $i\geq1$. In particular, if $S$ is a global C-semianalytic subset of $M$, each $S_i$ is a global C-semianalytic subset of $U$.
\end{prop}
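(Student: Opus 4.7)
The plan is to realize each weak irreducible component of $S$ as the intersection of $S$ with an irreducible C-analytic set defined in a common open neighborhood $U$. Theorem \ref{irredcomp2} furnishes the family $\{T_j\}_{j\geq 1}$ of weak irreducible components: each $T_j=\ceros(\ideal(T_j,S))$ is an irreducible, $S$-tame and closed subset of $S$, the ideals $\ideal(T_j,S)$ are the minimal prime saturated ideals of $\an(S)$, and the family is locally finite in $S$. For each $j$, Lemma \ref{charirred}(ii) shows that in any open neighborhood $V_j\subset M$ of $T_j$ the Zariski closure $\overline{T_j}^{\zar}_{V_j}$ is an irreducible C-analytic set, and since $T_j$ is amenable its dimension equals $\dim T_j$. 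Applying Lemma \ref{sheaf} to the prime ideal $\ideal(T_j,S)$ produces a C-analytic set $X_j^0\subset V_j$ with $X_j^0\cap S=T_j$, and after shrinking $V_j$ I may assume $X_j^0=\overline{T_j}^{\zar}_{V_j}$.

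The next step is to fix a single open neighborhood $U\subset\bigcap_j V_j$ of $S$ in $M$ such that $X_j:=X_j^0\cap U$ coincides with $\overline{T_j}^{\zar}_U$, is still irreducible, and the countable family $\{X_j\}_{j\geq 1}$ is locally finite in $U$. Local finiteness in $S$ is given, and each $T_j$ is Zariski-dense in $X_j^0$, so any accumulation of the $X_j^0$'s must happen in $M\setminus S$; such accumulation loci are excised by a suitable shrinking around $S$. Once $U$ has been chosen, Proposition \ref{normal2} together with Corollary \ref{lfuok2} guarantees that $X:=\bigcup_{j\geq 1} X_j$ is a C-analytic subset of $U$, and $X=\overline{S}^{\zar}_U$ follows from $S=\bigcup_j T_j$ combined with the Zariski density $\overline{T_j}^{\zar}_U=X_j$.

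The key identification is $X_i\cap S=T_i$ for every $i$. The inclusion $T_i\subset X_i\cap S$ is immediate, and for the reverse I plan to show that $X_i\cap S$ is an irreducible $S$-tame C-semianalytic set containing $T_i$ and then invoke the maximality clause~(2) of Definition \ref{irredcomptame}. Irreducibility of $X_i\cap S$ follows because the restriction map $\an(X_i\cap S)\hookrightarrow\an(T_i)$ is injective: any analytic representative $\tilde{f}$ of an element of $\an(X_i\cap S)$ that vanishes on $T_i$ has $\ceros(\tilde f)$ of dimension at least $\dim T_i=\dim X_i$, so by Lemma \ref{charirred}(iii) applied to the irreducible C-analytic set $X_i=\overline{T_i}^{\zar}_U$ it vanishes on the whole $X_i$, hence on $X_i\cap S$. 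As $\an(T_i)$ is a domain, so is $\an(X_i\cap S)$. Once $X_i\cap S=T_i$ is established, distinct $X_k$ and $X_i$ cannot be nested: an inclusion $X_k\subsetneq X_i$ would force $T_k\subset T_i$, contradicting Corollary \ref{nic}. Hence the $X_j$'s are precisely the irreducible components of $X$, and the required identifications $S_i=X_i\cap S$ and $X_i=\overline{S_i}^{\zar}$ hold. For the final assertion, if $S$ is global then each $S_i=X_i\cap S$ is the intersection of a C-analytic subset of $U$ (automatically globally defined by finitely many analytic equations in $U$) with a global C-semianalytic set, hence global.

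The main obstacle is the second step: the weak irreducible components are a priori only locally finite in $S$, and I must propagate this property to a neighborhood $U$ in $M$. This is delicate because the irreducible C-analytic extensions $X_j^0$ might accumulate outside $S$, and the argument uses crucially that each $T_j$ is closed in $S$ (Theorem \ref{irredcomp2}(i)) so that these accumulations occur off $S$ and can be excised from $U$ without losing $S$; the subsequent identifications are then essentially formal consequences of the correspondence between minimal primes of $\an(S)$ and their geometric realizations.
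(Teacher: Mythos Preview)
Your overall strategy matches the paper's: start from the weak irreducible components $T_j$ supplied by Theorem~\ref{irredcomp2}, realize each as the trace on $S$ of an irreducible C-analytic set $X_j$, and assemble everything in a common neighborhood $U$. The verification that the $X_j$ are exactly the irreducible components of $X=\overline{S}^{\zar}_U$ and the final globality clause are fine.

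The genuine gap is precisely the step you flag as the main obstacle, and your proposed fix does not work. You take each $X_j^0$ as the Zariski closure of $T_j$ in a neighborhood $V_j$ of $S$, and then assert that ``any accumulation of the $X_j^0$'s must happen in $M\setminus S$'' so that a shrinking around $S$ suffices. This is not justified: the $X_j^0$'s extend well beyond $S$, and a sequence $x_n\to p\in S$ with $x_n\in X_{j_n}^0\setminus S$ for distinct $j_n$ is not ruled out by local finiteness of $\{T_j\}$ in $S$, since $x_n\notin S$ means $x_n\notin T_{j_n}$. Passing from local finiteness of $\{T_j\}$ to local finiteness of their Zariski closures in $M$ is exactly the content of Lemma~\ref{zclc}, which is proved \emph{after} this proposition and by different means; invoking it here would be circular.

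The paper avoids this entirely by a different construction. After shrinking $M$ so that $\{S_i\}$ is locally finite in $M$, it applies Lemma~\ref{neighs} to produce a locally finite family $\{U_i'\}$ of open neighborhoods of the $S_i$, and then takes the Zariski closure $Y_i'$ of $S_i$ \emph{inside $U_i'$} rather than in a neighborhood of all of $S$. Since $Y_i'\subset U_i'$, the family $\{Y_i'\}$ is automatically locally finite. Lemma~\ref{bigneigh} then furnishes a single neighborhood $U$ of $S$ in which each $Y_i'\cap U$ is closed, and one takes $X_i$ to be the Zariski closure of $S_i$ in $U$ (which is contained in $Y_i'\cap U$ and is irreducible because $S_i$ is). This is the missing idea: confine each Zariski closure to a small neighborhood of its own $S_i$, not to a common neighborhood of $S$.

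A minor point: once the construction is set up this way, the identity $X_i\cap S=S_i$ follows from the direct chain $S_i\subset X_i\cap S\subset Y_i\cap S=S_i$, without the detour through injectivity of $\an(X_i\cap S)\hookrightarrow\an(T_i)$ and maximality.
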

\begin{proof}
For each $i\geq1$ there exist by Lemma \ref{sheaf} an open neighborhood $U_i\subset M$ of $S$ and a C-analytic set $Y_i$ such that $S_i=Y_i\cap S$. 

As the family $\{S_i\}_{i\geq1}$ is locally finite in $S$, we may assume after substituting $M$ by an open neighborhood of $S$ that \em the family $\{S_i\}_{i\geq1}$ is locally finite in $M$\em. Indeed, for each $x\in S$ let $U^x\subset M$ be an open neighborhood of $x$ such that only finitely many $S_i$ meet $U^x$. It is enough to take $M'=\bigcup_{x\in S}U^x$ instead of $M$.

By Lemma \ref{neighs} there exists a locally finite family $\{U_i'\}_{i\geq1}$ of open neighborhoods $U_i'\subset U_i$ of $S_i$. As $S_i$ is an amenable C-analytic subset of $U_i'$, the Zariski closure $Y_i'\subset Y_i\cap U_i'$ of $S_i$ in $U_i'$ has its same dimension. Each $Y_i'$ is a closed subset of $U_i'$ and it holds 
$$
S_i=Y_i'\cap S\subset\cl_M(Y_i')\cap S\subset\cl_M(Y_i)\cap S\cap U_i=\cl_{U_i}(Y_i)\cap S=Y_i\cap S=S_i,
$$
that is, $\cl_M(Y_i')\cap S=Y_i\cap S$. By Lemma \ref{bigneigh} there exists an open neighborhood $U\subset M$ of $S$ such that $Y_i''=Y_i'\cap U$ is a closed subset of $U$ for $i\geq1$. Let $h_i$ be an analytic equation of $Y_i'$ in $\an(U_i')$. Notice that $Y_i''$ is a C-analytic subset of $U$ because it is the zero set of the coherent sheaf on $U$:
$$
{\mathcal F}_x=\begin{cases}
h_i\an_{U,x}&\text{ if $x\in Y_i''$},\\
\an_{U,x}&\text{ otherwise}.
\end{cases}
$$
As $S_i$ is irreducible, the Zariski closure $X_i\subset Y_i''$ of $S_i$ in $U$ is an irreducible C-analytic set. Notice that 
\begin{itemize}
\item[(1)] $S_i\subset S\cap X_i\subset S\cap Y_i=S_i$, so $X_i\cap S=S_i$,
\item[(2)] The family $\{X_i\}_{i\geq1}$ is locally finite because $X_i\subset U_i'$ and the family $\{U_i'\}_{i\geq1}$ is locally finite. 
\end{itemize}

Putting all together, we are done.
\end{proof}


In order  to prove Theorem \ref{irredcomp1} we have to prove that the weak irreducible components are in fact the irreducible components of $S$. To do this in particular we have to prove that
the weak irreducible components of $S$ are amenable C-semianalytic sets and constitute a locally finite family of $M$.

We denote the family of the weak irreducible components of $S$ with $\{S_i\}_{i\geq1}$. We begin by some lemmas

\begin{lem}\label{neigh}
For each $i\geq1$ there exists an open C-semianalytic set $U$ such that $S\cap U=S_i\cap U$ is a real analytic manifold of dimension $\dim S_i$.
\end{lem}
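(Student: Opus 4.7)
The plan is to apply Proposition \ref{neatwirred} to reduce to a situation where the weak irreducible components sit inside the irreducible components of the Zariski closure, and then to carve out an open C-semianalytic set $U$ that isolates the top-dimensional ``generic'' part of $S_i$. First, I would invoke Proposition \ref{neatwirred} to obtain an open neighborhood $U'\subset M$ of $S$ such that the Zariski closure $X=\ol{S}^{\zar}_{U'}$ decomposes into irreducible components $\{X_j\}_{j\geq1}$, with $S_j=X_j\cap S$, $X_j=\ol{S_j}^{\zar}_{U'}$, and the family $\{X_j\}_{j\geq 1}$ locally finite in $U'$. In particular, $\dim X_i=\dim S_i$, call this $d_i$.

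Next, I would use that $S_i$ is amenable as a subset of $U'$ (being $X_i\cap S$ intersected with $S$, both amenable in $U'$) and apply Lemma \ref{decomp} to $S_i$ inside $U'$. This yields a decomposition $S_i=\bigcup_{k=1}^r(Z_k\cap U_k)$ in which the top-dimensional piece has Zariski closure equal to $X_i$ and each $Z_k\cap U_k$ is a real analytic manifold with strictly decreasing dimensions. Because $Z_1\cap U_1=X_i\cap U_1\subset S_i$ and every lower-dimensional piece $Z_k\cap U_k$ is contained in $S_i\subset X_i$, one gets the key equality $S_i\cap U_1=X_i\cap U_1$, exhibiting an open subset of $X_i$ of dimension $d_i$ that is contained in $S_i$.

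Then I would set
\[
V=U'\setminus\Big(\Sing(X_i)\cup\bigcup_{j\neq i}X_j\Big).
\]
Since $\Sing(X_i)$ is C-analytic in $U'$ and $\bigcup_{j\neq i}X_j$ is a locally finite union of C-analytic subsets of $U'$ (hence C-analytic), the set $V$ is open and C-semianalytic. By construction $V\cap X_i\subset \Reg(X_i)\setminus\bigcup_{j\neq i}X_j$ is a real analytic manifold of dimension $d_i$, and $V$ meets no other $X_j$, so in particular $V\cap S_j=\varnothing$ for $j\neq i$. Defining $U=V\cap U_1$, which is open C-semianalytic in $M$ (an open C-semianalytic subset of the open set $U'$ is C-semianalytic in $M$ by the local criterion of Definition \ref{Csemibis}), a direct computation gives
\[
U\cap S=V\cap U_1\cap S=V\cap U_1\cap S_i=V\cap(X_i\cap U_1)=V\cap X_i\cap U_1,
\]
which is an open subset of the real analytic manifold $V\cap X_i$ of dimension $d_i$, and equals $U\cap S_i$.

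The only genuinely delicate point is to ensure that all three ingredients — openness, C-semianalyticity in $M$, and the equalities $S\cap U=S_i\cap U=X_i\cap U$ — survive simultaneously; this forces the careful choice to take $V$ as a complement inside $U'$ of a locally finite C-analytic union (so that other $S_j$'s are pushed out) and to intersect with the open piece $U_1$ furnished by the decomposition of $S_i$ (so that what remains of $S_i$ is precisely the manifold $X_i\cap U_1\cap V$). I expect no conceptual obstacle beyond bookkeeping, since the hardest structural input — that $S_i$ contains an open piece of $X_i$ of the right dimension — is already packaged inside Lemma \ref{decomp}.
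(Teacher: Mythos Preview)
Your strategy is sound and takes a genuinely different route from the paper. The paper does not invoke Proposition \ref{neatwirred} or Lemma \ref{decomp}; instead it uses Corollary \ref{nic} to get $S_1\not\subset\bigcup_{j>1}S_j$, then via Lemma \ref{sheaf} produces $h\in\an(V)$ (for a neighborhood $V$ of $S$) with $\ceros(h)\cap S=\bigcup_{j>1}S_j$, applies Lemma \ref{charirred}(iii) to see $\dim(\ceros(h)\cap S_1)<\dim S_1$, picks a point $x\in\Int_{\Reg(X_1)}(S_1\cap\Reg(X_1)\setminus\bigcup_{j>1}S_j)$, and finally takes for $U$ a small open C-semianalytic neighborhood of $x$ in $M$. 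Your path through the structural Proposition \ref{neatwirred} is arguably cleaner in that it avoids the auxiliary function $h$, and Lemma \ref{decomp} packages exactly the ``open piece of $X_i$ inside $S_i$'' you need.

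There is, however, a genuine gap in the last step. The parenthetical claim ``an open C-semianalytic subset of the open set $U'$ is C-semianalytic in $M$ by the local criterion of Definition \ref{Csemibis}'' is false in general: take $M=\R^2$, let $U'=\{y>\varphi(x)\}$ for a smooth non-analytic $\varphi$, and set $V=U'$; then $V$ is trivially C-semianalytic in $U'$ but not in $M$. In your construction both $V$ and $U_1$ are only guaranteed to be C-semianalytic in $U'$ (since $\Sing(X_i)$, the $X_j$'s, and the sets from Lemma \ref{decomp} all live in $U'$), and the neighborhood $U'$ produced by Proposition \ref{neatwirred} is not asserted to be C-semianalytic in $M$. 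The fix is immediate and is precisely what the paper does at the end of its argument: once you have the non-empty open subset $V\cap U_1\cap X_i\subset\Reg(X_i)$, pick any point $x$ in it and replace your $U$ by a small open ball in $M$ around $x$ contained in $V\cap U_1$. That ball is manifestly open C-semianalytic in $M$, and your computation $S\cap U=S_i\cap U=X_i\cap U$ goes through unchanged on it.
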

\begin{proof}
For simplicity we prove the result for $S_1$. By Corollary \ref{nic} $S_1\not\subset\bigcup_{j>1}S_j$. Let $V\subset M$ be an open neighborhood of $S$ such that $S_1$ is an amenable C-semianalytic subset of $V$ and let $X_1$ be the Zariski closure of $S_1$ in $V$. By Lemma \ref{sheaf} we may assume after shrinking $V$ that there exists $h\in\bigcap_{j>1}\ideal(S_j,S)\cap\an(V)$ such that $\ceros(h)\cap S=\bigcup_{j>1}\ceros(\ideal(S_j,S))$. As $S_1$ is irreducible and $S_1\not\subset\bigcup_{j>1}S_j$, we deduce by Lemma \ref{charirred} that $\dim \ceros(h)\cap S_1<\dim S_1$. Pick a point $x\in\Int_{\Reg(X_1)}(S_1\cap\Reg(X_1)\setminus\bigcup_{j>1}S_j)$ and let $U$ be an open C-semianalytic neighborhood of $x$ in $M$ such that 
$$
S\cap U=\Int_{\Reg(X_1)}\Big(S_1\cap\Reg(X_1)\setminus\bigcup_{j>1}S_j\Big)\cap U.
$$
Observe that $U$ satisfies the conditions in the statement.
\end{proof}

\begin{lem}\label{zclc}
The equality $\dim \ol{S_i}^{\zar}=\dim S_i$ holds for $i\geq1$ and,  after eliminating repetitions, the family $\{\ol{S_i}^{\zar}\}_{i\geq1}$ is locally finite.
\end{lem}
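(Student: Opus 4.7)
The plan is to derive both statements from Proposition \ref{neatwirred}, which already does most of the geometric work, together with Lemma \ref{neigh} and the amenability of each weak irreducible component.

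First I would address the dimension equality. The inclusion $\dim S_i\leq\dim\ol{S_i}^{\zar}$ is immediate since $S_i\subset\ol{S_i}^{\zar}$ and dimension is monotone. For the reverse inequality, invoke Lemma \ref{neigh} to fix an open C-semianalytic set $U$ with $S\cap U=S_i\cap U$ a real analytic manifold of dimension $\dim S_i$. Thus $S_i$ is an amenable C-semianalytic subset of some neighborhood of $S$ (it is $S$-tame), and since $S_i$ is irreducible, Lemma \ref{charirred}(ii) implies that the Zariski closure of $S_i$ in any sufficiently small open neighborhood is an irreducible C-analytic set whose dimension cannot exceed $\dim S_i$ (otherwise an analytic function vanishing on $S_i$ but not identically zero on that closure would exist, contradicting the definition of Zariski closure). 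This gives $\dim\ol{S_i}^{\zar}\le\dim S_i$.

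For the locally finiteness statement, the main input is Proposition \ref{neatwirred}: shrinking $M$ to an open neighborhood $U\subset M$ of $S$ if necessary, the irreducible components $\{X_i\}_{i\geq1}$ of the C-analytic set $X=\ol{S}^{\zar}$ in $U$ satisfy $X_i=\ol{S_i}^{\zar}$ and $S_i=X_i\cap S$. Thus, the family $\{\ol{S_i}^{\zar}\}_{i\geq1}$ coincides (after eliminating possible repetitions) with the family of irreducible components of the C-analytic set $X$ in $U$. By the classical local finiteness of the decomposition of a C-analytic set into irreducible components (Theorem \ref{realcomponents} applied to $X$, or directly the complex version via a complexification of $X$ in $U$), this family is locally finite in $U$.

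The step that needs the most care is the reduction to working inside the neighborhood $U$ furnished by Proposition \ref{neatwirred}: one must be sure that the Zariski closure $\ol{S_i}^{\zar}$ taken in $M$ agrees (up to intersecting with $U$) with the Zariski closure taken in $U$, so that the local finiteness transferred from the irreducible components $\{X_i\}$ of $X$ is genuinely a statement about $\{\ol{S_i}^{\zar}\}$. This follows because each $S_i$ is already contained in $U$ and each $X_i$ is an irreducible C-analytic subset of $U$ containing $S_i$ with $\dim X_i=\dim S_i$, so by irreducibility any C-analytic set in $M$ containing $S_i$ must contain $X_i$ on $U$, forcing $\ol{S_i}^{\zar}\cap U=X_i$. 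Once this identification is in place, local finiteness in $U$ is exactly what the statement asks for.
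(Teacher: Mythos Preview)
Your approach via Proposition \ref{neatwirred} has a genuine gap: that proposition only controls the Zariski closures $X_i=\ol{S_i}^{\zar}_U$ inside a special neighborhood $U$ of $S$, whereas the lemma (as used immediately afterwards in the Corollary and, crucially, in Proposition \ref{tamend2}) concerns the Zariski closures $\ol{S_i}^{\zar}$ taken in $M$ and their local finiteness in $M$. Your last paragraph only establishes the easy inclusion $X_i\subset\ol{S_i}^{\zar}_M\cap U$; the reverse inclusion would require knowing in advance that $\ol{S_i}^{\zar}_M$ has dimension $\dim S_i$, which is precisely the first assertion you are trying to prove. And even granting the identification $\ol{S_i}^{\zar}_M\cap U=X_i$, local finiteness of $\{X_i\}$ in $U$ says nothing at points of $M\setminus U$, where infinitely many of the closed sets $\ol{S_i}^{\zar}_M$ could in principle accumulate. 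Your sentence ``local finiteness in $U$ is exactly what the statement asks for'' is therefore not correct.

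The paper's proof avoids this difficulty by a different route: it uses the tameness algorithm (Theorem \ref{algor1} and Proposition \ref{operators}) to write $S=\bigcup_{k=1}^r T_k(S)$ with each $T_k(S)$ amenable \emph{in $M$} and $\dim T_{k+1}(S)<\dim T_k(S)$. The key step, combining Lemma \ref{neigh} with the irreducibility of $S_i$, is to show that $\ol{S_i}^{\zar}_M=\ol{S\cap U_i}^{\zar}_M$ is an irreducible component of $\ol{T_k(S)}^{\zar}_M$ for the unique $k$ with $\dim T_k(S)=\dim S_i$. Since $S\cap U_i$ is amenable in $M$ (it is the intersection of the amenable set $S$ with an open set), its Zariski closure in $M$ has the correct dimension; and since there are only finitely many $T_k(S)$, each with a locally finite (in $M$) family of irreducible components, both conclusions follow. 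The essential idea you are missing is this bridge from $S_i$ to an object that is already amenable in $M$.
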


\begin{proof}
As $S$ is amenable, there exists by Theorem \ref{algor1} and \ref{operators} (vi) an index $r\geq1$ such that $S=\bigcup_{k=1}^rT_k(S)$ and $\dim T_{k+1}(S)<\dim T_k(S)$. Recall that each $T_k(S)$ is a real analytic manifold. For each $i\geq1$ there exists by Lemma \ref{neigh} an open C-semianalytic set $U_i$ such that $S\cap U_i=S_i\cap U_i$ is a real analytic manifold of dimension $\dim S_i$. As $S\cap U_i=\bigcup_{k=1}^rT_k(S)\cap U_i$, there exists $1\leq k\leq r$ such that $\dim T_k(S)=\dim T_k(S)\cap U_i=\dim S\cap U_i=\dim S_i$. By Lemma \ref{charirred}(iii) $\ol{S_i}^{\zar}=\ol{S\cap U_i}^{\zar}$ is an irreducible component of $\ol{T_k(S)}^{\zar}$. As $S\cap U_i$ is an amenable C-semianalytic set, $\dim(\ol{S_i}^{\zar})=\dim \ol{S\cap U_i}^{\zar}=\dim S\cap U_i=\dim S_i$.

As the family $\{\ol{T_k(S)}^{\zar}\}_{k=1}^r$ is finite and the irreducible components of each $\ol{T_k(S)}^{\zar}$ constitute a locally finite family, we conclude that the family $\{\ol{S_i}^{\zar}\}_{i\geq1}$ is locally finite after eliminating repetitions.
\end{proof}

\begin{cor}
We have:
\begin{itemize}
\item[(i)] Let $U\subset M$ be an open neighborhood of $S$ and let $\{X_j\}_{j\geq1}$ be the irreducible components of the C-analytic set $X=\ol{S}^{\zar}_U$. Then for each $j\geq1$ there exists $i\geq1$ such that $X_j=\ol{S_i}^{\zar}_U$ and $\I(X_j,X)=\I(S_i,S)\cap\an(X)$.
\item[(ii)] There exists an open neighborhood $V\subset M$ of $S$ such that if $X_i=\ol{S_i}^{\zar}_V$, then $\{X_i\}_{i\geq1}$ is the family of the irreducible components of $X=\ol{S}^{\zar}_V$ and $\I(X_i,X)=\I(S_i,S)\cap\an(X)$ for $i\geq1$.
\end{itemize}
\end{cor}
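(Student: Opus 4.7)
The plan is to derive both items directly from Lemma \ref{zclc}, Proposition \ref{neatwirred}, and Lemma \ref{charirred}, together with the injectivity of the restriction map $\an(X)\to\an(S)$ coming from the fact that $S$ is Zariski dense in $X$.

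For (i), I would start from the equality $S=\bigcup_{i\geq 1}S_i$, which upon taking Zariski closure in $U$ gives $X=\bigcup_{i\geq 1}\ol{S_i}^{\zar}_U$. By Lemma \ref{charirred} each $\ol{S_i}^{\zar}_U$ is an irreducible C-analytic subset of $U$ (because each $S_i$ is an irreducible amenable C-semianalytic set in an open neighborhood contained in $U$), and by Lemma \ref{zclc} the family $\{\ol{S_i}^{\zar}_U\}_{i\geq 1}$ is locally finite in $U$ after eliminating repetitions. Thus $X$ is exhibited as a locally finite union of irreducible C-analytic subsets, and any irreducible component $X_j$ of $X$ must coincide with one of these, say $X_j=\ol{S_i}^{\zar}_U$ (since an irreducible analytic set contained in a locally finite union of irreducible analytic sets must be contained in, hence equal to in the maximal case, one of them).

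For the ideal equality in (i), I would use that the restriction homomorphism $\an(X)\hookrightarrow\an(S)$ is injective, since $\ceros(\ideal(S,U))=X$ and by Lemma \ref{charirred}(iii) any $f\in\an(X)$ vanishing on $S$ must vanish identically. Under this embedding, if $f\in\ideal(X_j,X)$ then $f$ vanishes on $X_j$, hence on $S_i=X_j\cap S\supset S_i$, so $f_{|S}\in\ideal(S_i,S)$. Conversely, if $f\in\an(X)$ satisfies $f_{|S_i}\equiv 0$, then $f$ vanishes on the Zariski closure $\ol{S_i}^{\zar}_U=X_j$, hence $f\in\ideal(X_j,X)$. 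This shows $\ideal(X_j,X)=\ideal(S_i,S)\cap\an(X)$.

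For (ii), I would apply Proposition \ref{neatwirred}, which already furnishes an open neighborhood $V\subset M$ of $S$ such that $X=\ol{S}^{\zar}_V$, its irreducible components are $X_i=\ol{S_i}^{\zar}_V$, and $X_i\cap S=S_i$ for each $i\geq 1$. The ideal equality $\ideal(X_i,X)=\ideal(S_i,S)\cap\an(X)$ then follows by the same argument as in (i), using the injectivity of $\an(X)\hookrightarrow\an(S)$ and the fact that $S_i$ is Zariski dense in $X_i$ by construction. The mildly delicate step is ensuring that the ideal computation in $\an(S)$ (whose elements are germs of analytic functions along $S$ modulo $\ideal(S)$) matches with restriction from $\an(X)$; this is however precisely guaranteed by the injectivity coming from the irreducibility/Zariski density setup, so no serious obstacle arises once Lemma \ref{zclc} and Proposition \ref{neatwirred} are in hand.
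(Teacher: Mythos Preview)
Your proposal is correct and follows essentially the same approach as the paper. The paper's proof is slightly terser: it writes $Z=\bigcup_{i\geq1}\ol{S_i}^{\zar}$, notes $Z$ is a C-analytic set containing $S$ so $Z=X$, and then invokes Lemma \ref{ni} explicitly to conclude that each $X_j$ is contained in some $\ol{S_i}^{\zar}$ (hence equal to it by maximality); your sketch of this step as a standard fact about irreducible sets in a locally finite union of irreducible sets amounts to the same thing, and your treatment of the ideal equality and of part (ii) via Proposition \ref{neatwirred} matches the paper's argument.
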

\begin{proof}
(i) For simplicity consider $U=M$. By Lemma \ref{zclc} $\dim \ol{S_i}^{\zar}=\dim S_i$  and the family $\{\ol{S_i}^{\zar}\}_{i\geq1}$ is locally finite after eliminating repetitions. Thus, $Z=\bigcup_{i\geq1}\ol{S_i}^{\zar}\subset X$ is a C-analytic set that contains $S$, so $Z=X$. By Lemma \ref{ni} there exists $i\geq1$ such that $X_j\subset\ol{S_i}^{\zar}\subset X$, so $X_j=\ol{S_i}^{\zar}$. Consequently
$$
\I(S_i,S)\cap\an(X)=\{f\in\an(X):\ f_{|S_i}=0\}=\{f\in\an(X):\ f_{|\ol{S_i}^{\zar}}=0\}=\I(X_j,X).
$$ 

(ii) Apply (i) to the open neighborhood $V=U$ of $S$ constructed in Proposition \ref{neatwirred}.
\end{proof}

\begin{remark}\label{norm1}Let $\left((Y,\Oo_y),\pi\right)$ be the normalization of a complex analytic set $X$.
If $\Omega$ is an open subset of $X$, then $(\pi^{-1}(\Omega),\pi_{|\Omega})$ is the normalization of $\Omega$. If $Z$ is a connected component of $\pi^{-1}(\Omega)$, the map $\pi_{|Z}:Z\to \Omega$ is proper and $\pi(Z)$ is   a complex analytic subspace of $\Omega$. In addition $Z$ is irreducible because it is connected and normal. Thus, $Z\setminus(\pi_{|Z})^{-1}(\Sing(\pi(Z)))$ is  connected. Consequently,
$$
\pi(Z\setminus(\pi_{|Z})^{-1}(\Sing(\pi(Z))))=\pi(Z)\setminus\Sing(\pi(Z))=\Reg(\pi(Z))
$$
is connected and $\pi(Z)$ is irreducible. Thus, $\pi(Z)$ is an irreducible component of $\Omega$ and $(Z,\pi_{|Z})$ is the normalization of $\pi(Z)$.
\end{remark}

\begin{prop}\label{tamend1}
The weak irreducible components of $S$ are amenable.
\end{prop}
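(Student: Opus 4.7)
The plan is to realize each weak irreducible component $S_i$ as $X_i\cap S$ for a genuine C-analytic subset $X_i$ of the whole manifold $M$ (rather than of a neighborhood of $S$ only), and then to intersect with an amenable decomposition of $S$ provided by Lemma~\ref{decomp}.

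First, Proposition~\ref{neatwirred} gives, after shrinking $M$ around $S$ so that $\{S_i\}_{i\geq 1}$ is locally finite in $M$, an open neighborhood $U\subset M$ of $S$, a locally finite family $\{U_i'\}_{i\geq 1}$ in $M$ of open neighborhoods of the $S_i$ with $U_i'\subset U$, and irreducible C-analytic subsets $X_i$ of $U$, closed in $U$, with $X_i\subset U_i'$ and $S_i=X_i\cap S$. Applying the standard shrinking lemma in paracompact spaces together with (the real analog of) Lemma~\ref{bigneigh}, I would interlace two successive shrinkings to produce a smaller neighborhood $U^*\subset U$ of $S$ and a locally finite family $\{U_i^*\}_{i\geq 1}$ of open neighborhoods of the $S_i$, satisfying $\overline{U_i^*}^M\subset U^*$, and such that the Zariski closure $X_i$ of $S_i$ in $U^*$ remains a C-analytic subset of $U^*$, closed in $U^*$, contained in $U_i^*$, and still satisfies $S_i=X_i\cap S$. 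Under these conditions each $X_i$ becomes closed in $M$: any convergent sequence in $X_i$ stays in $U_i^*$, its limit lies in $\overline{U_i^*}^M\subset U^*$, and the closedness of $X_i$ in $U^*$ forces the limit to belong to $X_i$.

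Second, I promote each $X_i$ to a C-analytic subset of the whole $M$ by the sheaf-extension
\begin{equation*}
\Jj_{i,x}=\begin{cases}\Ii_{X_i,x}&\text{if }x\in U^*,\\[2pt] \an_{M,x}&\text{if }x\in M\setminus X_i,\end{cases}
\end{equation*}
where $\Ii_{X_i,x}$ denotes the stalk at $x$ of the coherent ideal sheaf of the C-analytic set $X_i\subset U^*$. The two prescriptions agree on the overlap $U^*\setminus X_i$, so $\Jj_i$ is well defined; it is coherent on $M$ because it coincides with the coherent ideal sheaf of $X_i$ on $U^*$, and with the structure sheaf on the open set $M\setminus X_i$. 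Its zero set is $X_i$, so $X_i$ is now a C-analytic subset of $M$.

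Finally, invoking Lemma~\ref{decomp} on the amenable set $S$, I would write $S=\bigcup_{j=1}^{r}(Z_j\cap V_j)$ with $Z_j\subset M$ C-analytic and $V_j\subset M$ open C-semianalytic, whence
\begin{equation*}
S_i=X_i\cap S=\bigcup_{j=1}^{r}(X_i\cap Z_j)\cap V_j.
\end{equation*}
Each $X_i\cap Z_j$ is C-analytic in $M$ as an intersection of two C-analytic subsets of $M$, and each $V_j$ is open C-semianalytic in $M$, which exhibits $S_i$ as amenable. The main technical obstacle lies in the first step: the simultaneous shrinking of $U$ and of the $U_i'$ so that $X_i$ is closed in $M$ while the identity $S_i=X_i\cap S$ is preserved. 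This requires interlacing the paracompact shrinking lemma with Lemma~\ref{bigneigh}, and checking that after the refinements the Zariski closure of $S_i$ in the smaller neighborhood still cuts out $S_i$ on $S$ (the containments $S_i\subset X_i\subset U_i^*\subset U^*$ and $S\subset U^*$ yield the nontrivial inclusion $X_i\cap S\subset S_i$, while the reverse one is automatic).
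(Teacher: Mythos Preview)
Your plan has a genuine gap in the first step, and it is not merely a technical obstacle: the two requirements you impose on $X_i$ are in general incompatible. You want the Zariski closure $X_i=\ol{S_i}^{\zar}_{U^*}$ to satisfy both $X_i\cap S=S_i$ and $X_i$ closed in $M$. For the closedness you need $\overline{U_i^*}^M\subset U^*$ with $S_i\subset X_i\subset U_i^*$, which forces $\overline{S_i}^M\subset U^*$; hence $U^*$ must contain $\overline{S}^M$, not just $S$. But once $U^*$ is that large, the Zariski closure of $S_i$ in $U^*$ typically meets $S$ in strictly more than $S_i$, destroying the identity $X_i\cap S=S_i$. The example preceding Proposition~\ref{dpm2} illustrates this cleanly: with $S_0=\{x^2-zy^2=0,\ z>0\}\subset\R^4$, one has $\overline{S_0}^M\supset\{x=0,\ z=0\}$, and any connected neighborhood $U^*$ of $S$ containing this plane gives $\ol{S_0}^{\zar}_{U^*}=X\cap U^*$ with $X=\{x^2-zy^2=0\}$ the full (irreducible) Whitney umbrella, so that $X_0\cap S=S\supsetneq S_0$. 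Conversely, if $U^*$ avoids that plane (e.g.\ $U^*\subset\{z\neq 0\}$), then $X_0$ is contained in $\{z>0\}$ and cannot be closed in $\R^4$. Lemma~\ref{bigneigh} does not help here: it only produces closedness \emph{inside a neighborhood of $S$}, never in the whole $M$.

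This is precisely why the paper takes a different route. Instead of trying to globalize $X_i$ as a C-analytic subset of $M$, it passes to the normalization $(Y,\pi)$ of a complexification of $\ol{S_1}^{\zar}_M$, reduces first to $S\subset X$ with $X$ irreducible, and shows that $S_1$ arises as $\pi(R_1)$ for a union $R_1$ of connected components of $\pi^{-1}(S)\cap Y^{\widehat\sigma}$. Amenability of $S_1$ then follows from Theorem~\ref{properint-tame}(ii), which says exactly that images of such unions of connected components under proper invariant holomorphic maps are amenable. The normalization is what separates $S_1$ from the other weak components at the global level---something your sheaf-extension argument cannot do, because several $S_j$ may share the same Zariski closure in $M$ (this is the whole point of Proposition~\ref{tamend2}).
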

\begin{proof}
Let $S_1$ be a weak irreducible component of $S$ and let us show that $S_1$ is an amenable C-semianalytic set. Let $X$ be the Zariski closure of $S_1$. Note that $X$ is an irreducible C-analytic set.

The first step is to prove that we can replace $S$ by $S'= S\cap X$. 
 This can be done once we prove that $S_1$ is a  weak irreducible component of the amenable C-semi\-analytic set $S'$.

Let $\{S'_k\}_{k\geq1}$ be the family of weak irreducible components of $S'$. By Lemma \ref{ni}, there exists $k\geq1$ such that $S_1\subset S'_k$. We want to prove that $S'_k$ is an $S$-tame C-semianalytic set 

It is enough to check that $S'_k=\ceros(h)$ for some analytic function $h$ on an open neighborhood of $S$. Let $h_0$ be an analytic function on an open neighborhood $W$ of $S'$ such that $S'_k=S'\cap\ceros(h_0)$. As $X$ is a C-analytic subset of $M$, it holds that $X\cap W$ is a C-analytic subset of $M\setminus(X\setminus W)$. By Cartan's Theorem B ${h_0}_{|X\cap W}$ is the restriction to $X\cap W$ of an analytic function $h$ on $M\setminus(X\setminus W)$ such that $\ceros(h)=\ceros(h_0)\cap X\cap W$. Observe that $S\subset M\setminus(X\setminus W)$. Consequently, $S'_k=S\cap\ceros(h)$ is a $S$-tame C-semianalytic set. 

By Lemma \ref{ni} there exists a weak irreducible component $S_j$ of $S$ such that $S_1\subset S'_k\subset S_j$ and we conclude $S'_k=S_1$. Hence we can assume $S=S\cap X$.  

Consider now an irreducible complexification $\widetilde{X}$  of the irreducible C-analytic set $X$. Let $(\widetilde{X}^\R,\an_{\widetilde{X}}^\R)$ be the underlying real analytic structure of $(\widetilde{X},\an_{\widetilde{X}})$.

Recall that if $\Omega$ is an open subset of $\widetilde{X}$, then the irreducible components of $\Omega$ are all pure dimensional and coincide with the closures of the connected components of the complex analytic manifold $\Omega\setminus\Sing(\widetilde{X})$, as proved in Chapter 2, Theorem \ref{decomposition}.  Notice also that $\Reg(\widetilde{X}^\R)=\Reg(\widetilde{X})$ and the irreducible components of $\Omega^\R$ arise as the underlying real structures of the irreducible components of $\Omega$.

Let $(Z,\an_Z)$ be a Stein complexification of $(\widetilde{X}^\R,\an_{\widetilde{X}}^\R)$ and let $\sigma:(Z,\an_Z)\to(Z,\an_Z)$ be an anti-involution whose fixed locus is $\widetilde{X}^\R$. Recall that $\Sing(\widetilde{X}^\R)=\Sing(Z)\cap \widetilde{X}^\R$. Denote the reduction of $(Z,\an_Z)$ with $(Z_1,\an_{Z_1})=(Z,\an_Z^r)$. Observe that $\sigma$ induces an anti-involution on $(Z_1,\an_{Z_1})$ whose fixed part space $(Z^\sigma_1,\an_{Z^\sigma_1})$ satisfies $Z^\sigma_1=\widetilde{X}^\R$ and $\an_{Z^\sigma_1}$ is a quotient (coherent) sheaf of $\an_{\widetilde{X}}^\R$. 

For each $z\in\widetilde{X}^\R$ it holds $\an_{Z^\sigma_1,z}\cong\an_{\widetilde{X},z}^\R/\gtn(\an_{\widetilde{X},z}^\R)$ where $\gtn(\an_{\widetilde{X},z}^\R)$ is the ideal of nilpotents elements of $\an_{\widetilde{X},z}^\R$. As we know from Chapter 1 $(Z_1,\an_{Z_1})$ is a Stein space.

Let $\pi:Y\to Z_1$ be the normalization of $(Z_1,\an_{Z_1})$. As $(Z_1,\an_{Z_1})$ is Stein, $(Y,\an_Y)$ is  Stein by what we saw in Chapter 2.  The anti-involution on $Z_1$ extends to an anti-involution $\widehat{\sigma}$ on $Y$ such that $\pi\circ\widehat{\sigma}=\sigma\circ\pi$. Denote the set of fixed points of $\widehat{\sigma}$ with $Y^{\widehat{\sigma}}=\{y\in Y:\ \widehat{\sigma}(y)=y\}$. As the restriction $\pi|:Y\setminus\pi^{-1}(\Sing(Z_1))\to Z_1\setminus\Sing(Z_1)$ is an invariant holomorphic diffeomorphism, 
$$
\pi(Y^{\widehat{\sigma}}\setminus\pi^{-1}(\Sing(Z_1))=Z_1^{\sigma}\setminus\Sing(Z_1)=\Reg(\widetilde{X}^\R).
$$
As $\pi$ is proper, $\pi(Y^{\widehat{\sigma}})=\widetilde{X}^\R$. Note that 
$\pi^{-1}(S)\cap Y^{\widehat{\sigma}}$ is an amenable C-semianalytic subset of $Y^{\widehat{\sigma}}$.

By Proposition \ref{neatwirred} there exists an open neighborhood $V\subset M$ of $S$ such that if $T=\ol{S}^{\zar}_V$ and $\{T_i\}_{i\geq1}$ are the irreducible components of $T$, we may assume $S_i=T_i\cap S$ for $i\geq1$. Observe that $\dim T_1=\dim S_1=\dim X$, so $T_1$ is an irreducible component of $X\cap V$.

Let $\Omega$ be an open neighborhood of $X\cap V$ in $\widetilde{X}$ such that $\Omega\cap X=V\cap X$ and for each irreducible component $X'$ of $X\cap V$ there exists an irreducible component $\Omega'$ of $\Omega$ such that $\Omega'\cap X=X'$. As $S\subset\Omega$, it holds $\pi^{-1}(S)\subset\pi^{-1}(\Omega)$. Let $\Omega_1$ be the irreducible component of $\Omega$ such that $\Omega_1\cap X=T_1$. Recall that $\Omega_1^\R$ is an irreducible component of $\Omega^\R$. Let $\Theta$ be an open neighborhood of $\Omega$ in $Z_1$ such that $\Theta\cap\widetilde{X}^\R=\Omega^\R$ and for each irreducible component ${\Omega'}^\R$ of $\Omega^\R$ there exists an irreducible component $\Theta'$ of $\Theta$ such that $\Theta'\cap\widetilde{X}^\R={\Omega'}^\R$. Let $\Theta_1$ be the irreducible component of $\Theta$ such that $\Theta_1\cap\widetilde{X}^\R=\Omega_1^\R$. Let $Y_1'$ be the connected component of $Y'=\pi^{-1}(\Theta)$ such that $\pi(Y_1')=\Theta_1$ (see Remark \ref{norm1}).

As $S\subset X\cap V\subset\Omega\subset\Theta$, we have $\pi^{-1}(S)\subset\pi^{-1}(\Theta)=Y'$. As $Y_1'$ is a connected component of $Y'$, the intersection $R_1=Y_1'\cap\pi^{-1}(S)\cap Y^{\widehat{\sigma}}$ is an open and closed subset of $R=\pi^{-1}(S)\cap Y^{\widehat{\sigma}}$, so $R_1$ is a union of connected components of $R$. Let us see $\pi(R_1)=S_1$.
Indeed, $\Omega_1^\R$ is the closure of the connected component $\Omega_1^\R\setminus\Sing(Z_1)$ of $\Omega^\R\setminus\Sing(Z_1)$. As the restriction $\pi_|:Y\setminus\pi^{-1}(\Sing(Z_1))\to Z_1\setminus\Sing(Z_1)$ is an invariant holomorphic diffeomorphism and $\Omega_1^\R=\Theta_1\cap\widetilde{X}^\R$, we conclude 
$$
\pi((Y_1'\setminus\pi^{-1}(\Sing(Z_1)))\cap Y^{\widehat{\sigma}})=(\Theta_1\setminus\Sing(Z_1))\cap Z_1^\sigma=\Omega_1^\R\setminus\Sing(Z_1).
$$
As $\pi$ is proper, $\pi(Y_1'\cap Y^{\widehat{\sigma}})=\Omega_1^\R$. Thus,
$$
\pi(R_1)=\pi(Y_1'\cap Y^{\widehat{\sigma}}\cap\pi^{-1}(S))=\pi(Y_1'\cap Y^{\widehat{\sigma}})\cap S
=\Omega_1^\R\cap S=\Omega_1^\R\cap X\cap S=T_1\cap S=S_1. 
$$

As $R_1$ is a union of connected components of $R$, we deduce by Theorem \ref{properint-tame}(ii) that $S_1=\pi(R_1)$ is an amenable C-semianalytic subset of $Z_1^\sigma$. $(X,\an_X)$ is a closed subspace of $(\widetilde{X}^\R,\an_{\widetilde{X}}^\R)$. As $\an_{X,x}$ contains no nilpotent element for each $x\in X$ (recall that we have considered on $X$ the well-reduced structure, see Chapter 1), it holds that $(X,\an_X)$ is a closed subspace of $(Z_1^\sigma,\an_{Z_1^\sigma})$. Consequently, $S_1$ is an amenable C-analytic subset of $X$. As $X\subset M$ is a C-analytic set, $S_1$ is by Cartan's Theorem B an amenable C-analytic subset of $M$, as required.
\end{proof}

\begin{prop}\label{tamend2}
  The family of the weak irreducible components of $S$ is locally finite in $M$.
\end{prop}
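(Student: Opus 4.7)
The plan is to combine Proposition \ref{tamend1} with Lemma \ref{zclc}. The decisive observation is that, although Proposition \ref{tamend1} is stated only as ``the weak irreducible components of $S$ are amenable,'' its proof in fact establishes the stronger conclusion that each weak irreducible component $S_i$ is an amenable \emph{C-analytic} subset of $M$ (the closing line applies Cartan's Theorem B to promote C-analyticity inside $X$ to C-analyticity inside $M$). I would begin the proof by invoking this stronger conclusion explicitly.

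Once each $S_i$ is known to be C-analytic in $M$, it coincides with its own Zariski closure, i.e. $\ol{S_i}^{\zar}=S_i$ for every $i\geq1$, since by definition $\ol{S_i}^{\zar}$ is the smallest C-analytic subset of $M$ containing $S_i$. I would then appeal to Lemma \ref{zclc}, which asserts that the family $\{\ol{S_i}^{\zar}\}_{i\geq1}$ is locally finite in $M$ after eliminating repetitions; under the identification $\ol{S_i}^{\zar}=S_i$ this becomes local finiteness of $\{S_i\}_{i\geq1}$ after eliminating repetitions.

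To finish, I would rule out repetitions: condition (3) in Definition \ref{irredcomptame} forces $S_i\neq S_j$ whenever $i\neq j$, so the family $\{S_i\}_{i\geq1}$ is already without repetitions and therefore locally finite in $M$.

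There is no real obstacle here, since the substantial work has already been done in Proposition \ref{tamend1} (where the normalization $\pi:Y\to Z_1$, the Stein complexification of $\widetilde{X}^{\R}$ and Theorem \ref{properint-tame}(ii) applied to a single invariant connected component of $\pi^{-1}(S)$ produced the C-analytic structure on $S_1$) and in Lemma \ref{zclc} (where local finiteness of the Zariski closures was reduced to the stratification $S=\bigcup_{k=1}^{r}T_k(S)$ coming from the tameness algorithm of \S\ref{algorithmtame}). The only subtlety worth flagging in the write-up is to point out clearly that amenable C-semianalytic \emph{equals} its Zariski closure precisely when it is C-analytic; this is what bridges Propositions \ref{tamend1} and \ref{tamend2} through Lemma \ref{zclc}.
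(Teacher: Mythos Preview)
Your argument rests on the claim that Proposition \ref{tamend1} shows each weak irreducible component $S_i$ is a \emph{C-analytic} subset of $M$. This is a misreading: the phrase ``amenable C-analytic subset'' at the end of that proof is a typographical slip for ``amenable C-semianalytic subset'' (note that the line just above correctly concludes from Theorem \ref{properint-tame}(ii) that $S_1=\pi(R_1)$ is an amenable \emph{C-semianalytic} subset of $Z_1^\sigma$, and nothing thereafter upgrades this to C-analyticity). A direct counterexample: take $S$ to be an open ball in $\R^n$. Then $\an(S)\cong\an(\R^n)$ is a domain, so $S$ is irreducible and is its own unique weak irreducible component; but $S$ is certainly not C-analytic, and $\ol{S}^{\zar}=\R^n\neq S$. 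Hence your identification $\ol{S_i}^{\zar}=S_i$ fails in general.

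This is not a cosmetic issue: the genuine difficulty in Proposition \ref{tamend2} is precisely that \emph{distinct} weak irreducible components $S_i$ can share the \emph{same} Zariski closure $X$, so Lemma \ref{zclc} (local finiteness of $\{\ol{S_i}^{\zar}\}$ after eliminating repetitions) does not by itself bound how many $S_i$'s meet a given neighborhood. The paper's proof therefore fixes $X$ and shows that $\{S_j:\ol{S_j}^{\zar}=X\}$ is locally finite; this requires passing to the normalization $(Y,\pi)$ of a complexification $\widetilde{X}$, identifying each such $S_j$ with the image $\pi(R_j)$ of a distinct connected component $R_j$ of $\pi^{-1}(S)$ (via Theorem \ref{dpm} and Proposition \ref{neatwirred}), using that the connected components of the semianalytic set $\pi^{-1}(S)$ form a locally finite family, and pushing this down through the proper map $\pi$ (Lemma \ref{proylc}). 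Your shortcut bypasses exactly this step and does not go through.
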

\begin{proof}
 By Lemma \ref{zclc} it is enough to prove the following statement.

\begin{quotation} 
\em Let $X=\ol{S_i}^{\zar}$ for some $i\geq1$ and let ${\mathfrak F}=\{j\geq1:\ \ol{S_j}^{\zar}=X\}$. Then $\{S_j\}_{j\in{\mathfrak F}}$ is locally finite in $M$\em.
\end{quotation}

As seen in the proof of Proposition \ref{tamend1}, we may assume that the Zariski closure of $S$ is $X$. Let $(\widetilde{X},\an_{\widetilde{X}})$ be a complexification of $(X,\an_X)$ an let $(Y,\pi)$ be its normalization. Let $V\subset M$ be an open neighborhood of $S$ such that the irreducible components $\{T_i\}_{i\geq1}$ of $T=\ol{S}^{\zar}_V$ satisfy $S_i=S\cap T_i$ (see Proposition \ref{neatwirred}). The family $\{T_j\}_{j\in{\mathfrak F}}$ is a collection of irreducible components of $X\cap V$ of its same dimension. Let $\Omega$ be an open neighborhood of $X\cap V$ in $\widetilde{X}$ such that $\Omega\cap X=X\cap V$ and for each irreducible component $X'$ of $X\cap V$ there exists an irreducible component $\Omega'$ of $\Omega$ such that $\Omega'\cap X=X'$. Let $\Omega_j$ be the irreducible component of $\Omega$ such that $\Omega_j\cap V=T_j$ for $j\in{\mathfrak F}$. 
By Remark \ref{norm1} $(\Theta=\pi^{-1}(\Omega),\pi_{|\Theta})$ is the normalization of $\Omega$ and for each $j\in{\mathfrak F}$ there exists a connected component $\Theta_j$ of $\Theta$ such that $\pi(\Theta_j)=\Omega_j$ and $(\Theta_j,\pi_{|\Theta_j})$ is the normalization of $\Omega_j$. As $\pi^{-1}(S)\subset\Theta$, the intersection $\pi^{-1}(S)\cap\Theta_j$ is a union of connected components of $\pi^{-1}(S)$. 

We claim: \em $\pi^{-1}(S)\cap\Theta_j=\pi^{-1}(S_j)\cap\Theta_j$. In particular, each connected component of $\pi^{-1}(S_j)\cap\Theta_j$ is a connected component of $\pi^{-1}(S)$.\em

As $\pi(\Theta_j)=\Omega_j$ and $\Omega_j\cap S=\Omega_j\cap X\cap S=T_j\cap S=S_j$, we deduce 
$$
\pi^{-1}(S_j)\cap\Theta_j=\pi^{-1}(\Omega_j)\cap\pi^{-1}(S)\cap\Theta_j=\pi^{-1}(S)\cap\Theta_j.
$$
Fix $j\in{\mathfrak F}$. As $X$ is the Zariski closure of $S_j$ in $M$, there exists by Theorem \ref{dpm}(i) a connected component $R_j$ of $\pi^{-1}(S_j)$ such that $\pi(R_j)=S_j$. As $S_j\subset\Omega_j$, we deduce that $R_j\subset\Theta_j$, so $R_j$ is a connected component of $\pi^{-1}(S_j)\cap\Theta_j$. Consequently, $R_j$ is a connected component of $\pi^{-1}(S)$. As $\pi^{-1}(S)$ is a semianalytic subset of the underlying real analytic structure $(Y^\R,\an_Y^\R)$ of $(Y,\an_Y)$, the family of its connected components is  locally finite. Consequently, the family $\{R_j\}_{j\in{\mathfrak F}}$ is locally finite. By Lemma \ref{proylc} the family $\{S_j=\pi(R_j)\}_{j\in{\mathfrak F}}$ is locally finite in $X$, so it is locally finite in $M$, as required.
\end{proof}

The first statement of Theorem \ref{irredcomp1} is a direct  consequence of Propositions \ref{tamend1} and \ref{tamend2}  if we prove that there is only one family of irreducible components. Indeed let $\{S_i\}_{i\geq1}$ be the family of the weak irreducible components of $S$ and let $\{T_j\}_{j\geq1}$ be a family of irreducible components of $S$ satisfying the conditions of Definition \ref{irredcomptame}. By Lemma \ref{ni} there exists an index $i\geq1$ such that $T_j\subset S_i$ for each $j\geq1$. By condition (2) in Definition \ref{irredcomptame} we have $T_j=S_i$. By Corollary \ref{nic} we conclude $\{T_j\}_{j\geq1}=\{S_i\}_{i\geq1}$, so the family of irreducible components of $S$ is unique.

Summing up what done in this section, we get the second statement of  Theorem \ref{irredcomp1} in a final form.

\begin{thm}\label{finale} There exists a bijection between the irreducible components of an amenable C-semianalytic set $S\subset M$ and the minimal prime ideals of the ring $\Oo(S)$.
If $S$ is an amenable C-semianalytic subset of $M$, there exists an open neighbourhood $U\subset M$ of $S$ such that if $X$ is the Zariski closure of $S$ in $U$ and $\{ X_i\}_{i\geq1}$ are the irreducible components of $X$, then the family $\{S_i= X_i\cap S\}_{i\geq 1}$ is the family of the irreducible components of $S$ and $X_i$  is the Zariski closure of $S_i$ in $U$ for $i\geq 1$. In particular, if $S$ is a global C-semianalytic subset of $M$, each $S_i$ is a global C-semianalytic subset of $U$.
\end{thm}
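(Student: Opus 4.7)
The plan is to assemble the statement from the technical results already established in Sections \ref{s5b} and after, without re-doing any of the hard work. The key observation is that Propositions \ref{tamend1} and \ref{tamend2} together imply that the family $\{S_i\}_{i\geq 1}$ of weak irreducible components produced by Theorem \ref{irredcomp2} actually satisfies the stronger Definition \ref{irredcomptame0}: each $S_i$ is an amenable C-semianalytic subset of $M$ (Proposition \ref{tamend1}) and the family is locally finite in $M$ (Proposition \ref{tamend2}), not merely locally finite in $S$. Combined with the uniqueness argument already noted after the proof of Proposition \ref{tamend2}, this identifies the families of weak and of (strong) irreducible components.

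For the bijection with minimal primes, I would argue as follows. By Theorem \ref{irredcomp2}(ii), the saturated ideals $\ideal(S_i,S)\subset\an(S)$ are precisely the minimal prime ideals of $\an(S)$, with distinct weak irreducible components giving distinct ideals (this is forced by condition (3) of Definition \ref{irredcomptame} together with Theorem \ref{irredcomp2}(i), since $S_i=\ceros(\ideal(S_i,S))$). Conversely, every minimal prime of $\an(S)$ arises this way because $\ideal(S)$ admits a locally finite irredundant primary decomposition whose associated primes are exactly the $\ideal(S_i,S)$, as constructed in the existence part of the proof of Theorem \ref{irredcomp2}. Hence $S_i\mapsto\ideal(S_i,S)$ gives the desired bijection between the irreducible components of $S$ and the minimal prime ideals of $\an(S)$.

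For the second assertion, the neighbourhood $U$ is the one produced in Proposition \ref{neatwirred}: there one finds $U\supset S$ such that the irreducible components $\{X_i\}$ of $X=\overline{S}^{\zar}_U$ satisfy $X_i=\overline{S_i}^{\zar}_U$ and $X_i\cap S=S_i$, where $\{S_i\}$ is the family of weak irreducible components. Since we have just identified the weak with the strong irreducible components, this is exactly the statement we need. The final sentence about global C-semianalytic sets is then immediate from the construction: if $S$ is global, the ideal $\ideal(S,U)$ is finitely generated, so $X$ is cut out globally in $U$ and each $X_i$ is a global C-analytic subset of $U$, whence $S_i=X_i\cap S$ is global.

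The only point that requires a little care -- and which I would expect to be the main bookkeeping obstacle -- is making sure that the weak/strong identification holds in the same open neighbourhood $U$ used to realise the Zariski closure decomposition; this is why one has to invoke Proposition \ref{neatwirred} to shrink $M$ to a common $U$ on which the families $\{\overline{S_i}^{\zar}\}$ and $\{X_i\}$ match up, using Lemma \ref{zclc} for local finiteness and Lemma \ref{ni} to rule out superfluous components. Everything else is a direct translation of Theorems \ref{irredcomp2} and the Propositions \ref{tamend1}, \ref{tamend2}, \ref{neatwirred}.
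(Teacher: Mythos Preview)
Your proposal is correct and matches the paper's approach exactly: Theorem \ref{finale} is stated in the paper as a summary (``Summing up what done in this section''), with no separate proof, and the assembly you describe from Theorem \ref{irredcomp2}, Propositions \ref{tamend1}, \ref{tamend2}, and \ref{neatwirred} is precisely what is intended. One minor point: for the final clause about global $S$, you need not argue via finite generation of $\ideal(S,U)$---this conclusion is already the last sentence of Proposition \ref{neatwirred}, so a direct citation suffices.
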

\begin{cor} Any union of irreducible components of an amenable C-se\-mi\-analytic set $S\subset M$ is an amenable C-semianalytic set.
 \end{cor}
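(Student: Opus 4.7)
The statement is a direct consequence of Theorem~\ref{irredcomp1} combined with Corollary~\ref{lfuok2}, so the plan is essentially to assemble pieces that are already available.

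Let $\{S_i\}_{i\geq 1}$ be the family of irreducible components of $S$, and fix an arbitrary subset $J\subset\{i\in\N:\ i\geq1\}$. Set $T=\bigcup_{i\in J} S_i$. The goal is to verify the hypotheses of Corollary~\ref{lfuok2} for the subfamily $\{S_i\}_{i\in J}$. First, by Theorem~\ref{irredcomp1} each $S_i$ is an amenable C-semianalytic subset of $M$, so all members of $\{S_i\}_{i\in J}$ are amenable. Second, Theorem~\ref{irredcomp1} also provides that the full family $\{S_i\}_{i\geq1}$ is locally finite in $M$; since local finiteness is inherited by arbitrary subfamilies, the collection $\{S_i\}_{i\in J}$ is locally finite in $M$ as well.

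The remaining hypothesis to check is that the family of Zariski closures $\{\overline{S_i}^{\zar}\}_{i\in J}$ is locally finite after eliminating repetitions. This is exactly the content of Lemma~\ref{zclc}, applied to the amenable C-semianalytic set $S$: that lemma tells us that $\{\overline{S_i}^{\zar}\}_{i\geq1}$ is locally finite after eliminating repetitions, and again the property passes to any subfamily.

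With these three facts in hand one simply invokes Corollary~\ref{lfuok2} to the subfamily $\{S_i\}_{i\in J}$ and concludes that $T=\bigcup_{i\in J}S_i$ is an amenable C-semianalytic subset of $M$. There is no serious obstacle in this argument, because all the delicate work (irreducibility, local finiteness in $M$, and controlled Zariski closures of the components) has already been established in Propositions~\ref{tamend1}, \ref{tamend2} and Lemma~\ref{zclc}; the corollary is a pure book-keeping consequence of these structural results.
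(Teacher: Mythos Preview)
Your argument is correct and is essentially the natural way to read the corollary as a consequence of the preceding material: the amenability of each $S_i$ (Proposition~\ref{tamend1}), the local finiteness in $M$ of $\{S_i\}_{i\geq1}$ (Proposition~\ref{tamend2}), and the local finiteness of the Zariski closures (Lemma~\ref{zclc}) feed directly into Corollary~\ref{lfuok2}. The paper states the corollary without proof right after Theorem~\ref{finale}, and your assembly of these ingredients is exactly the intended justification.
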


\begin{example}\label{nonpure}
The irreducible components of a pure dimensional amenable C-semianalytic set need not to be pure dimensional. Let $S=T_1\cup T_2\cup T_3\subset\R^3$ where
\begin{multline*}
T_1=[-1,1]\times[-2,2]\times\{0\},\quad T_2=[-2,-1]\times\{-1,1\}\times[-1,1],\\
 \quad T_3=[1,2]\times\{-1,1\}\times[-1,1]
\end{multline*}
By Proposition \ref{neatwirred} and Theorem \ref{finale} the irreducible components of $X$ are the intersections $S_1=S\cap\{x_3=0\}$, $S_2=S\cap\{x_2=1\}$ and $S_3=S\cap\{x_2=-1\}$ and none of them is pure dimensional.
\end{example}

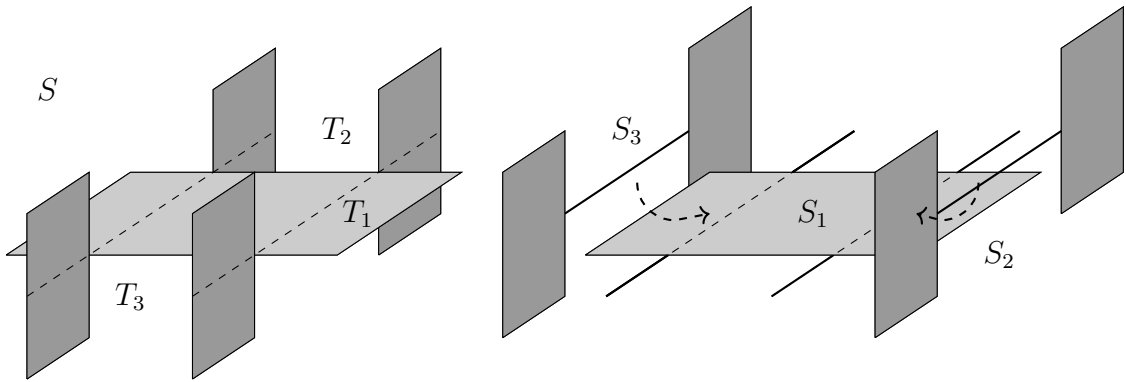
\begin{figure}[ht]
\centering
\begin{tikzpicture}[x=.275cm,y=.275cm]
\draw (10,6) -- (10,14) -- (13,16) -- (13,8) -- (10,6);
\draw[fill=black!40!white] (10,6) -- (10,14) -- (13,16) -- (13,8) -- (10,6);

\draw (18,6) -- (18,14) -- (21,16) -- (21,8) -- (18,6);
\draw[fill=black!40!white] (18,6) -- (18,14) -- (21,16) -- (21,8) -- (18,6);

\draw (0,6) -- (6,10) -- (22,10) -- (16,6) -- (0,6);
\draw[fill=black!20!white] (0,6) -- (6,10) -- (22,10) -- (16,6) -- (0,6);

\draw (1,0) -- (1,8) -- (4,10) -- (4,2) -- (1,0);
\draw[fill=black!40!white] (1,0) -- (1,8) -- (4,10) -- (4,2) -- (1,0);

\draw (9,0) -- (9,8) -- (12,10) -- (12,2) -- (9,0);
\draw[fill=black!40!white] (9,0) -- (9,8) -- (12,10) -- (12,2) -- (9,0);

\draw[dashed] (1,4) -- (13,12);
\draw[dashed] (9,4) -- (21,12);

\draw[dashed] (29,4) -- (41,12);
\draw[dashed] (37,4) -- (49,12);
\draw[thick] (29,4) -- (32,6);
\draw[thick] (38,10) -- (41,12);
\draw[thick] (37,4) -- (40,6);
\draw[thick] (46,10) -- (49,12);
\draw (33,8) -- (33,16) -- (36,18) -- (36,10) -- (33,8);
\draw[fill=black!40!white] (33,8) -- (33,16) -- (36,18) -- (36,10) -- (33,8);
\draw[thick] (27,8) -- (33,12);

\draw (51,8) -- (51,16) -- (54,18) -- (54,10) -- (51,8);
\draw[fill=black!40!white] (51,8) -- (51,16) -- (54,18) -- (54,10) -- (51,8);

\draw (28,6) -- (34,10) -- (50,10) -- (44,6) -- (28,6);
\draw[fill=black!20!white] (28,6) -- (34,10) -- (50,10) -- (44,6) -- (28,6);

\draw[thick] (29,4) -- (32,6);
\draw[thick] (38,10) -- (41,12);
\draw[dashed] (29,4) -- (41,12);
\draw[dashed] (37,4) -- (49,12);

\draw (24,2) -- (24,10) -- (27,12) -- (27,4) -- (24,2);
\draw[fill=black!40!white] (24,2) -- (24,10) -- (27,12) -- (27,4) -- (24,2);

\draw (42,2) -- (42,10) -- (45,12) -- (45,4) -- (42,2);
\draw[fill=black!40!white] (42,2) -- (42,10) -- (45,12) -- (45,4) -- (42,2);
\draw[thick] (45,8) -- (51,12);

\draw (2,14) node{$S$};
\draw (17,8) node{$T_1$};
\draw (16,12) node{$T_2$};
\draw (6,4) node{$T_3$};

\draw (39,8) node{$S_1$};
\draw (48,6) node{$S_2$};
\draw (30,12) node{$S_3$};

\draw[dashed,->,thick] (30.5,9.5) arc (180:270:0.5cm) -- (34,8);
\draw[dashed,->,thick] (47,9.5) arc (0:-90:0.5cm) -- (44,8);

\end{tikzpicture}
\caption{Irreducible components $S_1$, $S_2$ and $S_3$ of $S$ (Example \ref{nonpure})}
\end{figure}


We end this section relating the irreducible components of an amenable C-semi\-ana\-lytic set $S\subset M$ with the normalization $Y$ of a complexification $\widetilde X$ of its Zariski closure $X$. More precisely  
we want to know if the irreducible components of $S$ can be computed as the images of some  connected components of $\pi^{-1}(S)$. The following example shows that this is not true in general. However,  Proposition \ref{dpm2} above shows that under suitable conditions (achieved in Proposition \ref{neatwirred} ), the result is true.

\begin{example}
Consider the amenable C-semianalytic subset $S=S_0\cup S_1\cup S_2\cup S_3$ of $\R^4$ where 
\begin{align*}
S_0&=\{x^2-zy^2=0,z>0\},\\
S_1&=\{x=0,y=0,w=0,z<0\},\\
S_2&=\{x=0,y=0,w+z=-1,z<0\},\\
S_3&=\{x=0,y=0,w-z=1,z<0\}.
\end{align*}
The irreducible components of $S$ are $S_0,S_1,S_2$ and $S_3$. The Zariski closure of $S$ is $X=\{x^2-zy^2=0\}\subset\R^4$. Consider the complexification $\widetilde{X}=\{x^2-zy^2=0\}\subset\C^4$ and its normalization $\pi:Y=\C^3\to\widetilde{X},\ (s,t,w)\mapsto(st,s,t^2,w)$. Observe that $\pi^{-1}(S)=\bigcup_{i=0}^6T_i$ where 

$$\begin{array}{lll}
T_0=\{(s,t,w)\in\R^3:\ t\neq0\}\\[4pt]
T_1=\{(0,i t,0)\in\C^3:\ t>0\},&T_4=\{(0,-i t,0)\in\C^3:\ t>0\},\\[4pt]
T_2=\{(0,i t,-1+t)\in\C^3:\ t>0\},&T_5=\{(0,-i t,-1+t)\in\C^3:\ t>0\},\\[4pt]
T_3=\{(0,i t,1-t)\in\C^3:\ t>0\}, &T_6=\{(0,-i  t,1-t)\in\C^3:\ t>0\}.
\end{array}
$$

In addition $\pi(T_0)=S_0$, $\pi(T_1)=\pi(T_4)=S_1$, $\pi(T_2)=\pi(T_5)=S_2$ and $\pi(T_3)=\pi(T_6)=S_3$. Observe that $T_1\cap T_2\cap T_3=\{(0,i,0)\}$ and $T_4\cap T_5\cap T_6=\{(0,-i,0)\}\}$. Consequently, $\pi^{-1}(S)$ has three connected components $C_0=T_0$, $C_1=T_1\cup T_2\cup T_3$ and $C_2=T_4\cup T_5\cup T_6$, while $S$ has four irreducible components.
\end{example}

The announced result is similar to the statement of 
Proposition \ref{neatwirred}.
\begin{prop}\label{dpm2}
Let $S\subset M$ be an amenable C-semianalytic set and $X\subset M$ be a closed C-analytic set. Denote $\{S_i\}_{i\geq1}$ the irreducible components of $S$ and let $\{X_j\}_{j\geq1}$ be the irreducible components of $X$.
Assume   $S_i=X_i\cap S$ and $\ol{S_i}^{\zar}=X_i$ for $i\geq1$. Then for each $i\geq1$ there exists a connected components $T_i$ of $\pi^{-1}(S)$ (in the normalisation of a complexification of $X$) such that $\pi(T_i)=S_i$.
\end{prop}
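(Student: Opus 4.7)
The plan is to reduce the statement to a component-by-component application of Theorem \ref{dpm}, exploiting the fact that the normalization map canonically decomposes along the irreducible components of $\widetilde{X}$.

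First I would fix the setup. Choose a Stein complexification $\widetilde{X}$ of $X$ sitting in an invariant Stein open neighborhood $\Omega\subset\C^n$ of $M$, with anti-involution $\sigma$ whose fixed point set is $X$. Let $\pi:Y\to\widetilde{X}$ be its normalization and $\widehat{\sigma}$ the lifted anti-involution. After shrinking $\Omega$ if necessary (so that the irreducible components of $\widetilde{X}$ behave as expected over $M$), the irreducible components $\{\widetilde{X}_j\}_{j\geq 1}$ of $\widetilde{X}$ are complexifications of the $X_j$'s, in particular $\widetilde{X}_j\cap M=X_j$. Because normalization separates branches, $Y$ decomposes as a disjoint union $Y=\bigsqcup_{j\geq 1} Y_j$ of clopen subspaces, where $Y_j$ is the normalization of $\widetilde{X}_j$ and $\pi_j=\pi|_{Y_j}:Y_j\to\widetilde{X}_j$ is the normalization map; since each $\widetilde{X}_j$ is irreducible, each $Y_j$ is connected and hence a union of connected components of $Y$.

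The key intermediate step is to verify the identification
\[
\pi^{-1}(S)\cap Y_i\;=\;\pi_i^{-1}(S_i)
\]
for every $i\geq 1$. The inclusion $\supseteq$ is immediate from $S_i\subset S$. For the reverse inclusion, let $y\in Y_i$ with $\pi(y)\in S$. Then $\pi(y)\in\widetilde{X}_i$ and $\pi(y)\in S\subset M$, so $\pi(y)\in\widetilde{X}_i\cap M=X_i$; the hypothesis $S_i=X_i\cap S$ then yields $\pi(y)\in S_i$, i.e.\ $y\in\pi_i^{-1}(S_i)$. Since $Y_i$ is clopen in $Y$, this identifies the connected components of $\pi_i^{-1}(S_i)$ with connected components of $\pi^{-1}(S)$ that happen to lie in $Y_i$.

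Next, I would apply Theorem \ref{dpm} to each irreducible component $S_i$. Since $S_i$ is an amenable C-semianalytic set (by Theorem \ref{finale}), it is irreducible by construction, and its Zariski closure in $M$ equals $X_i$ by hypothesis, Theorem \ref{dpm} furnishes a connected component $T_i$ of $\pi_i^{-1}(S_i)$ such that $\pi_i(T_i)=S_i$. Combining this with the identification of the previous paragraph, $T_i$ is a connected component of $\pi^{-1}(S)$ and $\pi(T_i)=\pi_i(T_i)=S_i$, which is the desired conclusion.

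The main obstacle, and the point deserving the most care, is the bookkeeping around the decomposition $Y=\bigsqcup_j Y_j$ and the fact that $\widetilde{X}_j\cap M=X_j$ for the chosen complexification: one must be sure that the complexification $\widetilde{X}$ of $X$ has irreducible components that are honest complexifications of the $X_j$'s, and that after this choice the $Y_j$'s are clopen in $Y$. Once this is in place, the rest of the argument is a direct reduction to the already-proved irreducible case \ref{dpm}. The hypothesis $S_i=X_i\cap S$ is used exactly at the place where $\pi(y)\in S$ is promoted to $\pi(y)\in S_i$; without it, a point of $Y_i$ over $\pi(y)\in X_i\cap (S\setminus S_i)$ would obstruct the identification, as the example immediately preceding the proposition shows.
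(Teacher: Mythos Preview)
Your argument is correct and follows essentially the same route as the paper: reduce to Theorem \ref{dpm} on each $S_i$ after separating the normalization along the irreducible components, using the hypothesis $S_i=X_i\cap S$ to identify $\pi^{-1}(S)\cap Y_i$ with $\pi_i^{-1}(S_i)$. The only cosmetic difference is that the paper, rather than shrinking $\Omega$ so that the irreducible components of $\widetilde{X}$ complexify the $X_i$'s, instead picks an open neighbourhood $U\subset\widetilde{X}$ of $X$ whose irreducible components $U_i$ satisfy $U_i\cap X=X_i$ and then uses Remark \ref{norm1} to get the clopen pieces $Y_i'=\pi^{-1}(U_i)$; this achieves the same decomposition you obtain by shrinking and is precisely the bookkeeping point you flagged as the main obstacle.
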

\begin{proof}
Let $\widetilde{X}$ be a complexification of $X$ and take $U \subset \widetilde X$ be an open neighborhood of $X$ such that each irreducible component $X_i$ of $X$ is the intersection with $X$ of an irreducible component $U_i$ of $U$. 

By Remark \ref{norm1} $(Y'=\pi^{-1}(U),\pi_{|Y'})$ is the normalization of $U$ and for each $i\geq1$ there exists a connected component $Y'_i$ of $Y'$ such that $\pi(Y'_i)=U_i$ and $(Y'_i,\pi_{|{Y'_i}})$ is the normalization of $U_i$. As $S\subset X$, we have $\pi^{-1}(S)\subset Y'$. Consequently, the intersection $\pi^{-1}(S)\cap Y'_i$ is a union of connected components of $\pi^{-1}(S)$.

As $S_i$ is an amenable C-semianalytic set, there exists by Theorem \ref{dpm} a connected component $T_i$ of $\pi^{-1}(S)\cap Y'_i$ such that $\pi(T_i)=S_i$. Observe that $T_i$ is a connected component of $\pi^{-1}(S)$, as required. 
\end{proof}

\subsection*{ Bibliographic and Historical Notes.} \rm

Global semianalytic sets were considered by Ruiz in \cite{rz0,rz1,rz2,rz3} and more generally in \cite{abr}. In \cite[Th. 2.12 Cap VIII]{abr} one finds also bounds on complexity. 
Ruiz proved that the closure of a global semianalytic set is locally global, that is Theorem \ref{fujitagabrie} using real spectrum: the proof presented here is more geometric and  is inspired by \cite {fu} and \cite{gab}. We use a similar argument to prove Theorem \ref{ccruiz}.
 
The finiteness property is in \cite{abs}. Results for dimension 1 are classical. As for global analytic sets in a real analytic surface the theory was developped in the  thesis of Ana Castilla and can be found in \cite{ca}. 

Theorem \ref{SCH} is in \cite{sch} while Theorem 1.28 and its proof come from \cite{aab}. The general theory of C-semianalytic sets is developped in \cite{abf2} and proceeds in a similar way to the corresponding theory of the semianalytic sets in \cite{bm}. In particular the definition of subanalytic sets, Proposition \ref{bm1} and Theorem \ref{bm2} are in \cite{bm}. 

Theorem \ref{directimage} generalizes the result of  Galbiati \cite{gal2}, where she proved that if $f:X\to Y$ is a proper analytic map between real analytic spaces that admits a proper complexification $\widetilde{f}:\widetilde{X}\to\widetilde{Y}$ and $Z$ is a C-analytic subset of $X$, then $f(X\setminus Z)$ is a semianalytic set.  Hironaka in \cite{hi4} quoted this result and remarked that $f(X\setminus Z)$ is {\em globally semianalytic} in $Y$ with  respect to the given complexification $\widetilde{Y}$ of $Y'$  in the same line as  Theorem \ref{directimage}.

The fact that the set of local extrema of a real analytic function is C-semianalytic was proved by Fernando in (\cite{fernando}) as a particular case of the general framework of {\em weak cathegories.} 

The set of points $N(X)$ where an analytic set $X\subset M$ is not coherent was studied first by Fensch in \cite[I.\S2]{fe} where he proved that  it is contained in a semianalytic set of dimension $\leq\dim(X)-2$. This result was revisited by Galbiati in \cite{gal} and she proved that  it is in fact a semianalytic set. Later Tancredi-Tognoli provided in \cite{tt} a simpler proof of Galbiati's result. Their procedure has helped us to understand the global structure of the set of points of non-coherence of a $C$-analytic set.

The section on amenable C-semianalytic sets comes from \cite{fe1}. 
Irreducibility and irreducible components are usual concepts in Geometry and Algebra. Both concepts are strongly related with prime ideals and primary decomposition of ideals. There is an important background concerning this matter in Algebraic and Analytic Geometry. These concepts have been  developed for complex algebraic sets (Lasker-N\"other \cite{la}), complex analytic sets and Stein spaces (Cartan \cite{c1}, Forster \cite{of}, Remmert-Stein \cite{rs}), $C$-analytic sets (Whitney-Bruhat \cite{wb}), Nash sets (Efroymson \cite{e}, Mostowski \cite{m}, Risler \cite{r}) and semialgebraic sets (Fernando-Gamboa \cite{fg}).
The notion of irreducible component of amenable C-semianalytic sets generalizes the one available for semialgebraic  and for C-analytic sets.  The example (ii) in \ref{tame} is in \cite{wb}.

In some proofs we used the fact that the inverse image under a proper holomorphic map of a Stein manifold is a Stein manifold. This can be found in b \cite[M. Thm. 3]{gu3}.

\newpage

\cleardoublepage


\begin{thebibliography}{SaSaSiTo}
	
\bibitem[Ab]{ab} S.  Abhyankar : Local Analytic Geometry. {Academic Press} New York (1964).
\bibitem[AcAnBr]{aab} F. Acquistapace, C.Andradas, F. Broglia: The strict Positivstellensatz for global analytic functions and the moment problem for semianalytic. {\em Math. Ann. }  316(4)  (2000) 609-616.

\bibitem[AcBr]{moreabout} F. Acquistapace, F., Broglia : More about  signatures  and approximation.   {\em Geom. Dedicata} {\bf 50} (1994), 107-116.




\bibitem[AcBrFe1]{abfr1} F. Acquistapace, F. Broglia, J.F. Fernando, J.M. Ruiz: On the Pythagoras numbers of real-analytic surfaces, {\em Ann. Sci. \'Ecole Norm. Sup.} {\bf38} (2005), no. 5, 751--772.
\bibitem[AcBrFe2]{abfr2} F. Acquistapace, F. Broglia, J.F. Fernando, J.M. Ruiz: On the Pythagoras numbers of real-analytic curves. \em Math. Z. \em {\bf 257} (2007) no. 1, 13--21.

\bibitem[AcBrFe3]{abfr3} F. Acquistapace, F. Broglia, J.F. Fernando, J.M. Ruiz: On the finiteness of Pythagoras numbers of real meromorphic functions, {\em Bull. Soc. Math. France} {\bf 138} (2010), no. 2, 291--307.

\bibitem[AcBrFe4]{abf2} F. Acquistapace, F. Broglia, J.F. Fernando: On globally defined semianalytic sets. {\em Math. Ann.} {\bf 366 } (2016), no. 1-2, 613-654.\\ 
{\tt DOI 10.1007/s00208-015-1342-5, arXiv:1503.00987}
\bibitem[AcBrFe5]{abf1} F. Acquistapace, F. Broglia, J.F. Fernando: On the Nullstellensatz for Stein spaces and real C-analytic sets. {\em Trans. Amer. Math. Soc.} {\bf368} (2016), no. 6, 3899--3929.

\bibitem [AcBrFe5]{abf3} F. Acquistapace, F. Broglia, J.F. Fernando: On Hilbert's 17th problem and Pfister's multiplicative formulae for the ring of real analytic functions, {\em Ann. Sc. Norm. Super. Pisa Cl. Sci.} (5) 13 (2014), no. 2, 333-369.



\bibitem[AcBrNi]{abn} F. Acquistapace, F. Broglia, A. Nicoara: A Nullstellensatz for \L ojasiewicz ideals, {\em Rev. Mat. Iberoam.} {\bf30} (2014), no. 4, 1479--1487. 

\bibitem[AcBrSh]{abs} F. Acquistapace, F. Broglia, M. Shiota: The finiteness property and \L ojasiewicz inequality for global semianalytic sets. {\em Adv. in Geom.} {\bf5} (2005), 453--466.

\bibitem[AcBrTo1]{abt} F. Acquistapace, F. Broglia, A. Tognoli: Sull'insieme di non-coerenza di un insieme analitico reale. \em Atti Accad. Naz. Lincei Rend. Cl. Sci. Fis. Mat. Natur. \em (8) {\bf55} (1973), 42--45 (1974).
\bibitem[AcBrTo2]{AcBrTo} Acquistapace, F., Broglia, F., Tognoli, A.
\newblock Sulla normalizzazione degli spazi analitici reali.
\newblock {\em Boll. Un. Mat. Ital. (4)} {\bf 12} (1975), 26--36.


\bibitem[AcDc]{acdc} F.Acquistapace, A. D\'{\i}az-Cano: Divisors in global analytic set.{ \em  J. Eur. Math. Soc.} {\bf 13} (2011), 297-307.
 
\bibitem[AnBrRu1]{abr2} C. Andradas, L. Br\"ocker, J.M. Ruiz: Minimal generation of basic open semianalytic sets. \em Invent. Math. \em {\bf92} (1988), no. 2, 409--430.

\bibitem[AnBrRu2]{abr} C. Andradas, L. Br\"ocker, J.M. Ruiz: Constructible sets in real geometry. \em Ergeb. Math. \em {\bf 33}. Berlin Heidelberg New York: Springer Verlag (1996). 

\bibitem[AnCa]{ac} C. Andradas, A. Castilla: Connected components of global semianalytic subsets of 2-dimensional analytic manifolds. \em J. Reine Angew. Math. \em {\bf475} (1996), 137--148.

\bibitem[AnDCRu]{AnDCRz}
C. Andradas, A. D\'{\i}az-Cano, J.M. Ruiz :
\newblock The  Artin-Lang  property  for   normal  real  analytic surfaces.
\newblock{\em J. Reine angew. Math.} {\bf 556} (2003), 99--111.




\bibitem[Ar]{ar} E. Artin: \"Uber die Zerlegung definiter Funktionen in Quadrate. {\em Hamb. Abh. Math. Sem. Univ. Hamburg} {\bf 5} (1927), no. 1, 100--115.

\bibitem[ArSc]{arsc} E. Artin, O. Schreier: Algebraische Konstruktion reelle K\"orper. {\em Hamb. Abh. Math. Sem. Univ. Hamburg} {\bf 5} (1926),  85-99.



\bibitem[AtMc]{amd} M.F. Atiyah, I.G. Macdonald: Introduction to commutative algebra{ \em Reading : Addison-Wesley (1969)}

\bibitem[BiMi]{bm}E. Bierstone, P.D. Milman.; Semianalytic and subanalytic sets. {\em Inst. Hautes \'{E}tudes Sci. Publ. Math.} {\bf 67} (1988), 5--42




       



\bibitem[BoCoRo]{bcr} J. Bochnak, M. Coste, M.-F. Roy: Real algebraic geometry. {\em Ergeb. Math.} {\bf 36}, Springer-Verlag, Berlin, (1998).

\bibitem[BoKuSh]{bks} J. Bochnak, W. Kucharz, M.Shiota: On equivalence of ideals of real-analytic functions and the 17th Hilbert problem. {\em Invent. Math.} { \bf 63} (1981), no. 3, 403--421. 






\bibitem[BoRi]{br} J. Bochnak,  J.-J. Risler: Le th\'eor\`eme des z\'{e}ros pour les vari\'{e}t\'{e}s analytiques r\'{e}elles de dimension {$2$}. {\em Ann. Sci. \'{E}cole Norm. Sup.}{\bf (4)} Vol.8 (1975)353--363.





\bibitem[BoHae]{BoHa}
A. Borel, A., A. Haefliger:  La classe d'homologie fundamental d'un espace analytique. {\em Bull. Soc. Math. Fr.} {\bf 89} (1961), 461--513.

\bibitem[Br1]{br1} L.Br\"{o}cker: On the stability index of N\"{e}therian rings.
{\em Real analytic and algebraic geometry ({T}rento, 1988)},
{\em Lecture Notes in Math.}, {\bf 1420} (1990) 72--80. 

\bibitem[Br2]{br2} L.Br\"{o}cker:
      On the separation of basic semialgebraic sets by polynomials.
   {\em Manuscripta Math.} {\bf 60} no. 4, 497--508 (1988)
  
 \bibitem[BrPi]{BrPie}
F. Broglia, F. Pieroni: Separation of global  semianalytic subsets of 2-dimensional analytic
manifolds.
 \newblock{\em Pac. J. of Math.} {\bf 214}(1) (2004), 1--16.





\bibitem[BrPi]{bp} F. Broglia, F. Pieroni: The Nullstellensatz for real coherent analytic surfaces. {\em Rev. Mat. Iberoam.} {\bf 25} (2009), no. 2, 781--798. 


\bibitem[BruCa1]{bc1} F. Bruhat, H. Cartan: Sur les composantes irr\'eductibles d'un sous-ensemble analytique-r\'eel. {\em C. R. Acad. Sci. Paris} {\bf244} (1957), 1123--1126. 

\bibitem[BruC2]{bc2} F. Bruhat, H. Cartan: Sur la structure des sous-ensembles analytiques r\'eels. {\em C. R. Acad. Sci. Paris} {\bf244} (1957), 988--990.

\bibitem[Ca1]{c0} H. Cartan: Id\'eaux et modules de fonctions analytiques de variables complexes. {\em Bull. Soc. Math. France} {\bf78}, (1950), 29--64.

\bibitem[Ca2]{c1} H. Cartan: S\'eminaires, \em E.N.S. \em 1951/52 et 1953/54.



 
\bibitem[Ca3]{c} H. Cartan: Vari\'et\'es analytiques r\'eelles et vari\'et\'es analytiques complexes. {\em Bull. Soc. Math. France} {\bf85} (1957), 77--99.

\bibitem[Ca4]{c2} H. Cartan: Sur les fonctions de plusieurs variables complexes: les espaces analytiques. (1960) \em Proc. Internat. Congress Math\em. (1958) pp. 33--52 Cambridge Univ. Press, New York.

\bibitem[Ca5]{c5} H. Cartan: S\'eminaires, \em E.N.S. \em 1960/61 

\bibitem[Ca6]{c3}H. Cartan, Espaces fibr\'es analytiques. {\em Symposium Internacional de topologia algebraica} (1958) .Mexico

\bibitem[Cs]{ca} A.Castilla: Artin-Lang property for analytic manifolds of dimension two. Math.Z. {\bf 217}, 5-14 (1994)

\bibitem[Co]{Co}
S. Coen :
\newblock Sul rango dei fasci coerenti.
\newblock{\em Boll. Un. Mat. Ital.} {\bf 22} (1967), 373-383.




\bibitem[Da]{d} J.P. D'Angelo: Real and complex geometry meet the Cauchy-Riemann equations. \em Analytic and algebraic geometry\em, 77--182, IAS/Park City Math. Ser., {\bf17}, \em Amer. Math. Soc.\em, Providence, RI, 2010.

\bibitem[dB1]{db} P. de Bartolomeis: Algebre di Stein nel caso reale. {\em Rend. Accad. Naz.} {\bf XL} no. 5 1/2 (1975/76), 105--144 (1977).

\bibitem[dB2]{db1} P. de Bartolomeis: Una nota sulla topologia delle algebre reali coerenti. {\em Boll. Un. Mat. Ital.} {\bf 13A} (1976) no. 5, 123--125. 

\bibitem[DeMa]{dm} C.N. Delzell, J.J. Madden: Lattice-ordered rings and semialgebraic geometry. I. \em Real analytic and algebraic geometry \em (Trento, 1992), 103Ð129, de Gruyter, Berlin, 1995.

\bibitem[Do]{dl} P. Dolbeault: Formes differentielles et cohomologie sur une vari\'et\'e analytique complexe. I. {\em Ann. of Math.} (2) {\bf64} (1956), 83--130.

\bibitem[Ef]{e} G.A. Efroymson: A Nullstellensatz for Nash rings. \em Pacific J. Math. \em {\bf54} (1974), 101--112.

\bibitem[Fe]{fe} W. Fensch: Reell-analytische Strukturen. \em Schr. Math. Inst. Univ. M\"unster \em {\bf34} (1966), 56 pages.

\bibitem[Fe1]{fe2} J.F. Fernando: On Hilbert's 17th problem for global analytic functions in dimension 3. {\em Comment. Math. Helv.} {\bf 83} (2008), 353-363.

\bibitem[Fe2]{fe1} J.F. Fernando: On the irreducible components of globally defined semianalytic sets. {\em Math. Z.} {\bf 283} (2016), no. 3-4, 1071--1109.

\bibitem[Fe3]{fernando}J.F. Fernando: On the set of local extrema of a subanalytic function. {\em Collect. Math.}{\bf 71}(2020) no 1, 1-24

\bibitem[FeGa]{fg} J.F. Fernando, J.M. Gamboa: On the irreducible components of a semialgebraic set. {\em Internat. J. Math.}  {\bf 23} (2012), no. 4, 1250031, 40 pp 




\bibitem[Fo]{of} O. Forster: Prim\"arzerlegung in Steinschen Algebren. {\em Math. Ann.} {\bf 154} (1964), 307--329.

\bibitem[Fr]{f} J. Frisch: Points de platitude d'un morphisme d'espaces analytiques complexes. {\em Invent. Math.} {\bf 4} (1967) 118--138.

\bibitem [Fu]{fu} M. Fujita: Closure and connected component of planar global semianalytic sets defined by analytic functions definable in o-minimal structure. {\em Arc.Math.} {\bf 109 } (2017), 529-538.

\bibitem [Gab] {gab} A. Gabrielov: Complements of subanalytic sets and existential formulas for analytic functions. {\em Invent. Math.} {\bf 125} (1996) 1-12 

\bibitem[Ga1]{gal} M. Galbiati: Stratifications et ensemble de non-coh\'erence d'un espace analytique r\'eel. \em Invent. Math\em. {\bf34} (1976), no. 2, 113--128.

\bibitem[Ga2]{gal2} M. Galbiati: Sur l'image d'un morphisme analytique r\'eel propre. \em 
Ann. Scuola Norm. Sup. Pisa Cl. Sci\em. (4) {\bf3} (1976), no. 2, 311--319. 

\bibitem[Gr1]{g} H. Grauert,  Analytische Faserungen \"uber holomorph-vollst\"andigen R\"aumen. {\em. Math. Ann.} {\bf 135} (1958) 263--273. 


\bibitem[Gr2]{g2} H.Grauert: Ein Theorem der analytische Garbentheorie und die Modulr\"aume complexer Structuren. {\em Publ. Inst. Hautes \'Etudes Sci.} {\bf 5}(1960) 233--292.

\bibitem[Gr,Re]{gare} H. Grauert, R. Remmert: Theory of Stein spaces. Translated from the German by Alan Huckleberry. Reprint of the 1979 translation. \em Classics in Mathematics\em. Springer--Verlag, Berlin: 2004.


\bibitem[Gr,R]{gare1} H. Grauert, R. Remmert: Coherent analytic sheaves.
{\em Grundlehren der Mathematischen Wissenschaften.} Vol. 265.  Springer--Verlag, Berlin: 1984.

\bibitem[Gro]{gro} A. Grothendieck: Techniques de construction en g\'eom\'etrie analytique. II. G\'en\'eralit\'es sur les espaces annel\'es et les espaces analytiques. {\em S\'eminaire Henri Cartan}, {\bf13} no. 1, (1960-1961) Exp. No. 9, 14 p. 

\bibitem[GuMaTa]{gmt} F. Guaraldo, P. Macr\`{\i}, A. Tancredi: Topics on real analytic spaces. \em Advanced Lectures in Mathematics\em. Friedr. Vieweg \& Sohn, Braunschweig: 1986.

\bibitem[Gu1]{gu} R.C. Gunning: Introduction to holomorphic functions of several variables. Vol. II. Local theory. The Wadsworth \& Brooks/Cole Mathematics Series. Wadsworth \& Brooks/Cole Advanced Books \& Software, Monterey, CA: 1990.

\bibitem[Gu2]{gu3} R.C. Gunning: Introduction to holomorphic functions of several variables. Vol. III. Local theory. The Wadsworth \& Brooks/Cole Mathematics Series. Wadsworth \& Brooks/Cole Advanced Books \& Software, Monterey, CA: 1990.


\bibitem[Gu3]{gu4} R.C. Gunning: Introduction to holomorphic functions of several variables. Vol. I-III. Local theory. The Wadsworth \& Brooks/Cole Mathematics Series. Wadsworth \& Brooks/Cole Advanced Books \& Software, Monterey, CA (1990)


\bibitem[GuRo]{gr} R. Gunning, H. Rossi: Analytic functions of several complex variables. Englewood Cliff: Prentice Hall (1965).

\bibitem[HiPh]{hp} E. Hille and R. S. Phillips,  Functional analysis and semigroups. \em Amer. Math. Soc. Colloq. Publ. {\bf31} (rev. ed.), Amer. Math. Soc, Providence, R.I., (1957).\em  Translation:  Funktsional'nyi analiz i polugruppy. \em Izdat. Inost. Lit.,\em  Moscow (1962).

\bibitem[H]{hil} D. Hilbert: \"Uber die vollen Invariantensysteme. {\em Math. Ann.} {\bf42} (1893), no. 3, 313--373.

\bibitem[Hi1]{hi1} H. Hironaka: Introduction aux ensembles sous-analytiques.  R\'edig\'e par Andr\'e Hir\-schowitz et Patrick Le Barz. Singularit\'es \`a Carg\`ese \em (Rencontre Singularit\'es en G\'eom. Anal., Inst. \'Etudes Sci., Carg\`ese, (1972)),\em pp. 13--20.  Asterisque, {\bf7} et {\bf8}, Soc. Math. France, Paris, (1973).

\bibitem[Hi2]{hi2} H. Hironaka: Subanalytic sets. \em Number theory, algebraic geometry and commutative algebra\em, in honor of Yasuo Akizuki, pp. 453--493. Kinokuniya, Tokyo, (1973).

\bibitem[Hi3]{hi4} H. Hironaka: Stratification and flatness. Real and complex singularities (\em Proc. Ninth Nordic Summer School/NAVF Sympos. Math.) \em Oslo  (1976), pp. 199--265. Sijthoff and Noordhoff, Alphen aan den Rijn, (1977).

\bibitem[JoPf]{jp} T. de Jong, G.Pfister: Local analytic geometry. \em Advanced Lectures in Mathematics, \em Friedr. Vieweg \& Sohn, Braunschweig. (2000)  

\bibitem[Jw1]{j1} P. Jaworski: Positive Definite Analytic Functions and Vector Bundles. {\em Bull. Ac. Pol. Sc.  S\'erie Mathematique} {\bf 30}, no. 11-12, (1982).
 
\bibitem[Jw2]{j2} P. Jaworski: Extensions of orderings on fields of quotients of rings of real-analytic functions. {\em Math. Nachr.} {\bf 125} (1986), 329--339.

\bibitem[Ko]{k} J.J Kohn: Subellipticity of the $\ol{\partial}$-Neumann problem on pseudo-convex domains: sufficient conditions. {\em Acta Math}. {\bf142} (1979), no. 1--2, 79--122.


\bibitem[L]{lang}S. Lang: Algebra. Revised third edition. Graduate Texts in Mathematics, {\bf 211}. Springer-Verlag, New York, 2002.
 
\bibitem[La]{la} E. Lasker: Zur Theorie der moduln und Ideale. \em Math. Ann. \em {\bf60} (1905), no. 1, 20--116.

\bibitem[\L o1]{l} S. \L ojasiewicz: Ensembles semi-analytiques, Cours Facult\'e des Sciences d'Orsay, \em Mimeographi\'e I.H.E.S.\em, Bures-sur-Yvette, July: 1965.\\ 
{\tt http://perso.univ-rennes1.fr/michel.coste/Lojasiewicz.pdf}

\bibitem[\L o2]{l1} S. \L ojasiewicz: Triangulation of semi-analytic sets. \em Ann. Scuola Norm. Sup. Pisa \em (3) {\bf18} (1964) 449--474.

\bibitem[Mh1]{mh1} L.Mah\'e: Level and Phythagoras number of some geometric rings, \em Math. Z. {\bf 204} \rm (1990), no.4, 615-629.  

\bibitem[Mh2]{mh2} L.Mah\'e: Erratum: `` Level and Phythagoras number of some geometric rings'' \em Math. Z. {\bf 209} \rm (1992), no.3, 481-483.

\bibitem[Ma]{m1} B. Malgrange: Ideals of differentiable functions. \em Tata Institute of Fundamental Research Studies in Mathematics\em, {\bf3} Tata Institute of Fundamental Research, Bombay; Oxford University Press, London (1967).

\bibitem[Mas]{massey}
\newblock   W.S. Massey, 
\newblock{ Homology and Cohomology Theory}.
\newblock Marcel Dekker, New York, 1978.

\bibitem[Ma]{mat} 
\newblock H.Matsumura: 
\newblock { Commutative rimg theory.} 
\newblock Cambridge Studies in Advanced Mathematics.
\newblock Cambridge University Press Cambridge (1986).

\bibitem[Ma]{mat1}
\newblock H.Matsumura:
\newblock { Commutative algebra.}
\newblock Mathematics Lecture Note Series, vol.56, Second Edition.  
\newblock Benjamin/Cummings Publishing Co., Inc., Reading, Mass. (1980).


\bibitem[Mi]{mi} J.W. Milnor: Topology from the differentiable
viewpoint. Based on notes by David W. Weaver. Revised reprint of the 1965
original. Princeton Landmarks in Mathematics. Princeton University Press,
Princeton, NJ, 1997.



\bibitem[Mo]{m} T. Mostowski: Some properties of the ring of Nash functions. \em Ann. Scuola Norm. Sup. Pisa Cl. Sci. \em (4) {\bf3} (1976), no. 2, 245--266.


\bibitem[Mum]{mumford}
\newblock Mumford, D.
\newblock { The red book of varieties and schemes.}
\newblock Lecture Notes in Mathematics, 1358.
\newblock Springer-Verlag, Berlin, 1988.

\bibitem[Mz]{mz} T.S. Motzkin: The arithmetic-geometric inequality. {\em Inequalities} (Proc. Sympos. Wright-Patterson Air Force Base, Ohio, 1965) pp. 205--224 Academic Press, New York, (1967).

\bibitem[Na1]{n1} R. Narasimhan: Sur les espaces complexes holomorphiquement complets. \em C. R. Acad. Sci. Paris \em {\bf250} (1960) 3560--3561.

\bibitem[Na2]{n2} R. Narasimhan: Imbedding of holomorphically complete complex spaces. \em Amer. J. Math. \em {\bf82} (1960) 917--934.

\bibitem[Na3]{n4} R. Narasimhan: The Levi problem for complex spaces. \em Math. Ann. \em {\bf142} (1960/1961), 355--365.

\bibitem[Na4]{n3} R. Narasimhan: A note on Stein spaces and their normalisations. \em Ann. Scuola Norm. Sup. Pisa \em (3) {\bf16} (1962), 327--333.

\bibitem[Na5]{n5} R. Narasimhan: The Levi problem for complex spaces II. {\em Math. Ann.} {\bf146} (1962) 195--216. 

\bibitem[Na6]{n} R. Narasimhan: Introduction to the theory of analytic spaces, {\em Lecture Notes in Math.} {\bf 25}, Springer-Verlag, Berlin-New York, (1966).

\bibitem[Na7]{n6} R. Narasimhan: Analysis on Real and Complex Manifolds, {\em Advances Stuidies in
Pure Mathematics}, Masson $\&$ Cie, Editeur-Paris; Noth-Holland Publishing Company Amsterdam
(1968).




\bibitem[Ok]{ok} K. Oka: Sur les fonctions analytiques de plusieurs variables. VIII. Lemme fondamental. \em J. Math. Soc. Japan \em {\bf3} (1951) 204--214.


\bibitem[Per]{tesipern}
\newblock L. Pernazza:
\newblock Basicness  and separability with  analytic  and  $C^{\infty}$
function  germs.
\newblock  {\em  Tesi  di Perfezionamento Scuola Normale Superiore}, 2001.

\bibitem[Pf]{pf} A. Pfister: Multiplikative quadratische Formen. {\em Arch. Math.}{\bf 16} (1965), 363--370. 

\bibitem[RS]{rs} R. Remmert, K. Stein: \"Uber dei wesentlichen Singularit\"aten analytischer Mengen. \em Math. Ann. \em {\bf126}, (1953) 263--306.

\bibitem[Ri1]{r3} J.J. Risler: Une caract\'erisation des id\'eaux des vari\'et\'es alg\'ebriques r\'eelles. {\em C. R. Acad. Sci. Paris} S\'er. A-B {\bf271} (1970) A1171--A1173.

\bibitem[Ri2]{r} J.J. Risler: Sur l'anneau des fonctions de Nash globales. \em C. R. Acad. Sci. Paris S\'er\em. A-B {\bf276} (1973), A1513--A1516.

\bibitem[Ri3]{r2} J.J. Risler: Le th\'eor\`eme des z\'eros en g\'eom\'etries alg\'ebrique et analytique r\'eelles. {\em Bull. Soc. Math. France} {\bf104} (1976), no. 2, 113--127.

\bibitem[R\" u]{ru} W. R\"uckert: Zum Eliminationsproblem der Potenzreihenideale. \em 
Math. Ann. \em {\bf107} (1933), no. 1, 259--28
\bibitem[Ru1]{rz0} J.M. Ruiz: On Hilbert's 17th problem and real Nullstellensatz for analytic functions. {\em Math. Z.} {\bf190} (1985), no. 3, 447--454.

\bibitem[Ru2]{rz1} J.M. Ruiz: On the real spectrum of a ring of global analytic functions. \em Alg\`ebre\em, 84--95, \em Publ. Inst. Rech. Math. Rennes\em, 1986-4, Univ. Rennes I, Rennes, (1986).

\bibitem[Ru3]{rz2} J.M. Ruiz: On the connected components of a global semianalytic set. \em J. Reine Angew. Math. \em {\bf392} (1988), 137--144.

\bibitem[Ru4]{rz3} J.M. Ruiz: On the topology of global semianalytic sets. Real analytic and algebraic geometry (Trento, 1988), 237--246, \em Lecture Notes in Math.\em, {\bf1420}, Springer, Berlin, (1990).

\bibitem [Sch]{sch} K. Schm\" udgen: The $K$-moment problem for compact semialgebraic sets. \em Math. Ann. {\bf 289}  \em  (1991)  2.  203-206  

\bibitem[Sh]{sh} M.Shiota: Sur la factorialit\'e de l'anneau des fonctions analytiques. {\em C.R.Acad.Sci. Paris.S\'er. A 285} \rm (1977), 253-255.

\bibitem[ShYo]{shio} M.Shiota, M.Yokoi: Triangulations of subanalytic sets and locally subanalytic manifolds.  Trans. Amer. Math. Soc. 286 (1984), no. 2, 727–750.

\bibitem [Si]{siu} Y.T.Siu: Hilbert Nullstellensatz in global complex analytic case. \em Proc. Amer. Math.Soc. \bf 19 \rm (1968)  296-298
\bibitem[SaSaSiTo]{ssst} G. della Sala, A. Saracco, A. Simioniuc, G. Tomassini: Lectures on complex analysis and analytic geometry. Appunti. {\em Scuola Normale Superiore di Pisa} (Nuova Serie) {\bf3}. Edizioni della Normale, Pisa (2006).

\bibitem[St]{st}N. Steenrod: The topology of fibre bundles. Princeton Landmarks in Mathematics. Princeton University Press, Princeton, NJ (1999).

\bibitem[TaTo]{tt} A. Tancredi, A. Tognoli: On a decomposition of the points of non-coherence of a real analytic space. \em Riv. Mat. Univ. Parma \em (4) {\bf6} (1980), 401--405 (1981).

\bibitem[To1]{t} A. Tognoli: Propriet\`a globali degli spazi analitici reali. \em Ann. Mat. Pura Appl. \em (4) {\bf75} (1967) 143--218.

\bibitem[To2]{tog}
Tognoli, A.
\newblock Une remarque sur les fibr\'es vectoriels analytiques et de Nash.    
\newblock {\em C. R. Acad. Sci. Paris S\'er. A-B, } {\bf 290}(7) (1980), A321-A323.

\bibitem[To1]{t3} A. Tognoli: L'analogo del teorema delle matrici olomorfe invertibili nel caso analitico reale.  {\em Ann. Sc. Norm. Sup.} (3){\bf 22} 4 527-558 (1968)

\bibitem [Wh]{Wh} Whitney, H.: Analytic extensions of differentiable functions defined in closed sets. {\em Trans. Amer. Math. Soc.} 36 (1934), no. 1, 63–89. 


\bibitem[WhBr]{wb} H. Whitney, F. Bruhat: Quelques propi\'et\'es fondamentales des ensembles analytiques r\'eels. \em Comment. Math. Helv. \em {\bf33} (1959), 132--160. 
\bibitem[ZKKP]{zkkp} M.G. Za\u{\i}denberg, S.G. Kre\u{\i}n, P.A. Ku\v{c}ment and A.A. Pankov, \em Banach bundles and linear operators\em (Russian). {Uspehi Mat. Nauk} {\bf 30} (1975), no. 5 (185), 101--157. English translation: Russian Math. Surveys {\bf 30} (1975), no. 5, 115--175.
\end{thebibliography}
\end{document}